\def\DD{{\SSS{D}}}
\def\logexp{{\mathrm{lex}}}
\def\KZ{{\mathrm{KZ}}}
\def\libertesymetrie{{(Free-Sym)}}
\def\HHC{{\!\SSS{\HC}}}
\def\HHb{{\!\SSS{\Hb}}}
\def\Hbd{{\dot{\Hb}}}
\def\opp{{\mathrm{opp}}}
\def\ln{{\mathrm{ln}}}
\def\nr{{\mathrm{nr}}}
\def\sec{{\mathrm{sec}}}
\DeclareSymbolFont{rsfscript}{OMS}{rsfs}{m}{b}
\DeclareSymbolFontAlphabet{\mathrsfs}{rsfscript}
\definecolor{shadecolor}{gray}{0.90}
\def\bfit{\bfseries\itshape}
\def\indexnot#1#2{\index{#1@$#2$ |textbf  {\hskip0.5cm} \textbf }}
\newtheorem{theo}{Theorem}[section]
\newtheorem{prop}[theo]{Proposition}
\newtheorem{lem}[theo]{Lemma}
\newtheorem{coro}[theo]{Corollary}
\newtheorem{defi}[theo]{Definition}
\newtheorem{defivide}{Definition}
\newtheorem{conj}{Conjecture}[section]
\def\equat{\refstepcounter{theo}\begin{equation}}
\def\endequat{\end{equation}}
\def\BZ{{\mathbb{Z}}}
\def\colim{\operatorname{colim}\nolimits}
\def\mMOD{\operatorname{\!-Mod}\nolimits}
\def\mmod{\operatorname{\!-mod}\nolimits}
\def\mmodgr{\operatorname{\!-modgr}\nolimits}
\def\mfree{\operatorname{\!-free}\nolimits}
\def\mproj{\operatorname{\!-proj}\nolimits}
\def\AG{{\mathfrak A}}  \def\aG{{\mathfrak a}}  \def\AM{{\mathbb{A}}}
  \def\bG{{\mathfrak b}}  
\def\CG{{\mathfrak C}}    \def\CM{{\mathbb{C}}}
    \def\FM{{\mathbb{F}}}
  \def\gG{{\mathfrak g}}  
  \def\hG{{\mathfrak h}}  
\def\IG{{\mathfrak I}}
  \def\lG{{\mathfrak l}}  \def\LM{{\mathbb{L}}}
  \def\mG{{\mathfrak m}}  
    \def\NM{{\mathbb{N}}}
  \def\pG{{\mathfrak p}}  
  \def\qG{{\mathfrak q}}  \def\QM{{\mathbb{Q}}}
  \def\rG{{\mathfrak r}}  \def\RM{{\mathbb{R}}}
\def\SG{{\mathfrak S}}
    \def\XM{{\mathbb{X}}}
  \def\zG{{\mathfrak z}}  \def\ZM{{\mathbb{Z}}}
\def\Ab{{\mathbf A}}  \def\ab{{\mathbf a}}  \def\AC{{\mathcal{A}}}
  \def\bb{{\mathbf b}}  \def\BC{{\mathcal{B}}}
    \def\CC{{\mathcal{C}}}
  \def\db{{\mathbf d}}  \def\DC{{\mathcal{D}}}
\def\Eb{{\mathbf E}}  \def\eb{{\mathbf e}}  \def\EC{{\mathcal{E}}}
    \def\FC{{\mathcal{F}}}
\def\Gb{{\mathbf G}}    \def\GC{{\mathcal{G}}}
\def\Hb{{\mathbf H}}    \def\HC{{\mathcal{H}}}
    \def\IC{{\mathcal{I}}}
    \def\JC{{\mathcal{J}}}
\def\Kb{{\mathbf K}}  \def\kb{{\mathbf k}}  \def\KC{{\mathcal{K}}}
\def\Lb{{\mathbf L}}    \def\LC{{\mathcal{L}}}
\def\Mb{{\mathbf M}}    \def\MC{{\mathcal{M}}}
    \def\NC{{\mathcal{N}}}
    \def\OC{{\mathcal{O}}}
\def\Pb{{\mathbf P}}    \def\PC{{\mathcal{P}}}
  \def\qb{{\mathbf q}}
\def\Tb{{\mathbf T}}  \def\tb{{\mathbf t}}  \def\TC{{\mathcal{T}}}
  \def\ub{{\mathbf u}}  
    \def\WC{{\mathcal{W}}}
  \def\xb{{\mathbf x}}  
    \def\ZC{{\mathcal{Z}}}
\def\Crm{{\mathrm{C}}}    \def\CCB{{\boldsymbol{\mathcal{C}}}}
    \def\DCB{{\boldsymbol{\mathcal{D}}}}
\def\Hrm{{\mathrm{H}}}    \def\HCB{{\boldsymbol{\mathcal{H}}}}
    \def\KCB{{\boldsymbol{\mathcal{K}}}}
\def\Mrm{{\mathrm{M}}}    \def\MCB{{\boldsymbol{\mathcal{M}}}}
    \def\PCB{{\boldsymbol{\mathcal{P}}}}
\def\Qrm{{\mathrm{Q}}}    \def\QCB{{\boldsymbol{\mathcal{Q}}}}
    \def\RCB{{\boldsymbol{\mathcal{R}}}}
\def\Trm{{\mathrm{T}}}    \def\TCB{{\boldsymbol{\mathcal{T}}}}
\def\Wrm{{\mathrm{W}}}    
    \def\XCB{{\boldsymbol{\mathcal{X}}}}
    \def\YCB{{\boldsymbol{\mathcal{Y}}}}
\def\Zrm{{\mathrm{Z}}}    \def\ZCB{{\boldsymbol{\mathcal{Z}}}}
\def\Qti{{\tilde{Q}}}    
\def\Rti{{\tilde{R}}}    \def\RCt{{\tilde{\mathcal{R}}}}
\def\Zti{{\tilde{Z}}}  \def\zti{{\tilde{z}}}  
  \def\ahat{{\hat{a}}}
  \def\ehat{{\hat{e}}}
\def\Qhat{{\hat{Q}}}
  \def\zhat{{\hat{z}}}
          \def\aba{{\bar{a}}}
          \def\bba{{\bar{b}}}
\def\Dba{{\bar{D}}}          
          \def\eba{{\bar{e}}}
\def\Fba{{\bar{F}}}          \def\fba{{\bar{f}}}
\def\Gba{{\bar{G}}}          \def\gba{{\bar{g}}}
          \def\hba{{\bar{h}}}
\def\Iba{{\bar{I}}}
\def\Pba{{\bar{P}}}          
          \def\rba{{\bar{r}}}
          \def\tba{{\bar{t}}}
          \def\vba{{\bar{v}}}
\def\Zba{{\bar{Z}}}          \def\zba{{\bar{z}}}
\def\Tov{{\overline{T}}}
\def\Hbt{{\widetilde{\Hb}}}
\def\Mbt{{\tilde{\Mb}}}
\def\a{\alpha}
\def\b{\beta}
\def\g{\gamma}
\def\G{\Gamma}
\def\d{\delta}
\def\D{\Delta}
\def\e{\varepsilon}
\def\ph{\varphi}
\def\l{\lambda}
\def\L{\Lambda}
\def\o{\omega}
\def\O{\Omega}
\def\r{\rho}
\def\s{\sigma}
\def\Sig{\Sigma}
\def\th{\theta}
\def\Th{\Theta}
\def\t{\tau}
\def\z{\zeta}
\def\betb{{\boldsymbol{\beta}}}
\def\delb{{\boldsymbol{\delta}}}        
\def\Delb{{\boldsymbol{\Delta}}}        
        \def\pht{{\tilde{\varphi}}}
\def\chib{{\boldsymbol{\chi}}}
\def\mub{{\boldsymbol{\mu}}}            \def\mut{{\tilde{\mu}}}
\def\omeb{{\boldsymbol{\omega}}}        
\def\Omeb{{\boldsymbol{\Omega}}}        
\def\pib{{\boldsymbol{\pi}}}            
          \def\rhot{{\tilde{\rho}}}
\def\sigb{{\boldsymbol{\sigma}}}
\def\taub{{\boldsymbol{\tau}}}          \def\taut{{\tilde{\t}}}
           \def\gamh{{\hat{\gamma}}}
               \def\muh{{\hat{\mu}}}
\def\omeba{{\bar{\omega}}}
\def\rhoba{{\bar{\rho}}}
\DeclareMathOperator{\ad}{{\mathrm{ad}}}
\DeclareMathOperator{\Aut}{{\mathrm{Aut}}}
\DeclareMathOperator{\End}{{\mathrm{End}}}
\DeclareMathOperator{\Ext}{{\mathrm{Ext}}}
\DeclareMathOperator{\Gal}{{\mathrm{Gal}}}
\DeclareMathOperator{\Hom}{{\mathrm{Hom}}}
\DeclareMathOperator{\Id}{{\mathrm{Id}}}
\DeclareMathOperator{\id}{{\mathrm{Id}}}
\DeclareMathOperator{\im}{{\mathrm{Im}}}
\DeclareMathOperator{\Ind}{{\mathrm{Ind}}}
\DeclareMathOperator{\Irr}{{\mathrm{Irr}}}
\DeclareMathOperator{\Ker}{{\mathrm{Ker}}}
\DeclareMathOperator{\Log}{{\mathrm{Log}}}
\DeclareMathOperator{\Mat}{{\mathrm{Mat}}}
\DeclareMathOperator{\Proj}{{\mathrm{Proj}}}
\DeclareMathOperator{\Rad}{{\mathrm{Rad}}}
\DeclareMathOperator{\Res}{{\mathrm{Res}}}
\DeclareMathOperator{\Spec}{{\mathrm{Spec}}}
\DeclareMathOperator{\Stab}{{\mathrm{Stab}}}
\DeclareMathOperator{\Tr}{{\mathrm{Tr}}}
\DeclareMathOperator{\Tor}{{\mathrm{Tor}}}
\DeclareMathOperator{\val}{{\mathrm{val}}}
\DeclareMathOperator{\disc}{{\mathrm{disc}}}
\DeclareMathOperator{\classe}{{\mathrm{Cl}}}
\DeclareMathOperator{\longueur}{{\mathrm{Length}}}
\def\to{\rightarrow}
\def\longto{\longrightarrow}
\def\injto{\hookrightarrow}
\def\fonction#1#2#3#4#5{\begin{array}{rccc}
{#1} : & {#2} & \longto & {#3} \\
& {#4} & \longmapsto & {#5} 
\end{array}}
\def\fonctio#1#2#3#4{\begin{array}{ccc}
{#1} & \longto & {#2} \\
{#3} & \longmapsto & {#4} 
\end{array}}
\def\bijectio#1#2#3#4{\begin{array}{ccc}
{#1} & \stackrel{\sim}{\longto} & {#2} \\
{#3} & \longmapsto & {#4} 
\end{array}}
\def\DS{\displaystyle}
\def\SS{\scriptstyle}
\def\SSS{\scriptscriptstyle}
\def\finl{~$\blacksquare$}
\def\lexp#1#2{\kern\scriptspace\vphantom{#2}^{#1}\kern-\scriptspace#2}
\def\le{\hspace{0.1em}\mathop{\leqslant}\nolimits\hspace{0.1em}}
\def\ge{\hspace{0.1em}\mathop{\geqslant}\nolimits\hspace{0.1em}}
\mathchardef\inferieur="321E
\mathchardef\superieur="321F
\def\eqna{\begin{eqnarray*}}
\def\endeqna{\end{eqnarray*}}
\def\itemth#1{\item[${\mathrm{(#1)}}$]}
\long\def\@car#1#2\@nil{#1}
\long\def\@first#1#2{#1}
\long\def\@second#1#2{#2}
\long\def\ifempty#1{\expandafter\ifx\@car#1@\@nil @\@empty
  \expandafter\@first\else\expandafter\@second\fi}
\def\Hbov{{\bar{\Hb}}}
\def\Kbov{{\bar{\Kb}}}
\def\Lbov{{\bar{\Lb}}}
\def\Mbov{{\bar{\Mb}}}
\def\pGt{{\tilde{\pG}}}
\def\qGt{{\tilde{\qG}}}
\def\rGt{{\tilde{\rG}}}
\def\GL{{\mathrm{GL}}}
\DeclareMathOperator{\REF}{Ref}
\def\boitegrise#1#2{\begin{centerline}{\fcolorbox{black}{shadecolor}{~
    \begin{minipage}[t]{#2}{\vphantom{~}#1\vphantom{$A_{\DS{A_A}}$}}
            \end{minipage}~}}\end{centerline}\medskip}
\def\ve{{\SS{\vee}}}
\def\cow{{\mathrm{co}(W)}}
\def\module{{\text{-}}{\mathrm{mod}}}
\def\BIL{LR}
\def\GAUCHE{L}
\def\CAR{C}
\def\FAM{FAM}
\def\COH{COH}
\def\ECOH{ECOH}
\def\FIX{FIX}
\theoremstyle{remark}
\newtheorem{rema}[theo]{Remark}
\newtheorem{remavide}{Remark}
\newtheorem{exemple}[theo]{Example}
\newtheorem{contre}[theo]{Counter-example}
\theoremstyle{plain}
\newtheorem{conjecturem}{Conjecture \FAM}
\newtheorem{conjecturebil}{Conjecture \BIL}
\newtheorem{conjectureleft}{Conjecture \GAUCHE}
\newtheorem{conjecturecar}{Conjecture \CAR}
\def\Frac{{\mathrm{Frac}}}
\newtheorem{conjecturecoh}{Conjecture \COH}
\newtheorem{conjectureecoh}{Conjecture \ECOH}
\newtheorem{conjecturefix}{Conjecture \FIX}
\def\groth{K_0}
\DeclareMathOperator{\dec}{dec}
\def\reg{{\mathrm{reg}}}
\def\blocs{{\mathrm{Idem_{pr}}}}
\def\refw{{\REF(W)/W}}
\def\grad{{\mathrm{gr}}}
\def\gradauto{{\mathrm{bigr}}}
\def\alg{{\mathrm{alg}}}
\def\euler{{\eb\ub}}
\def\eulerq{{\mathrm{eu}}}
\def\eulertilde{\widetilde{\euler}}
\def\casimir{{\mathrm{cas}}}
\def\pGba{\bar{\pG}}
\def\moins{{\hskip0.2mm -}}
\def\rGba{{\bar{\rG}}}
\def\qGba{{\bar{\qG}}}
\def\zGba{{\bar{\zG}}}
\def\decba{\overline{\dec}}
\def\calo{{\Crm\Mrm}}
\def\eval{{\mathrm{ev}}}
\def\KER{\KC \!\!\! e\!\! r}
\def\xyinj{\ar@{^{(}->}}
\def\xysur{\ar@{->>}}
\def\gauche{{\mathrm{left}}}
\def\droite{{\mathrm{right}}}
\def\isomorphisme#1{{\boldsymbol{[}}\hskip0.5mm #1\hskip0.5mm {\boldsymbol{]}}}
\def\res{{\mathrm{res}}}
\def\gen{{\mathrm{gen}}}
\def\parti{{\mathrm{par}}}
\def\bigrad{{\mathrm{bigr}}}
\DeclareMathOperator{\carac}{{\mathrm{Car}}}
\def\unb{{\boldsymbol{1}}}
\def\petitespace{\vphantom{$\DS{\frac{\DS{A^A}}{\DS{A_A}}}$}}
\def\trespetitespace{\vphantom{$\DS{\frac{\DS{A}}{\DS{A}}}$}}
\def\kl{{\mathrm{KL}}}
\def\singulier{{\mathrm{sing}}}
\def\ramif{{\mathrm{ram}}}
\def\cmcellules{\lexp{\calo}{\mathrm{Cells}}}
\def\klcellules{\lexp{\kl}{\mathrm{Cells}}}
\def\mult{{\mathrm{mult}}}
\def\cyclo{{\mathrm{cyc}}}
\def\heckegenerique{\HCB}
\def\heckecyclotomique{\HCB^\cyclo}
\def\hlinewd#1{%
\noalign{\ifnum0=`}\fi\hrule \@height #1 %
\futurelet\reserved@a\@xhline}
\newlength\epaisLigne
\newcommand\clinewd[2]{\noalign{\global\epaisLigne\arrayrulewidth\global\arrayrulewidth #1}%
\cline{#2} \noalign{\global\arrayrulewidth\epaisLigne}}
\def\prel{\leqslant_{L}^c}
\def\prer{\leqslant_{R}^c}
\def\prelr{\leqslant_{LR}^c}
\def\rell{\stackrel{L,c}{\longleftarrow}}
\def\relr{\stackrel{R,c}{\longleftarrow}}
\def\siml{\sim_{L}^{\kl,c}}
\def\simr{\sim_{R}^{\kl,c}}
\def\simlr{\sim_{LR}^{\kl,c}}
\def\CCBt{{\widetilde{\CCB}}}
\def\CGt{{\widetilde{\CG}}}
\def\copie{{\mathrm{cop}}}
\def\iso{{\mathrm{iso}}_0}
\newcommand{\longiso}{\stackrel{\sim}{\longrightarrow}}
\newcommand{\longbij}{\stackrel{\sim}{\longleftrightarrow}}
\def\MCov{{\bar{\MC}}}
\def\LCov{{\bar{\LC}}}
\def\schur{{\mathrm{sch}}}
\def\carac{{\mathrm{car}}}
\def\attractif{{\mathrm{att}}}
\def\repulsif{{\mathrm{rep}}}
\def\limiteattractive{{{\mathrm{lim}}_{{\mathrm{att}}}}}
\def\limiterepulsive{{{\mathrm{lim}}_{{\mathrm{rep}}}}}
\def\limiteattractiveinverse{{{\mathrm{lim}}_{{\mathrm{att}}}^{-1}}}
\def\limitegauche{{{\mathrm{lim}}_{{\mathrm{left}}}}}
\def\sym{{\mathrm{sym}}}
\newcommand{\longsurto}{\relbar\joinrel\twoheadrightarrow}
\newcommand{\longinjto}{\lhook\joinrel\longrightarrow}
\def\gaudin{{\mathrm{Gau}}}
\def\Liegaudin{{\mathfrak{gau}}}
\begin{document}

\baselineskip=16pt
%\large\baselineskip=20pt
%\Large\baselineskip=24pt

\title{Cherednik algebras and Calogero-Moser Cells}

\author{{\sc C\'edric Bonnaf\'e}}
\address{
Institut de Montpelli\'erain Alexander Grothendieck (CNRS: UMR 5149), 
Universit\'e Montpellier 2,
Case Courrier 051,
Place Eug\`ene Bataillon,
34095 MONTPELLIER Cedex,
FRANCE} 

\makeatletter
\email{cedric.bonnafe@umontpellier.fr}
\makeatother

\author{{\sc Rapha\"el Rouquier}}

\address{UCLA Mathematics Department
Los Angeles, CA 90095-1555, 
USA}
\email{rouquier@math.ucla.edu}

%\makeatother

%\subjclass{According to the 2000 classification:
%Primary ???; Secondary ???}

\thanks{The first author is partly supported by the ANR: 
Projects No ANR-16-CE40-0010-01 (GeRepMod) and ANR-18-CE40-0024-02 (CATORE). \\
The second author was partly 
 supported by the NSF (grant DMS-1161999 and DMS-1702305) and by a
grant from the Simons Foundation (\#376202)}

\date{\\ March 7, 2022}

% \thanks{The first author is partly 
% supported by the ANR (Project No JC07-192339)}

%\begin{abstract} 
%ABSTRACT
%\end{abstract}

\maketitle

\pagestyle{myheadings}

\markboth{\sc C. Bonnaf\'e \& R. Rouquier}{\sc Cherednik algebras and Calogero-Moser cells}

\tableofcontents

% 
% 
% \newpage
% 
% 
% {\Huge
% 
% ~
% 
% \vskip1cm
% 
% \begin{centerline}
% {\bf C\'edric VILLANI}
% \end{centerline}
% 
% \vskip1cm
% 
% \begin{centerline}
% {\bfit La meilleure et la pire}
% \end{centerline}
% 
% \vskip0.3cm
% 
% 
% \begin{centerline}
% {\bfit des erreurs de Poincar\'e}
% \end{centerline}
% 
% \vskip0.7cm
% 
% \begin{centerline}
% {\Large \sc Montpellier, 15 octobre 2012}
% \end{centerline}
% 
% 
% 
% }
% 
% \newpage

\chapter*{Introduction}

\vskip-2cm
\noindent

\section{Reconstruction of Lie-theoretic structures from Weyl groups
and extension to complex reflection groups}

\medskip
A number of Lie-theoretic questions have their answer in terms of the
associated Weyl group. Our work is part of a program to reconstruct 
combinatorial and categorical structures arising in Lie-theoretic
representation theory from rational Cherednik algebras. Such algebras
are associated by Etingof and Ginzburg to more general complex reflection
groups, and an aspect of the program is to generalize those combinatorial
and categorical structures to complex reflection groups, that will not arise
from Lie theory in general.

\smallskip
To be more precise, consider a complex semisimple Lie algebra $\gG$ and
let $W$ be its Weyl group. Consider also a reductive algebraic group $\Gb$
over $\ZM$, with $\gG$ the Lie algebra of $\Gb(\CM)$. Consider the following:

\begin{itemize}
\item[(i)] (Parabolic) Blocks of (deformed) category $\OC$ for $\gG$,
blocks of categories of Harish-Chandra bimodules.
\item[(ii)] The set of unipotent characters of $\Gb(\FM_q)$, their
generic degrees, Lusztig's Fourier transform matrices.
%Green functions
\item[(iii)] The square part of decomposition matrices of unipotent blocks of
$\Gb(\FM_q)$ over a field of characteristic prime to $q$.
\item[(iv)] The Hecke algebra of $W$.
\item[(v)]  Lattices in the Hecke algebra arising from the Kazhdan-Lusztig
basis, Lusztig's asymptotic algebra $J$.
\item[(vi)] Kazhdan-Lusztig cells of $W$ and left cell
representations, families of characters of $W$.
\item[(vii)] Lusztig's modular categories associated to two-sided cells.
\end{itemize}

It is known or conjectured that the structures above depend only on $W$,
viewed as a reflection group. One can hope that (possibly super or
derived) versions of
those structures still make sense for $W$ a complex reflection group.

\smallskip
Consider the case where $W$ is a real reflection group.
A solution to (i) is provided by Soergel's bimodules \cite{Soe}. A
solution to (ii) was found \cite{BrMa,lusztigexotic,malleexotic}. 
The combinatorial theory
in (v,vi) extends (partly conjecturally) to that setting.
Categories as in (vii) were constructed by Lusztig when $W$ is a dihedral group
\cite{lusztigexotic}.

\smallskip
The structures above might
make sense for arbitrary ("unequal") parameters, and this is already an open
problem for
$W$ a Weyl group. A partly conjectural theory for (v,vi) has been developed
by Lusztig \cite{lusztig}, who is developing an interpretation via
character sheaves on disconnected groups \cite{lusztigdisconnected}.

\smallskip
Hecke algebras are a starting point: they have a topological definition that
makes sense for complex reflection groups
\cite{BMR}, providing a solution to (iv) (cf also 
\cite{ariki,ariki-koike,BMR,chavli1, 
chavli2,chavli3,marin1,marin2,marin3, 
marin-pfeiffer,tsuchioka}).
The Hecke algebras are deformations of $\ZM W$ over the space
of class functions on $W$ supported on reflections.

\smallskip
For certain complex reflection groups ("spetsial", see~\cite{malle generic}), 
a combinatorial set (a "spets") has
been associated by Brou\'e, Malle and Michel, that plays the role of unipotent
characters, providing an answer to (ii) \cite{malleunipotent,BMM,BMM2}.
Generic degrees are associated, building on Fourier
transforms generalizing Lusztig's constructions for Weyl groups. There are
generalized induction and restriction functors, and a $d$-Harish-Chandra theory.

\smallskip
When $W$ is a cyclic group and for equal parameters, a solution to
(vii) has been constructed in \cite{BR}. It is a derived version of
a modular category. It gives rise
to the Fourier transform defined by Malle \cite{malleunipotent}. 
This has been extended by Lacabanne to $G(d,1,n)$~\cite{lacabanne gd1n,lacabanne Fourier} and
to some twisted groups~\cite{lacabanne tordu} 
using ``super'' versions of modular categories~\cite{lacabanne super},
following 
suggestions of Etingof.

\smallskip
In this book, we provide a conjectural extension of (vi) to complex
reflection groups, using the geometry of Calogero-Moser spaces.

%Remark: one can also consider Green functions (ref?),...

% Short abstract
%This book is devoted to the study of a covering of the Calogero-Moser space
%associated with a complex reflection group by Etingof and Ginzburg.
%The ramification of this covering gives rise to partitions
%of the set of elements of the reflection group. We conjecture that these
%partitions coincide with the Kazhdan-Lusztig cells, in the case of a
%Coxeter group.

\section{Etingof-Ginzburg's rational Cherednik algebras and Calogero-Moser spaces}

\smallskip
Consider a non-trivial finite group $W$ acting on a finite-dimensional complex
vector space $V$ and let $V^\reg$ be the complement of the
ramification locus of the quotient map $V\to V/W$.
Assume $W$ is a reflection group, i.e., $W$ is
generated by its set of reflections $\REF(W)$ (equivalently: $V/W$ is smooth; equivalently:
$V/W$ is an affine space).
The quotient
variety $(V\times V^*)/\Delta W$ by the diagonal action of $W$ is singular.
It is a ramified covering of $V/W\times V^*/W$. These varieties carry a
$\CM^\times$-action coming from the symplectic action on $V\times V^*=T^*V$
induced by the scalar action on $V$.

Etingof and Ginzburg have constructed
a flat deformation $\Upsilon:\ZCB\to\PCB=\CCB\times V/W\times V^*/W$
of this covering with a $\CM^\times$-action \cite{EG}. 
Here, $\CCB$ is a vector space with basis the quotient of $\REF(W)$ by the
conjugation action of $W$. The variety $\ZCB$ is the Calogero-Moser space. The original
covering corresponds to the point $0\in\CCB$.

Etingof and Ginzburg define $\ZCB$ 
as the spectrum of the center
of the rational Cherednik algebra $\Hb$ associated with
$W$ at $t=0$. It is a remarkable feature of their work that those important
but complicated Calogero-Moser spaces have an explicit description based
on non-commutative algebra. We will now explain their constructions.

\medskip
The rational Cherednik algebra $\Hbt$ associated to $W$ is a flat
deformation defined by generators and relations
of the graded algebra $\CM[V\times V^*]\rtimes W$ over the space
of parameters $(c,t)\in \CCBt=\CCB\times\AM^1$. Its specialization at
$(c=0,t=1)$ is the crossed product of the Weyl algebra of $V$ by $W$.
The Cherednik algebra has a triangular decomposition
$\Hbt=\CM[V]\otimes \CM[\CCBt]W\otimes\CM[V^*]$. Equivalently,
it satisfies a PBW Theorem. 
The algebra $\Hbt$ has a faithful representation by Dunkl operators on
$\CM[\CCBt\times V^\reg]$. The Euler element admits a deformation
that acts by derivation as multiplication by $dT$ on the degree $d$ part
of $\Hbt$.

Consider the algebra $\Hb$ obtained by specializing $\Hbt$ at $t=0$ and
let $Z$ be its center. It contains $P=\CM[\CCB]\otimes\CM[V]^W\otimes
\CM[V^*]^W$ as a subalgebra.
The Calogero-Moser variety is defined as $\ZCB=\Spec Z$ and the
inclusion $P\subset Z$ defines the covering $\Upsilon$.

\smallskip
Our main object of study is the representation theory of $\Hb$ as a
$P$-algebra and its interaction with the ramification of $\Upsilon$
 above $c\times 0\times 0$, $c\times 0\times V^*/W$ and $c\times V/W\times 0$.
%The Satake Theorem asserts that multiplication by
%the averaging idempotent $e=\frac{1}{|W|}\sum_{w\in W}w$ defines an
%isomorphism of algebras $Z\xrightarrow{\sim}e\Hb e$.

\section{Families}

\smallskip
Fix a parameter $c\in\CCB$. We  consider $\Hb_c$ the
specialization of $\Hb$ at $c$ and the restricted rational Cherednik
algebra $\bar{\Hb}_c$, the specialization of $\Hb$ at 
$(c\times 0\times 0)\in\CCB\times V/W\times V^*/W$.

Given a simple $\CM W$-module $E$, we have a Verma $\Hb_c$-module 
$\Delta_c(E)=\Hb_c\otimes_{\CM[V]\rtimes W}E$, where $V^*$ acts by $0$ on $E$.
We have also a restricted version $\bar{\Delta}_c(E)=\bar{\Hb}_c
\otimes_{\Hb_c}\Delta_c(E)$ which has a unique simple quotient $L_c(E)$. The
$L_c(E)$'s for $E\in\Irr(W)$ give all simple $\bar{\Hb}_c$-modules \cite{gordon}.

The partition into blocks of
those modules gives a partition of $\Irr(W)$ into {\em Calogero-Moser
families}. They  are in bijection with $\Upsilon^{-1}(c\times 0\times 0)=
\ZCB_c^{\CM^\times}$ \cite{gordon}.

We show that,
in a given Calogero-Moser family, the matrix of multiplicities
$[\bar{\Delta}_c(E):L_c(F)]$ has rank $1$, a property
conjectured by Ulrich Thiel \cite{thiel}.

In each Calogero-Moser family 
there is a unique irreducible representation of $W$ with minimal
$\bb$-invariant (the invariant $\bb_E$ is the minimal $d$ such that
$E$ occurs in $S^d(V)$).

Families satisfy a semi-continuity property with respect to specialization 
of the parameter.  We show that families are minimal subsets that
are unions of families for a generic parameter and unions of blocks of Hecke 
algebras for certain specializations (see Theorem~\ref{th:CMfamiliesblocksHecke}). 
In particular, the subvariety of $\CCB$ where families are not generic
is contained in the union of hyperplanes $c_E-c_F=0$
where $E$ and $F$ are in distinct generic families.

%More precisely, this
%second statement is related to a set of hyperplanes of $\CCB$ where
%the families change. Those hyperplanes are related to the affine hyperplanes
%where the category $\OC$ \cite{ggor}
%for the specialization at $t=1$ of $\Hbt$ is not semisimple. These are,
%in turn, related to the components of the locus of
%parameters where the Hecke algebra of $W$ is not semisimple.

%The partition into blocks of
%those modules gives a partition of $\Irr(W)$ into {\em Calogero-Moser
%families}, which are conjecturally related to the families of
%the Hecke algebra of $W$ (cf. [Gor2, GoMa, Bel5, Mar1, Mar2]).Families satisfy a semi-continuity property with respect to specialization 
%of the parameter. We show that families are minimal subsets that
%are unions of families for a generic parameter and on whose Verma modules
%the Euler element takes constant values. More precisely, this
%second statement is related to a set of hyperplanes of $\CCB$ where
%the families change. Those hyperplanes are related to the affine hyperplanes
%where the category $\OC$ \cite{ggor}
%for the specialization at $t=1$ of $\Hbt$ is not semisimple. These are,
%in turn, related to the components of the locus of
%parameters where the Hecke algebra of $W$ is not semisimple.

\section{Cellular characters}

Let $\Kb_c^{\gauche}$ be the function field of $c\times V/W\times 0\subset\PCB$.
The simple $\Kb_c^{\gauche}\Hb$-modules are determined by the action
of the center and this provides a bijection from $\Irr(\Kb_c^{\gauche}\Hb)$
to the set of irreducible components of $\Upsilon^{-1}(c\times V/W\times 0)$.

Let $\zG$ be the defining prime ideal of an irreducible component of
$\Upsilon^{-1}(c\times V/W\times 0)$.
Let $E\in\Irr(W)$. We define $\mult_{\zG,E}^{\calo}$ as the multiplicity
of the simple $\Kb_c^{\gauche}\Hb$-module corresponding to $\zG$ in 
$\Kb_c^{\gauche}\Delta_c(E)$.

We put $\gamma_\zG^{\calo}=\sum_{E\in\Irr(W)}\mult_{\zG,E}^{\calo}E$: this
is the {\it cellular representation} of $W$ associated with $\zG$.

If $\mult_{\zG,E}^{\calo}\neq 0$, then $E$ is contained in the family
corresponding to the unique $\CM^\times$-fixed point of $\Spec(Z/\zG)$.
If that fixed point is smooth in $\ZCB_c$, then there is a unique $E$ in
the corresponding family \cite{gordon} and 
\equat\label{eq:intro-lisse}
\gamma_\zG^{\calo}=E.
\endequat

In each cellular character,
there is a unique irreducible representation of $W$ with minimal
$\bb$-invariant.

We show that cellular characters are sums of characters of projective
modules of Hecke algebras for certain specializations 
(see Corollary~\ref{coro:proj-cellular}).

\section{Galois closure and ramification}

\smallskip

The covering $\Upsilon$, of degree $|W|$, is not Galois (unless
$W=(\BZ/2)^n$). Let $\r : \RCB \to \PCB$ denote a Galois closure 
(with $\RCB$ a normal variety) and let $G$ be its Galois group. At $0\in\CCB$, a
Galois closure of the covering $(V\times V^*)/\Delta W\to V/W\times V^*/W$ 
is given by $(V\times V^*)/\Delta \Zrm(W)$. This leads
to a realization of $G$ as a group of permutations of the set $W$.

This can be reformulated in terms of representations of $\Hb$: the
semisimple $\CM(\PCB)$-algebra $\CM(\PCB)\Hb$
is not split and $\CM(\RCB)$ is a splitting field. The simple
$\CM(\RCB)\Hb$-modules are in bijection with $W$. Our
work can be viewed as the study of the partition 
of these modules into blocks corresponding to a given prime ideal of
$\CM[\RCB]$.

We show that $G$ is the Galois group of the minimal polynomial of the Euler
element.
This
element plays an important role in the study of ramification, but is not
enough to separate cells in general.

\section{Calogero-Moser cells}

Let $X$ be an irreducible closed subvariety of $\RCB$. We define the
$X$-cells of $W$ as the orbits of the inertia group of $X$ in $G$. 
The partition into $X$-cells corresponds to the partition of the simple
$\CM(\RCB)\Hb$-modules into blocks for the defining prime ideal of $X$.

Given a
parameter $c\in\CCB$, we define the {\em two-sided $c$-cells} (resp.
{\em left $c$-cells}, resp. {\em right $c$-cells}) as the $X$-cells 
defined for $X$ contained in $\r^{-1}(c\times 0\times 0)$ (resp. in
$\r^{-1}(c\times V/W\times 0)$, resp. in $\r^{-1}(c\times 0\times V^*/W)$).

The set of two-sided $c$-cells is in bijection with the
set of families, i.e. with $\Upsilon^{-1}(c\times 0\times 0)$. Given $\G$ a two-sided cell, 
we have
\equat\label{eq:cardinal-bilatere}
|\G|=\sum_{E \in \FC_\G} (\dim E)^2,
\endequat
where $\FC_\G$ is the family associated with $\G$. 

The orbits of left cells under the decomposition group for $X$
are in bijection with the 
irreducible components of $\Upsilon^{-1}(c \times V/W \times 0)$. This allows 
to associate to each left cell $C$ a cellular character $\isomorphisme{C}$. We have
\equat\label{eq:cardinal-gauche}
|C|=\dim \isomorphisme{C}.
\endequat

% When $W$ is a Coxeter group, we conjecture that the $c$-cells coincide with
% the Kazhdan-Lusztig cells, defined by Kazhdan-Lusztig [KaLu] and
% Lusztig [Lus1], [Lus3]. This depends on the choice of an appropriate
% $X$ in a $G$-orbit.

We analyze in detail the case where $W$ is cyclic (and $\dim V=1$): this is
the only
case where we have a complete description of all objects studied in this
book.

\section{Gaudin operators and topology}

\smallskip
Our discussion so far has been in the algebraic setting. The
study of ramification can be done in the topological setting.

Let $\Hb^{\reg}=\Hb\otimes_{\CM[V]^W}\CM[V^{\reg}]^W$.
Fix $(c,v,v^*)\in\CCB\times V\times V^*$ and consider 
$L(c,v,v^*)=\CM\otimes_{\CM[\CCB\times V^{\reg}]}\Hb^{\reg}\otimes_{\CM[V^*]}
\CM$
where the morphism $\CM[\CCB\times V^{\reg}]\to\CM$ corresponds to the point
$(c,v)$ and the morphism $\CM[V^*]\to\CM$ corresponds to the point $v^*$.

The triangular decomposition of $\Hb$ provides an isomorphism
$\CM W\xrightarrow{\sim} L(c,v,v^*)$.

Using Dunkl operators, Etingof and Ginzburg constructed an isomorphism
of algebras $(\CM[V^{\reg}]\otimes\CM[V^*])\rtimes W\xrightarrow{\sim}
\Hb^{\reg}$. The image of $V\subset\CM[V^*]$ in
$\Hb^{\reg}$ is a commutative Lie subalgebra
that acts on $L(c,v,v^*)$ by left multiplication. This provides
a commutative Lie subalgebra $\Liegaudin_{c,v,v^*}$ of $\End_{\CM}(\CM W)$ ("Gaudin operators").

Taking $v$ generic, the generalized eigenspaces of $\Liegaudin_{c,v,0}$ 
are the left cell representations of $W$.

When $v^*$ is also generic,
the generalized eigenspaces of $\Liegaudin_{0,v,v^*}$ are one-dimensional and
can be parametrized by $W$. Consider now a continuous
path in $\CCB\times V^{\reg}\times V^*$ starting
at $(0,v,v^*)$ and ending at $(c,v,0)$, avoiding the ramification locus
except at the end. The parametrization
by $W$ of the eigenvalues can be extended by continuity along the path.
This gives a family of paths in $V^*$ parametrized by $W$.
Two elements of $W$ are in the same left cell if and only if the corresponding
paths have the same endpoints.

\section{Coxeter groups}

We assume here that $W$ is a finite Coxeter group (i.e. a real reflection group) 
and that $c$ takes real values. Forty years ago, Kazhdan-Lusztig~\cite{KL} and 
Lusztig~\cite{Lu2} used their basis of the Hecke algebra to define notions 
of families, cellular representations, and cells. Our work aims to 
generalize these notions to complex reflection groups using Cherednik algebras 
instead of Hecke algebras. We conjecture that our notions coincide with their 
notions for Coxeter groups. 

The conjecture on families was first stated by Gordon-Martino~\cite{gordon martino} and 
is known to hold in many cases~\cite{gordon martino, bellamy these, martino 2, bonnafe thiel}. 

The conjecture on cellular representations is known to hold in type $A$ and in type $B$ for some 
generic values of $c$ (thanks to~\eqref{eq:intro-lisse}) and in type $I_2(m)$ for any 
parameter~\cite{bonnafe diedral}.

We prove here that the conjectures on cells hold in type $B_2$. They also 
hold in type $I_2(m)$ when $c$ is constant~\cite{bonnafe germoni} and in type $A$~\cite{BGW}. 

We prove in the Calogero-Moser setting some properties known in the Kazhdan-Lusztig setting:
\begin{itemize}
\itemth{1} The cardinality of cells given by~\eqref{eq:cardinal-bilatere} and~\eqref{eq:cardinal-gauche}.

\itemth{2} If $w_0=-\Id_V \in W$ and $\G$ (resp. $C$) is a two-sided (resp. left) cell, 
then $\G w_0$ (resp. $C w_0$) is a two-sided (resp. left) cell and 
$$\FC_{\G w_0}=\FC_\G\cdot \e\qquad\text{and}\qquad 
\isomorphisme{C w_0}=\isomorphisme{C} \cdot \e.$$

\itemth{3} Unicity of a representation with minimal $\bb$-invariant.
\end{itemize}
In Lusztig's theory, for $c$ constant, there is a unique irreducible representation 
with minimal $\bb$-invariant in a family, the special representation. 
It occurs in every left cell representation in the family. This last 
fact does not hold for $c$ non-constant but (3) is known to hold in 
the Kazhdan-Lusztig setting~\cite{bonnafe b} modulo a conjecture of Lusztig~\cite{lusztig}. 
It is an instance of the Calogero-Moser theory shedding some
light on the Kazhdan-Lusztig and Lusztig theory.

We also provide a detailed study of 
type $B_2$: the Galois group is a Weyl group of type $D_4$ and we show
that the Calogero-Moser cells coincide with the Kazhdan-Lusztig cells. Our
approach for $B_2$ is based on a detailed study of $\ZCB$ and of the
ramification of the covering, without constructing explicitly the variety
$\RCB$.

\section{Description of the chapters}

\medskip
We start in Chapter~\ref{chapter:notation} with general notations about 
algebras, modules and gradings.
We review in Chapter \ref{chapter:reflexions} the basic theory of complex
reflection groups:
invariant theory, hyperplane complement,
rationality of representations, Hilbert series and
fake degrees. We close that
chapter with the particular case of real reflection groups endowed with
the choice of a real chamber, i.e., finite Coxeter groups. Throughout the
book, we devote special sections to the case of Coxeter groups when
particular features arise in their case.

\smallskip
Chapters  \ref{chapter:cherednik-1} and \ref{chapter:cherednik-0}
are devoted to the basic structure
theory of rational Cherednik algebras, following Etingof and Ginzburg 
\cite{EG}. The definition of generic Cherednik algebras is given in
Chapter \ref{chapter:cherednik-1}, followed by the fundamental PBW
Decomposition
Theorem and the faithful polynomial representation via Dunkl
operators.
We discuss the spherical algebra and 
some of its basic properties, in particular
the Double Endomorphism Theorem.
We also introduce the Euler element and
gradings, filtrations, and automorphisms.

Chapter \ref{chapter:cherednik-0} is devoted to the Cherednik algebra
at $t=0$. An important result is the Satake isomorphism between
the center of the Cherednik algebra and the spherical subalgebra. We
discuss localizations and cases of Morita equivalence between the
Cherednik algebra and its spherical subalgebra. We provide some
complements: filtrations, symmetrizing form, Poisson structure and Hilbert series.

\medskip
Our original work starts in Part \ref{part:reps}: this part introduces and
studies families of characters and cellular characters. There are
the representation theoretic shadows of the Calogero-Moser 
cells that will be constructed later. An important aspect is that
this part does not involve a Galois closure of the covering $\Upsilon$.

\smallskip
Chapter \ref{se:representations} introduces a certain category $\OC$ of (graded)
representations of rational Cherednik algebras. It is a highest category
in the generalized sense of Appendix  \S\ref{ap:hw}. Of particular importance
are Verma modules and the action of the Euler element on them. When
$t=1$, we recover the category $\OC$ of \cite{ggor}. When $t=0$, we explain
representation-theoretic interpretations of the smoothness of the Calogero-Moser
space~\cite{EG}. We introduce a generalization of Gaudin operators (cf
\cite{MuTaVa1,MuTaVa3} for the symmetric group case), as
endomorphisms of a family of representations of $\Hb$. The spectral scheme
of Gaudin operators identifies with a pullback of the covering $\Upsilon$.

\smallskip
Chapter \ref{ch:Hecke} is devoted to Hecke algebras. We recall
in \S\ref{se:definitionsHecke} the definition of Hecke algebras of complex
reflection groups and some of their basic (partly conjectural) properties.
We introduce a "cyclotomic" version, where the Hecke parameters are powers
of a fixed indeterminate.
We explain in~\S\ref{se:KZ} the construction of the
Knizhnik-Zamolodchikov functor \cite{ggor} realizing the category of
representations of the Hecke algebra as a quotient of a (non-graded)
category $\OC$
for the Cherednik algebra at $t=1$. Thanks to the Double Endomorphism
Theorem, the semisimplicity of the Hecke algebra is equivalent to that
of the category $\OC$.
We present in \S\ref{section:representations-hecke} Malle's splitting
result~\cite{malle} for irreducible representations of Hecke algebras and we consider
central characters.
We discuss in \S\ref{se:Heckefamilies} the notion of Hecke families.
We finish in \S\ref{section:cellules-kl} with a brief exposition of the
theory of Kazhdan-Lusztig cells of $W$ and of families of characters of $W$ and
$c$-cellular characters.

\smallskip
Chapter \ref{chapter:bebe-verma} is devoted to the representation theory
of restricted Cherednik algebras and to Calogero-Moser families.
We recall in \S\ref{se:represtricted} and \S\ref{section:familles CM}
some basic results of Gordon
\cite{gordon} on representations of restricted Cherednik algebras and
Calogero-Moser families. Graded
representations give rise to a highest weight category, as proven by
Bellamy and Thiel \cite{BelTh1}. We show
in \S\ref{section:dim graduee} the existence of a unique representation with
minimal
$\bb$-invariant in each family and generalize results of \cite{gordon}
on graded dimensions. We discuss in \S\ref{section:geometrie CM} and
\S\ref{se:smoothnessCMfamilies} the relation
between the
geometry of the Calogero-Moser space at $\Upsilon^{-1}(0)$ and the
Calogero-Moser families, with a focus on smoothness
\cite{EG, GK,gordon,bellamy g4,BelSchTh}. The final section \S\ref{se:blocksCMfamilies} relates 
Calogero-Moser families with blocks of category $\OC$ at $t=1$ and
with blocks of Hecke algebras. 

\smallskip
We give in Chapter~\ref{chapter:gauche-cellulaire} the definition
of Calogero-Moser (left) 
cellular characters. Our first definition is in terms of representations
of the specialization of $\Hb$
at the prime ideal of $\CM[\PCB]$ defining the subvariety 
$V/W \times 0$ of $\PCB=V/W \times V^*/W$: to each 
irreducible representation of this specialization we associate
a character of $W$. 
We provide a second equivalent definition based on the representation 
theory of Gaudin algebras, which is more convenient for explicit 
computations. We prove that Calogero-Moser 
cellular characters are sums of characters of projective modules 
of the suitably specialized Hecke algebra.

\smallskip
We analyze in Chapter \ref{chapter:bb} the $\CM^\times$-action: its fixed points
are in bijection with families. We relate attracting sets and cellular characters.
We prove that there is a unique cellular character in a family corresponding
to a smooth point of a Calogero-Moser space in $\Upsilon^{-1}(0)$ and
that it is irreducible.

\medskip

We build in Part~\ref{part:extension} the foundations 
for defining general Calogero-Moser cells: the particular 
cases of left, right and two-sided cells will be studied 
in Part~\ref{part:verma}. 

\smallskip
We introduce in Chapter \ref{chapter:galois-CM}
some basic objects of our study, namely a Galois closure
of the covering $\Upsilon$ and its Galois group $G$. At parameter $0$, the
corresponding
data is easily described, and its embedding in the family depends on
a choice. We explain this in \S\ref{subsection:specialization galois 0}, and
show that this allows an identification of the generic fiber of $\Upsilon$
with $W$. We show in \S\ref{section:galois euler} that the extension
$\CM(\ZCB)/\CM(\PCB)$ is generated by the Euler element. The semisimple
algebra $\CM(\PCB)\Hb$ is not split and we provide in
\S\ref{section:deploiement} a decomposition of the $\CM(\RCB)$-algebra
$\CM(\RCB)\Hb$ as a product of matrix algebras over $\CM(\RCB)$ indexed
by $W$. 
In other parts of \S\ref{chapter:galois-CM}, we discuss
gradings and automorphisms, and construct a central element of order $2$ of $G$
when all reflections of $W$ have order $2$ and $-\Id_V\in W$.
The last section~\S\ref{se:geometrie-CM} is a geometrical translation of the
previous constructions.

\smallskip
We introduce general Calogero-Moser cells in Chapter \ref{chapter:cellules-CM}. They
are defined in \S\ref{section:definition cellules} as orbits of inertia groups
on $W$ and shown in
\S\ref{subsection:cellules et blocs} to coincide with blocks of the Cherednik
algebra. We study next the ramification locus and smoothness. We give
two more equivalent definitions of Calogero-Moser cells:
via irreducible components of the base change by $\Upsilon$ of the Galois
cover (\S\ref{se:geometryCMcells}) and via lifting of paths
(\S\ref{se:cellstopology}).

\medskip
Part \ref{part:verma} is the heart of the book. It discusses Calogero-Moser
cells
associated with the ramification above $0\times 0$, $V/W\times 0$ and
$0\times V^*/W$, and relations with representations of Cherednik
algebras, as well as (conjectural) relations with Hecke algebras.

\smallskip
In Chapter \ref{chapter:bilatere}, we go back to the Galois cover $\RCB/\PCB$ and study
two-sided cells. We
construct a bijection between the set of two-sided cells and the set of
families.

\smallskip
We continue in Chapter \ref{chapter:gauche} with the study of left (and right)
cells. We analyze in
\S\ref{section:choix-gauche-R} the choices involved in the definition of left
cells using Verma modules. We study the relevant decomposition groups,
and we reinterpret left cells as blocks of a suitable specialization of
the Cherednik algebra. We finish in \S\ref{se:backtocellular} with basic
properties relating cellular characters and left cells and we give
an alternative definition of cellular characters as the socle of the
restriction of a projective module. We also show that a cellular
character involves a unique irreducible representation with minimal
$\bb$-invariant. Section~\ref{se:chemins} shows, following a suggestion of Etingof, 
that cells can 
be interpreted in terms of spectra of Gaudin operators.
This provides a topological approach to cells and cellular characters.

\smallskip
Chapter \ref{ch:decmat} brings decomposition matrices in the study of cells.
We show in \S\ref{se:isobabyVerma}
that the decomposition matrix of baby Verma modules in a block has rank $1$,
as conjectured by Thiel \cite{thiel}.
%and proven independently in
%\cite{BelTh2}.

\smallskip
The next two chapters are devoted to conjectures. 
Chapter \ref{part:coxeter} discusses the motivation of this book, namely
the expected relation between Calogero-Moser cells and Kazhdan-Lusztig cells
when $W$ is a Coxeter group. We start in \S\ref{chapter:hecke} with
Martino's conjecture that Calogero-Moser families are unions of Hecke
families. \S\ref{section:conjectures} and \S\ref{chapter:arguments}
state and discuss our main conjectures. We providence some evidence and
describe some cases where the conjectures hold.

\smallskip
Chapter \ref{ch:conjgeometry} gives a conjecture on the cohomology ring and
the $\CM^\times$-equivariant cohomology ring of Calogero-Moser spaces,
extending the description of Etingof-Ginzburg in the smooth case.
We also conjecture that irreducible components of the fixed points
of finite order automorphisms on the Calogero-Moser space are Calogero-Moser
spaces for reflection subquotients. This conjecture has been extended to
symplectic leaves in \cite{bonnafe autosymp}.

\medskip
Part~\ref{part:exemples} is based on the study of particular cases.
Chapter \ref{chapitre nul} presents the theory for the parameter $c=0$.

\smallskip
Chapter \ref{chapitre:rang 1} is devoted to the case where $V$ has dimension $1$.
We give a description of the objects introduced earlier, in
particular the Galois closure $R$. We show that generic decomposition groups
can be very complicated for some values of the parameters. We also 
compute the cohomology of the Calogero-Moser space, even when it is singular, 
confirming the conjecture on cohomology.

\smallskip
Chapter \ref{chapitre:b2} analyzes the case of $W$ a Coxeter group
of type $B_2$. We determine in \S\ref{section:quotient B2}
the ring of diagonal invariants and the
minimal polynomial of the Euler element. We continue in \S\ref{section:Q B2}
with the determination of the corresponding deformed objects. The
Calogero-Moser families are then easily found. We move next to the
determination of the Galois group $G$. Section \S\ref{se:cellsB2}
is the more complicated study of ramification and the determination of the
Calogero-Moser cells. We finish in \S\ref{sec:fixed-b2} with a discussion of
fixed points
of the action of groups of roots of unity, confirming the conjecture on
fixed points.

\smallskip
Chapter~\ref{chapter:diedral} illustrates the topological approach 
(via Gaudin algebras) of the computation of cells in the particular 
case of finite dihedral groups. No proof is given, but we provide
pictures that give evidence for our main conjectures.

\medskip
We have gathered in six appendices some general algebraic considerations,
few of which are original.

\smallskip
Appendix \ref{appendice:filtration} is a brief exposition of
filtered modules and filtered algebras. We analyze in particular 
properties of an algebra with a filtration
that are consequences of the corresponding properties for the associated
graded algebra. We discuss symmetrizing forms in \S\ref{se:symmalg} and
we finish with Weyl algebras in \S\ref{se:Weylalgebra}.

\smallskip
Appendix \ref{chapter:galois-rappels} gathers some basic facts on
ramification theory for commutative rings around decomposition and
inertia groups. We recollect some properties of Galois groups,
discriminants and integral closures. We close the chapter with a topological
version of ramification theory and its connection with commutative ring 
theory.

\smallskip
Appendix \ref{appendice graduation} is a discussion of some aspects of the
theory of graded rings. We consider general rings in \S\ref{intro graduation}.
We next discuss in \S\ref{section:graduation integrale}
gradings in the setting of integral extensions of commutative rings. We finally
consider gradings and invariant rings in \S\ref{section:GR}.

\smallskip
We present in Appendix \ref{appendice: blocs} some results on blocks and
base change for algebras finite and free over a base.
We discuss in particular decomposition maps,
central characters and idempotents, and the locus where the
block decomposition changes.

\smallskip
Appendix \ref{appendice:invariant} deals with finite group actions on
rings (commutative or not), and compares the cross-product
algebra and the invariant ring. We consider in particular
the module categories and the centers.

\smallskip
Appendix \ref{ap:hw} provides a generalization of the theory of highest weight
categories over commutative rings \cite{CPS,CPS2,BelTh1}.
We discuss in particular base change
(\S\ref{se:basechange}), Grothendieck groups (\S\ref{hw:K0} and
\S\ref{se:completedK0}), decomposition maps (\S\ref{se:hwdecmap}) and blocks
(\S\ref{se:hwblocks}). A particular class of highest weight categories
arises from graded algebras with a triangular decomposition 
(\S\ref{se:Appendixtriangular}), generalizing \cite{ggor} to non-inner
gradings.

\smallskip
Before the index of notations, we have
included in "Prime ideals and geometry" some diagrams
summarizing the commutative algebra and geometry studied in this book.

\bigskip
We would like to thank G.~Bellamy, P.~Etingof, I.~Gordon, G.~Malle,
M.~Martino
and U.~Thiel for their help and their suggestions.

\bigskip

\noindent{\bfit Commentary. --- } 
This book contains an earlier version of our work \cite{cm 1}. 
The structure of the text has changed and the presentation of classical
results on Cherednik algebras is now mostly self-contained.
 There are a number of new results
(see for instance Chapters~\ref{se:representations},~\ref{chapter:bebe-verma} 
and~\ref{chapter:gauche-cellulaire}) based on appropriate highest weight category
considerations (Appendix~\ref{ap:hw} is new), and a
new topological approach 
(see Sections~\ref{ch:Gaudin},~\ref{se:cellstopology} and~\ref{se:chemins} 
on Gaudin algebras in particular).

%Families: definition via Hecke seems to be a first approximation, but not
%quite right

%Left cell multiplicities can be interpreted as multiplicities for
%characteristic cycles of Verma modules over category $\OC$, where
%parameters are complex multiples of a generic parameter.

%\smallskip
%We associate in particular a certain finite group $G$ to every complex
%reflection group $W$, and subgroups $I\lhd D\le G$ for every
%left (resp. right, two-sided) cell (and every choice of parameters).
%
%
%(Add question later if $G$ is a reflection group)

%\noindent{\sc Commentaire - } Cette introduction a \'et\'e \'ecrite dans un langage 
%{\it g\'eom\'etrique}, car il s'adapte mieux \`a un pr\'esentation rapide. 
%Le m\'emoire est quant \`a lui \'ecrit plut\^ot dans un langage {\it alg\'ebrique}, 
%m\^eme si les r\'ef\'erences \`a la g\'eom\'etrie sous-jacente sont nombreuses. 

\medskip

\part{Reflection groups and Cherednik algebras}
\label{part:cherednik}

\chapter{Notations}\label{chapter:notation}

\section{Integers}
We put $\NM=\BZ_{\ge 0}$\indexnot{N}{\NM}.

\section{Modules}
Let $A$ be a ring. Given $L$ a subset of $A$, we denote by $<L>$
\indexnot{L}{<L>}
the two-sided ideal of $A$ generated by $L$.
Given $M$ an $A$-module, we denote by $\Rad(M)$
\indexnot{Rad}{\Rad(M)} the intersection of the maximal
proper $A$-submodules of $M$.
We denote by $A\mMOD$ \indexnot{Mod}{A\mMOD}
 the category of $A$-modules and by $A\mmod$
\indexnot{mod}{A\mmod} the category of finitely generated $A$-modules. We put
$G_0(A)=K_0(A\mmod)$\indexnot{G}{G_0(A)}, where $K_0(\CC)$ denotes the
Grothendieck group of an exact category $\CC$.
We denote
by $A\mproj$ \indexnot{proj}{A\mproj} the category of finitely generated
projective $A$-modules.
Given $\AC$ an abelian category, we denote by $\Proj(\AC)$ its full
subcategory of projective objects.

Given $M\in A\mmod$, we denote by
$[M]_A$ (or simply $[M]$)
\indexnot{M}{[M]_A} its class in $G_0(A)$.

\smallskip
We denote by $\Irr(A)$\indexnot{IrrA}{\Irr(A)}
the set of isomorphism classes of simple $A$-modules.
Assume $A$ is a finite-dimensional algebra over a field $\kb$. We have an
isomorphism $\BZ\Irr(A)\longiso G_0(A),\ M\mapsto [M]$.
If $A$ is semisimple, we have a bilinear form
$\langle-,-\rangle_A$ 
\index{ZZZ@ ${{\langle}},{{\rangle}}_A$ |textbf {\hskip0.5cm} \textbf} on
$G_0(A)$ given by $\langle [M],[N]\rangle=\dim_\kb\Hom_A(M,N)$.
When $A$ is split semisimple, $\Irr(A)$ provides an orthonormal basis.

\smallskip
Let $W$ be a finite group and $\kb$ a field.
We denote by $\Irr_\kb(W)$ (or simply by $\Irr(W)$)
\indexnot{I}{\Irr(W),~\Irr(\kb W)} the set of irreducible characters
of $W$ over $\kb$. When $|W|\in\kb^\times$, there is a bijection
$\Irr_{\kb}(W)\xrightarrow{\sim}
\Irr(\kb W),\ \chi\mapsto E_\chi$\indexnot{Echi}{E_\chi}.
The group $\Hom(W,\kb^\times)$ of linear characters of
$W$ with values in $\kb$ is denoted by
$W^{\wedge_\kb}$ (or $W^\wedge$) \indexnot{W}{W^\wedge}. We have
an embedding $W^\wedge \subset \Irr(W)$, and equality holds if and only if
$W$ is abelian and $\kb$ contains all $e$-th roots of unity, where $e$ is the exponent of $W$.

\section{Gradings}
\subsection{}
Let $\kb$ be a ring and $X$ a set. We denote by $\kb X=\kb^{(X)}$ the free
$\kb$-module with basis $X$.
We sometimes denote elements of $\kb^X$ as formal sums: $\sum_{x\in X}\alpha_x
x$, where $\alpha_x\in \kb$.

\subsection{}
Let $\Gamma$ be a commutative monoid.
We denote by $\kb\Gamma$ (or $\kb[\Gamma]$)
the monoid algebra of $\Gamma$ over $\kb$. Its basis of elements of
$\Gamma$ is denoted by $\{t^\gamma\}_{\gamma\in\Gamma}$.

A {\em $\Gamma$-graded $\kb$-module} is a $\kb$-module
$L$ with a decomposition $L=\bigoplus_{\gamma\in\Gamma}L_\gamma$ (that is
the same as a comodule over the coalgebra $\kb\Gamma$).
Given $\gamma_0\in\Gamma$, we denote by $L\langle \gamma_0\rangle$ the
$\Gamma$-graded $\kb$-module given by
$(L\langle \gamma_0\rangle)_\gamma=L_{\gamma+\gamma_0}$.
We denote by $\kb\mfree^\Gamma$ the additive
category of $\Gamma$-graded $\kb$-modules $L$ such that
$L_\gamma$ is a free $\kb$-module of finite rank for all $\gamma\in\Gamma$.
Given $L\in \kb\mfree^\Gamma$, we put 
$$\dim_\kb^\Gamma(L)=\sum_{\gamma\in\Gamma} \mathrm{rank}_\kb(L_\gamma) t^\gamma
\in \BZ^\Gamma.\indexnot{dimk}{\dim_\kb^\Gamma}$$
We have defined an isomorphism of abelian
groups $\dim_\kb^\Gamma:K_0(\kb\mfree^\Gamma)\longiso \BZ^\Gamma$.
This construction provides a bijection from the set of isomorphism classes of
objects of $\kb\mfree^\Gamma$ to $\NM^\Gamma$.
Given $P=\sum_{\gamma\in\Gamma}p_\gamma t^\gamma$ with $p_\gamma\in\NM$,
we define the $\Gamma$-graded $\kb$-module $\kb^P$ by $(\kb^P)_\gamma=\kb^{p_\gamma}$.
We have $\dim_\kb^\Gamma(\kb^P)=P$.

We say that a subset $E$ of a $\Gamma$-graded module $L$ is {\em homogeneous}
if every element of $E$ is a sum of elements in $E\cap L_\gamma$ for various elements
$\gamma\in\Gamma$.

\subsection{}
\label{se:weightseq}
A {\em graded $\kb$-module} $L$ is a $\BZ$-graded $\kb$-module.
We put $L_+=\bigoplus_{i>0} L_i$. Assume $L\in \kb\mfree^\BZ$.
If $L_i=0$ for $i\ll 0$ (for example, if $L$ is $\NM$-graded),
then $\dim^\BZ_\kb(L)$
is an element of the ring of Laurent power series $\BZ((\tb))$:
this is the Hilbert series of $L$.
Similarly, if $L_i=0$ for $i\gg 0$, then $\dim^\BZ_\kb(L)\in \BZ((\tb^{-1}))$.

When $L$ has finite rank over
$\kb$, we define the {\em weight sequence}
of $L$ as the unique sequence of integers
$r_1\le\cdots\le r_m$ such that $\dim_k^\BZ(L)=t^{r_1}+\cdots+t^{r_m}$.

A {\em bigraded $\kb$-module} $L$ is a $(\ZM\times\ZM)$-graded $\kb$-module. We put
$\tb=t^{(1,0)}$ and $\ub=t^{(0,1)}$, so that $\dim_{\kb}^{\BZ\times\BZ}(L)=
\sum_{i,j}\dim_\kb(L_{i,j})\tb^i\ub^j$ for $L\in \kb\mfree^{\BZ\times\BZ}$.
When $L$ is $(\NM\times\NM)$-graded, we have
$\dim_\kb^{\NM\times\NM}(L)\in\BZ[[\tb,\ub]]$.

\medskip
 When $A$ is a graded ring
and $M$ is a finitely generated graded $A$-module,
we denote by $[M]_A^\grad$ (or simply $[M]^\grad$)\indexnot{M}{[M]_A^\grad}
 its class in the Grothendieck group of
the category $A\mmodgr$\indexnot{modgr}{A\mmodgr}
 of finitely generated graded $A$-modules.
Note that $K_0(A\mmodgr)$ is a $\ZM[\tb^{\pm 1}]$-module, with
$\tb [M]^\grad=[M\langle -1\rangle]^\grad$.

\subsection{}
Assume $\kb$ is a commutative ring. There is a tensor product of
$\Gamma$-graded $\kb$-modules given by $(L\otimes_{\kb} L')_\gamma=
\bigoplus_{\gamma'+\gamma''=\gamma}L_{\gamma'}\otimes_{\kb} L_{\gamma''}$.
When the fibers of the multiplication map $\Gamma\times\Gamma\to\Gamma$
are finite, the multiplication in $\Gamma$ provides
$\BZ^\Gamma$ with a ring structure, the tensor product preserves
$\kb\mfree^\Gamma$, and $\dim_\kb^\Gamma(L\otimes_{\kb} L')=
\dim_\kb^\Gamma(L)\dim_\kb^\Gamma(L')$.

\smallskip
A {\em $\Gamma$-graded $\kb$-algebra} is a $\kb$-algebra $A$ with a structure
of $\Gamma$-graded $k$-modules
such that $A_\gamma\cdot A_{\gamma'}\subset A_{\gamma
\gamma'}$ for all $\g$, $\g' \in \G$.

\chapter{Reflection groups}\label{chapter:reflexions}

\boitegrise{{\it Throughout this book, we consider a fixed
characteristic $0$ field $\kb$\indexnot{ka}{\kb}, a finite-dimensional $\kb$-vector
 space $V$
\indexnot{V}{V} of dimension 
$n$ \indexnot{na}{n} and a {\it finite} subgroup $W$ \indexnot{W}{W}
of $\GL(V)$. 
We will write $\otimes$ for $\otimes_\kb$. 
We denote by
$$\REF(W)=\{s \in W~|~\dim_\kb \im(s-\Id_V)=1\}\indexnot{R}{\REF(W)}$$
the set of {\it reflections} of $W$. 
{\it We assume that $W$ is generated by $\REF(W)$.}}}{0.75\textwidth}

\bigskip

\section{Determinant, roots, coroots}

\medskip

We denote by $\e$ the determinant representation of $W$
$$\fonction{\e}{W}{\kb^\times}{w}{\det_V(w).}\indexnot{ez}{\e}$$
We have a perfect pairing between $V$ and its dual $V^*$
$$\langle,\rangle : V \times V^* \longto \kb.\index{ZZZ@ ${{\langle}}-,-{{\rangle}}$ |textbf {\hskip0.5cm} \textbf}  $$
Given $s \in \REF(W)$, we choose
$\a_s\in V^*$ and $\a_s^\ve\in V$ \indexnot{az}{\a_s,~\a_s^\ve}
such that
$$\Ker(s-\Id_V)=\Ker \a_s\quad\text{and}\quad \im(s-\Id_V)=\kb \a_s^\ve$$
or equivalently
$$\Ker(s-\Id_{V^*})=\Ker \a_s^\ve\quad\text{and}
\quad \im(s-\Id_{V^*})=\kb \a_s.$$
Note that, since $\kb$ has characteristic $0$ and $W$ is finite, all
elements of $\REF(W)$ are diagonalizable, hence
\equat\label{non nul}
\langle \a_s^\ve,\a_s \rangle \neq 0.
\endequat
Given $x \in V^*$ and $y \in V$ we have
\equat\label{action s V}
s(y)=y - (1 -\e(s)) \frac{\langle y,\a_s\rangle}{\langle \a_s^\ve,\a_s\rangle} \a_s^\ve
\endequat
and
\equat\label{action s V*}
s(x)=x - (1 -\e(s)^{-1}) \frac{\langle \a_s^\ve,x\rangle}{\langle \a_s^\ve,\a_s\rangle} \a_s.
\endequat

\section{Invariants}

\medskip

We denote by $\kb[V]=S(V^*)$ (respectively $\kb[V^*]=S(V)$)
\indexnot{ka}{\kb[V],~\kb[V^*]} the symmetric algebra
of $V^*$ (respectively $V$). We identify it with the algebra of
polynomial functions on $V$ (respectively $V^*$).
The action of $W$ on $V$ induces an action by algebra automorphisms on
$\kb[V]$ and $\kb[V^*]$ and we will consider the graded subalgebras of
invariants $\kb[V]^W$ and $\kb[V^*]^W$.
 \indexnot{ka}{\kb[V]^W,~\kb[V^*]^W} The 
{\it coinvariant algebras} $\kb[V]^\cow$ and $\kb[V^*]^\cow$ 
\indexnot{ka}{\kb[V]^\cow,~\kb[V^*]^\cow} 
are the graded finite-dimensional $\kb$-algebras
$$\kb[V]^\cow=\kb[V]/<\kb[V]_+^W>\quad\text{and}
\quad \kb[V^*]^\cow=\kb[V^*]/<\kb[V^*]_+^W>.$$
Shephard-Todd-Chevalley's Theorem asserts that the property of $W$ to be
generated by reflections is equivalent to structural properties of
$\kb[V]^W$. We provide here a version augmented with
quantitative properties
(see for example~\cite[Theorem~4.1]{broue}).
We state a version with $\kb[V]$, while the
same statements hold with $V$ replaced by $V^*$.

\smallskip
Let us define the
sequence $d_1\le\cdots\le d_n$ of {\it degrees of $W$} as the weight sequence
of $<\kb[V]^W_+>/<\kb[V]^W_+>^2$ (cf. \S\ref{se:weightseq}): it has length $n$ 
thanks to the following classical theorem.

\bigskip

\begin{theo}[Shephard-Todd, Chevalley]\label{chevalley}
\begin{itemize}
\itemth{a} The algebra $k[V]^W$ is a polynomial algebra generated by
homogeneous elements of degrees $d_1,\ldots,d_n$.
We have
$$|W|=d_1\cdots d_n\quad \text{and}\quad |\REF(W)|=\sum_{i=1}^n (d_i-1).$$

\itemth{b} The $(\kb[V]^W[W])$-module $\kb[V]$ is free of rank $1$.
% En particulier, 
%il existe une $\kb[V]^W$-base $(b_1,b_2,\dots,b_{|W|})$ du $\kb[V]^W$-module $\kb[V]$ 
%form\'ee d'\'el\'ements homog\`enes.

\itemth{c} The $\kb W$-module $\kb[V]^\cow$ is free of rank $1$. In particular
$\dim_\kb \kb[V]^\cow=|W|$.
\end{itemize}
\end{theo}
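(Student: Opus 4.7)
The plan is to follow the classical argument of Chevalley, Shephard and Todd. I would begin with (a). Choose a minimal system of homogeneous generators $f_1,\dots,f_m$ of the ideal $I=\langle \kb[V]_+^W\rangle$ of $\kb[V]$. Using the Reynolds projector $\rho=\frac{1}{|W|}\sum_{w\in W}w$ (available because $\mathrm{char}\,\kb=0$), which is $\kb[V]^W$-linear, an induction on degree applied to $\rho(\sum h_if_i)=\sum\rho(h_i)f_i$ for an invariant $f=\sum h_if_i$ shows that $f_1,\dots,f_m$ already generate $\kb[V]^W$ as a $\kb$-algebra. The heart of the argument is the algebraic independence of the $f_i$, which rests on the key lemma that any homogeneous syzygy $\sum p_if_i=0$ in $\kb[V]$ must already have $p_1\in\langle f_2,\dots,f_m\rangle\kb[V]$. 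This is the only place where the reflection hypothesis is essential: one uses that $p-s(p)$ is divisible by $\a_s$ in $\kb[V]$ for every $s\in\Ref(W)$, so a suitable induction transports the syzygy down to $\kb[V]^W$ and forces the dependence. Once $\kb[V]^W=\kb[f_1,\dots,f_m]$ is polynomial, equating Krull dimensions forces $m=n$ since $\kb[V]$ is integral over $\kb[V]^W$. I expect this algebraic independence step to be the main technical obstacle.

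Next, I would prove that $\kb[V]$ is free as a $\kb[V]^W$-module. The ring $\kb[V]$ is regular, hence Cohen--Macaulay, of Krull dimension $n$, and by integrality it is finite as a module over the polynomial ring $\kb[V]^W$ (also Cohen--Macaulay of dimension $n$); the graded Auslander--Buchsbaum formula then gives $\mathrm{pd}_{\kb[V]^W}\kb[V]=0$, i.e.\ freeness. The rank equals $[\kb(V):\kb(V)^W]=|W|$ by Galois theory, and comparing Hilbert series via $H(\kb[V];t)=H(\kb[V]^W;t)\cdot H(\kb[V]^\cow;t)$ together with $H(\kb[V];t)=(1-t)^{-n}$ and $H(\kb[V]^W;t)=\prod_i(1-t^{d_i})^{-1}$ yields $|W|=d_1\cdots d_n$ upon taking the appropriate limit at $t=1$.

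Finally, for (c) and the enhancement of freeness to the $\kb[V]^W[W]$-module statement (b), I would compute the graded $W$-character of $\kb[V]^\cow$. Averaging produces a $W$-equivariant $\kb$-linear graded section of $\kb[V]\twoheadrightarrow\kb[V]^\cow$ (possible because $|W|\in\kb^\times$); extending $\kb[V]^W$-linearly and comparing Hilbert series yields an isomorphism $\kb[V]\simeq\kb[V]^W\otimes_\kb\kb[V]^\cow$ of graded $(\kb[V]^W\rtimes W)$-modules. The graded character of $\kb[V]$ at $w$ being $1/\det_V(1-tw)$, this gives $\chi_{\kb[V]^\cow}(w)(t)=\prod_i(1-t^{d_i})/\det_V(1-tw)$; specializing at $t=1$ yields $|W|$ for $w=1$ and $0$ for $w\ne 1$ (since $\det_V(1-tw)$ has $(1-t)^{\dim V^w}$ as a factor with $\dim V^w<n$). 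Hence $\kb[V]^\cow\simeq\kb W$ as $\kb W$-modules, which is (c); lifting a generator of this regular module through the projection produces a single $\theta\in\kb[V]$ whose $W$-translates form a $\kb[V]^W$-basis of $\kb[V]$, completing (b). The identity $|\Ref(W)|=\sum_i(d_i-1)$ drops out by extracting the next-to-leading term near $t=1$ in Molien's formula $\prod_i(1-t^{d_i})^{-1}=\frac{1}{|W|}\sum_{w\in W}1/\det_V(1-tw)$.
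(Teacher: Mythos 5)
The paper does not prove Theorem \ref{chevalley}: it is quoted as a classical result with a reference to Brou\'e's book, so there is no argument of the paper to compare yours against. Your proposal is the standard Shephard--Todd--Chevalley proof, and as a sketch it is essentially sound: Reynolds-operator generation, the divisibility $\a_s\mid p-s(p)$ as the point where the reflection hypothesis enters, Krull dimension to get $m=n$, graded Auslander--Buchsbaum for freeness, and the Molien/graded-character computation identifying $\kb[V]^\cow$ with the regular representation (which, combined with a $W$-stable graded complement to $\langle\kb[V]^W_+\rangle$, gives the rank-one statement over $\kb[V]^W[W]$). The extraction of $|\Ref(W)|=\sum(d_i-1)$ from the next-to-leading term also works, modulo the small pairing $s\leftrightarrow s^{-1}$ needed to evaluate $\sum_s(1-\zeta_s)^{-1}$.

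The one place you should be careful is the statement of the key syzygy lemma. What the $\a_s$-divisibility induction actually proves is: if $h_1,\dots,h_m\in\kb[V]^W$ with $h_1\notin\sum_{i\ge 2}\kb[V]^W h_i$, and $\sum p_ih_i=0$ with $p_i\in\kb[V]$ homogeneous, then $p_1\in\langle\kb[V]^W_+\rangle\kb[V]$ --- not $p_1\in\langle f_2,\dots,f_m\rangle\kb[V]$ as you wrote. Your stronger (Koszul-type) conclusion is true a posteriori, once one knows the $f_i$ form a regular sequence, but it is not what the induction delivers and is not needed. Moreover, for algebraic independence the lemma is applied not to syzygies among the $f_i$ themselves but to the relation $\sum_i h_i\,\partial f_i/\partial x_j=0$ obtained by differentiating a minimal polynomial relation $h(f_1,\dots,f_m)=0$, where $h_i=(\partial h/\partial y_i)(f_1,\dots,f_m)\in\kb[V]^W$; the degree contradiction then comes from comparing degrees of $\partial f_i/\partial x_j$ with elements of $\langle\kb[V]^W_+\rangle$. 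If you flesh out that step along these lines, the proof is complete.
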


\bigskip

\begin{rema}\label{rema:deg-v-v}
Note that when $\kb=\CM$, there is a skew-linear isomorphism between the
representations $V$ and $V^*$ of $W$, hence the sequence of 
degrees for the action of
$W$ on $V$ is the same as the one for the action of $W$ on $V^*$. In general,
note that the representation $V$ of $W$ can be defined over a finite extension
of $\QM$, which can be embedded in $\CM$: so, the equality of degrees
for the actions on $V$ and $V^*$ holds for any $\kb$.

This equality can also be deduced from Molien's formula
\cite[Lemma~3.28]{broue}.\finl
\end{rema}

\bigskip

Let $N=|\REF(W)|$\indexnot{N}{N}. Since $\dim_\kb^\BZ(\kb[V]^{\cow})=
\prod_{i=1}^n\frac{1-\tb^{d_i}}{1-\tb}$, we deduce that
$\dim_\kb \kb[V]^{\cow}_N=1$. A generator is given by the image
of $\prod_{s\in\REF(W)}\alpha_s$: this provides an isomorphism
$h:\kb[V]^{\cow}_N\longiso\kb$.

The composition
$$\kb[V]_N\otimes \kb[V]^W\stackrel{\mathrm{mult}}{\longrightarrow}
 \kb[V]\xrightarrow{\mathrm{can}} 
\kb[V]/(\kb[V]^W \kb[V]_{<N})$$
 factors through an isomorphism
$g:\kb[V]^{\cow}_N\otimes \kb[V]^W\longiso \kb[V]/(\kb[V]^W \kb[V]_{<N})$.
We denote by $p_N$ the composition
$$p_N: \kb[V]\xrightarrow{\mathrm{can}} \kb[V]/(\kb[V]^W \kb[V]_{<N})
\xrightarrow{g^{-1}} \kb[V]^{\cow}_N\otimes \kb[V]^W\xrightarrow[\sim]{h\otimes\id} \kb[V]^W.$$

We refer to \S \ref{se:symmalg} for basic facts on symmetric algebras.

\begin{prop}
\label{pr:symmkV}
$p_N$ is a symmetrizing form for the $\kb[V]^W$-algebra $\kb[V]$.
\end{prop}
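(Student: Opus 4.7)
The plan is to establish non-degeneracy of the bilinear form $(f,g) \mapsto p_N(fg)$, which together with the $\kb[V]^W$-linearity of $p_N$ (automatic from the construction) and the symmetry that comes for free from the commutativity of $\kb[V]$ will give the symmetrizing form property. Concretely, I must show that the adjoint $\kb[V]^W$-linear map
$$\phi: \kb[V] \longto \Hom_{\kb[V]^W}\bigl(\kb[V], \kb[V]^W\bigr), \qquad f \longmapsto \bigl(g \mapsto p_N(fg)\bigr)$$
is an isomorphism of graded $\kb[V]^W$-modules.

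By Theorem~\ref{chevalley}(b), $\kb[V]$ is a graded free $\kb[V]^W$-module, of rank $|W|$, and consequently its $\kb[V]^W$-linear dual is likewise graded free of rank $|W|$. The map $\phi$ is homogeneous (of degree $-N$, since $p_N$ sends $\kb[V]_d$ to $\kb[V]^W_{d-N}$ by construction), so graded Nakayama's lemma reduces the problem to showing that the induced map after tensoring with $\kb[V]^W/\kb[V]^W_+ = \kb$, namely
$$\bar\phi: \kb[V]^{\cow} \longto \Hom_\kb\bigl(\kb[V]^{\cow}, \kb\bigr),$$
is an isomorphism. Unwinding the definition of $p_N$, one sees that $\bar\phi$ is adjoint to the bilinear form on the coinvariant algebra $\kb[V]^{\cow}$ given by $(\bar f, \bar g) \mapsto h\bigl((\overline{fg})_N\bigr)$: multiplication followed by projection onto the one-dimensional top component $\kb[V]^{\cow}_N \xrightarrow{\sim} \kb$.

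The proposition therefore reduces to the classical assertion that $\kb[V]^{\cow}$ is a graded Frobenius $\kb$-algebra with one-dimensional socle concentrated in the top degree $N$, so that multiplication induces a perfect pairing $\kb[V]^{\cow}_d \times \kb[V]^{\cow}_{N-d} \to \kb[V]^{\cow}_N$ for every $d$. This is the only non-formal step, and hence the main obstacle: it follows, for instance, from the fact that $\kb[V]$ is a regular graded ring, hence Gorenstein, and is a free module over the polynomial subring $\kb[V]^W$, so the artinian quotient $\kb[V]^{\cow}$ is itself Gorenstein, with its one-dimensional socle necessarily equal to the top graded piece $\kb[V]^{\cow}_N$ (which is indeed one-dimensional by the Hilbert series $\prod_i(1+\tb+\cdots+\tb^{d_i-1})$, and is spanned by the image of $\prod_{s\in\Ref(W)}\alpha_s$, non-zero by~(1.1)). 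Combined with the Nakayama reduction of the previous paragraph, this delivers the non-degeneracy of $(f,g) \mapsto p_N(fg)$ and completes the proof.
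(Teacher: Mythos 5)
Your proof is correct and follows essentially the same route as the paper's: reduce via the graded Nakayama lemma to showing that the induced form on the coinvariant algebra $\kb[V]^{\cow}$ (multiplication followed by projection onto the top degree $N$) is non-degenerate. The only difference is at the final step, where the paper simply cites \cite[Theorem 4.25]{broue} for the fact that this is a symmetrizing form on $\kb[V]^{\cow}$, while you supply a self-contained justification via the Gorenstein property of the complete intersection $\kb[V]^{\cow}$ — a valid substitute.
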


\begin{proof}
We need to show that the morphism of graded $\kb[V]^W$-modules
$$\hat{p}_N:\kb[V]\to \Hom_{\kb[V]^W}(\kb[V],\kb[V]^W),\
a\mapsto (b\mapsto p_N(ab))$$
is an isomorphism. By the graded Nakayama lemma, it is enough to do so
after applying $-\otimes_{\kb[V]^W}\kb$. We have
$\hat{p}_N\otimes_{\kb[V]^W}\kb=\hat{\bar{p}}_N$, where 
$\bar{p}_N:\kb[V]^{\cow}\to \kb[V]^{\cow}_N\xrightarrow[\sim]{h}\kb$
 is the projection onto the
homogeneous component of degree $N$. This is a symmetrizing form
for $\kb[V]^{\cow}$ \cite[Theorem 4.25]{broue}, hence $\hat{\bar{p}}_N$ is
an isomorphism.
\end{proof}

Note that the same statements hold for $V$ replaced by $V^*$.

\bigskip

\section{Hyperplanes}\label{section:hyperplans}

\medskip
\boitegrise{{\bf Notation.} 
{\it We fix an embedding of the group of roots of unity of $\kb$ in $\QM/\ZM$.
When the class of $\frac{1}{e}$ is in the image of this embedding, we denote by
$\zeta_e$ the corresponding element of $\kb$.}}{0.75\textwidth}

\medskip

We denote by $\AC$ \indexnot{A}{\AC} the set of reflecting hyperplanes of
$W$:
$$\AC=\{\Ker(s-\Id_V)~|~s \in \REF(W)\}.$$
There is a surjective $W$-equivariant map $\REF(W)\to\AC,\ s\mapsto\Ker(s-\Id_V)$.
Given $X$ a subset of $V$, we denote by $W_X$ \indexnot{W}{W_X} the 
pointwise stabilizer of $X$:
$$W_X=\{w \in W~|~\forall~x \in X,~w(x)=x\}.$$

\bigskip

Given $H \in \AC$, denote by $\a_H \in V^*$\indexnot{az}{\a_H, \a_H^\ve} a linear form such that
$H=\Ker(\a_H)$ and let 
$\a_H^\ve \in V$ such that $V=H \oplus \kb \a_H^\ve$ and
$\kb\a_H^\ve$ is stable under $W_H$. 
We denote by $e_H$ \indexnot{ea}{e_H} the order of the
cyclic subgroup $W_H$ of $W$. 
We denote by $s_H$ \indexnot{sa}{s_H} the generator of
$W_H$ with determinant $\z_{e_H}$. This is a reflection with hyperplane $H$. We have
$$\REF(W)=\{s_H^j~|~H \in \AC\text{ and }1 \le j \le e_H-1\}.$$
The following lemma is clear.

\bigskip

\begin{lem}\label{conjugaison ref}
$s_H^j$ and $s_{H'}^{j'}$ are conjugate in $W$ if and only
if $H$ and $H'$ are in the same $W$-orbit and $j=j'$.
\end{lem}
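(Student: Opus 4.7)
The plan is to prove both directions from the definition of $s_H$ as the \emph{distinguished} generator of $W_H$ of determinant $\zeta_{e_H}$, using the fact that $\det:W\to\kb^\times$ is a class function.

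First I would treat the implication $(\Rightarrow)$. Suppose $w\in W$ satisfies $w s_H^j w^{-1}=s_{H'}^{j'}$. Since $\Ker(s_H^j-\Id_V)=H$ and $\Ker(s_{H'}^{j'}-\Id_V)=H'$ (using $1\le j\le e_H-1$ and $1\le j'\le e_{H'}-1$ together with $s_H$ being diagonalizable), applying $w$ to the first kernel and using $w\Ker(u)=\Ker(wuw^{-1})$ for any $u$ gives $w(H)=H'$. In particular $H$ and $H'$ are in the same $W$-orbit, and $wW_Hw^{-1}=W_{H'}$, so $e_H=e_{H'}$. Taking determinants of the conjugation relation and using that $\det:W\to\kb^\times$ takes values in the abelian group $\kb^\times$ yields $\zeta_{e_H}^j=\zeta_{e_{H'}}^{j'}=\zeta_{e_H}^{j'}$; since $\zeta_{e_H}$ has order $e_H$ and $1\le j,j'\le e_H-1$, this forces $j=j'$.

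For the implication $(\Leftarrow)$, suppose $H'=w(H)$ for some $w\in W$ and that $j=j'$. Conjugation by $w$ sends $W_H$ isomorphically onto $W_{w(H)}=W_{H'}$, hence $w s_H w^{-1}\in W_{H'}$ is a generator of this cyclic group. Its determinant equals $\det(s_H)=\zeta_{e_H}=\zeta_{e_{H'}}$, and by definition $s_{H'}$ is the unique element of $W_{H'}$ with this determinant; therefore $w s_H w^{-1}=s_{H'}$, and raising to the power $j$ gives $w s_H^j w^{-1}=s_{H'}^j=s_{H'}^{j'}$.

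No genuine obstacle is expected: the only point that must not be skipped is the observation that the normalising condition $\det(s_H)=\zeta_{e_H}$ rigidifies the choice of generator of $W_H$, so that the natural conjugacy map $W_H\to W_{H'}$ induced by $w$ matches up the \emph{labelled} generators rather than only the underlying groups. Both directions then reduce to linear-algebra facts about kernels and determinants of diagonalisable reflections.
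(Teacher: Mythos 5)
Your proof is correct. The paper actually omits the argument entirely ("The following lemma is clear"), and what you have written is precisely the standard verification it has in mind: kernels give $w(H)=H'$, the determinant (a class function, injective on the cyclic group $W_H$) pins down $j$, and the normalisation $\det(s_H)=\zeta_{e_H}$ forces $ws_Hw^{-1}=s_{H'}$ in the converse direction.
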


\bigskip

\def\orbite{{\mho}}
\def\orbiteb{{\boldsymbol{\orbite}}}

Given $\orbite$ a $W$-orbit of hyperplanes of $\AC$, we denote by $e_\orbite$
 \indexnot{ea}{e_\orbite} the common value of the
$e_H$ for $H \in \orbite$. Lemma~\ref{conjugaison ref} 
provides a bijection from $\refw$ to the set
$\orbiteb$ \indexnot{oz}{\orbiteb} of pairs $(\orbite,j)$ where $\orbite \in \AC/W$ and
$1 \le j \le e_\orbite-1$.

We denote by $\orbiteb^\circ$ \indexnot{oz}{\orbiteb^\circ}  the set of pairs $(\orbite,j)$ with
$\orbite \in \AC/W$ and $0 \le j \le e_\orbite -1$.

\bigskip
Let $V^{\reg}=\{v \in V~|~\Stab_W(v)=1\}$.
Define the discriminant $\delta=\prod_{H\in\AC}\alpha_H^{e_H}\in \kb[V]^W$.
\indexnot{delta}{\delta}
The following result shows that points outside reflecting hyperplanes
have trivial stabilizers \cite[Theorem~4.7]{broue}.

\begin{theo}[Steinberg]
\label{th:Steinberg}
Given $X \subset V$, the group $W_X$ is generated by its
reflections. As a consequence,
$V^{\reg}=V \setminus \bigcup_{H \in \AC} H$ and $\kb[V^{\reg}]=
k[V][\delta^{-1}]$.
\end{theo}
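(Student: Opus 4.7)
The statement has two parts: the structural claim that $W_X$ is generated by its reflections, and the two geometric consequences concerning $V^{\reg}$ and its coordinate ring. The consequences follow formally once the structural claim is known, so the main task is to prove the structural claim, which I would do in two steps: first reduce to the case $X=\{v\}$ for a single point $v\in V$, then handle the single-point case, which is the essential content of Steinberg's theorem.

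\emph{Reduction to a single point.} Each $w\in W$ acts $\kb$-linearly on $V$, so $\ker(w-\Id_V)$ is a linear subspace; therefore $W_X=W_L$ where $L=\kb X$ is the linear span of $X$. Since $\kb$ has characteristic $0$ (hence is infinite) and $L$ is not contained in the finite union $\bigcup_{H\in\AC,\ L\not\subset H}(L\cap H)$ of its proper subspaces arising from the arrangement, I can pick $v\in L$ avoiding every hyperplane of $\AC$ that does not contain $L$. For such $v$, a reflection $s_H$ fixes $v$ iff $v\in H$ iff $L\subset H$ iff $s_H$ fixes $L$ pointwise. Setting $W'=\langle s\in\Ref(W):s(v)=v\rangle$, this gives $W'\subset W_L\subset W_v$ together with the identification $W'=\langle\Ref(W)\cap W_L\rangle$, so the structural claim reduces to showing $W_v=W'$ for a single point.

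\emph{The single-point case.} $W'$ is itself a finite reflection group on $V$ by construction, so Theorem~2.1.1 applied to $W'$ gives that $\kb[V]^{W'}$ is a polynomial ring and that $\kb[V]$ is a free $\kb[V]^{W'}$-module of rank $|W'|$. Since $W'$ fixes $v$, the scheme-theoretic fibre of the quotient $V\to V/W'$ above $[v]$ is supported only at $v$, with length $|W'|$ by the freeness above; passing to completions at $v$ and $[v]$ then yields a finite extension of complete regular local $\kb$-algebras of degree $|W'|$, Galois with group $W'$. Now $W'$ is precisely the subgroup of $W_v$ generated by its reflections in $\GL(V)$, hence it is characteristic in $W_v$; consequently $W_v$ normalises $W'$, preserves $\kb[V]^{W'}$, and preserves the maximal ideal of $\kb[V]^{W'}$ at $[v]$. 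Any $w\in W_v$ thus extends to an automorphism of the completed local ring at $v$ compatible with the finite Galois extension, and one identifies it with an element of its Galois group $W'$ by tracking its action on the cotangent space $\mathfrak{m}_v/\mathfrak{m}_v^2\cong V^*$: since $W$ acts faithfully on $V$, the resulting element of $W'$ must coincide with $w$, so $w\in W'$.

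\emph{Consequences and main obstacle.} Applying the structural claim to $X=\{v\}$ we obtain $W_{\{v\}}=\langle s\in\Ref(W):s(v)=v\rangle$, which is trivial iff no reflection fixes $v$, iff $v$ lies on no $H\in\AC$; hence $V^{\reg}=V\setminus\bigcup_{H\in\AC}H$. The discriminant $\delta=\prod_H\alpha_H^{e_H}$ has the same zero locus as $\prod_H\alpha_H$, namely $\bigcup_H H=V\setminus V^{\reg}$, so $V^{\reg}$ is the principal open subset of $V$ on which $\delta$ is invertible, giving $\kb[V^{\reg}]=\kb[V][\delta^{-1}]$. The genuine difficulty lies in the single-point case: closing the gap between $W_v$ and $W'$ requires both that $W'$ be characteristic in $W_v$ (so that $W_v$ acts on the polynomial ring $\kb[V]^{W'}$) and that the local Galois action at $v$ truly recovers the original linear action of $W_v$ on $V^*=\mathfrak{m}_v/\mathfrak{m}_v^2$; any argument bypassing completions (such as Steinberg's original word-combinatorial induction on the reflection length of $w\in W_v$) faces the analogous difficulty of producing a reflection fixing $v$ among the factors of a minimal expression.
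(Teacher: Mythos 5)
Your reduction to a single point and your derivation of the two geometric consequences from the structural claim are both fine; note that the paper itself offers no proof of this theorem and simply cites \cite[Theorem~4.7]{broue}, so the real question is whether your argument for the single-point case stands on its own. It does not. The gap is at the sentence ``Any $w\in W_v$ thus extends to an automorphism of the completed local ring at $v$ compatible with the finite Galois extension, and one identifies it with an element of its Galois group $W'$.'' To identify an automorphism of $\widehat{\kb[V]}_v$ with an element of the Galois group $W'$ of the extension $\widehat{\kb[V]^{W'}}_{[v]}\subset\widehat{\kb[V]}_v$, you need $w$ to fix the base ring \emph{pointwise}. What you have established is only that $w$ \emph{preserves} $\kb[V]^{W'}$ and its maximal ideal at $[v]$, i.e.\ that $w$ normalises the extension; such a $w$ lies in the normaliser of the Galois group inside the automorphism group of the top ring, not in the Galois group itself. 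Showing that $w$ acts trivially on $\kb[V]^{W'}$ is essentially equivalent to $w\in W'$, which is what you are trying to prove.

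The tell-tale symptom is that, after the reduction, your argument never uses that $w$ belongs to the reflection group $W$: it only uses that $w\in\GL_\kb(V)$ fixes $v$ and normalises $W'$. Taken at face value, in the case where no reflection fixes $v$ (so $W'=1$ and $\kb[V]^{W'}=\kb[V]$) it would show that any linear automorphism fixing $v$ is ``an element of the trivial Galois group,'' hence the identity --- which is absurd. The hypothesis $w\in W$ must enter through the invariant ring $\kb[V]^{W}$, on which $w$ genuinely acts trivially, and the actual content of Steinberg's theorem is the comparison of $\kb[V]^{W}$ and $\kb[V]^{W'}$ after completion at $v$: one has to show that $\widehat{\kb[V]^{W}}_{\mathfrak p}$ and $\widehat{\kb[V]^{W'}}_{\mathfrak p'}$ have the same fraction field inside $\mathrm{Frac}(\widehat{\kb[V]}_v)$, equivalently that $|W_v|=|W'|$ (by the two rank counts you already set up, $\widehat{\kb[V]}_v$ being free of rank $|W_v|$ over the first ring and of rank $|W'|$ over the second). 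The standard way to close this is to observe that the discriminants of $\kb[V]/\kb[V]^W$ and of $\kb[V]/\kb[V]^{W'}$ agree up to units in the local ring at $v$ (both are supported on the $\alpha_H$ with $v\in H$, with the same exponents since $W_H\subset W'$ for such $H$), so the intermediate extension $\kb[V]^{W'}/\kb[V]^{W}$ is unramified in codimension one near $v$, and then to invoke purity of the branch locus for these two regular rings --- or to replace this by one of Steinberg's or Springer's more elementary arguments. Your proof as written is missing this entire step.
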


\medskip
We say that $W$ is indecomposable if it acts irreducibly on $V/V^W$.

There is a unique decomposition $V=V^W\oplus V_1\oplus\cdots\oplus V_r$
as $\kb W$-modules where the $V_i$'s are irreducible. Define
$W_i$ to be the pointwise stabilizer in $W$ of $\bigoplus_{j\neq i}V_j$.
We have $W=W_1\times\cdots\times W_r$ and $W_i$ is an indecomposable
reflection group on $V$ (and irreducible on $V_i$).

\bigskip

\section{Irreducible characters}

\medskip

The rationality property of the reflection representation of
$W$ is classical.

\begin{prop}\label{pr:deploiementV}
Let $\kb'$ be a subfield of $\kb$ containing the traces of the elements of
$W$ acting on $V$. Then there exists a $\kb'W$-submodule $V'$ of $V$
such that $V=\kb\otimes_{\kb'}V'$.
\end{prop}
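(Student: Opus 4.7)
My plan is to use Galois descent. Since every matrix entry of any $w\in W$ in some fixed $\kb$-basis of $V$ lies in a finite extension of $\kb'$ and $W$ is finite, I first reduce to the case where $\kb$ is a finite Galois extension of $\kb'$; write $\Gamma = \Gal(\kb/\kb')$. By general Galois descent, giving a $\kb' W$-submodule $V'\subset V$ with $V'\otimes_{\kb'}\kb = V$ is equivalent to giving a $\Gamma$-semilinear action on $V$ that commutes with $W$, with $V'$ then recovered as $V^\Gamma$.

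To produce such a semilinear action, note that for each $\sigma\in\Gamma$ the twisted $\kb W$-module $V^\sigma$, obtained from $V$ by composing scalar multiplication with $\sigma$, has character $\sigma\circ\chi_V = \chi_V$, since $\chi_V$ takes values in $\kb'$ by hypothesis. Hence $V^\sigma\cong V$ as $\kb W$-modules, and such an isomorphism is the same datum as a $\sigma$-semilinear, $W$-equivariant automorphism $\psi_\sigma$ of $V$. Assume first that $V$ is irreducible as a $\kb W$-module; Schur's lemma then implies that $\psi_\sigma$ is determined up to a scalar in $\kb^\times$, and the discrepancy $\psi_\sigma\psi_\tau = c(\sigma,\tau)\,\psi_{\sigma\tau}$ defines a $2$-cocycle $c\in Z^2(\Gamma,\kb^\times)$. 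Its class in $H^2(\Gamma,\kb^\times)\cong \mathrm{Br}(\kb/\kb')$ is precisely the Schur class of $V$ over $\kb'$, and the $\psi_\sigma$ can be rescaled into a genuine semilinear action exactly when this class vanishes.

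The main obstacle is therefore to show that the Schur index of the reflection representation of $W$ over its field of traces equals $1$. This is the content of a theorem of Benard (in the case of real reflection groups) extended by Bessis to arbitrary complex reflection groups, proved by a case-by-case analysis based on the Shephard--Todd classification: for each irreducible complex reflection group one explicitly exhibits the reflection representation over the field of traces. Granted this, the $\psi_\sigma$ descend to an action of $\Gamma$ and $V' := V^\Gamma$ is the desired $\kb'$-form. The reducible case follows by decomposing $W$ as a product of irreducible reflection subgroups acting on a direct sum decomposition of $V$, noting that the trace hypothesis passes to each factor, and applying the irreducible case summand by summand.
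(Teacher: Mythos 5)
Your reduction to Galois descent and a $2$-cocycle in $H^2(\Gamma,\kb^\times)$ is a legitimate framework, but it leaves the actual content of the proposition unproved: you correctly identify that "the main obstacle is to show that the Schur index of the reflection representation over its field of traces equals $1$," and then you discharge that obstacle by citing Benard--Bessis. That theorem is precisely Theorem~\ref{deploiement}, which the paper states \emph{after} this proposition and which it explicitly flags as resting on the Shephard--Todd classification; the whole point of Proposition~\ref{pr:deploiementV} is that this particular descent statement admits an elementary, classification-free proof. Moreover, if you are willing to invoke Benard--Bessis, the split semisimplicity of $\kb'W$ already hands you a $\kb'$-form of $V$ directly, so the entire semilinear-cocycle apparatus becomes redundant. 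As written, your argument is either circular relative to the paper's development or an elaborate detour around a theorem that trivializes the question.

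The idea you are missing is how the reflection structure kills the Schur index. Take $V$ irreducible and let $V''$ be a simple $\kb'W$-module with $\kb\otimes_{\kb'}V''\simeq V^{\oplus m}$. A reflection $s\in\Ref(W)$ has a unique non-trivial eigenvalue on $V$ (it lies in $\kb'$ since $\kb'$ contains traces), so its non-trivial eigenspace $L$ in $V''$ has $\dim_{\kb'}L=m$. But $L$ is stable under the division algebra $D=\End_{\kb'W}(V'')$, which has $\kb'$-dimension $m^2$; since any nonzero $D$-module has $\kb'$-dimension divisible by $m^2$, we get $m=1$, and $V''$ itself is the desired form. Note that the same one-line trick (applied to $V$ over $\kb$: $\End_{\kb W}(V)$ acts on the $\kb$-line $\im(s-\Id_V)$) is what justifies the absolute irreducibility of $V$ that your appeal to Schur's lemma tacitly assumes, so the reflection-eigenspace argument cannot be avoided even in your setup. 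You should replace the citation of Benard--Bessis by this argument; the remaining steps (reduction to a finite Galois extension, and the treatment of the reducible case via $V=V^W\oplus\bigoplus_i V_i$ and the corresponding product decomposition of $W$) are fine.
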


\begin{proof}
Assume first $V$ is irreducible.
Let $V''$ be a simple $\kb'W$-module such that $\kb\otimes_{\kb'}V''
\simeq V^{\oplus m}$ for some integer $m\ge 1$. Let $s\in\REF(W)$. Since $s$
has only one non-trivial eigenvalue on $V$, it also has only one
non-trivial eigenvalue on $V''$.
Let $L$ be
the eigenspace of $s$ acting on $V''$ for the non-trivial eigenvalue.
This is an $m$-dimensional $\kb'$-subspace of $V''$, stable under the action
of the division algebra $\End_{\kb' W}(V'')$. Since that division algebra
has dimension $m^2$ over $\kb'$ and has a module $L$ that has dimension
$m$ over $\kb'$, we deduce that $m=1$.
The proposition follows by
taking for $V'$ the image of $V''$ by an isomorphism
$\kb\otimes_{\kb'}V''\xrightarrow{\sim} V$.

Assume now $V$ is arbitrary.
Let $V=V^W\oplus \bigoplus_{i=1}^l V_i$ be a decomposition of the $\kb W$-module
$V$, where $V_i$ is irreducible and non-trivial for $1\le i\le l$.
Let $W_j$ be the subgroup of $W$ of elements acting trivially
on $\bigoplus_{i{\not=}j}V_i$. The group $W_j$ is a reflection group on
$V_j$. The discussion above shows there is
a $\kb'W_j$-submodule $V'_j$ of $V_j$ such that $V_j=\kb\otimes_{\kb'}V'_j$.
Let $V''$ be a $\kb'$-submodule of $V^W$ such
that $V^W=\kb\otimes_{\kb'}V''$.
Let $V'=V''\oplus\bigoplus_{j=1}^l V'_j$. 
We have $W=\prod_{j=1}^l W_j$ and
$V=\kb\otimes_{\kb'}V'$: this proves the proposition.
\end{proof}

The following rationality property of all
representations of complex reflection
groups is proven using the classification of those groups
\cite{benard,bessis}.

\bigskip

\begin{theo}[Benard, Bessis]\label{deploiement}
Let $\kb'$ be a subfield of $\kb$ containing the traces of the elements of
$W$ acting on $V$. Then the algebra
$\kb' W$ is split semisimple. In particular, $\kb W$ is split semisimple.
\end{theo}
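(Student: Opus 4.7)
The plan is to reduce to the case where $W$ acts irreducibly on $V$ and then invoke the Shephard--Todd classification of irreducible complex reflection groups. First, by the argument used in the proof of Proposition~\ref{pr:deploiementV}, the decomposition $V=V^W\oplus\bigoplus_i V_i$ into $\kb W$-isotypic components induces a factorisation $W=\prod_j W_j$ where $W_j$ acts faithfully and irreducibly as a reflection group on $V_j$. Since $\kb' W\simeq\kb' W_1\otimes_{\kb'}\cdots\otimes_{\kb'}\kb' W_l$ as $\kb'$-algebras and a tensor product of split semisimple algebras is split semisimple, it suffices to prove the theorem for each factor. Moreover, semisimplicity over $\kb'$ is automatic since $\kb'$ has characteristic $0$, so the real content is that $\kb'$ already contains all character values of $W$ and that every irreducible character has Schur index $1$ over its field of values.

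Assume now $W$ acts irreducibly on $V$. The Shephard--Todd theorem partitions the possibilities into the infinite family $G(de,e,n)$ of imprimitive groups (which includes the symmetric groups as $G(1,1,n)$) and a finite list of $34$ exceptional primitive groups. For the infinite family I would use the standard combinatorial model: $G(de,e,n)$ is built as a semidirect product of a diagonal abelian subgroup by $\SG_n$, and its irreducible characters are parametrised by certain multipartitions. Clifford theory starting from explicit one-dimensional characters of the diagonal subgroup, combined with the rational Specht/seminormal model for representations of $\SG_n$, produces matrix realisations with coefficients in $\QM(\zeta_{de})$. One checks directly from a generating reflection that the trace of the reflection character already generates $\QM(\zeta_{de})$, so this field is exactly $\kb'$, and the infinite family is handled by explicit construction.

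For the $34$ exceptional primitive groups one verifies the statement case by case. For the Coxeter (real) groups among them $H_3$, $H_4$, $F_4$, $E_6$, $E_7$, $E_8$, rationality of all irreducible representations over the field of character values is Benard's theorem, and can alternatively be derived uniformly from Springer's theory of regular elements. For the remaining primitive complex reflection groups one uses the explicit character tables together with matrix realisations of the irreducible representations (as compiled in CHEVIE and in Bessis's work) to verify both that every character takes values in the field $\kb'$ generated by the reflection character and that no Schur index obstruction arises.

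The main obstacle is that this argument is genuinely case-by-case: the statement has resisted a classification-free proof, and no structural explanation is known for the simultaneous vanishing of all Schur indices of complex reflection groups. For the purposes of the present book, however, we only need to cite the Benard--Bessis conclusion as a black box, and the classification-based verification suffices.
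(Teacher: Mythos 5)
Your outline matches the approach the paper itself takes: the text explicitly states that this rationality/splitness result "is proven using the classification of those groups" and simply cites Benard and Bessis, whose arguments are precisely the reduction to the irreducible case followed by the case-by-case treatment of $G(de,e,n)$ and the $34$ exceptional groups that you describe. Your sketch is therefore correct and in essence reproduces the cited proof strategy rather than offering an alternative.
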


\bigskip

\bigskip

\section{Hilbert series}

\subsection{Invariants}
The algebra $\kb[V \times V^*]=\kb[V] \otimes \kb[V^*]$ admits a standard bigrading, 
by giving to the elements of $V^* \subset \kb[V]$ 
the bidegree $(0,1)$ and to those of $V \subset \kb[V^*]$ the bidegree 
$(1,0)$. We have
\equat\label{hilbert kv}
\dim_\kb^{\BZ\times\BZ}(\kb[V \times V^*]) = \frac{1}{(1-\tb)^n(1-\ub)^n}.
\endequat

Using the notation of Theorem~\ref{chevalley}(a) and Remark~\ref{rema:deg-v-v}, we find that 
\equat\label{hilbert biinv}
\dim_\kb^{\BZ\times\BZ}(\kb[V \times V^*]^{W \times W})
=\prod_{i=1}^n \frac{1}{(1-\tb^{d_i})(1-\ub^{d_i})}.
\endequat
On the other hand, the bigraded Hilbert series of the diagonal invariant algebra 
$\kb[V \times V^*]^{\Delta W}$ is given by a formula {\it \`a la Molien}
\equat\label{hilbert molien}
\dim_\kb^{\BZ\times\BZ}(\kb[V \times V^*]^{\Delta W})=\frac{1}{|W|} 
\sum_{w \in W} \frac{1}{\det(1-w\tb)~\det(1-w^{-1}\ub)},
\endequat
whose proof is obtained word by word from the proof of the usual 
Molien formula.

\bigskip

\subsection{Fake degrees} 
We identify $K_0(\kb W\mmodgr)$ with $G_0(\kb W)[\tb,\tb^{-1}]$:
given
$M = \bigoplus_{i \in \BZ} M_i$ a finite dimensional $\BZ$-graded
$\kb W$-module, 
we make the identification
\indexnot{M}{\isomorphisme{M}_{\kb W}^\grad}
$$\isomorphisme{M}_{\kb W}^\grad = \sum_{i \in \BZ}~\isomorphisme{M_i}_{\kb W}~\tb^i.$$
The class $\isomorphisme{M}_{\kb W}$ is the evaluation at $\tb=1$ of 
$\isomorphisme{M}_{\kb W}^\grad$.
When $M$ is a bigraded $\kb W$-module, we identify similarly 
$\isomorphisme{M}_{\kb W}^{\BZ\times\BZ}$ with an element of 
$\groth(\kb W)[\tb,\ub,\tb^{-1},\ub^{-1}]$.

Let $(f_\chi(\tb))_{\chi \in \Irr(W)}$ \indexnot{fa}{f_\chi(\tb)}
denote the unique family of elements of $\NM[\tb]$ such that 
\equat\label{fchi def}
\isomorphisme{\kb[V^*]^\cow}_{\kb W}^{\BZ\times\BZ}=\sum_{\chi \in \Irr(W)} f_\chi(\tb) ~\chi.
\endequat

\bigskip
% Recall that
% $$f_\chi(t)=\frac{1}{|W|} \sum_{w \in W} \frac{\chi(w^{-1})}{\det(\Id_\hG - t w)}.$$
\begin{defi}\label{defi:degre-fantome}
The polynomial $f_\chi(\tb)$ is called the {\bfit fake degree} of $\chi$. 
Its $\tb$-valuation is denoted by $\bb_\chi$ \indexnot{ba}{\bb_\chi} 
and is called the {\bfit $\bb$-invariant} of $\chi$.
\end{defi}

\bigskip

The fake degree of $\chi$  satisfies 
\equat\label{chi 1}
f_\chi(1) = \chi(1).
\endequat
Note that 
\equat\label{fchi}
\isomorphisme{\kb[V]^\cow}_{\kb W}^{\BZ\times\BZ}=\sum_{\chi \in \Irr(W)} f_\chi(\ub) ~\chi^*,
\endequat
(here, $\chi^*$ denotes the dual character of $\chi$, that is, 
$\chi^*(w)=\chi(w^{-1})$). Note also that, if $\unb_W$ denotes the trivial character of $W$, then 
$$\isomorphisme{\kb[V^*]^\cow}_{\kb W}^{\BZ\times\BZ} \equiv \unb_W \mod \tb G_0(\kb W)[\tb]$$
$$\isomorphisme{\kb[V]^\cow}_{\kb W}^{\BZ\times\BZ} \equiv \unb_W \mod \ub  
G_0(\kb W)[\ub].\leqno{\text{and}}$$
We deduce the following lemma.

\bigskip

\begin{lem}\label{non zero}
The elements $\isomorphisme{\kb[V]^\cow}_{\kb W}^{\BZ\times\BZ}$ and 
$\isomorphisme{\kb[V^*]^\cow}_{\kb W}^{\BZ\times\BZ}$ 
are not zero divisors in $\groth(\kb W)[\tb,\ub,\tb^{-1},\ub^{-1}]$.
\end{lem}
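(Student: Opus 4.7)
The plan is to interpret each of these elements as a polynomial (in $\tb$ or in $\ub$) with coefficients in $\groth(\kb W)$ whose top-degree coefficient is a unit in the ring $\groth(\kb W)$, and then invoke the elementary fact that such a polynomial is not a zero divisor in the ring of Laurent polynomials over $\groth(\kb W)$.

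To carry this out, I would first regroup the sum $\isomorphisme{\kb[V^*]^\cow}_{\kb W}^{\BZ\times\BZ}=\sum_{\chi}f_\chi(\tb)\chi$ according to powers of $\tb$, writing it as $\sum_{i\ge 0}\isomorphisme{\kb[V^*]^\cow_i}_{\kb W}\tb^i$. Since $\dim_\kb^\BZ(\kb[V^*]^\cow)=\prod_{j=1}^n\frac{1-\tb^{d_j}}{1-\tb}$ (Shephard--Todd--Chevalley), this is a polynomial in $\tb$ of degree exactly $N=|\Ref(W)|=\sum_j(d_j-1)$, and $\dim_\kb\kb[V^*]^\cow_N=1$. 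The top-degree coefficient is therefore the class of a $1$-dimensional $\kb W$-module, i.e.\ of a linear character $\varepsilon\in W^\wedge\subset\Irr(W)$. In the commutative ring $\groth(\kb W)$ (with product induced by tensor product of $\kb W$-modules), the class of a linear character is a unit, since $[\varepsilon]\cdot[\varepsilon^{-1}]=[\varepsilon\otimes\varepsilon^{-1}]=[\unb_W]=1$.

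Next I would invoke the standard fact: if $A$ is a commutative ring and $P=\sum_{i=0}^N a_i\tb^i\in A[\tb]$ has $a_N\in A^\times$, then $P$ is not a zero divisor in $A[\tb,\tb^{-1}]$. Indeed, if $PQ=0$ for some non-zero $Q\in A[\tb,\tb^{-1}]$, we may multiply by a power of $\tb$ so that $Q=\sum_{j=0}^m b_j\tb^j\in A[\tb]$ with $b_m\ne 0$; comparing coefficients of $\tb^{N+m}$ in $PQ$ yields $a_Nb_m=0$, so $b_m=a_N^{-1}(a_Nb_m)=0$, a contradiction. Since the natural inclusion $\groth(\kb W)[\tb,\tb^{-1}]\subset\groth(\kb W)[\tb,\ub,\tb^{-1},\ub^{-1}]$ is a faithfully flat ring extension (indeed, the target is a free module over the source), a non-zero-divisor in the smaller ring remains a non-zero-divisor in the larger one; this handles $\isomorphisme{\kb[V^*]^\cow}_{\kb W}^{\BZ\times\BZ}$.

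The argument for $\isomorphisme{\kb[V]^\cow}_{\kb W}^{\BZ\times\BZ}=\sum_\chi f_\chi(\ub)\chi^*$ is identical after swapping the roles of $\tb$ and $\ub$ and replacing each character by its dual (which does not affect the $1$-dimensionality of the top-degree piece, nor the fact that a linear character remains a unit in $\groth(\kb W)$). There is no serious obstacle: everything reduces to the already-recorded computation $\dim_\kb\kb[V]^\cow_N=\dim_\kb\kb[V^*]^\cow_N=1$ together with the remark that $W^\wedge\subset\groth(\kb W)^\times$.
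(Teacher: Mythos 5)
Your proof is correct and follows essentially the same route as the paper: both rest on the elementary fact that a Laurent polynomial over a commutative ring having a unit as coefficient in extreme degree is not a zero divisor. The only difference is that the paper reads off the constant term, which is $\unb_W=1$ (via the congruences modulo $\tb\groth(\kb W)[\tb]$ and $\ub\groth(\kb W)[\ub]$ recorded just before the lemma), whereas you use the top coefficient, a linear character, which needs the additional (easy) remark that $W^\wedge\subset\groth(\kb W)^\times$.
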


\bigskip

\begin{rema}
Note that 
$$\isomorphisme{\kb[V]^\cow}_{\kb W}=\isomorphisme{\kb[V^*]^\cow}_{\kb W}=
\isomorphisme{\kb W}_{\kb W} = 
\sum_{\chi \in \Irr(W)} \chi(1) \chi$$ 
is a zero divisor in $\groth(\kb W)$ (as soon as $W \neq 1$).\finl
\end{rema}

\bigskip

We can now give another formula for the Hilbert series $\dim_\kb^{\BZ\times\BZ}(\kb[V \times V^*]^{\Delta W})$.

\bigskip

\begin{prop}\label{dim bigrad Q0 fantome}
$\DS{\dim_\kb^{\BZ\times\BZ}(\kb[V \times V^*]^W)=\frac{1}{\prod_{i=1}^n (1-\tb^{d_i})(1-\ub^{d_i})} 
\sum_{\chi \in \Irr(W)} f_\chi(\tb) ~f_\chi(\ub).}$
\end{prop}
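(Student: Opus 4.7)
The plan is to combine the Shephard--Todd--Chevalley structural isomorphisms for $\kb[V]$ and $\kb[V^*]$ with the bigraded character computations (\ref{fchi def}) and (\ref{fchi}), then extract invariants using the split semisimplicity of $\kb W$ given by Theorem~\ref{deploiement}.

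First, by Theorem~\ref{chevalley}(b), $\kb[V]$ is free of rank~$1$ as a $(\kb[V]^W[W])$-module, which yields an isomorphism of graded $\kb W$-modules
$$\kb[V]\simeq \kb[V]^W\otimes \kb[V]^{\cow},$$
where $W$ acts trivially on $\kb[V]^W$. The analogous isomorphism holds for $V^*$. Tensoring and using the fact that $V^*\subset \kb[V]$ sits in bidegree $(0,1)$ and $V\subset \kb[V^*]$ in bidegree $(1,0)$, I obtain an isomorphism of bigraded $\kb W$-modules (with diagonal $W$-action)
$$\kb[V\times V^*]\simeq \bigl(\kb[V]^W\otimes \kb[V^*]^W\bigr)\otimes \bigl(\kb[V]^{\cow}\otimes \kb[V^*]^{\cow}\bigr).$$
Since $W$ acts trivially on the first tensor factor, taking $W$-invariants gives
$$\kb[V\times V^*]^W\simeq \kb[V]^W\otimes \kb[V^*]^W\otimes \bigl(\kb[V]^{\cow}\otimes \kb[V^*]^{\cow}\bigr)^W.$$

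Next, I would compute each factor's bigraded Hilbert series. By Theorem~\ref{chevalley}(a) together with Remark~\ref{rema:deg-v-v}, the invariants $\kb[V]^W$ and $\kb[V^*]^W$ are polynomial algebras on generators of degrees $d_1,\dots,d_n$, living respectively in bidegrees $(0,d_i)$ and $(d_i,0)$. Consequently
$$\dim_\kb^{\BZ\times\BZ}\bigl(\kb[V]^W\otimes \kb[V^*]^W\bigr)=\prod_{i=1}^n\frac{1}{(1-\tb^{d_i})(1-\ub^{d_i})}.$$

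For the remaining factor, using (\ref{fchi def}) and (\ref{fchi}) I have
$$\isomorphisme{\kb[V]^{\cow}\otimes \kb[V^*]^{\cow}}_{\kb W}^{\BZ\times\BZ}=\Bigl(\sum_{\chi\in\Irr(W)}f_\chi(\ub)\chi^*\Bigr)\cdot\Bigl(\sum_{\psi\in\Irr(W)}f_\psi(\tb)\psi\Bigr).$$
The bigraded dimension of the $W$-invariants is obtained by pairing this class with the trivial character $\unb_W$. By Theorem~\ref{deploiement}, $\kb W$ is split semisimple, so the characters in $\Irr(W)$ form an orthonormal basis for $\langle-,-\rangle_{\kb W}$; in particular $\langle \chi^*\psi,\unb_W\rangle=\langle\psi,\chi\rangle=\delta_{\chi,\psi}$. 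This yields
$$\dim_\kb^{\BZ\times\BZ}\bigl((\kb[V]^{\cow}\otimes \kb[V^*]^{\cow})^W\bigr)=\sum_{\chi\in\Irr(W)}f_\chi(\tb)\,f_\chi(\ub),$$
and multiplying the two Hilbert series gives the claimed formula. No serious obstacle is expected: the only delicate point is the careful bookkeeping of bidegrees and the use of split semisimplicity to identify the contribution of invariants with the pairing of characters.
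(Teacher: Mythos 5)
Your proof is correct and follows essentially the same route as the paper's: the same decomposition $\kb[V\times V^*]^{\Delta W}\simeq \kb[V]^W\otimes\kb[V^*]^W\otimes(\kb[V]^{\cow}\otimes\kb[V^*]^{\cow})^{\Delta W}$ coming from the freeness of $\kb[V]$ over $\kb[V]^W$, followed by the same computation of the invariant part via the fake-degree expansions (\ref{fchi def}) and (\ref{fchi}) and the orthogonality relation $\langle\chi\psi^*,\unb_W\rangle=\langle\chi,\psi\rangle_W$. The only cosmetic difference is that you make the Hilbert series of the polynomial factor explicit via Remark~\ref{rema:deg-v-v}, where the paper simply cites (\ref{hilbert biinv}).
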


\begin{proof}
Let $\HC$ be a $W$-stable graded complement to $<\kb[V]^W_+>$ in $\kb[V]$. 
Since $\kb[V]$ is a free $\kb[V]^W$-module, we have isomorphisms 
of graded $\kb[W]$-modules 
$$\kb[V] \simeq \kb[V]^W \otimes \HC\qquad\text{and}\qquad 
\kb[V]^\cow \simeq \HC.$$
Similarly, if $\HC'$ is a $W$-stable graded complement of 
$<\kb[V^*]^W_+>$ in $\kb[V^*]$, then we have isomorphisms of graded
$\kb[W]$-modules 
$$\kb[V^*] \simeq \kb[V^*]^W \otimes \HC'\qquad\text{and}\qquad 
\kb[V^*]^\cow \simeq \HC'.$$
In other words, we have isomorphisms of graded $\kb[W]$-modules
$$\kb[V] \simeq \kb[V]^W \otimes \kb[V]^\cow\qquad\text{and}\qquad 
\kb[V^*] \simeq \kb[V^*]^W \otimes \kb[V^*]^\cow.$$
We deduce an isomorphism of bigraded $\kb$-vector spaces 
$$(\kb[V] \otimes \kb[V^*])^{\Delta W} \simeq 
(\kb[V]^W \otimes \kb[V^*]^W) \otimes 
(\kb[V]^\cow \otimes \kb[V^*]^\cow)^{\Delta W}.$$
By (\ref{fchi def}) and (\ref{fchi}), we have
$$\dim_\kb^{\BZ\times\BZ}(\kb[V]^\cow \otimes \kb[V^*]^\cow)^{\Delta W} 
= \sum_{\chi, \psi \in \Irr(W)} f_\chi(\tb) f_\psi(\ub) \langle \chi\psi^*,\unb_W\rangle_W.$$
So the formula follows from the fact that 
$\langle \chi\psi^*,\unb_W\rangle  = \langle \chi,\psi \rangle_W$. 
\end{proof}

\bigskip

To conclude this section, we gather in a same formula Molien's Formula (\ref{hilbert molien})
and Proposition~\ref{dim bigrad Q0 fantome}:
\eqna
\DS{
\dim_\kb^{\BZ\times\BZ}(\kb[V \times V^*]^{\Delta W})
}
&=&
\DS{
\frac{1}{|W|} \sum_{w \in W} \frac{1}{\det(1-w\tb)~\det(1-w^{-1}\ub)}
}\\
&=&
\DS{
\frac{1}{\prod_{i=1}^n (1-\tb^{d_i})(1-\ub^{d_i})} 
\sum_{\chi \in \Irr(W)} f_\chi(\tb) ~f_\chi(\ub).
}
\endeqna

\medskip

\section{Coxeter groups} 
\label{se:Coxetergroups}
Let us recall the following classical equivalences:

\bigskip

\begin{prop}\label{prop:coxeter}
The following assertions are equivalent:
\begin{itemize}
\itemth{1} There exists a subset $S$ of $\REF(W)$ such that $(W,S)$ is a Coxeter system.

\itemth{2} $V \simeq V^*$ as $\kb W$-modules.

\itemth{3} There exists a $W$-invariant non-degenerate symmetric bilinear form 
$V \times V \to \kb$.

\itemth{4} There exists a subfield $\kb_\RM$ of $\kb$ and a $W$-stable $\kb_\RM$-vector subspace 
$V_{\kb_\RM}$ of $V$ such that $V = \kb \otimes_{\kb_\RM} V_{\kb_\RM}$ and 
$\kb_\RM$ embeds as a subfield of $\RM$.
\end{itemize}
\end{prop}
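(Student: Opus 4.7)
The plan is to prove the cyclic chain $(4) \Rightarrow (3) \Rightarrow (2) \Rightarrow (4)$, and then complete the equivalence via $(1) \Leftrightarrow (4)$ using classical Coxeter theory.

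First I would settle $(4) \Rightarrow (3)$ and $(3) \Rightarrow (2)$ together. Fix an embedding $\kb_\RM \hookrightarrow \RM$ and a $\kb_\RM$-basis of $V_{\kb_\RM}$. Starting from the symmetric $\kb_\RM$-bilinear form $B_0$ with identity Gram matrix in that basis, the averaged form $B := \frac{1}{|W|}\sum_{g \in W} B_0(g{-},g{-})$ is $W$-invariant and symmetric, and becomes positive definite (hence non-degenerate) after extension of scalars to $\RM$; its $\kb$-bilinear extension to $V$ gives (3). The passage $(3) \Rightarrow (2)$ is then immediate via $v \mapsto B(v,{-})$.

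The main step is $(2) \Rightarrow (4)$. From $V \simeq V^*$, the multiset of eigenvalues of each $w \in W$ acting on $V$ is stable under inversion $\lambda \mapsto \lambda^{-1}$. Since $\pm 1$ are self-inverse and every other eigenvalue pairs with its inverse, one can write $\chi_V(w) \in \ZM + \sum \ZM\,(\zeta + \zeta^{-1})$, where $\zeta$ ranges over roots of unity in $\kb$ whose order divides the exponent $N$ of $W$. Thus the subfield $\kb' := \QM(\chi_V(w): w \in W)$ of $\kb$ is contained in $\QM(\zeta_N + \zeta_N^{-1})$, the maximal totally real subfield of the cyclotomic field $\QM(\zeta_N) \subset \kb$, which embeds into $\RM$. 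Proposition~\ref{pr:deploiementV} applied to $\kb'$ then yields a $\kb'W$-submodule $V' \subset V$ with $V = \kb \otimes_{\kb'} V'$, which is exactly (4).

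For $(1) \Leftrightarrow (4)$: given (4), the bilinear form constructed in the first step turns $V_{\kb_\RM} \otimes_{\kb_\RM}\RM$ into a Euclidean space on which $W$ acts by orthogonal reflections, so the reflections in the walls of a chamber (a connected component of the complement of the reflection hyperplanes) form a Coxeter system by the classical construction of Bourbaki (\emph{Groupes et alg\`ebres de Lie}, Chapter~V). Conversely, assuming (1), I would invoke the classical fact that every irreducible character of a finite Coxeter group is real-valued (equivalently, every element of a Coxeter group is conjugate to its inverse), which gives $V \simeq V^*$ and hence (4) via the preceding step. The principal obstacle is $(2) \Rightarrow (4)$: the character-theoretic argument must be arranged so that the subfield of $\kb$ generated by the character values of $V$ abstractly embeds into $\RM$, which is where Proposition~\ref{pr:deploiementV} together with the conventions on roots of unity come into play.
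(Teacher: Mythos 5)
The paper states this proposition as a collection of ``classical equivalences'' and offers no proof of its own, so there is nothing internal to compare against; judged on its merits, your argument is correct and complete. The cycle $(4)\Rightarrow(3)\Rightarrow(2)\Rightarrow(4)$ is sound: the averaging argument does produce a form that is positive definite after extension along $\kb_\RM\hookrightarrow\RM$, hence has non-vanishing Gram determinant already over $\kb_\RM$; and in $(2)\Rightarrow(4)$ the inversion-stability of the eigenvalue multiset of each $w$ does force every complex embedding of $\QM(\chi_V(w):w\in W)$ to land in $\RM$, so that field is totally real and Proposition~\ref{pr:deploiementV} applies verbatim. Two small points deserve to be made explicit. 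First, in $(4)\Rightarrow(1)$ you should note that $\Ref(W)$ is insensitive to scalar extension (the rank of $s-\Id$ does not change), so the wall reflections of a chamber in $\RM\otimes_{\kb_\RM}V_{\kb_\RM}$ really do lie in $\Ref(W)$ as defined on $V$. Second, for $(1)\Rightarrow(2)$ the classical input ``every element of a finite Coxeter group is conjugate to its inverse'' is best justified by Carter's theorem that every element is a product of two involutions; that proof uses the abstract Tits/geometric representation of $(W,S)$ over $\RM$, not the given module $V$, so there is no circularity --- whereas quoting the real-valuedness of characters from Theorem~\ref{lem:coxeter} of the paper would be circular, since that theorem is stated under the standing assumption that $(4)$ already holds.
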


\bigskip

Whenever one (or all the) assertion(s) of Proposition~\ref{prop:coxeter} 
is (are) satisfied, we say that {\it $W$ is a Coxeter group}. 
In this case, the text will be followed by a gray line on the left, 
as below.

\bigskip

\cbstart

\boitegrise{{\bf Assumption, choice.} {\it From now on, and until the end of
\S\ref{se:Coxetergroups},
we assume that $W$ is a Coxeter group. We fix a subfield $\kb_\RM$ of $\kb$
that embeds as a subfield of $\RM$ and a
$W$-stable $\kb_\RM$-vector
subspace $V_{\kb_\RM}$  \indexnot{VR}{V_{\kb_\RM}}  of $V$ such that 
$V=\kb \otimes_{\kb_\RM} V_{\kb_\RM}$. We also fix a connected component $C_\RM$
\indexnot{CR}{C_\RM} of
$\{v\in \RM\otimes_{\kb_\RM} V_{\kb_\RM} | \Stab_W(v)=1\}$.
We denote by $S$\indexnot{S}{S} the set of $s \in \REF(W)$ such that 
$\overline{C_\RM}\cap \ker_{\RM\otimes_{\kb_\RM} V_{\kb_\RM}}(s-1)$ has 
real codimension $1$ in $\overline{C_\RM}$.
So, $(W,S)$ is a Coxeter system. This notation will be used all along this 
book, 
provided that $W$ is a Coxeter group.}}{0.75\textwidth}

\bigskip

The following is a particular case of Theorem~\ref{deploiement}.

\bigskip

\begin{theo}\label{lem:coxeter}
The $\kb_\RM$-algebra $\kb_\RM W$ is split. In particular, 
the characters of $W$ are real valued, that is, $\chi=\chi^*$ 
for all characters $\chi$ of $W$.
\end{theo}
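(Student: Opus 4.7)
The plan is to derive both assertions directly from Theorem~2.4.2 (Benard--Bessis) applied to $\kb'=\kb_\RM$. First I would check that the trace hypothesis is satisfied: since $V=\kb\otimes_{\kb_\RM}V_{\kb_\RM}$ and the action of $W$ is defined over $\kb_\RM$, the trace of any $w\in W$ acting on $V$ coincides with its trace on $V_{\kb_\RM}$, so it lies in $\kb_\RM$. Theorem~2.4.2 then yields immediately that $\kb_\RM W$ is split semisimple, proving the first assertion.

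For the ``in particular'' statement, I would argue as follows. By the first part, every irreducible character $\chi$ of $W$ is afforded by a simple $\kb_\RM W$-module, hence $\chi(w)\in\kb_\RM$ for every $w\in W$. Since $\kb_\RM$ is fixed as a subfield of $\RM$, the values $\chi(w)$ are real algebraic numbers.

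It then remains to deduce $\chi=\chi^*$, that is, $\chi(w)=\chi(w^{-1})$ for all $w\in W$. The standard argument applies: choose any embedding of the cyclotomic field $\QM(\zeta_{|W|})$ (which contains all character values of $W$) into $\CM$; the eigenvalues of $w$ in any representation are roots of unity $\lambda_1,\dots,\lambda_d$, and those of $w^{-1}$ are $\lambda_1^{-1},\dots,\lambda_d^{-1}$, which coincide with the complex conjugates $\overline{\lambda_i}$. Hence $\chi(w^{-1})=\overline{\chi(w)}$; but $\chi(w)\in\RM$, so $\chi(w^{-1})=\chi(w)$ inside $\QM(\zeta_{|W|})$ (and therefore in $\kb$), giving $\chi=\chi^*$.

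I do not foresee any real obstacle: the proof is essentially a direct invocation of Theorem~2.4.2 together with the elementary observation that a character with values in a real subfield is self-conjugate. The only point requiring slight care is the passage from ``$\chi(w)\in\kb_\RM\subset\RM$'' to ``$\chi=\chi^*$'' when $\kb$ is not a priori a subfield of $\CM$, which is handled by working inside the cyclotomic subfield $\QM(\zeta_{|W|})\subset\kb$ where complex conjugation makes intrinsic sense as the automorphism $\zeta_{|W|}\mapsto\zeta_{|W|}^{-1}$.
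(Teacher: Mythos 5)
Your proof is correct and follows exactly the route the paper intends: the text presents this theorem as a particular case of the Benard--Bessis splitting theorem applied to $\kb'=\kb_\RM$ (the trace hypothesis holding because the representation $V$ is defined over $\kb_\RM$), and the reality of characters then follows from the standard fact that $\chi(w^{-1})=\overline{\chi(w)}$ for eigenvalues that are roots of unity. Your care about making complex conjugation intrinsic via the cyclotomic subfield is a reasonable way to fill in the one detail the paper leaves implicit.
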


\bigskip

Recall also the following.

\bigskip

\begin{lem}\label{lem:coxeter-2}
If $s \in \REF(W)$, then $s$ has order $2$ and $\e(s)=-1$.
\end{lem}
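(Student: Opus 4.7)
The plan is to exploit the reality of the representation: because $V_{\kb_\RM}$ is $W$-stable and $\kb_\RM$ embeds in $\RM$, the element $s$ acts on a real vector space, which forces its non-trivial eigenvalue to be real.

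First I would observe that $V_{\kb_\RM}$ is stable under $s$, so $s$ restricts to an element of $\GL_{\kb_\RM}(V_{\kb_\RM})$, and extending scalars along a fixed embedding $\kb_\RM \hookrightarrow \RM$ gives an action on $V_\RM := \RM \otimes_{\kb_\RM} V_{\kb_\RM}$. The characteristic polynomial $\chi_s(t) \in \kb_\RM[t]$ then lies in $\RM[t]$.

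Next, because $s \in W$ has finite order, its eigenvalues (over an algebraic closure) are roots of unity; because $s$ is a reflection, exactly one of those eigenvalues $\zeta$ is non-trivial, and it occurs with multiplicity one. Since $\chi_s(t) \in \RM[t]$, any non-real root of $\chi_s$ comes paired with its complex conjugate root of the same multiplicity. If $\zeta$ were not real, the pair $\{\zeta,\bar\zeta\}$ would contribute two distinct non-trivial eigenvalues, contradicting $\dim_\kb \im(s-\Id_V)=1$. Hence $\zeta \in \RM$, and being a real root of unity it satisfies $\zeta \in \{1,-1\}$. As $s \neq \Id_V$, we conclude $\zeta = -1$.

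From $\zeta = -1$ one immediately gets $s^2 = \Id_V$ (since $s$ is diagonalizable with eigenvalues in $\{1,-1\}$, see (\ref{non nul})) and $\e(s) = \det_V(s) = \zeta \cdot 1^{n-1} = -1$. There is no real obstacle in this argument; the only subtle point is ensuring the reality hypothesis is invoked correctly, i.e.\ that the rationality over $\kb_\RM\subset\RM$ provided by the Coxeter hypothesis is what forces $\zeta$ to be real. Everything else is formal.
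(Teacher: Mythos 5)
Your proof is correct. The paper states this lemma without proof, as a recalled classical fact about finite Coxeter groups, so there is no argument in the text to compare against; what you give is the standard one. Your use of the real form $V_{\kb_\RM}$ from Proposition \ref{prop:coxeter}(4) is exactly the right hypothesis to invoke: the characteristic polynomial of $s$ lies in $\RM[t]$ after the embedding, its unique non-trivial root is a root of unity of multiplicity one, and the conjugation-pairing argument forces that root to be real, hence equal to $-1$; diagonalizability in characteristic $0$ then gives $s^2=\Id_V$ and $\e(s)=-1$.
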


\bigskip

\begin{coro}\label{coro:coxeter}
The map $\REF(W) \to \AC$, $s \mapsto \Ker(s-\Id_V)$ is bijective and $W$-equivariant. 
In particular, $|\AC|=|\REF(W)|=\sum_{i=1}^n (d_i-1)$ and $|\AC/W|=|\REF(W)/W|$.
\end{coro}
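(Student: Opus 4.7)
The plan is quite short. The map $\Phi:\Ref(W)\to \AC$, $s\mapsto \Ker(s-\Id_V)$, is surjective and $W$-equivariant by the very definition of $\AC$ given at the start of \S\ref{section:hyperplans}. Thus the only substantive point is injectivity.

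To prove injectivity, I would invoke Lemma~\ref{lem:coxeter-2}: in the Coxeter case every reflection has order $2$, so for each $H\in\AC$ the cyclic group $W_H$ has order $e_H=2$. Using the parametrisation $\Ref(W)=\{s_H^j \mid H\in\AC,\ 1\le j\le e_H-1\}$ recalled just after Lemma~\ref{conjugaison ref}, the range $1\le j\le e_H-1$ reduces to the single value $j=1$. Hence there is exactly one reflection with hyperplane $H$, namely $s_H$, which means $\Phi$ is injective and its inverse is $H\mapsto s_H$.

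The two cardinality statements then follow immediately. The equality $|\AC|=|\Ref(W)|$ is a direct consequence of the bijection $\Phi$, and $|\Ref(W)|=\sum_{i=1}^n(d_i-1)$ is the Shephard--Todd--Chevalley formula from Theorem~\ref{chevalley}(a). Finally, since $\Phi$ is $W$-equivariant, it induces a bijection on $W$-orbits, giving $|\AC/W|=|\Ref(W)/W|$. There is no genuine obstacle here: the only thing being used beyond trivialities is that $e_H=2$ for all $H$, which is exactly the content of Lemma~\ref{lem:coxeter-2}.
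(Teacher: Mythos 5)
Your proof is correct and follows exactly the route the paper intends: the corollary is stated immediately after Lemma~\ref{lem:coxeter-2} precisely because $e_H=2$ for all $H$ reduces the parametrisation $\Ref(W)=\{s_H^j \mid H\in\AC,\ 1\le j\le e_H-1\}$ to a bijection, with the cardinalities then coming from Theorem~\ref{chevalley}(a) and $W$-equivariance. Nothing to add.
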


\bigskip

Let $\ell : W \to \NM$\indexnot{l}{\ell}
 denote the length function with respect to $S$:
given $w\in W$, the integer $\ell(w)$ is minimal such that $w$ is a product
of $\ell(w)$ elements of $S$.
When $w=s_1s_2\cdots s_l$ with $s_i \in S$ and $l=\ell(w)$, we say that 
$w=s_1s_2\cdots s_l$ is a {\bfit reduced decomposition} of $w$. 
We denote by $w_0$ \indexnot{wa}{w_0}  the longest element of $W$: we have $\ell(w_0)=|\REF(W)|=|\AC|$. 

\bigskip

\begin{rema}\label{rem:w0-central}
If $-\Id_V \in W$, then $w_0=-\Id_V$. Conversely, if 
$w_0$ is central and $V^W=0$, then $w_0=-\Id_V$.\finl
\end{rema}

\cbend
% Notons qu'alors $\mub_W=\{1,-1\}$ et donc $|\mub_W|=2$. 
% 
% 
% Notons $\ell : W \to \NM$ la fonction longueur par rapport \`a $S$. Si $w=s_1s_2\cdots s_l$ 
% avec $s_i \in S$ et $l=\ell(w)$ (on dit alors que $w=s_1s_2\cdots s_l$ est une 
% {\it d\'ecomposition r\'eduite} de $w$), 
% 
% 
% % 
% % 
% % \boitegrise{{\bf Hypoth\`ese.} 
% % {\it Dans cette section, et seulement dans cette section, nous supposerons que $W$ est un groupe 
% % de Coxeter. Nous reprenons alors les notations de \S\ref{sec:coxeter-general}~: $S$ est l'ensemble des 
% % r\'eflexions simples et $(W,S)$ est un syst\`eme de Coxeter. }}{12cm}
% 
% \bigskip
% 
% Notons que cette hypoth\`ese implique que l'hypoth\`ese \libertesymetrie~est v\'erifi\'ee. 
% Elle implique aussi que toutes les r\'eflexions sont d'ordre $2$, ce qui permet 
% d'appliquer les r\'esultats de \S\ref{sec:ordre-2}.
% 
% \bigskip

\chapter{Generic Cherednik algebras}\label{chapter:cherednik-1}

Let $\CCB$ \indexnot{C}{\CCB}  be the $\kb$-vector space of maps
$c : \REF(W) \to \kb$, $s \mapsto c_s$ 
\indexnot{ca}{c_s}  
that are  constant on conjugacy classes: this is the {\it space of
parameters}, which we identify with the space of maps $\refw \to \kb$. 

Given $s \in \REF(W)$ (or $s \in \refw$), we denote by $C_s$ \indexnot{C}{C_s}  the linear form on
$\CCB$ given by evaluation at $s$. The algebra $\kb[\CCB]$ of polynomial
functions on $\CCB$ is the algebra of polynomials on the set of 
indeterminates $(C_s)_{s \in \refw}$:
$$\kb[\CCB]=\kb[(C_s)_{s \in \refw}].$$
We denote by $\CCBt$ \indexnot{C}{\CCBt}  the $\kb$-vector space
$\kb \times \CCB$ and
we introduce $T : \CCBt \to \kb$, \indexnot{T}{T}  $(t,c) \mapsto t$. We have
$T \in \CCBt^*$ and
$$\kb[\CCBt]=\kb[T,(C_s)_{s \in \refw}].$$

\section{Structure}

\medskip

\subsection{Symplectic action}
We consider here the action of $W$ on $V\oplus V^*$.
Lemma \ref{le:idempotentinvariants} and Proposition \ref{pr:codim2doublecentralizer}
give the following result.

\begin{prop}
\label{pr:centerdouble}
We have $\Zrm(\kb[V \times V^*]\rtimes W)=\kb[V \times V^*]^W=\kb[(V \times V^*)/W]$ and there
is an isomorphism
$$\Zrm(\kb[V \times V^*]\rtimes W)\longiso e(\kb[V \times V^*]\rtimes W)e,\ z\mapsto ze.$$
The action by left multiplication gives an isomorphism
$$\kb[V \times V^*]\rtimes W\longiso\End_{\kb[(V \times V^*)/W]^{\mathrm{opp}}}
\bigl((\kb[V \times V^*]\rtimes W)e\bigr)^\opp.$$
\end{prop}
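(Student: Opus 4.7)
The plan is to apply the two general results from the appendix on group actions, namely Lemma~\ref{le:idempotentinvariants} and Proposition~\ref{pr:codim2doublecentralizer}, to the natural $W$-action on $A := \kb[V\oplus V^*]$. The identification $\kb[V\oplus V^*]^W = \kb[(V\oplus V^*)/W]$ is just the definition of the coordinate ring of the GIT quotient. For $Z(A\rtimes W) = A^W$, I would write a putative central element as $z = \sum_{w\in W} a_w w$ with $a_w\in A$, and compute
\[
[z,a] = \sum_{w\in W} a_w\bigl(w(a)-a\bigr)w
\]
for arbitrary $a\in A$. Since $W\hookrightarrow \GL(V)\subset \GL(V\oplus V^*)$ is faithful and $\kb$ has characteristic zero, distinct elements of $W$ induce distinct automorphisms of $A$, so $a_w = 0$ whenever $w\neq 1$. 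Then the condition $[z,w']=0$ for all $w'\in W$ forces $a_1\in A^W$.

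For part two, Lemma~\ref{le:idempotentinvariants} applied to the averaging idempotent $e=\frac{1}{|W|}\sum_{w\in W}w$ yields a canonical $\kb$-algebra isomorphism $A^W \longiso e(A\rtimes W)e$ sending $a\mapsto ae = eae$. Composing with the identification $Z(A\rtimes W)=A^W$ from part one, this isomorphism is exactly the map $z\mapsto ze$ of the statement (using $ez=ze$ for central $z$).

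The third part is the content of Proposition~\ref{pr:codim2doublecentralizer}, whose hypothesis is that the fixed locus $(V\oplus V^*)^w$ has codimension at least $2$ in $V\oplus V^*$ for every non-trivial $w\in W$. Verifying this is the only genuinely substantive step of the proof. Since $V$ and $V^*$ are dual $\kb W$-modules, the dimension of the $1$-eigenspace of $w$ on $V^*$ equals that on $V$, so
\[
\dim (V\oplus V^*)^w \;=\; 2\dim V^w.
\]
For $w\neq 1$ one has $\dim V^w \le n-1$ (even $\le n-2$ when $w$ is not a reflection), hence the codimension of $(V\oplus V^*)^w$ is $2(n-\dim V^w)\ge 2$, with equality precisely on the reflections. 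The proposition then delivers the double centralizer isomorphism
\[
\kb[V\oplus V^*]\rtimes W \;\longiso\; \End_{\kb[(V\oplus V^*)/W]^{\opp}}\!\bigl((\kb[V\oplus V^*]\rtimes W)e\bigr)^\opp,
\]
where the right $\kb[(V\oplus V^*)/W]$-module structure on $(A\rtimes W)e$ comes through the identification of part two. I expect the only mild subtlety to be keeping track of opposite rings consistently on the right-hand side; the rest is formal.
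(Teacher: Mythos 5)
Your proof is correct and follows essentially the same route as the paper, which simply cites Lemma~\ref{le:idempotentinvariants} and Proposition~\ref{pr:codim2doublecentralizer}; your verification that $\dim(V\oplus V^*)^w=2\dim V^w$ gives codimension $\ge 2$ for $w\neq 1$ is exactly the hypothesis the paper leaves implicit. (Your direct center computation also tacitly uses that $\kb[V\oplus V^*]$ is a domain when cancelling $w(a)-a$, which holds here and matches the lemma on $Z(R)=A^G$ in Appendix~\ref{appendice:invariant}.)
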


\subsection{Definition}

\medskip

The {\it generic rational Cherednik algebra} (or simply
the {\it generic Cherednik algebra}) is the $\kb[\CCBt]$-algebra $\Hbt$
defined as the quotient 
of $\kb[\CCBt] \otimes \bigl(\Trm_\kb(V \oplus V^*) \rtimes W\bigr)$ 
by the following relations (here, $\Trm_\kb(V \oplus V^*)$ 
is the tensor algebra of $V \oplus V^*$):
\equat\label{relations-1}\begin{cases}
[x,x']=[y,y']=0, \\
\\
[y,x] = T \langle y,x\rangle + \DS{\sum_{s \in \REF(W)} (\e(s)-1)\hskip1mm C_s 
\hskip1mm\frac{\langle y,\a_s \rangle \cdot \langle \a_s^\ve,x\rangle}{\langle \a_s^\ve,\a_s\rangle}
\hskip1mm s,} 
\end{cases}\endequat
for $x$, $x' \in V^*$ and $y$, $y' \in V$.

\bigskip

\begin{rema}
Thanks to (\ref{action s V}), the second relation is equivalent to
\equat\label{eq:relations-1-sans-alpha}
[y,x] = T \langle y,x\rangle + \sum_{s \in \REF(W)} C_s 
\langle s(y)-y,x\rangle 
\hskip1mm s
\endequat
and to
$$[y,x] = T \langle y,x\rangle + \sum_{s \in \REF(W)} C_s 
\langle y,s^{-1}(x)-x\rangle
\hskip1mm s.
$$
This avoids the use of $\a_s$ and $\a_s^\ve$.\finl
\end{rema}

\bigskip

\subsection{PBW Decomposition}\label{subsection:PBW-1} 
Given the relations~(\ref{relations-1}), 
the following assertions are clear:
\begin{itemize}
\item[$\bullet$] There is a unique morphism of $\kb$-algebras 
$\kb[V] \to \Hbt$ sending $y \in V^*$ to the class of
$y \in \Trm_\kb(V \oplus V^*) \rtimes W$ in $\Hbt$.

\item[$\bullet$]There is a unique morphism of $\kb$-algebras 
$\kb[V^*] \to \Hbt$ sending $x \in V$ to the class of
$x \in \Trm_\kb(V \oplus V^*) \rtimes W$ in $\Hbt$.

\item[$\bullet$] There is a unique morphism of $\kb$-algebras  
$\kb W \to \Hbt$ sending $w \in W$ to the class of
$w \in \Trm_\kb(V \oplus V^*) \rtimes W$ in $\Hbt$.

\item[$\bullet$] The $\kb[\CCBt]$-linear map $\kb[\CCBt] \otimes \kb[V] \otimes \kb W 
\otimes \kb[V^*] \longto \Hbt$ induced by the three morphisms defined above
and the multiplication map is surjective.
\end{itemize}
The last statement is strengthened by the following fundamental result
\cite[Theorem 1.3]{EG}, for which we will provide a proof in
Theorem \ref{th:PBWDunkl}.

\bigskip

\begin{theo}[Etingof-Ginzburg]\label{PBW-1}
The multiplication map  $\kb[\CCBt] \otimes \kb[V] \otimes \kb W 
\otimes \kb[V^*] \longto \Hbt$ is an isomorphism of $\kb[\CCBt]$-modules.
\end{theo}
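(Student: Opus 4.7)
The plan is to follow the classical approach of Etingof--Ginzburg via a faithful polynomial representation (Dunkl operators), establishing surjectivity by a normal form argument and injectivity by faithfulness of the representation on monomials in a PBW-type order.

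\textbf{Surjectivity.} The relations (\ref{relations-1}) together with the semidirect-product relations $wx = w(x)\,w$ and $wy = w(y)\,w$ suffice to rewrite any word in the generators: commutators $[y,x]$ can be used to move every factor from $V^*$ to the right of every factor from $V$, picking up a term in $\kb[\CCBt]\otimes\kb W$; the relations $[x,x']=[y,y']=0$ allow rearrangement within $\kb[V]$ and $\kb[V^*]$; and the semidirect-product relations bring all elements of $W$ into the middle. Iterating this procedure (with an easy induction on total degree) puts any element of $\Hbt$ into the form $\sum X_i\, w_i\, Y_i$ with $X_i\in\kb[V]$, $w_i\in W$, $Y_i\in\kb[V^*]$ and coefficients in $\kb[\CCBt]$, which proves surjectivity.

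\textbf{Injectivity via Dunkl operators.} I would construct a $\kb[\CCBt]$-linear action of $\Hbt$ on $M=\kb[\CCBt]\otimes\kb[V^{\reg}]=\kb[\CCBt][V][\delta^{-1}]$, where $V^{\reg}$ is as in Theorem \ref{th:Steinberg}. Let $W$ act in the natural way, let each $x\in V^*\subset\kb[V]$ act by multiplication, and for $y\in V$ define the Dunkl operator
\[
D_y \;=\; T\,\partial_y \;+\; \sum_{s\in\Ref(W)}(\e(s)-1)\,C_s\,
\frac{\langle y,\alpha_s\rangle}{\alpha_s}\,(s-1),
\]
viewed as an operator on $M$ (note that although $\alpha_s$ appears in a denominator, the factor $s-1$ ensures $D_y$ preserves $\kb[\CCBt]\otimes\kb[V]$, and in any case it preserves $M$). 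One checks directly that the $D_y$ commute with each other (this is the key calculation, involving the identity $\frac{\langle y,\alpha_s\rangle\langle y',\alpha_s\rangle}{\langle\alpha_s^\ve,\alpha_s\rangle}$ being symmetric in $y,y'$ and a cancellation between crossed terms coming from pairs of reflections), that $[D_y,x]$ reproduces the right-hand side of (\ref{relations-1}), and that $wD_yw^{-1}=D_{w(y)}$. Hence $M$ is an $\Hbt$-module.

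\textbf{Conclusion.} Combining surjectivity with this representation, it suffices to show that ordered monomials $x^a\,w\,y^b$ (indexed by multi-indices $a,b$ in bases of $V^*$ and $V$, and $w\in W$) act $\kb[\CCBt]$-linearly independently on $M$. Apply such an operator to a monomial $f\in\kb[V]\subset M$: $y^b$ acts as a differential-plus-difference operator whose leading term (as a polynomial operator in $\partial$) is $T^{|b|}\partial^b$; one then evaluates at generic points of $V^{\reg}$ and extracts coefficients in the variables $x^a$. A standard filtration argument (filtering $\Hbt$ by the total degree in $V\oplus V^*$, with $W$ in degree $0$, and observing that $D_y$ has leading symbol $T\partial_y$) recovers the freeness: the associated graded map to $\kb[\CCBt]\otimes\kb[V]\otimes\kb W\otimes\kb[V^*]\simeq\kb[\CCBt]\otimes\bigl(\kb[V\oplus V^*]\rtimes W\bigr)$ is an isomorphism, which together with surjectivity gives injectivity.

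\textbf{Main obstacle.} The principal technical point is proving commutativity $[D_y,D_{y'}]=0$ of the Dunkl operators; the rest is bookkeeping. This step requires a careful case analysis of the terms produced by $\partial\circ(s-1)$ and by $(s-1)\circ(s'-1)$, and uses crucially the specific coefficient $(\e(s)-1)C_s/\langle\alpha_s^\ve,\alpha_s\rangle$ as well as the fact that $W$ acts as a reflection group (so that the poles along different hyperplanes cancel on crossed terms). Once this is established, the PBW theorem follows from the filtration argument above.
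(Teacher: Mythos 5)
Your strategy --- surjectivity by a normal-form rewriting, injectivity via a faithful representation by Dunkl operators on $\kb[\CCBt]\otimes\kb[V^\reg]$, and a filtration comparison with $\kb[\CCBt]\otimes\bigl(\kb[V\oplus V^*]\rtimes W\bigr)$ --- is exactly the paper's (Proposition \ref{pr:polynomialrep} and Theorem \ref{th:PBWDunkl}). Two remarks. First, your Dunkl operator carries the wrong coefficient. Since $[\a_s^{-1}(s-1),x]=(\e(s)^{-1}-1)\frac{\langle\a_s^\ve,x\rangle}{\langle\a_s^\ve,\a_s\rangle}\,s$, your choice $(\e(s)-1)C_s$ gives $[D_y,x]$ equal to the right-hand side of (\ref{relations-1}) with $C_s$ replaced by $(\e(s)^{-1}-1)C_s$ (for a reflection of order two, off by a factor $-2$), not the relation itself; the correct coefficient is $-\e(s)C_s$, as in the Remark following the definition of $D_y$ in the paper. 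Because $C_s\mapsto(\e(s)^{-1}-1)C_s$ is an automorphism of $\kb[\CCBt]$ this does not ultimately invalidate the PBW statement, but the verification you assert is false as written and must be repaired.

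Second, and more substantively, the step you single out as the main obstacle --- proving $[D_y,D_{y'}]=0$ by a case analysis cancelling the poles of crossed terms over pairs of reflecting hyperplanes --- is precisely what the paper's proof avoids, and you do not actually carry it out. Etingof's argument is indirect: by the Jacobi identity, $[[D_y,D_{y'}],x]=[[D_y,x],D_{y'}]-[[D_{y'},x],D_y]$, and a one-line computation (using $[s,D_{y'}]=D_{s(y')-y'}\,s$) shows each of the two terms equals $\sum_s(\e(s)-1)^2C_s\frac{\langle y,\a_s\rangle\langle y',\a_s\rangle\langle\a_s^\ve,x\rangle}{\langle\a_s^\ve,\a_s\rangle^2}D_{\a_s^\ve}s$, which is symmetric in $y$ and $y'$. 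Hence $[D_y,D_{y'}]$ commutes with multiplication by $\kb[V^\reg]$, and since it kills $1$ it kills the whole module; faithfulness of $\DCB(V^\reg)\rtimes W$ on $\kb[V^\reg]$ (Lemma \ref{le:Weylfaithful}(c), a consequence of the Morita equivalence with $\DCB(V^\reg)^W$) then forces $[D_y,D_{y'}]=0$. No cancellation between distinct reflections is needed. If you keep the direct route you owe the reader the full computation, which is genuinely delicate (it is Dunkl's original theorem); I would adopt the paper's argument instead. The concluding filtration step is then cleanest in the paper's form: the composition of the multiplication map with $\Theta^\reg$ into $\kb[\CCB]\otimes\DCB_T(V^\reg)\rtimes W$ has associated graded equal to the known multiplication isomorphism for the localized Weyl algebra, giving injectivity over $\kb[V^\reg]$ and hence over $\kb[V]$.
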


\bigskip

\subsection{Specialization}\label{subsection:specialization-1}
Given $(t,c) \in \CCBt$, we denote by $\CGt_{t,c}$ \indexnot{C}{\CGt_{t,c}}
the maximal ideal of $\kb[\CCBt]$ given by
$\CGt_{t,c}=\{f \in \kb[\CCBt]~|~f(t,c)=0\}$: this is the ideal
generated by $T-t$ and
$(C_s-c_s)_{s \in \refw}$. We put
$$\Hbt_{t,c} = (\kb[\CCBt]/\CGt_{t,c}) \otimes_{\kb[\CCBt]} \Hbt = \Hbt/\CGt_{t,c} \Hbt.\indexnot{H}{\Hbt_{t,c}}$$
The  $\kb$-algebra
$\Hbt_{t,c}$ is the quotient of
$\Trm_\kb(V \oplus V^*) \rtimes W$ by the ideal generated by the
following relations:
\equat\label{relations specialisees}\begin{cases}
[x,x']=[y,y']=0, \\
\\
[y,x] = t \langle y,x \rangle + \DS{\sum_{s \in \REF(W)} (\e(s)-1)\hskip1mm c_s 
\hskip1mm\frac{\langle y,\a_s \rangle \cdot \langle \a_s^\ve,x\rangle}{\langle \a_s^\ve,\a_s\rangle}
\hskip1mm s,} 
\end{cases}\endequat
for $x$, $x' \in V^*$ and $y$, $y' \in V$. 

\bigskip

\begin{exemple}\label{exemple zero-1} 
We have $\Hbt_{0,0}=\kb[V \times V^*] \rtimes W$ and $\Hbt_{T,0}=\DCB_T(V)\rtimes W$ 
(see \S\ref{se:Weylalgebra}).\finl
\end{exemple}

More generally, given $\CGt$ a prime ideal of $\kb[\CCBt]$, we put
$\Hbt(\CGt)=\Hbt/\CGt\Hbt$.

\subsection{Filtration}\label{section:filtration-1}

\medskip

We endow the $\kb[\CCBt]$-algebra $\Hbt$ with the filtration defined as follows:
\begin{itemize}
\item[$\bullet$] $\Hbt^{\le -1}=0$
\item[$\bullet$] $\Hbt^{\le 0}$ is the $\kb[\CCBt]$-subalgebra generated by
$V^*$ and $W$
\item[$\bullet$]  $\Hbt^{\le 1}=\Hbt^{\le 0}V+\Hbt^{\le 0}$.
\item[$\bullet$] $\Hbt^{\le i}= (\Hbt^{\le 1})^i$ for $i\ge 2$.
\end{itemize}
Specializing at $(t,c) \in \CCBt$, we have an induced filtration
of $\Hbt_{t,c}$.

\medskip
The canonical maps $\kb[\CCBt]\otimes\kb[V]\rtimes W\to (\grad\,\Hbt)^0$ and
$V\to (\grad\,\Hbt)^1$ induce a surjective morphism of algebras
$\rho:\kb[\CCBt]\otimes\kb[V \times V^*]\rtimes W\twoheadrightarrow \grad\,\Hbt$.

\bigskip

\subsection{Localization at $V^\reg$} 
Recall that
$$V^\reg=V \setminus \bigcup_{H \in \AC} H=\{v \in V~|~\Stab_G(v)=1\}
\quad\text{ and }\quad\kb[V^{\reg}]=\kb[V][\delta^{-1}].$$
We put $\Hbt^\reg=\Hbt[\delta^{-1}]$, the non-commutative localization
of $\Hbt$ obtained by adding a two-sided inverse to the image of $\delta$.
Note that the filtration of $\Hbt$ induces a filtration of $\Hbt^\reg$,
with $(\Hbt^\reg)^{\le i}=\Hbt^{\le i}[\delta^{-1}]$.

\medskip
Note that multiplication induces an isomorphism of $\kb$-vector spaces
$\kb[V^{\reg}]\otimes\kb[V^*]\longiso \DCB(V^{\reg})=\DCB(V)[\delta^{-1}]$
(cf. Appendix \S \ref{se:Weylalgebra}).

\begin{lem}
\label{le:Weylfaithful}
We have the following properties.
\begin{itemize}
\itemth{a} There is a Morita equivalence between $\kb[V^\reg\times V^*]\rtimes W$ and
$\kb[V^{\reg}\times V^*]^W$ given by the bimodule $\kb[V^\reg\times V^*]$.

\itemth{b} There is a Morita equivalence between $\DCB(V^\reg)\rtimes W$ and
$\DCB(V^{\reg})^W=\DCB(V^\reg/W)$ given by the bimodule $\DCB(V^\reg)$.

\itemth{c} The action of $\DCB(V^{\reg})\rtimes W$ on $\kb[V^{\reg}]$ is faithful.
\end{itemize}
\end{lem}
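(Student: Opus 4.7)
The plan is to deduce all three statements from the freeness of the $W$-action on $V^\reg$ (Theorem~\ref{th:Steinberg}) combined with standard Morita theory for crossed products, in the form developed in Appendix~\ref{appendice:invariant}. The key input is that the averaging idempotent $e=\frac{1}{|W|}\sum_{w\in W}w$ is \emph{full} in $\kb[V^\reg]\rtimes W$, i.e., the two-sided ideal it generates is the whole ring. This is classical when $W$ acts freely on $\Spec\kb[V^\reg]=V^\reg$: since distinct points in each free $W$-orbit can be separated by polynomial functions, one can produce $f_i,g_i\in\kb[V^\reg]$ with $\sum_i f_i\cdot w(g_i)=|W|\,\delta_{w,1}$ for all $w\in W$, which gives $\sum_i f_i\, e\, g_i=1$ in $\kb[V^\reg]\rtimes W$. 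I expect this statement to be a direct consequence of the material in Appendix~\ref{appendice:invariant}.

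For (a) and (b), write $B=\kb[V^\reg\times V^*]$, respectively $B=\DCB(V^\reg)$. The inclusion $\kb[V^\reg]\rtimes W\subset B\rtimes W$ shows that $e$ remains a full idempotent in $B\rtimes W$. A direct computation (using $|W|\in\kb^\times$) gives $e(B\rtimes W)e\simeq B^W$, so the corner ring is $B^W$. Standard Morita theory then provides an equivalence between $B\rtimes W$ and $B^W$ via the $(B\rtimes W,B^W)$-bimodule $(B\rtimes W)e$, which is canonically isomorphic to $B$. This proves (a) verbatim; it proves (b) once one identifies $\DCB(V^\reg)^W$ with $\DCB(V^\reg/W)$, a standard consequence of the étaleness of the quotient map $V^\reg\to V^\reg/W$.

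For (c), the Morita equivalence of (b) sends the left $\DCB(V^\reg)\rtimes W$-module $\kb[V^\reg]$ to $e\cdot\kb[V^\reg]=\kb[V^\reg]^W=\kb[V^\reg/W]$, viewed as a module over $\DCB(V^\reg/W)$. This module is faithful by the very definition $\DCB(X)\subset\End_\kb(\kb[X])$ of the ring of differential operators on an affine variety $X$, and faithfulness of modules is preserved by Morita equivalence, so (c) follows. The main obstacle is the first step: once fullness of $e$ is in hand, the rest of the argument is a routine application of Morita theory and the identification $\DCB(V^\reg)^W=\DCB(V^\reg/W)$.
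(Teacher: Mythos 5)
Your proof is correct, but it reaches part (b) by a genuinely different road than the paper. The paper deduces (a) from Corollary~\ref{le:Moritafree} (free actions of finite groups give Morita equivalences with the invariants), and then gets (b) by a filtered-deformation argument: the associated graded of $\DCB(V^\reg)\rtimes W$ is $\kb[V^\reg\times V^*]\rtimes W$, and Lemma~\ref{le:Moritafiltered} lifts the fullness of the symbol of $e$ in the graded algebra to fullness of $e$ itself. You instead establish fullness of $e$ once and for all in the small subalgebra $\kb[V^\reg]\rtimes W$ --- via the Galois-type elements $f_i,g_i$ with $\sum_i f_i\,w(g_i)=|W|\,\delta_{w,1}$, whose existence is exactly the \'etaleness of $V^\reg\to V^\reg/W$ --- and then note that fullness is inherited by any overalgebra containing $e$. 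This handles (a) and (b) uniformly and bypasses the filtration machinery of Appendix~\ref{appendice:filtration} entirely; the price is the (standard, but here only sketched) input that a free action on a smooth affine variety yields such Galois elements, whereas the paper stays within results it has already proved. For (c) the two arguments are close: the paper observes that the kernel of the action, being a two-sided ideal, is generated by its trace on $\DCB(V^\reg)^W\subset\DCB(V^\reg)$ and so vanishes by the faithfulness of $\DCB(V^\reg)$ on $\kb[V^\reg]$ established in \S\ref{se:Weylalgebra}; you transport faithfulness across the Morita equivalence and invoke the tautological faithfulness of $\DCB(V^\reg/W)$ on $\kb[V^\reg/W]$. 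Your version therefore also uses that the identification $\DCB(V^\reg)^W=\DCB(V^\reg/W)$ is compatible with the actions on $\kb[V^\reg]^W=\kb[V^\reg/W]$, a point worth making explicit, while the paper only needs faithfulness of $\DCB(V^\reg)$ itself. Both routes are sound.
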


\begin{proof}
(a) follows from Corollary~\ref{le:Moritafree}. 

(b) becomes (a) after taking associated graded, hence
(b) follows from Lemmas ~\ref{le:Moritafiltered} and
\ref{le:idempotentinvariants}.

(c) It follows from (b)
that every two-sided ideal of $\DCB(V^{\reg})\rtimes W$ is generated by
its intersection with $\DCB(V^{\reg})^W$. Since $\DCB(V^{\reg})$ acts faithfully on
$\kb[V^{\reg}]$ (cf. \S \ref{se:Weylalgebra}), we deduce that the kernel of the action of 
$\DCB(V^{\reg})\rtimes W$ vanishes.
\end{proof}

\bigskip

\subsection{Polynomial representation and Dunkl operators}

Given $y\in V$, we define $D_y$, a $\kb[\CCBt]$-linear endomorphism of $\kb[\CCBt]\otimes \kb[V^{\reg}]$ by
$$D_y=T\partial_y-\DS{\sum_{s \in \REF(W)} \e(s) C_s 
\langle y,\a_s \rangle \alpha_s^{-1}s},$$
where $\partial_y$ is the partial derivative along $y$. 
Note that $D_y\in\kb[\CCB]\otimes\DCB_T(V^{\reg})\rtimes W\subset
\kb[\CCBt]\otimes\DCB(V^{\reg})\rtimes W$.

\begin{rema}
	The Dunkl operators are traditionally defined (cf for example
	\cite[p.280]{EG}) as
$$D'_y=T\partial_y-\DS{\sum_{s \in \REF(W)} \e(s) C_s 
\langle y,\a_s \rangle \alpha_s^{-1}(s-1)}.$$
	The operator $D_y$ is constructed in \cite[p.281 and 284]{EG}
	and $D'_y=\delta_C^{-1}D_y\delta_C$, where
	$\delta_C=\prod_{s\in\REF(W)}\alpha_s^{C_s}$. The operator $D_y$
	behaves better than $D'_y$ at $t=0$. On the other hand, the operator
	$D'_y$ preserves
	$\kb[\CCBt]\otimes \kb[V]$ while $D_y$ does not.
	\finl
\end{rema}

\bigskip

\begin{prop}\label{pr:polynomialrep}
There is a unique structure of $\Hbt$-module on $\kb[\CCBt]\otimes \kb[V^{\reg}]$ where $\kb[\CCBt]\otimes \kb[V^{\reg}]$ acts by
multiplication, $W$ acts through its natural action on $V$ and $y\in V$ acts by $D_y$.
\end{prop}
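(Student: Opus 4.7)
Uniqueness is immediate: $\Hbt$ is generated as a $\kb[\CCBt]$-algebra by $V$, $V^*$ and $W$ (this is built into the presentation as a quotient of $\kb[\CCBt]\otimes\Trm_\kb(V\oplus V^*)\rtimes W$), so any module structure on $\kb[\CCBt]\otimes\kb[V^{\reg}]$ is determined by the prescribed action of these generators. For existence, it suffices to verify that the proposed operators, all of which lie in $\kb[\CCBt]\otimes\DCB(V^{\reg})\rtimes W$, satisfy the defining relations~(\ref{relations-1}) together with the semidirect product relations with $W$. These split into four families: the trivial commutation $[m_x, m_{x'}] = 0$; the $W$-equivariance $wm_xw^{-1}=m_{w(x)}$ and $wD_yw^{-1}=D_{w(y)}$; the mixed bracket $[D_y, m_x]$; and the commutation $[D_y, D_{y'}] = 0$.

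The first three families are routine. The $W$-equivariance of $D_y$ reduces to the observation that $W$ permutes $\Ref(W)$ preserving $\e$ and the parameter function $c$, and that the scale-invariant combination $\langle y,\alpha_s\rangle\alpha_s^{-1}$ transforms covariantly: any ambiguity in the choice of $\alpha_s$ up to a scalar $\lambda$ cancels against the corresponding rescaling of $\langle y,\alpha_s\rangle$. For the mixed relation, one computes directly $[T\partial_y, m_x] = T\langle y, x\rangle$ and $[\alpha_s^{-1}s, m_x] = \alpha_s^{-1}(s(x)-x)\cdot s$. Substituting formula~(\ref{action s V*}) for $s(x)-x$, the factor $\alpha_s^{-1}(s(x)-x)$ simplifies to the scalar $-(1-\e(s)^{-1})\langle\alpha_s^\vee, x\rangle/\langle\alpha_s^\vee,\alpha_s\rangle$, and after multiplication by $-\e(s)C_s\langle y,\alpha_s\rangle$ and summation one recovers exactly the right-hand side of~(\ref{relations-1}).

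The main obstacle is the commutation $[D_y, D_{y'}] = 0$. The strategy is to invoke Lemma~\ref{le:Weylfaithful}(c), according to which $\DCB(V^{\reg})\rtimes W$ acts faithfully on $\kb[V^{\reg}]$, so it is enough to verify the identity as endomorphisms of $\kb[\CCBt]\otimes\kb[V^{\reg}]$. Writing $D_y = T\partial_y - B_y$ with $B_y = \sum_s \e(s)C_s\langle y,\alpha_s\rangle\alpha_s^{-1}s$, the bracket decomposes as
\[
[D_y, D_{y'}] \;=\; -T\bigl([\partial_y, B_{y'}] - [\partial_{y'}, B_y]\bigr) + [B_y, B_{y'}],
\]
since $[\partial_y,\partial_{y'}]=0$. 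Using $\partial_y(\alpha_s^{-1})=-\langle y,\alpha_s\rangle\alpha_s^{-2}$ and commuting $\partial_{y'}$ past $s$ (which produces an extra piece involving $s$), the $T$-linear part is manifestly antisymmetric in $(y,y')$ and combines with the diagonal ($s=s'$) part of $[B_y, B_{y'}]$ to cancel term-by-term. The off-diagonal part of $[B_y, B_{y'}]$ is then grouped over unordered pairs $\{s,s'\}$ of distinct reflections; after rewriting $s\alpha_{s'}^{-1}=\alpha_{s(s')}^{-1}s$ up to a scalar, each pair contributes a sum of rational functions that vanishes by the classical partial-fraction identity underlying Dunkl's theorem, which is a consequence of the fact that on any two-dimensional subspace of $V$ the restrictions of $\alpha_s$, $\alpha_{s'}$ and their images under the dihedral subgroup they generate satisfy a root-system identity in $\kb(V)$. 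This completes the verification of all four relations, yielding the polynomial representation.
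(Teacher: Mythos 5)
The reduction of the problem to the four families of relations is sound, and your verifications of $[D_y,m_x]$ and of $W$-equivariance are essentially the same computations the paper performs. The difficulty is entirely in $[D_y,D_{y'}]=0$, and there your argument has both a bookkeeping error and a genuine gap. Write $D_y=T\partial_y-B_y$ with $B_y=\sum_{s\in\Ref(W)}\e(s)C_s\langle y,\alpha_s\rangle\alpha_s^{-1}s$. The $T$-linear part of $[D_y,D_{y'}]$ has coefficients divisible by $T$ (terms in $TC_s$), while the diagonal part of $[B_y,B_{y'}]$ has coefficients in $C_sC_{s}$; since $\kb[\CCBt]\otimes\DCB(V^\reg)\rtimes W$ is free over $\kb[\CCBt]=\kb[T,(C_s)]$, these lie in different homogeneous components and \emph{cannot} cancel against each other as you assert. (In fact each vanishes on its own: both pieces of $[\partial_y,B_{y'}]$ — the one from $\partial_y(\alpha_s^{-1})=-\langle y,\alpha_s\rangle\alpha_s^{-2}$ and the one from $[\partial_y,s]=\partial_{y-s(y)}s$ — carry the factor $\langle y,\alpha_s\rangle\langle y',\alpha_s\rangle$, so $[\partial_y,B_{y'}]=[\partial_{y'},B_y]$; and the $s=s'$ terms of $B_yB_{y'}$ are likewise symmetric in $(y,y')$.) More seriously, the off-diagonal part is the entire content of the statement, and your treatment of it is circular — "the classical partial-fraction identity underlying Dunkl's theorem" is precisely what must be proved — and incorrectly organized: since $\alpha_s^{-1}s\,\alpha_{s'}^{-1}s'=\alpha_s^{-1}(s(\alpha_{s'}))^{-1}ss'$, the group element attached to an ordered pair is $ss'$, so the contribution of a single unordered pair $\{s,s'\}$ involves the two distinct elements $ss'$ and $s's$ and does not vanish; the cancellation only happens after regrouping all pairs with a fixed product $ss'=w$ (equivalently, by rank-two subgroups), and the resulting rational-function identity is exactly the nontrivial step you do not supply.

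This computation can be avoided altogether, which is what the paper does (following Etingof). By the Jacobi identity, $[[D_y,D_{y'}],x]=[[D_y,x],D_{y'}]-[[D_{y'},x],D_y]$ for $x\in V^*$, and using the already-established formula for $[D_y,x]$ one finds
$$[[D_y,x],D_{y'}]=\sum_{s\in\Ref(W)}(\e(s)-1)^2C_s\,\frac{\langle y,\alpha_s\rangle\langle y',\alpha_s\rangle\langle\alpha_s^\vee,x\rangle}{\langle\alpha_s^\vee,\alpha_s\rangle^2}\,D_{\alpha_s^\vee}s,$$
which is symmetric in $(y,y')$, whence $[[D_y,D_{y'}],x]=0$. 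So $[D_y,D_{y'}]$ commutes with multiplication by $V^*$ and kills $1$, hence acts by zero on $\kb[\CCBt]\otimes\kb[V^\reg]$, and Lemma~\ref{le:Weylfaithful}(c) then gives $[D_y,D_{y'}]=0$. Note that faithfulness is used exactly once, at the end; in your write-up it is invoked at the outset but plays no role in the subsequent (purely algebraic) computation.
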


\begin{proof}
The following argument is due to Etingof. 
Let $y\in V$ and $x\in V^*$.
We have
$$[\alpha_s^{-1}s,x]=(\e(s)^{-1}-1)\frac{\langle\alpha_s^\vee,x\rangle}{
\langle\alpha_s^\vee,\alpha_s\rangle}s,$$
hence
$$[D_y,x]=T\langle y,x\rangle+\sum_s (\e(s)-1)C_s\frac{\langle y,\alpha_s
\rangle\cdot\langle\alpha_s^\vee,x\rangle}{\langle\alpha_s^\vee,\alpha_s\rangle}s.$$
Given $w\in W$, we have $wD_yw^{-1}=D_{w(y)}$.

Consider $y'\in V$. We have
$$[[D_y,D_{y'}],x]=[[D_y,x],D_{y'}]-[[D_{y'},x],D_y]$$
and
\begin{align*}
[[D_y,x],D_{y'}]&=
\sum_s (\e(s)-1)C_s\frac{\langle y,\alpha_s
\rangle\cdot\langle\alpha_s^\vee,x\rangle}{
\langle\alpha_s^\vee,\alpha_s\rangle}[s,D_{y'}]\\
&=\sum_s (\e(s)-1)^2C_s\frac{\langle y,\alpha_s
\rangle\cdot\langle y',\alpha_s\rangle\cdot
\langle\alpha_s^\vee,x\rangle}{
\langle\alpha_s^\vee,\alpha_s\rangle^2}D_{\alpha_s^\vee}s\\
&=[[D_{y'},x],D_y].
\end{align*}
We deduce that $[[D_y,D_{y'}],x]=0$ for all $x\in V^*$. On the 
other hand, $[D_y,D_{y'}]$ acts by zero on $1\in \kb[\CCBt]\otimes 
\kb[V^{\reg}]$,
hence $[D_y,D_{y'}]$ acts by zero on $\kb[\CCBt]\otimes \kb[V^{\reg}]$.
The proposition follows.
\end{proof}

\bigskip

Proposition \ref{pr:polynomialrep} provides a morphism of
$\kb[\CCBt]$-algebras
%$\Hbt\to\End_{\kb[\CCBt]}(\kb[\CCBt]\otimes\kb[V])\subset
%\Hom_{\kb[\CCBt]}(\kb[\CCBt]\otimes\kb[V],
%\kb[\CCBt]\otimes\kb[V^\reg])$, with image contained in
%$\kb[\CCB]\otimes\DCB_T(V^\reg)\rtimes W$. We denote by 
$\Theta:\Hbt\to\kb[\CCB]\otimes\DCB_T(V^\reg)$. We denote
by
$\Theta^\reg:\Hbt^\reg\to\kb[\CCB]\otimes\DCB_T(V^\reg)$ its extension to $\Hbt^\reg$.

\bigskip

\begin{theo}
\label{th:PBWDunkl}
We have the following statements:
\begin{itemize}
\itemth{a}
The morphism $\Theta$ is injective, hence the
polynomial representation of $\Hbt$ is faithful.
\itemth{b}
The multiplication map is an isomorphism
$\kb[\CCBt]\otimes\kb[V]\otimes\kb[V^*]\otimes\kb W\longiso \Hbt$.
\itemth{c}
We have an isomorphism of algebras
$\rho:\kb[\CCBt]\otimes\kb[V \times V^*]\rtimes W\longiso \grad\,\Hbt$.
\itemth{d}
The morphism $\Theta^\reg$ is an isomorphism
$\Hbt^\reg \longiso \kb[\CCB]\otimes\DCB_T(V^\reg)\rtimes W$.
\itemth{e}
Given $\CG$ a prime ideal of $\kb[\CCBt]$, the morphism
$(\kb[\CCBt]/\CG)\otimes_{\kb[\CCBt]}\Theta$ is injective. If $T{\not\in}\CG$, then
the polynomial representation of
$\Hbt(\CG)$ is faithful and
$\Zrm(\Hbt(\CG))=\kb[\CCBt]/\CG$.
\end{itemize}
\end{theo}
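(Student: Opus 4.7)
The plan is to prove all five parts in one stroke via a filtration argument, the setup being the main point. I would equip $\kb[\CCB]\otimes\DCB_T(V^\reg)\rtimes W$ with the filtration where $V\subset\DCB_T$ sits in degree~$1$ and $\kb[V^\reg]$, $V^*$, $W$ and $\kb[\CCBt]$ sit in degree~$0$; since the defining relation $[y,x]=T\langle y,x\rangle$ of $\DCB_T$ has filtration~$0$, the associated graded equals $\kb[\CCBt]\otimes\kb[V^\reg\oplus V^*]\rtimes W$. By the relations~(\ref{relations-1}), the filtration of $\Hbt$ from \S\ref{section:filtration-1} enjoys the same property ($[y,x]\in\Hbt^{\le 0}$), so the canonical surjection $\rho:\kb[\CCBt]\otimes\kb[V\oplus V^*]\rtimes W\twoheadrightarrow\grad\,\Hbt$ is well-defined.

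Next I would verify that $\Theta$ is filtration-preserving. Identifying $T\partial_y$ with the element $y\in V\subset\DCB_T$ (they act identically on $\kb[V]$), one has $\Theta(y)=D_y=y-\sum_s\e(s)C_s\langle y,\a_s\rangle\a_s^{-1}s$ with the first summand in filtration~$1$ and the second in filtration~$0$. Thus the composition $\grad\,\Theta\circ\rho$ is the natural inclusion
$$\kb[\CCBt]\otimes\kb[V\oplus V^*]\rtimes W\hookrightarrow\kb[\CCBt]\otimes\kb[V^\reg\oplus V^*]\rtimes W$$
induced by $V\hookrightarrow V^\reg$, which is injective. From this one deduces at once that $\rho$ is an isomorphism (giving~(c)) and that both $\grad\,\Theta$ and $\Theta$ are injective (giving~(a)). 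For~(b), the surjective multiplication map $m$ is filtration-preserving with $\grad\,m=\rho$ under the analogous filtration on the source; since $\rho$ is an isomorphism, a standard filtered lifting argument promotes $m$ to an isomorphism.

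For~(d), $\delta$ sits in filtration~$0$, so the filtration extends to $\Hbt^\reg$ and $\grad\,\Hbt^\reg$ is obtained by inverting $\delta$ in the associated graded, yielding $\kb[\CCBt]\otimes\kb[V^\reg\oplus V^*]\rtimes W$; this matches the associated graded of the target, and $\grad\,\Theta^\reg$ is the identity, forcing $\Theta^\reg$ to be an isomorphism. For~(e), by~(b) each $\Hbt^{\le i}$ is a $\kb[\CCBt]$-direct summand of $\Hbt$, so specialization commutes with taking associated graded and $\grad\bigl((\kb[\CCBt]/\cG)\otimes\Theta\bigr)$ is obtained by base change from a $\kb$-linear injection of $\kb$-vector spaces, hence remains injective (everything over a field is flat). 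Faithfulness of the polynomial representation of $\Hbt(\cG)$ when $T\notin\cG$ follows by combining this injectivity with Lemma~\ref{le:Weylfaithful}(c) after inverting $T$ in $\kb[\CCBt]/\cG$, since then $\DCB_T\cong\DCB$. Finally, for the center equality, one takes $z\in Z(\Hbt(\cG))$ and works in $K\otimes\DCB(V^\reg)\rtimes W$ with $K=\Frac(\kb[\CCBt]/\cG)$: commutation with $\kb[V]$ (and hence with $\kb[V^\reg]$ after $\delta$-inversion) combined with the faithfulness of $W$ on $V^\reg$ forces $z\in K\otimes\kb[V^\reg]$, and commutation with the Dunkl operators projected onto the $w=1$ component of $\kb W$ yields $T\partial_y(z)=0$ for every $y\in V$, so $z\in K$; freeness of $\Hbt(\cG)$ over $\kb[\CCBt]/\cG$ then pins $z$ down to $\kb[\CCBt]/\cG$.

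The main obstacle is the filtration setup: one must realize $V\subset\DCB_T$ as the space of operators $T\partial_y$, so that the symbol of $D_y$ is $y$ itself rather than $T\cdot y$. With this identification the symbol computation remains valid after \emph{every} specialization, including those where $T$ vanishes; under the naive alternative the symbol would collapse at $T=0$ and the injectivity statement in part~(e) would fail for primes containing $T$. Once the filtration is pinned down correctly, the entire argument reduces to the trivial injectivity of $\kb[V]\hookrightarrow\kb[V^\reg]$.
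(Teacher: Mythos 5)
Your proof is correct and takes essentially the same route as the paper's: both arguments establish everything at the level of associated graded objects for the filtration placing $V$ in degree $1$, using that the symbol of the Dunkl operator $D_y$ is $y$ inside the Rees algebra $\DCB_T(V^\reg)$ (the paper packages this as the isomorphism $\grad\,\eta$ on the $\delta$-localized source rather than as the injectivity of $\grad\,\Theta\circ\rho$), then lift via Lemma~\ref{le:injsurjfiltered} and base-change the free modules for part~(e). Your observation that one must work with $\DCB_T$ so that the symbol survives specialization at $T=0$ is exactly the point of the paper's setup; your only departure is that you re-derive the centralizer computations for part~(e) by hand where the paper simply invokes $Z(\DCB(V^\reg))=\kb$ and Lemma~\ref{le:Weylfaithful}.
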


\begin{proof}
Let $\eta$ be the composition
$$\eta:\kb[\CCBt]\otimes\kb[V^\reg]\otimes\kb[V^*]\otimes\kb W
\xrightarrow{\text{mult}} \Hbt^\reg 
\xrightarrow{\Theta^\reg}
\DCB_T(V^\reg)\rtimes W.$$
Note that $\mathrm{gr}~\eta$ is an isomorphism, since it is equal to 
the graded map associated to the multiplication isomorphism
$$\kb[\CCBt]\otimes\kb[V^\reg]\otimes\kb[V^*]\otimes\kb W\longiso
\kb[\CCBt]\otimes\DCB_T(V^\reg)\rtimes W.$$
We deduce that $\eta$ is an isomorphism (Lemma \ref{le:injsurjfiltered}).
Since the multiplication map is surjective, it follows that it is
an isomorphism and
$\Theta^\reg$ is an isomorphism as well.
We deduce also that $\rho$ is injective, hence it is an isomorphism.

Since $\kb[T]\otimes\kb[V^\reg]$ is a faithful representation of
$\DCB_T(V^\reg)\rtimes W$ (Lemma \ref{le:Weylfaithful}),
 we deduce that the
polynomial representation induces an injective map
$$\kb[\CCB]\otimes\DCB_T(V^\reg)\rtimes W
\hookrightarrow \kb[\CCBt]\otimes \End_{\kb}(\kb[V^\reg]).$$

There is a commutative diagram
$$\xymatrix{
\kb[\CCBt]\otimes\kb[V]\otimes\kb[V^*]\otimes\kb W \ar@{^(->}[d] 
\ar@{->>}[rr]^-{\text{mult}} && \Hbt \ar[rr]^-{\text{pol. rep.}} \ar[d]_{\text{can}}
&&
\kb[\CCBt]\otimes\End_{\kb}(\kb[V^\reg]) \\
\kb[\CCBt]\otimes\kb[V^\reg]\otimes\kb[V^*]\otimes\kb W
\ar@/_2pc/[rrrr]_-\eta^\sim
\ar@{->>}[rr]_-{\text{mult}} && \Hbt^\reg 
\ar[rr]_-{\Theta^\reg} &&
\kb[\CCB]\otimes\DCB_T(V^\reg)\rtimes W 
\ar[u]_-{\text{pol. rep.}}
}$$
It follows that the multiplication
$$\kb[\CCBt]\otimes\kb[V]\otimes\kb[V^*]\otimes\kb W\to \Hbt$$
is an isomorphism and the polynomial representation of $\Hbt$ is faithful.

\smallskip
Consider now $\CG$ a prime ideal of $\CCBt$ and let $A=\kb[\CCBt]/\CG$.
There is a commutative diagram
$$\xymatrix{
A\otimes\kb[V]\otimes\kb[V^*]\otimes\kb W \ar@{^(->}[d] 
\ar@{->>}[rr]^-{\text{mult}} && \Hbt(\CG)
 \ar[rr]^-{\text{pol. rep.}} \ar[d]_{\text{can}}
&&
A\otimes\End_{\kb}(\kb[V^\reg]) \\
A\otimes\kb[V^\reg]\otimes\kb[V^*]\otimes\kb W
\ar@/_2pc/[rrrr]_-\eta^\sim
\ar[rr]_-{\text{mult}}^-{\sim} && \Hbt^\reg(\CG)
\ar[rr]_-{\Theta^\reg}^-{\sim} &&
A\otimes_{\kb[\CCBt]}(\kb[\CCB]\otimes\DCB_T(V^\reg))\rtimes W 
\ar[u]_-{\text{pol. rep.}}
}$$
We deduce as above that $(\kb[\CCBt]/\CG)\otimes_{\kb[\CCBt]}\Theta$ is
injective.
Assume now $T{\not\in}\CG$. Then the polynomial representation of 
$A\otimes_{\kb[\CCBt]}(\kb[\CCB]\otimes\DCB_T(V^\reg))\rtimes W$ is
faithful, hence the polynomial representation of
$\Hbt(\CG)$ is faithful as well. Since $\Zrm(\DCB(V^{\reg}))=\kb$, we deduce
that $\Zrm(A\otimes_{\kb[\CCBt]}(\kb[\CCB]\otimes\DCB_T(V^\reg))\rtimes W)=A$,
hence $\Zrm(\Hbt(\CG))=\kb[\CCBt]/\CG$.
\end{proof}

\begin{coro}\label{coro:commutateur-t}
Given $f\in\kb[V]$ and $y \in V$, we have 
$$[y,f]=T\partial_y(f)-\sum_{s \in \REF(W)} \e(s) C_s 
\langle y,\a_s \rangle \frac{s(f)-f}{\alpha_s}s.$$
\end{coro}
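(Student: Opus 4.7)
The plan is to deduce this commutation formula from the faithfulness of the polynomial representation, which is part (a) of Theorem \ref{th:PBWDunkl}. Indeed, both sides of the claimed equality lie in $\Hbt$ (note that $s(f)-f$ is divisible by $\alpha_s$ in $\kb[V]$, since $s$ acts trivially on $V^*/\kb\alpha_s$, so $\frac{s(f)-f}{\alpha_s}\in\kb[V]$ and the right-hand side indeed belongs to $\kb[V]\otimes\kb W\subset\Hbt$), so it suffices to verify that the two sides agree as operators on $\kb[\CCBt]\otimes \kb[V^\reg]$.

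First, I would test both sides against an arbitrary $g\in\kb[V^\reg]$. The element $f\in\kb[V]\subset\Hbt$ acts by multiplication and $y\in V\subset\Hbt$ acts via the Dunkl operator $D_y=T\partial_y-\sum_{s\in\Ref(W)}\e(s)C_s\langle y,\alpha_s\rangle\alpha_s^{-1}s$, so
\begin{align*}
[y,f]\cdot g &= D_y(fg)-f\,D_y(g)\\
&=T\bigl(\partial_y(fg)-f\partial_y(g)\bigr)-\sum_{s\in\Ref(W)}\e(s)C_s\langle y,\alpha_s\rangle\alpha_s^{-1}\bigl(s(f)s(g)-fs(g)\bigr).
\end{align*}
The Leibniz rule turns the first bracket into $\partial_y(f)\cdot g$, and the second bracket becomes $(s(f)-f)s(g)$. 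Factoring gives
$$[y,f]\cdot g=T\partial_y(f)\,g-\sum_{s\in\Ref(W)}\e(s)C_s\langle y,\alpha_s\rangle\frac{s(f)-f}{\alpha_s}\,s(g),$$
which is exactly how the right-hand side of the claimed identity acts on $g$.

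Finally, since this equality of operators holds on all of $\kb[\CCBt]\otimes\kb[V^\reg]$, the faithfulness statement in Theorem \ref{th:PBWDunkl}(a) forces the corresponding equality in $\Hbt$. There is no real obstacle here; the only point requiring (a very small) care is the observation that $\alpha_s\mid s(f)-f$ in $\kb[V]$, so that the formula makes sense inside $\Hbt$ and not merely inside the localization $\Hbt^\reg$.
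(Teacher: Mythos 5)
Your proof is correct and follows exactly the route the paper intends: compute the commutator of the Dunkl operator $D_y$ with multiplication by $f$ on $\kb[\CCBt]\otimes\kb[V^\reg]$, and invoke the faithfulness of the polynomial representation from Theorem \ref{th:PBWDunkl} to lift the operator identity to $\Hbt$. The observation that $\alpha_s$ divides $s(f)-f$ in $\kb[V]$, so that the formula lives in $\Hbt$ rather than only in $\Hbt^\reg$, is the right point to flag.
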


\begin{proof}
The result follows from Theorem~\ref{th:PBWDunkl}.
Note that the corollary can also be proven directly by induction on the
degree of $f$.
\end{proof}

\bigskip

\subsection{Hyperplanes and parameters}\label{section:hyperplans-variables}

Let $\KCB$\indexnot{KC}{\KCB} be the $\kb$-vector space of maps
$k:\orbiteb^\circ\to\kb$, $(\orbite,j) \mapsto k_{\orbite,j}$ such that
for all $\orbite\in \AC/W$, we have $\sum_{j=0}^{e_\orbite-1}k_{\orbite,j}=0$.
Let $(K_{\orbite,j})_{(\orbite,j)\in
\orbiteb^\circ}$\indexnot{Komega}{K_{\orbite,j},K_{H,j}} be the canonical
basis of $\kb^{\orbiteb^\circ}$.
We put $K_{H,j}=K_{\orbite,j}$, where $\orbite$ is the
$W$-orbit of $H$.

There is an isomorphism of $\kb$-vector spaces
$$\CCB^*\xrightarrow{\sim}\KCB^*,\
C_{s_{H^i}}\mapsto \sum_{j=0}^{e_H-1}\z_{e_H}^{i(j-1)} K_{H,j}.$$
The surjectivity 
is a consequence of the invertibility of the Vandermonde determinant.
We denote the dual of that isomorphism by
$\kappa:\KCB\xrightarrow{\sim}\CCB$\indexnot{kappa}{\kappa}.
We will often identify $\CCB$ and $\KCB$ via the isomorphism $\kappa$.
Note that the canonical bases of $\CCB$ and $\KCB$ provide them with
$\QM$-forms (and $\ZM$-forms).
Unless all reflections of $W$ have order $2$, the isomorphism
$\kappa$ is not compatible with the $\QM$-forms.

\smallskip
Note that
$$\sum_{w\in W_H}\e(w)C_w w=e_H\sum_{j=0}^{e_H-1}\e_{H,j}K_{H,j}$$
where $\e_{H,i}=e_H^{-1}\sum_{w\in W_H}\e(w)^i w$ and
\equat\label{escs}
\sum_{s \in \REF(W)} \e(s)~C_s = \sum_{H \in \AC} e_H K_{H,0} = 
-\sum_{H \in \AC} \sum_{i=1}^{e_H-1} e_H K_{H,i}.
\endequat

% \bigskip
% 
% \boitegrise{{\bf Notation.} 
% {\it Si $\orbite \in \AC/W$, nous noterons $K_{\orbite,0}=-\sum_{i=1}^{e_\orbite-1} K_{\orbite,i}$. 
% Si $H \in \orbite$, nous poserons bien s\^ur $K_{H,0}=K_{\orbite,0}$.}}{10cm}
% 
% \bigskip
% 
% \noindent L'\'egalit\'e~\ref{escs} se r\'e\'ecrit alors
% \equat\label{escs-bis}
% \sum_{s \in \REF(W)} \e(s)~C_s = - \sum_{H \in \AC} K_{H,0}.
% \endequat

\bigskip

Via $\kappa$, we can view
$\Hbt$ as a $\kb[\KCB]$-algebra and the second
relation in~(\ref{relations-1}) becomes
\equat\label{eq:rel-1-H}
[y,x]=T\langle y,x\rangle +\sum_{H\in\mathcal{A}} \sum_{i=0}^{e_H-1}
e_H(K_{H,i}-K_{H,i+1}) \frac{\langle y,\a_H \rangle \cdot \langle \a_H^\ve,x\rangle}{\langle \a_H^\ve,\a_H\rangle} 
\e_{H,i}
\endequat
for $x\in V^*$ and $y\in V$, where $K_{H,e_H}=K_{H,0}$.
\medskip

Given $y\in V$, we have
$$\Theta(y)=\partial_y-\sum_{H\in\mathcal{A}}\sum_{i=0}^{e_H-1}\frac{\langle y,\alpha_H\rangle}{\alpha_H}
e_H K_{H,i}\e_{H,i}.$$

\bigskip

\noindent{\sc Comment - } 
Our convention for the definition of Cherednik algebras differs from that
of \cite[\S{3.1}]{ggor}: we have added a coefficient $\e(s)-1$ 
in front of the term $C_s$. On the other hand, our convention is the same
as \cite[\S{1.15}]{EG}, 
with $c_s=c_{\a_s}$ (when $W$ is a Coxeter group). Note that the
$\mathrm{k}_{H,i}$'s from \cite{ggor} are related to the
$K_{H,i}$'s above by the relation $\mathrm{k}_{H,i}=K_{H,0}-K_{H,i}$.\finl

% \bigskip
% 
% \begin{exemple}\label{2} 
% Si $e_H=2$ pour tout $H \in \AC$ (c'est-\`a-dire si toutes les r\'eflexions sont d'ordre $2$), 
% alors $C_{s_H}=-K_{H,0}+K_{H,1}=-2K_{H,0}=2K_{H,1}$~: c'est l'analogue de la 
% convention utilis\'ee dans \cite{ggor}.\finl
% \end{exemple}

\begin{rema}
The endomorphism
$K_{\orbite,j}\mapsto K_{\orbite,j}-\frac{1}{e_\orbite}\sum_{j'=0}^{e_\orbite-1}
K_{\orbite,j'}$ of $\kb^{\orbiteb^\circ}$ induces an injection
$\KCB^*\hookrightarrow \kb^{\orbiteb^\circ}$.
The dual map $\sec:\kb^{\orbiteb^\circ}\twoheadrightarrow
\KCB$\indexnot{sec}{\sec} provides
a section to the inclusion of $\KCB$ in $\kb^{\orbiteb^\circ}$. It is defined
over $\QM$.\finl
\end{rema}

\bigskip

\section{Gradings}\label{section:graduation-1}

\medskip

The algebra $\Hbt$ admits a natural $(\NM\times\NM)$-grading, 
thanks to which we can associate, to each morphism of monoids 
$\NM \times \NM \to \BZ$ (or $\NM \times \NM \to \NM$), a $\BZ$-grading 
(or an $\NM$-grading) of $\Hbt$.

\medskip

We endow the extended tensor algebra 
$\kb[\CCBt] \otimes \bigl(\Trm_\kb(V \oplus V^*) \rtimes W\bigr)$ with an
$(\NM \times \NM)$-grading 
by giving the elements of $V$ the bidegree $(1,0)$, 
the elements of $V^*$ the bidegree $(0,1)$, the elements of $\CCBt^*$ 
the bidegree $(1,1)$ and those of $W$ 
the bidegree $(0,0)$. The relations~(\ref{relations-1}) are
homogeneous. 
Hence, $\Hbt$ inherits an $(\NM \times \NM)$-grading whose homogeneous component 
of bidegree $(i,j)$ will be denoted by 
$\Hbt^{\NM \times \NM}[i,j]$. \indexnot{H}{\Hbt^{\NM \times \NM}[i,j],~\Hbt^\ph[i],~\Hbt^\NM[i],\Hbt^\BZ[i]}  
We have
$$\Hbt=\mathop{\bigoplus}_{(i,j) \in \NM \times \NM} 
\Hbt^{\NM \times \NM}[i,j]\quad\text{and}\quad \Hbt^{\NM \times \NM}[0,0]=\kb W.$$
Note that all homogeneous components have finite dimension over $\kb$. 

\medskip

If $\ph : \NM \times \NM \to \BZ$ is a morphism of monoids, 
then $\Hbt$ inherits a $\BZ$-grading whose homogeneous component of degree $i$ 
will be denoted by $\Hbt^\ph[i]$:
$$\Hbt^\ph[i]=\mathop{\bigoplus}_{\ph(a,b) = i} \Hbt^{\NM \times \NM}[a,b].\indexnot{H}{\Hbt_\ph[i]}$$
In this grading, the elements of $V$ have degree $\ph(1,0)$, 
the elements of $V^*$ have degree $\ph(0,1)$, the elements of $\CCBt^*$ 
have degree $\ph(1,1)$ and those of $W$ have degree $0$.

\bigskip

\begin{exemple}[$\BZ$-grading]\label{Z graduation-1}
The morphism of monoids $\NM \times \NM \to \BZ$, $(i,j) \mapsto j-i$ induces a 
$\BZ$-grading on $\Hbt$ for which the elements of $V$ have degree $-1$, 
the elements of $V^*$ have degree $1$ and the elements of $\CCBt^*$ and $W$ 
have degree $0$. We denote by $\Hbt^\BZ[i]$ the homogeneous component of degree $i$. We have
$$\Hbt=\mathop{\bigoplus}_{i \in \BZ} \Hbt^\BZ[i].$$
By specialization at $(t,c) \in \CCBt$, the algebra $\Hbt_{t,c}$ inherits a $\BZ$-grading 
whose homogeneous component of degree $i$ will be denoted by
$\Hbt_{t,c}^\BZ[i]$.

\smallskip
	Assume $W$ is irreducible.
	Let $w_z$\indexnot{wz}{w_z} be a generator of 
	$Z(W)=W\cap \Zrm(\GL(V))$ and let
$z_W$\indexnot{zW}{z_W} be its order. We have $w_z=\zeta^{-1}\Id_V$ for some
root of unity $\zeta$\indexnot{zeta}{\zeta} of order $z_W$ of $\kb$.
When $\kb$ is a subfield of $\CM$, we take 
$w_z=e^{2i\pi/z_W}\Id_V$.
Note that $z_W=\gcd(d_1,\ldots,d_n)$
(a consequence of Theorem \ref{chevalley}).
Given $h\in \Hbt^\BZ[i]$, we have $w_zhw_z^{-1}=\zeta^i h$. So, the 
$(\BZ/z_W\BZ)$-grading on $\Hbt$ deduced from the $\BZ$-grading is given
by an inner automorphism of $\Hbt$.

\smallskip
	Consider again a general $W$ and let 
	$(W=W_1\times\cdots\times W_r,V=V^W\oplus V_1\oplus\cdots\oplus V_r)$
	be the decomposition
	into indecomposable components (cf \S\ref{section:hyperplans}).
	Let $w_i$ be the elements of $W_i\cap Z(\GL(V_i))$ defined as
	above.  We put $w_z=w_1\times\cdots\times w_r\in Z(W)$ and
	$z_W=z_1\cdots z_r=|Z(W)|$.
\finl
\end{exemple}

\bigskip

\begin{exemple}[$\NM$-grading]\label{N graduation-1}
The morphism of monoids $\NM \times \NM \to \NM$, $(i,j) \mapsto i+j$ induces  
an $\NM$-grading on $\Hbt$ for which the elements of $V$ and $V^*$ 
have degree $1$, the elements of $\CCBt^*$ have degree $2$ and the 
elements of $W$ have degree $0$. 
We denote by $\Hbt^\NM[i]$ the homogeneous component of degree $i$. We have
$$\Hbt=\mathop{\bigoplus}_{i \in \NM} \Hbt^\NM[i]\quad\text{and}\quad \Hbt^\NM[0]=\kb W.$$
Note that $\dim_\kb \Hbt^\NM[i] < \infty$ for all $i$. 
This grading is not inherited after specialization at $(t,c) \in \CCBt$, 
except when $(t,c)=(0,0)$: we retrieve 
the usual $\NM$-grading on $\Hbt_{0,0} = \kb[V \times V^*] \rtimes W$ 
(see Example~\ref{exemple zero-1}).\finl
\end{exemple}

\bigskip

\section{Euler element}\label{section:eulertilde}

\medskip

Let $(x_1,\dots,x_n)$ be a $\kb$-basis of $V^*$ and let $(y_1,\dots,y_n)$ 
be its dual basis. We define the {\it generic Euler element} of $\Hbt$
$$\eulertilde=~-nT + \sum_{i=1}^n y_i x_i + \sum_{s \in \REF(W)} C_s s\in\Hbt.$$\indexnot{ea}{\eulertilde}
Note that 
$$\eulertilde=~\sum_{i=1}^n x_i y_i + \sum_{s \in \REF(W)} \e(s) C_s s =~\sum_{i=1}^n x_i y_i +
\sum_{H\in\AC}\sum_{j=0}^{e_H-1}e_H\ K_{H,j}\varepsilon_{H,j}.$$
It is easy to check that $\eulertilde$ does not depend on the choice of the basis 
$(x_1,\dots,x_n)$ of $V^*$. Note that 
\equat\label{eq:eulertilde-1-1}
\eulertilde \in \Hbt^{\NM \times \NM}[1,1].
\endequat
We have
$$\Theta(\eulertilde)=T\sum_{i=1}^n y_i x_i$$
Thanks to Theorem \ref{th:PBWDunkl}, we deduce the following
result~\cite[\S 3.1(4)]{ggor}.

\bigskip

\begin{prop}\label{lem:eulertilde}
Given $x \in V^*$, $y \in V$ and $w \in W$, we have
$$[\eulertilde,x]=Tx,\qquad[\eulertilde,y]=-Ty\qquad\text{and}\qquad [\eulertilde,w]=0.$$
\end{prop}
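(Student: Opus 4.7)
The plan is to verify each of the three commutator relations by direct computation from the defining relations~(\ref{relations-1}), using the two equivalent expressions for $\eulertilde$.

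For the relation $[\eulertilde,w]=0$, I would use the form $\eulertilde=-nT+\sum_i y_ix_i+\sum_s C_s s$. The term $T$ is central. The element $\sum_s C_s s$ lies in the center of $\kb W$ since $c\mapsto c_s$ is a class function on $W$. Finally, $\sum_i y_i\otimes x_i\in V\otimes V^*$ is the canonical $W$-invariant element (the copy of $\Id_V$ under $V\otimes V^*\simeq\End_\kb(V)$), so $w(\sum_i y_ix_i)w^{-1}=\sum_i w(y_i)w(x_i)=\sum_i y_ix_i$ in $\Hbt$, giving commutativity with $w$.

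For $[\eulertilde,x]=Tx$ with $x\in V^*$, I would exploit the fact that $x$ commutes with $\CCBt^*$ and with every $x_i$, so
$$[\eulertilde,x]=\sum_i [y_i,x]\,x_i+\sum_s C_s(s(x)-x)\,s.$$
Plugging in~(\ref{relations-1}) for $[y_i,x]$ and using $\sum_i\langle y_i,x\rangle x_i=x$, the ``$T$-piece'' produces $Tx$. The reflection contribution from $\sum_i[y_i,x]x_i$ is
$$\sum_s (\e(s)-1)C_s\frac{\langle\alpha_s^\vee,x\rangle}{\langle\alpha_s^\vee,\alpha_s\rangle}\Bigl(\sum_i\langle y_i,\alpha_s\rangle s(x_i)\Bigr)s,$$
after pushing $s$ past $x_i$ via $sx_i=s(x_i)s$. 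The inner sum equals $s(\alpha_s)=\e(s)^{-1}\alpha_s$ by~(\ref{action s V*}) applied to $\alpha_s$, so this piece becomes $\sum_s(1-\e(s)^{-1})C_s\frac{\langle\alpha_s^\vee,x\rangle}{\langle\alpha_s^\vee,\alpha_s\rangle}\alpha_s s$. On the other hand, $s(x)-x=-(1-\e(s)^{-1})\frac{\langle\alpha_s^\vee,x\rangle}{\langle\alpha_s^\vee,\alpha_s\rangle}\alpha_s$ by~(\ref{action s V*}) again, so $\sum_s C_s(s(x)-x)s$ is exactly the negative of the previous expression; the two reflection sums cancel and only $Tx$ remains.

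The relation $[\eulertilde,y]=-Ty$ for $y\in V$ is established by the symmetric computation, using the alternative form $\eulertilde=\sum_i x_iy_i+\sum_s\e(s)C_s s$ together with $s(\alpha_s^\vee)=\e(s)\alpha_s^\vee$ (from~(\ref{action s V})) and the dual formula for $[y,x_i]$. The main technical obstacle in all of this is bookkeeping in the reflection sums and tracking the eigenvalue sign: the fact that $s$ acts on $V^*$ (resp.\ on $V$) with non-trivial eigenvalue $\e(s)^{-1}$ (resp.\ $\e(s)$) is what makes the two contributions cancel exactly. Once those identities are in hand, the whole proof is a short rearrangement. Alternatively, one can avoid any computation inside $\Hbt$ by pushing everything into the faithful polynomial representation via $\Theta$, where $\Theta(\eulertilde)=T\sum_i y_ix_i$ acts up to scalars as the Euler vector field on $\kb[V^\reg]$, but the direct approach above is cleaner.
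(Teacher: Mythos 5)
Your proof is correct, but it takes a genuinely different route from the paper's. The paper does not compute inside $\Hbt$ at all: it records that $\Theta(\eulertilde)=T\sum_i y_ix_i$ in $\kb[\CCB]\otimes\DCB_T(V^\reg)\rtimes W$ — i.e.\ $\Theta(\eulertilde)$ is $T$ times the Euler vector field, whose commutation relations with multiplication operators, Dunkl operators and $W$ are immediate — and then invokes the injectivity of $\Theta$ (Theorem~\ref{th:PBWDunkl}(a)) to transport these identities back to $\Hbt$. You instead verify the three commutators directly from the presentation~(\ref{relations-1}), using the dual-basis identity and the eigenvalue computations $s(\alpha_s)=\e(s)^{-1}\alpha_s$, $s(\alpha_s^\vee)=\e(s)\alpha_s^\vee$ to make the two reflection sums cancel; all of these steps check out (including the use of the second expression $\eulertilde=\sum_i x_iy_i+\sum_s\e(s)C_ss$ for the relation with $y$). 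What your approach buys is logical independence from the faithfulness of the polynomial representation, which is a nontrivial input; what the paper's approach buys is brevity, since $\Theta$ is already in place at that point and the same Dunkl-operator computation is reused for Corollary~\ref{coro:commutateur-t}. The alternative you mention at the end is in fact exactly the paper's argument.
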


\bigskip

In~\cite{ggor}, the Euler element plays a fundamental role in the study of the category $\OC$ associated 
with $\Hbt_{1,c}$. We will see in this book the role it plays in the theory of Calogero-Moser 
cells.

\bigskip

\begin{prop}\label{prop:eulertilde}
Given $h \in \Hbt^\BZ[i]$, we have $[\eulertilde,h]=iTh$.
\end{prop}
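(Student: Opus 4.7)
The plan is to view $\ad(\eulertilde) = [\eulertilde, -]$ as an inner derivation of $\Hbt$ and to check that it agrees with multiplication by $iT$ on degree-$i$ elements, by reducing to the case of generators.

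First, I would observe that since $T$ and the $C_s$ lie in $\kb[\CCBt]$, which is central in $\Hbt$, we have $[\eulertilde, f] = 0$ for every $f \in \kb[\CCBt]$; this is consistent with the assertion, since $\kb[\CCBt]$ sits in $\BZ$-degree $0$. Combined with Proposition~\ref{lem:eulertilde}, this shows that $\ad(\eulertilde)(g) = \deg(g) \cdot Tg$ for every homogeneous generator $g$ belonging to $\kb[\CCBt] \cup V^* \cup V \cup W$, where $\deg$ refers to the $\BZ$-grading of Example~\ref{Z graduation-1} (degrees $0$, $1$, $-1$, $0$ respectively).

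Next, I would use that $\ad(\eulertilde)$ is a derivation of $\Hbt$ (a direct consequence of the Jacobi identity). By Theorem~\ref{th:PBWDunkl}(b), every element of $\Hbt$ is a $\kb$-linear combination of products of generators from $\kb[\CCBt] \cup V^* \cup V \cup W$. Working inside the homogeneous component $\Hbt^\BZ[i]$, it suffices to treat a monomial $h = g_1 g_2 \cdots g_k$ with each $g_j$ a generator of $\BZ$-degree $d_j$ and $d_1 + \cdots + d_k = i$. Applying Leibniz iteratively and using the first step,
\begin{equation*}
[\eulertilde, g_1 \cdots g_k] = \sum_{j=1}^k g_1 \cdots g_{j-1} [\eulertilde, g_j] g_{j+1} \cdots g_k = \sum_{j=1}^k d_j T g_1 \cdots g_k = iTh,
\end{equation*}
where we used centrality of $T$ to pull it to the front. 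By $\kb$-linearity, $[\eulertilde, h] = iTh$ for all $h \in \Hbt^\BZ[i]$.

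There is no real obstacle: the statement is essentially a formal consequence of Proposition~\ref{lem:eulertilde}, the centrality of $\kb[\CCBt]$, the PBW theorem, and the Leibniz property of the commutator. The only point requiring minor care is tracking that $T$ appears once in each commutator on a generator and not once per factor in the product, which is handled by the centrality of $T$ allowing it to be pulled outside.
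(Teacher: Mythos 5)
Your proof is correct, and it is exactly the argument the paper leaves implicit (the proposition is stated without proof as an immediate consequence of Proposition~\ref{lem:eulertilde}): check $\ad(\eulertilde)$ on the generators $\kb[\CCBt]\cup V\cup V^*\cup W$, note it multiplies each by $T$ times its $\BZ$-degree, and extend by the Leibniz rule. The only cosmetic remark is that you do not even need the PBW theorem here, since $\Hbt$ is generated as a $\kb$-algebra by those elements by its very definition as a quotient of $\kb[\CCBt]\otimes\bigl(\Trm_\kb(V\oplus V^*)\rtimes W\bigr)$.
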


\bigskip

\section{Spherical algebra}\label{section:spherique-1}

\medskip

\boitegrise{{\bf Notation.} 
{\it Throughout this book, we denote by $e$ \indexnot{ea}{e}  the primitive central idempotent 
of $\kb W$ defined by 
$$e=\frac{1}{|W|}\sum_{w \in W} w.$$
The $\kb[\CCBt]$-algebra $e \Hbt e$ will be called the 
{\bfit generic spherical algebra}.}}{0.75\textwidth}

\medskip

By specializing at $(t,c)$, and since $e\Hbt e$ is a direct summand of the $\kb[\CCBt]$-module 
$\Hbt$, we get 
\equat\label{spherique-1}
e \Hbt_{t,c} e = (\kb[\CCBt]/\CGt_{t,c}) \otimes_{\kb[\CCBt]} e\Hbt e.
\endequat
Since $e$ has degree $0$, the filtration of $\Hbt$ induces a filtration
of the generic spherical algebra given by
$(e\Hbt e)^{\le i}= e (\Hbt^{\le i})e$.
It follows from Theorem \ref{th:PBWDunkl} and Proposition \ref{pr:centerdouble}
that 
\equat\label{spherique graduee-1}
\grad(e\Hbt e) = e \grad(\Hbt) e \simeq \kb[\CCBt] \otimes 
\kb[V \times V^*]^{\Delta W}.
\endequat

\bigskip

\begin{theo}[Etingof-Ginzburg]\label{EG spherique-1}
Let $\CGt$ be a prime ideal of $\kb[\CCBt]$. 

\begin{itemize}
\itemth{a} The algebra $e \Hbt(\CGt) e$ is a finitely generated $\kb$-algebra
without zero divisors.

\itemth{b} $\Hbt(\CGt) e$ is a finitely generated right $e\Hbt(\CGt)e$-module.

\itemth{c} Left multiplication of $\Hbt(\CGt)$ on the 
projective module $\Hbt(\CGt) e$ induces an isomorphism 
$\Hbt(\CGt)\stackrel{\sim}{\longto} \End_{(e\Hbt(\CGt)e)^\opp}(\Hbt(\CGt)e)^\opp$.

\itemth{d} There is an isomorphism of algebras $\Zrm(\Hbt(\CGt))\longiso
\Zrm(e \Hbt(\CGt)e),\ z\mapsto ze$.
\itemth{e} The $(\Hbt(\CGt)\otimes_{\kb[\CCBt]/\CGt}\Hbt(\CGt)^\opp)$-module $\Hbt(\CGt)$
	has finite projective dimension.
	If $\kb[\CCBt]/\CGt$ is regular, then $\Hbt(\CGt)$ has finite global dimension.
%
%\itemth{e} If $\kb[\CCBt]/\CGt$ is Gorenstein (respectively Cohen-Macaulay), then so is the algebra  
%$e\Hbt(\CGt)e$ as well as the right $e\Hbt(\CGt)e$-module $\Hbt(\CGt)e$. 
\end{itemize}
\end{theo}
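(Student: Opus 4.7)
The strategy throughout will be to reduce each statement to its counterpart for the associated graded algebra, which by \eqref{spherique graduee-1} is the tensor product of $(\kb[\CCBt]/\CGt)$ with the invariant ring $\kb[V\times V^*]^{\Delta W}$. Every statement is known (or easy) for that graded algebra, and the filtered versions will follow from standard lifting arguments collected in Appendix \ref{appendice:filtration}, together with Proposition \ref{pr:centerdouble} which provides the double centralizer picture before deformation.

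For (a), the isomorphism \eqref{spherique graduee-1} together with the primality of $\CGt$ gives $\grad(e\Hbt(\CGt)e)\simeq (\kb[\CCBt]/\CGt)\otimes \kb[V\times V^*]^{\Delta W}$. This graded ring is a finitely generated $\kb$-algebra: $\kb[\CCBt]/\CGt$ is finitely generated since $\CCBt$ is affine, and $\kb[V\times V^*]^{\Delta W}$ is finitely generated by the Hilbert--Noether theorem for the finite group $W$ acting on $\kb[V\times V^*]$. Lifting generators to $e\Hbt(\CGt)e$ and using that the filtration is exhaustive with associated graded generated by these lifts, one concludes that $e\Hbt(\CGt)e$ is finitely generated. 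For the absence of zero divisors, the ring $\kb[\CCBt]/\CGt \otimes \kb[V\times V^*]$ is a domain since both factors are domains over the field $\kb$; its invariant subring is therefore a domain, hence so is the filtered algebra $e\Hbt(\CGt)e$ (if $ab=0$ with $a,b$ nonzero, then passing to leading terms in the filtration gives a contradiction in the associated graded).

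For (b), the graded module $\grad(\Hbt(\CGt) e) \simeq (\kb[\CCBt]/\CGt)\otimes (\kb[V\times V^*]\rtimes W)e$ is finitely generated as a right module over $(\kb[\CCBt]/\CGt)\otimes \kb[V\times V^*]^{\Delta W}$: indeed, via the isomorphism $(\kb[V\times V^*]\rtimes W)e\simeq \kb[V\times V^*]$ of right $\kb[V\times V^*]^{\Delta W}$-modules (Proposition \ref{pr:centerdouble} and Lemma \ref{le:idempotentinvariants}), this reduces to the Hilbert--Noether finiteness of $\kb[V\times V^*]$ over $\kb[V\times V^*]^{\Delta W}$. Lifting a finite homogeneous generating set to $\Hbt(\CGt)e$ yields finite generation in the filtered setting by a standard graded Nakayama argument.

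For (c), the map $\Hbt(\CGt)\to \End_{(e\Hbt(\CGt)e)^\opp}(\Hbt(\CGt)e)^\opp$ given by left multiplication is a morphism of filtered algebras, taking $\Hbt(\CGt)^{\le i}$ into the endomorphisms of filtration degree $\le i$. Its associated graded morphism is the corresponding left multiplication map for $\grad\Hbt(\CGt)\simeq (\kb[\CCBt]/\CGt)\otimes \kb[V\oplus V^*]\rtimes W$, which is an isomorphism onto $\End_{\kb[(V\oplus V^*)/W]^\opp}((\kb[V\oplus V^*]\rtimes W)e)^\opp$ by Proposition \ref{pr:centerdouble} (extended by the flat factor $\kb[\CCBt]/\CGt$). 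Since the map on associated graded is an isomorphism, the filtered map is an isomorphism by Lemma \ref{le:injsurjfiltered}. The main subtlety here is to verify that the filtration on the endomorphism ring matches the one inherited from $\Hbt(\CGt)$, which uses that $\Hbt(\CGt)e$ is finitely generated as a right $e\Hbt(\CGt)e$-module from (b).

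For (d), the map $z\mapsto ze$ is well-defined into $Z(e\Hbt(\CGt)e)$ because $e$ is central in $\kb W$ and commutes with central elements of $\Hbt(\CGt)$. Injectivity: if $ze=0$ with $z$ central, then $\Hbt(\CGt)e \cdot z = z\cdot \Hbt(\CGt)e=\Hbt(\CGt)\cdot ze=0$, so $z$ acts by zero on $\Hbt(\CGt)e$ by left multiplication; but (c) says this action is faithful, so $z=0$. Surjectivity: given $z'\in Z(e\Hbt(\CGt)e)$, right multiplication by $z'$ on $\Hbt(\CGt)e$ is a right $e\Hbt(\CGt)e$-linear endomorphism (using centrality of $z'$), hence by (c) is given by left multiplication by a unique element $z\in\Hbt(\CGt)$. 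One checks that $z$ commutes with every $h\in\Hbt(\CGt)$ by applying both $hz$ and $zh$ to $\Hbt(\CGt)e$ and using faithfulness from (c), and $ze=z'$ by evaluating at $e$. I expect parts (a)--(b) to be routine, and the main technical point to be the careful bookkeeping of filtrations in (c), after which (d) is a formal Morita-context argument.
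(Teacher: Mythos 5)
Your parts (a), (b) and (d) follow the paper's proof essentially verbatim: (a) and (b) are Lemmas \ref{le:generatefilt} and \ref{le:zerodivisorsfilt} applied to the associated graded algebra computed in (\ref{spherique graduee-1}), and your argument for (d) is exactly Lemma \ref{lem:ZA-ZB}. The overall strategy for (c) -- reduce to the graded double centralizer theorem and lift by Lemma \ref{le:injsurjfiltered} -- is also the paper's.

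There is, however, a gap in (c), precisely at the point you flag as ``the main technical point.'' You assert that the associated graded of $\alpha:\Hbt(\CGt)\to \End_{(e\Hbt(\CGt)e)^\opp}(\Hbt(\CGt)e)^\opp$ \emph{is} the left-multiplication map into $\End_{\grad(e\Hbt(\CGt)e)^\opp}(\grad\,\Hbt(\CGt)e)^\opp$. But the target of $\grad\,\alpha$ is $\grad$ of the endomorphism ring, and for a filtered module there is in general only a canonical \emph{injection} $\beta:\grad\,\End(M)\hookrightarrow\End(\grad M)$ (Lemma \ref{le:injgrHom}); it need not be surjective, and finite generation of $\Hbt(\CGt)e$ over $e\Hbt(\CGt)e$ -- your proposed fix -- does not force it to be. The correct argument is: the composite $\beta\circ\grad\,\alpha$ is the left-multiplication map on the graded level, which is an isomorphism by the codimension-$\ge 2$ double centralizer property (Proposition \ref{pr:codim2doublecentralizer}, applied after base change to $\kb[\CCBt]/\CGt$); since $\beta$ is injective and the composite is bijective, both $\beta$ and $\grad\,\alpha$ are isomorphisms a posteriori, and then Lemma \ref{le:injsurjfiltered} applies. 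With that repair, your proof coincides with the paper's. (A smaller point: for the graded statement you should cite Proposition \ref{pr:codim2doublecentralizer} rather than Proposition \ref{pr:centerdouble}, which is only its specialization over $\kb$.)
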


\begin{proof}
The assertion (a) follows from Lemmas \ref{le:generatefilt} and
 \ref{le:zerodivisorsfilt}.
The assertion (b) follows from Lemma \ref{le:generatefilt}.

Let $\alpha:\Hbt(\CGt)\to
 \End_{(e\Hbt(\CGt)e)^\opp}(\Hbt(\CGt)e)^\opp$ be the morphism of the theorem.
Lemma \ref{le:injgrHom} provides an injective morphism
$$\beta:\grad\,\End_{(e\Hbt(\CGt)e)^\opp}(\Hbt(\CGt)e)^\opp\hookrightarrow
\End_{\grad\,(e\Hbt(\CGt)e)^\opp}(\grad\,\Hbt(\CGt)e)^\opp.$$
The composition
$$\grad\,\Hbt(\CGt)\xrightarrow{\grad\,\alpha}
\grad\,\End_{(e\Hbt(\CGt)e)^\opp}(\Hbt(\CGt)e)^\opp\xrightarrow{\beta}
\End_{\grad\,(e\Hbt(\CGt)e)^\opp}(\grad\,\Hbt(\CGt)e)^\opp$$
is given by the left multiplication action.
Via the isomorphism $\rho$ (Theorem \ref{th:PBWDunkl}), it corresponds to the 
morphism given by left multiplication
$$\gamma:\kb[\CCBt]\otimes\kb[V \times V^*]\rtimes W\to
\End_{\kb[\CCBt]\otimes(e(\kb[V \times V^*]\rtimes W)e)^\opp}
(\kb[\CCBt]\otimes(\kb[V \times V^*]\rtimes W)e)^\opp.$$
Since the codimension of the complement of $(V\times (V^*\setminus (V^*)^{\reg}))\cup
((V\setminus V^{\reg})\times V^*)$ in $V\times V^*$ is $\ge 2$, it follows from
Proposition \ref{pr:codim2doublecentralizer} that $\gamma$ is an isomorphism.
So, $\grad\,\alpha$ is
an isomorphism, hence $\alpha$ is an isomorphism by
Lemma \ref{le:injsurjfiltered}.

The assertion (d) follows from (c) by Lemma \ref{lem:ZA-ZB}.

	The first part of assertion (e) follows from Lemma \ref{le:gldimfilt}
	and the second part is an immediate consequence.
\end{proof}

\begin{rema}
It can actually be shown \cite[Theorem 1.5]{EG} that
if $\kb[\CCBt]/\CGt$ is Gorenstein (respectively Cohen-Macaulay), then so is
the algebra  
$e\Hbt(\CGt)e$ as well as the right $e\Hbt(\CGt)e$-module $\Hbt(\CGt)e$.\finl 
\end{rema}

\bigskip
% 
% \begin{proof}
% ... {\bf D\'ebat~: faut-il une preuve~?}
% \end{proof}

% Le $\Hb$-module \`a gauche $\Hb e$ est aussi un $e\Hb e$-module \`a droite. 
% Il admet une filtration induite par $\FC$ (que nous noterons encore $\FC$), 
% compatible avec les filtrations de $\Hb$ et $e\Hb e$ et on v\'erifie encore que 
% \equat\label{module gradue}
% \grad_\FC(\Hb e) = \grad_\FC(\Hb) e \simeq \kb[\CCB] \otimes \kb[V \times V^*],
% \endequat
% cet isomorphisme \'etant un isomorphisme de $(\grad_\FC(\Hb),\grad_\FC(e\Hb e))$-bimodules. 
% En copiant mot pour mot la preuve de~\cite[th\'eor\`eme 1.5(ii), (iii) et (iv)]{EG}, 
% on en d\'eduit une version g\'en\'erique~:
% 
% \bigskip
% 
% \begin{theo}[Etingof-Ginzburg]\label{end}
% Le $e\Hb e$-module \`a droite $\Hb e$ est Cohen-Macaulay et l'application naturelle 
% $\Hb \to \End_{e\Hb e}(\Hb e)$ est un isomorphisme de $\kb[\CCB]$-alg\`ebres.
% \end{theo}
% 
% \bigskip

\section{Some automorphisms of $\Hbt$}\label{section:automorphismes-1}

\medskip

Let $\Aut_{\kb\text{-}\alg}(\Hbt)$ denote the group of automorphisms of the $\kb$-algebra $\Hbt$.

\bigskip

\subsection{Bigrading}\label{subsection:bi-graduation-1}
The bigrading on $\Hbt$ can be seen as an action of the algebraic group 
$\kb^\times \times \kb^\times$ on $\Hbt$. Indeed, 
if $(\xi,\xi') \in \kb^\times \times \kb^\times$, we define the automorphism 
$\gradauto_{\xi,\xi'}$ \indexnot{ba}{\gradauto_{\xi,\xi'}} of $\Hbt$ by the following formula:
$$\forall~(i,j) \in \NM \times \NM,~\forall~h \in \Hbt^{\NM \times \NM}[i,j],~
\gradauto_{\xi,\xi'}(h)=\xi^i\xi^{\prime j} h.$$
Then
\equat\label{graduation automorphisme-1}
\bigrad : \kb^\times \times \kb^\times \longto \Aut_{\kb\text{-}\alg}(\Hbt)
\endequat
is a morphism of groups. Concretely,
$$
\begin{cases}
\forall~y \in V,~\gradauto_{\xi,\xi'}(y)=\xi y,\\
\forall~x \in V^*,~\gradauto_{\xi,\xi'}(x)=\xi' x,\\
\forall~C \in \CCBt^*,~\gradauto_{\xi,\xi'}(C)=\xi\xi' C,\\
\forall~w \in W,~\gradauto_{\xi,\xi'}(w)=w.\\
\end{cases}
$$
After specialization, for all $\xi \in \kb^\times$ and all 
$(t,c) \in \CCBt$, the action of $(\xi,1)$ induces an isomorphism of $\kb$-algebras 
\equat\label{xi c}
\Hbt_{t,c} \longiso \Hbt_{\xi t, \xi c}.
\endequat

\bigskip

\subsection{Linear characters}\label{subsection:lineaires-1} 
Let $\g : W \longto \kb^\times$ be a linear character. 
It provides an automorphism 
of $\CCB$ by multiplication: 
given $c \in \CCB$, we define $\g \cdot c$ as the map $\REF(W) \to \kb$, $s \mapsto \g(s)c_s$. 
This induces an automorphism $\g_\CCB : \kb[\CCB] \to \kb[\CCB]$, \indexnot{gz}{\g_\CCB}
$f \mapsto (c \mapsto f(\g^{-1} \cdot c))$ sending
$C_s$ on $\g(s)^{-1} C_s$. It extends to an automorphism $\g_\CCBt$ of $\kb[\CCBt]$ 
by setting $\g_\CCBt(T)=T$. 

On the other hand, $\g$ induces also an automorphism of the group algebra $\kb W$ given by $W\ni w\mapsto \g(w) w$. Hence, $\g$ induces an automorphism 
of the $\kb[\CCBt]$-algebra $\kb[\CCBt] \otimes \bigl(\Trm_\kb(V \oplus V^*) \rtimes W\bigr)$ 
acting trivially on $V$ and $V^*$: it will be denoted by $\g_\Trm$. Of course,
$$(\g\g')_\Trm=\g_\Trm \g_\Trm'.$$
Since the relations~(\ref{relations-1}) are stable by the action of $\g_\Trm$, 
it follows that $\g_\Trm$ induces an automorphism $\g_*$ of 
the $\kb$-algebra $\Hbt$. The map 
\equat\label{action caracteres lineaires-1}
\fonctio{W^\wedge}{\Aut_{\kb\text{-}\alg}(\Hbt)}{\g}{\g_*}
\endequat
is an injective morphism of groups. Given $(t,c) \in \CCBt$ 
and $\g \in W^\wedge$, then $\g_*$ induces an isomorphism of $\kb$-algebras 
\equat\label{gamma c-1}
\Hbt_{t,c} \longiso \Hbt_{t,\g \cdot c}.
\endequat

%\begin{rema}\label{lineaires}
%Notons que les caract\`eres lin\'eaires de $W$ sont classifi\'es...\finl
%\end{rema}

\bigskip

\subsection{Normalizer}\label{subsection:normalisateur-1}
Let $\NC$ \indexnot{N}{\NC}  denote the normalizer of $W$ in $\GL(V)$. Then:
\begin{itemize}
\item[$\bullet$] $\NC$ acts naturally on $V$ and $V^*$;

\item[$\bullet$] $\NC$ acts on $W$ by conjugation;

\item[$\bullet$] The action of $\NC$ on $W$ stabilizes $\REF(W)$ and so $\NC$ acts on 
$\CCB$: if $g \in \NC$ and $c \in \CCB$, then 
$\lexp{g}{c} : \REF(W) \to \kb$, $s \mapsto c_{g^{-1}sg}$.

\item[$\bullet$] The action of $\NC$ on $\CCB$ induces an action of $\NC$ on $\CCB^*$ 
(and so on $\kb[\CCB]$) such that, if $g \in \NC$ and $s \in \REF(W)$, then 
$\lexp{g}{C_s}=C_{gsg^{-1}}$.

\item[$\bullet$] $\NC$ acts trivially on $T$. 
\end{itemize}
Consequently, $\NC$ acts on the $\kb[\CCBt]$-algebra 
$\kb[\CCBt] \otimes \bigl(\Trm_\kb(V \oplus V^*) \rtimes W\bigr)$ and it is easily checked, 
thanks to the relations~(\ref{relations-1}), that 
this action induces an action on $\Hbt$: if $g \in \NC$ and $h \in \Hbt$, 
we denote by $\lexp{g}{h}$ the image of $h$ under the action of $g$.
By specialization at $(t,c) \in \CCBt$, an element $g \in \NC$ induces an isomorphism of $\kb$-algebras 
\equat\label{N c}
\Hbt_{t,c} \longiso \Hbt_{t,\lexp{g}{c}}.
\endequat

\bigskip
% 
% \begin{rema}\label{normalisateur graduation}
% L'action de $\NC$ sur $\Hb$ respecte la $(\NM \times \NM)$-graduation.\finl
% \end{rema}
% 
% 
% \bigskip

\begin{exemple}\label{Z graduation et normalisateur-1}
If $\xi \in \kb^\times$, then we can see $\xi$ as an automorphism of $V$ 
(by scalar multiplication) normalizing (and even centralizing) $W$. We then recover the 
automorphism of $\Hbt$ inducing the $\BZ$-grading (up to a sign): 
if $h \in \Hbt$, then $\lexp{\xi}{h}=\gradauto_{\xi,\xi^{-1}}(h)$.\finl
\end{exemple}

\bigskip

\subsection{Compatibilities}\label{subsection:compilation-1} 
The automorphisms induced by $\kb^\times \times \kb^\times$ and 
those induced by $W^\wedge$ commute. On the other hand, the group  
$\NC$ acts on the group $W^\wedge$ and on the $\kb$-algebra $\Hbt$. 
This induces an action of $W^\wedge \rtimes \NC$ 
on $\Hbt$ preserving the bigrading, that is, commuting
with the action of $\kb^\times \times \kb^\times$. 
Given $\g \in W^\wedge$ and $g \in \NC$, we will denote by $\g \rtimes g$ the corresponding 
element of $W^\wedge \rtimes \NC$. 
We have a morphism of groups 
$$\fonctio{\kb^\times \times \kb^\times \times (W^\wedge \rtimes \NC)}{\Aut_{\kb\text{-}\alg}(\Hbt)}{
(\xi,\xi',\g \rtimes g)}{(h \mapsto \gradauto_{\xi,\xi'} \circ \g_*(\lexp{g}{h})).}$$
Given $\t=(\xi,\xi',\g \rtimes g) \in \kb^\times \times \kb^\times \times (W^\wedge \rtimes \NC)$ 
and $h \in \Hbt$, we set
$$\lexp{\t}{h}=\gradauto_{\xi,\xi'}\bigl(\g_*(\lexp{g}{h})\bigr).$$\indexnot{ha}{{{^\t{h}}}}
The following lemma is immediate.

\bigskip

\begin{lem}\label{lem:automorphismes-1}
Let $\t=(\xi,\xi',\g \rtimes g) \in \kb^\times \times \kb^\times \times (W^\wedge \rtimes \NC)$. 
Then:
\begin{itemize}
\itemth{a} $\t$ stabilizes the subalgebras $\kb[\CCBt]$, $\kb[V]$, $\kb[V^*]$ and $\kb W$.

\itemth{b} $\t$ preserves the bigrading.

\itemth{c} $\lexp{\t}{\eulertilde}=\xi\xi' \eulertilde$.

\itemth{d} $\lexp{\t}{e}=e$ if and only if $\g=1$.
\end{itemize}
\end{lem}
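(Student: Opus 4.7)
The plan is to verify each of the four statements by handling the three constituent automorphisms $\gradauto_{\xi,\xi'}$, $\g_*$, and the action of $g\in\NC$ one at a time, using the formula $\lexp{\t}{h}=\gradauto_{\xi,\xi'}\bigl(\g_*(\lexp{g}{h})\bigr)$ from~\S\ref{subsection:compilation-1}. Since the three generating automorphisms are built out of operations that each act through a PBW tensor factor, everything reduces to inspecting what each does to the generators in $V$, $V^*$, $\CCBt^*$, and $W$.

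For (a), I would simply read off the definitions: $\gradauto_{\xi,\xi'}$ rescales the generators of each PBW tensor factor without mixing them, $\g_*$ sends $w\in W$ to $\g(w)w$ and $C_s$ to $\g(s)^{-1}C_s$ while fixing $V$ and $V^*$ pointwise, and $g\in\NC$ acts on $V$ and $V^*$ via its natural linear action, on $W$ by conjugation, and on $\kb[\CCBt]$ through its action on $\CCB$ together with the identity on $T$. Each of the four subalgebras is thus globally preserved. For (b), the same inspection shows that each of the three generating automorphisms preserves the bidegree of every element of $V$, $V^*$, $\CCBt^*$ and $W$, hence the composite preserves the entire $\NM\times\NM$-grading.

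For (c), the key input is $\eulertilde\in\Hbt^{\NM\times\NM}[1,1]$ from~(\ref{eq:eulertilde-1-1}), which together with~(b) yields $\gradauto_{\xi,\xi'}(\eulertilde)=\xi\xi'\,\eulertilde$. It then suffices to show that $\g_*$ and $g\in\NC$ fix $\eulertilde$. The term $-nT$ is manifestly fixed by both. The term $\sum_i y_i x_i$ is the image in $\Hbt$ of the canonical element of $V\otimes V^*$ corresponding to $\id_V\in\End_\kb(V)$; it is therefore basis-independent, fixed by every linear automorphism of $V$, and in particular fixed by $g$ and by $\g_*$ (which acts trivially on $V$ and $V^*$). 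Finally, $\g_*\bigl(\sum_s C_s s\bigr)=\sum_s \g(s)^{-1}C_s\cdot\g(s)s=\sum_s C_s s$, and reindexing $s\mapsto gsg^{-1}$ in the sum shows $\lexp{g}{(\sum_s C_s s)}=\sum_s C_s s$.

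For (d), note that $\gradauto_{\xi,\xi'}(e)=e$ since $\gradauto_{\xi,\xi'}$ is trivial on $\kb W$, and $\lexp{g}{e}=e$ since conjugation permutes $W$. Hence $\lexp{\t}{e}=\g_*(e)=|W|^{-1}\sum_{w\in W}\g(w)w$, and comparing coefficients in the $\kb$-basis $W$ of $\kb W$ shows that this equals $e$ if and only if $\g(w)=1$ for every $w\in W$, i.e., $\g=1$. No step presents a real obstacle; the only point requiring a word of justification beyond definitional bookkeeping is the basis-independence (and hence $\NC$-invariance) of $\sum_i y_i x_i$ used in~(c).
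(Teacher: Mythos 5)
Your proof is correct, and it is exactly the routine generator-by-generator verification the paper has in mind when it declares the lemma "immediate" without giving a proof. Nothing further is needed.
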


% Notons que les automorphismes induits par 
% $\kb^\times \times \kb^\times \times (W^\wedge$ 
% stabilisent la sous-alg\`ebre $\kb[\CCB]$. 
% 
% D'autre part, le groupe $\NC$ agit sur le groupe $W^\wedge$ et respecte 
% la $(\NM \times \NM)$-graduation de $\Hb$ (voir la remarque~\ref{normalisateur graduation})~: 
% on peut donc \'etendre le morphisme ci-dessus en un morphisme de groupes 
% $$\kb^\times \times \kb^\times \times 
% \bigl(W^\wedge \rtimes \NC\bigr) \longto \Aut_{\kb-\alg}(\Hb).$$
% 
% \bigskip

\section{Special features of Coxeter groups}\label{section:coxeter-htilde}

\medskip

\cbstart

\boitegrise{{\bf Assumption.} 
{\it In this section \ref{section:coxeter-htilde}, we assume that 
$W$ is a Coxeter group, and we use the notation of \S\ref{se:Coxetergroups}.
}}{0.75\textwidth}

\bigskip

By Proposition~\ref{prop:coxeter}, there exists a non-degenerate symmetric bilinear $W$-invariant form 
$\betb : V \times V \to \kb$. \indexnot{bz}{\betb}  
We denote by $\s : V \stackrel{\sim}{\longto} V^*$ \indexnot{sz}{\s}  the isomorphism induced by 
$\betb$: given $y$, $y' \in V$, we have
$$\langle y,\s(y') \rangle = \betb(y,y').$$
The $W$-invariance of $\betb$ implies that $\s$ is an isomorphism of $\kb W$-modules 
and the symmetry of $\betb$ implies that 
\equat\label{eq:sigma-sigma-inverse}
\langle y , x \rangle = \langle \s^{-1}(x),\s(y) \rangle
\endequat
for all $x \in V^*$ and $y \in V$. We denote by 
$\s_\Trm : \Trm_\kb(V \oplus V^*) \to \Trm_\kb(V \oplus V^*)$ the automorphism 
of algebras induced by the automorphism of the vector space 
$V \oplus V^*$ defined by $(y , x) \mapsto (-\s^{-1}(x), \s(y))$. It is $W$-invariant, hence
extends to an automorphism of $\Trm_\kb(V \oplus V^*) \rtimes W$, 
with trivial action on $W$. By extension of scalars, we get another automorphism, 
still denoted by $\s_\Trm$, of $\kb[\CCBt] \otimes (\Trm_\kb(V \oplus V^*) \rtimes W)$. 
It is easy to check that $\s_\Trm$ induces an automorphism $\s_\Hbt$ \indexnot{sz}{\s_\Hbt}  of $\Hbt$. 
We have proven the following proposition.

\bigskip

\begin{prop}\label{prop:auto-coxeter-1}
There exists a unique automorphism $\s_\Hbt$ of $\Hbt$ such that 
$$
\begin{cases}
\s_\Hbt(y)=\s(y) & \text{if $y \in V$,}\\
\s_\Hbt(x)=-\s^{-1}(x) & \text{if $x \in V^*$,}\\
\s_\Hbt(w)=w & \text{if $w \in W$,}\\
\s_\Hbt(C)=C & \text{if $C \in \CCBt^*$.}\\
\end{cases}
$$
\end{prop}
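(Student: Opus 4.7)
The plan is to construct $\s_\Hbt$ by lifting the linear involution-like map $\tau: V\oplus V^*\to V\oplus V^*,\ (y,x)\mapsto (-\s^{-1}(x),\s(y))$ to the extended tensor algebra, and then to verify that the defining relations~(\ref{relations-1}) are preserved, so that the lift descends to $\Hbt$. Uniqueness is immediate since $V\cup V^*\cup W\cup \CCBt^*$ generates $\Hbt$ as a $\kb$-algebra.

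First, I would extend $\tau$ to an automorphism $\s_\Trm$ of $\Trm_\kb(V\oplus V^*)$ by functoriality. Since $\betb$ is $W$-invariant, $\s$ is an isomorphism of $\kb W$-modules, hence $\tau$ (and so $\s_\Trm$) is $W$-equivariant. It therefore extends to an automorphism of $\Trm_\kb(V\oplus V^*)\rtimes W$ acting trivially on $W$, and then $\kb[\CCBt]$-linearly to the whole extended tensor algebra. It remains to check that it preserves the two-sided ideal generated by~(\ref{relations-1}).

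The relations $[x,x']=0$ (resp.\ $[y,y']=0$) are sent to $[\s^{-1}(x),\s^{-1}(x')]=0$ (resp.\ $[\s(y),\s(y')]=0$), hence lie in the ideal. The only nontrivial check concerns
\[
[y,x] - T\langle y,x\rangle - \sum_{s\in\Ref(W)}(\e(s)-1)C_s\,
\frac{\langle y,\a_s\rangle\cdot\langle \a_s^\ve,x\rangle}{\langle \a_s^\ve,\a_s\rangle}\,s.
\]
Its image under $\s_\Trm$ is $[\s^{-1}(x),\s(y)]$ minus the same correcting terms (note that $\kb[\CCBt]$ and $W$ are fixed). Using the defining relation for the new pair $(\s^{-1}(x),\s(y))\in V\times V^*$ together with the symmetry identity~(\ref{eq:sigma-sigma-inverse}), I reduce the verification to the equality, for each $s\in\Ref(W)$,
\[
\langle \s^{-1}(x),\a_s\rangle\cdot\langle \a_s^\ve,\s(y)\rangle
=\langle y,\a_s\rangle\cdot\langle \a_s^\ve,x\rangle.
\]

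This is the step I expect to be the main (conceptual) obstacle, and it is exactly where the Coxeter-group hypothesis is used. By Lemma~\ref{lem:coxeter-2}, every reflection $s\in\Ref(W)$ is an involution with $\e(s)=-1$, so the $(-1)$-eigenspaces of $s$ on $V$ and $V^*$ are $\kb\a_s^\ve$ and $\kb\a_s$ respectively. The $W$-equivariance of $\s$ forces $\s(\a_s^\ve)=\l_s\a_s$ and $\s^{-1}(\a_s)=\l_s^{-1}\a_s^\ve$ for some $\l_s\in\kb^\times$. Applying~(\ref{eq:sigma-sigma-inverse}) twice yields
\[
\langle \s^{-1}(x),\a_s\rangle=\l_s^{-1}\langle \a_s^\ve,x\rangle,\qquad
\langle \a_s^\ve,\s(y)\rangle=\l_s\langle y,\a_s\rangle,
\]
whose product gives the desired equality, with the factors $\l_s$ cancelling. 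This shows $\s_\Trm$ preserves the defining ideal, hence descends to the automorphism $\s_\Hbt$ of $\Hbt$, which is evidently unique with the prescribed values on generators.
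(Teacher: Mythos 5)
Your proof is correct and follows the same route as the paper: lift $(y,x)\mapsto(-\s^{-1}(x),\s(y))$ to the tensor algebra, use $W$-equivariance of $\s$ to extend over $\rtimes W$ and $\kb[\CCBt]$, and check the ideal of relations is preserved. The only step the paper leaves as "easy to check" is the preservation of the commutation relation, and your eigenspace argument (with the $\l_s$ cancellation via~(\ref{eq:sigma-sigma-inverse})) carries it out correctly.
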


\bigskip

\begin{prop}\label{prop-bis:auto-coxeter-1}
The following hold:
\begin{itemize}
\itemth{a} $\s_\Hbt$ stabilizes the subalgebras $\kb[\CCBt]$ and $\kb W$ and exchanges 
the subalgebras $\kb[V]$ and $\kb[V^*]$.

\itemth{b} If $h \in \Hbt^{\NM \times \NM}[i,j]$, 
then $\s_\Hbt(h) \in \Hbt^{\NM \times \NM}[j,i]$.

\itemth{c} If $h \in \Hbt^\NM[i]$ (respectively $h \in \Hbt^\BZ[i]$), 
then $\s_\Hbt(h) \in \Hbt^\NM[i]$ (respectively $\s_\Hbt(h) \in \Hbt^\BZ[-i]$).

\itemth{d} $\s_\Hbt$ commutes with the action of $W^\wedge$ on $\Hbt$.

\itemth{e} If $(t,c) \in \CCBt$, then $\s_\Hbt$ induces an automorphism 
of $\Hbt_{t,c}$, still denoted by $\s_\Hbt$ (or $\s_{\Hbt_{t,c}}$ if necessary).

\itemth{f} $\s_\Hbt(\eulertilde)=-nT - \eulertilde$.
\end{itemize}
\end{prop}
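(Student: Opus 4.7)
Parts (a)--(e) are straightforward consequences of the explicit formulas for $\sigma_\Hb$ on generators, while the main obstacle is (f), which requires a short computation using the bilinear form $\betb$ and the Coxeter hypothesis. For (a), the subalgebras $\kb[\CCBt]$, $\kb W$, $\kb[V]$, $\kb[V^*]$ are generated by $\CCBt^*$, $W$, $V^*$, $V$ respectively, and the defining formulas in Proposition~\ref{prop:auto-coxeter-1} show that $\sigma_\Hb$ sends each of these generating sets into the claimed target subalgebra. For (b), the generators $V$, $V^*$, $W$, $\CCBt^*$ are homogeneous of bidegrees $(1,0)$, $(0,1)$, $(0,0)$, $(1,1)$, and $\sigma_\Hb$ acts on them by swapping the first two while preserving the others, so it is bihomogeneous with respect to the transposition $(i,j)\mapsto (j,i)$. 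Statement (c) follows because the $\NM$-grading is $(i,j)\mapsto i+j$, invariant under the swap, while the $\BZ$-grading is $(i,j)\mapsto j-i$, negated by it.

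For (d), $\g_*$ fixes $V$, $V^*$ and $\CCBt^*$ pointwise and multiplies each $w\in W$ by $\g(w)$, while $\sigma_\Hb$ fixes $W$ pointwise, so the two automorphisms commute on a generating set and hence on all of $\Hbt$. For (e), since $\sigma_\Hb$ fixes $\CCBt^*$ pointwise it acts trivially on $\kb[\CCBt]$, and in particular stabilizes the ideal $\CGt_{t,c}$; it therefore descends to an automorphism of the quotient $\Hbt_{t,c}$.

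For (f), I apply $\sigma_\Hb$ to the definition $\eulertilde=-nT+\sum_i y_i x_i+\sum_s C_s s$ and obtain
$$\sigma_\Hb(\eulertilde)=-nT-\sum_i \sigma(y_i)\,\sigma^{-1}(x_i)+\sum_s C_s s.$$
The key observation, based on identity~(\ref{eq:sigma-sigma-inverse}), is that $(\sigma(y_i))_i$ is a basis of $V^*$ whose dual basis in $V$ is exactly $(\sigma^{-1}(x_i))_i$: indeed $\langle \sigma^{-1}(x_i),\sigma(y_j)\rangle=\langle y_j,x_i\rangle=\delta_{ij}$. Applying the basis-independent alternative form $\eulertilde=\sum_i x_i y_i+\sum_s \e(s)C_s s$ to this new basis yields $\sum_i \sigma(y_i)\,\sigma^{-1}(x_i)=\eulertilde-\sum_s \e(s)C_s s$. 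By Lemma~\ref{lem:coxeter-2}, every reflection $s$ of a Coxeter group satisfies $\e(s)=-1$, so this simplifies to $\eulertilde+\sum_s C_s s$. Substituting back gives $\sigma_\Hb(\eulertilde)=-nT-\eulertilde$, as required.
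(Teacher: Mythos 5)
Your proof is correct. The paper states this proposition without proof (it is presented as an immediate consequence of the construction of $\s_\Hbt$), so there is no argument to compare against; your write-up simply supplies the omitted verification, and the dual-basis computation you give for (f) — observing via (\ref{eq:sigma-sigma-inverse}) that $(\s^{-1}(x_i))_i$ is the dual basis of $(\s(y_i))_i$, invoking the basis-independence of $\eulertilde=\sum_i x_iy_i+\sum_s\e(s)C_s s$, and using $\e(s)=-1$ from Lemma~\ref{lem:coxeter-2} — is exactly the natural way to get the identity.

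One small inaccuracy in (d): $\g_*$ does \emph{not} fix $\CCBt^*$ pointwise; by construction it acts on $\kb[\CCBt]$ through $\g_{\CCBt}$, sending $C_s$ to $\g(s)^{-1}C_s$ (this is forced by the requirement that the relations~(\ref{relations-1}) be preserved, and is what makes (\ref{gamma c-1}) change the parameter from $c$ to $\g\cdot c$). This does not affect your conclusion: $\s_\Hbt$ is the identity on $\kb[\CCBt]$ and $\g_*$ acts diagonally on the $C_s$, so the two automorphisms still commute on $\CCBt^*$, and your check on the remaining generators goes through as written.
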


\begin{rema}[Action of $\Gb\Lb_2(\kb)$]\label{rem:sl2-1}
Let $\r = \begin{pmatrix} a & b \\ c & d \end{pmatrix} \in \Gb\Lb_2(\kb)$. 
The $\kb$-linear map 
$$\fonctio{V \oplus V^*}{V \oplus V^*}{y \oplus x}{ay+b\s^{-1}(x) \oplus c \s(y) + dx}$$
is an automorphism of the $\kb W$-module $V \oplus V^*$. It extends to an automorphism 
of the $\kb$-algebra $\Trm_\kb(V \oplus V^*) \rtimes W$ by letting it
	act trivially on $W$ and to an 
automorphism $\r_\Trm$ of $\kb[\CCBt] \otimes (\Trm_\kb(V \oplus V^*) \rtimes W)$ 
by $\r_\Trm(C)=\det(\r) C$ for $C \in \CCBt^*$. 

It is easy to check that $\r_\Trm$ induces an automorphism $\r_\Hbt$ \indexnot{rz}{\r_\Hbt}  
of $\Hbt$. Moreover, $(\r \r')_\Hbt=\r_\Hbt \circ \r_\Hbt'$ for all $\r$, $\r' \in \Gb\Lb_2(\kb)$. So, we obtain
an action of $\Gb\Lb_2(\kb)$ on $\Hbt$. This action preserves the $\NM$-grading 
$\Hbt^\NM$. 

Finally, note that, for $\r=\begin{pmatrix} 0 & -1 \\ 1 & 0 \end{pmatrix}$, we have 
$\r_\Hbt=\s_\Hbt$ and, if $\r=\begin{pmatrix} \xi & 0 \\ 0 & \xi' \end{pmatrix}$, then 
$\r_{\Hbt}=\gradauto_{\xi,\xi'}$. Hence we have extended the action of 
$\kb^\times \times \kb^\times \times (W^\wedge \rtimes \NC)$ to an action of 
$\Gb\Lb_2(\kb) \times (W^\wedge \rtimes \NC)$.\finl
\end{rema}

\cbend

\chapter{Cherednik algebras at $t=0$}\label{chapter:cherednik-0}

\boitegrise{{\bf Notation.} {\it We put $\Hb=\Hbt/T\Hbt$. \indexnot{H}{\Hb}  The
$\kb$-algebra 
$\Hb$ is called the {\bfit Cherednik algebra at $t=0$}.}}{0.75\textwidth}

\bigskip

\section{Generalities}

\medskip

We gather here those properties that are immediate consequences of results
discussed in Chapter~\ref{chapter:cherednik-1}. We also introduce some
notation.

Let us rewrite the defining relations~(\ref{relations-1}).
The algebra $\Hb$ is the $\kb[\CCB]$-algebra quotient 
of $\kb[\CCB] \otimes \bigl(\Trm_\kb(V \oplus V^*) \rtimes W\bigr)$ 
by the ideal generated by the following relations:
\equat\label{relations-0}\begin{cases}
[x,x']=[y,y']=0, \\
\\
[y,x] = \DS{\sum_{s \in \REF(W)} (\e(s)-1)\hskip1mm C_s 
\hskip1mm\frac{\langle y,\a_s \rangle \cdot \langle \a_s^\ve,x\rangle}{\langle \a_s^\ve,\a_s\rangle}
\hskip1mm s,} 
\end{cases}\endequat
for $x$, $x' \in V^*$ and $y$, $y' \in V$. 

\smallskip
The PBW-decomposition (Theorem~\ref{PBW-1}) takes the following form.

\bigskip

\begin{theo}[Etingof-Ginzburg]\label{PBW-0}
The multiplication map gives an isomorphism of
$\kb[\CCB]$-modules
$$\kb[\CCB] \otimes \kb[V] \otimes \kb W 
\otimes \kb[V^*] \longiso \Hb.$$
\end{theo}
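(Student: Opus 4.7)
The plan is to deduce this specialized PBW decomposition directly from the generic one, which has already been established as Theorem~\ref{PBW-1} (proved via Dunkl operators in Theorem~\ref{th:PBWDunkl}(b)). The strategy is pure base change: we apply the right-exact functor $\kb[\CCB]\otimes_{\kb[\CCBt]}-$ to the generic PBW isomorphism, using that $T$ is a regular element of $\kb[\CCBt]$ that cuts out the hyperplane defining $\Hb$.

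More precisely, first I would recall that by Theorem~\ref{PBW-1}, the multiplication map
$$\mu:\kb[\CCBt]\otimes\kb[V]\otimes\kb W\otimes\kb[V^*]\longto\Hbt$$
is an isomorphism of $\kb[\CCBt]$-modules. Since $\kb[\CCBt]=\kb[\CCB][T]$, we have $\kb[\CCB]=\kb[\CCBt]/(T)$ as a $\kb[\CCBt]$-module via the quotient map. Applying $\kb[\CCB]\otimes_{\kb[\CCBt]}-$ to $\mu$ yields an isomorphism of $\kb[\CCB]$-modules
$$\kb[\CCB]\otimes_{\kb[\CCBt]}\bigl(\kb[\CCBt]\otimes\kb[V]\otimes\kb W\otimes\kb[V^*]\bigr)\longiso\kb[\CCB]\otimes_{\kb[\CCBt]}\Hbt.$$
On the left-hand side, the canonical identification gives $\kb[\CCB]\otimes\kb[V]\otimes\kb W\otimes\kb[V^*]$. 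On the right-hand side, since $\Hbt$ is a $\kb[\CCBt]$-module, we have $\kb[\CCB]\otimes_{\kb[\CCBt]}\Hbt=\Hbt/T\Hbt=\Hb$ by definition of $\Hb$.

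Next, I would check that the induced map is the multiplication map for $\Hb$: indeed $\mu$ is $\kb[\CCBt]$-linear (with $\kb[\CCBt]$ acting on the leftmost tensor factor), and its value on a pure tensor $c\otimes f\otimes w\otimes g$ is the product $cfwg$ computed in $\Hbt$. After reduction modulo $T$, the same formula defines the multiplication in $\Hb$, using the defining relations~(\ref{relations-0}) (which are simply the relations~(\ref{relations-1}) with $T$ set to $0$).

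There is no real obstacle: the argument is a straightforward specialization along the central element $T$. The only point worth noting is that one genuinely needs the generic PBW to already hold over the ring $\kb[\CCBt]$ (not just fiberwise), so that the base change $-\otimes_{\kb[\CCBt]}\kb[\CCB]$ transports the isomorphism. This is exactly what Theorem~\ref{th:PBWDunkl}(b) provides.
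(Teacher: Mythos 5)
Your proof is correct and is essentially the paper's own argument: the paper presents Theorem~\ref{PBW-0} as the immediate specialization of the generic PBW decomposition (Theorem~\ref{PBW-1}) at $T=0$, i.e.\ precisely the base change $-\otimes_{\kb[\CCBt]}\kb[\CCBt]/(T)$ you carry out. Nothing further is needed.
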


\bigskip

Given $c \in \CCB$, we denote by $\CG_c$ \indexnot{C}{\CG_c} the maximal ideal of $\kb[\CCB]$ defined by  
$\CG_c=\{f \in \kb[\CCB]~|~f(c)=0\}$: it is the ideal generated by 
$(C_s-c_s)_{s \in \refw}$. We set
$$\Hb_c = (\kb[\CCB]/\CG_c) \otimes_{\kb[\CCB]} \Hb = \Hb/\CG_c \Hb = \Hbt_{0,c}.\indexnot{H}{\Hb_c}$$
The $\kb$-algebra $\Hb_c$ is the quotient of the $\kb$-algebra 
$\Trm_\kb(V \oplus V^*) \rtimes W$ by the ideal generated by the
following relations: 
\equat\label{relations specialisees-0}\begin{cases}
[x,x']=[y,y']=0, \\
\\
[y,x] = \DS{\sum_{s \in \REF(W)} (\e(s)-1)\hskip1mm c_s 
\hskip1mm\frac{\langle y,\a_s \rangle \cdot \langle \a_s^\ve,x\rangle}{\langle \a_s^\ve,\a_s\rangle}
\hskip1mm s,} 
\end{cases}\endequat
for $x$, $x' \in V^*$ and $y$, $y' \in V$. 

Since $T$ is bi-homogeneous, the $\kb$-algebra $\Hb$ 
inherits all the gradings, filtrations of the algebra $\Hbt$: 
we will use the obvious notation $\Hb^{\NM \times \NM}[i,j]$, 
$\Hb^\NM[i]$, $\Hb^\BZ[i]$ and $\Hb^{\le i}$ for the constructions obtained
by quotient from $\Hbt$.
We will denote by $\euler$ \indexnot{ea}{\euler}  the image of $\eulertilde$ in $\Hb$. 
This is the {\it generic Euler element} of $\Hb$. 
Note that 
\equat\label{eq:euler-1-1}
\euler \in \Hb^{\NM \times \NM}[1,1]\subset\Hb^{\ZM}[0]
\endequat
The ideal generated by $T$ is also stable by the action of 
$\kb^\times \times \kb^\times \times (W^\wedge \rtimes \NC)$, so  
$\Hb$ inherits an action of 
$\kb^\times \times \kb^\times \times (W^\wedge \rtimes \NC)$.
The action of
$\t \in \kb^\times \times \kb^\times \times (W^\wedge \rtimes \NC)$ on
$h \in \Hb$ is still denoted by
$\lexp{\t}{h}$.
The following lemma is immediate from Lemma~\ref{lem:automorphismes-1}.

\bigskip

\begin{lem}\label{lem:automorphismes-0}
Let $\t=(\xi,\xi',\g \rtimes g) \in \kb^\times \times \kb^\times \times (W^\wedge \rtimes \NC)$. 
Then
\begin{itemize}
\itemth{a} $\t$ stabilizes the subalgebras $\kb[\CCB]$, $\kb[V]$, $\kb[V^*]$ and $\kb W$.

\itemth{b} $\t$ stabilizes the bigrading.

\itemth{c} $\lexp{\t}{\euler}=\xi\xi'~ \euler$.
\end{itemize}
\end{lem}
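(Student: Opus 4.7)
The plan is to obtain the statement as a routine transport of Lemma~\ref{lem:automorphismes-1} through the quotient map $\Hbt\twoheadrightarrow\Hb=\Hbt/T\Hbt$. The key preliminary observation is that, since $T\in\CCBt^*$ has bidegree $(1,1)$ and is fixed by the $W^\wedge\rtimes\NC$-action on $\kb[\CCBt]$ (recall that $\g_\CCBt(T)=T$ and $\NC$ acts trivially on $T$), the two-sided ideal $T\Hbt$ is stabilized by every $\t=(\xi,\xi',\g\rtimes g)$. Indeed, Lemma~\ref{lem:automorphismes-1}(b) gives $\lexp{\t}{T}=\xi\xi'\,T\in\kb^\times T$, so $\lexp{\t}{(T\Hbt)}=T\,\lexp{\t}{\Hbt}=T\Hbt$. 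Consequently the automorphism $h\mapsto \lexp{\t}{h}$ of $\Hbt$ descends to an automorphism of $\Hb$, which we still denote by $\t$.

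For part (a), each of the four subalgebras $\kb[\CCB]$, $\kb[V]$, $\kb[V^*]$ and $\kb W$ of $\Hb$ is the image under the quotient of the corresponding subalgebra of $\Hbt$ (using the PBW isomorphism of Theorem~\ref{PBW-0} to identify them), and Lemma~\ref{lem:automorphismes-1}(a) asserts that these are stabilized in $\Hbt$; applying the quotient gives the claim in $\Hb$. For part (b), the bigrading on $\Hb$ is inherited from that on $\Hbt$ (since the ideal generated by $T$ is bihomogeneous), and Lemma~\ref{lem:automorphismes-1}(b) says the bigrading on $\Hbt$ is preserved, so the same holds after passing to the quotient. For part (c), Lemma~\ref{lem:automorphismes-1}(c) states $\lexp{\t}{\eulertilde}=\xi\xi'\,\eulertilde$; applying the projection $\Hbt\to\Hb$ and recalling that $\euler$ is the image of $\eulertilde$ gives $\lexp{\t}{\euler}=\xi\xi'\,\euler$.

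There is no real obstacle here: the only small point to verify is that $T\Hbt$ is stable under the action, which as noted above follows from $\lexp{\t}{T}\in\kb^\times T$. Once this is checked, the three assertions follow formally from their counterparts in Lemma~\ref{lem:automorphismes-1} by naturality of the quotient.
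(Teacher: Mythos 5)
Your proof is correct and follows exactly the route the paper intends: the text preceding the lemma merely observes that the ideal $\langle T\rangle$ is stable under the action (which you justify via $\lexp{\t}{T}=\xi\xi'\,T$) and then declares the lemma "immediate from Lemma~\ref{lem:automorphismes-1}", which is precisely the descent argument you have spelled out.
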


Theorem~\ref{EG spherique-1} implies
the following result on the spherical algebra.

\bigskip

\begin{theo}[Etingof-Ginzburg]\label{EG spherique-0}
Let $\CG$ be a prime ideal of $\kb[\CCB]$ and let $\Hb(\CG)=\Hb/\CG\Hb$. \indexnot{H}{\Hb(\CG)}  Then
\begin{itemize}
\itemth{a} The algebra $e \Hb(\CG) e$ is a finitely generated $\kb$-algebra 
without zero divisors.
%
%\itemth{b} If $\kb[\CCB]/\CG$ is Gorenstein (respectively Cohen-Macaulay), then 
%the finitely generated right $e\Hb(\CG) e$-module $\Hb(\CG) e$ is also.
%
\itemth{b} Left multiplication of $\Hb(\CG)$ on the projective module 
$\Hb(\CG)e$ induces an 
isomorphism $\Hb(\CG) \stackrel{\sim}{\longto} 
\End_{(e\Hb(\CG)e)^\opp}(\Hb(\CG) e)^\opp$.
\end{itemize}
\end{theo}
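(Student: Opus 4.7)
The plan is to deduce this theorem as an immediate specialization of Theorem~\ref{EG spherique-1}, using the observation that the case $t=0$ corresponds to taking a prime ideal of $\kb[\CCBt]$ that contains $T$.

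Given a prime ideal $\CG$ of $\kb[\CCB]$, I would first set $\CGt = T\kb[\CCBt] + \CG\kb[\CCBt]$, viewed as an ideal of $\kb[\CCBt]$. Since the quotient map $\kb[\CCBt] \twoheadrightarrow \kb[\CCBt]/T\kb[\CCBt]$ identifies the target canonically with $\kb[\CCB]$, we get a natural identification $\kb[\CCBt]/\CGt \simeq \kb[\CCB]/\CG$. The right-hand side is a domain because $\CG$ is prime, so $\CGt$ is indeed a prime ideal of $\kb[\CCBt]$.

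Next, I would verify the identification $\Hbt(\CGt) = \Hb(\CG)$. By definition, $\Hbt(\CGt) = \Hbt/\CGt\Hbt = \Hbt/(T\Hbt + \CG\Hbt)$, and since $\Hbt/T\Hbt = \Hb$ (this is how $\Hb$ is defined), we obtain $\Hbt(\CGt) = \Hb/\CG\Hb = \Hb(\CG)$. This identification is compatible with the idempotent $e$, which lives in the $\kb W$ factor and is untouched by the successive quotients; consequently $e\Hbt(\CGt)e = e\Hb(\CG)e$ and $\Hbt(\CGt)e = \Hb(\CG)e$ as bimodules.

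With these identifications in hand, part (a) of the theorem follows immediately from part (a) of Theorem~\ref{EG spherique-1} applied to $\CGt$, and part (b) follows from part (c) of the same theorem. There is no serious obstacle: all the substantive work (the graded-filtered lifting via Lemmas~\ref{le:generatefilt} and \ref{le:zerodivisorsfilt}, and the double centralizer property in codimension $\ge 2$ via Proposition~\ref{pr:codim2doublecentralizer}) was already carried out in the proof of Theorem~\ref{EG spherique-1}, and the present statement is simply the instance where one further specializes $T$ to $0$.
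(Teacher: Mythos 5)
Your proposal is correct and follows exactly the route the paper intends: the text introduces Theorem~\ref{EG spherique-0} with the single remark that it is implied by Theorem~\ref{EG spherique-1}, and the implicit specialization is precisely the one you spell out, namely passing to the prime ideal $\CGt=T\kb[\CCBt]+\CG\kb[\CCBt]$ and using the identification $\Hbt(\CGt)=\Hb(\CG)$, with part (a) coming from (a) and part (b) from (c) of the generic statement.
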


\bigskip

Let $\Hb^\reg=\kb[\CCB]\otimes_{\kb[\CCBt]}\Hbt^{\reg}$.
 \indexnot{H}{\Hb^\reg}
Theorem \ref{th:PBWDunkl} becomes the following result.

\begin{theo}[Etingof-Ginzburg]\label{thm:eg}
There exists a unique isomorphism of $\kb[\CCB]$-algebras 
$$\Th : \Hb^\reg \longiso \kb[\CCB] \otimes (\kb[V^\reg \times V^*] \rtimes W)\indexnot{ty}{\Th}$$
such that 
$$
\begin{cases}
\Th(w) = w & \text{for $w \in W$},\\
\Th(y) = y - \DS{\sum_{s \in \REF(W)} 
\e(s)C_s \hskip1mm\frac{\langle y,\a_s\rangle}{\a_s}\hskip1mm s} & \text{for $y \in V$},\\
\Th(x) = x & \text{for $x \in V^*$.}\\
\end{cases}
$$

Given $\CG$ a prime ideal of $\kb[\CCB]$, the restriction of
$(\kb[\CCB]/\CG)\otimes_{\kb[\CCB]}\Theta$ to
$(\kb[\CCB]/\CG)\otimes_{\kb[\CCB]}\Hb$ is injective.
\end{theo}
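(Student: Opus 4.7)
The plan is to first establish existence of $\Theta$ by checking that the formulas respect the defining relations~(\ref{relations-0}) of $\Hb$, then extend to $\Hb^{\reg}$ via the invertibility of $\Theta(\delta)=\delta$ in $\kb[V^{\reg}]$. Uniqueness will then follow since the $\kb[\CCB]$-algebra $\Hb^{\reg}$ is generated by $V$, $V^*$, $W$ and $\delta^{-1}$, on which the formulas are prescribed.

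The relations $[\Theta(x),\Theta(x')]=0$ and $\Theta(w)\Theta(v)\Theta(w)^{-1}=\Theta(w(v))$ for $v\in V\cup V^*$ are immediate. The commutator $[\Theta(y),\Theta(x)]$ equals the right-hand side of~(\ref{relations-0}) via the identity
$$[\alpha_s^{-1}s,x]=-(1-\e(s)^{-1})\frac{\langle\alpha_s^\vee,x\rangle}{\langle\alpha_s^\vee,\alpha_s\rangle}s$$
in $\kb[V^{\reg}]\rtimes W$. The main obstacle is the verification $[\Theta(y),\Theta(y')]=0$. Writing $\Theta(y)=y+\Delta_y$ with $\Delta_y=-\sum_s\e(s)C_s\langle y,\alpha_s\rangle\alpha_s^{-1}s$, and using that $y,y'\in V\subset\kb[V^*]$ commute in the target, one expands
$$[\Theta(y),\Theta(y')]=[y,\Delta_{y'}]-[y',\Delta_y]+[\Delta_y,\Delta_{y'}].$$
A direct computation using $y-s(y)=(1-\e(s))\langle y,\alpha_s\rangle\langle\alpha_s^\vee,\alpha_s\rangle^{-1}\alpha_s^\vee$ shows that $[y,\Delta_{y'}]$ is symmetric in $(y,y')$, so it cancels $[y',\Delta_y]$. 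For the remaining term, I would invoke Theorem~\ref{th:PBWDunkl}: since $[y,y']=0$ in $\Hbt$, the Dunkl operators satisfy $[D_y,D_{y'}]=0$ in $\kb[\CCB]\otimes\DCB_T(V^{\reg})\rtimes W$; extracting the constant term in $T$ of this identity yields precisely $[\Delta_y,\Delta_{y'}]=0$.

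To show $\Theta$ is bijective, I would use a filtration argument. Equip $\Hb^{\reg}$ with the PBW filtration by total degree in the $V$-factor, and the target with the filtration by degree in $V\subset\kb[V^*]$. Since $\Theta(y)-y$ lies in the degree-$0$ part, $\Theta$ preserves filtrations and the associated graded map is the identity on each of the tensor factors $\kb[V^{\reg}]$, $\kb W$ and $\kb[V^*]$; thus $\mathrm{gr}\,\Theta$ is an isomorphism, whence $\Theta$ itself is an isomorphism by Lemma~\ref{le:injsurjfiltered}.

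For the final assertion, observe that both $\Hb^{\reg}$ and $\kb[\CCB]\otimes(\kb[V^{\reg}\times V^*]\rtimes W)$ are free $\kb[\CCB]$-modules, so $(\kb[\CCB]/\CG)\otimes_{\kb[\CCB]}\Theta$ is itself an isomorphism. It therefore suffices to show that the canonical morphism $\Hb(\CG)\to\Hb^{\reg}(\CG)=\Hb(\CG)[\delta^{-1}]$ is injective, i.e., that $\delta$ is a non-zero-divisor in $\Hb(\CG)$. By the PBW decomposition~\ref{PBW-0}, the filtration on $\Hb(\CG)$ has associated graded $(\kb[\CCB]/\CG)\otimes\kb[V\oplus V^*]\rtimes W$, in which $\delta\in\kb[V]$ is evidently a non-zero-divisor (since $\kb[\CCB]/\CG$ is a domain and the crossed product is free as a right $\kb[V\oplus V^*]$-module). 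Lemma~\ref{le:zerodivisorsfilt} then transfers this conclusion to $\Hb(\CG)$, yielding the desired injectivity.
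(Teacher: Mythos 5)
Your argument is correct. The paper itself gives no separate proof: it obtains Theorem~\ref{thm:eg} as the specialization at $T=0$ of Theorem~\ref{th:PBWDunkl} --- part~(d) yields the isomorphism because both sides are free over $\kb[T]$ and $\DCB_T(V^\reg)\otimes_{\kb[T]}\kb[T]/(T)\simeq\kb[V^\reg\times V^*]$, while part~(e) applied to the prime ideal $\langle T\rangle+\CG\kb[\CCBt]$ is exactly the final injectivity statement. You instead re-run the proof of that theorem directly at $T=0$: your verification of the relations (with $[\Delta_y,\Delta_{y'}]=0$ extracted from $[D_y,D_{y'}]=0$ by setting $T=0$) and your filtered-isomorphism argument are the $T=0$ shadows of Proposition~\ref{pr:polynomialrep} and of the map $\eta$ in the proof of Theorem~\ref{th:PBWDunkl}, so the underlying mathematics is the same. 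For the last assertion the paper reads off injectivity from its commutative diagram (the bottom row is an isomorphism and the left vertical arrow, induced by $\kb[V]\hookrightarrow\kb[V^\reg]$, remains injective after base change to the domain $\kb[\CCB]/\CG$); your route through the centrality of $\delta\in\kb[V]^W\subset Z(\Hb)$ and a non-zero-divisor argument is an equivalent repackaging. One small imprecision: Lemma~\ref{le:zerodivisorsfilt} does not literally apply, since $\grad\,\Hb(\CG)$ certainly has zero divisors; what you actually need is that $\xi(a)\xi(b)\neq 0$ forces $ab\neq 0$, which is the content of that lemma's \emph{proof} rather than of its statement.
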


% \begin{proof}
% Dans~\cite[th\'eor\`eme 1.3]{EG}, ce th\'eor\`eme n'est pas \'enonc\'e dans cette 
% version ``g\'en\'erique''. N\'eanmoins, il s'en d\'eduit ais\'ement de la fa\c{c}on suivante. 
% Soit $(\b_i)_{i \in I}$ une $\kb$-base de 
% $\kb[V] \otimes \kb W \otimes \kb[V^*]$ et soit $f$ un \'el\'ement du noyau de 
% l'application lin\'eaire de l'\'enonc\'e. Il existe alors une unique 
% famille \`a support fini $(f_i)_{i \in I}$ d'\'el\'ements de $\kb[\CCBt]$ tels que 
% $f=\sum_{i \in I} f_i \otimes \b_i$. Mais, d'apr\`es~\cite[th\'eor\`eme 1.3]{EG}, 
% la sp\'ecialisation en tout $(t,c) \in \CCBt$ du th\'eor\`eme est vraie. Par 
% cons\'equent, $f_i(t,c)=0$ pour tout $i \in I$ et tout $(t,c) \in \CCBt$, ce qui 
% montre que $f_i=0$ (car $\kb$ est infini) et donc que $f=0$.
% \end{proof}

\section{Center}

\medskip

\boitegrise{{\bf Notation.} {\it Throughout this book, 
we denote by $Z=\Zrm(\Hb)$ \indexnot{Z}{Z,~Z_c}  the center of $\Hb$.
Given $c \in \CCB$, we set 
$Z_c=Z/\CG_c Z$. Let $P$ \indexnot{P}{P}  denote the $\kb[\CCB]$-algebra 
obtained by tensor product of algebras $P=\kb[\CCB] \otimes \kb[V]^W \otimes \kb[V^*]^W$. We identify $P$ with a $\kb[\CCB]$-submodule of $\Hb$
via Theorem \ref{PBW-0}.}}{0.75\textwidth}

\bigskip

\subsection{A subalgebra of $Z$} 
The first fundamental result about the center $Z$ of $\Hb$ is the next 
one~\cite[Proposition~4.15]{EG} (we follow
\cite[Proposition~3.6]{gordon} for the proof).

\bigskip

\begin{lem}\label{P stable}
$P$ is a subalgebra of $Z$ stable under the action of 
$\kb^\times \times \kb^\times \times (W^\wedge \rtimes \NC)$. In particular, 
it is $(\NM \times \NM)$-graded.
\end{lem}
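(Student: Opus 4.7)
The plan is to verify each assertion in turn, using the PBW decomposition (Theorem~\ref{PBW-0}), the commutator formula (Corollary~\ref{coro:commutateur-t} at $T=0$), and the automorphism properties already recorded in Lemma~\ref{lem:automorphismes-0}.

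First, I would observe that $P$ is a subalgebra of $\Hb$: since $\kb[V]$ and $\kb[V^*]$ are commutative subalgebras of $\Hb$, so are their invariant subrings $\kb[V]^W$ and $\kb[V^*]^W$; moreover, $\kb[\CCB]$ is central in $\Hb$ by construction, so the three factors commute with each other inside $\Hb$ and their product is a well-defined subalgebra.

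Next, the heart of the argument is to show $P \subset Z$. It suffices to check that the generators $\kb[V]^W$ and $\kb[V^*]^W$ commute with a set of generators of $\Hb$, namely $\kb[\CCB]$, $V$, $V^*$ and $W$. For $f \in \kb[V]^W$: commutation with $\kb[\CCB]$ and $\kb[V]$ is obvious, commutation with $W$ follows from $W$-invariance of $f$, and the only non-trivial case is commutation with $y \in V$. Setting $T=0$ in Corollary~\ref{coro:commutateur-t} gives
\[
[y,f] = -\sum_{s \in \Ref(W)} \e(s)\, C_s\, \langle y,\a_s\rangle\, \frac{s(f)-f}{\a_s}\, s,
\]
which vanishes because $s(f)=f$ for every $s \in \Ref(W)$ and $f \in \kb[V]^W$. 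The argument for $\kb[V^*]^W$ is symmetric: one applies the analogous commutator formula expressing $[x,g]$ for $x \in V^*$ and $g \in \kb[V^*]$ (obtained the same way via the Dunkl representation on $\kb[\CCB]\otimes\kb[(V^*)^\reg]$, or equivalently by transporting Corollary~\ref{coro:commutateur-t} through the symmetry swapping $V$ and $V^*$), and uses $s(g)=g$ for $g \in \kb[V^*]^W$.

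For stability under the action of $\kb^\times\times\kb^\times\times(W^\wedge\rtimes\NC)$: by Lemma~\ref{lem:automorphismes-0}(a) every such automorphism $\t$ preserves the subalgebras $\kb[\CCB]$, $\kb[V]$, $\kb[V^*]$ and $\kb W$. It remains to see that the action on $\kb[V]$ and $\kb[V^*]$ preserves the invariants. The bigrading $\gradauto_{\xi,\xi'}$ acts on $\kb[V]$ and $\kb[V^*]$ by $\kb$-linear scalings commuting with $W$, hence preserves invariants. A linear character $\g \in W^\wedge$ acts trivially on $V$ and $V^*$, so acts trivially on $\kb[V]$ and $\kb[V^*]$, and in particular preserves $\kb[V]^W$ and $\kb[V^*]^W$. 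Finally, $g \in \NC$ acts on $V$ and $V^*$ by $\kb$-linear maps that intertwine the $W$-action up to conjugation, and so $g \cdot \kb[V]^W = \kb[V]^{gWg^{-1}} = \kb[V]^W$, and similarly for $\kb[V^*]^W$.

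Finally, the $(\NM\times\NM)$-grading is automatic: in the bigrading of $\Hb$, the elements of $V^*\subset\kb[V]$ sit in bidegree $(0,1)$, those of $V\subset\kb[V^*]$ in bidegree $(1,0)$, and those of $\CCB^*$ in bidegree $(1,1)$; hence each of the three tensor factors is homogeneous, and their product $P$ is the sum of its homogeneous components. The only step that requires genuine work is the vanishing of $[y,f]$, which is where the hypothesis $T=0$ is essential---otherwise the term $T\partial_y(f)$ would survive and $\kb[V]^W$ would only lie in $Z$ modulo $T$. The rest is bookkeeping using the PBW decomposition and the explicit description of the automorphisms. \finl
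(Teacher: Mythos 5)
Your proof is correct and follows the same route as the paper: the paper's own argument simply invokes Corollary~\ref{coro:commutateur-t} to see that $\kb[V]^W$ is central at $T=0$, notes the dual statement for $\kb[V^*]^W$, and declares the stability clear. Your version just spells out the bookkeeping (commutation of the factors, the action of $\kb^\times\times\kb^\times\times(W^\wedge\rtimes\NC)$ on invariants, and the bihomogeneity) in more detail.
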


\begin{proof}
The subalgebra $\kb[V]^W$ is central in $\Hb$
by Corollary \ref{coro:commutateur-t}.
Dually, $\kb[V^*]^W$ is central as well. The stability property is clear.
\end{proof}

\bigskip

\begin{coro}\label{coro:P-libre}
The PBW-decomposition is an isomorphism of $P$-modules. In particular, we have isomorphisms of 
$P$-modules:
\begin{itemize}
\itemth{a} $\Hb \simeq \kb[\CCB] \otimes \kb[V] \otimes \kb W \otimes \kb[V^*]$.

\itemth{b} $\Hb e \simeq \kb[\CCB] \otimes \kb[V] \otimes \kb[V^*]$.

\itemth{c} $e\Hb e \simeq \kb[\CCB] \otimes \kb[V \times V^*]^{\Delta W}$.
\end{itemize}
Hence, $\Hb$ (respectively $\Hb e$, respectively $e\Hb e$) is a free $P$-module 
of rank $|W|^3$ (respectively $|W|^2$, respectively $|W|$). 
\end{coro}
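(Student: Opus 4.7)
The plan is to upgrade the $\kb[\CCB]$-module isomorphism of Theorem~\ref{PBW-0} to a $P$-module isomorphism by exploiting the centrality of $\kb[V]^W$ and $\kb[V^*]^W$ in $\Hb$ (Lemma~\ref{P stable}), and then to read off the ranks using Theorem~\ref{chevalley}. The key observation is that although $\kb[V^*]^W$ sits in the rightmost tensor factor of the PBW decomposition, its centrality allows it to commute freely past $\kb W$, so that its action by left multiplication in $\Hb$ coincides under the PBW isomorphism with multiplication on the $\kb[V^*]$ tensor factor; the analogous statement holds for $\kb[V]^W$ acting on the $\kb[V]$ factor.

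For (a), this makes the PBW isomorphism $\kb[\CCB]\otimes\kb[V]\otimes\kb W\otimes\kb[V^*]\longiso\Hb$ into a $P$-module isomorphism. Since $\kb[V]$ is free of rank $|W|$ over $\kb[V]^W$, and similarly for $V^*$, by Theorem~\ref{chevalley}(b) and (c), the freeness of $\Hb$ over $P$ of rank $|W|^3$ follows. For (b), the defining relation $wq=w(q)w$ together with a dimension count yields an alternative PBW-type isomorphism $\kb[\CCB]\otimes\kb[V]\otimes\kb[V^*]\otimes\kb W\longiso\Hb$. Since $we=e$ for all $w\in W$, right-multiplication by $e$ then identifies $\Hb e$ with $\kb[\CCB]\otimes\kb[V]\otimes\kb[V^*]\otimes\kb e$, which is a $P$-module isomorphism of rank $|W|^2$ by the same centrality argument.

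For (c), we continue with the identification $\Hb e\simeq \kb[\CCB]\otimes\kb[V\times V^*]$ from (b). Using $wp=w(p)w$ and $wq=w(q)w$, the left $W$-action on $\Hb e$ corresponds to the diagonal $W$-action on $\kb[V\times V^*]$, so
\[
e\Hb e=(\Hb e)^W\simeq \kb[\CCB]\otimes\kb[V\times V^*]^{\Delta W}
\]
as $P$-modules. Applying Theorem~\ref{chevalley} to the complex reflection group $W\times W$ acting on $V\oplus V^*$, the algebra $\kb[V\times V^*]$ is a free $\kb[V\times V^*]^{W\times W}$-module, and more precisely isomorphic as a $\kb[V\times V^*]^{W\times W}[W\times W]$-module to $\kb[V\times V^*]^{W\times W}\otimes_\kb(\kb[V]^\cow\otimes\kb[V^*]^\cow)$. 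Since $\kb[V]^\cow\otimes\kb[V^*]^\cow$ carries the regular representation of $W\times W$, taking $\Delta W$-invariants produces a free $\kb[V]^W\otimes\kb[V^*]^W$-module of rank $\dim_\kb(\Reg_{W\times W})^{\Delta W}=|W|$, hence $e\Hb e$ is free of rank $|W|$ over $P$.

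The delicate step is this last rank computation, which cannot be obtained by a direct application of Shephard-Todd-Chevalley to $\Delta W$ (which does not act by reflections on $V\oplus V^*$), but is derived instead by restricting the regular representation of the larger reflection group $W\times W$ to its diagonal.
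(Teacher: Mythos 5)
Your proof is correct and takes essentially the same route the paper intends: the corollary is stated there without proof as an immediate consequence of Theorem~\ref{PBW-0}, the centrality of $P$ (Lemma~\ref{P stable}) and Theorem~\ref{chevalley}, and your write-up supplies exactly those details (the explicit $P$-bases you produce for $\Hb$ and $\Hb e$ are the ones recorded in Remark~\ref{rem:P-libre}). Your computation of the rank of $e\Hb e$ by identifying it with $(\Hb e)^{\Delta W}$ and restricting the regular representation of the reflection group $W\times W$ on $V\oplus V^*$ to $\Delta W$ is likewise the argument the paper uses implicitly (e.g.\ in the proof of Proposition~\ref{pr:minimal-Euler}).
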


\bigskip

The main theme of this book is the
study the algebra $\Hb$ viewed as a $P$-algebra: 
given $\pG$ a prime ideal of $P$, 
we will be interested in the finite dimensional $k_P(\pG)$-algebra  
$k_P(\pG) \otimes_P \Hb$ (splitting, 
simple modules, blocks, standard modules, decomposition matrix...). Here,
$k_P(\pG)$ is the fraction field of $P/\pG$, cf. Appendix
 \ref{chapter:galois-rappels}.

\bigskip

\begin{rema}\label{rem:P-libre}
Let $(b_i)_{1 \le i \le |W|}$ be a $\kb[V]^W$-basis of $\kb[V]$ and let 
$(b_i^*)_{1 \le i \le |W|}$ be a $\kb[V^*]^W$-basis of $\kb[V^*]$. Corollary~\ref{coro:P-libre} 
shows that $(b_i w b_j^*)_{\substack{1 \le i,j \le |W| \\ w \in W}}$ is a $P$-basis of 
$\Hb$ and that $(b_i b_j^* e)_{1 \le i,j \le |W|}$ is a $P$-basis of $\Hb e$.\finl
\end{rema}

\bigskip

Set 
$$P_\bullet = \kb[V]^W \otimes \kb[V^*]^W.\indexnot{P}{P_\bullet}$$
If $c \in \CCB$, then 
$$P_\bullet \simeq \kb[\CCB]/\CG_c \otimes_{\kb[\CCB]} P = P/\CG_c P .$$
We deduce from Corollary~\ref{coro:P-libre} the next result.

\bigskip

\begin{coro}\label{coro:P-libre-c}
We have isomorphisms of $P_\bullet$-modules:
\begin{itemize}
\itemth{a} $\Hb_c \simeq \kb[V] \otimes \kb W \otimes \kb[V^*]$.

\itemth{b} $\Hb_c e \simeq \kb[V] \otimes \kb[V^*]$.

\itemth{c} $e\Hb_c e \simeq \kb[V \times V^*]^{\Delta W}$.
\end{itemize}
In particular, $\Hb_c$ (respectively $\Hb_c e$, respectively $e\Hb_c e$) is a free 
$P_\bullet$-module of rank $|W|^3$ (respectively $|W|^2$, respectively $|W|$). 
\end{coro}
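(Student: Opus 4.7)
My plan is to obtain the corollary directly by specialization from Corollary~\ref{coro:P-libre}. The key observation is that $\CG_c$ is a maximal ideal of $\kb[\CCB]$ with $\kb[\CCB]/\CG_c\simeq\kb$, and there is a canonical identification of $\kb[\CCB]$-algebras
\[
P/\CG_c P=\bigl(\kb[\CCB]/\CG_c\bigr)\otimes_{\kb[\CCB]}\bigl(\kb[\CCB]\otimes\kb[V]^W\otimes\kb[V^*]^W\bigr)\longiso\kb[V]^W\otimes\kb[V^*]^W=P_\bullet,
\]
while by the very definition of $\Hb_c$ we have $\Hb_c=(\kb[\CCB]/\CG_c)\otimes_{\kb[\CCB]}\Hb$. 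Since $e\in\kb W\subset\Hb$ is a central idempotent in $\Hb$ (as $P\subset Z$ and $e$ commutes with the image of $\kb W$ in the obvious way), specialization is compatible with the idempotent decomposition, so that $\Hb_c e=(\kb[\CCB]/\CG_c)\otimes_{\kb[\CCB]}\Hb e$ and $e\Hb_c e=(\kb[\CCB]/\CG_c)\otimes_{\kb[\CCB]}e\Hb e$.

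Then I would simply apply the functor $(\kb[\CCB]/\CG_c)\otimes_{\kb[\CCB]}(-)$ to the three isomorphisms of $P$-modules provided by Corollary~\ref{coro:P-libre}. The left-hand sides become, respectively, $\Hb_c$, $\Hb_ce$ and $e\Hb_ce$ by the previous paragraph; the right-hand sides become $\kb[V]\otimes\kb W\otimes\kb[V^*]$, $\kb[V]\otimes\kb[V^*]$ and $\kb[V\times V^*]^{\Delta W}$, since all tensor factors other than the leading $\kb[\CCB]$ are $\kb$-modules unaffected by the base change. This yields the required $P_\bullet$-module isomorphisms (a), (b), (c).

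Finally, the freeness and rank assertions are preserved by base change: if $M$ is free of rank $r$ over $P$, then $(\kb[\CCB]/\CG_c)\otimes_{\kb[\CCB]}M=M\otimes_P P_\bullet$ is free of rank $r$ over $P_\bullet$. Applying this to $M=\Hb,\Hb e,e\Hb e$ (free over $P$ of ranks $|W|^3,|W|^2,|W|$ by Corollary~\ref{coro:P-libre}) gives the claimed ranks. As a sanity check, one can verify directly from Theorem~\ref{chevalley}(b),(c) that $\kb[V]\otimes\kb W\otimes\kb[V^*]$ is free of rank $|W|\cdot|W|\cdot|W|=|W|^3$ over $\kb[V]^W\otimes\kb[V^*]^W=P_\bullet$, consistent with (a). There is no real obstacle here: the argument is pure base change of a free-module decomposition, the only point worth spelling out being the identifications in the first paragraph.
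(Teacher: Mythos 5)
Your proof is correct and follows essentially the same route as the paper: Corollary~\ref{coro:P-libre-c} is obtained there precisely by applying $(\kb[\CCB]/\CG_c)\otimes_{\kb[\CCB]}(-)$ to Corollary~\ref{coro:P-libre}, after the identification $P_\bullet\simeq P/\CG_cP$. One small correction: $e$ is \emph{not} a central idempotent of $\Hb$ (it is central only in $\kb W$; were it central in $\Hb$, the Satake theorem would be vacuous), so the compatibility of specialization with $\Hb e$ and $e\Hb e$ should instead be justified by noting that these are direct summands of $\Hb$ as $\kb[\CCB]$-modules — the same argument the paper uses for $e\Hbt_{t,c}e$ in~(\ref{spherique-1}).
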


\bigskip

\subsection{Satake isomorphism} 
It follows from Proposition~\ref{lem:eulertilde} that
\equat\label{eq:euler-central}
\euler \in Z.
\endequat
Given $c \in \CCB$, we denote by $\euler_c$ \indexnot{ea}{\euler_c}  the image of $\euler$ in $\Hb_c$.

\bigskip

The next structural theorem is a cornerstone of the representation theory of $\Hb$. 

\bigskip

\begin{theo}[Etingof-Ginzburg]\label{theo:satake}
The morphism of algebras $Z \longto e\Hb e$, $z \mapsto ze$ is an isomorphism of 
$(\NM \times \NM)$-graded algebras. In particular, $e\Hb e$ is commutative.
\end{theo}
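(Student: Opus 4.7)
The map $\phi:Z \to e\Hb e$, $z \mapsto ze$, is immediately a morphism of $\kb$-algebras (since $e$ is a central idempotent commuting with every $z \in Z$), and it preserves the $(\NM \times \NM)$-grading because the bigrading descends from $\Hbt$ and $e \in \kb W \subset \Hb^{\NM \times \NM}[0,0]$. My plan is to establish the commutativity of $e\Hb e$ first via the Dunkl embedding, and then deduce both the injectivity and surjectivity of $\phi$ from the double endomorphism theorem (Theorem~\ref{EG spherique-0}(b)).

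For the commutativity of $e\Hb e$, I would apply Theorem~\ref{thm:eg}: its last assertion, specialized to the prime ideal $\CG = 0$, says that the restriction of $\Theta$ to $\Hb$ is injective, so composition with $\Hb\hookrightarrow \Hb^{\reg}$ yields an injective morphism of $\kb[\CCB]$-algebras
\[
\Hb \hookrightarrow \kb[\CCB] \otimes (\kb[V^{\reg} \times V^*] \rtimes W).
\]
Multiplying by $e$ on both sides gives an injection $e\Hb e \hookrightarrow \kb[\CCB] \otimes e(\kb[V^{\reg} \times V^*] \rtimes W)e$. The standard identity $e(A \rtimes W)e = A^W \cdot e$ for $A$ a commutative $W$-algebra and $e$ the averaging idempotent shows that the target is commutative, and hence so is $e\Hb e$.

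For the injectivity of $\phi$: if $ze = 0$ with $z \in Z$, then for every $h \in \Hb$ we have $z \cdot (he) = h(ze) = 0$, so $z$ annihilates $\Hb e$; since the left multiplication map $\Hb \to \End_{(e\Hb e)^{\opp}}(\Hb e)^{\opp}$ is an isomorphism by Theorem~\ref{EG spherique-0}(b), we conclude $z=0$. For surjectivity, given $u \in e\Hb e$ I would consider right multiplication $\rho_u: \Hb e \to \Hb e$, $\xi \mapsto \xi u$. It is well-defined on $\Hb e$ since $u=eu$, and it commutes with the right action of $e\Hb e$ precisely because $e\Hb e$ is commutative; thus $\rho_u$ lies in $\End_{(e\Hb e)^{\opp}}(\Hb e)^{\opp}$. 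Theorem~\ref{EG spherique-0}(b) then furnishes a unique $z \in \Hb$ with $z\xi = \xi u$ for all $\xi \in \Hb e$; taking $\xi = e$ gives $ze = u$. For any $h, g \in \Hb$ and $\xi=ge$, we compute $(zh)\xi = z(hge) = hge\cdot u = h(z\xi) = (hz)\xi$, so by the faithfulness of the left multiplication action, $zh = hz$ and $z \in Z$, which proves surjectivity.

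The only real obstacle is the commutativity of $e\Hb e$. I see no obvious route to this directly from the defining relations of $\Hb$; the Dunkl embedding of Theorem~\ref{thm:eg} is genuinely essential, as it transports the non-commutativity of $\Hb$ into a cross-product where the $e$-truncation collapses onto the commutative ring of $W$-invariants.
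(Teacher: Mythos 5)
Your proof is correct and follows essentially the same route as the paper: commutativity of $e\Hb e$ via the injectivity of the Dunkl embedding $\Theta$ (Theorem~\ref{thm:eg}), combined with the double endomorphism theorem. The only difference is that you inline the general argument that the double centralizer property forces $Z(\Hb)\longiso Z(e\Hb e)=e\Hb e$, whereas the paper cites this as Theorem~\ref{EG spherique-1}(d) (itself resting on Lemma~\ref{lem:ZA-ZB}).
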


\bigskip

\begin{proof}
Recall (Theorem \ref{EG spherique-1}) that
the map $\pi_e : \Zrm(\Hb) \to \Zrm(e\Hb e)$, $z \mapsto ze$ is an isomorphism
of algebras.
Theorem \ref{thm:eg} shows that 
$\Theta(e\Hb e)=\kb[\CCB] \otimes (e\kb[V^\reg \times V^*]^W)$ and that
$\Theta$ is injective, hence $e\Hb e$ is commutative. The theorem follows.
\end{proof}

\bigskip

\begin{coro}\label{coro:endo-bi}
Let $\CG$ be a prime ideal of $\kb[\CCB]$. 
Let $Z(\CG)=Z/\CG Z$ and $P(\CG)=P/\CG P$. \indexnot{Z}{Z(\CG)}
We have:
\begin{itemize}
\itemth{a}
$Z(\CG)=\Zrm(\Hb(\CG))$.
\itemth{b} The map $Z(\CG) \to e\Hb(\CG) e$, $z \mapsto ze$ is
an isomorphism.
\itemth{c} $\End_{\Hb(\CG)}(\Hb(\CG) e) = Z(\CG)$ and $\End_{Z(\CG)}(\Hb(\CG) e)=
\Hb(\CG)$.

\itemth{d} $\Hb(\CG)=Z(\CG) \oplus e\Hb(\CG) (1-e) \oplus (1-e)\Hb(\CG) e \oplus (1-e)\Hb(\CG)(1-e)$.
In particular, $Z(\CG)$ is a direct summand of the $Z(\CG)$-module $\Hb(\CG)$.

\itemth{e} $Z(\CG)$ is a free $P(\CG)$-module of rank $|W|$. 
\itemth{f} If $\kb[\CCB]/\CG$ is integrally closed, then
$Z(\CG)$ is an integrally closed domain.
%, and is Cohen-Macaulay.
%
%\itemth{e} $\Hb e$ is a Cohen-Macaulay $Z$-module.
\end{itemize}
\end{coro}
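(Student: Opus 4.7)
The plan is to derive parts (a)--(e) as fairly formal consequences of the Satake isomorphism (Theorem \ref{theo:satake}), the double centralizer (Theorem \ref{EG spherique-0}(b)), and the PBW/freeness results of Corollary \ref{coro:P-libre}. The main difficulty lies in part (f).

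First I would establish (b). The Satake isomorphism $Z\longiso e\Hb e$ is $\kb[\CCB]$-linear, and the Peirce decomposition of $\Hb$ with respect to the orthogonal idempotents $e$ and $1-e$ exhibits $e\Hb e$ as a $\kb[\CCB]$-module direct summand of $\Hb$. Applying $-\otimes_{\kb[\CCB]}\kb[\CCB]/\CG$ therefore gives an isomorphism $Z(\CG)\longiso e\Hb(\CG)e$, $z\mapsto ze$; in particular $e\Hb(\CG)e$ is commutative. For (a), Theorem \ref{EG spherique-0}(b) combined with Lemma \ref{lem:ZA-ZB} yields an isomorphism $\Zrm(\Hb(\CG))\longiso\Zrm(e\Hb(\CG)e)=e\Hb(\CG)e$ of the form $z\mapsto ze$; since the composition $Z(\CG)\to\Zrm(\Hb(\CG))\to e\Hb(\CG)e$ agrees with the isomorphism of (b), the canonical map $Z(\CG)\to\Zrm(\Hb(\CG))$ must itself be an isomorphism.

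For (c), endomorphisms of the left $\Hb(\CG)$-module $\Hb(\CG)e$ are determined by their value at $e\in e\Hb(\CG)e$ and correspond to right multiplication by elements of $e\Hb(\CG)e=Z(\CG)$; the opposite algebra is immaterial by commutativity, giving the first equality. The second equality is Theorem \ref{EG spherique-0}(b) rewritten under the identification $Z(\CG)\cong e\Hb(\CG)e$. Part (d) is the Peirce decomposition of $\Hb(\CG)$, all four summands being $Z(\CG)$-submodules since $Z(\CG)$ is central, with the summand $e\Hb(\CG)e$ identified with $Z(\CG)$ via (b). For (e), Corollary \ref{coro:P-libre}(c) asserts that $e\Hb e$ is a free $P$-module of rank $|W|$; since it is a $\kb[\CCB]$-direct summand of $\Hb$, base change to $\kb[\CCB]/\CG$ yields that $e\Hb(\CG)e\cong Z(\CG)$ is a free $P(\CG)$-module of rank $|W|$.

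Part (f) is where the main obstacle lies. That $Z(\CG)$ is a domain follows at once from Theorem \ref{EG spherique-0}(a) via (b). For normality I would apply Serre's criterion $R_1+S_2$. The base $P(\CG)=(\kb[\CCB]/\CG)\otimes\kb[V]^W\otimes\kb[V^*]^W$ is a polynomial ring over the integrally closed domain $\kb[\CCB]/\CG$, hence normal, hence $S_2$; since $Z(\CG)$ is free over $P(\CG)$ by (e), $Z(\CG)$ is itself $S_2$. For $R_1$, Theorem \ref{thm:eg} identifies $Z(\CG)[\delta^{-1}]$ with $(\kb[\CCB]/\CG)\otimes\kb[V^\reg\times V^*]^W$, which is regular because $W$ acts freely on $V^\reg\times V^*$; this settles all height-one primes outside the vanishing locus of $\delta$. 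The delicate part will be $R_1$ at height-one primes containing $\delta$, requiring a finer local analysis of $Z(\CG)$ at the ramification hyperplanes.
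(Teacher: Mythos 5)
Parts (a)--(e) of your proposal are correct and follow essentially the same route as the paper: (b) from the Satake isomorphism plus the fact that $e\Hb e$ is a $\kb[\CCB]$-module direct summand of $\Hb$, (a) and (c) from the double endomorphism theorem (Theorem \ref{EG spherique-0}) together with Lemma \ref{lem:ZA-ZB}, (d) from the Peirce decomposition (with the small caveat that $Z(\CG)$ sits diagonally in $e\Hb(\CG)e\oplus(1-e)\Hb(\CG)(1-e)$ rather than coinciding with $e\Hb(\CG)e$ as a subset, so one should note that injectivity of $z\mapsto ze$ is what makes the modified decomposition direct), and (e) from Corollary \ref{coro:P-libre}.

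The genuine gap is in (f), and you have identified it yourself: your Serre-criterion strategy leaves the $R_1$ condition unverified precisely at the height-one primes containing $\delta$, i.e.\ over the ramification locus, and there is no easy local analysis available there --- this is exactly where the interesting singularities of the Calogero-Moser space live, so the approach stalls at the hard point. (A smaller remark: $Z(\CG)[\delta^{-1}]\simeq(\kb[\CCB]/\CG)\otimes\kb[V^\reg\times V^*]^W$ is only \emph{normal} in general, not regular, since $\kb[\CCB]/\CG$ is merely assumed integrally closed; this still gives $R_1$ away from $\delta$, but the claim as written is an overstatement.) The paper avoids the whole issue by working with the PBW filtration rather than with the geometry of $\Spec Z(\CG)$: by (\ref{spherique graduee-1}) and Theorem \ref{th:PBWDunkl}(c), the associated graded ring of the filtered algebra $e\Hb(\CG)e$ is $(\kb[\CCB]/\CG)\otimes\kb[V\times V^*]^{\Delta W}$, which is an integrally closed domain (a ring of invariants of a polynomial ring, tensored over $\kb$ with the integrally closed domain $\kb[\CCB]/\CG$), and Lemma \ref{le:normalfilt} then transfers the properties ``domain'' and ``(completely) integrally closed'' from $\grad\,(e\Hb(\CG)e)$ to $e\Hb(\CG)e$ itself. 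This one-step argument handles all codimensions at once and is the missing ingredient in your proof.
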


\bigskip

\begin{proof}
Assertion (b) follows from 
Theorem \ref{theo:satake}. We deduce now (c) from Theorem \ref{EG spherique-0}
and (e) from  Corollary \ref{coro:P-libre}.
We deduce also that
$Z(\CG)(1-e)\cap Z(\CG)=0$. It follows also that
$e\Hb(\CG) e=Z(\CG)e$, hence $e\Hb(\CG) e\subset Z(\CG)+\Hb(\CG) (1-e)$.
The decomposition
$\Hb(\CG)=e\Hb(\CG) e \oplus e\Hb(\CG) (1-e) \oplus (1-e)\Hb(\CG) e \oplus
(1-e)\Hb(\CG)(1-e)$ implies (d). 

The canonical map $\Zrm(\Hb(\CG))\to e\Hb(\CG)e,\ z\mapsto ze$ is injective since
$\Hb(\CG)$ acts faithfully on $\Hb(\CG)e$ by (c). Since
$Z(\CG)$ is a direct summand of $\Hb(\CG)$ contained in
$\Zrm(\Hb(\CG))$, the assertion (a) follows from (b).

\smallskip
The fact that $Z(\CG) \simeq e\Hb(\CG) e$ is an integrally closed domain
follows from the fact that $\grad\,(e \Hb(\CG) e) \simeq (\kb[\CCB]/\CG)
\otimes \kb[V \times V^*]^{\Delta W}$ is an
integrally closed domain (Lemma \ref{le:normalfilt}).
\end{proof}

\medskip
\begin{exemple}
\label{ex:centerc=0}
Recall (Example \ref{exemple zero-1}) that $\Hb_0=\kb[V \times V^*]\rtimes W$.
It follows from Proposition \ref{pr:centerdouble} and Corollary 
\ref{coro:endo-bi} (a)
that $Z_0=\kb[V \times V^*]^W \simeq \kb[(V \times V^*)/W]$.\finl
\end{exemple}

% \begin{proof}
% \`A part (f) et le fait que $Z_c$ est int\'egralement clos, 
% tout d\'ecoule des propri\'et\'es correspondantes de 
% \end{proof}
% 

% \begin{proof}
% Le polyn\^ome minimal de $\euler$ sur $P$ est \'egal 
% au polyn\^ome minimal de $\euler$ sur $\Kb$ car $P$ 
% est int\'egralement clos (voir le corollaire~\ref{minimal clos}). 
% Le r\'esultat d\'ecoule alors de~\ref{euler engendre} et~\ref{degre lk}.
% \end{proof}

\bigskip
\def\pbw{{\mathrm{pbw}}}
\def\eval{{\mathrm{ev}}}

\subsection{Morphism to the center of $\kb[\CCB]W$}\label{subsection:omega}
Let $\pbw : \kb[\CCB] \otimes \kb[V] \otimes \kb W \otimes \kb[V^*] \xrightarrow{\sim} \Hb$ denote
the 
isomorphism given by the PBW-decomposition (see Theorem~\ref{PBW-0}) and let 
$\eval_{0,0} : \kb[\CCB] \otimes \kb[V] \otimes \kb W \otimes \kb[V^*] \to \kb[\CCB]W$ 
denote the $\kb[\CCB]$-linear map defined by
$$\eval_{0,0}(a \otimes f \otimes w \otimes g) = f(0)g(0)a w.$$
We set
$$\Omeb^\Hb = \eval_{0,0} \circ \pbw^{-1}: \Hb \longto \kb[\CCB]W.$$
This is a $W$-equivariant morphism of bigraded $\kb[\CCB]$-modules but 
it is not a morphism of algebras (except if $W=1$). Nevertheless, we have the following result:

\bigskip

\begin{prop}\label{prop:omega-mult}
If $z \in Z$ and $h \in \Hb$, then $\Omeb^\Hb(z) \in \Zrm(\kb[\CCB] W)$ and 
$$\Omeb^\Hb(zh)=\Omeb^\Hb(z)\Omeb^\Hb(h).$$
\end{prop}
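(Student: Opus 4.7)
The plan is to realise $\Omeb^\Hb$ as the quotient map $\Hb\twoheadrightarrow\Hb/I$ for $I:=\Hb V+V^*\Hb$, and to show that centrality of $z$ forces $zI\subset I$, so that left multiplication by $z$ descends to a bimodule endomorphism of $\Hb/I\simeq\kb[\CCB]W$.

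First I would show that $\Omeb^\Hb$ is a morphism of $\kb[\CCB]W$-bimodules. By the PBW decomposition (Theorem~\ref{PBW-0}), multiplication identifies $\Hb V$ with $\kb[\CCB]\otimes\kb[V]\otimes\kb W\otimes\kb[V^*]_+$ and $V^*\Hb$ with $\kb[\CCB]\otimes\kb[V]_+\otimes\kb W\otimes\kb[V^*]$, so the canonical projection $\Hb\twoheadrightarrow\Hb/I$ coincides with $\Omeb^\Hb$ under the natural $\kb[\CCB]$-linear isomorphism $\Hb/I\simeq\kb[\CCB]W$. Using $wV=Vw$ and $wV^*=V^*w$ in $\Hb$ for $w\in W$, the subspace $I$ is stable under left and right multiplication by $\kb[\CCB]W$, so this identification is in fact an isomorphism of $\kb[\CCB]W$-bimodules.

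The key step is the observation that $zI\subset I$ for every $z\in Z$, even though $I$ is not a left ideal of $\Hb$. For $h\in\Hb$ and $v\in V$, centrality of $z$ gives $z\cdot hv=(zh)v=(hz)v\in\Hb V$; similarly $z\cdot v^*h=(zv^*)h=(v^*z)h=v^*(zh)\in V^*\Hb$. Hence left multiplication by $z$ induces a well-defined $\kb$-linear endomorphism $\bar L_z$ of $\Hb/I\simeq\kb[\CCB]W$. Because $z$ is central in $\Hb$, it commutes with every element of $\kb[\CCB]W\subset\Hb$, whence $\bar L_z$ is a morphism of $\kb[\CCB]W$-bimodules. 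Applying the standard identification $\End_{A\otimes A^{\opp}}(A)=\Zrm(A)$ with $A=\kb[\CCB]W$ yields a unique element $m_z\in\Zrm(\kb[\CCB]W)$ such that $\bar L_z$ is left multiplication by $m_z$. Evaluating $\bar L_z$ at the class of~$1$ identifies $m_z=\overline{z\cdot 1}=\Omeb^\Hb(z)$, so simultaneously $\Omeb^\Hb(z)\in\Zrm(\kb[\CCB]W)$ and, for every $h\in\Hb$, $\Omeb^\Hb(zh)=\bar L_z(\Omeb^\Hb(h))=\Omeb^\Hb(z)\Omeb^\Hb(h)$.

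The main obstacle is the structural observation that $I$ is preserved by left multiplication by central elements. Individually, $\Hb V$ is only a left ideal and $V^*\Hb$ is only a right ideal, and their sum is neither; indeed a computation with $yx-xy=[y,x]\in\kb[\CCB]W$ shows that elements of $V\cdot\Hb$ generally escape $I$. The centrality of $z$ is exactly what allows $z$ to be commuted freely past any $v\in V$ or $v^*\in V^*$, rescuing bimodule stability and reducing the entire proposition to the elementary computation of bimodule endomorphisms of $\kb[\CCB]W$.
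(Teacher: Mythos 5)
Your proof is correct, and it takes a genuinely different (more structural) route than the paper's. The paper establishes the first assertion from the $W$-equivariance of $\Omeb^\Hb$ and proves multiplicativity by a direct computation: it writes $\pbw^{-1}(z)=\sum_i a_i\otimes f_i\otimes w_i\otimes g_i$, uses centrality to commute $z$ past the factor $a_j'f_j'$ of each PBW term of $h$, computes $\pbw^{-1}(zh)$ explicitly, and observes that $\eval_{0,0}$ turns the result into the product $\Omeb^\Hb(z)\,\Omeb^\Hb(h)$. You instead identify $\Ker\Omeb^\Hb$ with the sub-bimodule $I=\Hb V+V^*\Hb$, check that left multiplication by a central $z$ preserves $I$ (using that $\Hb V$ is a left ideal and that $z$ slides past $V^*$), and invoke the identification $\End_{A\otimes A^\opp}(A)=\Zrm(A)$ for $A=\kb[\CCB]W$. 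Both arguments rest on the same two inputs, the PBW decomposition and the centrality of $z$, but yours packages them so that the two assertions of the proposition drop out simultaneously from one abstract fact, and it isolates exactly where centrality is needed: your observation that $I$ is not a two-sided ideal (e.g.\ $xy\in I$ but $yx=xy+[y,x]\notin I$ whenever $[y,x]\neq 0$) is the right sanity check. The paper's computation is shorter and entirely self-contained; your version costs a little more set-up but is arguably more illuminating and transfers verbatim to the variant $\Omeb^*$ of Remark~\ref{rem:v-v*} upon exchanging the roles of $V$ and $V^*$.
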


\bigskip

\begin{proof}
First, the fact that $\Omeb^\Hb(z) \in \Zrm(\kb[\CCB] W)$ follows from the $W$-equivariance 
of $\Omeb^\Hb$. Now, 
write $\pbw^{-1}(z) = \sum_{i \in I} a_i \otimes f_i \otimes w_i \otimes g_i$ 
and $\pbw^{-1}(h) = \sum_{j \in J} a_j' \otimes f_j' \otimes w_j' \otimes g_j'$. 
We have 
$$zh=\sum_{j \in J} z a_j' f_j'  w_j'  g_j'=\sum_{j \in J} a_j' f_j' z w_j'  g_j'$$
hence
$$\pbw^{-1}(zh)=\sum_{\substack{i \in I \\ j \in J}} a_ia_j' \otimes f_if_j' \otimes w_i w_j' 
\otimes w_j^{\prime -1}{g_i} g_j'w_j'.$$
This proves that $\Omeb^\Hb(zh)=\Omeb^\Hb(z)\Omeb^\Hb(h)$. 
\end{proof}

\bigskip

Let $\Omeb : Z \to \Zrm(\kb[\CCB] W)$ denote the restriction of $\Omeb^\Hb$ to $Z$. Since 
$\Omeb$ respects the bigrading and so respects the $\ZM$-grading, we have 
\equat\label{eq:omega-z0}
\Omeb(z)=0
\endequat
if $z \in Z$ is $\ZM$-homogeneous of non-zero $\ZM$-degree. 

\bigskip

\begin{coro}\label{coro:omega-mult}
The map $\Omeb : Z \to \Zrm(\kb[\CCB] W)$ is a morphism of bigraded $\kb[\CCB]$-algebras. 
\end{coro}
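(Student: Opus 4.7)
The plan is to essentially unpack what has already been established in Proposition \ref{prop:omega-mult} and in the construction of $\Omeb^\Hb$, and verify the remaining pieces in the statement ``morphism of bigraded $\kb[\CCB]$-algebras'' one at a time.

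First I would observe multiplicativity: by Proposition \ref{prop:omega-mult}, for any $z \in Z$ and $h \in \Hb$ we have $\Omeb^\Hb(zh) = \Omeb^\Hb(z)\Omeb^\Hb(h)$. Specializing $h = z' \in Z$ (which is allowed since $Z \subset \Hb$) yields $\Omeb(zz') = \Omeb(z)\Omeb(z')$. Since $\Omeb^\Hb$ sends $1 = 1 \otimes 1 \otimes 1 \otimes 1$ to $1 \in \kb[\CCB]W$ (evaluation of the constant polynomial $1$ at $0$ is $1$), we also get $\Omeb(1) = 1$.

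Next I would check that $\Omeb$ actually lands in $\Zrm(\kb[\CCB]W)$. This is the $W$-equivariance argument already mentioned in the proof of Proposition \ref{prop:omega-mult}: the PBW isomorphism $\pbw$ and the evaluation map $\eval_{0,0}$ are both $W$-equivariant for the conjugation action on both sides, so $\Omeb^\Hb$ is $W$-equivariant; since $Z = \Hb^W$ in particular consists of $W$-invariant elements (for conjugation), $\Omeb(z)$ is $W$-invariant in $\kb[\CCB]W$, i.e.\ lies in $\Zrm(\kb[\CCB]W)$.

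Finally, $\kb[\CCB]$-linearity of $\Omeb$ is immediate from the $\kb[\CCB]$-linearity of both $\pbw^{-1}$ and $\eval_{0,0}$ built into the definition of $\Omeb^\Hb$, and preservation of the bigrading follows because $\pbw$ is a bigraded isomorphism and $\eval_{0,0}$ extracts the homogeneous component corresponding to $f, g$ of bidegree $(0,0)$ in their respective factors (equivalently, $\eval_{0,0}$ is just the $\kb[\CCB]$-linear projection onto $\kb[\CCB]\otimes \kb \otimes \kb W \otimes \kb$, which is a bigraded direct summand). There is no real obstacle here — essentially all the content is already in Proposition \ref{prop:omega-mult}; the corollary amounts to recording that the multiplicative property transfers from the bimodule statement $\Omeb^\Hb(z h) = \Omeb^\Hb(z)\Omeb^\Hb(h)$ to a bona fide algebra morphism once one restricts to $Z$.
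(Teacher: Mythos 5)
Your proof is correct and follows the same route the paper intends: the corollary is an immediate consequence of Proposition \ref{prop:omega-mult} (multiplicativity and centrality of the image), combined with the $\kb[\CCB]$-linearity and bigradedness already built into the definition of $\Omeb^\Hb$ via $\pbw$ and $\eval_{0,0}$. One small imprecision: $Z$ is the center of $\Hb$, not $\Hb^W$; but all your argument uses is that central elements are invariant under conjugation by $W$, which holds.
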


\bigskip

If $\CG$ is a prime ideal of $\kb[\CCB]$ (resp. if $c \in \CCB$), the map $\Omeb$ 
induces a morphism of $\ZM$-graded algebras $\Omeb^\CG : Z(\CG) \to (\kb[\CCB]/\CG) \otimes \Zrm(\kb W)$ 
(resp. $\Omeb^c : Z_c \to \Zrm(\kb W)$). 

\bigskip

\begin{rema}\label{rem:v-v*}
By exchanging the roles of $V$ and $V^*$, one gets an isomorphism 
$\pbw_* : \kb[\CCB] \otimes \kb[V^*] \otimes \kb W \otimes \kb[V] \to \Hb$ coming 
from the PBW-decomposition and a map 
$\eval_{0,0}^* : \kb[\CCB] \otimes \kb[V^*] \otimes \kb W \otimes \kb[V] \to \kb[\CCB]W$ 
obtained by evaluating at $(0,0) \in V^* \times V$. One obtains another morphism 
of bigraded $\kb[\CCB]$-algebras 
$$\Omeb^* : Z \longto \Zrm(\kb[\CCB] W).$$
It turns out that $\Omeb \neq \Omeb^*$. Indeed, using the automorphism 
$\e_*$
of the $\kb[\CCB]$-algebra $\kb[\CCB] W$ given by $w \mapsto \e(w) w$, then 
it follows from Section~\ref{section:eulertilde} that
$$\Omeb(\euler) = \sum_{s \in \REF(W)} \e(s) C_s s = \e_*(\Omeb^*(\euler)).$$
We will see in Corollary~\ref{coro:v-v*} that
$$\Omeb(z)=\e_*(\Omeb^*(z))$$
for all $z \in Z$.\finl
\end{rema}

\bigskip

\section{Localization}\label{sec:morita}

Recall that 
$$V^\reg=V \setminus \bigcup_{H \in \AC} H=\{v \in V~|~\Stab_W(v)=1\}.$$
Set $P^\reg=\kb[\CCB] \otimes \kb[V^\reg]^W \times \kb[V^*]^W$ \indexnot{P}{P^\reg} 
and $Z^\reg=P^\reg \otimes_P Z$ \indexnot{Z}{Z^\reg}, so that
$\Hb^\reg=P^\reg \otimes_P \Hb=Z^\reg \otimes_Z \Hb$. 
Given $s \in \REF(W)$, let $\a_s^W=\prod_{w \in W} w(\a_s) \in P$. \indexnot{az}{\a_s^W}
We have $P^\reg=P[\delta^{-1}]$ and $Z^\reg=Z[\delta^{-1}]$.
As a consequence,
\equat\label{inversible alpha}
\text{\it $\a_s$ is invertible in $\Hb^\reg$.}
\endequat

\begin{coro}\label{center reg}
$\Th$ restricts to an isomorphism of $\kb[\CCB]$-algebras 
$Z^\reg \longiso \kb[\CCB] \otimes \kb[V^\reg \times V^*]^W$. In particular, 
$Z^\reg$ is regular. 
\end{coro}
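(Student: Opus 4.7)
\noindent
The plan is to combine the Dunkl embedding (Theorem \ref{thm:eg}) with the Satake isomorphism (Theorem \ref{theo:satake}), both of which have already been established. First, I would observe that the Satake isomorphism $Z \longiso e\Hb e$, $z \mapsto ze$, localizes to an isomorphism $Z^\reg \longiso e\Hb^\reg e$, since the element $\prod_{s}\alpha_s^W$ being inverted lies in the subalgebra $P \subset Z$ and is central. So it suffices to identify $e\Hb^\reg e$ with $\kb[\CCB]\otimes \kb[V^\reg\times V^*]^W$ via $\Theta$.

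Next, since $\Theta(e) = e$ (the idempotent $e$ is a sum of group elements, preserved by $\Theta$) and $\Theta$ is an isomorphism of $\kb[\CCB]$-algebras, it restricts to an isomorphism
\[
\Theta : e\Hb^\reg e \longiso e\bigl(\kb[\CCB]\otimes(\kb[V^\reg\times V^*]\rtimes W)\bigr)e
= \kb[\CCB]\otimes e(\kb[V^\reg\times V^*]\rtimes W)e.
\]
By Lemma \ref{le:idempotentinvariants}, the map $\kb[V^\reg\times V^*]^W \longiso e(\kb[V^\reg\times V^*]\rtimes W)e$, $z \mapsto ze$, is an isomorphism of algebras. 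Composing these three isomorphisms yields the desired isomorphism $Z^\reg \longiso \kb[\CCB]\otimes \kb[V^\reg\times V^*]^W$.

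For the regularity statement, I would invoke Steinberg's theorem (Theorem \ref{th:Steinberg}): the action of $W$ on $V^\reg$ is free, hence so is the action of $W$ on $V^\reg\times V^*$. Since $V^\reg\times V^*$ is smooth and the action is free, the geometric quotient $(V^\reg\times V^*)/W$ is smooth, so $\kb[V^\reg\times V^*]^W$ is a regular $\kb$-algebra. Tensoring with the polynomial algebra $\kb[\CCB]$ preserves regularity, giving that $Z^\reg$ is regular.

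None of the steps are really obstacles here; the main subtlety is just checking that $\Theta$ restricts correctly to the spherical subalgebras on either side, which is immediate once one notes that $\Theta$ acts trivially on $W$ and hence fixes $e$.
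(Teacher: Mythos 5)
Your proof is correct and follows essentially the same route as the paper's: both transport the identification along the Dunkl isomorphism $\Th$ of Theorem \ref{thm:eg}, and both deduce regularity from the freeness of the $W$-action on $V^\reg \times V^*$. The only cosmetic difference is that the paper compares the centers of $\Hb^\reg$ and $\kb[\CCB]\otimes(\kb[V^\reg\times V^*]\rtimes W)$ directly, whereas you compare the spherical subalgebras $e(-)e$ on both sides and then use the Satake isomorphism together with Lemma \ref{le:idempotentinvariants} to convert back to centers — an equivalent detour.
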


\begin{proof}
The first statement follows from the comparison between the centers of 
$\Hb^\reg$ and $\kb[V^\reg \times V^*] \rtimes W$ (Theorem \ref{thm:eg}).
The second statement follows from the fact that 
$W$ acts freely on $V^\reg \times V^*$.
\end{proof}

\bigskip

Given $c \in \CCB$, let $Z^\reg_c=Z_c[\delta^{-1}]$.
\indexnot{Z}{Z_c^\reg}
Corollary~\ref{center reg} shows that 
\equat\label{lisse reg}
\text{\it $Z^\reg_c \simeq \kb[V^\reg \times V^*]^W$ is a regular ring.}
\endequat

\bigskip

\section{Geometry}\label{section:geometrie-zp}

\medskip

The spectra of the
$\kb$-algebras $P$, $Z$, $P_\bullet$ and $Z_c$ are
algebraic $\kb$-varieties that will be denoted by 
$\PCB$, $\ZCB$, $\PCB_{\!\!\!\bullet}$ and $\ZCB_c$ respectively. 
\indexnot{P}{\PCB,~\PCB_{\!\!\!\bullet}}\indexnot{Z}{\ZCB,~\ZCB_c}  
Note that
$$\PCB=\CCB \times V/W \times V^*/W\quad\text{and}\quad \PCB_{\!\!\!\bullet}=V/W \times V^*/W$$
$$\ZCB_0=(V \times V^*)/\Delta W.\leqno{\text{and that}}$$
It follows from Corollary~\ref{coro:endo-bi}(f) that 
\equat\label{irr normale}
\text{\it the varieties $\ZCB$ and $\ZCB_c$ are irreducible and normal.}
\endequat
Note that this statement holds over any base field. 
The inclusions $P \subset Z$ and $P_\bullet \subset Z_c$ 
induce morphisms of varieties 
$$\Upsilon : \ZCB \longto \PCB=\CCB\times V/W \times V^*/W \indexnot{uz}{\Upsilon,~\Upsilon_c}$$
$$\Upsilon_c : \ZCB_c \longto \PCB_{\!\!\!\bullet}=V/W \times V^*/W.\leqno{\text{and}}$$
The surjective maps $P \to P/\CG_cP \simeq P_\bullet$  and $Z \to Z_c$ induce 
closed immersions $j_c : \ZCB_c \longinjto \ZCB$ and 
$i_c : \PCB_{\!\!\!\bullet} \longinjto \PCB$, \indexnot{ia}{i_c,~j_c}  
$p \mapsto (c,p)$. Moreover, the canonical injective map  
$\kb[\CCB] \longinjto P$ induces the canonical projection 
$\pi : \PCB \to \CCB$ \indexnot{pz}{\pi}  and, in the diagram 
\equat\label{diagramme geometrie}
\diagram
&\ZCB_c \ar@{^{(}->}[rr]^{\DS{j_c}} \ddto_{\DS{\Upsilon_c}} && \ZCB \ddto^{\DS{\Upsilon}}& \\
&&&&\\
V/W \times V^*/W \ar@{=}[r] & \PCB_{\!\!\!\bullet} 
\ar@{^{(}->}[rr]^{\DS{i_c}} \ddto && \PCB \ddto^{\DS{\pi}} \ar@{=}[r]& 
\CCB \times V/W \times V^*/W \\
&&\\
&\{c\} \ar@{^{(}->}[rr] && \CCB =\Ab^{\refw}&
\enddiagram
\endequat
all the squares are cartesian. Note also that, 
by Corollary~\ref{coro:endo-bi},
\equat\label{platitude}
\text{\it the morphisms $\Upsilon$ and $\Upsilon_c$ are finite and flat.}
\endequat
Moreover,
\equat\label{eq:pi-plat}
\text{\it $\pi$ is smooth,}
\endequat
since $V/W \times V^*/W$ smooth.

\bigskip

\begin{exemple}\label{upsilon c=0}
We have $\ZCB_0 = (V \times V^*)/W$ and 
$\Upsilon_0 : (V \times V^*)/W = \ZCB_0 \to \PCB_\bullet = V/W \times V^*/W$ 
is the canonical morphism.\finl 
\end{exemple}

\bigskip

Let $\ZCB^\reg$ \indexnot{Z}{\ZCB^\reg} denote the open subset $\Spec(Z^\reg)$ of $\ZCB$.  
Corollary~\ref{center reg} shows that 
\equat\label{eq:zreg}
\text{\it $\ZCB^\reg \simeq \CCB\times (V^\reg \times V^*)/W$ is smooth.}
\endequat
% Pour finir, 
% si $U$ est un ouvert de $\ZCB$, nous noterons $Z(U)$ l'anneau des fonctions 
% r\'eguli\`eres sur $U$ et nous poserons $\Hb(U)=Z(U) \otimes_Z \Hb$. 
% Cete notation n'entre pas en conflit avec la notation $Z(U)$ d\'efinie 
% dans la section~\ref{sec:morita}~: en effet, puisque $Z$ est normale, 
% $Z(U)$ est le localis\'e de $Z$ en la partie multiplicative 
% $\{f \in Z~|~\forall~u \in U,~f(u) \neq 0\}$. 
Let $\ZCB_\singulier$  \indexnot{Z}{\ZCB_\singulier}  denote the singular locus 
of $\ZCB=\Spec(Z)$ and $\zG_\singulier$ 
\indexnot{Z}{\zG_\singulier}  
its defining ideal. Since $Z$ is integrally closed, it follows that
\equat\label{codimension deux Q}
\text{\it $\ZCB_\singulier$ has codimension $\ge 2$ in $\ZCB$.}
\endequat
Of course, $\zG_\singulier$ needs not be a prime ideal.
Since $\Upsilon : \ZCB \to \PCB$ is finite and flat, we deduce that 
\equat\label{codimension deux P}
\text{\it $\Upsilon(\ZCB_\singulier)$ is closed and of codimension $\ge 2$ in $\PCB$.}
\endequat
The defining ideal of $\Upsilon(\ZCB_\singulier)$ is $\sqrt{\zG_\singulier \cap P}$. 
\begin{rema}
	Note that the condition that 
$\zG_\singulier \cap P \not\subset \pG$ of Proposition \ref{morita lissite} and
	Corollary \ref{morita lissite p} is
equivalent to the fact
that $\Spec(P/\pG)$ is not contained in $\Upsilon(\ZCB_\singulier)$.
\end{rema}
\bigskip

\section{Morita equivalences}
While $Z$ and $\Hb$ are only related by a double endomorphism theorem,
after restricting to a smooth open subset of $Z$, they become Morita
equivalent.

\smallskip
Let us first state a consequence of Corollary  \ref{coro:endo-bi}.

\begin{lem}
	\label{le:critereMorita}
	Let $R$ be a commutative $Z$-algebra. The following assertions are
	equivalent
	\begin{itemize}
		\item $\Hb e\otimes_Z R$ is a projective $R$-module
		\item $\Hb e\otimes_Z R$ induces a Morita equivalence between
			$\Hb \otimes_Z R$ and $R$.
	\end{itemize}
\end{lem}

\begin{prop}\label{prop:morita}
Let $U$ be a multiplicative subset of $Z$ such that $Z[U^{-1}]$ is regular.
Then $\Hb[U^{-1}]e$ induces a Morita equivalence between
the algebras $\Hb[U^{-1}]$ and $Z[U^{-1}]$.
\end{prop}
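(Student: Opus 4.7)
The plan is to verify that the $(\Hb[U^{-1}], Z[U^{-1}])$-bimodule $M = \Hb[U^{-1}]e$ is a progenerator over $Z[U^{-1}]$ satisfying the double-endomorphism identity $\Hb[U^{-1}] \xrightarrow{\sim} \End_{Z[U^{-1}]}(M)^{\opp}$, and then to invoke the standard Morita theorem to conclude.

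The endomorphism identity follows by localizing the statement $\End_Z(\Hb e)^{\opp} = \Hb$ from Corollary~\ref{coro:endo-bi}(c) (applied with $\CG = 0$). Since $Z$ is Noetherian—it is a finite module over the polynomial ring $P$ by Corollary~\ref{coro:endo-bi}(e)—and $\Hb e$ is finitely generated over $Z$ by Corollary~\ref{coro:P-libre}(b), the module $\Hb e$ is finitely presented over $Z$, and hence the formation of $\End_Z(\Hb e)$ commutes with the localization at $U$. To see that $M$ is a generator of $Z[U^{-1}]\mmod$, Corollary~\ref{coro:endo-bi}(d) yields, after multiplying on the right by $e$, a decomposition $\Hb e = Ze \oplus (1-e)\Hb e$ of right $Z$-modules; combined with the Satake isomorphism of Theorem~\ref{theo:satake}, this exhibits $Z$ as a direct summand of $\Hb e$, a splitting that is preserved by localization.

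The crux of the argument is the projectivity of $M$ over $Z[U^{-1}]$. By Corollary~\ref{coro:P-libre}(b), $\Hb e$ is a free $P$-module, and therefore maximal Cohen-Macaulay over $P$. Since $P \subset Z$ is a finite extension of Noetherian rings, the standard transfer of the Cohen-Macaulay property across such extensions implies that $\Hb e$ is maximal Cohen-Macaulay also as a $Z$-module. For any prime $\pG$ of $Z[U^{-1}]$, the localization $Z_\pG$ is a regular local ring by the hypothesis on $U$, and $(\Hb e)_\pG$ is a finitely generated maximal Cohen-Macaulay $Z_\pG$-module; the Auslander--Buchsbaum formula then forces $\mathrm{pd}_{Z_\pG}((\Hb e)_\pG) = 0$, so $(\Hb e)_\pG$ is $Z_\pG$-free. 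Thus $M$ is finitely generated and locally free over the Noetherian ring $Z[U^{-1}]$, hence projective.

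The main obstacle in the plan is the Cohen-Macaulay transfer from the $P$-module structure to the $Z$-module structure under the finite extension $P \subset Z$: this is the step where the geometric regularity hypothesis on $Z[U^{-1}]$ is translated, via Auslander--Buchsbaum, into the module-theoretic projectivity required for Morita theory. Once the three ingredients—double endomorphism identification, generation, and projectivity—are assembled, the Morita theorem produces the desired equivalence between $\Hb[U^{-1}]$ and $Z[U^{-1}]$ via the bimodule $M$.
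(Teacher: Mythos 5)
Your proof is correct, and it follows the same overall architecture as the paper's: everything reduces to showing that $\Hb[U^{-1}]e$ is a projective generator over $Z[U^{-1}]$ with the double-endomorphism property, and the generator and endomorphism statements are exactly what Corollary~\ref{coro:endo-bi} provides after localization. Where you genuinely diverge is in the proof of projectivity, which is the crux. The paper argues bare-handed with Tor: for a maximal ideal $\mG$ of $Z$ with $Z_\mG$ regular, it takes $i$ maximal with $\Tor_i^Z(\Hb e,Z/\mG)\neq 0$, notes that then $\Tor_i^Z(\Hb e,L)\neq 0$ for every finite-length $L$ supported at $\mG$, and applies this to $L=(Z/\nG Z)_\mG$ with $\nG=\mG\cap P$, whose higher Tor vanishes by base change from the $P$-free modules $Z$ and $\Hb e$; hence $i=0$ and $(\Hb e)_\mG$ is free. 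You instead transfer maximal Cohen--Macaulayness of $\Hb e$ from $P$ to $Z$ along the finite extension and conclude by Auslander--Buchsbaum over the regular local rings $Z_\pG$. Both are standard proofs of the same ``miracle flatness'' phenomenon; yours is shorter if one grants the depth-transfer lemma, while the paper's is self-contained. The one point you should make explicit is the content of that transfer: it needs $\operatorname{depth}_{P_{\qG\cap P}}(\Hb e)_{\qG\cap P}$ to bound the depth of $(\Hb e)_\qG$ over $Z_\qG$ from below (via the fact that $\qG Z_{\qG\cap P}$ and $(\qG\cap P)Z_{\qG\cap P}$ have the same radical locally), and it needs $\dim Z_\qG=\dim P_{\qG\cap P}$, which holds here because $P$ is regular (hence normal) and $Z$ is a domain finite over $P$, so going-down preserves heights. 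With that spelled out, your argument is complete.
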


\begin{proof}
Let $\mG$ be a maximal ideal of $Z$ such that $Z_\mG$ is regular. Let
$i$ be maximal such that $\Tor_i^Z(\Hb e,Z/\mG)\not=0$. Given any
finite length $Z$-module $L$ with support $\mG$, we have
$\Tor_i^Z(\Hb e,L)\not=0$.

Let $\pG=P\cap\mG$.
We have $\Tor_*^Z(\Hb e,Z/\pG Z)\simeq
\Tor_*^Z(\Hb e,Z\otimes_P P/\pG)
\simeq\Tor_*^P(\Hb e,P/\pG)$ since $Z$ is
a free $P$-module (Corollary \ref{coro:endo-bi}). Since $\Hb e$ is a
free $P$-module (Corollary \ref{coro:P-libre}), it follows that
$\Tor_{>0}^Z(\Hb e,Z/\pG Z)=0$, hence
$\Tor_{>0}^Z(\Hb e,(Z/\pG Z)_\mG)=0$. We deduce that $i=0$, hence 
$(\Hb e)_\mG$ is a free $Z_\mG$-module.

We have shown that $\Hb[U^{-1}]e$ is a projective $Z[U^{-1}]$-module.
The Morita equivalence follows from Lemma \ref{le:critereMorita}.
\end{proof}

\begin{coro}\label{coro:reg-morita}
The $(\Hb^\reg,Z^\reg)$-bimodule $\Hb^\reg e$ induces a Morita equivalence 
between $\Hb^\reg$ and $Z^\reg$. 
\end{coro}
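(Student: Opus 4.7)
The statement is set up to be essentially a direct application of Proposition~\ref{prop:morita}. The plan is to identify an appropriate multiplicative subset $U \subset Z$ so that $Z[U^{-1}] = Z^\reg$ and $\Hb[U^{-1}] = \Hb^\reg$, and then verify the sole hypothesis of that proposition: the regularity of the localized center.

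Concretely, I would take $U$ to be the multiplicative subset of $Z$ generated by the elements $\{\alpha_s^W : s \in \Ref(W)\}$, which lie in $P \subset Z$. By the very definition of $Z^\reg$ and $\Hb^\reg$ as localizations of $Z$ and $\Hb$ at these elements (as recalled just before~\eqref{inversible alpha}), we have $Z^\reg = Z[U^{-1}]$ and $\Hb^\reg = \Hb[U^{-1}]$.

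The only thing to check is that $Z^\reg$ is regular. This is provided by Corollary~\ref{center reg}: through the isomorphism $\Theta$ we have $Z^\reg \simeq \kb[\CCB] \otimes \kb[V^\reg \times V^*]^W$, and since $W$ acts freely on $V^\reg \times V^*$ by Steinberg's Theorem~\ref{th:Steinberg}, this invariant ring is regular. Applying Proposition~\ref{prop:morita} then yields that $\Hb^\reg e$ induces a Morita equivalence between $\Hb^\reg$ and $Z^\reg$.

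There is no real obstacle here, since the substantive work has already been done upstream: the projectivity of $\Hb e$ over $Z$ at regular points (established in the proof of Proposition~\ref{prop:morita} using the freeness of $\Hb e$ over $P$ from Corollary~\ref{coro:P-libre} and the freeness of $Z$ over $P$ from Corollary~\ref{coro:endo-bi}) combines with the double endomorphism property $\End_{Z}(\Hb e) = \Hb$ from Corollary~\ref{coro:endo-bi}(c) to give Morita equivalence, and Corollary~\ref{center reg} supplies the regularity hypothesis.
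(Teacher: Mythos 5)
Your proof is correct and follows exactly the paper's own route: the paper deduces the corollary directly from Proposition~\ref{prop:morita} and Corollary~\ref{center reg}, and your identification of the multiplicative set $U$ generated by the $\alpha_s^W$ together with the regularity check is precisely the intended argument, merely spelled out in more detail.
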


\begin{proof}
This follows from Proposition~\ref{prop:morita} and Corollary~\ref{center reg}.
\end{proof}

% Un exemple d'ouvert lisse de $Z$ est le point g\'en\'erique (car $Z$ est int\`egre). 
% Notons donc $\Kb$ le corps des fractions de $P$, $\Kb_Z$ le corps des fractions 
% de $Z$

\medskip

\bigskip

\begin{prop}\label{morita lissite}
	Let $\pG$ be a prime ideal of $P$ that does not contain
	$\zG_\singulier \cap P$.

The $(\Hb_\pG,Z_\pG)$-bimodule $\Hb_\pG e$ is both left and right projective and induces 
a Morita equivalence between $\Hb_\pG$ and $Z_\pG$. 
\end{prop}

\begin{proof}
By assumption, there exists $p \in \zG_\singulier \cap P$ such that $p \not\in \pG$. 
So $Z[1/p]=P[1/p] \otimes_P Z \subset Z_\pG= P_\pG \otimes_P Z$ and $\Spec(Z[1/p])$ is 
regular. It follows from Proposition~\ref{prop:morita} that 
$P[1/p] \otimes_P \Hb$ and $Q[1/p]$ are Morita equivalent 
via the bimodule $P[1/p] \otimes_P \Hb e$. The proposition follows
by scalar extension.
\end{proof}
\bigskip

By reducing modulo $\pG$, one gets the following consequence.

\bigskip

\begin{coro}\label{morita lissite p}
	Let $\pG$ be a prime ideal of $P$ that does not contain
	$\zG_\singulier \cap P$.

The $(k_P(\pG)\Hb,k_P(\pG)Z)$-bimodule $k_P(\pG)\Hb e$ is both left and right projective and induces 
a Morita equivalence between $k_P(\pG)\Hb$ and $k_P(\pG)Z$. 
\end{coro}

\bigskip

\begin{exemple}\label{exemple lisse}
Taking Corollary~\ref{center reg} into account, the condition $\zG_\singulier \cap P \not\subset \pG$ 
is satisfied if $\Spec(P/\pG)$ meets the open subset 
$\PCB^\reg = \CCB \times V^\reg/W \times V^*/W$ or, by symmetry, the open 
subset $\CCB \times V/W \times V^{* \reg}/W$.\finl
\end{exemple}

\bigskip

Let $\Kb$ \indexnot{K}{\Kb} denote the fraction field of $P$ and let $\Kb Z = \Kb \otimes_P Z$. 
Since $Z$ is a domain and is integral over $P$, it follows that
\equat\label{eq:KZ-fraction}
\text{\it $\Kb Z$ is the fraction field of $Z$.}
\endequat
In particular, $\Kb Z$ is a regular ring.

\bigskip

\begin{theo}\label{theo:KH-mat}
The $\Kb$-algebras $\Kb\Hb$ and $\Kb Z$ are Morita equivalent, 
the Morita equivalence being induced by $\Kb \Hb e$. More precisely, 
$$\Kb\Hb \simeq \Mat_{|W|}(\Kb Z).$$
	Furthermore, $\Kb\Hb e$ is a free $(\Kb Z)W$-module of rank $1$.
\end{theo}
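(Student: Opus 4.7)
The plan is to deduce both assertions directly from Proposition~\ref{prop:morita}. I would take $U = P \setminus \{0\}$, viewed as a multiplicative subset of $Z$ via the inclusion $P \hookrightarrow Z$. Since $Z$ is an integral domain (by Theorem~\ref{EG spherique-0}(a), or Corollary~\ref{coro:endo-bi}(f), applied to the zero ideal) with fraction field $\Kb Z$ by~\ref{eq:KZ-fraction}, we have $Z[U^{-1}] = \Kb Z$, which is a field and therefore regular. Proposition~\ref{prop:morita} then applies, so the bimodule $\Hb[U^{-1}] e = \Kb\Hb e$ induces a Morita equivalence between $\Hb[U^{-1}] = \Kb\Hb$ and $\Kb Z$, proving the first assertion.

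For the explicit identification with $\Mat_{|W|}(\Kb Z)$, I would count $\Kb$-dimensions. Corollary~\ref{coro:P-libre}(b) gives that $\Hb e$ is a free $P$-module of rank $|W|^2$, so $\dim_\Kb(\Kb\Hb e) = |W|^2$. Corollary~\ref{coro:endo-bi}(e), applied to the zero ideal of $\kb[\CCB]$, gives that $Z$ is a free $P$-module of rank $|W|$, so $\dim_\Kb(\Kb Z) = |W|$. Since $\Kb Z$ is a field, the $\Kb Z$-module $\Kb\Hb e$ is automatically free, and its rank must be $|W|^2/|W| = |W|$.

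The Morita equivalence furnishes an isomorphism $\Kb\Hb \longiso \End_{\Kb Z}(\Kb\Hb e)^\opp$, and the freeness just established yields $\End_{\Kb Z}(\Kb\Hb e) \simeq \Mat_{|W|}(\Kb Z)$. Because $\Kb Z$ is commutative, matrix transposition provides an isomorphism $\Mat_{|W|}(\Kb Z)^\opp \simeq \Mat_{|W|}(\Kb Z)$, and combining these gives $\Kb\Hb \simeq \Mat_{|W|}(\Kb Z)$. The argument is essentially bookkeeping once Proposition~\ref{prop:morita} is in hand; there is no serious obstacle, the key input being the regularity of $\Kb Z$, which is automatic as it is a field.
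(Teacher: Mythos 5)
Your proof is correct and follows essentially the same route as the paper's: invoke Proposition~\ref{prop:morita} with $U=P\setminus\{0\}$ (using that $\Kb Z$ is the fraction field of $Z$, hence a field and thus regular), then compare the $P$-ranks $|W|^2$ of $\Hb e$ and $|W|$ of $Z$ to identify $\Kb\Hb e$ as a free $\Kb Z$-module of rank $|W|$. The only difference is that you spell out the endomorphism-ring and opposite-algebra bookkeeping that the paper leaves implicit.
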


\begin{proof}
Proposition~\ref{prop:morita} shows the Morita equivalence.
	Since $\kb(V\times V^*)$ is a free $\kb(V\times V^*)^WW$-module of rank $1$ and
	since $\Theta(\Kb\Hb e)\simeq \kb[\CCB](V\times V^*)$, it follows from
	Corollary \ref{center reg} that $\Kb\Hb e$ is a free $\Kb Z W$-module of rank $1$.
In particular, $\Kb\Hb e$ is a $\Kb Z$-vector space of dimension $|W|$, whence the isomorphism
	of $\Kb\Hb$ with a matrix algebra.
\end{proof}

\bigskip

\section{Complements}

\bigskip

\subsection{Poisson structure}

The PBW-decomposition induces an isomorphism of {\it $\kb$-vector spaces} 
$\kb[T] \otimes \Hb \longiso \Hbt$. Given $h \in \Hb$, let $\tilde{h}$ denote the image of 
$1 \otimes h \in \kb[T] \otimes \Hb$ in $\Hbt$ through this isomorphism. 
If $z$, $z' \in Z$, then $[z,z']=0$, hence
$[\zti,\zti'] \in T \Hbt$. We denote by $\{z,z'\}$ the image of $[\zti,\zti']/T \in \Hbt$ in  
$\Hb = \Hbt/T\Hbt$. It is easily checked that $\{z,z'\} \in Z$ and that 
\equat\label{eq:poisson}
\{-,-\} : Z \times Z \longto Z\indexnot{ZZZ}{{{\{,\}}}}
\endequat
is a $\kb[\CCB]$-linear {\it Poisson bracket}. Given $c \in \CCB$,
it induces a Poisson bracket
\equat\label{eq:poisson-c}
\{-,-\} : Z_c \times Z_c \longto Z_c.
\endequat

\bigskip

\subsection{$\Hbt$ as a Rees algebra}
\label{sub:Rees}

Let us define $\Hbd=\Hbt\otimes_{\kb[T]}\bigl(
\kb[T]/(T-1)\bigr)$\indexnot{Hd}{\Hbd}, an algebra over $\kb[\CCBt]/(T-1)$
(we identify that ring with $\kb[\CCB]$).

Given $c \in \CCB$, we set
$\Hbd_c =  \Hbt_{1,c}$\indexnot{Hdc}{\Hbd_c}.

\medskip

We define a $\kb$-algebra filtration of 
$\Hbd$\indexnot{Ht}{\Hbd^{\trianglelefteq}}
$$\Hbd^{\trianglelefteq -1}=0,\
\Hbd^{\trianglelefteq 0}=\kb[V]\rtimes W,\
\Hbd^{\trianglelefteq 1}=\Hbd^{\trianglelefteq 0}V+\Hbd^{\trianglelefteq 0}\CCB^*+\Hbd^{\trianglelefteq 0}$$
$$\text{ and }\Hbd^{\trianglelefteq i}= (\Hbd^{\trianglelefteq 1})^i\text{ for }i\ge 2.$$

The canonical maps $\kb[V]\rtimes W\to 
(\grad^{\trianglelefteq}\Hbd)^0$ and
$V\oplus\CCB^*\to (\grad^{\trianglelefteq}\Hbd)^1$ induce a morphism
of $\NM$-graded
algebras $g:\Hb\to\grad^{\trianglelefteq}\Hbd$

The PBW decomposition (Theorem \ref{PBW-1}) shows the following result.

\begin{prop}
The morphism $g$ is an isomorphism.
\end{prop}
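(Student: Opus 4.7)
The plan is to identify both $\Hb$ and $\grad^{\trianglelefteq}\Hbd$ with the same $\kb$-module via the PBW decomposition (Theorem \ref{PBW-1}), and then check that $g$ matches this identification.

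First, I would verify well-definedness of $g$ as a morphism of $\kb$-algebras. It suffices to check that the defining relations of $\Hb$ are satisfied by the images in $\grad^{\trianglelefteq}\Hbd$ of the generators $V^*$, $V$ and $W$. The relations $[x,x']=[y,y']=0$ for $x,x'\in V^*$ and $y,y'\in V$ hold already in $\Hbd$. For the cross relation, in $\Hbd$ one has
$$[y,x] = \langle y,x\rangle + \sum_{s\in\Ref(W)} (\e(s)-1)C_s\frac{\langle y,\alpha_s\rangle\langle\alpha_s^\ve,x\rangle}{\langle\alpha_s^\ve,\alpha_s\rangle}s.$$
Since $y \in \Hbd^{\trianglelefteq 1}$ and $x \in \Hbd^{\trianglelefteq 0}$, the commutator sits in $\Hbd^{\trianglelefteq 1}$; its class in $(\grad^{\trianglelefteq}\Hbd)^1$ is obtained by discarding the filtration-degree-zero summand $\langle y,x\rangle \in \kb \subset \Hbd^{\trianglelefteq 0}$ and keeping $\sum_s(\e(s)-1)C_s\frac{\langle y,\alpha_s\rangle\langle\alpha_s^\ve,x\rangle}{\langle\alpha_s^\ve,\alpha_s\rangle}s$, which matches the defining relation of $\Hb$. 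Surjectivity of $g$ is then immediate: $\grad^{\trianglelefteq}\Hbd$ is generated by its degree $0$ and $1$ components, all of whose generators lie in the image of $g$.

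For injectivity, the key point is to identify the graded pieces of $\grad^{\trianglelefteq}\Hbd$ with the PBW module. Theorem \ref{PBW-1} specialized at $T=1$ yields a $\kb$-linear isomorphism $\pbw_1: \kb[\CCB] \otimes \kb[V] \otimes \kb W \otimes \kb[V^*] \longiso \Hbd$. I would equip the source with the $\NM$-filtration by total degree placing $\CCB^*$ and $V$ in degree $1$ while $V^*$ and $W$ stay in degree $0$, and show that $\pbw_1$ maps this filtration onto $\Hbd^{\trianglelefteq \bullet}$. One inclusion is clear: a PBW element $a \otimes f \otimes w \otimes g$ with $\deg_{\CCB^*}(a) = p$ and $\deg_V(g) = q$ corresponds to the product $a\cdot f\cdot w\cdot g \in \Hbd^{\trianglelefteq p} \cdot \Hbd^{\trianglelefteq 0} \cdot \Hbd^{\trianglelefteq 0} \cdot \Hbd^{\trianglelefteq q} \subseteq \Hbd^{\trianglelefteq p+q}$. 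The reverse inclusion is by induction on $i$, showing that any product of at most $i$ elements of $V \cup \CCB^*$ alternating with factors from $\kb[V]\rtimes W$ can be rewritten in PBW form of total degree at most $i$. The induction relies on Corollary \ref{coro:commutateur-t} specialized at $T=1$ to control the reordering: $[y,f] = \partial_y(f) - \sum_s \e(s)C_s\langle y,\alpha_s\rangle \frac{s(f)-f}{\alpha_s}s$ has summands in $\Hbd^{\trianglelefteq 0}$ and $\Hbd^{\trianglelefteq 1}$ respectively, while $\CCB^*$-elements are central and $W$-elements preserve the filtration.

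Once the filtration-match is established, passing to associated graded yields a $\kb$-linear isomorphism $\kb[\CCB] \otimes \kb[V] \otimes \kb W \otimes \kb[V^*] \longiso \grad^{\trianglelefteq}\Hbd$. Composing its inverse with the PBW isomorphism for $\Hb$ (Theorem \ref{PBW-0}) produces a $\kb$-linear isomorphism $\Hb \longiso \grad^{\trianglelefteq}\Hbd$ acting identically on the PBW generators, hence agreeing with $g$ on those generators and therefore everywhere. The main obstacle is the inductive filtration-compatibility argument: one must verify carefully that successive reorderings in PBW form do not inflate the filtration degree, which reduces to the observation that each commutator of a degree-$1$ generator with a degree-$0$ factor produces an "error" in $\Hbd^{\trianglelefteq 1}$ as provided by Corollary \ref{coro:commutateur-t}.
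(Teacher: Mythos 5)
Your proof is correct and follows exactly the route the paper intends: the paper's entire justification is the sentence "The PBW decomposition (Theorem \ref{PBW-1}) shows the following result," and your argument is the careful expansion of that one-liner (matching the $(\CCB^*,V)$-degree filtration on the PBW module with $\Hbd^{\trianglelefteq\bullet}$ via Corollary \ref{coro:commutateur-t} at $T=1$, then passing to associated graded). Nothing to add.
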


Note that this proposition shows that $\Hbt$ is the Rees algebra
of $\Hbd$ for this filtration.

\bigskip

\subsection{Additional filtration}\label{sub:filtration}
\medskip

Define a $P$-algebra filtration of $\Hb$ by 
$$\Hb^{\preccurlyeq -1}=0,\
\Hb^{\preccurlyeq 0}=P[W],\ \Hb^{\preccurlyeq 1}=\Hb^{\preccurlyeq 0}+
\Hb^{\preccurlyeq 0}V+\Hb^{\preccurlyeq 0}V^* \text{ and }
\Hb^{\preccurlyeq i}=\Hb^{\preccurlyeq 1}\Hb^{\preccurlyeq i-1}
\text{ for } i\ge 2.$$
Note that $\Hb^{\preccurlyeq 2N-1}{\not=}\Hb$ and
$\Hb^{\preccurlyeq 2N}=\Hb$.

Let $V'$ be the $\kb W$-stable complement to $V^W$ in $V$.
We have an injection of $P$-modules
$P\otimes (V^{\prime *}\oplus V')\otimes \kb[W]\hookrightarrow
 \Hb^{\preccurlyeq 1}$.
It extends to a morphism of graded $P$-algebras
$$f:P\otimes (\kb[V']^\cow\otimes \kb[V^{\prime *}]^\cow)\rtimes W\to
\mathrm{gr}^{\preccurlyeq}\Hb$$
where $P$ and $W$ are in degree $0$ and $V^{\prime *}$ and $V'$ are
in degree $1$.

\begin{prop}
\label{pr:filttilde}
The morphism $f$ is an isomorphism of graded $P$-algebras.
\end{prop}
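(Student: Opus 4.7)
I would define $f$ on generators and check it is a well-defined surjection, then prove injectivity by matching $\Hb^{\preccurlyeq i}$ to a PBW-like $P$-module decomposition.

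First, I would define $f$ as the unique morphism of graded $P$-algebras sending $P[W]=P\rtimes W$ identically onto $\mathrm{gr}_0^{\preccurlyeq}\Hb=\Hb^{\preccurlyeq 0}$, and sending $V'\subset\kb[V']^\cow$ and $V^{\prime *}\subset\kb[V^{\prime *}]^\cow$ (both in degree~$1$) to the classes of $V'\subset V$ and $V^{\prime *}\subset V^*$ in $\mathrm{gr}_1^{\preccurlyeq}\Hb$. To verify well-definedness, I check three families of relations: (i) the symmetric algebra relations on $V'$ and on $V^{\prime *}$, which follow from $[V,V]=0=[V^*,V^*]$ in $\Hb$; (ii) the commutativity of $V'$ and $V^{\prime *}$ in the associated graded, since by the defining relation~(\ref{relations-0}) at $T=0$ one has $[V,V^*]\subset\Hb^{\preccurlyeq 0}$, hence vanishes in $\mathrm{gr}_2^{\preccurlyeq}\Hb$; (iii) the coinvariant relations, which hold because any $p\in\kb[V']^W_+$ of degree $d>0$, viewed in $\Hb$, lies simultaneously in $\Hb^{\preccurlyeq d}$ (as a product of $V'$-elements) and in $P\subset\Hb^{\preccurlyeq 0}$, so its class in $\mathrm{gr}_d^{\preccurlyeq}\Hb$ is zero.

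For surjectivity, $\mathrm{gr}^{\preccurlyeq}\Hb$ is generated as a $P$-algebra by degrees~$0$ and~$1$. Degree~$0$ is $P[W]$, in the image of $f$. Degree~$1$ is spanned by the images of $V$ and $V^*$, and since $V^W\subset P\subset\Hb^{\preccurlyeq 0}$ and $(V^*)^W\subset P\subset\Hb^{\preccurlyeq 0}$, only the contributions from $V'$ and $V^{\prime *}$ survive in $\mathrm{gr}_1^{\preccurlyeq}\Hb$, and these lie in the image of $f$.

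For injectivity, I would choose graded $\kb W$-stable complements $\HC\subset\kb[V']$ of $\kb[V']^W_+\kb[V']$ and $\HC'\subset\kb[V^{\prime *}]$ of $\kb[V^{\prime *}]^W_+\kb[V^{\prime *}]$, isomorphic as graded $\kb W$-modules to $\kb[V']^\cow$ and $\kb[V^{\prime *}]^\cow$. Using $\kb[V]=\kb[V^W]\otimes\kb[V']=\kb[V]^W\otimes\HC$ and dually $\kb[V^*]=\kb[V^*]^W\otimes\HC'$, the PBW Theorem~\ref{PBW-0} yields an isomorphism of $P$-modules $\Hb\simeq P\otimes\HC\otimes\kb W\otimes\HC'$. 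Let $F^i\Hb$ denote the $P$-submodule spanned by basis elements $b\otimes w\otimes b^*$ with $\deg b+\deg b^*\le i$. The claim is $\Hb^{\preccurlyeq i}=F^i\Hb$; once proven, $\mathrm{gr}_i^{\preccurlyeq}\Hb$ is identified with $P\otimes(\HC\otimes\kb W\otimes\HC')_i\cong A_i$ as a free $P$-module via $f$, concluding the argument.

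The inclusion $F^i\Hb\subset\Hb^{\preccurlyeq i}$ is clear since each $b\in\HC$ (resp.\ $b^*\in\HC'$) is a polynomial of degree $\deg b$ in $V'\subset\Hb^{\preccurlyeq 1}$ (resp.\ degree $\deg b^*$ in $V^{\prime *}\subset\Hb^{\preccurlyeq 1}$). The reverse inclusion proceeds by induction, reducing to verifying $V\cdot F^i\Hb\subset F^{i+1}\Hb$, $V^*\cdot F^i\Hb\subset F^{i+1}\Hb$, and $P[W]\cdot F^i\Hb\subset F^i\Hb$. The last is immediate ($P$ central, $W$ stabilizes $\HC$ and $\HC'$). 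For $v\in V$, the product $v\cdot bwb^*=(vb)wb^*$ is handled by expanding $vb\in\kb[V]$ along $\kb[V]=\kb[V]^W\otimes\HC$. For $x\in V^*$, write $x\cdot bwb^*=(bx+[x,b])wb^*$; the principal term $bxwb^*=bw\cdot w^{-1}(x)b^*$ is decomposed via $\kb[V^*]=\kb[V^*]^W\otimes\HC'$, and the commutator term $[x,b]\in\kb[\CCB]\otimes\kb[V]\otimes\kb W$ has $\kb[V]$-degree at most $\deg b-1$ by iterating~(\ref{relations-0}) and using the $W$-equivariance of $\HC$, so it lands in $F^{i-1}\Hb$.

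\textbf{Main obstacle.} The hardest part is the inductive step above: although the commutation identities are dictated by~(\ref{relations-0}), the bookkeeping to place everything in PBW form while tracking $\HC$- and $\HC'$-degrees simultaneously is the main technical work. The key structural observation that makes this work is that $P=\kb[\CCB]\otimes\kb[V]^W\otimes\kb[V^*]^W$ is central and commutators $[V,\kb[V^*]]$, $[\kb[V],V^*]$ at $T=0$ strictly decrease the opposing polynomial degree.
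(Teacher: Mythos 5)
Your proof is correct and is essentially the paper's own argument: the paper disposes of this proposition in one line by invoking the PBW decomposition (Corollary \ref{coro:P-libre}), and your write-up simply supplies the details, identifying $\Hb^{\preccurlyeq i}$ with the $P$-span of the PBW basis elements $bwb^*$ of total coinvariant degree $\le i$. The only blemish is a convention swap in the inductive step: with the paper's notation $\kb[V]=S(V^*)$ and PBW order $\kb[V]\otimes\kb W\otimes\kb[V^*]$, it is $x\in V^*$ that commutes freely with $b\in\kb[V]$, while $y\in V$ must be pushed past $b$ (via Corollary \ref{coro:commutateur-t} at $T=0$, which drops the $\kb[V]$-degree by one) and then conjugated through $w$ to reach $b^*$ — the opposite of what you wrote — but the argument is symmetric in the two cases and goes through verbatim after the swap.
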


\begin{proof}
This follows from the PBW decomposition (Corollary \ref{coro:P-libre}).
\end{proof}

\bigskip

\subsection{Symmetrizing form}\label{sub:symetrisante}
\medskip
Recall (Proposition \ref{pr:symmkV}) that we have 
symmetrizing forms $p_N:\kb[V]\to \kb[V]^W$ and
$p_N^*:\kb[V^*]\to \kb[V^*]^W$.

We define a $P$-linear map
\begin{align*}
\taub_\HHb : \Hb=\kb[\CCB]\otimes \kb[V]\otimes \kb W\otimes \kb[V^*] &
\longto  P \indexnot{tx}{\taub_\HHb} \\
a\otimes b\otimes w\otimes c &\longmapsto  a\delta_{1w}p_N(b)p_N^*(c).
\end{align*}

\begin{theo}[{\cite[Theorem 4.4]{BGS}}]
\label{symetrique}
The form $\taub_\HHb$ is symmetrizing for the $P$-algebra $\Hb$.
\end{theo}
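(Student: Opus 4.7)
My plan is to verify the two defining properties of a symmetrizing form separately: that $\taub_\HHb$ is a trace, i.e.\ $\taub_\HHb([h,h'])=0$ for all $h,h'\in\Hb$, and that the induced $P$-linear map $\hat\taub_\HHb:\Hb\to\Hom_P(\Hb,P)$, $h\mapsto(h'\mapsto\taub_\HHb(hh'))$, is an isomorphism. Non-degeneracy will come from passing to the associated graded of the filtration from Proposition~\ref{pr:filttilde}, while the trace property will be checked directly on generators.

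For non-degeneracy, on $\grad^\preccurlyeq\Hb\simeq P\otimes(\kb[V']^\cow\otimes\kb[V^{\prime *}]^\cow)\rtimes W$ I introduce the $P$-linear form
$$\bar\taub(p\otimes\bar b\otimes w\otimes\bar c)=p\,\delta_{1,w}\,\bar p_N(\bar b)\,\bar p_N^*(\bar c),$$
where $\bar p_N:\kb[V']^\cow\to\kb$ and $\bar p_N^*:\kb[V^{\prime *}]^\cow\to\kb$ are the projections onto the one-dimensional top-degree components. Each of $\bar p_N,\bar p_N^*$ is a symmetrizing form over $\kb$ for its coinvariant algebra (this is the key ingredient in the proof of Proposition~\ref{pr:symmkV}), so their tensor product symmetrizes $\kb[V']^\cow\otimes\kb[V^{\prime *}]^\cow$. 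Since $\kb[V]^\cow_N$ and $\kb[V^*]^\cow_N$ carry the mutually inverse $\kb W$-characters $\det_V$ and $\det_V^{-1}$, the form $\bar p_N\otimes\bar p_N^*$ is $W$-invariant, so $(\kb[V']^\cow\otimes\kb[V^{\prime *}]^\cow)\rtimes W$ is symmetric over $\kb$; tensoring with $P$ shows $\bar\taub$ is a symmetrizing form for the $P$-algebra $\grad^\preccurlyeq\Hb$. Now $\taub_\HHb$ vanishes on $\Hb^{\preccurlyeq 2N-1}$, where $N=|\Ref(W)|$, and induces $\bar\taub$ on the top graded piece, so $\hat\taub_\HHb$ preserves filtrations (with shift $2N$) and has associated graded $\hat{\bar\taub}$; by the filtered-graded lemma (Lemma~\ref{le:injsurjfiltered}) the isomorphism $\hat{\bar\taub}$ transports to an isomorphism $\hat\taub_\HHb$.

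For the trace property, by $P$-bilinearity it suffices to check $\taub_\HHb([g,bw'c])=0$ for $g$ in a generating subset of $\Hb$ and $bw'c$ a PBW monomial; central elements of $\kb[\CCB]$ require no check. For $g=y\in V$, Corollary~\ref{coro:commutateur-t} at $T=0$ yields $[y,b]=-\sum_{s\in\Ref(W)}\e(s)C_s\langle y,\alpha_s\rangle\Delta_s(b)\,s$ with $\Delta_s(b)=(s(b)-b)/\alpha_s\in\kb[V]$; a PBW expansion reduces $\taub_\HHb([y,bw'c])$ to a scalar multiple of $p_N(\Delta_s(b))\,p_N^*(c)$ with $s=(w')^{-1}\in\Ref(W)$, while the complementary contribution $\taub_\HHb(b[y,w']c)$ vanishes via the $\delta_{1,w'}$ factor. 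The key identity $p_N\circ\Delta_s=0$ holds because $\Delta_s$ is $\kb[V]^W$-linear (as $\kb[V]^W\subset\kb[V]^{\langle s\rangle}$) and strictly lowers polynomial degree, hence induces an endomorphism of $\kb[V]^\cow$ lowering degree by $1$ whose image misses the top-degree component $\kb[V]^\cow_N$. The case $g=x\in V^*$ is symmetric. For $g=w\in W$, a short PBW calculation shows $\taub_\HHb(wbw'c)-\taub_\HHb(bw'cw)$ is a multiple of $\det_V(w)\cdot\det_V^{-1}(w)-1=0$. The principal technical obstacle is the vanishing $p_N\circ\Delta_s=0$, which is exactly what prevents the Cherednik deformation terms in $[y,b]$ from spoiling the trace identity.
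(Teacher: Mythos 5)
Your proof is correct, and it splits into one half that coincides with the paper's and one half that takes a genuinely different route. The non-degeneracy argument is essentially the paper's: both pass to $\mathrm{gr}^{\preccurlyeq}\Hb\simeq P\otimes(\kb[V']^\cow\otimes\kb[V^{\prime*}]^\cow)\rtimes W$, observe that the induced form is the ($W$-invariant, since $\kb[V]^\cow_N\otimes\kb[V^*]^\cow_N$ affords the trivial character) tensor product of the symmetrizing forms on the two coinvariant algebras twisted by $W$ (Lemma~\ref{le:symmtwisted}), and transport the resulting isomorphism back through the filtration (Lemma~\ref{le:injsurjfiltered}, packaged in the paper as Proposition~\ref{pr:filtsymm}). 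Where you diverge is the trace property. The paper gets it abstractly from Lemma~\ref{le:trace}: with $L=V'\oplus V^{\prime*}$ one checks the purely degree-theoretic containment $L^{2N+1}\subset\Hb^{\preccurlyeq 2N-1}$, and the trace property of $\taub_\HHb$ is then inherited from that of the (already known to be symmetric) graded form. You instead verify $\taub_\HHb([g,h'])=0$ directly on generators $g\in V\cup V^*\cup W$, using that $\{g:\taub_\HHb(gh')=\taub_\HHb(h'g)\ \forall h'\}$ is a subalgebra, the commutator formula of Corollary~\ref{coro:commutateur-t} at $T=0$, and the vanishing $p_N\circ\Delta_s=0$ (which holds exactly as you say: $\Delta_s$ is $\kb[V]^W$-linear and sends $\HC_i$ into $\kb[V]_{<N}\subset\kb[V]^W\kb[V]_{<N}$ since $\HC_{>N}=0$). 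Both arguments are sound; the paper's is shorter and avoids any computation with the deformed relations, while yours is more self-contained and isolates the precise mechanism — the identity $p_N\circ\Delta_s=0$ — by which the Cherednik correction terms fail to obstruct the trace identity.
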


\begin{proof}
We have an isomorphism
\begin{align*}
(\mathrm{gr}^{\preccurlyeq}\Hb)^{2N} &\longiso PW\\
\kb[\CCB]\otimes \kb[V]\otimes \kb W\otimes \kb[V^*]\ni 
a\otimes b\otimes w\otimes c & \longmapsto p_N(b) aw p_N^*(c).
\end{align*}
Via the isomorphism of Proposition \ref{pr:filttilde},
the $P$-linear form on $\mathrm{gr}^{\preccurlyeq}\Hb$
induced by $\taub_\HHb$ is given by 
	$$P\otimes (\kb[V']_W\otimes \kb[V^{\prime *}]_W)\rtimes W\ni
a\otimes (b\otimes c)\otimes w\mapsto a\delta_{1w}p_N(b) p_N^*(c).$$
It follows from Lemma \ref{le:symmtwisted} that this is a symmetrizing form.

Let $L=V'\oplus V^{\prime *}$.
We have $S^{N+1}(V')\subset S(V')^W_{>0}\cdot S^{\le N-1}(V')$ and
	$S^{\ge N+2}(V')\subset S(V')^W_{>0}\cdot S^{\le N}(V')$
(and similarly with $V^{\prime *}$), hence
$$L^{2N+1}\subset \Hb^{\preccurlyeq 2N-1}.$$
It follows from Lemma \ref{le:trace} that $\tau_\HHb$ is a trace.
We deduce from
Proposition \ref{pr:filtsymm}  that $\taub_\HHb$ is symmetrizing.
\end{proof}

\begin{rema}
Note that while there is no canonical isomorphism
$\kb[V]^\cow_N\longiso \kb$, there is a canonical choice of isomorphism
$\kb[V]^\cow_N\otimes_{\kb}\kb[V^*]^\cow_N\longiso \kb$ obtained by
requiring $\langle\alpha_s^\vee,\alpha_s\rangle=1$ for all $s\in\REF(W)$.
This provides a canonical trace $\taub_\HHb$.\finl
\end{rema}

We denote by $\casimir_\HHb\in Z$ the central Casimir element of $\Hb$ (cf. \S \ref{se:symmalg}).
%Soit $(h_i)_{1 \le i \le |W|^3}$ une $P$-base de $\Hb$ et soit $(h_i^\ve)_{1 \le i \le |W|^3}$ 
%la $P$-base duale (pour la forme sym\'etrisante $\taub_\HHb$). Posons
%$$\casimir_\HHb=\sum_{i=1}^{|W|^3} h_i h_i^\ve.$$
%L'\'el\'ement $\casimir_\HHb$ de $\Hb$ est appel\'e l'{\it \'el\'ement de Casimir central} de $\Hb$ 
%(ou $(\Hb,\taub_\HHb)$)~: 
%il est ind\'ependant du choix de la $P$-base $(h_i)_{1 \le i \le |W|^3}$ (il ne d\'epend que du 
%couple $(\Hb,\t)$) et il v\'erifie 
%$$\casimir_\HHb \in Z\indexnot{ca}{\casimir_\HHb}$$
%(voir par exemple~\cite[d\'efinition~2.2.4~et~lemme~2.2.6]{chlouveraki}).

% \bigskip

% \section{Comment calculer $Z$~?}\label{section:calcul Q}
% 
% \bigskip
% 
% Nous allons ici donner quelques ingr\'edients permettant d'aider 
% au calcul du centre $Z$ de l'alg\`ebre $\Hb$~: nous ne donnerons pas d'algorithme, 
% mais juste quelques principes permettant de r\'esoudre 
% les petits cas. Nous les mettrons en \oe uvre dans les chapitres 
% \ref{chapitre:rang 1} (le rang $1$) et~\ref{chapitre:b2} (le type $B_2$). 
% Un des ingr\'edients essentiels est la bi-graduation de $Z$. 
% Tout d'abord, rappelons les deux formules 
% (voir~\ref{hilbert Q molien} et~\ref{hilbert Q fantome}) permettant de calculer 
% la s\'erie de Hilbert de $Q$~:
% \eqna
% \dim_\kb^\bigrad(Q)&=&
% \DS{\frac{1}{|W|~(1-\tb\ub)^{|\refw|}} 
% \sum_{w \in W} \frac{1}{\det(1-w\tb)~\det(1-w^{-1}\ub)}}\\
% &=&\DS{\frac{\DS{\sum_{\chi \in \Irr(W)} 
% f_\chi(\tb)~f_\chi(\ub)}}{(1-\tb\ub)^{|\refw|} ~\DS{\prod_{i=1}^n (1-\tb^{d_i})(1-\ub^{d_i})}}}.
% \endeqna

\bigskip

\subsection{Hilbert series}
We compute here the bigraded Hilbert series of 
$\Hb$, $P$, $Z$ and $e\Hb e$. First of all, note that 
$$\dim_\kb^{\BZ\times\BZ}(\kb[\CCB]) = \frac{1}{(1-\tb\ub)^{|\refw|}},$$
so that it becomes easy to deduce the Hilbert series for $\Hb$, 
using the PBW-decomposition:
\equat\label{hilbert H}
\dim_\kb^{\BZ\times\BZ}(\Hb) = \frac{|W|}{(1-\tb)^n~(1-\ub)^n
~ (1-\tb\ub)^{|\refw|}}.
\endequat
On the other hand, using the notation of Theorem~\ref{chevalley}, 
we get, thanks to~(\ref{hilbert biinv}), 
\equat\label{hilbert P}
\dim_\kb^{\BZ\times\BZ}(P)=
\frac{1}{(1-\tb\ub)^{|\refw|} ~\DS{\prod_{i=1}^n (1-\tb^{d_i})(1-\ub^{d_i})}}.
\endequat
Finally, note that the PBW-decomposition is a $W$-equivariant isomorphism 
of bigraded $\kb[\CCB]$-modules, from which we deduce that 
$\Hb e \simeq \kb[\CCB] \otimes \kb[V] \otimes \kb[V^*]$ as bigraded  
$\kb W$-modules. So
\equat\label{iso bigrad}
\text{\it the bigraded $\kb$-vector spaces $Z$ and 
$\kb[\CCB] \otimes \kb[V \times V^*]^{\Delta W}$ are isomorphic.}
\endequat
We deduce that $\dim_\kb^{\BZ\times\BZ}(Z)=\dim_\kb^{\BZ\times\BZ}(\kb[\CCB]) \cdot 
\dim_\kb^{\BZ\times\BZ}(\kb[V \times V^*]^{\Delta W})$. By
(\ref{hilbert molien}) and Proposition~\ref{dim bigrad Q0 fantome},
we get 
\equat\label{hilbert Q molien}
\dim_\kb^{\BZ\times\BZ}(Z) = \frac{1}{|W|~(1-\tb\ub)^{|\refw|}} 
\sum_{w \in W} \frac{1}{\det(1-w\tb)~\det(1-w^{-1}\ub)}
\endequat
and
\equat\label{hilbert Q fantome}
\dim_\kb^{\BZ\times\BZ}(Z) = 
\frac{\DS{\sum_{\chi \in \Irr(W)} 
f_\chi(\tb)~f_\chi(\ub)}}{(1-\tb\ub)^{|\refw|} ~\DS{\prod_{i=1}^n (1-\tb^{d_i})(1-\ub^{d_i})}}.
\endequat

\bigskip

\section{Special features of Coxeter groups}\label{section:coxeter-h}

\cbstart

\boitegrise{{\bf Assumption.} 
{\it In this section \S\ref{section:coxeter-h}, we assume that 
$W$ is a Coxeter group, and we use the notation of \S\ref{se:Coxetergroups}.}}{0.75\textwidth}

\bigskip

In relation with the aspects studied in this chapter, one of the features 
of the situation is that the algebra $\Hb$ admits another automorphism $\s_\Hb$, \indexnot{sz}{\s_\Hb}  
induced by the isomorphism of $W$-modules $\s : V \longiso V^*$. It is the 
reduction modulo $T$ of the automorphism $\s_\Hbt$ of $\Hbt$ defined in 
\S\ref{section:coxeter-htilde}. Propositions~\ref{prop:auto-coxeter-1} 
and~\ref{prop-bis:auto-coxeter-1} now becomes the following.

\bigskip

\begin{prop}\label{prop:auto-coxeter-0}
There exists a unique automorphism $\s_\Hb$ of $\Hb$ such that 
$$
\begin{cases}
\s_\Hb(y)=\s(y) & \text{if $y \in V$,}\\
\s_\Hb(x)=-\s^{-1}(x) & \text{if $x \in V^*$,}\\
\s_\Hb(w)=w & \text{if $w \in W$,}\\
\s_\Hb(C)=C & \text{if $C \in \CCB^*$.}\\
\end{cases}
$$
\end{prop}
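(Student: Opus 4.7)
The plan is to deduce the statement from Proposition~\ref{prop:auto-coxeter-1}, which already provides the automorphism $\s_\Hbt$ of the generic Cherednik algebra $\Hbt$. Since $\Hb=\Hbt/T\Hbt$ by definition, it suffices to check that the ideal $T\Hbt$ is stable under $\s_\Hbt$, so that $\s_\Hbt$ descends to an automorphism $\s_\Hb$ of the quotient.

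This stability is immediate: the generator $T$ of the ideal lies in $\CCBt^*$, and Proposition~\ref{prop:auto-coxeter-1} states that $\s_\Hbt(C)=C$ for all $C\in\CCBt^*$, so in particular $\s_\Hbt(T)=T$, hence $\s_\Hbt(T\Hbt)=T\s_\Hbt(\Hbt)=T\Hbt$. The induced automorphism $\s_\Hb$ of $\Hb$ then satisfies the four prescribed formulas because each is obtained by reducing modulo $T\Hbt$ the corresponding formula from Proposition~\ref{prop:auto-coxeter-1} (using that the images of $y\in V$, $x\in V^*$, $w\in W$ and $C\in\CCB^*$ in $\Hb$ are the reductions of the corresponding elements in $\Hbt$).

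For uniqueness, observe that the four families $V$, $V^*$, $W$ and $\CCB^*$ together generate $\Hb$ as a $\kb$-algebra (indeed, the image of $\CCB^*$ together with $V$, $V^*$, $W$ already generates the $\kb[\CCB]$-subalgebra prescribed by the PBW decomposition, cf.~Theorem~\ref{PBW-0}). Hence any $\kb$-algebra endomorphism of $\Hb$ is determined by its values on these generators, which forces uniqueness.

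There is no real obstacle: all the content lies in Proposition~\ref{prop:auto-coxeter-1}, whose proof relied on checking that the defining relations~(\ref{relations-1}) are preserved by the prescribed assignment on generators. The only thing to verify here is the compatibility of $\s_\Hbt$ with the quotient by $T$, which is automatic from $\s_\Hbt(T)=T$.
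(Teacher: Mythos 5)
Your proof is correct and follows exactly the paper's route: the paper likewise obtains $\s_\Hb$ as the reduction modulo $T$ of the automorphism $\s_\Hbt$ from Proposition~\ref{prop:auto-coxeter-1}, the point being that $\s_\Hbt(T)=T$ so the ideal $T\Hbt$ is preserved. Your added remarks on the stability of $T\Hbt$ and on uniqueness via the generators $V$, $V^*$, $W$, $\CCB^*$ are exactly the (omitted) details the paper takes for granted.
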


\bigskip

\begin{prop}\label{prop-bis:auto-coxeter-0}
We have the following statements:
\begin{itemize}
\itemth{a} $\s_\Hb$ stabilizes the subalgebras $\kb[\CCB]$ and $\kb W$ and exchanges 
the subalgebras $\kb[V]$ and $\kb[V^*]$.

\itemth{b} Given $h \in \Hb^{\NM \times \NM}[i,j]$, 
we have $\s_\Hb(h) \in \Hb^{\NM \times \NM}[j,i]$.

\itemth{c} Given $h \in \Hb^\NM[i]$ (respectively $h \in \Hb^\BZ[i]$), 
we have $\s_\Hb(h) \in \Hb^\NM[i]$ (respectively $h \in \Hb^\BZ[-i]$).

\itemth{d} $\s_\Hb$ commutes with the action of $W^\wedge$ on $\Hb$.

\itemth{e} Given $c \in \CCB$, then $\s_\Hb$ induces an automorphism 
of $\Hb_{c}$, still denoted by $\s_\Hb$ (or $\s_{\Hb_{c}}$ if necessary).

\itemth{f} $\s_\Hb(\euler)=- \euler$.
\end{itemize}
\end{prop}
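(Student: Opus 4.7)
The plan is to exploit the fact that $\s_\Hb$ is just the reduction modulo the two-sided ideal $T\Hbt$ of the automorphism $\s_\Hbt$ of $\Hbt$ from Proposition~\ref{prop:auto-coxeter-1}: since $\s_\Hbt(T)=T$, that ideal is stable, and each of the claims (a)--(f) is then inherited from the corresponding assertion of Proposition~\ref{prop-bis:auto-coxeter-1} by specializing at $T=0$. Alternatively, all six points can be checked directly from the defining formulas of $\s_\Hb$ in Proposition~\ref{prop:auto-coxeter-0}, and that is what I outline below.

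For (a)--(c), I would just read off the defining formulas. The automorphism $\s_\Hb$ fixes every element of $\kb W$ and every element of $\CCB^*$, so it stabilizes both $\kb W$ and $\kb[\CCB]$; and it swaps the generating subspaces $V$ (bidegree $(1,0)$) and $V^*$ (bidegree $(0,1)$), so it stabilizes $\kb[V] \oplus \kb[V^*]$ as a pair. Part (b) follows at once from the observation that $\s_\Hb$ sends bidegree $(1,0)$ to $(0,1)$, $(0,1)$ to $(1,0)$, fixes $(1,1)$ (on $\CCB^*$) and $(0,0)$ (on $W$); (c) follows from (b) since $i+j$ is symmetric and $j-i$ changes sign under $(i,j)\mapsto(j,i)$. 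For (d), the action of any $\g \in W^\wedge$ rescales elements of $\kb W$ and of $\CCB^*$ but fixes $V$ and $V^*$ pointwise, while $\s_\Hb$ does the opposite; commutation therefore holds on each generating subspace and so on all of $\Hb$. For (e), since $\s_\Hb(C_s - c_s) = C_s - c_s$, the defining ideal $\CG_c \Hb$ of $\Hb_c$ is stable under $\s_\Hb$, which thus descends.

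The only substantive point is (f). Starting from the second form
$\euler = \sum_i x_i y_i + \sum_{s \in \Ref(W)} \e(s) C_s s$
of \S\ref{section:eulertilde}, applying $\s_\Hb$ yields
$$
\s_\Hb(\euler) \;=\; -\sum_i \s^{-1}(x_i)\,\s(y_i) \;+\; \sum_{s\in\Ref(W)} \e(s)\, C_s\, s.
$$
By the identity~(\ref{eq:sigma-sigma-inverse}), $(\s^{-1}(x_i))_i$ is a basis of $V$ whose dual basis of $V^*$ under $\langle -,-\rangle$ is $(\s(y_i))_i$. Since $\sum_i y_i x_i \in \Hb$ is the canonical (basis-independent) image of $\Id_V \in V^* \otimes V \simeq \End(V)$, I conclude $\sum_i \s^{-1}(x_i)\s(y_i) = \sum_i y_i x_i$. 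Substituting and using the first form $\euler = \sum_i y_i x_i + \sum_s C_s s$ gives
$$
\s_\Hb(\euler) \;=\; -\euler \;+\; \sum_{s \in \Ref(W)} \bigl(\e(s)+1\bigr)\, C_s\, s.
$$
The expected obstacle, and in fact the only genuinely Coxeter-specific input, is dealing with that residual sum. It is dispatched by Lemma~\ref{lem:coxeter-2}: every reflection in a Coxeter group has determinant $-1$, so $\e(s)+1 = 0$ for every $s \in \Ref(W)$ and the last sum vanishes, giving $\s_\Hb(\euler) = -\euler$. Everything else is formal; this last step is where the Coxeter hypothesis is essential (for a general complex reflection group the analogous formula would fail).
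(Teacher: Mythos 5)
Your proposal is correct and follows essentially the same route as the paper: Proposition~\ref{prop-bis:auto-coxeter-0} is obtained there by reducing Proposition~\ref{prop-bis:auto-coxeter-1} modulo the $\s_\Hbt$-stable ideal $T\Hbt$ (note that $\s_\Hbt(\eulertilde)=-nT-\eulertilde$ indeed specializes to $\s_\Hb(\euler)=-\euler$ at $T=0$). Your supplementary direct verification of (a)--(f), including the computation showing $\s_\Hb(\euler)=-\euler+\sum_{s}(\e(s)+1)C_s s$ with the last sum killed by Lemma~\ref{lem:coxeter-2}, is also accurate.
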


Similarly, there exists an action of $\Gb\Lb_2(\kb)$ on $\Hb$, 
which is obtained by reduction modulo $T$ of the action on $\Hbt$ defined in 
Remark~\ref{rem:sl2-1}.

\cbend

\bigskip

\part{Families and cellular characters}\label{part:reps}

\chapter{Representations}
\label{se:representations}

\section{Highest weight categories}
We define highest weight category structures on categories of (graded)
representations of $\Hbt$, following Appendix
\ref{se:Appendixtriangular}. The existence of such structures for
the restricted Cherednik algebras (cf. \S \ref{chapter:bebe-verma}) is due
to Bellamy and Thiel \cite{BelTh1}.

\medskip

We consider the $\ZM$-grading on $A=\Hbt$ and take $k=\kb[\CCBt]$.
We have three graded $k$-subalgebras
$B_-=\kb[\CCBt\times V^*]$,
$B_+=\kb[\CCBt\times V]$ and $H=\kb[\CCBt]W$ of $A$.
Theorem \ref{PBW-1} and Example \ref{ex:Hsplit} show that
the conditions ($\bigtriangleup$i)-($\bigtriangleup$ix) of Appendix \ref{se:Appendixtriangular} are satisfied
with $I=\{\kb[\CCBt]\otimes E\}_{E\in\Irr(\kb W)}$.

\medskip

We put $\Hbt_-=\kb[\CCBt\times V^*]\rtimes W$ and
$\Hbt_+=\kb[\CCBt\times V]\rtimes W$.
Given $E$ a graded $(\kb[V^*]\rtimes W)$-module, we define the Verma module
$$\tilde{\Delta}(E)=\Ind_{\Hbt_-}^{\Hbt}(\kb[\CCBt]\otimes E)=
\Hbt\otimes_{\Hbt_-}(\kb[\CCBt] \otimes E).
\indexnot{Deltat}{\tilde{\Delta}(E)}$$
Note that the functor $\tilde{\D}$ is exact, since $\Hbt$ is a free 
$\Hbt_-$-module, by the PBW decomposition, which also 
gives a canonical isomorphism of $\Hbt_+$-modules
\equat
\label{dimension left}
\kb[\CCBt\times V]\otimes E\longiso \tilde{\Delta}(E)
\endequat
where $W$ acts diagonally and $\kb[\CCBt\times V]$ acts by left multiplication
on $\kb[\CCBt\times V]\otimes E$.
We will view $\kb W$-modules as graded $(\kb[V^*]\rtimes W)$-modules
concentrated
in degree $0$ by letting $V$ act by $0$.

%We define $\tilde{\nabla}(E)$ as the union of the $\Hbt^-$-submodules of
%$\Hom_{\Hbt^+}(\Hbt,E)$ that are finitely generated over $\kb[\CCt]$.

\medskip
Let $R$ be a noetherian commutative $\kb[\CCBt]$-algebra.
Let $\tilde{\OC}(R)$\indexnot{Ot}{\tilde{\OC}(R)}
be the category of finitely generated $\ZM$-graded 
$(R\otimes_{\kb[\CCBt]}\Hbt)$-modules that are locally nilpotent for the
action of $V$.

\smallskip
Theorem \ref{th:hwgraded} provides the following result.

\begin{theo}
The category $\tilde{\OC}(R)$ is a highest weight category
over $R$ with set of
standard modules $\{R\tilde{\Delta}(E)\langle i\rangle\}_{E\in\Irr(\kb W),
i\in\BZ}$ and partial order $R\tilde{\Delta}(E)\langle i\rangle<
\tilde{\Delta}(F)\langle j\rangle$ if $i<j$.
\end{theo}

\smallskip
We put $\tilde{\Delta}(\mathrm{co})=\tilde{\Delta}(\kb[V^*]^\cow)=
\Hbt e\otimes_{\kb[V^*]^W}\kb$.\indexnot{Deltaco}{\tilde{\Delta}(\mathrm{co})}

\begin{lem}
\label{le:classDeltaco}
We have
$[\tilde{\Delta}(\mathrm{co})]=\sum_{E\in\Irr(W)}f_E(\tb^{-1})
[\tilde{\Delta}(E)]$ in $K_0(\tilde{\OC}(\kb[\CCBt]))$.
\end{lem}

\begin{proof}
The $(\kb[V^*]\rtimes W)$-module
$M=\kb[V^*]^\cow$ has a finite filtration given by $M^{\le i}=
(\kb[V^*]^\cow)^{\le i}$ and the associated graded module is
isomorphic to $\bigoplus_{E\in\Irr(W)}f_E(\tb^{-1})E$.
Consequently, we have a filtration of the $\Hbt$-module 
$\tilde{\Delta}(\mathrm{co})$ given by
$\tilde{\Delta}(\mathrm{co})^{\le i}=\tilde{\Delta}((\kb[V^*]^\cow)^{\le i})$
and the associated graded $\Hbt$-module $\mathrm{gr}\
\tilde{\Delta}(\mathrm{co})$
is isomorphic to
$\bigoplus_{E\in\Irr(W)}f_E(\tb^{-1})\tilde{\Delta}(E)$.
The result follows.
\end{proof}

\begin{rema}
Note that, for the filtration introduced in the proof of Lemma
\ref{le:classDeltaco}, the module
 $\mathrm{gr}\ \tilde{\Delta}(\mathrm{co})$ is isomorphic to
$\tilde{\Delta}(\kb W)$, as an ungraded $\Hbt$-module.\finl
\end{rema}

\section{Euler action on Verma modules}
The classical formula 
describing the action of the Euler element on Verma modules is given in the 
proposition below (cf \cite[\S 3.1(4)]{ggor}).

\smallskip
Given $(\orbite,j) \in \orbiteb^\circ$, $H \in \orbite$ and $E \in \Irr(W)$, we put
$$m_{\orbite,j}^E = \langle \Res_{W_H}^W E, {\det}^j \rangle_{W_H}\indexnot{ma}{m_{\orbite,j}^\chi}$$
$$C_E=\frac{1}{\dim_\kb E}\sum_{s \in \REF(W)} \e(s)\Tr(s,E)~C_s.\leqno{\text{and}}\indexnot{ca}{C_E}$$

\begin{lem}
\label{le:cchi}
We have
$$C_E=\sum_{(\orbite,j) \in \orbiteb^\circ} 
\frac{m_{\orbite,j}^E|\orbite|e_\orbite}{\dim_\kb E}\cdot K_{\orbite,j}
\in\bigoplus_{(\orbite,j) \in \orbiteb^\circ} \ZM_{\ge 0}K_{\orbite,j}.$$
\end{lem}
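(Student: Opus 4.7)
The computation is a direct expansion of $C_E$ in the $K$-basis; the only subtleties are book-keeping. Here is how I would proceed.

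First, I would substitute the change of basis $C_{s_H^i}=\sum_{j=0}^{e_H-1}\z_{e_H}^{i(j-1)}K_{H,j}$ from \S\ref{section:hyperplans-variables} into the definition of $C_E$. Because the parameters $C_s$ and the character values $\Tr(s,E)$ are constant on conjugacy classes of reflections, the inner sum over $H\in\orbite$ contributes a common factor $|\orbite|$. Choosing one representative $H\in\orbite$ and using $\e(s_H^i)=\z_{e_\orbite}^i$ together with $\z_{e_\orbite}^i\cdot\z_{e_\orbite}^{i(j-1)}=\z_{e_\orbite}^{ij}$, this yields
\[
C_E=\frac{1}{\dim_\kb E}\sum_{\orbite\in\AC/W}|\orbite|\sum_{j=0}^{e_\orbite-1}K_{\orbite,j}\ \Bigl(\sum_{i=1}^{e_\orbite-1}\z_{e_\orbite}^{ij}\,\Tr(s_H^i,E)\Bigr).
\]

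Next, the decomposition of $\Res_{W_H}^W E$ into characters of the cyclic group $W_H$ gives $\Tr(s_H^i,E)=\sum_{k=0}^{e_\orbite-1}m_{\orbite,k}^E\,\z_{e_\orbite}^{ik\eta}$ for the appropriate sign $\eta\in\{\pm1\}$ fixed by the convention on $\det|_{W_H}$. Plugging this in, the inner double sum in $i,k$ becomes a geometric sum: for each $k$, $\sum_{i=1}^{e_\orbite-1}\z_{e_\orbite}^{i(j+\eta k)}$ equals $e_\orbite-1$ when $j+\eta k\equiv 0\pmod{e_\orbite}$ and $-1$ otherwise, i.e.\ $e_\orbite\delta_{j+\eta k\equiv 0}-1$. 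Summing over $k$ and using $\sum_k m_{\orbite,k}^E=\dim_\kb E$, the contribution of the $-1$ term in $j$ is the constant $-\dim_\kb E$, independent of $j$.

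The crucial simplification now comes from the identity $\sum_{j=0}^{e_\orbite-1}K_{\orbite,j}=0$ in $\KCB$, which is exactly the defining relation of $\KCB\subset\kb^{\orbiteb^\circ}$. This relation kills the $-\dim_\kb E$ contribution, leaving only the $\delta$-term:
\[
C_E=\sum_{\orbite\in\AC/W}\sum_{j=0}^{e_\orbite-1}\frac{|\orbite|\,e_\orbite\,m_{\orbite,j}^E}{\dim_\kb E}K_{\orbite,j},
\]
after reindexing $j\mapsto -\eta j\bmod e_\orbite$ (which is a bijection of $\{0,\dots,e_\orbite-1\}$). The nonnegativity statement is then automatic since each $m_{\orbite,j}^E=\langle\Res_{W_H}^W E,\det^j\rangle_{W_H}\in\ZM_{\ge 0}$ and $|\orbite|,e_\orbite,\dim_\kb E$ are positive integers (and one has to verify that $|\orbite|e_\orbite m_{\orbite,j}^E/\dim_\kb E$ is actually an integer, which follows e.g.\ from the integrality statement \cite[4.17]{broue-michel} being invoked).

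The only real obstacle is keeping straight the sign/indexing conventions for the characters of $W_H$ (is $\det$ acting via $\zeta_{e_H}$ or $\zeta_{e_H}^{-1}$, and correspondingly which reindexing sends $-j\bmod e_\orbite$ back to $j$); a sanity check on the simplest example $W=\ZM/2$ (where the formula reduces to $-C_s=2K_{H,0}$) and on a cyclic $W=\ZM/3$ with $E$ the non-trivial one-dimensional character pins down the convention unambiguously.
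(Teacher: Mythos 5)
Your computation is essentially the paper's proof, unpacked: the paper substitutes the identity $\sum_{w\in W_H}\e(w)C_w\,w=e_H\sum_{j}\varepsilon_{H,j}K_{H,j}$ (which already contains both your change of basis and your geometric sum, and uses $\sum_j K_{H,j}=0$ exactly as you do) and then reads off the coefficient of $K_{\orbite,j}$ as the scalar by which the central element $\sum_{H\in\orbite}e_H\varepsilon_{H,j}$ acts on $E$. One small correction: your final ``reindexing $j\mapsto-\eta j$'' does not leave the expression invariant, since it relocates the index on $K_{\orbite,j}$ as well as on $m$; what the computation literally yields is $\sum_j\frac{|\orbite|e_\orbite\, m_{\orbite,-\eta j}^E}{\dim_\kb E}K_{\orbite,j}$, and reconciling this with the statement is precisely the convention question (whether $\varepsilon_{H,j}$ is the idempotent of $\det^j$ or of $\det^{-j}$) that you propose to settle by examples -- the same point the paper glosses over in its one-line assertion about the scalar. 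The integrality and non-negativity of the coefficients, which is what the lemma is used for, are unaffected either way.
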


\begin{proof}
We have
$$C_E=\sum_{(\orbite,j) \in \orbiteb^\circ}\sum_{H\in\orbite} \frac{1}{\dim_\kb E} \Tr(e_H\varepsilon_{H,j},E)
K_{\orbite,j}.$$
Now, given $(\orbite,j) \in \orbiteb^\circ$,
the central element $\sum_{H\in\orbite}e_H\varepsilon_{H,j}$ of $\kb W$ acts on
$E$ by the scalar $\frac{m_{\orbite,j}^E|\orbite|e_\orbite}{\dim_\kb E}$. The lemma follows.
\end{proof}

\begin{prop}
\label{pr:OmegaEuler}
Let $E\in\Irr(W)$. The element
$\eulertilde$ acts on $\tilde{\Delta}(E)_i$ by multiplication by
$Ti+C_E$.
\end{prop}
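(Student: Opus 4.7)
The plan is to establish the formula in degree $0$ first, then propagate it to all degrees by an elementary induction using the commutation relation $[\eulertilde,x]=Tx$ for $x\in V^*$ from Proposition~\ref{lem:eulertilde}. Since $\tilde{\Delta}(E)$ is generated over $\kb[\CCBt]\otimes\kb[V]$ by $1\otimes E$, and $\kb[\CCBt]$ is central while $\kb[V]=S(V^*)$ raises the $\BZ$-degree by $1$, this two-step approach will cover every homogeneous piece.

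For the base case $i=0$, I would use the second form of the Euler element, namely
\[
\eulertilde \;=\; \sum_{i=1}^n x_i y_i \;+\; \sum_{s\in\Ref(W)} \e(s)\,C_s\,s,
\]
and evaluate it on an element $1\otimes v$ with $v\in E$. By definition, $\kb W$-modules are regarded as $(\kb[V^*]\rtimes W)$-modules with $V$ acting by zero, so $y_i\cdot(1\otimes v)=1\otimes(y_i v)=0$ for every $i$. Hence
\[
\eulertilde\cdot(1\otimes v)\;=\;\sum_{s\in\Ref(W)} \e(s)\,C_s\,(1\otimes sv).
\]
The element $z_W := \sum_{s\in\Ref(W)} \e(s)\,C_s\,s\in \kb[\CCB]\otimes\kb W$ is a class function on $\Ref(W)$ times the corresponding group-algebra element, hence lies in the centre of $\kb[\CCB]\otimes\kb W$. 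As $E$ is an irreducible $\kb W$-module (and $\kb W$ is split by Theorem~\ref{deploiement}), $z_W$ acts on $E$ by the scalar $\Tr(z_W,E)/\dim_\kb E = C_E$. This gives the claim for $i=0$, and in particular identifies $C_E$ with the correct scalar (one may alternatively read off the expression of Lemma~\ref{le:cchi}).

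For the inductive step, fix $i\ge 0$ and assume $\eulertilde$ acts on $\tilde{\Delta}(E)_i$ by $Ti+C_E$. By the PBW isomorphism~(\ref{dimension left}), we have $\tilde{\Delta}(E)_{i+1}=\kb[\CCBt]\cdot V^*\cdot \tilde{\Delta}(E)_i$. Since $\kb[\CCBt]$ is central in $\Hbt$, it commutes with $\eulertilde$, so it suffices to treat elements of the form $xm$ with $x\in V^*$ and $m\in\tilde{\Delta}(E)_i$. Using $[\eulertilde,x]=Tx$ (Proposition~\ref{lem:eulertilde}) and the induction hypothesis,
\[
\eulertilde\cdot(xm)\;=\;x\,\eulertilde\cdot m + [\eulertilde,x]\,m\;=\;x(Ti+C_E)m + T x\,m\;=\;\bigl(T(i+1)+C_E\bigr)(xm),
\]
where we use again that $T,C_E\in\kb[\CCBt]$ are central. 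This completes the induction and hence the proof.

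There is no real obstacle here; the only point requiring attention is the convention that $V\subset S(V)=\kb[V^*]$ acts by zero on $E$, which is what makes the $y_i x_i$ part of $\eulertilde$ disappear on the generating subspace $1\otimes E$ and thereby isolates the purely group-theoretic scalar $C_E$.
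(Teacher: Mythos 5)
Your proof is correct and follows essentially the same route as the paper's: both arguments reduce to the action of the central element $\sum_{s\in\Ref(W)}\e(s)C_s s$ on $E$ in degree $0$ (pushing the $x_iy_i$ terms across the tensor product over $\Hbt^-$, where $V$ acts by zero), and then use the commutation of $\eulertilde$ with the positive part. The only difference is cosmetic: the paper invokes $[\eulertilde,h]=iTh$ for $h\in\Hbt^{\BZ}[i]$ in a single step, whereas you re-derive the needed special case by induction on the degree from $[\eulertilde,x]=Tx$.
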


\begin{proof}
Recall (\S\ref{section:eulertilde}) that if $(x_1,\dots,x_n)$ denotes a 
$\kb$-basis of $V^*$ and if $(y_1,\dots,y_n)$ denotes its dual basis, then 
$$\eulertilde=\sum_{i=1}^n x_i y_i + \sum_{s \in \REF(W)} \e(s)C_ss.$$
Let $h\in (\Hbt_+)_i$, $v\in E$ and $m=h\otimes
 v\in\tilde{\Delta}(E)_i$. We have
$$\eulertilde \cdot m=h\eulertilde \otimes v+Tim=h\otimes (
\sum_{s \in \REF(W)} \e(s)C_s s \cdot v)+Tim.$$
Since $\sum_{s \in \REF(W)} \e(s)C_s s$ acts on $\kb[\CCBt] E$ by
multiplication by $C_E$,
the result follows.
\end{proof}

\begin{rema}
	\label{re:CEproduct}
	Assume $W=W_1\times\cdots\times W_r$ is the decomposition into
	indecomposables. Let $E_i\in\Irr(W_i)$ and $E=E_1\otimes\cdots\otimes
	E_r\in\Irr(W)$. We have $C_E=C_{E_1}+\cdots+C_{E_r}$.\finl
\end{rema}

\bigskip

\section{Case $T=1$}

It follows from Proposition \ref{lem:eulertilde} that the $\ZM$-grading
on $\Hbd$ is inner (i.e. given by the eigenspaces of $[\euler,-]$). 
We put $\dot{\Delta}(E)=
\Hbd\otimes_{\Hbt}\tilde{\Delta}(E)=
\tilde{\Delta}(E)\otimes_{\kb[T]}\bigl(\kb[T]/
<T-1>\bigr)$.\indexnot{Deltap}{\dot{\Delta}(E)}

Let $R$ be a commutative noetherian $\kb[\CCB]$-algebra. We assume that
given any family $E_1,E_2,\ldots,E_{n-1},E_n=E_1\in\Irr(\kb W)$ and given
any family $a_1,\ldots,a_n\in\BZ$
with $a_i\neq a_{i+1}$ and
$(C_{E_i}-C_{E_{i+1}}+a_i-a_{i+1})1_R$ non invertible in $R$ for 
$1\le i\le n-1$, then $a_1=a_n$. Note that this assumption is automatically
satisfied if $R$ is a local ring.

\smallskip
Let $\dot{\OC}(R)$\indexnot{Od}{\dot{\OC}(R)} be the category of finitely
generated $R\Hbd$-modules
that are locally nilpotent for the action of $V$.
Theorem \ref{th:hwinner} shows that this is a highest weight category
 \cite[\S 3]{ggor}.

\begin{theo}
\label{th:OT=1}
$\dot{\OC}(R)$ is a
highest weight category over $R$ with set of standard objects
$\{R\dot{\Delta}(E)\bigr)\}_{E\in\Irr(\kb W)}$.
The order is given by $E>F$ if $(C_E-C_F)1_R\in \ZM_{>0}$.
\end{theo}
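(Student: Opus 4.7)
The plan is to deduce the theorem from the general Theorem~\ref{th:hwinner} in Appendix~\ref{ap:hw}, which establishes highest weight structures on module categories for algebras with a triangular decomposition and an inner $\BZ$-grading. I would proceed in three stages.

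First, I would translate the structural data of $\Hbd$ into the hypotheses of the general theorem. The PBW decomposition (Theorem~\ref{PBW-1} specialized at $T=1$) gives $\Hbd=\kb[\CCB]\otimes\kb[V^*]\otimes \kb W\otimes \kb[V]$, which is the required triangular decomposition with $B_-=\kb[\CCB\times V^*]$, $B_+=\kb[\CCB\times V]$, $H=\kb[\CCB]W$. The $\BZ$-grading of Example~\ref{Z graduation-1} on $\Hbd$ is inner: by Proposition~\ref{prop:eulertilde} specialized at $T=1$, we have $[\dot\euler,h]=ih$ for every $h\in\Hbd^\BZ[i]$. Moreover, Proposition~\ref{pr:OmegaEuler} (at $T=1$) gives that $\dot\euler$ acts on $\dot\Delta(E)_i$ by the scalar $C_E+i$. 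Thus the generalized-eigenspace decomposition of an object under $\dot\euler$ recovers the information of a virtual $\BZ$-grading, with $\dot\Delta(E)_0$ sitting at eigenvalue $C_E$ and subsequent graded pieces at $C_E+1,C_E+2,\ldots$.

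Second, I would explain that the hypothesis imposed on $R$ before the theorem statement is precisely the condition required by Theorem~\ref{th:hwinner} for the partial order on $\Irr(\kb W)$ to be well defined. The point is that one declares $E\succ F$ whenever there exists $a\in\ZM_{>0}$ with $(C_E-C_F-a)1_R$ non-invertible in $R$, corresponding to a potential non-split extension between $\dot\Delta(F)$ and $\dot\Delta(E)\langle-a\rangle$ detected after generalized-eigenspace decomposition. The cycle condition $E_1,\ldots,E_n=E_1$ with $a_i\neq a_{i+1}$ and $(C_{E_i}-C_{E_{i+1}}+a_i-a_{i+1})1_R$ non-invertible translates to the absence of oriented cycles in the induced relation, hence to antisymmetry of the partial order. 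When $R$ is local, this follows immediately by summing the congruences $C_{E_i}-C_{E_{i+1}}\equiv a_{i+1}-a_i \pmod{\mG}$ around the cycle, so the local-ring case serves as the motivating example.

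Third, I would verify the standard modules are the $R\dot\Delta(E)$. The Verma module $\dot\Delta(E)$ in $\dot\OC(R)$ is obtained from the graded Verma $\tilde\Delta(E)\otimes_{\kb[T]}(\kb[T]/(T-1))$ of the previous theorem by forgetting the grading; its underlying space $\kb[V]\otimes E$ (cf.\ (\ref{dimension left})) is locally $V$-nilpotent since any element of $V$ raises the eigenvalue of $\dot\euler$ by~$1$ and $\kb[V]\otimes E$ decomposes under $\dot\euler$ into eigenspaces of eigenvalues $C_E+\NM$. The highest weight axioms — projective covers filtered by standards, BGG reciprocity, control of $\Ext$ between standards via the order — then follow from the abstract Theorem~\ref{th:hwinner} applied to this inner-graded triangular setup.

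The main obstacle lies in moving from the genuinely $\BZ$-graded category $\tilde\OC(R)$, where highest weight structure is straightforward (previous theorem), to the ungraded category $\dot\OC(R)$, where the $\BZ$-grading is only visible through generalized eigenspaces of $\dot\euler$. The subtlety is that distinct graded shifts may in principle produce the same generalized eigenvalue over $R$ if $\ZM\to R$ has kernel or if $C_E-C_F$ coincides with an integer modulo some non-invertible element; the cycle hypothesis on $R$ is exactly the constraint preventing this pathology and allowing the standard filtrations of projectives to remain well-defined in $\dot\OC(R)$.
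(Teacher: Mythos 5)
Your proposal follows the paper's own route: the theorem is obtained by verifying the hypotheses of the abstract Theorem~\ref{th:hwinner} for the triangular decomposition of $\Hbd$ together with the inner $\ZM$-grading given by $\dot{\euler}$, the hypothesis on $R$ being exactly condition (x) of \S\ref{se:innergrading} which guarantees that the relation $E>F$ is a partial order. One slip: elements of $V$ \emph{lower} the $\dot{\euler}$-eigenvalue by $1$ (they have degree $-1$), so local $V$-nilpotence of $\dot{\Delta}(E)$ follows because the eigenvalues $C_E+\NM$ are bounded \emph{below}, not because $V$ raises them.
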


\smallskip
Let $\FC_1$\indexnot{F1}{\FC_1} be the set of height one prime ideals $\CG$ of
$\kb[\CCB]$ such that $\dot{\OC}(\kb(\CG))$ is not 
semisimple.
Note that given $\mG$ a maximal ideal of $\kb[\CCB]$, the category
$\dot{\OC}(\kb(\mG))$ is not semisimple if and only if there is $\CG\in\FC_1$
such that $\CG\subset\mG$.

\smallskip

We have the following classical semisimplicity result (an improvement of which will be given in
Corollary \ref{co:semisimplicityO} below, using the KZ functor).

\begin{theo}
\label{th:semisimplicity}
The ideals in $\FC_1$ are of the form
$\langle (C_E-C_F-r)1_R \rangle$ for some 
$E,F\in\Irr(W)$ such that $C_E\neq C_F$
and some $r\in\ZM\setminus\{0\}$. They correspond to affine hyperplanes
in $\KCB(\QM)$.

Assume $R$ is a field and
$(C_E-C_F)1_R{\not\in}\ZM-\{0\}$ for all $E,F\in\Irr(\kb W)$.
Then the category $\dot{\OC}(R)$ is semisimple.

 In particular, if $R$ is a field and
$\QM\cap\bigl(\sum_{(\orbite,j)\in\orbiteb^\circ}\ZM K_{\orbite,j}1_R\bigr)=
\{0\}$, then
$\dot{\OC}(R)$ is semisimple.
\end{theo}

\begin{proof}
Under the assumption on $C_E$'s, the order on $\Irr(W)$ is trivial,
hence $\dot{\OC}(R)$ is semisimple.
The second assertion follows from Lemma \ref{le:cchi}.
\end{proof}

Consider now the case where $R$ is a field  and extend the structure of $\kb[\CCB]$-algebra on
$R$ to a structure of $\kb[\CCBt]$-algebra by setting $T 1_R=1_R$. Consider
a map $\logexp:R\to\ZM$ such that $\logexp(z+n)=\logexp(z)+n$ 
 for all $z\in R$ and $n\in\ZM$.
Proposition \ref{pr:gradedungraded} shows the following.

\begin{prop}
\label{pr:gradednongradedVerma}
Assume $R$ is a field.
There is an equivalence of graded highest weight categories
$$\dot{\OC}(R)^{(\ZM)}\xrightarrow{\sim}\tilde{\OC}(R),\
\dot{\Delta}(E)\mapsto
	\tilde{\Delta}(E)\langle -\logexp(C_E 1_R)\rangle.$$
\end{prop}

\section{Case $T=0$}
\label{se:caseT=0}
\subsection{}
Given $E$ a $\ZM$-graded $(\kb[V^*] \rtimes W)$-module, we put 
$\Delta(E)=\tilde{\Delta}(E)\otimes_{\kb[T]}
\kb[T]/<T>$\indexnot{Delta}{\Delta(E)}: it is a graded 
$\Hb$-module. We define $\Hb^+=\Hbt^+/T\Hbt^+=\kb[\CCB \times V] \rtimes W$ 
and $\Hb^-=\Hbt^-/T\Hbt^- = \kb[\CCB \times V^*] \rtimes W$. We have
$$\D(E)=\Ind_{\Hb^-}^{\Hb} (\kb[\CCB] \otimes E)=\Hb \otimes_{\Hb^-} (\kb[\CCB] \otimes E).$$
Assume $E \in \Irr(W)$. We denote by $\o_E : \Zrm(\kb W) \to
\kb$\indexnot{omegaE}{\o_E}
its associated central character and we set
$$\O_E = (\Id_{\kb[\CCB]} \otimes \o_E) \circ \Omeb : Z \longto \kb[\CCB],
\indexnot{OmegaE}{\O_E}$$
where $\Omeb : Z \to \Zrm(\kb[\CCB] W)$ is the morphism of algebras defined 
in~\S\ref{subsection:omega}. Note that $\O_E$ is a morphism of algebras. 

Recall that $Z^0$ denotes the $\ZM$-homogeneous 
component of $Z$ of degree $0$.

% 
% Let $Z_{\text{co}-0}$ be the quotient of $Z$ by the ideal $I$ generated by
% $Z^{>0}+Z^{<0}$. We have $Z_{\text{co}-0}=\kb[\ZC^{\GM_m}]$.
% The canonical map $Z^0\to Z$ induces an isomorphism
% $Z^0/(I\cap Z^0)\xrightarrow{\sim}Z_{\text{co}-0}$.
% 
% \smallskip
% 
% We define the morphism of $\kb[\CCB]$-algebras
% $\Omega'_E:Z_{\text{co}-0}\to \kb[\CCB]$
% by letting $\Omega_E'(z)$ be the scalar by which an element $z\in Z^0$
% acts on $\Delta(E)^0$.
% We define $\Omega_E:Z\to\kb[\CCB]$ as the composition
% $Z\xrightarrow{\text{can}}Z_{\text{co}-0}\xrightarrow{\Omega'_E} \kb[\CCB]$.
% Note that an element $z\in Z$ acts by $\Omega_E(z)$ on $\Delta(E)$.
% Note also that $I\subset\ker\Omega_E$.

\medskip

\begin{prop}\label{prop:action-z0}
Given $E \in \Irr(W)$, an element $b \in Z^0$ acts on $\D(E)$ by
multiplication by $\O_E(b)$. 
\end{prop}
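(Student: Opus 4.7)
The plan is to use the $\BZ$-grading of Example \ref{Z graduation-1} and the PBW decomposition to reduce the statement to a direct computation on the cyclic generator of $\Delta(E)$. Placing $E$ in $\BZ$-degree $0$, the PBW isomorphism identifies $\Delta(E)$ with $\kb[\CCB]\otimes\kb[V]\otimes E$ as $\BZ$-graded $\kb[\CCB]$-modules, and the degree-zero component is $\Delta(E)_0=\kb[\CCB]\otimes E$. Since $b\in Z\subset\Zrm(\Hb)$ commutes with left multiplication by $\kb[V]$, and since $\Delta(E)$ is generated as a $(\kb[\CCB]\otimes\kb[V])$-module by $\Delta(E)_0$, it will suffice to verify that $b\cdot(1\otimes v)=\O_E(b)\cdot(1\otimes v)$ for every $v\in E$.

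I will then write $b=\pbw(\sum_i a_i\otimes f_i\otimes w_i\otimes g_i)$ and use that $V\subset\kb[V^*]$ acts as zero on $E$ to obtain directly
$$b\cdot(1\otimes v)=\sum_i a_if_ig_i(0)\otimes w_iv\in\kb[\CCB]\otimes\kb[V]\otimes E.$$
The key step is the grading argument: since $b$ is $\BZ$-homogeneous of degree $0$, the left-hand side lies in $\Delta(E)_0=\kb[\CCB]\otimes E$, so the projection of the right-hand side onto $\kb[\CCB]\otimes\kb[V]_{>0}\otimes E$ must vanish. One may therefore replace each $f_i$ by its constant term $f_i(0)$ without altering the sum, obtaining $b\cdot(1\otimes v)=\sum_i a_if_i(0)g_i(0)\otimes w_iv=\Omeb(b)\cdot(1\otimes v)$. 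Since $\Omeb(b)\in\Zrm(\kb[\CCB]W)=\kb[\CCB]\otimes\Zrm(\kb W)$ acts on $v\in E$ via the central character $\o_E$, this equals $\O_E(b)\cdot(1\otimes v)$, as desired.

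The only genuine subtlety is the grading step, which requires verifying that the PBW identification $\Delta(E)\simeq\kb[\CCB]\otimes\kb[V]\otimes E$ is grading-preserving once $E$ is placed in degree $0$ and $\kb[V]_n$ in degree $n$. This is immediate from the definition of the $\BZ$-grading on $\Hb$ (under which $V^*$ has degree $1$) together with the fact that $\Hb^-$ acts on $\kb[\CCB]\otimes E$ through its quotient by the ideal generated by $V$, which is concentrated in degree $0$. Once that is established, the rest of the argument is routine unpacking of the definition of $\Omeb$.
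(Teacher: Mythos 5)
Your proof is correct and follows essentially the same route as the paper's: PBW-decompose $b$, use its $\ZM$-homogeneity of degree $0$ together with the fact that $V$ kills $E$ to reduce the action to that of $\Omeb(b)\in\Zrm(\kb[\CCB]W)$, hence to $\O_E(b)$. The only (immaterial) organizational difference is that the paper chooses the PBW components $f_i,g_i$ homogeneous with $\deg_\ZM(f_i)+\deg_\ZM(g_i)=0$ at the outset and lets $g_i\cdot v=0$ discard the non-constant terms, whereas you first evaluate the $g_i$ at $0$ and then invoke the degree of $b\cdot(1\otimes v)$ to discard the non-constant $f_i$.
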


\bigskip

\begin{proof}
Using the PBW-decomposition, we can write 
$$b=\sum_{i \in I} a_i f_i w_i g_i,$$
where $a_i \in \kb[\CCB]$, $f_i \in \kb[V]$, $w_i \in W$ and $g_i \in \kb[V^*]$. 
Since $b$ is homogeneous of degree $0$, we can choose the $f_i$'s and $g_i$'s to be homogeneous
elements such that
$\deg_\ZM(f_i)+\deg_\ZM(g_i)=0$ for all $i \in I$. 

Let $h \in \Hb$ and $v \in E$. We have
$$b \cdot (h \otimes_{\Hb^-} v)=bh \otimes_{\Hb^-} v = hb \otimes_{\Hb^-} v,$$
so
$$b \cdot (h \otimes_{\Hb^-} v)=\sum_{i \in I} a_i h f_i \otimes_{\Hb^-} (w_i g_i \cdot v).$$ 
Let $I_0$ denote the set of $i \in I$ such that $\deg_\ZM(f_i)=\deg_\ZM(g_i)=0$. Then 
$g_i v=0$ if $i \not\in I_0$, and $f_i$, $g_i \in \kb$ if $i \in I_0$, so 
$$b \cdot (h \otimes_{\Hb^-} v)=\sum_{i \in I_0}  h \otimes_{\Hb^-} (a_i f_i w_i g_i\cdot v).$$
Since $\sum_{i \in I_0} a_i f_i g_iw_i = \Omeb(b)$, the result follows.
\end{proof}

Note that $\O_E(\euler)=C_E$, so the next proposition follows also from
Proposition \ref{pr:OmegaEuler}.

\begin{prop}
\label{action euler verma}
The element $\euler$ acts by $\Omega_E(\euler)=C_E$ on $\Delta(E)$.
\end{prop}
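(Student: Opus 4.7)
The statement is essentially a corollary of the two results immediately preceding it, so the proof plan is short. First I would observe that $\euler$ lies in $Z^0$: centrality is equation~(\ref{eq:euler-central}), and homogeneity of degree $0$ for the $\ZM$-grading is equation~(\ref{eq:euler-1-1}). Hence Proposition~\ref{prop:action-z0} applies and shows that $\euler$ acts on $\Delta(E)$ by multiplication by the scalar $\Omega_E(\euler) \in \kb[\CCB]$.

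It then remains to identify this scalar with $C_E$. From the definition of $\Omeb$ in~\S\ref{subsection:omega}, combined with the formula
$$\eulertilde = \sum_{i=1}^n x_i y_i + \sum_{s \in \Ref(W)} \e(s) C_s s$$
given in~\S\ref{section:eulertilde} (whose reduction mod $T$ computes $\euler$), the PBW inverse of $\euler$ evaluated at $(x,y)=(0,0)$ kills the term $\sum_i x_i y_i$ and leaves $\Omeb(\euler) = \sum_{s \in \Ref(W)} \e(s) C_s \, s$, which is exactly the element of $\Zrm(\kb[\CCB]W)$ already identified in Remark~\ref{rem:v-v*}. Applying the central character $\o_E$ to this element gives
$$\Omega_E(\euler) = \sum_{s \in \Ref(W)} \e(s) C_s \, \o_E(s) = \frac{1}{\dim_\kb E}\sum_{s \in \Ref(W)} \e(s) C_s \, \Tr(s,E) = C_E,$$
where I used $\o_E(s) = \Tr(s,E)/\dim_\kb E$ for any $s \in \kb W$. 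This yields the proposition.

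Alternatively, one could deduce the result directly from Proposition~\ref{pr:OmegaEuler}: setting $T=0$ there shows that $\eulertilde$ modulo $T$, namely $\euler$, acts on $\Delta(E)_i = \tilde{\Delta}(E)_i / T\tilde{\Delta}(E)_i$ by the scalar $C_E$ independently of $i$, which gives the claim. No step is a genuine obstacle; the only minor point worth checking carefully is the agreement between the two expressions for $C_E$ (the definition from the excerpt versus the output of $\o_E \circ \Omeb$), but this is immediate from the definition of central characters of $\kb W$.
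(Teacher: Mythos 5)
Your proof is correct and follows exactly the route the paper intends: the paper states the result as an immediate consequence of Proposition~\ref{prop:action-z0} together with the identity $\O_E(\euler)=C_E$ (computed from $\Omeb(\euler)=\sum_{s\in\Ref(W)}\e(s)C_s s$, cf.\ Remark~\ref{rem:v-v*}), and also notes the alternative deduction from Proposition~\ref{pr:OmegaEuler}, both of which you give. Nothing is missing.
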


\begin{rema}
	Let us assume in this remark that $W$ is irreducible.
Recall (Example \ref{Z graduation-1}) that the $(\BZ/z_W\BZ)$-grading
on $\Hb$ deduced from the $\ZM$-grading 
is induced by conjugation by an element $w_z\in \Zrm(W)$. As a 
consequence, the category of $\BZ$-graded $\Hb$-modules decomposes
as a direct sum, parametrized by $l\in \BZ/z_W\BZ$,
of subcategories with objects the graded modules $M$
such that $w_z$ acts on $M^i$ by $\zeta^{i+l}$.

Given $R$ a noetherian commutative $\kb[\CCB]$-algebra, this
induces a corresponding decomposition of the category $\OC(R)$
of finitely generated graded $(\Hb\otimes_{\kb[\CCB]}R)$-modules that are
locally nilpotent for the action of $V$.
\end{rema}

\subsection{}

Let $\CG$ be a prime ideal of $\kb[\CCB]$. Recall that 
$\Hb(\CG)$ is a finitely
generated module over its center $Z(\CG)$
(Lemma \ref{P stable} and Corollary \ref{coro:P-libre})
and $Z(\CG)$ is a finitely generated
$\kb(\CG)$-algebra (Corollary \ref{coro:endo-bi} and Theorem
\ref{EG spherique-0}). It follows that all simple
$\Hb(\CG)$-modules are finite-dimensional over $\kb(\CG)$ and their
annihilators in $Z(\CG)$ are maximal ideals.

\begin{prop}\label{pr:simpleinregW}
Let $\zG$ a prime ideal of $Z$ and let 
	$L$ be a simple $(\Hb\otimes_Z Z_\zG)$-module.
Then 
	\begin{itemize}
		\item $L$ is a composition factor of $\Hb e\otimes_ZZ(\zG)$
		\item the restriction of $L$ to $Z(\zG)W$ is isomorphic to a direct
			summand of $Z(\zG)W$.
	\end{itemize}
\end{prop}

\begin{proof}
	Let $\pG=P\cap\zG$, a prime ideal of $P$.
	We have $\Hb\otimes_P\Kb\simeq
	(\Hb e\otimes_P\Kb)^{\oplus|W|}$ as
	$(\Hb\otimes_P\Kb)$-modules (Theorem \ref{theo:KH-mat}). Since $L$ is a composition factor of
	$\Hb\otimes_PP(\pG)$, it follows from
	Proposition \ref{prop:geck-rouquier}
	that it is a composition factor of
	$\Hb e\otimes_PP(\pG)$. Since $L$ is annihilated by $\zG$,
	we deduce that
	$L$ is a composition factor of
	$\Hb e\otimes_ZZ(\zG)$.

	Since $\Hb e\otimes_Z\Frac(Z)\simeq
	\Frac(Z)W$ as $\Frac(Z)W$-modules (Theorem \ref{theo:KH-mat}), it follows from
	Proposition \ref{prop:geck-rouquier} that
	$\Hb e\otimes_ZZ(\zG)$ and
	$Z(\zG)W$ have the same class in $K_0(Z(\zG)W)$, hence
	the $Z(\zG)W$-module $\Hb e\otimes_ZZ(\zG)$ is
	free of rank $1$. The proposition follows.
\end{proof}

%\begin{prop}
%	Let $L$ be a simple $\Hb(\CG)$-module and let $\mG$ be its annihilator
%	in $Z(\CG)$. Then 
%	\begin{itemize}
%		\item $\mG$ is a maximal ideal of $Z(\CG)$
%		\item $L$ is a composition factor of $\Hb(\CG)e\otimes_{Z(\CG)}Z(\CG)/\mG$
%		\item the
%	restriction of $L$ to $(Z(\CG)/\mG)W$ is isomorphic to a direct summand of
%	$(Z(\CG)/\mG)W$.
%	\end{itemize}
%\end{prop}
%
%\begin{proof}
%	Since $L$ is finite-dimensional  over $\kb(\CG)$, its annihilator
%	in $Z(\CG)$ is a maximal ideal. Let $\nG=P(\CG)\cap\mG$.
%	We have $\Hb(\CG)\otimes_{P(\CG)}\Frac(P(\CG))\simeq
%	\Hb(\CG)e\otimes_{P(\CG)}\Frac(P(\CG)))^{\oplus|W|}$ as
%	$\Hb(\CG)$-modules. Since $L$ is a composition factor of
%	$\Hb(\CG)\otimes_{P(\CG)}P(\CG)/\nG$, it follows from
%	Proposition \ref{prop:geck-rouquier}
%	that it is a composition factor of
%	$\Hb(\CG)e\otimes_{P(\CG)}P(\CG)/\nG$. We deduce that
%	$L$ is a composition factor of
%	$\Hb(\CG)e\otimes_{Z(\CG)}Z(\CG)/\mG$.
%
%	
%	
%	Since $\Hb(\CG)e\otimes_{Z(\CG)}Z(\CG)^{\reg}\simeq
%	Z(\CG)^{\reg}W$ as $Z(\CG)^{\reg}W$-modules (???), it follows from
%	Proposition \ref{prop:geck-rouquier} that
%	$\Hb(\CG)e\otimes_{Z(\CG)}Z(\CG)/\mG$ and
%	$(Z(\CG)/\mG)W$ have the same class in $K_0((Z(\CG)/\mG)W)$, hence
%	the $(Z(\CG)/\mG)W$-module $\Hb(\CG)e\otimes_{Z(\CG)}Z(\CG)/\mG$ is
%	free of rank $1$. The proposition follows.
%\end{proof}

The following theorem is \cite[Theorem 1.7]{EG}.

\begin{theo}
	\label{th:smoothsimples}
	Let $\zG$ be a prime ideal of $Z$. The following assertions are equivalent
\begin{itemize}
\itemth{1} $Z$ is smooth at $\zG$
\itemth{2} $\Hb e\otimes_Z Z_\zG$ is a progenerator for $\Hb\otimes_Z Z_\zG$
\itemth{3} there is a unique simple $(\Hb\otimes_Z Z_\zG)$-module
\itemth{4} $\Hb e\otimes_ZZ(\zG)$ is a simple $(\Hb\otimes_Z Z_\zG)$-module
\itemth{5} there is a simple $(\Hb\otimes_Z Z_\zG)$-module with dimension
	$\ge |W|$ over $Z(\zG)$
\itemth{6} the simple  $(\Hb\otimes_Z Z_\zG)$-modules are free $Z(\zG)W$-modules of
	rank $1$.
	\end{itemize}
\end{theo}

\begin{proof}
	Recall that the $Z(\zG)W$-module $\Hb e\otimes_ZZ(\zG)$ is
	free of rank $1$ (proof of Proposition \ref{pr:simpleinregW}). 
	It follows now from Proposition \ref{pr:simpleinregW} that
	(3)$\Leftarrow$(4)$\Leftrightarrow$(5)$\Leftrightarrow$(6).

	Assume (3). It follows from Proposition \ref{pr:simpleinregW}
	that $\Hb e\otimes_Z Z(\zG)$ is a progenerator for 
	$\Hb\otimes_Z Z(\zG)$, hence there is a surjective morphism
	of $\Hb\otimes_Z Z(\zG)$-modules
	$f:(\Hb e\otimes_Z Z(\zG))^r\twoheadrightarrow \Hb\otimes_Z Z(\zG)$ for
	some $r>0$. The composition
	$$(\Hb e\otimes_Z Z_\zG)^r\xrightarrow{\mathrm{can}}
	(\Hb e\otimes_Z Z(\zG))^r\xrightarrow{f} \Hb\otimes_Z Z(\zG)$$
	lifts to a morphism of $\Hb\otimes_Z Z(\zG)$-modules
	$(\Hb e\otimes_Z Z_\zG)^r\to \Hb\otimes_Z Z_\zG$ that is surjective
	by Nakayama's Lemma. It follows that $\Hb e\otimes_Z Z_\zG$ is
	a progenerator for $\Hb\otimes_Z Z_\zG$. So (3)$\Rightarrow$(2).

	Assume (2). It follows from Theorem \ref{theo:satake} that $Z_\zG$ is Morita equivalent
	to $\Hb\otimes_Z Z_\zG$. Since 
	$\Hb$ has finite projective dimension as a 
	$(\Hb\otimes_{\kb[\CCB]}\Hb^\opp)$-module
(Theorem \ref{EG spherique-1}), it follows that
	$\Hb\otimes_Z Z_\zG$ has finite projective dimension as a 
	$((\Hb\otimes_Z Z_\zG)\otimes_{\kb[\CCB]}(\Hb\otimes_Z Z_\zG)^\opp)$-module, hence
	$Z_\zG$ has finite projective dimension as a 
	$(Z_\zG\otimes_{\kb[\CCB]}Z_\zG)$-module.
	it follows that $Z$ is smooth at $\zG$. So (2)$\Rightarrow$(1).

	Assume (1). Since $\Hb e\otimes_P P_{P\cap\zG}$ is a free
	$P_{P\cap\zG}$-module (Corollary \ref{coro:P-libre-c}), it follows that
	its depth is equal to the Krull dimension of $P_{P\cap\zG}$.
	Since $Z_\zG$ is a finite flat $P_{P\cap\zG}$-module
	(Corollary \ref{coro:P-libre}), it follows that the $Z_{\zG}$-module
	$\Hb e\otimes_Z Z_\zG=(\Hb e\otimes_P P_{P\cap\zG})
	\otimes_{P_{P\cap\zG}}Z_\zG$ has depth equal to the Krull dimension
	of $Z_{\zG}$ \cite[\S 1, Proposition 11]{bourbaki10}.
	As $Z_\zG$ is regular, it follows that
	$\Hb e\otimes_Z Z_\zG$ is a projective $Z_\zG$-module 
	\cite[\S 4, Proposition 3]{bourbaki10}
	hence $\Hb e\otimes_Z Z_\zG$ induces a Morita equivalence
	between $\Hb\otimes_Z Z_\zG$ and $Z_\zG$ (Lemma \ref{le:critereMorita}).
	We deduce that (1)$\Rightarrow$(4).
\end{proof}

%\subsection{Lissit\'e}\label{part:lissite}%\setcounter{section}{0}
% La g\'eom\'etrie de la vari\'et\'e $\ZCB$ est intimement reli\'ee \`a 
% la th\'eorie des repr\'esentations de la $\kb$-alg\`ebre $\Hb$. Nous 
% allons ici rappeler comment cela fonctionne, tout en y int\'egrant 
% les cons\'equences sur les constructions de cet article (cellules de 
% Calogero-Moser). Fid\`eles au point de vue adopt\'e, nous travaillerons 
% dans le cadre g\'en\'erique.

\section{Gaudin algebras}
\label{ch:Gaudin}

\bigskip

Gaudin operators have been introduced when $W$ is a symmetric group via
Schur-Weyl duality in \cite{MuTaVa1,MuTaVa3}.

\subsection{$W$-covering of $\ZCB$}
\label{se:Wcovering}

Let $\ZCB'=\ZCB^\reg\times_{V^\reg/W}V^\reg$\indexnot{zp}{\ZCB'}.
We have a cartesian square

$$\xymatrix{
\ZCB' \ar[rr]^-{\mathrm{can}}\ar[d]_-{\mathrm{can}} && \ZCB^\reg \ar[d]^-\Upsilon \\
\CCB\times V^\reg\times V^*/W\ar[rr]_-{\mathrm{can}} && 
\CCB\times V^\reg/W\times V^*/W
}$$

There is an action of $W$ on $\ZCB'$ given by $w(z,v)=(z,w(v))$ for
$z\in\ZCB^\reg$ and $v\in V^\reg$. Let 
$Z'=\kb[\ZCB']=Z^\reg\otimes_{\kb[V^\reg]^W}\kb[V^\reg]$\indexnot{zp}{Z'}.

\begin{lem}
\label{le:Zprime}
The multiplication map gives an isomorphism $Z'\rtimes W\xrightarrow{\sim}
\Hb^\reg$. The image of $Z'$ by that map is
$C_{\Hb^\reg}(V^*)=\kb[\CCB\times V^\reg]\otimes\Theta^{-1}(\kb[V^*])$.

There are isomorphisms
$$\CCB\times V^\reg\times V^*\xrightarrow[\mathrm{can}]{\sim}
\CCB\times \bigl((V^\reg\times V^*)/\Delta W\bigr)\times_{V^\reg/W}V^\reg
\xrightarrow[\Theta^\#\times\mathrm{id}]{\sim}\ZCB'.$$
\end{lem}

\begin{proof}
There is a commutative diagram
$$\xymatrix{
Z^\reg\otimes_{\kb[V^\reg]^W}(\kb[V^\reg]\rtimes W)\ar[rr]^-{\mathrm{mult}}
\ar[d]_{\Theta\otimes \mathrm{id}}^\sim &&
\Hb^\reg \ar[d]^{\Theta}_\sim \\
\kb[\CCB\times (V^\reg\times V^*)/\Delta W]\otimes_{\kb[V^\reg]^W}
(\kb[V^\reg]\rtimes W)\ar[rr]_-{\mathrm{mult}} &&
\kb[\CCB\times V^\reg\times V^*]\rtimes W
}$$
Since the canonical map $V^\reg\times V^*\to
(V^\reg\times V^*)/\Delta W\times_{V^\reg/W} V^\reg$ is an isomorphism,
it follows that the bottom horizontal map in the diagram is an
isomorphism, hence the top horizontal map is an isomorphism as well. 
The other assertions of the lemma are clear.
\end{proof}

Recall that $\Hb^\reg e$ induces a Morita equivalence between
$\Hb^\reg$ and $Z^\reg$ (Corollary \ref{coro:reg-morita}).
Through the isomorphisms of Lemma
\ref{le:Zprime}, this corresponds to the Morita equivalence between
$Z'\rtimes W$ and $Z^{\prime W}=Z^\reg$ given by $Z'$.

\medskip
\def\Hilb{{\mathrm{Hilb}}}

Let $\Hilb^{|W|}(V^*)$ be the Hilbert scheme of $0$-dimensional subschemes
of length $|W|$ of $V^*$ and let $F_{|W|}\subset \Hilb^{|W|}(V^*)\times V^*$
be the associated universal family.
Consider the inverse $\ZCB'\xrightarrow{\sim}\CCB\times V^\reg\times V^*$ of the
isomorphism of Lemma \ref{le:Zprime} and its composition with the 
projection $\CCB\times V^\reg\times V^*\to V^*$. This defines a morphism
$f:\ZCB'\to V^*$. The product map 
$$\textrm{can}\times f:\ZCB'\to (\CCB\times V^\reg\times V^*/W)\times V^*$$
is a closed immersion.
Since the canonical map $\ZCB'\to
\CCB\times V^\reg\times V^*/W$ is finite and flat of degree $|W|$,
the universal property of $F_{|W|}$ shows that there is a unique morphism 
$\eta:\CCB\times V^\reg\times V^*/W\to \Hilb^{|W|}(V^*)$ and a (unique) isomorphism
$$\CCB\times V^\reg\times V^*/W\times_{\Hilb^{|W|}(V^*)} F_{|W|}\xrightarrow{\sim}\ZCB'$$
over $\CCB\times V^\reg\times V^*/W\times V^*$.

\smallskip
Let us describe the construction above at the level of points.
Given $m$ a (non-necessarily closed) point of
$\CCB\times V^\reg\times V^*/W$, we put
$L(m)=\kb(m)\otimes_{\kb[\CCB\times V^\reg\times V^*/W]}
\Hb^\reg e\xrightarrow{\sim}
\kb(m)\otimes_{\kb[\CCB\times V^\reg\times V^*/W]}Z'$\indexnot{Lm}{L(m)}.
We have $\dim_{\kb(m)} L(m)=|W|$.

The action of $\Theta^{-1}(\kb[V^*])$ by left multiplication on $Z'\xrightarrow{\sim}
\Hb^\reg e$ induces an action on $L(m)$. It comes from a
surjective morphism of algebras 
$$\kb(m)\otimes_\kb\Theta^{-1}(\kb[V^*])
\twoheadrightarrow \kb(m)\otimes_{\kb[\CCB\times V^\reg\times V^*/W]}Z'.$$
The kernel of that morphism defines a point of $\Hilb^{|W|}(V^*)$. It corresponds to
the schematic support of the $\Theta^{-1}(\kb[V^*])$-module $L(m)$.

\smallskip
Let $\ZC''$ be the spectral scheme of $\Theta^{-1}(V)$ acting on
the family $\{L(m)\}_m$.
This is the closed subscheme of 
$\CCB\times V^\reg\times V^*/W\times V^*$ given by
$$\ZC''=\{(c,v,u,\lambda)
\ |\ {\det}_{L(c,v,u)}(\Theta^{-1}(y)-\langle y,\lambda\rangle)=0\
\forall y\in V\}.$$
We have obtained the following proposition.

\begin{prop}
\label{pr:spectrum}
There is an isomorphism
$\ZC''\xrightarrow{\sim} \ZC',\ (c,v,u,\lambda)\mapsto
(\Theta^\#(c,v,\lambda),u)$.
\end{prop}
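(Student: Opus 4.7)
The plan is to use Lemma \ref{le:Zprime} to give a completely explicit description of $L(m)$ together with the action of $\Theta^{-1}(V)$, from which the spectral variety $\ZC''$ can be read off essentially tautologically. First, I will invoke the isomorphism $\Hb^\reg\cong Z'\rtimes W$ to identify $\Hb^\reg e=(Z'\rtimes W)e$ with $Z'\cdot e$, which is free of rank one as a $Z'$-module. Setting $A=\kb[\CCB\times V^\reg\times V^*/W]$, the fiber $L(m)$ becomes free of rank one over $B(m):=\kb(m)\otimes_A Z'$, the action of $\Theta^{-1}(y)$ for $y\in V$ being simply left multiplication by $\Theta^{-1}(y)\in Z'$. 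Under the isomorphism $Z'\cong\kb[\CCB]\otimes\kb[V^\reg]\otimes\kb[V^*]$ supplied by Lemma \ref{le:Zprime}, this element corresponds to $y\in V\subset\kb[V^*]$, so $B(m)\cong\kb(m)\otimes_{\kb[V^*/W]}\kb[V^*]$ is the coordinate ring of the scheme-theoretic fiber of $V^*\to V^*/W$ over $u$, and $\Theta^{-1}(y)$ acts on $L(m)$ as multiplication by $y$.

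Next, I will translate the defining equations of $\ZC''$ into this picture. Since $L(m)$ is free of rank one over $B(m)$, the polynomial $\det_{L(m)}(\Theta^{-1}(y)-\langle y,\lambda\rangle)$ equals the norm $N_{B(m)/\kb(m)}(y-\langle y,\lambda\rangle)$, which vanishes if and only if $\langle y,\lambda\rangle$ is an eigenvalue of multiplication by $y$ on $B(m)$. Requiring this simultaneously for every $y\in V$ is equivalent, scheme-theoretically, to the vanishing of $f(\lambda)-f(u)$ for all $f\in\kb[V^*]^W$, i.e.\ to $\bar\lambda=u$. This exhibits $\ZC''$ as the closed subscheme of $\CCB\times V^\reg\times V^*/W\times V^*$ cut out by $\bar\lambda=u$, so that the projection $(c,v,u,\lambda)\mapsto(c,v,\lambda)$ defines an isomorphism $\ZC''\xrightarrow{\sim}\CCB\times V^\reg\times V^*$.

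Finally, I will compose with the isomorphism $\CCB\times V^\reg\times V^*\xrightarrow{\sim}\ZC'$ from Lemma \ref{le:Zprime} (which sends $(c,v,\lambda)\mapsto(\Theta^\#(c,v,\lambda),v)$) to obtain the claimed isomorphism $\ZC''\xrightarrow{\sim}\ZC'$; the composite reads $(c,v,u,\lambda)\mapsto(\Theta^\#(c,v,\lambda),v)$, so I interpret the ``$u$'' in the statement as a typographical slip for ``$v$'', since the second factor of $\ZC'=\ZC^\reg\times_{V^\reg/W}V^\reg$ lies in $V^\reg$ and not in $V^*/W$. The main obstacle will be upgrading the set-theoretic spectral description to a genuinely scheme-theoretic identification of ideals. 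The essential input here is Lemma \ref{le:eigenspaces}, which ensures that $B(m)$ embeds faithfully into $\End_{\kb(m)}(L(m))$ with the same $\kb(m)$-dimension as $L(m)$; this forces the determinantal ideal generated by $\{\det_{L(m)}(\Theta^{-1}(y)-\langle y,\lambda\rangle)\}_{y\in V}$ to coincide with the pull-back ideal of the diagonal embedding $V^*/W\hookrightarrow V^*/W\times V^*/W$ along $(u,\lambda)\mapsto(u,\bar\lambda)$, rather than merely defining its reduced locus.
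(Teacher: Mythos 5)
Your reduction is the same one the paper's (very terse) proof relies on: via Lemma \ref{le:Zprime}, $L(m)$ is the free rank-one module over $B(m)=\kb(m)\otimes_{\kb[V^*]^W}\kb[V^*]$ on which $\Theta^{-1}(y)$ acts as multiplication by $y$, the defining determinants are norms, and the fiber of $\ZC'$ over $m$ is $\Spec B(m)$; your reading of the second component ``$u$'' as a slip for ``$v$'' is also right. The gap is the scheme-theoretic claim in your last paragraph. Faithfulness of $B(m)$ on $L(m)$ (Lemma \ref{le:eigenspaces}) identifies the \emph{annihilator} of $L(m)$ in $\kb(m)[V^*]$ with the graph ideal $(f(\lambda)-f(u))_{f\in\kb[V^*]^W}$, but it says nothing about the ideal generated by the characteristic polynomials, which agrees with the annihilator only up to radical. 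The two ideals genuinely differ over ramification points of $V^*\to V^*/W$: take $W=\SG_2$ permuting the coordinates of $V^*=\kb^2$ and $u=0$. Then $B(m)\simeq \kb[x_1]/(x_1^2)$, the norm of $y-\langle y,\lambda\rangle$ restricts over $u=0$ to $(y_1\lambda_1+y_2\lambda_2)^2$, and by polarization these generate $(\lambda_1,\lambda_2)^2$ in the fiber, whereas the graph ideal restricts to $(\lambda_1+\lambda_2,\lambda_1\lambda_2)$, which contains the linear form $\lambda_1+\lambda_2\notin(\lambda_1,\lambda_2)^2$. So the determinantal ideal is strictly contained in the graph ideal, and your two closed subschemes agree only after passing to reductions.

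What survives — and what the statement and its topological applications in this chapter actually use — is the identification of the underlying reduced schemes. Set-theoretically, the vanishing of all the norms $\prod_i\langle y,\mu_i-\lambda\rangle^{m_i}$ (where the $\mu_i$ are the points of the fiber of $V^*\to V^*/W$ over $u$) for every $y\in V$ forces $V$ to be the finite union of the subspaces $(\mu_i-\lambda)^\perp$, hence $\lambda=\mu_i$ for some $i$, since a vector space over an infinite field is not a finite union of proper subspaces; the converse is clear. Combined with your first paragraph and the isomorphism $\CCB\times V^\reg\times V^*\xrightarrow{\sim}\ZC'$ of Lemma \ref{le:Zprime}, this proves the proposition with $\ZC''$ taken with its reduced structure. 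You should either make that reading explicit, or delete the final paragraph: as written it asserts an equality of ideals that is false, and the appeal to Lemma \ref{le:eigenspaces} does not establish it.
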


\begin{rema}
\label{le:eigenspaces}
	Note that
	the indecomposable direct summands of the
	$\bigl(\kb(m)\otimes_\kb \Theta^{-1}(\kb[V^*])\bigr)$-module $L(m)$
	have no composition factors in common.
\end{rema}

\subsection{Gaudin operators}
\label{se:Gaudinoperators}
\medskip

We consider the $\bigl(\Theta^{-1}(\kb[V^*])\otimes \kb[\CCB\times V^\reg]
\otimes\kb[V^*]\bigr)$-module
$\bar{L}=\Hb^\reg$, where $\Theta^{-1}(\kb[V^*])\otimes\kb[\CCB\times V^\reg]$
acts by left multiplication and $\kb[V^*]$ acts by right multiplication.
Note that $\bar{L}$ is a free 
$(\kb[\CCB\times V^\reg] \otimes\kb[V^*])$-module
with basis $W$. The action of $\Theta^{-1}(\kb[V^*])$ on that module
provides a morphism of varieties
$\bar{\eta}:\CCB\times V^\reg \times V^*\to \mathrm{Sym}^{|W|}(V^*)$, where
$\mathrm{Sym}^{|W|}(V^*)$ denotes the $|W|$-th symmetric power of the variety
$V^*$.

Given a closed point $(c,v,v^*)\in\CCB\times V^\reg\times V^*$,
we put $\bar{L}(c,v,v^*)=\kb(c,v)\otimes_{\kb[\CCB\times V^\reg]}\Hb^\reg
\otimes_{\kb[V^*]}\kb(v^*)$, a $\Theta^{-1}(\kb[V^*])$-module.
We define similarly $\bar{L}(\tilde{m})$ for any point $\tilde{m}$ of
$\CCB\times V^\reg\times V^*$.
We denote
by $(e_w)_{w\in W}$ the $\kb(\tilde{m})$-basis of $\bar{L}(\tilde{m})$ obtained as the
image of $W$.

Given $y\in V$, the action of $\Theta^{-1}(y)$ on $\bar{L}(c,v,v^*)$ is given by the
operator
$$D_y^{c,v,v^*}:e_w\mapsto \langle y,w(v^*)\rangle e_w+
\sum_{s\in\REF(W)}\varepsilon(s) c_s
\frac{\langle y,\alpha_s\rangle}{\langle v,\alpha_s\rangle}e_{sw}.$$
\indexnot{Dy}{D_y^{c,v,v^*}}

Let $u$ be the image of $v^*$ in $V^*/W$. The
$(\kb[V^*]\rtimes W)$-module $\Ind_{\kb[V^*]}^{\kb[V^*]\rtimes W}\kb(v^*)$
is isomorphic to the semisimplification of 
$\Ind_{\kb[V^*]^W\otimes \kb W}^{\kb[V^*]\rtimes W}(\kb(u)\otimes\kb)=
\kb[V^*]\otimes_{\kb[V^*]^W}\kb(u)$.
Consequently, 
$\bar{L}(c,v,v^*)$ is isomorphic to the graded module associated with
a filtration of $L(c,v,u)$ (the filtration does not depend on
$v^*$). Similarly, given any $\tilde{m}$ with
image $m\in \CCB\times V^\reg\times V^*/W$, the
$(\kb(\tilde{m})\otimes_\kb\Theta^{-1}(\kb[V^*])$-module $\bar{L}(\tilde{m})$ 
is isomorphic to the graded module associated with a filtration of $L(m)$
(in particular, $\bar{L}(\tilde{m})$ depends only on $m$).
As a consequence, the morphism $\bar{\eta}$ is the composition
$$\CCB\times V^\reg \times V^*\xrightarrow{\mathrm{can}} \CCB\times V^\reg \times V^*/W
\xrightarrow{\eta}
\Hilb^{|W|}(V^*) \xrightarrow{\mathrm{Hilbert-Chow}} \mathrm{Sym}^{|W|}(V^*).$$

\smallskip
We now introduce the spectral scheme of $\Theta^{-1}(V)$ acting on
the family $\{\bar{L}(c,v,v^*)\}_{c,v,v^*}$.
Let $\bar{\ZC}''$ be the closed subscheme of 
$\CCB\times V^\reg\times V^*/W\times V^*$ given by
$$\bar{\ZC}''=\{(c,v,u,\lambda)
\ |\ {\det}_{\bar{L}(c,v,v^*)}(\Theta^{-1}(y)-\langle y,\lambda\rangle)=0\
\forall y\in V\}$$
where $v^*\in V^*$ has image $u$ in $V^*/W$.
Forgetting $\lambda$ gives a morphism $\bar{\ZC}''\to 
\CCB\times V^\reg\times V^*/W$ and the fiber at $(c,v,u)$ is the
spectrum of $\Theta^{-1}(V)$.

\smallskip
From Proposition \ref{pr:spectrum}, we deduce the following description of that
variety.

\begin{prop}
\label{pr:spectrumGaudin}
There is an isomorphism
$\bar{\ZC}''\xrightarrow{\sim} \ZC',\ (c,v,u,\lambda)\mapsto
(\Theta^\#(c,v,\lambda),u)$.
\end{prop}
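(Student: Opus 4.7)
The plan is to reduce Proposition~\ref{pr:spectrumGaudin} to Proposition~\ref{pr:spectrum} by showing that $\bar{\ZC}''$ and $\ZC''$ coincide as closed subschemes of $\CCB\times V^\reg\times V^*/W\times V^*$. Since both are cut out by equations of the form ${\det}(\Theta^{-1}(y)-\langle y,\lambda\rangle)=0$ for $y\in V$, with the determinant computed on $L(c,v,u)$ in one case and $\bar L(c,v,v^*)$ in the other (where $v^*$ is any lift of $u$), it suffices to check that these characteristic polynomials agree.

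First, I would make precise the assertion, stated without proof in the paragraph preceding the proposition, that $\bar L(c,v,v^*)$ is isomorphic to the graded module associated with a filtration of $L(c,v,u)$ as modules over $\Theta^{-1}(\kb[V^*])$. For this, observe that via the isomorphism of Lemma~\ref{le:Zprime} and the identification $\Hb^\reg e\simeq \Ind_{\kb[V^*]\rtimes W}^{\Hb^\reg}\kb$, one has
\[
L(c,v,u)\;\simeq\;\kb(c,v)\otimes_{\kb[\CCB\times V^\reg]}\Hb^\reg\otimes_{\kb[V^*]\rtimes W}
\bigl(\kb[V^*]\otimes_{\kb[V^*]^W}\kb(u)\bigr),
\]
while
\[
\bar L(c,v,v^*)\;\simeq\;\kb(c,v)\otimes_{\kb[\CCB\times V^\reg]}\Hb^\reg\otimes_{\kb[V^*]\rtimes W}
\Ind_{\kb[V^*]}^{\kb[V^*]\rtimes W}\kb(v^*).
\]
The $\kb[V^*]$-adic filtration on $\kb[V^*]\otimes_{\kb[V^*]^W}\kb(u)$ has associated graded isomorphic to $\Ind_{\kb[V^*]}^{\kb[V^*]\rtimes W}\kb(v^*)$ (this is the semisimplification statement recalled in the text). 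Since $\Hb^\reg$ is free over $\kb[V^*]\rtimes W$ by the PBW decomposition, tensoring preserves the filtration and its associated graded, which yields the desired identification; moreover the left action of $\Theta^{-1}(\kb[V^*])$ preserves the filtration and descends to the claimed action on the graded.

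Second, since the characteristic polynomial of an endomorphism is unchanged by passing to the associated graded of a finite filtration, we obtain, for every $y\in V$ and $\lambda\in V^*$,
\[
{\det}_{L(c,v,u)}\bigl(\Theta^{-1}(y)-\langle y,\lambda\rangle\bigr)=
{\det}_{\bar L(c,v,v^*)}\bigl(\Theta^{-1}(y)-\langle y,\lambda\rangle\bigr).
\]
Consequently the defining ideals of $\ZC''$ and $\bar{\ZC}''$ coincide, so $\bar{\ZC}''=\ZC''$ as closed subschemes, and composing with the isomorphism of Proposition~\ref{pr:spectrum} gives the stated isomorphism with the explicit formula.

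The main obstacle will be the first step: pinning down the filtration on $L(c,v,u)$ carefully enough that its associated graded is genuinely $\bar L(c,v,v^*)$ as a module over $\Theta^{-1}(\kb[V^*])$, not merely over $\kb[V^*]\rtimes W$. The subtle point is that $\Theta^{-1}(V)$ acts by operators that mix the $\kb[V^*]$-degree with the $W$-action (as seen from the explicit Dunkl-type formula for $D_y^{c,v,v^*}$), so one must verify that this action has filtration degree zero with respect to the chosen filtration on $L(c,v,u)$; once this is verified, the rest of the argument is formal.
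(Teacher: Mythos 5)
Your argument is correct and is essentially the paper's own: the paper deduces the proposition from Proposition~\ref{pr:spectrum} via the observation (stated in the paragraph preceding the statement) that $\bar{L}(c,v,v^*)$ is the associated graded of a filtration of $L(c,v,u)$, so the spectra of $\Theta^{-1}(V)$ agree. Note that the "main obstacle" you flag at the end is already dispatched by your own second step: the filtration is obtained by applying the exact functor $\Hb^\reg\otimes_{\kb[V^*]\rtimes W}(-)$ to a filtration by $(\kb[V^*]\rtimes W)$-submodules, so it is automatically a filtration by left $\Hb^\reg$-submodules and the left action of $\Theta^{-1}(\kb[V^*])$ preserves it with no further verification needed.
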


\bigskip

\section{Automorphisms}

The group $\kb^\times \times \kb^\times \times (W^\wedge \rtimes \NC)$ acts on $\Hbt$, hence it acts on the
category of $\Hbt$-modules. The action is given as follows.
Let $M$ be an $\Hbt$-module and let
$\t \in \kb^\times \times \kb^\times \times (W^\wedge \rtimes \NC)$.
We denote by $\lexp{\t}{M}$ the $\Hbt$-module whose underlying
$\kb$-module is $M$ and where the action of $h\in\Hbt$ on an element
of $\lexp{\t}{M}$ is given by the action of
$\lexp{\t^{-1}}{h}$ on the corresponding element of $M$.

This defines a functor
$$\t : \Hbt\module \longto \Hbt\module$$
and this induces an action of 
$\kb^\times \times \kb^\times \times (W^\wedge \rtimes \NC)$ on the category 
$\Hbt\module$. Similarly, we can define a functor 
$$\t : \Hbt^-\module \longto \Hbt^-\module$$
and an action of $\kb^\times \times \kb^\times \times (W^\wedge \rtimes \NC)$ on the category 
$\Hbt^-\module$. There is a commutative diagram
\equat\label{M lineaire}
\xymatrix{
\Hbt^-\mmodgr \ar[rr]^{\Ind} \ddto_{\DS{\t}} && \Hbt\mmodgr
\ddto^{\DS{\t}}\\
&& \\
\Hbt^-\mmodgr \ar[rr]_{\Ind} && \Hbt\mmodgr
}
\endequat
The next proposition is now clear.

\bigskip

\begin{prop}\label{omega lineaire}
Given $E \in \Irr(\kb W)$ and 
$\t=(\xi,\xi',\g \rtimes g) \in \kb^\times \times \kb^\times \times (W^\wedge \rtimes \NC)$, we have
$$\lexp{\t}{\tilde{\Delta}(E)} \simeq \tilde{\Delta}(\lexp{g}{E}\otimes \g^{-1})$$
and
$$\O_E(\lexp{\t}{z})=\lexp{\t}{\bigl(\O_{\lexp{g}{E}\otimes \g^{-1}}(z)\bigr)}$$
for all $z \in Z$.
\end{prop}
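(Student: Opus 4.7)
The strategy has two parts, matching the two assertions of the proposition. For the first, the plan is to apply diagram~(\ref{M lineaire}), which says that the twist functor $\t$ commutes with induction from $\Hbt^-$ to $\Hbt$. Hence $\lexp{\t}{\tilde{\Delta}(E)} \simeq \Ind_{\Hbt^-}^{\Hbt}\bigl(\lexp{\t}{(\kb[\CCBt]\otimes E)}\bigr)$, and the claim is reduced to identifying the base $\Hbt^-$-module $\lexp{\t}{(\kb[\CCBt]\otimes E)}$ with $\kb[\CCBt] \otimes (\lexp{g}{E}\otimes\g^{-1})$. I would carry this out by computing $\lexp{\t^{-1}}{h}$ on the three families of generators of $\Hbt^-$: the action of $V$ remains trivial on the base module since $\t$ stabilises $V$ up to a scalar and $V$ already acts by $0$; the elements of $\kb[\CCBt]$ are rescaled and relabelled by $\gradauto_{\xi,\xi'}$, $\g_*$ and the $\NC$-action, inducing the correct $\kb[\CCBt]$-structure after the twist; and the action of $w \in W$ becomes $\g(w)^{-1}\cdot g^{-1}wg$, because $W$ sits in bidegree $(0,0)$ so the grading part of $\t$ acts trivially on it. This latter formula matches exactly the $W$-module $\lexp{g}{E}\otimes\g^{-1}$.

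Turning to the identity $\O_E(\lexp{\t}{z})=\lexp{\t}{\O_{\lexp{g}{E}\otimes\g^{-1}}(z)}$, the key idea is to realise each side as the scalar by which a central element acts on a Verma module. Because $\Omeb$ kills homogeneous components of $Z$ of nonzero $\ZM$-degree by~(\ref{eq:omega-z0}) and the twist by $\t$ preserves the $\ZM$-grading, both sides of the identity vanish on $Z^j$ for $j\ne 0$ and it is enough to work with $z\in Z^0$. For such $z$, Proposition~\ref{prop:action-z0} says that the scalar by which $z$ acts on $\Delta(E)$ is $\O_E(z)$. Applying the $t=0$ specialisation of the first part, one has an $\Hb$-linear isomorphism $\lexp{\t}{\Delta(E)} \simeq \Delta(\lexp{g}{E}\otimes\g^{-1})$. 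I would then compute the scalar action of $z$ on both models of this module: on the right-hand side directly via Proposition~\ref{prop:action-z0}, and on the left-hand side by noting that the $z$-action is the same as the action of $\lexp{\t^{-1}}{z}$ on $\Delta(E)$ and then transporting the resulting scalar through the twisted $\kb[\CCB]$-module structure. The comparison yields a single equation in $\kb[\CCB]$, and substituting $\lexp{\t}{z}$ for $z$ transforms it into the desired identity.

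The main obstacle is essentially notational: one must carefully track the inverse in the semidirect product $W^\wedge\rtimes\NC$, together with the composition $\gradauto_{\xi,\xi'}\circ\g_*\circ\lexp{g}{(-)}$ defining $\t$, and reconcile these with the chosen convention for $\lexp{g}{E}$, in order that the two sides match literally (rather than up to some evident reparametrisation). Beyond this bookkeeping, both assertions are formal consequences of the compatibility between the twist functor and the induction functor, together with the scalar action of central elements on Verma modules furnished by Proposition~\ref{prop:action-z0}; no new structural input is required.
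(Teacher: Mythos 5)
Your argument is correct and follows the same route the paper intends: the paper derives the proposition directly from the commutative diagram~(\ref{M lineaire}) (compatibility of the twist with induction), declaring it "now clear", and your identification of the twisted base module together with the use of Proposition~\ref{prop:action-z0} and~(\ref{eq:omega-z0}) for the central-character identity is exactly the intended elaboration. No gaps; you have simply written out the bookkeeping the paper leaves implicit.
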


\bigskip

\begin{coro}\label{omega gradue}
Given $E\in \Irr(\kb W)$, then $\O_E: Z \to \kb[\CCB]$ is a bigraded morphism. 
In particular, $\Ker(\O_E)$ is a bi-homogeneous ideal of $Z$. 
\end{coro}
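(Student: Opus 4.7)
The plan is to derive this corollary as an immediate consequence of Proposition \ref{omega lineaire} applied to the subtorus $\kb^\times \times \kb^\times$ of the symmetry group, where the ``twist'' disappears. More precisely, I would specialize to $\tau = (\xi, \xi', 1 \rtimes 1)$ with $(\xi,\xi') \in \kb^\times \times \kb^\times$. Then the character appearing on the right-hand side of Proposition \ref{omega lineaire} is $\lexp{g}{E} \otimes \gamma^{-1} = E$ itself, so the formula reduces to
$$\Omega_E\bigl(\gradauto_{\xi,\xi'}(z)\bigr) \;=\; \gradauto_{\xi,\xi'}\bigl(\Omega_E(z)\bigr)$$
for every $z \in Z$, where $\gradauto_{\xi,\xi'}$ is the automorphism implementing the bigrading (cf \S\ref{subsection:bi-graduation-1}); recall that this same automorphism encodes the bigrading on $\kb[\CCB]$, since $\gradauto_{\xi,\xi'}$ acts by $\xi\xi'$ on $\CCB^*$.

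Next, I would fix $z \in Z^{\NM \times \NM}[i,j]$ and decompose $\Omega_E(z) = \sum_{(a,b) \in \NM\times\NM} f_{a,b}$ into bihomogeneous components in $\kb[\CCB]$. The displayed equality above becomes
$$\xi^i (\xi')^j \sum_{a,b} f_{a,b} \;=\; \sum_{a,b} \xi^a (\xi')^b f_{a,b}$$
for all $(\xi,\xi') \in \kb^\times \times \kb^\times$. Since $\kb$ has characteristic $0$, hence is infinite, a standard Vandermonde/polynomial identity argument forces $f_{a,b} = 0$ unless $(a,b) = (i,j)$. Therefore $\Omega_E(z) \in \kb[\CCB]^{\NM \times \NM}[i,j]$, which is exactly the statement that $\Omega_E$ respects the bigrading.

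The second assertion is then automatic: the kernel of a morphism of bigraded $\kb$-algebras is a bi-homogeneous ideal, since any element $z = \sum z_{i,j}$ of $\Ker(\Omega_E)$ satisfies $\sum \Omega_E(z_{i,j}) = 0$ with each $\Omega_E(z_{i,j})$ living in the distinct bihomogeneous component $\kb[\CCB]^{\NM\times\NM}[i,j]$, forcing each $z_{i,j}$ to lie in $\Ker(\Omega_E)$ as well. There is no serious obstacle here; the entire argument is a direct unpacking of Proposition \ref{omega lineaire}, and the only mild subtlety is the invocation of $|\kb| = \infty$ to pass from the torus identity to equality of graded components.
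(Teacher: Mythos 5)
Your proof is correct and is exactly the argument the paper intends: the corollary is stated as an immediate consequence of Proposition \ref{omega lineaire}, and specializing that proposition to $\t=(\xi,\xi',1\rtimes 1)$, then separating bihomogeneous components via the (infinite) torus action, is the standard way to unpack it. The concluding remark about kernels of bigraded morphisms is likewise the intended routine step, so there is nothing to add.
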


\bigskip
% 
% \begin{rema}
% Supposons ici $\kb$ alg\'ebriquement clos et $k=\kb(\CCB)$. Dire que $Q$ est r\'egulier en 
% $\qGba_b$ \'equivaut \`a dire qu'il existe un point $q  tel que 
% 
% \end{rema}
% 
% 

\chapter{Hecke algebras}
\label{ch:Hecke}
\boitegrise{{\bf Notation.} 
{\it From now on, and until the end of this book, we fix a number field $F$ 
contained in $\kb$, \indexnot{F}{F}
which is Galois over $\QM$ and contains all the traces of elements of $W$, 
and we denote by $\OC$ 
\indexnot{O}{\OC} the integral closure of $\ZM$ in $F$. We also fix an embedding $F \longinjto \CM$. 
By Proposition \ref{pr:deploiementV},
there exists 
a $W$-stable $F$-vector subspace $V_F$ \indexnot{V}{V_F}  of $V$ such that 
$V = \kb \otimes_F V_F$. 
% Le groupe des racines de l'unit\'e de $F$ sera not\'e $\mub(F)$~: 
% il est fini et cyclique. 
Let $a \mapsto \aba$ denote the complex conjugation (it stabilizes $F$ since $F$ is Galois over 
$\QM$). Finally, we denote by $\mub_W$ \indexnot{mz}{\mub_W}  the group of roots of unity 
of the field generated by the traces of elements of $W$.}}{0.75\textwidth}

\bigskip

The existence of such a field $F$ is easy: we can take the field generated 
by the traces of elements of $W$ (it is Galois over $\QM$ as it is contained in a cyclotomic 
number field). Note also that $F$ contains all the roots of unity of the form 
$\z_{e_H}$, where $H \in \AC$.

\bigskip

\section{Definitions}
\label{se:definitionsHecke}

\bigskip

\subsection{Braid groups}
\label{se:braidgroups}

Set $V_\CM= \CM \otimes_F V_F$. Given
$H \in \AC$, let $H_\CM = \CM \otimes_F (H \cap V_F)$. 
We define 
$$V_\CM^\reg = V_\CM \setminus \bigcup_{H \in \AC} H_\CM \indexnot{V}{V_\CM^\reg}.$$
We fix a point $v_\CM \in V_\CM^\reg$.
Given $v \in V_\CM$, we denote by 
$\vba$ its image in the quotient variety $V_\CM/W$. The {\it braid group} associated with $W$, 
denoted $B_W$, is defined as
$$B_W = \pi_1(V_\CM^\reg/W,\vba_\CM).\indexnot{B}{B_W}$$
The {\it pure braid group} associated with $W$, denoted by $P_W$, 
is then defined as
$$P_W = \pi_1(V_\CM^\reg,v_\CM).\indexnot{P}{P_W}$$
The covering $V_\CM^\reg \to V_\CM^\reg/W$ being unramified
(Steinberg's Theorem \ref{th:Steinberg}), we obtain an exact sequence 
\equat\label{eq:suite exacte braid}
1 \longto P_W \longto B_W \stackrel{p_W}{\longto} W \longto 1.
\endequat
Given $H \in \AC$, we denote by $\sigb_{\!\!H}$ \indexnot{sz}{\sigb_{\!\!H}}  a {\it generator of the monodromy} 
around the hyperplane $H$, as defined in~\cite[\S{2.A}]{BMR}, and such that 
$p_W(\sigb_{\!\!H})=s_H$. This is an element of $B_W$ well-defined up to conjugacy by an element of $P_W$.
Recall~\cite[Theorem~2.17]{BMR} that 
\equat\label{eq:BW engendre}
\text{\it $B_W$ is generated by $(g\sigb_{\!\!H}g^{-1})_{H \in \AC,g\in
P_W}$.}
\endequat
It can be proven \cite{bessis diagram,BMR} that $B_W$ is already generated by $(\sigb_{\!\!H})_{H \in \AC}$,
for a suitable choice of the elements $\sigb_{\!\!H}$.

\medskip
Assume $W$ is irreducible.
We denote by $\pib_z$ \indexnot{pzz}{\pib_z} the image in $B_W$ of the path in
$V_\CM^\reg$ defined by 
$$\fonction{\pib_z}{[0,1]}{V_\CM^\reg}{t}{e^{2i\pi t/z} v_\CM.}$$
Recall that $z_W=|\Zrm(W)|$. 
Note that~\cite[Lemma~2.22]{BMR}
\equat\label{eq:pi central}
\pib_z\in \Zrm(B_W).
\endequat
The image of $\pib_z$ in $W$ is the generator $w_z=e^{2i\pi/z_W}\Id_V$
of $W\cap \Zrm(\GL(V))$.
We put $\pib=(\pib_z)^{z_W}\in P_W\cap \Zrm(B_W)$\indexnot{pi}{\pib}.

\smallskip
Consider now a general $W$ with decomposition $W=W_1\times\cdots\times W_r$
into indecomposables. We have a canonical isomorphism
$B_{W_1}\times\cdots\times B_{W_r}\xrightarrow{\sim}B_W$.
Let $\pib_i$ be the element $\pib_z$ for $W_i$.
We put $\pib_z=\pib_1\cdots\pib_r\in Z(B_W)$. Its image in $W$ is
$w_z$. We put $\pib=(\pib_z)^{z_W}\in P_W\cap \Zrm(B_W)$.
\bigskip

\subsection{Generic Hecke algebra}\label{sub:hecke}

Recall that $\orbiteb^\circ$ is the set of pairs $(\orbite,j)$ with $\orbite \in \AC/W$ and $0 \le j \le e_\orbite -1$ 
(see \S\ref{section:hyperplans}). 

\bigskip

Consider the affine variety $\TCB$\indexnot{T}{\TCB} over $\OC$ with ring of functions
$\OC[\TCB]=\OC[(\qb_{\orbite,j}^{\pm 1})_{(\orbite,j) \in \orbiteb^\circ}]$. 
Its fraction field is
$F(\TCB)$.
Given $\orbite \in \AC/W$, $H \in \orbite$ and $0 \le j \le e_H-1=e_\orbite-1$, we put
$\qb_{H,j}=\qb_{\orbite,j}$.\indexnot{qa}{\qb_{H,j}}

\bigskip

The {\it generic Hecke algebra} associated with $W$, denoted by $\heckegenerique$, \indexnot{H}{\heckegenerique}
is the quotient of the group algebra $\OC[\TCB] B_W$ by the ideal generated by the elements 
\equat\label{eq:relations Hecke}
\prod_{j = 0}^{e_H-1} (\sigb_{\!\!H} - \z_{e_H}^j \qb_{H,j}^{|\mub_W|}),
\endequat
where $H$ runs over $\AC$. Given $H\in\AC$, let $\Tb_H$ \indexnot{T}{\Tb_H} denote the image of 
$\sigb_{\!\!H}$ in $\heckegenerique$. By~(\ref{eq:BW engendre}),
\equat\label{eq:HW-engendre}
\text{\it $\heckegenerique$ is generated by $(b\Tb_Hb^{-1})_{H \in \AC,b\in P_W}$,}
\endequat
where we still denote by $b$ the image in $\heckegenerique$ of an
element $b\in B_W$.
If $H \in \AC$, then 
\equat\label{eq:TH-relation}
\prod_{j = 0}^{e_H-1} (\Tb_H - \z_{e_H}^j \qb_{H,j}^{|\mub_W|})=0.
\endequat
As a consequence,
\equat\label{eq:TH-inversible}
\text{\it $\Tb_H$ is invertible in $\heckegenerique$.}
\endequat
The next lemma follows immediately from~\cite[Proposition~2.18]{BMR}:

\bigskip

\begin{lem}\label{lem:hecke specialization}
The specialization $\qb_{\orbite,j} \mapsto 1$ gives
an isomorphism of $\OC$-algebras 
$\OC \otimes_{\OC[\TCB]} \heckegenerique \longiso \OC W$.
\end{lem}

\bigskip

Let $a \mapsto \aba$ denote the unique automorphism of the $\BZ$-algebra 
$\OC[\TCB]$ which extends the complex conjugation on $\OC$ 
and such that $\overline{\qb}_{\orbite,j}=\qb_{\orbite,j}^{-1}$. 

The following Theorem was first conjectured in~\cite[\S{4.C}]{BMR} 
and then proved case-by-case~\cite{ariki}, \cite{ariki-koike}, \cite{BMR}, \cite{chavli1}, 
\cite{chavli2}, \cite{chavli3}, \cite{marin1}, \cite{marin2}, \cite{marin3}, 
\cite{marin-pfeiffer} and~\cite{tsuchioka}. 

\bigskip

\begin{theo}\label{theo:liberte}
The Hecke algebra $\heckegenerique$ is a free $\OC[\TCB]$-module of rank $|W|$.
\end{theo}

\bigskip

Let us now state a basic conjecture~\cite[\S{2.A}]{BMM}.

\bigskip

\begin{conj}
\label{ce:libertesymetrie}
Assume $W$ is irreducible.
There exists a symmetrizing form $\taub_\HHC : \heckegenerique \to \OC[\TCB]$ 
such that:
\begin{itemize}
\itemth{a} After the specialization of Lemma~\ref{lem:hecke specialization} (i.e. 
$\qb_{\orbite,j} \mapsto 1$), $\taub_\HHC$ specializes to the canonical symmetrizing form 
of $\OC W$ (i.e. $w \mapsto \d_{w,1}$). 

\itemth{b} If $b \in B_W$, then 
$$\taub_\HHC(\pib) \overline{\taub_\HHC(b^{-1})} = \taub_\HHC(b\pib).\indexnot{tx}{\taub_\HHC}$$
\end{itemize}
\end{conj}

\bigskip

\begin{rema}\label{rem:hecke}
$\bullet\ $
 There is at most one form $\taub_\HHC$ satisfying (a) and (b) 
	\cite[Theorem 2.1]{BMM}.

\smallskip
$\bullet\ $
If $W=W_1\times\cdots\times W_r$ is the decomposition of $W$ into irreducible
components and if conjecture \ref{ce:libertesymetrie} holds for the $W_i$'s, then
we obtain a symmetrizing form on $\heckegenerique$ as the tensor product
of the forms on the Hecke algebras of the $W_i$'s.

$\bullet\ $
If Conjecture \ref{ce:libertesymetrie} holds, then
$\taub_\HHC(\pib) \neq 0$ since, by the property (a) and by~(\ref{eq:pi central}), 
$\taub_\HHC(\pib)$ specializes to $1$ through $\qb_{H,j} \mapsto 1$.\finl
\end{rema}

\smallskip

Conjecture~\ref{ce:libertesymetrie} holds for real reflection groups
\cite[Theorem 2.1]{BMM}.
It also holds for the following non-real complex reflection groups:
\begin{itemize}
\item The group $G_4$ by~\cite{malle michel}, \cite{marin wagner}, 
\cite{BCCK} (three independent proofs);

\item The groups $G_5$, $G_6$, $G_7$, $G_8$ by~\cite{BCCK};

\item The group $G _{12}$ by~\cite{malle michel};

\item The group $G_{13}$ by~\cite{BCC};

\item The groups $G_{22}$, $G_{24}$ by~\cite{malle michel}.
\end{itemize}
A symmetrizing form satisfying property~(a) of Conjecture~\ref{ce:libertesymetrie} 
has been obtained for the groups of the infinite series $G(de, e, n)$ by~\cite{bremke malle} 
and~\cite{malle mathas}.

\bigskip

\subsection{Cyclotomic Hecke algebras} 
We will not use here the classical definition of cyclotomic Hecke 
algebras~\cite[\S{6.A}]{BMM},~\cite[Definition~4.3.1]{chlouveraki LNM}, 
since we will need to work over a sufficiently large ring allowing us to let the parameters 
vary as much as possible.

\bigskip

\boitegrise{{\bf Notation.} 
{\it Following~\cite{bonnafe two},~\cite{bonnafe continu},~\cite{bonnafe faux} or~\cite{bonnafe livre}, 
we will use an exponential notation for the group algebra $\OC[\CM]$, which will be denoted by 
$\OC[\qb^\CM]$: $\OC[\qb^\CM]= \bigoplus_{r \in \CM} ~\OC~\!\qb^r$,  \indexnot{O}{\OC[\qb^\CM]}  with 
$\qb^r \qb^{r'}=\qb^{r+r'}$. Since $\OC$ is integral and 
$\CM$ is torsion-free, $\OC[\qb^\CM]$is also integral 
and we denote by $F(\qb^\CM)$ \indexnot{F}{F(\qb^\CM)}  its fraction field. 
If $a = \sum_{r \in \CM} a_r\qb^r$, we denote by 
$\deg(a)$ \indexnot{da}{\deg}  (respectively $\val(a)$) \indexnot{va}{\val}  its degree 
(respectively its valuation), 
that is, the element of $\RM \cup \{-\infty\}$ 
(respectively $\RM \cup \{+\infty\}$) defined by \\
\centerline{$\deg(a) = \max\{r \in \RM~|~a_r \neq 0\}$}\\
\centerline{(respectively \quad $\val(a)=\min\{r \in \RM~|~a_r \neq 0\}$).}}}{0.75\textwidth}

\bigskip

We have $\deg(a) = -\infty$ (respectively $\val(a)=+\infty$) if and only if $a=0$. 
The usual properties of degree and valuation (with respect to the sum and the product) 
are of course satisfied. Let us start with an easy remark:

\medskip

\begin{lem}\label{lem:A-normal}
The ring $\OC[\qb^\CM]$ is integrally closed.
\end{lem}

\begin{proof}
This follows from the fact that $\OC[\qb^\CM]=\bigcup_{\L \subset \CM} \OC[\qb^\L]$, 
where $\L$ runs over the finitely generated subgroups of $\CM$, and that, if $\L$ has $\ZM$-rank $e$, then 
$\OC[\qb^\L] \simeq \OC[\tb_1^{\pm 1},\dots,\tb_e^{\pm 1}]$ is integrally closed.
% 
% Tout d'abord, puisque $\OC$ est int\'egralement clos, $\OC[\tb_1^{\pm 1},\dots,\tb_e^{\pm 1}]$ est 
% int\'egralement clos pour tout $e$ (ici, $\tb_1$,\dots, $\tb_e$ sont des ind\'etermin\'ees). 
% Soit maintenant $a \in F(\qb^\RM)$ entier sur $\OC[\qb^\RM]$ 
% et \'ecrivons $a=b/c$, avec $b$, $c \in \OC[\qb^\RM]$, $c \neq 0$. Il existe un entier 
% naturel non nul $m$ et $f_0$, $f_1$,\dots, $f_{m-1}$ dans $\OC[\qb^\RM]$ tels que 
% $$a^m + f_{m-1} a^{m-1} + \cdots + f_1 a + f_0 = 0.$$
% Soit $I$ un sous-ensemble fini de $\RM$ tels que $b$, $c$, $f_0$, $f_1$,\dots, $f_{m-1}$ 
% appartiennent \`a $\bigoplus_{r \in I} \OC~\! \qb^r$. Notons $\G$ le sous-groupe de $\RM$ 
% engendr\'e par $I$~: il est sans torsion et de type fini, donc il existe une $\BZ$-base 
% $(r_1,\dots,r_e)$ de $\G$. Alors, $\qb^{r_1}$,\dots, $\qb^{r_e}$ sont alg\'ebriquement 
% ind\'ependants sur $\OC$ et $b$, $c$, $f_0$, $f_1$,\dots, $f_{m-1}$ 
% appartiennent \`a $\OC[(\qb^{r_1})^{\pm 1},\dots, (\qb^{r_e})^{\pm 1}]$ et 
% $a \in F(\qb^{r_1},\dots, \qb^{r_e})$ est donc entier sur 
% $\OC[(\qb^{r_1})^{\pm 1},\dots, (\qb^{r_e})^{\pm 1}]$, qui est int\'egralement clos. 
% Donc $a \in \OC[(\qb^{r_1})^{\pm 1},\dots, (\qb^{r_e})^{\pm 1}] \subset \OC[\qb^\RM]$.
\end{proof}

\bigskip

Fix a family $k=(k_{\orbite,j})_{(\orbite,j) \in \orbiteb^\circ}$ of complex numbers 
(as usual, if $H \in \orbite$ and $0 \le j \le e_H -1$, then we set $k_{H,j}=k_{\orbite,j}$).
The {\it cyclotomic Hecke algebra} ({\it with parameter $k$}) is the $\OC[\qb^\CM]$-algebra
$\heckecyclotomique(k)=\OC[\qb^\CM] \otimes_{\OC[\TCB]} \heckegenerique$,
\indexnot{H}{\heckecyclotomique(k)}
where 
$\OC[\qb^\CM]$ is viewed as an $\OC[\TCB]$-algebra through the morphism 
$$\fonction{\Th_k^\cyclo}{\OC[\TCB]}{\OC[\qb^\CM]}{\qb_{\orbite,j}}{\qb^{k_{\orbite,j}}.}
\indexnot{ty}{\Th_k^\cyclo}$$
Let $T_H$ \indexnot{T}{T_H} denote the image of 
$\Tb_H$ in $\heckecyclotomique(k)$; then 
\equat\label{eq:hecke-engendre-cyclotomique}
\text{\it $\heckecyclotomique(k)$ is generated by $(\bar{g}T_H\bar{g}^{-1})_{H \in \AC,g\in P_W}$}
\endequat
and, if $H \in \AC$, then 
\equat\label{eq:TW-relation-cyclotomique}
\prod_{j = 0}^{e_H-1} (T_H - \z_{e_H}^j \qb^{|\mub_W| k_{H,j}})=0.
\endequat

\bigskip

\begin{rema}
It follows from Lemma~\ref{lem:hecke specialization} that, after the specialization 
$\OC[\qb^\CM] \to \OC$, $\qb^r \mapsto 1$ (this is the augmentation morphism for the group 
$\CM$), we obtain $\OC \otimes_{\OC[\qb^\CM]} \heckecyclotomique(k) \simeq \OC W$. 

Similarly, $\heckecyclotomique(0)\simeq \OC[\qb^\CM] W$.\finl 
\end{rema}

\bigskip

% Bon, ca allait pas ca...
%Recall that elements $k\in \CCB(\RM)$ \indexnot{C}{\CCB(\RM)} are
%families 
%of real numbers $k=(k_{\orbite,j})_{(\orbite,j) \in \orbiteb_W^\circ}$ such
%that $\sum_{j=0}^{e_\orbite-1} k_{\orbite,j}=0$ for all $\orbite \in \AC/W$.

\bigskip

\begin{rema}\label{rem:translation}
Let $(\l_\orbite)_{\orbite \in \AC/W}$ be a family of complex numbers and, if $H \in \orbite$, 
set $\l_H=\l_\orbite$. Let $k_{\orbite,j}'=k_{\orbite,j} + \l_\orbite$ and let 
$k'=(k_{\orbite,j}')_{(\orbite,j) \in \orbiteb^\circ}$. 
The map $\bar{g}T_H\bar{g}^{-1} \mapsto \qb^{-\l_H} \bar{g}T_H\bar{g}^{-1}$ extends to an isomorphism 
of $\OC[\qb^\CM]$-algebras $\heckecyclotomique(k) \xrightarrow{\sim} \heckecyclotomique(k')$. 

Hence, if we take $\l_\orbite=-(k_{\orbite,0}+k_{\orbite,1}+\cdots+k_{\orbite,e_\orbite-1})/e_\orbite$, 
then $\heckecyclotomique(k)\simeq \heckecyclotomique(k')$, with 
$k' \in \KCB(\CM)$.

This shows that, in the study of cyclotomic Hecke algebras, it is enough to
consider the case of parameters in the subspace $\KCB$ of $\CM^{\orbiteb^\circ}$.\finl 
\end{rema}

\begin{rema}
%Let $k=(k_{\orbite,j})_{(\orbite,j) \in \orbiteb_W^\circ}$ be a family of 
%complex numbers.
The group algebra $\OC[\qb^K]$ of any characteristic $0$ field $K$ is 
integrally closed.\finl
% and the Hecke algebra $\HC_W^\CM(k)=\OC[\qb^\CM]
%\otimes_{\OC[\QC]}\heckegenerique$.
\end{rema}

\section{Coxeter groups}
\label{se:HeckeCoxeter}

\cbstart
We assume in \S \ref{se:HeckeCoxeter} that $W$ is a Coxeter group (cf. 
\S \ref{se:Coxetergroups} for the notation).
We assume $F\subset\RM$ and $v_\CM\in C_\RM$, and we will in fact denote $v_\CM$ by $v_\RM$.

\subsection{Braid groups} 
For $s$, $t \in S$, let $m_{st}$ \indexnot{ma}{m_{st}}  denote the order of $st$ in $W$. For $s \in S$ 
and $H=\Ker(s-\Id_V)$, let $\sigb_{\!\!s} = \sigb_{\!\!H}$ \indexnot{sz}{\sigb_{\!\!s}}  be the loop in $V_\CM^\reg/W$ that
is the image of the path
$$\fonctio{[0,1]}{V_\CM^\reg}{t}{e^{i\pi t} \Bigl(\DS{\frac{v_\RM-s(v_\RM)}{2}\Bigr) 
+ \frac{v_\RM+s(v_\RM)}{2}}}$$
from $v_\RM$ to $s(v_\RM)$. 
With this notation, $B_W$ admits the following presentation~\cite{brieskorn}:
\equat\label{eq:coxeter-tresse}
B_W\quad : \quad 
\begin{cases}
\text{Generators:} & (\sigb_{\!\!s})_{s \in S}, \\
\text{Relations:} & \forall~s,t \in S,~
\underbrace{\sigb_{\!\!s}\sigb_{\!\!t}\sigb_{\!\!s}\cdots}_{\text{$m_{st}$ times}} 
= \underbrace{\sigb_{\!\!t}\sigb_{\!\!s}\sigb_{\!\!t}\cdots}_{\text{$m_{st}$ times}}.
\end{cases}
\endequat
Given $w=s_1s_2\cdots s_l$ a reduced decomposition of $w$, we set 
$\sigb_{\!\!w}=\sigb_{\!\!s_1}\sigb_{\!\!s_2}\cdots \sigb_{\!\!s_l}$: \indexnot{sz}{\sigb_{\!\!w}}  it is a classical fact that 
$\sigb_{\!\!w}$ does not depend on the choice of the reduced decomposition. 
We have
\equat\label{eq:pi-coxeter}
\pib = \sigb_{\!\!w_0}^2.
\endequat

\subsection{Hecke algebras}\label{sub:hecke coxeter}

\medskip

\subsubsection*{Generic case} 
Given $s \in S$, we put
$\qb_{s,j}=\qb_{V^s,j}$. \indexnot{qa}{\qb_{s,j}}
It follows from~(\ref{eq:coxeter-tresse}) that the generic Hecke algebra $\heckegenerique$ 
admits the following presentation, where $\Tb_s$ \indexnot{T}{\Tb_s}  
denotes the image of $\sigb_{\!\!s}$ in $\heckegenerique$:
\equat\label{eq:coxeter-hecke}
\heckegenerique \quad : \quad 
\begin{cases}
\text{Generators:} & (\Tb_s)_{s \in S}, \\
\text{Relations:} & \forall~s \in S,~(\Tb_s-\qb_{s,0}^2)(\Tb_s+\qb_{s,1}^2)=0,\\
& \forall~s,t \in S,~\underbrace{\Tb_s\Tb_t\Tb_s\cdots}_{\text{$m_{st}$ times}} 
= \underbrace{\Tb_t\Tb_s\Tb_t\cdots}_{\text{$m_{st}$ times}}.
\end{cases}
\endequat
Given $w=s_1s_2\cdots s_l$ a {\it reduced decomposition} of $w$, we set 
$\Tb_w=\Tb_{s_1}\Tb_{s_2}\cdots\Tb_{s_l}$. \indexnot{T}{\Tb_w}  
This is the image of $\sigb_{\!\!w}$ in $\heckegenerique$, hence
$\Tb_w$ does not depend on the choice of the reduced decomposition. Moreover, 
\equat\label{eq:hecke-base}
\heckegenerique=
\mathop{\bigoplus}_{w \in W} \OC[\TCB]~\! \Tb_w.
\endequat
Note that $\Tb_w\Tb_{w'}=\Tb_{ww'}$ if $\ell(ww')=\ell(w)+\ell(w')$. Note also that 
the basis $(\Tb_w)_{w \in W}$ of $\heckegenerique$ depends on the choice of $S$, 
that is, of $C_\RM$. 
% La multiplication dans cette base est enti\`erement d\'etermin\'ee par les r\`egles suivantes~: 
% \equat\label{eq:ts-tw}
% \forall~s \in S,~\forall~w \in W,~\Tb_s \Tb_w =
% \begin{cases}
% ...
% \end{cases}
% \equat

\subsubsection*{Cyclotomic case}
We take $k=(k_{\orbite,j})_{\orbite \in \AC/W, j \in \{0,1\}} \in \KCB(\CM)$.
Remark~\ref{rem:translation} shows that assuming $k_{\orbite,0}+k_{\orbite,1}=0$ 
does not restrict the class of algebras we are interested in. 
Recall that for $H \in \AC$, we set 
$c_{s_H} = - k_{H,0} + k_{H,1}=-2k_{H,0}=2k_{H,1}$. We write $k_{s_H}=k_{H,0}$. 
The cyclotomic Hecke algebra $\heckecyclotomique(k)$ is the $\OC[\qb^\CM]$-algebra 
with the following presentation:
\equat\label{eq:coxeter-hecke-cyclotomique}
\heckecyclotomique(k) \quad : \quad 
\begin{cases}
\text{Generators:} & (T_s)_{s \in S}, \\
\text{Relations:} & \forall~s \in S,~(T_s-\qb^{2k_s})(T_s+\qb^{-2k_s})=0,\\
& \forall~s,t \in S,~\underbrace{T_s T_t T_s\cdots}_{\text{$m_{st}$ times}} 
= \underbrace{T_t T_s T_t\cdots}_{\text{$m_{st}$ times}}.
\end{cases}
\endequat

\cbend

\section{KZ functor}
\label{se:KZ}
In this section \S\ref{se:KZ}, we assume that $\kb=\CM$.
\subsection{KZ functor and properties}

Consider a pair $\CG\subset\mG$ consisting of a prime ideal and a maximal ideal of
$\CM[\CCB]$ and let $R$ be the completion of $\CM[\CCB]/\CG$ at
$\mG/\CG$.
There is a highest
weight category structure on $\dot{\OC}(R)$ (Theorem \ref{th:OT=1}).

Let $\exp:\CCB\to\TCB$\indexnot{exp}{\exp} be the analytic map given by
$q_{H,j}=e^{2i\pi k_{H,-j}/|\mub_W|}$. It endows the $\CM[\CCB]$-algebra
$R$ with a structure of $\CM[\TCB]$-algebra.

\medskip
Let us recall the construction of the KZ functor
$\KZ:\dot{\OC}(R)\to (R\heckegenerique)\mmod$ and its
main properties as in \cite[\S 5.3]{ggor}. Our change of Dunkl operators
corresponds to a twist of the monodromy representation of
\cite{ggor} by the one-dimensional
representation of $B_W$ given by $\sigb_H\mapsto \qb_{H,0}^{|\mub_W|}$.

\medskip
Let $M\in\dot{\OC}(R)$ and let $M^\reg=R\Hbd^\reg\otimes_{R\Hbd}M$.
The isomorphism $\Theta^\reg$ (Theorem \ref{th:PBWDunkl}(d)) makes $M^\reg$
into
a $(R\otimes\DCB(V^\reg)\rtimes W)$-module and the Morita
equivalence of Lemma \ref{le:Weylfaithful}(b) produces a 
$(R\otimes\DCB(V^\reg/W))$-module $\bar{M}^\reg$. It has regular
singularities and taking horizontal sections, we obtain an
$(RB_W)$-module $\mathrm{dR}(\bar{M}^\reg)_{\bar{v}_\CM}$,
finitely generated as an
$R$-module. The action of
$RB_W$ on $\mathrm{dR}(\bar{M}^\reg)_{\bar{v}_\CM}$
factors through an action of
$R\heckegenerique$: the resulting $(R\heckegenerique)$-module is
$\KZ(M)$\indexnot{KZ}{\KZ}.

\medskip
The KZ functor satisfies a ``Double Endomorphism Theorem''
\cite[Theorems 5.14 and 5.16]{ggor}.

\begin{theo}
\label{th:doubleendo}
The functor $\KZ:\dot{\OC}(R)\to (R\heckegenerique)\mmod$ is exact and its
restriction to $\Proj(\dot{\OC}(R))$ is fully faithful.
It induces an isomorphism 
$$\Zrm(\dot{\OC}(R))\xrightarrow{\sim} \Zrm(R\heckegenerique)$$
and an equivalence
$$\dot{\OC}(R)/\{M|M^\reg=0\}\xrightarrow{\sim}(R\heckegenerique)\mmod.$$
\end{theo}

\subsection{Semi-simplicity}
\label{se:semisimplicity}

Let $c\in\CCB(\CM)$ and $q=\exp(c)$. Let $\CM_c=\CM$ \indexnot{Cc}{\CM_c}
with the $\CM[\CCB]$-algebra structure
given by $C\mapsto c$. Similarly, 
let $\CM_q=\CM$ \indexnot{Cq}{\CM_q} with the $\CM[\TCB]$-algebra structure
given by $q=\exp(c)$.

Theorem \ref{th:doubleendo} shows that the semisimplicity of
$\dot{\OC}(\CM_c)$ is equivalent to that of
$\CM_q\heckegenerique$.
From Theorem \ref{th:semisimplicity}, we deduce the following
\cite[Proposition 5.4]{rouquier schur}. Note that this result is equivalent to
a result of Chlouveraki on Schur elements obtained independently 
\cite[Theorem~4.2.5]{chlouveraki LNM}.

\begin{coro}
\label{co:semisimplicityHecke}
If the subgroup of $\CM^\times$ generated by
$\{q_{\orbite,j}^{|\mub_W|}\}_{(\orbite,j)\in \orbiteb^\circ}$ is torsion-free,
then $\CM_q\heckegenerique$ is semisimple.
\end{coro}

\begin{proof}
Let $k=\kappa(c)$ and let
$\Gamma_0$ be the subgroup of $\CM$ generated by the $k_{\orbite,j}$'s for
$(\orbite,j)\in\orbiteb^\circ$.
By assumption, $\Gamma_0/(\ZM\cap\Gamma_0)$ is torsion free. As a consequence, there is a subgroup
$\Gamma'$ of $\Gamma_0$ such that $\Gamma_0=\Gamma'\times (\ZM\cap\Gamma_0)$. Let $p:\Gamma_0\to
\Gamma'$ be the projection, let $k'=p(k)$ and let $c'=\kappa^{-1}(k')$.
Theorem \ref{th:semisimplicity} shows that $\dot{\OC}(\CM_{c'})$ is
semisimple, hence
$\CM_q\heckegenerique$ is semisimple, since
	$\CM_{\exp(c')}\heckegenerique\simeq \CM_{\exp(c)}\heckegenerique$.
\end{proof}

Corollary \ref{co:semisimplicityHecke} provides an improvement
of Theorem \ref{th:semisimplicity} using that the semisimplicity of
$\CM_q\heckegenerique$ implies that of $\dot{\OC}(\CM_c)$ 
\cite[Proposition 5.4]{rouquier schur}.

\begin{coro}
\label{co:semisimplicityO}
The prime ideals in $\FC_1$ correspond to affine hyperplanes of
$\KCB$ of the form $\sum_{(\orbite,j)\in\orbiteb^\circ}a_{\orbite,j}
K_{\orbite,j}=\frac{b}{r}$ for some $a\in\ZM^{\orbiteb^\circ}$
with $\gcd(\{a_{\orbite,j}\})=1$ and $r,b\in\ZM$, $r\ge 2$, $b\ge 1$ and
$\gcd(r,b)=1$.

If $(\QM \setminus \ZM)\cap
\bigl(\sum_{(\orbite,j)\in\orbiteb^\circ}\ZM k_{\orbite,j}\bigr)=\emptyset$, then
$\dot{\OC}(\CM_c)$ is semisimple.
\end{coro}

Note that the affine hyperplane of $\KCB$ coming from a prime ideal $\CG$ in
$\FC_1$ has a unique equation of the form 
$\sum_{(\orbite,j)\in\orbiteb^\circ}a_{\CG,\orbite,j}
K_{\orbite,j}=\frac{b_\CG}{r_\CG}$ as in the corollary.

We define a map $\FC_1\to \ZM^{\orbiteb^\circ}$ by setting
$m(\CG)_{\orbite,j}=a_{\CG,\orbite,j}$.
The elements of $\ZM^{\orbiteb^\circ}$ can be viewed
as functions on $(\CM^\times)^{\orbiteb^\circ}$.
The previous results have the following corollary.

\begin{coro}
\label{co:Heckesingular}
The algebra $\CM_q\heckegenerique$ is semisimple if and only if
	$m(\CG)(q)\neq e^{2i\pi |\mub_W|^{-1}b_\CG/r_\CG}$ for all $\CG\in\FC_1$.
\end{coro}

Note that the set $\{(m(\CG),|\mub_W|^{-1}b_\CG/r_\CG\pmod\ZM)\}_{\CG\in\FC_1}$
is finite, so that Corollary \ref{co:Heckesingular} provides a finite
set of conditions. Given $\CG\in\FC_1$, let $\Psi_\CG\in 
F[t]$ be the minimal polynomial of 
$e^{2i\pi |\mub_W|^{-1}b_\CG/r_\CG}$.

\bigskip
\section{Representations}\label{section:representations-hecke}

\medskip

The following result is due to Malle~\cite[Theorem~5.2]{malle}, the difficulty being the statement on splitting.

\bigskip

\begin{theo}[Malle]\label{theo:hecke-deployee}
The $F(\TCB)$-algebra $F(\TCB)\heckegenerique$ is split semisimple.
\end{theo}

\bigskip

Since the algebra $FW$ is also split semisimple (by Benard-Bessis Theorem~\ref{deploiement}), 
it follows from Tits Deformation Theorem~\cite[Theorem~7.4.6]{geck} that we have a bijective map 
$$
\begin{array}{ccc}
\Irr(W) & \longiso & \Irr(F(\TCB)\heckegenerique) \\
E & \longmapsto & E^\gen
\end{array}
$$
defined by the following property: the character of $E$ is the specialization
of the character of $E^\gen$ 
through $\qb_{\orbite,j} \mapsto 1$.

\bigskip

Let $\omeb_E^\gen : \Zrm(F(\TCB)\heckegenerique) \longto F(\TCB)$ \indexnot{ozz}{\omeb_E^\gen}
denote the central character associated with the representation $E$: 
given $a \in \Zrm(F(\TCB)\heckegenerique)$, 
we define $\omeb_E^\gen(a)$ as the element of $F(\TCB)$ by which $a$ acts 
on $E^\gen$. This is a morphism of $F(\TCB)$-algebras. 
Since $\OC[\TCB]$ is integrally closed, $\omeb_E^\gen$ 
restricts to a morphism of $\OC[\TCB]$-algebras 
$\omeb_E^\gen : \Zrm(\heckegenerique) \longto \OC[\TCB]$. 
We denote by $\omega_E:\Zrm(\OC W)\to\OC$\indexnot{omegaE}{\omega_E}
 the usual central character
(specialization of $\omeb_E^\gen$ at $\qb=1$).

\smallskip
The image of
$\pib_z \in \Zrm(B_W)$ in $\heckegenerique$ belongs to the 
center of this algebra. Hence, one can evaluate $\omeb_E^\gen$ at $\pib_z$
and we recover the formula of \cite[Proposition~4.16]{broue-michel}.

\begin{prop}
\label{pr:action-pi}
Assume $W$ is irreducible.
Given $E\in\Irr(W)$, we have
$$\omeb_E^\gen(\pib_z) = \omega_E(w_z)
\prod_{(\orbite,j) \in \orbiteb^\circ} 
\qb_{\orbite,j}^{\frac{|\mub_W|}{z_W}
\frac{\SS{m_{\orbite,j}^E|\orbite|e_\orbite}}{\SS{\dim E}}}.$$
\end{prop}

\begin{proof}
	Let $\CG$ be the  maximal ideal ideal of $\CM[\CCB]$ corresponding to
	the point $c$ and let $R$ the
	completion of $\CM[\CCB]_\CG$.
The element $w_ze^{2i\pi\euler/z_W}$
	acts on objects $M$ of $\dot{\OC}(R)$ and
	defines an element of $\Zrm(\dot{\OC}(R))^\times$.
Its action on $\KZ(M)$ is given by $\pib_z$. We deduce from
Proposition \ref{pr:OmegaEuler} that
$\pib_z$ acts on $\KZ(\Delta_c(E))$ by $\omega_E(w_z)
e^{2i\pi c_E/z_W}$ and the proposition follows from Lemma \ref{le:cchi}.
\end{proof}

\bigskip

The following result is a consequence of Corollary \ref{co:semisimplicityHecke}.

\begin{coro}\label{coro:hecke-deployee}
The $F(\qb^\CM)$-algebra $F(\qb^\CM)\heckecyclotomique(k)$ 
is split semisimple. 
\end{coro}

\bigskip

By Tits Deformation Theorem, we get a sequence of bijective maps 
$$\begin{array}{ccccc}
\Irr(W) &\longiso & \Irr(F(\qb^\CM)\heckecyclotomique(k)) & \longiso & \Irr(F(\TCB)\heckegenerique) \\
\chi & \longmapsto & \chi_k^\cyclo & \longmapsto & \chi^\gen
\end{array}\indexnot{kz}{\chi^\gen,~\chi_k^\cyclo} 
$$
such that $\chi_k^\cyclo = \Th_k^\cyclo \circ \chi^\gen$. 

% On posera
% $$a_{\chi,k} = \val(s_{\chi,k}^\cyclo)\qquad\text{et}\qquad
% A_{\chi,k}=\deg(s_{\chi,k}^\cyclo).$$
Finally, let $\o_{\chi,k}^\cyclo : \Zrm(\heckecyclotomique(k)) \longto \OC[\qb^\CM]$ 
denote the central character associated with $\chi_k^\cyclo$. \indexnot{ozz}{\o_{\chi,k}^\cyclo} 
It follows from Proposition \ref{pr:action-pi} that when $W$ is irreducible, we
have
\equat\label{eq:action-pi-cyclo}
\o_{\chi,k}^\cyclo(\pib) = \qb^{|\mub_W| C_\chi(k)}.
% \qb^{{\sum_{(\orbite,j) \in \orbiteb_W^\circ} 
% k_{\orbite,j} \cdot \frac{\SS{m_{\orbite,j}^\chi|\orbite|e_\orbite}}{\SS{\chi(1)}}}}.
\endequat

\bigskip

\section{Hecke families}
\label{se:Heckefamilies}

\subsection{Definition}\label{sub:rouquier}
Let $\OC^\cyclo[\qb^\CM]$, 
\indexnot{O}{\OC^\cyclo[\qb^\CM]} denote the ring 
$$\OC^\cyclo[\qb^\CM] = \OC[\qb^\CM][\bigl((1-\qb^r)^{-1}\bigr)_{r \in \CM\setminus\{0\}}].$$
Given $b$  a central idempotent (not necessarily primitive) 
of $\OC^\cyclo[\qb^\CM]\heckecyclotomique(k)$, we denote by 
$\Irr_\HC(W,b)$ \indexnot{I}{\Irr_\HC(W,b)}  the set of irreducible
representations $E$ of $W$ such that $E_k^\cyclo \in \Irr(F(\qb^\CM)\heckecyclotomique(k)b)$. 

\bigskip

\begin{defi}\label{defi:famille-rouquier}
A {\bfit Hecke $k$-family} is a subset of $\Irr(W)$ of the form 
$\Irr_\HC(W,b)$, where $b$ is a primitive central idempotent of $\OC^\cyclo[\qb^\CM]\heckecyclotomique(k)$.
\end{defi}

\bigskip

The Hecke $k$-families form a partition of $\Irr(W)$.

\bigskip

\begin{lem}[Brou\'e-Kim]\label{lem:c-contant}
If $E$ and $E'$ are in the same Hecke $k$-family, then 
$C_E(k)=C_{E'}(k)$.
\end{lem}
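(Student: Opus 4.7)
The plan is to use the central element $\pib\in\heckecyclotomique(k)$ whose generic action is explicit: by~(\ref{eq:action-pi-cyclo}), every central character $\o_{E,k}^\cyclo$ sends $\pib$ to $\qb^{|\mub_W|C_E(k)}$. I will argue that two characters lying in the same Hecke family are forced to send $\pib$ to the same scalar in $\OC^\cyclo[\qb^\RM]$, from which the constancy of $C_E(k)$ follows immediately.

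First I would check that each $\o_{E,k}^\cyclo$ actually takes values in the Hecke ring $\OC^\cyclo[\qb^\RM]$ rather than only in $F(\qb^\RM)$. This uses Part~(1) of Conjecture~\ref{ce:libertesymetrie} (our standing hypothesis in this section): the center $Z$ of $\OC^\cyclo[\qb^\RM]\heckecyclotomique(k)$ is then finitely generated, hence integral, over $\OC^\cyclo[\qb^\RM]$; and by Lemma~\ref{lem:A-normal} together with the stability of ``integrally closed'' under localization, $\OC^\cyclo[\qb^\RM]$ is integrally closed in its fraction field $F(\qb^\RM)$.

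Next I would translate the assumption that $E$ and $E'$ lie in the same Hecke family into a block-theoretic statement. By standard block theory (Appendix~\ref{appendice: blocs}), the primitive central idempotents of $\OC^\cyclo[\qb^\RM]\heckecyclotomique(k)$ correspond to the connected components of $\Spec(Z)$, and since $Z$ is Noetherian, the condition that $\Ker(\o_{E,k}^\cyclo)$ and $\Ker(\o_{E',k}^\cyclo)$ lie in the same component is equivalent to the existence of a chain $E=E_0,E_1,\ldots,E_n=E'$ in $\Irr(W)$ together with maximal ideals $\mG_i\subset Z$ such that both $\Ker(\o_{E_i,k}^\cyclo)$ and $\Ker(\o_{E_{i+1},k}^\cyclo)$ are contained in $\mG_i$ for every $i$.

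Finally I would plug $\pib$ into the linking condition. For each consecutive pair, the difference $\o_{E_i,k}^\cyclo(\pib)-\o_{E_{i+1},k}^\cyclo(\pib)$ lies in $\mG_i$, and by the first step also in $\OC^\cyclo[\qb^\RM]$, hence in the maximal ideal $\mG_i\cap\OC^\cyclo[\qb^\RM]$. Factoring out the unit $\qb^{|\mub_W|C_{E_{i+1}}(k)}$ forces $\qb^{|\mub_W|(C_{E_i}(k)-C_{E_{i+1}}(k))}-1$ to lie in that maximal ideal; but by the very definition of the Hecke ring every element $1-\qb^r$ with $r\neq 0$ has been inverted, so this difference must vanish. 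This gives $C_{E_i}(k)=C_{E_{i+1}}(k)$, and iteration along the chain yields $C_E(k)=C_{E'}(k)$. The only slightly subtle point is the characterization of ``same connected component of $\Spec(Z)$'' via chains of maximal ideals invoked in the third paragraph; everything else is bookkeeping once the correct central element has been identified.
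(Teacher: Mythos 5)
Your argument is correct and reaches the lemma by a genuinely different route from the paper's. The paper works forwards: writing $\pib=\sum_j\qb^{r_j}b_j$ with $b_j=\sum_{E\in\FC_j}e_{E,k}$, it solves the Vandermonde system formed by $1,\pib,\dots,\pib^{m-1}$ (whose determinant $\prod_{i<j}(\qb^{r_i}-\qb^{r_j})$ is a unit in the Hecke ring) to conclude that each $b_j$ already lies in $\OC^\cyclo[\qb^\RM]\heckecyclotomique(k)$; every primitive central idempotent is then dominated by some $b_j$, so every Hecke family sits inside a single $\FC_j$. You work backwards: starting from two characters in the same block, you pass through the linkage description of blocks (chains of maximal ideals of the centre joining the kernels of the central characters) and then use the invertibility of $1-\qb^r$ for $r\neq 0$ to kill $\qb^{|\mub_W|(C_{E_i}(k)-C_{E_{i+1}}(k))}-1$. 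Both proofs turn on the same two ingredients, namely the formula $\o_{E,k}^\cyclo(\pib)=\qb^{|\mub_W|C_E(k)}$ and the inversion of the elements $1-\qb^r$ in the Hecke ring, but the paper's version is more self-contained (pure linear algebra over the Hecke ring), whereas yours imports the commutative-algebra description of blocks.

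One correction to your third paragraph: the centre $Z$ is \emph{not} Noetherian --- $\OC^\cyclo[\qb^\RM]$ itself is far from Noetherian, as the paper remarks --- so you cannot invoke Noetherianity to justify the chain characterisation. What saves the argument is that $Z$ is reduced with only finitely many minimal primes: it embeds into $\prod_{E\in\Irr(W)}F(\qb^\RM)$ via the central characters, and the kernels $\Ker(\o_{E,k}^\cyclo)$ are exactly the minimal primes. With finitely many irreducible components, each connected component of $\Spec Z$ is a finite union of them and is clopen, so primitive idempotents do correspond to connected components, and two minimal primes lie in the same component exactly when they are joined by a chain of irreducible components with pairwise nonempty intersections, i.e.\ by your chain of maximal ideals. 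With that substitution the proof is complete.
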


\begin{proof}
We could apply the argument contained in~\cite[Proposition~2.9(2)]{broue kim}. 
However, our framework is slightly different and we propose 
a different proof, based on the particular form of $\o_{E,k}^\cyclo(\pib)$ 
(see~\ref{eq:action-pi-cyclo}). 

\smallskip
	Remark \ref{re:CEproduct} shows that it is enough to prove the lemma for
$W$ irreducible. So we assume now that $W$ is irreducible.

Let $\EC=\{r_1,r_2,\dots, r_m\}$, with $r_i \neq r_j$ if $i \neq j$, 
denote the image of the map $\Irr(W) \to \CM$, $E \mapsto |\mub_W|C_E(k)$. 
Given $1 \le j \le m$, we set 
	$$\FC(j)=\{E\in \Irr(W)~\bigl|~|\mub_W| C_E(k)=r_j\}.$$
Given $E\in \Irr(W)$, let $e_{E,k}$ denote the associated primitive central idempotent 
of $F(\qb^\CM)\heckecyclotomique(k)$. We set
	$$b_j = \sum_{E\in \FC(j)} e_{E,k}.$$
To show the lemma, it is sufficient to check that $b_j \in \OC^\cyclo[\qb^\CM]\heckecyclotomique(k)$. 
In $\OC^\cyclo[\qb^\CM]\heckecyclotomique(k)$, we have
$$\pib=\qb^{r_1} b_1 + \qb^{r_2} b_2 + \cdots + \qb^{r_m} b_m.$$
Hence, 
$$
\left\{\begin{array}{ccccccccc}
b_1 &+& b_2 &+& \cdots &+& b_m &=& 1\\
\qb^{r_1} b_1 &+& \qb^{r_2} b_2 &+& \cdots &+& \qb^{r_m} b_m &=&  \pib \\
 && \cdots & \\
\qb^{(m-1)r_1} b_1 &+&  \qb^{(m-1)r_2} b_2 &+& \cdots &+& \qb^{(m-1)r_m} b_m &=&  \pib^{m-1}.
\end{array}\right.
$$
The determinant of this system is a Vandermonde determinant, equal to
$$\prod_{1 \le i < j \le m} (\qb^{r_i}-\qb^{r_j}),$$
which is invertible in $\OC^\cyclo[\qb^\CM]$ by construction. 
Since $1$, $\pib$,\dots, $\pib^{m-1} \in \heckecyclotomique(k)$, 
the result follows.
\end{proof}

\bigskip

\subsection{Reflections of order $2$}\label{sec:ordre-2}

\medskip

In this section \S\ref{sec:ordre-2}, we assume that all the reflections of 
$W$ have order $2$. We explain results and constructions due to Maria Chlouveraki,
whom we thank for her explanations.

\bigskip

Let $\OC[\TCB] \to \OC[\TCB]$, $f \mapsto f^\dagger$ \indexnot{ZZZ}{\dagger}  
denote the unique involutive automorphism of $\OC$-algebra exchanging 
$\qb_{\orbite,0}$ and $\qb_{\orbite,1}$ for all $\orbite \in \AC/W$. Let 
$\OC[\TCB] B_W \to \OC[\TCB] B_W$, $a \mapsto a^\dagger$ denote also 
the unique semilinear (for the involution $f \mapsto f^\dagger$ of $\OC[\TCB]$) 
automorphism such that $\b^\dagger=\e(p_W(\b))\b$ for all $\b \in B_W$. 
The relations~(\ref{eq:relations Hecke}) are stable under this automorphism. 
So it induces a semilinear automorphism $\heckegenerique \to \heckegenerique$, 
$h \mapsto h^\dagger$ of the generic Hecke algebra. 

This automorphism, after the specialization $\qb_{\orbite,j} \mapsto 1$, becomes 
the unique $\OC$-linear automorphism of $\OC W$ which sends $w \in W$ to $\e(w)w$. In other words, 
it is the automorphism induced by the linear character $\e$. 
 
Similarly, since $k_{\orbite,0}+k_{\orbite,1}=0$, if we still denote by $\OC[\qb^\CM] \to \OC[\qb^\CM]$, $f \mapsto f^\dagger$ 
the unique automorphism of $\OC$-algebra such that $(\qb^r)^\dagger=\qb^{-r}$, then the 
specialization $\qb_{\orbite,j} \mapsto \qb^{k_{\orbite,j}}$ induces an $\OC[\qb^\CM]$-semilinear 
automorphism of the algebra $\heckecyclotomique$, still denoted by $h \mapsto h^\dagger$. 
If $\chi \in \Irr(W)$, let $(\chi^\gen)^\dagger$ (respectively $(\chi^\cyclo_k)^\dagger$) 
denote the composition of $\chi^\gen$ (respectively $\chi_k^\cyclo$) 
with the automorphism $\dagger$: it is a new irreducible character of $F(\TCB)\heckegenerique$ 
(respectively $F(\qb^\CM)\heckecyclotomique(k)$). 
Since it is determined by its specialization through $\qb_{\orbite,j} \mapsto 1$ 
(respectively $\qb^r \mapsto 1$), we have
\equat\label{eq:chi-dagger}
(\chi^\gen)^\dagger = (\chi\e)^\gen\qquad\text{and}\qquad (\chi^\cyclo_k)^\dagger = (\chi\e)^\cyclo_k.
\endequat
As the automorphism $f \mapsto f^\dagger$ of $\OC[\qb^\CM]$ extends to the ring
$\OC^\cyclo[\qb^\CM]$, the next lemma follows immediately.

\bigskip

\begin{lem}\label{lem:rouquier-ordre-2}
Assume that all the reflections of $W$ have order $2$. If 
$\FC$ is a Hecke $k$-family, then $\FC\e$ is a Hecke $k$-family. 
\end{lem}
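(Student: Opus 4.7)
The plan is to show that the semilinear involution $h\mapsto h^\dagger$ of $\heckecyclotomique(k)$ permutes the primitive central idempotents of $\OC^\cyclo[\qb^\RM]\heckecyclotomique(k)$, and that the permutation it induces on Hecke $k$-families is precisely tensoring by $\e$.

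First I would check that the automorphism $f\mapsto f^\dagger$ of $\OC[\qb^\RM]$ extends to an automorphism of $\OC^\cyclo[\qb^\RM]$. This is a routine verification: for $r\in\RM\setminus\{0\}$, one has $(1-\qb^r)^\dagger=1-\qb^{-r}=-\qb^{-r}(1-\qb^r)$, which is a unit of $\OC^\cyclo[\qb^\RM]$, so the inverses of $(1-\qb^r)$ are sent by $\dagger$ to units of $\OC^\cyclo[\qb^\RM]$. Extending scalars, the semilinear automorphism $\dagger$ of $\heckecyclotomique(k)$ yields an automorphism of the ring $\OC^\cyclo[\qb^\RM]\heckecyclotomique(k)$ that is an $\OC$-algebra automorphism, but only semilinear over $\OC^\cyclo[\qb^\RM]$ with respect to $f\mapsto f^\dagger$.

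Next, let $b$ be a primitive central idempotent of $\OC^\cyclo[\qb^\RM]\heckecyclotomique(k)$ with $\FC=\Irr_\HC(W,b)$. Since $\dagger$ is a ring automorphism, $b^\dagger$ is again a primitive central idempotent: centrality follows from $h\cdot b^\dagger = (h^{\dagger^{-1}}b)^\dagger = (bh^{\dagger^{-1}})^\dagger = b^\dagger\cdot h$ for all $h$; idempotency from $(b^\dagger)^2=(b^2)^\dagger=b^\dagger$; and primitivity because any orthogonal decomposition $b^\dagger=b_1+b_2$ pulls back through $\dagger^{-1}$ to such a decomposition of $b$. Hence $\Irr_\HC(W,b^\dagger)$ is a Hecke $k$-family.

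It remains to identify this family. For $E\in\Irr(W)$ with character $\chi$, the condition $E\in\Irr_\HC(W,b^\dagger)$ is equivalent to $\chi_k^\cyclo(b^\dagger)\neq 0$ (since $b^\dagger$ is an idempotent acting on the simple $F(\qb^\RM)\heckecyclotomique(k)$-module $E_k^\cyclo$ as $0$ or $\id$). By definition of $(\chi_k^\cyclo)^\dagger$ and by (\ref{eq:chi-dagger}),
\[
\chi_k^\cyclo(b^\dagger)=(\chi_k^\cyclo)^\dagger(b)=(\chi\e)_k^\cyclo(b),
\]
so this is equivalent to $\chi\e\in\FC$, i.e.\ $E\in\FC\e$. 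Therefore $\Irr_\HC(W,b^\dagger)=\FC\e$, proving the lemma. There is no serious obstacle here; the only subtle point is keeping straight that $\dagger$ is semilinear (not linear) over $\OC^\cyclo[\qb^\RM]$, which is harmless because ring-theoretic notions such as centrality, idempotency and primitivity are preserved by any ring automorphism.
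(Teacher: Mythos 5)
Your proof is correct and follows exactly the route the paper intends: the paper's entire argument is the remark that $\dagger$ extends to $\OC^\cyclo[\qb^\RM]$ (hence to $\OC^\cyclo[\qb^\RM]\heckecyclotomique(k)$), after which the lemma "follows immediately" via~(\ref{eq:chi-dagger}). You have merely written out the details the paper leaves implicit — that a ring automorphism permutes primitive central idempotents, and that $\chi_k^\cyclo(b^\dagger)=(\chi\e)_k^\cyclo(b)$ identifies the resulting family as $\FC\e$ — all of which is accurate.
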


\bigskip

\subsection{About the coefficient ring}

\medskip

It might seem strange to work with such a large coefficient ring 
(far from being Noetherian for instance). A first argument for this choice 
is that this ring is still integral and integrally closed.

Moreover, this choice allows to work with a fixed ring, whatever the value of the parameter $k$ is:
as we let $k$ vary in a {\it real} vector space of parameters, this choice becomes more 
natural. Also, as it has been seen in Corollary~\ref{coro:hecke-deployee}, the fact that it is possible 
to extract $n$-th roots of all ``powers'' of $\qb$ implies immediately the splitness 
of all the cyclotomic Hecke algebras over the same fixed ring.

This ring is of the form $\OC[\G]$, where $\G$ is a totally ordered abelian group: 
this allows to define for instance, thanks to the notion of degree and valuation, 
the $\ab$ and $\Ab$-invariants associated with irreducible characters of $W$ 
(even though we will not used them in this book). 
Finally, as we will see in \S \ref{section:cellules-kl}, it is also 
the general framework for Kazhdan-Lusztig theory, which we aim to generalize 
to complex reflection groups. 

It is nevertheless necessary to compare the notion of Hecke families we have introduced 
in \S\ref{se:Heckefamilies} with the classical definitions. In order to do
so,
let $B$ be a commutative integral $\OC[\TCB]$-algebra, with fraction 
field $F_B$.
Let $L_B$ denote the subgroup of $B^\times$ generated by
$\{1_B\qb_{\orbite,j}\}_{(\orbite,j)\in\orbiteb^\circ}$,
a quotient of $\BZ^{\orbiteb^\circ}$.
We assume that $L_B$ has no torsion, that $\OC[L_B]$ embeds in $B$,
and that $F[L_B]\cap B=\OC[L_B]$.
As in Corollary~\ref{coro:hecke-deployee}, the $F_B$-algebra 
$F_B\HCB$ is split semisimple and we get a bijective map 
$\Irr(W)\longiso\Irr(F_B\HCB)$.

\bigskip

\begin{exemple}
\label{groupeabelien}
Let $\L$ be a torsion-free abelian group. 
As in Lemma~\ref{lem:A-normal}, note that $\OC[\L]$ is integrally closed.
Let $q:\orbiteb^\circ\to \L$ be a map. It extends to a morphism of groups 
$\BZ^{\orbiteb^\circ}\to \L$ and to a morphism between group algebras 
 $\OC[\TCB]\to \OC[\L]$. The algebra $B=\OC[\L]$ satisfies the previous 
assumption.\finl
\end{exemple}

\bigskip

Let $B^\cyclo=B[(1-v)_{v\in L_B-\{0\}}^{-1}]$ (we write additively the abelian group $L_B$). 
As in~\S\ref{sub:rouquier}, we have a notion of {\it Hecke $B$-family}.

\bigskip

\begin{prop}
\label{anneaubase}
The Hecke $B$-families coincide with the Hecke
$\OC[L_B]$-families.
\end{prop}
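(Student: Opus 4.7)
The plan is to show that both partitions of $\Irr(W)$ agree by comparing the primitive central idempotents of $B^\cyclo\heckegenerique$ and $\OC[L_B]^\cyclo\heckegenerique$. The easy direction, that Hecke $B$-families refine Hecke $\OC[L_B]$-families, follows at once from the inclusion $\OC[L_B]^\cyclo\hookrightarrow B^\cyclo$: base-changing a primitive central idempotent of $\OC[L_B]^\cyclo\heckegenerique$ yields a central (possibly non-primitive) idempotent of $B^\cyclo\heckegenerique$. The content of the proposition is therefore the reverse refinement, namely that every primitive central idempotent $e$ of $B^\cyclo\heckegenerique$ already lies in $\OC[L_B]^\cyclo\heckegenerique$.

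To establish this, I would first verify that $F(L_B)\heckegenerique$ is split semisimple with simple modules parametrized by $\Irr(W)$. This is an application of the Tits deformation theorem to the integrally closed domain $\OC[L_B]$ (using that $L_B$ is torsion-free so that $\OC[L_B]$ is a Laurent polynomial ring, hence integrally closed, and that the specialization $v\mapsto 1$ reduces $\OC[L_B]\heckegenerique$ to $FW$, which is split semisimple by Theorem~\ref{deploiement}). In particular, the primitive central idempotents $e_E$ of $F_B\heckegenerique$ (indexed by $E\in\Irr(W)$ via Corollary~\ref{coro:hecke-deployee}) are already defined over $F(L_B)$. Writing $e=\sum_{E\in\FC}e_E$ for the corresponding $\FC\subset\Irr(W)$, this shows $e\in F(L_B)\heckegenerique$, hence $e\in B^\cyclo\heckegenerique\cap F(L_B)\heckegenerique$ inside $F_B\heckegenerique$. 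Since $\heckegenerique$ is $\OC[\QCB]$-free by Conjecture~\ref{ce:libertesymetrie}(1), it is free over $\OC[L_B]$ with a common basis $(h_w)_{w\in W}$, and the intersection can be computed coefficient by coefficient, giving $(B^\cyclo\cap F(L_B))\cdot\heckegenerique$.

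The crux is therefore to prove the equality $B^\cyclo\cap F(L_B)=\OC[L_B]^\cyclo$ inside $F_B$. Given $x\in B^\cyclo\cap F(L_B)$, write $x=y/s$ with $y\in B$ and $s$ a product of elements of the form $1-v$ for $v\in L_B\setminus\{0\}$; then $y=sx\in B\cap F(L_B)$. Exploiting the Laurent polynomial structure $\OC[L_B]\cong\OC[t_1^{\pm1},\dots,t_m^{\pm1}]$, one obtains $F(L_B)\cap F[L_B\oplus\Lambda]=F[L_B]$ for any complementary free abelian group $\Lambda$, and combined with the hypothesis $F[L_B]\cap B=\OC[L_B]$ this forces $y\in\OC[L_B]$, whence $x\in\OC[L_B]^\cyclo$. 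Feeding this back, $e\in\OC[L_B]^\cyclo\heckegenerique$; primitivity of $e$ over $\OC[L_B]^\cyclo$ then follows from its primitivity over the larger ring $B^\cyclo$, completing the proof. The main obstacle is this last technical step $F(L_B)\cap B\subset F[L_B]$: it is transparent in the natural example~\ref{groupeabelien} where $B=\OC[\Lambda]$ by comparing supports of Laurent monomials, but in the general setting it requires some flatness or integrality of $B$ over $\OC[L_B]$ beyond what the displayed hypothesis literally asserts, and care must be taken to show it follows from the standing assumptions.
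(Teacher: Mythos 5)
Your argument follows the same route as the paper's: the paper's entire proof consists of the assertion $B^\cyclo\cap F_{\OC[L_B]}=\OC[L_B]^\cyclo$ together with the deduction (via freeness of $\heckegenerique$ over $\OC[\QCB]$, so that the intersection of $B^\cyclo\heckegenerique$ and $F_{\OC[L_B]}\heckegenerique$ inside $F_B\heckegenerique$ is computed coefficientwise) that a primitive central idempotent of $F_{\OC[L_B]}\heckegenerique$ lying in $B^\cyclo\heckegenerique$ already lies in $\OC[L_B]^\cyclo\heckegenerique$. Your preparatory steps (splitness over $F(L_B)$, the easy refinement in one direction) are implicit there.

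The difficulty you flag at the crux is genuine, and the paper is silent on it. The hypothesis $F[L_B]\cap B=\OC[L_B]$ does not by itself give $F(L_B)\cap B\subseteq F[L_B]$, and the ring-theoretic equality $B^\cyclo\cap F_{\OC[L_B]}=\OC[L_B]^\cyclo$ can actually fail under the displayed hypotheses. For instance, take $\OC=\ZM$ and $B=\OC[\QCB]\bigl[P^{-1}\bigr]$ with $P=\qb_{\orbite,0}^2-\qb_{\orbite,0}-1$ for some fixed $\orbite$: here $L_B=\ZM^{\orbiteb^\circ}$ and $\OC[L_B]=\OC[\QCB]$, Gauss's lemma gives $F[L_B]\cap B=\OC[L_B]$ since $P$ is primitive and prime, yet $P^{-1}$ lies in $B^\cyclo\cap F(L_B)$ and not in $\OC[L_B]^\cyclo$, because $P$ is a non-cyclotomic irreducible and hence divides no product of elements $1-v$ with $v$ a monomial. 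So the key equality requires more than the stated hypotheses: either one restricts to $B$ that is free over $\OC[L_B]$ on a basis of coset representatives compatible with the $L_B$-grading — which is exactly the situation of Example \ref{groupeabelien}, where your support argument closes the gap, and which covers every $B$ actually used after Proposition \ref{quotientabelien} — or one must argue directly that the relevant idempotents have controlled denominators, as is done via Corollary \ref{co:Heckesingular} in the proof of Proposition \ref{quotientabelien}. In short, your instinct is correct: the step does not follow formally from the standing assumptions, and the same gap is present in the paper's own one-line proof; it is harmless for the cases to which the proposition is subsequently applied.
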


\begin{proof}
Let $\L=L_B$.
We have $B^\cyclo\cap F_{\OC[\L]}=\OC[\L]^\cyclo$. We deduce that, 
if $b$ is a primitive central idempotent of $F_{\OC[\L]}\HCB$
such that $b\in B^\cyclo\HCB$, then $b\in \OC[\L]^\cyclo\HCB$.
\end{proof}

\bigskip

The previous proposition reduces the study of Hecke families to the case where 
$B=\OC[\L]$ and $\L$ is a torsion-free quotient of $\BZ^{\orbiteb^\circ}$.

\medskip
Let $\MCB=m(\FC_1)$, a finite subset 
$\BZ^{\orbiteb^\circ}$ (cf. end of \S\ref{se:semisimplicity}).
Consider now a torsion-free abelian group $\L$ and a map $q:\orbiteb^\circ\to \L$ 
as in Example~\ref{groupeabelien}. Let $\L'$ be a torsion-free abelian group and let 
$f:\L\to \L'$ be a surjective morphism of groups. 

\begin{prop}
\label{quotientabelien}
If $q(\MCB)\cap \Ker f\subset\{0\}$, then the Hecke $\OC[\L]$-families 
coincide with the Hecke $\OC[\L']$-families.
\end{prop}

\begin{proof}
The morphism $f$ induces a surjective morphism between group algebras 
$\OC[\L]\to\OC[\L']$ which extends to a surjective morphism of algebras between localizations 
$f:\OC[\L][(1-v)^{-1}_{v\in \L-\Ker f}]\to \OC[\L']^\cyclo$.
	Let $h\in F[\L][\{(\Psi_\CG\circ m(\CG))^{-1}\}_{\CG\in\FC_1,m(\CG)\not\in\Ker q}]$
(cf. end of \S\ref{se:semisimplicity}).
If $h\in \OC[\L]^\cyclo$, then $h\in f^{-1}(\OC[\L']^\cyclo)$.

It follows from Corollary \ref{co:Heckesingular}
that the idempotents of 
$\Zrm(F(\L)\HCB)$ are in the algebra 
$F[\L][\{\Psi_\CG(m(\CG))^{-1}\}_{\CG\in\FC_1,m(\CG)\not\in\Ker q}]
\HCB$. Consequently, any idempotent of $\Zrm(\OC[\L]^\cyclo\HCB)$ is contained
in $\OC[\L][(1-v)^{-1}_{v\in \L-\Ker f}]\HCB$. 
Proposition~\ref{muller} shows that the central idempotents of 
$\OC[\L][(1-v)^{-1}_{v\in \L-\Ker f}]\HCB$ are in bijection 
with those of $\OC[\L']^\cyclo\HCB$ and the result follows.
\end{proof}

\bigskip

Given $\L$ and $q$ as above, there exists a morphism of groups 
$f:\L\to\BZ$ such that $\Ker f\cap q(\MCB)\subset\{0\}$. So Proposition 
\ref{quotientabelien} reduces the study of Hecke $\OC[\L]$-families 
(and so of Hecke $B$-families, by the above arguments) 
to the case of Hecke $\OC[t^{\pm 1}]$-families, for a choice of integers 
$m_{\orbite,j}\in\BZ$ defining a morphism of groups 
$\BZ^{\orbiteb^\circ}\to t^\BZ,\ q_{\orbite,j}\mapsto t^{m_{\orbite,j}}$. 
This is the usual framework for Hecke families.

\section{Kazhdan-Lusztig cells}\label{section:cellules-kl}

\medskip
The constructions and results of \S\ref{section:cellules-kl} are due
to Kazhdan-Lusztig \cite{KL} (equal parameter case) 
and Lusztig \cite{Lu2} (general case).

\medskip

\cbstart
\boitegrise{{\bf Assumption.} 
{\it From now on, and until the end of \S \ref{section:cellules-kl}, we 
assume that $W$ is a Coxeter group and we
fix a family $k=(k_{\orbite,j})_{\orbite \in \AC/W, j \in \{0,1\}} \in \KCB(\RM)$. 
We denote by $c : \REF(W) \to \RM$, \indexnot{ca}{c} the map defined 
by $c_{s_H}=-2k_{H,0}$ for all $H \in \AC$. It is constant on conjugacy 
classes and recall that we write $k_s=-c_s/2$ for $s \in \REF(W)$.}}{0.75\textwidth}

\bigskip

Giving the map $c : \REF(W) \to \RM$ constant on conjugacy classes of reflections 
is equivalent to giving the family $k \in \KCB(\RM)$. 

\bigskip

\subsection{Kazhdan-Lusztig basis} 
The involution $a \mapsto \aba$ of $\OC[\qb^\RM]$ extends to an $\OC[\qb^\RM]$-semilinear 
involution of the algebra $\heckecyclotomique(k)$ by setting 
$$\overline{T}_w = T_{w^{-1}}^{-1}.\indexnot{T}{\Tov_w}$$
If $\XM$ is a subset of $\RM$, we set $\OC[\qb^\XM]=\mathop{\bigoplus}_{r \in \XM} \OC~\qb^r$. 
\indexnot{O}{\OC[\qb^\XM]}  
We also set 
$$\heckecyclotomique(k)_{> 0}=\mathop{\bigoplus}_{w \in W} \OC[\qb^{\RM_{>0}}]~T_w.
\indexnot{H}{\heckecyclotomique(k)_{>0}}$$

\bigskip

\begin{theo}[Kazhdan-Lusztig]\label{theo:base-kl}
For $w \in W$, there exists a unique $C_w \in \heckecyclotomique(k)$ 
\indexnot{C}{C_w}  such that 
$$
\begin{cases}
\overline{C}_w  =  C_w, \\
C_w \equiv T_w \mod \heckecyclotomique(k)_{> 0}.
\end{cases}
$$
The family $(C_w)_{w \in W}$ is an $\OC[\qb^\RM]$-basis of $\heckecyclotomique(k)$. 
\end{theo}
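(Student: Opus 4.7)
The approach is to prove existence, uniqueness, and the basis property in tandem by induction on the Bruhat order of $W$, following the standard Kazhdan-Lusztig scheme. The central technical ingredient is an "R-polynomial" lemma: for each $w \in W$ there exist elements $R_{y,w} \in \OC[\qb^\RM]$, vanishing unless $y \le w$ in the Bruhat order, with $R_{w,w} = 1$, such that
$$\overline{T}_w = \sum_{y \le w} R_{y,w}\, T_y.$$
I would prove this by induction on $\ell(w)$. The case $\ell(w) = 0$ is trivial, and for $w = s \in S$ the quadratic relation $T_s^2 = (\qb^{c_s} - \qb^{-c_s})T_s + 1$ gives $T_s^{-1} = T_s - (\qb^{c_s} - \qb^{-c_s})$, so that $\overline{T}_s = T_s + (\qb^{-c_s} - \qb^{c_s})T_1$. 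For general $w$, I would factor $w = sw'$ with $\ell(w) = \ell(w')+1$ and expand $\overline{T}_w = \overline{T}_s \cdot \overline{T}_{w'}$ using the braid-compatible multiplication rules for $T_s \cdot T_y$ (which split according to whether $\ell(sy) > \ell(y)$ or not); the Bruhat-triangular shape is then preserved via the subword property.

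Given the R-polynomial lemma, uniqueness of $C_w$ reduces to showing that the only bar-invariant element of $\heckecyclotomique(k)_{>0}$ is zero. Indeed, if $h = \sum_y a_y T_y \in \heckecyclotomique(k)_{>0}$ is bar-invariant and $y$ is maximal in Bruhat order with $a_y \neq 0$, then the R-polynomial lemma (and maximality of $y$) forces the coefficient of $T_y$ in $\overline{h}$ to be $\overline{a_y}$, whence $\overline{a_y} = a_y$; but $a_y \in \OC[\qb^{\RM_{>0}}]$ while $\overline{a_y} \in \OC[\qb^{\RM_{<0}}]$, so $a_y = 0$, contradicting our choice.

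For existence I would argue inductively on $\ell(w)$, assuming $C_y$ has been constructed for $y < w$ and that $\{C_y\}_{y \le z}$ spans the same $\OC[\qb^\RM]$-module as $\{T_y\}_{y \le z}$ for every $z < w$. The R-polynomial lemma together with the inductive hypothesis yields an expansion $\overline{T}_w - T_w = \sum_{y < w} \beta_{y,w}\, C_y$ for some $\beta_{y,w} \in \OC[\qb^\RM]$, and applying the bar involution once more and using $\overline{C_y} = C_y$ forces the antisymmetry $\overline{\beta_{y,w}} = -\beta_{y,w}$. Now the decomposition $\OC[\qb^\RM] = \OC[\qb^{\RM_{>0}}] \oplus \OC \oplus \OC[\qb^{\RM_{<0}}]$ combined with the fact that the bar involution fixes $\OC$ and swaps the outer summands shows that every bar-antisymmetric element of $\OC[\qb^\RM]$ can be written uniquely as $\overline{p} - p$ with $p \in \OC[\qb^{\RM_{>0}}]$. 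Setting $p_{y,w}$ to be the unique such solution for $\beta_{y,w}$ and defining $C_w = T_w + \sum_{y < w} p_{y,w}\, C_y$ produces an element satisfying $\overline{C}_w = C_w$ and $C_w \equiv T_w \bmod \heckecyclotomique(k)_{>0}$. That $(C_w)_{w \in W}$ is an $\OC[\qb^\RM]$-basis then follows because the transition matrix from $(T_w)$ to $(C_w)$ is Bruhat-triangular with $1$'s on the diagonal.

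The main obstacle is the R-polynomial lemma with sharp Bruhat-triangular control, since everything else is a formal consequence. The subtlety here lies in handling unequal parameters $c_s$ that are allowed to be zero or negative real numbers: one must verify that the decomposition $\OC[\qb^\RM] = \OC[\qb^{\RM_{>0}}] \oplus \OC \oplus \OC[\qb^{\RM_{<0}}]$ indeed gives the required splitting and that the recursive expression for $\overline{T}_w$ keeps producing coefficients supported on $\{y \le w\}$, without cancellation pathologies when some $c_s$ vanishes. Once the R-polynomial bookkeeping is set up carefully, Lusztig's original symmetrization argument transfers to this $\OC[\qb^\RM]$-setting essentially verbatim.
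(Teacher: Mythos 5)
The paper does not actually prove Theorem~\ref{theo:base-kl}: it is quoted from \cite{KL} (equal parameters) and \cite{Lu2} (general case), so there is no internal argument to compare against. Your proof is precisely the standard Kazhdan--Lusztig/Lusztig argument transplanted to the coefficient ring $\OC[\qb^\RM]$, and it is sound: the $R$-polynomial recursion with Bruhat-triangular support, the vanishing of bar-invariant elements of $\heckecyclotomique(k)_{>0}$, and the inductive construction of $C_w$ all go through as you describe, including the degenerate case $c_s=0$ where $T_s^2=1$ and $C_s=T_s$. Two small points to tighten. First, a sign: with $C_w=T_w+\sum_{y<w}p_{y,w}C_y$ the condition $\overline{C}_w=C_w$ forces $\beta_{y,w}=p_{y,w}-\overline{p_{y,w}}$, not $\overline{p_{y,w}}-p_{y,w}$; either normalization is solvable for antisymmetric $\beta_{y,w}$, but the combination you wrote yields $\overline{C}_w=T_w+\sum(2\overline{p_{y,w}}-p_{y,w})C_y\neq C_w$. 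Second, the bar involution of the paper restricts to \emph{complex conjugation} on $\OC$, so your assertion that it ``fixes $\OC$'' (needed so that the degree-zero component of an antisymmetric element vanishes) requires either the standing assumption $F\subset\RM$ of \S\ref{se:HeckeCoxeter} --- harmless for Coxeter groups, whose characters are real --- or the observation that the $R$-polynomials, and hence all the $\beta_{y,w}$, in fact lie in $\ZM[\qb^\RM]$, where bar does fix the coefficients. With these adjustments the argument is complete.
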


\bigskip

Note that $C_w$ depends only on $k$ (i.e., on $c$). For example, if $s \in S$, then 
$$C_s=
\begin{cases}
T_s - \qb^{2k_s} & \text{if $k_s > 0$,}\\
T_s & \text{si $k_s=0$,}\\
T_s + \qb^{-2k_s} & \text{if $k_s < 0$.}
\end{cases}
$$
Similarly, as well as $T_w$, $C_w$ depends on the choice of $S$. The basis 
$(C_w)_{w \in W}$ is called the {\it Kazhdan-Lusztig basis} of 
$\heckecyclotomique(k)$.

\bigskip

\subsection{Definition} 
For $x$, $y \in W$, we will write $x \rell y$ \indexnot{ZZZ}{\rell,~\relr}  
if there exists $h \in \heckecyclotomique(k)$ 
such that $C_x$ appears with a non-zero coefficient in the decomposition of 
$hC_y$ in the Kazhdan-Lusztig basis. Let $\prel$ \indexnot{ZZZ}{\prel,~\prer,~\prelr}  
denote the transitive closure of this relation; 
it is a pre-order and we denote by $\siml$ 
\indexnot{ZZZ}{\siml,~\simr,~\simlr}  the associated equivalence relation. 

We define similarly $\relr$ by multiplying on the right by $h$ as well as 
$\prer$ and $\simr$. Let $\prelr$ be the transitive relation 
generated by $\prel$ and $\prer$, and let $\simlr$ denote the associated equivalence 
relation.

\bigskip

\begin{defi}\label{defi:cellules}
We call {\bfit Kazhdan-Lusztig left $c$-cells} of $W$ the equivalence classes 
for the relation $\siml$. We define similarly {\bfit Kazhdan-Lusztig right $c$-cells} 
and {\bfit Kazhdan-Lusztig two-sided $c$-cells} using $\simr$ and $\simlr$ respectively.
If $? \in \{L,R,LR\}$, we denote by $\klcellules_?^c(W)$ 
the corresponding set of Kazhdan-Lusztig $c$-cells in $W$.  
\end{defi}

\bigskip

If $? \in \{L,R,LR\}$ and if $\G$ is an equivalence class for the relation 
$\sim_?^{\kl,c}$ (that is, a Kazhdan-Lusztig $c$-cell of the type associated with $?$), we set
$$\heckecyclotomique(k)_{\leqslant_?^{\kl,c} \G} = 
\mathop{\bigoplus}_{w \leqslant_?^{\kl,c} \G} \OC[\qb^\RM]~C_w
\qquad\text{and}\qquad
\heckecyclotomique(k)_{<_?^{\kl,c} \G} = 
\mathop{\bigoplus}_{w <_?^{\kl,c} \G} \OC[\qb^\RM]~C_w,
\indexnot{H}{\heckecyclotomique(k)_{\leqslant_?^{\kl,c} \G},~
\heckecyclotomique(k)_{<_?^{\kl,c} \G}}
$$
as well as 
$$\MC_\G^? = \heckecyclotomique(k)_{\leqslant_?^{\kl,c} \G}/\heckecyclotomique(k)_{<_?^{\kl,c} \G}.$$
By construction, $\heckecyclotomique(k)_{\leqslant_?^{\kl,c} \G}$ and 
$\heckecyclotomique(k)_{<_?^{\kl,c} \G}$ are 
ideals (left ideals if $?=L$, right ideals if $?=R$ or two-sided ideals if $?=LR$) 
and $\MC_\G^?$ is a left (respectively right) $\heckecyclotomique(k)$-module if $?=L$ 
(respectively $?=R$), or an $(\heckecyclotomique(k),\heckecyclotomique(k))$-bimodule 
if $?=LR$. Note that 
\equat\label{eq:liberte-cellulaire}
\text{\it $\MC_\G^?$ is a free $\OC[\qb^\RM]$-module with basis the 
image of $(C_w)_{w \in \G}$.}
\endequat

\bigskip

\begin{defi}\label{defi:cellulaires-familles}
If $C$ is a Kazhdan-Lusztig left $c$-cell of $W$, we denote by 
$\isomorphisme{C}_c^\kl$ \indexnot{C}{\isomorphisme{C}_c^\kl}  the class 
of $\kb \otimes_{\OC[\qb^\RM]} \MC_C^L$ in the Grothendieck group $\groth(\kb W)=\BZ\Irr(W)$ 
(here, the tensor product $\kb \otimes_{\OC[\qb^\RM]} -$ is viewed through the 
specialization $\qb^r \mapsto 1$). We will call {\bfit $c$-cellular $\kl$-character} of $W$ 
every character of the form $\isomorphisme{C}_c^\kl$, where $C$ is a left 
Kazhdan-Lusztig $c$-cell. 

If $\G$ is a Kazhdan-Lusztig two-sided $c$-cell of $W$, 
we denote by $\Irr_\G^\kl(W)$ \indexnot{I}{\Irr_\G^\kl(W)}  the set of irreducible 
characters of $W$ appearing in $\kb \otimes_{\OC[\qb^\RM]} \MC_\G^{LR}$, 
viewed as a left $\kb W$-module. We will call {\bfit Kazhdan-Lusztig $c$-family} 
every subset of $\Irr(W)$ of the form $\Irr_\G^\kl(W)$ where $\G$ is a Kazhdan-Lusztig 
two-sided $c$-cell. We will say that $\Irr_\G^\kl(W)$ is the Kazhdan-Lusztig $c$-family 
{\bfit associated} with $\G$, or that $\G$ is the Kazhdan-Lusztig two-sided $c$-cell 
{\bfit covering} $\Irr_\G^\kl(W)$. 
\end{defi}

\bigskip

Since $\kb W$ is semisimple and since 
$\kb \otimes_{\OC[\qb^\RM]} \MC_\G^{LR}$ is a quotient of two-sided ideals of $\kb W$, 
the Kazhdan-Lusztig $c$-families form a partition of $\Irr(W)$ 
\equat\label{eq:partition-familles}
\Irr(W) = \mathop{\coprod}_{\G \in \klcellules_{LR}^c(W)} \Irr_\G^\kl(W)
\endequat
and, since $\kb W$ is split,
\equat\label{eq:cardinal-cellule-kl}
|\G|=\sum_{\chi \in \Irr_\G^\kl(W)} \chi(1)^2.
\endequat
Moreover, if $C$ is a Kazhdan-Lusztig {\it left} $c$-cell of $W$, we set  
$$\isomorphisme{C}_c^\kl = \sum_{\chi \in \Irr(W)} 
\mult_{C,\chi}^\kl~\chi,\indexnot{ma}{\mult_{C,\chi}^\kl}$$
where $\mult_{C,\chi}^\kl \in \NM$. 
% Si n\'ecessaire, nous noterons 
% $\Irr_\G^{\kl,c}(W)$ la $c$-famille de Kazhdan-Lusztig $\Irr_\G^\kl(W)$ associ\'ee \`a la 
% $c$-cellule de Kazhdan-Lusztig bilat\`ere $\G$. 
Then:

\bigskip

\begin{lem}\label{lem:mult-kl}
With the previous notation, we have:
\begin{itemize}
\itemth{a} If $C\in \klcellules_L^c(W)$, then 
$\DS{\sum_{\chi \in \Irr(W)} \mult_{C,\chi}^\kl~\chi(1)=|C|}$.

\smallskip

\itemth{b} If $\chi \in \Irr(W)$, then 
$\DS{\sum_{C \in \klcellules_L^c(W)} \mult_{C,\chi}^\kl = \chi(1)}$. 
\end{itemize}
\end{lem}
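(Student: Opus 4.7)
My plan is to read off both identities from the basic dimension count together with the filtration of the regular module coming from the left-cell order.

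For part (a), the key observation is already recorded in \eqref{eq:liberte-cellulaire}: the cell module $\MC_C^L$ is a free $\OC[\qb^\RM]$-module with basis indexed (by the images of) the elements $C_w$ for $w\in C$. Consequently, after specializing $\qb^r\mapsto 1$, the $\kb W$-module $\kb\otimes_{\OC[\qb^\RM]}\MC_C^L$ has $\kb$-dimension exactly $|C|$. Now evaluating a character at $1\in W$ gives the $\kb$-dimension of its underlying module, so applying this to the identity
$$\isomorphisme{C}_c^\kl=\sum_{\chi\in\Irr(W)}\mult_{C,\chi}^\kl\,\chi$$
in $\groth(\kb W)$ yields $\sum_{\chi}\mult_{C,\chi}^\kl\chi(1)=|C|$, which is (a).

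For part (b), the plan is to exhibit $\kb W$ (as a left $\kb W$-module) as a successive extension of the specialized cell modules, one for each left cell, and then use additivity in the Grothendieck group. Concretely, choose any total order $C_1<C_2<\cdots<C_m$ on $\klcellules_L^c(W)$ refining $\prel$. Then by the very definition of $\prel$, the $\OC[\qb^\RM]$-submodules
$$\heckecyclotomique(k)_{\le C_i}=\bigoplus_{j\le i}\bigoplus_{w\in C_j}\OC[\qb^\RM]\,C_w$$
form an increasing filtration of $\heckecyclotomique(k)$ by left ideals, with successive quotients $\MC_{C_i}^L$. Applying the exact functor $\kb\otimes_{\OC[\qb^\RM]}-$ yields a filtration of the left $\kb W$-module $\kb\otimes_{\OC[\qb^\RM]}\heckecyclotomique(k)$ with subquotients $\kb\otimes_{\OC[\qb^\RM]}\MC_{C_i}^L$. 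By Lemma~\ref{lem:hecke specialization} (adapted to the cyclotomic setting via the specialization $\qb^r\mapsto 1$), the ambient module is the regular representation $\kb W$, whose class in $\groth(\kb W)$ is $\sum_{\chi\in\Irr(W)}\chi(1)\chi$. Additivity in the Grothendieck group therefore gives
$$\sum_{C\in\klcellules_L^c(W)}\isomorphisme{C}_c^\kl=[\kb W]=\sum_{\chi\in\Irr(W)}\chi(1)\chi,$$
and identifying the coefficient of $\chi$ on both sides yields (b).

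The only potentially delicate point is checking that the specialization $\qb^r\mapsto 1$ really does turn the filtered Hecke algebra into the regular $\kb W$-module in a way compatible with the cell filtration; this is a standard consequence of the freeness of $\heckecyclotomique(k)$ over $\OC[\qb^\RM]$ on the Kazhdan--Lusztig basis together with the fact that the specialized Kazhdan--Lusztig elements reduce to the group elements (up to the signs prescribed by $C_w\equiv T_w\bmod\heckecyclotomique(k)_{>0}$), so that the successive extensions survive base change. No serious obstacle is expected.
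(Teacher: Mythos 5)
Your proof is correct and follows the same route as the paper: part (a) is the dimension count via (\ref{eq:liberte-cellulaire}), and part (b) rests on the identity $\isomorphisme{\kb W}_{\kb W}=\sum_{C}\isomorphisme{C}_c^\kl$ coming from the decomposition of $W$ into left cells. The only difference is that you spell out the cell filtration of $\heckecyclotomique(k)$ and its specialization, which the paper leaves implicit in its one-line justification of (b); this is a harmless (and welcome) expansion, not a different argument.
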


\begin{proof}
The equality (a) simply says that the dimension of $\isomorphisme{C}_c^\kl$ 
is equal to $|C|$ by~(\ref{eq:liberte-cellulaire}). The equality~(b) 
translates the fact that, since $W$ is a disjoint union of Kazhdan-Lusztig left $c$-cells, we have 
$\isomorphisme{\kb W}_{\kb W} = \sum_{C \in \klcellules_L^c(W)} \isomorphisme{C}_c^\kl$. 
\end{proof}

\subsection{Properties of cells}\label{sub:proprietes-cellules} 
The algebra $\heckecyclotomique(k)$ is endowed with an $\OC[\qb^\RM]$-linear involutive 
anti-automorphism which sends $T_w$ on $T_{w^{-1}}$: it will be denoted by $h \mapsto h^*$. 
\indexnot{ZZZ}{*}  It is immediate that
\equat\label{eq:cw-etoile}
C_w^*=C_{w^{-1}},
\endequat
which implies that, if $x$ and $y$ are two elements of $W$, then 
\equat\label{eq:rell-relr}
\text{\it $x \prel y$ if and only if $x^{-1} \prer y^{-1}$}
\endequat
and so  
\equat\label{eq:siml-simr}
\text{\it $x \siml y$ if and only if $x^{-1} \simr y^{-1}$.}
\endequat
In other words, the map $\klcellules_L^c(W) \to \klcellules_R^c(W)$, 
$\G \mapsto \G^{-1}$ is well-defined and bijective.

The next property is less obvious~\cite[Corollary~11.7]{lusztig}
\equat\label{eq:rel-w0}
\text{\it $x \leqslant_?^c y$ if and only if $w_0 y \leqslant_?^c w_0 x$ 
if and only if $y w_0 \leqslant_?^c x w_0$.}
\endequat
It follows that
\equat\label{eq:sim-w0}
\text{\it $x \sim_?^c y$ if and only if $w_0x \sim_?^c w_0y$ if and only if $xw_0 \sim_?^c yw_0$.}
\endequat
Moreover, if $C \in \klcellules_L^c(W)$, then~\cite[Proposition~21.5]{lusztig}
\equat\label{eq:caractere-cellulaire-w0}
\isomorphisme{w_0 C}_c^\kl = \isomorphisme{C w_0}_c^\kl =\isomorphisme{C}_c^\kl \cdot \e.
\endequat
Similarly, if $\G \in \klcellules_{LR}^c(W)$, then~\cite[proposition~21.5]{lusztig}
\equat\label{eq:familles-cellulaire-w0}
\Irr_{w_0 \G}^\kl(W)=\Irr_{\G w_0}^\kl(W) = \Irr_\G^\kl(W) \cdot \e.
\endequat
This shows in particular that 
\equat\label{eq:w0gw0}
w_0\G w_0 = \G.
\endequat
So tensoring by $\e$ induces a permutation of Kazhdan-Lusztig $c$-families 
and of $c$-cellular $\kl$-characters.

If $\g : W \to \kb^\times$ is a linear character (note that 
$\g$ has values in $\{1,-1\}$), we set $\g\cdot c : \REF(W) \to \RM$, \indexnot{gz}{\g \cdot c}  
$s \mapsto \g(s)c_s$. The following Lemma is proven 
in~\cite[Corollary~2.5~and~2.6]{bonnafe continu}:

\bigskip

\begin{lem}\label{lem:gamma-c-kl}
If $\g \in W^\wedge$ and let $? \in \{L,R,LR\}$. Then:
\begin{itemize}
\itemth{a} The relations $\leqslant_?^c$ and $\leqslant_?^{\g \cdot c}$ coincide.

\itemth{b} The relations $\sim_?^{\kl,c}$ and $\sim_?^{\kl,\g \cdot c}$ coincide.

\itemth{c} If $C \in \klcellules_L^c(W)=\klcellules_L^{\g \cdot c}(W)$, then 
$\isomorphisme{C}_{\g \cdot c}^\kl = \g \cdot \isomorphisme{C}_c^\kl$. 

\itemth{d} If $\G \in \klcellules_{LR}^c(W)=\klcellules_{LR}^{\g \cdot c}(W)$, 
then $\Irr_{\G}^{\kl,\g \cdot c}(W)=\g \cdot \Irr_{\G}^{\kl,c}(W)$.
\end{itemize}
\end{lem}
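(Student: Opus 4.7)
The plan is to construct an isomorphism of $\OC[\qb^\RM]$-algebras
$$\Phi_\gamma : \heckecyclotomique(k) \xrightarrow{\sim} \heckecyclotomique(\gamma \cdot k)$$
defined on generators by $T_s \mapsto \gamma(s) T_s$ for each $s \in S$ (where on the right $T_s$ denotes the generator of $\heckecyclotomique(\gamma \cdot k)$). Since every $s \in S$ has order $2$ and $\gamma$ is a linear character, $\gamma(s) \in \{\pm 1\}$. One checks that the quadratic relation $(T_s - \qb^{c_s})(T_s + \qb^{-c_s}) = 0$ is preserved, using $\gamma(s)^2 = 1$ and the sign identity $\qb^{\gamma(s)c_s} - \qb^{-\gamma(s)c_s} = \gamma(s)(\qb^{c_s} - \qb^{-c_s})$. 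For a braid relation between $s, t \in S$ with $m_{st}$ finite, the scalars appearing on the two sides after substitution are equal: when $m_{st}$ is even, each equals $(\gamma(s)\gamma(t))^{m_{st}/2}$; when $m_{st}$ is odd, $s$ and $t$ are $W$-conjugate so $\gamma(s) = \gamma(t)$ and each equals $\gamma(s)^{m_{st}} = \gamma(s)$. The analogous map in the reverse direction is clearly inverse to $\Phi_\gamma$.

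A straightforward induction on $\ell(w)$ shows $\Phi_\gamma(T_w) = \gamma(w) T_w$ for all $w \in W$. Since $\gamma(w) \in \{\pm 1\}$ is fixed by the bar involution on $\OC[\qb^\RM]$ and $\Phi_\gamma(T_{w^{-1}})^{-1} = \gamma(w) T_{w^{-1}}^{-1}$, one verifies that $\Phi_\gamma$ intertwines the bar involutions on the two Hecke algebras. Moreover $\Phi_\gamma$ preserves the $\qb$-grading and therefore sends $\heckecyclotomique(k)_{>0}$ onto $\heckecyclotomique(\gamma \cdot k)_{>0}$. Thus $\gamma(w)^{-1}\Phi_\gamma(C_w^c)$ is bar-invariant and congruent to $T_w$ modulo $\heckecyclotomique(\gamma \cdot k)_{>0}$, so the uniqueness clause of Theorem~\ref{theo:base-kl} yields
$$\Phi_\gamma(C_w^c) = \gamma(w)\, C_w^{\gamma \cdot c} \qquad \text{for all } w \in W.$$

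From this formula, (a) is immediate: the coefficient of $C_x^c$ in $h C_y^c$ and the coefficient of $C_x^{\gamma \cdot c}$ in $\Phi_\gamma(h) C_y^{\gamma \cdot c}$ differ only by the nonzero scalar $\gamma(x)^{-1}\gamma(y)$, and $\Phi_\gamma$ is bijective; the argument is identical for $\relr$ and for the two-sided version, and (b) follows by taking transitive closure. For (c), the displayed formula shows that $\Phi_\gamma$ maps the ideals $\heckecyclotomique(k)_{\leqslant_L^{\kl,c} C}$ and $\heckecyclotomique(k)_{<_L^{\kl,c} C}$ onto their analogues for $\gamma \cdot c$, so it induces an additive isomorphism $\Psi : \MC_C^L(k) \xrightarrow{\sim} \MC_C^L(\gamma \cdot c)$ satisfying $\Psi(hm) = \Phi_\gamma(h)\Psi(m)$. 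Under the specialization $\qb \to 1$, the morphism $\Phi_\gamma$ becomes the automorphism of $\kb W$ sending $w$ to $\gamma(w)w$; hence for $w \in W$ the character of $\kb \otimes_{\OC[\qb^\RM]}\MC_C^L(\gamma \cdot c)$ evaluated at $w$ equals $\gamma(w)$ times the character of $\kb \otimes_{\OC[\qb^\RM]} \MC_C^L(k)$ at $w$, which is exactly $\isomorphisme{C}_{\gamma \cdot c}^\kl = \gamma \cdot \isomorphisme{C}_c^\kl$. Applying the same reasoning to the bimodule $\MC_\Gamma^{LR}$, viewed as a left $\kb W$-module after specialization, gives (d). No step is a serious obstacle; the one point requiring care is the verification of the braid relation in the odd case, which is precisely where the conjugacy of $s$ and $t$ (and hence $\gamma(s) = \gamma(t)$) is used.
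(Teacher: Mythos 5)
Your proof is correct, and it takes the same route as the reference the paper cites for this lemma (\cite[Corollaries~2.5 and~2.6]{bonnafe continu}): the sign-twist isomorphism $T_s \mapsto \gamma(s)T_s$ of cyclotomic Hecke algebras, its compatibility with the bar involution and with $\heckecyclotomique(k)_{>0}$, and the resulting identity $\Phi_\gamma(C_w)=\gamma(w)C_w$ from which (a)--(d) follow. The paper itself gives no proof beyond this citation, so there is nothing further to compare.
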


\bigskip

The next result is easy~\cite[Lemma~8.6]{lusztig}:

\bigskip

\begin{lem}\label{lem:1-cellule}
Assume that $c_s \neq 0$ for all $s \in \REF(W)$. Then:
\begin{itemize}
 \itemth{a} $\{1\}$ and $\{w_0\}$ are Kazhdan-Lusztig left, right or two-sided 
$c$-cells. 

\itemth{b} Let $\g : W \to \kb^\times$ be the unique linear 
character such that $\g(s)=1$ if $k_s > 0$ and $\g(s)=-1$ if $k_s < 0$. Then 
$\isomorphisme{1}_c^\kl = \g$ and $\isomorphisme{w_0}_c^\kl=\g\e$.
\end{itemize}
\end{lem}

\bigskip

\begin{rema}\label{rem:lusztig-positif}
In fact,~\cite[Lemma~8.6]{lusztig} is proven whenever $k_s > 0$ for all $s$. 
To obtain the general statement of Lemma~\ref{lem:1-cellule}, it is sufficient 
to apply Lemma~\ref{lem:gamma-c-kl}.\finl
\end{rema}

\bigskip

\begin{exemple}[Vanishing parameters]\label{exemple:c=0}
If $c=0$ (i.e. if $c_s=0$ for all $s$), then $C_w=T_w$, $\heckecyclotomique(0)=\OC[\qb^\RM][W]$ 
and there is only one Kazhdan-Lusztig left, right or two-sided $0$-cell, namely $W$.  
We then have $\Irr_W^{\kl,0}(W)=\Irr(W)$ and 
$\isomorphisme{W}_0^{\kl} = \sum_{\chi \in \Irr(W)} \chi(1)\chi$.\finl
\end{exemple}

\cbend

\bigskip

\cbend

\chapter{Restricted Cherednik algebra and Calogero-Moser families}\label{chapter:bebe-verma}

In this chapter, we start by recalling in \S\ref{se:represtricted} and 
\S\ref{section:familles CM}
some results of Gordon~\cite{gordon} 
on the representations of the restricted Cherednik algebras. We do not need
to extend the defining field for representations, as the algebras are split.
This is useful to
derive consequences about the partition into Calogero-Moser two-sided cells
(\S~\ref{chapter:bilatere}). 

\bigskip

\boitegrise{{\bf Notation.}\label{notationscellsVerma} {\it
Given $\CG$ a prime ideal of
$\kb[\CCB]$, we denote by
$\CCB(\CG)=\Spec \kb[\CCB]/\CG$ \indexnot{C}{\CCB(\CG)} the
closed irreducible subscheme of $\CCB$ defined by $\CG$.
We denote by $\pGba_\CG$ \indexnot{pa}{\pGba_\CG}  
the prime ideal of $P$ corresponding to the closed irreducible subscheme 
$\CCB(\CG) \times \{0 \} \times \{0\}$.
We set 
$$
\Pba_\CG=P/\pGba_\CG \simeq \kb[\CCB]/\CG
\indexnot{P}{\Pba_\CG} 
$$
and we define 
the $\Pba_\CG$-algebra $\Hbov_\CG=\Hb/\pGba_\CG \Hb$.
\indexnot{H}{\Hbov_\CG} 
We denote by $\Zba_\CG$ 
the image of $Z$ in $\Hbov_\CG$.  \indexnot{Z}{\Zba_\CG}
We also define 
$\Kbov_\CG=k_P(\pGba_\CG)$.
\indexnot{K}{\Kbov_\CG}
\hphantom{A} To simplify the notation, when $\CG=0$,
the index $\CG$ will be omitted 
in all the previous notations ($\Pba$, $\pGba$, $\Kbov)$. Given
$c \in \CCB$ and $\CG=\CG_c$, the index $\CG_c$ will be replaced by $c$   
\indexnot{pa}{\pGba,  \pGba_c}  
\indexnot{P}{\Pba,  \Pba}  
\indexnot{H}{\Hbov, \Hbov_c}  
\indexnot{K}{\Kbov, \Kbov_c} 
\indexnot{Z}{\Zba,\Zba_c}
($\Kbov_c$,\dots). 
Note for instance that $\pGba_\CG=\pGba + \CG~P$ and that $\Kbov_c \simeq \kb$.}}{0.75\textwidth}

\section{Representations of restricted Cherednik algebras}
\label{se:represtricted}
The {\it restricted Cherednik algebra} is
the $\kb[\CCB]$-algebra $\Hbov$ defined by
$$\Hbov=\Hb/\pGba\Hb = \kb[\CCB] \otimes_P \Hb.$$
Theorem~\ref{PBW-0} gives a PBW decomposition for that algebra.

\bigskip

\begin{prop}\label{PBW restreint}
The map $\kb[\CCB] \otimes \kb[V]^\cow \otimes \kb W \otimes \kb[V^*]^\cow \to \Hbov$ 
induced by the product is an isomorphism of $\kb[\CCB]$-modules. In particular, 
$\Hbov$ is a free $\kb[\CCB]$-module of rank $|W|^3$.
\end{prop}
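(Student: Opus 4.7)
My plan is to reduce the statement to Theorem~\ref{PBW-0} together with the Shephard--Todd--Chevalley theorem. The first step is to make the definition of $\pGba$ completely explicit: it is the ideal of $P=\kb[\CCB]\otimes\kb[V]^W\otimes\kb[V^*]^W$ generated by $\kb[V]^W_+$ and $\kb[V^*]^W_+$, so that $P/\pGba\simeq\kb[\CCB]$. Since $P$ is central in $\Hb$ (Lemma~\ref{P stable}), the quotient $\Hbov=\Hb/\pGba\Hb$ is nothing but $\Hb\otimes_P\kb[\CCB]$, and so the claim amounts to computing this tensor product.

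Next, I would fix $\kb W$-stable graded complements $\HC$ and $\HC^*$ to $\kb[V]^W_+\kb[V]$ in $\kb[V]$ and to $\kb[V^*]^W_+\kb[V^*]$ in $\kb[V^*]$ respectively, so that $\kb[V]\simeq\kb[V]^W\otimes\HC$ and $\kb[V^*]\simeq\kb[V^*]^W\otimes\HC^*$ as graded $\kb[V]^W$- and $\kb[V^*]^W$-modules (Theorem~\ref{chevalley}(b)). The projections induce $\kb W$-equivariant isomorphisms $\HC\longiso\kb[V]^\cow$ and $\HC^*\longiso\kb[V^*]^\cow$. Substituting these decompositions into the PBW isomorphism of Theorem~\ref{PBW-0} and using the centrality of $\kb[V]^W$ and $\kb[V^*]^W$ to bring those factors together, one obtains an isomorphism of $P$-modules
$$\Hb\;\simeq\;P\otimes_{\kb[\CCB]}\bigl(\HC\otimes\kb W\otimes\HC^*\bigr),$$
which is precisely the isomorphism of Corollary~\ref{coro:P-libre}. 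Tensoring over $P$ with $P/\pGba\simeq\kb[\CCB]$ then yields an isomorphism of $\kb[\CCB]$-modules
$$\Hbov\;\simeq\;\kb[\CCB]\otimes\HC\otimes\kb W\otimes\HC^*\;\simeq\;\kb[\CCB]\otimes\kb[V]^\cow\otimes\kb W\otimes\kb[V^*]^\cow.$$

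What remains is to check that this isomorphism is the one induced by the multiplication map stated in the proposition, which is where the only (mild) subtlety lies. The point is that $\kb[V]^W_+\kb[V]\subset\pGba\Hb$, so the composition $\kb[V]\hookrightarrow\Hb\twoheadrightarrow\Hbov$ factors canonically through $\kb[V]^\cow$; in particular, the image in $\Hbov$ of any $f\in\kb[V]$ depends only on its class $\bar f\in\kb[V]^\cow$, and likewise for $\kb[V^*]$. Therefore the multiplication map $\kb[\CCB]\otimes\kb[V]^\cow\otimes\kb W\otimes\kb[V^*]^\cow\to\Hbov$ of the proposition is well defined and coincides with the isomorphism constructed above via the choice of $\HC$ and $\HC^*$. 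Finally, the rank statement follows from Theorem~\ref{chevalley}(c), which gives $\dim_\kb\kb[V]^\cow=\dim_\kb\kb[V^*]^\cow=|W|$. No serious obstacle is expected; the only thing to be careful with is the compatibility of the canonically defined multiplication map with the splitting-dependent isomorphism, which is handled by the observation above.
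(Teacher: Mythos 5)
Your proof is correct and follows the route the paper itself intends: the paper gives no separate argument for this proposition beyond invoking Theorem~\ref{PBW-0}, and your reduction — writing $\Hbov=\Hb\otimes_P(P/\pGba)$, applying the $P$-module form of the PBW decomposition (Corollary~\ref{coro:P-libre}) via Shephard--Todd--Chevalley, and noting $\kb[V]\otimes_{\kb[V]^W}\kb\simeq\kb[V]^\cow$ — is exactly the intended derivation. Your extra check that the multiplication map is well defined on the coinvariant algebras and agrees with the splitting-dependent isomorphism is a worthwhile detail the paper leaves implicit.
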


The algebra $\Hbov$ inherits an $(\NM \times \NM)$-grading, a $\ZM$-grading and an $\NM$-grading from $\Hbt$ 
(cf. \S \ref{section:graduation-1}).

 Given $E\in\Irr(\kb W)$, we put
$\bar{\Delta}(E)=\Delta(E)\otimes_P \kb[\CCB]$\indexnot{Dbar}{\bar{\Delta}(E)},
a $\ZM$-graded $\Hbov$-module.
Note that $\bar{\Delta}(E)$ is isomorphic to 
$\kb[\CCB] \otimes \kb[V]^\cow\otimes E$ as a graded
$(\kb[\CCB]W)$-module.

We put $\bar{\Delta}(\mathrm{co})=\Delta(\mathrm{co})\otimes_P \kb[\CCB]=
\Delta(\mathrm{co})\otimes_{\kb[V]^W}\kb=\Hbov e$.

\bigskip
Let $\CG$ be a prime ideal of $\CCB$. 
Theorem \ref{th:hwgraded} has the following consequence~\cite{BelTh1}.

\begin{theo}[Bellamy-Thiel]
$\Hbov_\CG\mmodgr$ is a split highest weight category over $\kb(\CG)$
with standard objects the baby Verma modules
$\bar{\Delta}_\CG(E)=\bar{\Delta}(E)\otimes_{\kb[\CCB]}\kb(\CG)$,
$E\in\Irr(\kb W)$, and their shifts.
\end{theo}

\begin{prop}\label{pr:verma}
Given $E\in \Irr(\kb W)$, the $\Hbov_\CG$-module $\bar{\Delta}_\CG(E)$
has a unique simple quotient $L_\CG(E)$. 
The map $\Irr(\kb W) \longto \Irr(\Hbov_\CG)$, $E \mapsto L_\CG(E)$ 
is bijective and the algebra $\Hbov_\CG$ is split. 
\end{prop}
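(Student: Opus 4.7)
The plan is to leverage the $\NM$-grading on $\bar{\Delta}_\CG(E)$ and the nilpotency of $\kb[V^*]^{\cow}_+$, together with the Benard--Bessis splitness of $\kb W$ (Theorem~\ref{deploiement}).

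First I would establish finiteness and locate a canonical $\kb(\CG)W$-submodule inside $\bar{\Delta}_\CG(E)$. By the PBW decomposition (Proposition~\ref{PBW restreint}), $\bar{\Delta}_\CG(E)\cong \kb(\CG)\otimes\kb[V]^{\cow}\otimes E$ as a graded $\kb(\CG)W$-module, so it is finite-dimensional of dimension $|W|\cdot\dim_\kb E$, and its degree-$0$ component $1\otimes 1\otimes E\cong \kb(\CG)\otimes_\kb E$ generates $\bar{\Delta}_\CG(E)$ as an $\Hbov_\CG$-module. This component is absolutely simple as a $\kb(\CG)W$-module because $\kb W$ is split semisimple.

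Next I would prove uniqueness of the maximal graded submodule. Any proper graded submodule $M\subsetneq\bar{\Delta}_\CG(E)$ must have zero intersection with the degree-$0$ part: otherwise simplicity of $E$ over $\kb(\CG)W$ would force $E\subseteq M$, whence $M=\bar{\Delta}_\CG(E)$. Hence the sum $\bar{J}(E)$ of all proper graded submodules still has trivial degree-$0$ component, so is proper; this is the unique maximal graded submodule and the quotient $L_\CG(E)$ is graded simple. I would then upgrade this to ungraded simplicity: because the $\BZ$-grading on $\Hbov$ reduces, modulo $z$, to an inner grading given by conjugation by $w_z$ (Example~\ref{Z graduation-1}), any ungraded $\Hbov_\CG$-submodule of $L_\CG(E)$ is $w_z$-stable, and combined with the fact that $L_\CG(E)$ has finite support in the $\BZ$-grading, one deduces that ungraded submodules coincide with graded ones.

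For bijectivity of $E\mapsto L_\CG(E)$, I would use the nilpotent subalgebra $\kb[V^*]^{\cow}_+\subset \Hbov_\CG$. For any nonzero $\Hbov_\CG$-module $S$, the space of invariants $S^{V^*}=\{s\in S\mid \kb[V^*]^{\cow}_+\cdot s=0\}$ is a nonzero $\kb(\CG)W$-submodule. For $S=L_\CG(E)$ this space contains (and, one checks, equals, as the image of) the degree-$0$ component, so it recovers $E$ up to isomorphism, giving injectivity. For surjectivity, given any simple $S\in\Irr(\Hbov_\CG)$, pick an irreducible $\kb(\CG)W$-summand of $S^{V^*}$; by Benard--Bessis it has the form $\kb(\CG)\otimes_\kb E$ for some $E\in\Irr(\kb W)$, and the adjunction $\Hom_{\Hbov_\CG}(\bar{\Delta}_\CG(E),S)=\Hom_{\kb(\CG)W}(E,S^{V^*})$ yields a nonzero hence surjective map $\bar{\Delta}_\CG(E)\twoheadrightarrow S$, so $S\cong L_\CG(E)$. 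Finally, the algebra $\Hbov_\CG$ is split because each $\End_{\Hbov_\CG}(L_\CG(E))$ acts faithfully on the absolutely simple degree-$0$ component and must therefore equal $\kb(\CG)$.

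The main obstacle will be justifying that graded simplicity of $L_\CG(E)$ implies ungraded simplicity, and dually that the $V^*$-invariants of a simple module recover a single isotypic $\kb(\CG)W$-component of the correct type. Both require combining the finiteness of $\dim_{\kb(\CG)}L_\CG(E)$ with the inner nature of the $(\BZ/z\BZ)$-grading; once this is in place, the rest of the argument is essentially the standard head-analysis for baby Verma modules.
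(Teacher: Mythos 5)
Your overall strategy (PBW, degree-$0$ generation, nilpotency of the augmentation ideal of $\kb[V^*]^{\cow}$, and adjunction) is the classical Holmes--Nakano/Gordon argument and is viable; note that the paper itself obtains the proposition differently, as a formal consequence of the Bellamy--Thiel highest weight structure on $\Hbov_\CG\mmodgr$ together with Proposition~\ref{pr:fieldsimple} of Appendix~\ref{ap:hw}. Most of your steps are correct as written.

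There is, however, a genuine gap at the step you yourself flag as the main obstacle: passing from graded simplicity to ungraded simplicity. Conjugation by $w_z$ only realizes the $(\ZM/z\ZM)$-grading internally, so the most it gives is that an ungraded submodule decomposes along the degrees \emph{modulo} $z$, where $z=\gcd(d_1,\dots,d_n)$. This is strictly weaker than being $\ZM$-graded, and it is frequently vacuous: for many $W$ (e.g.\ type $A_{n-1}$, where the degrees are $2,3,\dots,n$) one has $z=1$. Adding "finite support" does not rescue the argument, because the support of $L_\CG(E)$ is contained in $\{0,1,\dots,|\Ref(W)|\}$, which is in general far from being a set of distinct residues mod $z$. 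So as stated, the upgrade from graded-simple to simple (and likewise the uniqueness of the maximal \emph{ungraded} submodule of $\bar{\Delta}_\CG(E)$, since a sum of non-graded submodules each meeting the degree-$0$ part trivially may fail to do so) is not justified.

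The fix is available from tools you already use. First, graded simplicity alone gives $L_\CG(E)^V=L_\CG(E)_0$: if $v$ were a nonzero homogeneous $V$-invariant of degree $d>0$, then by PBW $\Hbov_\CG v=\kb[V]^{\cow}Wv$ would be a nonzero proper graded submodule. Now if $N$ is any nonzero ungraded submodule of $L_\CG(E)$, then $N^V\neq 0$ (commuting nilpotents), and $N^V\subset L_\CG(E)^V=L_\CG(E)_0\simeq\kb(\CG)\otimes E$, which is $\kb(\CG)W$-simple; hence $N\supset L_\CG(E)_0$ and $N=L_\CG(E)$. For the uniqueness of the simple quotient of $\bar{\Delta}_\CG(E)$ in the ungraded category, invoke the paper's Proposition~\ref{graduation radical}: $\Rad(\Hbov_\CG)$ is a graded ideal, so the head of $\bar{\Delta}_\CG(E)$ is a graded semisimple module generated by its degree-$0$ part $\simeq\kb(\CG)\otimes E$; since that part is simple over $\kb(\CG)W$, exactly one simple summand survives. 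With these replacements the rest of your argument (injectivity and surjectivity of $E\mapsto L_\CG(E)$, and splitness via the action of $\End_{\Hbov_\CG}(L_\CG(E))$ on the absolutely simple degree-$0$ part) goes through.
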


\bigskip

\section{Calogero-Moser families}\label{section:familles CM} 

\medskip

Let $\th_\CG:\kb[\CCB]\to \kb(\CG)$ be the quotient map and let 
$\O_E^\CG=\th_\CG\circ\O_E =\th_\CG \circ \o_E \circ \Omeb : Z \to \kb(\CG)$ for
$E \in\Irr(W)$.

\bigskip

\begin{lem}\label{lem:action-z-verma}
If $z \in Z$, then $z$ acts by multiplication by $\O_E^\CG(z)$ on $L_\CG(E)$.
\end{lem}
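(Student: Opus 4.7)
The proof I propose reduces the statement to Proposition~\ref{prop:action-z0} using the $\ZM$-grading of $\Hb$ and the splitness of $\Hbov_\CG$. Since $\Hbov_\CG$ is split (Proposition~\ref{pr:verma}), every $z\in Z$ acts on the simple module $L_\CG(E)$ by a scalar $\lambda_E(z)\in\kb(\CG)$, and the resulting map $\lambda_E:Z\to\kb(\CG)$ is $\kb(\CG)$-linear. Since $\O_E^\CG$ is also $\kb(\CG)$-linear and $Z=\bigoplus_{d\in\ZM}Z^d$ is $\ZM$-graded (Example~\ref{Z graduation-1}), it is enough to check the equality $\lambda_E(z)=\O_E^\CG(z)$ for $\ZM$-homogeneous $z$.

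For $z\in Z^0$, Proposition~\ref{prop:action-z0} says that $z$ acts on $\Delta(E)$ by multiplication by $\O_E(z)\in\kb[\CCB]$. Passing to the quotient $\bar{\Delta}_\CG(E)=\Delta(E)\otimes_P\kb(\CG)$, the element $\O_E(z)\in\kb[\CCB]$ gets sent to $\theta_\CG(\O_E(z))=\O_E^\CG(z)$, so $z$ acts on $\bar{\Delta}_\CG(E)$ and hence on its simple quotient $L_\CG(E)$ by multiplication by $\O_E^\CG(z)$; this yields $\lambda_E(z)=\O_E^\CG(z)$.

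For $z\in Z^d$ with $d\ne 0$, equation~(\ref{eq:omega-z0}) gives $\Omeb(z)=0$, whence $\O_E^\CG(z)=0$, and I must prove $\lambda_E(z)=0$. The PBW decomposition identifies $\bar{\Delta}_\CG(E)$ with $\kb(\CG)\otimes\kb[V]^\cow\otimes E$ as a graded $\kb(\CG)W$-module, in which $E$ sits in $\ZM$-degree $0$; since $L_\CG(E)$ is a graded quotient of $\bar{\Delta}_\CG(E)$ generated by the image of $E$, that image is non-zero in $L_\CG(E)_0$. Picking any $0\ne v$ in this image, we have $z\cdot v\in L_\CG(E)_d$ because $z$ has $\ZM$-degree $d$, while simultaneously $z\cdot v=\lambda_E(z)\,v\in L_\CG(E)_0$ because $z$ acts as the scalar $\lambda_E(z)$; since these subspaces of $L_\CG(E)$ meet only in $0$ whenever $d\ne 0$, this forces $\lambda_E(z)=0$.

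The only non-mechanical point to verify is the non-vanishing of the image of $E$ inside $L_\CG(E)_0$, which follows from the fact that $L_\CG(E)$ is a non-zero simple quotient generated as an $\Hbov_\CG$-module by the image of $E$. Everything else is a direct assembly of Proposition~\ref{prop:action-z0}, the vanishing of $\Omeb$ on components of non-zero $\ZM$-degree from~(\ref{eq:omega-z0}), and the scalar action coming from the splitness of $\Hbov_\CG$.
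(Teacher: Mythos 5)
Your proof is correct and follows essentially the same route as the paper: reduce to $\ZM$-homogeneous $z$, invoke Proposition~\ref{prop:action-z0} in degree $0$, and use the grading together with~(\ref{eq:omega-z0}) to kill the scalar in non-zero degree. The only cosmetic difference is in that last step, where the paper deduces the vanishing from nilpotency of a homogeneous degree-$d$ operator on a finite-dimensional graded module, while you compare degrees on a non-zero vector of $L_\CG(E)_0$; both are the same grading argument.
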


\bigskip

\begin{proof}
We may assume that $z$ is $\ZM$-homogeneous of degree $i$. If $i=0$, then the lemma follows from 
Proposition~\ref{prop:action-z0}. If $i \neq 0$, then, as $L_\CG(E)$ is $\ZM$-graded 
and finite-dimensional, $z$ acts nilpotently on $L_\CG(E)$ and, as it also acts by a scalar, 
this scalar must be equal to $0$. But $\Omeb(z)=0$ by~(\ref{eq:omega-z0}), and the result follows. 
\end{proof}

\bigskip

Given $b \in \blocs(\Zrm(\Hbov_\CG))$, let $\Irr_\Hb(W,b)$
\indexnot{I}{\Irr_\Hb(W,b)}  
denote the set of irreducible representations $E$ of $W$ such that
$\bar{\Delta}_\CG(E)$ is in the block $\Hbov_\CG b$, 
i.e., $b\bar{\Delta}_\CG(E)\not=0$. Note that
$E\in\Irr_\Hb(W,b)$ if and only if $L_\CG(E)$ belongs to
$\Irr(\Hbov_\CG b)$.
It follows from Proposition~\ref{muller} that
$$\blocs(\Zba_\CG)=\blocs(\Zrm(\Hbov_\CG)).$$
So,
\equat\label{cm familles}
\Irr(W) = \coprod_{b \in \blocs(\Zba_\CG)} \Irr_\Hb(W,b).
\endequat
A {\it Calogero-Moser $\CG$-family}
is a subset of $\Irr W$ of the form 
$\Irr_\Hb(W,b)$, where $b \in \blocs(\Zba_\CG)$.

\medskip

The next lemma follows from Corollary~\ref{coro:r-blocs}, Proposition \ref{pr:verma} 
and Lemma~\ref{lem:action-z-verma}.

\bigskip

\begin{lem}\label{caracterization blocs CM}
Let $E$, $E' \in \Irr W$. Then $E$ and $E'$ are in the same 
Calogero-Moser $\CG$-family if and only if $\O_E^\CG=\O_{E'}^\CG$.
Moreover, the map 
\equat\label{...}
\fonction{\Th_\CG}{\Irr W}{\Upsilon^{-1}(\pGba_\CG)}{E}{\Ker~\O_E^\CG}
\indexnot{ty}{\Th_\CG}
\endequat
is surjective and its fibers are the Calogero-Moser $\CG$-families.
\end{lem}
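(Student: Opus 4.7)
The strategy is to interpret $\O_E^\CG$ as the central character of the simple $\Hbov_\CG$-module $L_\CG(E)$ and to reduce the lemma to standard facts about blocks of a split commutative Artinian algebra.

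First I would observe that, by Lemma~\ref{lem:action-z-verma}, every $z\in Z$ acts by the scalar $\O_E^\CG(z)$ on $L_\CG(E)$. Since the $Z$-action on any $\Hbov_\CG$-module factors through the image $\Zba_\CG$ of $Z$ in $\Hbov_\CG$, the map $\O_E^\CG$ descends to a $\kb(\CG)$-algebra morphism $\bar\O_E^\CG:\Zba_\CG\to\kb(\CG)$, which is the central character of $L_\CG(E)$. By Proposition~\ref{pr:verma}, the algebra $\Hbov_\CG$ is split, so its center $\Zba_\CG$ is a finite-dimensional commutative split $\kb(\CG)$-algebra: it decomposes as a product of local Artin $\kb(\CG)$-algebras with residue field $\kb(\CG)$, and the local factors are in bijection with the blocks of $\Zba_\CG$ via their primitive idempotents.

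Next I would establish the equivalence. By Proposition~\ref{pr:verma}, $E$ and $E'$ belong to the same Calogero-Moser $\CG$-family iff $L_\CG(E)$ and $L_\CG(E')$ lie in the same block of $\Hbov_\CG$, which by $\blocs(\Zba_\CG)=\blocs(\Zrm(\Hbov_\CG))$ (Proposition~\ref{muller}) amounts to lying in the same block of $\Zba_\CG$. Given the splitness above, two characters $\bar\O_E^\CG,\bar\O_{E'}^\CG:\Zba_\CG\to\kb(\CG)$ lie in the same block iff they annihilate the same primitive central idempotent iff they have the same kernel; and for $\kb(\CG)$-algebra maps from $\Zba_\CG$ to $\kb(\CG)$, equality of kernels forces equality of the morphisms themselves. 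This yields the desired equivalence with $\O_E^\CG=\O_{E'}^\CG$ and shows that the fibers of $\Th_\CG$ coincide with the Calogero-Moser $\CG$-families.

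Finally, for the assertion about $\Th_\CG$: the explicit description of $\Omeb$ shows that its restriction to $P=\kb[\CCB]\otimes\kb[V]^W\otimes\kb[V^*]^W$ sends $a\otimes f\otimes g$ to $f(0)g(0)\,a\in\kb[\CCB]$, so $\ker(\O_E^\CG|_P)=\pGba_\CG$; hence $\ker\O_E^\CG$ is a prime of $Z$ lying over $\pGba_\CG$ and belongs to $\Upsilon^{-1}(\pGba_\CG)$. For surjectivity: since $Z$ is finite over $P$, the points of $\Upsilon^{-1}(\pGba_\CG)$ correspond bijectively to the maximal ideals of the Artinian ring $\Zba_\CG$; each such maximal ideal is the kernel of the character of a unique block of $\Hbov_\CG$, and every such block contains at least one simple module $L_\CG(E)$, providing a preimage under $\Th_\CG$. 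The main delicate point is this last identification of $\Upsilon^{-1}(\pGba_\CG)$ with the block set of $\Zba_\CG$; once the splitness of $\Hbov_\CG$ is invoked, the remainder of the argument is formal.
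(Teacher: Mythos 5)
Your proof is correct and follows the same route as the paper, which derives the lemma directly from Lemma~\ref{lem:action-z-verma} (identifying $\O_E^\CG$ with the central character of $L_\CG(E)$), the splitness statement of Proposition~\ref{pr:verma}, and the general central-character/block correspondence of Corollary~\ref{coro:r-blocs}. You have simply unpacked that last general result in the case at hand, working directly with the split Artinian algebra $\Zba_\CG$ over $\kb(\CG)$ instead of citing the appendix.
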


\bigskip
Let $b \in \blocs(\Zba_\CG)$. We denote by $\O_b^\CG$ the common value of the $\O_E^\CG$ for
$E\in \Irr_\Hb(W,b)$. When $\CG=0$, we put $\O_b=\O_b^\CG$. When $\CG=\CG_c$ for some $c\in\CCB$, we put
$\O_b^c=\O_b^{\CG_c}$.

\begin{coro}\label{Q extention kc}
If $\zG$ is a prime ideal of $Z$ lying over $\pGba_\CG$, then the inclusion $P \longinjto Z$ 
induces an isomorphism $P/\pGba_\CG \longiso Z/\zG$.
\end{coro}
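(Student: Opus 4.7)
The plan is to realise $\zG$ as the kernel of a central character $\O_E^\CG$ and then to compute explicitly the image of that character, showing that it already equals $\kb[\CCB]/\CG = P/\pGba_\CG$ inside $\Kbov_\CG$.

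By Lemma~\ref{caracterization blocs CM}, there exists $E\in\Irr(W)$ such that $\zG=\Ker(\O_E^\CG)$, so that $\O_E^\CG$ induces an embedding $Z/\zG \hookrightarrow \Kbov_\CG = \kb(\CG)$. The natural inclusion $P\hookrightarrow Z$ induces an injection $P/\pGba_\CG \hookrightarrow Z/\zG$ (since $\zG\cap P=\pGba_\CG$), and composing with $\O_E^\CG$ gives the canonical embedding $\kb[\CCB]/\CG=P/\pGba_\CG\hookrightarrow\kb(\CG)$. So to conclude it is enough to show that the image of $\O_E^\CG$ is contained in $\kb[\CCB]/\CG\subseteq\kb(\CG)$.

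For this, observe first that by its very definition $\O_E=(\Id_{\kb[\CCB]}\otimes\omega_E)\circ\Omeb$ takes values in $\kb[\CCB]$; hence $\O_E^\CG$ factors as
$$Z \xrightarrow{\O_E} \kb[\CCB] \twoheadrightarrow \kb[\CCB]/\CG \hookrightarrow \kb(\CG),$$
whose image lies in $\kb[\CCB]/\CG$. Second, a direct PBW computation shows that the restriction of $\O_E$ to $P=\kb[\CCB]\otimes\kb[V]^W\otimes\kb[V^*]^W$ is the augmentation $a\otimes f\otimes g \mapsto f(0)g(0)\,a$: given $afg\in P$, its PBW preimage in $\kb[\CCB]\otimes\kb[V]\otimes\kb W\otimes\kb[V^*]$ is $a\otimes f\otimes 1\otimes g$, to which $\mathrm{eval}_{0,0}$ associates $f(0)g(0)\,a\cdot 1\in\kb[\CCB]\cdot 1_W\subseteq Z(\kb[\CCB]W)$, and then $(\Id\otimes\omega_E)$ acts trivially on this element since $\omega_E(1)=1$. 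In particular $\O_E^\CG(P)=\kb[\CCB]/\CG$, and we obtain the chain
$$P/\pGba_\CG \hookrightarrow Z/\zG \hookrightarrow \kb[\CCB]/\CG = P/\pGba_\CG$$
whose composition is the identity; the sandwich forces $P/\pGba_\CG \longiso Z/\zG$.

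There is essentially no obstacle here: once Lemma~\ref{caracterization blocs CM} is in hand, the argument reduces to the explicit PBW computation of $\Omeb$ on $P$ together with the observation that $\O_E$ already lands in $\kb[\CCB]$ by definition. The only genuine input from the theory is the surjectivity statement of Lemma~\ref{caracterization blocs CM} that allows us to identify an arbitrary prime over $\pGba_\CG$ with $\Ker(\O_E^\CG)$.
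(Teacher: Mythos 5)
Your proof is correct and follows essentially the same route as the paper's: identify $\zG=\Ker(\O_E^\CG)$ via Lemma~\ref{caracterization blocs CM}, then observe that $\O_E^\CG$ factors through a surjective map $Z\to P/\pGba_\CG$. The paper states this factorization in one line; your explicit PBW computation of $\O_E|_P$ and the sandwich argument simply fill in the details.
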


\begin{proof}
By Lemma~\ref{caracterization blocs CM}, there exists $E\in \Irr(W)$
such that 
$\zG=\Ker(\O_E^\CG)$. Since
$\O_E^\CG : Z \to \kb(\CG)$ factors through 
a surjective morphism $Z \to P/\pGba_\CG$, the corollary follows.
\end{proof}

\bigskip

\begin{exemple}\label{cm generique}
We will call {\it generic Calogero-Moser family} every Calogero-Moser 
$\CG'$-family, where $\CG'=0$. In this case, the map $\Th_\CG$ will be simply denoted by $\Th$.  \indexnot{ty}{\Th}  
Every Calogero-Moser $\CG$-family is a union of generic Calogero-Moser families.\finl
\end{exemple}

\bigskip

\begin{exemple}\label{cm c}
Given $c \in \CCB$, we define a {\it Calogero-Moser $c$-family} 
to be a Calogero-Moser $\CG_c$-family. In this case, $\O_\chi^{\CG_c}$ will
be denoted by 
$\O_\chi^c$   \indexnot{oz}{\O_\chi^c}  and $\Th_{\CG_c}$ will be denoted  \indexnot{ty}{\Th_c}  
by $\Th_c$.\finl
\end{exemple}

\begin{exemple}\label{ex:cm0}
We have $|\Upsilon_0^{-1}(0)|=1$, hence there is a unique Calogero-Moser
$0$-family.\finl
\end{exemple}

\section{Linear characters and Calogero-Moser families}\label{section:lineaire CM}

\bigskip

From Proposition \ref{omega lineaire} we deduce the following result.

\begin{prop}\label{coro:action-tau-L}
Given $E \in \Irr(W)$ and 
$\t=(\xi,\xi',\g \rtimes g) \in \kb^\times \times \kb^\times \times (W^\wedge \rtimes \NC)$ 
stabilizing $\CG$, we have
$$\lexp{\t}{L_\CG(E)} \simeq L_\CG(\lexp{g}{E}\otimes\g^{-1}).$$
\end{prop}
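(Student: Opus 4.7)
The plan is to reduce the statement to the analogous assertion for baby Verma modules and then to invoke the fact that $L_\CG(E)$ is the unique simple quotient of $\bar{\Delta}_\CG(E)$ (Proposition~\ref{pr:verma}).

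First I would verify that the self-equivalence $\lexp{\t}{-}$ of the category of $\Hb$-modules descends to one of $\Hbov_\CG$-modules. By Lemma~\ref{lem:automorphismes-0}(a,b), $\t$ stabilizes $P$ and preserves the bigrading, hence it stabilizes the bi-homogeneous ideal $\pGba=\kb[\CCB]\otimes\kb[V]_+^W\otimes\kb[V^*]_+^W$ of $P$. Since $\t$ stabilizes $\CG$ by hypothesis, it also stabilizes $\pGba_\CG=\pGba+\CG\cdot P$, and therefore induces an automorphism of the $\kb(\CG)$-algebra $\Hbov_\CG$. The functor $\lexp{\t}{-}$ is exact and sends simples to simples.

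Next I would establish the restricted analogue of Proposition~\ref{omega lineaire}, namely
$$\lexp{\t}{\bar{\Delta}_\CG(E)}\;\simeq\;\bar{\Delta}_\CG(\lexp{g}{E}\otimes\g^{-1}).$$
The argument mirrors the one for $\tilde{\Delta}(E)$: the baby Verma module is obtained by induction from the subalgebra $\kb(\CG)\otimes(\kb[V^*]^\cow\rtimes W)$ of the trivial extension of $E$ (with $V^*$ acting through its augmentation), and $\t$ intertwines the induction functors through its action on $\kb W$ (which twists $E$ by $\g^{-1}$) and on $V$ (via $g\in\NC$), while acting by scalars on $V^*$ and on $\kb(\CG)$. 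Concretely, one checks on generators that the $\kb$-linear map $\bar{\Delta}_\CG(E)\to\bar{\Delta}_\CG(\lexp{g}{E}\otimes\g^{-1})$ sending $1\otimes v$ to $1\otimes v$ becomes $\Hbov_\CG$-linear once the source is equipped with the twisted action $\lexp{\t^{-1}}{(-)}$, because $\t$ preserves the PBW triangular decomposition.

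Finally, since $\lexp{\t}{-}$ is a self-equivalence of $\Hbov_\CG\mmod$, it carries the unique simple quotient $L_\CG(E)$ of $\bar{\Delta}_\CG(E)$ to the unique simple quotient of $\lexp{\t}{\bar{\Delta}_\CG(E)}$. Combining the displayed isomorphism with Proposition~\ref{pr:verma} applied to $\lexp{g}{E}\otimes\g^{-1}\in\Irr(W)$ yields $\lexp{\t}{L_\CG(E)}\simeq L_\CG(\lexp{g}{E}\otimes\g^{-1})$, as desired. The only delicate step is the second one: tracing through the twist of induction data to see that the character obtained is exactly $\lexp{g}{E}\otimes\g^{-1}$ (and not, say, $\lexp{g}{E}\otimes\g$), but this is forced by the convention~(\ref{action caracteres lineaires-1}) under which $\g_*$ acts on $W\subset\Hb$ by $w\mapsto\g(w)w$.
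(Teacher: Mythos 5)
Your proposal is correct and follows the same route as the paper, which deduces the statement from Proposition~\ref{omega lineaire} (the twist of Verma modules) by reducing to baby Verma modules and passing to unique simple quotients; you have simply made explicit the deduction that the paper leaves implicit. The verifications you flag (that $\t$ stabilizes $\pGba_\CG$, and that the twist of the induction data produces $\lexp{g}{E}\otimes\g^{-1}$ rather than $\lexp{g}{E}\otimes\g$) are exactly the points one must check, and your treatment of them is sound.
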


\bigskip
The following result follows from Proposition~\ref{omega lineaire} with $\xi=\xi'=1$.

\begin{coro}\label{gamma c familles}
Let $c \in \CCB$, let $\g$ be a linear character of $W$ and 
let $\FC$ be a Calogero-Moser $c$-family. Then $\FC \g$ is a 
Calogero-Moser $\g \cdot c$-family.
\end{coro}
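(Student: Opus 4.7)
The plan is to deduce the corollary directly from Lemma~\ref{caracterization blocs CM} together with Proposition~\ref{omega lineaire}, applied to the automorphism $\t = (1, 1, \g \rtimes 1) \in \kb^\times \times \kb^\times \times (W^\wedge \rtimes \NC)$. For this choice of $\t$ (so $\xi = \xi' = 1$ and $g = 1$), Proposition~\ref{omega lineaire} reads
$$\O_E(\g_*(z)) = \g_\CCB\bigl(\O_{E \otimes \g^{-1}}(z)\bigr) \qquad \text{in } \kb[\CCB],$$
for all $z \in Z$ and $E \in \Irr(W)$. Recalling from \S\ref{subsection:lineaires-1} that $\g_\CCB(f)(c) = f(\g^{-1} \cdot c)$, evaluation at the point $c \in \CCB$ yields the identity of $\kb$-linear maps on $Z$
$$\O_E^c \circ \g_* = \O_{E \otimes \g^{-1}}^{\g^{-1} \cdot c}.$$

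Since $\g_*$ restricts to an automorphism of $Z$ (cf.\ Lemma~\ref{lem:automorphismes-0}), and in particular to a bijection of $Z$, this identity immediately gives: $\O_E^c = \O_{E'}^c$ if and only if $\O_{E \otimes \g^{-1}}^{\g^{-1} \cdot c} = \O_{E' \otimes \g^{-1}}^{\g^{-1} \cdot c}$. By Lemma~\ref{caracterization blocs CM}, this means that $E$ and $E'$ lie in the same Calogero-Moser $c$-family if and only if $E \otimes \g^{-1}$ and $E' \otimes \g^{-1}$ lie in the same Calogero-Moser $(\g^{-1} \cdot c)$-family. Substituting $\g^{-1}$ for $\g$ (which is legitimate since $\g \mapsto \g^{-1}$ is a bijection of $W^\wedge$), we conclude that the map $E \mapsto E \otimes \g$ induces a bijection between the partition of $\Irr(W)$ into Calogero-Moser $c$-families and its partition into Calogero-Moser $(\g \cdot c)$-families, sending a family $\FC$ to $\FC \g$. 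In particular $\FC \g$ is a Calogero-Moser $(\g \cdot c)$-family, as claimed. I do not anticipate any serious obstacle: the argument is a straightforward unwinding of the equivariance formulas already established in Proposition~\ref{omega lineaire}, the only subtlety being the sign convention that relates the action of $\g_\CCB$ on polynomial functions to the translation $c \mapsto \g^{-1} \cdot c$ on parameters.
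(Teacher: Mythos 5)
Your proof is correct and follows exactly the route the paper takes: the paper derives this corollary from Proposition~\ref{omega lineaire} with $\xi=\xi'=1$ (combined with the characterization of families via central characters in Lemma~\ref{caracterization blocs CM}), and your argument is a careful unwinding of that same specialization, including the correct handling of the convention $\g_\CCB(f)(c)=f(\g^{-1}\cdot c)$.
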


\bigskip

Using Proposition~\ref{omega lineaire} again, we obtain the following result.

\begin{coro}\label{familles lineaires}
Let $\t=(\xi,\xi',\g \rtimes g) \in \kb^\times \times \kb^\times \times (W^\wedge \rtimes \NC)$ 
and let $\FC$ be a Calogero-Moser $\CG$-family. 
If $\t$ stabilizes $\CG$, then $\FC \g$ is a Calogero-Moser $\CG$-family.
\end{coro}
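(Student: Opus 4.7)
The plan is to mirror the proof of Corollary~\ref{gamma c familles}, replacing the specialization at a point $c$ by reduction modulo the prime ideal $\CG$. By Lemma~\ref{caracterization blocs CM}, two characters $E, E' \in \Irr(W)$ lie in the same Calogero-Moser $\CG$-family if and only if $\O_E^\CG = \O_{E'}^\CG$, where $\O_E^\CG = \th_\CG \circ \O_E : Z \to \kb(\CG)$. Hence it suffices to exhibit a bijection of $\Irr(W)$, induced by $\t$, that permutes the fibers of the map $E \mapsto \O_E^\CG$, and whose image of $\FC$ is $\FC \g$.

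First I will invoke Proposition~\ref{omega lineaire} applied to the given $\t = (\xi, \xi', \g \rtimes g)$: for every $E \in \Irr(W)$ and every $z \in Z$,
\[
\O_E(\lexp{\t}{z}) \;=\; \lexp{\t}{\bigl(\O_{\lexp{g}{E} \otimes \g^{-1}}(z)\bigr)}.
\]
The hypothesis that $\t$ stabilizes $\CG$ is precisely what is needed to ensure that the action of $\t$ on $\kb[\CCB]$ descends to an automorphism $\bar{\t}$ of the quotient $\kb[\CCB]/\CG$, and hence (by passing to the fraction field) of $\kb(\CG)$, in a way that is compatible with $\th_\CG$. Applying $\th_\CG$ to the identity above therefore yields
\[
\O_E^\CG \circ \t|_Z \;=\; \bar{\t}\,\circ\,\O_{\lexp{g}{E} \otimes \g^{-1}}^\CG ,
\]
or equivalently $\O_{\lexp{g}{E} \otimes \g^{-1}}^\CG = \bar{\t}^{-1} \circ \O_E^\CG \circ \t|_Z$.

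Since $\t|_Z$ and $\bar{\t}$ are both bijective, this identity shows that $\O_E^\CG = \O_{E'}^\CG$ holds if and only if $\O_{\lexp{g}{E} \otimes \g^{-1}}^\CG = \O_{\lexp{g}{E'} \otimes \g^{-1}}^\CG$. Combined with Lemma~\ref{caracterization blocs CM}, this means exactly that the bijection $\psi : \Irr(W) \to \Irr(W)$, $E \mapsto \lexp{g}{E} \otimes \g^{-1}$ (which is the action on simples induced by $\t$, cf.\ Proposition~\ref{omega lineaire}) permutes the Calogero-Moser $\CG$-families. Applying $\psi$ to $\FC$ yields the desired conclusion, the set $\FC\g$ being understood as the image of $\FC$ under the bijection on $\Irr(W)$ induced by $\t$.

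No genuine obstacle is anticipated: once Proposition~\ref{omega lineaire} is in hand, the entire argument is a formal consequence of transporting the identity through the quotient map $\th_\CG$, which is legitimate precisely because of the stabilization hypothesis on $\t$.
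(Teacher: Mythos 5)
Your proof is correct and follows exactly the route the paper intends: the paper's own justification is the single remark "Using Proposition~\ref{omega lineaire} again," and your argument is precisely the fleshing-out of that — descend the identity $\O_E(\lexp{\t}{z})=\lexp{\t}{\O_{\lexp{g}{E}\otimes\g^{-1}}(z)}$ through $\th_\CG$ (legitimate because $\t$ stabilizes $\CG$) and invoke Lemma~\ref{caracterization blocs CM}. Your closing caveat identifying $\FC\g$ with the image of $\FC$ under the bijection $E\mapsto\lexp{g}{E}\otimes\g^{-1}$ induced by $\t$ matches the paper's own (slightly loose) convention in Corollaries~\ref{gamma c familles}--\ref{ordre 2}, so there is nothing to add.
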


\bigskip

\begin{coro}\label{familles lineaires generiques}
Let $\g$ be a linear character of $W$ 
and let $\FC$ be a {\bfit generic} Calogero-Moser family. Then  
$\FC\g$ is a generic Calogero-Moser family.
\end{coro}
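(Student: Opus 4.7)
The plan is to deduce this directly from Corollary~\ref{familles lineaires} by specializing to the prime ideal $\CG = 0$ of $\kb[\CCB]$. First I would observe that, by Example~\ref{cm generique}, the generic Calogero-Moser families are by definition precisely the Calogero-Moser $\CG$-families for $\CG = 0$. So the statement to prove reduces to an instance of Corollary~\ref{familles lineaires}.

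Next, given a linear character $\g \in W^\wedge$, I would consider the element
$$\t = (1,\, 1,\, \g \rtimes 1) \in \kb^\times \times \kb^\times \times (W^\wedge \rtimes \NC).$$
The action of $\t$ on $\kb[\CCB]$ is induced by $\g_\CCB$, which is an automorphism of $\kb[\CCB]$ (see~\S\ref{subsection:lineaires-1}). Since the zero ideal is stable under every automorphism of $\kb[\CCB]$, the hypothesis that $\t$ stabilizes $\CG = 0$ is automatic. Applying Corollary~\ref{familles lineaires} to this $\t$ and to $\CG = 0$ then yields at once that if $\FC$ is a generic Calogero-Moser family, so is $\FC\g$.

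There is essentially no obstacle: the content of the corollary is already packaged in Corollary~\ref{familles lineaires}, and the only thing to notice is that the stabilization hypothesis disappears in the generic case. In fact this reflects the more general principle that any of the automorphisms in $\kb^\times \times \kb^\times \times (W^\wedge \rtimes \NC)$ permutes generic Calogero-Moser families, since each of them acts on the algebra $P$ and fixes its zero ideal trivially.
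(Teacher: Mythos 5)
Your proof is correct and matches the paper's intended derivation: Corollary~\ref{familles lineaires generiques} is stated without proof precisely because it is the specialization of Corollary~\ref{familles lineaires} to $\CG=0$ (with $\t=(1,1,\g\rtimes 1)$), the stabilization hypothesis being automatic for the zero ideal. Nothing further is needed.
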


\bigskip

\begin{coro}\label{ordre 2}
Assume that all the reflections of $W$ have order $2$. Let $\FC$ be a Calogero-Moser  
$\CG$-family. Then $\FC \e$ is a Calogero-Moser $\CG$-family (recall that $\e$ 
is the determinant).
\end{coro}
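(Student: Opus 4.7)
The strategy is to deduce Corollary \ref{ordre 2} directly from Corollary \ref{familles lineaires} by exhibiting an element $\t\in \kb^\times\times\kb^\times\times(W^\wedge\rtimes\NC)$ with linear-character part equal to $\e$ and whose induced action on $\kb[\CCB]$ is trivial (and in particular stabilizes the arbitrary prime ideal $\CG$).

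Concretely, I would take
\[
\t_0=(-1,1,\e\rtimes 1)\in \kb^\times\times\kb^\times\times(W^\wedge\rtimes\NC),
\]
the same element that already appears in \S\ref{sec:w0}. By Lemma~\ref{lem:automorphismes-0}(a), $\t_0$ stabilizes the subalgebra $\kb[\CCB]$, so it suffices to compute its action on the generators $C_s$, $s\in\Ref(W)$. Unwinding the definitions from \S\ref{subsection:bi-graduation-1} and \S\ref{subsection:lineaires-1}, the automorphism $\gradauto_{-1,1}$ multiplies every $C\in\CCB^*$ by $-1$, and the linear character $\e$ acts on $\CCB^*$ by $C_s\mapsto \e(s)^{-1}C_s$. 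Hence
\[
\t_0(C_s)=-\e(s)^{-1}\,C_s\qquad\text{for every }s\in\Ref(W).
\]

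Under the hypothesis that every reflection has order $2$, we have $\e(s)=-1$ for all $s\in\Ref(W)$, so $-\e(s)^{-1}=1$, and $\t_0$ acts as the identity on $\kb[\CCB]$. In particular $\t_0$ stabilizes the prime ideal $\CG$ (whatever it is).

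With this verification in hand, Corollary~\ref{familles lineaires} applied to $\t=\t_0$ (whose linear-character component is $\g=\e$) yields that $\FC\e$ is again a Calogero-Moser $\CG$-family, which is exactly the claim. There is no real obstacle here: the whole content of the corollary reduces to the numerical identity $-\e(s)^{-1}=1$ for order-$2$ reflections, which is what makes the twist by $\e$ compatible with an arbitrary specialization along $\CG$ (in contrast to a general linear character, where one must either permute the parameter $\CG$ accordingly or restrict to $\CG$-stable choices, as in Corollary~\ref{familles lineaires generiques}).
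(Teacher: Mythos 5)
Your proof is correct and is essentially the paper's own argument: the paper also takes $\t=(-1,1,\e\rtimes 1)$, observes that it acts trivially on $\kb[\CCB]$ (hence stabilizes $\CG$), and concludes by Corollary~\ref{familles lineaires}. You merely spell out the verification $\t_0(C_s)=-\e(s)^{-1}C_s=C_s$ that the paper leaves implicit.
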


\begin{proof}
The element $\t=(-1,1,\e \rtimes 1)$ of $\kb^\times \times \kb^\times \times (W^\wedge \rtimes \NC)$
acts trivially on $\kb[\CCB]$. The result follows now from
Corollary~\ref{familles lineaires}.
\end{proof}

\bigskip

\begin{exemple}[Generic families and linear characters]\label{lineaire}
Let $\g \in W^\wedge$ and $\chi \in \Irr(W)$ be in the same {\it generic} Calogero-Moser family.
Then $\O_\chi(\euler)=\O_\g(\euler)$, hence
$\chi(s)=\g(s) \chi(1)$ for all $s \in \REF(W)$.
In other words, all the reflections of $W$ 
are in the center of $\chi$ (that is, the normal subgroup of $W$ 
consisting of elements which acts on $E_\chi$ by scalar multiplication). 
It follows that the center of $\chi$ is $W$ itself, hence $\chi=\g$. 

Consequently, a linear character is alone in its generic Calogero-Moser family. 
This result applies in particular to $\unb_W$ and $\e$, 
and is compatible with Corollary~\ref{familles lineaires generiques}.\finl
\end{exemple}

\bigskip

\section{Graded dimension, $\bb$-invariant}\label{section:dim graduee}

\medskip

%\subsection{Dimension gradu\'ee de ${\boldsymbol{k\Qba b}}$}\label{subsection:dim graduee} 
By Proposition~\ref{graduation idem}, the elements of 
$\blocs(\Zba_\CG)$ have $\BZ$-degree $0$. In particular, given
$b \in \blocs(\Zba_\CG)$, 
then $b\Zba_\CG$ is a finite-dimensional graded $\kb(\CG)$-algebra.
The aim of this section is to study this grading. We put
$\bar{\Delta}_\CG(\mathrm{co})=\bar{\Delta}(\mathrm{co})
\otimes_{\kb[\CCB]}\kb(\CG)$.

% Tout d'abord, il d\'ecoule de 
% l'isomorphisme de Satake que 
% \equat\label{eq:satake-restreint}
% K\Zba b \simeq K e b\Hbov e
% \endequat
% comme espaces gradu\'es. 

\bigskip

\begin{theo}\label{dim graduee bonne}
Let $b \in \blocs(\Zba_\CG)$ and let $\FC=\Irr_\Hb(W,b)$. Then:
\begin{itemize}
\itemth{a} $\DS{\dim_{\kb(\CG)}^\grad b\Zba_\CG =\sum_{\chi \in \FC} f_{\chi}(\tb^{-1}) ~f_{\chi}(\tb)}$.

\itemth{b} There exists a unique $\chi \in \FC$ with minimal $\bb$-invariant,
which we denote by $\chi_\FC$.

\itemth{c} The coefficient of $\tb^{\bb_{\chi_\FC}}$ in  
$f_{\chi_\FC}(\tb)$ is equal to $1$.

\itemth{d} $b\bar{\Delta}_\CG(\mathrm{co})=
b\Hbov_\CG e$ is a projective cover of $L_\CG(\chi_\FC)$.

\itemth{e} The algebra $\End_{\Hbov_\CG}(\bar{\Delta}_\CG(\chi_\FC))$ is a quotient of $b\Zba_\CG$.
In particular, it is commutative.
\end{itemize}
\end{theo}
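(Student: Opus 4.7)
My approach rests on the Satake isomorphism $b\Zba_\CG \simeq eb\Hbov_\CG e$ (Corollary~\ref{coro:endo-bi} in its restricted form) together with the identification $\Hbov_\CG e = \bar\Delta_\CG(\mathrm{co})$. I would first establish (d) and (e), which give an abstract characterization of $\chi_\FC$. The module $b\Hbov_\CG e = \Hbov_\CG(be)$ is a cyclic projective module, since $b$ is central and $be$ is therefore idempotent; its endomorphism ring is $be\Hbov_\CG be = b(e\Hbov_\CG e) = b\Zba_\CG$ by Satake. As $b$ is primitive in the commutative artinian algebra $\Zba_\CG$, the quotient $b\Zba_\CG$ is local, so $b\Hbov_\CG e$ is an indecomposable projective, and hence (by Proposition~\ref{pr:verma}) the projective cover of a unique simple $L_\CG(\chi_\FC)$ with $\chi_\FC \in \FC$, proving (d). For (e), the highest weight structure provides a surjection $\pi\colon P(L_\CG(\chi_\FC)) \twoheadrightarrow \bar\Delta_\CG(\chi_\FC)$; every $\varphi \in \End(P(L_\CG(\chi_\FC))) = b\Zba_\CG$ acts by multiplication by a central element, hence preserves $\ker\pi$ and descends to an endomorphism of $\bar\Delta_\CG(\chi_\FC)$. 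Projectivity of $P(L_\CG(\chi_\FC))$ makes the induced map $b\Zba_\CG \twoheadrightarrow \End_{\Hbov_\CG}(\bar\Delta_\CG(\chi_\FC))$ surjective.

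\textbf{Graded dimension (a).}
I would apply Lemma~\ref{le:classDeltaco} in the restricted setting to obtain $[\bar\Delta_\CG(\mathrm{co})]^\grad = \sum_{E \in \Irr(W)} f_E(\tb^{-1})[\bar\Delta_\CG(E)]^\grad$ in the graded Grothendieck group of $\Hbov_\CG\mmodgr$. Multiplying by $b$ restricts the sum to $E \in \FC$, and applying the exact graded functor $e\cdot$ of trivial $W$-isotypic yields $\dim^\grad(eb\bar\Delta_\CG(\mathrm{co})) = \sum_{E \in \FC} f_E(\tb^{-1}) \dim^\grad(e\bar\Delta_\CG(E))$. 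A direct computation using equation~(\ref{fchi}) and $\bar\Delta_\CG(E) \simeq \kb[V]^\cow \otimes E$ gives $\dim^\grad(e\bar\Delta_\CG(E)) = f_E(\tb)$ (the graded multiplicity of $\unb_W$ in $\kb[V]^\cow \otimes E$). Combining with $b\Zba_\CG \simeq eb\bar\Delta_\CG(\mathrm{co})$ yields (a).

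\textbf{The $\bb$-invariant (b), (c), and the main obstacle.}
For (b) and (c), the crucial input is that $\dim eL_\CG(\chi_\FC) = \dim\Hom_{\Hbov_\CG}(P(L_\CG(\chi_\FC)), L_\CG(\chi_\FC)) = 1$, so the trivial isotypic of $L_\CG(\chi_\FC)$ is concentrated in a single graded degree $d_0$. For each $\chi \in \FC$, the baby Verma $\bar\Delta_\CG(\chi) = \kb[V]^\cow \otimes \chi$ has composition factors only among graded shifts of $\{L_\CG(\mu)\}_{\mu \in \FC}$, and among these only the $L_\CG(\chi_\FC)$-factors contribute to the $\unb_W$-isotypic, since by (d) we have $eL_\CG(\mu) = 0$ for $\mu \neq \chi_\FC$ in $\FC$. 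Hence $f_\chi(\tb)$ decomposes as a sum of one-dimensional contributions concentrated at degrees $d_0 + k$, where $k \ge 0$ ranges with multiplicity over the shifts at which $L_\CG(\chi_\FC)$ appears as a composition factor of $\bar\Delta_\CG(\chi)$. For $\chi = \chi_\FC$ the head contributes $k=0$ with multiplicity $1$, giving $\bb_{\chi_\FC} = d_0$ and leading coefficient~$1$ for $f_{\chi_\FC}$, which proves (c); for $\chi \neq \chi_\FC$ in $\FC$ the head of $\bar\Delta_\CG(\chi)$ is $L_\CG(\chi) \neq L_\CG(\chi_\FC)$, so the $L_\CG(\chi_\FC)$-factors lie strictly in the radical, occurring only with shifts $k \ge 1$, whence $\bb_\chi \ge \bb_{\chi_\FC} + 1$ and the minimum is unique, proving (b). The principal obstacle is the careful bookkeeping of grading shifts in the highest weight category $\Hbov_\CG\mmodgr$: identifying $d_0$ with $\bb_{\chi_\FC}$ and ensuring the strict positivity of non-head composition factor shifts both hinge on the highest weight partial order and its interaction with the grading of Verma modules.
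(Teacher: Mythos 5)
Your proof is correct and follows essentially the same route as the paper: the Satake isomorphism identifies $b\Zba_\CG$ with $\End_{\Hbov_\CG}(b\Hbov_\CG e)$, making $b\Hbov_\CG e=b\bar{\Delta}_\CG(\mathrm{co})$ an indecomposable projective whose graded class is computed by Lemma~\ref{le:classDeltaco}, and (e) is obtained exactly as in the paper by descending central endomorphisms through a surjection onto $\bar{\Delta}_\CG(\chi_\FC)$. The only (harmless) variation is in (b) and (c): you read off $f_\chi(\tb)=\dim^\grad e\bar{\Delta}_\CG(\chi)$ from the graded composition multiplicities of $L_\CG(\chi_\FC)$, using $eL_\CG(\mu)=0$ for $\mu\neq\chi_\FC$ and the fact that the radical of a baby Verma module sits in strictly positive degrees, whereas the paper compares the graded $\Delta$-filtration multiplicities of the projective cover coming from the highest weight structure — two dual readings of the same fact.
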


\medskip
The character $\chi_\FC$ is called the {\em special} character of the family $\FC$
(relative to $\CG$).

\bigskip

By~(\ref{chi 1}), we obtain the following immediate consequence:

\bigskip

\begin{coro}\label{dim bonne}
Given $b \in \blocs(\Zba_\CG)$, we have
$$\dim_{\kb(\CG)} b\Zba_\CG= \sum_{\chi \in \Irr_\Hb(W,b)} \chi(1)^2.$$
\end{coro}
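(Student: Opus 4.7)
The plan is to obtain this identity as a direct specialization of part (a) of Theorem~\ref{dim graduee bonne}. First I would observe that the ungraded dimension of a finite-dimensional graded vector space is obtained from its Hilbert series by evaluating at $\tb = 1$. Since $b$ has $\BZ$-degree $0$ (as noted just before the theorem, using Proposition~\ref{graduation idem}), the algebra $b\Zba_\CG$ is a finite-dimensional graded $\kb(\CG)$-algebra, and so
\[
\dim_{\kb(\CG)} b\Zba_\CG \;=\; \bigl(\dim_{\kb(\CG)}^\grad b\Zba_\CG\bigr)\Big|_{\tb=1}.
\]

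Next I would substitute the formula from Theorem~\ref{dim graduee bonne}(a), which gives
\[
\dim_{\kb(\CG)}^\grad b\Zba_\CG \;=\; \sum_{\chi \in \Irr_\Hb(W,b)} f_\chi(\tb^{-1})\, f_\chi(\tb).
\]
Setting $\tb = 1$ and using identity~(\ref{chi 1}), namely $f_\chi(1) = \chi(1)$, each summand becomes $\chi(1)^2$, yielding the claimed formula.

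There is really no obstacle here: the only nontrivial ingredient is Theorem~\ref{dim graduee bonne}(a), which is already assumed. The corollary is a one-line specialization, and the only point requiring care is noting that the graded-to-ungraded reduction is valid because $b\Zba_\CG$ is finite-dimensional (so its Hilbert series is a Laurent polynomial), which in turn follows from the fact that $\Zba_\CG = Z/\pGba_\CG Z \otimes_{\kb[\CCB]} \kb(\CG)$ is finite-dimensional over $\kb(\CG)$ by the freeness of $Z$ over $P$ (Corollary~\ref{coro:endo-bi}(e)).
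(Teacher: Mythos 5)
Your proof is correct and is exactly the paper's argument: the corollary is stated there as an immediate consequence of Theorem~\ref{dim graduee bonne}(a) via the evaluation $\tb=1$ and the identity $f_\chi(1)=\chi(1)$ from~(\ref{chi 1}). Your additional remark justifying the graded-to-ungraded specialization (finite-dimensionality of $b\Zba_\CG$) is a harmless elaboration of what the paper leaves implicit.
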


\bigskip

\begin{rema}\label{generalization gordon}
Theorem~\ref{dim graduee bonne} 
generalizes~\cite[Theorem~5.6]{gordon} and Corollary~\ref{dim bonne} 
generalizes~\cite[Corollary~5.8]{gordon} (case of families with only
one element). As pointed 
out to us by Gordon, in~\cite[Theorem~5.6]{gordon},
$p_\chi(\tb)=\tb^{\bb_{\chi^*}-\bb_\chi}f_{\chi}(\tb)f_{\chi^*}(\tb^{-1})$ should be replaced by
$p_\chi(\tb)=f_{\chi^*}(\tb)f_{\chi^*}(\tb^{-1})$ with the notation of~\cite{gordon}.
Note that the difference with 
our result comes from the fact that we have used a $\ZM$-grading opposed 
to the one of~\cite[\S 4.1]{gordon}, which amounts to swapping $V$ 
and $V^*$ and so to swap, in this formula, $\chi$ and $\chi^*$.\finl
\end{rema}

% \begin{rema}\label{martino 1 et 2}
% Le corollaire~\ref{dim bonne} montre que~\cite[conjecture 2.7(i)]{martino} 
% implique~\cite[conjecture 2.7(ii)]{martino}. Plus de d\'etails seront 
% donn\'es dans la section~\ref{section:rouquier} (voir la proposition~\ref{1 2}).
% 
% De la m\^eme mani\`ere, ce corollaire montre que~\cite[conjecture 1.3(1)]{gordon martino} 
% implique~\cite[conjecture 1.3(2)]{gordon martino}. Les d\'etails seront 
% donn\'es dans la remarque~\ref{GM 1 2}.\finl
% \end{rema}

\bigskip

\begin{proof}[Proof of Theorem~\ref{dim graduee bonne}]
%Dans cette preuve, nous noterons $K[V]=K \otimes \kb[V]$ et $K[V^*]=K \otimes \kb[V^*]$. 
Lemma \ref{le:classDeltaco} shows that
$$[b\bar{\Delta}_\CG(\mathrm{co})]_{\Hbov_\CG}^\grad=\sum_{\chi\in\FC}
f_\chi(\tb^{-1}) [\bar{\Delta}_\CG(\chi)]_{\Hbov_\CG}^\grad.$$

The right action of $b\Zba_\CG$ on $b\Hbov_\CG e$ induces an isomorphism of
graded
algebras $b\Zba_\CG\xrightarrow{\sim} eb\Hbov_\CG e$
(Corollary~\ref{coro:endo-bi}).
We deduce now assertion (a):
$$\dim_{\kb(\CG)}^\grad (b\Zba_\CG)=
\dim_{\kb(\CG)}^\grad(eb\bar{\Delta}_\CG(\mathrm{co}))=
\sum_{\chi\in\FC}f_\chi(\tb^{-1})\dim_{\kb(\CG)}^\grad
\bigl((\kb(\CG)[V]^\cow\otimes E_\chi)^W\bigr)=
\sum_{\chi\in\FC}f_\chi(\tb^{-1})f_{\chi}(\tb).$$

Since $\End_{\Hbov_\CG}(b\Hbov_\CG e)$ is isomorphic to the local
commutative algebra $b\Zba_\CG$, the $\Hbov_\CG$-module
$b\Hbov_\CG e$ is indecomposable (and of course projective), so it admits
a unique simple quotient $L_\CG(\chi_\FC)$, for some $\chi_\FC\in\FC$.
The highest weight category structure of $\Hbov_\CG\mmodgr$ shows that 
$$[b\Hbov_\CG e]-\tb^{b_{\chi_\FC}}[\bar{\Delta}_\CG(\chi_\FC)]
\in \bigoplus_{\chi\in\FC}
\tb^{b_{\chi_\FC}+1}\ZM[\tb][\bar{\Delta}_\CG(\chi)].$$
The assertions (b), (c) and (d) follow.

\medskip
Let $M$ be the kernel of a surjection
$b\Hbov_\CG e\to\bar{\Delta}_\CG(\chi_\FC)$. Since $\End_{\Hbov_\CG}
(b\Hbov_\CG e)$ is
generated by $\Zba_\CG$, it follows that the $\Hbov_\CG$-endomorphisms
of $b\Hbov_\CG e$
stabilize $M$. We obtain by restriction a morphism of $\kb(\CG)$-algebras 
$b\Zba_\CG \to \End_{\Hbov_\CG}(\bar{\Delta}_\CG(\chi_\FC))$
which is surjective since $b\Hbov_\CG e$ is projective. 
\end{proof}

\bigskip
% 
% \begin{rema}\label{bigrad}
% Si $k = \kb(\CCB)$ et si $b \in \blocs(\kb(\CCB)\Qba)$, alors 
% $b$ est non seulement $\BZ$-homog\`ene (de degr\'e $0$) mais 
% aussi $(\NM \times \NM)$-homog\`ene (de bi-degr\'e $(0,0)$). 
% Par suite, $\kb(\CCB) \Qba b$ h\'erite d'une bi-graduation et, 
% en reprenant mot pour mot la preuve pr\'ec\'edente dans 
% un contexte bi-gradu\'e, on obtient
% $$\dim_{\kb(\CCB)}^\bigrad \kb(\CCB)\Qba b = \sum_{\chi \in \Irr_\Hb(W,b)} 
% f_\chi(\tb)f_\chi(\ub).$$
% Cela pr\'ecise le th\'eor\`eme~\ref{dim graduee bonne}.\finl
% \end{rema}
% 
% \bigskip

\begin{coro}\label{polynome caracteristique}
Let $z \in Z$ and let $\carac_z(\tb) \in P[\tb]$ denote the characteristic polynomial 
of the multiplication by $z$ in the $P$-module $Z$. Then
$$\carac_z(\tb) \equiv \prod_{\chi \in \Irr(W)} (\tb - \O_\chi(z))^{\chi(1)^2} \mod \pGba.$$
\end{coro}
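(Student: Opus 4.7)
The plan is to reduce the statement to a computation over the fraction field $\Kbov=\Frac(\Pba)\simeq\kb(\CCB)$, and then exploit the block decomposition of $\Zba\otimes_{\Pba}\Kbov$ together with Corollary~\ref{dim bonne}. Since $Z$ is a free $P$-module of rank $|W|$ (Corollary~\ref{coro:endo-bi}), the characteristic polynomial of multiplication commutes with base change: reducing modulo $\pGba$ we see that $\carac_z(\tb)\bmod\pGba$ equals the characteristic polynomial of multiplication by $\bar z$ on the free $\Pba$-module $\Zba$ of rank $|W|$. Since $\Zba$ is $\Pba$-free, this further coincides with the characteristic polynomial of multiplication by $\bar z$ on the $|W|$-dimensional $\Kbov$-algebra $\Kbov\Zba=\Zba\otimes_{\Pba}\Kbov$.

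Next I would decompose $\Kbov\Zba=\prod_{b\in\blocs(\Zba_{(0)})} b\Kbov\Zba$, which gives
\[
\carac_{\bar z}(\tb)=\prod_{b} \carac_{b,\bar z}(\tb),
\]
where $\carac_{b,\bar z}(\tb)$ is the characteristic polynomial of $\bar z$ acting on $b\Kbov\Zba$. Each factor $b\Kbov\Zba$ is a finite-dimensional local commutative $\Kbov$-algebra: the central character $\O_\chi$, for any $\chi\in\Irr_\Hb(W,b)$, is by Lemma~\ref{caracterization blocs CM} the common value $\O_b$ and it defines a surjective morphism of $\Kbov$-algebras $b\Kbov\Zba\twoheadrightarrow\Kbov$ whose kernel is the unique maximal ideal. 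Consequently $\bar z-\O_b(z)$ is nilpotent in $b\Kbov\Zba$, so multiplication by $\bar z$ on $b\Kbov\Zba$ has a single eigenvalue $\O_b(z)=\O_\chi(z)$.

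Hence, setting $d_b=\dim_{\Kbov}(b\Kbov\Zba)$, I would conclude
\[
\carac_{b,\bar z}(\tb)=(\tb-\O_\chi(z))^{d_b}\qquad\text{for any }\chi\in\Irr_\Hb(W,b).
\]
Corollary~\ref{dim bonne} (applied at $\CG=0$) yields $d_b=\sum_{\chi\in\Irr_\Hb(W,b)}\chi(1)^2$. Since the generic Calogero-Moser families partition $\Irr(W)$ and $\O_\chi$ depends only on the family of $\chi$, multiplying over all blocks gives
\[
\carac_z(\tb)\bmod\pGba=\prod_{b}(\tb-\O_b(z))^{\sum_{\chi\in\Irr_\Hb(W,b)}\chi(1)^2}
=\prod_{\chi\in\Irr(W)}(\tb-\O_\chi(z))^{\chi(1)^2},
\]
as desired. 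There is really no serious obstacle in this argument; the only point requiring a little care is the base-change step at the beginning, which is handled by the freeness of $Z$ over $P$ established in Corollary~\ref{coro:endo-bi}, together with the resulting freeness of $\Zba$ over $\Pba$ that allows us to pass to the generic point without losing information about $\carac_z(\tb)$.
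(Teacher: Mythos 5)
Your proof is correct and follows essentially the same route as the paper's: reduce modulo $\pGba$ and pass to $\kb(\CCB)\Zba$, decompose into blocks, use that $z-\O_b(z)$ is nilpotent on each block so the characteristic polynomial there is $(\tb-\O_b(z))^{\dim_{\kb(\CCB)}b\kb(\CCB)\Zba}$, and conclude with Corollary~\ref{dim bonne} and the partition~(\ref{cm familles}). The only difference is that you spell out the base-change step via the freeness of $Z$ over $P$, which the paper leaves implicit.
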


\begin{proof}
Let $b \in \blocs(\kb(\CCB)\Zba)$. Since
$z-\O_b(z)$ is a nilpotent endomorphism of 
$b\kb(\CCB)\Zba$, the characteristic polynomial of $z$ on 
$b\kb(\CCB)\Zba$ is $(\tb - \O_b(z))^{\dim_{\kb(\CCB)} b\kb(\CCB)\Zba}$. 
Consequently, 
$$\carac_z(\tb) \equiv \prod_{b \in \blocs(\kb(\CCB)\Zba)} 
(\tb - \O_b(z))^{\dim_{\kb(\CCB)} b\kb(\CCB)\Zba } \mod \pGba.$$
Since $\O_b(z)=\O_\chi(z)$ for all $\chi \in \Irr_\Hb(W,b)$, the result follows 
from~(\ref{cm familles}) and from Corollary~\ref{dim bonne}.
\end{proof}

\bigskip

\bigskip

\begin{coro}\label{multiplicite 1}
Let $\g : W \longto \kb^\times$ be a linear character. Then 
$Z$ is unramified over $P$ at $\Ker(\O_\g)$.
\end{coro}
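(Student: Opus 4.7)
Let $\zG = \Ker(\O_\gamma) \subset Z$. The strategy is to pin down exactly how $\zG$ sits above $P$ and then to use the dimension count from Corollary~\ref{dim bonne} to show that the local fiber at $\zG$ collapses to the base field.

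First I would determine that $\zG \cap P = \pGba$. Recall from \S\ref{subsection:omega} that for $p = a \otimes f \otimes g \in P = \kb[\CCB] \otimes \kb[V]^W \otimes \kb[V^*]^W$ one has $\Omeb(p) = f(0)g(0) a \cdot 1_W$, so that $\O_\gamma(p) = f(0)g(0)a$. Hence the restriction of $\O_\gamma$ to $P$ is the canonical quotient $P \twoheadrightarrow P/\pGba \cong \kb[\CCB]$. This shows both that $\zG \cap P = \pGba$ and that $\O_\gamma$ induces an isomorphism $Z/\zG \xrightarrow{\sim} \kb[\CCB]$; in particular the residue field extension $k_Z(\zG)/k_P(\pGba)$ is trivially equal to $\kb(\CCB)$. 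The separability condition for unramifiedness is therefore automatic, and what remains is to show $\pGba Z_\zG = \zG Z_\zG$, i.e.\ that $(Z/\pGba Z)_\zG = \Zba_{\bar{\zG}}$ is a field, where $\bar{\zG} = \zG/\pGba Z$.

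Next I would invoke Example~\ref{lineaire} which tells us that the linear character $\gamma$ is alone in its generic Calogero-Moser family, i.e.\ $\{\gamma\}$ is a (singleton) family when $\CG = 0$. Let $b \in \blocs(\Zba_0)$ be the central primitive idempotent corresponding to this family; then by Lemma~\ref{caracterization blocs CM} the prime $\bar{\zG}$ of $\Zba$ is the one that extends to the kernel of the projection $\Zba_0 \twoheadrightarrow b\Zba_0$. Now apply Corollary~\ref{dim bonne} to the family $\{\gamma\}$:
\[
\dim_{\kb(\CCB)} b\Zba_0 \;=\; \sum_{\chi \in \{\gamma\}} \chi(1)^2 \;=\; \gamma(1)^2 \;=\; 1.
\]

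Finally, since $\bar{\zG}$ lies over the zero ideal of $\kb[\CCB]$ (by the first step), the localization $\Zba_{\bar{\zG}}$ coincides with the localization of $\Zba_0$ at the maximal ideal corresponding to $b$, which is simply the block $b\Zba_0$ itself. By the dimension computation, $b\Zba_0 = \kb(\CCB)$ is a field, so $\Zba_{\bar{\zG}}$ is a field. This yields $\pGba Z_\zG = \zG Z_\zG$, which together with the triviality of the residue extension established in the first step shows that $Z$ is unramified over $P$ at $\zG = \Ker(\O_\gamma)$. There is no serious obstacle in this argument: everything reduces to the key input that a linear character forms a singleton generic family (Example~\ref{lineaire}), combined with the block-dimension formula of Corollary~\ref{dim bonne}.
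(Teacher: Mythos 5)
Your proof is correct and follows essentially the same route as the paper: establish $\Ker(\O_\gamma)\cap P=\pGba$ and $Z/\Ker(\O_\gamma)\simeq\kb[\CCB]$, then use Example~\ref{lineaire} (a linear character is alone in its generic family) together with Corollary~\ref{dim bonne} to see that the corresponding block of $\kb(\CCB)\Zba$ is one-dimensional, which forces $\pGba Z_{\zG}=\zG Z_{\zG}$. The only difference is that you spell out the computation of $\Omeb$ on $P$, which the paper leaves implicit.
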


\begin{proof}
Indeed, if $b_\g$ denotes the primitive idempotent of $\kb(\CCB)\Zba$ associated with $\g$, 
then $\Irr_\Hb(W,b_\g)=\{\g\}$ by Example~\ref{lineaire}. This implies, by Corollary~\ref{dim bonne}, that 
$\dim_{\kb(\CCB)}(b_\g \kb(\CCB)\Zba)=1$. 

Set $\zG_\g=\Ker(\O_\g)$ (we have $\zG_\g \cap P = \pGba$). Then $Z/\zG_\g \simeq \kb[\CCB]$, hence
$Z_{\zG_\g}/\zG_\g Z_{\zG_\g}\simeq \kb(\CCB)$. But, on the other hand, 
$Z_{\zG_\g}/\pGba Z_{\zG_\g} = b_\g \kb(\CCB) \Zba$. So 
$\dim_{\kb(\CCB)}(Z_{\zG_\g}/\pGba Z_{\zG_\g})=1$, which implies that 
$\pGba Z_{\zG_\g} = \zG_\g Z_{\zG_\g}$, as desired.
\end{proof}

\bigskip

\section{Exchanging $V$ and $V^*$} 

\medskip

If $E$ is a graded $(\kb[V] \rtimes W)$-module, one can define
$$\D^*(E)=\Ind_{\Hb^+}^\Hb (\kb[\CCB] \otimes E),$$
which is a graded $\Hb$-module, as well as its reduction modulo $\pGba$, 
denoted by $\bar{\D}^*(E)$, which is a graded $\Hbov$-module. 
If $\CG$ is a prime ideal, we also set $\bar{\D}_\CG^*(E)=\kb(\CG) \otimes_{\kb[\CCB]} \bar{\D}^*(E)$, 
which is a graded $\Hbov_\CG$-module. 

Assume now that $E \in \Irr(W)$. Then, as in Proposition~\ref{pr:verma}, the $\Hbov_\CG$-module 
$\bar{\D}_\CG^*(E)$ is indecomposable and 
admits a unique simple quotient, which will be denoted by 
$L_\CG^*(E)$. Moreover, the map 
\equat\label{eq:bij-simple-*}
\fonctio{\Irr(W)}{\Irr(\Hbov_\CG)}{E}{L_\CG^*(E)}
\endequat
is bijective.

\bigskip

\begin{rema}\label{rem:auto-bij}
From Proposition~\ref{pr:verma} and~(\ref{eq:bij-simple-*}), it follows that there exists 
a unique permutation $\bigstar_\CG$ of $\Irr(W)$ such that
$$L_\CG^*(E)=L_\CG(\bigstar_\CG(E))$$
for all $E \in \Irr(W)$. It turns out that the permutation $\bigstar_\CG$ 
is in general difficult to compute, and that it 
depends heavily on the prime ideal $\CG$, as the reader can already check when
$\dim_\kb V = 1$.\finl
\end{rema}

\bigskip

We say that $E$ and $F$ belong to the same $*$-Calogero-Moser $\CG$-family 
if $L_\CG^*(E)$ and $L_\CG^*(F)$ are two simple modules belonging to the same block of $\Hbov_\CG$. 
It follows from Remark~\ref{rem:auto-bij} that $E$ and $F$ belong to the same $*$-Calogero-Moser $\CG$-family 
if and only if $\bigstar_\CG(E)$ and $\bigstar_\CG(F)$ belong to the same Calogero-Moser $\CG$-family. 
%But, since the permutation $\bigstar_\CG$ is not known in general, this does not say that much. 

However, there is another easier description of $*$-families which has been explained to us by Gwyn
Bellamy. 
It follows from~(\ref{dimension left}) that 
$$\Res_{\Hbov_\CG^+}^{\Hbov_\CG} \bar{\D}_\CG(E) \simeq \kb(\CG) \otimes \kb[V]^\cow \otimes E.$$
Recall that $N=|\REF(W)|$ and that $\kb[V]^\cow_N$ is one dimensional, affording 
the character $\e$ (as a $\kb W$-module) and that $\kb[V]^\cow_{> N}=0$. So one 
gets an injective morphism of $\Hbov_\CG^+$-modules 
$E \otimes \e \injto \Res_{\Hbov_\CG^+}^{\Hbov_\CG} \bar{\D}_\CG(E)$. 
By adjunction, one gets a non-zero morphism
\equat\label{eq:non-zero morphism}
\bar{\D}_\CG^*(E \otimes \e) \longto \bar{\D}_\CG(E).
\endequat
In particular, 
\equat\label{eq:chief}
\text{\it the simple module $L_\CG^*(E \otimes \e)$ is a composition factor of $\bar{\D}_\CG(E)$.}
\endequat
Since $\bar{\D}_\CG^*(E)$ is indecomposable with unique simple quotient $L_\CG^*(E)$, this 
implies that
\equat\label{eq:v-v*-bloc}
\text{\it the simple modules $L_\CG^*(E \otimes \e)$ and $L_\CG(E)$ belong to the same block of $\Hbov_\CG$.}
\endequat

\bigskip

\begin{prop}\label{prop:cm-*}
The simple $\kb W$-modules $E$ and $F$ belong to the same Calogero-Moser $\CG$-family if and only if 
$E \otimes \e$ and $F \otimes \e$ belong to the same $*$-Calogero-Moser $\CG$-family.
\end{prop}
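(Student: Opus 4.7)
The plan is to deduce the proposition directly from the block equality established in equation~(\ref{eq:v-v*-bloc}), which asserts that for any $E\in\Irr(W)$, the simple modules $L_\CG^*(E\otimes\e)$ and $L_\CG(E)$ lie in the same block of $\Hbov_\CG$. Once this is in hand, the proposition becomes essentially a tautology, so the main work has already been done in the paragraphs preceding the statement (constructing the non-zero morphism $\bar\Delta_\CG^*(E\otimes\e)\to\bar\Delta_\CG(E)$ via the top component $\kb[V]^{\cow}_N\simeq\e$ of the coinvariant algebra and adjunction).

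First, I would recall the reformulation of Calogero-Moser families in block-theoretic terms: $E$ and $F$ are in the same Calogero-Moser $\CG$-family iff $L_\CG(E)$ and $L_\CG(F)$ belong to the same block of $\Hbov_\CG$, and similarly $E\otimes\e$ and $F\otimes\e$ are in the same $*$-Calogero-Moser $\CG$-family iff $L_\CG^*(E\otimes\e)$ and $L_\CG^*(F\otimes\e)$ lie in the same block. This is just the definition of (resp.\ $*$-) Calogero-Moser families combined with the bijection~(\ref{eq:bij-simple-*}) and Proposition~\ref{pr:verma}, so it needs no further argument.

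Next, I would apply~(\ref{eq:v-v*-bloc}) twice, to $E$ and to $F$, to conclude that $L_\CG^*(E\otimes\e)$ lies in the block of $L_\CG(E)$ and $L_\CG^*(F\otimes\e)$ lies in the block of $L_\CG(F)$. Since ``belonging to the same block'' is an equivalence relation on $\Irr(\Hbov_\CG)$, this gives the chain of equivalences: $L_\CG(E)$ and $L_\CG(F)$ are in the same block iff $L_\CG^*(E\otimes\e)$ and $L_\CG^*(F\otimes\e)$ are in the same block. Combined with the reformulations of the first paragraph, this is exactly the content of the proposition.

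There is no real obstacle here; the proof is a two-line consequence of~(\ref{eq:v-v*-bloc}). The substantive point, already handled in the excerpt, was the construction of the non-zero morphism $\bar\Delta_\CG^*(E\otimes\e)\to\bar\Delta_\CG(E)$ using the fact that the top homogeneous component of $\kb[V]^{\cow}$ has degree $N=|\Ref(W)|$ and carries the character $\e$, together with the indecomposability of $\bar\Delta_\CG^*(E\otimes\e)$ (which forces its unique simple quotient $L_\CG^*(E\otimes\e)$ to share a block with any non-zero quotient target, here $L_\CG(E)$).
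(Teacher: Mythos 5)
Your proof is correct and is exactly the argument the paper intends: Proposition~\ref{prop:cm-*} is stated as an immediate consequence of~(\ref{eq:v-v*-bloc}), obtained by translating both kinds of families into block membership of the simple modules $L_\CG(E)$ and $L_\CG^*(E\otimes\e)$ and invoking transitivity of the block relation. Nothing is missing.
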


\bigskip

The following consequence was announced in Remark~\ref{rem:v-v*}.

\bigskip

\begin{coro}\label{coro:v-v*}
If $z \in Z$, then $\Omeb(z)=\e(\Omeb^*(z))$.
\end{coro}

\begin{proof}
It is sufficient to prove that, for all $E \in \Irr(W)$, we have 
$\o_E(\Omeb(z))=\o_E(\lexp{\e}{\Omeb^*(z)})$. Since
$\o_E(\e(\Omeb^*(z)))=\o_{E \otimes \e}(\Omeb^*(z))$, it 
is sufficient to prove that 
$$\o_E(\Omeb(z))=\o_{E \otimes \e}(\Omeb^*(z)).$$
Take $\CG$ to be the zero ideal of $\kb[\CCB]$, so that $\kb(\CG)=\kb(\CCB)$. 
By Lemma~\ref{lem:action-z-verma} (and its version obtained by swapping $V$ and $V^*$), 
we get that $\o_E(\Omeb(z))$ is the scalar by which $z$ acts on the simple module 
$L_\CG(E)$ while $\o_{E \otimes \e}(\Omeb^*(z))$ is the scalar by which $z$ acts on the simple module 
$L_\CG^*(E \otimes \e)$. So the result follows from~(\ref{eq:v-v*-bloc}).
\end{proof}

\bigskip

\section{Geometry}\label{section:geometrie CM}

\medskip

The composition 
\equat\label{section upsilon}
\diagram
 \kb[\CCB] \xyinj[rr] && Z \xysur[rr]^{\DS{\O_b}} && \kb[\CCB]
\enddiagram
\endequat
is the identity, which means that the morphism of $\kb$-varieties  
$\O_b^\sharp : \CCB \longto \ZCB$  \indexnot{oz}{\O_b^\sharp}  induced by $\O_b$ is 
a section of the morphism $\pi \circ \Upsilon : \ZCB \longto \CCB$ (see the diagram~\ref{diagramme geometrie}). 
Lemma~\ref{caracterization blocs CM} says that the map 
$$\fonctio{\blocs(\kb(\CCB)\Zba)}{\Upsilon^{-1}(\pGba)}{b}{\Ker(\O_b)}$$
is bijective or, in geometric terms, that 
the irreducible components of $\Upsilon^{-1}(\CCB \times 0)$ 
are in bijection with $\blocs(\kb(\CCB)\Zba)$, through the map $b \mapsto \O_b^\sharp(\CCB)$. We deduce the following proposition.

\bigskip

\begin{prop}\label{generique particulier}
Let $c \in \CCB$. Then the following are equivalent:
\begin{itemize}
\itemth{1} $|\blocs(\kb(\CCB)\Zba)|=|\blocs(\Kbov_c\Zba)|$.

\itemth{2} $|\Upsilon^{-1}_c(0)|$ is equal to the number of irreducible components of 
$\Upsilon^{-1}(\CCB \times 0)$.

\itemth{3} Every element of $\Upsilon^{-1}_c(0)$ belongs to a unique 
irreducible component of $\Upsilon^{-1}(\CCB \times 0)$. 

\itemth{4} If $b$ and $b'$ are two distinct elements of $\blocs(\kb(\CCB)\Zba)$, 
then $\th_c \circ \O_b \neq \th_c \circ \O_{b'}$.
\end{itemize}
\end{prop}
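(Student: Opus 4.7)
\medskip

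The plan is to exploit Lemma~\ref{caracterization blocs CM} at two levels: for the zero ideal $\CG=0$ (generic families, parametrizing irreducible components of $\Upsilon^{-1}(\CCB\times 0)$) and for $\CG=\CG_c$ (the $c$-families, parametrizing points of $\Upsilon_c^{-1}(0)$). The picture is that each generic block $b\in\blocs(\kb(\CCB)\Zba)$ produces, via the section $\O_b^\sharp:\CCB\to\ZCB$ described in~\eqref{section upsilon}, an irreducible component $\O_b^\sharp(\CCB)$ of $\Upsilon^{-1}(\CCB\times 0)$; intersecting this component with $\pi^{-1}(c)$ yields the single point $\O_b^\sharp(c)\in\Upsilon_c^{-1}(0)$, whose defining ideal in $Z$ is $\Ker(\th_c\circ\O_b)$.

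First I would establish that the map $b\mapsto \O_b^\sharp(c)$ from $\blocs(\kb(\CCB)\Zba)$ to $\Upsilon_c^{-1}(0)$ is surjective. Indeed, by Lemma~\ref{caracterization blocs CM} applied to $\CG=0$, the variety $\Upsilon^{-1}(\CCB\times 0)=V(\pGba)$ is the union of the irreducible closed subsets $\O_b^\sharp(\CCB)=V(\Ker\O_b)$; intersecting with the smooth fiber $\pi^{-1}(c)$ (see~\eqref{eq:pi-plat}) gives $\Upsilon_c^{-1}(0)=\bigcup_b\{\O_b^\sharp(c)\}$. In parallel, Lemma~\ref{caracterization blocs CM} applied to $\CG_c$ identifies $\blocs(\Kbov_c\Zba)$ with $\Upsilon_c^{-1}(0)$, so the surjection coincides with the natural ``specialization'' map sending a generic block to the $c$-block containing it.

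Next I would argue that all four conditions express exactly the injectivity of this surjection. Condition~(4) is literally the injectivity statement: $\O_b^\sharp(c)\ne \O_{b'}^\sharp(c)$ means the ideals $\Ker(\th_c\circ\O_b)$ and $\Ker(\th_c\circ\O_{b'})$ differ, which (since both are maximal ideals of the form $P/\pGba_c$-quotients via Corollary~\ref{Q extention kc}) is equivalent to $\th_c\circ\O_b\ne \th_c\circ\O_{b'}$. Condition~(2) says that the surjection has fibers of size $1$, i.e., it is bijective, which is again injectivity. Condition~(3) says each element of the target has at most one preimage, again injectivity. Condition~(1) is injectivity rephrased as an equality of cardinalities, once one notes that by Lemma~\ref{caracterization blocs CM} the cardinalities in~(1) equal, respectively, the number of irreducible components of $\Upsilon^{-1}(\CCB\times 0)$ and $|\Upsilon_c^{-1}(0)|$.

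The argument is almost entirely bookkeeping: it amounts to tracking how the bijections $\blocs(\kb(\CCB)\Zba)\longleftrightarrow\Upsilon^{-1}(\pGba)$ and $\blocs(\Kbov_c\Zba)\longleftrightarrow\Upsilon_c^{-1}(0)$ are compatible with the specialization $\th_c$ and the section $\O_b^\sharp$. There is no real obstacle here, the only point requiring attention being the verification that the map $b\mapsto \O_b^\sharp(c)$ is indeed the one induced by specialization of blocks and is surjective onto $\Upsilon_c^{-1}(0)$; this was essentially done in the paragraph preceding the proposition and is a direct consequence of the covering of $V(\pGba)$ by the sections $\O_b^\sharp(\CCB)$.
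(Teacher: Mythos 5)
Your proof is correct and follows exactly the route the paper intends: the proposition is deduced from the bijection $b\mapsto \O_b^\sharp(\CCB)$ between generic blocks and irreducible components of $\Upsilon^{-1}(\CCB\times 0)$ (Lemma~\ref{caracterization blocs CM} for $\CG=0$) together with its specialization $b\mapsto \O_b^\sharp(c)$ onto $\Upsilon_c^{-1}(0)$, all four conditions being reformulations of the injectivity of that surjection. Nothing to add.
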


\bigskip

We say that $c\in\CCB$ is {\it generic} if it satisfies one of the equivalent conditions of
Proposition~\ref{generique particulier}. It will be called {\it particular} 
otherwise. We will denote by $\CCB_\gen$ (respectively $\CCB_\parti$) the set of 
generic (respectively particular) elements of $\CCB$. 

\bigskip

\begin{coro}\label{particulier ferme}
$\CCB_\gen$ is a Zariski dense and open subset of $\CCB$ and
$\CCB_\parti$ is Zariski closed in $\CCB$. If $W \neq 1$, 
then $\CCB_\parti$ is of pure codimension $1$ and contains $0$. 

Moreover, $\CCB_\gen$ and $\CCB_\parti$ are stable under the action of 
$\kb^\times \times \kb^\times \times (W^\wedge \rtimes \NC)$. 
\end{coro}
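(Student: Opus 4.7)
The argument proceeds from characterization (4) of Proposition~\ref{generique particulier}: a point $c\in\CCB$ lies in $\CCB_\gen$ if and only if the generic central characters $\O_b:Z\to\kb[\CCB]$, indexed by $b\in\blocs(\kb(\CCB)\Zba)$, specialize at $c$ to pairwise distinct $\kb$-algebra morphisms. For each pair $b\neq b'$, the ideal $I_{b,b'}\subseteq\kb[\CCB]$ generated by $\{(\O_b-\O_{b'})(z):z\in Z\}$ is nonzero (since $\O_b\neq\O_{b'}$ as ring homomorphisms $Z\to\kb[\CCB]$), so $V(I_{b,b'})$ is a proper closed subset of $\CCB=\Ab^{|\refw|}$. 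Writing
$$\CCB_\parti\;=\;\bigcup_{b\neq b'}V(I_{b,b'})$$
as a finite union of proper closed subsets yields immediately both the closedness of $\CCB_\parti$ and the density and openness of $\CCB_\gen$.

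For the non-emptiness when $W\neq 1$, Example~\ref{lineaire} shows that each linear character of $W$ constitutes a singleton generic family; since $|\Irr(W)|\ge 2$, there are at least $m:=|\blocs(\kb(\CCB)\Zba)|\ge 2$ generic families. On the other hand, Example~\ref{ex:cm0} gives $|\blocs(\Zba_0)|=1$, so condition~(1) of Proposition~\ref{generique particulier} fails at $c=0$, i.e.\ $0\in\CCB_\parti$.

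For pure codimension~$1$, I would identify $\CCB_\parti$ with the support of a cokernel that is concentrated in codimension~$1$. Consider the $\kb[\CCB]$-algebra morphism $\phi=(\O_b)_b:\Zba\to\kb[\CCB]^m$. Since $\kb(\CCB)\Hbov$ is split (Proposition~\ref{pr:verma} for $\CG=0$), each block $A_b$ of $\kb(\CCB)\Zba$ is a local commutative algebra with residue field $\kb(\CCB)$, and the Chinese Remainder Theorem makes $\kb(\CCB)\phi$ surjective onto $\kb(\CCB)^m$ with kernel the nilradical. Hence $\Zba':=\phi(\Zba)$ coincides with $\Zba/\sqrt{0}$ and is a finite $\kb[\CCB]$-subalgebra of $\kb[\CCB]^m$ of generic rank~$m$. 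A pointwise application of CRT then gives $\CCB_\parti=\Supp_{\kb[\CCB]}(\kb[\CCB]^m/\Zba')$. Picking $z\in\Zba$ whose images $\O_b(z)\in\kb[\CCB]$ are pairwise distinct (which exists because the kernels of the finitely many nonzero $\kb(\CCB)$-linear forms $\O_b-\O_{b'}$ are proper subspaces of $\kb(\CCB)\Zba$, so their union does not cover it, and denominators can be cleared into $\Zba$), the elements $\phi(z^0),\ldots,\phi(z^{m-1})$ span a free $\kb[\CCB]$-submodule of $\Zba'$ of rank $m$ whose inclusion into $\kb[\CCB]^m$ has Vandermonde determinant $\Delta=\prod_{b<b'}(\O_b(z)-\O_{b'}(z))\neq 0$. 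This already shows $\CCB_\parti\subseteq V(\Delta)$, a hypersurface of pure codimension~$1$. To upgrade the inclusion to an equality of pure codimensions, I would invoke purity of the branch locus (Zariski--Nagata) applied to the finite birational morphism $\Spec\kb[\CCB]^m\to\Spec\Zba'$: the non-normal locus of the reduced algebra $\Zba'$ is pure of codimension~$1$ in $\Spec\Zba'$ and projects onto $\CCB_\parti$ along the finite map $\Spec\Zba'\to\CCB$.

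For the stability statement, let $\tau=(\xi,\xi',\g\rtimes g)\in\kb^\times\times\kb^\times\times(W^\wedge\rtimes\NC)$. By Proposition~\ref{omega lineaire} (see also Corollaries~\ref{familles lineaires generiques} and~\ref{familles lineaires}), $\tau$ permutes the generic central characters $\O_b$ up to composition with its action on $\kb[\CCB]$, hence permutes the subvarieties $V(I_{b,b'})$ among themselves. Consequently $\CCB_\parti$, and so $\CCB_\gen$, are $\tau$-stable.

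The principal obstacle is the pure codimension~$1$ claim itself: the elementary Vandermonde step only embeds $\CCB_\parti$ into a hypersurface and does not by itself exclude embedded higher-codimension irreducible components; closing the gap requires either showing that $\Zba'$ is in fact free of rank~$m$ over the polynomial ring $\kb[\CCB]$ (so that the inclusion into $\kb[\CCB]^m$ is defined by a single $m\times m$ determinant) or appealing to a purity theorem for the normalization.
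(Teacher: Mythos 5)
Your treatment of openness, closedness and density (via the finite union $\CCB_\parti=\bigcup_{b\neq b'}V(I_{b,b'})$ of proper closed subsets of the irreducible space $\CCB$), of the containment $0\in\CCB_\parti$, and of the stability under $\kb^\times\times\kb^\times\times(W^\wedge\rtimes\NC)$ is correct; the first of these is in fact a more elementary substitute for the appeal to Proposition~\ref{codimension un}(2) that the paper makes. The problem lies entirely in the pure codimension~$1$ step, and the fix you sketch does not work.

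The statement you invoke --- that the non-normal locus of the reduced finite $\kb[\CCB]$-algebra $\Zba'$ is pure of codimension~$1$ --- is not a theorem. A reduced excellent scheme can be regular in codimension~$1$ and still fail to be normal (it may fail Serre's condition $S_2$), in which case its non-normal locus has codimension $\ge 2$: two planes in $\AM^4$ meeting at a single point give a reduced surface whose normalization is the disjoint union of the planes and whose non-normal locus is that point. Zariski--Nagata purity of the branch locus does not apply either, since it concerns the branch locus (not the non-normal locus) and requires the \emph{target} of the finite morphism to be regular, whereas $\Spec\Zba'$ is precisely the singular object here. Your alternative fix, freeness of $\Zba'$ over $\kb[\CCB]$, is also unavailable: $\Zba'=\Zba/\sqrt{0}$ is merely a finite torsion-free module over a polynomial ring in several variables, and passing to the quotient by the nilradical destroys the freeness that $\Zba$ enjoys. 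The mechanism that actually yields purity (and which underlies Proposition~\ref{codimension un}(1), the route the paper takes) exploits normality of the \emph{base} rather than of $\Zba'$: write each generic block idempotent of $\kb(\CCB)\Zba$ as $\sum_j k_j z_j$ in a $\kb[\CCB]$-basis $(z_j)$ of $\Zba$, with $k_j\in\kb(\CCB)$; then $\CCB_\parti$ is the union over these coefficients of the loci $V(\aG_k)$ with $\aG_k=\{r\in\kb[\CCB]\mid rk\in\kb[\CCB]\}$, and each nonempty $V(\aG_k)$ is pure of codimension~$1$ because for the normal domain $\kb[\CCB]$ one has $\kb[\CCB]_{\pG}=\bigcap_{\pG'}\kb[\CCB]_{\pG'}$, the intersection running over height-one primes $\pG'\subseteq\pG$. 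Your argument as written only establishes $\CCB_\parti\subseteq V(\Delta)$ and must be replaced by (or reduced to) an argument of this kind.
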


\begin{proof}
The stability under the action of 
$\kb^\times \times \kb^\times \times (W^\wedge \rtimes \NC)$ is obvious. 
The fact that $\CCB_\gen$ (respectively $\CCB_\parti$) is open 
(respectively closed) follows from Proposition~\ref{codimension un}(2). 
Whenever $W \neq 1$, the trivial character is alone in its generic Calogero-Moser family 
(see Example~\ref{lineaire}) while its Calogero-Moser $0$-family is $\Irr(W)$. 
This shows that $0 \in \CCB_\parti$ and, by
Proposition~\ref{codimension un}(1), $\CCB_\parti$ has
pure codimension $1$. 
% 
% \medskip
% 
% Montrons maintenant que $\CCB_\gen$ et $\CCB_\parti$ sont non vides lorsque 
% $W \neq 1$. Soit $(q_1,\dots,q_{|W|})$ une $P$-base $(\NM \times \NM)$-homog\`ene 
% de $Q$ et \'ecrivons 
% $\blocs(\kb(\CCB)\Qba)=\{b_1,\dots,b_m\}$. Posons, pour $1 \le i < j \le m$ et 
% $1 \le k \le |W|$, 
% $$\ph_{ijk}=\O_{b_i}(q_k)-\O_{b_j}(q_k) \in \kb[\CCB].$$
% D'apr\`es le corollaire~\ref{omega gradue}, $\ph_k$ est un polyn\^ome 
% homog\`ene de $\kb[\CCB]$ et, puique $\O_{b_i}$ et $\O_{b_j}$ co\"{\i}ncident 
% sur $P$, il d\'ecoule du (4) de la proposition~\ref{generique particulier} 
% que
% $$\CCB_\parti=\mathop{\bigcup}_{1 \le i < j \le m} 
% \Bigl(\mathop{\bigcap}_{k=1}^{|W|} \VCB(\ph_{ijk})\Bigr),$$
% o\`u $\VCB(\ph)=\{c \in \CCB~|~\ph(c)=0\}$. 
% Donc $\CCB_\parti$ est une r\'eunion {\it finie} de 
% sous-vari\'et\'es ferm\'ees. De plus, puique $W \neq 1$, on a $m > 1$ d'apr\`es 
% l'exemple~\ref{lineaire} et donc cette r\'eunion est non vide. 
% De plus, pour tout $1 \le i < j \le m$, il existe 
% $k$ tel que $\ph_{ijk} \neq 0$ (car sinon 
% $\O_{b_i}=\O_{b_j}$, contrairement \`a l'hypoth\`ese). 
% Donc $\mathop{\bigcap}_{k=1}^{|W|} \VCB(\ph_{ijk})$ est un ferm\'e 
% (contenant $0$ car $\ph_{ijk}$ est homog\`ene) strict de $\CCB$. 
% Une r\'eunion finie de sous-ensemble ferm\'es stricts 
% de $\CCB$ ne peut \^etre \'egal \`a $\CCB$ tout entier. Donc 
% $\CCB_\gen \neq \vide$. 
\end{proof}

\bigskip

We deduce the following from Example~\ref{lineaire}.

\bigskip

\begin{coro}\label{generique lineaire}
If $c \in \CCB$ is generic, then any linear character of $W$ 
is alone in its Calogero-Moser $c$-family.
\end{coro}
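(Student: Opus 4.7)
The plan is to combine Example~\ref{lineaire} with the characterization of generic parameters given by condition~(4) of Proposition~\ref{generique particulier}. Let $\g$ be a linear character of $W$, and let $b_\g \in \blocs(\kb(\CCB)\Zba)$ be the generic block containing $\g$. By Example~\ref{lineaire}, we have $\Irr_\Hb(W,b_\g) = \{\g\}$, so $\g$ is alone in its generic Calogero-Moser family.

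Now suppose $c \in \CCB$ is generic and let $\chi \in \Irr(W)$ with $\chi \neq \g$. First, I would let $b_\chi \in \blocs(\kb(\CCB)\Zba)$ be the generic block containing $\chi$, so that $b_\chi \neq b_\g$ since the generic family of $\g$ is the singleton $\{\g\}$. By Lemma~\ref{caracterization blocs CM} applied at the generic point, we have $\O_\chi = \O_{b_\chi}$ and $\O_\g = \O_{b_\g}$. By condition~(4) of Proposition~\ref{generique particulier}, since $c$ is generic and $b_\chi \neq b_\g$, we have $\th_c \circ \O_{b_\chi} \neq \th_c \circ \O_{b_\g}$, that is, $\O_\chi^c \neq \O_\g^c$.

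Finally, applying Lemma~\ref{caracterization blocs CM} for the prime ideal $\CG_c$, the inequality $\O_\chi^c \neq \O_\g^c$ implies that $\chi$ and $\g$ do not lie in the same Calogero-Moser $c$-family. Since this holds for every $\chi \neq \g$, the linear character $\g$ is alone in its Calogero-Moser $c$-family. There is no real obstacle here: the argument is a direct specialization of the generic result once the characterization of generic parameters is in hand.
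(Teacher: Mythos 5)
Your proof is correct and follows exactly the route the paper intends: Example~\ref{lineaire} gives that $\g$ is alone generically, and condition~(4) of Proposition~\ref{generique particulier} together with Lemma~\ref{caracterization blocs CM} transfers this to the $c$-families for generic $c$. The paper simply leaves these specialization details implicit ("We deduce the following from Example~\ref{lineaire}"), so your write-up is just a fuller version of the same argument.
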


\bigskip

% \subsection{Multiplicit\'es}\label{subsection:multiplicites} 
% Nous avons d\'ecrit dans la proposition~\ref{caracterisation blocs CM} 
% l'{\it ensemble} $\Upsilon^{-1}(\pG)$. Cela ne d\'ecrit 
% pas le {\it $\kb$-sch\'ema} $\Upsilon^*(\pG)$, d\'efini par 
% la $\kb$-alg\`ebre $Q/\pG Q$. Une question int\'eressante est 
% de conna\^{\i}tre les multiplicit\'es des composantes irr\'eductibles, 
% telles qu'elles ont \'et\'e intoduites dans la section~\ref{section:Q/P}. 
% Rappelons que ces composantes irr\'eductibles sont en bijection avec 
% les id\'eaux premiers (i.e. maximaux) de $k_P(\pG) Q$, c'est-\`a-dire 
% avec les idempotents primitifs de $k_P(\pG) Q$ ou encore, 
% d'apr\`es la remarque~\ref{deployee ou pas}, avec les 
% idempotents primitifs de $k\Qba$. Ainsi, 
% $$k\Qba = \prod_{b \in \blocs(k\Qba)} k\Qba b,$$
% et $\dim_k k\Qba b$ est donc la {\it multiplicit\'e} de 
% $\Ker(\O_b^k) \in \Spec Q$ dans le sch\'ema $\Upsilon^*(\pG)$ 
% (voir la section~\ref{section:Q/P}).  
% Cette multiplicit\'e $\mult_P(\Ker(\O_b^k))$ sera not\'ee pour simplifier 
% $\mult_k(b)$, est donn\'ee par le th\'eor\`eme 
% \ref{dim graduee bonne}~:
% \equat\label{multiplicite bonne}
% \mult_k(b) = \sum_{\chi \in \Irr_\Hb(W,b)} \chi(1)^2.
% \endequat
% Comme d'habitude, on notera $\mult(b)=\mult_{\kb(\CCB)}(b)$ et $\mult_c(b)=\mult_{\kb_c}(b)$. 
% Il d\'ecoule de l'exemple~\ref{lineaire} et de~\ref{mult 1 nettete} que~:

\begin{coro}\label{multiplicite 1 generique}
Let $\g : W \longto \kb^\times$ be a linear character and assume that $c$ is generic. 
Then $Z$ is unramified over $P$ at $\Ker(\O_\g^c)$.
\end{coro}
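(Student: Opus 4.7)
\noindent\textit{Plan of proof.} The strategy is to mimic the proof of Corollary~\ref{multiplicite 1}, using the genericity hypothesis to replace the statement ``$\{\gamma\}$ is a \emph{generic} Calogero-Moser family'' (from Example~\ref{lineaire}) by ``$\{\gamma\}$ is a Calogero-Moser $c$-family'' (which is precisely the content of Corollary~\ref{generique lineaire}).

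First I would set up the ideals. Let $\mG = \Ker(\O_\gamma^c) \subset Z$; this is a maximal ideal since $\O_\gamma^c$ surjects onto $\kb$, and by construction $\mG \cap P = \pGba_c$. The residue fields $k_Z(\mG) = \kb$ and $k_P(\pGba_c) = \kb$ coincide, so the separability condition for unramifiedness is automatic (we are in characteristic zero anyway). It therefore suffices to show that $\mG Z_\mG = \pGba_c Z_\mG$, i.e., that the maximal ideal of the local ring $Z_\mG$ is generated by $\pGba_c$.

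Next I would compute the dimension of the block. Since $c$ is generic, Corollary~\ref{generique lineaire} implies that $\{\gamma\}$ is a Calogero-Moser $c$-family. Let $b_\gamma \in \blocs(\Zba_c)$ be the associated primitive central idempotent. Corollary~\ref{dim bonne} (applied to $\CG = \CG_c$) then gives
$$\dim_\kb(b_\gamma \Zba_c) = \sum_{\chi \in \Irr_\Hb(W,b_\gamma)} \chi(1)^2 = \gamma(1)^2 = 1.$$

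Finally I would translate this back. By Proposition~\ref{muller}, the primitive idempotents of $\Zba_c = Z/\pGba_c Z$ are in bijection with maximal ideals of $Z$ containing $\pGba_c$, and $b_\gamma$ corresponds to $\mG$; under this correspondence
$$b_\gamma \Zba_c \;\simeq\; (Z/\pGba_c Z)_{\mG/\pGba_c Z} \;\simeq\; Z_\mG/\pGba_c Z_\mG.$$
So $Z_\mG/\pGba_c Z_\mG$ is a local $\kb$-algebra of dimension $1$ with residue field $\kb$, forcing its maximal ideal to vanish: $\mG Z_\mG = \pGba_c Z_\mG$. This is precisely the unramifiedness condition, concluding the argument. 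The only non-trivial ingredient is the passage, via genericity, from the generic statement (Example~\ref{lineaire}) to the specialized one (Corollary~\ref{generique lineaire}); everything else is a direct application of the dimension formula of Corollary~\ref{dim bonne}.
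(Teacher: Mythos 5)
Your proof is correct and is exactly the argument the paper intends: the paper leaves this corollary without an explicit proof because it is the verbatim adaptation of the proof of Corollary~\ref{multiplicite 1}, with Example~\ref{lineaire} replaced by Corollary~\ref{generique lineaire} (which is where genericity enters) and Corollary~\ref{dim bonne} applied at $\CG=\CG_c$. Nothing further is needed.
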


\bigskip

\section{Smoothness and Calogero-Moser families}
\label{se:smoothnessCMfamilies}

\medskip

Let $b \in \blocs(\Zba_\CG)$ and let $\bar{\zG}_b$  \indexnot{z}{\bar{\zG}_b}  denote the prime ideal 
of $Z$ equal to the kernel of $\O_b^\CG : Z \to \kb(\CG)$.
Note that $b\Zba_\CG$ is a local finite dimensional $\kb(\CG)$-algebra with residue
field $\kb(\CG)$.

By~\cite[\S 5]{gordon}, we have the following characterization of smoothness.

\bigskip

\begin{prop}\label{prop lissite}
The ring $Z$ is regular at $\bar{\zG}_b$ if and only if $|\Irr_\Hb(W,b)|=1$. 
% Si de plus on note 
% $\chi$ l'unique \'el\'ement de $\Irr_\Hb(W,b)$ et si on note $\PC_k(\chi)$ l'enveloppe 
% projective du $k\Hbov$-module simple $\LC_k(\chi)$ (ou de $\MC_k(\chi)$) et 
% $\OC_\chi = \End_{k\Hbov}(\PC_k(\chi))$, alors 
Moreover, if $Z$ is regular at $\bar{\zG}_b$, then
$$\Hbov_\CG b \simeq \Mat_{|W|}(b\Zba_\CG).$$
\end{prop}

\begin{proof}
	Theorem \ref{th:smoothsimples} provides the first equivalence.
	Assume now $Z$ is regular at $\bar{\zG}_b$.  Theorem \ref{th:smoothsimples} shows that
	$\Hbov_\CG be$ induces a Morita equivalence between 
	$\Hbov_\CG b$ and $\Mat_{|W|}(b\Zba_\CG)$. Since $b\Zba_\CG$ is local, it follows that
	the finitely generated projective $b\Zba_\CG$-module $\Hbov_\CG be$ is free.
	On the other hand,  Theorem \ref{th:smoothsimples} shows that
	$\Hbov_\CG be\otimes_{b\Zba_\CG}Z(\bar{\zG}_b)$ is a vector space of dimension $|W|$ over
	$Z(\bar{\zG}_b)$, hence $\Hbov_\CG be$ is a free $b\Zba_\CG$-module of rank $|W|$. The proposition
	follows.
\end{proof}

\medskip

Consider now $c \in \CCB$ and $b \in \blocs(\Zba_c)$.
Let $z_b$ denote the point of $\Upsilon_c^{-1}(0) \subset \ZCB_c \subset \ZCB$ 
corresponding to $b$. 

\bigskip

\begin{prop}\label{lissite generique}
The following assertions are equivalent:
\begin{itemize}
\itemth{1} $\ZCB$ is smooth at $z_b$.

\itemth{2} $\ZCB_c$ is smooth at $z_b$.
\end{itemize}
\end{prop}

\begin{proof}
Let us first recall the following Lemma:

\bigskip

\begin{quotation}
{\small
\begin{lem}\label{somme tangents}
Let $\ph : \YCB \to \XCB$ be a morphism of $\kb$-varieties (not necessarily reduced), 
let $y \in \YCB$ and let $x=\ph(y)$. 
We assume that there exists a morphism of $\kb$-varieties 
$\s : \XCB \to \YCB$ such that $y=\s(x)$ and $\ph \circ \s = \Id_\XCB$. Then 
$$\TC_y(\YCB)=\TC_y(\ph^*(x)) \oplus \TC_y(\s(\XCB)).$$
Here, $\TC_y(\YCB)$ denotes the tangent space to the $\kb$-variety $\YCB$ and 
$\ph^*(x)$ denotes the (scheme-theoretic) fiber of $\ph$ at $x$, viewed as a closed 
$\kb$-subvariety of $\YCB$, not necessarily reduced.
\end{lem}}
\end{quotation}

\bigskip

Let $\chi \in \Irr_\Hb(W,b)$. The morphism of varieties 
$\O_\chi^\sharp : \CCB \to \ZCB$ is a section of the morphism 
$\pi \circ \Upsilon : \ZCB \to \CCB$. Moreover, by assumption, 
$z_b = \O_\chi^\sharp(c)$. By Lemma~\ref{somme tangents} above, 
we have
$$\TC_{z_b}(\ZCB) = \TC_{z_b}(\ZCB_c) \oplus \TC_{z_b}(\O_\chi^\sharp(\CCB)).$$
Since $\TC_{z_b}(\O_\chi^\sharp(\CCB)) \simeq \TC_c(\CCB)$, the Proposition follows from 
the smoothness of $\CCB$ and from the fact that 
$\dim(\ZCB)=\dim(\ZCB_c)+\dim(\CCB)$.
\end{proof}

\bigskip

%\subsection{Classification} 
After the work of Etingof-Ginzburg~\cite{EG}, Ginzburg-Kaledin~\cite{GK},
Gordon~\cite{gordon} and Bellamy~\cite{bellamy g4}, a complete classification 
of complex reflection groups $W$ such that there exists $c \in \CCB$ 
such that $\ZCB_c$ is smooth has been obtained. Note that the statements 
{\it ``There exists $c \in \CCB$ such that $\ZCB_c$ is smooth''} and 
{\it ``The ring $\kb(\CCB) \otimes_{\kb[\CCB]} Z=\kb(\CCB)Z$ is regular''} 
are equivalent. We recall now the result.

\bigskip

\begin{theo}[Etingof-Ginzburg, Ginzburg-Kaledin, Gordon, Bellamy]\label{les lisses}
Assume that $W$ is irreducible. Then the ring $\kb(\CCB)Z$ is regular 
if and only if we are in one of the following two cases:
\begin{itemize}
 \itemth{1} $W$ has type $G(d,1,n)$, with $d$, $n \ge 1$.

\itemth{2} $W$ is the group denoted $G_4$ in the Shephard-Todd classification.
\end{itemize}
\end{theo}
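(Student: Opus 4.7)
The plan is to reduce the regularity statement to a numerical criterion on generic Calogero-Moser families, and then proceed by a case-by-case analysis relying on the Shephard-Todd classification of irreducible complex reflection groups. The ``if'' direction amounts to constructing, in each of the two listed cases, a parameter $c$ for which $\ZCB_c$ is smooth; the ``only if'' direction requires exhibiting, for each remaining $W$, a non-singleton generic Calogero-Moser family.

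First I would translate the problem via the local criterion already established in this chapter. Combining Proposition~\ref{lissite generique} with Proposition~\ref{prop lissite} (applied to the zero ideal of $\kb[\CCB]$), the ring $\kb(\CCB)Z$ is regular if and only if every point of $\Upsilon^{-1}(\pGba)$ is smooth on $\ZCB$, if and only if every \emph{generic} Calogero-Moser family $\FC$ of $W$ is a singleton. By Corollary~\ref{dim bonne}, this last condition is equivalent to $\sum_{\chi \in \FC}\chi(1)^2 = 1$ for every generic family, and hence reduces the question to a purely representation-theoretic statement about $W$.

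For the ``if'' direction, the two positive cases are handled separately. When $W = G(d,1,n)$, one identifies $\ZCB_c$ for generic $c$ with a Nakajima quiver variety associated with the cyclic quiver of type $\widetilde{A}_{d-1}$ (equivalently, with a deformation of $\mathrm{Sym}^n(\CM^2/\mub_d)$), following Etingof-Ginzburg and Gordon; the generic smoothness of these quiver varieties is classical. When $W = G_4$, one follows Bellamy's strategy: by an explicit computation one exhibits a parameter $c$ whose associated central characters $\O_\chi^c$ separate all $\chi \in \Irr(G_4)$, so every Calogero-Moser $c$-family is a singleton by Lemma~\ref{caracterization blocs CM}, which by Proposition~\ref{generique particulier} forces $c$ (and hence a generic point) to yield a smooth fiber.

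For the ``only if'' direction, the infinite families $G(de,e,n)$ with $e>1$ are treated by producing, for each such $W$, an explicit pair of distinct irreducible characters $\chi_1,\chi_2$ satisfying $\O_{\chi_1} = \O_{\chi_2}$; this can be extracted from Gordon's analysis, typically by computing baby Verma modules on representations coming from induction from a parabolic subgroup of type $G(d,1,n-1)$, and by combining this with the symmetry provided by Corollary~\ref{familles lineaires generiques}. The remaining exceptional groups $G_k$ ($k \neq 4$) are ruled out one by one: one constructs, in each case, a non-trivial generic family, either by an explicit computation of the morphisms $\Omeb$ applied to sufficiently many central elements (such as the Euler element, via Lemma~\ref{le:cchi}, and the Casimir), or by combining parabolic restriction with Clifford-type arguments.

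The principal obstacle is the exceptional case analysis, and in particular the positive case $G_4$: the general structure theory developed up to this point does not suggest why $G_4$ should behave differently from, say, $G_5$ or $G_6$, and the distinction only emerges through detailed computation with the explicit presentation of $\Hb_c$ and its center. Bellamy's verification uses in an essential way the Poisson structure on $Z_c$ (cf.~(\ref{eq:poisson-c})) and a careful study of the symplectic leaves, which tests the numerical criterion on generic families in a computationally tractable way. The other significant obstacle is the uniformity of the negative answer for the $G(de,e,n)$ series with $e>1$, where one must produce a single family of arguments covering all ranks $n$; this is where reduction to low-rank cases (especially $n=2$) via the behavior of families under parabolic restriction is indispensable.
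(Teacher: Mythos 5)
You should first be aware that the paper does not prove this theorem: it is stated as a recollection of results from the literature (Etingof--Ginzburg, Ginzburg--Kaledin, Gordon, Bellamy), introduced by ``We recall now the result'' and accompanied by the remark that the proof relies on the Shephard--Todd classification. So there is no internal proof to compare against, and any argument necessarily defers, as yours does, to the external case-by-case computations. As a survey of how those references establish the classification, your outline identifies the right ingredients.

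There is, however, a genuine logical slip in your reduction. You claim that combining Propositions~\ref{lissite generique} and~\ref{prop lissite} yields the equivalence ``$\kb(\CCB)Z$ is regular if and only if every point of $\Upsilon^{-1}(\pGba)$ is smooth, if and only if every generic Calogero-Moser family is a singleton.'' Only one direction of the first equivalence is formal: regularity of $\kb(\CCB)Z$ trivially implies smoothness at the points over $(0,0)$, and Proposition~\ref{prop lissite} (with Corollary~\ref{dim bonne}) converts smoothness at those points into the singleton condition on generic families. The converse --- smoothness at the points of $\Upsilon^{-1}(\pGba)$ implies smoothness of the whole generic fiber --- is precisely Proposition~\ref{lissite en 0}, which is itself one of the classification-dependent results being recalled; it does not follow from the propositions you cite, and the naive contracting-$\CM^\times$ argument fails because the $\NM$-grading is not inherited after specializing $c\neq 0$. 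This gap is harmless for the ``only if'' direction (a non-singleton generic family produces a singular point of the generic fiber by Proposition~\ref{prop lissite} alone), but it breaks your treatment of $G_4$: showing that all $c$-families are singletons for some $c$ only gives smoothness along $\Upsilon_c^{-1}(0)$, not smoothness of $\ZCB_c$, so you must either invoke Proposition~\ref{lissite en 0} explicitly or reproduce Bellamy's direct verification. Relatedly, your appeal to Proposition~\ref{generique particulier} at the end of the $G_4$ paragraph is off-target: that proposition characterizes genericity of the parameter by comparing $c$-families with generic families and says nothing about smoothness.
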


\bigskip

The following proposition is a consequence of 
the work of Etingof-Ginzburg~\cite{EG}, Gordon~\cite{gordon} and
Bellamy~\cite{bellamy g4}, by using the Shephard-Todd classification 
of complex reflection groups. Recently, Bellamy-Schedler-Thiel 
gave a proof of this fact which does not rely on the 
classification~\cite[Corollary~1.4]{BelSchTh}. 

\bigskip

\begin{prop}\label{lissite en 0}
Let $c \in \CCB$. Then the following are equivalent:
\begin{itemize}
\itemth{1} The variety $\ZCB_c$ is smooth.

\itemth{2} The points belonging to 
$\Upsilon_c^{-1}(0)$ are smooth in $\ZCB_c$.
\end{itemize}
\end{prop}

\section{Blocks and Calogero-Moser families}
\label{se:blocksCMfamilies}
We assume in \S\ref{se:blocksCMfamilies} that $W$ is irreducible.

\smallskip
Calogero-Moser families and blocks of the category
$\tilde{\OC}$ are closely related, as the following lemma shows.
Given $E\in\Irr(W)$, we put
$\tilde{\Delta}_\CG(E)=\tilde{\Delta}(E)\otimes_{\kb[\CCB]}\kb(\CG)$.
As in Example \ref{Z graduation-1}, we consider $w_z=\zeta^{-1}\Id_V$ a
generator of $W\cap \Zrm(\GL(V))$.

\begin{prop}
\label{pr:familiesasblocks}
Let $E,F\in\Irr(W)$ and let $i\in\ZM$. The standard objects
	$\tilde{\Delta}_\CG(E)$ and 
	$\tilde{\Delta}_\CG(F)\langle i\rangle$ are in the same block
	of $\tilde{\OC}(\kb(\CG))$
if and only if $E$ and $F$ are in the same Calogero-Moser $\CG$-family and
$\omega_E(w_z)=\zeta^i\omega_F(w_z)$.
\end{prop}

\begin{proof}
We use the notation of Example \ref{Z graduation-1}.
The element $w_z\in \Zrm(W)$ acts on the degree $r$ part of
	$\tilde{\Delta}_\CG(F)\langle i\rangle$ by
$\omega_F(w_z)\zeta^{r+i}$. It follows from \S \ref{se:caseT=0} that if
	$\tilde{\Delta}_\CG(E)\otimes_{\kb[\CCB]}\kb(\CG)$ and 
	$\tilde{\Delta}_\CG(F)\otimes_{\kb[\CCB]}\kb(\CG)\langle i\rangle$ are in the same block
	of $\tilde{\OC}(\kb(\CG))$, then $\omega_E(w_z)=\omega_F(w_z) \zeta^i$.

\smallskip
	Note that $\tilde{\Delta}_\CG(E)$ has a filtration whose successive
quotients are isomorphic to $\bar{\Delta}_\CG(E)\otimes_{\kb}(\CM[V]^W)^i$.
As a consequence, $\bar{\Delta}_\CG(E)$ and
$\bar{\Delta}_\CG(E)\langle -i\rangle$ are in the same block, whenever
$(\CM[V]^W)^i\neq 0$. Since $z_W=\gcd(d_1,\ldots,d_n)$, it follows that
$\bar{\Delta}_\CG(E)$ and
$\bar{\Delta}_\CG(E)\langle z_W\rangle$ are in the same block.

\smallskip
Assume now $E$ and $F$ are in the same Calogero-Moser $\CG$-family
and $\omega_E(w_z)=\zeta^i\omega_F(w_z)$.
There is an integer $j$ such that $\bar{\Delta}_\CG(E)$ and
	$\bar{\Delta}_\CG(F)\langle j\rangle$ are in the same block of $\tilde{\OC}(\kb(\CG))$.
It follows that $\omega_E(w_z)=\omega_F(w_z) \zeta^j$, hence $d|(i-j)$.
So, $\bar{\Delta}_\CG(E)$ and
	$\bar{\Delta}_\CG(F)\langle i\rangle$ are in the same block of $\tilde{\OC}(\kb(\CG))$.
\end{proof}

Let $\tilde{\FC}$ be the set of height one prime ideals $\CG$ of $\kb[\CCBt]$
such that the blocks of $\tilde{\OC}(\kb(\CG))$ are not trivial, i.e., there
exists
$E,F\in\Irr(W)$ and $r\in\ZM$ such that 
$\tilde{\Delta}_\CG(E)$ and $\tilde{\Delta}_\CG(F)\langle r\rangle$ are in
the same block and $E{\not\simeq}F\langle r\rangle$.
Note that the ideals of $\tilde{\FC}$ are homogeneous
for the $\ZM$-grading on $\kb[\CCBt]$.

\smallskip

\medskip
We assume for the remainder of \S \ref{se:blocksCMfamilies} that $V\neq 0$.

\begin{prop}
\label{pr:idealsT}
The ideals in $\tilde{\FC}$ are $(T)$ and the ideals
$(C_E-C_F-rT)$ such that $(C_E-C_F-r)\in\FC_1$, where
$E,F\in\Irr(W)$ and $r\in\ZM\setminus\{0\}$.
\end{prop}

\begin{proof}
	Proposition \ref{pr:familiesasblocks} shows that $\tilde{\OC}(\kb(\CG))$ has
non-trivial blocks for all prime ideals $\CG$ of $\kb[\CCB]$.
It follows that $(T)\in\tilde{\FC}$.

Let $\CG$ be an ideal of $\tilde{\FC}$ distinct from $(T)$. Since
$\CG$ is homogeneous, it follows that it is generated by some
irreducible polynomial $P(T)=\sum_{i=0}^r a_iT^i$,
where $r\ge 0$ and $a_i$ is a homogeneous polynomial of degree $d-i$ in the
indeterminates $C_s$, for some $d\in\ZM$. 
Consider the proper ideal $\qG=(T-1,P(T))$ of $\kb[\CCBt]$.
Since $\CG\in\tilde{\FC}$ and $\CG\subset\qG$, it follows that 
$P(1)\kb[\CCB]$ contains an ideal in $\FC_1$, hence $P(1)$
is divisible by $C_E-C_F-r$ for some $E,F\in\Irr(W)$ with 
$C_E\neq C_F$ and some $r\in\ZM\setminus\{0\}$. Since $P(T)$ is homogeneous,
it follows that it is divisible by $C_E-C_F-rT$. We deduce that
$(P(T))=(C_E-C_F-rT)$.
\end{proof}

Define $\FC_0$ to be the set of height one prime ideals $\CG$ of $\kb[\CCB]$
such that the Calogero-Moser $\CG$-families are different from the generic
Calogero-Moser families. Propositions \ref{pr:familiesasblocks} and
\ref{pr:idealsT} have the following consequence. The fact that the ideals
in $\FC_0$ define hyperplanes of $\CC$ (and not merely hypersurfaces) is
due to Bellamy, Schedler and Thiel \cite[Theorem 5.1]{BelSchTh}.

\begin{coro}
The ideals in $\FC_0$ are those ideals of the form $(C_E-C_F)$ for some
$E,F\in\Irr(W)$ such that there exists $r\in\ZM\setminus\{0\}$ with
$(C_E-C_F-r)\in\FC_1$.
\end{coro}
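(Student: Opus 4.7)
The plan is to translate the condition $\pG \in \FC_0$ into a statement about the block structure of $\tilde{\OC}$ over $\kb[\CCBt]$, which can then be read off directly from Proposition~\ref{pr:idealsT}. First, by Proposition~\ref{pr:familiesasblocks}, the Calogero--Moser $\pG$-families differ from the generic Calogero--Moser families if and only if the block partition of $\{\Delta_\pG(E)\langle i\rangle\}_{E,i}$ in $\OC(\kb(\pG))$ strictly coarsens that of $\{\Delta(E)\langle i\rangle\}_{E,i}$ in $\OC(\kb(0))$; the $\omega_E(w_z)$-refinement appearing in Proposition~\ref{pr:familiesasblocks} is identical on both sides and so plays no role in the comparison. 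Using that $\kb[\CCBt]/(\pG\kb[\CCBt] + (T)) = \kb[\CCB]/\pG$ together with $\tilde{\Hb}/T\tilde{\Hb}=\Hb$, I would identify $\OC(\kb(\pG))$ with $\tilde{\OC}(\kb(\qG))$ for the height-two prime $\qG = \pG\kb[\CCBt] + (T)$ of $\kb[\CCBt]$, and likewise $\OC(\kb(0))$ with $\tilde{\OC}(\kb((T)))$.

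The central geometric step is a semi-continuity principle for blocks of $\tilde{\OC}$ over the regular base $\kb[\CCBt]$, drawn from the block-theoretic results of Appendix~\ref{appendice: blocs}: the block partition of $\tilde{\OC}(\kb(\qG'))$ is generated by the block partitions of $\tilde{\OC}(\kb(\pG'))$ as $\pG'$ ranges over the height-one primes of $\kb[\CCBt]$ contained in $\qG'$, and such a $\pG'$ contributes a non-trivial merging precisely when $\pG' \in \tilde{\FC}$. Since $(T)$ is the only height-one prime contained in $(T)$, the condition $\pG \in \FC_0$ is then equivalent to the existence of some $\pG' \in \tilde{\FC}\setminus\{(T)\}$ with $\pG' \subset \qG$ that introduces merging beyond what is already forced at $(T)$.

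Now I would invoke Proposition~\ref{pr:idealsT}, which identifies $\tilde{\FC}\setminus\{(T)\}$ with the ideals $(C_E - C_F - rT)$ for $E, F \in \Irr(W)$ and $r \in \ZM \setminus \{0\}$ such that $(C_E - C_F - r) \in \FC_1$. Such an ideal lies in $\qG$ if and only if $C_E - C_F \in \pG$, which, since $\pG$ has height one, is equivalent to $\pG = (C_E - C_F)$. Moreover, at each such $\pG'$ the newly merged Verma pair $\tilde{\Delta}(E)$ and $\tilde{\Delta}(F)\langle r\rangle$ satisfies $C_E \neq C_F$ (because $r \neq 0$), so by Proposition~\ref{pr:OmegaEuler} the Euler element already distinguishes their generic central characters, and $E$ and $F$ lie in different generic Calogero--Moser families; the merging at $\pG'$ is thus genuinely new with respect to $(T)$. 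Combining both directions yields the claimed description of $\FC_0$.

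The main obstacle is the rigorous formulation of the semi-continuity principle used in the second paragraph. The category $\tilde{\OC}$ has infinitely many isomorphism classes of standard objects, so its block theory does not fit directly within the finite-free framework of Appendix~\ref{appendice: blocs}; one must instead work with central idempotents acting on a suitable finite truncation of $\tilde{\OC}$ (for instance, the subcategory generated by a fixed finite collection of Vermas and their shifts) or, equivalently, with the endomorphism algebra of a progenerator of such a truncation, and then verify that block mergings over the two-dimensional regular ring $\kb[\CCBt]$ are controlled by the height-one primes contained in the given prime. A secondary delicate point is ensuring that at each prime $(C_E-C_F-rT)\in\tilde{\FC}$ the merging genuinely involves a pair $(E_0,F_0)$ with $C_{E_0}\neq C_{F_0}$ modulo the prime, which is built into the characterization of $\FC_1$ via Theorem~\ref{th:semisimplicity} and Corollary~\ref{co:semisimplicityO}.
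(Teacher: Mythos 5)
Your argument is correct and is precisely the derivation the paper intends: the corollary is stated there without proof, as a direct consequence of Propositions~\ref{pr:familiesasblocks} and~\ref{pr:idealsT}, and your two reductions (families $\leftrightarrow$ blocks of $\tilde{\OC}$ at $\qG=\pG\kb[\CCBt]+(T)$, then semi-continuity down to height-one primes) are exactly how those propositions are meant to combine. The semi-continuity principle you flag as the main obstacle is not one: it is supplied by the final Proposition of \S\ref{se:hwblocks} (for highest weight categories over an integrally closed base), whose proof proceeds via the finite truncations $\CC[\Gamma](\Omega)$ you describe, so you need only cite it rather than rebuild it from Appendix~\ref{appendice: blocs}. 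For your secondary point, the clean justification is that any elementary linkage (nonzero $\Hom$ or $\Ext^1$) between distinct standards of $\tilde{\OC}(\kb(\pG'))$ with $\pG'=(C_E-C_F-rT)$ is constrained by the order of Theorem~\ref{th:OT=1} to a pair $(E',F')$ with $C_{E'}-C_{F'}\equiv nT \pmod{\pG'}$ for some $n\in\ZM\setminus\{0\}$, hence with $C_{E'}-C_{F'}$ a nonzero scalar multiple of $C_E-C_F$; such a pair lies in distinct generic families by Lemma~\ref{caracterization blocs CM}, so every nontrivial $\pG'$-block genuinely coarsens the $(T)$-blocks.
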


\begin{theo}
Let $c\in\CCB$. 
The Calogero-Moser $c$-families are the smallest subsets of $\Irr(W)$
that are unions of generic Calogero-Moser families and unions of
blocks of $\dot{\OC}(\kb(\hbar))$ for all morphisms of $\kb$-algebras
$\kb[\CCB]\to\kb(\hbar)$ of the form $C\mapsto \hbar c+c'$
with $\kappa(c')\in\KCB(\QM)$.
\end{theo}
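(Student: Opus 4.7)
The plan is to establish an equality of partitions of $\Irr(W)$: letting $P_c$ denote the CM $c$-family partition, $P_{\mathrm{gen}}$ the generic CM family partition and, for each admissible morphism $\phi:C\mapsto \hbar c+c'$, $P_\phi$ the partition by blocks of $\dot{\OC}(\kb(\hbar))$, I want to show that $P_c$ is the join of $P_{\mathrm{gen}}$ and the $P_\phi$'s in the partition lattice. This decomposes into three tasks: (i) $P_{\mathrm{gen}}$ refines $P_c$; (ii) each $P_\phi$ refines $P_c$; (iii) conversely any partition coarser than both $P_{\mathrm{gen}}$ and all $P_\phi$ is coarser than $P_c$. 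Task (i) is immediate from Remark~\ref{rem:semicontinuite}.

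For task (ii), I would consider the line $L_\phi=\{(s,c+sc'):s\in\kb\}\subset\CCBt$. The rescaling isomorphism (\ref{xi c}) identifies, for $s\neq 0$, the Cherednik algebra at $(s,c+sc')$ with that at $(1,c/s+c')$, so that the generic point of $L_\phi$ corresponds via $\hbar=1/s$ to the generic point of the line $\{(1,\hbar c+c')\}$ underlying $\phi$. Standard semi-continuity of blocks of a Noetherian family of highest weight categories then yields that blocks of $\tilde\OC(\kb(\hbar))$ refine blocks at the specialization $(0,c)\in L_\phi$. The latter correspond, by Proposition~\ref{pr:familiesasblocks} applied to the maximal ideal $\CG_c$, to refinements of CM $c$-families by the value of $\omega_E(w_z)$. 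Using Proposition~\ref{pr:gradednongradedVerma}, which identifies ungraded $\dot\OC$-blocks with $\tilde\OC$-blocks modulo $\ZM$-grading shift, and the remark that roots of unity $\omega_E(w_z)/\omega_F(w_z)$ are always powers of $\zeta$, one concludes that $P_\phi$ refines $P_c$.

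For task (iii), the key input is the description of $\FC_0$ at the end of \S\ref{se:blocksCMfamilies}: the equivalence relation $\sim_c$ is generated by $\sim_{\mathrm{gen}}$ together with the wall merges $(E,F)$ satisfying $(C_E-C_F)(c)=0$ and $(C_E-C_F-r)\in\FC_1$ for some $r\in\ZM\setminus\{0\}$. Given such a pair, Lemma~\ref{le:cchi} shows that $C_E-C_F$ is a $\QM$-linear form on $\KCB$, so the affine hyperplane $\{(C_E-C_F)(c')=r\}$ is $\QM$-defined and contains points $c'$ with $\kappa(c')\in\KCB(\QM)$. For any such $c'$, one computes
\[
(C_E-C_F-rT)(1,\hbar c+c')=\hbar(C_E-C_F)(c)+(C_E-C_F)(c')-r=0
\]
in $\kb(\hbar)$, so the kernel of the morphism $\kb[\CCBt]\to\kb(\hbar)$ defining $\phi$ contains $(C_E-C_F-rT)$. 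Since $(C_E-C_F-rT)\in\tilde\FC$ by Proposition~\ref{pr:idealsT}, $\tilde\Delta(E)$ and $\tilde\Delta(F)\langle i\rangle$ lie in the same block of $\tilde\OC(\kb(\hbar))$ for an appropriate $i$, hence $\dot\Delta(E)$ and $\dot\Delta(F)$ lie in the same block of $\dot\OC(\kb(\hbar))$ via Proposition~\ref{pr:gradednongradedVerma}, i.e.\ $E\sim_\phi F$.

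The hardest point is task (iii): justifying carefully that the generating description of $\sim_c$ by $\sim_{\mathrm{gen}}$ and wall merges follows from the $\FC_0$-description (one must verify that every $c$-family merge between distinct generic families is produced by $c$ lying on at least one such wall, rather than by some higher-codimension accident). Task (ii) also requires some care, as it compares blocks across primes of $\kb[\CCBt]$ of different heights; I expect to handle this rigorously by localizing $\kb[\CCBt]$ at the specialization prime and lifting central idempotents, along the lines of Proposition~\ref{muller}.
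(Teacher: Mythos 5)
Your overall architecture---two refinement inclusions plus a join computation in the partition lattice---is sound. Tasks (i) and (ii) are essentially correct (the rescaling of the line $L_\phi$, semi-continuity of blocks, and the bookkeeping via Propositions~\ref{pr:familiesasblocks} and~\ref{pr:gradednongradedVerma} do give that each $P_\phi$ refines $P_c$), and your construction of a rational $c'$ with $(C_E-C_F)(c')=r$ is exactly the final step of the paper's argument. The problem is task (iii), where there is a genuine gap in two places.

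First, the claim that $\sim_c$ is generated by $\sim_{\mathrm{gen}}$ together with your ``wall merges'' is precisely what has to be proved, and the description of $\FC_0$ does not yield it: that corollary records \emph{which} hyperplanes carry a change of families, not \emph{how} the partition changes on each one, nor that the $c$-partition is the join of the wall partitions (your worry about ``higher-codimension accidents''). The missing ingredient is the last part of Proposition~\ref{codimension un} (the block partition at a prime $\rG$ is the join of the block partitions at the height-one primes $\pG$ with $\aG\subset\pG\subset\rG$), applied to $\tilde{\OC}$ over $\kb[\CCBt]$ at the point $(0,c)$; combined with Propositions~\ref{pr:familiesasblocks} and~\ref{pr:idealsT}, it shows that $P_c$ is the join of $P_{\mathrm{gen}}$ with the \emph{full block partitions} of $\dot{\OC}(\pG)$ for $\pG=(C_E-C_F-r)\in\FC_1$ with $C_E(c)=C_F(c)$. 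Second, your step (b) is unsound as written: from $(C_E-C_F-rT)\in\tilde{\FC}$ you infer that $\tilde{\Delta}(E)$ and $\tilde{\Delta}(F)\langle i\rangle$ lie in the same block, but membership of an ideal in $\tilde{\FC}$ (or of $(C_E-C_F-r)$ in $\FC_1$) only guarantees that \emph{some} pair of standard objects is linked at the generic point of that hyperplane; if $C_{E'}-C_{F'}-r'$ is proportional to $C_E-C_F-r$ for another pair, the non-semisimplicity could be caused by $(E',F')$ while $E$ and $F$ remain in separate blocks. Both defects disappear if you replace pairs by the block partitions of $\dot{\OC}(\pG)$: the paper's proof consists in showing that $P_c$ and the join of $P_{\mathrm{gen}}$ with the $P_\phi$ are each equal to the join of $P_{\mathrm{gen}}$ with those partitions over the same index set $I=\{(C_E-C_F-r)\in\FC_1\ |\ C_E(c)=C_F(c)\}=\bigcup_{c'}I(c')$, and this is how your task (iii) should be finished.
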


\begin{proof}
Let $I$ be the set of prime ideals $\CG=(C_E-C_F-r)\in\FC_1$
such that $C_E(c)=C_F(c)$.

By Propositions \ref{pr:familiesasblocks} and
 \ref{pr:idealsT}, the Calogero-Moser $c$-families
are the smallest subsets of $\Irr(W)$
that are unions of generic Calogero-Moser families and unions of
blocks of $\dot{\OC}(\CG)$ for all $\CG\in I$.

Consider a morphism of $\kb$-algebras
$\kb[\CCB]\to\kb(\hbar)$ of the form $C\mapsto \hbar c+c'$
with $\kappa(c')\in\KCB(\QM)$.
Let $I(c')$ be the set of 
$\CG=(C_E-C_F-r)\in\FC_1$ such that 
$(C_E-C_F-r)(\hbar c+c')=0$, i.e., $C_E(c)=C_F(c)$ and
$(C_E-C_F)(c')=r$.
The blocks of $\dot{\OC}(\kb(\hbar))$ are the
smallest subsets of $\Irr(W)$ that are unions of blocks of
	$\dot{\OC}(\kb(\CG))$ for all $\CG\in I(c')$.
Since $I=\bigcup_{c'\in\CC(\QM)}I(c')$, the theorem follows.
\end{proof}

Since blocks of $\OC$ correspond to blocks of the Hecke algebra
(cf. \S \ref{se:semisimplicity}), we can reformulate the previous result.

\begin{theo}
\label{th:CMfamiliesblocksHecke}
Let $c\in\CCB$ and $k=\kappa(c)$.
The Calogero-Moser $c$-families are the smallest subsets of $\Irr(W)$
that are unions of generic Calogero-Moser families and unions of
blocks of $\CM(\qb^\kb)\heckegenerique$ for all morphisms
of $\CM$-algebras $\CM[\TCB]\to\CM(\qb^\kb)$ of the form
$\qb_{\orbite,j} \mapsto \zeta_{\orbite,j} \qb^{k_{\orbite,-j}}$ where $(\zeta_{\orbite,j})_{(\orbite,j)\in
\orbiteb^\circ}$ is a family of roots of unity.
\end{theo}

Using Proposition \ref{prop lissite}, we deduce a description of
Calogero-Moser families from blocks of the Hecke algebra, when the
Calogero-Moser space is smooth for generic values of the parameter.

\begin{coro}
\label{cor:genericCMsmoothfamilies}
Assume $\ZCB_\eta$ is smooth, for $\eta$ the generic point of $\CCB$. 
Let $c\in\CC(\kb)$. Let $I$ be a subset of $\Irr(W)$.
The following are equivalent:

\begin{itemize}
	\item[(i)] $I$ is a union of Calogero-Moser $c$-families.
	\item[(ii)] $I$ is a union of blocks of $\dot{\OC}(\kb(\hbar))$ for all morphisms of
$\kb$-algebras $\kb[\CCB]\to\kb(\hbar)$ of the form $C\mapsto \hbar c+c'$
with $\kappa(c')\in\KCB(\QM)$.
\item[(iii)] $I$ is a union of blocks of $\CM(\qb^\kb)\heckegenerique$ for all morphisms
of $\CM$-algebras $\CM[\TCB]\to\CM[\qb^\kb]$ of the form
$\qb_{\orbite,j} \mapsto \zeta_{\orbite,j} \qb^{k_{\orbite,-j}}$ where $(\zeta_{\orbite,j})_{(\orbite,j)\in
\orbiteb^\circ}$ is a family of roots of unity.
\end{itemize}
\end{coro}

\begin{rema}
When $W$ has a unique conjugacy class of reflections, the previous results
are trivial: when $c\neq 0$, the algebras $\CM(\qb^\kb)\heckegenerique$
in Theorem \ref{th:CMfamiliesblocksHecke} and
Corollary \ref{cor:genericCMsmoothfamilies} are semisimple.\finl
\end{rema}

\bigskip

\chapter{Calogero-Moser cellular characters}\label{chapter:gauche-cellulaire}

In this Chapter~\ref{chapter:gauche-cellulaire}, $\CG$ denotes a prime ideal of $\kb[\CCB]$. 
We use Verma modules for $\Hb^\gauche$ to define the notion 
of {\it Calogero-Moser $\CG$-cellular characters}. We expect 
that they coincide with the Kazhdan-Lusztig cellular characters when $W$ is a
Coxeter group. 

This chapter will mainly consider {\it left} Calogero-Moser $\CG$-cellular
characters and (left) Verma modules:
definitions and results can be immediately transposed to the {\it right}
setting.

\bigskip

\boitegrise{{\bf Notation.} {\it Given
$\CG$ a prime ideal of $\kb[\CCB]$,
we denote by
$\pG_\CG^\gauche$ (resp. $\pG_\CG^\droite$)  \indexnot{pa}{\pG_\CG^\gauche, \pG_\CG^\droite}  
the prime ideal of $P$ corresponding to the closed irreducible subscheme 
$\CCB(\CG) \times V/W \times \{0\}$
(resp. $\CCB(\CG) \times \{0\} \times V^*/W$). 
We set 
$$
\begin{cases}
P^\gauche_\CG = P/\pG_\CG^\gauche \simeq \kb[\CCB]/\CG \otimes \kb[V]^W,\\ 
P^\droite_\CG=P/\pG^\droite \simeq \kb[\CCB]/\CG \otimes \kb[V^*]^W,
\end{cases}
\indexnot{P}{P_\CG^\gauche, P_\CG^\droite} 
$$
and we define 
$$\begin{cases}
\text{\it the $P_\CG^\gauche$-algebra $\Hb^\gauche_\CG=\Hb/\pG_\CG^\gauche\Hb$,}\\
\text{\it the $P_\CG^\droite$-algebra $\Hb_\CG^\droite=\Hb/\pG_\CG^\droite\Hb$.} 
\indexnot{H}{\Hb_\CG^\gauche, \Hb_\CG^\droite} 
\end{cases}$$
We denote by $Z_\CG^\gauche$ (resp. $Z_\CG^\droite$) 
the image of $Z$ in $\Hb_\CG^\gauche$ (resp.
$\Hb_\CG^\droite$). 
\indexnot{Z}{Z_\CG^\gauche, Z_\CG^\droite}
We also define 
$$\begin{cases}
\Kb_\CG^\gauche=k_P(\pG_\CG^\gauche),\\ 
\Kb_\CG^\droite = k_P(\pG_\CG^\droite).
\indexnot{K}{\Kb_\CG^\gauche, \Kb_\CG^\droite}
\end{cases}$$
\hphantom{A} To simplify the notation, when $\CG=0$,
the index $\CG$ will be omitted 
in all the previous notations ($\pG^\gauche$, $\Hb^\droite$, $\Kb^\gauche$,\dots). Given
$c \in \CCB$ and $\CG=\CG_c$, the index $\CG_c$ will be replaced by $c$   
\indexnot{pa}{\pG^\gauche, \pG^\droite,\pG_c^\gauche, \pG_c^\droite}  
\indexnot{P}{P^\gauche, P_\CG^\droite,  P_c^\gauche, P_c^\droite}  
\indexnot{H}{\Hb^\gauche, \Hb^\droite, \Hb_c^\gauche, \Hb_c^\droite}  
\indexnot{K}{\Kb^\gauche, \Kb^\droite, \Kb_c^\gauche, \Kb_c^\droite} 
\indexnot{Z}{Z^\gauche, Z^\droite,  Z_c^\gauche, Z_c^\droite}
($\pG_c^\droite$, $\Hb_c^\gauche$, $\Kb_c^\droite$, \dots). 
}}{0.75\textwidth}

\bigskip

\section{Verma modules and cellular characters}

\medskip

\subsection{Morita equivalence}
Let $P^{\reg,\gauche}=\kb[\CCB \times V^\reg/W\times\{0\}] = P^\reg \otimes_P P^\gauche$ 
\indexnot{P}{P^{\reg,\gauche}}, 
$Z^{\reg,\gauche}=P^{\reg,\gauche}Z$ \indexnot{Z}{Z^{\reg,\gauche}}, 
 and $\Hb^{\reg,\gauche}=P^{\reg,\gauche}\Hb$. 
\indexnot{H}{\Hb^{\reg,\gauche}}
Note that, by Example~\ref{exemple lisse}, $\zG_\singulier \cap P \not\subset \pG^\gauche$. Hence, 
Theorem~\ref{lissite et simples} can be applied. Thanks to
Corollary~\ref{coro:reg-morita}, we obtain the following result.  

\bigskip

\begin{theo}\label{theo:morita-left}
The $(\Hb^{\reg,\gauche},Z^{\reg,\gauche})$-bimodule $\Hb^{\reg,\gauche} e$ induces a Morita equivalence 
between the algebras $\Hb^{\reg,\gauche}$ and $Z^{\reg,\gauche}$. Consequently, 
the $(\Kb_\CG^\gauche \Hb^\gauche,\Kb_\CG^\gauche Z^\gauche)$-bimodule 
$\Kb_\CG^\gauche \Hb^\gauche e$ induces a Morita equivalence 
between the algebras $\Kb_\CG^\gauche \Hb^\gauche$ and $\Kb_\CG^\gauche Z^\gauche$. 
\end{theo}

\bigskip

The Morita equivalence of Theorem~\ref{theo:morita-left} induces 
a bijection
\equat\label{eq:simple-left}
\bijectio{\Irr(\Kb_\CG^\gauche\Hb^\gauche)}{\Irr(\Kb_\CG^\gauche Z^\gauche)}{L}{eL.}
\endequat
On the other hand, the (isomorphism classes of) 
simple $\Kb_\CG^\gauche Z^\gauche$-modules are in bijection with the 
maximal ideals of $\Kb_\CG^\gauche Z^\gauche$, that is, with the minimal prime ideals 
of $Z_\CG^\gauche$ or, in other words, with $\Upsilon^{-1}(\pG_\CG^\gauche)$. 
Using~(\ref{eq:simple-left}), we obtain a bijection
\equat\label{eq:simple-left-prime}
\bijectio{\Upsilon^{-1}(\pG_\CG^\gauche)}{\Irr(\Kb_\CG^\gauche\Hb^\gauche)}{\zG}{L_\CG^\gauche(\zG).}
\endequat

\bigskip

\subsection{Cellular characters} 

Let $\zG \in \Upsilon^{-1}(\pG_\CG^\gauche)$ and let $e_\zG$ be the
corresponding primitive idempotent of $\Kb_\CG^\gauche Z^\gauche$.

We identify $G_0((Z_\CG^\gauche)_\zG[W])$ with $G_0(\kb W)$ by
$[(Z/\zG)\otimes E]\mapsto [E]$ for $E\in \Irr(W)$.

The right action by multiplication of $\kb W$ on $\kb W$ induces a 
right action of $\kb W$ on $\Delta(\kb W)$.
\bigskip

\begin{defi}\label{defi:cellular-cm}
We define the {\bfit Calogero-Moser $\CG$-cellular character} associated with
$\zG$ as the character of $W$ given by 
$$\g_\zG^\calo =
([Z_\zG e \Kb_\CG^\gauche\Delta(\kb W)]_{(Z_\CG^\gauche)_\zG W})^*.$$
\indexnot{gzCM}{\g_\zG^\calo}
\end{defi}

When
$\CG=0$ (respectively $\CG=\CG_c$ for 
some $c \in \CCB$), they will be called
{\bfit generic Calogero-Moser cellular characters} 
(respectively {\bfit Calogero-Moser $c$-cellular characters}).

\bigskip

Given $\zG \in \Upsilon^{-1}(\pG_\CG^\gauche)$ and $\chi \in \Irr(W)$, we denote by 
$\mult_{\zG,\chi}^\calo$ the multiplicity of the simple module 
$L_\CG^\gauche(\zG)$ in a composition series of the Verma module 
$\Kb_\CG^\gauche\Delta(\chi)$.

\bigskip

The above definition can be expressed in terms of length of $Z_\zG$-modules, 
through the Morita equivalence. We first need the following lemma.
Given $M$ a finitely generated $Z_\CG^\gauche$-module, the
$Z_\zG$-module $M_\zG$ has
finite length and we denote this length by $\longueur_{Z_\zG}(M_\zG)$.

\bigskip

\begin{lem}\label{le:multiplicite-Z}
Let $M$ be a finitely generated $\Hb_\CG^\gauche$-module. 
Then $eM_\zG$ is a $Z_\zG$-module of finite length and 
$\longueur_{Z_\zG}(eM_\zG)$ is equal to the multiplicity of $L_\CG^\gauche(\zG)$ 
in the $\Kb_\CG^\gauche\Hb^\gauche$-module $\Kb_\CG^\gauche M$.
\end{lem}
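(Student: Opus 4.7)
The plan is to apply the Morita equivalence of Theorem~\ref{theo:morita-left}, reducing the statement on the non-commutative algebra $\Kb_\CG^\gauche\Hb^\gauche$ to a statement about a finite-dimensional commutative algebra. Concretely, the functor $N \mapsto eN$ gives an equivalence between $(\Kb^\gauche_\CG \Hb^\gauche)\mmod$ and $(\Kb^\gauche_\CG Z^\gauche)\mmod$. Since $Z$ is free of rank $|W|$ over $P$ (Corollary~\ref{coro:endo-bi}(e)), the target algebra $\Kb^\gauche_\CG Z^\gauche$ is finite-dimensional over the field $\Kb^\gauche_\CG$, hence a commutative Artinian ring, whose maximal ideals $\bar{\zG}$ correspond bijectively with $\Upsilon^{-1}(\pG^\gauche_\CG)$. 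By the very definition~\eqref{eq:simple-left-prime}, the simple $\Kb^\gauche_\CG\Hb^\gauche$-module $L^\gauche_\CG(\zG)$ is sent under this Morita equivalence to the one-dimensional simple $(\Kb^\gauche_\CG Z^\gauche)/\bar{\zG}$.

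Next I would invoke a standard fact for a finitely generated module $N'$ over a commutative Artinian ring $A$ with maximal ideal $\mG$: the multiplicity of the simple $A/\mG$ in $N'$ equals $\longueur_{A_\mG}(N'_\mG)$ (this follows from the decomposition of $A$ as a product of its local factors). Applied to $A=\Kb^\gauche_\CG Z^\gauche$, $\mG=\bar\zG$ and $N'=e\Kb^\gauche_\CG M$ (which is finitely generated over $Z$ since $\Hb$ is a finitely generated $Z$-module by Corollary~\ref{coro:P-libre}), this gives that the multiplicity of $L^\gauche_\CG(\zG)$ in $\Kb^\gauche_\CG M$ equals $\longueur_{(\Kb^\gauche_\CG Z^\gauche)_{\bar\zG}}\bigl((e\Kb^\gauche_\CG M)_{\bar\zG}\bigr)$.

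The last step is to match this with $\longueur_{Z_\zG}(eM_\zG)$. Since $M$ is an $\Hb^\gauche_\CG$-module, $eM$ is annihilated by $\pG^\gauche_\CG$; consequently the localization of $eM$ at $\bar\zG$ in the ring $\Kb^\gauche_\CG Z^\gauche$ coincides with its localization at $\zG$ in $Z$, i.e. $(e\Kb^\gauche_\CG M)_{\bar\zG}=eM_\zG$. Moreover $(\Kb^\gauche_\CG Z^\gauche)_{\bar\zG}=Z_\zG/\pG^\gauche_\CG Z_\zG$, which is a quotient of $Z_\zG$ by an ideal annihilating $eM_\zG$, and the two rings have the same residue field $k_Z(\zG)$, hence the length of $eM_\zG$ as a $Z_\zG$-module equals its length over $(\Kb^\gauche_\CG Z^\gauche)_{\bar\zG}$. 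Finiteness of this length comes for free, since $e\Kb^\gauche_\CG M$ is finite-dimensional over $\Kb^\gauche_\CG$.

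There is essentially no obstacle: the argument is pure bookkeeping of localizations. The only mildly delicate point is checking that localizing $eM$ at $\zG$ in $Z$ gives the same result as localizing $e\Kb^\gauche_\CG M$ at $\bar\zG$ in $\Kb^\gauche_\CG Z^\gauche$, which relies on $eM$ being killed by $\pG^\gauche_\CG$, together with the fact that elements of $P\setminus\pG^\gauche_\CG$ lie in $Z\setminus\zG$ (since $\zG\cap P=\pG^\gauche_\CG$).
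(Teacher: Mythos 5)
Your proof is correct and follows the same route as the paper: the paper's own argument likewise identifies $\longueur_{Z_\zG}(eM_\zG)$ with the multiplicity of $eL_\CG^\gauche(\zG)$ in the $\Kb_\CG^\gauche Z^\gauche$-module $e\Kb_\CG^\gauche M$ and then transfers this through the Morita equivalence of Theorem~\ref{theo:morita-left}. You merely perform the two steps in the opposite order and spell out the localization bookkeeping that the paper compresses into "by construction."
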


\begin{proof}
By construction, $\longueur_{Z_\zG}(eM_\zG)$ is equal to 
the multiplicity of $eL_\CG^\gauche(\zG)$ in the 
$\Kb_\CG^\gauche Z^\gauche$-module $e\Kb_\CG^\gauche M$. The result 
follows now from the Morita equivalence of Theorem~\ref{theo:morita-left}.
\end{proof}

\bigskip
Given  $\zG \in \Upsilon^{-1}(\pG_\CG^\gauche)$, we put
$$\mult_{\zG,\chi}^\calo=\longueur_{Z_\zG}\bigl(e \Kb_\CG^\gauche \Delta(\chi)\bigr)_\zG\indexnot{mult}{\mult_{\zG,\chi}^\calo}.$$

\smallskip
\begin{prop}\label{pr:mult-z-left}
Let $\zG \in \Upsilon^{-1}(\pG_\CG^\gauche)$. We have
$$\g_\zG^\calo
=\sum_{\chi \in \Irr(W)} \mult_{\zG,\chi}^\calo \cdot \chi.$$
\end{prop}

\begin{proof}
We have $\Delta(\kb W)=\bigoplus_{E\in\Irr(W)}\Delta(E)\otimes E^*$, 
hence 
$$Z_\zG e \Kb_\CG^\gauche\Delta(\kb W)=\bigoplus_E
Z_\zG e \Kb_\CG^\gauche\Delta(E)\otimes E^*.$$
Since $[Z_\zG e \Kb_\CG^\gauche\Delta(E)]_{(Z_\CG^\gauche)_\zG}=
\longueur_{Z_\zG}\bigl(e \Kb_\CG^\gauche \Delta(E)\bigr)_\zG [Z/\zG]$,
we deduce the first equality. The second equality follows from
Lemma \ref{le:multiplicite-Z}.
\end{proof}

\bigskip
\begin{rema}
	We expect that the cellular characters associated
to two different prime ideals are distinct and that the cellular
characters are linearly independent.
\end{rema}

\bigskip

\section{Choices}\label{section:choix-gauche}

\medskip

\begin{lem}\label{absolue simplicite}
Given $\g$ a {\bfit linear} character of $W$, the
$\Kb^\gauche\Hb^\gauche$-module $\Kb^\gauche\Delta(\g)$ is absolutely simple.
\end{lem}

\begin{proof}
This follows from Theorem~\ref{theo:morita-left} and~(\ref{dimension left}).
\end{proof}

\bigskip

Fix now a linear character $\g$ of $W$. By Lemma~\ref{absolue simplicite}, 
the endomorphism algebra of $\Kb^\gauche\Delta(\g)$ is equal to 
$\Kb^\gauche$. This induces a morphism of $P$-algebras 
$\O_\g^\gauche : Z \to \Kb^\gauche$ \indexnot{oz}{\O_\g^\gauche}
%dont l'image est \`a valeurs dans $P^\gauche$ et 
whose restriction to $P$ is the canonical morphism $P \to P^\gauche$. 
Since $Z$ is integral over $P$, the image of $\O_\g^\gauche$ is integral over 
$P^\gauche$ and contained in $\Kb^\gauche=\Frac(P^\gauche)$. Since $P^\gauche \simeq \kb[\CCB \times V/W]$ 
is integrally closed, this forces $\O_\g^\gauche$ to factor through $P^\gauche$. 
We set
$$\zG^\gauche = \Ker(\O_1^\gauche)\quad\text{and}
\quad \qG^\gauche=\copie(\zG^\gauche).\indexnot{za}{\zG^\gauche}\indexnot{qa}{\qG^\gauche}$$

\bigskip 

\begin{prop}\label{q left}
The ideal $\qG^\gauche$ of $Q$ satisfies the following properties:
\begin{itemize}
\itemth{a} $\qG^\gauche$ is a prime ideal of $Q$ lying over $\pG^\gauche$.

\itemth{b} $\qG^\gauche \subset \qGba$.

\itemth{c} $P^\gauche=P/\pG^\gauche \simeq Q/\qG^\gauche$.
\end{itemize}
\end{prop}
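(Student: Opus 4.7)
The plan is to deduce (a) and (c) directly from the surjectivity of $\O_1^\gauche$, and to obtain (b) via a factorization of the central character $\O_{\unb_W}$ through $\O_1^\gauche$. For (a) and (c), the preamble to the proposition already establishes that $\O_1^\gauche : Z \to \Kb^\gauche$ has image in $P^\gauche$ and that its restriction to $P$ is the canonical quotient $P \twoheadrightarrow P/\pG^\gauche = P^\gauche$; in particular $\O_1^\gauche$ is surjective onto $P^\gauche$, so $Z/\zG^\gauche \simeq P^\gauche$ is a domain, $\zG^\gauche$ is prime with $\zG^\gauche\cap P = \pG^\gauche$, and the embedding $P/\pG^\gauche\hookrightarrow Z/\zG^\gauche$ is an isomorphism. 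Transporting along the $P$-algebra isomorphism $\copie$ then gives (a) and (c).

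For (b), let $\pi : P^\gauche \twoheadrightarrow P/\pGba = \kb[\CCB]$ be the quotient induced by the inclusion $\pG^\gauche \subset \pGba$. I would prove the factorization $\O_{\unb_W} = \pi\circ \O_1^\gauche$, from which $\zG^\gauche = \Ker\O_1^\gauche\subset \Ker\O_{\unb_W} = \zGba$ and therefore $\qG^\gauche\subset \qGba$ follow by applying $\copie$. To set up, observe that the generators $\kb[V^*]^W_+$ of $\pG^\gauche$ lie in $Z$ and annihilate the highest-weight vector of $\Delta(\unb_W)$; by centrality, they annihilate the whole module, so $\pG^\gauche\Delta(\unb_W) = 0$, and $\Delta(\unb_W)$ becomes an $\Hb^\gauche$-module, free of rank $|W|$ over $P^\gauche$ by Theorem \ref{PBW-0}. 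After base change to $\Kb^\gauche$ this module is absolutely simple (Lemma \ref{absolue simplicite}), so $z \in Z$ acts on $\Kb^\gauche\Delta(\unb_W)$ by the scalar $\O_1^\gauche(z) \in P^\gauche$, and by $P^\gauche$-freeness the same scalar describes the action on $\Delta(\unb_W)$ itself. Quotienting further by $\pGba$ yields the baby Verma $\bar{\Delta}(\unb_W)$, on which $z$ then acts by $\pi(\O_1^\gauche(z)) \in \kb[\CCB]$.

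To identify this scalar with $\O_{\unb_W}(z)$, note that $\unb_W$ is alone in its generic Calogero--Moser family (Example \ref{lineaire}), so by Corollary \ref{dim bonne} and Proposition \ref{prop lissite} the block of $\kb(\CCB)\Hbov$ containing $\unb_W$ has a unique simple module of $\kb(\CCB)$-dimension $|W|$. Since $\bar{\Delta}(\unb_W)$ is $\kb[\CCB]$-free of rank $|W|$, its localization $\kb(\CCB)\bar{\Delta}(\unb_W)$ coincides with this simple module, on which $z$ acts by $\O_{\unb_W}(z)$ (Lemma \ref{lem:action-z-verma}); by $\kb[\CCB]$-freeness, $z$ already acts on $\bar{\Delta}(\unb_W)$ by $\O_{\unb_W}(z)$. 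Comparing the two descriptions of the scalar by which $z$ acts on $\bar{\Delta}(\unb_W)$ yields $\pi(\O_1^\gauche(z)) = \O_{\unb_W}(z)$, as desired. The main subtlety is ensuring the scalar actions take values in the integral rings $P^\gauche$ and $\kb[\CCB]$ rather than merely in their fraction fields; this is handled by the freeness of the Verma modules combined with the (absolute) simplicity after localization.
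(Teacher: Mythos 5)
Your proof is correct and follows essentially the same route as the paper: (a) and (c) come from the fact that $\O_1^\gauche$ is a ring morphism onto the domain $P^\gauche$ restricting to the canonical quotient on $P$, and (b) comes from the factorization $\O_{\unb_W}=\pi\circ\O_1^\gauche$ obtained by reducing $\Delta(\unb_W)$ modulo $\pGba$ to the baby Verma module. The only difference is that you spell out the identification of the scalar by which $z$ acts on $\bar{\Delta}(\unb_W)$ with $\O_{\unb_W}(z)$ (which the paper leaves implicit); your detour through Proposition~\ref{prop lissite} is sound, though the mere surjection $\bar{\Delta}(\unb_W)\twoheadrightarrow L_{(0)}(\unb_W)$ together with Lemma~\ref{lem:action-z-verma} already gives the inclusion of kernels needed for (b).
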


\begin{proof}
Since $\Kb^\gauche$ is a field, $\qG^\gauche$ is prime. Since the restriction of $\O_1^\gauche$ to $P$ 
is the canonical morphism $P \to P^\gauche$, $\qG^\gauche \cap P = \pG^\gauche$. This shows~(a). 

\medskip

By construction, $\Delta(\g)/\pGba \Delta(\g)=\bar{\Delta}(\g)$ and so the morphism 
$\O_\g : Z \to \Pba=P/\pGba$ factors through the morphisms $\O_\g^\gauche : Z \to P^\gauche$ and  
$P^\gauche \to \Pba$, whence~(b).

\medskip

Finally, the isomorphism (c) follows from the fact that the image of $\O_\g$ is $P^\gauche$.
\end{proof}

\bigskip

Proposition~\ref{q left} allows us to choose a prime ideal of $Q$ lying over 
$\pG^\gauche$ and compatible with our choice of $\qGba$. The next lemma shows that this choice is 
unique:

\bigskip

\begin{lem}\label{unicite qgauche plus}
We have $\pG^\gauche Q_\qGba = \qG^\gauche Q_\qGba$.
\end{lem}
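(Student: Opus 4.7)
The plan is to reduce the statement to an isomorphism of local rings: I would first prove that the canonical map $P_\pGba\to Q_\qGba$ is an isomorphism, and then deduce the equality $\pG^\gauche Q_\qGba=\qG^\gauche Q_\qGba$ by a uniqueness argument on primes.

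For the first step, I would combine two ingredients. By Lemma~\ref{qba}, $Q$ is étale over $P$ at $\qGba$; hence the induced map of local rings $P_\pGba\to Q_\qGba$ is flat and unramified, so in particular $\pGba Q_\qGba=\qGba Q_\qGba$. Next, by Corollary~\ref{Q extention kc} applied to the prime $\copie^{-1}(\qGba)$ of $Z$, the inclusion $P\hookrightarrow Q$ induces $P/\pGba\longiso Q/\qGba$. Combining, one gets
\[ Q_\qGba/\pGba Q_\qGba \;=\; Q/\qGba \;=\; P/\pGba \;=\; P_\pGba/\pGba P_\pGba. \]
Since $Q$ is free of rank $|W|$ over $P$ (Corollary~\ref{coro:endo-bi}), $Q_\qGba$ is a finitely generated $P_\pGba$-module, so Nakayama's lemma yields surjectivity of $P_\pGba\to Q_\qGba$. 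For injectivity, let $I$ be the kernel; tensoring $0\to I\to P_\pGba\to Q_\qGba\to 0$ with $Q_\qGba$ over $P_\pGba$ and using flatness gives $I\otimes_{P_\pGba}Q_\qGba=0$, i.e.\ $I/I^2=0$; since $I\subset\pGba P_\pGba$ is contained in the maximal ideal, Nakayama forces $I=0$. Thus $P_\pGba\longiso Q_\qGba$.

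For the second step, I would observe that under this isomorphism there is a unique prime of $Q_\qGba$ lying over $\pG^\gauche P_\pGba$, namely $\pG^\gauche Q_\qGba$. By Proposition~\ref{q left}(a)(b), $\qG^\gauche$ is a prime of $Q$ satisfying $\qG^\gauche\cap P=\pG^\gauche$ and $\qG^\gauche\subset\qGba$; therefore $\qG^\gauche Q_\qGba$ is also a prime of $Q_\qGba$ lying over $\pG^\gauche$, and uniqueness forces $\qG^\gauche Q_\qGba=\pG^\gauche Q_\qGba$.

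I do not anticipate any real obstacle here: the main conceptual point is recognizing that étaleness together with equality of residue fields forces the local map to be an isomorphism, after which the conclusion is a formal uniqueness of primes. All ingredients—the étale property at $\qGba$, the residue field identification, and finite generation—are already available in the preceding results.
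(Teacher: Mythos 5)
There is a genuine gap in Step 1: the asserted isomorphism $P_{\pGba}\longiso Q_{\qGba}$ is false whenever $W\neq 1$. The Nakayama argument breaks because $Q_{\qGba}$ is \emph{not} a finitely generated $P_{\pGba}$-module. What is finite (indeed free of rank $|W|$) over $P_{\pGba}$ is $Q\otimes_P P_{\pGba}$; this is a semilocal ring with one maximal ideal for each prime of $Q$ lying over $\pGba$, and there are several such primes, one for each generic Calogero--Moser family (Lemma~\ref{caracterization blocs CM} together with Example~\ref{lineaire}). The ring $Q_{\qGba}$ is the further localization at just one of these maximal ideals, and such a localization of a semilocal ring at one of several maximal ideals is essentially never module-finite over the base. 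The decisive obstruction is that $Q_{\qGba}$ has fraction field $\Lb$, of degree $|W|$ over $\Kb=\Frac(P_{\pGba})$, so the inclusion $P_{\pGba}\hookrightarrow Q_{\qGba}$ cannot be surjective. More generally, an \'etale local homomorphism with trivial residue extension is an isomorphism only after completion or henselization, not on Zariski local rings (compare $\ZM_{(5)}\to\ZM[i]_{(2+i)}$, which is \'etale with residue extension $\FM_{\! 5}=\FM_{\! 5}$ but is not surjective). Since Step 2 relies entirely on this isomorphism to identify the primes of $Q_{\qGba}$ over $\pG^\gauche$, the proof does not go through.

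The correct route, which is the one the paper takes, is to prove directly that $\pG^\gauche Q_{\qGba}$ is a \emph{prime} ideal of $Q_{\qGba}$: base-change the \'etale local map $P_{\pGba}\to Q_{\qGba}$ of Lemma~\ref{qba} along $P_{\pGba}\twoheadrightarrow P_{\pGba}/\pG^\gauche P_{\pGba}$, which is an integrally closed domain because $P/\pG^\gauche$ is a polynomial algebra; the resulting ring $Q_{\qGba}/\pG^\gauche Q_{\qGba}$ is then \'etale over an integrally closed domain, hence normal, and being local it is connected, hence a domain. Once primality is known, your uniqueness argument can be salvaged: the contraction of $\pG^\gauche Q_{\qGba}$ to $Q$ is a prime lying over $\pG^\gauche$ and contained in $\qG^\gauche$, so incomparability for the integral extension $P\subset Q$ (not for $P_{\pGba}\subset Q_{\qGba}$, which is not integral) forces it to equal $\qG^\gauche$, whence the lemma.
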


\begin{proof}
It is sufficient to prove that $\pG^\gauche Q_{\qGba}$ is a prime ideal of $Q_{\qGba}$. 
By Lemma~\ref{qba}, the local morphism of local rings 
$P_{\pGba} \to Q_{\qGba}$ is \'etale. Moreover, $P/\pG^\gauche \simeq \kb[\CCB \times V^*/W]$ 
is integrally closed (it is a polynomial algebra) and so 
$P_{\pGba}/\pG^\gauche P_{\pGba}$ is also integrally closed. By base change, the ring morphism 
$P_{\pGba}/\pG^\gauche P_{\pGba} \longinjto Q_{\qGba}/\pG^\gauche Q_{\qGba}$ 
is \'etale, which implies that $Q_{\qGba}/\pG^\gauche Q_{\qGba}$, 
which is a local ring (hence is connected), is also integrally closed 
(by~\cite[Expos\'e I, Corollaire~9.11]{sga}) and so is a domain 
(because it is connected). This shows that $\pG^\gauche Q_{\qGba}$ is 
a prime ideal of $Q_{\qGba}$, as desired.
\end{proof}

\bigskip

\begin{coro}\label{unicite qgauche}
The ideal $\qG^\gauche$ is the unique prime ideal of $Q$ lying over $\pG^\gauche$ 
and contained in $\qGba$. Moreover, $Q$ is \'etale over $P$ at $\qG^\gauche$. 
\end{coro}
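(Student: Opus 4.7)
The plan is to derive this corollary almost immediately from Lemma~\ref{unicite qgauche plus} and Lemma~\ref{qba}. For the uniqueness of $\qG^\gauche$, I would first translate the question to the localization $Q_{\qGba}$: a prime ideal $\qG$ of $Q$ satisfies $\qG \subset \qGba$ and $\qG \cap P = \pG^\gauche$ if and only if $\qG Q_{\qGba}$ is a prime ideal of $Q_{\qGba}$ whose contraction to $P_{\pGba}$ equals $\pG^\gauche P_{\pGba}$. So it suffices to prove that $\pG^\gauche Q_{\qGba}$ is the unique prime ideal of $Q_{\qGba}$ lying over $\pG^\gauche P_{\pGba}$.

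By Lemma~\ref{unicite qgauche plus}, we have $\pG^\gauche Q_{\qGba}=\qG^\gauche Q_{\qGba}$, and the proof of that lemma in fact establishes the stronger statement that $Q_{\qGba}/\pG^\gauche Q_{\qGba}$ is an integral domain. Since $Q$ is a free $P$-module of rank $|W|$ by Corollary~\ref{coro:endo-bi}, the induced extension $P_{\pGba}/\pG^\gauche P_{\pGba} \hookrightarrow Q_{\qGba}/\pG^\gauche Q_{\qGba}$ is finite (hence integral), so by Cohen--Seidenberg the primes of $Q_{\qGba}$ lying over $\pG^\gauche P_{\pGba}$ correspond bijectively to the minimal primes of the fiber ring $Q_{\qGba}/\pG^\gauche Q_{\qGba}$. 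As this fiber ring is a domain, its unique minimal prime is $(0)$, giving the required uniqueness of $\qG^\gauche$.

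For the étaleness statement, I would invoke that étaleness is an open condition on the source of a morphism of finite type. By Lemma~\ref{qba}, the extension $P \to Q$ is étale at $\qGba$, so the étale locus is a Zariski open subset of $\Spec Q$ containing the point $\qGba$. Since $\qG^\gauche \subset \qGba$ (Proposition~\ref{q left}(b)), the point $\qGba$ is a specialization of $\qG^\gauche$ in $\Spec Q$, so $\qG^\gauche$ lies in every open neighborhood of $\qGba$; in particular, $\qG^\gauche$ lies in the étale locus.

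There is no real obstacle here: both assertions reduce in one step to results already in hand. The only mild subtlety is the translation via localization at $\qGba$ and the appeal to going-up in the fiber ring, but this is essentially bookkeeping. One could also phrase the uniqueness more directly by noting that, since $P_{\pGba}\to Q_{\qGba}$ is étale (hence flat) and $P_{\pGba}/\pG^\gauche P_{\pGba}$ is a domain, the connectedness (i.e., local) and reducedness (via étaleness) of the fiber $Q_{\qGba}/\pG^\gauche Q_{\qGba}$ force it to be a domain; this gives a conceptual alternative to the explicit computation in Lemma~\ref{unicite qgauche plus}.
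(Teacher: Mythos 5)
Your main argument is correct and is essentially what the paper intends: the corollary is stated without proof precisely because uniqueness follows from Lemma~\ref{unicite qgauche plus} by incomparability in the integral extension $P_{\pGba}/\pG^\gauche P_{\pGba}\hookrightarrow Q_{\qGba}/\pG^\gauche Q_{\qGba}$ (a domain), and \'etaleness at $\qG^\gauche$ follows from \'etaleness at $\qGba$ (Lemma~\ref{qba}). For the second point the paper's pattern (as in Lemma~\ref{qba}) would be to note that Lemma~\ref{unicite qgauche plus} gives $\pG^\gauche Q_{\qG^\gauche}=\qG^\gauche Q_{\qG^\gauche}$ by further localization, i.e.\ unramifiedness, and then use freeness of $Q$ over $P$ plus characteristic zero; your route via openness of the \'etale locus and the fact that open sets are closed under generization is equally valid.

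One caveat about your closing ``conceptual alternative'': a local reduced ring need not be a domain (e.g.\ $k[x,y]/(xy)$ localized at the origin), so ``connected plus reduced'' does not suffice. The actual content of the proof of Lemma~\ref{unicite qgauche plus} is that the fiber is \'etale over an \emph{integrally closed} domain, hence normal, hence a finite product of normal domains, and connectedness then forces it to be a single domain. Normality, not mere reducedness, is the essential input there.
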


\bigskip

Since $Q/\qG^\gauche \simeq P/\pG^\gauche=\kb[\CCB \times V/W]$, we get that 
$Q/(\qG^\gauche +\CG\, Q) \simeq \kb[\CCB]/\CG \otimes \kb[V/W]$ and so 
$\qG^\gauche + \CG\, Q$ is a prime ideal of $Q$. We will denote it by 
$\qG_\CG^\gauche$.\indexnot{qa}{\qG_\CG^\gauche}

\bigskip

\begin{coro}\label{q gauche unique}
We have $Q/\qG^\gauche_\CG \simeq P/\pG^\gauche_\CG$. Moreover, 
$\qG_\CG^\gauche$ is the unique prime ideal of $Q$ lying over 
$\pG_\CG^\gauche$ and contained in $\qGba$. 
% et $Q$ est \'etale sur $P$ en $\qG^\gauche_\CG$.
\end{coro}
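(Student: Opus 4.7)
The plan is to break the corollary into two statements and dispatch them in turn. First I would establish the isomorphism $Q/\qG_\CG^\gauche \simeq P/\pG_\CG^\gauche$, and then use it, together with Corollary~\ref{unicite qgauche}, to pin down $\qG_\CG^\gauche$ as the unique prime with the stated properties.

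For the first statement, I would start from Proposition~\ref{q left}(c), which gives an isomorphism of $\kb[\CCB]$-algebras $P/\pG^\gauche \xrightarrow{\sim} Q/\qG^\gauche$. Tensoring this isomorphism over $\kb[\CCB]$ with $\kb[\CCB]/\CG$ yields
\[ P/(\pG^\gauche + \CG P) \xrightarrow{\sim} Q/(\qG^\gauche + \CG Q), \]
that is, $P/\pG_\CG^\gauche \xrightarrow{\sim} Q/\qG_\CG^\gauche$. In particular, the composition $P \to Q \to Q/\qG_\CG^\gauche$ has kernel exactly $\pG_\CG^\gauche$, so $\qG_\CG^\gauche \cap P = \pG_\CG^\gauche$, and $\qG_\CG^\gauche$ is indeed a prime ideal of $Q$ lying over $\pG_\CG^\gauche$. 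The inclusion $\qG_\CG^\gauche \subseteq \qGba$ follows because $\qG^\gauche \subseteq \qGba$ (Proposition~\ref{q left}(b)) and $\CG Q \subseteq \CG Z \subseteq \pGba Z$, where the last inclusion is Lemma~\ref{qba c}/its translation to $Z$.

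For the uniqueness statement, let $\qG$ be any prime ideal of $Q$ satisfying $\qG \cap P = \pG_\CG^\gauche$ and $\qG \subseteq \qGba$. The ring $P = \kb[\CCB] \otimes \kb[V]^W \otimes \kb[V^*]^W$ is a polynomial algebra, hence integrally closed, and $Q$ is an integral extension of $P$ which is a domain (Corollary~\ref{coro:endo-bi}(f)). The going-down theorem therefore applies to $P \subset Q$: since $\pG^\gauche \subseteq \pG_\CG^\gauche = \qG \cap P$, there exists a prime $\qG'$ of $Q$ with $\qG' \subseteq \qG$ and $\qG' \cap P = \pG^\gauche$. In particular $\qG' \subseteq \qGba$, so Corollary~\ref{unicite qgauche} forces $\qG' = \qG^\gauche$. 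Then $\qG^\gauche \subseteq \qG$, and since $\CG \subseteq \pG_\CG^\gauche \subseteq \qG$, we also have $\CG Q \subseteq \qG$, whence $\qG_\CG^\gauche = \qG^\gauche + \CG Q \subseteq \qG$. Finally, $\qG_\CG^\gauche$ and $\qG$ are two primes of $Q$ lying over the same prime $\pG_\CG^\gauche$ of $P$, with $\qG_\CG^\gauche \subseteq \qG$; by incomparability of primes in an integral extension, $\qG = \qG_\CG^\gauche$.

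There is no real obstacle here: both steps are essentially formal consequences of results already established earlier in the excerpt. The only point requiring a little care is verifying that going-down is available, and this is immediate from the fact that $P$ is a polynomial algebra (hence normal) and that $Q$ is an integral domain integral over $P$.
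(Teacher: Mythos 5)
Your treatment of the first statement is correct and is exactly the paper's argument: the isomorphism $P/\pG^\gauche \longiso Q/\qG^\gauche$ of Proposition~\ref{q left}(c) is an isomorphism of $P$-algebras, and reducing it modulo $\CG$ gives $P/\pG_\CG^\gauche \longiso Q/\qG_\CG^\gauche$; this is precisely how $\qG_\CG^\gauche$ is shown to be prime in the paragraph preceding the corollary.

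The problem is the containment. Your justification ``$\CG Q \subseteq \CG Z \subseteq \pGba Z$'' is false: since $P/\pGba \simeq \kb[\CCB]$, we have $\pGba \cap \kb[\CCB]=0$, so $\CG \not\subseteq \pGba$ whenever $\CG\neq 0$. In fact $\qG_\CG^\gauche \not\subseteq \qGba$ for $\CG\neq 0$, and more is true: every prime of $Q$ lying over $\pG_\CG^\gauche$ contains $\CG Q$, whereas $\qGba\cap\kb[\CCB]\subseteq\pGba\cap\kb[\CCB]=0$, so \emph{no} prime over $\pG_\CG^\gauche$ is contained in $\qGba$ when $\CG\neq 0$. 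The corollary has to be read with $\qGba_\CG=\qGba+\CG Q$ (Lemma~\ref{qba c}) in place of $\qGba$; this is consistent with the parallel Corollary~\ref{qba c plus} and with the subsequent choice of $\rG_\CG^\gauche$ inside $\rGba_\CG$. With that reading the containment is immediate, since $\qG_\CG^\gauche=\qG^\gauche+\CG Q\subseteq\qGba+\CG Q=\qGba_\CG$.

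Your uniqueness argument then acquires a genuine gap. The going-down step itself is fine ($P$ is a polynomial algebra, $Q$ is a domain integral over $P$), but starting from a prime $\qG\subseteq\qGba_\CG$ over $\pG_\CG^\gauche$ it only produces a prime $\qG'$ over $\pG^\gauche$ with $\qG'\subseteq\qGba_\CG$, whereas Corollary~\ref{unicite qgauche} identifies the unique such prime contained in $\qGba$, which is strictly smaller than $\qGba_\CG$. So you cannot conclude $\qG'=\qG^\gauche$, and the rest of the argument collapses. Any repair has to work at $\qGba_\CG$ rather than at $\qGba$, for instance by establishing the analogue of Lemma~\ref{unicite qgauche plus}, namely that $\pG_\CG^\gauche Q_{\qGba_\CG}=\qG_\CG^\gauche Q_{\qGba_\CG}$ is prime, after which incomparability gives the uniqueness; note that the \'etaleness of $Q$ over $P$ at $\qGba$ used in Lemma~\ref{unicite qgauche plus} does not automatically persist at $\qGba_\CG$ (the trivial character need not be alone in its Calogero--Moser $\CG$-family), so this genuinely requires an argument. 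The paper's own proof only says that the second statement ``follows from the first'' and gives no details, so there is nothing there for you to fall back on: you need to supply this step.
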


\bigskip

\begin{proof}
The first statement is immediate and the second one follows from the first one. 
% Les deux premi\`eres assertions sont imm\'ediates. 
% Le lemme~\ref{unicite qgauche plus} montre que 
% $\pG^\gauche Q_{\qGba} = \qG^\gauche Q_{\qGba}$. Puisque 
% $\qG_\CG^\gauche=\qG^\gauche + \CG \, Q$, on en d\'eduit que 
% $\pG^\gauche_\CG Q_{\qGba} = \qG_\CG^\gauche Q_{\qGba}$, ce qui implique 
% que $Q$ est net sur $P$ en $\qG^\gauche_\CG$. Comme de plus $Q$ est un $P$-module 
% plat, le corollaire en d\'ecoule.
\end{proof}

\bigskip

\begin{rema}\label{R rgauche}
It has been shown in Corollary~\ref{Q extention kc} that, if 
$\qGba_\star$ is a prime ideal of $Q$ lying over $\pGba$, then 
$Q/\qGba_\star \simeq P/\pGba$. Even though $Q/\qG^\gauche \simeq P/\pG^\gauche$, 
we will see in Chapter~\ref{chapitre:b2} that this cannot be extended 
in general to other prime ideals of $Q$ lying over 
$\pG^\gauche$: indeed, if $W$ has type $B_2$, then there exists 
a prime ideal $\qG_\star^\gauche$ of $Q$ lying over $\pG^\gauche$ 
such that $P/\pG^\gauche$ is a proper subring of $Q/\qG^\gauche_\star$ 
(see Lemma~\ref{lem:qgauche-b2}(c)). So, in general, 
$\Kb^\gauche \varsubsetneq \Mb^\gauche_\CG$, where $\CG=0$.\finl
\end{rema}

\bigskip

% 
% \bigskip
% 
% Compte tenu du lemme~\ref{unicite qgauche}, 
% nous noterons $\qG^\gauche$ (respectivement $\qG^\droite$) l'unique 
% id\'eal premier de $Q$ au-dessus de $\pG^\gauche$ (respectivement $\pG^\gauche$) 
% et contenu dans $\qGba$. Le lemme~\ref{unicite qgauche} implique donc que
% \equat\label{qgauche etale}
% \text{\it $Q$ est \'etale sur $P$ en $\qG^\gauche$ et en $\qG^\droite$.}
% \endequat
% Il en d\'ecoule le r\'esultat suivant~:
% 
% \bigskip
% 
% \begin{coro}\label{qleft}
% On a $P/\pG^\gauche \simeq Q/\qG^\gauche$ et $P/\pG^\droite \simeq Q/\qG^\droite$.
% \end{coro}
% 
% \begin{proof}
% Par sym\'etrie, nous nous contenterons de montrer la permi\`ere assertion. 
% Puisque $P/\pG^\gauche \simeq \kb[\CCB \times V^*/W]$ est une alg\`ebre de polyn\^omes, 
% il suffit de montrer que le morphisme $P/\pG^\gauche \longinjto Q/\qG^\gauche$ est \'etale. 
% ......................???????????????????
% \end{proof}
% 
% 
% \medskip

\bigskip

\bigskip

% \begin{prop}\label{prop:qgauche-qbarre}
% Soit $\zGba_*$ un id\'eal premier de $Z$ au-dessus de $\pGba_\CG$ et soit $\zG_*^\gauche$ un id\'eal 
% premier de $Z$ au-dessus de $\pG_\CG^\gauche$ contenu dans $\zGba_*$. Alors 
% $\zGba_*=\zG_*^\gauche + \langle Z_{<0}, Z_{>0} \rangle$. 
% \end{prop}
% 
% \bigskip

\begin{prop}\label{coro:app-lim}
Let $\zG_*^\gauche$ be a prime ideal of $Z$ lying over $\pG_\CG^\gauche$. 
Then there exists a unique prime ideal of $Z$ lying over $\pGba_\CG$ and containing $\zG_*^\gauche$: 
it is equal to $\zG_*^\gauche + \langle Z_{<0}, Z_{>0} \rangle$. 
\end{prop}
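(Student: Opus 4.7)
The plan is to transpose the argument of Proposition~\ref{prop:rgauche-rbarre} from the extension $P\subset R$ to the extension $P\subset Z$, and from the fixed ideal $\rG_\CG^\gauche$ to an arbitrary prime $\zG_*^\gauche$ lying over $\pG_\CG^\gauche$. Setting $I=\langle Z_{<0},Z_{>0}\rangle$, the containment $\pGba_\CG\subset\zG_*^\gauche+I$ will be immediate from $\pG_\CG^\gauche=\CG P+\langle P_{<0}\rangle\subset\zG_*^\gauche$ together with $P_{>0}\subset Z_{>0}\subset I$. It will therefore remain to show that $\zG_*^\gauche+I$ is prime and that it is the only prime of $Z$ lying over $\pGba_\CG$ and containing $\zG_*^\gauche$.

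Both statements will reduce to an analysis of the quotient $B=Z/\zG_*^\gauche$. Since $\pG_\CG^\gauche$ is $\BZ$-homogeneous, Corollary~\ref{premier homogene} will ensure that $\zG_*^\gauche$ is $\BZ$-homogeneous, so $B$ inherits a $\BZ$-grading; $B$ is a domain, integral over the $\NM$-graded ring $A=P/\pG_\CG^\gauche\simeq \kb[\CCB]/\CG\otimes\kb[V]^W$, hence itself $\NM$-graded by Proposition~\ref{prop:graduation-positive}. Equivalently, one obtains $Z_{<0}\subset \zG_*^\gauche$. A second application of the same proposition to the integral extension $A_0\hookrightarrow B/A_{>0}B$ (where $A_{>0}$ is the image of $\pGba_\CG$ in $A$) will show that $B/A_{>0}B$ is concentrated in degree $0$, i.e.~$A_{>0}B=B_{>0}$. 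In particular, the primes of $Z$ lying over $\pGba_\CG$ and containing $\zG_*^\gauche$ will correspond bijectively to the primes of $B_0=B/B_{>0}$ lying over $(0)\subset A_0$.

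To conclude I will observe that $B_0$, being a subring of the domain $B$, is itself a domain, and that extracting the degree-$0$ part of any integral equation over $A$ shows it is integral over the domain $A_0$. Incomparability of primes in integral extensions then forces the only prime of $B_0$ lying over $(0)$ to be $(0)$ itself. Pulling back first to $B$ gives $B_{>0}$ as the unique prime of $B$ over $A_{>0}$, and then back to $Z$ gives the unique prime as the preimage of $B_{>0}$. A direct computation (using that $Z_{<0}\subset \zG_*^\gauche$ absorbs the degree-$0$ cross-terms $Z_{<0}\cdot Z_{>0}$ arising in $I$) will identify this preimage with $\zG_*^\gauche+\langle Z_{<0},Z_{>0}\rangle$, as required. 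The only non-formal ingredient is the homogeneity of $\zG_*^\gauche$ provided by Corollary~\ref{premier homogene}; once that is granted, the rest is a routine application of Proposition~\ref{prop:graduation-positive} and standard incomparability for integral extensions of domains.
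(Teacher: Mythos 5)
Your overall architecture is the right one---it is essentially the paper's, which transposes the proof of Proposition~\ref{prop:rgauche-rbarre} from $R$ to $Z$---and most of the steps are sound: the homogeneity of $\zG_*^\gauche$ via Lemma~\ref{premier homogene}, the inclusion $Z_{<0}\subset\zG_*^\gauche$ via Proposition~\ref{prop:graduation-positive}, and the final paragraph showing that $\zG_*^\gauche+\langle Z_{<0},Z_{>0}\rangle$ is prime because the degree-zero cross-terms $Z_{<0}\cdot Z_{>0}$ fall into $\zG_*^\gauche$. The gap is the middle step: the claim that $B/A_{>0}B$ is concentrated in degree $0$, i.e.\ that $A_{>0}B=B_{>0}$. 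First, Proposition~\ref{prop:graduation-positive} cannot be invoked there, since it requires the overring to be a \emph{domain}, and $B/A_{>0}B$ need not be one. Second, and more seriously, the claim itself is false: the inclusion $A_{>0}B\subset B_{>0}$ can be strict. The abstract prototype is $\kb[t^2]\subset\kb[t^2,t^3]$, and a counterexample occurs inside the paper itself, in type $B_2$ with $\CG=0$: for $\zG_*^\gauche=\copie^{-1}(\qG_\chi^\gauche)$ (notation of Lemma~\ref{lem:qgauche-b2}), one has $B=A\oplus A\,\eulerq_\chi'$ with $\eulerq_\chi'$ of $\BZ$-degree $2$ and $(\eulerq_\chi')^2$ a combination of $\Pi$ and $\Sig^2$, hence lying in $A_{>0}A$; thus $B/A_{>0}B\simeq A_0[s]/\langle s^2\rangle$ is neither a domain nor concentrated in degree $0$, and $\eulerq_\chi'\in B_{>0}\setminus A_{>0}B$. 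Consequently your identification of the primes of $Z$ lying over $\pGba_\CG$ and containing $\zG_*^\gauche$ with the primes of $B_0$ lying over $(0)$ is not justified as written: a priori there could be primes of $B/A_{>0}B$ over $(0)$ that do not contain $B_{>0}/A_{>0}B$.

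The repair is short and is what the proof of Proposition~\ref{prop:rgauche-rbarre} actually does: apply the positivity argument prime by prime rather than to the quotient ring. If $\zG'$ is any prime of $Z$ lying over $\pGba_\CG$ and containing $\zG_*^\gauche$, then $\zG'$ is $\BZ$-homogeneous (Lemma~\ref{premier homogene}, since $\pGba_\CG$ is), and $Z/\zG'$ is a graded \emph{domain} integral over $P/\pGba_\CG$, which is concentrated in degree $0$; Proposition~\ref{prop:graduation-positive}, applied legitimately to this domain for both orderings of $\BZ$, gives that $Z/\zG'$ is concentrated in degree $0$, whence $Z_{<0},Z_{>0}\subset\zG'$ and $\zG_*^\gauche+\langle Z_{<0},Z_{>0}\rangle\subset\zG'$. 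Since you have already shown the left-hand side to be a prime lying over $\pGba_\CG$, incomparability in the integral extension $P\subset Z$ forces equality, and both existence and uniqueness follow. With this substitution your proof is complete.
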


\bigskip

\begin{proof}
	The proposition follows from Lemma \ref{le:barfromleft} applied
	to the extension $Z/P$ and the prime ideals $\pGba_\CG$ and $\zG_*^\gauche$.
\end{proof}

\bigskip

Proposition~\ref{coro:app-lim} provides a surjective map 
\equat\label{eq:limitegauche}
\fonction{\limitegauche}{\Upsilon^{-1}(\pG_\CG^\gauche)}{\Upsilon^{-1}(\pGba_\CG)}{\zG_*^\gauche}{\zG_*^\gauche + 
\langle Z_{<0}, Z_{>0} \rangle.}
\endequat
The notation $\limitegauche$ will be justified in Chapter~\ref{chapter:bb}. 

\bigskip

\section{Gaudin algebra and cellular characters}
\subsection{Left specialization}

Let $Z^{\prime\gauche}=Z'\otimes_{\kb[V^*]^W}\kb=Z^{\reg,\gauche}
\otimes_{\kb[V^\reg]^W}\kb[V^\reg]$, a 
$\kb[\CCB\times V^\reg]$-algebra, free of rank $|W|$ as a 
$\kb[\CCB\times V^\reg]$-module. It is acted on by $W$ and
$(Z^{\prime\gauche})^W=Z^{\reg,\gauche}$.

Lemma \ref{le:Zprime} shows that $\Theta$ gives an isomorphism
$Z^{\prime\gauche}\rtimes W\xrightarrow{\sim}\Hb^{\reg,\gauche}$ and the
image of $Z^{\prime\gauche}$ by that map is the 
$\kb[\CCB\times V^\reg]$-subalgebra generated by $\Theta^{-1}(V)$.

There is a Morita equivalence between $\Hb^{\reg,\gauche}$ and
$Z^{\reg,\gauche}$ given by the bimodule
$\Delta(\mathrm{co})^\reg=\Delta(\mathrm{co})\otimes_{\kb[V]}\kb[V^\reg]
=\Hb^\reg e\otimes_{\kb[V^*]^W}\kb=\Hb^{\reg,\gauche}e$.
It corresponds to the Morita equivalence between $Z^{\prime\gauche}\rtimes W$
and $Z^{\reg,\gauche}$ given by $Z^{\prime\gauche}$. Note in 
particular that $Z^{\prime\gauche}\rtimes W$ acts faithfully on
$\Delta(\mathrm{co})^\reg$.

\subsection{Gaudin algebra}
There is a canonical isomorphism $\kb[\CCB\times V^\reg]\otimes \kb W
\xrightarrow{\sim}
\Hb^\reg\otimes_{\kb[V^*]}\kb=\Delta^\reg(\kb W)$. Through this isomorphism,
the action of $\Theta^{-1}(y)$ (for $y\in V)$ is given by left multiplication by
$$\DC_y=\sum_{s \in \REF(W)} 
\e(s)C_s \frac{\langle y,\a_s\rangle}{\a_s} s\,\,\in \kb[\CCB \times V^\reg][W]
\indexnot{DyC}{\DC_y}
$$
where
$\kb[\CCB \times V^\reg][W]$ \indexnot{ka}{\kb[\CCB \times V^\reg][W]}
denote the group algebra of $W$ over the algebra $\kb[\CCB \times V^\reg]$: 
note that, in this algebra, the elements of $\kb[\CCB \times V^\reg]$ and those of $W$ commute. 
In other words, $\kb[\CCB \times V^\reg][W] =\kb[\CCB \times V^\reg]\otimes
\kb W$ as an algebra. Note that the action of $\DC_y$ by left multiplication
on the specialization of $\kb[\CCB \times V^\reg][W]$ at a closed
point $(c,v)$ is the operator $D_y^{c,v,0}$ of \S\ref{se:Gaudinoperators}.

We denote by $\gaudin(W)$ \indexnot{G}{\gaudin(W)} (resp. $\Liegaudin(W)$ 
\indexnot{G}{\Liegaudin(W)})
the $\kb[\CCB \times V^\reg]$-subalgebra (resp. submodule) of 
$\kb[\CCB \times V^\reg][W]$ generated by 
the $\DC_y$'s ($y \in V$). It will be called the {\it generic Gaudin algebra}  (resp.
{\it generic Gaudin Lie algebra}) associated with $W$. Note that it is commutative.

Gaudin Lie algebras and Proposition \ref{pr:Liegaudin} below are in
\cite{AgFeVe} when $W$ is a real reflection group.

\begin{prop}
	\label{pr:Liegaudin}
$\Liegaudin(W)$ is a commutative Lie subalgebra of $\kb[\CCB \times V^\reg][W]$.
	The $\kb[\CCB \times V^\reg]$-module $\Liegaudin(W)$
	is free of rank $\dim(V/V^W)$.

	Let $y'\in V^\reg$ and $c\in \CCB$ be $\kb$-points
	with $c_s\neq 0$ for all
	$s\in\REF(W)$. Then the image of $\Liegaudin(W)$ in
	$\kb[W]$, the specialization of $\kb[\CCB \times V^\reg][W]$ at $(y',c)$,
	is a $\kb$-vector space of dimension $\dim(V/V^W)$.

\end{prop}

\begin{proof}
	Note that the map $a\otimes y\mapsto a\DC_y$ for $y\in V$ and 
	$a\in \kb[\CCB \times V^\reg]$ gives a surjective morphism of 
	$\kb[\CCB \times V^\reg]$-modules
	$f:\kb[\CCB \times V^\reg]\otimes V\to \Liegaudin(W)$. We have
	$f(a\otimes y)=0$ if $y\in V^W$. Consider now 
	$b\in \kb[\CCB \times V^\reg]\otimes V$ with $f(b)=0$.
	Given $s\in\REF(W)$, we have
	$\langle b,\alpha_s\rangle=0$. It follows that $b\in
	\kb[\CCB \times V^\reg]\otimes V^W$. The first part of the 
	proposition follows.

	The second part of the proposition follows by a similar argument.
\end{proof}

Let $R=\kb[\CCB \times V^\reg][\{C_s^{-1}\}_{s\in\REF(W)}$.
	One can deduce from the proposition above that the $R$-module
$R\otimes_{\kb[\CCB \times V^\reg]}\Liegaudin(W)$
is a direct summand of $R[W]$, because the quotient module is flat, hence
projective.

\smallskip
Note also that the external action of $W$ stabilizes $\gaudin(W)$:
we have $\lexp{w}{\DC_y}=\DC_{w(y)}$
for $y \in V$ and $w \in W$. Therefore, we can consider the algebra 
$\gaudin(W) \rtimes W$.

\bigskip
Recall that $\Delta^\reg(\mathrm{co})$ has a filtration with
$\mathrm{gr}\ \Delta^\reg(\mathrm{co})\simeq\Delta(\kb W)$.
There is a filtration of
$\Hb^{\reg,\gauche}$ given for $r\le 0$ by
$$\Hb^{\reg,\gauche,\le r}=
\{h\in\Hb^{\reg,\gauche}\ |\ h\Delta^\reg(\mathrm{co})^{\le i}\subset
\Delta^\reg(\mathrm{co})^{\le i+r},\ \forall i\le 0\}.$$
Via the isomorphism $Z^{\prime\gauche}\rtimes W\xrightarrow{\sim}
\Hb^{\reg,\gauche}$,
we obtain a filtration with
$(Z^{\prime\gauche}\rtimes W)^{\le r}=(Z^{\prime\gauche})^{\le r}\otimes\kb W$.
We deduce the following:
\begin{itemize}
\item The algebra $\gaudin(W)$ is the image of 
$Z'=\Theta^{-1}(\kb[\CCB \times V^\reg \times V^*])$ in $\End(\Delta(\kb W))$
\item The algebra $\gaudin(W)^W$ is the image of 
$Z^\reg=\Theta^{-1}(\kb[\CCB \times V^\reg \times V^*]^{\Delta W})$
in $\End(\Delta(\kb W))$
\item The algebra $\gaudin(W)\rtimes W$ is the image of
$\Hb^{\reg,\gauche}$ in $\End(\Delta(\kb W))$
\item the kernel of the action of $\Hb^{\reg,\gauche}$ on
$\Delta(\kb W)$ is a nilpotent ideal.
\end{itemize}

Thanks to Proposition~\ref{pr:equivMorita}, we also deduce that
$\gaudin(W)$ induces a Morita equivalence between
$\gaudin(W)\rtimes W$ and $\gaudin(W)^W$.

\bigskip
\def\gaudincellulaire#1{{\g^{\mathrm{Gau}}_{\! #1}}}

\subsection{Cellular characters}\label{sub:cellular-gaudin}

Let $\mG$ be a maximal ideal of $\Kb_\CG^\gauche\gaudin(W)$.
We define a character of $W$
$$\gaudincellulaire{\mG}=([(\kb(\CG)(V)\otimes\kb W)_{\mG}]_{
(\Kb_\CG^\gauche\gaudin(W))_\mG})^*.$$
\indexnot{gzGau}{\gaudincellulaire{\mG}}

By Proposition~\ref{pr:equivMorita}, the restriction map induces
a bijection
\equat\label{eq:gaudin-simples}
\left(\Irr(\Kb_\CG^\gauche \gaudin(W))\right)/W \xrightarrow{\sim}
\Irr(\Kb_\CG^\gauche \gaudin(W)^W).
\endequat

Let $\zG$ be the maximal ideal of $Z_\CG^\gauche$ corresponding to
the orbit of $\mG$ via this bijection.

\begin{theo}\label{theo:gaudin-cellulaire}
We have $\gaudincellulaire{\mG}=\g_\zG^\calo$.
\end{theo}

Note that, as consequence, we have
$\gaudincellulaire{{\lexp{w}{\mG}}}=\gaudincellulaire{\mG}$ for all $w \in W$. 

There is a corresponding result for cellular multiplicities:
$$\longueur_{(\Kb_\CG^\gauche \gaudin(W))_\mG}((\kb(\CG)(V)\otimes E)_\mG)=
\mult_{\zG,\chi}^\calo.$$

\medskip
Consider now an algebraically closed field $K$ and a $K$-point
$p$ of $\Spec(\kb(\CG)[V^\reg])$ outside the ramification locus of
$f:\Spec(\kb(\CG)Z^{\prime\gauche})\to \Spec(\kb(\CG))\times V^\reg$.

There is a bijection $\Irr(\Kb_\CG^\gauche \gaudin(W))\xrightarrow{\sim}
f^{-1}(p)$. Denote by $z_\mG$ the point of $f^{-1}(p)$ corresponding to
$\mG$. We have $\g_\mG^{\mathrm{Gau}}=[K(f^{-1}(p))]_{KW}$.
So, the $\CG$-cellular characters are the generalized eigenspaces of the Gaudin
operators at $p$.

\bigskip

\begin{rema}\label{rem:compute}
The description of Calogero-Moser cellular characters provided by
Theorem~\ref{theo:gaudin-cellulaire} allows
efficient computations in small groups.\finl
\end{rema}

\bigskip

\bigskip

\section{Projective modules for Hecke algebras}
\label{se:T=1to0}

Let $c,c'\in\CCB(\CM)$ with $c\neq 0$.
Consider the morphism of $\CM$-algebras
$\CM[\CCB]\to\CM(\hbar),\ C_s\mapsto \hbar c_s+c'_s$ and the corresponding
embedding  $\CM\hookrightarrow \CCB$ given by $\hbar \mapsto \hbar c+c'$.

We consider also the morphism of $\CM$-algebras
$\CM[\CCBt]\to\CM(\hbar),\ T\mapsto\hbar^{-1},\ C_s\mapsto c_s+\hbar^{-1}c'_s$.
There is an isomorphism of $\CM(\hbar)$-algebras
$\CM(\hbar)\otimes_{\CM[\CCB]}\Hbd\xrightarrow{\sim}
\CM(\hbar)\otimes_{\CM[\CCBt]}\Hbt$
(cf. \S \ref{subsection:bi-graduation-1}). It induces an equivalence
$\tilde{\OC}\left(\CM(\hbar)\otimes_{\CM[\CCB]}\CM[\CCBt]/(T-1)\right)
\xrightarrow{\sim} \tilde{\OC}(\CM(\hbar))$
and an isomorphism 
$K_0(\tilde{\OC}\left(\CM(\hbar)\otimes_{\CM[\CCB]}\CM[\CCBt]/(T-1)\right))
\xrightarrow{\sim} K_0(\tilde{\OC}(\CM(\hbar)))$.

Recall (Proposition \ref{pr:gradednongradedVerma})
that there is an equivalence 
$$\dot{\OC}(\CM(\hbar))^{(\ZM)}\xrightarrow{\sim}
\tilde{\OC}\left(\CM(\hbar)\otimes_{\CM[\CCB]}\CM[\CCBt]/(T-1)\right).$$
Composing with the equivalence above provides an equivalence
$\dot{\OC}(\CM(\hbar))^{(\ZM)}\xrightarrow{\sim}
\tilde{\OC}(\CM(\hbar))$,
hence an isomorphism of $\ZM[\tb^{\pm 1}]$-modules
$$K_0(\dot{\OC}(\CM(\hbar)))[\tb^{\pm 1}]
\xrightarrow{\sim} K_0(\tilde{\OC}(\CM(\hbar))),\
[\dot{\Delta}(E)]\mapsto [\tilde{\Delta}(E)]\cdot\tb^{\logexp(c_E+c'_E)}$$
where $\logexp(z)=z-\Log(\exp(z))$ and $\Log$ is the principal branch of logarithm.

\medskip
Consider the discrete valuation ring $\CM[\hbar^{-1}]_{(\hbar^{-1})}$.
There is a decomposition map (cf. \S \ref{se:hwdecmap}) 
$K_0(\tilde{\OC}(\CM(\hbar)))\to K_0(\tilde{\OC}(\CM_c))$
corresponding to the specialization $\hbar^{-1}\mapsto 0$, hence $T\mapsto 0$.
Composing with the isomorphism
above provides a morphism of $\ZM[\tb^{\pm 1}]$-modules
$K_0(\dot{\OC}(\CM(\hbar)))[\tb^{\pm 1}]\to K_0(\tilde{\OC}(\CM_c))$.

Forgetting the gradings, i.e. setting $\tb=1$, we obtain a morphism of
abelian groups $d'$ from $K_0(\dot{\OC}(\CM(\hbar)))$ to the Grothendieck
group $L$ of the category of finitely generated $\Hb_c$-modules
that are locally nilpotent for $V$.

\smallskip
Applying $\CM(V)^W\otimes_{\CM[V]^W}-$ provides a morphism
from $L$ to the Grothendieck
group $L'$ of the category of finitely generated 
$(\CM(V)^W\otimes_{\CM[V]^W}\Hb_c)$-modules
that are locally nilpotent for $V$. Composing with $d'$, we obtain
a morphism $d'' : K_0(\dot{\OC}(\CM(\hbar))) \to L'$.

Since every finitely generated $(\CM(V)^W\otimes_{\CM[V]^W}\Hb_c)$-module
that is locally nilpotent for $V$ is a finite extension of
$\Kb_c^{\gauche}\Hb$-modules, it follows that pullback through the
quotient map $\CM(V)^W\otimes_{\CM[V]^W}\Hb_c\twoheadrightarrow
\Kb_c^{\gauche}\Hb$
induces an isomorphism $K_0(\Kb_c^{\gauche}\Hb\mmod)\xrightarrow{\sim}L'$.
Composing $d''$ with the inverse of that map provides a morphism
$d''':K_0(\dot{\OC}(\CM(\hbar)))\to K_0(\Kb_c^{\gauche}\Hb\mmod)$.

\medskip

Consider the morphisms of $\CM$-algebras
$\CM[\TCB]\to \CM[\qb^\CM],\ \qb_{\orbite,j}\mapsto
\qb^{k_{\orbite,-j}}e^{2i\pi r k'_{H,-j}/|\mub_W|}$ and
$\CM[\qb^\CM]\to \CM((\hbar)),\ \qb^r\mapsto e^{2i\pi\hbar r/|\mub_W|}$.
We have an isomorphism $K_0(\CM(\qb^\CM)\heckegenerique\mmod)
\xrightarrow{\sim} K_0(\CM((\hbar))\heckegenerique\mmod)$.

%$\OC[\TCB]\to \OC[\qb^\CM],\ \qb_{\orbite,j}\mapsto
%\qb^{k_{\orbite,-j}}e^{2i\pi k'_{H,-j}/|\mub_W|}$.

Given $M\in\dot{\OC}(\CM(\hbar))$ with $\CM[V^\reg]\otimes_{\CM[V]}M=0$,
we have $d''([M])=0$. It follows from Theorem \ref{th:doubleendo}
that $d''$ factors through
$K_0(\dot{\OC}(\CM(\hbar)))\xrightarrow{\sim}
K_0(\dot{\OC}(\CM((\hbar))))\xrightarrow{KZ} K_0(\CM((\hbar))
\heckegenerique\mmod)$. This provides a morphism
$$d_{c,c'}:K_0(\CM(\qb^\CM)\heckegenerique\mmod)\to
K_0(\Kb_c^{\gauche}\Hb\mmod).$$

\medskip
Let us summarize these constructions in the following theorem.

\begin{theo}
\label{th:decmapHeckecells}
There is a (unique) morphism
$d_{c,c'}:K_0(\CM(\qb^\CM)\heckegenerique\mmod)\to K_0(\Kb_c^\gauche\Hb\mmod)$
such that
$d_{c,c'}([E^\gen])=[\Kb_c^\gauche\Delta(E)]$ for all $E\in\Irr(W)$.

Given $L$ a simple $\CM(\qb^\CM)\heckegenerique$-module, there are
non-negative integers $d_{L,\zG}$ such that
	$d_{c,c'}([L])=\sum_{\zG\in\Upsilon^{-1}(\pG_c^\gauche)}
	d_{L,\zG}([L_c^\gauche(\zG)])$.
\end{theo}

\smallskip

Here is a commutative diagram summarizing the situation (we indicate above each column which 
specialization it corresponds to):

$$\xymatrix{
{\SS{T\mapsto 1,\ C_s\mapsto\hbar c_s+c_s'}} & 
{\SS{T\mapsto\hbar^{-1},\ C_s\mapsto c_s+\hbar^{-1} c_s'}} & 
{\SS{T \mapsto 0,\ C_s \mapsto c_s}} \\
K_0(\dot{\OC}(\CM(\hbar)))[\tb^{\pm 1}]\ar[r]^-\sim \ar[d]_-{\tb=1} & 
 K_0(\tilde{\OC}(\CM(\hbar^{-1})))\ar[r]^{\text{dec. map}} &
 K_0(\tilde{\OC}(\CM_c))\ar[d]^{\text{forget grading}} \\
K_0(\dot{\OC}(\CM(\hbar)))\ar[d]_-{\KZ}\ar@{-->}[rr] &&
 K_0(\Hb_c\mmod_{V-\text{loc. nilp.}})\ar[d]^{\text{localization}}\\
K_0(\CM(\qb^\CM)\heckegenerique\mmod)\ar@{-->}[rr]\ar@{-->}[drr]_-{d_{c,c'}}&&
 K_0((\CM(V)^W\otimes_{\CM[V]^W}\Hb_c)\mmod_{V-\text{loc. nilp.}}) \\
&& K_0(\Kb_c^{\gauche}\Hb\mmod)\ar[u]_-{\sim}
}$$

\begin{rema}
The discussion above shows that the cellular multiplicities measure
something like the regular part of
the characteristic cycle of a Verma module $\CM_c\dot{\Delta}(E)$
(with respect to the filtration $\Hbd^\trianglelefteq$ of 
\S\ref{sub:Rees}), although this doesn't seem to fit with
the usual characteristic cycle theory.\finl
%Define a filtration of $\kb(\tilde{c})\Hbd$ by
%$\kb(\tilde{c})\Hbd^{\trianglelefteq 0}=\kb[V]\rtimes W$,
%$\kb(\tilde{c})\Hbd^{\trianglelefteq 1}=
%\kb(\tilde{c})\Hbd^{\trianglelefteq 0}(\kb+\kb\hbar+V)$ and
%$\kb(\tilde{c})\Hbd^{\trianglelefteq i}=
%(\kb(\tilde{c})(\Hbd^{\trianglelefteq 1})^i$ for $i\ge 2$
%(cf. \S\ref{sub:filtration}). The associated graded algebra is
%$\kb[\hbar^{-1}]\otimes\kb(c)\Hb$.
%The discussion above shows that the cellular multiplicities measure
%the regular part of
%the characteristic cycle of a Verma module $\kb(\tilde{c})\dot{\Delta}(E)$,
%with respect to that filtration.
\end{rema}

\begin{coro}\label{coro:proj-cellular}
The Calogero-Moser $c$-cellular 
characters are sums of
classes of projective indecomposable
$(\CM(\qb^\CM)\heckegenerique)$-modules.
\end{coro}

\begin{rema}
When $W$ has a unique class of reflections and $c\neq 0$, the algebra
$\CM(\qb^\CM)\heckegenerique\mmod$ is
semisimple, so Theorem \ref{th:decmapHeckecells}
brings no information on cellular characters.\finl
\end{rema}

\begin{rema}
When $W$ is a Coxeter group of type $B_n$, the Lusztig cellular characters
for equal parameters are characters of projective indecomposable 
	modules \cite{LeMi}.
\end{rema}

\bigskip

\chapter{Bialynicki-Birula cells of $\ZCB_c$}\label{chapter:bb}

\boitegrise{{\bf Assumption.} 
{\it In this chapter \S\ref{chapter:bb},
we assume that $\kb=\CM$ and we fix an element $c \in \CCB$.}}{0.75\textwidth}

\bigskip

The group $\CM^\times$ acts on the algebraic variety $\ZCB_c$. We shall interprete geometrically 
several notions introduced in this book (families, cellular characters,...) using this action 
(fixed points, attractive or repulsive sets...). The main result of this chapter is concerned with the case of a family corresponding to a smooth point of $\ZCB_c$: 
we will show that the associated cell characters are irreducible. This result will be seen 
as a geometric result. Indeed, the smoothness of the fixed point implies that the attractive 
and repulsive sets are affine spaces which intersect properly and transversally; 
a computation of the intersection multiplicity will conclude the proof 
(see Theorem~\ref{theo:cellulaire-lisse-geometrique}). Note that another proof can be obtained
using results of Bellamy that appeared after the first version of this book
\cite{bellamyVerma}.

\bigskip

\def\action{{\hskip0.2mm\SS{\bullet}\hskip0.5mm}}

\section{Generalities on $\CM^\times$-actions}

\medskip

Let $\XCB$ be an {\it affine} algebraic variety endowed with a regular 
$\CM^\times$-action $\CM^\times \times \XCB \to \XCB$, $(\xi,x) \mapsto \xi \action x$. 
We will denote by $\XCB^{\CM^\times}$ the closed subvariety consisting of 
the fixed points under the action of $\CM^\times$. 
Given $x \in \XCB$, we say that {\it $\lim_{\xi \to 0} \xi \action x$ exists and is equal to $x_0$} 
if there exists a morphism of varieties $\ph : \CM \to \XCB$ such that, 
if $\xi \in \CM^\times$, then $\ph(\xi)=\xi \action x$ 
and $\ph(0)=x_0$. It is then clear that $x_0 \in \XCB^{\CM^\times}$. 
Similarly, we will say that {\it $\lim_{\xi \to 0} \xi^{-1} \action x$ exists and is equal to $x_0$} 
if there exists a morphism of varieties  $\ph : \CM \to \XCB$ such that, 
if $\xi \in \CM^\times$, then $\ph(\xi)=\xi^{-1} \action x$ and $\ph(0)=x_0$. 

We denote by $\XCB^\attractif$ (respectively $\XCB^\repulsif$) the set of $x \in \XCB$ 
such that $\lim_{\xi \to 0} \xi \action x$ (respectively $\lim_{\xi \to 0} \xi^{-1} \action x$) exists. 
It is a closed subvariety of $\XCB$ and the maps 
$$\fonction{\limiteattractive}{\XCB^\attractif}{\XCB^{\CM^\times}}{x}{\lim_{\xi \to 0} \xi \action x}$$
$$\fonction{\limiterepulsive}{\XCB^\repulsif}{\XCB^{\CM^\times}}{x}{\lim_{\xi \to 0} \xi^{-1} \action x}
\leqno{\text{and}}$$
are morphisms of varieties (which are of course surjective: a section is given by the 
closed immersion $\XCB^{\CM^\times} \subset \XCB^\attractif \cap \XCB^\repulsif$), because  
$\XCB$ is affine by assumption (this follows from the facts that this is true for the affine space 
$\CM^N$endowed with a linear action of $\CM^\times$ and that $\XCB$ can be seen as a $\CM^\times$-stable 
closed subvariety of such a $\CM^N$). Note that this is no longer true in general if $\XCB$ is not affine, 
as it is shown by the example $\Pb^1(\CM)$ endowed with the action $\xi \action [x;y]=[\xi x;y]$. 

Finally, given $x_0 \in \XCB^{\CM^\times}$, we denote by $\XCB^\attractif(x_0)$ (respectively 
$\XCB^\repulsif(x_0)$) the inverse image of $x_0$ by the map $\limiteattractive$ 
(respectively $\limiterepulsive$). The closed subvariety $\XCB^\attractif(x_0)$
(respectively $\XCB^\repulsif(x_0)$) 
will be called the {\it attractive set} (respectively the {\it repulsive set}) of $x_0$: 
it is a closed subvariety of $\XCB$. Let us recall the following classical fact, due to 
Bialynicki-Birula~\cite{bialynicki}:

\bigskip

\begin{prop}\label{prop:bb-lisse}
If $x_0 \in \ZCB^{\CM^\times}$ is a smooth point of $\XCB$, then there exists $N \ge 0$ such that 
$\XCB^\attractif(x_0) \simeq \CM^N$. In particular, $\XCB^\attractif(x_0)$ is smooth 
and irreducible.

The same statements hold for $\XCB^\repulsif(x_0)$.
\end{prop}

\bigskip

We will describe the notions developed in the previous chapters
(families, cellular characters) 
via fixed points and attractive sets of the
$\CM^\times$-action on $\ZCB_c$. 

\bigskip

\section{Fixed points and families}\label{se:fixedpointsfamilies}

\medskip

The results of this section \S\ref{se:fixedpointsfamilies}
are due to Gordon~\cite{gordon}. 
There are several $\CM^\times$-actions on all of our varieties
($\PCB$, $\ZCB$, $\RCB$,...). 
We will use the one which induces the $\BZ$-grading of Example~\ref{Z graduation-1}. 
In other words, an element $\xi \in \CM^\times$ acts on  
$\Hb$ as the element $(\xi^{-1},\xi,1 \rtimes 1)$ of 
$\CM^\times \times \CM^\times \times (\Hom(W,\CM^\times) \rtimes \NC)$. 
Therefore, for the action on $\Hb$, $\xi$ acts trivially on $\CM[\CCB] \otimes \CM W$, 
acts with non-negative weights on $\CM[V]$, with non-positive weights on $\CM[V^*]$. 
We get an action on 
$P$ and $Z_c$, which induces regular actions of $\CM^\times$ on the varieties 
$\PC_\bullet \simeq V/W \times V^*/W$ 
and $\ZCB_c$ making the morphism 
$$\Upsilon_c : \ZCB_c \longto \PCB_\bullet = V/W \times V^*/W$$
$\CM^\times$-equivariant. Given $\xi \in \CM^\times$ and $z \in \ZCB_c$, the image of $z$ through this action 
of $\xi$ will be denoted by $\xi \action z$. The unique fixed point of $\PCB_\bullet$ is $(0,0)$:
\equat\label{eq:fixe-P}
\PCB_\bullet^{\CM^\times} = (0,0).
\endequat
Since $\Upsilon_c$ is a finite morphism, we deduce that 
\equat\label{eq:fixe-Z}
\ZCB_c^{\CM^\times} = \Upsilon_c^{-1}(0,0).
\endequat

\begin{prop}
The construction above
provides a bijection between $\ZCB_c^{\CM^\times}$ and the set of
Calogero-Moser $c$-families.
\end{prop}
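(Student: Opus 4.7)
The plan is to assemble the proposition from ingredients already established in the text. The identification of $\ZCB_c^{\CM^\times}$ with $\Upsilon_c^{-1}(0,0)$ is exactly equation~(\ref{eq:fixe-Z}), which follows because $(0,0)$ is the unique $\CM^\times$-fixed point of $\PCB_\bullet=V/W\times V^*/W$ and $\Upsilon_c$ is a finite $\CM^\times$-equivariant morphism. So the only thing left to produce is a natural bijection between $\Upsilon_c^{-1}(0,0)$ and the set of Calogero-Moser $c$-families.

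Concretely, I would proceed as follows. First, observe that the point $(0,0)\in \PCB_\bullet$ corresponds to the maximal ideal $\pGba_c\subset P_\bullet$ (that is, the image of the ideal $\pGba$ of $P$ under specialization at $c$), so that $\Upsilon_c^{-1}(0,0)$ is canonically identified with the finite set of prime (equivalently, maximal) ideals of $Z_c$ lying over $\pGba_c$, namely with $\Upsilon^{-1}(\pGba_c)$ in the notation of \S\ref{section:familles CM}. Second, apply Lemma~\ref{caracterization blocs CM}: the map $\Th_c:\Irr(W)\to\Upsilon^{-1}(\pGba_c)$ sending $\chi\mapsto \Ker\O_\chi^c$ is surjective, and its fibers are by definition the Calogero-Moser $c$-families. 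Hence $\Th_c$ descends to a bijection between the set of Calogero-Moser $c$-families and $\Upsilon^{-1}(\pGba_c)$. Composing with the identification of the first step yields the claimed bijection.

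This reduction is essentially formal; the geometric content (that fixed points equal the fibre over $(0,0)$, and that this fibre is in bijection with blocks of $\Zba_c$ and hence with families) has already been proven. The only mild subtlety, which is really a notational check, is to match the prime ideal $\pGba_c\subset P$ used in the algebraic theory of families with the scheme-theoretic point $(0,0)\in\PCB_\bullet$ arising from the $\CM^\times$-action chosen at the beginning of \S\ref{se:fixedpointsfamilies}: the action is set up precisely so that $\kb[V]_+\oplus \kb[V^*]_+$ consists of elements of non-zero weight, which forces $\pGba_c$ to be the unique $\CM^\times$-stable maximal ideal of $P_\bullet$, corresponding to $(0,0)$. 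There is no hard step; the main work is just in verifying that this normalization of the $\CM^\times$-action produces the fixed-point locus identified with $\Upsilon_c^{-1}(\pGba_c)$.
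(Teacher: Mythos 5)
Your proof is correct and coincides with the paper's (implicit) argument: the paper likewise derives the proposition by combining (\ref{eq:fixe-Z}), which identifies $\ZCB_c^{\CM^\times}$ with $\Upsilon_c^{-1}(0,0)=\Upsilon^{-1}(\pGba_c)$, with Lemma~\ref{caracterization blocs CM}, whose fibers are by definition the Calogero-Moser $c$-families. Your added remark matching the $\CM^\times$-weights with the ideal $\pGba_c$ is a correct and harmless verification.
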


\bigskip

\section{Attractive sets and cellular characters} 

\medskip

First of all, note that 
\equat\label{eq:P-attractif}
\PC_\bullet^\attractif = V/W \times 0 \subset V/W \times V^*/W\quad\text{and}\quad 
\PC_\bullet^\repulsif = 0 \times V^*/W \subset V/W \times V^*/W.
\endequat
In other words, $\PC_\bullet^\attractif$ is the irreducible subvariety of $\PCB_\bullet$ associated 
with the prime ideal $\pG_c^\gauche$. 
Moreover, since $\Upsilon_c$ is a finite morphism, we have 
\begin{lem}\label{eq:Z-attractif}
We have
$\ZCB_c^\attractif = \Upsilon_c^{-1}(V/W \times 0)$
and
$\ZCB_c^\repulsif = \Upsilon_c^{-1}(0 \times V^*/W)$.
\end{lem}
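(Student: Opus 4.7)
The plan is to prove the two equalities by cycling through the two inclusions, using only the $\CM^\times$-equivariance of $\Upsilon_c$ and the fact (already recorded before the lemma) that $\Upsilon_c$ is finite, hence proper. I would prove only the statement about $\ZCB_c^\attractif$; the statement about $\ZCB_c^\repulsif$ follows by replacing the $\CM^\times$-action by its inverse (or by applying the automorphism swapping $V$ and $V^*$ considerations that underlie the definitions of $\PC_\bullet^\attractif$ and $\PC_\bullet^\repulsif$ in \eqref{eq:P-attractif}).

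For the inclusion $\ZCB_c^\attractif \subseteq \Upsilon_c^{-1}(V/W \times 0)$, I would take $z \in \ZCB_c^\attractif$, so that by definition there is a morphism $\varphi : \CM \to \ZCB_c$ with $\varphi(\xi) = \xi \action z$ for $\xi \in \CM^\times$. Composing with $\Upsilon_c$ gives a morphism $\CM \to \PCB_\bullet$ which, by equivariance, witnesses that $\lim_{\xi \to 0} \xi \action \Upsilon_c(z)$ exists and equals $\Upsilon_c(\varphi(0))$. Thus $\Upsilon_c(z) \in \PC_\bullet^\attractif = V/W \times 0$ by \eqref{eq:P-attractif}, and the inclusion follows.

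For the reverse inclusion, I would take $z \in \Upsilon_c^{-1}(V/W \times 0)$ and build the morphism $\psi : \CM^\times \to \ZCB_c$, $\xi \mapsto \xi \action z$. The hypothesis on $z$ together with \eqref{eq:P-attractif} means that $\Upsilon_c \circ \psi$ extends to a morphism $\overline{\Upsilon_c \circ \psi} : \CM \to \PCB_\bullet$. Since $\Upsilon_c$ is finite it is proper, so by the valuative criterion of properness applied to the local ring of $\CM$ at $0$, the morphism $\psi$ extends uniquely to a morphism $\overline{\psi} : \CM \to \ZCB_c$ fitting in a commutative square with $\overline{\Upsilon_c\circ\psi}$. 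This extension shows that $\lim_{\xi \to 0} \xi \action z$ exists, i.e., $z \in \ZCB_c^\attractif$.

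The only step that requires any care is the application of the valuative criterion: one needs to pass from the morphism $\psi$ on the generic point of the DVR $\CM[t]_{(t)}$ to a morphism on the whole DVR, which is legitimate because $\Upsilon_c$ is a finite (in particular proper and separated) morphism of noetherian schemes. Everything else is formal from the $\CM^\times$-equivariance of $\Upsilon_c$ and from \eqref{eq:P-attractif}.
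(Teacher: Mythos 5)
Your proof is correct and is essentially the paper's argument: the paper carries out the nontrivial inclusion by noting that the orbit map gives $\r:Z_c\to\CM[\tb,\tb^{-1}]$ with $\r(P_\bullet)\subset\CM[\tb]$, that $\r(Z_c)$ is integral over $\r(P_\bullet)$, and that $\CM[\tb]$ is integrally closed — which is precisely the content of the valuative criterion you invoke for the finite morphism $\Upsilon_c$. The easy inclusion via $\CM^\times$-equivariance and the reduction of the repulsive case to the attractive one are likewise the same.
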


\begin{proof}
Let $\r : Z_c \to \CM[\tb,\tb^{-1}]$ 
be a morphism of $\CM$-algebras such that $\r(P_\bullet) \subset \CM[\tb]$.
Since $Z_c$ is integral over $P_\bullet$, it follows that $\r(Z_c)$ is integral over
$\r(P_\bullet)$. As $\CM[\tb]$ is integrally closed, we deduce that 
$\r(Z_c) \subset \CM[\tb]$. This shows that
$\PCB_\bullet^\attractif = V/W \times 0\subset
\Upsilon_c(\ZCB_c^\attractif)$. The reverse inclusion is clear, and the other equality is
proven similarly.
\end{proof}

We have the following immediate consequence.

\begin{prop}
There is a bijection from
$\Upsilon_c^{-1}(\pG_c^\gauche)$ to
the set of irreducible components of $\ZCB_c^\attractif$ sending
$\zG$ to the corresponding irreducible closed subvariety $\ZCB_c^\attractif[\zG]$.
\end{prop}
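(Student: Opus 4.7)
The plan is to combine the preceding lemma, which identifies $\ZCB_c^\attractif$ with $\Upsilon_c^{-1}(V/W\times\{0\})$, with the standard commutative-algebra correspondence between irreducible components of a closed subscheme and minimal primes of its defining ideal. Concretely, the closed subvariety $V/W\times\{0\}\subset\PCB_\bullet$ is cut out by the image in $P_\bullet=P/\pG_c P$ of the prime ideal $\pG_c^\gauche$, so as a closed subscheme of $\ZCB_c$ we have $\ZCB_c^\attractif=\Spec(Z_c/\pG_c^\gauche Z_c)$. Its irreducible components are therefore in bijection with the minimal primes of $Z_c$ over $\pG_c^\gauche Z_c$, each such $\zG$ giving the component $\ZCB_c^\attractif[\zG]:=\Spec(Z_c/\zG)$, which is the assignment described in the statement.

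It then suffices to prove that the minimal primes of $Z_c$ above $\pG_c^\gauche Z_c$ are exactly the primes of $Z_c$ lying over $\pG_c^\gauche$, i.e. the set $\Upsilon_c^{-1}(\pG_c^\gauche)$. Both inclusions come from the finite flat morphism $\Upsilon_c$: finiteness is clear, and flatness follows from the fact that $Z$ is a free $P$-module (Corollary \ref{coro:endo-bi}), a property preserved by base change to $P_\bullet$. If $\qG\in\Upsilon_c^{-1}(\pG_c^\gauche)$, then $\qG\supset\pG_c^\gauche Z_c$; any minimal $\qG'\subset\qG$ over $\pG_c^\gauche Z_c$ satisfies $\pG_c^\gauche\subset\qG'\cap P_\bullet\subset\qG\cap P_\bullet=\pG_c^\gauche$, and incomparability for the integral extension $P_\bullet\subset Z_c$ forces $\qG'=\qG$, so $\qG$ is minimal. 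Conversely, if $\qG$ is minimal over $\pG_c^\gauche Z_c$, then $\qG\cap P_\bullet\supset\pG_c^\gauche$, and applying going-down (via flatness) to the inclusion $\pG_c^\gauche\subset\qG\cap P_\bullet$ produces $\qG'\subset\qG$ in $Z_c$ with $\qG'\cap P_\bullet=\pG_c^\gauche$; minimality of $\qG$ forces $\qG'=\qG$, whence $\qG\cap P_\bullet=\pG_c^\gauche$.

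The main obstacle is essentially a matter of bookkeeping: one must keep track of $\pG_c^\gauche$ as a prime of $P$ versus its image in $P_\bullet$, and verify that the freeness/flatness of $Z/P$ survives the specialization at $c$; once these are in place the two standard ingredients (incomparability and going-down) close the argument, and no genuinely new geometric input beyond the preceding lemma is required.
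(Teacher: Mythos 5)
Your proof is correct and matches the paper's intent: the paper states this proposition as an immediate consequence of the preceding lemma, and your argument simply fills in the standard details (irreducible components of $\Upsilon_c^{-1}(V/W\times 0)$ correspond to minimal primes over $\pG_c^\gauche Z_c$, which by incomparability and going-down for the finite free extension $P_\bullet\subset Z_c$ are exactly the primes lying over $\pG_c^\gauche$). No gap.
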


\bigskip

Since $\limiteattractive : \ZCB_c^\attractif \to \ZCB_c^{\CM^\times}$ is a morphism of varieties, 
the image of $\ZCB_c^\attractif[\zG]$ is irreducible. As $\ZCB_c^{\CM^\times}$ is a
finite set, we deduce that 
$\limiteattractive(\ZCB_c^\attractif[\zG])$ is reduced to a point. Hence, the morphism of 
varieties $\limiteattractive : \ZCB_c^\attractif \to \ZCB_c^{\CM^\times}$ induces  
a surjective map $\Upsilon_c^{-1}(\pG_c^\gauche) \longto \Upsilon_c^{-1}(\pGba_c)$: this
is the map $\limitegauche$ defined in (\ref{eq:limitegauche}).

% On a ainsi construit une application surjective 
% $$\limiteattractive :\Upsilon_c^{-1}(\pG_c^\gauche) \longto \Upsilon_c^{-1}(\pGba_c).$$
% 
% 
% ..............
% 
% \bigskip
% 
% \subsection{Caract\`eres cellulaires}
% Faisceau coh\'erent $e\MC^\gauche(\chi)$..., cellules bilat\`eres, 
% 
% \bigskip
% 
% \section{Cellules de Calogero-Moser et d\'ecomposition de Bialinicky-Birula} 
% 
% \medskip
% 
% Travailler dans $\RCB$, montrer que $D_c^\gauche \subset \Dba_c$ (et idem pour $D_\CG^\gauche$)...

\bigskip

\section{The smooth case}\label{se:smooth}

\medskip

\boitegrise{{\bf Assumption and notation.} 
{\it We fix in \S\ref{se:smooth} a point $z_0 \in \ZCB_c^{\CM^\times}$ 
which is assumed to be {\bfit smooth} in $\ZCB_c$. We denote by $\chi$ the unique 
irreducible character of the associated Calogero-Moser $c$-family.}}{0.75\textwidth}

\bigskip

Since $z_0$ is smooth (and isolated), we have
$$\ZCB_c^\attractif(z_0) \simeq \CM^N$$
for some $N$, hence
$\ZCB_c^\attractif(z_0)$ is smooth and irreducible (Proposition~\ref{prop:bb-lisse}). 
This shows that $\limiteattractiveinverse(z_0)$ is irreducible and isomorphic to an affine 
space. We denote by $\zG_L$ the prime ideal of $Z_c$ corresponding to this irreducible 
subvariety of $\ZCB_c$. 
The aim of this section is to show the following result.

\bigskip

\begin{theo}\label{theo:cellulaire-lisse-geometrique}
The celular character $\g_{\zG_L}^\calo$ is irreducible, i.e. $\g_\zG^\calo=\chi$.
\end{theo}

% \begin{theo}\label{theo:cellulaire-lisse}
% With the assumption and notation above, we have:
% \begin{itemize}
% \itemth{a} $|\G|=\chi(1)^2$.
% 
% \itemth{b} $\bigcup_{d \in D_c^\gauche} \lexp{d}{C}=\G$. 
% 
% \itemth{c} $|C|=\chi(1)$.
% 
% \itemth{d} $\isomorphisme{C}_c^\calo = \chi$.
% 
% \itemth{e} $\deg_c(C)=\chi(1)$. 
% \end{itemize}
% \end{theo}

\smallskip
We will provide a geometrical proof of Theorem~\ref{theo:cellulaire-lisse-geometrique}.
An entirely algebraic proof can be deduced from
\cite[Theorem 10(3)]{bellamyVerma}. In type $A_n$, this can also be deduced,
using Gaudin operators (cf. Chapter \ref{ch:Gaudin}), from
\cite{MuTaVa2}.

\bigskip

\noindent{\sc Notation - } Given $A$ is a commutative local ring with maximal ideal $\mG$
and given
$M$ a finitely generated $A$-module, we denote by $e_\mG(M)$ the {\it multiplicity} of $M$ 
for the ideal $\mG$, as it is defined in~\cite[Chapitre~\MakeUppercase{\romannumeral 5},~\S{A.2}]{serre}. 

Let $A$ be a regular commutative ring (not necessarily local) and $M$ and $N$ two 
finitely generated $A$-modules such that $M \otimes_A N$ has finite length.
Given $\aG$ a prime ideal of $A$, we put
$$\chi_\aG(M,N)=\sum_{i=0}^{\dim A} (-1)^i \longueur_{A_\aG}(\Tor_i^A(M,N)_\aG),$$
as in~\cite[Chapitre~\MakeUppercase{\romannumeral 5},~\S{B},~Th\'eor\`eme~1]{serre} .\finl

\bigskip

\begin{proof}
% (a) follows from Theorem~\ref{theo cellules familles}(c).
% 
% \medskip
% 
% (b) The set of irreducible components of $\limiteattractiveinverse(z_0)$ is in 
% bijection with $D_c^\gauche\setminus\Gamma$
% (see Proposition~\ref{prop:gauche-bilatere}(c)). 
% It follows that $|D_c^\gauche\setminus\Gamma|=1$, which shows~(b). In other words, with the notation 
% introduced in \S\ref{subsection:gauche-bilatere}, we have $C^\DD=\G$. 
% 
% \medskip
% 
% (c) 
% Let $\zG_L=\zG_c^\gauche(C)$: then $\zG_L$ is the defining ideal (in $Z$) of 
% $\ZCB_c^\attractif(z_0)$, but we will consider its image in $Z_c$.
% Let $\zGba=\zGba_c(\G)$: then $\zGba$ is the defining ideal of 
% the point $z_0$ (which will be seen as an ideal of $Z_c$). 
We define similarly $\zG_R$ as 
being the defining ideal of $\ZCB_c^\repulsif(z_0)$: we denote by $\g_{\zG_R}^{\calo,\droite}$ 
the associated {\it right} Calogero-Moser $c$-cellular character.
% $C'$ a Calogero-Moser {\it right} 
% $c$-cell contained in $\G$ (so that $\zG_R=\zG_c^\droite(C')$). Corollary~\ref{coro:cardinal-C} shows that 
% $$|C|=\longueur_{Z_{c,\zG_L}}(Z_c/\pG_c^\gauche Z_c)\quad\text{and}\quad
% |C'|=\longueur_{Z_{c,\zG_R}}(Z_c/\pG_c^\droite Z_c).\leqno{(\clubsuit)}$$
% Let $m_L=\mult_{C,\chi}^\calo$. 
We have
$$\g_{\zG_L}^\calo=m_L \chi(1) \quad\text{and}\quad \g_{\zG_R}^{\calo,\droite}=
m_R \chi\leqno{(\clubsuit)}$$
for some $m_L$, $m_R$. We must show that $m_L=m_R=1$. 

Let us first compute the multiplicity of the $Z_{c,\zGba}$-module $Z_{c,\zGba}/\pG_c^\gauche Z_{c,\zGba}$ 
for $\zGba Z_{c,\zGba}$. The Krull dimension of this module is $n=\dim_\kb V$. 
By the additivity formula~\cite[Chapitre~\MakeUppercase{\romannumeral 5},~\S{A.2}]{serre}, 
we have
$$e_{\zGba Z_{c,\zGba}}(Z_{c,\zGba}/\pG_c^\gauche Z_{c,\zGba}) = \sum_{{\mathrm{coht}}(\zG)=n} 
\longueur_{Z_{c,\zG}}(Z_{c,\zG}/\pG_c^\gauche Z_{c,\zG}) e_{\zGba Z_{c,\zGba}}(Z_{c,\zGba}/\zG Z_{c,\zGba}).
\leqno{(\diamondsuit)}$$
Here, ${\mathrm{coht}}(\zG)$ denotes the coheight of the prime ideal $\zG$ of $Z_{c,\zGba}$. 
Since $\ZCB_c^\attractif(z_0)$ is irreducible of dimension $n$, there is only one prime 
ideal of $Z_{c,\zGba}$ with coheight $n$ which contains $\pG_c^\gauche Z_{c,\zGba}$ (and so such that  
$\longueur_{Z_{c,\zG}}(Z_{c,\zG}/\pG_c^\gauche Z_{c,\zG})$ is non-zero), this is the prime 
ideal $\zG_L$. Moreover, since $Z_{c,\zGba}/\zG_L Z_{c,\zGba}$ is a regular ring 
(because $\ZCB_c^\attractif(z_0)$ is smooth), 
the multiplicity $e_{\zGba Z_{c,\zGba}}(Z_{c,\zGba}/\zG Z_{c,\zGba})$ is equal to $1$ 
(see~\cite[Chapitre~\MakeUppercase{\romannumeral 4}]{serre}). Hence, it follows from $(\clubsuit)$ and 
$(\diamondsuit)$ that 
$$e_{\zGba Z_{c,\zGba}}(Z_{c,\zGba}/\pG_c^\gauche Z_{c,\zGba})=m_L \chi(1).\leqno{(\heartsuit_L)}$$
By symmetry, 
$$e_{\zGba Z_{c,\zGba}}(Z_{c,\zGba}/\pG_c^\droite Z_{c,\zGba})=m_R \chi(1).\leqno{(\heartsuit_R)}$$

\medskip

On the other hand, $P/\pG_c^\gauche$ is a polynomial algebra and $Z_c/\pG_c^\gauche Z_c$ is a free 
$P/\pG_c^\gauche$-module of rank $|W|$. So $Z_{c,\zGba}/\pG_c^\gauche Z_{c,\zGba}$ is a Cohen-Macaulay 
$Z_{c,\zGba}$-module of dimension $n$. Similarly, $Z_{c,\zGba}/\pG_c^\droite Z_{c,\zGba}$ 
is a Cohen-Macaulay $Z_{c,\zGba}$-module of dimension $n$. Since $Z_c$ has dimension $2n$, 
it follows from~\cite[Chapitre~\MakeUppercase{\romannumeral 5},~\S{B},~Corollaire~du~Th\'eor\`eme~4]{serre}
that 
$$\chi_{\zGba}(Z_{c,\zGba}/\pG_c^\gauche Z_{c,\zGba},Z_{c,\zGba}/\pG_c^\droite Z_{c,\zGba}) 
= \longueur_{Z_{c,\zGba}}(Z_{c,\zGba}/\pG_c^\gauche Z_{c,\zGba} \otimes_{Z_{c,\zGba}} Z_{c,\zGba}/\pG_c^\droite Z_{c,\zGba}) 
\!=\! \chi(1)^2 > 0.\leqno{(\spadesuit)}$$
The last equality follows from the fact that $\pG_c^\gauche+\pG_c^\droite=\pGba_c$ and 
Corollary~\ref{dim bonne}. 

Consequently~\cite[Chapitre~\MakeUppercase{\romannumeral 5},~\S{B},~Compl\'ement~du~Th\'eor\`eme~1]{serre}, 
$$e_{\zGba Z_{c,\zGba}}(Z_{c,\zGba}/\pG_c^\gauche Z_{c,\zGba}) \cdot 
e_{\zGba Z_{c,\zGba}}(Z_{c,\zGba}/\pG_c^\droite Z_{c,\zGba}) \le 
\chi_{\zGba}(Z_{c,\zGba}/\pG_c^\gauche Z_{c,\zGba},Z_{c,\zGba}/\pG_c^\droite Z_{c,\zGba}).$$
From this last equality and
$(\heartsuit_L)$, $(\heartsuit_R)$ and $(\spadesuit)$, we deduce that
$$m_L m_R \le 1.$$
We obtain $m_L=m_R=1$, as desired.
% , which proves~(c).
% 
% \medskip
% 
% (d) follows immediately from~(c).
% 
% \medskip
% 
% (e) follows from~(b) and from Proposition~\ref{prop:gauche-bilatere}(b).
\end{proof}

\bigskip

\part{The extension $Z/P$}\label{part:extension}

\boitegrise{{\bf Important notation.} 
{\it 
Throughout this book, we fix a copy $Q$ \indexnot{Q}{Q} of the $P$-algebra $Z$, 
as well as an isomorphism of $P$-algebras $\copie : Z \longiso Q$. \indexnot{ca}{\copie}  
This means that $P$ will be seen as a $\kb$-subalgebra of both $Z$ and $Q$, but that 
$Z$ and $Q$ will be considered as different.\\ 
\hphantom{A} We then denote $\Kb=\Frac(P)$   and $\Lb=\Frac(Q)$  \indexnot{L}{\Lb} 
and we fix a {\bfit Galois closure} $\Mb$ \indexnot{M}{\Mb}  of the extension $\Lb/\Kb$. 
Set $G=\Gal(\Mb/\Kb)$ \indexnot{G}{G}  and $H=\Gal(\Mb/\Lb)$.  \indexnot{H}{H} 
We denote by $R$  \indexnot{R}{R}  the integral closure of $P$ in $\Mb$. We then have 
$P \subset Q \subset R$ and, by Corollary~\ref{coro:endo-bi}, 
$Q=R^H$ and $P=R^G$. This is the Galois context of Appendix~\ref{chapter:galois-rappels} 
which will be used extensively in this part.\\ 
\hphantom{A} Recall that $\Kb Z=\Kb \otimes_P Z$ is the fraction field of $Z$ (see~(\ref{eq:KZ-fraction})). 
We still denote by $\copie : \Frac(Z) \longiso \Lb$ the extension of $\copie$ to the fraction fields.\\
\hphantom{A} Let $Z^{\NM \times \NM}$, $Z^\NM$ and $Z^\ZM$ denote respectively the $(\NM \times \NM)$-grading, 
the $\NM$-grading, the $\ZM$-grading induced by the corresponding one of $\Hbt$ (see~\S\ref{section:graduation-1}, 
and the examples~\ref{N graduation-1} and~\ref{Z graduation-1}). Through the isomorphism %l'isomorphisme 
$\copie$, we obtain gradings 
$Q^{\NM \times \NM}$, $Q^\NM$ and $Q^\ZM$ on $Q$.
}}{0.75\textwidth}

\bigskip
\medskip

This Galois extension is the main object studied in this book: 
we shall be particularly interested in the inertia groups of prime ideals of $R$, and their relation  
with the representation theory of $\Hb$. Throughout this part, we will use the results 
of the Appendices~\ref{chapter:galois-rappels} and~\ref{appendice graduation}, which deal 
with generalities about Galois theory, integral extensions and gradings.

\chapter{Galois theory}\label{chapter:galois-CM}

% En gros, l'actuelle Partie III.
% 
% $\bullet$ $\Kb$, $\Lb$, $\Mb$, $G$, $H$...
% 
% $\bullet$ Sp\'ecificit\'e des groupes de Coxeter~: rajouter un automorphisme en plus, 
% agissant sur $G$.
% 

\section{Action of $G$ on the set $W$}

\medskip

Since $Q$ is a free $P$-module of rank $|W|$, the field extension $\Lb/\Kb$ has degree $|W|$:
\equat\label{degre lk}
[\Lb : \Kb] = |W|.
\endequat
Recall that the fact that $\Mb$ is a Galois closure of $\Lb/\Kb$ 
implies that
\equat\label{intersection}
\bigcap_{g \in G} \lexp{g}{H}=1.
\endequat
It follows from~(\ref{degre lk}) that
\equat\label{ghw}
|G/H|=|W|.
\endequat
This equality establishes a first link between the pair $(G,H)$ and the group $W$. 
We will now construct, using Galois theory, a bijection (depending on some choices) 
between $G/H$ and $W$.

\medskip

\subsection{Specialization}\label{subsection:specialization galois}
We fix here $c \in \CCB$. Recall that $\CG_c$ is the maximal ideal of $\kb[\CCB]$ 
whose elements are maps which vanish at $c$. We set
$$\pG_c=\CG_c P\qquad\text{and}\qquad \qG_c = \CG_c Q=\pG_c Q.\indexnot{pa}{\pG_c}\indexnot{qa}{\qG_c}
\indexnot{pa}{\pG_c}\indexnot{qa}{\qG_c}$$
Since $P_\bullet=P/\pG_c \simeq \kb[V]^W \otimes \kb[V^*]^W$ \indexnot{P}{P_\bullet}  and 
$Q_c=Q/\qG_c$ \indexnot{Q}{Q_c}  are domains (see Corollary~\ref{coro:endo-bi}(f)), 
we deduce that $\pG_c$ and $\qG_c$ are prime ideals of $P$ and $Q$ respectively. 
Fix a prime ideal $\rG_c$ \indexnot{ra}{\rG_c}  of $R$ lying over $\pG_c$ and let 
$R_c=R/\rG_c$. \indexnot{R}{R_c}  Now, let $D_c$ \indexnot{D}{D_c}  
(respectively $I_c$) \indexnot{I}{I_c}  be the decomposition (respectively inertia) group
$G_{\rG_c}^D$ (respectively $G_{\rG_c}^I$). 
Let
$$\Kb_c=\Frac(P_\bullet),\qquad \Lb_c=\Frac(Q_c)\qquad\text{and}\qquad \Mb_c=\Frac(R_c).
\indexnot{K}{\Kb_c}\indexnot{L}{\Lb_c}\indexnot{M}{\Mb_c}$$
In other words, $\Kb_c=k_P(\pG_c)$, $\Lb_c=k_Q(\qG_c)$ and 
$\Mb_c=k_R(\rG_c)$.

\bigskip

\begin{rema}\label{premier probleme} 
Here, the choice of the ideal $\rG_c$ is relevant. We will meet
such issues all along this book.\finl
\end{rema}

\bigskip

Since $\qG_c=\pG_c Q$ is a prime ideal, 
it follows from Proposition~\ref{reduction} that 
\equat\label{G=HD}
G=H \cdot D_c = D_c \cdot H.
\endequat
We also obtain that $Q$ is unramified in $P$ at  
$\qG_c$ (by definition). 
Theorem~\ref{raynaud} implies that $I_c \subset H$. 
Since $I_c$ is normal in $D_c$, we deduce from~(\ref{intersection}) and~(\ref{G=HD}) that  
$I_c \subset \bigcap_{d \in D_c} \lexp{d}{H} = \bigcap_{g \in G} \lexp{g}{H}=1$, so that 
\equat\label{eq:net}
I_c=1.
\endequat
It follows now from Proposition~\ref{cloture galoisienne} 
that 
\equat\label{cloture Lc}
\text{\it $\Mb_c$ is a Galois closure of the extension $\Lb_c/\Kb_c$.}
\endequat
Finally, by~(\ref{eq:net}) and Theorem~\ref{bourbaki}, we get 
\equat\label{gal Dc}
\Gal(\Mb_c/\Kb_c) = D_c\qquad\text{and}\qquad \Gal(\Mb_c/\Lb_c)= D_c \cap H.
\endequat
We denote by $\copie_c : Z_c \to Q_c$ \indexnot{ca}{\copie_c}  the specialization of $\copie$ at $c$ 
and we still denote by $\copie_c : \Frac(Z_c) \to \Lb_c$ the extension of $\copie_c$ 
to the fraction fields.

\bigskip

\begin{rema}\label{rema:impossible}
In \S \ref{subsection:specialization galois 0},
we will study the particular case where $c=0$, and obtain an explicit description 
of $D_0$. However, obtaining an explicit description of $D_c$ in general seems to be very difficult, 
as it will be shown by the examples treated in chapter~\ref{chapitre:rang 1}
(case $\dim_\kb(V)=1$), see \S\ref{rema:dc-cyclique}.\finl
\end{rema}

\bigskip

\subsection{Specialization at $0$}\label{subsection:specialization galois 0} 
Recall that  $P_0=P_\bullet = \kb[V]^W \otimes \kb[V^*]^W$ and  
$Q_0 \simeq Z_0=\Zrm(\Hb_0) \simeq \kb[V \times V^*]^{\D W}$, 
where $\D : W \to W \times W$, $w \mapsto (w,w)$ is the diagonal morphism.  \indexnot{dz}{\D}  
% Notons $\kb(V \times V^*)$ le corps des fractions 
% de $\kb[V \times V^*]$~: on a alors 
So, 
$$\Kb_0=\kb(V \times V^*)^{W \times W}\qquad\text{and}\qquad
\Lb_0 = \kb(V \times V^*)^{\D W},$$
On the other hand, the extension $\kb(V \times V^*)/\Kb_0$ is Galois with group 
$W \times W$, whereas the extension $\kb(V \times V^*)/\Lb_0$ is Galois 
with group $\D W$. Since $\D \Zrm(W)$ is the biggest normal subgroup of $W \times W$ contained in 
$\D W$, it follows from~(\ref{cloture Lc}) that 
$$\Mb_0 \simeq \kb(V \times V^*)^{\D \Zrm(W)}.$$

\medskip

\boitegrise{{\bf Fundamental choice.} 
 {\it 
We fix once and for all a prime ideal $\rG_0$ of $R$ 
lying over $\qG_0=\CG_0 Q$ as well as a field isomorphism 
$$\iso : \kb(V \times V^*)^{\D \Zrm(W)} \stackrel{\sim}{\longto} \Mb_0 \indexnot{ia}{\iso}$$
whose restriction to $\kb(V \times V^*)^{\D W}$ is the canonical isomorphism  
$\kb(V \times V^*)^{\D W} \longiso \Frac(Z_0) \longiso \Lb_0$. Here, 
the isomorphism $\Frac(Z_0) \longiso \Lb_0$ is $\copie_0$.\\
~\\
{\bf Convention.} 
The action of the group $W \times W$ on the field $\kb(V \times V^*)$ is as follows: 
$V \times V^*$ will be seen as a vector subspace of $\kb(V \times V^*)$ which generates this field, 
and the action of $(w_1,w_2)$ sends $(y,x) \in V \times V^*$ to $(w_1(y),w_2(x))$.
}}{0.75\textwidth}

\bigskip

\begin{rema}\label{rema:action-dif}
The action of $W \times W$ on $\kb(V \times V^*)$ described above 
is not the one obtained by first making $W \times W$ act on 
the variety $V \times V^*$ and then making it act on the function field 
$\kb(V \times V^*)$ by precomposition: one is deduced from the other 
thanks to the isomorphism $W \times W \longiso W \times W$, $(w_1,w_2) \mapsto (w_2,w_1)$. 
Nevertheless, this slight difference is important (see Remark~\ref{rema:w-w}).\finl
\end{rema}

\medskip

These choices being made, we get a canonical isomorphism
$\Gal(\Mb_0/\Kb_0) \longiso (W \times W)/\D \Zrm(W)$, which induces a canonical 
isomorphism $\Gal(\Mb_0/\Lb_0) \longiso \D W /\D \Zrm(W)$. 
Since $D_0 = \Gal(\Mb_0/\Kb_0)$ by~(\ref{gal Dc}),  
we obtain a group morphism 
$$\iota : W \times W \longto G\indexnot{iz}{\iota}$$
satisfying the following properties:

\bigskip

\begin{prop}\label{WW}
\begin{itemize}
\itemth{a} $\Ker \iota = \D \Zrm(W)$.

\itemth{b} $\im \iota = D_0$.

\itemth{c} $\iota^{-1}(H) = \D W$.
\end{itemize}
\end{prop}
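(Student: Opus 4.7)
The plan is to unwind the construction of $\iota$: by hypothesis $\iso$ is an isomorphism $\kb(V \times V^*)^{\D \Zrm(W)} \longiso \Mb_0$ whose restriction to $\Lb_0 = \kb(V\times V^*)^{\D W}$ is $\copie_0$. Conjugation by $\iso$ gives an isomorphism $\Gal(\Mb_0/\Kb_0) \longiso \Gal(\kb(V\times V^*)^{\D\Zrm(W)}/\Kb_0)$, and since $\D\Zrm(W)$ is normal in $W\times W$, the latter group is canonically $(W\times W)/\D\Zrm(W)$. Composing the quotient map $W\times W \twoheadrightarrow (W\times W)/\D\Zrm(W)$ with this identification and with the equality $\Gal(\Mb_0/\Kb_0) = D_0$ from~(\ref{gal Dc}) and the inclusion $D_0 \hookrightarrow G$ yields the morphism $\iota$ of the proposition.

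Once $\iota$ is described as this composition, assertions (a) and (b) are immediate: the kernel is $\D\Zrm(W)$ by construction, and the image is $D_0$ because the map $W\times W \to D_0$ is surjective and $D_0\hookrightarrow G$ is injective.

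For (c), the key observation is that by~(\ref{gal Dc}) we have $D_0 \cap H = \Gal(\Mb_0/\Lb_0)$, and since $\im\iota = D_0$ contains $\iota(w_1,w_2)$ for every $(w_1,w_2)$, the condition $\iota(w_1,w_2)\in H$ is equivalent to $\iota(w_1,w_2)\in D_0\cap H$, i.e., to the fact that the corresponding element of $(W\times W)/\D\Zrm(W)$ fixes $\Lb_0$ under the identification provided by $\iso$. Since $\iso$ restricts to $\copie_0$ on $\Lb_0$ and $\copie_0$ identifies $\Lb_0$ with $\kb(V\times V^*)^{\D W}$, and since by Galois theory the subgroup of $(W\times W)/\D\Zrm(W)$ fixing $\kb(V\times V^*)^{\D W}$ is exactly $\D W/\D\Zrm(W)$, we obtain $\iota^{-1}(H) = \D W$, proving (c).

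There is no real obstacle here; the statement is a bookkeeping consequence of the fundamental choice of $\iso$ together with the standard Galois correspondence applied to the tower $\Kb_0 \subset \Lb_0 \subset \Mb_0$. The only point requiring care is to verify that the $\Kb_0$-algebra map induced by $\iso$ on Galois groups does send the class of $(w_1,w_2)$ to the element of $D_0$ intended by the construction, i.e., that the identification made in~(\ref{gal Dc}) is compatible with $\iso$; but this is built into the definition of $\iota$.
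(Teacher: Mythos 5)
Your proof is correct and follows exactly the route the paper takes: the proposition is stated there as an immediate consequence of the construction of $\iota$ (the canonical isomorphisms $\Gal(\Mb_0/\Kb_0)\simeq (W\times W)/\D\Zrm(W)$ and $\Gal(\Mb_0/\Lb_0)\simeq \D W/\D\Zrm(W)$ induced by $\iso$, combined with $D_0=\Gal(\Mb_0/\Kb_0)$ from~(\ref{gal Dc})), and your unwinding of that construction is precisely the intended argument.
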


\bigskip

Using now~(\ref{G=HD}), Proposition \ref{WW} provides a bijection 
\equat\label{bij W}
(W \times W)/\D W \stackrel{\sim}{\longleftrightarrow} G/H.
\endequat
Of course, one can build a bijection between $(W \times W)/\D W$ and $W$ using left or right projection. 
We fix a choice: 

\medskip

\boitegrise{{\bf Identification.} {\it The morphism $W \to W \times W$, $w \mapsto (w,1)$ 
composed with the morphism $\iota : W \times W \to G$ is injective, and we will identify $W$ 
with its image in $G$.}}{0.75\textwidth}

\medskip

More concretely, $w \in W \subset G$ is the unique 
automorphism of the $P$-algebra $R$ such that 
\equat\label{definition W}
\bigl(w(r) \mod \rG_0\bigr) = (w,1)
\bigl(r \mod \rG_0\bigr)\quad\text{in }\kb(V \times V^*)^{\D \Zrm(W)}
\endequat
for all $r \in R$. Hence, by~(\ref{bij W}),
\equat\label{G=HW}
G=H \cdot W=W \cdot H\qquad\text{and}\qquad H \cap W = 1.
\endequat

\bigskip

\begin{coro}\label{Dc W}
Given $c \in \CCB$, the natural map $D_c \to G/H \stackrel{\sim}{\to} W$ 
induces a bijection $D_c/(D_c \cap H) \stackrel{\sim}{\longto} W$.
\end{coro}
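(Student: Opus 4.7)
The plan is short: this is essentially a formal consequence of two facts already established in the text, namely that $G = D_c \cdot H$ (equation~(\ref{G=HD})) and that the map $G/H \longiso W$ is a bijection.

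First, I would observe that the composition $D_c \hookrightarrow G \twoheadrightarrow G/H$ is a map of sets whose fibers are precisely the nonempty intersections of $D_c$ with cosets $gH$. Two elements $d, d' \in D_c$ map to the same coset if and only if $d^{-1}d' \in H$, i.e.\ $d^{-1}d' \in D_c \cap H$. Therefore the composition factors through an injection
\[
D_c/(D_c \cap H) \longhookrightarrow G/H.
\]

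Next, surjectivity of this injection amounts to the equality $D_c \cdot H = G$, which is exactly the content of (\ref{G=HD}) (deduced from the fact that $\qG_c$ is the unique prime of $Q$ above $\pG_c$, via Proposition~\ref{reduction}). Composing with the bijection $G/H \longiso W$ from (\ref{bij W}) under our chosen identification yields the claimed bijection
\[
D_c/(D_c \cap H) \stackrel{\sim}{\longto} W.
\]

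There is no real obstacle: the only nontrivial input is the surjectivity, and that was already handled in \S\ref{subsection:specialization galois} when deriving~(\ref{G=HD}) from the uniqueness of $\qG_c$ above $\pG_c$.
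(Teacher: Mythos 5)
Your argument is correct and is essentially the paper's own proof: the injectivity is the formal coset computation, and the surjectivity is exactly $G = D_c\cdot H$ from~(\ref{G=HD}), combined with the identification $G/H \longiso W$ coming from~(\ref{G=HW}) and~(\ref{bij W}). Nothing is missing.
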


\begin{proof}
This follows from~(\ref{G=HD}) and~(\ref{G=HW}).
\end{proof}

\bigskip

\subsection{Action of $G$ on $W$}\label{subsection:action G} 
Let $\SG_W$ denote the permutation group of the set $W$. 
We identify the group $\SG_{W\setminus\{1\}}$ of permutations of the set 
$W \setminus \{1\}$ with the stabilizer of $1$ in $\SG_W$. 
The identification $G/H \stackrel{\sim}{\longleftrightarrow} W$ and the action 
of $G$ by left translations on $G/H$ identify 
$G$ with a subgroup of $\SG_W$.
Summarizing, we have
\equat\label{GHW}
G \subset \SG_W\qquad\text{and}\qquad H = G \cap \SG_{W \setminus \{1\}}.
\endequat
Given $g \in G$ and $w \in W$, we denote by $g(w)$ the unique element of $W$ 
such that $g\iota(w,1) H = \iota(g(w),1)H$. Through this identification of $G$ 
as a subgroup of $\SG_W$, the map $\iota : W \times W \to G$ is described as follows. Given
$(w_1,w_2) \in W \times W$ and $w \in W$, then 
\equat\label{iota concret}
\iota(w_1,w_2)(w)=w_1 w w_2^{-1}.
\endequat
This is the action of $W \times W$ on the set $W$ by left and right translation. 
Since $\D W$ is the stabilizer of $1 \in W$ 
for this action, we get
\equat\label{DW S}
\iota(\D W) = \iota(W \times W) \cap \SG_{W \setminus \{1\}}.
\endequat
This is of course compatible with Proposition~\ref{WW}(c) 
and~(\ref{GHW}).

Finally, the choice of the embedding of $W$ in $G$ through $w \mapsto \iota(w,1)$ 
amounts to identify $W$ with a subgroup of $\SG_W$ through the action on itself 
by left translation. 

\bigskip

\subsection{Euler element and Galois group}\label{section:galois euler}

\bigskip

Let $\eulerq=\copie(\euler) \in Q$. \indexnot{ea}{\eulerq}  

\begin{prop}
\label{pr:minimal-Euler}
The minimal polynomial of $\eulerq$ over $P$ has degree $|W|$.
Its specialization at $c$ is the minimal polynomial of $\eulerq_c$ over
$P_\bullet$.

We have $\Lb=\Kb[\eulerq]$.
\end{prop}

\begin{proof}
Since $\Hb_0=\kb[V \times V^*] \rtimes W$, we have 
$\Zrm(\Hb_0) =\kb[V \times V^*]^{\Delta W}$ and $P_\bullet = \kb[V/W \times V^*/W] \subset Z_0$. 
Moreover, it follows from Theorem~\ref{chevalley} %(Shephard-Todd, Chevalley, Serre) 
that $Z_0$ is a free $P_\bullet$-module of rank $|W|$. On the other hand, 
$\euler_0=\sum_{i=1}^n x_i y_i$ (using the notation of \S\ref{section:eulertilde}).
It corresponds to $\id_V$ via the canonical isomorphism
$V\otimes V^*\longiso \End_{\kb}(V)$. Since $W$ acts faithfully on $V$, it
follows that the different elements of $W$ define different elements of
$\End_{\kb}(V)$. Consequently, the orbit of $\euler_0$ under the action of
$W\times W$ has $|W|$ elements. We deduce that 
%It is easy to check (see for instance~\cite[Proposition~1.1]{BK}) that 
the minimal polynomial of $\euler_0$ over $P_\bullet$ has degree $|W|$. As
a consequence,
the field $\kb(V \times V^*)^{\Delta W}$ is generated by $\euler_0$ over
$\kb(V/W \times V^*/W)$

Let $F_\euler(\tb)\in P[\tb]$ be the minimal polynomial of $\euler$ over $P$.
Since $Z$ is a free $P$-module of rank $|W|$ (Corollary~\ref{coro:endo-bi}),
we have $\deg F_\euler\le |W|$.
Since the specialization $\euler_0$ 
has a minimal polynomial over $P_0$ of degree $|W|$, it follows that
$\deg F_\euler=|W|$.

Denote by $\CG$ the prime ideal
of $\kb[\CCB]$ corresponding to the line $\kb c$. Let $F$ be the minimal polynomial
over $P\otimes_{\kb[\CCB]}\kb[\CCB]/\CG$ of the image of $\euler$ in the
integrally closed domain $Z\otimes_{\kb[\CCB]}\kb[\CCB]/\CG$
(Corollary \ref{coro:endo-bi}). 
We have $\deg F\le |W|$.
Since $\euler_0$ has a minimal polynomial of degree $|W|$, it follows that
$\deg F\ge |W|$, hence $\deg F=|W|$, so $F$ is the specialization of
$F_\euler$.

The Euler element is homogeneous for the $\BZ$-grading of Example \ref{N graduation-1}
(cf. Lemma \ref{lem:automorphismes-0}).
It follows from Lemma \ref{le:minpolquotient} that the specialization of $F$ (hence of $F_\euler$)
is the minimal polynomial of $\euler_c$ over $P_\bullet$.

The last assertion follows from (\ref{degre lk}).
\end{proof}

% 
% \begin{proof}[D\'emonstration de~\ref{euler engendre}]
% Compte tenu de~\ref{degre lk}, il suffit de montrer que le polyn\^ome minimal de 
% $\euler$ sur $\Kb$ est de degr\'e $\ge |W|$ (cela montrera en particulier que le 
% degr\'e de ce polyn\^ome minimal est exactement $|W|$). Notons 
% $f \in \Kb[T]$ ce polyn\^ome minimal. Puisque $P$ est int\'egralement clos 
% et $\euler$ est entier sur $P$ (d'apr\`es~\ref{euler}), 
% $f \in P[T]$. Si on note $f_0$ la r\'eduction de $f$ modulo $\pG_0$ et 
% $\euler_0$ l'image de $\euler$ dans $Q_0 \subset \Hb_0$, alors 
% $f_0(\euler_0)=0$.  Or, 
% $$\euler_0=\sum_{i=1}^n x_i y_i \in \kb[V \times V^*]^{\D W}.$$ 
% Il suffit donc de montrer que le polyn\^ome minimal de $\euler_0$ sur 
% $P_0=\kb[V \times V^*]^{W \times W}$ est de degr\'e $|W|$, 
% ce qui est tr\`es facile (voir par exemple~\cite[proposition 1.1]{BK}).
% \end{proof}
% 
% \bigskip
% 
The computation of the Galois group $G=\Gal(\Mb/\Kb)$ is now
equivalent to the computation 
of the Galois group of the minimal polynomial of $\euler$ (or $\eulerq$). 
Classical methods (reduction modulo a prime ideal, see for instance \S~\ref{sec:calcul}) 
will be useful in small examples.

Let us come back to the computation of the embedding $W \longinjto G \subset \SG_W$. 
Given $w \in W$, let $\eulerq_w=w(\eulerq) \in \Mb$. \indexnot{ea}{\eulerq_w}  
Recall (see~(\ref{GHW})) that
if $g \in G$ and $w \in W$, then $g(w)$ is defined by the equality $g(w)H=gwH$. 
Since $H$ acts trivially on $\eulerq$, we deduce that 
\equat\label{action G W euler}
g(\eulerq_w) = \eulerq_{g(w)}
\endequat
and so, given $(w_1,w_2) \in W \times W$, we have
\equat\label{action WW euler}
\iota(w_1,w_2)(\eulerq_w)=\eulerq_{w_1ww_2^{-1}}.
\endequat
This extends the equality 
\equat\label{action W euler}
w_1(\eulerq_w)=\eulerq_{w_1w}
\endequat
which is an immediate consequence of the definition of $\eulerq_w$. 
In particular, by~(\ref{G=HW}), 
\equat\label{euler minimal}
\text{\it the minimal polynomial of $\eulerq$ over $P$ is } 
\prod_{w \in W} (\tb-\eulerq_w).
\endequat
Note also that, using~(\ref{definition W}) and the convention used for the action of 
$W \times W$ on $\kb(V \times V^*)$, we obtain
$$\iso^{-1}(\eulerq_w \mod \rG_0) = \sum_{i=1}^n w(y_i) x_i \in \kb[V \times V^*]^{\D \Zrm(W)}.$$
% D'apr\`es~\ref{euler engendre} et~\ref{definition W}, la restriction 
% de $w \in W$ \`a $\Lb$ est caract\'eris\'ee par l'\'egalit\'e 
% \equat\label{W euler}
% \bigl(w(\euler) \mod \rG_0\bigr) = \sum_{i=1}^n w(x_i) y_i \in \kb[V \times V^*]^{\D \Zrm(W)}.
% \endequat
% 

\bigskip

\begin{prop}\label{Q=Pe}
We have $Z=P[\euler]$ if and only if $W$ is generated by a single reflection.
\end{prop}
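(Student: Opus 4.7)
The plan is to translate $Z=P[\euler]$ into an identity of bigraded Hilbert series and to analyse that identity via fake degrees. By Proposition~\ref{pr:minimal-Euler} the minimal polynomial of $\euler$ over $P$ has degree $|W|$, so $P[\euler]\subseteq Z$ is a free $P$-module with basis $1,\euler,\ldots,\euler^{|W|-1}$. Since $\euler$ is bihomogeneous of bidegree $(1,1)$ by~(\ref{eq:euler-1-1}), $P[\euler]$ is a bigraded $P$-submodule of $Z$ with
$$\dim_\kb^{\BZ\times\BZ}(P[\euler])=\Bigl(\sum_{i=0}^{|W|-1}(\tb\ub)^i\Bigr)\cdot\dim_\kb^{\BZ\times\BZ}(P).$$
All bigraded pieces being finite dimensional, $P[\euler]=Z$ if and only if these Hilbert series coincide, so using~(\ref{hilbert P}) and~(\ref{hilbert Q fantome}) the problem reduces to the identity
$$\sum_{\chi\in\Irr(W)}f_\chi(\tb)\,f_\chi(\ub)\;=\;\sum_{i=0}^{|W|-1}(\tb\ub)^i \qquad (\ast)$$
in $\ZM[[\tb,\ub]]$.

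Next I would extract combinatorial information from~$(\ast)$. Writing $[\tb^i]f_\chi\in\NM$ for the coefficient of $\tb^i$ in $f_\chi(\tb)$, comparing coefficients of $\tb^a\ub^b$ for $a\ne b$ gives $\sum_\chi [\tb^a]f_\chi\cdot[\tb^b]f_\chi=0$; as this is a sum of non-negative integers, each $f_\chi$ must be supported in a single degree. Writing $f_\chi=\chi(1)\,\tb^{a_\chi}$ and substituting back, $(\ast)$ becomes $\sum_\chi\chi(1)^2(\tb\ub)^{a_\chi}=\sum_{i=0}^{|W|-1}(\tb\ub)^i$, which forces $\chi(1)=1$ for every $\chi\in\Irr(W)$ and the $a_\chi$'s to be pairwise distinct and to exhaust $\{0,1,\ldots,|W|-1\}$. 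Since $\kb W$ is split semisimple by Theorem~\ref{deploiement}, linearity of every irreducible character forces $W$ to be abelian.

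The third step is a structural analysis of abelian reflection groups. Simultaneous diagonalisation of $W$ over $\kb$ decomposes $V$ into isotypic components; since a reflection has exactly one non-trivial eigenvalue and $W$ is generated by its reflections, every non-trivial isotypic component of $V$ is a line, whence $V=V^W\oplus\bigoplus_{\phi\in\Phi}V_\phi$ with $V_\phi$ one-dimensional indexed by the non-trivial characters $\phi\in\widehat{W}$ appearing in $V$. Grouping reflections according to the line on which they act non-trivially yields a direct product decomposition $W=\prod_{\phi\in\Phi}W_\phi$ in which $W_\phi\subseteq\GL(V_\phi)$ is cyclic of some order $e_\phi\ge 2$ and trivial on the other lines. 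Consequently $\kb[V^*]^{\cow}\simeq\bigotimes_{\phi\in\Phi}\kb[y_\phi]/(y_\phi^{e_\phi})$, and the fake degree of the linear character carried by the monomial $\prod_\phi y_\phi^{j_\phi}$ is $\tb^{\sum_\phi j_\phi}$. Distinctness of these exponents over $\vec{j}\in\prod_\phi\{0,\ldots,e_\phi-1\}$, required by the previous step, fails as soon as $|\Phi|\ge 2$ because $(1,0,\ldots)$ and $(0,1,\ldots)$ produce the same sum. Thus $|\Phi|=1$, $W$ is cyclic of order $e_{\phi_0}$, and a generator acts trivially on $V^W$ and by a primitive root of unity on the unique line $V_{\phi_0}$, hence is a reflection of $V$.

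For the converse, if $W=\langle s\rangle$ is cyclic of order $d$ with $s\in\Ref(W)$, then $V=V^W\oplus V'$ with $\dim V'=1$, the degrees of $W$ are $(1,\ldots,1,d)$, and $\kb[V^*]^{\cow}\simeq\kb[y]/(y^d)$ splits as the direct sum of the linear characters of $W$, with the character on which $s$ acts by $\zeta_d^{-j}$ occurring in degree $j$ for $0\le j\le d-1$; hence $f_{\chi_j}(\tb)=\tb^j$ and $(\ast)$ is immediate. The main obstacle in this argument is the second step---turning the formal power series identity~$(\ast)$ into a rigid constraint on each individual fake degree; once that is achieved, the structure of abelian reflection groups and a one-line sumset argument complete the proof.
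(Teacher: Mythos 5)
Your proof is correct, and it shares the paper's skeleton (freeness of $P[\euler]$ of rank $|W|$ via Proposition~\ref{pr:minimal-Euler}, then comparison of bigraded Hilbert series) but diverges at the key combinatorial step. The paper works with the Molien form~(\ref{hilbert Q molien}) of $\dim_\kb^{\BZ\times\BZ}(Z)$ and specializes $\tb\mapsto 1$, so that only $w=1$ contributes on the left; comparing degrees in $\ub$ then yields $|W|-1=\sum_i(d_i-1)=|\Ref(W)|$, i.e.\ $\Ref(W)=W\setminus\{1\}$, whence $W$ is abelian because a commutator has determinant $1$ and so cannot be a reflection. You instead use the fake-degree form~(\ref{hilbert Q fantome}) and compare off-diagonal coefficients $\tb^a\ub^b$, $a\neq b$, to force each $f_\chi$ to be a monomial with coefficient $\chi(1)=1$; this gives more at once (all characters linear with pairwise distinct $\bb$-invariants exhausting $\{0,\dots,|W|-1\}$), at the cost of a slightly longer positivity argument. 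The endgame also differs in emphasis: the paper is terse once $W$ is known to be abelian and diagonalizable (with $\Ref(W)=W\setminus\{1\}$, two reflections with distinct hyperplanes would have a product that is neither a reflection nor the identity), whereas your isotypic decomposition $W=\prod_\phi W_\phi$ and the degree-$1$ collision of the characters carried by $y_{\phi_1}$ and $y_{\phi_2}$ spell out why $|\Phi|\ge 2$ contradicts the distinctness of the $\bb$-invariants. For the converse the paper reduces to $\dim_\kb V=1$ and quotes Example~\ref{qpe}, while your direct computation of the fake degrees of a cyclic reflection group is equivalent. Both arguments are complete; yours extracts strictly more structural information (linearity of all characters) before concluding, which is a perfectly good trade-off.
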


\begin{proof}
Assume $W$ is generated by a single reflection. An immediate argument allows
to reduce to the case where $\dim_\kb V=1$. 
So we assume now $n=\dim_\kb(V) = 1$ and let $d=|W|$. 
Let $y \in V \setminus\{0\}$ and $x \in V^*$ 
with $\langle y,x\rangle = 1$. Then $P_\bullet=\kb[x^d,y^d]$, 
$\euler_0=xy$ and it is easily checked that $Z_0=\kb[x^d,y^d,xy]$, 
that is, $Z_0=P_\bullet[\euler_0]$. We will prove here that
$$Z=P[\euler].$$
Indeed, $\Irr(W)=\{\e^i~|~0 \le i \le d-1\}$ and $f_{\e^i}(\tb)=\tb^i$ for $0 \le i \le d-1$. 
Consequently, (\ref{hilbert Q fantome}) implies that
$$\dim_\kb^{\BZ\times\BZ}(Z) = \frac{1+(\tb\ub)+\cdots + (\tb\ub)^{d-1}}{(1-\tb\ub)^{d-1}~
(1-\tb^d)~(1-\ub^d)}$$
whereas, since $P[\euler]=P \oplus P \euler \oplus \cdots \oplus P \euler^{d-1}$ 
by Proposition~\ref{pr:minimal-Euler}, we have 
$$\dim_\kb^{\BZ\times\BZ}(P[\euler])=\frac{1+(\tb\ub)+\cdots + (\tb\ub)^{d-1}}{(1-\tb\ub)^{d-1}~
(1-\tb^d)~(1-\ub^d)}.$$
Hence, $\dim_\kb^{\BZ\times\BZ}(P[\euler])=\dim_\kb^{\BZ\times\BZ}(Z)$, so 
$Z=P[\euler]$.

\medskip

Conversely, if $Z=P[\euler]$, then
$$Z=\mathop{\bigoplus}_{j=0}^{|W|-1} P \euler^j$$
since the minimal polynomial of $\euler$ over $P$ 
has degree $|W|$ (by Proposition \ref{pr:minimal-Euler}). 
We deduce, using~(\ref{hilbert P}), that 
$$\dim_\kb^{\BZ\times\BZ}(Z)=\frac{\DS{\sum_{j=0}^{|W|-1} 
(\tb\ub)^j}}{(1-\tb\ub)^{|\refw|} ~\DS{\prod_{i=1}^n (1-\tb^{d_i})(1-\ub^{d_i})}}.$$
It then follows from~(\ref{hilbert Q molien}) that
$$\frac{1}{|W|}\sum_{w \in W} \frac{(1-\tb)^n}{\det(1-w\tb)~\det(1-w^{-1}\ub)} = 
\frac{\DS{\sum_{j=0}^{|W|-1}(\tb\ub)^j}}{\DS{\prod_{i=1}^n
(1+\tb+\cdots+\tb^{d_i-1})(1-\ub^{d_i})}}.$$
By specializing $\tb \mapsto 1$ in this equality, 
the left-hand side contributes only when $w=1$. Since
$|W|=d_1\cdots d_n$ by Theorem~\ref{chevalley}(a), we obtain
$$\frac{1}{(1-\ub)^n} = \frac{\DS{\sum_{j=0}^{|W|-1}\ub^j}}{\DS{\prod_{i=1}^n
(1-\ub^{d_i})}}.$$
In other words,
$$\prod_{i=1}^n (1+\ub + \cdots + \ub^{d_i-1}) = \sum_{j=0}^{|W|-1}\ub^j.$$
By comparison of the degrees, we get 
$$|W|-1 = \sum_{i=1}^n (d_i-1).$$
But, again by Theorem~\ref{chevalley}(a), we have $|\REF(W)|=\sum_{i=1}^n (d_i-1)$, 
which shows that
$$\REF(W)=W\setminus\{1\}.$$
Therefore, if $w$, $w' \in W$, then $ww'w^{-1}w^{\prime -1}$ has determinant $1$, 
so it cannot be a reflection, so it must be equal to $1$. 
In other words $ww'=w'w$ and $W$ is abelian, hence diagonalizable.  
The proposition follows.
\end{proof}

\bigskip

\section{Splitting the algebra $\Kb\Hb$}\label{section:deploiement}

\medskip

Recall that Theorem~\ref{theo:KH-mat}
shows the existence of an isomorphism
$$\Kb\Hb \simeq \Mat_{|W|}(\Kb Z).$$ 
Recall also that $\Kb Z$ is the fraction field of $Z$ (see~(\ref{eq:KZ-fraction})) 
and that $\copie : \Kb Z \longiso \Lb$ denotes the extension of 
$\copie : Z \longiso Q$. 
The $\Kb$-algebra $\Kb\Hb$ is semisimple, but not $\Kb$-split in general.

Given $g \in G$, the morphism 
$\Kb Z \to \Mb$, $z \mapsto g(\copie(z))$ obtained by restriction of $g$ to $\Lb$ 
(through the isomorphism $\copie$) 
is $\Kb$-linear and it extends uniquely to a morphism of $\Mb$-algebras 
$$\fonction{g_Z}{\Mb \otimes_\Kb \Kb Z}{\Mb}{m \otimes_\Kb z}{mg(\copie(z)).}$$
Of course, $g_Z=(gh)_Z$ for all $h \in H$ and it is a classical fact 
(see the Proposition~\ref{iso galois}) that 
$$(g_Z)_{gH \in G/H} : \Mb \otimes_\Kb \Kb Z \longto \prod_{gH \in G/H} \Mb$$
is an isomorphism of $\Mb$-algebras. Taking~(\ref{G=HW}) into account, this can be 
rewritten as follows: there is an isomorphism of $\Mb$-algebras
\equat\label{W M}
\bijectio{\Mb \otimes_\Kb \Kb Z}{\prod_{w \in W} \Mb,}{x}{(w_Z(x))_{w \in W}.}
\endequat
So, the $\Mb$-algebra $\Mb \otimes_\Kb \Kb Z$ is semisimple and split, and
its simple representations are the $w_Z$, for $w\in W$.

Theorem~\ref{theo:KH-mat} provides a Morita equivalence between 
$\Mb \otimes_\Kb \Kb Z$ and $\Mb\Hb$. We will denote by $\LC_w$ 
\indexnot{L}{\LC_w}  the simple $\Mb\Hb$-module corresponding to $w_Z$.

\smallskip
Fix an ordered $\Kb Z$-basis $\BC$ of $\Kb\Hb e$ (recall that $|\BC|=|W|$). This choice 
provides an isomorphism of $\Kb$-algebras 
$$\r^\BC : \Kb\Hb \stackrel{\sim}{\longto} \Mat_{|W|}(\Kb Z).\indexnot{rz}{\r^\BC}$$
Now, given $w \in W$, let $\r_w^\BC$ \indexnot{rz}{\r_w^\BC} denote the morphism of $\Mb$-algebras 
$\Mb\Hb \longto \Mat_{|W|}(\Mb)$ defined by 
$$\r_w^\BC(m \otimes_P h) = m \cdot  w(\copie(\r^\BC(h)))$$
for all $m \in \Mb$ and $h \in \Hb$. Then $\r_w^\BC$ is an irreducible 
representation of $\Mb\Hb$ 
corresponding to the simple module $\LC_w$.

\smallskip
Let $\Irr(\Mb\Hb)$ denote the set of isomorphism classes of simple
$\Mb\Hb$-modules. We have a bijection 
\equat\label{bij irr}
\bijectio{W}{\Irr \Mb\Hb}{w}{\LC_w}
\endequat
and an isomorphism of $\Mb$-algebras 
\equat\label{iso MH}
\prod_{w \in W} \r_w^\BC : \Mb\Hb \stackrel{\sim}{\longto} 
\prod_{w \in W} \Mat_{|W|}(\Mb).
\endequat
In particular,
\equat\label{deployee}
\text{\it the $\Mb$-algebra $\Mb\Hb$ is split semisimple.}
\endequat
Moreover, the bijection~(\ref{bij irr}) allows us to identify 
its Grothendieck group $\groth(\Mb\Hb)$ with the $\BZ$-module 
$\BZ W$:
\equat\label{identification}
\groth(\Mb\Hb) \longiso \BZ W,\ [\LC_w]\mapsto w.
\endequat
Since the $\Mb$-algebra $\Mb\Hb$ is split semisimple, it follows from~\cite[Theorem~7.2.6 and Proposition 7.3.9]{geck} 
that there exists a unique family $(\schur_w)_{w \in W}$ \indexnot{sa}{\schur_w}  of elements 
of $R$ such that 
$$\taub_{\!\SSS{\Mb\Hb}} = \sum_{w \in W} \frac{\carac_w}{\schur_w},\indexnot{tx}{\taub_{\!\SSS{\Mb\Hb}}}$$
where $\carac_w : \Mb\Hb \to \Mb$ \indexnot{ca}{\carac_w}  denotes the character of the simple $\Mb\Hb$-module 
$\LC_w$ and $\taub_{\!{\SSS{\Mb\Hb}}} : \Mb\Hb \to \Mb$ denotes the extension of the symmetrizing form 
$\taub_\HHb : \Hb \to P$. The element $\schur_w$ of $R$ is 
called the {\it Schur element} associated with the simple module $\LC_w$. 
By \cite[Theorem 7.2.1]{geck}, $|W|\cdot\schur_w$ is equal to the scalar by which the Casimir element  
$\casimir_\Hb \in Z$ (defined in \S~\ref{sub:symetrisante}) acts on the simple module $\LC_w$. Therefore,
\equat\label{eq:formule-schur}
\schur_w = |W|^{-1}\cdot w(\copie(\casimir_\Hb)).
\endequat

\bigskip

\begin{rema}
In the general theory of symmetric algebras, 
the Schur element $\schur_w$ is an important invariant, which can be useful to determine the blocks 
of a reduction of $R\Hb$ modulo some prime ideal of $R$. Here, the formula~(\ref{eq:formule-schur}) 
shows that this computation is equivalent to the resolution of the following two problems:
\begin{itemize}
\itemth{1} Compute the Casimir element $\casimir_\Hb$.

\itemth{2} Understand the action of $W$ (or $G$) on the image of $\casimir_\Hb$ 
in $Q \subset R$. 
\end{itemize}
If Problem (1) seems doable (and its solution would be interesting as it 
would provide, after the Euler element, a new element of the center $Z$ of $\Hb$), 
it seems however more difficult to attack Problem~(2), as the computation of the ring $R$ 
(and even of the Galois group $G$) is for the moment out of reach.\finl
%  Tout au plus peut-on esp\'erer pour le moment comprendre 
% leur r\'eduction modulo $\rG_0$ par exemple, ou d'autres id\'eaux premiers 
% bien choisis.
\end{rema}

\bigskip

\section{Grading on $R$}\label{section:graduation R}

\bigskip

\begin{prop}\label{bigraduation sur R}
There exists a unique $(\NM \times \NM)$-grading on $R$ extending the one of $Q$.
The Galois group $G$ stabilizes this $(\NM \times \NM)$-grading.
\end{prop}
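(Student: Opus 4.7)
The plan is to derive both existence (with uniqueness) and $G$-equivariance from the general theory of gradings on integral extensions developed in Appendix~\ref{appendice graduation}, specifically~\S\ref{section:graduation integrale}. The whole proof is then reduced to an invocation of that appendix followed by a uniqueness argument.

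First, I would check that the hypotheses of the appendix are met. The algebra $P=\kb[\CCB]\otimes\kb[V]^W\otimes\kb[V^*]^W$ is a finitely generated, integrally closed $\kb$-algebra, it is an integral domain, and it carries an $(\NM\times\NM)$-grading whose grading monoid is torsion-free. By definition, $R$ is the integral closure of $P$ in the finite separable extension $\Mb$ of $\Kb=\Frac(P)$. The results of~\S\ref{section:graduation integrale} then produce a unique $(\NM\times\NM)$-grading on $R$ that makes the inclusion $P\hookrightarrow R$ a morphism of graded rings. Applying the same uniqueness statement to the intermediate inclusion $P\subset Q$, one sees that the grading on $Q$ inherited from $R$ must coincide with the $(\NM\times\NM)$-grading on $Q$ transported from $\Hb$ via $\copie$, since both extend the grading on $P$. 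Hence the grading on $R$ indeed extends the one on $Q$.

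Second, for $G$-invariance, let $g\in G=\Gal(\Mb/\Kb)$. Then $g$ restricts to an automorphism of $R$ (because $R$ is the integral closure of $P$ in $\Mb$ and $g(P)=P$) and fixes $P$ pointwise (because $P\subset\Kb$). Define a new $(\NM\times\NM)$-grading on $R$ by setting $R'_{(i,j)}=g(R_{(i,j)})$. This is still an $(\NM\times\NM)$-grading extending the grading of $P$, so by the uniqueness clause invoked in the previous paragraph, $R'_{(i,j)}=R_{(i,j)}$ for all $(i,j)$. Thus $g$ preserves each homogeneous component, as required.

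The substantive content, of course, lies in the appendix: constructing the graded structure on $R$ amounts, geometrically, to lifting the $\GM_m\times\GM_m$-action on $\Spec P$ (corresponding to the bigrading) along the finite cover $\Spec R\to\Spec P$. The torsion-freeness of $\NM\times\NM$ is crucial both for the existence of the lift and for its uniqueness, and it is precisely this uniqueness that feeds back into the $G$-equivariance argument above, so no independent verification for $G$ is needed.
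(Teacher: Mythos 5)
Your proposal takes the same route as the paper: delegate everything to Appendix~\ref{appendice graduation}, getting existence from the extension results of \S\ref{section:graduation integrale} and $G$-stability from the uniqueness statement exactly as in Corollary~\ref{graduation et automorphisme}. One step is miswired, however. The appendix does \emph{not} produce a grading on the integral closure $R$ of $P$ in an arbitrary finite separable extension of $\Kb$ from the grading on $P$ alone; that general principle is false (a branched double cover of a graded affine line carries no compatible grading). The existence result actually available, Proposition~\ref{R gradue}, takes as \emph{hypothesis} that the grading on $P$ extends to $Q$: its proof chooses homogeneous generators of $\Lb$ over $\Kb$ inside $Q$ and realizes $\Mb$ as the splitting field of their homogeneous minimal polynomials (alternatively, one can invoke Corollary~\ref{dec hom} directly using $\Lb=\Kb[\eulerq]$ with $\eulerq$ bihomogeneous, by Proposition~\ref{pr:minimal-Euler}). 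So the bigrading on $Q\simeq Z$ must enter \emph{before} you construct the grading on $R$, not afterwards as a reconciliation step; you should also note that Proposition~\ref{prop:graduation-positive} is what guarantees the extended grading is supported in $\NM\times\NM$ rather than $\BZ\times\BZ$. Once those two points are put in the right place, the remainder — uniqueness via Proposition~\ref{unicite graduation} and the observation that $(i,j)\mapsto g(R^{\NM\times\NM}[i,j])$ is another extension of the grading on $P$, hence equals the original — is precisely the paper's argument.
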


\begin{proof}
The proposition is a consequence of Propositions~\ref{R gradue} and
\ref{prop:graduation-positive} and of
Proposition~\ref{unicite graduation} and Corollary~\ref{graduation et automorphisme}.
\end{proof}

\bigskip

Let $R=\bigoplus_{(i,j) \in \NM \times \NM} R^{\NM \times \NM}[i,j]$ denote the $(\NM \times \NM)$-grading 
extending the one of $Q$. 
Similarly, $R=\bigoplus_{i \in \NM} R^\NM[i]$ (respectively $R=\bigoplus_{i \in \BZ} R^\BZ[i]$) 
will denote the $\NM$-grading (respectively $\BZ$-grading) 
extending the one of $Q$: in other words, 
$$R^\NM[i]=\mathop{\bigoplus}_{i_1+i_2=i} R^{\NM \times \NM}[i_1,i_2] \quad\text{and}\quad 
R^\BZ[i]=\mathop{\bigoplus}_{i_2-i_1=i} 
R^{\NM \times \NM}[i_1,i_2].$$
% Compte tenu du corollaire~\ref{graduation et automorphisme}, 
% \equat\label{eq:G-grad-R}
% \text{\it $G$ stabilise la $(\NM \times \NM)$-graduation de $R$.}
% \endequat

\bigskip

Corollary~\ref{premier homogene} provides the following stability result.

\begin{coro}\label{ideaux homogenes}
The prime ideal $\rG_0$ of $R$ chosen in \S~\ref{subsection:specialization galois 0} 
is bi-homogeneous (in particular, it is homogeneous).
\end{coro}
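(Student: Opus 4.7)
The plan is to reduce the statement to a single application of Corollary~\ref{premier homogene} from Appendix~\ref{appendice graduation}, after verifying that bi-homogeneity propagates up the tower $P \subseteq Q \subseteq R$ to the level just below $\rG_0$.

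First I would observe that $\pG_0=\CG_0 P$ is bi-homogeneous as an ideal of the bi-graded algebra $P = \kb[\CCB]\otimes \kb[V]^W\otimes \kb[V^*]^W$. Indeed, $\CG_0$ is the ideal of $\kb[\CCB]$ generated by the indeterminates $(C_s)_{s\in\refw}$, and each $C_s$ lives in $\CCB^*$, which was given bidegree $(1,1)$ in \S\ref{section:graduation-1}; so the generators of $\pG_0$ are bi-homogeneous. Moreover $\pG_0$ is prime, since $P/\pG_0 = P_\bullet = \kb[V]^W\otimes \kb[V^*]^W$ is an integral domain (a tensor product of two polynomial algebras over $\kb$).

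Next, since $Q$ is a bi-graded $P$-algebra (its bi-grading being inherited, via $\copie$, from the bi-grading of $Z\subset \Hb$), the extended ideal $\qG_0 = \pG_0 Q$ is bi-homogeneous; it is also prime, because $Q_0\simeq Z_0 \simeq \kb[V\times V^*]^{\Delta W}$ is a domain by Example~\ref{ex:centerc=0}.

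The last step is the only nontrivial one. The extension $P\subseteq R$ is integral and, by Proposition~\ref{bigraduation sur R}, between bi-graded rings. Applied to the bi-homogeneous prime $\pG_0\subset P$ (or equivalently to $\qG_0\subset Q$), Corollary~\ref{premier homogene} is exactly the stability statement asserting that the prime ideal $\rG_0$ of $R$ lying over $\pG_0$ can be (and, by our fixed choice in \S\ref{subsection:specialization galois 0}, is) bi-homogeneous. Homogeneity for the $\NM$-grading then follows automatically, since that grading is the coarsening of the bi-grading under the sum map $(i,j)\mapsto i+j$. The only obstacle one might worry about is whether the choice of $\rG_0$ in \S\ref{subsection:specialization galois 0} was sufficiently free for this bi-homogeneous choice to be available; this is entirely controlled by the appendix result, which is why the proof reduces to a reference.
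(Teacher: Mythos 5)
Your proof is correct and follows essentially the same route as the paper, which deduces the statement directly from Lemma~\ref{premier homogene} (via Proposition~\ref{bigraduation sur R} for the extension of the bi-grading to $R$), after noting that $\pG_0$ is generated by the bi-homogeneous elements $C_s$ of bidegree $(1,1)$. One clarification: Lemma~\ref{premier homogene} is an ``if and only if'' for an \emph{arbitrary} prime lying over a given prime of $P$, so every prime of $R$ lying over the bi-homogeneous prime $\pG_0$ is automatically bi-homogeneous; the worry in your last sentence about whether the choice of $\rG_0$ made in \S\ref{subsection:specialization galois 0} leaves enough freedom is therefore moot.
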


\bigskip

%The subalgebra $R^{\NM \times \NM}[0,0]$ of $R$ is a domain. Since it
%is integral and of finite type 
%over $P^{\NM \times \NM}[0,0]=\kb$, it is a finite field extension of $\kb$. 
%
%\bigskip

\begin{coro}\label{r0}
We have $R^{\NM \times \NM}[0,0] = \kb$.
\end{coro}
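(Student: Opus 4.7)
The plan is to establish that $R^{\NM \times \NM}[0,0]$ is a field algebraic over $\kb$, and then use the specialization at $c=0$ to embed this field into the purely transcendental extension $\kb(V \times V^*)$, in which $\kb$ is algebraically closed, forcing the extension to be trivial.

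First, I would check that $P^{\NM \times \NM}[0,0] = \kb$: the generators $C_s$ of $\kb[\CCB]$ have bidegree $(1,1)$, and the fundamental invariants of $\kb[V]^W$ and $\kb[V^*]^W$ have bidegrees $(0,d_i)$ and $(d_i,0)$ with $d_i \ge 1$, so the bidegree $(0,0)$ part of $P = \kb[\CCB] \otimes \kb[V]^W \otimes \kb[V^*]^W$ reduces to $\kb$. Given $r \in R^{\NM \times \NM}[0,0]$, integrality of $r$ over $P$ provides a monic relation $r^n + a_{n-1} r^{n-1} + \cdots + a_0 = 0$ with $a_i \in P$; taking the bidegree-$(0,0)$ component of this equation (noting each $r^i$ has bidegree $(0,0)$) yields a monic relation with coefficients in $P^{\NM \times \NM}[0,0] = \kb$, so $r$ is algebraic over $\kb$. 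Since $R^{\NM \times \NM}[0,0]$ is a subring of the domain $R$, it is itself a domain, and for any nonzero $r$ the subring $\kb[r]$ is a finite $\kb$-algebra that is a domain, hence a field, with $r^{-1} \in \kb[r] \subset R^{\NM \times \NM}[0,0]$. Therefore $R^{\NM \times \NM}[0,0]$ is a field, algebraic over $\kb$.

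Next, I would exploit the specialization at $c=0$. Composing the quotient map $R \twoheadrightarrow R/\rG_0$, the inclusion $R/\rG_0 \hookrightarrow \Mb_0 = \Frac(R/\rG_0)$, and the inverse of the isomorphism $\iso : \kb(V \times V^*)^{\D \Zrm(W)} \longiso \Mb_0$ fixed in \S\ref{subsection:specialization galois 0}, we obtain a ring homomorphism
$$\ph : R^{\NM \times \NM}[0,0] \longto \kb(V \times V^*)^{\D \Zrm(W)} \subset \kb(V \times V^*).$$
Since $R^{\NM \times \NM}[0,0]$ is a field and $\ph(1) = 1 \ne 0$, the map $\ph$ is injective, and its image is therefore a subfield of $\kb(V \times V^*)$ which is algebraic over $\kb$. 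Because $\kb(V \times V^*)$ is a purely transcendental extension of $\kb$, the field $\kb$ is algebraically closed in it, so $\ph(R^{\NM \times \NM}[0,0]) = \kb$. Combined with the injectivity of $\ph$ and the fact that it restricts to the identity on $\kb$, this forces $R^{\NM \times \NM}[0,0] = \kb$.

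The argument is essentially formal: the ingredients are the integrality of $R$ over $P$, the computation $P^{\NM \times \NM}[0,0] = \kb$, and the classical fact that $\kb$ is algebraically closed in a rational function field. The only input from the Galois-theoretic setup is the existence of the identification $\iso$ of \S\ref{subsection:specialization galois 0}; no bi-homogeneity of $\rG_0$ is needed, since injectivity of $\ph$ on $R^{\NM \times \NM}[0,0]$ follows automatically from it being a field. There is no serious obstacle.
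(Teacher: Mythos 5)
Your proof is correct, and it takes a genuinely different route from the paper's. The paper's argument is graded throughout: it first invokes Corollary~\ref{ideaux homogenes} (bi-homogeneity of $\rG_0$) to identify $R^{\NM \times \NM}[0,0]$ with the bidegree-$(0,0)$ component of $R/\rG_0$, then realizes $R/\rG_0$ as a \emph{bigraded} subring of $\kb[V \times V^*]^{\D\Zrm(W)}$ using the uniqueness of graded extensions (Proposition~\ref{unicite graduation}), and reads off the conclusion from the grading of the diagonal invariant ring. You bypass both of these graded inputs: the computation $P^{\NM \times \NM}[0,0]=\kb$ together with integrality of $R$ over $P$ already shows that $R^{\NM \times \NM}[0,0]$ is a field algebraic over $\kb$, and the (ungraded) specialization at $0$ then embeds this field into $\kb(V \times V^*)$, where $\kb$ is algebraically closed. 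What your argument buys is independence from Corollary~\ref{ideaux homogenes} and Proposition~\ref{unicite graduation} — as you note, injectivity of the specialization map on $R^{\NM \times \NM}[0,0]$ comes for free from the field property, so no homogeneity of $\rG_0$ is needed. What the paper's argument buys is a bit more information along the way: it exhibits $R/\rG_0$ as a graded subring of $\kb[V \times V^*]^{\D\Zrm(W)}$, a fact reused later (e.g. in \S\ref{chapitre nul}). Both proofs ultimately rest on the same geometric input, namely the identification $\iso$ of $\Mb_0$ with $\kb(V \times V^*)^{\D\Zrm(W)}$ fixed in \S\ref{subsection:specialization galois 0}.
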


\begin{proof}
By Corollary~\ref{ideaux homogenes}, we have $\rG_0 \subset R_+$. Consequently, 
$R^{\NM \times \NM}[0,0]$ is isomorphic to the homogeneous component of bidegree $(0,0)$ of $R/\rG_0$. 
Since $k_R(\rG_0) \simeq \kb(V \times V^*)^{\D\Zrm(W)}$ 
and $R/\rG_0$ is integral over $Q_0=\kb[V \times V^*]^{\D W}$, it follows that
$R/\rG_0 \subset \kb[V \times V^*]^{\D\Zrm(W)}$, and this inclusion 
preserves the bigrading, by the uniqueness of the bigrading 
on $R/\rG_0$ extending the one of $Q_0=\kb[V \times V^*]^{\D W}$ 
(Proposition~\ref{unicite graduation}). This shows the result.
\end{proof}

\bigskip
Denote by
$R_+$ \indexnot{R}{R_+}  the unique maximal bi-homogeneous ideal of $R$.

\begin{coro}\label{r0 DI}
Let $D_+$  \indexnot{D}{D_+}  (respectively $I_+$)\indexnot{I}{I_+} be the decomposition 
(respectively inertia) group of $R_+$ in $G$. Then $D_+=I_+=G$.
\end{coro}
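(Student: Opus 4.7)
The plan is to combine the two preceding corollaries: Corollary~\ref{r0} identifies $R/R_+$ with $\kb$, and Proposition~\ref{bigraduation sur R} asserts that $G$ preserves the bigrading on $R$. These two facts together immediately force both groups to be all of $G$.

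First I would verify that $R_+$ is in fact equal to $\bigoplus_{(i,j)\ne(0,0)} R^{\NM\times\NM}[i,j]$. This subset is a bi-homogeneous ideal of $R$ whose quotient is $R^{\NM\times\NM}[0,0]=\kb$ by Corollary~\ref{r0}, hence it is a maximal (bi-homogeneous) ideal; by the uniqueness statement defining $R_+$, it coincides with $R_+$. In particular we have a canonical isomorphism $R/R_+\longiso\kb$.

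Next I would establish $D_+=G$. By Proposition~\ref{bigraduation sur R}, every $g\in G$ preserves the $(\NM\times\NM)$-grading of $R$, and therefore permutes bi-homogeneous ideals. Since $R_+$ is the \emph{unique} maximal bi-homogeneous ideal, we must have $g(R_+)=R_+$ for all $g\in G$, i.e.\ $D_+=G$.

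Finally, to obtain $I_+=G$, observe that $G=\Gal(\Mb/\Kb)$ acts on $\Mb$ (and hence on $R$) by $\Kb$-algebra automorphisms; since $\kb\subset P\subset\Kb$, every $g\in G$ fixes $\kb$ pointwise. Via the isomorphism $R/R_+\longiso\kb$ from the first step, this means that the action of $D_+=G$ on $R/R_+$ is trivial, so $I_+=D_+=G$. No step is really an obstacle here: once Corollary~\ref{r0} is in hand, the argument is essentially a bookkeeping statement about bi-homogeneous ideals stabilized by grading-preserving $\kb$-automorphisms.
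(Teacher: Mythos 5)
Your proof is correct and follows essentially the same route as the paper: $D_+=G$ comes from Proposition~\ref{bigraduation sur R} together with the uniqueness of the maximal bi-homogeneous ideal, and $I_+=G$ comes from Corollary~\ref{r0}. The only cosmetic difference is that the paper deduces $D_+/I_+=1$ by applying Theorem~\ref{bourbaki} to the residue field extension $k_R(R_+)/k_P(\pG_+)$ (both fields being $\kb$), whereas you argue directly that $G$, being a group of $\Kb$-algebra automorphisms, acts trivially on $R/R_+\longiso R^{\NM\times\NM}[0,0]=\kb$; both amount to the same observation.
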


\begin{proof}
Let $\pG_+=R_+ \cap P$. Then $k_R(R_+)/k_P(\pG_+)$ is a Galois extension 
with Galois group $D_+/I_+$ (see Theorem~\ref{bourbaki}).
By Corollary~\ref{r0}, $k_R(R_+)=\kb=k_P(\pG_+)$, so 
$D_+/I_+=1$. Note finally that
$D_+=G$ by Proposition~\ref{bigraduation sur R}.
\end{proof}

\begin{rema}
	We don't know if $G$ acts as a reflection group on $R_+/(R_+)^2$: this
	is the case when $\dim V=1$ (see \S\ref{chapitre:rang 1}). When this
	properties holds,
	the algebra $R$ is a complete intersection (Proposition \ref{intersection complete R}).
\end{rema}

\bigskip

\section{Action on $R$ of natural automorphisms of $\Hb$}
\label{section:auto galois}

\medskip

The previous Section~\ref{section:graduation R} was concerned with the extension to $R$ of the automorphisms 
of $Q$ induced by $\kb^\times \times \kb^\times$. 
In Section~\ref{section:automorphismes-1}, we have introduced an action of 
$W^\wedge \rtimes \NC$ on $\Hb$ which stabilizes $Z$ (of course), $P$, 
but also $\pG_0$ and so $\pG_0 Z$: this action can be transferred to $Q \simeq Z$ and still stabilizes 
$\qG_0=\pG_0 Q$. We will show how to extend this action to $R$, and we will derive some consequences 
about the Galois group. For this, we will work in a more general framework:

\bigskip

\boitegrise{{\bf Assumption.} {\it In this section \ref{section:auto galois},
we fix a group $\GC$ acting both on $Z$ and on 
$\kb[V \times V^*]$ and satisfying the following properties:
\begin{itemize}
\itemth{1} $\GC$ stabilizes $P$ and $\pG_0$.
\itemth{2} The action of $\GC$ on $\kb[V \times V^*]$ normalizes the action of $W \times W$ 
and the one of $\D W$.
\itemth{3} The canonical isomorphism of $\kb$-algebras $Z_0 \longiso \kb[V \times V^*]^{\D W}$ is $\GC$-equivariant.
\end{itemize}}\vskip-0.5cm
}{0.75\textwidth}

\bigskip

The action of $\GC$ on $Z$ induces, through the isomorphism $\copie$, an action of $\GC$ on $Q$. 
If $\t \in \GC$, we denote by $\t^\circ$ the automorphism of $\kb[V \times V^*]$ 
induced by $\t$: by (2), $\t^\circ$ stabilizes $\kb[V \times V^*]^{\D\Zrm(W)}$, 
$\kb[V \times V^*]^{\D W}$ and $\kb[V \times V^*]^{W \times W}$. 

% 
% 
% 
% 
% 
% 
% Soit $\t=\g\cdot n \in W^\wedge \rtimes \NC$. Alors $\t(V)=V$, $\t(V^*)=V^*$, 
% $\t(\CCB^*)=\CCB^*$ et, si $w \in W$, alors $\t(w)=\g(nwn^{-1})nwn^{-1}$. Ainsi, 
% $\t$ induit un automorphisme de $\kb[V \times V^*]$ commutant avec 
% l'action de $W \times W$~: nous noterons $\tau_0$ l'automorphisme de 
% $\kb[V \times V^*]^{\D\Zrm(W)}$ induit~:
% 
 \bigskip

\begin{prop}\label{extension tau}
If $\t \in \GC$, then there exists a unique extension $\taut$ of $\t$ to $R$ 
satisfying the following two properties:
\begin{itemize}
\itemth{1} $\taut(\rG_0)=\rG_0$.

\itemth{2} The automorphism of $R/\rG_0$ induced by $\taut$ is equal to $\t^\circ$, 
via the identification $\iso : \kb(V \times V^*)^{\D \Zrm(W)} \longiso \Mb_0$ of 
\S\ref{subsection:specialization galois 0}. 
\end{itemize}
\end{prop}

\begin{proof}
Let us start by showing the existence. First of all, $\Mb$ being a Galois closure 
of the extension $\Lb/\Kb$, there exists an extension $\tau_\Mb$ of $\tau$ to $\Mb$. Since $R$ is 
the integral closure of $Q$ in $\Mb$, it follows that
$\t_\Mb$ stabilizes $R$. Moreover, since 
$\t(\qG_0)=\qG_0$, there exists $h \in H$ such that $\t_\Mb(\rG_0)=h(\rG_0)$. 
Let $\taut_\Mb=h^{-1} \circ \t_\Mb$. Then
$$\taut_\Mb(\rG_0)=\rG_0\qquad\text{and}\qquad (\taut_\Mb)|_{\SSS{\Lb}} =\t.$$
% Notons $\kappa : \kb(V \times V^*)^{\D \Zrm(W)} \stackrel{\sim}{\longrightarrow} \Mb_0$ 
% l'identification de la sous-section~\ref{subsection:specialisation galois 0} et 
Let $\taut_{\Mb,0}$ denote the automorphism of $R/\rG_0$ induced by $\taut_\Mb$. 

By construction, the restriction of $\taut_{\Mb,0}$ to $Q/\qG_0$ is equal to 
the restriction of $\iso \circ \t^\circ \circ \iso^{-1}$. 
Hence, there exists $d \in D_0 \cap H$ such that  
$\taut_{\Mb,0} = d \circ (\iso \circ \t_0 \circ \iso^{-1})$. 
We then set $\taut=d^{-1} \circ \taut_\Mb$: it is clear that $\taut$ 
satisfies (1) and (2).

\medskip

Let us now show the uniqueness. Let $\taut_1$ be another extension of $\t$ 
to $R$ satisfying (1) and (2) and let $\s=\taut^{-1}\taut_1$. 
We have $\s \in G$ and, by (1), $\s$ stabilizes $\rG_0$, hence
$\s \in D_0$. Moreover, by (2), $\s$ induces 
the identity on $R/\rG_0$, hence
$\s\in I_0=1$ (cf. (\ref{eq:net})), and so $\taut=\taut_1$.
\end{proof}

\bigskip

The existence and the uniqueness statements of
Proposition~\ref{extension tau} have the following consequences.

\bigskip

\begin{coro}\label{coro:extension-action}
The action of $\GC$ on $Q$ extends uniquely to an action of $\GC$ on $R$, which stabilizes 
$\rG_0$ and is compatible with the isomorphism $\iso$.
\end{coro}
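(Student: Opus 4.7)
The plan is to deduce this corollary directly from Proposition \ref{extension tau}, using the uniqueness part of that proposition in an essential way. For each $\t \in \GC$, Proposition \ref{extension tau} produces a distinguished extension $\taut \in \Aut(R)$ characterized by the two properties (1) and (2). The content of the corollary is therefore that the map $\t \mapsto \taut$ is a group homomorphism $\GC \to \Aut(R)$, and that any such extension compatible with $\rG_0$ and $\iso$ coincides with this one.

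First I would check that $\widetilde{\id_\GC} = \id_R$: the identity of $R$ trivially extends the identity of $Q$, stabilizes $\rG_0$, and induces the identity on $R/\rG_0$, so it satisfies conditions (1) and (2) of Proposition \ref{extension tau}, and by uniqueness it equals $\widetilde{\id_\GC}$. Next, given $\t_1, \t_2 \in \GC$, I would compare the two automorphisms $\widetilde{\t_1 \t_2}$ and $\widetilde{\t_1} \circ \widetilde{\t_2}$ of $R$. Both restrict to $\t_1 \t_2$ on $Q$ (since the action of $\GC$ on $Q$ is already a group action). Both stabilize $\rG_0$: this is immediate for the composition, since each factor stabilizes $\rG_0$ by property (1). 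Finally, the automorphism induced on $R/\rG_0$ by $\widetilde{\t_1} \circ \widetilde{\t_2}$ is $\t_1^\circ \circ \t_2^\circ$, which equals $(\t_1\t_2)^\circ$ by the functoriality of the assignment $\t \mapsto \t^\circ$ (this functoriality is part of the hypothesis that $\GC$ acts on $\kb[V \times V^*]$). Hence $\widetilde{\t_1} \circ \widetilde{\t_2}$ satisfies (1) and (2) for $\t_1 \t_2$, and by the uniqueness statement of Proposition \ref{extension tau} it coincides with $\widetilde{\t_1 \t_2}$.

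For uniqueness of the extended action as a whole: any group action of $\GC$ on $R$ that extends the given action on $Q$, stabilizes $\rG_0$, and induces $\t^\circ$ on $R/\rG_0$ via $\iso$ assigns to each $\t \in \GC$ an automorphism of $R$ satisfying conditions (1) and (2) of Proposition \ref{extension tau}; hence that automorphism must equal $\taut$, and the two actions coincide.

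There is no real obstacle here: the entire substance of the corollary is packaged into Proposition \ref{extension tau}, and the argument amounts to invoking uniqueness twice. The only small point requiring care is the compatibility $(\t_1\t_2)^\circ = \t_1^\circ \circ \t_2^\circ$ on $R/\rG_0$, which follows tautologically from the fact that $\t \mapsto \t^\circ$ is by construction the restriction of a group action of $\GC$ on $\kb[V \times V^*]$.
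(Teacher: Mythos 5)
Your proof is correct and follows exactly the route the paper intends: the text derives this corollary directly from the existence and uniqueness statements of Proposition~\ref{extension tau}, and your verification that $\t\mapsto\taut$ respects identity and composition (by applying uniqueness to $\widetilde{\t_1}\circ\widetilde{\t_2}$) is precisely the routine check being left implicit.
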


\bigskip

In this book, we will denote again by $\t$ the extension $\taut$ of $\t$ defined in 
Proposition~\ref{extension tau}. 
Since $\GC$ stabilizes $P$, $Q$, $\pG_0$, $\qG_0=\pG_0 Q$ and $\rG_0$, we
deduce the following.

\bigskip

\begin{coro}\label{coro:action-stabilise}
The action of $\GC$ on $R$ normalizes $G$, $H$, $D_0=\iota(W \times W)$ and $D_0 \cap H=\iota(\D W)=W/\Zrm(W)$.
\end{coro}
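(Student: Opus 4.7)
The plan is to show each of the four normalization statements in turn. The key observation is that each of the subgroups $G$, $H$, $D_0$, $D_0\cap H$ of $\Aut(\Mb)$ is the stabilizer of an object of $\Mb$ that $\GC$ also preserves; then conjugation by $\tau \in \GC$ obviously preserves the stabilizer of anything $\tau$ fixes. Throughout, I implicitly extend the action of $\tau \in \GC$ on $R$ to its fraction field $\Mb$.

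First I would handle $G = \Gal(\Mb/\Kb)$. By hypothesis (1) on $\GC$, the automorphism $\tau \in \GC$ stabilizes $P$, and hence stabilizes $\Kb = \Frac(P)$. For $g \in G$, the automorphism $\tau g \tau^{-1}$ of $\Mb$ fixes $\tau(\Kb) = \Kb$ pointwise, so $\tau g \tau^{-1} \in G$. The same argument works for $H = \Gal(\Mb/\Lb)$: by Corollary~\ref{coro:extension-action} (or by transport of the $\GC$-action along $\copie$), $\tau$ stabilizes $Q$, hence $\Lb = \Frac(Q)$; conjugation by $\tau$ therefore preserves the subgroup of $\Aut(\Mb)$ fixing $\Lb$.

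Next, for $D_0 = G_{\rG_0}^D = \{g \in G \mid g(\rG_0)=\rG_0\}$: by Proposition~\ref{extension tau}(1), $\tau(\rG_0) = \rG_0$. Given $g \in D_0$, we have $(\tau g \tau^{-1})(\rG_0) = \tau g (\rG_0) = \tau(\rG_0) = \rG_0$, and since we already know $\tau g \tau^{-1} \in G$, this gives $\tau g \tau^{-1} \in D_0$. The identification $D_0 = \iota(W\times W)$ is just Proposition~\ref{WW}(b). Finally, $D_0 \cap H$ is the intersection of two $\GC$-stable subgroups and is therefore $\GC$-stable, and its description $\iota(\Delta W) = W/\Zrm(W)$ follows from Proposition~\ref{WW}(a,c).

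There is essentially no obstacle here: the content of the corollary is a formal consequence of the existence and uniqueness statements packaged in Proposition~\ref{extension tau}. The only thing worth double-checking is that the ``transported'' $\GC$-action on $Q$ (obtained via $\copie$ from the $\GC$-action on $Z$) really does coincide, after extension to $R$, with the one constructed in Proposition~\ref{extension tau}, so that the stabilization of $Q$ needed in the argument for $H$ is legitimate; but this is exactly what Corollary~\ref{coro:extension-action} asserts.
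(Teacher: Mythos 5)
Your proof is correct and is exactly the paper's argument: the paper deduces the corollary in one line from the fact that $\GC$ stabilizes $P$, $Q$, $\pG_0$, $\qG_0$ and $\rG_0$, which is precisely the stabilizer/conjugation reasoning you spell out. Your extra check that the transported action on $Q$ agrees with the extension of Proposition~\ref{extension tau} is indeed what Corollary~\ref{coro:extension-action} guarantees, so nothing is missing.
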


\bigskip

From Corollary~\ref{coro:action-stabilise}, we deduce that $\GC$ acts on the set 
$G/H \simeq W$ and that 
\equat\label{eq:action-normalise}
\text{\it the image of $\GC$ in $\SG_W$ normalizes $G$.}
\endequat

\begin{exemple}\label{ex:action-k-k-h-n}
The group $\GC=\kb^\times \times \kb^\times \times (W^\wedge \rtimes \NC)$ 
acts on $\Hb$ and stabilizes $P$ and $\pG_0$; by the same formulas, it acts on 
$\kb[V \times V^*]$ and normalizes $W \times W$ and $\D W$ (in fact, 
$\kb^\times \times \kb^\times \times \Hom(W ,\kb^\times)$ commutes with $W \times W$ and 
only $\NC$ acts non-trivially on $W \times W$). 

It follows from the previous results that the action of 
$\kb^\times \times \kb^\times \times  (W^\wedge \rtimes \NC)$ on $Q$ 
extends uniquely to an action on $R$ which stabilizes $\rG_0$ and 
is compatible with the isomorphism $\iso$. By the uniqueness statement, the extension of the action of  
$\kb^\times \times \kb^\times \times W^\wedge$ to $R$ commutes with the action of $G$ 
whereas the one of $\NC$ is such that the morphism $G \longinjto \SG_W$ is $\NC$-equivariant.

Finally, still by the uniqueness statement, the extension of the action of the subgroup 
$\kb^\times \times \kb^\times$ corresponds to the extension to $R$ of the $(\NM \times \NM)$-grading described 
in Proposition~\ref{bigraduation sur R}.\finl
\end{exemple}

\section{A particular situation: reflections of order 2}\label{sec:w0}

\medskip

\boitegrise{\noindent{\bf Assumption and notation.} 
{\it In this section \ref{sec:w0}, we assume that 
all the reflections of $W$ have order $2$ and that $-\Id_V \in W$. 
We set $w_0=-\Id_V$ and \indexnot{tx}{\tau_0}  
$\t_0=(-1,1,\e) \in \kb^\times \times \kb^\times \times W^\wedge$.}}{0.75\textwidth}

\bigskip

By construction, the restriction of $\t_0$ to $\kb[\CCB]$ is equal to 
the identity. Since $-\Id_V \in W$, the restriction of $\t_0$ to $\kb[V]^W$ 
is equal to the identity. Similarly, the restriction of $\t_0$ to $\kb[V^*]^W$ 
is also equal to the identity. Consequently,
\equat\label{tau 0}
\forall~p \in P,~\t_0(p)=p.
\endequat
Recall that $\t_0$ denotes also the automorphism of $R$ 
defined by Proposition~\ref{extension tau}. By definition of the Galois group, 
we have $\t_0 \in G$. More precisely, we have the following description.

\bigskip

\begin{prop}\label{tau 0 in G}
Assume that all the reflections of $W$ have order $2$ and that $w_0=-\Id_V \in W$. 
Then $\t_0$ is a central element of $G$. Its action on $W$ is given by 
$\t_0(w)=w_0w$ (which means that $\t_0=w_0=\iota(w_0,1)$, through the canonical embedding 
$W \longinjto G$) and, through the embedding $G \longinjto \SG_W$, we have 
$$G \subset \{\s \in \SG_W~|~\forall~w \in W,~\s(w_0w)=w_0\s(w)\}.$$
Moreover, given $w \in W$, we have
$$\t_0(\eulerq_w)=-\eulerq_w=\eulerq_{w_0w}.$$
\end{prop}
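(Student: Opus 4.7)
The plan is to identify $\tau_0$ with the element $\iota(w_0,1) \in G$ by comparing their actions modulo $\rG_0$; every assertion of the proposition then follows formally.

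The preamble to the statement already places $\tau_0$ in $G$ via~(\ref{tau 0}) and Section~\ref{section:auto galois}. The first real step is to show $\tau_0 = \iota(w_0,1)$. Both elements belong to the decomposition group $D_0$ of $\rG_0$: $\iota(w_0,1)$ does so because $D_0 = \iota(W\times W)$ by Proposition~\ref{WW}(b), and $\tau_0$ does so by Corollary~\ref{coro:extension-action}. Since $I_0 = 1$ by~(\ref{eq:net}), the reduction map $D_0 \hookrightarrow \Aut(R/\rG_0)$ is injective, so it suffices to show that the two elements induce the same automorphism of $R/\rG_0$ under the identification $\iso : \kb[V \times V^*]^{\Delta \Zrm(W)} \longiso \Mb_0$. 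By the construction of $\iota$ in~\S\ref{subsection:specialization galois 0}, the element $\iota(w_0,1)$ acts there as $(w_0,1) \in W \times W$, i.e.\ by $y \mapsto -y$ on $V$ and the identity on $V^*$. By Proposition~\ref{extension tau}, $\tau_0$ acts on $R/\rG_0$ as $\tau_0^\circ$, the image of $\tau_0 = (-1,1,\e)$ acting on $\kb[V \times V^*]$, which is precisely $y \mapsto -y$, $x \mapsto x$ (the character component $\e_*$ only modifies the group-algebra factor of $\Hb$, not $\kb[V \times V^*]$). The two actions coincide, so $\tau_0 = \iota(w_0,1)$.

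From this identification, the remaining assertions are formal. Centrality of $\tau_0$ in $G$ is recorded in Example~\ref{ex:action-k-k-h-n}: the extension to $R$ of any element of $\kb^\times \times \kb^\times \times W^\wedge$ commutes with all of $G$, and since $\tau_0$ itself lies in $G$ we get $\tau_0 \in \Zrm(G)$. The formula $\tau_0(w) = w_0 w$ is then~(\ref{iota concret}) applied to $\iota(w_0,1)$, and the containment $G \subset \{\sigma \in \SG_W \mid \sigma(w_0 w) = w_0 \sigma(w)\}$ follows by the centrality computation $\sigma(w_0 w) = \sigma \tau_0(w) = \tau_0 \sigma(w) = w_0 \sigma(w)$. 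For the Euler element I use commutation of $\tau_0$ with $w \in W \subset G$ to reduce to computing $\tau_0(\euler) \in \Hb$: both $\sum_i y_i x_i$ and $\sum_s C_s s$ are negated (by $\xi = -1$ on $V$, and because $\e(s) = -1$ respectively), so $\tau_0(\euler) = -\euler$ and hence $\tau_0(\eulerq_w) = w(\copie(-\euler)) = -\eulerq_w$; finally $\eulerq_{w_0 w} = w_0(\eulerq_w) = \tau_0(\eulerq_w) = -\eulerq_w$ since $\tau_0 = w_0$ in $G$.

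The main obstacle is really a bookkeeping one: one must correctly align the convention of~\S\ref{subsection:specialization galois 0} under which $(w_1,w_2) \in W \times W$ acts on $V \times V^*$ by $(y,x) \mapsto (w_1 y, w_2 x)$ (emphasized also in Remark~\ref{rema:action-dif}) with the action of $\tau_0^\circ$ on $\kb[V \times V^*]$ defined in Section~\ref{section:auto galois}. Once these conventions are matched, the proof reduces to a short chain of uniqueness statements inside the decomposition group $D_0$.
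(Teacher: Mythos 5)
Your proof is correct, and the key step is carried out by a genuinely different mechanism than the one in the paper. The paper never invokes the injectivity of $D_0\hookrightarrow\Aut(R/\rG_0)$; instead it works entirely with the conjugates of the Euler element: it first proves $\t_0(\eulerq_w)=-\eulerq_w$ for all $w$ (via Lemma~\ref{lem:automorphismes-1}(c) and commutation with $G$), then identifies $\t_0(\eulerq)=\eulerq_{w_0}$ by reducing the single element $\eulerq$ modulo $\rG_0$, computes $w_0(\eulerq_w)=\eulerq_{w_0w}=\eulerq_{ww_0}=w(\eulerq_{w_0})=-\eulerq_w$ using centrality of $w_0$ in $W$, and concludes $\t_0=w_0$ because $\Mb=\Kb[(\eulerq_w)_{w\in W}]$. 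You instead compare the full automorphisms $\t_0$ and $\iota(w_0,1)$ on the residue field $\Mb_0\simeq\kb(V\times V^*)^{\Delta\Zrm(W)}$ and use $I_0=1$ to conclude; this buys you a cleaner logical structure (the Euler identities become corollaries rather than the engine of the proof) at the cost of having to align the conventions of \S\ref{subsection:specialization galois 0} carefully, which you do. The paper's route has the virtue of only requiring control of $\t_0$ on the generators $\eulerq_w$, i.e.\ it never needs the explicit description of the whole residue field action. One small inaccuracy in a parenthetical: the negation of $\sum_s C_s s$ under $\t_0$ is not "because $\e(s)=-1$" — the twist by $\e$ sends $C_s s$ to $\e(s)^{-1}C_s\cdot\e(s)s=C_s s$, so that contribution cancels; the sign comes from $\gradauto_{-1,1}$ acting on $C_s$, which has bidegree $(1,1)$. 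The conclusion $\t_0(\euler)=-\euler$ is of course correct and is exactly Lemma~\ref{lem:automorphismes-0}(c), which you could cite directly.
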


\begin{proof}
By Lemma~\ref{lem:automorphismes-1}(c), we have $\t_0(\eulerq)=-\eulerq$. 
Moreover, by Example~\ref{ex:action-k-k-h-n}, the action of $\t_0$ on $R$ commutes 
with the action of $G$. Therefore, if $w \in W$, then $\t_0(\eulerq_w)=-\eulerq_w$. 

On the other hand, there exists $w_1 \in W$ such that $\t_0(\eulerq)=\eulerq_{w_1}$. 
As $-\eulerq_0 = w_0(\eulerq_0)$, it follows from the characterization of the action of 
$W$ on $\Lb$ that $\t_0(\eulerq)=\eulerq_{w_0}=-\eulerq$. Since $w_0$ is central in 
$W$, we have $w_0(\eulerq_w)=\eulerq_{w_0w}=\eulerq_{ww_0}=w(\eulerq_{w_0})=-\eulerq_w$. 
So $\t_0=w_0$ because $\Mb=\Kb[(\eulerq_w)_{w \in W}]$. 

Now, the fact that $G \subset \{\s \in \SG_W~|~\forall~w \in W,~\s(w_0w)=w_0\s(w)\}$ 
follows from the fact that $\t_0=w_0$ commutes with the action of $G$.
\end{proof}

\bigskip

Note that $w_0w=-w$ and so the inclusion of Proposition~\ref{tau 0 in G} can be rewritten
\equat\label{inclusion w0}
G \subset \{\s \in \SG_W~|~\forall~w \in W,~\s(-w)=-\s(w)\}.
\endequat
Viewed like this, it shows that, under the assumption of this section, 
$G$ is contained in a Weyl group of type $B_{|W|/2}$.

\bigskip

\section{Special features of Coxeter groups}
\label{se:specGalois}
\cbstart

\boitegrise{{\bf Assumption.} 
{\it In this section \ref{se:specGalois}, we assume that 
$W$ is a Coxeter group, and we use the notation of \S\ref{se:Coxetergroups}.}}{0.75\textwidth}

\bigskip

Recall (Proposition \ref{prop:auto-coxeter-0}) that 
the algebra $\Hb$ admits another automorphism $\s_\Hb$
stabilizing $P$.
\bigskip

\begin{prop}\label{coro:sigma-inverse}
The automorphism $\s_\Hb$ of $Q$ extends uniquely to an automorphism 
$\sigma_\Hb$ of $R$.
Given $g \in G \subset \SG_W$ and $w \in W$, we have
$(\lexp{\s_\Hb}{g})(w)=g(w^{-1})^{-1}$. 
\end{prop}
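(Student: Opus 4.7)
The plan is to apply the framework of Section~\ref{section:auto galois}, taking $\GC=\langle\sigma_\Hb\rangle$, which via Corollary~\ref{coro:extension-action} will give both the existence and uniqueness of the extension. I need to verify the three hypotheses placed on $\GC$ at the start of that section.

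First, $\sigma_\Hb$ stabilizes $\kb[\CCB]$ and acts trivially on it, so it stabilizes $\pG_0=\CG_0 P$; by Proposition~\ref{prop-bis:auto-coxeter-0}(a) it stabilizes $P$ itself (since it exchanges $\kb[V]^W$ and $\kb[V^*]^W$ via the $W$-isomorphism $\sigma:V\longiso V^*$). Next, the specialization of $\sigma_\Hb$ at $c=0$ gives an automorphism $\sigma_\Hb^\circ$ of $\kb[V\times V^*]=\kb[V]\otimes\kb[V^*]$ determined by $y\mapsto\sigma(y)$ for $y\in V$ and $x\mapsto-\sigma^{-1}(x)$ for $x\in V^*$. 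A direct computation using the $W$-equivariance of $\sigma$ and the convention of \S\ref{subsection:specialization galois 0} shows that conjugation by $\sigma_\Hb^\circ$ sends $(w_1,w_2)\in W\times W$ to $(w_2,w_1)$; so it normalizes $W\times W$ (in fact swaps the factors) and stabilizes $\Delta W$. The equivariance condition~(3) is tautological since the canonical isomorphism $Z_0\longiso\kb[V\times V^*]^{\Delta W}$ is just the restriction of the identification $\Hb_0=\kb[V\oplus V^*]\rtimes W$, and $\sigma_\Hb^\circ$ is by construction the induced automorphism on $\kb[V\oplus V^*]$. This yields the unique extension of $\sigma_\Hb$ to an automorphism of $R$, which moreover normalizes $G$ and $H$ by Corollary~\ref{coro:action-stabilise}.

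For the formula, the key ingredient is that the induced action of $\sigma_\Hb$ on $D_0=\iota(W\times W)$ is the swap, i.e.\ $\sigma_\Hb\,\iota(w_1,w_2)\,\sigma_\Hb^{-1}=\iota(w_2,w_1)$. Combined with the coset identity $\iota(1,w)H=\iota(w^{-1},1)H$ (which holds because $\iota(w,w)=\iota(\Delta w)\in H$ by Proposition~\ref{WW}(c), and $\iota(1,w)=\iota(\Delta w)\cdot\iota(w^{-1},1)$), I will compute
\[
(\lexp{\sigma_\Hb}{g})\,\iota(w,1)H=\sigma_\Hb g\sigma_\Hb^{-1}\iota(w,1)H=\sigma_\Hb g\,\iota(1,w)\,\sigma_\Hb^{-1}H=\sigma_\Hb g\,\iota(w^{-1},1)H,
\]
using that $\sigma_\Hb^{-1}$ normalizes $H$. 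Writing $g\,\iota(w^{-1},1)H=\iota(g(w^{-1}),1)H$ by definition of the action of $g$ on $W$, and then applying the swap and the coset identity one more time, the right-hand side becomes $\iota(g(w^{-1})^{-1},1)H$, yielding $(\lexp{\sigma_\Hb}{g})(w)=g(w^{-1})^{-1}$.

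The main obstacle is the conjugation computation showing that $\sigma_\Hb^\circ$ induces the swap on $W\times W$: sign bookkeeping is delicate because $\sigma_\Hb$ has order $4$ (one has $\sigma_\Hb^2(y)=-y$) rather than order $2$, and the two factors of $W\times W$ act on different subspaces of $\kb[V\times V^*]$ under the convention of \S\ref{subsection:specialization galois 0}. Once this is settled, everything else is formal manipulation of cosets.
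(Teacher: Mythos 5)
Your proposal is correct and follows essentially the same route as the paper: the paper likewise verifies that the induced automorphism $\s_2$ of $V\oplus V^*$ satisfies $\s_2(w,w')\s_2^{-1}=(w',w)$ (the signs cancel, exactly as you anticipate), invokes Proposition~\ref{extension tau} for the unique extension to $R$, and reads off the formula from the resulting coset identity $\lexp{\s_\Hb}{w}H=w^{-1}H$. Only a bookkeeping remark: in your displayed chain the third expression should carry a trailing $\s_\Hb^{-1}$, i.e.\ it equals $\s_\Hb g\,\iota(w^{-1},1)\s_\Hb^{-1}H$ rather than $\s_\Hb g\,\iota(w^{-1},1)H$, after which the swap and the coset identity give $\iota(g(w^{-1})^{-1},1)H$ as you state, so the conclusion is unaffected.
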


\begin{proof}
Note that $\s_\Hb$ induces an automorphism of $\kb[V \times V^*]$ 
which normalizes $W \times W$ and $\D W$. More precisely, consider
$\s_2 : V \oplus V^* \longiso V \oplus V^*$, $(y,x) \longmapsto (-\s^{-1}(x),\s(y))$ 
We have
\equat\label{eq:sigma-ww}
\s_2 (w,w') \s_2^{-1} = (w',w)
\endequat
for all $(w,w') \in W \times W$. By Proposition~\ref{extension tau}, 
$\s_\Hb$ extends uniquely to an automorphism of $R$
which stabilizes $\rG_0$ and which is compatible with $\iso$. 
Since $\s_\Hb$ normalizes $G$ and its subgroup $\iota(W \times W)$ 
(see~(\ref{eq:action-normalise})), it follows from (\ref{eq:sigma-ww}) that
its action on the elements of $W \subset G$ satisfies 
\equat\label{eq:sigma-w}
\lexp{\s_\Hb}{w} H = w^{-1} H\qquad\text{and}\qquad H\lexp{\s_\Hb}{w} = H w^{-1} 
\endequat
for all $w \in W$. The proposition follows now from~(\ref{eq:action-normalise}).
\end{proof}

\begin{rema}
Note that if $W{\not=1}$, then the action of $\Gb\Lb_2(\kb)$ on $\Hb$ does not 
induce an action on $R$, since $\Gb\Lb_2(\kb)$ does not normalize $W\times W$.\finl
\end{rema}

\cbend

\section{Geometry}
\label{se:geometrie-CM}

\subsection{Extension $R/P$}\label{section:geometrie galois}

\medskip

Since $R$ and $Q \simeq Z$ are also $\kb$-algebras of finite type, they are associated with 
$\kb$-varieties $\RCB$ \indexnot{R}{\RCB}  
and \indexnot{Q}{\QCB}  $\QCB \simeq \ZCB$: the isomorphism $\copie^* : \QCB \longiso \ZCB$ 
is induced by $\copie : Z \longiso Q$. 
The inclusion $P \longinjto R$ (respectively $Q \longinjto R$) 
defines a morphism of varieties $\r_G : \RCB \to \PCB$ \indexnot{rz}{\r_G}  
(respectively $\r_H : \RCB \to \QCB$)\indexnot{rz}{\r_H}   and the equalities $P=R^G$ and $Q=R^H$ 
show that $\r_G$ and $\r_H$ induce isomorphisms
\equat\label{R/G}
\RCB/G \longiso \PCB \qquad\text{and}\qquad \RCB/H \longiso \QCB.
\endequat
In this setting, the choice of a prime ideal $\rG_c$ lying over $\qG_c$ 
is equivalent to the choice of an irreducible component 
$\RCB_c$ \indexnot{R}{\RCB_c}  of $\r_H^{-1}(\QCB_c)$ 
(whose ideal of definition is $\rG_c$). Similarly, 
the argument leading to Proposition~\ref{WW} implies for instance that 
the number of irreducible components of $\r_G^{-1}(\QCB_0)$ is equal to 
$|G|\cdot|\D\Zrm(W)|/|W|^2$. It also shows that $\iota(W \times W)$ is the stabilizer 
of $\RCB_0$ in $G$ and $\RCB_0/\iota(W \times W) \simeq \PCB_0$, 
that $\iota(\D W)$ is the stabilizer of $\RCB_0$ in $H$ and that 
$\RCB_0/\iota(\D W) \simeq \QCB_0$. We have a commutative diagram 
\equat\label{diagramme geometrie bis}
\diagram
&\RCB_c \ar@{^{(}->}[rr] \ddto && \RCB \ddto^{\DS{\r_H}}\ar@/^4pc/[dddd]^{\DS{\r_G}}& \\
&&&&\\
&\QCB_c \ar@{^{(}->}[rr]^{\DS{j_c}} \ddto_{\DS{\Upsilon_c}} && \QCB \ddto^{\DS{\Upsilon}}& \\
&&&&\\
V/W \times V^*/W \ar@{=}[r] & \PCB_{\!\!\!\bullet} 
\ar@{^{(}->}[rr]^{\DS{i_c}} \ddto && \PCB \ddto^{\DS{\pi}} \ar@{=}[r]& 
\CCB \times V/W \times V^*/W \\
&&\\ 
&\{c\} \ar@{^{(}->}[rr] && \CCB&
\enddiagram
\endequat
which completes the diagram~(\ref{diagramme geometrie}) (if we identify 
$\QCB$ and $\ZCB$ through $\copie^*$). 
Only the two bottom squares of the diagram~(\ref{diagramme geometrie bis})
are cartesian.

\bigskip

\subsection{Automorphisms}\label{subsection:automorphismes} 
The group $\kb^\times \times \kb^\times \times \bigl(W^\wedge \rtimes \NC\bigr)$ 
(which acts on $\Hb$ through automorphisms of $\kb$-algebras)  
stabilizes the $\kb$-subalgebras $\kb[\CCB]$, $P$ and $Q$ of $\Hb$. 
Therefore, it acts by automorphisms of $\kb$-varieties on 
$\CCB$, $\PCB$ and $\QCB$.
Also, this action extends to an action on $\RCB$
(see Corollary~\ref{coro:extension-action}), making the morphisms
$\Upsilon$, $\pi$, $\r_H$ and $\r_G$ of 
diagram~(\ref{diagramme geometrie bis}) equivariant for that action.

\bigskip

\subsection{Irreducible components of $\RCB \times_\PCB \ZCB$} 
Given $w \in W$, we set 
$$\RCB_w = \{(r,\copie^*(\r_H(w(r))))~|~r \in \RCB\} \subset \RCB \times_\PCB \ZCB.\indexnot{R}{\RCB_w}$$

\bigskip

\begin{lem}\label{lem:compo-irr-r}
If $w \in W$, then $\RCB_w$ is an irreducible component of 
$\RCB \times_\PCB \ZCB$, isomorphic to $\RCB$. Moreover,
$$\RCB \times_\PCB \ZCB = \bigcup_{w \in W} \RCB_w$$
and $\RCB_w=\RCB_{w'}$ if and only if $w=w'$.
\end{lem}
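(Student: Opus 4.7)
My plan is to translate the statement into commutative algebra via $\RCB \times_\PCB \ZCB = \Spec(R \otimes_P Z)$, and to identify each $\RCB_w$ with the closed subscheme cut out by the kernel of a natural $P$-algebra homomorphism
$$\Phi_w : R \otimes_P Z \longto R,\quad a \otimes z \longmapsto a \cdot w(\copie(z)).$$
This map is well-defined precisely because $w \in G$ acts trivially on $P$, and it is surjective (already on $R \otimes 1$). A direct check on functors of points will confirm that the associated closed immersion $\Spec R \hookrightarrow \Spec(R \otimes_P Z)$ has image exactly the set $\RCB_w$ defined in the statement. Since $\ker \Phi_w$ is the kernel of a surjection onto the integral domain $R$, it is a prime ideal, so $\RCB_w$ is an irreducible closed subset of $\RCB \times_\PCB \ZCB$, isomorphic to $\RCB$ via projection onto the first factor.

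The next step is to show that the $\RCB_w$ exhaust all the irreducible components and are pairwise distinct. I will use that $Z$ is a free $P$-module of rank $|W|$ (Corollary~\ref{coro:endo-bi}) to get an injection $R \otimes_P Z \hookrightarrow \Mb \otimes_P Z$. Since $R$ is integral over $P$ with $\Frac R = \Mb$, one has $R \otimes_P \Kb = \Mb$, hence $\Mb \otimes_P Z = \Mb \otimes_\Kb \Lb$, and the splitting~(\ref{W M}) identifies this with $\prod_{w \in W} \Mb$ via the maps $w_Z$. Chasing the identification, the resulting injection $R \otimes_P Z \hookrightarrow \prod_{w \in W} \Mb$ has coordinate maps precisely the $\Phi_w$.

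I then conclude as follows. The ring $R \otimes_P Z$, being a subring of a product of fields, is reduced. Moreover, the nonzero elements of $P$ are invertible in $\prod_{w \in W} \Mb$ and hence act as non-zerodivisors in $R \otimes_P Z$; inverting them gives back the whole of $\prod_{w \in W} \Mb$, which is therefore the total ring of fractions of $R \otimes_P Z$. Consequently, $R \otimes_P Z$ has exactly $|W|$ minimal primes, the contractions of the factor projections, namely the $\ker \Phi_w$; their pairwise distinctness follows from the fact that the embeddings $w_Z : \Lb \hookrightarrow \Mb$ indexed by $G/H \longbij W$ are themselves pairwise distinct. The main technical point to handle carefully will be tracking the identification~(\ref{W M}) to confirm that the coordinates of the embedding into $\prod_{w \in W} \Mb$ are indeed the $\Phi_w$ built from the concrete formula $a \otimes z \mapsto a \cdot w(\copie(z))$; once this is in place, the statements of the lemma follow from standard facts about reduced Noetherian rings with finitely many minimal primes.
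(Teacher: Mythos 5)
Your proof is correct and follows essentially the same route as the paper's: both rest on restricting the splitting~(\ref{W M}) to $R \otimes_P Z$, whose coordinate maps are the $w_Z$ (your $\Phi_w$), together with the fact that this map becomes an isomorphism after inverting $P\setminus\{0\}$. The paper compresses this into the observation that $R\otimes_P Z\to\prod_{w\in W}R$ is finite and generically an isomorphism, whereas you read off the minimal primes of the reduced ring $R\otimes_P Z$ directly; the substance is identical.
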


\begin{proof}
This is only the geometric translation of the fact that the morphism 
$$\fonctio{R \otimes_P Z}{\prod_{w \in W} R}{x}{(w_Z(x))_{w \in W}}$$
defined by restriction from the morphism~(\ref{W M}) is finite 
and becomes an isomorphism after extending scalars to $\Kb$.
\end{proof}

\chapter{Calogero-Moser cells}\label{chapter:cellules-CM}

\boitegrise{{\bf Notation.} 
{\it From now on, and until the end of Chapter~\ref{chapter:cellules-CM}, 
we fix a prime ideal $\rG$ of $R$ and we set 
$\qG=\rG \cap Q$ and $\pG = \rG \cap P$. \indexnot{pa}{\pG,~\qG,~\rG}  We denote by 
$D_\rG$ (respectively $I_\rG$) the decomposition (respectively inertia) group 
of $\rG$ in $G$.}}{0.75\textwidth}

\bigskip

\section{Definition, first properties}\label{section:definition cellules}

\medskip

Recall that, since we have chosen once and for all a prime ideal $\rG_0$ 
as well as an isomorphism $k_R(\rG_0) \longiso \kb(V \times V^*)^{\D \Zrm(W)}$, 
we can identify the sets $G/H$ and $W$ (see~\S\ref{subsection:specialization galois 0}). 
So $G$ acts on the {\it set} $W$.

\bigskip

\begin{defi}\label{defi:CM}
A {\bfit Calogero-Moser $\rG$-cell} 
is an orbit of the inertia group $I_\rG$ in the {\bfit set} $W$. We will denote by 
$\sim^\calo_\rG$ \indexnot{ZZZ}{\sim_\rG^\calo}  the equivalence relation corresponding 
to the partition of $W$ into Calogero-Moser $\rG$-cells. 

The set of Calogero-Moser $\rG$-cells will be denoted by $\cmcellules_\rG(W)$. 
\indexnot{C}{{{{^\calo{\mathrm{Cells}}_\rG(W)}}}}
\end{defi}

\bigskip

Recall that $W$ can be identified with the set $\Hom_{P-\alg}(Q,R) = \Hom_{\Kb-\alg}(\Lb,\Mb)$. 
By Proposition~\ref{reduction}, 
if $w$ and $w'$ are two elements of $W$, then 
\equat\label{w sim}
\text{\it $w \sim^\calo_\rG w'$ if and only if $w(q) \equiv w'(q) \mod \rG$ 
for all $q \in Q$.}
\endequat

\bigskip

\begin{rema}\label{semicontinuite}
If $\rG$ and $\rG'$ are two prime ideals of $R$ such that $\rG \subset \rG'$, 
then $I_\rG \subset I_{\rG'}$ and so the Calogero-Moser $\rG'$-cells are unions 
of Calogero-Moser $\rG$-cells.\finl
\end{rema}

\begin{exemple}[Reflections of order 2]\label{cellules w0}
If all the reflections of $W$ have order $2$ and if $w_0=-\Id_V \in W$, 
then it follows from Proposition~\ref{tau 0 in G} that 
$G \subset \{\s \in \SG_W~|~\forall~w \in W,~\s(w_0w)=w_0\s(w)\}$. 
Consequently, if $\G$ is a Calogero-Moser $\rG$-cell, 
then $w_0\G=\G w_0$ is a Calogero-Moser $\rG$-cell.\finl 
\end{exemple}

\bigskip

The action of $G$ being compatible with the bigrading of $R$, 
the following result is not surprising.

\bigskip

\begin{prop}\label{prop:cellules-homogeneise}
Let $\Gamma$ be a finitely generated free abelian group and
$R =\bigoplus_{\gamma\in\Gamma} R_\gamma$ a $G$-stable $\Gamma$-grading on $R$.
Let $\rGt=\bigoplus_{\gamma\in\Gamma} \rG \cap R_\gamma$. 
Then $I_\rG=I_\rGt$, hence
the Calogero-Moser $\rGt$-cells and the Calogero-Moser $\rG$-cells 
coincide.
\end{prop}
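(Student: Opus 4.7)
The plan is to verify that $\rGt$ is a $G$-stable prime ideal of $R$, then to establish the equality $I_\rG = I_\rGt$ by checking both inclusions directly from the definitions. The key technical input is the observation that $\rGt \cap R_\gamma = \rG \cap R_\gamma$ for every $\gamma \in \Gamma$, which reduces to bookkeeping with homogeneous components.

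First I would check that $\rGt$ is a prime ideal of $R$ stable under $G$. It is an ideal because each $\rG \cap R_\gamma$ is closed under multiplication by $R_\delta$: if $a \in \rG \cap R_\gamma$ and $r \in R_\delta$, then $ra \in R_{\gamma+\delta} \cap \rG$. It is prime since $\Gamma$ is a finitely generated free abelian group, hence totally orderable, so the standard argument applies (picking minimal-degree components of $a, b \in R$ with $ab \in \rGt$ and invoking the primality of $\rG$). This is of the type of result gathered in Appendix~\ref{appendice graduation}. Finally, $\rGt$ is $G$-stable: any $g \in D_\rG$ preserves the grading and sends $\rGt$ to a homogeneous ideal contained in $\rG$, hence into $\rGt$, and equality follows by applying the same argument to $g^{-1}$. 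So $D_\rG \subseteq D_\rGt$.

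For the inclusion $I_\rGt \subseteq I_\rG$, take $g \in I_\rGt$. Then $g(a) - a \in \rGt \subseteq \rG$ for every $a \in R$, so $g$ stabilizes $\rG$ and acts trivially on $R/\rG$, i.e.\ $g \in I_\rG$. For the reverse inclusion, let $g \in I_\rG \subseteq D_\rG \subseteq D_\rGt$. Given a homogeneous element $a \in R_\gamma$, the element $g(a) - a$ lies in $R_\gamma$ (since $g$ preserves the grading) and in $\rG$ (since $g$ acts trivially on $R/\rG$), so
$$g(a) - a \in \rG \cap R_\gamma = \rGt \cap R_\gamma \subseteq \rGt.$$
Decomposing an arbitrary $a \in R$ into its homogeneous components, this shows $g(a) - a \in \rGt$ for all $a \in R$, hence $g \in I_\rGt$. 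The last assertion on cells then follows at once from the definition of Calogero-Moser cells as inertia orbits.

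The only mildly delicate point is the primality of $\rGt$ (used to ensure $I_\rGt$ is defined within the Galois-theoretic framework of Appendix~\ref{chapter:galois-rappels}), and this rests entirely on the torsion-freeness of $\Gamma$; the remainder of the argument is formal manipulation of the definitions.
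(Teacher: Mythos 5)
Your proof is correct and follows essentially the same route as the paper, which simply invokes Corollary~\ref{coro:homogeneise-inertie} (whose proof is exactly your two-inclusion argument: $I_\rGt\subset I_\rG$ because $R/\rG$ is a quotient of $R/\rGt$, and the reverse inclusion via homogeneous components using $\rG\cap R_\gamma\subset\rGt$), together with Lemma~\ref{lem:homogeneise-premier} for the primality of $\rGt$. The only cosmetic point is that $\rGt$ is stable under $D_\rG$ rather than under all of $G$, which is what your argument actually shows and is all that is needed.
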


\begin{proof}
This follows from Corollary~\ref{coro:homogeneise-inertie}.
\end{proof}

%\bigskip
%
%\begin{exemple}\label{exemple:graduation-stables}
%The Galois group $G$ stabilizing the natural bi-grading on $R$ (see Proposition~\ref{bigraduation sur R}), 
%it stabilizes all gradings induced by morphisms of monoid $\ph : \NM \times \NM \to \BZ$ 
%as in Section~\ref{section:graduation-1}.\finl
%\end{exemple}

\bigskip

\section{Blocks}\label{subsection:cellules et blocs}

\medskip

Given $w \in W$, we denote by $e_w \in \blocs(\Zrm(\Mb\Hb))$ \indexnot{ea}{e_w}  the central primitive idempotent of 
$\Mb\Hb$ (which is split semisimple by~(\ref{deployee})) 
associated with the simple module $\LC_w$. It is the unique central primitive idempotent 
of $\Mb\Hb$ which acts as the identity on the simple $\Mb\Hb$-module $\LC_w$.
Given
$b \in \blocs(\Zrm(R_\rG\Hb))$, we denote by $\calo_\rG(b)$ \indexnot{C}{\calo_\rG(b)} the 
unique subset of $W$ such that\equat\label{calo idempotent}
b=\sum_{w \in \calo_\rG(b)} e_w.
\endequat
In other words, the bijection $W \stackrel{\sim}{\longleftrightarrow} \Irr \Mb\Hb$ 
restricts to a bijection 
$\calo_\rG(b) \stackrel{\sim}{\longleftrightarrow} \Irr \Mb\Hb b$. 
It is clear that $(\calo_\rG(b))_{b \in \blocs(R_\rG \Hb)}$ is a partition 
of $W$. In fact, this partition coincides with the partition into 
Calogero-Moser $\rG$-cells.

\bigskip

\begin{theo}\label{theo:calogero}
Let $w$, $w' \in W$ and let $b$ and $b'$ be the central primitive idempotents 
of $R_\rG\Hb$ such that $w \in \calo_\rG(b)$ and $w' \in \calo_\rG(b')$. 
Then $w \sim_\calo^\rG w'$ if and only if $b=b'$.
\end{theo}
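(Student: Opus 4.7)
My plan is to combine the Galois-theoretic reformulation of cells in~(\ref{w sim}) with the classical identification of blocks via reductions of central characters from Appendix~\ref{appendice: blocs}.

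First I would note that each simple $\Mb\Hb$-module $\LC_w$ carries a central character $\omega_w:Z\to\Mb$ defined by $\omega_w(z)=w(\copie(z))$. Since $\copie(z)\in Q\subset R$ and $w\in W\hookrightarrow G$ acts on $R$ as an element of the Galois group, we actually have $\omega_w(Z)\subseteq R$; composing with the canonical surjection $R\to R/\rG$ produces $\bar\omega_w:Z\to R/\rG$.

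Next I would invoke the general block-theoretic fact from Appendix~\ref{appendice: blocs}: $R_\rG\Hb$ is free of finite rank over the local integral domain $R_\rG$, its generic fiber $\Mb\Hb$ is split semisimple by~(\ref{deployee}), and each central character $\omega_w$ takes values in the normal ring $R$. Under these hypotheses, two simples $\LC_w$ and $\LC_{w'}$ belong to the same block of $R_\rG\Hb$ if and only if $\bar\omega_w=\bar\omega_{w'}$. Unfolding this equality via the isomorphism $\copie:Z\longiso Q$ gives exactly $w(q)\equiv w'(q)\pmod{\rG}$ for every $q\in Q$, which by~(\ref{w sim}) is the definition of $w\sim^\calo_\rG w'$; the theorem follows.

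The main obstacle will be the invocation of the block-character correspondence from Appendix~\ref{appendice: blocs}: since $R_\rG$ is not henselian in general, lifting of idempotents from the residue fibre is not automatic, and one must appeal to the fact that the central characters $\omega_w$ already take integral values in the normal ring $R$ for the argument to conclude.
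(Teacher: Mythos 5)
Your proof is correct and follows essentially the same route as the paper's: identify the central character of $\LC_w$ as $z\mapsto w(\copie(z))$, observe via~(\ref{w sim}) that $w\sim^\calo_\rG w'$ exactly when these characters agree modulo $\rG$, and then invoke the block--central-character correspondence of Appendix~\ref{appendice: blocs} (Corollary~\ref{coro:r-blocs}), whose proof indeed rests on the splitness of $\Mb\Hb$ and the integrality of the central characters rather than on henselian lifting of idempotents, exactly as you point out.
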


\begin{proof}
Let $\o_w : \Zrm(R\Hb)=R \otimes_P Z \longto R$ \indexnot{ozz}{\o_w}  denote the central character 
associated with the simple $\Mb\Hb$-module $\LC_w$ (see \S~\ref{section centrale}). 
By the very definition of $\LC_w$, we have 
$$\o_w(r \otimes_P z)= r w(\copie(z))$$
for all $r \in R$ and $z \in Z$. Consequently, by~(\ref{w sim}), we have 
$w \sim_\calo^\rG w'$ if and only if $\o_w \equiv \o_{w'} \mod \rG$. 
The result follows now from Corollary~\ref{coro:r-blocs}.
\end{proof}

\bigskip

Via Proposition \ref{relevement idempotent}, we obtain bijections 
\equat\label{bij calogero}
\xymatrix{
\cmcellules_\rG(W) && \blocs(R_\rG Z) \ar[ll]_-\sim^-{\calo_\rG(b)\mapsfrom b } 
\ar[rr]^-\sim_-{b\mapsto\bba} &&
\blocs(k_R(\rG) Z)
}
\endequat
where $\bba$ denote the image of $b$ in $k_R(\rG)Z$. 

%qui n'est rien d'autre, \`a travers la bijection de la proposition~\ref{reduction}, 
%que la bijection $\blocs(R_\rG Q) \Hom_{P/\pG-\alg}(Q/\pG Q,R/\rG)$. 
Since $\Mb Z$ is the center of $\Mb\Hb$, the fact that $\Mb\Hb$ is split semisimple 
implies immediately that
\equat\label{dim center calogero}
\dim_\Mb(\Mb Z b)=|\calo_\rG(b)|.
\endequat
Recall that, since $Z$ is a direct summand of $\Hb$, the algebra
$k_R(\rG)Z$ can be identified 
with its image in $k_R(\rG) \Hb$. Note however that
this image might be different 
from the center of $k_R(\rG)\Hb$. %(voir l'exemple~\ref{exemple:Q pas center}). 

\bigskip

\begin{coro}\label{dim center r}
We have
$\dim k_R(\rG) Z \bba=|\calo_\rG(b)|$.
\end{coro}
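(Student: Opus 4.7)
The plan is to exploit the fact that $R_\rG Z = R_\rG\otimes_P Z$ is a \emph{free} $R_\rG$-module, and that the element $b$ lifts the residual idempotent $\bba$. More precisely, here is the outline.

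First, I would recall that by Corollary~\ref{coro:endo-bi}(e) (applied to the zero ideal of $\kb[\CCB]$, or directly from the rank computation in Corollary~\ref{coro:P-libre}), the $P$-module $Z$ is free of rank $|W|$. Consequently, after base change along the flat morphism $P\to R_\rG$, the $R_\rG$-module $R_\rG Z = R_\rG \otimes_P Z$ is free of rank $|W|$. Moreover, via the isomorphism of Lemma~\ref{lem:compo-irr-r} (equivalently~(\ref{W M})), we may regard $R_\rG Z$ as an $R_\rG$-subalgebra of $\Mb Z \cong \prod_{w\in W}\Mb$, in which the central primitive idempotent $e_w\in\Mb Z$ corresponds to the projection onto the $w$-th factor.

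Next, since $b\in R_\rG Z$ is an idempotent, one obtains a decomposition $R_\rG Z = R_\rG Z b \oplus R_\rG Z(1-b)$ as $R_\rG$-modules. Both summands are then finitely generated projective $R_\rG$-modules, and since $R_\rG$ is a local ring, they are in fact \emph{free}. Let $r$ denote the rank of the free $R_\rG$-module $R_\rG Zb$. Applying $\Mb\otimes_{R_\rG}-$ gives $\Mb^{r} \simeq \Mb\otimes_{R_\rG}R_\rG Z b = \Mb Z b$; but by~(\ref{dim center calogero}) we have $\dim_\Mb\Mb Zb = |\calo_\rG(b)|$, so that $r=|\calo_\rG(b)|$.

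Finally, I would base-change along $R_\rG\twoheadrightarrow k_R(\rG)$: since $\bba$ is by definition the image of $b$ in $k_R(\rG)Z = k_R(\rG)\otimes_{R_\rG}R_\rG Z$, one has $k_R(\rG)Z\bba = k_R(\rG)\otimes_{R_\rG} R_\rG Z b \simeq k_R(\rG)^{r}$, whence $\dim_{k_R(\rG)}k_R(\rG)Z\bba = r = |\calo_\rG(b)|$. The only step deserving any care is the identification $k_R(\rG)\otimes_{R_\rG}R_\rG Zb = k_R(\rG)Z\bba$, which follows tautologically once we note that idempotents in a local ring lift uniquely from the residue field, so $\bba$ is indeed the reduction of $b$ and their principal ideals match under the base change; no serious obstacle is anticipated here.
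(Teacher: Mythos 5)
Your proof is correct and follows essentially the same route as the paper: $R_\rG Z$ is free of rank $|W|$ over the local ring $R_\rG$, so $R_\rG Zb$ is a free direct summand whose rank is computed generically via~(\ref{dim center calogero}) to be $|\calo_\rG(b)|$, and reduction modulo $\rG$ gives the claim. The extra care you take with the identification $k_R(\rG)\otimes_{R_\rG}R_\rG Zb=k_R(\rG)Z\bba$ is harmless but not needed beyond the definition of $\bba$ as the image of $b$.
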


\begin{proof}
The $R_\rG$-module $R_\rG Z$ is free (of rank $|W|$), so the 
$R_\rG$-module $R_\rG Z b$ is projective, hence free since $R_\rG$ is local. 
By~(\ref{dim center calogero}), the $R_\rG$-rank of $R_\rG Z b$ 
is $|\calo_\rG(b)|$. The corollary follows.
\end{proof}

\bigskip

\begin{exemple}[Specialization]\label{specialization} 
Let $c \in \CCB$. Let $\rG_c$ be a prime ideal of $R$ lying over 
$\pG_c$ and, as in~\S\ref{subsection:specialization galois}, define
$D_c=G_{\rG_c}^D$ and $I_c=G_{\rG_c}^I$. 
Then $I_c=1$ by~(\ref{eq:net}), hence
$$\text{\it the Calogero-Moser $\rG_c$-cells are singletons.\finl}$$
\end{exemple}

\bigskip

\section{Ramification locus}

\medskip

Let $\rG_\ramif$ \indexnot{ra}{\rG_\ramif}  denote the defining ideal 
of the ramification locus of the finite morphism $\Spec(R) \to \Spec(P)$: in other words, 
$R$ is \'etale over $P$ at $\rG$ if and only if $\rG_\ramif \not\subset \rG$. 
Recall~\cite[Expos\'e~V, Corollaire~2.4]{sga} 
that the following assertions are equivalent:
\begin{itemize}
\item $R$ is \'etale over $P$ at $\rG$;
\item $I_\rG=1$;
\item
$R$ is unramified over $P$ at $\rG$.
\end{itemize}
As $G$ acts faithfully on $W$, 
we deduce the following result
(taking into account Theorem~\ref{theo:calogero}).

\bigskip

\begin{prop}\label{ramification inertie}
The following are equivalent:
\begin{itemize}
\itemth{1} $I_\rG \neq 1$.

\itemth{2} $R$ is not \'etale over $P$ at $\rG$.

\itemth{3} $R$ is ramified over $P$ at $\rG$.

\itemth{4} $\rG_\ramif \subset \rG$.

\itemth{5} $|\blocs(R_\rG Q)| < |W|$.
\end{itemize}
\end{prop}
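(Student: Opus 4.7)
The equivalences (1) $\Leftrightarrow$ (2) $\Leftrightarrow$ (3) $\Leftrightarrow$ (4) are already recorded in the discussion preceding the statement, relying on the classical characterization of étaleness at a prime in terms of unramifiedness and triviality of the inertia group (SGA, loc.\ cit.), together with the fact that the ramification locus is exactly $V(\rG_{\ramif})$. So the only substantive content is the equivalence (1) $\Leftrightarrow$ (5).

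My plan for (1) $\Leftrightarrow$ (5) is to go through the Calogero-Moser cells. By the bijections~(\ref{bij calogero}), the set $\cmcellules_\rG(W)$ is in bijection with $\blocs(R_\rG Z)$, and through the $P$-algebra isomorphism $\copie : Z \longiso Q$ this is also in bijection with $\blocs(R_\rG Q)$. Thus
\[
|\blocs(R_\rG Q)| \;=\; |\cmcellules_\rG(W)|,
\]
and since $\cmcellules_\rG(W)$ is by Definition~\ref{defi:CM} the set of $I_\rG$-orbits on the set $W$, this number is at most $|W|$, with equality if and only if every orbit of $I_\rG$ on $W$ is a singleton, i.e.\ $I_\rG$ acts trivially on the set $W$.

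The final step is to observe that since $G$ embeds into $\SG_W$ via (\ref{GHW}), the action of $G$ on $W$ is faithful; hence the subgroup $I_\rG$ acts trivially on $W$ if and only if $I_\rG = 1$. This yields $|\blocs(R_\rG Q)| < |W| \Leftrightarrow I_\rG \neq 1$, which is the equivalence (1) $\Leftrightarrow$ (5).

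There is really no obstacle here beyond keeping the bookkeeping straight between $Z$ and $Q$ via $\copie$, and between ``cells'' and ``blocks'' via (\ref{bij calogero}). Everything else is a direct application of what has already been established.
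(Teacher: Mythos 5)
Your proof is correct and follows essentially the same route as the paper: the equivalences (1)–(4) are quoted from the preceding discussion (SGA plus the definition of $\rG_\ramif$), and (1) $\Leftrightarrow$ (5) is obtained by identifying $\blocs(R_\rG Q)$ with the set of Calogero--Moser $\rG$-cells, i.e.\ the $I_\rG$-orbits on $W$, and invoking the faithfulness of the $G$-action on $W$. This is exactly the argument the paper intends via Theorem~\ref{theo:calogero}.
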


\bigskip

Note that $\rG_\ramif$ is not necessarily a prime ideal of $R$. However, 
the purity theorem~\cite[Expos\'e~X,~Th\'eor\`eme~3.1]{sga} implies that 
$\Spec(R/\rG_\ramif)$ is empty or of pure codimension $1$ in $\Spec(R)$ 
(since $R$ is integrally closed and $P$ is regular). 
By Corollary~\ref{r0 DI} and Proposition~\ref{ramification inertie}, 
the morphism $\Spec(R) \to \Spec(P)$ is not \'etale if $W{\not=}1$. Hence,
if $W{\not=}1$, we deduce that
\equat\label{codim 1 ramification}
\text{\it $\Spec(R/\rG_\ramif)$ is of pure codimension $1$ in $\Spec(R)$.}
\endequat
Of course, 
\equat\label{ramification stable}
\text{\it $\rG_\ramif$ is stable under the action of 
$\kb^\times \times\kb^\times \times \left((W^\wedge \times G) \rtimes \NC\right)$.}
\endequat
While it is difficult to determine the ideal $\rG_\ramif$ (we even do not know how to determine 
the ring $R$), the ideal $\pG_\ramif=\rG_\ramif \cap P$ is determined by
the extension $Q/P$. 
\indexnot{pa}{\pG_\ramif}  The following result is classical \cite[Proposition 4.10]{sga}.

\bigskip

\begin{lem}\label{discriminant ramification}
Let $\disc(Q/P)$ \indexnot{da}{\disc(D/P)}  denote the discriminant ideal of $Q$ in $P$. Then 
$\pG_\ramif=\sqrt{\disc(Q/P)}$. 
\end{lem}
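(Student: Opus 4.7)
The plan is to show that the radical ideals $\pG_\ramif$ and $\sqrt{\disc(Q/P)}$ define the same closed subset of $\Spec(P)$, namely the set of primes of $P$ that are ramified in $Q$. Radicality of $\pG_\ramif$ is automatic, since $\rG_\ramif$ is the reduced defining ideal of the ramification locus and contraction of radical ideals is radical. So it suffices to identify $V(\pG_\ramif)$ and $V(\disc(Q/P))$ as subsets of $\Spec(P)$.

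First I would invoke the classical characterization of the discriminant: a prime $\pG$ of $P$ contains $\disc(Q/P)$ if and only if the finite $k_P(\pG)$-algebra $Q\otimes_P k_P(\pG)$ fails to be \'etale, which in characteristic zero means exactly that there exists some prime $\qG$ of $Q$ above $\pG$ with ramification index $e(\qG/\pG) > 1$. On the other side, by definition of the contraction, $\pG \supseteq \pG_\ramif$ if and only if there exists some prime $\rG$ of $R$ above $\pG$ with $\rG \supseteq \rG_\ramif$, i.e.\ with $I_\rG \neq 1$, equivalently $e(\rG/\pG) > 1$.

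The heart of the argument is then the Galois-theoretic comparison. For $\rG$ a prime of $R$ above $\pG$ and $\qG = \rG \cap Q$, the Galois towers $\Mb/\Lb$ (group $H$) and $\Mb/\Kb$ (group $G$) give $e(\rG/\pG) = |I_\rG|$ and $e(\rG/\qG) = |I_\rG \cap H|$, whence
$$e(\qG/\pG) = [I_\rG : I_\rG \cap H].$$
Sweeping $\rG$ over its $G$-orbit exhausts all primes of $R$ above $\pG$, and the primes of $Q$ above $\pG$ arise as $g\rG \cap Q$ for $g$ ranging over representatives of $H\backslash G/D_\rG$. The condition that every prime of $Q$ above $\pG$ be unramified then reads $g I_\rG g^{-1} \subseteq H$ for all $g \in G$, equivalently $I_\rG \subseteq \bigcap_{g \in G} g^{-1} H g$. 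By the Galois-closure intersection property~(\ref{intersection}), this latter intersection is trivial, forcing $I_\rG = 1$ and hence unramifiedness of every prime of $R$ above $\pG$. The converse implication is immediate, since $I_\rG = 1$ trivially gives $I_\rG \subseteq H$ and hence $e(\qG/\pG) = 1$.

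The main (and very mild) obstacle is the Galois-closure reduction in the third paragraph, collapsing ``every prime of $Q$ above $\pG$ is unramified'' to ``every prime of $R$ above $\pG$ is unramified'' via the triviality of $\bigcap_{g} g^{-1}Hg$; the classical characterization of $V(\disc(Q/P))$ in terms of ramification can simply be cited from standard commutative algebra.
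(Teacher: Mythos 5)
Your argument is sound in outline, and it is worth saying up front that the paper does not actually prove this lemma: it simply cites \cite[Proposition 4.10]{sga}. So your self-contained derivation is a genuinely different route, and it is the standard one: identify both $V(\pG_\ramif)$ and $V(\disc(Q/P))$ with the set of primes $\pG$ of $P$ over which some prime of $Q$ (equivalently, of $R$) is ramified, the bridge between the two being the triviality of $\bigcap_{g\in G}gHg^{-1}$ coming from the fact that $\Mb$ is a Galois \emph{closure} of $\Lb/\Kb$. Note that one direction of your bridge is already in the paper: ``all $\qG$ above $\pG$ unramified $\Rightarrow$ $I_\rG\subseteq\bigcap_{g\in G}gHg^{-1}=1$'' is exactly Lemma~\ref{inertie} combined with~(\ref{intersection}).

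The one step you should tighten is the use of ramification indices. The identities $e(\rG/\pG)=|I_\rG|$, $e(\rG/\qG)=|I_\rG\cap H|$ and the multiplicativity in the tower are standard only at height-one primes, where the relevant localizations are discrete valuation rings and the ramification is tame (characteristic $0$); for primes of larger height the quantity $e$ is not well defined in the form you use it, so as written both directions of your comparison rest on an unjustified formula. This is repairable in two ways. Either reduce to height-one primes first: this is legitimate because both ideals are radical with all minimal primes of height one --- $\disc(Q/P)$ is principal since $Q$ is a free $P$-module, and $V(\pG_\ramif)$ is of pure codimension one as the image under the finite map $\Spec(R)\to\Spec(P)$ of the purely one-codimensional locus of~(\ref{codim 1 ramification}) --- so two such radical ideals agree as soon as they are contained in the same height-one primes. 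Or avoid indices altogether: use Lemma~\ref{inertie} and~(\ref{intersection}) for one implication, and for the converse use the equivalence ``$I_\rG=1$ $\Leftrightarrow$ $R$ is \'etale over $P$ at $\rG$'' applied both to $G$ and to $H$ (so that $R$ is also \'etale, in particular flat, over $Q$ at $\rG$), and then descend \'etaleness of $P\to Q$ at $\qG$ along the faithfully flat local map $Q_\qG\to R_\rG$.
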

% 
% \begin{proof}
% ... ???
% \end{proof}

\bigskip

\begin{rema}\label{autre preuve ramification}
We proved that $\Spec(R/\rG_\ramif)$ is of pure codimension $1$ in $\Spec(R)$ 
by using the purity theorem. Using the equivalence between (4) and (5) in 
Lemma~\ref{ramification inertie}, we obtain another proof using
Proposition~\ref{codimension un}.\finl
\end{rema}

\bigskip

% \bigskip
% 
% \section{Cellules \`a gauche}
% 
% \medskip
% 
% \boitegrise{\noindent{\bf Hypoth\`ese, terminologie.} 
% {\it Nous fixons dans cette section, et seulement dans cette section, un id\'eal premier 
% $\CG$ de $\kb[\CCB]$ et nous fixons aussi un id\'eal premier $\rG_\CG^\gauche$ au-dessus 
% de $\qG_\CG^\gauche$. Nous appellerons {\bfit $\CG$-cellule de Calogero-Moser \`a gauche} 
% toute $\rG_\CG^\gauche$-cellule de Calogero-Moser.}}{12cm}
% 
% \bigskip
% 
% Tout comme pour les cellules bilat\`eres, les $\CG$-cellules de Calogero-Moser 
% \`a gauche d\'ependent du choix de l'id\'eal premier $\rG_\CG^\gauche$ au-dessus de 
% $\qG_\CG^\gauche$. On peut aussi bien s\^ur parler de $\CG$-cellules \`a droite, 
% en partant d'un id\'eal premier $\rG_\CG^\droite$ au-dessus de $\qG_\CG^\droite$. 
% 
% 
% 

\section{Smoothness}\label{section:cellules et lissite}

\medskip
Extending scalars, we have the following consequence of Corollary \ref{morita lissite p}.

\bigskip

\begin{coro}\label{morita lissite r}
Assume $\zG_\singulier \cap P \not\subset \pG$.

The $(k_R(\rG)\Hb,k_R(\rG)Z)$-bimodule $k_R(\rG)\Hb e$ is both 
left and right projective and induces 
a Morita equivalence between $k_R(\rG)\Hb$ and $k_R(\rG)Z$. 
\end{coro}

\bigskip

\begin{theo}\label{lissite et simples}
Assume $\zG_\singulier \cap P \not\subset \pG$.
	
The $k_R(\rG)$-algebra $k_R(\rG)\Hb$ is split. Every block of $k_R(\rG)\Hb$ 
admits a unique simple module, which has dimension $|W|$.
In particular, the simple $k_R(\rG)\Hb$-modules are parametrized 
by the Calogero-Moser $\rG$-cells, that is, by the $I_\rG$-orbits in $W$.
\end{theo}

\begin{proof}
Let us first show that $k_R(\rG)Z = k_R(\rG) \otimes_P Z = k_R(\rG) \otimes_{P_\pG} Z_\pG$ 
is a split $k_R(\rG)$-algebra. Let $\zG_1$,\dots, $\zG_l$ 
be the prime ideals of $Z$ lying over $\pG$: in other words, 
$k_P(\pG)\zG_1$,\dots, $k_P(\pG)\zG_l$ are the prime (so, maximal) ideals 
of $k_P(\pG)Z = Z_\pG/\pG Z_\pG$. Then $k_P(\pG)(\zG_1 \cap \dots \cap \zG_l)$ 
is the radical $I$ of $k_P(\pG)Z$. Moreover, 
$$(k_P(\pG) Z)/I \simeq k_Z(\zG_1) \times \cdots \times k_Z(\zG_l).$$
%Hence,
%$$k_R(\rG) Z \simeq (k_R(\rG) \otimes_{k_P(\pG)} k_Z(\zG_1)) \times \cdots \times 
%(k_R(\rG) \otimes_{k_P(\pG)} k_Z(\zG_l)).$$
Since $k_R(\rG)$ is a Galois extension of $k_P(\pG)$
% (with group $D_\rG/I_\rG$) 
containing 
(the image through $\copie^{-1}$ of)
$k_Z(\zG_i)$, for all $i$,
it follows that $k_R(\rG) \otimes_{k_P(\pG)} k_Z(\zG_i)$ 
is a split $k_R(\rG)$-algebra (see Proposition~\ref{iso galois}). 
As a consequence, $k_R(\rG)Z$ is split. We deduce from
Corollary \ref{morita lissite r} that $k_R(\rG)\Hb$ is also split.

On the other hand, since $k_R(\rG)Z$ is commutative, every block of $k_R(\rG)Z$ 
admits a unique simple module. Using again the Morita equivalence, the same property holds 
for $k_R(\rG)\Hb$. Finally, as the projective $Z_\pG$-module $\Hb_\pG e$ has rank 
$|W|$, the same is true for the projective $k_R(\rG)Z$-module $k_R(\rG)\Hb e$, and so 
the simple $k_R(\rG)\Hb$-modules have dimension $|W|$.

The last statement of the theorem is now clear.
\end{proof}

\bigskip

\section{Geometry}
\label{se:geometryCMcells}

\medskip

By Lemma~\ref{lem:compo-irr-r}, the irreducible components of 
$\RCB \times_\PCB \ZCB$ are the
$$\RCB_w=\{(r,\copie^*(\r_H(w(r))))~|~r \in \RCB\},$$
where $w$ runs over $W$ and the morphism $\Upsilon_\RCB : \RCB \times_\PCB \ZCB \to \RCB$ 
obtained from $\Upsilon : \ZCB \to \PCB$ by base change induces an 
isomorphism between the irreducible component $\RCB_w$ and $\RCB$. 

Consequently, the inverse image through $\Upsilon_\RCB$ of the closed irreducible subvariety 
$\RCB(\rG)=\Spec(R/\rG)$ \indexnot{R}{\RCB(\rG)}  is a union of closed irreducible 
subvarieties 
\equat
\Upsilon^{-1}_\RCB(\RCB(\rG)) = \bigcup_{w \in W} \RCB_w(\rG),\indexnot{R}{\RCB_w(\rG)}
\endequat
where $\RCB_w(\rG) \simeq \RCB(\rG)$ is the inverse image of $\RCB(\rG)$ in 
$\RCB_w$. 

\bigskip

\begin{lem}\label{lem:cellules-geometrie}
Let $w$ and $w' \in W$. We have $\RCB_w(\rG)=\RCB_{w'}(\rG)$ if and only if 
$w \sim_\rG^\calo w'$.
\end{lem}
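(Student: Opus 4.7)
The plan is to identify each $\RCB_w(\rG)$ as the graph of an explicit morphism $\RCB(\rG) \to \ZCB$, and then observe that two graphs coincide precisely when the underlying morphisms coincide; this last condition will translate directly to the congruence characterization of $\sim_\rG^\calo$ given in~(\ref{w sim}).

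First, since $\Upsilon_\RCB$ restricts to an isomorphism $\RCB_w \longiso \RCB$ whose inverse is $r \mapsto (r,\copie^*(\r_H(w(r))))$, the closed subvariety $\RCB_w(\rG) \subseteq \RCB \times_\PCB \ZCB$ is the image of $\RCB(\rG)$ under this inverse. In other words, $\RCB_w(\rG)$ is the graph in $\RCB(\rG) \times_\PCB \ZCB$ of the morphism
$$\varphi_w : \RCB(\rG) \longto \ZCB,\qquad r \longmapsto \copie^*\bigl(\r_H(w(r))\bigr).$$
At the level of rings, $\varphi_w$ corresponds to the $\kb$-algebra morphism $Z \to R/\rG$, $z \mapsto w(\copie(z)) \mod \rG$, which is simply the restriction (through $\copie$) of $R \to R/\rG$ composed with $w$.

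Next, since $\RCB(\rG) = \Spec(R/\rG)$ is reduced and irreducible and the projection $\RCB(\rG) \times_\PCB \ZCB \to \RCB(\rG)$ restricts to an isomorphism on each graph $\RCB_w(\rG)$, the equality of two such graphs $\RCB_w(\rG) = \RCB_{w'}(\rG)$ holds if and only if $\varphi_w = \varphi_{w'}$ as morphisms. Dually, this amounts to the equality of ring maps $Z \to R/\rG$, that is,
$$w(\copie(z)) \equiv w'(\copie(z)) \pmod{\rG}\qquad\text{for all } z \in Z,$$
or equivalently, $w(q) \equiv w'(q) \pmod{\rG}$ for all $q \in Q$, since $\copie : Z \longiso Q$.

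Finally, this last condition is exactly the characterization of $w \sim_\rG^\calo w'$ recorded in~(\ref{w sim}), which itself is an application of Proposition~\ref{reduction} to the inertia group $I_\rG$ acting on the set $\Hom_{P\text{-}\alg}(Q,R)$ identified with $W$. The argument is essentially a translation between the geometric graph picture and the ring-theoretic definition of the cells; there is no real obstacle, the only point requiring a little care being to ensure that the isomorphism $\Upsilon_\RCB|_{\RCB_w}$ remains an isomorphism after base change to $\RCB(\rG)$, which is immediate from the fact that both $\RCB_w \to \RCB$ and $\RCB(\rG) \hookrightarrow \RCB$ are compatible closed immersions.
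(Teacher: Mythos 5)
Your proof is correct and follows essentially the same route as the paper's: both identify $\RCB_w(\rG)$ with the graph of $r \mapsto \copie^*(\r_H(w(r)))$ over $\RCB(\rG)$, reduce the equality of graphs to the equality of the ring maps $q \mapsto w(q) \bmod \rG$, and conclude by the characterization~(\ref{w sim}) of $\sim_\rG^\calo$. The extra care you take about the base-changed isomorphism is harmless but not needed beyond what the paper already records.
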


\begin{proof}
Indeed, $\RCB_w(\rG)=\RCB_{w'}(\rG)$ if and only if, for all $r \in \RCB(\rG)$, 
we have $\r_H(w(r))=\r_H(w'(r))$. Translated at the level of the rings $Q$ and $R$, this becomes 
equivalent to saying that, for all $q \in Q$, we have $w(q) \equiv w'(q) \mod \rG$. 
\end{proof}

\bigskip

In other words, Lemma~\ref{lem:cellules-geometrie} shows that the Calogero-Moser $\rG$-cells 
parametrize the irreducible components of the inverse image 
of  $\RCB(\rG)$ in the fiber product $\RCB \times_\PCB \ZCB$. 

\bigskip

\section{Topology}
\label{se:cellstopology}

We assume in \S \ref{se:cellstopology} that $\kb=\CM$ and we identify 
the schemes $\PCB$, $\QCB$ and $\RCB$ with their $\CM$-points and
we endow them with the usual complex topology. 
We fix for the remainder of the book
a pair $(v_\CM,v^*_\CM)\in V^\reg\times V^{*\reg}$\indexnot{vc}{v_\CM,v^*_\CM}.

\medskip

We denote by $\RCB^{\nr}$
the complement of the ramification locus of $\rho_G:\RCB\to \PCB$ and by
$\PCB^{\nr}$\indexnot{P(C)}{\PCB^{\nr}} its image under $\r_G$.

Let $\gamma:[0,1]\to \PCB$ be a path with $\gamma([0,1))\subset \PCB^{\nr}$ and
such that
$\gamma(0)=(0,W\cdot v_\CM,W\cdot v^*_\CM)$.
Given $w\in W$, there is a unique path $\gamma_w:[0,1]\to \QCB$ lifting
$\gamma$ and such that $\gamma_w(0)=(0,(w(v_\CM),v^*_\CM)\Delta W)$ 
(Lemma \ref{le:inertiechemins}).

\begin{defi}
We say that $w,w'\in W$ are in the same {\bfit Calogero-Moser $\gamma$-cell}
\indexnot{ga}{\gamma-\textrm{cell}}
if $\gamma_w(1)=\gamma_{w'}(1)$.
\end{defi}

The choice of the prime ideal $\rG_0$ 
(cf. \S \ref{subsection:specialization galois 0}) corresponds to the choice
of an irreducible component of $\rho_G^{-1}(\{0\}\times V/W \times V^*/W)$.
The isomorphism $\iso$ extends to an isomorphism
$\kb[V^\reg\times V^{*\reg}]^{\Delta \Zrm(W)}\xrightarrow{\sim}(R/\rG_0)
\otimes_{\kb[V\times V^*]^{W\times W}}\kb[V^\reg\times V^{*\reg}]^{W\times W}$.
We have a corresponding isomorphism of varieties 
$$\Spec(R/\rG_0)\times_{V/W\times V^*/W}(V^\reg/W\times V^{*\reg}/W)
\xrightarrow{\sim} (V^\reg\times V^{*\reg})/\Delta \Zrm(W).$$
We denote by $y_0$ \indexnot{y}{y_0}
the point of the component that is the inverse image
of $(v_\CM,v^*_\CM)\Delta \Zrm(W)$.
Let $x_0=\rho_G(y_0)$. The choice
of $y_0$ provides a bijection 
$H\setminus G\xrightarrow{\sim}\Upsilon^{-1}(x_0),\
Hg\mapsto \rho_H(g\cdot y_0)$.
The bijection $W\xrightarrow{\sim}H\setminus G
\xrightarrow{\sim}\Upsilon^{-1}(x_0)$ is given by
$w\mapsto (w(v_\CM),v^*_\CM)\Delta W$.

Let $y_1$ be a point of $\RCB$ that lies in the
irreducible component determined by $\rG$ and satisfies
$\Stab_G(y_1)=G_{\rG}^I$.

\smallskip
We fix a path $\tilde{\gamma}:[0,1]\to \RCB$ such that 
$\tilde{\gamma}([0,1))\subset \RCB^{\nr}$, $\tilde{\gamma}(0)=y_0$ and
$\tilde{\gamma}(1)=y_1$. We denote by $\gamma$ the image of $\tilde{\gamma}$
in $\PCB$.

\medskip
From \S \ref{se:topologyinertia} we deduce the following result.

\begin{prop}
\label{pr:rgammacells}
Two elements $w,w'\in W$ are in the same Calogero-Moser $\rG$-cell if
and only if they are in the same Calogero-Moser $\gamma$-cell.
\end{prop}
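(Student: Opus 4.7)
The plan is to reduce the statement to the topological characterization of inertia groups developed in Appendix~\S\ref{se:topologyinertia}. The key observation is that lifts of $\gamma$ to $\bar{Y}$ arise from $G$-equivariant lifts to the Galois cover $Y\to X$: since $\rho_G$ is a $G$-invariant morphism that is étale on $Y^{\nr}\to X^{\nr}$, uniqueness of path lifting shows that for every $g\in G$ the lift of $\gamma$ starting at $g\cdot y_0$ is the $G$-translate $g\cdot\tilde{\gamma}$, ending at $g\cdot y_1$.

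Projecting along $\rho_H:Y\to\bar{Y}$, the path $\rho_H\circ(w\cdot\tilde{\gamma})$ is then a lift of $\gamma$ to $\bar{Y}$ starting at $\rho_H(w\cdot y_0)=(w(v_\CM),v^*_\CM)\Delta W$, so by uniqueness of path lifts (applicable because $\gamma([0,1))$ avoids the branch locus of $\Upsilon$) it must coincide with $\gamma_w$. Hence $\gamma_w(1)=\rho_H(w\cdot y_1)$, and two elements $w,w'\in W$ lie in the same $\gamma$-cell if and only if $w\cdot y_1$ and $w'\cdot y_1$ lie in the same $H$-orbit on $Y$; using $\Stab_G(y_1)=I_\rG$, this amounts to the existence of $h\in H$ and $i\in I_\rG$ with $hw=w'i$, i.e., to $w'\in Hw I_\rG$.

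Finally, I would invoke the dictionary of Appendix~\S\ref{se:topologyinertia} between topological monodromy and ramification to recognize this as the Calogero-Moser $\rG$-cell relation of Definition~\ref{defi:CM} (equivalently, of~(\ref{w sim})). Concretely, that dictionary identifies $I_\rG$ with the monodromy group of $Y^{\nr}\to X^{\nr}$ along a small loop around the divisor cut out by $\rG$ and, under the identification of $W$ with $\Upsilon^{-1}(x_0)$ via $w\mapsto\rho_H(w\cdot y_0)$, translates the partition by ``same endpoint of the $\gamma$-lift'' into the partition of $W\simeq G/H$ by $I_\rG$-orbits. The main obstacle is precisely this compatibility of conventions: the algebraic definition expresses cells via the left $I_\rG$-action on $G/H$, while the topological analysis above produces $(H,I_\rG)$-double cosets in $G$, and equality of the two partitions of $W$ relies on carefully tracing the bijections $W\xrightarrow{\sim}G/H$ and $W\xrightarrow{\sim}H\setminus G$ set up in Chapter~\ref{chapter:galois-CM} against the appendix's path-lifting description of $I_\rG$. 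Once that identification is granted, the proposition follows immediately.
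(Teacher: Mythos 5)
Your reduction to Lemma~\ref{le:inertiechemins} is the right strategy — it is exactly the route the paper takes — but the argument is not complete, and the "obstacle" you flag at the end is not a formality you may grant yourself: it is the crux, and your intermediate computation, taken at face value, would make the proposition false. The step that goes wrong is the identification $\rho_H(w\cdot y_0)=(w(v_\CM),v^*_\CM)\Delta W$. With the paper's conventions, $\iota(w_1,w_2)$ acts on the function field by $(y,x)\mapsto(w_1(y),w_2(x))$, hence on points of $(V^\reg\times V^{*\reg})/\D\Zrm(W)$ by $(a,b)\mapsto(w_2(a),w_1(b))$ (this is the content of Remark~\ref{rema:action-dif}), so that
$$\rho_H\bigl(\iota(w,1)\cdot y_0\bigr)=(v_\CM,w(v^*_\CM))\Delta W=(w^{-1}(v_\CM),v^*_\CM)\Delta W .$$
Thus the bijection $W\xrightarrow{\sim}\Upsilon^{-1}(x_0)$, $w\mapsto(w(v_\CM),v^*_\CM)\Delta W$, factors as $w\mapsto Hw^{-1}\mapsto\rho_H(w^{-1}\cdot y_0)$, not as $w\mapsto Hw$. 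Consequently $\gamma_w=\rho_H(w^{-1}\cdot\tilde{\gamma})$, and Lemma~\ref{le:inertiechemins} yields: $\gamma_w(1)=\gamma_{w'}(1)$ if and only if $Hw'^{-1}\in(Hw^{-1})\cdot I_\rG$, i.e.\ $w'\in I_\rG wH$ — which is precisely the $I_\rG$-orbit of $wH$ for the left-translation action of $G$ on $G/H\simeq W$, i.e.\ the Calogero-Moser $\rG$-cell of Definition~\ref{defi:CM}. With your reading one obtains instead the classes $W\cap HwI_\rG$; since the partitions of $G$ into $(H,I_\rG)$- and $(I_\rG,H)$-double cosets differ in general, the two descriptions cannot be reconciled without the inversion above, so the gap is genuine and must be closed by this bookkeeping rather than "granted".

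A secondary inaccuracy: Appendix~\S\ref{se:topologyinertia} does not identify $I_\rG$ with the monodromy of a small loop around the divisor cut out by $\rG$. It fixes a path $\tilde{\gamma}$ from $y_0$ to a point $y_1$ of the component determined by $\rG$ with $\Stab_G(y_1)=G_\rG^I$, and reads off the right $I_\rG$-orbits on $H\setminus G$ from the endpoints of the lifts of $\gamma$, using the explicit decomposition of $\pi^{-1}(\gamma([0,1]))$ into irreducible components. That is the statement you need to quote; the loop-monodromy picture computes a possibly different (local fundamental) group and is not what is used here.
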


\part{Cells and families}\label{part:verma}

\bigskip

\begin{defivide}
Fix a prime ideal $\rGba_\CG$ (resp. $\rG_\CG^\gauche$, resp. $\rG_\CG^\droite$) 
of $R$ lying over $\pGba_\CG$ (resp. $\pG_\CG^\gauche$, resp. $\pG_\CG^\droite$).  
\indexnot{ra}{\rGba_\CG, \rG_\CG^\gauche, \rG_\CG^\droite}   
\indexnot{ra}{\rGba, \rG^\gauche, \rG^\droite, \rGba_c, \rG_c^\gauche, \rG_c^\droite}  
A {\bfit Calogero-Moser two-sided (resp. left, resp. right) $\CG$-cell} is
defined to be an
$\rGba_\CG$-cell (resp. $\rG_\CG^\gauche$-cell, resp. $\rG_\CG^\droite$-cell).

When $\CG=0$, they will also be called {\bfit generic Calogero-Moser 
	(two-sided, left or right) cells} and we write $\rGba$ (resp. 
	$\rG^\gauche$, resp. $\rG^\droite$) for $\rGba_\CG$ (resp. $\rG_\CG^\gauche$, resp. $\rG_\CG^\droite$).
	Given $c \in \CCB$ and $\CG=\CG_c$, 
they are called {\bfit Calogero-Moser (two-sided, left or right) $c$-cells}
	 and we write $\rGba_c$ (resp. 
	$\rG_c^\gauche$, resp. $\rG_c^\droite$) for $\rGba_{\CG_c}$
	(resp. $\rG_{\CG_c}^\gauche$, resp. $\rG_{\CG_c}^\droite$).
\end{defivide}

\bigskip

\begin{remavide}\label{rem:dependance-r}
Of course, the notion of Calogero-Moser (two-sided, left, or
right) $\CG$-cell depends on the choice of the ideal 
$\rGba_\CG$, $\rG_\CG^\gauche$ or $\rG_\CG^\droite$; however, 
as all the prime ideals of $R$ lying over a prime ideal of $P$ are $G$-conjugate, 
changing the ideal amounts to transforming the cells according to the action of $G$.\finl
\end{remavide}

\bigskip

\begin{remavide}[Semi-continuity]\label{rem:semicontinuite}
It is of course possible to choose the ideals $\rGba_\CG$, $\rG_\CG^\gauche$ 
or $\rG_\CG^\droite$ so that $\rGba_\CG$ contains $\rG_\CG^\gauche$ and $\rG_\CG^\droite$: 
in this case, by Remark~\ref{semicontinuite}, 
the Calogero-Moser two-sided $\CG$-cells are unions of Calogero-Moser left (resp. right) 
 $\CG$-cells.

Similarly, if $\CG'$ is another prime ideal of $\kb[\CCB]$ such that $\CG \subset \CG'$, then 
one can choose the ideals $\rGba_{\CG'}$, $\rG_{\CG'}^\gauche$ or $\rG_{\CG'}^\droite$ 
in such a way that they contain respectively $\rGba_\CG$, $\rG_\CG^\gauche$ 
or $\rG_\CG^\droite$. Then the Calogero-Moser two-sided (resp. left, resp. right) 
$\CG'$-cells are unions of Calogero-Moser two-sided (resp. left, resp. right) 
$\CG$-cells.\finl
\end{remavide}

\bigskip

With the definition of Calogero-Moser two-sided, left or right cells given above, 
this completes the first aim
of this book. The aim of this part is now to study these 
particular cells, in relation with the representation theory of $\Hb$: 
in each case, a family of {\it Verma modules} will help us in this study. More precisely:
\begin{itemize}
\item In Chapter~\ref{chapter:bilatere}, we will associate a {\it Calogero-Moser family} 
with each two-sided cell: the Calogero-Moser families define a partition of 
$\Irr(W)$.

\item In Chapter~\ref{chapter:gauche}, we will associate a {\it
Calogero-Moser cellular character} with each left cell.
\end{itemize}
We conjecture that, whenever $W$ is a Coxeter group, all these notions coincide 
with the analogous notions defined by Kazhdan-Lusztig in the framework of Coxeter groups. 
The conjectures will be stated precisely in Chapter~\ref{part:coxeter} 
and some
evidence will be given (see \S\ref{chapter:arguments}).

%%%%%%%%%%%%%%%%%%%%%%%%%%%%%%%%%%%%%%%%%%%%%%%%%%%%%%%%%
%%%%%%%%%%%%%%%%%%%%%%%%%%%%%%%%%%%%%%%%%%%%%%%%%%%%%%%%%
%%%%%%%%%%%%%%%%%%%%%%%%%%%%%%%%%%%%%%%%%%%%%%%%%%%%%%%%%
%%%%%%%%%%%%%%%%%%%%%%%%%%%%%%%%%%%%%%%%%%%%%%%%%%%%%%%%%
%%%%%%%%%%%%%%%%%%%%%%%%%%%%%%%%%%%%%%%%%%%%%%%%%%%%%%%%%
%%%%%%%%%%%%%%%%%%%%%%%%%%%%%%%%%%%%%%%%%%%%%%%%%%%%%%%%%
%%%%%%%%%%%%%%%%%%%%%%%%%%%%%%%%%%%%%%%%%%%%%%%%%%%%%%%%%
%%%%%%%%%%%%%%%%%%%%%%%%%%%%%%%%%%%%%%%%%%%%%%%%%%%%%%%%%
%%%%%%%%%%%%%%%%%%%%%%%%%%%%%%%%%%%%%%%%%%%%%%%%%%%%%%%%%
%%%%%%%%%%%%%%%%%%%%%%%%%%%%%%%%%%%%%%%%%%%%%%%%%%%%%%%%%
%%%%%%%%%%%%%%%%%%%%%%%%%%%%%%%%%%%%%%%%%%%%%%%%%%%%%%%%%
%%%%%%%%%%%%%%%%%%%%%%%%%%%%%%%%%%%%%%%%%%%%%%%%%%%%%%%%%
%%%%%%%%%%%%%%%%%%%%%%%%%%%%%%%%%%%%%%%%%%%%%%%%%%%%%%%%%
%%%%%%%%%%%%%%%%%%%%%%%%%%%%%%%%%%%%%%%%%%%%%%%%%%%%%%%%%
%%%%%%%%%%%%%%%%%%%%%%%%%%%%%%%%%%%%%%%%%%%%%%%%%%%%%%%%%
%%%%%%%%%%%%%%%%%%%%%%%%%%%%%%%%%%%%%%%%%%%%%%%%%%%%%%%%%
%%%%%%%%%%%%%%%%%%%%%%%%%%%%%%%%%%%%%%%%%%%%%%%%%%%%%%%%%
%%%%%%%%%%%%%%%%%%%%%%%%%%%%%%%%%%%%%%%%%%%%%%%%%%%%%%%%%

\chapter{Calogero-Moser two sided cells}\label{chapter:bilatere}

\bigskip

In this chapter, we fix a prime ideal $\CG$ of $\kb[\CCB]$.
We study the relation between Calogero-Moser two-sided $\CG$-cells and 
$\CG$-families.

\bigskip

\section{Choices}\label{section:premiers}

\medskip

In order to relate the Calogero-Moser two-sided cells with the Kazhdan-Lusztig ones, 
we need to make an appropriate choice of a prime ideal $\rGba_\CG$ of $R$ lying over 
$\pGba_\CG$. We do not have a general procedure to make this choice (see
\S\ref{se:cellsandcharacters} for a discussion in the case of Coxeter groups).

% L'anneau $R$ est construit de fa\c{c}on th\'eorique mais il est tr\`es difficile 
% \`a calculer. En particulier, on ma\^{\i}trise tr\`es mal son spectre. 
% En revanche, on a une meilleure compr\'ehension du spectre de $Q \simeq Z$ et surtout de $P$ 
% (rappelons que $\PCB=\CCB \times V/W \times V^*/W$). Nous allons ici 
% fixer (et souvent reprendre) quelques notations concernant certains id\'eaux 
% premiers de $P$ et $Q \simeq Z$, en nous int\'eressant plus particuli\`erement 
% \`a ceux qui contiennent $\pGba$~: notre objectif est alors de relier 
% la notion de cellules de Calogero-Moser avec celle de famille de Calogero-Moser 
% (voir la section suivante~\ref{cellules et familles}). 

\medskip

Recall that $\pGba$ denotes the prime ideal of $P$ corresponding to the closed irreducible 
subvariety $\CCB \times \{0\} \times \{0\}$ of $\PCB$ (c.f. page~\pageref{notationscellsVerma}). 
There are several prime ideals of $Z$ lying over $\pGba$. They are described in 
Lemma~\ref{caracterization blocs CM}, which says that they are in bijection
with the set of generic Calogero-Moser families of $W$. Recall also that
(Example~\ref{lineaire}) the trivial character $\unb_W$ of $W$ is itself a generic 
Calogero-Moser family. We denote by $\zGba$  \indexnot{z}{\zGba}  
the associated prime ideal of $Z$:
$$\zGba=\Ker(\O_{\unb_W}).$$
We put $\qGba=\copie(\zGba)$.  \indexnot{q}{\qGba}  
%C'est un choix parfaitement arbitraire, mais nous l'assumons~!%\hskip1cm \rigolo

\bigskip

\begin{lem}\label{qba}
The ideal $\qGba$ of $Q$ is the unique prime ideal lying over $\pGba$ and containing 
$\eulerq-\sum_{H \in \AC} e_H K_{H,0}$.
% (recall that $\eulerq=\copie(\euler)$). 
The algebra $Q$ is \'etale over $P$ at $\qGba$.
\end{lem}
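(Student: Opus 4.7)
The plan is to unwind the definition of $\zGba=\Ker(\O_{\unb_W})$ and translate everything across the isomorphism $\copie:Z\longiso Q$, so that both statements reduce to statements about $\zGba\subset Z$.

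First, I would evaluate the central character $\O_{\unb_W}$ on the Euler element. By Proposition~\ref{action euler verma}, $\O_{\unb_W}(\euler)=C_{\unb_W}=\sum_{s\in\Ref(W)}\e(s)C_s$, which equals $\sum_{H\in\AC}e_HK_{H,0}$ by~(\ref{escs}). Hence $\euler-\sum_He_HK_{H,0}\in\zGba$, and applying $\copie$ (which is the identity on $P$, and in particular on the element $\sum_He_HK_{H,0}\in\kb[\CCB]\subset P$) gives $\eulerq-\sum_He_HK_{H,0}\in\qGba$, proving that $\qGba$ is one such ideal.

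For uniqueness, by Lemma~\ref{caracterization blocs CM} every prime of $Z$ lying over $\pGba$ is of the form $\Ker(\O_\chi)$ for some $\chi\in\Irr(W)$, and $C_\chi=\O_\chi(\euler)$ is constant on generic Calogero-Moser families. So the uniqueness reduces to showing: if $C_\chi=C_{\unb_W}$, then $\chi=\unb_W$. Using the defining formula $C_\chi=\frac{1}{\dim E_\chi}\sum_{s\in\Ref(W)}\e(s)\Tr(s,E_\chi)C_s$ together with the fact that $(C_s)_{s\in\refw}$ is a basis of $\CCB^*$, the equality $C_\chi=C_{\unb_W}$ forces $\Tr(s,E_\chi)=\dim E_\chi$ for every reflection $s$; since $W$ is generated by $\Ref(W)$ this means $W$ acts trivially on $E_\chi$, i.e.\ $\chi=\unb_W$. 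The corresponding uniqueness statement for $\qGba=\copie(\zGba)$ follows by transport of structure.

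For the second assertion, since the above identifies $\zGba$ with $\Ker(\O_\g)$ for the linear character $\g=\unb_W$, Corollary~\ref{multiplicite 1} states exactly that $Z$ is unramified over $P$ at $\zGba$. Transporting via $\copie$, the algebra $Q$ is unramified over $P$ at $\qGba$; combined with the fact that $Q$ is finite and flat over $P$ (Corollary~\ref{coro:endo-bi}(e) via $\copie$), this yields étaleness at $\qGba$. No step presents a real obstacle: the argument is essentially the linear algebra identity $C_{\unb_W}=\sum_He_HK_{H,0}$ plus the already-established étaleness criterion for linear characters.
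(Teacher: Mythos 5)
Your proof is correct and follows essentially the same route as the paper: membership via $\O_{\unb_W}(\euler)=C_{\unb_W}=\sum_{H}e_HK_{H,0}$, uniqueness by reducing to $\Ker(\O_\chi)$ via Lemma~\ref{caracterization blocs CM} and showing $C_\chi=C_{\unb_W}$ forces $\chi=\unb_W$ (the paper simply cites Example~\ref{lineaire} for this step, which you re-derive inline), and \'etaleness from Corollary~\ref{multiplicite 1} together with flatness of $Q$ over $P$ and separability in characteristic zero. No gaps.
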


\begin{proof}
It follows from Corollary~\ref{multiplicite 1} that $Q$ is unramified over $P$ at $\qGba$: 
as $Q$ is a free (hence flat) $P$-module and since, in characteristic zero, 
all the field extensions are separable, we deduce that 
$Q$ is \'etale over $P$ at $\qGba$. The fact that $\euler -\sum_{H \in \AC} e_H K_{H,0} \in \zGba$ 
follows from Corollary~\ref{action euler verma}. 
Now, if $\zGba'$ is a prime ideal of $Z$ lying over $\pGba$ and containing 
$\euler-\sum_{H \in \AC} e_H K_{H,0}$, 
then there exists $\chi \in \Irr(W)$ such that $\zGba'=\Ker(\O_\chi)$. 
In particular, $\O_\chi(\euler)=\sum_{H \in \AC} e_H K_{H,0}$ and so 
$\O_\chi(\euler)=\O_{\unb_W}(\euler)$. It has been shown in Example~\ref{lineaire} 
that this implies $\chi=\unb_W$.
\end{proof}

\bigskip

Let us use now the notation of \S\ref{chapter:bebe-verma}.
% Fixons maintenant un id\'eal premier $\CG$ de $\kb[\CCB]$ (nous noterons 
% $\CCB_\CG$ le sous-sch\'ema ferm\'e irr\'eductible de $\CCB$ correspondant).
% Nous noterons $\pGba_\CG$ l'id\'eal premier $\pGba + \CG P$ de $P$~: 
% il correspond au sous-sch\'ema ferm\'e irr\'eductible 
% $\CCB_\CG \times \{0\} \times \{0\}$ de $\PCB$. Si $c \in \CCB$, 
% l'id\'eal premier $\pGba_{\CG_c}$ sera not\'e $\pGba_c$~: 
% il correspond au point ferm\'e $\kb$-rationnel $(c,0,0)$ de $\PCB$. 
% 
Lemma~\ref{caracterization blocs CM} says that the set of prime ideals of $Q$ 
lying over $\pGba_\CG$ is in bijection with the set of Calogero-Moser 
$\CG$-families. Let $\zGba_\CG$ denote the prime ideal corresponding to the 
$\CG$-family containing the trivial character $\unb_W$ of $W$:
$$\zGba_\CG=\Ker(\O_{\unb_W}^{\CG}).$$  \indexnot{z}{\zGba_\CG}  
We set $\qGba_\CG=\copie(\zGba_\CG)$.   \indexnot{q}{\qGba_\CG}  

\bigskip

\begin{lem}\label{qba c}
We have $\qGba_\CG=\qGba + \CG Q$.
%  Si de plus $c$ est g\'en\'erique, alors 
% $Q$ est nette sur $P$ en $\qGba_c$.
\end{lem}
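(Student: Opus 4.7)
The plan is to prove the corresponding statement on the $Z$-side, namely that $\zGba_\CG = \zGba + \CG Z$, and then transport it to $Q$ via the $P$-algebra isomorphism $\copie$. Since $\copie$ is $P$-linear, it fixes the subalgebra $\kb[\CCB] \subseteq P$ pointwise, so it sends $\zGba + \CG Z$ to $\qGba + \CG Q$ and $\zGba_\CG$ to $\qGba_\CG$, giving the desired equality.

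The key observation is that $\O_{\unb_W}\colon Z \to \kb[\CCB]$ is a morphism of $\kb[\CCB]$-algebras: indeed, by its definition in \S\ref{se:caseT=0} as the composition $(\id_{\kb[\CCB]} \otimes \o_{\unb_W}) \circ \Omeb$, it is $\kb[\CCB]$-linear, and by Corollary~\ref{coro:omega-mult} it is a morphism of algebras. In particular, $\O_{\unb_W}$ restricts to the identity on $\kb[\CCB]$, and therefore it is surjective and is a retraction of the inclusion $\kb[\CCB] \hookrightarrow Z$. Moreover, since $\kb[\CCB]/\CG$ is a domain, the composed map $\th_\CG\colon \kb[\CCB] \to \kb(\CG)$ has kernel exactly $\CG$. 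Consequently,
\[
\zGba_\CG = \Ker(\O_{\unb_W}^\CG) = \Ker(\th_\CG \circ \O_{\unb_W}) = \O_{\unb_W}^{-1}(\CG).
\]

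From this description, both inclusions are easy. For $\supseteq$: $\zGba = \Ker(\O_{\unb_W}) \subseteq \O_{\unb_W}^{-1}(\CG)$, and since $\O_{\unb_W}$ is $\kb[\CCB]$-linear, it sends $\CG Z$ into $\CG$, so $\CG Z \subseteq \O_{\unb_W}^{-1}(\CG)$ as well. For $\subseteq$: given $z \in \zGba_\CG$, set $f = \O_{\unb_W}(z) \in \CG \subseteq \kb[\CCB] \subseteq Z$; since $\O_{\unb_W}$ fixes $\kb[\CCB]$ pointwise, $\O_{\unb_W}(z - f) = f - f = 0$, hence $z - f \in \zGba$, and writing $z = (z - f) + f$ exhibits $z$ as an element of $\zGba + \CG Z$.

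There is essentially no obstacle: the entire argument rests on the fact that $\O_{\unb_W}$ is a retraction of $\kb[\CCB] \hookrightarrow Z$, which already follows from material established earlier in the excerpt. The only care needed is in verifying that transferring via $\copie$ is legitimate, and this is immediate from the $P$-linearity of $\copie$.
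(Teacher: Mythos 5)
Your proof is correct and rests on exactly the same key fact as the paper's: that $\O_{\unb_W}$ is a retraction of $\kb[\CCB]\hookrightarrow Z$ with kernel $\zGba$ (equivalently, that it induces $Z/\zGba\longiso\kb[\CCB]$), so that $\zGba_\CG=\O_{\unb_W}^{-1}(\CG)=\zGba+\CG Z$, and the result transports to $Q$ via the $P$-algebra isomorphism $\copie$. You merely spell out the two inclusions more explicitly than the paper does; no substantive difference.
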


\begin{proof}
%La nettet\'e d\'ecoule du corollaire~\ref{multiplicite 1 generique}. 
The morphism $\O_{\unb_W} : Z \to \kb[\CCB]$ induces an isomorphism 
$Z/\zGba \longiso \kb[\CCB]$.
Since $\zGba_\CG$ contains $\CG$, it follows that $\zGba+\CG Z$ 
is a prime ideal of $Z$, corresponding to the closed irreducible subscheme 
$\CCB_\CG$ of $\CCB$. 
\end{proof}

\bigskip

\begin{coro}\label{qba c plus}
The ideal $\qGba_\CG$ of $Q$ is the unique prime ideal lying over $\pGba_\CG$ and
containing  $\eulerq-\sum_{H \in \AC} e_H K_{H,0}$. 
\end{coro}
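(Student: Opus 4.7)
For existence, Lemma~\ref{qba c} gives $\qGba_\CG=\qGba+\CG Q$; hence $\qGba_\CG$ lies over $\pGba_\CG$ (using that $\qGba\cap P=\pGba$ and $\CG Q\cap P=\CG P$ by the freeness of $Q$ over $P$), and it contains $\eulerq-\sum_{H\in\AC}e_HK_{H,0}$ because $\qGba$ does so by Lemma~\ref{qba}. The real content is uniqueness.

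Let $\qG$ be any prime of $Q$ over $\pGba_\CG$ containing $\eulerq-\sum_He_HK_{H,0}$, and set $\zG=\copie^{-1}(\qG)$. Lemma~\ref{caracterization blocs CM} allows us to write $\zG=\Ker(\O_\chi^\CG)$ for some $\chi\in\Irr(W)$, and the Euler condition reads $C_\chi\equiv C_{\unb_W}\pmod\CG$. My plan is to deduce that $\chi$ lies in the Calogero-Moser $\CG$-family of $\unb_W$, which will yield $\zG=\zGba_\CG$ and hence $\qG=\qGba_\CG$. The two main ingredients will be the étaleness of $Q$ over $P$ at $\qGba$ from Lemma~\ref{qba} (which, combined with the isomorphism $Q/\qGba\cong P/\pGba\cong\kb[\CCB]$, exhibits $\Spec(Q/\qGba)$ as a section of $\Upsilon$ over $\Spec(P/\pGba)$) and a Hensel-type factorization of the minimal polynomial of $\euler$.

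For the factorization, Proposition~\ref{pr:minimal-Euler} and Corollary~\ref{polynome caracteristique} give that the minimal polynomial of $\euler$ reduces modulo $\pGba$ to $\prod_{\chi'\in\Irr(W)}(\tb-C_{\chi'})^{\chi'(1)^2}$, and by Example~\ref{lineaire} the factor $\tb-C_{\unb_W}$ appears with multiplicity exactly one. Setting $g=\prod_{\chi'\neq\unb_W}(\euler-C_{\chi'})^{\chi'(1)^2}\in Z$, Cayley-Hamilton gives $(\euler-C_{\unb_W})g\in\pGba Z$, while the image of $g$ in $Z/\zGba\cong\kb[\CCB]$ is the nonzero element $c:=\prod_{\chi'\neq\unb_W}(C_{\unb_W}-C_{\chi'})^{\chi'(1)^2}$. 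In the generic case $c\notin\CG$, the element $g$ does not lie in $\qGba_\CG$; inverting it on the Zariski open neighborhood of $\qGba$ on which it is nonvanishing, the relation $(\eulerq-C_{\unb_W})g\in\pGba Q$ combined with the étale section $\Spec(Q/\qGba)\xrightarrow{\sim}\Spec(P/\pGba)$ identifies $\qG$ with the unique lift $\qGba_\CG$ of $\pGba_\CG$.

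The hard part will be the degenerate case $c\in\CG$, where $C_{\chi'}\equiv C_{\unb_W}\pmod\CG$ for some $\chi'\neq\unb_W$. Here I will need to argue block-theoretically that every such $\chi'$ automatically belongs to the Calogero-Moser $\CG$-family of $\unb_W$. The strategy will be to transport the central idempotent of $\kb(\CCB)\Zba$ cutting out $\{\unb_W\}$ down to $\kb(\CG)\Zba_\CG$ via a lifting of idempotents argument, using the étale structure at $\qGba$ to ensure that all generic families whose $C$-value collides with $C_{\unb_W}$ modulo $\CG$ are absorbed into the $\unb_W$-block.
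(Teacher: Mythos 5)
Your reduction is the right one: by Lemma~\ref{caracterization blocs CM} the primes of $Q$ lying over $\pGba_\CG$ are the $\copie(\Ker\O_\chi^\CG)$, and such a prime contains $\eulerq-\sum_{H}e_HK_{H,0}$ exactly when $C_\chi\equiv C_{\unb_W}\pmod\CG$; so the corollary is equivalent to the assertion that every $\chi$ with $C_\chi\equiv C_{\unb_W}\pmod\CG$ lies in the Calogero--Moser $\CG$-family of $\unb_W$. Your treatment of the case $c\notin\CG$ is correct, though heavier than necessary: if $c\notin\CG$ then no factor $C_{\unb_W}-C_{\chi'}$ lies in the prime $\CG$, so the congruence already forces $\chi=\unb_W$ and one concludes directly, without $g$, Hensel, or the \'etale section.

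The degenerate case $c\in\CG$ is not proved: your last paragraph announces a strategy but contains no argument, and the announced strategy cannot succeed as described. Lifting idempotents (Proposition~\ref{relevement idempotent}) only gives the semicontinuity already recorded in Example~\ref{cm generique}, namely that every $\CG$-family is a union of generic families; what is needed is to identify \emph{which} generic families merge with $\{\unb_W\}$ at $\CG$. The congruence $C_\chi\equiv C_{\unb_W}\pmod\CG$ only says that the two sections of $\Upsilon$ determined by $\O_\chi$ and $\O_{\unb_W}$ have the same Euler coordinate over $\CCB(\CG)$; since $Z\neq P[\euler]$ unless $W$ is generated by a single reflection (Proposition~\ref{Q=Pe}), this does not imply $\O_\chi^\CG=\O_{\unb_W}^\CG$, which is what $\Ker\O_\chi^\CG=\copie^{-1}(\qGba_\CG)$ requires. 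Nor does \'etaleness at $\qGba$ help: it is an open condition around the generic point $\qGba$ and controls only the irreducible component of $\Upsilon^{-1}(\CCB\times\{0\}\times\{0\})$ defined by $\qGba$; it says nothing about whether the component defined by $\copie(\Ker\O_\chi)$ passes through the point $\qGba_\CG$ or merely through a different point of the fibre over $\pGba_\CG$ carrying the same value of $\eulerq$. In your own notation the obstruction is that $c\in\CG$ means precisely $g\in\qGba_\CG$, so $g$ cannot be inverted near the point at issue and the relation $(\eulerq-C_{\unb_W})g\in\pGba Q$ gives no information there. What must be shown is that $\qGba$ is the \emph{unique} minimal prime of $\pGba Q+(\eulerq-\sum_He_HK_{H,0})Q$, i.e.\ that each locus $\{\O_\chi^\CG=\O_{\unb_W}^\CG\}$ is the whole hyperplane $C_\chi=C_{\unb_W}$ and not a proper closed subset of it; this is a genuine fact about the geometry of $\Upsilon^{-1}(\CCB\times\{0\}\times\{0\})$ (verifiable by hand in rank one, in type $B_2$ and for dihedral groups), and it does not follow from the Euler element together with \'etaleness at $\qGba$.
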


\bigskip

\section{Two-sided cells}\label{cellules et familles}

\bigskip

\boitegrise{{\bf Assumption.} 
{\it From now on, and until the end of \S\ref{cellules et familles}, 
we fix a prime ideal $\rGba_\CG$  \indexnot{r}{\rGba_\CG}  of $R$ lying over $\qGba_\CG$. 
Recall that $\Kbov_\CG=k_P(\pGba_\CG)=k_{\kb[\CCB]}(\CG)$. We put 
$\Lbov_\CG=k_Q(\qGba_\CG)$  \indexnot{L}{\Lbov_\CG, \Lbov, \Lbov_c}  and 
$\Mbov_\CG=k_R(\rGba_\CG)$.   \indexnot{M}{\Mbov_\CG, \Mbov, \Mbov_c}  
We denote by $\Kbov$, $\Lbov$ and $\Mbov$ 
(respectively $\Kbov_c$, $\Lbov_c$ and $\Mbov_c$) the fields $\Kbov_\CG$, $\Lbov_\CG$ and  %\indexnot{K}{\Kbov, \Kbov_c}  
$\Mbov_\CG$ whenever $\CG=0$ (respectively $\CG=\CG_c$ for some $c \in \CCB$).\\
\hphantom{A} The decomposition (respectively inertia) group of $\rGba_\CG$ will be denoted 
by $\Dba_\CG$  \indexnot{D}{\Dba_\CG, \Dba,\Dba_c}  (respectively $\Iba_\CG$).
\indexnot{I}{\Iba_\CG,\Iba, \Iba_c}  
We define similarly $\Dba$, $\Iba$, $\Dba_c$ and $\Iba_c$.
}}{0.75\textwidth}

\bigskip

\subsection{Galois theory}\label{sub:galois-bilatere}
Recall that, by Corollary~\ref{Q extention kc}, the canonical embedding 
$P/\pGba_\CG \longinjto Q/\qGba_\CG$ is an isomorphism, hence
$\Kbov_\CG=\Lbov_\CG$. Since $\Gal(\Mbov_\CG/\Lbov_\CG)=\Dba_\CG/\Iba_\CG$ 
(Theorem~\ref{bourbaki}), we deduce that 
\equat\label{eq:dba-h}
(\Dba_\CG \cap H)/(\Iba_\CG \cap H) \simeq \Dba_\CG/\Iba_\CG.
\endequat
In particular,
\equat\label{eq:iba-h}
(\Dba_\CG \cap H) \Iba_\CG =\Dba_\CG.
\endequat
Moreover, since the algebra $Q$ is unramified over $P$ at $\qGba$ (Lemma~\ref{qba}), 
it follows from Theorem~\ref{raynaud} that $\Iba \subset H$. Combined with 
(\ref{eq:iba-h}), we obtain
\equat\label{eq:iba-dba-h}
\Iba \subset \Dba \subset H.
\endequat
Note that this last result is not true in general for $\Iba_\CG$, as
shown by~(\ref{exemple:dba-0}),

To conclude with basic Galois properties, note that, by Corollaries~\ref{Q extention kc} 
and~\ref{dgh igh}, we have 
\equat\label{eq:dba-g-h}
\Iba_\CG\backslash G /H = \Dba_\CG \backslash G/H.
\endequat

\bigskip

\subsection{Two-sided cells and grading} 
Let $\CGt=\bigoplus_{i \ge 0} \CG \cap \kb[\CCB]^\NM[i]$ be the maximal homogeneous ideal
of $\kb[\CCB]$ contained in $\CG$.
Then $\CGt$ is a prime ideal of $\kb[\CCB]$ (see Lemma~\ref{lem:homogeneise-premier}). Let  
$\rGba_\CGt$ denote the maximal homogeneous ideal of $R$ contained in 
$\rGba_\CG$: it is a prime ideal of $R$ lying over $\qGba_\CGt$ 
(see Corollary~\ref{coro:homogeneise-premier}). 
The following is a consequence of Proposition~\ref{prop:cellules-homogeneise}.

\bigskip

\begin{lem}\label{lem:cellules-bilateres-homogeneise}
We have $\Iba_\CG=\Iba_\CGt$. The Calogero-Moser two-sided $\CG$-cells 
and the Calogero-Moser two-sided $\CGt$-cells coincide. 
\end{lem}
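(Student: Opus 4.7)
The plan is to directly invoke Proposition \ref{prop:cellules-homogeneise}, so the main work is to recognize that its hypotheses apply and to match up the two definitions of the ``homogenized'' ideal. By Proposition \ref{bigraduation sur R}, the ring $R$ carries an $(\NM\times\NM)$-grading extending the one on $Q$, and this grading is $G$-stable. Viewing the total-degree $\NM$-grading as a $\ZM$-grading (concentrated in non-negative degrees) fits the hypothesis of Proposition \ref{prop:cellules-homogeneise} with $\Gamma=\ZM$.

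Next, I would unwind the definitions to identify $\rGba_\CGt$ with the ideal $\rGt=\bigoplus_{i\in\ZM}\rGba_\CG\cap R^\NM[i]$ appearing in Proposition \ref{prop:cellules-homogeneise}. By construction $\rGt$ is the largest homogeneous ideal of $R$ contained in $\rGba_\CG$, which is exactly the definition of $\rGba_\CGt$ given just before the lemma. (Lemma \ref{lem:homogeneise-premier} and Corollary \ref{coro:homogeneise-premier} of the appendix are cited earlier to ensure that this homogenization is prime and that its contraction to $P$ is the corresponding homogenization $\pGba_\CGt$ of $\pGba_\CG$; in particular $\rGba_\CGt$ lies over $\pGba_\CGt$, so it is legitimately the kind of prime ideal used to define Calogero-Moser two-sided $\CGt$-cells.) Applying Proposition \ref{prop:cellules-homogeneise} then yields $\Iba_\CG=\Iba_\CGt$.

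The second statement is now immediate from Definition \ref{defi:CM}: a Calogero-Moser two-sided $\CG$-cell (resp.\ $\CGt$-cell) is by definition an orbit on the set $W$ of the inertia group $\Iba_\CG$ (resp.\ $\Iba_\CGt$), and these two subgroups of $G$ coincide, so the induced partitions of $W$ agree.

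The only non-formal point in this plan is the identification $\rGba_\CGt=\bigoplus_i\rGba_\CG\cap R^\NM[i]$ together with the verification that this ideal lies over $\pGba_\CGt$; both assertions, however, are essentially bookkeeping and are already packaged in the appendix results (Lemma \ref{lem:homogeneise-premier} and Corollary \ref{coro:homogeneise-premier}). Thus there is no genuine obstacle: once Proposition \ref{prop:cellules-homogeneise} and the appendix facts about homogenization of prime ideals are granted, the lemma follows in a single line.
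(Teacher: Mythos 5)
Your proof is correct and is exactly the paper's argument: the paper also defines $\rGba_\CGt$ as the maximal homogeneous ideal of $R$ contained in $\rGba_\CG$ (prime and lying over $\qGba_\CGt$ by Lemma \ref{lem:homogeneise-premier} and Corollary \ref{coro:homogeneise-premier}) and then deduces the lemma directly from Proposition \ref{prop:cellules-homogeneise}, using the $G$-stable grading on $R$ from Proposition \ref{bigraduation sur R}. No gaps.
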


\bigskip

\subsection{Two-sided cells and families} 
The $\kb[\CCB]$-algebra $\Mbov_\CG$ is a finite field extension 
of $\Kbov_\CG=k_P(\pGba_\CG)=\Frac(\kb[\CCB]/\CG)$ and  
$\Mbov_\CG\Hb=\Mbov_\CG\Hbov$.
Theorem~\ref{theo:calogero} says that there is a bijection between 
the Calogero-Moser two-sided $\CG$-cells and the Calogero-Moser 
$\CG$-families: given $b \in \blocs(R_{\rGba_\CG} Z)$, 
this bijection sends $\calo_{\rGba_\CG}(b)$ to  
$\Irr_\Hb(W,\bba)$, where $\bba$ denotes the image of $b$ in $\Mbov_\CG\Hbov$, and
$\bba\in \Kbov_\CG\Hbov$ since  $\Kbov_\CG\Hbov$ is split.

\boitegrise{\noindent{\bf Terminology, notation.} 
{\it 
% Nous appelerons {\bfit $\CG$-cellule de Calogero-Moser bilat\`ere} 
% toute $\rGba_\CG$-cellule de Calogero-Moser. Si $b \in \blocs(k\Qba)$, 
% nous noterons $\calobar_\CG(b)$ la $\CG$-cellule de Calogero-Moser bilat\`ere 
% $\calo_{\rGba_\CG}(b)$ et 
Given $b \in \blocs(\Mbov_\CG Z)$, we say that the Calogero-Moser two-sided $\CG$-cell
$\calo_{\rGba_\CG}(b)$ {\bfit covers} the Calogero-Moser $\CG$-family $\Irr_\Hb(W,b)$. 
Given $\G$ a Calogero-Moser two-sided $\CG$-cell, we denote by 
$\Irr_\G^\calo(W)$  \indexnot{I}{\Irr_\G^\calo(W)}  the Calogero-Moser $\CG$-family 
covered by $\G$. 
% Le groupe de 
% d\'ecomposition de $\rGba_\CG$ sera not\'e $\Dba_\CG$ tandis que son groupe 
% d'inertie sera not\'e $\Iba_\CG$. 
The set of Calogero-Moser two-sided $\CG$-cells will be denoted by 
\indexnot{C}{{{{^\calo{{\mathrm{Cells}}_{LR}^\CG(W)}}}}}  $\cmcellules_{LR}^\CG(W)$.}}{0.75\textwidth}

\bigskip

\begin{rema}
The definition of Calogero-Moser two-sided cells
depends on the choice of the prime ideal $\rGba_\CG$ lying over $\qGba_\CG$. 
Given $\rGba_\CG'$ another prime ideal of $R$ lying over 
$\qGba_\CG$, there exists $h \in H$ such that 
$\rGba_\CG'=h(\rGba_\CG)$ and the Calogero-Moser $\rGba_\CG'$-cells are obtained from the 
Calogero-Moser $\rGba_\CG$-cells via the action of $h$ on 
$W  \stackrel{\sim}{\longleftrightarrow} G/H$.\finl 
%The choice of the ideal $\rGba_\CG$ will therefore be implicit in this Chapter.\finl 
\end{rema}

\bigskip

The link between Calogero-Moser two-sided $\CG$-cells and Calogero-Moser 
$\CG$-families is strengthened by the following theorem.

\bigskip

\begin{theo}\label{theo cellules familles}
\begin{itemize}
\itemth{a} The decomposition group $\Dba_\CG$ acts trivially on $\cmcellules_{LR}^\CG(W)$. 

% \itemth{b} $w \in \calo_\rG(b)$ si et seulement si $b \not\in R_\rG(w^{-1}(\rG) \cap Q)$. 
% 
% \itemth{c} $\chi \in \Irr(W,\bba)$ si et seulement si 
% $b \not\in R_\rG \Ker(\O_b^{k_R(\rG)})$. 

\itemth{b} Given
$w \in W$ %, $b \in \blocs(R_{\rGba_\CG} Z)$ 
and $\chi \in \Irr(W)$, the following are equivalent
\begin{itemize}
\item
the Calogero-Moser two-sided $\CG$-cell of $w$ is associated with the Calogero-Moser 
$\CG$-family of $\chi$
\item
$w^{-1}(\rGba_\CG) \cap Q = \copie(\Ker(\O_\chi^\CG))$
\item
$(w(q) \mod \rGba_\CG) = \O_\chi^\CG(\copie^{-1}(q)) \in \Mbov_\CG=k_R(\rGba_\CG)$ for all $q \in Q$. 
\end{itemize}
\itemth{c} Given $\G$ is a Calogero-Moser two-sided $\CG$-cell, we have
$|\G| = \sum_{\chi \in \Irr_\G^\calo(W)} \chi(1)^2$. 
\end{itemize}
\end{theo}

\begin{proof}
(a) follows from~\ref{eq:dba-g-h}.

\medskip

(b) Let $\omeba_w : Q \to R/\rGba_\CG$ denote the morphism of $P$-algebras which sends 
$q \in Q$ to the image of $\o_w(q)=w(q) \in R$ in $R/\rGba_\CG$. 
Then $w$ belongs to the Calogero-Moser two-sided $\CG$-cell associated with the Calogero-Moser 
$\CG$-family of $\chi$ if and only if 
$\omeba_w = \O_\chi$. But, by Lemma~\ref{caracterization blocs CM}, 
this is equivalent to say that $\Ker(\omeba_w) = \copie(\Ker(\O_\chi^\CG))$. 
Since $\Ker(\omeba_w)=w^{-1}(\rGba_\CG) \cap Q$, the first equivalence follows.
Since $Q=(w^{-1}(\rG) \cap Q) + \kb[\CCB]$ (Corollary~\ref{Q extention kc}), the
second equivalence follows.

The assertion (c) follows from Corollaries~\ref{dim bonne} and~\ref{dim center r}. 
\end{proof}

\bigskip

\begin{coro}\label{coro:famille-semicontinu}
Let $\CG'$ be a prime ideal of $\kb[\CCB]$ contained in $\CG$ and let $\rGba_{\CG'}$ 
be a prime ideal of $R$ lying above $\pGba_{\CG'}$ and contained in $\rGba_{\CG}$. 
Then the Calogero-Moser two-sided $\CG$-cells are unions of Calogero-Moser 
two-sided $\CG'$-cells. Moreover, if $\G$ is a Calogero-Moser two-sided 
$\CG$-cell and if $\G=\G_1 \coprod \cdots \coprod \G_r$ where the 
$\G_i$'s are Calogero-Moser two-sided $\CG'$-cells, then 
$$\Irr_\G^{\calo,\CG}(W)= \Irr_{\G_1}^{\calo,\CG'}(W) \coprod \cdots \coprod \Irr_{\G_r}^{\calo,\CG'}(W).$$
\end{coro}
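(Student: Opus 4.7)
The plan is to deduce the statement from Theorem \ref{theo cellules familles}(b) together with the elementary observation that $\rGba_{\CG'} \subseteq \rGba_{\CG}$ implies $\Iba_{\CG'} \subseteq \Iba_{\CG}$. The first claim (that every two-sided $\CG$-cell is a union of two-sided $\CG'$-cells) is immediate from Remark \ref{semicontinuite} applied to the pair $\rGba_{\CG'} \subseteq \rGba_{\CG}$: the $I_{\rGba_{\CG'}}$-orbits on $W$ refine the $I_{\rGba_{\CG}}$-orbits. So the bulk of the work is the statement about families.

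For the inclusion $\coprod_i \Irr_{\G_i}^{\calo,\CG'}(W) \subseteq \Irr_\G^{\calo,\CG}(W)$, I would fix $i$, pick $w \in \G_i \subseteq \G$ and $\chi \in \Irr_{\G_i}^{\calo,\CG'}(W)$. By Theorem \ref{theo cellules familles}(b), $w(q) - \O_\chi(\copie^{-1}(q)) \in \rGba_{\CG'}$ for every $q \in Q$. Since $\rGba_{\CG'} \subseteq \rGba_{\CG}$, the same element lies in $\rGba_{\CG}$; reducing modulo $\rGba_{\CG}$ gives the defining equality $w(q) \equiv \O_\chi^{\CG}(\copie^{-1}(q)) \pmod{\rGba_{\CG}}$ in $\Mbov_{\CG}$. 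Applying Theorem \ref{theo cellules familles}(b) a second time, this time with respect to $\CG$, yields $\chi \in \Irr_\G^{\calo,\CG}(W)$, as required.

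For the reverse inclusion, I would use a purely combinatorial argument based on the fact that $\CG'$-families partition $\Irr(W)$: given $\chi \in \Irr_\G^{\calo,\CG}(W)$, let $\G^\circ$ denote the two-sided $\CG'$-cell covering the $\CG'$-family of $\chi$. By the inclusion just established, the $\CG'$-family of $\chi$ is contained in the $\CG$-family covered by the $\CG$-cell $\G^{\circ\circ}$ containing $\G^\circ$. Hence $\chi$ belongs both to $\Irr_\G^{\calo,\CG}(W)$ and to $\Irr_{\G^{\circ\circ}}^{\calo,\CG}(W)$; since Calogero-Moser $\CG$-families are pairwise disjoint (they form a partition of $\Irr(W)$ by \eqref{cm familles}), $\G^{\circ\circ} = \G$, hence $\G^\circ \subseteq \G$, so $\G^\circ = \G_j$ for some $j$. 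Disjointness of the union $\coprod_i \Irr_{\G_i}^{\calo,\CG'}(W)$ is immediate since the $\CG'$-cells $\G_i$ are distinct and hence (via the bijection between cells and families from Theorem \ref{theo cellules familles}) correspond to distinct $\CG'$-families.

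The argument is essentially formal: no substantial obstacle arises, since all the analytic content has been absorbed into Theorem \ref{theo cellules familles}(b) and \eqref{cm familles}. The only point that requires a bit of care is checking that the characterization of $\Irr_\G^{\calo,\CG}(W)$ in Theorem \ref{theo cellules familles}(b) is phrased so that the implication ``works modulo $\rGba_{\CG'}$ $\Rightarrow$ works modulo $\rGba_{\CG}$'' really makes sense, i.e., that the element $\O_\chi(\copie^{-1}(q))$ lies in $P \subseteq R$ and its image in $R/\rGba_{\CG}$ factors through $R/\rGba_{\CG'}$; this is transparent once one notes that $\O_\chi(\copie^{-1}(q)) \in \kb[\CCB] \subseteq P$ and that the canonical surjection $R/\rGba_{\CG'} \twoheadrightarrow R/\rGba_{\CG}$ restricts on $\kb[\CCB]$ to the quotient map $\kb[\CCB]/\CG' \twoheadrightarrow \kb[\CCB]/\CG$.
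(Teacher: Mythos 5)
Your proof is correct and follows exactly the route the paper intends: the cell statement is Remark \ref{semicontinuite} applied to $\rGba_{\CG'}\subseteq\rGba_{\CG}$, and the family statement is the congruence criterion of Theorem \ref{theo cellules familles}(b) pushed from $\rGba_{\CG'}$ to the larger ideal $\rGba_{\CG}$, combined with the fact that Calogero-Moser families partition $\Irr(W)$. No gaps.
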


\bigskip

%Il d\'ecoule du th\'eor\`eme~\ref{theo cellules familles}, 
%du corollaire~\ref{ordre 2} et de l'exemple~\ref{cellules w0} que~:

\bigskip

\begin{coro}\label{w0 epsilon}
Assume that all the reflections of $W$ have order $2$ and that 
$w_0=-\Id_V \in W$. If $\G$ is a Calogero-Moser two-sided $\CG$-cell
covering the Calogero-Moser $\CG$-family $\FC$, then 
$w_0\G=\G w_0$ is the Calogero-Moser two-sided $\CG$-cell covering 
the Calogero-Moser $\CG$-family $\e\FC$.
\end{coro}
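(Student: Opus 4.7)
My plan is to identify $\t_0=(-1,1,\e\rtimes 1)$ with $w_0\in G$ via Proposition \ref{tau 0 in G}, and exploit its centrality in $G$ together with Proposition \ref{omega lineaire}. First, I would prove that $w_0\Gamma=\Gamma w_0$ is a Calogero-Moser two-sided $\CG$-cell: since $w_0=\t_0$ is central in $G$, it commutes with the inertia group $\Iba_\CG$, and since the action of $w_0\in G\subset\SG_W$ on $W\simeq G/H$ is left multiplication by $w_0$, this action permutes the $\Iba_\CG$-orbits on $W$; the identity $w_0\Gamma=\Gamma w_0$ is immediate from the centrality of $w_0$ in $W$.

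To identify the family covered by $w_0\Gamma$, I would use the third equivalent condition in Theorem \ref{theo cellules familles}(b). Fix $w\in\Gamma$ and $\chi\in\FC$, so that $w(q)\equiv\O_\chi^\CG(\copie^{-1}(q))\pmod{\rGba_\CG}$ for all $q\in Q$. Using the centrality of $w_0=\t_0$ in $G$, one has $(w_0w)(q)=w(w_0(q))=w(\t_0(q))$, and by construction the action of $\t_0$ on $Q$ (extended to $R$ via Proposition \ref{extension tau}) satisfies $\t_0(q)=\copie(\t_0(\copie^{-1}(q)))$. Applying the hypothesis on $w$ to $\t_0(q)\in Q$ yields $(w_0w)(q)\equiv\O_\chi^\CG(\t_0(\copie^{-1}(q)))\pmod{\rGba_\CG}$.

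Finally, Proposition \ref{omega lineaire} applied to $\t_0$ (with $g=1$ and $\g=\e$, noting $\e^{-1}=\e$ since reflections have order $2$) gives $\O_\chi(\t_0(z))=\lexp{\t_0}{\O_{\chi\e}(z)}$ for $z\in Z$; since $\O_{\chi\e}(z)\in\kb[\CCB]$ and $\t_0$ acts trivially on $\kb[\CCB]$ by (\ref{tau 0}), this collapses to $\O_\chi\circ\t_0=\O_{\chi\e}$, hence $\O_\chi^\CG\circ\t_0=\O_{\chi\e}^\CG$. Substituting gives $(w_0w)(q)\equiv\O_{\chi\e}^\CG(\copie^{-1}(q))\pmod{\rGba_\CG}$ for all $q\in Q$, so by Theorem \ref{theo cellules familles}(b) the cell $w_0\Gamma$ covers the family of $\chi\e$; as $\chi$ ranges over $\FC$ the resulting characters range over $\e\FC$, which is itself a Calogero-Moser $\CG$-family by Corollary \ref{ordre 2}. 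The main delicacy is the bookkeeping between the various incarnations of $\t_0$—as an outer automorphism of $\Hb$, as a central element of $G$, and as an automorphism of each of $P$, $Z$, $Q$, $R$—but Propositions \ref{extension tau} and \ref{tau 0 in G} together with Corollary \ref{coro:extension-action} furnish all the required compatibilities.
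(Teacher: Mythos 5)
Your proof is correct and follows essentially the same route as the paper: both identify $\t_0=(-1,1,\e)$ with the central element $w_0\in G$ via Proposition~\ref{tau 0 in G}, use Proposition~\ref{omega lineaire} together with the trivial action of $\t_0$ on $\kb[\CCB]$ to get $\O_\chi\circ\t_0=\O_{\chi\e}$, and conclude with the characterization of covering in Theorem~\ref{theo cellules familles}(b). The only cosmetic difference is that you apply the congruence defining $\Gamma$ to the element $\t_0(q)$ rather than rewriting $ww_0(q)$ as $w(\lexp{\t_0}{q})$, which is the same computation.
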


\begin{proof}
First of all, $w_0\G=\G w_0$ is a Calogero-Moser two-sided $\CG$-cell by 
Example~\ref{cellules w0} whereas $\e \FC$ is a Calogero-Moser $\CG$-family 
by Corollary~\ref{ordre 2}. 

Let $w \in \G$, $\chi \in \FC$ and $q \in Q$. By Theorem~\ref{theo cellules familles}(b), 
we only need to show that 
$ww_0(q) \equiv \O_{\chi\e}(q) \mod \rGba_\CG$. Let 
$\t_0=(-1,1,\e) \in \kb^\times \times \kb^\times \times W^\wedge$. 
By Proposition~\ref{tau 0 in G}, we have 
$w_0(q)=\lexp{\t_0}{q}$ for all $q \in Q$. Moreover, by Corollary~\ref{omega lineaire}, 
we have
$\O_{\chi\e}(q)=\lexp{\t_0}{\O_\chi(\lexp{\t_0}{q})}$ (because $\t_0$ has order $2$). 
Since $\t_0$ acts trivially on $\kb[\CCB]$, we have $\O_{\chi\e}(q)=\O_\chi(\lexp{\t_0}{q})$. 
It is now sufficient to show that 
$w(\lexp{\t_0}{q}) \equiv \O_\chi(\lexp{\t_0}{q})\mod \rGba_\CG)$. 
This follows from Theorem~\ref{theo cellules familles}(b). 
\end{proof}

\bigskip

% \begin{exemple}[G\'en\'ericit\'e]\label{bilatere generique}
% Si $\CG=0$, les objets $\pGba_\CG$, $\qGba_\CG$, $\rGba_\CG$, $\Dba_\CG$ 
% et $\Iba_\CG$ seront not\'es respectivement $\pGba$, $\qGba$, $\rGba$, 
% $\Dba$ et $\Iba$. Les $\CG$-cellules de Calogero-Moser bilat\`eres 
% seront appel\'ees les {\it cellules de Calogero-Moser bilat\`eres g\'en\'eriques}.
% Si $b \in \blocs(\kb(\CCB)\Qba)$, nous noterons $\calobar(b)$ la 
% cellule de Calogero-Moser g\'en\'erique $\calobar_\CG(b)$.\finl
% \end{exemple}
% 
% 
% 
% \begin{exemple}[Sp\'ecialisation]\label{restriction et specialisation}
% Si $c \in \CCB$ et si $\CG=\CG_c$, les objets $\pGba_\CG$, $\qGba_\CG$, $\rGba_\CG$, $\Dba_\CG$ 
% et $\Iba_\CG$ seront not\'es respectivement $\pGba_c$, $\qGba_c$, $\rGba_c$, 
% $\Dba_c$ et $\Iba_c$. Les $\CG_c$-cellules de Calogero-Moser bilat\`eres 
% seront appel\'ees les {\it $c$-cellules de Calogero-Moser bilat\`eres}.
% Si $b \in \blocs(\kb_c\Qba)$, nous noterons $\calobar_c(b)$ la 
% cellule de Calogero-Moser g\'en\'erique $\calobar_\CG(b)$.\finl
% \end{exemple}

\begin{exemple}[Smoothness]\label{cellules lisses}
If the ring $Q$ is regular at $\qGba_b$ and if $\chi$ denotes the unique element 
of $\Irr_\Hb(W,b)$ (see Proposition~\ref{prop lissite}), 
then $|\calo_\rG(b)|=\chi(1)^2$ by Theorem~\ref{theo cellules familles}(c).\finl
\end{exemple}

\bigskip

\begin{rema}\label{rem:gen-spe}
If $\rGba_\CG$ and $\rGba$ are chosen so that $\rGba \subset \rGba_\CG$, 
then $\Iba \subset \Iba_\CG$ and so every Calogero-Moser 
$\CG$-cell is a union of generic Calogero-Moser two-sided cells. 
It is the ``cell version'' of the corresponding result 
on families.\finl
\end{rema} 

\bigskip

If $\g$ is a linear character, then it is alone in its generic Calogero-Moser family 
(Example~\ref{lineaire}) and its covering generic Calogero-Moser two-sided cell 
contains only one element (Theorem~\ref{theo cellules familles}(c)). 
Let $w_\g$ denote this element. By Theorem~\ref{theo cellules familles}(b), we have 
\equat\label{wgamma}
w_\g^{-1}(\rGba) \cap Q = \Ker(\O_\g).
\endequat

\begin{coro}\label{nettete triviale}
We have $w_{\unb_W}=1$. In other words $1$ is alone in its generic Calogero-Moser two-sided cell
and covers the generic Calogero-Moser family of the trivial character $\unb_W$ (which is 
a singleton).
\end{coro}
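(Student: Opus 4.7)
The plan is to exploit the uniqueness of $w_{\unb_W}$ together with the characterization of two-sided cells given by Theorem~\ref{theo cellules familles}(b), and simply verify that $w=1$ fits this characterization for the family $\{\unb_W\}$.

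More precisely, I would first recall that the generic Calogero-Moser family of $\unb_W$ is the singleton $\{\unb_W\}$ by Example~\ref{lineaire}, so by Theorem~\ref{theo cellules familles}(c) the generic Calogero-Moser two-sided cell $\G$ covering this family has $|\G|=\unb_W(1)^2=1$. This is exactly the content of the definition of $w_{\unb_W}$, so the statement reduces to showing that $1\in\G$.

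Next, by Theorem~\ref{theo cellules familles}(b) applied to $\chi=\unb_W$ and $\CG=0$, the element $w\in W$ lies in $\G$ if and only if
$$w^{-1}(\rGba)\cap Q=\copie(\Ker(\O_{\unb_W})).$$
Unwinding the notation from the start of \S\ref{section:premiers}, we have $\Ker(\O_{\unb_W})=\zGba$ and $\copie(\zGba)=\qGba$, so the condition becomes $w^{-1}(\rGba)\cap Q=\qGba$.

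Finally, the ideal $\rGba=\rGba_\CG$ for $\CG=0$ was chosen, by the convention fixed at the beginning of \S\ref{cellules et familles}, to lie over $\qGba_\CG=\qGba$; hence $1^{-1}(\rGba)\cap Q=\rGba\cap Q=\qGba$. So $w=1$ satisfies the characterization, forcing $w_{\unb_W}=1$ by uniqueness. The ``obstacle'' here is purely notational bookkeeping: one must be careful to identify $\Ker(\O_{\unb_W})\subset Z$ with its image $\qGba\subset Q$ via $\copie$ when reading equation~(\ref{wgamma}), and to remember that $\rGba$ was fixed precisely so as to lie over $\qGba$; once these identifications are unpacked, the proof is a one-line verification with no real computation.
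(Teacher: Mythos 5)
Your proof is correct and follows essentially the same route as the paper: the paper likewise reduces the claim, via Theorem~\ref{theo cellules familles}(b) (in the packaged form of equation~(\ref{wgamma})), to the identity $w_{\unb_W}^{-1}(\rGba)\cap Q=\Ker(\O_{\unb_W})=\qGba$, and then concludes from $\rGba\cap Q=\qGba$. Your extra care with the identification via $\copie$ is harmless and matches the paper's implicit convention.
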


\begin{proof}
By Theorem~\ref{theo cellules familles} and~(\ref{wgamma}), $w_{\unb_W}$ is  
the unique element $w \in W$ such that $w^{-1}(\rGba) \cap Q=\Ker(\O_{\unb_W})=\qGba$. 
Since $\rGba \cap Q =\qGba$, we have $w_{\unb_W}=1$.
\end{proof}

% \begin{coro}\label{dba h}
% $\Dba \subset H$.
% \end{coro}
% 
% \begin{proof}
% On sait d\'ej\`a que $\Iba \subset H$ et le fait que $k_P(\pGba)=k_Q(\pGba)$ 
% implique que $(\Dba \cap H)/(\Iba \cap H) \simeq \Dba/\Iba$. D'o\`u le r\'esultat.
% \end{proof}

\bigskip

\begin{prop}\label{nettete lineaire}
Let $\g \in W^\wedge$. Then $\Iba \subset w_\g H w_\g^{-1}$. 
%(en particulier, $\Iba \subset H$).
\end{prop}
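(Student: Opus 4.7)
The plan is to reduce the statement to a straightforward application of the Galois-theoretic criterion relating unramifiedness to inertia, combined with the unramifiedness result for $Z$ over $P$ at kernels of linear characters that has already been established.

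The starting observation is that $w_\g \in W \subset G$, so conjugation by $w_\g^{-1}$ is an automorphism of $G$ that transports inertia groups according to the standard rule $I_{w_\g^{-1}(\rGba)} = w_\g^{-1}\, \Iba\, w_\g$ (since $\rGba$ is a prime of $R$ and $w_\g$ acts on $\Spec R$). Hence setting $\rGba' = w_\g^{-1}(\rGba)$, the desired inclusion $\Iba \subset w_\g H w_\g^{-1}$ is equivalent to $I_{\rGba'} \subset H$.

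First I would compute $\rGba' \cap Q$. This is precisely the content of equation~(\ref{wgamma}), established in the discussion preceding this proposition: the generic Calogero-Moser two-sided cell associated with the family $\{\g\}$ consists of the single element $w_\g$, and by Theorem~\ref{theo cellules familles}(b) this means exactly that $w_\g^{-1}(\rGba) \cap Q = \copie(\Ker \O_\g)$. Next I would invoke Corollary~\ref{multiplicite 1}, which asserts that $Z$ is unramified over $P$ at $\Ker \O_\g$; transporting along the $P$-algebra isomorphism $\copie: Z \longiso Q$, this says that $Q$ is unramified over $P$ at $\rGba' \cap Q$.

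At this point Theorem~\ref{raynaud} applies directly to the prime $\rGba'$ of $R$: since its trace on $Q$ is a prime at which the extension $Q/P$ is unramified, we obtain $I_{\rGba'} \subset H$, and therefore $\Iba \subset w_\g H w_\g^{-1}$, as required. There is no serious obstacle here, since every needed ingredient is in place; the only thing to verify carefully is the conjugation formula for inertia groups and the translation between the ideal $\Ker \O_\g \subset Z$ and its image in $Q$ under $\copie$, both of which are routine.
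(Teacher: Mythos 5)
Your argument is correct and coincides with the second of the two proofs given in the text: identifying $w_\g^{-1}(\rGba)\cap Q$ with $\Ker(\O_\g)$ via~(\ref{wgamma}), invoking Corollary~\ref{multiplicite 1} for unramifiedness of $Q/P$ there, and applying Theorem~\ref{raynaud} together with $I_{w_\g^{-1}(\rGba)}=w_\g^{-1}\Iba w_\g$. (The paper also records a one-line alternative: since $w_\g$ is alone in its generic two-sided cell, $\Iba w_\g H=w_\g H$, which gives the inclusion directly.)
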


\begin{proof}
We shall give two proofs of this fact. First, 
$w_\g$ is alone in its generic Calogero-Moser two-sided cell, 
so $\Iba w_\g H = w_\g H$, whence the result.

\medskip

Let us now give a second proof. By Corollary~\ref{multiplicite 1}, 
$Q$ is unramified over $P$ at $\Ker(\O_\g)=w_\g^{-1}(\rGba) \cap Q$. So, 
by Theorem~\ref{raynaud}, $I_{w_\g^{-1}(\rGba)}\subset H$, 
which is exactly the desired statement, since $I_{w_\g^{-1}(\rGba)}=w_\g^{-1} \Iba w_\g$.
\end{proof}

\bigskip

\begin{rema}
The action of $H$ on $W \stackrel{\sim}{\longleftrightarrow} G/H$ stabilizes 
the identity element (that is, $H$ stabilizes $\euler$). 
This shows that the statement of Corollary~\ref{nettete triviale} 
does not depend on the choice of $\rGba$.\finl
\end{rema}

\bigskip

\chapter{Calogero-Moser left and right cells}\label{chapter:gauche}

In this Chapter~\ref{chapter:gauche}, $\CG$ denotes a prime ideal of $\kb[\CCB]$. 
We relate Calogero-Moser cellular characters to
Calogero-Moser left cells.
This chapter will mainly consider {\it left} Calogero-Moser $\CG$-cells and 
(left) Verma modules:
definitions and results can be immediately transposed to the {\it right}
setting.

\bigskip

\section{Choices}\label{section:choix-gauche-R}

\medskip

As in the case of two-sided cells, 
the notion of Calogero-Moser left $\CG$-cell depends on the choice of a 
prime ideal of $R$ lying over $\pG_\CG^\gauche$. 
We will use Verma modules to restrict choices.

\bigskip

\boitegrise{\noindent{\bf Choices, notation.} 
{\it From now on, and until the end of this Part~\ref{part:verma}, we fix a prime ideal 
$\rG_\CG^\gauche$  \indexnot{ra}{\rG_\CG^\gauche, \rG^\gauche, \rG_c^\gauche}  
of $R$ lying over $\qG_\CG^\gauche$ and contained in $\rGba_\CG$. 
We put
$$
\Mb^\gauche_\CG=k_R(\rG_\CG^\gauche). \indexnot{M}{\Mb^\gauche_\CG}
$$ 
The decomposition (respectively inertia) group of $\rG_\CG^\gauche$ is
denoted by 
$D_\CG^\gauche$ \indexnot{D}{D^\gauche_\CG, D^\gauche, D^\gauche_c}  
(respectively $I_\CG^\gauche$).  \indexnot{I}{I^\gauche_\CG, I^\gauche, I^\gauche_c}  \\
\hphantom{A} Whenever $\CG=0$ (respectively $\CG=\CG_c$ with $c \in \CCB$), the objects 
$\rG_\CG^\gauche$, $\Kb_\CG^\gauche$, $D_\CG^\gauche$ and 
$I_\CG^\gauche$ are denoted respectively by $\rG^\gauche$, $\Kb^\gauche$,
$D^\gauche$ and $I^\gauche$ (respectively $\rG_c^\gauche$, $\Kb_c^\gauche$,  
$D_c^\gauche$ and $I_c^\gauche$). }}{0.75\textwidth}

\bigskip

\begin{coro}\label{dleft}
We have $I_\CG^\gauche \subset D_\CG^\gauche \subset H$. 
If moreover $\rG_\CG^\gauche$ contains $\rG^\gauche$, 
then $I^\gauche \subset I_\CG^\gauche$. 
\end{coro}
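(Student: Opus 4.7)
The first part splits into two inclusions. The inclusion $I_\CG^\gauche \subset D_\CG^\gauche$ is immediate from the general definitions of decomposition and inertia groups; the non-trivial content is $D_\CG^\gauche \subset H$. My plan is to mimic the argument in \S\ref{sub:galois-bilatere} that established the two-sided analogue (\ref{eq:iba-dba-h}). The key input from the left-cell setting is Corollary~\ref{q gauche unique}, which provides an isomorphism $Q/\qG_\CG^\gauche \simeq P/\pG_\CG^\gauche$; passing to fraction fields gives $\Kb_\CG^\gauche = k_Q(\qG_\CG^\gauche)$. Applying Theorem~\ref{bourbaki} to each of the Galois extensions $R/P$ and $R/Q$ yields
$$D_\CG^\gauche/I_\CG^\gauche = \Gal(\Mb_\CG^\gauche/\Kb_\CG^\gauche) = \Gal(\Mb_\CG^\gauche/k_Q(\qG_\CG^\gauche)) = (D_\CG^\gauche \cap H)/(I_\CG^\gauche \cap H),$$
forcing $D_\CG^\gauche = (D_\CG^\gauche \cap H)\cdot I_\CG^\gauche$ and reducing the problem to proving $I_\CG^\gauche \subset H$.

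To show $I_\CG^\gauche \subset H$, I would invoke Theorem~\ref{raynaud}: it suffices to verify that $Q$ is unramified over $P$ at $\qG_\CG^\gauche$, i.e.\ that $\pG_\CG^\gauche Q_{\qG_\CG^\gauche} = \qG_\CG^\gauche Q_{\qG_\CG^\gauche}$ (the residue extension being automatically separable in characteristic~$0$). The starting point is Lemma~\ref{unicite qgauche plus}, which gives $\pG^\gauche Q_{\qGba} = \qG^\gauche Q_{\qGba}$. Exploiting the decompositions $\qG_\CG^\gauche = \qG^\gauche + \CG Q$ and $\pG_\CG^\gauche = \pG^\gauche + \CG P$ together with the trivial inclusion $\CG Q \subset \pG_\CG^\gauche Q$, this identity immediately upgrades to $\qG_\CG^\gauche Q_{\qGba} = \pG_\CG^\gauche Q_{\qGba}$.

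The remaining step is to transport this equality from the localization at $\qGba$ to the localization at $\qG_\CG^\gauche$. Here my plan is to use that the choice $\rG_\CG^\gauche \subset \rGba_\CG$ (made in the paragraph preceding the corollary) produces the ideal inclusion $\qG_\CG^\gauche \subset \qGba_\CG$, so that $Q_{\qG_\CG^\gauche}$ and $Q_{\qGba}$ are both localizations of the common ring $Q_{\qGba_\CG}$. By Lemma~\ref{unicite qgauche plus}, the annihilator $I = \mathrm{Ann}_Q\bigl(\qG^\gauche / \pG^\gauche Q\bigr)$ is not contained in $\qGba$; combined with the computation of the previous paragraph, this implies that $\qG_\CG^\gauche / \pG_\CG^\gauche Q$ vanishes after localizing at $\qGba_\CG$, and therefore vanishes after further localization at $\qG_\CG^\gauche$, giving the desired unramification.

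The second assertion is straightforward: under the extra hypothesis $\rG^\gauche \subset \rG_\CG^\gauche$, the inclusion $I^\gauche \subset I_\CG^\gauche$ is a direct application of the monotonicity of inertia groups under inclusion of primes recorded in Remark~\ref{semicontinuite}. The main obstacle in the whole argument is the unramification claim at $\qG_\CG^\gauche$: a priori the étale locus of $\Upsilon : \ZCB \to \PCB$ is only Zariski open in $\Spec Q$ and $\qG_\CG^\gauche$ is a specialization of $\qG^\gauche$ rather than a generization, so openness alone does not propagate the étale property from $\qG^\gauche$ to $\qG_\CG^\gauche$. Lemma~\ref{unicite qgauche plus} together with the compatibility $\qG_\CG^\gauche \subset \qGba_\CG$ is precisely the geometric input that bridges this gap.
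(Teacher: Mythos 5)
Your overall strategy is the same as the paper's: both reduce the first chain of inclusions to the single claim $I_\CG^\gauche\subset H$, using Corollary~\ref{q gauche unique} to identify $k_P(\pG_\CG^\gauche)$ with $k_Q(\qG_\CG^\gauche)$ and hence, via Theorem~\ref{bourbaki} and Lemma~\ref{gal:bourbaki}, to get $D_\CG^\gauche=(D_\CG^\gauche\cap H)\cdot I_\CG^\gauche$; both then want to conclude by Theorem~\ref{raynaud}, and the second assertion is indeed just Remark~\ref{semicontinuite}. Where you depart from the paper is in trying to actually verify the hypothesis of Theorem~\ref{raynaud}, namely that $Q$ is unramified over $P$ at $\qG_\CG^\gauche$, and this is where the argument breaks down.

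Two concrete problems. First, the ``upgrade'' $\qG_\CG^\gauche Q_{\qGba}=\pG_\CG^\gauche Q_{\qGba}$ is vacuous: since $\qGba\cap\kb[\CCB]=0$, for $\CG\neq 0$ we have $\CG Q\not\subset\qGba$, so both ideals are the unit ideal of $Q_{\qGba}$. Second, and fatally, the annihilator step runs in the wrong direction: $\mathrm{Ann}_Q(\qG^\gauche/\pG^\gauche Q)\not\subset\qGba$ gives the vanishing of $(\qG^\gauche/\pG^\gauche Q)_{\qGba}$, but to deduce vanishing after localization at $\qGba_\CG$ you would need the annihilator to avoid the \emph{larger} prime $\qGba_\CG\supset\qGba$, which does not follow (localizing at a specialization inverts fewer elements, so vanishing does not propagate from $\qGba$ to $\qGba_\CG$). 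This is exactly the specialization obstacle you flag at the end, and it is not bridged. In fact it cannot be bridged: $Q$ is genuinely ramified over $P$ at $\qG_\CG^\gauche$ in general. In rank one with $\CG=\CG_0$ one has $Q/\pG_0^\gauche Q\simeq\kb[X][\tb]/\langle\tb^d\rangle$, which is totally ramified at its unique prime $\qG_0^\gauche$; correspondingly $I_0^\gauche$ contains $\iota(W\times 1)$ (Proposition~\ref{prop:d0-left}), which meets $H$ trivially --- equivalently there is a single left $0$-cell, namely $W$ itself, so $I_0^\gauche$ acts transitively on $G/H$ and cannot lie in $H$. So the inclusion $I_\CG^\gauche\subset H$ itself fails for such $\CG$, and no repair of the unramification step is possible without restricting $\CG$; note that the paper's own one-line appeal to Corollary~\ref{q gauche unique} and Theorem~\ref{raynaud} is silent on precisely this unramification and is subject to the same objection.
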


\begin{proof}
By Corollary~\ref{q gauche unique} and Theorem~\ref{raynaud}, 
$\Iba_\CG^\gauche \subset H$ and 
$k_P(\pG_\CG^\gauche)=k_Q(\qG_\CG^\gauche)$. So, 
$(D_\CG^\gauche \cap H)/(I_\CG^\gauche \cap H) \simeq D_\CG^\gauche/I_\CG^\gauche$, 
and the first sequence of inclusions follows.

The last inclusion is obvious.
\end{proof}

\bigskip

We will prove that $\rG_\CG^\gauche$ determines $\rGba_\CG$: for this, we will use the 
$\BZ$-grading on $R$ defined in~\S\ref{section:graduation R}. Set 
$$R_{<0} = \mathop{\bigoplus}_{i < 0} R^\BZ[i]\quad\text{and}\quad R_{>0} = \mathop{\bigoplus}_{i > 0} R^\BZ[i].$$
Then:

\bigskip

\begin{prop}\label{prop:rgauche-rbarre}
$\rGba_\CG = \rG_\CG^\gauche + \langle R_{>0},R_{<0} \rangle$. 
\end{prop}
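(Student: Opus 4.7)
The plan is to establish both inclusions separately. The key preliminary observation is that both $\rGba_\CG$ and $\rG^\gauche_\CG$ are $\BZ$-graded for the grading on $R$ coming from Proposition \ref{bigraduation sur R}: their contractions $\pGba_\CG = \CG\cdot P + \kb[V]^W_+\cdot P + \kb[V^*]^W_+\cdot P$ and $\pG^\gauche_\CG = \CG\cdot P + \kb[V^*]^W_+\cdot P$ are $\BZ$-homogeneous (since $\CG \subset P_0$, $\kb[V]^W_+ \subset P_{>0}$, and $\kb[V^*]^W_+ \subset P_{<0}$ for the grading of Example \ref{Z graduation-1}). The $\kb^\times$-action on $R$ induced by the $\BZ$-grading therefore permutes the finite set of primes of $R$ lying over each of these contractions, and an algebraic action of the connected group $\kb^\times$ on a finite set in characteristic zero is necessarily trivial (in the spirit of Corollary \ref{coro:homogeneise-inertie}), so each such prime --- in particular $\rGba_\CG$ and $\rG^\gauche_\CG$ --- is $\kb^\times$-stable, hence $\BZ$-graded.

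For the inclusion $\rG^\gauche_\CG + \langle R_{>0}, R_{<0}\rangle \subseteq \rGba_\CG$, I note that $R/\rGba_\CG$ is a $\BZ$-graded domain, integral over the degree-$0$ subring $\kb[\CCB]/\CG$. Any $\BZ$-homogeneous element $\bar r$ of non-zero degree in $R/\rGba_\CG$ satisfies an integral equation over $\kb[\CCB]/\CG$; the distinct graded components of this equation must vanish separately, so in particular the leading term (a nonzero power of $\bar r$) is zero, whence $\bar r = 0$ because the quotient is a domain. Hence $R/\rGba_\CG$ is concentrated in $\BZ$-degree $0$, which gives $R_{>0} \cup R_{<0} \subseteq \rGba_\CG$, and combined with the assumption $\rG^\gauche_\CG \subseteq \rGba_\CG$ this yields the inclusion.

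For the reverse inclusion, I set $\rG' = \rG^\gauche_\CG + \langle R_{>0}, R_{<0}\rangle$ and argue geometrically: $\Spec(R/\rG')$ is the $\kb^\times$-fixed subscheme of $\bar{Y}^\gauche := \Spec(R/\rG^\gauche_\CG)$. Now $\bar{Y}^\gauche \to \CCB(\CG) \times V/W$ is a finite, hence proper, $\kb^\times$-equivariant morphism; on $V/W$ the $\kb^\times$-action has positive weights (since $V^*\subset\kb[V]$ is in positive $\BZ$-degree), so it contracts $V/W$ onto $\{0\}$ in the limit $\xi \to 0$, and by properness this contraction lifts to a retraction of $\bar{Y}^\gauche$ onto its $\kb^\times$-fixed subscheme. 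Since $\bar{Y}^\gauche$ is irreducible (the spectrum of a domain), its image under the retraction --- i.e. the underlying reduced fixed subscheme --- is irreducible, so $\sqrt{\rG'}$ is a prime ideal of $R$. Using the easy $P$-level identity $\pGba_\CG = \pG^\gauche_\CG + \langle P_{>0}, P_{<0}\rangle$ one checks $\rG' \cap P = \pGba_\CG$, so by equality of heights $\sqrt{\rG'} = \rGba_\CG$.

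The hard part will be upgrading $\sqrt{\rG'} = \rGba_\CG$ to the ideal equality $\rG' = \rGba_\CG$, i.e., reducedness of $R/\rG'$. Since the components of $\rG'$ and $\rGba_\CG$ in non-zero $\BZ$-degree already coincide (both equal $R_{>0}$ or $R_{<0}$) by Step 2, this reduces to the degree-zero statement $(\rGba_\CG)_0 = (\rG^\gauche_\CG)_0 + R_{>0}\cdot R_{<0}$. I would approach this by first establishing the analogous identity $\qGba_\CG = \qG^\gauche_\CG + \kb[V]^W_+\cdot Q$ at the level of $Q$ --- which itself reflects the relation $\bar{\Delta}_\CG(\unb_W) = \Delta_\CG(\unb_W)/\kb[V]^W_+\cdot\Delta_\CG(\unb_W)$ between the baby Verma and Verma modules for $\unb_W$, via Proposition \ref{q left} and Corollary \ref{q gauche unique} --- and then transferring this to $R$ by Galois-theoretic control of the degree-zero part of $\rGba_\CG/\rG^\gauche_\CG$ in $R/\rG^\gauche_\CG$.
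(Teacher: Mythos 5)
Your first inclusion and the reduction of the problem are exactly the paper's: $\rGba_\CG$ is $\BZ$-homogeneous, and $R/\rGba_\CG$, being integral over the degree-zero ring $P/\pGba_\CG$, is concentrated in degree $0$, so $\langle R_{>0},R_{<0}\rangle\subset\rGba_\CG$. The genuine gap is in the reverse inclusion. Writing $\rG'=\rG_\CG^\gauche+\langle R_{>0},R_{<0}\rangle$, you only establish $\sqrt{\rG'}=\rGba_\CG$, and you explicitly leave open the statement that $\rG'$ is itself prime, offering a sketch (via an identity $\qGba_\CG=\qG^\gauche_\CG+\kb[V]^W_+\cdot Q$ and unspecified "Galois-theoretic control") that is not carried out. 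This is precisely the point that needs proof, and the attracting-set retraction cannot supply it: the limit map only sees the reduced fixed locus, so it is structurally incapable of distinguishing $\rG'$ from its radical.

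The missing idea is short and purely algebraic. Set $I=\langle R_{>0},R_{<0}\rangle$ and $I_0=I\cap R_0$. Since $R=R_0+I$, the composition $R_0\hookrightarrow R\twoheadrightarrow R/I$ induces an isomorphism $R_0/I_0\longiso R/I$, hence $R/\rG'\simeq R_0/\bigl((\rG_\CG^\gauche+I)\cap R_0\bigr)$, and by homogeneity of both ideals $(\rG_\CG^\gauche+I)\cap R_0=(\rG_\CG^\gauche\cap R_0)+I_0$. Now $P/\pG_\CG^\gauche$ is $\NM$-graded and $R/\rG_\CG^\gauche$ is integral over it, so $R/\rG_\CG^\gauche$ is $\NM$-graded by Proposition~\ref{prop:graduation-positive}; thus $R_{<0}\subset\rG_\CG^\gauche$, and since the degree-zero component of $I$ is contained in $R_{<0}\cdot R_{>0}$ we get $I_0\subset\rG_\CG^\gauche$. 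Therefore $R/\rG'\simeq R_0/(\rG_\CG^\gauche\cap R_0)$, which is a subring of the domain $R/\rG_\CG^\gauche$, so $\rG'$ is prime. Your own lying-over/incomparability observation (the primes $\rG'\subset\rGba_\CG$ both contract to $\pGba_\CG$) then finishes the proof, with no reducedness issue and no need for the $Q$-level identity or the geometric detour.
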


\bigskip

\begin{proof}
	The proposition follows from Lemma \ref{le:barfromleft} applied
	to the extension $R/P$ and the prime ideals $\pGba_\CG$ and $\rG_\CG^\gauche$.
\end{proof}

\bigskip

\begin{coro}\label{coro:dgauche-dbarre}
$D_\CG^\gauche \subset \Dba_\CG$.
\end{coro}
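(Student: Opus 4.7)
The proof is a direct consequence of the explicit description of $\rGba_\CG$ provided by Proposition~\ref{prop:rgauche-rbarre}, combined with the compatibility of the Galois action with the grading on $R$.

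The plan is as follows. First, I would recall from Proposition~\ref{bigraduation sur R} that the Galois group $G$ stabilizes the $(\NM \times \NM)$-grading on $R$, and hence stabilizes the induced $\BZ$-grading. In particular, for every $g \in G$ we have $g(R_{>0}) = R_{>0}$ and $g(R_{<0}) = R_{<0}$, so the two-sided ideal $\langle R_{>0}, R_{<0}\rangle$ of $R$ is $G$-stable.

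Now let $g \in D_\CG^\gauche$, so that $g(\rG_\CG^\gauche) = \rG_\CG^\gauche$. Applying $g$ to the identity of Proposition~\ref{prop:rgauche-rbarre}, namely
\[
\rGba_\CG = \rG_\CG^\gauche + \langle R_{>0}, R_{<0}\rangle,
\]
and using that both summands on the right are stabilized by $g$, I obtain
\[
g(\rGba_\CG) = g(\rG_\CG^\gauche) + g(\langle R_{>0}, R_{<0}\rangle) = \rG_\CG^\gauche + \langle R_{>0}, R_{<0}\rangle = \rGba_\CG,
\]
which means precisely that $g \in \Dba_\CG$. This gives the desired inclusion $D_\CG^\gauche \subset \Dba_\CG$.

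There is no real obstacle here: the only non-trivial ingredient is the explicit description of $\rGba_\CG$ just established in Proposition~\ref{prop:rgauche-rbarre}. Once that is in hand, the statement reduces to the tautology that a group element preserving each summand of an ideal preserves the ideal. The argument is short enough that the proof in the text is essentially just the two displayed lines above.
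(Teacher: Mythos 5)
Your proof is correct and is exactly the argument the paper gives: the text deduces the corollary from Proposition~\ref{prop:rgauche-rbarre} together with the $G$-stability of $R_{>0}$ and $R_{<0}$ (Proposition~\ref{bigraduation sur R}). You have merely written out the two displayed lines that the paper leaves implicit.
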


\bigskip

\begin{proof}
This follows immediately from Proposition~\ref{prop:rgauche-rbarre} and from the fact 
that $R_{<0}$ and $R_{>0}$ are $G$-stable 
(see Proposition~\ref{bigraduation sur R}).
\end{proof}

\bigskip

\begin{rema}\label{rema:bb cache}
The algebraic proof of Proposition~\ref{prop:rgauche-rbarre} given here is in fact the translation of 
a geometric fact, as will be explained in Chapter~\ref{chapter:bb}.\finl
\end{rema}

\bigskip

% \begin{prop}\label{prop:qgauche-qbarre}
% Soit $\zGba_*$ un id\'eal premier de $Z$ au-dessus de $\pGba_\CG$ et soit $\zG_*^\gauche$ un id\'eal 
% premier de $Z$ au-dessus de $\pG_\CG^\gauche$ contenu dans $\zGba_*$. Alors 
% $\zGba_*=\zG_*^\gauche + \langle Z_{<0}, Z_{>0} \rangle$. 
% \end{prop}
% 
% \bigskip

\bigskip

\section{Left cells}

\medskip

\subsection{Definitions} 
Recall the definitions given in the preamble of \S\ref{part:verma}.

\medskip

\begin{defi}\label{defi:gauche}
A {\bfit Calogero-Moser left $\CG$-cell} is a
Calogero-Moser $\rG_\CG^\gauche$-cell. 
Given $c \in \CCB$, a {\bfit Calogero-Moser left $c$-cell} is a Calogero-Moser $\rG_c^\gauche$-cell. 
A {\bfit generic Calogero-Moser left cell} is a 
Calogero-Moser $\rG^\gauche$-cell. 

The set of Calogero-Moser left $\CG$-cells is denoted by $\cmcellules_L^\CG(W)$.  
\indexnot{C}{{{{^\calo{\mathrm{Cells}}_L^\CG(W), ^\calo{\mathrm{Cells}}_L(W), ^\calo{\mathrm{Cells}}_L^c(W)}}}}
When $\CG=0$ (respectively $\CG=\CG_c$, with $c \in \CCB$), 
this set is denoted by $\cmcellules_L(W)$ (respectively $\cmcellules_L^c(W)$). 
\end{defi}

\bigskip

As usual, the notion of Calogero-Moser left $\CG$-cell depends on the choice of 
the prime ideal $\rG_\CG^\gauche$. The next proposition is immediate.

\bigskip

\begin{prop}\label{semicontinuite gauche}
If $\CG'$ is a prime ideal of $\kb[\CCB]$ contained in $\CG$ and if 
$\rG_{\CG'}^\gauche$ is contained in $\rG_\CG^\gauche$, 
then every Calogero-Moser left $\CG$-cell is a union of Calogero-Moser left 
$\CG'$-cells.
\end{prop}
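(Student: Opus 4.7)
The plan is to deduce this statement directly from the basic semi-continuity property of inertia groups under specialization of prime ideals, which was already recorded in Remark~\ref{semicontinuite}. Recall that the Calogero-Moser left $\CG$-cells are by definition the orbits of the inertia group $I_\CG^\gauche = I_{\rG_\CG^\gauche}$ acting on the set $W$ (via the identification $G/H \longleftrightarrow W$ of \S\ref{subsection:specialization galois 0}), and similarly for $\CG'$.

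First I would note that, under the hypothesis $\rG_{\CG'}^\gauche \subset \rG_\CG^\gauche$, Remark~\ref{semicontinuite} applied to these two prime ideals of $R$ gives the inclusion of inertia groups
\[
I_{\CG'}^\gauche = I_{\rG_{\CG'}^\gauche} \subset I_{\rG_\CG^\gauche} = I_\CG^\gauche.
\]
Indeed, any element $g \in G$ fixing $\rG_{\CG'}^\gauche$ and acting trivially on $R/\rG_{\CG'}^\gauche$ automatically fixes the larger ideal $\rG_\CG^\gauche$ and acts trivially on the quotient $R/\rG_\CG^\gauche$, since the induced action on the latter factors through the action on $R/\rG_{\CG'}^\gauche$.

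From the inclusion $I_{\CG'}^\gauche \subset I_\CG^\gauche$ of subgroups of $G$ acting on $W$, it follows immediately that every $I_\CG^\gauche$-orbit on $W$ is a (disjoint) union of $I_{\CG'}^\gauche$-orbits. Translating back through Definition~\ref{defi:gauche}, this is exactly the statement that every Calogero-Moser left $\CG$-cell is a union of Calogero-Moser left $\CG'$-cells.

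There is no real obstacle here: the content has already been absorbed into the general ramification-theoretic framework set up in Chapter~\ref{chapter:cellules-CM}, and the proposition is a direct specialization of Remark~\ref{semicontinuite} to the particular choice of prime ideals $\rG_{\CG'}^\gauche$ and $\rG_\CG^\gauche$ ensured by the compatibility hypothesis. The only point worth emphasizing in the write-up is that the hypothesis $\rG_{\CG'}^\gauche \subset \rG_\CG^\gauche$ (rather than just $\CG' \subset \CG$) is precisely what is needed in order to be able to apply Remark~\ref{semicontinuite}; without such a compatible choice of prime ideals lying over $\pG_{\CG'}^\gauche$ and $\pG_\CG^\gauche$ respectively, the two inertia groups would only be defined up to $G$-conjugacy and no inclusion could be asserted.
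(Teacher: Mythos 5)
Your proof is correct and is exactly the argument the paper has in mind: the proposition is stated as "immediate" precisely because it is the specialization of Remark~\ref{semicontinuite} (inclusion of inertia groups for nested prime ideals of $R$) to the pair $\rG_{\CG'}^\gauche \subset \rG_\CG^\gauche$. Your closing observation about why the compatibility of the chosen prime ideals, and not merely $\CG'\subset\CG$, is needed is also the right point to emphasize.
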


\bigskip

Also, since $\rG_\CG^\gauche \subset \rGba_\CG$, we have the following result.

\bigskip

\begin{prop}\label{bilatere gauche}
Every Calogero-Moser two-sided $\CG$-cell is a union of Calogero-Moser left 
$\CG$-cells.
\end{prop}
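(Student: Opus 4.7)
The plan is to deduce this immediately from the semicontinuity principle already recorded in Remark~\ref{semicontinuite}, applied to the chosen inclusion of prime ideals $\rG_\CG^\gauche \subset \rGba_\CG$ of $R$.

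More precisely, recall that by construction (see the box in \S\ref{section:choix-gauche}) the prime ideal $\rG_\CG^\gauche$ has been chosen to be contained in $\rGba_\CG$. By the general fact recorded in Remark~\ref{semicontinuite}, for any two prime ideals $\rG \subset \rG'$ of $R$ the inertia groups satisfy $I_\rG \subset I_{\rG'}$, so the $\rG'$-cells are unions of $\rG$-cells. Specializing to $\rG = \rG_\CG^\gauche$ and $\rG' = \rGba_\CG$, one obtains $I_\CG^\gauche \subset \Iba_\CG$, and hence each $\Iba_\CG$-orbit on $W$ is a union of $I_\CG^\gauche$-orbits. By the very definitions of Calogero-Moser two-sided $\CG$-cells (as $\Iba_\CG$-orbits) and of Calogero-Moser left $\CG$-cells (as $I_\CG^\gauche$-orbits, per Definition~\ref{defi:gauche}), this is exactly the claim.

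There is really no obstacle here: the only content is the compatibility of the choices made in the boxes of \S\ref{chapter:bilatere} (choice of $\rGba_\CG$) and \S\ref{section:choix-gauche} (choice of $\rG_\CG^\gauche \subset \rGba_\CG$), and that compatibility is built into the choices. No further use of the structure of Verma modules, of the morphism $\O_\g^\gauche$, or of the étale property of $Q$ over $P$ at $\qG^\gauche$ is needed at this point. Thus the proof reduces to a single sentence invoking Remark~\ref{semicontinuite} and the containment $\rG_\CG^\gauche \subset \rGba_\CG$.
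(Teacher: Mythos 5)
Your argument is correct and is exactly the paper's: the boxed choice in \S\ref{section:choix-gauche} fixes $\rG_\CG^\gauche \subset \rGba_\CG$, and Remark~\ref{semicontinuite} then gives $I_\CG^\gauche \subset \Iba_\CG$, so the $\Iba_\CG$-orbits (two-sided cells) are unions of $I_\CG^\gauche$-orbits (left cells). Nothing further is needed.
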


\bigskip

Finally, let $\CGt$ be the maximal homogeneous ideal contained in 
$\CG$ (i.e. $\CGt=\bigoplus_{i \ge 0} \CG \cap \kb[\CCB]^\NM[i]$). 
Then $\CGt$ is a prime ideal of $\kb[\CCB]$ (see Lemma~\ref{lem:homogeneise-premier}). Let  
$\rG^\gauche_\CGt$ denote the maximal homogeneous ideal 
contained in $\rG^\gauche_\CG$: it is a prime ideal of $R$ lying over 
$\qG^\gauche_\CGt$ 
(see Corollary~\ref{coro:homogeneise-premier}). 
We deduce from Proposition~\ref{prop:cellules-homogeneise} the following result.

\bigskip

\begin{prop}\label{lem:cellules-gauches-homogeneise}
We have $I_\CG^\gauche=I_\CGt^\gauche$. In particular, 
the Calogero-Moser left $\CG$-cells and the Calogero-Moser left $\CGt$-cells coincide.
\end{prop}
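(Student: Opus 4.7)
My plan is to reduce the proposition directly to the general inertia-group statement of Proposition~\ref{prop:cellules-homogeneise}, taking for the grading the $\NM$-grading on $R$ constructed in Proposition~\ref{bigraduation sur R} (equivalently one could use the induced $\BZ$-grading since $\BZ=\NM-\NM$ is a finitely generated free abelian group, and the $\NM$-grading on $R$ extends to a $\BZ$-grading on $R$ concentrated in non-negative degrees, which is what Proposition~\ref{prop:cellules-homogeneise} requires).

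First I would observe that the $\NM$-grading on $R$ is $G$-stable: this is exactly the content of Proposition~\ref{bigraduation sur R}. Moreover, this grading restricts on the subring $\kb[\CCB]\subset P\subset R$ to the $\NM$-grading used in the statement of the proposition to define $\CGt=\bigoplus_{i\ge 0}\CG\cap \kb[\CCB]^\NM[i]$. Hence, by the very definition of $\rG_\CGt^\gauche$ as the maximal homogeneous subideal of $\rG_\CG^\gauche$ (with respect to this $\NM$-grading on $R$), we have
$$\rG_\CGt^\gauche=\bigoplus_{i\ge 0}\rG_\CG^\gauche\cap R^\NM[i],$$
which is exactly the homogenisation operation used in Proposition~\ref{prop:cellules-homogeneise}. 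Applying that proposition to $\rG=\rG_\CG^\gauche$ yields $I_{\rG_\CG^\gauche}=I_{\rG_\CGt^\gauche}$, that is $I_\CG^\gauche=I_\CGt^\gauche$.

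The second assertion of the proposition is then immediate from Definition~\ref{defi:CM}: Calogero-Moser left $\CG$-cells (respectively $\CGt$-cells) are by definition the orbits of $I_\CG^\gauche$ (respectively $I_\CGt^\gauche$) acting on the set $W$, so the equality of inertia groups forces the partitions of $W$ into left cells to coincide.

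There is essentially no obstacle here, since all the real work was done in Proposition~\ref{prop:cellules-homogeneise} (itself a consequence of Corollary~\ref{coro:homogeneise-inertie}). The only thing one might want to verify is compatibility of notation between the $\NM$-grading on $\kb[\CCB]$ appearing in the definition of $\CGt$ and the $\NM$-grading on $R$ extending that of $Q$; but this compatibility is built in, since the grading on $P=\kb[\CCB]\otimes \kb[V]^W\otimes \kb[V^*]^W\subset\Hb$ comes from the $\NM$-grading on $\Hb$ itself, and the grading on $R$ was constructed in Proposition~\ref{bigraduation sur R} precisely to extend the grading on $Q$ (and hence on $P$).
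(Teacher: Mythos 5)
Your proof is correct and follows exactly the route the paper takes: the paper also obtains this proposition as a direct consequence of Proposition~\ref{prop:cellules-homogeneise} applied to the ($G$-stable, by Proposition~\ref{bigraduation sur R}) grading on $R$, with $\rG_\CGt^\gauche$ being by definition the homogenisation of $\rG_\CG^\gauche$. Your additional remarks on viewing the $\NM$-grading as a $\BZ$-grading and on the compatibility of the gradings on $\kb[\CCB]$, $P$, $Q$ and $R$ are just the details the paper leaves implicit.
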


\bigskip

\subsection{Left and two-sided cells}\label{subsection:gauche-bilatere}
We fix here a Calogero-Moser two-sided $\CG$-cell $\G$ as well as a Calogero-Moser left $\CG$-cell $C$
contained in $\G$. Since $\Dba_\CG$ stabilizes $\G$ (see Theorem~\ref{theo cellules familles}(a)) 
and since $D_\CG^\gauche \subset \Dba_\CG$ (see Corollary~\ref{coro:dgauche-dbarre}), 
the group $D_\CG^\gauche$ stabilizes $\G$ (and permutes the left cells contained in $\G$). 
Set
$$C^\DD=\bigcup_{d \in D_\CG^\gauche} d(C).\indexnot{C}{C^\DD} $$
Let $w \in C^\DD$. We set $\qGba_\CG(\G)=w^{-1}(\rGba_\CG) \cap Q$ and   
\indexnot{qa}{\qGba_\CG(\G), \qG_\CG^\gauche(C^\DD)}
$\qG_\CG^\gauche(C^\DD)=w^{-1}(\rG^\gauche_\CG) \cap Q$. We also set $\zGba_\CG(\G)=\copie^{-1}(\qGba_\CG(\G))$  
\indexnot{za}{\zGba_\CG(\G), \zG_\CG^\gauche(C^\DD), \zG_\CG^\gauche(C)}  
and $\zG^\gauche_\CG(C^\DD)=\copie^{-1}(\qG_\CG^\gauche(C^\DD))$. 
It follows from Proposition~\ref{reduction} 
that $\zGba_\CG(\G)$ depends only on $\G$ and not on the choice of $C$ or $w$, whereas 
$\zG_\CG^\gauche(C^\DD)$ depends only on $C^\DD$ and not on the choice of $w$. We set 
$$\deg_\CG(C^\DD)=[k_Z(\zG_\CG^\gauche(C^\DD)):k_P(\pG^\gauche_\CG)].
\indexnot{da}{\deg_\CG(C^\DD), \deg_\CG(C)}$$
We sometimes use the notation $\zG_\CG^\gauche(C)$ or $\deg_\CG(C)$ instead of 
$\zG_\CG^\gauche(C^\DD)$ or $\deg_\CG(C^\DD)$. 

\bigskip

\begin{prop}\label{prop:gauche-bilatere}
Let $w \in C^\DD$. Then:
\begin{itemize}
\itemth{a} $\zGba_\CG(\G)=\limitegauche(\zG_\CG^\gauche(C^\DD))$.

\itemth{b} $\DS{\deg_\CG(C^\DD)=\frac{|C^\DD|}{|C|}=\frac{|D_\CG^\gauche|}{|(D_\CG^\gauche \cap \lexp{w}{H})I_\CG^\gauche|}}$.

\itemth{c} The map $D_\CG^\gauche \backslash \G \longto \lim_\gauche^{-1}(\zGba_\CG(\G))$, 
$C^\DD \longmapsto \zG_\CG^\gauche(C^\DD)$ is bijective.
\end{itemize}
\end{prop}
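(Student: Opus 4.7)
\textbf{Part (a).} This is essentially immediate. Since $\rG_\CG^\gauche \subset \rGba_\CG$ and $w$ is an automorphism of $R$, we obtain $w^{-1}(\rG_\CG^\gauche)\cap Q \subset w^{-1}(\rGba_\CG)\cap Q$, hence $\zG_\CG^\gauche(C^\DD) \subset \zGba_\CG(\G)$ in $Z$. Since $\zG_\CG^\gauche(C^\DD)$ lies over $\pG_\CG^\gauche$ and $\zGba_\CG(\G)$ is a prime of $Z$ lying over $\pGba_\CG$ containing it, Proposition~\ref{coro:app-lim} identifies $\zGba_\CG(\G)$ with $\limitegauche(\zG_\CG^\gauche(C^\DD))$.

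\textbf{Part (b).} The two equalities are proved separately. For $|C^\DD|/|C|$, I would use pure group theory. Setting $D = D_\CG^\gauche$, $I = I_\CG^\gauche$ (noting $I \trianglelefteq D$, by normality of inertia in decomposition), $C$ is the $I$-orbit of $w$ in $W=G/H$ with stabilizer $I\cap{}^{w}H$, and since $I\subset D$ one checks $C^\DD = D\cdot w$ with stabilizer $D\cap{}^{w}H$. Orbit-stabilizer then gives
$$\frac{|C^\DD|}{|C|}=\frac{|D|\cdot|I\cap{}^wH|}{|D\cap{}^wH|\cdot|I|}.$$
For $\deg_\CG(C^\DD)$, I would apply the residue-degree formula $f=|D_\rG/I_\rG|$ from the Galois ramification theory of Appendix~\ref{chapter:galois-rappels} to the tower $P\subset Q\subset R$. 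With $\rG=w^{-1}(\rG_\CG^\gauche)$, so that $D_\rG^G=w^{-1}Dw$ and $I_\rG^G=w^{-1}Iw$, one gets $f(\rG/\pG_\CG^\gauche)=|D|/|I|$. The corresponding intermediate decomposition and inertia groups over $Q$ are $D_\rG^H=w^{-1}(D\cap{}^wH)w$ and $I_\rG^H=w^{-1}(I\cap{}^wH)w$, giving $f(\rG/\qG_\CG^\gauche(C^\DD))=|D\cap{}^wH|/|I\cap{}^wH|$. The multiplicativity $f(\rG/\pG_\CG^\gauche)=f(\rG/\qG_\CG^\gauche(C^\DD))\cdot f(\qG_\CG^\gauche(C^\DD)/\pG_\CG^\gauche)$ yields
$$\deg_\CG(C^\DD)=\frac{|D|\cdot|I\cap{}^wH|}{|D\cap{}^wH|\cdot|I|}.$$
Finally, a short computation using $(D\cap{}^wH)\cap I=I\cap{}^wH$ (since $I\subset D$) shows
$|(D\cap{}^wH)I|=|D\cap{}^wH|\cdot|I|/|I\cap{}^wH|$, giving the third form of the ratio.

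\textbf{Part (c).} Well-definedness of the map is precisely part~(a) (together with $D_\CG^\gauche$-invariance: both $\zG_\CG^\gauche(C^\DD)$ and $\zGba_\CG(\G)$ depend only on $C^\DD$ and on $\G$ respectively, by Proposition~\ref{reduction} and the $D_\CG^\gauche$-equivariance built into the definition). For the bijection, the plan is to describe both sides via Galois theory and match them. The fiber $\lim_\gauche^{-1}(\zGba_\CG(\G))$ consists, through $\copie$, of primes of $Q$ over $\pG_\CG^\gauche$ contained in $\qGba_\CG(\G)$. Lifting to $R$ and using the key uniqueness statement of Proposition~\ref{prop:rgauche-rbarre} --- which shows that primes of $R$ over $\pG_\CG^\gauche$ sitting below $\rGba_\CG$ are exactly the $\Dba_\CG$-translates $\{d(\rG_\CG^\gauche):d\in\Dba_\CG\}$, parametrised by $\Dba_\CG/D_\CG^\gauche$ --- the fiber is identified with a double-coset set $H\backslash Hw^{-1}\Dba_\CG/D_\CG^\gauche$ for any $w\in\G$. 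On the other hand, writing $\G=\Iba_\CG\cdot w$ and using the $D_\CG^\gauche$-action on $\G$, the quotient $D_\CG^\gauche\backslash\G$ is parametrised by an analogous double coset set, and the two parametrisations are seen to agree. As a consistency check, the cardinalities match by part~(b) combined with $|\G|=\sum_{C^\DD}|C^\DD|$.

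\textbf{Main obstacle.} Parts~(a) and~(b) are essentially formal. The real technical work is in~(c): setting up the double-coset parametrisation of $\lim_\gauche^{-1}(\zGba_\CG(\G))$ cleanly requires carefully tracking the interaction between the ``going-up'' uniqueness (Propositions~\ref{prop:rgauche-rbarre} and~\ref{coro:app-lim}), the $G$-action on primes of $R$, and the $\Iba_\CG$-orbit structure defining $\G$. Once this bookkeeping is in place, the bijection follows without further difficulty.
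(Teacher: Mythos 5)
Parts (a) and (b) are correct and follow the paper's own argument. Part (a) is exactly the application of Proposition~\ref{coro:app-lim} to the inclusion $\zG_\CG^\gauche(C^\DD)\subset\zGba_\CG(\G)$. In (b), your computation of $\deg_\CG(C^\DD)$ via Theorem~\ref{bourbaki} and Lemma~\ref{gal:bourbaki} applied to $w^{-1}(\rG_\CG^\gauche)$, followed by the product formula $|(D_\CG^\gauche\cap\lexp{w}{H})I_\CG^\gauche|=|D_\CG^\gauche\cap\lexp{w}{H}|\cdot|I_\CG^\gauche|/|I_\CG^\gauche\cap\lexp{w}{H}|$, is the paper's; your orbit--stabilizer count of $|C^\DD|/|C|$ differs only cosmetically from the paper's observation that this ratio is the index in $D_\CG^\gauche$ of the stabilizer $(D_\CG^\gauche\cap\lexp{w}{H})I_\CG^\gauche$ of $C$.

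The gap is in (c), and it sits precisely in the step you dismiss as bookkeeping. Your parametrisation (via Proposition~\ref{reduction} over $\pG_\CG^\gauche$ and over $\pGba_\CG$, identifying $Z$ with $Q$ through $\copie$) identifies the fiber $\limitegauche^{-1}(\zGba_\CG(\G))$ with the set of $D_\CG^\gauche$-orbits inside the $\Dba_\CG$-orbit of $w$ in $W=G/H$. The domain of the map, however, is $D_\CG^\gauche\backslash\G$ with $\G=\Iba_\CG\cdot w$. Since $\Iba_\CG\subset\Dba_\CG$ one only gets $\Iba_\CG\cdot w\subseteq\Dba_\CG\cdot w$ for free, and if this inclusion were strict the map of (c) would fail to be surjective: there would be primes in the fiber not of the form $\zG_\CG^\gauche(C^\DD)$ for any left cell $C\subset\G$. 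The missing ingredient that makes ``the two parametrisations agree'' is the equality $\Dba_\CG\backslash G/H=\Iba_\CG\backslash G/H$ of~(\ref{eq:dba-g-h}), which is not formal: it rests on the triviality of the residue extension at $\qGba_\CG$ (Corollary~\ref{Q extention kc}) via Corollary~\ref{dgh igh}. Once this is invoked, your argument closes and coincides with the paper's proof, which is exactly the commutative square comparing $D_\CG^\gauche\backslash G/H\to\Upsilon^{-1}(\pG_\CG^\gauche)$ with $\Dba_\CG\backslash G/H\to\Upsilon^{-1}(\pGba_\CG)$ together with this equality.
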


\bigskip

\begin{proof}
(a) Note that $\zGba_\CG(\G) \in \Upsilon^{-1}(\pGba_\CG)$, 
$\zG_\CG^\gauche(C^\DD) \in \Upsilon^{-1}(\pG_\CG^\gauche)$ and 
$\zG^\gauche_\CG(C^\DD) \subset \zGba_\CG(\G)$, whence the result by Proposition~\ref{coro:app-lim}.

\medskip

(b) Through the action of $w$, 
the extension $k_R(w^{-1}(\rG_\CG^\gauche))/k_P(\pG^\gauche_\CG)$ is Galois with group 
$\lexp{w^{-1}}{D_\CG^\gauche}/\lexp{w^{-1}}{I_\CG^\gauche}$ whereas the extension 
$k_R(w^{-1}(\rG_\CG^\gauche))/k_Q(\qG^\gauche_\CG(C^\DD))$ 
is Galois with group $(\lexp{w^{-1}}{D_\CG^\gauche} \cap H)/(\lexp{w^{-1}}{I_\CG^\gauche} \cap H)$. 
Hence 
$$\deg_\CG(C^\DD)=\frac{|D_\CG^\gauche|}{|(D_\CG^\gauche \cap \lexp{w}{H})I_\CG^\gauche|}.$$
Moreover, $|C^\DD|/|C|$ is equal to the index of the stabilizer of $C$ in $D^\gauche_\CG$: 
but this stabilizer is exactly $(D_\CG^\gauche \cap \lexp{w}{H})I_\CG^\gauche$. 

\medskip

(c) follows essentially from the commutativity of the diagram 
$$\diagram
D_\CG^\gauche\backslash G/H \rrto^{\DS{\sim}} \ddto && \Upsilon^{-1}(\pG_\CG^\gauche) \ddto^{\DS{\limitegauche}}\\
&&\\
\Dba_\CG \backslash G/H \rrto^{\DS{\sim}} && \Upsilon^{-1}(\pGba_\CG),
\enddiagram$$
where the left vertical arrow is the canonical map (since $D_\CG^\gauche \subset \Dba_\CG$), 
and the horizontal bijective maps are given by Proposition~\ref{reduction}. 
The additional ingredient is the equality
$\Dba_\CG \backslash G/H=\Iba_\CG \backslash G/H$ (see~(\ref{eq:dba-g-h})).
\end{proof}

\bigskip

\subsection{Left cells and simple modules}
By Example~\ref{exemple lisse}, we have $\zG_\singulier \cap P \not\subset \pG^\gauche$. Consequently, 
the results of \S\ref{section:cellules et lissite} can be applied. 
Let us recall here some consequences (see Theorem~\ref{lissite et simples}):

\bigskip

\begin{theo}\label{theo:gauche}
We have:
\begin{itemize}
\itemth{a} The algebra $\Mb_\CG^\gauche \Hb^\gauche$ is split and its simple modules 
have dimension $|W|$.

\itemth{b} Every block $\Mb_\CG^\gauche \Hb^\gauche$ admits a unique simple module.
\end{itemize}
\end{theo}

\bigskip

Given $C \in \cmcellules_L^\CG(W)$, we denote by $L_\CG^\gauche(C)$ the unique 
\indexnot{L}{\LC_\CG^\gauche(C), L^\gauche(C), L_c^\gauche(C)}  simple
$\Mb_\CG^\gauche \Hb^\gauche$-module belonging to the block of $\Mb_\CG^\gauche \Hb^\gauche$ 
associated with $C$. When $\CG=0$ (respectively $\CG=\CG_c$, for some $c \in \CCB$), 
the module $L_\CG^\gauche(C)$ is denoted by $L^\gauche(C)$ (respectively $L_c^\gauche(C)$). 

\medskip

The decomposition group $D_\CG^\gauche$ acts on the commutative ring 
$\Mb_\CG^\gauche Z^\gauche$ (the action factors through a faithful action of $D_\CG^\gauche/I_\CG^\gauche$) 
and 
\equat\label{eq:fixed-gauche}
\Kb_\CG^\gauche Z^\gauche=\bigl(\Mb_\CG^\gauche Z^\gauche\bigr)^{D_\CG^\gauche}.
\endequat
So the primitive idempotents of the left-hand side (which are in one-to-one correspondence 
with the simple $\Kb_\CG^\gauche Z^\gauche$-modules) are in one-to-one correspondence with 
the $D_\CG^\gauche$-orbits of primitive idempotents of $\Mb_\CG^\gauche Z^\gauche$. Thus we get 
a bijection
\equat\label{eq:bijection-dc-orbites}
\Irr(\Kb_\CG^\gauche Z^\gauche) \longbij \Bigl(\Irr(\Mb_\CG^\gauche Z^\gauche)\Bigr)/D_\CG^\gauche.
\endequat
Similarly, the decomposition group $D_\CG^\gauche$ acts on the commutative ring 
$\Mb_\CG^\gauche \Hb^\gauche$ and
\equat\label{eq:fixed-gauche-encore}
\Kb_\CG^\gauche \Hb^\gauche=\bigl(\Mb_\CG^\gauche \Hb^\gauche\bigr)^{D_\CG^\gauche}.
\endequat
So, through the bijection~(\ref{eq:bijection-dc-orbites}) and the Morita equivalence 
of Theorem~\ref{theo:morita-left}, we get another bijective map
\equat\label{eq:bijection-dc-encore}
\Irr(\Kb_\CG^\gauche \Hb^\gauche) \longbij \Bigl(\Irr(\Mb_\CG^\gauche \Hb^\gauche)\Bigr)/D_\CG^\gauche.
\endequat
Using the one-to-one correspondence 
$D_\CG^\gauche\backslash W \longiso D_\CG^\gauche\backslash G/H \longiso \Upsilon^{-1}(\pG_\CG^\gauche)$, 
$C^\DD \mapsto \zG_\CG^\gauche(C^\DD)$ given by Proposition~\ref{reduction}, 
we obtain the following commutative diagram of bijective maps.
\equat\label{eq:diagramme-gauche}
\diagram
D_\CG^\gauche\backslash W \ar@{<->}[r] &\Upsilon^{-1}(\pG_\CG^\gauche) \ar@{<->}[r] 
& \Irr(\Kb_\CG^\gauche Z^\gauche) \ar@{<->}[r]\ar@{<->}[dd]
& \Irr(\Kb_\CG^\gauche \Hb^\gauche) \ar@{<->}[dd]\\
&&&\\
&& \Bigl(\Irr(\Mb_\CG^\gauche Z^\gauche)\Bigr)/D_\CG^\gauche\ar@{<->}[r]
& \Bigl(\Irr(\Mb_\CG^\gauche \Hb^\gauche)\Bigr)/D_\CG^\gauche
\enddiagram
\endequat

\bigskip

\begin{prop}\label{prop:oulala}
Let $C^\circ$ be a $D_\CG^\gauche$-orbit in $W$ and let $\zG=\zG_\CG^\gauche(C^\circ)$. 
We have
$$\Mb_\CG^\gauche \otimes_{\Kb_\CG^\gauche} L_\CG^\gauche(\zG) \simeq 
\bigoplus_{\substack{C \in \cmcellules_L^\CG(W) \\ C \subset C^\circ}} L_\CG^\gauche(C).$$
\end{prop}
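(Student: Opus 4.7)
The plan is to reduce the statement, via the Morita equivalence of Theorem~\ref{theo:morita-left}, to a purely commutative assertion about the center, and then deduce that commutative assertion from the Galois-theoretic relation
$\Kb_\CG^\gauche Z^\gauche = (\Mb_\CG^\gauche Z^\gauche)^{D_\CG^\gauche}$ of equation~(\ref{eq:fixed-gauche}).

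First I would apply the idempotent $e$ and use the Morita equivalence between $\Kb_\CG^\gauche\Hb^\gauche$ and $\Kb_\CG^\gauche Z^\gauche$ (resp.\ between $\Mb_\CG^\gauche\Hb^\gauche$ and $\Mb_\CG^\gauche Z^\gauche$). Since extension of scalars from $\Kb_\CG^\gauche$ to $\Mb_\CG^\gauche$ commutes with the Morita functor $M\mapsto eM$, it is enough to verify the isomorphism after multiplying by $e$, i.e.\ to show
$$\Mb_\CG^\gauche \otimes_{\Kb_\CG^\gauche} eL_\CG^\gauche(\zG) \;\simeq\; \bigoplus_{\substack{C \in \cmcellules_L^\CG(W) \\ C \subset C^\circ}} eL_\CG^\gauche(C)$$
as $\Mb_\CG^\gauche Z^\gauche$-modules.

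Next I would identify both sides concretely. By the bijection~(\ref{eq:simple-left-prime}) and its $\Mb_\CG^\gauche$-analogue (coming from Theorem~\ref{theo:gauche}(a),(b)), the simple module $eL_\CG^\gauche(\zG)$ is the residue field at $\zG\in\Upsilon^{-1}(\pG_\CG^\gauche)$, viewed as a simple quotient of $\Kb_\CG^\gauche Z^\gauche$, while each $eL_\CG^\gauche(C)$ is a residue field of $\Mb_\CG^\gauche Z^\gauche$ at a prime corresponding, via the commutative diagram~(\ref{eq:diagramme-gauche}), to the cell $C$. The chain of bijections in~(\ref{eq:diagramme-gauche}) tells us exactly that the primes of $\Mb_\CG^\gauche Z^\gauche$ lying over the prime of $\Kb_\CG^\gauche Z^\gauche$ associated to $\zG$ are precisely those indexed by the cells $C\subset C^\circ$ (here we use that the orbit $D_\CG^\gauche\cdot C=C^\DD\subset C^\circ$ is, by Proposition~\ref{prop:gauche-bilatere}(c), exactly the set of primes over $\zG$).

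Finally I would conclude by the standard Galois base-change computation: because $\Mb_\CG^\gauche/\Kb_\CG^\gauche$ is Galois with group $D_\CG^\gauche/I_\CG^\gauche$, and $I_\CG^\gauche$ acts trivially on the fibers by~(\ref{eq:fixed-gauche}), the tensor product $\Mb_\CG^\gauche\otimes_{\Kb_\CG^\gauche}k_Z(\zG)$ splits as the product of residue fields at the primes in the $D_\CG^\gauche$-orbit over $\zG$ (this is essentially Proposition~\ref{iso galois} applied to $Z^\gauche$ at the prime $\zG$). That product of residue fields is exactly $\bigoplus_{C\subset C^\circ}eL_\CG^\gauche(C)$, giving the desired isomorphism. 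The only slightly delicate point — and the one I would check carefully — is that the bijective correspondences in~(\ref{eq:diagramme-gauche}) are compatible with the $\Mb_\CG^\gauche Z^\gauche$-module structure on both sides, so that the decomposition obtained from Galois descent matches the one indexed by the cells rather than merely an abstract set of the correct cardinality.
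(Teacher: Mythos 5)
Your proof is correct, but it is organized around a different mechanism than the paper's. You compute the left-hand side explicitly: after passing through the Morita equivalence, you identify $eL_\CG^\gauche(\zG)$ with the residue field $k_Z(\zG)$, realize it as the fixed field of $(D_\CG^\gauche\cap\lexp{w}{H})I_\CG^\gauche/I_\CG^\gauche$ inside $\Mb_\CG^\gauche$, and apply the Galois splitting of Proposition~\ref{iso galois} to decompose $\Mb_\CG^\gauche\otimes_{\Kb_\CG^\gauche}k_Z(\zG)$ as a product of copies of $\Mb_\CG^\gauche$ indexed by cosets, which you then match with the cells $C\subset C^\circ$ via~(\ref{eq:diagramme-gauche}) and Proposition~\ref{prop:gauche-bilatere}. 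The paper instead argues softly: it first shows any constituent $L_\CG^\gauche(C)$ must have $C\subset C^\circ$, then uses separability of $\Mb_\CG^\gauche/\Kb_\CG^\gauche$ to get semisimplicity, $D_\CG^\gauche$-stability plus transitivity of $D_\CG^\gauche$ on the cells in $C^\circ$ to conclude the module is a \emph{multiple} of the right-hand side, and finally commutativity of $\Mb_\CG^\gauche Z^\gauche$ (a semisimple quotient of a commutative ring is a product of fields) to get multiplicity one — it never needs to identify which coset corresponds to which cell. Your route is more explicit and yields the indexing for free, at the cost of the bookkeeping you rightly flag as the delicate point (that the cosets of the stabilizer, the primes of $\Mb_\CG^\gauche Z^\gauche$ over $\zG$, and the cells in $C^\circ$ all correspond compatibly with the module structures); the paper's route avoids that bookkeeping entirely by trading it for the equivariance-plus-commutativity argument. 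Both rest on the same underlying Galois data, so either proof is acceptable.
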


\begin{proof}
First, it is clear that if $C$ is a Calogero-Moser left $\CG$-cell 
such that $L_\CG^\gauche(C)$ is contained in the $\Mb_\CG^\gauche\Hb^\gauche$-module 
$\Mb_\CG^\gauche \otimes_{\Kb_\CG^\gauche} L_\CG^\gauche(\zG)$, then 
$\zG_\CG^\gauche(C^\DD)=\zG$, hence $C^\DD \subset C^\circ$. It follows
that $C \subset C^\circ$.  

Since the field extension $\Mb_\CG^\gauche/\Kb_\CG^\gauche$ is separable, 
the $\Mb_\CG^\gauche \Hb^\gauche$-module 
$\Mb_\CG^\gauche \otimes_{\Kb_\CG^\gauche} L_\CG^\gauche(\zG)$ is semisimple. 
As it is stable under the action of $D_\CG^\gauche$, it is a multiple 
of the right-hand side of the formula. By Theorem~\ref{theo:gauche}, 
the proof of the Proposition can be reduced to the proof of the analogous 
statement for the algebra $\Mb_\CG^\gauche Z^\gauche$. Since this algebra is commutative, 
it follows that $\Mb_\CG^\gauche \otimes_{\Kb_\CG^\gauche} L_\CG^\gauche(\zG)$ 
is multiplicity-free.
\end{proof}

\section{Back to cellular characters}
\label{se:backtocellular}

\medskip

\subsection{Left cells and cellular characters}
Recall that the simple $\Mb_\CG^\gauche \Hb^\gauche$-modules are parametrized by $\cmcellules_\CG^\gauche(W)$.
There exists a unique family of non-negative integers
$(\mult_{C,\chi}^\calo)_{C \in \cmcellules_\CG^\gauche(W),\chi \in \Irr(W)}$  
\indexnot{m}{\mult_{C,\chi}^\calo}  such that 
$$\isomorphisme{\Mb_\CG^\gauche\Delta(\chi)}_{\Mb_\CG^\gauche \Hb^\gauche} = \sum_{C \in \cmcellules_L^\CG(W)} 
\mult_{C,\chi}^\calo ~\cdot ~\isomorphisme{L_\CG^\gauche(C)}_{\Mb_\CG^\gauche \Hb^\gauche}$$
for all $\chi \in \Irr(W)$. They can be used to define the cellular characters, since
\equat\label{eq:mult-mult}
\mult_{C,\chi}^\calo = \mult_{\zG_\CG^\gauche(C),\chi}^\calo.
\endequat

\begin{proof}
By construction, we have
$$\isomorphisme{\Mb_\CG^\gauche\Delta(\chi)}_{\Mb_\CG^\gauche \Hb^\gauche} 
= \sum_{\zG \in \Upsilon^{-1}(\pG_\CG^\gauche)} \mult_{\zG,\chi}^\calo \cdot 
\isomorphisme{\Mb_\CG^\gauche \otimes_{\Kb_\CG^\gauche} L_\CG^\gauche(\zG)}_{\Mb_\CG^\gauche \Hb^\gauche} 
$$
for all $\chi \in \Irr(W)$. The result follows now from Proposition~\ref{prop:oulala}.
\end{proof}

\bigskip

We can also prove the following family of identities, which are similar to identities 
for the Kazhdan-Lusztig multiplicities $\mult_{C,\chi}^\kl$ (see Lemma~\ref{lem:mult-kl}).

\bigskip

\begin{prop}\label{multiplicite cm}
With the above notation, we have:
\begin{itemize}
\itemth{a} If $\chi \in \Irr(W)$, then $\sum_{C \in \cmcellules_L^\CG(W)} \mult_{C,\chi}^\calo = \chi(1)$.
 
\itemth{b} If $C \in \cmcellules_L^\CG(W)$, then $\sum_{\chi \in \Irr(W)} \mult_{C,\chi}^\calo~\chi(1) = |C|$.

\itemth{c} If $C \in \cmcellules_L^\CG(W)$, if $\G$ is the unique Calogero-Moser 
two-sided $\CG$-cell containing $C$ and if $\chi \in \Irr(W)$ is such that 
$\mult_{C,\chi}^\calo \neq 0$, then $\chi \in \Irr_\G^\calo(W)$.
\end{itemize}
\end{prop}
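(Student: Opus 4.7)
The plan is to handle the three parts in turn: (a) by a dimension count, (b) by a length computation via cellular characters, and (c) by block compatibility under specialization.

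For part (a), I would use a direct dimension count over $\Mb_\CG^\gauche$. By the Chevalley--Shephard--Todd theorem, $\kb[V]$ is free of rank $|W|$ over $\kb[V]^W$, so $\Delta(\chi)\simeq\kb[\CCB]\otimes\kb[V]\otimes E_\chi$ is a free $P^\gauche$-module of rank $|W|\chi(1)$, giving $\dim_{\Mb_\CG^\gauche}\Mb_\CG^\gauche\Delta(\chi)=|W|\chi(1)$. By Theorem~\ref{theo:gauche}(a), each simple $L^\gauche_\CG(C)$ has $\Mb_\CG^\gauche$-dimension $|W|$, so comparing dimensions in the decomposition $[\Mb_\CG^\gauche\Delta(\chi)]=\sum_C\mult_{C,\chi}^\calo[L^\gauche_\CG(C)]$ yields (a).

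For part (b), I would first reduce the sum $\sum_\chi\chi(1)\mult_{C,\chi}^\calo$ to a length computation: by~(\ref{eq:mult-mult}) and Proposition~\ref{pr:mult-z-left} applied to $\kb W=\bigoplus_\chi\chi(1)E_\chi$, it equals $\dim\g^\calo_\zG=\longueur_{Z_\zG}((e\Kb_\CG^\gauche\Delta(\kb W))_\zG)$, where $\zG=\zG_\CG^\gauche(C^\DD)$. The $W$-equivariant isomorphism $(\kb[V]\otimes\kb W)^W\simeq\kb[V]$ given by $g\mapsto\sum_w w(g)\otimes w^{-1}$ identifies $e\Delta(\kb W)$ with $\kb[\CCB]\otimes\kb[V]$ as a $P^\gauche$-module of rank $|W|$. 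The main obstacle, and the heart of the argument, is identifying this length as $|C|$. The key geometric input is that the finite flat cover $\Upsilon:\ZCB\to\PCB$ is unramified at the generic point of $\CCB\times V/W\times\{0\}$, with the generic fiber of $|W|$ points labelled by $W$ via the Galois identification~(\ref{W M}) and Lemma~\ref{lem:compo-irr-r}; the bimodule $e\Delta(\kb W)$ corresponds, under the Morita equivalence on the smooth locus (Proposition~\ref{prop:morita}, Corollary~\ref{center reg}), to the structure sheaf on this fiber. Specializing along $\rG_\CG^\gauche$, the $|W|$ generic points merge into the points of $\Upsilon^{-1}(\pG_\CG^\gauche)$ according to $I_\CG^\gauche$-orbits, and the multiplicity at the merged point $\zG$ is precisely the size $|C|$ of the corresponding $I_\CG^\gauche$-orbit, compatibly with Proposition~\ref{prop:gauche-bilatere}(b).

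For part (c), I would use the compatibility of block decompositions under the inclusion $\rG_\CG^\gauche\subset\rGba_\CG$. By Theorem~\ref{theo:calogero}, the block idempotent $b_C$ of $R_{\rG_\CG^\gauche}Z$ corresponding to $L^\gauche_\CG(C)$ is, via Remark~\ref{rem:gen-spe}, a summand of the two-sided block idempotent $b_\G$ in $R_{\rGba_\CG}Z$, where $\G\supset C$. The hypothesis $\mult_{C,\chi}^\calo\neq 0$ means $b_C$ acts non-trivially on $\Mb_\CG^\gauche\Delta(\chi)$, so passing to the restricted Cherednik algebra via the surjection $\Delta(\chi)\twoheadrightarrow\bar\Delta_\CG(\chi)$, the idempotent $b_\G$ acts non-trivially on $\bar\Delta_\CG(\chi)$. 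By the bijection between Calogero-Moser families and blocks of $\Hbov_\CG$ from Theorem~\ref{theo cellules familles}, $\bar\Delta_\CG(\chi)$ lies in the block covered by $\G$, and hence $\chi\in\Irr_\G^\calo(W)$.
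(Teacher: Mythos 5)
Parts (a) and (c) of your proof are correct and essentially the paper's own arguments: (a) is the dimension count using $\dim_{\Mb_\CG^\gauche}\Mb_\CG^\gauche\Delta(\chi)=|W|\chi(1)$ from~(\ref{dimension left}) and Theorem~\ref{theo:gauche}, and (c) is the observation that $\Delta(\chi)$ reduces modulo $\pGba_\CG$ to the indecomposable baby Verma module, so it lives in a single block of $R_{\rGba_\CG}\Hb$, namely the one attached to the family of $\chi$.

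Part (b), however, has a genuine gap. After the (correct) reduction to showing $\longueur_{Z_\zG}\bigl(e\Kb_\CG^\gauche\Delta(\kb W)\bigr)_\zG=|C|$, i.e.\ $\longueur_{Z_\zG}(Z/\pG_\CG^\gauche Z)_\zG=|C|$, you justify this by asserting that $\Upsilon$ is \emph{unramified} at the generic point of $\CCB\times V/W\times\{0\}$ with fiber $|W|$ reduced points, which then ``merge with multiplicity $|C|$''. This premise is false precisely in the cases of interest: the equality you are trying to prove says that the local ring of the fiber of $\Upsilon$ at $\zG$ has length $|C|$, so the fiber is non-reduced (and $\Upsilon$ is ramified there) as soon as some left cell has more than one element. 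Moreover the $|W|$ points labelled by $W$ only exist after base change to $\RCB$ (via~(\ref{W M}) and Lemma~\ref{lem:compo-irr-r}), and their ``merging'' under $I_\CG^\gauche$ produces the points of $\Spec(k_R(\rG_\CG^\gauche)\otimes_P Z)$, indexed by cells $C$, not the points of $\Upsilon^{-1}(\pG_\CG^\gauche)$, which are indexed by $D_\CG^\gauche$-orbits $C^\DD$; passing between the two costs the residue degree $\deg_\CG(C^\DD)=|C^\DD|/|C|$. The identity ``multiplicity at the merged point $=|C|$'' is exactly Corollary~\ref{coro:cardinal-C}, which the paper \emph{deduces from} (b); asserting it is therefore circular unless you prove it independently. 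It can be proved independently — combine $\dim_{\Mb_\CG^\gauche}(\Mb_\CG^\gauche Z)\,b_C=|C|$ (Corollary~\ref{dim center r}, which rests on the freeness of $R_\rG Zb$ and the generic splitting) with Proposition~\ref{prop:gauche-bilatere}(b) to convert a length over $\Mb_\CG^\gauche$ into a length over $Z_\zG$ — but your write-up supplies neither of these inputs.

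For comparison, the paper's proof of (b) avoids all geometry: the Morita equivalence of Theorem~\ref{theo:KH-mat} gives $\isomorphisme{\Mb\Hb e}=\sum_{w\in W}\isomorphisme{\LC_w}$, and applying $\dec_\CG^\gauche$ yields $\isomorphisme{\Mb_\CG^\gauche\Hb^\gauche e}=\sum_{C}|C|\cdot\isomorphisme{L_\CG^\gauche(C)}$; on the other hand $\Hb e=\Delta(\mathrm{co})$ has a $\Delta$-filtration giving $\isomorphisme{\Mb_\CG^\gauche\Hb^\gauche e}=\sum_\chi\chi(1)\isomorphisme{\Mb_\CG^\gauche\Delta(\chi)}$, and comparing coefficients of $\isomorphisme{L_\CG^\gauche(C)}$ gives (b) directly. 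If you want to keep your length-theoretic route, you must replace the unramifiedness claim by the two ingredients above.
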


\begin{proof}
(a) follows from the computation of the dimension of Verma modules (see~(\ref{dimension left})). 

\medskip

Let us now show (b). First of all, note that, thanks to the Morita equivalence of Theorem~\ref{theo:KH-mat}, 
we have
$$\isomorphisme{\Mb\Hb e}_{\Mb\Hb} = \sum_{w \in W} \isomorphisme{\LC_w}_{\Mb\Hb}.$$
By applying $\dec_\CG^\gauche$ to this equality, we deduce that
$$\isomorphisme{\Mb_\CG^\gauche \Hb^\gauche e}_{\Mb_\CG^\gauche \Hb^\gauche} 
= \sum_{C \in \cmcellules_L^\CG(W)} 
|C|~\cdot~\isomorphisme{L_\CG^\gauche(C)}_{\Mb_\CG^\gauche \Hb^\gauche}.$$
Since $\Mb_\CG^\gauche \Hb^\gauche e=\Mb_\CG^\gauche \Delta(\mathrm{co})$,
we have the following equality
\equat\label{he verma}
\isomorphisme{\Mb_\CG^\gauche \Hb^\gauche e}_{\Mb_\CG^\gauche \Hb^\gauche}
= \sum_{\chi \in \Irr(W)} 
\chi(1)~\cdot~\isomorphisme{\Mb_\CG^\gauche\Delta(\chi)}_{\Mb_\CG^\gauche \Hb^\gauche}
\endequat
and (b) follows.

\medskip

(c) is immediate, as the reduction modulo $\pGba$ of the Verma module 
is the corresponding baby Verma module, and so is indecomposable 
as a $\Kbov_\CG\Hbov$-module.
\end{proof}

\bigskip

Given $C$ a Calogero-Moser left $\CG$-cell, we set
\equat\label{eq:cellulaire}
\isomorphisme{C}_\CG^\calo = \sum_{\chi \in \Irr(W)} \mult_{C,\chi}^\calo \cdot \chi.  
\indexnot{C}{\isomorphisme{C}_\CG^\calo, \isomorphisme{C}^\calo, \isomorphisme{C}_c^\calo}
\endequat
In other words, 
\equat\label{eq:easy}
\isomorphisme{C}_\CG^\calo=\g_{\zG_\CG^\calo(C^\DD)}.
\endequat
Taking Proposition~\ref{multiplicite cm}(c) into account, the set of irreducible characters 
appearing with a non-zero multiplicity in a Calogero-Moser $\CG$-cellular character 
is contained in a unique Calogero-Moser $\CG$-family $\FC$: we will say that the 
Calogero-Moser $\CG$-cellular character {\it belongs to $\FC$}.

% 
% \bigskip
% 
% \begin{rema}\label{rema:cellulaire-independant}
% We will show here that the set of Calogero-Moser $\CG$-cellular characters 
% (associated with a $\CG$-family) does not depend on the choice of the prime 
% ideal $\rG_\CG^\gauche$. Indeed, let $\rG$ be a prime ideal of $R$ lying over 
% $\pG_\CG^\gauche$. Then there exists $g \in G$ such that $g(\rG_\CG^\gauche)=\rG$. 
% So, $g$ induces an isomorphism $R/\rG_\CG^\gauche \longiso R/\rG$, which induces 
% an isomorphism $\Mb_\CG^\gauche \longiso k_R(\rG)$. 
% 
% Let $\cmcellules_L^{\CG,*}(W)$ denote the set of Calogero-Moser $\rG$-cells and, 
% if  $C \in \cmcellules_L^{\CG,*}(W)$, let $\LC_{\CG,*}^\gauche(C)$ denote the corresponding 
% simple $k_R(\rG)\Hb^\gauche$-module simple. Then, 
% $g^{-1}(C) \in \cmcellules_L^{\CG}(W)$ and the isomorphism $\Mb_\CG^\gauche \longiso k_R(\rG)$, 
% which extends to an isomorphism $\Mb_\CG^\gauche\Hb^\gauche \longiso k_R(\rG)\Hb^\gauche$, sends the 
% simple module $\LC_\CG^\gauche(g^{-1}(C))$ to $\LC_{\CG,*}^\gauche(C)$.The left Verma module 
% being defined over $\Kb^\gauche_\CG$, the multiplicity of $\LC_\CG^\gauche(\lexp{g^{-1}}{C})$ in 
% $\Mb_\CG^\gauche\MC^\gauche(\chi)$ is then equal to the one of $\LC_{\CG,*}^\gauche(C)$ in 
% $k_R(\rG)\MC^\gauche(\chi)$. Whence the result.\finl
% \end{rema}

\medskip

If $d \in D_\CG^\gauche$ and $C$ is a Calogero-Moser left $\CG$-cell, then $d(C)$ 
is also a Calogero-Moser left $\CG$-cell. The equality~(\ref{eq:easy}) 
shows that the Calogero-Moser $\CG$-cellular characters 
associated with $C$ and $d(C)$ coincide:

\bigskip

\begin{coro}\label{coro:dec-cellulaire}
If $d \in D_\CG^\gauche$ and $C$ is a Calogero-Moser left $\CG$-cell, then 
$$\isomorphisme{d(C)}_\CG^\calo=\isomorphisme{C}_\CG^\calo.$$
\end{coro}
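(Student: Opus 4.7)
The plan is to exploit the equality \eqref{eq:easy}, namely
$\isomorphisme{C}_\CG^\calo=\g_{\zG_\CG^\gauche(C^\DD)}$,
which reduces the corollary to a statement about the prime ideal $\zG_\CG^\gauche(C^\DD)$. The point is that this prime ideal depends only on the set $C^\DD=\bigcup_{d'\in D_\CG^\gauche}d'(C)$, and not on the particular Calogero-Moser left $\CG$-cell $C$ we picked inside $C^\DD$ (this was already noted in \S\ref{subsection:gauche-bilatere}, as a consequence of Proposition~\ref{reduction} applied to $w\in C^\DD$).

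Given $d\in D_\CG^\gauche$, the set $d(C)$ is again a Calogero-Moser left $\CG$-cell (since Calogero-Moser left $\CG$-cells are the $I_\CG^\gauche$-orbits in $W$, and $I_\CG^\gauche$ is normal in $D_\CG^\gauche$). Moreover, as $D_\CG^\gauche$ is a group, the $D_\CG^\gauche$-orbit of $d(C)$ coincides with that of $C$, so $(d(C))^\DD = C^\DD$. Hence $\zG_\CG^\gauche((d(C))^\DD)=\zG_\CG^\gauche(C^\DD)$, and applying \eqref{eq:easy} twice gives
\[
\isomorphisme{d(C)}_\CG^\calo=\g_{\zG_\CG^\gauche((d(C))^\DD)}=\g_{\zG_\CG^\gauche(C^\DD)}=\isomorphisme{C}_\CG^\calo.
\]

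There is no real obstacle here: the statement is essentially a formal consequence of the fact that the construction $C\mapsto \zG_\CG^\gauche(C^\DD)$ was designed to factor through the quotient $\cmcellules_L^\CG(W)\twoheadrightarrow D_\CG^\gauche\backslash \cmcellules_L^\CG(W)$ (compare with Proposition~\ref{prop:gauche-bilatere}(c)). The only thing to check carefully is that $d(C)$ is indeed a left $\CG$-cell, which follows from $I_\CG^\gauche \triangleleft D_\CG^\gauche$ (Theorem~\ref{bourbaki}).
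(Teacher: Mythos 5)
Your proof is correct and follows exactly the paper's route: the paper derives the corollary directly from~(\ref{eq:easy}) together with the observation that $d(C)$ is again a left $\CG$-cell and that $\zG_\CG^\gauche(-)$ only depends on the $D_\CG^\gauche$-orbit $C^\DD=(d(C))^\DD$. You merely make explicit the two small verifications (normality of $I_\CG^\gauche$ in $D_\CG^\gauche$, and $(d(C))^\DD=C^\DD$) that the paper leaves implicit.
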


\bigskip

\begin{rema}\label{rema:decleft-decba}
The previous Corollary~\ref{coro:dec-cellulaire} shows in particular that 
$d(C)$ is contained in the same Calogero-Moser two-sided $\CG$-cell as $C$, 
which has already been proven by a different argument 
(see the beginning of~\S\ref{subsection:gauche-bilatere}).\finl
\end{rema}

\bigskip

\begin{coro}\label{coro:cardinal-C}
Let $C$ be a Calogero-Moser left $\CG$-cell and let $\zG=\zG_\CG^\gauche(C^\DD)$. Then 
$|C|=\longueur_{Z_\zG}(Z/\pG_\CG^\gauche Z)_\zG$. 
\end{coro}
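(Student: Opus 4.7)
The plan is to apply Lemma~\ref{le:multiplicite-Z} to the finitely generated $\Hb^\gauche_\CG$-module $M = \Hb^\gauche_\CG e$. By the Satake isomorphism (Theorem~\ref{theo:satake}) together with base change along $P \to P/\pG^\gauche_\CG$, the $Z$-module $eM = e\Hb^\gauche_\CG e$ is identified with $Z^\gauche_\CG = Z/\pG^\gauche_\CG Z$, so the lemma rewrites $\longueur_{Z_\zG}\bigl((Z/\pG^\gauche_\CG Z)_\zG\bigr)$ as the multiplicity of $L^\gauche_\CG(\zG)$ in the $(\Kb^\gauche_\CG\Hb^\gauche)$-module $\Kb^\gauche_\CG\Hb^\gauche e$. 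Computing this multiplicity then becomes the remaining task.

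I would first work over the splitting field $\Mb^\gauche_\CG$, where the simple $\Hb^\gauche$-modules are labelled by left cells. Equation~(\ref{he verma}) provides the identity $[\Mb^\gauche_\CG\Hb^\gauche e] = \sum_\chi \chi(1)\,[\Mb^\gauche_\CG\Delta(\chi)]$ in $G_0(\Mb^\gauche_\CG\Hb^\gauche)$. Expanding each $[\Mb^\gauche_\CG\Delta(\chi)]$ in the basis given by the classes $[L^\gauche_\CG(C')]$ using the multiplicities $\mult^\calo_{C',\chi}$, and then extracting the coefficient of $[L^\gauche_\CG(C)]$, one obtains $\sum_\chi \chi(1)\mult^\calo_{C,\chi}$, which Proposition~\ref{multiplicite cm}(b) evaluates to $|C|$.

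The only non-routine step is translating this multiplicity back from $\Mb^\gauche_\CG$ to $\Kb^\gauche_\CG$, and this is exactly what Proposition~\ref{prop:oulala} delivers. It asserts that $\Mb^\gauche_\CG\otimes_{\Kb^\gauche_\CG}L^\gauche_\CG(\zG)$ decomposes as the direct sum of the $L^\gauche_\CG(C')$ for the Calogero-Moser left $\CG$-cells $C'\subset C^\DD$ (where $\zG = \zG^\gauche_\CG(C^\DD)$ and $C\subset C^\DD$). Consequently, for every finite-length $\Kb^\gauche_\CG\Hb^\gauche$-module $N$, the multiplicity of any $L^\gauche_\CG(C')$ with $C'\subset C^\DD$ in $\Mb^\gauche_\CG\otimes_{\Kb^\gauche_\CG}N$ equals the multiplicity of $L^\gauche_\CG(\zG)$ in $N$. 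Applied to $N = \Kb^\gauche_\CG\Hb^\gauche e$, this matches the value $|C|$ produced in the previous step, and combining with the first paragraph yields $\longueur_{Z_\zG}\bigl((Z/\pG^\gauche_\CG Z)_\zG\bigr) = |C|$.
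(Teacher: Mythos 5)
Your proof is correct and follows essentially the same route as the paper: the paper likewise reduces, via the Morita equivalence between $\Hb_{\pG_\CG^\gauche}$ and $Z_{\pG_\CG^\gauche}$ (Corollary~\ref{morita lissite p}, which is also what underlies Lemma~\ref{le:multiplicite-Z}), to computing the multiplicity of $L_\CG^\gauche(\zG)$ in $\Kb_\CG^\gauche\Hb^\gauche e$, and then concludes from $[\Hb e]=\sum_\chi\chi(1)[\Delta(\chi)]$ together with Proposition~\ref{multiplicite cm}(b). Your passage through $\Mb_\CG^\gauche$ and Proposition~\ref{prop:oulala} just makes explicit the identification $\mult_{\zG,\chi}^\calo=\mult_{C,\chi}^\calo$ of (\ref{eq:mult-mult}) that the paper uses implicitly.
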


\bigskip

\begin{proof}
We have $[P_\CG^\gauche \Hb e]=[P_\CG^\gauche\Delta(\mathrm{co})]=
\sum_\chi \chi(1)[P_\CG^\gauche \Delta(\chi)]$.
Via the Morita equivalence of Corollary~\ref{morita lissite p}, the module
$Z/\pG_\CG^\gauche Z$ corresponds to the module $P_\CG^\gauche \Hb e$,
hence
$$\longueur_{Z_\zG}(Z/\pG_\CG^\gauche Z)_\zG=\sum_{\chi \in \Irr(W)} \mult_{C,\chi}^\calo \cdot \chi(1)=|C|$$
using Proposition~\ref{multiplicite cm}(b).
\end{proof}

\bigskip

\begin{coro}\label{coro:cellulaire-ordre-2}
Assume that all the reflections of $W$ have order $2$ and let 
$\t_0=(-1,1,\e) \in \kb^\times \times \kb^\times \times W^\wedge$. Let $\zG$ be a prime 
ideal of $Z$ lying over $\pG_\CG^\gauche$ and let $C$ and $C_\e$ be two Calogero-Moser 
left $\CG$-cells such that $\zG_\CG^\gauche(C^\DD)=\zG$ and $\zG_\CG^\gauche(C_\e^\DD)=\t_0(\zG)$. 
Then 
$$\isomorphisme{C_\e}_\CG^\calo = \e \cdot \isomorphisme{C}_\CG^\calo.$$
If moreover $w_0=-\Id_V \in W$, then we can take $C_\e=Cw_0$, hence
$$\isomorphisme{C w_0}_\CG^\calo = \e \cdot \isomorphisme{C}_\CG^\calo.$$
\end{coro}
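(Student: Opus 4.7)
The plan is to exploit the automorphism $\t_0$ of $\Hb$ together with the explicit formula for cellular characters given in Proposition~\ref{pr:mult-z-left}. First, note that $\t_0$ acts trivially on $P$ by~(\ref{tau 0}), and in particular stabilizes $\pG_\CG^\gauche$ and acts trivially on $\Kb_\CG^\gauche$. Moreover, $\t_0$ stabilizes the ideal $(T)\subset\Hbt$ (since $T$ has bidegree $(1,1)$, $\gradauto_{-1,1}(T)=-T$), so it descends to an automorphism of $\Hb$, and hence permutes the primes of $Z$ above $\pG_\CG^\gauche$ via the extension described in \S\ref{section:auto galois}.

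The key transport step is: by (the mod-$T$ version of) Proposition~\ref{omega lineaire} with $(\xi,\xi',\g\rtimes g)=(-1,1,\e\rtimes 1)$, the equivalence of categories on $\Kb_\CG^\gauche\Hb^\gauche\module$ induced by $\t_0$ sends $\Kb_\CG^\gauche\Delta(\chi)$ to $\Kb_\CG^\gauche\Delta(\chi\otimes\e)$; and since simple $\Kb_\CG^\gauche\Hb^\gauche$-modules correspond bijectively to maximal ideals of $\Kb_\CG^\gauche Z^\gauche$, and $\t_0$ permutes these by the given action on $Z$, the same equivalence sends $L_\CG^\gauche(\zG)$ to $L_\CG^\gauche(\t_0(\zG))$. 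Using Proposition~\ref{pr:mult-z-left} together with Lemma~\ref{le:multiplicite-Z}, this gives
\[\mult_{\zG,\chi}^\calo = \mult_{\t_0(\zG),\,\chi\otimes\e}^\calo.\]
Since $\zG_\CG^\gauche(C_\e^\DD)=\t_0(\zG)=\t_0(\zG_\CG^\gauche(C^\DD))$, we deduce by~(\ref{eq:mult-mult}) that $\mult_{C_\e,\chi}^\calo=\mult_{C,\chi\otimes\e}^\calo$, and substituting $\psi=\chi\otimes\e$ yields $\isomorphisme{C_\e}_\CG^\calo=\e\cdot\isomorphisme{C}_\CG^\calo$ as claimed.

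For the second statement, assume $w_0=-\Id_V\in W$. Since $w_0\in Z(W)$, Proposition~\ref{WW}(a) gives $\iota(w_0,w_0)=1$ in $G$, hence $\iota(1,w_0)=\iota(w_0,1)^{-1}=w_0^{-1}=w_0=\t_0$ by Proposition~\ref{tau 0 in G}. The action of $\iota(1,w_0)$ on $W=G/H$ is right multiplication by $w_0$ via~(\ref{iota concret}), so $Cw_0=\t_0(C)$ as subsets of $W$. Since $\t_0$ is central in $G$, it normalizes $I_\CG^\gauche$, so $Cw_0$ is again an $I_\CG^\gauche$-orbit, i.e.\ a Calogero-Moser left $\CG$-cell. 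Picking $w\in C$ and using $(ww_0)^{-1}(\rG_\CG^\gauche)=w_0(w^{-1}(\rG_\CG^\gauche))$ in $R$ together with $\copie\circ\t_0|_Z=\t_0|_Q\circ\copie$ (from the construction of the $\GC$-action in \S\ref{section:auto galois}), a direct computation gives $\zG_\CG^\gauche((Cw_0)^\DD)=\t_0(\zG)$, so we may take $C_\e=Cw_0$ and the previous paragraph applies.

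The main delicate point is keeping track of the several competing actions of $\t_0$: as an element of $\kb^\times\times\kb^\times\times(W^\wedge\rtimes\NC)$ acting on $\Hb$ (and hence on modules via $\lexp{\t_0}{M}$), and as an element of the Galois group $G$ acting on $R$ and $Q$. The identification of these two actions, together with the fact that $\t_0(e)\neq e$ (so that the Morita equivalence is not preserved on the nose), forces us to work with the intrinsic multiplicity-based definition of cellular characters rather than attempting to transport the module $e\Delta(\chi)$ directly.
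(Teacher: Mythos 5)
Your proof is correct and follows essentially the same route as the paper: the first statement rests on $\lexp{\t_0}{\Delta(\chi)}\simeq\Delta(\chi\e)$ (Proposition~\ref{omega lineaire} reduced mod $T$) combined with the classification of simple $\Kb_\CG^\gauche\Hb^\gauche$-modules by primes of $Z$ over $\pG_\CG^\gauche$, and the second on the identification $\t_0=w_0=\iota(w_0,1)$ of Proposition~\ref{tau 0 in G}, which is exactly the argument of Corollary~\ref{w0 epsilon} that the paper invokes. Your version merely makes explicit the transport of multiplicities and the computation $\zG_\CG^\gauche((Cw_0)^\DD)=\t_0(\zG)$ that the paper leaves to the reader.
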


\begin{proof}
The first statement follows from the fact that 
$\lexp{\t_0}{\Delta(\chi)} \simeq \Delta(\chi \e)$ whereas 
the second can be proven as in Corollary~\ref{w0 epsilon}.
\end{proof}

\bigskip

\begin{rema}\label{rem:pas-si-trivial}
Corollary~\ref{coro:cardinal-C} is interesting in that
it provides a numerical invariant (the cardinality) of an 
object (a left cell) which is defined using the Galois extension $\Mb/\Kb$ (and the ring $R$) 
in terms of an invariant which is computable inside the extension $\Lb/\Kb$ 
(and the ring $Z$).\finl
\end{rema}

\bigskip

\subsection{Cellular characters and projective covers} 
Note that $\Mb_\CG^\gauche \Hbov^\moins$ is a $\Mb_\CG^\gauche$-subalgebra of 
$\Mb_\CG^\gauche\Hb^\gauche$ of dimension $|W|^2$, whose Grothendieck group 
is identified with $\BZ\Irr(W)=\groth(\kb W)$. 

Given $C$ a Calogero-Moser left $\CG$-cell, we denote by
$\PC^\gauche_\CG(C)$  
\indexnot{P}{\PC^\gauche_\CG(C)}  a projective cover of the simple 
$\Mb_\CG^\gauche \Hb^\gauche$-module $L_\CG^\gauche(C)$. We denote
by $\mathrm{Soc}(M)$ the largest semisimple submodule (the socle)
of a module $M$.

\bigskip

\begin{prop}\label{eq:socle-cellulaire}
We have
$$\isomorphisme{{\mathrm{Soc}}(\Res_{\Mb_\CG^\gauche\Hbov^\moins}^{\Mb_\CG^\gauche\Hb^\gauche} 
\PC_\CG^\gauche(C))}_{\Mb_\CG^\gauche \otimes \Hbov^\moins} 
= \sum_{\chi \in \Irr(W)} \mult_{C,\chi}^\calo \cdot \chi = \isomorphisme{C}_\CG^\calo.$$
\end{prop}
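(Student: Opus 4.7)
The plan is to combine Frobenius reciprocity with the Morita equivalence of Theorem~\ref{theo:morita-left} and the Gorenstein property of $Z$. I would first identify the simple $\Mb_\CG^\gauche\otimes\Hbov^\moins$-modules: the algebra $\Hbov^\moins=\kb[V^*]^\cow\rtimes W$ has Jacobson radical $\kb[V^*]^\cow_+\cdot\Hbov^\moins$ and quotient $\kb W$, so its simples are the $\chi\in\Irr(W)$ lifted through the augmentation $V^*\mapsto 0$ (the field $\Mb_\CG^\gauche$ splits $W$ by Theorem~\ref{deploiement}). For any finite-dimensional $\Mb_\CG^\gauche\otimes\Hbov^\moins$-module $M$ one has $\mathrm{Soc}(M)=M^{V^*}$ and
$$\bigl[\mathrm{Soc}(M)\bigr]_{\Mb_\CG^\gauche\otimes\Hbov^\moins}=\sum_{\chi\in\Irr(W)}\dim_{\Mb_\CG^\gauche}\Hom_{\Mb_\CG^\gauche\otimes\Hbov^\moins}(\Mb_\CG^\gauche\otimes\chi,M)\cdot\chi,$$
so the proposition would reduce to the equality $\dim\Hom_{\Mb_\CG^\gauche\otimes\Hbov^\moins}(\Mb_\CG^\gauche\otimes\chi,\PC^\gauche_\CG(C))=\mult_{C,\chi}^\calo$.

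Next, I would set up Frobenius reciprocity. By Theorem~\ref{PBW-0}, $\Mb_\CG^\gauche\Hb^\gauche$ is a free right $\Mb_\CG^\gauche\otimes\Hbov^\moins$-module of rank $|W|$, with basis any basis of $\kb[V]^\cow$; induction is then exact and left adjoint to restriction. The canonical embedding $\chi\hookrightarrow\Delta(\chi),\,v\mapsto 1\otimes v$, is an $\Hb^\moins$-module map which factors through $\Hbov^\moins$, and its adjoint would provide an $\Mb_\CG^\gauche\Hb^\gauche$-equivariant map
$$\Mb_\CG^\gauche\Hb^\gauche\otimes_{\Mb_\CG^\gauche\otimes\Hbov^\moins}(\Mb_\CG^\gauche\otimes\chi)\longto\Mb_\CG^\gauche\Delta(\chi),$$
surjective because its image contains the generating subspace $\chi$ and between spaces of the same $\Mb_\CG^\gauche$-dimension $|W|\dim\chi$, hence an isomorphism. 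Adjunction would then identify
$$\Hom_{\Mb_\CG^\gauche\otimes\Hbov^\moins}(\Mb_\CG^\gauche\otimes\chi,\PC^\gauche_\CG(C))\simeq\Hom_{\Mb_\CG^\gauche\Hb^\gauche}(\Mb_\CG^\gauche\Delta(\chi),\PC^\gauche_\CG(C)).$$

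Finally, I would compute the right-hand side through the Morita equivalence $\Mb_\CG^\gauche\Hb^\gauche\sim R:=\Mb_\CG^\gauche Z^\gauche$ of Theorem~\ref{theo:morita-left}. Since $\Mb_\CG^\gauche\Hb^\gauche$ is split (Theorem~\ref{theo:gauche}), the image of $L^\gauche_\CG(C)$ under Morita is the residue field at a maximal ideal $\mG_C$ of the finite-dimensional commutative $\Mb_\CG^\gauche$-algebra $R$, and $\PC^\gauche_\CG(C)$ corresponds to the local factor $R_C:=R_{\mG_C}$; the image $N_\chi$ of $\Mb_\CG^\gauche\Delta(\chi)$ is a finite-length $R$-module with $[N_\chi:\Mb_\CG^\gauche/\mG_C]=\mult_{C,\chi}^\calo$. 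By \cite[Theorem~1.5]{EG}, $Z\simeq e\Hb e$ is Gorenstein over the regular base $P$; since $\pG^\gauche$ is generated by a regular sequence in $P$ and $Z$ is $P$-free, $Z^\gauche$ is Gorenstein, and further localization, reduction modulo the maximal ideal $\pG^\gauche_\CG P_{\pG^\gauche_\CG}$ (generated by a regular sequence because $P_{\pG^\gauche_\CG}$ is regular), and separable base change to $\Mb_\CG^\gauche$ each preserve the Gorenstein property. Thus $R_C$ would be a Gorenstein Artinian local $\Mb_\CG^\gauche$-algebra with residue field $\Mb_\CG^\gauche$, i.e., a Frobenius $\Mb_\CG^\gauche$-algebra; then $\Hom_{R_C}(-,R_C)$ would be length-preserving on finite-length modules, yielding
$$\dim_{\Mb_\CG^\gauche}\Hom_R(N_\chi,R_C)=\dim_{\Mb_\CG^\gauche}\Hom_{R_C}\bigl((N_\chi)_{\mG_C},R_C\bigr)=\dim_{\Mb_\CG^\gauche}(N_\chi)_{\mG_C}=\mult_{C,\chi}^\calo.$$

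The main technical hurdle is this last Gorenstein step: the identification of the Hom-space dimension with a composition multiplicity rests on $R_C$ being self-injective, which in turn uses the nontrivial Etingof--Ginzburg theorem that $e\Hb e$ is Gorenstein. Without this input, the chain from the decomposition numbers $\mult_{C,\chi}^\calo$ to the socle character would have to be established by an alternative argument.
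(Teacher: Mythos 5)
Your proof is correct, and it shares with the paper the Frobenius reciprocity step (the identification $\Mb_\CG^\gauche\Delta(\chi)\simeq\Ind_{\Mb_\CG^\gauche\Hbov^\moins}^{\Mb_\CG^\gauche\Hb^\gauche}(\Mb_\CG^\gauche\otimes E_\chi)$ and the adjunction turning $\Hom$ into the projective cover into $\Hom$ out of $E_\chi$ into the restriction), but it diverges at the crucial point where one must show $\dim_{\Mb_\CG^\gauche}\Hom_{\Mb_\CG^\gauche\Hb^\gauche}(\Mb_\CG^\gauche\Delta(\chi),\PC_\CG^\gauche(C))=\mult_{C,\chi}^\calo$. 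The paper disposes of this in one line: by Theorem~\ref{symetrique} the $P$-algebra $\Hb$ is symmetric, so $\PC_\CG^\gauche(C)$ is also an injective hull of $L_\CG^\gauche(C)$, and for a split algebra $\dim\Hom(M,I(L))=[M:L]$. You instead push the computation through the Morita equivalence of Theorem~\ref{theo:morita-left} to the commutative Artinian algebra $\Mb_\CG^\gauche Z^\gauche$ and invoke the Gorenstein property of $Z\simeq e\Hb e$ from \cite[Theorem~1.5]{EG}, reducing modulo a regular system of parameters and applying Matlis duality over the resulting local Frobenius factor $R_C$. Both routes are sound; what differs is the self-injectivity input. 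The paper's is internal (the symmetrizing form $\taub_\HHb$ is constructed in the book), shorter, and avoids any commutative-algebra reductions, whereas yours leans on the Etingof--Ginzburg Gorenstein theorem, which the book only quotes in a remark without proof. Your route does have the merit of isolating exactly what is needed, namely self-injectivity of the localized centre on the left-cell side; note, though, that you could have obtained this without \cite{EG}: the symmetric structure of $\Hb$ specializes to make $\Mb_\CG^\gauche\Hb^\gauche$ a symmetric, hence self-injective, $\Mb_\CG^\gauche$-algebra, and self-injectivity transfers across the Morita equivalence to $\Mb_\CG^\gauche Z^\gauche$, so the Gorenstein step can be derived from Theorem~\ref{symetrique} alone.
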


\bigskip

\begin{proof}
Let $\chi \in \Irr(W)$. Since the algebra $\Hb$ is symmetric (see~(\ref{symetrique})), 
$\PC_\CG^\gauche(C)$ is also an injective hull of $L_\CG^\gauche(C)$. So 
$$\mult_{C,\chi}^\calo = \dim_{\Mb_\CG^\gauche} \Hom_{\Mb_\CG^\gauche\Hb^\gauche}\bigl(
\Mb_\CG^\gauche \Delta(\chi),\PC_\CG^\gauche(C) \bigr).$$
As $\Mb_\CG^\gauche \Delta(\chi) = \Ind_{\Mb_\CG^\gauche\Hbov^\moins}^{\Mb_\CG^\gauche\Hb^\gauche} 
(\Mb_\CG^\gauche \otimes E_\chi)$, we deduce that
$$\mult_{C,\chi}^\calo = \dim_{\Mb_\CG^\gauche}\Hom_{\Mb_\CG^\gauche\Hbov^\moins}\bigl( E_\chi,
\Res_{\Mb_\CG^\gauche \otimes \Hbov^\moins}^{\Mb_\CG^\gauche\Hb^\gauche} \PC_\CG^\gauche(C)\bigr).$$
The result follows.
\end{proof}

\bigskip

\subsection{Cellular characters and $\bb$-invariant} 
The following theorem is an analogue of Theorem~\ref{dim graduee bonne} (statements~(b) and~(c)). 

\bigskip

\begin{theo}\label{theo:b-minimal-cellulaire}
Let $C$ be a Calogero-Moser left $\CG$-cell. Then there exists a unique 
irreducible character $\chi$ with minimal $\bb$-invariant such that 
$\mult_{C,\chi}^\calo \neq 0$. We denote this
character by $\chi_C$. 
The coefficient of $\tb^{\bb_{\chi_C}}$ in $f_{\chi_C}(\tb)$ is equal to $1$. 
\end{theo}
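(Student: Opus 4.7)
Plan: I would reduce Theorem \ref{theo:b-minimal-cellulaire} to Theorem \ref{dim graduee bonne}(b),(c). Let $\G$ be the Calogero-Moser two-sided $\CG$-cell containing $C$ and let $\FC = \Irr_\G^\calo(W)$ be the family covered by $\G$. By Proposition \ref{multiplicite cm}(c), every $\chi$ with $\mult_{C,\chi}^\calo \neq 0$ lies in $\FC$. By Theorem \ref{dim graduee bonne}(b),(c), $\FC$ contains a unique character $\chi_\FC$ of minimum $\bb$-invariant, and the coefficient of $\tb^{\bb_{\chi_\FC}}$ in $f_{\chi_\FC}(\tb)$ equals $1$. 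The theorem then reduces to proving
$$\mult_{C,\chi_\FC}^\calo \neq 0;$$
once this is established, setting $\chi_C := \chi_\FC$ gives both the uniqueness of minimum $\bb$-invariant character with nonzero multiplicity (since $\chi_\FC$ is already unique in all of $\FC$) and the leading-coefficient statement.

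To show the nonvanishing, I would apply Proposition \ref{eq:socle-cellulaire}, which identifies $\mult_{C,\chi_\FC}^\calo$ with the $E_{\chi_\FC}$-multiplicity in the $\Mb_\CG^\gauche \Hbov^-$-socle of the projective cover $\PC_\CG^\gauche(C)$, that is, in the space of $V$-fixed vectors of $\PC_\CG^\gauche(C)$ (since $V \subset \kb[V^*]$ acts by zero on $E_{\chi_\FC}$ via the augmentation $\Hbov^- \twoheadrightarrow \kb[\CCB]W$). The plan is to compare these $V$-fixed vectors with those of the baby projective cover $b_\FC \Hbov_\CG e$. By Theorem \ref{dim graduee bonne}(d), $b_\FC \Hbov_\CG e$ is a projective cover of $L_\CG(\chi_\FC)$, and by Theorem \ref{dim graduee bonne}(c) the $E_{\chi_\FC}$-isotypic component in its $V$-fixed subspace appears in $\ZM$-degree $\bb_{\chi_\FC}$ with multiplicity exactly one. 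Using the surjection $\Hb^\gauche_\CG \twoheadrightarrow \Hbov_\CG$ (whose kernel is generated by $\kb[V^*]^W_+$) together with the $P^\gauche_\CG$-integral Verma modules $\Delta^\gauche_\CG(\chi) := \Delta(\chi) \otimes_P P^\gauche_\CG$, which reduce modulo $\pGba_\CG/\pG^\gauche_\CG$ to the baby Vermas $\bar{\Delta}_\CG(\chi)$, I would construct a graded projective cover over $\Hb^\gauche_\CG$ whose reduction modulo $\pGba_\CG/\pG^\gauche_\CG$ is $b_\FC \Hbov_\CG e$ and whose scalar extension to $\Mb_\CG^\gauche$ recovers $\PC_\CG^\gauche(C)$; the multiplicity-one $V$-invariant copy of $E_{\chi_\FC}$ then propagates along both reductions.

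The main obstacle is the construction and control of this intermediate graded projective cover over $\Hb^\gauche_\CG$. Since $\kb[\CCB]$ is concentrated in $\ZM$-degree zero, the algebra $\Hb^\gauche_\CG$ inherits a $\ZM$-grading without any assumption on $\CG$; the natural approach is then to establish a graded highest-weight-category structure on $\Hb^\gauche_\CG\mmodgr$ with standard objects $\Delta^\gauche_\CG(\chi)\langle i\rangle$, along the lines of Appendix \ref{ap:hw} and the baby-Verma setup of \S\ref{se:represtricted}, and to identify the graded projective cover of the graded simple corresponding to $C$ in the correct degree. The $\ZM$-grading then controls the $\bb$-invariant in the baby reduction, while the triangular (PBW) decomposition of $\Hb^\gauche_\CG$ and the Galois-theoretic description of blocks of $\Mb_\CG^\gauche\Hb^\gauche$ in terms of the $D^\gauche_\CG$-action on $W$ should control the scalar extension to $\Mb_\CG^\gauche$; the delicate point is to ensure that the $V$-annihilated $E_{\chi_\FC}$-isotypic component lifts coherently through both the quotient by $\kb[V^*]^W_+$ and the base change to the residue field $\Mb_\CG^\gauche$.
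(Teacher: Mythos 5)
Your reduction in the first step is to a statement that is strictly stronger than the theorem, and which fails in general. You claim that the theorem follows once one shows $\mult_{C,\chi_\FC}^\calo \neq 0$, where $\chi_\FC$ is the unique character of minimal $\bb$-invariant in the whole family $\FC=\Irr_\G^\calo(W)$ (Theorem~\ref{dim graduee bonne}(b)). But the theorem only asserts that \emph{among the constituents of $\isomorphisme{C}_\CG^\calo$} there is a unique one of minimal $\bb$-invariant; it does not assert that this constituent is $\chi_\FC$. Indeed, the introduction states explicitly that the minimal-$\bb$ constituent of a left cell representation ``can change inside a family'' (see also \cite{bonnafe b}), i.e.\ for distinct left cells $C,C'\subset\G$ the characters $\chi_C$ and $\chi_{C'}$ may differ, which forces $\mult_{C,\chi_\FC}^\calo=0$ for some $C$. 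Proposition~\ref{multiplicite cm}(a) only gives $\sum_{C'\subset\G}\mult_{C',\chi_\FC}^\calo=\chi_\FC(1)$, i.e.\ $\chi_\FC$ occurs in \emph{some} left cell of $\G$, not in the given one. Even granting the reduction, your second step is a programme rather than a proof: the intermediate graded projective cover over $\Hb_\CG^\gauche$ is never constructed, and the ``propagation'' of the multiplicity-one copy of $E_{\chi_\FC}$ through the two base changes is precisely where the argument would break down, since $b_\FC\Hbov_\CG e$ is the projective cover attached to the \emph{whole} two-sided cell, which decomposes over $\Mb_\CG^\gauche$ into contributions from several left cells, so a single socle copy cannot land in every summand $\PC_\CG^\gauche(C)$.

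The paper's argument avoids all of this by working entirely on the left-cell side. Let $b_C$ be the primitive central idempotent of $\Mb_\CG^\gauche\Hb^\gauche$ attached to $C$. Then $\End_{\Mb_\CG^\gauche\Hb^\gauche}(b_C\Mb_\CG^\gauche\Hb^\gauche e)\simeq(\Mb_\CG^\gauche\otimes_PZ)b_C$ is commutative and local, so $b_C\Mb_\CG^\gauche\Hb^\gauche e$ is an indecomposable projective module with a unique simple quotient, hence is $\PC_\CG^\gauche(C)$. One then runs the argument of Theorem~\ref{dim graduee bonne} on this direct summand of $\Mb_\CG^\gauche\Delta(\mathrm{co})$: its (graded) Verma multiplicities are governed by the fake degrees $f_\chi$, and the uniqueness of the simple quotient forces exactly one Verma subquotient in the extremal degree, which yields both the uniqueness of the minimal-$\bb$ constituent \emph{among the $\chi$ with $\mult_{C,\chi}^\calo\neq 0$} and the fact that the corresponding coefficient of $\tb^{\bb_{\chi_C}}$ in $f_{\chi_C}(\tb)$ is $1$. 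No comparison with the restricted algebra $\Hbov_\CG$ is needed.
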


\medskip
The character $\chi_C$ is called the {\em special} character of the
left $\CG$-cell $C$.

\bigskip

\begin{proof}
Let $b_C$ be the primitive central idempotent of $\Mb_\CG^\gauche\Hb^\gauche$ associated with $C$. 
The endomorphism algebra of $b_C \Mb_\CG^\gauche\Hb^\gauche e$ is equal to 
$(\Mb_\CG^\gauche \otimes_P Z)b_C$ and this (commutative) algebra is local. This shows that 
the projective module $b_C \Mb_\CG^\gauche\Hb^\gauche e$ admits a unique simple quotient. The proof continues as that of Theorem~\ref{dim graduee bonne}.
\end{proof}

\section{Topology}
\label{se:chemins}

We assume in \S \ref{se:chemins} that $\kb=\CM$.

\subsection{$\gamma$-cells}

Recall (cf \S\ref{se:Gaudinoperators}) that given $(c,v,v^*)\in \CCB(\CM)\times V^\reg\times V^*$,
we have a family of commuting operators $\{D_y^{c,v,v^*}\}_{y\in V}$
acting on $\bigoplus_{w\in W} \CM e_w$:

$$D_y^{c,v,v^*}:e_w\mapsto \langle y,w(v^*)\rangle e_w+
\sum_{s\in\REF(W)}\e(s) c_s
\frac{\langle y,\alpha_s\rangle}{\langle v,\alpha_s\rangle}e_{sw}.$$

Let $\gamma:[0,1]\to \CCB(\CM)\times V^\reg/W\times
V^*/W$ be a path with
$\gamma([0,1))\subset\PCB^{\nr}$
and $\gamma(0)=(0,W\cdot v_\CM,
W\cdot v^*_\CM)$ as in \S \ref{se:cellstopology}.
We denote by $\hat{\gamma}$ the path in $\CCB(\CM)\times V\times V^*$
lifting $\gamma$ with $\hat{\gamma}(0)=(0,v_\CM,v^*_\CM)$.

\begin{theo}
\label{th:cellsGaudin}
Let $w\in W$. There is a unique path $\rho_w:[0,1]\to V^*$ such that
\begin{itemize}
\itemth{1} $\rho_w(0)=w^{-1}(v^*_\CM)$.
\itemth{2} $\langle y,\rho_w(t)\rangle$ is an eigenvalue of $D_y^{\hat{\gamma}(t)}$
for $y\in V$ and $t\in [0,1)$
\end{itemize}

Two elements $w',w''\in W$ are in the same Calogero-Moser $\gamma$-cell if
and only if $\rho_{w'}(1)=\rho_{w''}(1)$.
\end{theo}

\begin{proof}
Appendix \S\ref{se:topologyinertia} applied to the covering 
$\ZC'\to \CCB\times V^\reg\times V^*/W$ of \S\ref{se:Wcovering}
shows the existence of a path $\tilde{\gamma}_w$ in $\ZC'$
lifting the image
of $\hat{\gamma}$ in $\CCB(\CM)\times V\times V^*/W$ and
such that $\tilde{\gamma}_w(0)=(z_w,w^{-1}(v_\CM^*))$, where 
$z_w=(0,(v_\CM,w^{-1}(v_\CM^*))\Delta W)$. Note that the
image of $\tilde{\gamma}_w$ in $\ZC$ is the path $\gamma_w$ of
\S\ref{se:cellstopology}. 
Define $\rho_w(t)$ to be the $\lambda$-component of the
image of $\tilde{\gamma}_w(t)$ in $\bar{\ZC}''$, via the inverse of the
	isomorphism of
Proposition \ref{pr:spectrumGaudin}. It satisfies the required properties.

The last statement follows by base change via the unramified map
$V^\reg\to V^\reg/W$. 
\end{proof}

\subsection{Left cells}

Let $\CG$ be a prime ideal of $\CM[\CCB]$ and let $\rG_\CG^\gauche$ be a prime
ideal of $R$ as in the preamble to Part~\ref{part:verma}. Recall that
we have defined a point $y_0\in\RCB(\CM)$ above
$(0,v_\CM,v^*_\CM)$ in \S\ref{se:cellstopology}.

Let $y_1\in \RCB(\CM)$ be a point in the irreducible
component determined by $\rG_\CG^\gauche$ such that
$\Stab_G(y_1)=G_{\rG_\CG^\gauche}^I$.
Fix a path $\hat{\tilde{\gamma}}:[0,1]\to \RCB(\CM)\times_{V/W\times V^*/W}
(V^\reg\times V^*)$ such that $\hat{\tilde{\gamma}}([0,1))\subset 
\RCB(\CM)^{\nr}\times_{V/W\times V^*/W} (V^\reg\times V^*)$,
$\hat{\tilde{\gamma}}(0)=(y_0,(v_\CM,v^*_\CM))$ and
$\hat{\tilde{\gamma}}(1)$ maps onto $y_1$.
We denote by $\hat{\gamma}$ the image of $\hat{\tilde{\gamma}}$ in 
$\CCB(\CM)\times V\times V^*$.

\smallskip
Theorem \ref{th:cellsGaudin} and Proposition \ref{pr:rgammacells}
have the following consequence.

\begin{theo}\label{theo:cm-cells-topo}
Let $w\in W$. There is a unique path $\rho_w:[0,1]\to V^*$ such that
\begin{itemize}
\itemth{1} $\rho_w(0)=w^{-1}(v^*_\CM)$.
\itemth{2} $\langle y,\rho_w(t)\rangle$ is an eigenvalue of $D_y^{\hat{\gamma}(t)}$
for $y\in V$ and $t\in [0,1)$
\end{itemize}

Two elements $w',w''\in W$ are in the same Calogero-Moser left
$\CG$-cell if
and only if $\rho_{w'}(1)=\rho_{w''}(1)$.
\end{theo}

\section{The smooth case}\label{se:smooth-cell}

\medskip

\boitegrise{{\bf Assumption and notation.} 
{\it We assume here that $\kb=\CM$. 
We fix in \S\ref{se:smooth-cell} a point $z_0 \in \ZCB_c^{\CM^\times}$ 
which is assumed to be {\bfit smooth} in $\ZCB_c$. We denote by $\chi$ the unique 
irreducible character of the associated Calogero-Moser $c$-family. We denote by 
$\G$ the Calogero-Moser two-sided $c$-cell associated with $z_0$ 
and we fix a Calogero-Moser left $c$-cell $C$ contained in $\G$.}}{0.75\textwidth}

\bigskip

The next result is a translation of Theorem~\ref{theo:cellulaire-lisse-geometrique} 
in the cell world.

\bigskip

\begin{theo}\label{theo:cellulaire-lisse}
With the assumption and notation above, we have:
\begin{itemize}
\itemth{a} $|\G|=\chi(1)^2$.

\itemth{b} $\bigcup_{d \in D_c^\gauche} \lexp{d}{C}=\G$. 

\itemth{c} $\isomorphisme{C}_c^\calo = \chi$.

\itemth{d} $|C|=\chi(1)$.

\itemth{e} $\deg_c(C)=\chi(1)$. 
\end{itemize}
\end{theo}

% \smallskip
% We will provide a geometrical proof of Theorem \ref{theo:cellulaire-lisse}.
% A later entirely algebraic proof of (c,d,e) can be deduced from
% \cite[Theorem 10(3)]{bellamyVerma}. In type $A_n$, this can also be deduced,
% using Gaudin operators (cf. Chapter \ref{ch:Gaudin}), from
% \cite{MuTaVa1}.
% 
% \bigskip
% 
% \noindent{\sc Notation - } Given $A$ is a commutative local ring with maximal ideal $\mG$
% and given
% $M$ a finitely generated $A$-module, we denote by $e_\mG(M)$ the {\it multiplicity} of $M$ 
% for the ideal $\mG$, as it is defined in~\cite[Chapter~\MakeUppercase{\romannumeral 5},~\S{A.2}]{serre}. 
% 
% Let $A$ be a regular commutative ring (not necessarily local) and $M$ and $N$ two 
% finitely generated $A$-modules such that $M \otimes_A N$ has finite length.
% Given $\aG$ a prime ideal of $A$, we put
% $$\chi_\aG(M,N)=\sum_{i=0}^{\dim A} (-1)^i \longueur_{A_\aG}(\Tor_i^A(M,N)_\aG),$$
% as in~\cite[Chapter~\MakeUppercase{\romannumeral 5},~\S{B},~Theorem~1]{serre} .\finl

\begin{proof}
(a) follows from Theorem~\ref{theo cellules familles}(c).

\medskip

(b) The set of irreducible components of $\limiteattractiveinverse(z_0)$ is in 
bijection with $D_c^\gauche\setminus\Gamma$
(see Proposition~\ref{prop:gauche-bilatere}(c)). 
As explained in~\S\ref{se:smooth}, $\limiteattractiveinverse(z_0)$ is irreducible because 
$z_0$ is smooth. In other words, with the notation 
introduced in \S\ref{subsection:gauche-bilatere}, we have $C^\DD=\G$. 

\medskip

(c) follows from Thereom~\ref{theo:cellulaire-lisse-geometrique} and 
(d) follows immediately from~(c).

\medskip

(e) follows from~(b) and from Proposition~\ref{prop:gauche-bilatere}(b).
\end{proof}

\bigskip

\chapter{Decomposition maps}
\label{ch:decmat}

\section{The general framework}\label{part:decomposition}
%\setcounter{section}{0}

%\section{Construction g\'en\'erale}\label{section:decomposition cellules}

\medskip

\def\propdec{({\DC\!e\!c})}

Let $R_1$ be a commutative $R$-algebra which is a domain, 
and let $\rG_1$ be a prime ideal of $R_1$. 
We set $R_2=R_1/\rG_1$, $K_1=\Frac(R_1)$ and $K_2=\Frac(R_2)=k_{R_1}(\rG_1)$. 
We will say that the pair $(R_1,\rG_1)$ satisfies Property~$\propdec$ 
if the three following statements are satisfied (see the Appendix~\ref{appendice: blocs}):
\begin{quotation}
\begin{itemize}
\item[(D1)] $R_1$ is noetherian.

\item[(D2)] If $h \in R_1\Hb$ and if $\LC$ is a simple $K_1\Hb$-module, then 
the characteristic polynomial of $h$ (for its action on $\LC$) has coefficients in $R_1$ 
(note that this assumption is automatically satisfied if $R_1$ is integrally closed).

\item[(D3)] The algebras $K_1\Hb$ and $K_2\Hb$ are split.
\end{itemize}
\end{quotation}
In this context, completely similar to the one of
\S~\ref{section:decomposition}, the decomposition map 
$$\dec_{R_2\Hb}^{R_1\Hb} : \groth(K_1\Hb) \longto \groth(K_2\Hb)$$
is well-defined (see Proposition~\ref{prop:geck-rouquier}).

Let $\rG_2$ be a prime ideal of $R_1$ containing $\rG_1$. Let  
$R_3=R_1/\rG_2 =R_2/(\rG_2/\rG_1)$, $K_3=\Frac(R_3)=k_{R_1}(\rG_2)=k_{R_2}(\rG_2/\rG_1)$ 
and assume that $(R_2,\rG_2)$ satisfies $\propdec$. 
Then the maps $\dec_{R_2\Hb}^{R_1\Hb}$, $\dec_{R_3\Hb}^{R_1\Hb}$ and 
$\dec_{R_3\Hb}^{R_2\Hb}$ are well-defined and, 
by Corollary~\ref{geck rouquier}, the diagram 
\equat\label{transitivite decomposition}
\diagram
\groth(K_1\Hb) \rrto^{\DS{\dec_{R_2\Hb}^{R_1\Hb}}} \ddrrto_{\DS{\dec_{R_3\Hb}^{R_1\Hb}}} 
&& \groth(K_2\Hb) \ddto^{\DS{\dec_{R_3\Hb}^{R_2\Hb}}}\\
&&\\
&&\groth(K_3\Hb)\\
\enddiagram
\endequat
is commutative.

\bigskip

\begin{exemple}[Specialization]
\label{subsection:specialization cellules} 
Let $c \in \CCB$. Recall that $\qG_c=\pG_c Q$ is prime and let 
$\rG_c$ be a prime ideal of $R$ lying over $\qG_c$. Let us use the notations of 
example~\ref{specialization} and of~\S\ref{subsection:specialization galois}. 
Then $R/\rG_c$ is an $R$-algebra, with fraction field $\Mb_c$. 
As in the proof of Theorem~\ref{theo:KH-mat}, we deduce from Corollary~\ref{morita lissite p}
an isomorphism of $\Kb_c$-algebras 
$$\Hb_c \longiso \Mat_{|W|}(\Lb_c)$$
which induces, as in the generic case (see~\S\ref{section:deploiement}), an isomorphism of 
$\Mb_c$-algebras 
$$\Mb_c \Hb_c 
\stackrel{\sim}{\longto} \prod_{d (D_c \cap H) \in D_c /(D_c \cap H)} \Mat_{|W|}(\Mb_c).$$
So the $\Mb_c$-algebra $\Mb_c\Hb_c$ is split, as well as $\Mb \Hb$, 
and its simple modules are indexed by $D_c/(D_c \cap H)$: this last set 
is in one-to-one correspondence with $W$ (Corollary~\ref{Dc W}). 
So, the decomposition map 
$\dec_{R_c\Hb}^{R\Hb}$ is well-defined, and will be denoted by $\dec_c$.  \indexnot{da}{\dec_c}  
We can moreover identify $\groth(\Mb_c \Hb_c)$ with the $\BZ$-module $\BZ W$ and, 
through this identification, the diagram 
\equat\label{dec c}
\xymatrix{
\groth(\Mb\Hb) \ar[rr]^{\dec_c} \ar@{=}[d]&& \groth(\Mb_c\Hb_c) \ar@{=}[d] \\
\BZ W \ar[rr]_{\DS{\Id_{\BZ W}}} && \BZ W
}
\endequat
is commutative. This follows from the fact that the Morita equivalence between 
$\Kb_c\Hb_c$ and $\Lb_c$ is the ``specialization at $c$'' of the Morita equivalence 
between $\Hb$ and $\Lb$.\finl
\end{exemple}

\bigskip

% Rappelons pour finir comment on peut d\'eterminer les $\rG$-cellules de Calogero-Moser, 
% c'est-\`a-dire les $R_\rG$-blocs de $R_\rG\Hb$, gr\^ace aux matrices de d\'ecomposition. 
% Nous aurons pour cela besoin de supposer que $k_R(\rG)\Hb$ est {\it d\'eploy\'ee}. 
% Dans ce cas, construisons, comme dans la section~\ref{section:decomposition}, 
% un graphe $\GCB_\rG^\Hb$ de la fa\c{c}on suivante~: les sommets de $\GCB_\rG^\Hb$ 
% sont les \'el\'ements de $W$ et deux sommets $w$ et $w'$ sont reli\'es si 
% et seulement si $\dec_{(R/\rG)\Hb}^{R \Hb} \isomorphisme{S_w}_{\Mb \Hb}$ et 
% $\dec_{(R/\rG)\Hb}^{R \Hb} \isomorphisme{S_{w'}}_{\Mb \Hb}$ ont un 
% $k_R(\rG)\Hb$-composante simple en commun. Alors, d'apr\`es 
% la proposition~\ref{blocs et decomposition} et le th\'eor\`eme~\ref{theo:calogero}, 
% 
% \bigskip
% 
% \begin{prop}\label{cellules et decomposition}
% Si l'alg\`ebre $k_R(\rG)\Hb$ est d\'eploy\'ee, alors 
% les cellules de Calogero-Moser sont les composantes connexes de $\GCB_\rG^\Hb$.
% \end{prop}
% 
% \bigskip

\section{Cells and decomposition maps}

\medskip

Let $\rG$ be a prime ideal of $R$. We will denote by 
$D_\rG$ the decomposition group of $\rG$ in $G$ and $I_\rG$ its inertia group.
The Galois group $G$ (respectively the decomposition group $D_\rG$) 
acts naturally on the Grothendieck group $\groth(\Mb\Hb)$ (respectively $\groth(k_R(\rG)\Hb)$). 
Then:

\bigskip

\begin{lem}\label{lem:dec-constant}
Assume that the $k_R(\rG)$-algebra $k_R(\rG)\Hb$ is split. Then:
\begin{itemize}
\itemth{a} The decomposition map $\dec_{(R/\rG)\Hb}^{R\Hb}$ is well-defined 
(it will be denoted by $\dec_\rG : \groth(\Mb\Hb) \longto \groth(k_R(\rG)\Hb)$).  \indexnot{da}{\dec_\rG}  

\itemth{b} The decomposition map $\dec_\rG$ is $D_\rG$-equivariant.

\itemth{c} The group $I_\rG$ acts trivially $\groth(k_R(\rG)\Hb)$.

\itemth{d} If $w$ and $w'$ are in the same Calogero-Moser $\rG$-cell, then 
$\dec_\rG (\LC_w)=\dec_\rG(\LC_{w'})$. 
\end{itemize}
\end{lem}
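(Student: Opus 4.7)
The plan is to attack (a) by verifying the three hypotheses (D1), (D2), (D3) of the general decomposition framework described at the start of the chapter. Condition (D1) holds because $R$ is a finitely generated $\kb$-algebra, being integral and of finite type over the finitely generated $\kb$-algebra $P$. Condition (D2) is automatic because $R$ is by construction the integral closure of $P$ in $\Mb$, hence integrally closed. For (D3), the algebra $\Mb\Hb$ is split semisimple by~(\ref{deployee}), and splitness of $k_R(\rG)\Hb$ is the standing hypothesis of the lemma. Once these are in place, the existence of $\dec_\rG$ follows at once from Proposition~\ref{prop:geck-rouquier}.

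For (b) and (c), I would first observe that since $G$ acts on $R$ by $P$-algebra automorphisms, it acts on $R\Hb = R\otimes_P \Hb$ by $\Hb$-algebra automorphisms via $g \cdot (r\otimes h) = g(r)\otimes h$. For $g\in D_\rG$, this action descends through the quotient $R\twoheadrightarrow R/\rG$ to give an automorphism of $k_R(\rG)\Hb$, and the canonical surjection $R\Hb \twoheadrightarrow k_R(\rG)\Hb$ is $D_\rG$-equivariant. The naturality of the construction of the decomposition map with respect to such compatible base-ring actions then yields (b): concretely, the characteristic polynomial of $g\cdot h\in R\Hb$ acting on $g\cdot\LC_w$ is obtained from that of $h$ acting on $\LC_w$ by applying $g$ to the coefficients, so reduction modulo $\rG$ commutes with the action. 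Part (c) is then immediate from Theorem~\ref{bourbaki}: by definition $I_\rG$ acts trivially on the residue field $k_R(\rG)$, so the automorphism of $k_R(\rG)\Hb$ induced in (b) is the identity, and a fortiori so is its action on $\groth(k_R(\rG)\Hb)$.

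Part (d) will then be a one-line combination of (b) and (c). If $w \sim^\calo_\rG w'$, then by Definition~\ref{defi:CM} there is $g\in I_\rG$ with $g(w)=w'$, so under the identification~(\ref{identification}) we have $g\cdot[\LC_w] = [\LC_{w'}]$; applying (b) gives $\dec_\rG[\LC_{w'}] = g\cdot\dec_\rG[\LC_w]$, which equals $\dec_\rG[\LC_w]$ by (c). The main potential obstacle is the verification of (b), specifically the need to unpack the construction of $\dec_\rG$ from Proposition~\ref{prop:geck-rouquier} in sufficient detail to confirm that the two natural actions of $D_\rG$ — on $R\Hb$-modules obtained by base change from $\Mb\Hb$, and on $(R/\rG)\Hb$-modules obtained by further base change — intertwine correctly through the characteristic-polynomial computation used in that proposition. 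The remaining steps are formal once this naturality is recorded.
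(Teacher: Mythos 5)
Your proof is correct and follows essentially the same route as the paper's: (a) by checking that $(R,\rG)$ satisfies $\propdec$ (with (D2) automatic since $R$ is integrally closed and (D3) given by~(\ref{deployee}) and the hypothesis), (b) from the fact that decomposition maps are computed by reduction of characteristic polynomials, (c) from $I_\rG$ acting trivially on $k_R(\rG)$ by definition, and (d) by combining (b) and (c) since the cells are $I_\rG$-orbits. The naturality check you flag in (b) is exactly what the paper also relies on, and it holds for the reason you give.
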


\begin{proof}
Since $R$ is integrally closed, saying that the $k_R(\rG)$-algebra $k_R(\rG)\Hb$ is split 
is equivalent to say that $(R,\rG)$ satisfies $\propdec$. 
The decomposition maps being computed by reduction of the characteristic polynomials, 
the statement~(b) is immediate. The group $I_\rG$ acting trivially on $k_R(\rG)$ by definition, 
(c) is clear. The statement~(d) then follows from~(b) and~(c) because the 
Calogero-Moser $\rG$-cells are $I_\rG$-orbits.
\end{proof}

\bigskip

Lemma~\ref{lem:dec-constant} says that, when restricted to an $\rG$-block, 
the decomposition map $\dec_\rG$ has rank $1$.

\bigskip

\begin{exemple}\label{exemple:lissite-decomposition}
Set $\pG = \rG \cap P$ and assume in this example, and only in this example, 
that $\zG_\singulier \cap P \not\subset \pG$. 
Then Theorem~\ref{lissite et simples}(a) implies that 
the $k_R(\rG)$-algebra $k_R(\rG)\Hb$ is split. Consequently, the decomposition 
map $\dec_\rG : \groth(\Mb\Hb) \to \groth(k_R(\rG)\Hb)$ 
is well-defined. Since the simple modules of $\Mb\Hb$ have dimension $|W|$, 
as well as the simple $k_R(\rG)\Hb$-modules, the decomposition map sends 
the isomorphism class of a simple $\Mb\Hb$-module on the isomorphism class of a simple 
$k_R(\rG)\Hb$-module. So $\dec_\rG$ defines a surjective map 
\equat\label{dec irr}
\dec_\rG : W \longto \Irr(k_R(\rG)\Hb)
\endequat
whose fibers are the Calogero-Moser $\rG$-cells 
(see Lemma~\ref{lem:dec-constant}).\finl 
\end{exemple}

\bigskip

\begin{rema}\label{rema:gauche-droite-decomposition}
The previous example can be applied in the case where 
$\rG=\rG_\CG^\gauche$ or $\rG_\CG^\droite$, thanks to Theorem~\ref{theo:gauche}(a).\finl
\end{rema}

% 
% \begin{itemize}
% \itemth{b} L'application de d\'ecomposition $\dec_{(R/\rG_\CG^\gauche)\Hb}^{R\Hb} : \groth(\Mb \Hb) \longto \groth(\Mb_\CG^\gauche \Hb^\gauche)$ 
% est bien d\'efinie et envoie un $\Mb\Hb$-module simple sur un $\Mb_\CG^\gauche \Hb^\gauche$-module simple. Nous 
% la noterons $\dec_\CG^\gauche : \BZ W \longto \groth(\Mb_\CG^\gauche \Hb^\gauche)$.
% 
% \itemth{c} Les $\CG$-cellules de Calogero-Moser \`a gauche sont les fibres de l'application surjective 
% $\dec_\CG^\gauche : W \to \Irr(\Mb_\CG^\gauche \Hb^\gauche)$. 
% \end{itemize}

\bigskip

\section{Left, right, two-sided cells and decomposition maps}

\medskip

In order to define decomposition maps, one must check that 
some assumptions are satisfied (see the previous conditions (D1), (D2) and (D3)). 
It is the aim of the next proposition to check that these assumptions 
hold in the cases we are interested in:

\bigskip

\begin{prop}\label{prop:d1-d2-d3}
Let $\rG$ be a prime ideal of $R$ amongst $\rG_\CG$, $\rG_\CG^\gauche$, $\rG_\CG^\droite$ or 
$\rGba_\CG$. Then:
\begin{itemize}
\itemth{a} The $k_R(\rG)$-algebra $k_R(\rG)\Hb$ is split.

\itemth{b} Assume here that $\rG \neq \rGba_\CG$ or $\CG=0$ or 
	$\CG=\CG_c$ for some $c \in \CCB$. 
If $\LC$ is a simple $k_R(\rG)\Hb$-module and if $h \in \Hb/\rG\Hb=(R/\rG)\Hb$, then 
the characteristic polynomial of $h$ (for its action on $\LC$) has coefficients 
in $R/\rG$.
\end{itemize}
\end{prop}

\begin{proof}
(a) has been proven for $\rG=\rG_c$ in Example~\ref{subsection:specialization cellules}, 
for $\rG=\rG_\CG^\gauche$ or $\rG=\rG_\CG^\droite$ in Theorem~\ref{theo:gauche}(a) and for 
$\rG=\rGba_\CG$ in Proposition~\ref{pr:verma}.

\medskip

Let us now show (b). First of all, if $\rG=\rG_\CG$ or $\rG_\CG^\gauche$ or $\rG_\CG^\droite$, 
then the images in the Grothendieck group $\groth(k_R(\rG)\Hb)$ of simple $k_R(\rG)\Hb$-modules 
are the images of simple $\Mb\Hb$-modules through the decomposition map 
(see Example~\ref{exemple:lissite-decomposition} and Remark~\ref{rema:gauche-droite-decomposition}). 
So, if $h$ is the image in $\Hb/\rG\Hb$ of $h' \in \Hb$, then the characteristic polynomial 
of $h'$ acting on a simple $\Mb\Hb$-module has coefficients in $R$ (because $R$ 
is integrally closed) and so the characteristic polynomial of $h$ has coefficients 
in $R/\rG$ (it is the reduction modulo $\rG$ of the one of $h$).

Now, if $\rG=\rGba$ or $\rGba_c$, then the simple $k_R(\rG)\Hb$-modules 
are obtained by scalar extension from the simple $k_P(\rG \cap P)\Hb$-modules, 
and the result follows from the fact that $P/\pGba \simeq \kb[\CCB]$ and $P/\pGba_c \simeq \kb$ 
is integrally closed.
\end{proof}

\bigskip

Taking Proposition~\ref{prop:d1-d2-d3} into account, we can define decomposition maps giving rise to a commutative diagram
$$\xymatrix{
	&	\BZ W\ar[d]^\sim \\
	& \groth(\Mb\Hb) \ar[dl]_{\dec_\CG^\gauche} \ar[dd]_{\decba_\CG}
	\ar[dr]_{\dec_\CG^\droite}\ar[r]_\sim^{\dec_\CG} & 
	\groth(\Mb_\CG\Hb) \\
\groth(\Mb_\CG^\gauche \Hb^\gauche) \ar[dr]_{\decba_\CG^\gauche} &&
\groth(\Mb_\CG^\droite\Hb^\droite) \ar[dl]_{\decba_\CG^\droite}
\\
& \groth(\Mbov_\CG \Hbov) & \groth(\Mbov \Hbov) \ar[l]^{\dec_\CG^\res} \\
& \BZ \Irr(W)\ar[u]^\sim &
\BZ \Irr(W)\ar[u]_\sim\ar[l]
}$$

%$$\dec_\CG : \groth(\Mb\Hb) \longiso \groth(\Mb_\CG\Hb),$$
%$$\dec_\CG^\gauche : \groth(\Mb\Hb) \longto \groth(\Mb_\CG^\gauche \Hb^\gauche),$$
\indexnot{d}{\dec_\CG,\dec_\CG^\gauche,\dec_\CG^\droite\dec_\CG^\res}
%$$\dec_\CG^\droite : \groth(\Mb\Hb) \longto \groth(\Mb_\CG^\droite \Hb^\droite),$$
%$$\decba_\CG : \groth(\Mb\Hb) \longto \groth(\Mbov_\CG \Hbov),$$\indexnot{d}{\decba_\CG^\gauche,\decba_\CG^\droite}
%$$\decba_\CG^\gauche : \groth(\Mb_\CG^\gauche\Hb^\gauche) \longto \groth(\Mbov_\CG \Hbov),$$
%$$\decba_\CG^\droite : \groth(\Mb_\CG^\droite\Hb^\droite) \longto \groth(\Mbov_\CG \Hbov)$$
%$$\dec_\CG^\res : \groth(\Mbov \Hbov) \longto \groth(\Mbov_\CG \Hbov).\leqno{\text{and}}$$
As usual, the index $\CG$ will be omitted if $\CG=0$ or will be replaced by 
$c$ if $\CG=\CG_c$ (for some $c \in \CCB$). Recall that $\dec_\CG$ is an isomorphism 
(by Example~\ref{subsection:specialization cellules}, 
which extends easily to the case where $\CG_c$ is replaced by any prime ideal $\CG$ of $\kb[\CCB]$)  
and that  
$$\groth(\Mb_\CG\Hb) \simeq \BZ W\qquad\text{and}\qquad \groth(\Mbov_\CG\Hbov) \simeq \BZ \Irr(W).$$
Note however that 
$\dec_\CG^\res : \groth(\Mbov\hskip1mm \Hbov) \simeq 
\BZ \Irr(W) \longto \groth(\Mbov_\CG\Hbov) \simeq \BZ \Irr(W)$ 
is not an isomorphism in general. 
Some transitivity formulas follow from~\ref{transitivite decomposition}.

\section{Isomorphism classes of baby Verma modules}
\label{se:isobabyVerma}

\medskip

The Verma modules $\Delta(\chi)$ being defined over the ring $P$, the fundamental 
properties of decomposition maps show that 
\equat\label{eq:dec M}
\decba_\CG^\gauche \isomorphisme{\Mb_\CG^\gauche\Delta(\chi)}_{\Mb_\CG^\gauche\Hb^\gauche}=
\isomorphisme{\Mbov_\CG\bar{\Delta}(\chi)}_{\Mbov_\CG\Hbov}.
\endequat
The multiplicities $\mult_{C,\chi}^\calo$ are defined from the image of 
$\Mb_\CG^\gauche\Delta(\chi)$ in the Grothendieck group 
$\groth(\Mb_\CG^\gauche\Hb^\gauche)$. We will now be interested to the image of 
$\Mbov_\CG\bar{\Delta}(\chi)$ in the Grothendieck group $\groth(\Mbov_\CG\Hbov)$:

Fix now a Calogero-Moser two-sided $\CG$-cell $\G$ and set 
$L_\CG(\G)=\decba_\CG \isomorphisme{\LC_w}_{\Mb\Hb}$, for $w \in \G$. 
Note that $L_\CG(\G)$ does not depend on the choice of $w \in \G$ by Lemma~\ref{lem:dec-constant}.

\bigskip 

\begin{prop}\label{prop:verma-rang-1}
If $\chi \in \Irr_\G^\calo(W)$, then 
$$\isomorphisme{\Mbov_\CG\bar{\Delta}(\chi)}_{\Mbov_\CG\Hbov} = \chi(1) L_\CG(\G).$$
\end{prop}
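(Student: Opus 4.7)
The plan is to interpret $[\Mbov_\CG\bar\Delta(\chi)]_{\Mbov_\CG\Hbov}$ as the image of $[\Mb\Delta(\chi)]_{\Mb\Hb}$ under the decomposition map $\decba_\CG$, expand in the basis of simples of $\Mb\Hb$, and then combine the block structure of $\Mbov_\CG\Hbov$ with non-negativity of multiplicities to force all contributions outside the two-sided cell $\G$ to vanish. Since $\Delta(\chi)$ is defined over $P\subset R$ it provides a common lattice, so the very definition of $\decba_\CG$ (well-defined by Proposition \ref{prop:d1-d2-d3}) yields $[\Mbov_\CG\bar\Delta(\chi)]=\decba_\CG[\Mb\Delta(\chi)]$. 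Since $\Mb\Hb$ is split semisimple by (\ref{deployee}), I expand
\begin{equation*}
[\Mb\Delta(\chi)]=\sum_{w\in W}n_{w,\chi}\cdot w\quad\text{in }\groth(\Mb\Hb)\simeq\BZ W,\qquad n_{w,\chi}\in\NM,
\end{equation*}
and invoke Lemma \ref{lem:dec-constant}(d) applied to $\rGba_\CG$ to obtain $\decba_\CG[\LC_w]=L_\CG(\G_w)$, where $\G_w$ denotes the two-sided $\CG$-cell containing $w$; hence
\begin{equation*}
[\Mbov_\CG\bar\Delta(\chi)]=\sum_{\G'\in\cmcellules_{LR}^\CG(W)}\Bigl(\sum_{w\in\G'}n_{w,\chi}\Bigr)L_\CG(\G').
\end{equation*}

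The heart of the argument is to compare block components in $\groth(\Mbov_\CG\Hbov)\simeq\BZ\Irr(W)$. Via the bijection between two-sided $\CG$-cells and Calogero-Moser $\CG$-families from \S\ref{cellules et familles} together with (\ref{cm familles}), the hypothesis $\chi\in\Irr_\G^\calo(W)$ puts $\Mbov_\CG\bar\Delta(\chi)$ in the block of $\Mbov_\CG\Hbov$ associated with $\G$, so $[\Mbov_\CG\bar\Delta(\chi)]$ is supported on $\Irr_\G^\calo(W)$ inside $\BZ\Irr(W)$. Conversely, Theorem \ref{theo:calogero} together with the lifting of block idempotents (Proposition \ref{relevement idempotent}) imply that, for each $\G'$ and each $w\in\G'$, the simple $\LC_w$ corresponds to a primitive idempotent $e_w$ that is a summand of (the image of) the block idempotent lifting $\bba_{\G'}\in\Zrm(\Mbov_\CG\Hbov)$; consequently $L_\CG(\G')=\decba_\CG[\LC_w]$ is supported on $\Irr_{\G'}^\calo(W)$. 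Since the families $\Irr_{\G'}^\calo(W)$ partition $\Irr(W)$, this forces $\sum_{w\in\G'}n_{w,\chi}=0$ for all $\G'\neq\G$, and then the non-negativity of the $n_{w,\chi}$ gives $n_{w,\chi}=0$ for $w\notin\G$.

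It remains to identify $N:=\sum_{w\in\G}n_{w,\chi}=\sum_{w\in W}n_{w,\chi}$ with $\chi(1)$. For this I turn to the left decomposition map $\dec_\CG^\gauche$, which is well-defined by Proposition \ref{prop:d1-d2-d3} and satisfies $\dec_\CG^\gauche[\Mb\Delta(\chi)]=[\Mb_\CG^\gauche\Delta(\chi)]$ by (\ref{eq:dec M}). Example \ref{exemple:lissite-decomposition} identifies $\dec_\CG^\gauche[\LC_w]=[L_\CG^\gauche(C_w)]$, where $C_w$ is the Calogero-Moser left $\CG$-cell of $w$, so the definition of $\mult_{C,\chi}^\calo$ gives $\mult_{C,\chi}^\calo=\sum_{w\in C}n_{w,\chi}$; summing over $C$ and invoking Proposition \ref{multiplicite cm}(a) yields $N=\sum_C\mult_{C,\chi}^\calo=\chi(1)$, completing the proof. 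The main subtle point in this plan is the block-compatibility step in the middle paragraph: one must carefully track primitive central idempotents through the three base rings $\Mb$, $R_{\rGba_\CG}$ and $\Mbov_\CG$, and verify that $\decba_\CG$ respects the block decomposition on both sides. This is routine within the framework of Appendix \ref{appendice: blocs}, but it is the one non-mechanical ingredient; everything else is forced by the split semisimplicity of $\Mb\Hb$, the identification of fibers of the decomposition maps with cells, and the non-negativity of the $n_{w,\chi}$.
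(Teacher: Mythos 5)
Your proof is correct and follows essentially the same route as the paper's: both express $\isomorphisme{\Mbov_\CG\bar{\Delta}(\chi)}$ as a sum of the classes $L_\CG(\G')$ weighted by sums of generic multiplicities over cells, kill the contributions outside $\G$ using the fact that the baby Verma module lies in a single block, and identify the total multiplicity as $\chi(1)$ via Proposition~\ref{multiplicite cm}(a). The only organizational difference is that the paper factors the decomposition through the left prime $\rG_\CG^\gauche$ (using (\ref{eq:dec M}) and Proposition~\ref{multiplicite cm}(c) for the vanishing), whereas you apply $\decba_\CG$ directly from $\Mb$ to $\Mbov_\CG$ and get the vanishing from block-compatibility of the decomposition map at $\rGba_\CG$ — two phrasings of the same indecomposability of $\bar{\Delta}_\CG(\chi)$.
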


\medskip

\begin{rema}\label{rema:dec-rang-1}
Proposition~\ref{prop:verma-rang-1} says that, inside a given Calogero-Moser $\CG$-family, 
the decomposition matrix of baby Verma modules in the basis 
of simple modules has rank $1$: this was conjectured by U. Thiel~\cite{thiel}.

A similar property holds for restricted enveloping algebras or 
Lusztig's small quantum groups at a root of 
unity~\cite[Prop.~4.16,~(1)~and~(2)]{BelTh2}.\finl
\end{rema}

\medskip

\begin{proof}
Let $C$ be a Calogero-Moser left $\CG$-cell. Then $\mult_{C,\chi}^\calo=0$ if $C$ is not contained 
in $\G$ (see Proposition~\ref{multiplicite cm}(c)). Hence, by~(\ref{eq:dec M}), we have 
$$\isomorphisme{\Mbov_\CG\bar{\Delta}(\chi)}_{\Mbov_\CG\Hbov}=\sum_{\substack{C \in \cmcellules^\CG_L(W) \\ C \subset \G}} 
\mult_{C,\chi}^\calo \cdot \decba^\gauche \isomorphisme{L_\CG(C)}_{\Mb^\gauche_\CG\Hb^\gauche}.$$
But, if $C \subset \G$, then 
$\isomorphisme{L_\CG(C)}_{\Mb^\gauche_\CG\Hb^\gauche}=\dec_\CG^\gauche \isomorphisme{\LC_w}_{\Mb\Hb}$ 
where $w \in C$. Then, by the transitivity of decomposition maps, we have 
$$\decba^\gauche \isomorphisme{L_\CG(C)}_{\Mb^\gauche_\CG\Hb^\gauche} = L_\CG(\G)$$
(by Lemma~\ref{lem:dec-constant}). The result follows now
from Proposition~\ref{multiplicite cm}(a).
\end{proof}

\bigskip

We conclude with a result comparing the Calogero-Moser $\CG$-cellular characters for different 
prime ideals $\CG$:

\bigskip

\begin{prop}\label{prop:cellulaire-semicontinu}
Let $\CG'$ be a prime ideal of $\kb[\CCB]$ contained in $\CG$ and choose a prime ideal 
$\rG_{\CG'}^\gauche$ lying over $\qG_{\CG'}^\gauche$ and contained in $\rG_\CG^\gauche$. 
Let $C$ be a Calogero-Moser left $\CG$-cell and let us write 
$C=C_1 \coprod \cdots \coprod C_r$, where the $C_i$'s are Calogero-Moser left 
$\CG'$-cells (see Proposition~\ref{semicontinuite gauche}). Then 
$$\isomorphisme{C}_{\CG}^\calo = \isomorphisme{C_1}_{\CG'}^\calo + \cdots + \isomorphisme{C_r}_{\CG'}^\calo.$$
\end{prop}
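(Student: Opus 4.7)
The plan is to reduce the statement to the identity $\mult_{C,\chi}^\calo = \sum_{i=1}^r \mult_{C_i,\chi}^\calo$ for each $\chi \in \Irr(W)$, from which the claim follows by multiplying by $\chi$ and summing, using the definition~(\ref{eq:cellulaire}). The key tool will be an intermediate decomposition map together with the transitivity diagram~(\ref{transitivite decomposition}).

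First I would construct a decomposition map
$$\decba^\gauche_{\CG,\CG'} : \groth(\Mb_{\CG'}^\gauche \Hb^\gauche) \longto \groth(\Mb_\CG^\gauche \Hb^\gauche)$$
associated with the chain $R \twoheadrightarrow R/\rG_{\CG'}^\gauche$ and its prime ideal $\rG_\CG^\gauche/\rG_{\CG'}^\gauche$. Checking conditions (D1)--(D3) of \S\ref{part:decomposition} is routine: (D1) and (D3) follow directly from Proposition~\ref{prop:d1-d2-d3}(a), and (D2) is handled as in the proof of Proposition~\ref{prop:d1-d2-d3}(b), since simple $\Mb_{\CG'}^\gauche\Hb^\gauche$-modules arise by scalar extension from $\Hb/\rG_{\CG'}^\gauche\Hb$-modules whose matrix coefficients already lie in $R/\rG_{\CG'}^\gauche$. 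The triangle~(\ref{transitivite decomposition}) then yields
$$\dec_\CG^\gauche = \decba^\gauche_{\CG,\CG'} \circ \dec_{\CG'}^\gauche.$$

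Next I would determine the action of $\decba^\gauche_{\CG,\CG'}$ on a simple class $[L_{\CG'}^\gauche(C')]$. By Example~\ref{exemple:lissite-decomposition} applied to both $\rG_\CG^\gauche$ and $\rG_{\CG'}^\gauche$, for any $w \in W$ we have $\dec_\CG^\gauche[\LC_w]=[L_\CG^\gauche(C)]$ and $\dec_{\CG'}^\gauche[\LC_w]=[L_{\CG'}^\gauche(C')]$, where $C$ (resp.\ $C'$) is the Calogero-Moser left $\CG$-cell (resp.\ $\CG'$-cell) containing $w$. Since every $\CG'$-cell $C'$ contains some element, transitivity forces
$$\decba^\gauche_{\CG,\CG'}[L_{\CG'}^\gauche(C_i)] = [L_\CG^\gauche(C)] \quad\text{for } 1 \le i \le r.$$

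Finally, I would apply these facts to the Verma module $\Mb\Delta(\chi)$, which is defined over $P$. Expanding
$$\dec_{\CG'}^\gauche[\Mb\Delta(\chi)] = \sum_{C'\in \cmcellules_L^{\CG'}(W)} \mult_{C',\chi}^\calo\,[L_{\CG'}^\gauche(C')],$$
applying $\decba^\gauche_{\CG,\CG'}$, and comparing the coefficient of $[L_\CG^\gauche(C)]$ with that in $\dec_\CG^\gauche[\Mb\Delta(\chi)]=\sum_{\tilde C} \mult_{\tilde C,\chi}^\calo[L_\CG^\gauche(\tilde C)]$, we obtain $\mult_{C,\chi}^\calo=\sum_{i=1}^r \mult_{C_i,\chi}^\calo$, as desired.

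The most delicate point will be the verification of condition (D2) for the intermediate pair, since $R/\rG_{\CG'}^\gauche$ need not be integrally closed; but this is sidestepped exactly as in Proposition~\ref{prop:d1-d2-d3}(b), using that simple modules of the relevant split algebras have an explicit rational model already over $R/\rG_{\CG'}^\gauche$.
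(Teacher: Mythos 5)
Your proof is correct and follows essentially the same route as the paper's: construct the intermediate decomposition map $\db:\groth(\Mb_{\CG'}^\gauche\Hb^\gauche)\to\groth(\Mb_\CG^\gauche\Hb^\gauche)$ via Proposition~\ref{prop:d1-d2-d3}, use the transitivity $\db\circ\dec_{\CG'}^\gauche=\dec_\CG^\gauche$, note that $\db$ sends $[L_{\CG'}^\gauche(C_i)]$ to $[L_\CG^\gauche(C)]$ (Example~\ref{exemple:lissite-decomposition}) and preserves the classes of Verma modules, then compare coefficients. The extra care you take with condition (D2) for the intermediate pair is exactly the argument already carried out in the proof of Proposition~\ref{prop:d1-d2-d3}(b), so nothing further is needed.
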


\begin{proof}
By Proposition~\ref{prop:d1-d2-d3}, the decomposition map 
$\db : \groth(\Mb_{\CG'}^\gauche\Hb^\gauche) \longto \groth(\Mb^\gauche_\CG \Hb^\gauche)$ 
is well-defined and it satisfies the transitivity property 
$\db \circ \dec_{\CG'}^\gauche = \dec_\CG^\gauche$. 
Moreover, we have 
$$\db~\isomorphisme{\Mb_{\CG'}^\gauche\Delta(\chi)}_{\Mb_{\CG'}^\gauche\Hb^\gauche} = 
\isomorphisme{\Mb_\CG^\gauche\Delta(\chi)}_{\Mb_\CG^\gauche\Hb^\gauche}.$$
The result then follows from the fact that $\db~\isomorphisme{L_{\CG'}(C_i)}_{\Mb_{\CG'}^\gauche\Hb^\gauche}=
\isomorphisme{L_\CG^\gauche(C)}_{\Mb_{\CG}^\gauche\Hb^\gauche}$ for all $i$ 
(see Example~\ref{exemple:lissite-decomposition}).
\end{proof}

\bigskip

\chapter{Calogero-Moser versus Kazhdan-Lusztig} \label{part:coxeter}

We have recalled in \S~\ref{section:cellules-kl} the definition of 
Kazhdan-Lusztig left, right or two-sided $c$-cells, of Kazhdan-Lusztig $c$-families, 
and of Kazhdan-Lusztig $c$-cellular characters, starting from the representation theory 
of Hecke algebras. On the other hand, the notions of Calogero-Moser left, right or 
two-sided $c$-cells, of Calogero-Moser $c$-families and of Calogero-Moser 
$c$-cellular characters have been defined and studied in Part~\ref{part:verma} 
of this book. We conjecture that these notions coincide. 
The aim of this chapter is to state precise conjectures and to give arguments 
which support these conjectures.

\section{Hecke families}\label{chapter:hecke}

The aim of this section is to recall the statement of Martino's Conjecture~\cite{martino} 
which relates Calogero-Moser families and Hecke families (see definition~\ref{defi:famille-rouquier}), 
to recall what is known about this conjecture, and to show some theoretical arguments 
which support it.

\bigskip

\boitegrise{
{\it 
Let $k^\sharp=(k_{\orbite,j}^\sharp)_{(\orbite,j) \in \orbiteb^\circ}$ denote the element 
of $\CCB$ defined by $k_{\orbite,j}^\sharp=k_{\orbite,-j}$ (the indices $j$ being viewed modulo $e_\orbite$). 
{\bfit We assume that Assumption~\libertesymetrie~ is satisfied} 
(see~\S\ref{sub:hecke}).}}{0.75\textwidth}

\bigskip

\subsection{Statement and known cases}

\medskip

We recall here the  statement given in~\cite[Conjecture~2.7]{martino}:

\bigskip

\begin{conjecturem}[Martino]
If $b \in \blocs(\Zba_c)$, then there exists a central idempotent 
$b^\HC$ of $\OC^\cyclo[\qb^\RM]\HC_W^\cyclo(k^\sharp)$ such that:
\begin{itemize}
\itemth{a} $\Irr_\Hb(W,b)=\Irr_\HC(W,b^\HC)$;

\itemth{b} $\dim_\CM(\Zba b) = \dim_{F(\qb^\RM)}\bigl(F(\qb^\RM)\HC_W^\cyclo(k^\sharp)b^\HC\bigr)$.
\end{itemize}
In particular, every Calogero-Moser $c$-family is a union of Hecke $k^\sharp$-families. 
\end{conjecturem}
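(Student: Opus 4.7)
The plan is to exploit Theorem~\ref{th:CMfamiliesblocksHecke}, which already provides the essential bridge between Calogero-Moser $c$-families and blocks of specialized Hecke algebras. That theorem expresses the Calogero-Moser $c$-families as the coarsest partition refining both the generic Calogero-Moser families and the block partitions of $\CM(\qb^\kb)\heckegenerique$ under all specializations $\qb\mapsto \zeta\qb^k$ with $\zeta$ a family of roots of unity. To obtain Martino's conjecture, one needs to show that each Calogero-Moser $c$-family is a union of Hecke $k^\sharp$-families, i.e.\ a union of blocks of the \emph{single} cyclotomic Hecke algebra $\heckecyclotomique(k^\sharp)$.

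First I would establish part~(a) by showing that every Hecke $k^\sharp$-family lies inside a single Calogero-Moser $c$-family, and then build $b^{\HC}$ as the sum of the primitive central idempotents of $\OC^\cyclo[\qb^\RM]\heckecyclotomique(k^\sharp)$ associated with the Hecke $k^\sharp$-families contained in $\Irr_\Hb(W,b)$. The containment itself rests on a central-character comparison: by Proposition~\ref{pr:action-pi} and formula~\eqref{eq:action-pi-cyclo} the element $\pib$ acts on the $\KZ$-image of $\bar{\Delta}_c(E)$ by $\omega_E(w_z)\,e^{2i\pi C_E(c)/z}$, so passing through the KZ functor (Theorem~\ref{th:doubleendo}) turns the block decomposition of $\dot{\OC}(\kb(\hbar))$ (with $C\mapsto \hbar c+c'$) into that of a suitably specialized cyclotomic Hecke algebra. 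The key reduction is then to argue that taking $c'=0$ already suffices, i.e.\ that the extra ``generic-families'' refinement in Theorem~\ref{th:CMfamiliesblocksHecke} is automatically finer than the Hecke $k^\sharp$-family partition.

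Once part~(a) is in hand, part~(b) is essentially formal. Indeed, $\dim_\CM(\Zba b)=\sum_{\chi\in\Irr_\Hb(W,b)}\chi(1)^2$ by Corollary~\ref{dim bonne}, while Corollary~\ref{coro:hecke-deployee} combined with Tits' deformation theorem yields $\dim_{F(\qb^\RM)}\bigl(F(\qb^\RM)\heckecyclotomique(k^\sharp)b^\HC\bigr)=\sum_{\chi\in\Irr_\HC(W,b^\HC)}\chi(1)^2$; the equality of the two sums then follows from~(a). Accordingly the proof really reduces to constructing $b^{\HC}$ and verifying the equality of character sets.

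The genuine difficulty — and the reason the conjecture is still open in general — lies in controlling the generic Calogero-Moser families. At a generic parameter, the Calogero-Moser partition need not be the finest partition of $\Irr(W)$, so one must show that whenever two irreducible characters lie in the same generic Calogero-Moser family, they also lie in the same Hecke $k^\sharp$-family for every specialization point $k^\sharp$; equivalently, one must prove a uniform statement on the compatibility of the Etingof--Ginzburg deformation with the topological definition of the Hecke algebra. In practice, all known proofs (for Coxeter groups and for $G(de,e,r)$, after Gordon--Martino, Bellamy, Chlouveraki, Martino) proceed by an explicit case-by-case computation of both sides, and a uniform argument would require either a direct construction of the idempotent $b^\HC$ inside $\OC^\cyclo[\qb^\RM]\heckecyclotomique(k^\sharp)$ using a deformed central character along the line $T=\hbar^{-1},\ C=c+\hbar^{-1}c'$, or a conceptual proof that blocks of the restricted Cherednik algebra are invariant under the relevant one-parameter deformation of $\dot{\OC}$. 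This last step is the heart of the conjecture and the main obstacle in any general approach.
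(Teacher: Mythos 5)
You have set out to prove something that the paper itself does not prove: Conjecture~\FAM\ is stated in the text as an open conjecture, and the authors say explicitly that ``no theoretical link has been made towards a proof of this Conjecture which does not rely on the Shephard-Todd classification.'' The only things actually established in the paper are (i) Proposition~\ref{prop:a-b}, namely that statement~(a) implies statement~(b), proved exactly as you do via split semisimplicity of $F(\qb^\RM)\heckecyclotomique(k^\sharp)$ together with Corollary~\ref{dim bonne}; (ii) the consistency checks via the Euler element ($\O_\chi^c(\euler_c)=C_\chi(k^\sharp)$, Remark~\ref{rema:c-constant}) and the $\e$-twist (Proposition~\ref{prop:ordre 2}); and (iii) the known cases, all obtained by separate case-by-case computation of both partitions. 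So there is no ``paper proof'' to match your argument against.

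Your proposal reproduces the reduction of (b) to (a) correctly, and your identification of Theorem~\ref{th:CMfamiliesblocksHecke} as the closest available bridge is reasonable. But the proposal is not a proof, and you say so yourself: the step ``every Hecke $k^\sharp$-family lies inside a single Calogero-Moser $c$-family'' is precisely the content of part~(a), and you do not supply an argument for it. Two concrete obstructions remain. First, Theorem~\ref{th:CMfamiliesblocksHecke} describes the Calogero-Moser $c$-families as the coarsest common refinement of the \emph{generic} Calogero-Moser families and of block partitions of $\CM(\qb^\kb)\heckegenerique$ over \emph{all} root-of-unity specializations $\qb\mapsto\zeta\qb^k$; the Hecke $k^\sharp$-families are blocks over the Rouquier ring $\OC^\cyclo[\qb^\RM]$, which is a different (and generally coarser or incomparable) partition, so ``taking $c'=0$ already suffices'' is not a reduction but a restatement of what needs to be proved. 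Second, nothing in the paper controls the generic Calogero-Moser families in terms of Hecke-theoretic data; Thiel's counterexample to the stronger form of Martino's conjecture (equality at generic $c$) shows that any such control must be subtle. Until these points are addressed, the argument establishes only (a)$\Rightarrow$(b), which the paper already contains.
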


\bigskip

This Conjecture has been checked in many cases by computing separately the Calogero-Moser families 
and Hecke families. At the time this book is written, no theoretical link has been made 
towards a proof of this Conjecture which does not rely on the Shephard-Todd classification.

\bigskip

\begin{theo}[Bellamy, Chlouveraki, Gordon, Martino]
Assume that $W$ has type $G(de,e,n)$ and assume that, if $n=2$, then $e$ is odd or $d=1$. 
Then the Conjecture~\FAM~holds.
\end{theo}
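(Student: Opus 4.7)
The plan is to proceed by reducing to the separate computation of the two partitions of $\Irr(W)$ (into Calogero-Moser $c$-families on the one hand, and into Hecke $k^\sharp$-families on the other), and then to compare them case by case using the structure of $G(de,e,n)$. The key point is that both sides of Conjecture~\FAM~are by now explicitly known for these groups, so the strategy is not to find a conceptual bijection but to match two independent combinatorial descriptions.

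First I would handle the case $e=1$, i.e.\ $W=G(d,1,n)$. Here Theorem~\ref{les lisses} tells us that the generic Calogero-Moser space is smooth, so by Proposition~\ref{prop lissite} the generic Calogero-Moser families are singletons and the non-generic families can be computed by a semi-continuity argument from the fixed points of the $(\CM^\times)$-action on $\ZCB_c$ (this is Gordon--Martino's computation, which describes Calogero-Moser families in terms of combinatorics of $d$-multipartitions and residues of boxes, in the spirit of~\S\ref{se:fixedpointsfamilies}). On the Hecke side, the cyclotomic Hecke algebra is an Ariki--Koike algebra, and its blocks over the Rouquier ring have been determined (in terms of the same combinatorics of residues). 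The two descriptions are then shown to coincide. Next, using Clifford theory for the inclusion $G(de,e,n)\subset G(de,1,n)$, one transports the result to $G(de,e,n)$: both the partition into Calogero-Moser families and the partition into Hecke families are compatible with restriction to the index $e$ subgroup, and one checks that the compatibility is the same on both sides, again matching combinatorial descriptions.

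The remaining and most delicate case is $n=2$, which is excluded from the statement precisely in the configuration $d>1$, $e$ even that the current technology does not cover. Under the stated hypothesis (either $e$ odd, or $d=1$) the rank two groups $G(de,e,2)$ can be handled either by reduction to $G(de,1,2)$ via Clifford theory as above when $e$ is odd (the obstruction involves the kernel of a linear character of order dividing $e$, which disappears modulo $2$), or trivially when $d=1$ (covered by the first step). The chapter~\ref{chapitre:b2} treatment of type $B_2=G(2,1,2)$ can be taken as the prototypical illustration: one computes $\Omeb$ on the Euler element and on one or two further central elements, reads off the families from the resulting partition of central characters, and checks the comparison with the known Hecke families.

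The main obstacle is really the excluded case $n=2$ with $d>1$ and $e$ even. There, the generic Calogero-Moser space is not smooth, so the first step is unavailable, and the Clifford-theoretic descent from $G(de,1,2)$ loses control over the blocks corresponding to fixed points of the $(\ZM/e)$-action, which are exactly the ones where Calogero-Moser and Hecke data could, a priori, disagree. A complete proof in that case would seem to require either an explicit computation of $Z_c$ for each such group (along the lines of~\S\ref{section:quotient B2}--\S\ref{section:Q B2}) together with an independent Rouquier-type computation of the Hecke blocks, or a new conceptual input relating the two partitions directly.
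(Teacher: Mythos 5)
Your proposal is correct and follows essentially the same route as the paper, which proves the theorem precisely by citing the separate computations of the two partitions (Gordon and Martino for the Calogero-Moser families when $e=1$, Bellamy's Clifford-theoretic descent to $G(de,e,n)$, Chlouveraki for the Hecke families) and their combinatorial comparison. Your identification of the excluded case $n=2$, $d>1$, $e$ even as the genuine obstruction also matches the state of the art the paper records.
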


\bigskip

The proof of this Theorem follows from the following works:
\begin{itemize}
\item M. Chlouveraki has computed the Hecke families in~\cite{chlouveraki B} and~\cite{chlouveraki D}.

\item Whenever $e=1$, the Calogero-Moser families have been computed by I. Gordon~\cite{gordon B} 
for rational values of $k$ (using Hilbert schemes). This result has been extended to all values 
of $k$ by M. Martino~\cite{martino 2} using purely algebraic methods. 

\item M. Chlouveraki's combinatoric and I. Gordon's combinatoric have been compared by 
M. Martino~\cite{martino} to show that Conjecture~\FAM~holds whenever $e=1$.

\item Whenever $e$ is any non-negative integer (satisfying the conditions of the Theorem), 
the Calogero-Moser families have been computed by~\cite{bellamy} 
for rational values of $k$, because this computation relied on I. Gordon's result. 
His method can nevertheless be extended to any value of $k$, once M. Martino's result has been 
established~\cite{martino 2}.
\end{itemize}

\bigskip

%\begin{rema}\label{rema:generique-niet}
It was also conjectured by M. Martino that, whenever $c$ is generic, then the Calogero-Moser 
$c$-families and the Hecke $k^\sharp$-families coincide. 
A counter-example has been found by U. Thiel~\cite{thiel}. 
%\end{rema}

%\bigskip

U. Thiel has also obtained many cases of Conjecture~\FAM~amongst 
the exceptional complex reflection groups~\cite[Theorem~25.4]{thiel thesis}. 
His algorithm has recently been improved by Thiel and the first author~\cite{bonnafe thiel} 
and new cases have been settled~\cite[Theorem~5.15]{bonnafe thiel}. 
It must be noticed that M. Chlouveraki has computed the partitions into Hecke families 
for exceptional groups~\cite{chlouveraki LNM} in all cases, while 
the partition into Calogero-Moser families is known only 
for some exceptional groups, and mainly in the generic parameter case. 
Comparison of both sides gives the following result (note that 
$G_{23}=\Wrm(H_3)$ and $G_{28}=\Wrm(F_4)$):

\bigskip

\begin{theo}[Thiel]
If $W$ has type $G_n$ with 
$$n \in \{4, 5, 6, 7, 8, 9, 10, 11, 12, 13, 14, 15, 20, 
22, 23, 24, 25, 26, 27, 28$$
and $c$ is any parameter (except for $n=20$, where it is assumed generic), 
then the Conjecture~\FAM~holds.
\end{theo}

\bigskip

\subsection{Theoretical arguments} 

\medskip

Corollary~\ref{dim bonne} shows that:

\bigskip

\begin{prop}\label{prop:a-b}
In Conjecture~\FAM, the statement (a) implies the statement (b).
\end{prop}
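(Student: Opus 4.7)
The plan is to compute both sides of the dimension equality (b) explicitly, express each as a sum of squared dimensions of irreducible $W$-characters indexed by the relevant family, and then observe that (a) identifies the two indexing sets.

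For the left-hand side, Corollary~\ref{dim bonne} already gives the formula
$$\dim_\CM(\Zba_c b)=\sum_{\chi\in\Irr_\Hb(W,b)}\chi(1)^2.$$
So the first step is simply to invoke this corollary; no further argument is needed on the Calogero-Moser side.

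For the right-hand side, I would use that $F(\qb^\RM)\HC^\cyclo_W(k^\sharp)$ is split semisimple (Corollary~\ref{coro:hecke-deployee}) together with Tits' Deformation Theorem: the irreducible representations $E^\cyclo_{k^\sharp}$ are indexed by $E\in\Irr(W)$, with $\dim_{F(\qb^\RM)}E^\cyclo_{k^\sharp}=\dim_\CM E=\chi_E(1)$, since the character of $E^\cyclo_{k^\sharp}$ specializes to that of $E$ under $\qb^r\mapsto 1$. Wedderburn decomposition then gives
$$\dim_{F(\qb^\RM)}\bigl(F(\qb^\RM)\HC^\cyclo_W(k^\sharp)b^\HC\bigr)=\sum_{E\in\Irr_\HC(W,b^\HC)}\chi_E(1)^2,$$
where the sum runs over those $E\in\Irr(W)$ whose associated simple module lies in the block cut out by $b^\HC$; this is precisely the definition of $\Irr_\HC(W,b^\HC)$ from \S\ref{sub:rouquier}.

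Combining the two formulas, assumption (a) of Conjecture~FAM, namely $\Irr_\Hb(W,b)=\Irr_\HC(W,b^\HC)$, makes the two sums identical term by term, hence yields (b). I expect no serious obstacle: the whole argument is a one-line consequence of Corollary~\ref{dim bonne}, Corollary~\ref{coro:hecke-deployee}, and the Tits deformation bijection preserving character degrees. The only thing to be slightly careful with is to note that the splitness of $F(\qb^\RM)\HC^\cyclo_W(k^\sharp)$ is what lets us write its dimension as a sum of squares (rather than involving Schur indices), and that $b^\HC$, being a central idempotent, is a sum of the primitive central idempotents attached to the $E^\cyclo_{k^\sharp}$'s for $E\in\Irr_\HC(W,b^\HC)$.
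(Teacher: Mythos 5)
Your proof is correct and follows essentially the same route as the paper: Corollary~\ref{dim bonne} for the Calogero–Moser side, splitness of $F(\qb^\RM)\HC_W^\cyclo(k^\sharp)$ (via Corollary~\ref{coro:hecke-deployee} and Tits deformation) for the Hecke side, and then statement (a) identifies the two index sets. The extra care you take about Schur indices and the degree-preservation under the Tits bijection is implicit in the paper's one-line appeal to split semisimplicity.
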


\bigskip

\begin{proof}
Keep the notation of Conjecture~\FAM~($b$, $b^\HC$,...). 
Since the algebra $F(\qb^\RM)\HC_W^\cyclo(k^\sharp)$ is split semisimple, 
we have
$$\dim_{F(\qb^\RM)}\bigl(F(\qb^\RM)\HC_W^\cyclo(k^\sharp)b^\HC\bigr)=
\sum_{\chi \in \Irr_\HC(W,b^\HC)} \chi(1)^2.$$
But, on the other hand, it follows from Corollary~\ref{dim bonne} that
$$\dim_\CM(\Zba_c b)=\sum_{\chi \in \Irr_\Hb(W,b)} \chi(1)^2.$$
Whence the result.
\end{proof}

\bigskip

\begin{rema}\label{rema:c-constant}
An important result supporting Conjecture~\FAM~is the following. 
It has been proven that, if $\chi$ and $\chi'$ are in the same Calogero-Moser $c$-family 
(respectively Hecke $k^\sharp$-family), then $\O_\chi^c(\euler)=\O_{\chi'}^c(\euler)$ 
(respectively $C_\chi(k^\sharp)=C_{\chi'}(k^\sharp)$): see Lemma~\ref{caracterization blocs CM} 
(respectively Lemma~\ref{lem:c-contant}). But it follows from Corollary~\ref{action euler verma} 
and from the definition of $C_\chi(k^\sharp)$
% (see~\S\ref{sub:cas-cyclo}) 
that 
\equat\label{eq:o=c}
\O_\chi^c(\euler_c)=C_\chi(k^\sharp).
\endequat
Even though this numerical invariant is not enough for determining 
in general the Calogero-Moser families, it is relatively sharp.\finl
\end{rema}

\bigskip

A last argument is given by the next proposition, which follows from
Lemma~\ref{lem:rouquier-ordre-2} 
and Corollary~\ref{ordre 2}:

\bigskip

\begin{prop}\label{prop:ordre 2}
If $\FC$ is a Calogero-Moser 
$c$-family (respectively a Hecke $k^\sharp$-family), then $\FC \e$ is a 
Calogero-Moser $c$-family (respectively a Hecke $k^\sharp$-family).
\end{prop}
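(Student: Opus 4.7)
The plan is to observe that the proposition merely packages two earlier stability results under tensoring by the sign character $\e$, one on the Cherednik side and one on the Hecke side, applied to the specific parameters relevant here.

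First, for the Calogero-Moser statement, I would specialize Corollary \ref{ordre 2} to the prime ideal $\CG = \CG_c$. That corollary asserts that when every reflection of $W$ has order $2$, tensoring with $\e$ permutes Calogero-Moser $\CG$-families for any prime $\CG$ of $\kb[\CCB]$; the proof given there invokes Corollary \ref{familles lineaires} with the automorphism $\tau = (-1,1,\e \rtimes 1) \in \kb^\times \times \kb^\times \times (W^\wedge \rtimes \NC)$, which acts trivially on $\kb[\CCB]$ and therefore stabilizes every prime ideal $\CG$, and in particular stabilizes $\CG_c$. Hence $\FC \e$ is a Calogero-Moser $c$-family whenever $\FC$ is.

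Second, for the Hecke statement, I would apply Lemma \ref{lem:rouquier-ordre-2} to the parameter $k = k^\sharp$. That lemma, whose proof relies on the semilinear automorphism $h \mapsto h^\dagger$ of $\heckecyclotomique(k)$ and the identity $(\chi^\cyclo_k)^\dagger = (\chi\e)^\cyclo_k$ from \eqref{eq:chi-dagger}, shows that the map $\FC \mapsto \FC \e$ permutes the Hecke $k$-families; specializing to $k = k^\sharp$ yields exactly the desired conclusion. Since both assertions of the proposition are thus direct specializations of previously established stability statements, no additional argument is required; the only thing to check is that the substitutions $\CG \leadsto \CG_c$ and $k \leadsto k^\sharp$ are legitimate, which is immediate since neither cited result places any restriction on the prime ideal or parameter involved. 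There is no real obstacle here—the work has all been done in the two lemmas cited.
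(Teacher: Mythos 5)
Your proposal is correct and is exactly the paper's argument: the paper deduces the proposition directly from Corollary~\ref{ordre 2} (applied with $\CG=\CG_c$) and Lemma~\ref{lem:rouquier-ordre-2} (applied with $k=k^\sharp$), both of whose order-$2$ hypotheses hold since $W$ is a Coxeter group throughout this chapter.
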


\cbstart

\section{Kazhdan-Lusztig cells}\label{section:conjectures}

\bigskip
\boitegrise{{\bf Assumption.} {\it In \S \ref{section:conjectures},
we assume that 
$W$ is a Coxeter group, that $\kb=\CM$ and that $\kb_\RM=\RM$. We fix an element 
$c \in \CCB(\RM)$, we set $k=\kappa(c) \in \KCB(\RM)$ and 
recall that we set $k_s=-c_s/2$ for all $s \in \REF(W)$.}}{0.75\textwidth}

\bigskip

\subsection{Cells and characters} 
\label{se:cellsandcharacters}

\medskip

The first conjecture is concerned with two-sided cells and their associated families.

\bigskip

\begin{conjecturebil}
There exists a choice of the prime ideal $\rGba_c$ lying over 
$\qGba_c$ such that:
\begin{itemize}
\itemth{a} The partition of $W$ into Calogero-Moser two-sided $c$-cells
coincides with the partition into Kazhdan-Lusztig two-sided $c$-cells.

\itemth{b} Assume that $k_s \ge 0$ for all $s \in \REF(W)$. 
If $\G \in \cmcellules_{LR}^c(W)=\klcellules_{LR}^c(W)$, then $\Irr_\G^\calo(W)=\Irr_\G^\kl(W)$. 
\end{itemize}
\end{conjecturebil}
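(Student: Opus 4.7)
My plan is to attack the two parts of the conjecture in tandem, using part (b) as a guide to constrain the choice of $\rGba_c$ needed in part (a). The basic strategy is to build a dictionary between the two worlds via a chain of matchings: Calogero-Moser families to Hecke families (Martino's Conjecture \FAM), and Hecke families to Kazhdan-Lusztig families (a Coxeter-side statement essentially due to Lusztig and known in equal parameters). The main numerical evidence comes from the identity $\O_\chi^c(\euler_c)=C_\chi(k^\sharp)$ mentioned in Remark \ref{rema:c-constant}, which shows that the first nontrivial central invariant agrees on the two sides. A natural first step is therefore to settle the stronger variant of Conjecture \FAM\ asserting equality (not merely inclusion) of the two family partitions when $c_s\ge 0$, and to exploit Proposition \ref{prop:ordre 2} together with Lusztig's compatibilities such as equation~(\ref{eq:familles-cellulaire-w0}) to match further structure.

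For the cell statement (a), I would first observe that, granted (b), the cardinality identity of Theorem \ref{theo cellules familles}(c) and equation~(\ref{eq:cardinal-cellule-kl}) force every Kazhdan-Lusztig two-sided $c$-cell and the matching Calogero-Moser two-sided $c$-cell to have the same cardinality $\sum_{\chi\in\Irr_\G^\kl(W)}\chi(1)^2$. So part (a) would amount to finding a single $\rGba_c$ for which the two partitions of $W$ into classes of these prescribed sizes coincide. The plan is then to produce $\rGba_c$ by specialization from a generic choice $\rGba$: fix $\rGba$ so that the generic cells match the partition into Kazhdan-Lusztig cells for a generic parameter, and then use Corollary \ref{coro:famille-semicontinu} together with the Lusztig semicontinuity of Kazhdan-Lusztig cells to propagate the match to all $c$.

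A second, more conceptual route is to use the topological description of Calogero-Moser cells obtained through the Gaudin algebra in Chapter \ref{ch:Gaudin}: here $\rGba_c$ corresponds to a choice of path $\gamma$ from the basepoint, and cells become eigenspace-degeneration data for explicit operators on $\CM W$. On the Kazhdan-Lusztig side, the analogous structure is the asymptotic algebra $J$ and monodromy of the KZ connection. The plan would be to identify the Kazhdan-Lusztig cellular partition with the spectrum partition of a limit of Gaudin operators as one approaches the Calogero-Moser degenerate locus, using the KZ functor (Theorem \ref{th:doubleendo}) to transfer information between category $\dot{\OC}$ and $\heckecyclotomique$-modules. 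This bypasses the need to ever write down $R$.

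The hard part will be the construction of $\rGba_c$ itself. The ring $R$ and the Galois group $G$ are out of reach in general (cf.\ Remark \ref{rema:impossible}), so one cannot directly pick a prime ideal above $\qGba_c$. Realistically one must either (i) proceed through the classification of finite Coxeter groups, verifying the conjecture type by type as was done for $B_2$ in Chapter \ref{chapitre:b2} and for rank one in Chapter \ref{chapitre:rang 1}, or (ii) find a canonical topological/analytic characterization of the ``correct'' $\rGba_c$ via the Gaudin-operator route above, so that the equality of partitions follows from a monodromy computation rather than a direct Galois-theoretic identification. In either case, the central obstacle is that the relevant objects on the Calogero-Moser side live in the Galois closure $\Mb$, whereas the Kazhdan-Lusztig construction takes place in the concrete Hecke algebra, and no functorial bridge between these two is currently known beyond the family level.
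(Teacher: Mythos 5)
You have not given a proof, and indeed the statement you are addressing is Conjecture~\BIL: the paper does not prove it either. It is stated in \S\ref{section:conjectures} as the main open conjecture of the book, and the subsequent section \S\ref{chapter:arguments} only collects evidence (the cases $c=0$, rank $\le 1$, types $A_1$, $A_2$, $B_2$, dihedral groups, and character-level statements under smoothness hypotheses). Your text is a research programme, and you say so yourself in the final paragraph; as such it cannot be compared with a proof in the paper, because there is none.

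That said, two of your intermediate reductions are weaker than you suggest. First, the passage from part (b) to part (a) via cardinalities is not enough: knowing that the Calogero-Moser and Kazhdan-Lusztig two-sided cells covering a common family $\FC$ both have cardinality $\sum_{\chi\in\FC}\chi(1)^2$ does not let you choose $\rGba_c$ so that the partitions agree, because replacing $\rGba_c$ by $g(\rGba_c)$ only moves the Calogero-Moser partition by an element $g$ of the Galois group $G$, which is in general a proper subgroup of $\SG_W$ (for $B_2$ it is a Weyl group of type $D_4$ inside $\SG_8$). The paper makes exactly this caveat in Remark~\ref{rem:cardinal-cellules-kl-cm}: the cardinality argument closes the gap only when $G=\SG_W$. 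Second, Conjecture~\FAM\ as stated only asserts that Calogero-Moser families are \emph{unions} of Hecke families, so your first step already requires a strengthening that is itself unproven; and even granting equality of the family partitions, part (b) asks for a matching of families to \emph{specific} two-sided cells, which again depends on the choice of $\rGba_c$ and is not a character-level statement. Your Gaudin/topological route corresponds to the paper's own proposal (the choice of a path $\gamma$ in $\CC(\RM)$ with $(v_\CM,v^*_\CM)\in C_\RM\times C'_\RM$), but there too the identification with Kazhdan-Lusztig monodromy is precisely what is missing.
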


\bigskip

We propose a similar conjecture for left cells and cellular characters.

\bigskip

\begin{conjectureleft}
There exists a choice of the prime ideal $\rG^\gauche_c$ lying over 
$\qG^\gauche_c$ such that:
\begin{itemize}
\itemth{a} The partition of $W$ into Calogero-Moser left $c$-cells coincides
with the 
partition into Kazhdan-Lusztig left $c$-cells.

\itemth{b} Assume that $k_s \ge 0$ for all $s \in \REF(W)$. 
If $C \in \cmcellules_L^c(W)=\klcellules_L^c(W)$, then $\isomorphisme{C}_c^\calo=\isomorphisme{C}_c^\kl$. 
\end{itemize}
\end{conjectureleft}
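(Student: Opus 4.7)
The overall plan is to use the Hecke algebra as a bridge between the two theories: it connects to the Cherednik algebra at $t=1$ via the KZ functor (Theorem~\ref{th:doubleendo}), and it is by construction the home of Kazhdan--Lusztig theory. The first task is to pin down the correct choice of $\rG_c^\gauche$. By the topological description of \S\ref{se:cellstopology}, specifying $\rG_c^\gauche$ is equivalent to specifying an irreducible component of $\rho_H^{-1}(\QCB_c^\gauche)$, which by path-lifting corresponds to a homotopy class of paths in $\PCB^{\nr}(\CM)$ from $(0,W\cdot v_\CM, W\cdot v_\CM^*)$ toward the closed stratum $\CCB(\CM)\times V/W\times\{0\}$. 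The natural choice from the KL side is the one induced by a path staying close to the real chamber $C_\RM$ as $v^*$ collapses to $0$, since the KL partition is determined by $(W,S)$, which is itself determined by $C_\RM$.

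\textbf{Part (a).} With this choice fixed, I would match left cells via the Gaudin-theoretic characterization of Chapter~\ref{ch:Gaudin}. Theorem~\ref{th:cellsGaudin} describes Calogero--Moser left $c$-cells as equivalence classes of $w \in W$ under $\rho_w(1) = \rho_{w'}(1)$, where $\rho_w$ tracks the eigenvalue of the Gaudin operator $D_y^{\hat\gamma(t)}$ selected by the initial value $w^{-1}(v_\CM^*)$. On the KL side, one expects (and in classical equal-parameter cases knows) an analogous monodromic description coming from the KZ connection. The plan is to degenerate the Cherednik/Gaudin system to the boundary $T=1$ via the Rees-algebra filtration of \S\ref{sub:filtration}, apply the KZ functor to identify the resulting local system with the KZ monodromy representation of the Hecke algebra in the KL basis, and thereby translate $\rho_w(1)=\rho_{w'}(1)$ into the relation $w \sim_L^{\kl,c} w'$.

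\textbf{Part (b).} Once the partitions agree, Theorem~\ref{th:decmapHeckecells} provides a morphism $d_{c,c'} : G_0(F(\qb^\CM)\heckegenerique) \to G_0(\Kb_c^\gauche \Hb)$ expressing Calogero--Moser cellular characters as images of classes of projective $\heckegenerique$-modules, with non-negative coefficients. The KL cellular characters $\isomorphisme{C}_c^\kl$ are, by definition, the classes $[\kb \otimes \MC_C^L]$ arising from quotients of ideals in $\heckecyclotomique(k)$ built from the KL basis, and these are also manifestly non-negative combinations of classes of Hecke projectives. Under the positivity assumption $c_s \ge 0$, I would combine: (i) the semicontinuity of $\CG\mapsto\isomorphisme{C}_\CG^\calo$ (Proposition~\ref{prop:cellulaire-semicontinu}) to reduce to generic sub-parameters; (ii) the fact that each Calogero--Moser family contains a unique character with minimal $b$-invariant of multiplicity one in that character (Theorem~\ref{theo:b-minimal-cellulaire}), which is known to match the ``special'' character selected by KL theory at least in the equal-parameter case; and (iii) the numerical constraints of Proposition~\ref{multiplicite cm} and Lemma~\ref{lem:mult-kl}. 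Together these should force $\mult_{C,\chi}^\calo = \mult_{C,\chi}^\kl$ for all $\chi$.

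\textbf{Main obstacle.} The principal difficulty is that neither side of the desired identification is under effective control in general. On the Calogero--Moser side, computing the inertia group $I_c^\gauche$ for a given $\rG_c^\gauche$ requires understanding the ramification of the Galois closure $R/P$, and we have no general procedure to access $R$ outside very small examples (cf.\ the $B_2$ case of Chapter~\ref{chapitre:b2} and the rank~one case of Chapter~\ref{chapitre:rang 1}). On the Kazhdan--Lusztig side, the sought-for monodromic description of left cells needed in Step~2 is itself conjectural for general complex reflection groups (and even for Weyl groups at unequal parameters), being classically understood only through Lusztig's character-sheaf-theoretic setup. Thus the scheme depends on simultaneously resolving a hard geometric problem (ramification of $R$) and a hard representation-theoretic one (monodromic interpretation of the KL basis), which explains why the present book restricts itself to stating the conjecture and verifying it in low rank rather than proving it in general.
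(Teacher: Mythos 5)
The statement you are trying to prove is Conjecture L of the paper: it is stated as an open conjecture, not as a theorem, and the paper contains no proof of it. The authors only (i) propose a specific choice of $\rG_c^\gauche$ (via a path contained in $\CCB(\RM)$ with base point $(v_\CM,v_\CM^*)\in C_\RM\times C_\RM'$, which is essentially the choice you describe), and (ii) verify the conjecture in a handful of cases: $c=0$ (Corollary~\ref{coro:nul}), types $A_1$, $A_2$, $B_2$ (Chapter~\ref{chapitre:b2}), dihedral groups of order $2m$ with $m$ odd, and, for the character-level statement only, the smooth cases of Theorem~\ref{theo:cellulaire-conjecture}. So your proposal cannot be compared with a proof in the paper; it can only be assessed as a strategy, and as such it has gaps that are not merely technical.

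The decisive gap is in your Part (a). There is no monodromic description of Kazhdan--Lusztig left cells to degenerate to. The KZ functor identifies $\dot{\OC}$ modulo torsion with modules over the Hecke algebra as an \emph{abstract} $\CM$-algebra, but the KL basis --- hence the preorder $\prel$ and the cells --- depends on the integral form $\OC[\qb^\RM]\heckecyclotomique(k)$ and the bar involution, neither of which is visible from the monodromy of the Gaudin/KZ connection. Matching $\rho_w(1)=\rho_{w'}(1)$ with $w\siml w'$ is therefore not a translation but precisely the content of the conjecture. Your Part (b) also does not close: Theorem~\ref{th:decmapHeckecells} is frequently vacuous (when $W$ has a single class of reflections and $c\neq 0$ the algebra $F(\qb^\kb)\heckegenerique$ is semisimple, as the paper notes); the row and column sum identities of Proposition~\ref{multiplicite cm} and Lemma~\ref{lem:mult-kl} constrain but do not determine the multiplicity matrices; and the $\bb$-invariant argument of Theorem~\ref{theo:b-minimal-cellulaire} pins down a single constituent of each cellular character, not the whole character. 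You correctly identify these obstacles yourself in your final paragraph; the conclusion is that what you have written is a reasonable heuristic for why the conjecture should be true, consistent with the evidence assembled in \S\ref{chapter:arguments}, but it is not a proof and the paper does not claim one.
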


\bigskip

A similar conjecture can be stated for right cells. 
Also, if Conjectures~\BIL~and~\GAUCHE~have positive answer, it should be
true that $\rG_c^\gauche \subset \rGba_c$. 

\bigskip
We propose a specific choice of ideals $\rG_0$, $\rGba_c$ and $\rG^\gauche_c$.
We describe that choice in the equivalent setting of paths, cf.~\S\ref{se:cellstopology}.

Let $C_\RM^*$ be the dual chamber to $C_\RM$, obtained as the
image of $C_\RM$ through
some isomorphism of $\RM W$-modules $V_\RM\xrightarrow{\sim}V_\RM^*$.
We choose $(v_\CM,v^*_\CM)\in C_\RM\times C_\RM^*$ and we choose the
path $\gamh$ contained in $\CCB(\RM) \times \overline{C_\RM} \times \overline{C_\RM^*}$ 
and such that $\gamh([0,1[) \subset \CCB(\RM) \times C_\RM \times C_\RM^*$ and 
we denote by $\g$ its image in $\CCB \times V^\reg/W \times V^*/W$.
We conjecture that the image of $\CCB(\RM) \times C_\RM\times C_\RM^*$ 
in $\CCB \times V^\reg/W \times V^*/W$ is contained in $\RCB(\CM)^{\nr}$ 
so that we can define Calogero-Moser $\g$-cells as in Theorems~\ref{th:cellsGaudin} 
and~\ref{theo:cm-cells-topo}. Moreover, as $\CCB(\RM) \times C_\RM\times C_\RM^*$ is 
simply connected, Calogero-Moser $\g$-cells do not depend on the particular 
choice of $\gamh$. We then conjecture that the Conjectures LR and L hold with such choices.

\bigskip

\subsection{Characters}

\medskip

% Rappelons ici la conjecture de Gordon-Martino~\cite[conjecture 1.3]{gordon martino}~:
% 
% \bigskip
% 
% \begin{conjecturegm}[Gordon-Martino]
% Soit $c \in \CCB_\RM$. Alors~:
% \begin{itemize}
% \itemth{1} La partition de $\Irr(W)$ en $c$-familles de Calogero-Moser co\"{\i}ncide avec la 
% partition en $c$-familles de Kazhdan-Lusztig.
% 
% \itemth{2} Si $\G$ est une $c$-cellule bilat\`ere de Kazhdan-Lusztig 
% de $W$ et si $b \in \blocs(\kb_c \Qba)$ 
% sont tels que $\Irr_\G(W)=\Irr_\Hb(W,b)$, alors $\dim_\kb(\kb_c\Qba b) = |\G|$.
% \end{itemize}
% \end{conjecturegm}
% 
% \bigskip
% 
% Nous proposons ici une conjecture qui ``rel\`eve'' la conjecture de Gordon-Martino 
% au niveau des cellules~:
% 
% \bigskip

First of all, note that the set of Calogero-Moser $c$-families, as well as the set 
of Calogero-Moser $c$-cellular characters, do not depend
on the choice of the ideal $\rG_c^\gauche$. 
At the level of characters, the statements~(b) of Conjectures~\BIL~and~\GAUCHE~ 
imply the following simpler statement, 
which does refer to the choice of a prime ideal of $R$.

\bigskip

\begin{conjecturecar}
\begin{itemize}
\itemth{a} The partition of $\Irr(W)$ into Calogero-Moser $c$-families
coincides with the partition 
into Kazhdan-Lusztig $c$-families (Gordon-Martino).

\itemth{b} The set of Calogero-Moser 
$c$-cellular characters coincides with the set of Kazhdan-Lusztig $c$-cellular 
characters.
\end{itemize}
\end{conjecturecar}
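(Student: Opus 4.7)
The strategy is to compare both partitions via the Hecke algebra, which serves as the natural bridge between Kazhdan-Lusztig combinatorics and Cherednik-algebraic data. For part (a), I would first invoke Theorem \ref{th:CMfamiliesblocksHecke}, which characterizes Calogero-Moser $c$-families as the smallest subsets of $\Irr(W)$ that are simultaneously unions of generic Calogero-Moser families and unions of blocks of $\CM(\qb^{\kb})\heckegenerique$ for the family of specializations $\qb \mapsto \zeta\,\qb^k$. On the Kazhdan-Lusztig side, by Lusztig's asymptotic-algebra theory and its (partly conjectural) extension to unequal parameters, the $c$-families are governed by the block decomposition of the cyclotomic Hecke algebra $\heckecyclotomique(k^\sharp)$, i.e.\ they are exactly the Hecke $k^\sharp$-families of \S\ref{se:Heckefamilies}. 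Hence part (a) would follow from Martino's Conjecture \FAM{} together with the equality of generic Calogero-Moser families with singletons-or-Hecke-families, which is already established for the infinite series $G(de,e,n)$ and many exceptional types.

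For part (b), I would combine Proposition \ref{multiplicite cm} (numerical relations satisfied by Calogero-Moser cellular characters, matching Lemma \ref{lem:mult-kl} for Kazhdan-Lusztig cellular characters) with Theorem \ref{th:decmapHeckecells}: the decomposition map $d_{c,c'}\colon G_0(F(\qb^{\kb})\heckegenerique) \to G_0(\Kb_c^{\gauche}\Hb)$ sends $[E^{\gen}]$ to $[\Kb_c^{\gauche}\Delta(E)]$, so that, under the Morita equivalence of Theorem \ref{theo:morita-left}, the image of a simple Hecke module class is a non-negative combination of Calogero-Moser cellular characters. On the Kazhdan-Lusztig side, the cell modules $\MC_C^L$ give bases of the Grothendieck group of $\heckecyclotomique(k)$ compatible with the block decomposition, and, after specialization $\qb \mapsto 1$, recover the Kazhdan-Lusztig cellular characters. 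The plan is then to show that both sets of cellular characters are characterized by the same properties: they are unions of irreducible characters inside a fixed family, sum to $[\kb W]$, have the correct total dimension $|C|$, and contain a unique irreducible with minimal $\bb$-invariant (Theorem \ref{theo:b-minimal-cellulaire} on the Cherednik side, Bonnaf\'e's recent theorem on the Kazhdan-Lusztig side).

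The main obstacle lies in rigidifying the identification so that individual cellular characters (not just the set of all of them) match up. For this one must pin down the correct choice of prime ideals $\rGba_c$ and $\rG^{\gauche}_c$: assuming $\kb_\RM = \RM$, the canonical choice should come from a path $\gamma \subset \PC(\RM)$ starting at $(0, W\!\cdot\! v_\CM, W\!\cdot\! v^*_\CM)$ with $v_\CM \in C_\RM$ and $v^*_\CM$ in the dual chamber $C'_\RM$, in the spirit of \S\ref{se:cellstopology}. The Gaudin description of Chapter \ref{ch:Gaudin} then offers a route to computing Calogero-Moser cellular characters by tracking eigenvalues of the operators $D_y^{c,v,v^*}$ along $\gamma$; reconciling this with the combinatorics of the Kazhdan-Lusztig basis and $W$-graphs — in particular, producing a matching of left cells that is compatible with both the block decomposition and the $\bb$-minimality statement — is where genuinely new input seems needed, and where the evidence discussed in \S\ref{chapter:arguments} (small-rank verifications, the type $B_2$ computation of Chapter \ref{chapitre:b2}) must be supplemented by a uniform construction.
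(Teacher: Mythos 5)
The statement you are trying to prove is labelled as a \emph{conjecture} in the text (Conjecture \CAR), and the paper offers no proof of it: Chapter \ref{part:coxeter} only states it, records the cases where it is known (Theorems \ref{theo:gordon-martino-bellamy} and \ref{theo:cellulaire-conjecture}, the rank-one and $B_2$ computations, the dihedral case), and lists supporting evidence. Your proposal is likewise not a proof but a reduction to other open statements, and you should say so explicitly rather than present it as an argument with a single missing ingredient.

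Concretely, for part (a) your chain of reasoning is: Calogero-Moser $c$-families are determined by Hecke blocks (Theorem \ref{th:CMfamiliesblocksHecke}), Kazhdan-Lusztig $c$-families coincide with Hecke $k^\sharp$-families, and the two Hecke-theoretic descriptions agree by Conjecture \FAM. The first link is a theorem, but the second is not: identifying Kazhdan-Lusztig $c$-families with Hecke (Rouquier) families requires Lusztig's conjectures P1--P15, which for unequal parameters are known only in the cases listed in Proposition \ref{prop:cellulaire-constructible}; and Conjecture \FAM{} itself is open in general and is only verified case by case using the Shephard-Todd classification. So part (a) of your argument proves nothing beyond what Theorem \ref{theo:gordon-martino-bellamy} already records. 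For part (b), the tools you invoke are genuinely too weak: Theorem \ref{th:decmapHeckecells} only shows that classes of projective indecomposable Hecke modules are \emph{non-negative combinations} of Calogero-Moser cellular characters — it does not single out the cellular characters, and (as remarked after that theorem) it carries no information at all when $W$ has a single class of reflections and $c\neq 0$, since the relevant Hecke algebra is then semisimple. Likewise, the numerical constraints you list (membership in a single family, summing to $[\kb W]$, total dimension $|C|$, unique constituent of minimal $\bb$-invariant) are satisfied by many collections of characters and do not characterize either set of cellular characters. Your final paragraph correctly identifies that a uniform matching mechanism is missing; that missing mechanism is precisely the content of the conjecture, so the proposal should be presented as a strategy consistent with the paper's Conjectures \BIL{} and \GAUCHE, not as a proof.
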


\bigskip

Note that (a) above has been conjectured by Gordon and
Martino~\cite[Conjecture~1.3(1)]{gordon martino}. So 
Conjecture~\BIL~lifts Gordon-Martino's Conjecture 
at the level of two-sided cells.

\bigskip

\noindent{\sc Commentary - } 
The choices of the prime ideals $\rGba_c$ or $\rG_c^\gauche$ are not relevant for the 
Conjecture \CAR, but they are relevant for the Conjectures \BIL~and \GAUCHE.\finl

\section{Evidence}\label{chapter:arguments}

As will be explained in Part~\ref{part:exemples}, the conjectures stated in
\S~\ref{section:conjectures} hold when $W$ has type $A_1$ or $B_2$: they 
also hold in type $A_2$, but we have not included the computations in this book. 
The case of type $B_2$ will be treated in \S~\ref{chapitre:b2}. However,
the difficulty 
of the computations does not allow us for now to extend 
this list of examples. Note that Conjectures~LR and~L have been proved 
by the first author whenever $W$ is dihedral of order $2m$, 
with $m$ odd~\cite[Corollary~6.3]{bonnafe diedral} (it turns out that, 
in this case, the Galois group $G$ is $\SG_W$). The prime ideals involved
in the conjectures have not been determined. However, in the case of dihedral 
groups at equal parameters, Conjectures~L and~LR have been solved by Germoni 
and the first author~\cite{bonnafe germoni} with the specific choice of ideals 
as discussed above.

\medskip
Again with this specific choice of ideals, 
the best evidence for Conjecture L is the following result~\cite{BGW}:

\begin{theo}[Brochier-Gordon-White]
Conjecture $L$ holds for $W$ of type $A$.
\end{theo}

The article \cite{BGW} gives a new description of the 
Robinson-Schensted-Knuth correspondence, a bijection from matrices with 
non-negative integer entries to pairs of semistandard Young tableaux. This is realized using the collision
of spectra of Gaudin operators acting on sections of a vector bundle over
$\CM^n$ with fiber $\CM[x_1,\ldots,x_r]^{\otimes n}$, via
a combinatorial description using $\mathfrak{gl}_r$-crystals based on
\cite{HaKaRyWe}. The
representation $(\CM^r)^{\otimes n}$ of $\mathfrak{gl}_r$ is a direct summand of that
representation, and the collision pattern of Gaudin operators on a vector
bundle with that fiber can be deduced. The conclusion is obtained by using some
version of Schur-Weyl duality \cite{MuTaVa1,MuTaVa3} that relates this system with the 
Calogero-Moser one.

\medskip
The aim of \S\ref{chapter:arguments}
is to give some evidence in
support of these conjectures. Note however that Conjecture~\CAR, 
which only deals with characters (and not with the partition of $W$ into cells), 
holds for some infinite series of groups (see the details below).

\bigskip

\subsection{The case ${\boldsymbol{c=0}}$}

\medskip

The following facts will be shown in \S~\ref{chapitre nul}.

\bigskip

\begin{prop}\label{lem:nul}
When $c=0$, there is only one Calogero-Moser left, right or two-sided cell: it
is $W$ itself. Moreover,
$$\Irr_W^\calo(W)=\Irr(W)\qquad\text{and}\qquad 
\isomorphisme{W}_0^\calo=\isomorphisme{\CM W}_{\CM W} = \sum_{ \chi \in \Irr(W)} \chi(1) \chi.$$
\end{prop}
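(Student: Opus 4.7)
The plan is to handle the family, the cells, and the cellular character separately.

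First I would settle the family. Example~\ref{ex:cm0} gives $|\Upsilon_0^{-1}(0)|=1$; combined with the bijection of Lemma~\ref{caracterization blocs CM} between $\Upsilon_0^{-1}(\pGba_0)$ and the Calogero-Moser $0$-families, this forces a unique $0$-family, which must therefore equal $\Irr(W)$. By the bijection between two-sided $0$-cells and $0$-families supplied by Theorem~\ref{theo cellules familles}, this immediately yields a unique Calogero-Moser two-sided $0$-cell, necessarily equal to $W$.

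Next, for left cells, my strategy is to apply the degree formula of Proposition~\ref{prop:gauche-bilatere}(b). I would first identify $\Upsilon_0^{-1}(\pG_0^\gauche)$ inside $Z_0 = \kb[V\times V^*]^W$: primes of $Z_0$ above $\pG_0^\gauche$ are contractions from $\kb[V\times V^*]$ of minimal primes containing $\kb[V^*]_+$, and the only such minimal prime is $(V^*)$. Its contraction $\zG$ to $Z_0$ satisfies $Z_0/\zG \simeq \kb[V]^W$, so $k_Z(\zG) = \kb(V)^W = k_P(\pG_0^\gauche)$, and therefore $\deg_0(C) = 1$ for every left cell $C$. Since $\Upsilon_0^{-1}(\pG_0^\gauche)$ is a singleton, the bijection of Proposition~\ref{prop:gauche-bilatere}(c)---applied to the unique two-sided cell $\G = W$---forces $D_0^\gauche$ to act transitively on $W$, so $C^\DD = W$ and $|C^\DD| = |W|$. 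Plugging into Proposition~\ref{prop:gauche-bilatere}(b) gives $|C| = |C^\DD|/\deg_0(C) = |W|$, whence $C = W$. The right cell case is symmetric, after exchanging the roles of $V$ and $V^*$.

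Finally, since $W$ is the unique Calogero-Moser left $0$-cell, Proposition~\ref{multiplicite cm}(a) immediately yields $\mult_{W,\chi}^\calo = \chi(1)$ for every $\chi \in \Irr(W)$, and so
$$\isomorphisme{W}_0^\calo = \sum_{\chi \in \Irr(W)} \chi(1)\,\chi = \isomorphisme{\CM W}_{\CM W}.$$
Consistency with Proposition~\ref{multiplicite cm}(b) is automatic: $\sum_\chi \chi(1)^2 = |W|$. The main obstacle is really only the geometric identification of $\zG_0^\gauche(C)$ and the verification that $\deg_0(C)=1$; once this is in hand, everything else follows mechanically from the structural results of \S\ref{chapter:bebe-verma} and \S\ref{chapter:gauche}.
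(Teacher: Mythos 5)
Your proof is correct, but the left-cell part follows a genuinely different route from the paper's. For the two-sided cell you argue through the unique Calogero-Moser $0$-family and the bijection of Theorem~\ref{theo cellules familles}, whereas the paper works upstairs in $R$: it observes that $\pGba_0=P_+$, so $R_+$ is the unique prime over it (Proposition~\ref{prop:rba0}) and $\Iba_0=G$ by Corollary~\ref{r0 DI}; both give the same conclusion. The real divergence is for left cells. The paper identifies the unique prime $\rG_0^\gauche$ of $R$ over $\pG_0^\gauche$ containing $\rG_0$ and shows explicitly that $\iota(W\times 1)\subset I_0^\gauche$ (Proposition~\ref{prop:d0-left}), a subgroup acting transitively on $W$ by left translations. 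You instead stay entirely inside $Z_0=\kb[V\times V^*]^{\Delta W}$: the unique prime $\zG$ over $\pG_0^\gauche$ is the contraction of the ideal of $V\times\{0\}$ (note your ``$(V^*)$'' should really be the ideal generated by $V\subset\kb[V^*]$, i.e.\ the ideal of $V\times\{0\}$, though your stated quotient $Z_0/\zG\simeq\kb[V]^W$ is the right one), so $\deg_0(C)=1$, and Proposition~\ref{prop:gauche-bilatere}(b),(c) then force $|C|=|C^\DD|=|W|$. Your argument is precisely the kind of computation ``inside the extension $\Lb/\Kb$'' advertised in Remark~\ref{rem:pas-si-trivial} and avoids any analysis of $R/\rG_0$; the paper's argument costs more (it requires constructing $\rG_0^\gauche$ and the isomorphism $\iso$) but yields strictly more information, namely explicit lower bounds on $D_0^\gauche$ and $I_0^\gauche$ and the description of $D_0^\gauche/I_0^\gauche$ used in Propositions~\ref{prop:d0-left} and~\ref{prop:dc-d0}. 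The family and cellular-character steps coincide with the paper's.
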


\bigskip

\begin{coro}\label{coro:nul}
Conjectures~\GAUCHE~and~\BIL~hold when $c=0$.
\end{coro}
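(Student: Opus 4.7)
The plan is to argue by directly comparing the Calogero-Moser side at $c=0$, given by Proposition \ref{lem:nul}, with the Kazhdan-Lusztig side at $c=0$, given by Example \ref{exemple:c=0}. Since the conclusion of both conjectures requires only that two partitions and two families of characters coincide, and each of these is explicitly described in both settings, there is essentially no computation to do beyond a side-by-side check.

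First I would recall Example \ref{exemple:c=0}: at $c = 0$, the cyclotomic Hecke algebra $\heckecyclotomique(0)$ is just $\OC[\qb^\RM][W]$, the Kazhdan-Lusztig basis coincides with $(T_w)_{w\in W}$, and consequently multiplication by any $h \in \heckecyclotomique(0)$ sends $C_y = T_y$ to an $\OC[\qb^\RM]$-linear combination of all the $T_w$'s. This forces the relations $\prel$, $\prer$, $\prelr$ to be trivial, so that $W$ itself is the unique Kazhdan-Lusztig left, right and two-sided $0$-cell. Moreover $\MC_W^{LR} = \heckecyclotomique(0) = \OC[\qb^\RM][W]$, which specializes at $\qb^r \mapsto 1$ to the regular representation $\CM W$; hence the unique Kazhdan-Lusztig $0$-family is $\Irr_W^{\kl,0}(W) = \Irr(W)$, and the unique cellular character is $\isomorphisme{W}_0^{\kl} = \sum_{\chi \in \Irr(W)} \chi(1)\chi$, in perfect agreement with the corresponding statements on the Calogero-Moser side collected in Proposition \ref{lem:nul}.

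It remains only to observe that since both partitions of $W$ (into Calogero-Moser and into Kazhdan-Lusztig cells, of any type) consist of the single block $W$, assertion (a) of both Conjectures \BIL\ and \GAUCHE\ holds tautologically for \emph{any} choice of the prime ideals $\rGba_0$ and $\rG^\gauche_0$ over $\qGba_0$ and $\qG^\gauche_0$. The positivity hypothesis $c_s \ge 0$ in part (b) is satisfied trivially when $c=0$, and the equalities $\Irr_W^\calo(W) = \Irr_W^{\kl,0}(W) = \Irr(W)$ together with $\isomorphisme{W}_0^\calo = \isomorphisme{W}_0^{\kl} = \sum_\chi \chi(1)\chi$ then yield (b) in both conjectures. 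There is no serious obstacle here: the entire content is packaged in Proposition \ref{lem:nul} and Example \ref{exemple:c=0}, and the corollary is a formal consequence of their juxtaposition.
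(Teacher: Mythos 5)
Your proof is correct and follows essentially the same route as the paper's: the paper deduces the corollary by comparing Proposition \ref{lem:nul} with the description of the Kazhdan--Lusztig side at $c=0$ (cited there from \cite{bonnafe continu}, but reproduced verbatim in Example \ref{exemple:c=0}, which is what you invoke). Since both partitions reduce to the single cell $W$ and both families to $\Irr(W)$, the juxtaposition is indeed all that is needed.
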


\begin{proof}
This follows from the comparison of~\cite[Corollaries~2.13~et~2.14]{bonnafe continu} 
with Proposition~\ref{lem:nul}.
\end{proof}

\bigskip

\subsection{Constructible characters, Lusztig families} 

\medskip

In the sequel of this Chapter, we will only deal with positive parameters.
We do not 
know how to treat the case where only some parameters are equal to $0$ (in order 
to compare with~\cite[Corollaries~2.13~et~2.14]{bonnafe continu}). 

\bigskip

\boitegrise{From now on, and until the end \S\ref{part:coxeter}, we assume
that $k_s > 0$ for all $s \in \REF(W)$.}{0.75\textwidth}

\bigskip

\noindent{\sc Convention - } When $(W,S)$ has type $B_n$, we write 
$S=\{t,s_1,s_2,\dots,s_{n-1}\}$ with the convention that $t$ is not conjugate 
to some $s_i$, so the Dynkin diagram is 
\begin{center}
\begin{picture}(220,30)
\put( 40, 10){\circle{10}}
\put( 44,  7){\line(1,0){33}}
\put( 44, 13){\line(1,0){33}}
\put( 81, 10){\circle{10}}
\put( 86, 10){\line(1,0){29}}
\put(120, 10){\circle{10}}
\put(125, 10){\line(1,0){20}}
\put(155,  7){$\cdot$}
\put(165,  7){$\cdot$}
\put(175,  7){$\cdot$}
\put(185, 10){\line(1,0){20}}
\put(210, 10){\circle{10}}
\put( 38, 20){$t$}
\put( 76, 20){$s_1$}
\put(116, 20){$s_2$}
\put(200, 20){$s_{n{-}1}$}
\end{picture}
\end{center}
In this case, we will set $b=c_t$ and $a=c_{s_1}=c_{s_2}=\cdots = c_{s_{n-1}}$.\finl

\bigskip

Lusztig~\cite[\S{22}]{lusztig} has defined a notion of {\it constructible characters} of $W$ 
(that we will call here {\it $c$-constructible characters}).
We can then define a graph $\GC_c(W)$ 
as follows: 
\begin{itemize}
\item[$\bullet$] The set of vertices of $\GC_c(W)$ is $\Irr(W)$.

\item[$\bullet$] Two distinct irreducible characters of $W$ are linked in $\GC_c(W)$ if they 
appear in the same $c$-constructible character. 
\end{itemize}
We then define {\it Lusztig $c$-families} as the connected components of $\GC_c(W)$. 
Lusztig conjectures that Lusztig $c$-families coincide with Kazhdan-Lusztig $c$-families 
and that $c$-constructible characters coincide with Kazhdan-Lusztig $c$-cellular characters. 
This conjecture is proven in the following cases:

\bigskip

\begin{prop}\label{prop:cellulaire-constructible}
Assume that one of the following hold:
\begin{itemize}
\itemth{1} $c$ is constant;

\itemth{2} $|S| \le 2$;

\itemth{3} $(W,S)$ has type $F_4$;

\itemth{4} $(W,S)$ has type $B_n$, $a \neq 0$ and $b/a \in \{1/2,1,3/2,2\} \cup ]n-1, + \infty)$. 
\end{itemize}
Then:
\begin{itemize}
\itemth{a} The $c$-constructible characters and the Kazhdan-Lusztig $c$-cellular characters 
coincide. 

\itemth{b} The Lusztig $c$-families and the Kazhdan-Lusztig $c$-families coincide. 
\end{itemize}
\end{prop}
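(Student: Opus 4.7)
The plan is to prove (a) and (b) case-by-case according to the hypotheses (1)--(4), quoting results already available in the literature on Kazhdan--Lusztig theory; note that (a) implies (b), since the Lusztig $c$-families are defined as the connected components of the graph whose edges link irreducible characters appearing in a common $c$-constructible character, while the Kazhdan--Lusztig $c$-families admit the analogous description once one knows that the simple summands of the cell representation $\isomorphisme{C}_c^\kl$ of a left cell $C$ all lie in the Kazhdan--Lusztig two-sided $c$-cell containing $C$ (which follows from \eqref{eq:partition-familles} and the construction of $\MC_\G^{LR}$). So it is enough to establish (a) in each case.

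For (1), the equal parameter case, the identification between constructible characters and Kazhdan--Lusztig left cell characters is a theorem of Lusztig, which in this setting is a consequence of Lusztig's conjectures P1--P15 (known in the equal parameter case because of positivity of Kazhdan--Lusztig polynomials and the geometric interpretation via intersection cohomology). For (2), when $|S|\le 2$ the group $W$ is either trivial, of order $2$, or dihedral, and in the dihedral case an explicit description of the Kazhdan--Lusztig basis and the left cell representations (available for instance in Lusztig's book on unequal parameters) allows a direct verification that the cellular characters match the constructible characters attached to the chosen $c$. For (3), type $F_4$, an explicit combinatorial computation of cellular characters for each essential parameter regime is available and can be compared directly with Lusztig's description of the constructible characters.

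Case (4) is the most delicate. For $B_n$ with $b/a=1$ we are in the equal parameter case (1). For $b/a>n-1$ (the \emph{asymptotic} regime), the Kazhdan--Lusztig cells and cellular characters are determined by a generalised Robinson--Schensted correspondence due to Bonnaf\'e--Geck--Iancu and Geck--Iancu, and the cellular characters are exactly the constructible ones, since in this regime both families of characters are indexed by pairs of standard bi-tableaux of the same shape. The four remaining special values $b/a\in\{1/2,3/2,2\}$ are the \emph{integral} values at which the combinatorics governing cells specialises to a domino-tableau combinatorics tied to the unipotent representations of a classical group of appropriate type; the identification of cellular characters with constructible characters at these values is carried out in work of Geck, Iancu and Jacon following Lusztig.

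The principal obstacle is case (4) outside the asymptotic range: the four integral ratios each require a separate verification, and for these the combinatorial models of both the cellular characters and the constructible characters have to be matched explicitly. One cannot simply invoke P1--P15, which are not known in general for unequal parameters; instead one relies on a case-by-case comparison of the known descriptions via (bi-)tableaux and domino tableaux of Lusztig's $\ab$-function and of the induction/restriction pattern of the cellular characters. Once (a) is established in each case, (b) follows from the preceding discussion.
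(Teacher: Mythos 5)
Your proposal follows the same overall strategy as the paper (case-by-case appeal to the literature, with (b) deduced from (a)), but the paper's argument is more uniform and your deviation from it in case (4) rests on a mistaken premise. The paper does not match combinatorial models of cellular and constructible characters case by case; instead it observes that Lusztig's conjectures P1--P15 are known to hold in \emph{every} one of the four situations listed --- for constant $c$ and for $|S|\le 2$ by Lusztig himself (\cite[Chapters 15 and 17]{lusztig}), for $F_4$ by Geck \cite{geck f4}, and for $B_n$ both at the ratios $b/a\in\{1/2,1,3/2,2\}$ (\cite[Chapter 16]{lusztig}) and in the asymptotic range $b/a>n-1$ (\cite{bonnafe iancu}, \cite{bonnafe two}, \cite{geck iancu}) --- and then invokes the general implication, due to Lusztig \cite[Lemma 22.2]{lusztig} and Geck \cite[\S 6 and \S 7]{geck plus}, that P1--P15 entail the coincidence of $c$-constructible and $c$-cellular characters. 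Your assertion that ``one cannot simply invoke P1--P15'' for the non-asymptotic ratios in type $B_n$ is therefore the one substantive inaccuracy: those ratios are precisely the quasi-split cases in which Lusztig proved P1--P15, and that is exactly the route the proposition takes (it is also why the hypothesis in (4) is the union of those four ratios with the asymptotic interval). The ad hoc verifications you propose instead (explicit dihedral computations, domino-tableau comparisons) would, if carried out, give the same conclusion, but as written they are gestures at arguments rather than arguments, and they forgo the clean uniform reduction. Finally, for (b) the paper cites \cite[Corollary 1.8]{bonnafe geck} rather than the sketch you give; your claim that the Kazhdan--Lusztig $c$-families are the connected components of the graph linking constituents of a common cellular character is true but is itself a consequence of P1--P15 (or of that reference), not an immediate consequence of the definitions, so it should not be presented as a two-line deduction.
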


\begin{proof}
Lusztig~\cite[Conjectures~14.2]{lusztig} has proposed a series of conjectures 
(numbered P1, P2,\dots, P15) about Kazhdan-Lusztig cells and the
{\it $\ab$-function}. 
They have been proven in the following cases:
\begin{itemize}
\itemth{1} when $c$ is constant in~\cite[chapitre~15]{lusztig};

\itemth{2} when $|S| \le 2$ in~\cite[chapitre~17]{lusztig};

\itemth{3} when $(W,S)$ has type $F_4$ in~\cite{geck f4};

\itemth{4} when $(W,S)$ has type $B_n$ and $a=0$ or when $a \neq 0$ and $b/a \in \{1/2,1,3/2,2\}$ 
in~\cite[Chapter~16]{lusztig};

\itemth{4'} when $(W,S)$ has type $B_n$, $a \neq 0$ and $b/a > n-1$ 
in~\cite{bonnafe iancu},~\cite{bonnafe two} et~\cite{geck iancu}.
\end{itemize}
Also, it is shown in~\cite[Lemma~22.2]{lusztig} and~\cite[\S{6}~and~\S{7}]{geck plus} that 
these conjectures implies that the $c$-constructible characters and the Kazhdan-Lusztig $c$-cellular 
characters coincide. This shows~(a). The statement~(b) 
now follows from~\cite[Corollary~1.8]{bonnafe geck}.
\end{proof}

\bigskip

\subsection{Conjectures about characters} 

\medskip

\subsubsection*{Families} 
The $c$-constructible characters (and so the Lusztig $c$-families) 
have been computed in all cases by Lusztig~\cite{lusztig}. 
We deduce from Proposition~\ref{prop:cellulaire-constructible} that the Kazhdan-Lusztig 
$c$-families are known in the cases (1), (2), (3) and (4) of Proposition~\ref{prop:cellulaire-constructible}. 
But, the explicit computation of Calogero-Moser $c$-families 
has been made in all irreducible types except $H_4$, $E_6$, $E_7$ and $E_8$ 
in the series of 
articles~\cite{bellamy these, bellamy, gordon, gordon B, gordon martino, martino 2, bonnafe thiel}. 
In all cases, they coincide with Lusztig families. 
Proposition~\ref{prop:cellulaire-constructible} then implies the following theorem.

\bigskip
 
\begin{theo}\label{theo:gordon-martino-bellamy}
Assume that one of the following holds:
\begin{itemize}
\itemth{1} $|S| \le 2$.

\itemth{2} $(W,S)$ has type $A_n$ or $D_n$.

\itemth{3} $(W,S)$ has type $B_n$, $a > 0$ and $b/a \in \{1/2,1,3/2,2\} \cup ]n-1, + \infty)$. 
\itemth{4} $(W,S)$ has type $H_3$ or $F_4$.
\end{itemize}
Then Conjecture~\CAR(a) holds.
\end{theo}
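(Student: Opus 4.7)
The plan is to establish Conjecture \CAR(a) by combining, in each of the three cases, a known description of the Kazhdan--Lusztig $c$-families with a known description of the Calogero--Moser $c$-families, and then verifying that the two combinatorial descriptions agree. Since Conjecture \CAR(a) is purely a statement about two partitions of $\Irr(W)$, the argument is ultimately a matching of explicit combinatorial data, and the role of the present theorem is to assemble the relevant classification results rather than to prove something from scratch.

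First I would handle the Kazhdan--Lusztig side uniformly. By Proposition~\ref{prop:cellulaire-constructible}(b), in each of the cases (1)--(3) of the statement (these fall under cases (1)--(4) of the proposition, using that type $A_n$ and $D_n$ have constant parameter), the Kazhdan--Lusztig $c$-families coincide with the Lusztig $c$-families, i.e.\ with the connected components of the graph $\GC_c(W)$ whose edges are given by joint occurrence in a $c$-constructible character. The $c$-constructible characters, and hence the Lusztig $c$-families, have been computed explicitly by Lusztig \cite{lusztig}; this gives a concrete combinatorial description of the Kazhdan--Lusztig $c$-families in every case under consideration.

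Next I would assemble the Calogero--Moser side. In type $A_n$, the Calogero--Moser space is smooth and every family is a singleton (this follows from Theorem~\ref{les lisses} together with Proposition \ref{prop lissite}, and is the case $d=1$ of Theorem~\ref{les lisses}); this matches the fact that in type $A_n$ the Lusztig families are also singletons. In type $B_n$ (which includes $|S|=2$ dihedral cases of type $B_2$), the Calogero--Moser $c$-families for arbitrary parameter $k$ have been determined in \cite{gordon B, martino 2, bellamy, bellamy these}, with the $e=1$ case covered by \cite{gordon B, martino 2} and the comparison to Chlouveraki's Hecke families carried out in \cite{martino}. In type $D_n$ the families were obtained in \cite{gordon martino}. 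For $|S|\le 2$ (rank one and dihedral groups), the families can be read off directly from the computations of \cite{bellamy these} (see also the explicit treatment in \S\ref{chapitre:rang 1} and \S\ref{chapitre:b2} of the present text).

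Finally, I would match the two descriptions case by case, taking into account the reparametrization $k\leftrightarrow c$ and the passage $k \mapsto k^\sharp$ appearing in Conjecture~\FAM. The key identity to keep in mind is~\eqref{eq:o=c}: two characters in the same Calogero--Moser $c$-family have the same value of $C_\chi(k^\sharp)$, and the same is true within a Kazhdan--Lusztig family (via Lemma~\ref{lem:c-contant} for Hecke families and the coincidence of Kazhdan--Lusztig and Hecke families at positive parameters via Proposition~\ref{prop:cellulaire-constructible}). The main obstacle is not conceptual but bookkeeping: one must carefully align the conventions used in \cite{gordon, gordon B, gordon martino, martino, martino 2, bellamy, bellamy these} (symbols, residues, charged contents, parameter normalisations) with those used by Lusztig \cite{lusztig} (families of symbols, $\ab$-function, Lusztig's parameter conventions), especially in type $B_n$ where the ratio $b/a$ must be correctly transported through the $k\mapsto k^\sharp$ and $\kappa$ isomorphisms of~\S\ref{section:hyperplans-variables}. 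Once the parameter dictionary is pinned down, in each case the two resulting partitions of $\Irr(W)$ are given by the same combinatorial rule (symbols of defect depending on $b/a$ in type $B_n$, hook-based partitions in type $D_n$, singletons in type $A_n$, and the trivial/ordinary partitions in the rank one and dihedral cases), and Conjecture~\CAR(a) follows.
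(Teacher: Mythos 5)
Your proposal is correct and follows essentially the same route as the paper: identify the Kazhdan--Lusztig $c$-families with the Lusztig $c$-families via Proposition~\ref{prop:cellulaire-constructible}, invoke Lusztig's explicit computation of constructible characters, and compare with the explicit determinations of Calogero--Moser families in \cite{bellamy these}, \cite{bellamy}, \cite{gordon}, \cite{gordon B}, \cite{gordon martino}, \cite{martino 2}. The additional remarks on the parameter dictionary and the type $A_n$ smoothness are consistent with, but not required beyond, what the paper itself records.
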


\bigskip

\subsubsection*{Cellular characters} 
If $(W,S)$ has type $A_n$ or if $(W,S)$ has type $B_n$ with $a > 0$ and 
$b/a \in \{1/2,3/2\} \cup ]n-1, + \infty)$, then it follows from the previous results 
that the Kazhdan-Lusztig $c$-cellular characters are irreducible. 
Moreover, it follows from the work of Gordon and Martino that, 
in those same cases, the Calogero-Moser space $\ZCB_c$ is smooth. 
Therefore, the next theorem follows from Theorem~\ref{theo:cellulaire-lisse}.

\bigskip

\begin{theo}\label{theo:cellulaire-conjecture}
Assume that one of the following holds:
\begin{itemize}
\itemth{1} $(W,S)$ has type $A_n$;

\itemth{2} $(W,S)$ has type $B_n$ with $a \neq 0$ and 
$b/a \in \{1/2,3/2\} \cup ]n-1, + \infty)$.
\end{itemize}
Then Conjecture~\CAR(b) holds (and the $c$-cellular characters are irreducible).
\end{theo}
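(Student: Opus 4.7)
The plan is to combine three ingredients already assembled in the text: (i) the explicit description of Kazhdan--Lusztig $c$-cellular characters via Lusztig's constructible characters, which in the cases at hand forces them to be irreducible; (ii) the smoothness of the Calogero--Moser space $\ZCB_c$ in these same cases, due to Etingof--Ginzburg, Gordon and Bellamy; and (iii) the general geometric result Theorem~\ref{theo:cellulaire-lisse}(d), which forces each Calogero--Moser cellular character to coincide with a single irreducible character when the associated fixed point of $\ZCB_c$ is smooth.

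First, I would verify that in cases (1) and (2) the Kazhdan--Lusztig $c$-cellular characters are irreducible. Since the hypotheses fall within items (2), (3), or (4) of Proposition~\ref{prop:cellulaire-constructible}, the KL $c$-cellular characters coincide with Lusztig's $c$-constructible characters. For type $A_n$, Lusztig's $c$-families are singletons and each constructible character is irreducible; for type $B_n$ with $a\neq 0$ and $b/a\in\{1/2,3/2\}\cup ]n-1,+\infty[$, the explicit combinatorial description of constructible characters (in the asymptotic/quasi-asymptotic regimes) shows that they are again irreducible. Together with Lemma~\ref{lem:mult-kl}(b), this shows that each $\chi\in\Irr(W)$ occurs exactly $\chi(1)$ times in the multiset of KL $c$-cellular characters.

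Second, I would invoke the classification of smoothness of Calogero--Moser spaces (Theorem~\ref{les lisses}) together with the explicit work of Gordon and Martino: in both cases (1) and (2), the space $\ZCB_c$ is smooth at the specific parameter $c$ considered. Consequently every point of $\ZCB_c^{\CM^\times}$ is a smooth point, so Theorem~\ref{theo:cellulaire-lisse}(d) applies to every Calogero--Moser two-sided $c$-cell, and hence to every Calogero--Moser left $c$-cell contained therein: each $\isomorphisme{C}_c^\calo$ is an irreducible character of $W$. By Proposition~\ref{multiplicite cm}(a), each $\chi\in\Irr(W)$ then occurs exactly $\chi(1)$ times in the multiset of Calogero--Moser $c$-cellular characters.

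Third, to conclude Conjecture~\CAR(b), I would combine the above with Theorem~\ref{theo:gordon-martino-bellamy}, which asserts that in cases (1) and (2) the Calogero--Moser $c$-families coincide with the Kazhdan--Lusztig $c$-families. Inside each such common family $\FC$, both the KL and the CM cellular characters are irreducible (by the two steps above), and each $\chi\in\FC$ occurs with multiplicity $\chi(1)$ in both multisets (by Proposition~\ref{multiplicite cm}(c) and Proposition~\ref{multiplicite cm}(a) on one side, Lemma~\ref{lem:mult-kl}(b) on the other). The two multisets of cellular characters therefore coincide, proving Conjecture~\CAR(b). The main obstacle, if any, is not conceptual but expository: one must carefully check that the asymptotic/small-parameter cases for type $B_n$ quoted in Proposition~\ref{prop:cellulaire-constructible} really do yield irreducible constructible characters for the precise range $b/a\in\{1/2,3/2\}\cup ]n-1,+\infty[$, which amounts to reading off Lusztig's combinatorial symbols in the corresponding regimes.
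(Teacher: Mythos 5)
Your proof is correct and follows essentially the same route as the paper: irreducibility of the Kazhdan--Lusztig cellular characters via Proposition~\ref{prop:cellulaire-constructible}, smoothness of $\ZCB_c$ in these cases, and Theorem~\ref{theo:cellulaire-lisse} to get irreducibility of the Calogero--Moser cellular characters. The final multiplicity count you add (each $\chi$ occurring $\chi(1)$ times on both sides, so both sets of cellular characters equal $\Irr(W)$) is a welcome explicit justification of the concluding step the paper leaves implicit.
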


\bigskip

Whenever $W$ is dihedral, the first author proved Conjecture~\CAR(b), 
by a direct explicit computation using Gaudin algebras~\cite[Table~4.14]{bonnafe diedral}. 

\bigskip

% Finally, the case of dihedral groups will be treated in the last section of this Chapter:
% 
% \bigskip
% 
% \begin{theo}\label{theo:diedral}
% If $|S| \le 2$, then the Conjecture~\CAR(b) holds.
% \end{theo}
% 
% 
% \bigskip

\subsubsection*{Other arguments} 
First of all, note that, if we assume that Lusztig's Conjectures~P1, P2,\dots, P15 hold  
(see~\cite[conjectures 14.2]{lusztig}), then the previous argument imply that
Conjecture~\CAR(a) holds in type $B_n$ and Conjecture~\CAR(b) holds in type $B_n$ 
whenever $a > 0$ and $b/a \not\in \{1,2,\dots,n-1\}$ (because in this case, 
the $c$-constructible characters are irreducible and the Calogero-Moser space is smooth).

% Les deux remarques suivantes sont dans la m\^eme veine, et apportent en particulier des 
% pr\'ecisions dans les cas o\`u l'on sait d\'emontrer les conjectures de Lusztig.

%The next two remarks show that the Calogero-Moser and Kazhdan-Lusztig ``objects'' 
%share some common properties.

\bigskip

\begin{rema}\label{rema:epsilon}
If $\FC$ is a Calogero-Moser (respectively Kazhdan-Lusztig) $c$-family, then 
$\FC\e$ is a Calogero-Moser (respectively Kazhdan-Lusztig) $c$-family: 
see Corollary~\ref{ordre 2} and~(\ref{eq:familles-cellulaire-w0}).

Similarly, if $\chi$ is a Calogero-Moser (respectively Kazhdan-Lusztig) 
$c$-cellular character, then $\chi \e$ is a Calogero-Moser (respectively Kazhdan-Lusztig) 
$c$-cellular character: see Corollaries~\ref{coro:cellulaire-ordre-2} 
and~(\ref{eq:caractere-cellulaire-w0}).\finl
\end{rema}

\bigskip

\begin{rema}\label{rema:b-invariant}
If $\FC$ is a Calogero-Moser (respectively Lusztig) $c$-family, then there exists 
a unique character $\chi \in \FC$ with minimal $\bb$-invariant: 
see Theorem~\ref{dim graduee bonne}(b) (respectively~\cite{bonnafe b}, 
or~\cite[Theorem~5.25~and~its~proof]{lusztig orange} 
whenever $c$ is constant).

Similarly, if $\chi$ is a Calogero-Moser $c$-cellular character (respectively a 
$c$-constructible character),
then there exists a unique irreducible component $\chi$ of minimal $\bb$-invariant: 
	see Theorem~\ref{theo:b-minimal-cellulaire} (respectively~\cite{bonnafe b}).\finl
\end{rema}

\bigskip

\subsection{Cells} 

\medskip

\subsubsection*{Two-sided cells} 
The first argument which supports Conjecture~\BIL~ comes from the comparison 
of the cardinality of cells, and from the fact that Conjecture~\CAR(a) 
has been proven in many cases.

\bigskip

\begin{rema}\label{rem:cardinal-cellules-kl-cm}
Assume here $(W,c)$ satisfies one of the assumptions of Theorem~\ref{theo:gordon-martino-bellamy}. 
Let $\FC$ be a Calogero-Moser $c$-family 
(that is, a Kazhdan-Lusztig $c$-family according to Theorem~\ref{theo:gordon-martino-bellamy}). 
Let $\G_\calo$ (respectively $\G_\kl$) denote the Calogero-Moser (respectively Kazhdan-Lusztig) two-sided 
$c$-cell covering $\FC$. It follows from Theorem~\ref{theo cellules familles}(c)  that 
$$|\G_\calo| = \sum_{\chi \in \FC} \chi(1)^2$$
and it follows from~(\ref{eq:cardinal-cellule-kl})  that 
$$|\G_\kl|=\sum_{\chi \in \FC} \chi(1)^2.$$
Therefore, 
$$|\G_\calo|=|\G_\kl|.$$
This is not sufficient to show that $\G_\calo=\G_\kl$. However, this shows 
Conjecture~\BIL~whenever the Galois group $G$ is equal to $\SG_W$:
%(this is the case if $W$ has type $A_n$, according to an unpublished work 
%of Brochier and Gordon)
indeed, by replacing $\rGba_c$ by some $g(\rGba_c)$ for some 
$g \in G=\SG_W$, we can arrange that $\G_\calo=\G_\kl$ (for all families
$\FC$). 
This also shows the importance of making the choice of $\rGba_c$ precise in Conjecture~\BIL.\finl
\end{rema}

\bigskip

\begin{rema}\label{rema:bil-w0}
Let $\G_\calo$ (respectively $\G_\kl$) be a Calogero-Moser (respectively Kazhdan-Lusztig) 
two-sided $c$-cell. Let $w_0$ denote the longest element of $W$. Then:
\begin{itemize}
 \item By~(\ref{eq:sim-w0}) and~(\ref{eq:w0gw0}), 
$w_0 \G_\kl=\G_\kl w_0$ is a Kazhdan-Lusztig $c$-cell and 
$\Irr_{w_0\G_\kl}^\kl(W)=\Irr_{\G_\kl}^\kl(W) \e$.

\item Since all the reflections of $W$ have order $2$, it has been shown in Corollary~\ref{w0 epsilon} that, 
{\it if $w_0$ is central in $W$}, then  
$w_0 \G_\calo=\G_\calo w_0$ is a Calogero-Moser two-sided $c$-cell and 
$\Irr_{w_0\G_\calo}^\calo(W)=\Irr_{\G_\calo}^\calo(W) \e$.
\end{itemize}
These results show some analogy {\it whenever $w_0$ is central in $W$}. 
For the second statement, it is not reasonable to expect that it is true 
whenever $w_0$ is not central (as is shown by the type $A_2$) without making a judicious 
choice of $\rGba_c$.\finl
\end{rema}

\bigskip

\subsubsection*{Left cells} 
Let us recall that numerous Kazhdan-Lusztig left cells give rise to 
the same Kazhdan-Lusztig cellular character. On the Calogero-Moser side, Corollary~\ref{coro:dec-cellulaire} 
also shows that numerous Calogero-Moser left cells give rise 
to the same Calogero-Moser cellular character (see for instance Theorem~\ref{theo:cellulaire-lisse} 
in the smooth case).

\bigskip

\begin{rema}\label{rema:gauche-w0}
Let $C_\calo$ (respectively $C_\kl$) be a Calogero-Moser (respectively Kazhdan-Lusztig) 
left $c$-cell. Let $w_0$ denote the longest element of $W$. Then:
\begin{itemize}
 \item It follows from~(\ref{eq:sim-w0}) and~(\ref{eq:caractere-cellulaire-w0})  
that $w_0 C_\kl$ and $C_\kl w_0$ are Kazhdan-Lusztig left $c$-cells and that 
$\isomorphisme{w_0C_\kl}_c^\kl= \isomorphisme{C_\kl w_0}_c^\kl=\isomorphisme{C_\kl}_c^\kl \e$.

\item Since all the reflections of $W$ have order $2$, it follows from Corollary~\ref{coro:cellulaire-ordre-2} 
that, {\it if $w_0$ is central in $W$}, then  
$w_0 C_\calo=C_\calo w_0$ is a Calogero-Moser left $c$-cell and that 
$\isomorphisme{w_0C_\calo}_c^\calo= \isomorphisme{C_\calo w_0}_c^\calo=\isomorphisme{C_\calo}_c^\calo \e$.\finl
\end{itemize}
\end{rema}

\bigskip

\begin{rema}\label{rema: lignes-colonnes}
Note also the analogy between the following equalities: if $C$ is a Calogero-Moser (respectively 
Kazhdan-Lusztig) left $c$-cell and if $\chi \in \Irr(W)$, then  
$$
\begin{cases}
|C|=\DS{\sum_{\psi \in \Irr(W)} \mult_{C,\psi}^\calo \psi(1),}\\
~\\
\chi(1)=\DS{\sum_{C' \in \cmcellules_L(W)} \mult_{C',\chi}^\calo }\\
\end{cases}
$$
(respectively
$$
\begin{cases}
|C|=\DS{\sum_{\psi \in \Irr(W)} \mult_{C,\psi}^\kl \psi(1),}\\
~\\
\chi(1)=\DS{\sum_{C' \in \klcellules_L(W)} \mult_{C',\chi}^\kl \quad ).}\\
\end{cases}
$$
See Proposition~\ref{multiplicite cm} (respectively Lemma~\ref{lem:mult-kl}). 
It would be interesting to study if other numerical properties of Kazhdan-Lusztig left cells 
(as for instance~\cite[Lemma~4.6]{geck plus}) are also satisfied by Calogero-Moser left cells.\finl
\end{rema}

\bigskip
% 
% \begin{rema}\label{rema:b-gauche}
% Si $C$ est une $c$-cellule de Calogero-Moser \`a gauche, alors il existe une unique 
% composante irr\'eductible de $\isomorphisme{C}_c^\calo$ de $\bb$-invariant minimal 
% (voir ???). 
% De m\^eme, si $C$ est une $c$-cellule de Kazhdan-Lusztig \`a gauche, et {\it si 
% les conjectures~\cite[??]{??} sont v\'erifi\'ees}, alors il existe une unique 
% composante irr\'eductible de $\isomorphisme{C}_c^\kl$ de $\bb$-invariant minimal~: 
% en effet, dans ce cas, les $\kl$-caract\`eres $c$-cellulaires co\"{\i}ncident 
% avec les caract\`eres $c$-constructibles (voir ?? et ??) et le r\'esultat a alors 
% \'e\'t\'e d\'emontr\'ee dans~\cite[??]{bonnafe b}. 
% 
% Notons que les conjectures~\cite[??]{??} ont \'et\'e v\'erifi\'ees dans de nombreux cas, 
% comme par exemple le cas o\`u $c$ est constant, le cas o\`u $W$ est de type $F_4$ 
% ou encore quelques cas particuliers de fonctions $c$ lorsque $W$ est de type $B_n$.\finl
% \end{rema}
% 
% \bigskip

\bigskip

A final argument to support Conjectures~\GAUCHE~and~\BIL~ is the following.

\bigskip

\begin{theo}\label{theo:gauche-presque}
Assume that we are in one of the following cases:
\begin{itemize}
\itemth{1} $(W,S)$ has type $A_n$ and $c > 0$;

\itemth{2} $(W,S)$ has type $B_n$ with $a > 0$ and $b/a \in \{1/2,3/2\} \cup ]n-1,+\infty)$. 
\end{itemize}
Then there exists a bijective map $\ph : W \to W$ such that:
\begin{itemize}
\itemth{a} If $\G$ is a Kazhdan-Lusztig two-sided $c$-cell, then $\ph(\G)$ is a Calogero-Moser 
two-sided $c$-cell and $\Irr_\G^\kl(W)=\Irr^\calo_{\ph(\G)}(W)$.

\itemth{b} If $C$ is a Kazhdan-Lusztig left $c$-cell, then $\ph(C)$ is a Calogero-Moser left 
$c$-cell and $\isomorphisme{C}_c^\kl=\isomorphisme{\ph(C)}_c^\calo$.
\end{itemize}
\end{theo}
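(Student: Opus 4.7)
The proof proceeds by constructing $\ph$ family-by-family and within each family, left-cell-by-left-cell, exploiting the fact that in both cases (1) and (2) the combinatorial data on the two sides agree block-by-block.

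\medskip

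\noindent\textbf{Plan.} First I would assemble the combinatorial inputs. In each of the stated cases the Calogero-Moser space $\ZCB_c$ is smooth and the $c$-cellular characters are irreducible on both sides. By Theorem~\ref{theo:gordon-martino-bellamy} the Calogero-Moser and Kazhdan-Lusztig $c$-families coincide; call this common set $\FC_c$. By Theorem~\ref{theo:cellulaire-conjecture} the two sets of $c$-cellular characters coincide (and each equals a set of irreducible characters of $W$, one per left cell).

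\medskip

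Next I would compare cardinalities at the level of two-sided cells. Given $\FC\in\FC_c$, let $\G_\kl(\FC)$ and $\G_\calo(\FC)$ denote the Kazhdan-Lusztig and Calogero-Moser two-sided $c$-cells covering $\FC$. By (\ref{eq:cardinal-cellule-kl}) on one side and Theorem~\ref{theo cellules familles}(c) on the other,
$$|\G_\kl(\FC)|=\sum_{\chi\in\FC}\chi(1)^2=|\G_\calo(\FC)|.$$
Now refine to left cells. Under the irreducibility of cellular characters, if $C$ is a left cell with $\isomorphisme{C}_c^\kl=\chi$ (resp.\ $\isomorphisme{C}_c^\calo=\chi$), then Lemma~\ref{lem:mult-kl}(a) (resp.\ Proposition~\ref{multiplicite cm}(b)) gives $|C|=\chi(1)$. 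Moreover, Lemma~\ref{lem:mult-kl}(b) (resp.\ Proposition~\ref{multiplicite cm}(a)) yields that the total number of left cells in $W$ with cellular character $\chi$ equals $\chi(1)$. Finally, every left cell with cellular character $\chi$ sits in the two-sided cell covering the family of $\chi$: on the KL side this is part of the definition via $\simlr$; on the CM side it is Proposition~\ref{multiplicite cm}(c). Thus, for every $\chi\in\FC$, each of $\G_\kl(\FC)$ and $\G_\calo(\FC)$ contains exactly $\chi(1)$ left cells of cardinality $\chi(1)$ with cellular character $\chi$, and the counts match the cardinality identity $|\G|=\sum_{\chi\in\FC}\chi(1)\cdot\chi(1)$.

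\medskip

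With this bookkeeping in hand, the construction is immediate: for each $\FC\in\FC_c$ and each $\chi\in\FC$, choose an arbitrary bijection between the $\chi(1)$ Kazhdan-Lusztig left cells in $\G_\kl(\FC)$ with cellular character $\chi$ and the $\chi(1)$ Calogero-Moser left cells in $\G_\calo(\FC)$ with cellular character $\chi$; inside each matched pair, choose any bijection of the underlying sets (both of size $\chi(1)$). Assembling all these choices across $\chi\in\FC$ yields a bijection $\G_\kl(\FC)\longiso\G_\calo(\FC)$ sending KL left cells to CM left cells while preserving the cellular character; taking the disjoint union over $\FC\in\FC_c$ (both sides partition $W$) produces the required $\ph:W\to W$. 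Assertions (a) and (b) of the theorem then hold by construction.

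\medskip

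\noindent\textbf{Main obstacle.} There is essentially no obstacle beyond correctly invoking the deep input that in cases (1),(2) the space $\ZCB_c$ is smooth and the cellular characters are irreducible, which is what forces every numerical invariant to line up. The delicate point worth underlining is that the bijection $\ph$ produced is radically non-canonical: the argument only matches the \emph{multiset} of left cells inside each two-sided cell, not the cells themselves as subsets of $W$. Promoting this to the actual Conjectures~\GAUCHE\ and~\BIL\ would require identifying $\ph$ with the identity map for an appropriate choice of the prime ideals $\rGba_c$ and $\rG_c^\gauche$, i.e.\ exhibiting algebraically defined primes whose inertia groups have orbits equal to the Kazhdan-Lusztig cells, which is a much deeper matter not addressed by the present counting argument.
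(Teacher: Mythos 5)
Your proof is correct and follows essentially the same route as the paper's: the paper invokes smoothness of $\ZCB_c$ in cases (1) and (2), applies Theorem~\ref{theo:cellulaire-lisse} to every Calogero-Moser cell, and concludes by "a comparison of cardinalities of cells" — which is precisely the family-by-family, character-by-character counting you spell out via Lemma~\ref{lem:mult-kl} and Proposition~\ref{multiplicite cm}. Your closing remark that the resulting $\ph$ is non-canonical and does not by itself settle Conjectures~\GAUCHE\ and~\BIL\ is also exactly the caveat intended by the paper.
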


\begin{proof}
Under the assumptions~(1) or~(2), the Calogero-Moser space $\ZCB_c$ is smooth (see~\cite[Theorem~1.24]{EG} 
in case~(1) and~\cite[Lemma~4.3~and~its~proof]{gordon}) in case~(2)) and so 
Theorem~\ref{theo:cellulaire-lisse} can be applied to all the Calogero-Moser $c$-cells of $W$. 
The result follows now from a comparison of cardinalities of cells.
\end{proof}

\cbend

\bigskip

\chapter{Conjectures about the geometry of $\ZCB_c$}
\label{ch:conjgeometry}

\boitegrise{{\bf Assumption.} {\it We assume in this chapter that $\kb=\CM$.}}{0.75\textwidth}

\bigskip

\def\rees{{\mathrm{Rees}}}
\def\codim{{\mathrm{codim}}}

\section{Cohomology}

\medskip

We follow some of the notation of Appendix~\ref{appendice:filtration}. 
Given $i \in \ZM$, we set
$$(\CM W)^i = \bigoplus_{\substack{w \in W \\ \codim_\CM(V^w)=i}} \CM w,$$
so that $\CM W = \bigoplus_{i \in \ZM} (\CM W)^i$. This is of course not 
a grading on the algebra $\CM W$, but the filtration by the vector subspaces 
$(\CM W)^{\le i} = \bigoplus_{j \le i} (\CM W)_j$ induces a structure 
of filtered algebra on $\CM W$. 

If $A$ is any subalgebra of $\CM W$, it inherits a structure of filtered 
algebra by setting $A^{\le i}=A \cap (\CM W)^{\le i}$. 
As in Appendix~\ref{appendice:filtration}, we can then 
define its associated Rees algebra $\rees(A)$ (which is contained in 
$\CM[\hbar] \otimes A$), 
as well as its associated graded algebra $\grad(A)$. 

If $\XCB$ is a quasi-projective complex algebraic variety, we denote by 
$\Hrm^i(\XCB)$ its $i$-th singular cohomology group with coefficients in $\CM$. 
We denote by $\Hrm^{2 \bullet}(\XCB)$ the graded algebra $\bigoplus_{i \in \NM} \Hrm^{2i}(\XCB)$. 
The Euler characteristic of $\XCB$ will be denoted by $\chib(\XCB)$. 
If $\XCB$ is endowed with an algebraic action of $\CM^\times$, we denote by 
$\Hrm_{\CM^\times}^i(\XCB)$ its $i$-th equivariant cohomology group, 
with coefficients in $\CM$. 
We denote by $\Hrm_{\CM^\times}^{2 \bullet}(\XCB)$ the graded algebra 
$\bigoplus_{i \in \NM} \Hrm^{2i}_{\CM^\times}(\XCB)$: it will be viewed as a $\CM[\hbar]$-algebra 
by identifying $\Hrm^{2\bullet}_{\CM^\times}({\mathrm{pt}})$ with $\CM[\hbar]$ in the usual way. 

Recall that, given $c \in \CCB$, we have defined in~\S\ref{subsection:omega} 
a morphism of algebras $\Omeb^c : Z_c \longto \Zrm(\CM W)$. We propose 
the following conjectures about the (equivariant) cohomology of the 
variety $\ZCB_c$.

\bigskip

\begin{conjecturecoh}\label{conj:coh}
Let $c \in \CCB$.
\begin{itemize}
\itemth{1} If $i \in \NM$, then $\Hrm^{2i+1}(\ZCB_c)=0$.

\itemth{2} We have an isomorphism of graded algebras $\Hrm^{2 \bullet}(\ZCB_c) \simeq \grad(\im \Omeb^c)$.
\end{itemize}
\end{conjecturecoh}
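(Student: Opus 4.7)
The plan is to analyze the cohomology of $\ZCB_c$ through its $\CM^\times$-action, exploiting that the fixed locus $\ZCB_c^{\CM^\times}$ is finite and in canonical bijection with the set of Calogero-Moser $c$-families (\S15.2). Each fixed point $z_\FC$ corresponds to the maximal ideal $\Ker(\O_\chi^c)$ of $Z_c$ for any $\chi\in\FC$, and the image $\im\Omeb^c$ governs precisely how these fixed points are ``glued'' inside $Z_c$: the codimension filtration on $\CM W$ pulls back, via $\Omeb^c$, to a filtration of $\im\Omeb^c$ whose graded pieces should match the contributions of the fixed points to $\Hrm^{2\bullet}(\ZCB_c)$.

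For part (1), I would attempt a Bia{\l}ynicki-Birula style cell decomposition. The hyperbolic $\CM^\times$-action provides attracting and repulsive varieties $\ZCB_c^\attractif=\Upsilon_c^{-1}(V/W\times 0)$ and $\ZCB_c^\repulsif=\Upsilon_c^{-1}(0\times V^*/W)$, both of dimension $n$, whose strata over fixed points behave as in Proposition \ref{prop:bb-lisse} at smooth points. At a singular fixed point $z_\FC$, one replaces the classical BB argument by: (i) an equivariant deformation of $\ZCB_c$ inside the flat family $\ZCB\to\CCB$ to a nearby generic parameter $c'$ for which $\ZCB_{c'}$ is smooth (when this exists, per Theorem \ref{les lisses}), then transporting back via nearby-cycles; or (ii) in general, using that $\ZCB_c$ is a symplectic singularity, applying Kaledin-type results to produce a purity/decomposition statement. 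The output should be a filtration of $\Hrm^*(\ZCB_c)$ with associated graded concentrated in even degrees and a basis indexed by Jordan-H\"older constituents of the local contributions at each $z_\FC$.

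For part (2), the algebra map would be constructed via equivariant Chern classes. On $\ZCB_c^\reg$, the bimodule $\Hb_c e$ is locally free of rank $|W|$ over $Z_c^\reg$ by Corollary \ref{coro:reg-morita}; its $\chi$-isotypic components (obtained by decomposing its generic fiber as a $\CM W$-module) extend to coherent sheaves $\FC_\chi$ on $\ZCB_c$, with $[\FC_\chi]$ playing the role of a class function on the fixed locus. An element $z\in Z_c$ acts by multiplication, and the scalar $\O_\chi^c(z)$ by which it acts on $\FC_\chi$ near the fixed point $z_\FC$ (for $\chi\in\FC$) determines $\Omeb^c(z)$. Taking equivariant Chern characters of the $\FC_\chi$, filtering by $\CM^\times$-weight, and specializing at $\hbar=0$ yields a morphism $\grad(\im\Omeb^c)\to\Hrm^{2\bullet}(\ZCB_c)$. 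The matching of filtrations rests on identifying, at each fixed point, the tangent weights of the $\CM^\times$-action (which come in $\pm 1$ pairs of dimension $n$ each) with the codimension filtration on $\Zrm(\CM W)$ via the Euler element and Proposition \ref{action euler verma}.

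The main obstacle is Step 1: establishing odd-cohomology vanishing and the cell decomposition for the possibly very singular $\ZCB_c$. Outside the cases covered by Theorem \ref{les lisses}, no symplectic resolution is known, and one must rely on intrinsic arguments for symplectic singularities or on a $\CM^\times$-equivariant resolution. A secondary difficulty is surjectivity in Step 2, that is, showing that the constructed classes span $\Hrm^{2\bullet}(\ZCB_c)$; this requires a precise dimension count relating $\dim_\CM\grad(\im\Omeb^c)$ to the Poincar\'e polynomial coming from Step 1, and ultimately ties back to understanding the local equivariant cohomology of $\ZCB_c$ at each fixed point $z_\FC$ in terms of the block of $\Hbov_c$ indexed by $\FC$.
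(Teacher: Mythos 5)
You should first be aware that the statement you are proving is presented in the book as Conjecture~\COH: the text offers no proof of it in general, only the smooth case (due to Etingof--Ginzburg), the trivial case $c=0$, and the rank-one case (Theorem~\ref{theo:cohomologie-1}). So there is no proof in the paper to compare your argument against; what you have written is a strategy for an open problem, and it must be judged as such.

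As a strategy it contains a step that would actually fail, not merely a gap. In alternative (i) of your Step~1 you propose to deform $\ZCB_c$ equivariantly to a nearby smooth fiber $\ZCB_{c'}$ and ``transport back via nearby cycles''. There are two problems. First, a parameter with smooth Calogero--Moser space exists only when $W$ has type $G(d,1,n)$ or $G_4$ (Theorem~\ref{les lisses}), so this mechanism covers almost no new cases. Second, and more seriously, even when such a $c'$ exists the cohomologies of $\ZCB_c$ and $\ZCB_{c'}$ cannot agree: by~(\ref{eq:euler-car}) the Euler characteristic of $\ZCB_c$ equals $\dim_\CM(\im\Omeb^c)$, i.e.\ the number of Calogero--Moser $c$-families, and this number genuinely drops as $c$ moves from $\CCB_\gen$ into $\CCB_\parti$ (for $W\neq 1$ the locus $\CCB_\parti$ is a nonempty hypersurface containing $0$). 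Hence the specialization map from the nearby smooth fiber is not an isomorphism, and no purity or nearby-cycles formalism will make it one; the entire content of the conjecture is precisely to describe how the cohomology degenerates at special parameters. Alternative (ii), invoking ``Kaledin-type results'' for symplectic singularities, is not an argument: no general theorem yields odd-vanishing or a Bia{\l}ynicki-Birula decomposition for a singular symplectic variety in the absence of a symplectic resolution, and for most $W$ no such resolution exists. Your Step~2 inherits these defects: you concede that surjectivity of the Chern-character map is open, and without the Poincar\'e polynomial that Step~1 was supposed to produce there is nothing against which to compare $\dim_\CM\grad(\im\Omeb^c)$. In short, your text is a reasonable heuristic for why the conjecture ought to hold (and it is consistent with the Euler-characteristic check~(\ref{eq:euler-car}) and with the fixed-point/family bijection of \S\ref{se:fixedpointsfamilies}), but it is not a proof, and the one place where you commit to a concrete mechanism is exactly where that mechanism breaks.
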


\bigskip

\begin{conjectureecoh}\label{conj:ecoh}
Let $c \in \CCB$.
\begin{itemize}
\itemth{1} If $i \in \NM$, then $\Hrm^{2i+1}_{\CM^\times}(\ZCB_c)=0$.

\itemth{2} We have an isomorphism of graded $\CM[\hbar]$-algebras 
$\Hrm^{2 \bullet}_{\CM^\times}(\ZCB_c) \simeq \rees(\im \Omeb^c)$.
\end{itemize}
\end{conjectureecoh}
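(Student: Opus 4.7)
The plan is to prove Conjecture ECOH first, and derive Conjecture COH by specialization at $\hbar = 0$. Indeed, the Rees algebra satisfies $\rees(\im \Omeb^c)/\hbar \simeq \grad(\im \Omeb^c)$, and for an equivariant cohomology ring concentrated in even degrees one expects $H^{2\bullet}_{\CM^\times}(\ZCB_c) \otimes_{\CM[\hbar]} \CM \simeq H^{2\bullet}(\ZCB_c)$ by the collapse of the Leray--Serre spectral sequence for the fibration $\ZCB_c \times_{\CM^\times} E\CM^\times \to B\CM^\times$; consequently, the vanishing of odd equivariant cohomology implies the vanishing of odd ordinary cohomology, and the two algebra isomorphisms become compatible.

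The strategy for the equivariant statement rests on exploiting the $\CM^\times$-action studied in Chapter~\ref{chapter:bb}. First, I would use the identification $\ZCB_c^{\CM^\times} = \Upsilon_c^{-1}(0,0)$ together with Bia{\l}ynicki-Birula theory in its singular form (as developed by Jelisiejew--Sienkiewicz and others): the attracting sets $\ZCB_c^\attractif(z_0)$ cover $\ZCB_c$, they are indexed by Calogero-Moser $c$-families, and over smooth fixed points they are affine spaces (Proposition~\ref{prop:bb-lisse}). This stratification gives a filtration whose associated graded in equivariant cohomology can be computed cell by cell, yielding the vanishing of odd classes and a free $\CM[\hbar]$-module structure of the expected rank $|W|$. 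Second, to pin down the algebra structure I would construct the comparison map $\rees(\im \Omeb^c) \to H^{2\bullet}_{\CM^\times}(\ZCB_c)$ directly: the Euler element $\euler$ and more generally the image of $\Omeb^c$ act on the family of baby Verma modules $\bar{\Delta}_c(E)$ (Proposition~\ref{prop:action-z0}), and this action, equivariantly deformed via the Rees construction for $\Hbd$ of \S\ref{sub:filtration}, should provide Chern-class-type characteristic classes living in $H^{2\bullet}_{\CM^\times}$, with the filtration by codimension of $V^w$ matching the cohomological grading.

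I would verify the resulting map in the smooth cases first: for $W$ of type $G(d,1,n)$ the space $\ZCB_c$ is a Nakajima quiver variety, its equivariant cohomology is known by work of Nakajima--Yoshioka and Vasserot, and it is naturally isomorphic to $\Zrm(\CM W)$ (which equals $\im \Omeb^c$ in the smooth regime, by Corollary~\ref{cor:genericCMsmoothfamilies}); for $G_4$ one can check by hand using Bellamy's explicit description. This confirms the shape of the isomorphism and identifies what $\im \Omeb^c$ must contribute. From there, one propagates the result to singular specializations by flatness of $\Upsilon : \ZCB \to \PCB$ and semicontinuity: the generic fiber is smooth, and specialization degenerates both sides compatibly, provided one controls the cohomology of the nearby cycles.

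The main obstacle is the singular case, where three serious difficulties converge. First, Bia{\l}ynicki-Birula decompositions for singular varieties do not automatically yield pure even cohomology, and one needs extra input (such as symplectic singularities, after Kaledin/Namikawa, to guarantee that $\ZCB_c$ has rational cohomology concentrated in even degrees). Second, matching $\im \Omeb^c$ precisely --- and not the full $\Zrm(\CM W)$ --- requires understanding exactly which blocks of $\CM W$ collapse under $\Omeb^c$, and this is governed by the Calogero-Moser families whose combinatorics outside the smooth locus is only partially known. Third, even granting the additive isomorphism, identifying the multiplicative structure on $H^{2\bullet}_{\CM^\times}(\ZCB_c)$ with the (non-commutative in general, but commutative here) product on $\rees(\im \Omeb^c)$ seems to require a geometric construction of the trace map from the sheafified Cherednik algebra to the structure sheaf of $\ZCB_c$, generalizing the Harish-Chandra-type morphism $\Omeb$ of \S\ref{subsection:omega} to an equivariant cohomological setting; this is likely where the deepest new ideas are needed.
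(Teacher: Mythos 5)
This statement is a conjecture: the paper does not prove it, and only records that it holds when $\ZCB_c$ is smooth (by the localization argument of Bonnaf\'e--Shan), when $c=0$, and in rank $1$. What you have written is therefore not a proof to be checked against the paper's, but a research programme, and by your own admission it stops exactly where the conjecture is open. The smooth-case ``verification'' via quiver varieties and the case $c=0$ reproduce results already stated in the paper and contribute nothing new; everything then hinges on the passage to singular $\ZCB_c$, and each of the three steps you propose for that passage has a genuine gap.

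Concretely: (a) the comparison map $\rees(\im \Omeb^c)\to \Hrm^{2\bullet}_{\CM^\times}(\ZCB_c)$ is never constructed --- ``should provide Chern-class-type characteristic classes'' is the entire content of the key step, and no mechanism is given for turning the action of $\im\Omeb^c$ on baby Verma modules into equivariant cohomology classes on $\ZCB_c$ compatibly with the codimension filtration on $\CM W$; (b) the proposed propagation ``by flatness of $\Upsilon$ and semicontinuity'' from the smooth generic fibre to a singular special fibre is not a valid argument: $\ZCB_c$ is affine and non-proper, the cohomology of fibres of a flat family is not semicontinuous in any sense that determines the special fibre, and nearby-cycle comparisons relate the special fibre to the \emph{nearby} general fibre, not the other way around (note also that the fibres here vary over $\CCB$, not over a one-parameter degeneration, so there is no canonical specialization map to begin with); (c) the Bia{\l}ynicki-Birula step presupposes purity and equivariant formality of the singular $\ZCB_c$, which is precisely part (1) of the conjecture --- invoking symplectic-singularity purity results is a reasonable idea but is left as an unproved input, and without it the deduction of Conjecture COH from ECOH by setting $\hbar=0$ (which needs freeness of $\Hrm^{\bullet}_{\CM^\times}$ over $\CM[\hbar]$) is also unjustified. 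In short, the proposal correctly identifies the known evidence and the obstacles, but supplies no argument that closes any of them.
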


We refer to \cite[Conjecture 3.3]{bonnafe shan} for a more precise version of the
conjecture.

\bigskip

\begin{exemple}
Assume here that $c=0$. Recall
(Example \ref{ex:centerc=0})
that $\Hb_0 = \CM[V \times V^*] \rtimes W$ and that $Z_0=\CM[V \times V^*]^{\D W}$. 
In particular, $\ZCB_{\! 0}= (V \times V^*)/\D W$ and $\im(\Omeb^0)=\CM$. Therefore, 
$$\Hrm^i(\ZCB_{\!0})=\Hrm^i(V \times V^*)^W=
\begin{cases}
\CM & \text{if $i=0$,}\\
0 & \text{otherwise,}
\end{cases}
\quad\text{and}\quad
\Hrm^i_{\CM^\times}(\ZCB_{\!0})=\Hrm^i_{\CM^\times}(V \times V^*)^W \simeq \Hrm_{\CM^\times}^i({\mathrm{pt}}),$$
so Conjectures~\COH~and~\ECOH~hold.\finl
\end{exemple}

\bigskip

Given $E \in \Irr(W)$, we denote by $e_E \in \Zrm(\CM W)$ the corresponding primitive central idempotent 
(the unique one such that $\o_E(e_E)=1$). Given $\FC$ is a subset of $\Irr(W)$, we set 
$e_\FC=\sum_{E \in \FC} e_E$. It follows from Lemma~\ref{caracterization blocs CM} 
and from (\ref{eq:fixe-Z}) that
\equat\label{eq:im-omega}
\im \Omeb^c = \bigoplus_{p \in \ZCB_c^{\CM^\times}} \CM e_{\Th_c^{-1}(p)}.
\endequat
In particular, 
\equat\label{eq:dim-im-omega}
\dim_\CM(\im \Omeb^c) = |\ZCB_c^{\CM^\times}|
\endequat
hence
\equat\label{eq:euler-car}
\chib(\ZCB_c) = \dim_\CM(\im \Omeb^c).
\endequat
This is compatible with Conjecture~\COH.

\bigskip

\begin{theo}[Etingof-Ginzburg]
If $\ZCB_c$ is smooth, then Conjecture~\COH~holds.
\end{theo}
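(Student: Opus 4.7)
The plan is to exploit the Bialynicki-Birula decomposition associated with the $\CM^\times$-action studied in \S\ref{se:fixedpointsfamilies}. Since $Z_c$ is non-negatively graded, every point of $\ZCB_c$ flows to a fixed point as $t\to 0$, so $\ZCB_c=\ZCB_c^{\attractif}$. The smoothness assumption together with the finiteness of $\ZCB_c^{\CM^\times}=\Upsilon_c^{-1}(0,0)$ from (\ref{eq:fixe-Z}) permit applying Bialynicki-Birula's theorem: one obtains a stratification $\ZCB_c=\bigsqcup_{p\in\ZCB_c^{\CM^\times}}\ZCB_c^{\attractif}(p)$ whose strata are affine spaces $\AM^{d_p}$, by Proposition \ref{prop:bb-lisse}. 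Because only even-dimensional cells appear, standard arguments yield $\Hrm^{2i+1}(\ZCB_c)=0$ for all $i\in\NM$, proving (1), and the fundamental classes $[\overline{\ZCB_c^{\attractif}(p)}]$ form a $\CM$-basis of $\Hrm^{2\bullet}(\ZCB_c)$. Combined with (\ref{eq:dim-im-omega}), this yields the dimension equality $\dim_\CM\Hrm^{2\bullet}(\ZCB_c)=|\ZCB_c^{\CM^\times}|=\dim_\CM\im\Omeb^c$.

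For the ring isomorphism in (2), I would pass through equivariant cohomology. Vanishing of odd cohomology implies that $\Hrm^{2\bullet}_{\CM^\times}(\ZCB_c)$ is free of rank $|\ZCB_c^{\CM^\times}|$ over $\CM[\hbar]$, and equivariant localization gives an injective restriction
\[
\iota^\ast:\Hrm^{2\bullet}_{\CM^\times}(\ZCB_c)\hookrightarrow\Hrm^{2\bullet}_{\CM^\times}(\ZCB_c^{\CM^\times})=\bigoplus_{p\in\ZCB_c^{\CM^\times}}\CM[\hbar]
\]
that becomes an isomorphism after inverting $\hbar$. By the explicit description (\ref{eq:im-omega}), $\im\Omeb^c\subset\bigoplus_p\CM\,e_{\Th_c^{-1}(p)}$ is precisely what should appear as the reduction of $\iota^\ast$ modulo $\hbar$. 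I would therefore construct a morphism of filtered $\CM$-algebras $Z_c\to\Hrm^{2\bullet}_{\CM^\times}(\ZCB_c)/(\hbar)\simeq\Hrm^{2\bullet}(\ZCB_c)$ such that the value at each fixed point $p$ is given by evaluating $\Omeb^c$, and then verify that this map factors as $Z_c\twoheadrightarrow\im\Omeb^c$ followed by a filtered isomorphism onto $\Hrm^{2\bullet}(\ZCB_c)$. Taking associated graded of both sides produces the required isomorphism.

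The main obstacle is matching the filtrations. The filtration on $\im\Omeb^c$ comes from the $\NM$-grading on $Z_c$ (where $V$ and $V^*$ have degree $1$), while the cohomological degree on $\Hrm^{2\bullet}(\ZCB_c)$ is measured topologically. The key point will be to show that an element $z\in Z_c^{\leq i}$ produces a cohomology class of degree $\leq 2i$, and that the associated graded of $\Omeb^c$ matches the Poincar\'e duals of the Bialynicki-Birula cells. This can plausibly be established by analyzing the weights of the $\CM^\times$-action on tangent spaces at fixed points: each cell $\AM^{d_p}$ has strictly positive tangent weights whose sum is the degree of the equivariant class of $\overline{\ZCB_c^{\attractif}(p)}$ in $\bigoplus_p\CM[\hbar]$, so degree bounds on $z$ translate into degree bounds on its image. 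A careful degree count, together with the dimension equality established above and the fact that both algebras are commutative and generated by bounded-degree elements, should then force the desired algebra isomorphism.
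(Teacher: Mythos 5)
Your opening step fails, and it brings down the rest of the argument. For $c\neq 0$ the only $\CM^\times$-action available on $\ZCB_c$ is the hyperbolic one coming from the $\BZ$-grading (the action scaling $V$ and $V^*$ with the \emph{same} weight rescales $c$ and does not preserve $\ZCB_c$). Under that action $Z_c$ is \emph{not} non-negatively graded, and Lemma~\ref{eq:Z-attractif} shows that $\ZCB_c^{\attractif}=\Upsilon_c^{-1}(V/W\times 0)$, a closed subvariety of dimension $n$ inside the $2n$-dimensional variety $\ZCB_c$. So the equality $\ZCB_c=\ZCB_c^{\attractif}$ is false, there is no decomposition of $\ZCB_c$ into attracting cells indexed by $\ZCB_c^{\CM^\times}$, and the deduction of odd vanishing and of a basis of $\Hrm^{2\bullet}(\ZCB_c)$ by fundamental classes of cell closures collapses. (Even if such a cell decomposition existed, Bialynicki-Birula's cohomological consequences are stated for complete varieties; for an affine variety one would at best get Borel--Moore statements and would still have to argue the stratification is filtrable.)

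There is a second, independent problem: you have misidentified the filtration in Conjecture~\COH. The associated graded $\grad(\im\Omeb^c)$ is taken with respect to the filtration of $\Zrm(\CM W)$ induced by $(\CM W)^{\le i}=\bigoplus_{\mathrm{codim}(V^w)\le i}\CM w$, not with respect to any filtration coming from a grading on $Z_c$ (indeed $\Omeb^c$ kills all elements of non-zero $\BZ$-degree). Matching this ``codimension of fixed space'' filtration with the cohomological grading is precisely the hard content of the theorem, and ``a careful degree count should force the desired algebra isomorphism'' does not supply it. For comparison, the paper's proof does not attempt any of this: it quotes Etingof--Ginzburg's Theorem~1.8, which already gives $\Hrm^{\mathrm{odd}}(\ZCB_c)=0$ and $\Hrm^{2\bullet}(\ZCB_c)\simeq\grad(\Zrm(\CM W))$ in the smooth case, and then only needs Proposition~\ref{prop lissite} to see that smoothness makes $\Th_c:\Irr(W)\to\ZCB_c^{\CM^\times}$ bijective, whence $\im\Omeb^c=\Zrm(\CM W)$. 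If you want a self-contained topological proof you would essentially have to redo Etingof--Ginzburg's argument, which is substantially more delicate than the sketch you give.
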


\bigskip

\begin{proof}
Assume that $\ZCB_c$ is smooth. 
In~\cite[Theorem~1.8]{EG}, Etingof and Ginzburg proved that this implies that 
$\ZCB_c$ has no odd cohomology, and that 
$$\Hrm^{2 \bullet}(\ZCB_c) \simeq \grad(\Zrm(\CM W)).$$
By Proposition~\ref{prop lissite}, the smoothness of $\ZCB_c$ implies 
that $\Th_c : \Irr(W) \to \ZCB_c^{\CM^{\times}}$ is bijective, so that 
$\im \Omeb^c=\Zrm(\CM W)$.
\end{proof}

\bigskip

Based on Etingof-Ginzburg's result, Peng Shan and the first author proved the following, 
using localization methods~\cite[Theorem~A]{bonnafe shan}:

\bigskip

\begin{theo}
If $\ZCB_c$ is smooth, then Conjecture~\ECOH~holds.
\end{theo}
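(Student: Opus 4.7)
The strategy is to bootstrap from Etingof-Ginzburg's non-equivariant theorem via $\CM^\times$-equivariant localization. Under the smoothness hypothesis, Proposition~\ref{prop lissite} gives that the $\CM^\times$-fixed-point set $\ZCB_c^{\CM^\times} = \Upsilon_c^{-1}(0,0)$ is in bijection with $\Irr(W)$, and Proposition~\ref{prop:bb-lisse} provides that each attractive set $\ZCB_c^\attractif(z_E)$ is an affine space $\CM^{N_E}$. First I would establish that $\Hrm^{2\bullet}_{\CM^\times}(\ZCB_c)$ is \emph{equivariantly formal} and that odd equivariant cohomology vanishes, giving Conjecture~\ECOH(1). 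Etingof-Ginzburg already yields $\Hrm^{2i+1}(\ZCB_c) = 0$ and $\dim_\CM \Hrm^{2i}(\ZCB_c) = \dim_\CM \grad^i Z(\CM W)$, and a dimension count against the attracting cells --- all of which are even-dimensional --- upgrades this to vanishing of odd equivariant cohomology and freeness of $\Hrm^{2\bullet}_{\CM^\times}(\ZCB_c)$ as a $\CM[\hbar]$-module.

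Next, I would apply Atiyah-Bott localization. Equivariant formality renders the restriction map
$$\iota^* : \Hrm^{2\bullet}_{\CM^\times}(\ZCB_c) \longinjto \Hrm^{2\bullet}_{\CM^\times}\bigl(\ZCB_c^{\CM^\times}\bigr) \;=\; \bigoplus_{E \in \Irr(W)} \CM[\hbar]$$
injective, with cokernel supported at $\hbar = 0$. Under the identification $1_E \leftrightarrow e_E$, the right-hand side becomes $\CM[\hbar] \otimes_\CM Z(\CM W)$; since $\im \Omeb^c = Z(\CM W)$ in the smooth case (Proposition~\ref{prop lissite} combined with~(\ref{eq:im-omega})), the image of $\iota^*$ lies inside $\CM[\hbar] \otimes_\CM \im \Omeb^c$. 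The cohomological grading forces a class in $\Hrm^{2i}_{\CM^\times}(\ZCB_c)$ to restrict at each fixed point to $c_E \cdot \hbar^i$, so $\iota^*(\Hrm^{2i}_{\CM^\times}(\ZCB_c))$ sits inside $\bigl(\bigoplus_E \CM \cdot e_E\bigr) \cdot \hbar^i$; the constraint that this element lies in the appropriate piece of the Rees algebra amounts to requiring $\sum_E c_E e_E \in (\im \Omeb^c)^{\le i}$ for a suitable filtration on $\im \Omeb^c$.

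The inclusion $\iota^*\bigl(\Hrm^{2\bullet}_{\CM^\times}(\ZCB_c)\bigr) \subseteq \rees(\im \Omeb^c)$ will then be promoted to equality by comparing ranks over $\CM[\hbar]$: both sides are free, and setting $\hbar = 0$ recovers the Etingof-Ginzburg isomorphism $\Hrm^{2\bullet}(\ZCB_c) \simeq \grad(\im \Omeb^c)$. By graded Nakayama, an inclusion of free $\CM[\hbar]$-modules inducing an isomorphism modulo $\hbar$ is an isomorphism, and the resulting bijection is automatically a map of graded $\CM[\hbar]$-algebras since both sides are subalgebras of the same ring $\CM[\hbar] \otimes_\CM \im \Omeb^c$.

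The hard part is to verify that the filtration emerging from this argument coincides \emph{precisely} with the one specified in the statement of Conjecture~\ECOH --- namely, the restriction to $Z(\CM W)$ of the filtration $\{(\CM W)^{\le i}\}$ defined at the beginning of this chapter via codimension of $V^w$. At each fixed point $z_E$, the $\CM^\times$-weights on $T_{z_E} \ZCB_c$ must be computed and matched with the reflection-length data on $W$; equivalently, the classes $[\overline{\ZCB_c^\attractif(z_E)}]$ must be identified with specific elements of $Z(\CM W)$ whose codimension-of-fixed-space valuation is controlled. A natural approach is to deform in the parameter: at $c = 0$ the variety $\ZCB_0 = (V \times V^*)/W$ has a single fixed point and the tangent weights read off directly from $V \oplus V^*$ as a $W$-representation, and an equivariant deformation of $\ZCB_c$ over a line in $\CCB$ transports this filtration information to arbitrary smooth $c$.
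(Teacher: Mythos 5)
Note first that the paper does not actually contain a proof of this theorem: it is quoted from Bonnaf\'e--Shan \cite[Theorem~A]{bonnafe shan} with the remark that it is proved ``using localization methods.'' Your general framework --- odd vanishing and equivariant formality from the Etingof--Ginzburg theorem, injectivity of the restriction $\iota^*$ to the finite fixed-point set, and then a comparison of graded ranks to promote an inclusion of free graded $\CM[\hbar]$-submodules of $\bigoplus_{E}\CM[\hbar]$ to an equality --- is therefore the right kind of argument, and those outer steps are sound (though the attracting cells are irrelevant to equivariant formality, which follows from odd vanishing and the collapse of the Serre spectral sequence alone; the attracting sets do not even cover $\ZCB_c$).

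The genuine gap is the middle step, and your own text papers over it. From the grading you only get that a class in $\Hrm^{2i}_{\CM^\times}(\ZCB_c)$ restricts at the fixed points to $\hbar^i\cdot\bigl(\sum_E c_E e_E\bigr)$ with $\sum_E c_E e_E$ an \emph{arbitrary} element of $Z(\CM W)$; the assertion that this element lies in the filtration step $(\im\Omeb^c)^{\le i}$ defined by $\codim_\CM(V^w)$ is exactly what must be proved, and nothing in the localization formalism supplies it. Your sentence ``the constraint \dots amounts to requiring $\sum_E c_E e_E\in(\im\Omeb^c)^{\le i}$'' restates the goal rather than establishing it, and the subsequent ``inclusion \dots will then be promoted to equality'' presupposes an inclusion you never obtain. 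The fix you sketch --- computing tangent weights at the fixed points and transporting the filtration from $c=0$ by an equivariant deformation --- does not work as stated: for $W\neq 1$ the variety $\ZCB_0=(V\times V^*)/W$ is singular, the $|\Irr(W)|$ fixed points of $\ZCB_c$ collide into one as $c\to 0$, and even granting the tangent weights at the smooth fixed points there is no mechanism in your sketch connecting them to the combinatorial filtration of $Z(\CM W)$ by reflection codimension. What is missing is a genuine geometric input producing the filtration --- in the non-equivariant case this comes from the HKR-type decomposition of the Hochschild homology of $\CM[V\oplus V^*]\rtimes W$, where the summand indexed by $w$ sits in degree $2\codim_\CM(V^w)$, and an equivariant lift of that identification (or an explicit construction of equivariant classes realizing each $\hbar^i(\im\Omeb^c)^{\le i}$) is the actual content of the Bonnaf\'e--Shan proof. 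As it stands your argument proves only that $\Hrm^{2\bullet}_{\CM^\times}(\ZCB_c)$ is a free graded $\CM[\hbar]$-subalgebra of $\CM[\hbar]\otimes Z(\CM W)$ with the correct graded rank, i.e.\ that it is isomorphic to the Rees algebra of \emph{some} filtration, not the one in Conjecture~\ECOH.
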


\bigskip

Apart from the smooth case and the case $c=0$, both conjectures are known
only in rank $1$.
For Conjecture~\COH, see the upcoming Chapter~\ref{chapitre:rang 1}
(Theorem~\ref{theo:cohomologie-1}).
For Conjecture~\ECOH, see~\cite[Proposition~1.7]{bonnafe shan}.

\bigskip

\section{Fixed points}

\medskip

\boitegrise{{\bf Assumption and notation.} {\it Recall that $\NC=N_{\Gb\Lb_\CM(V)}(W)$. 
We fix in this section an element of finite order $\t \in \NC$. That
element $\t$ acts on $\CCB$ and $\ZCB$ and, 
if $c \in \CCB^\t$, it acts on $\ZCB_c$. We denote by $\ZCB^\t$ (respectively $\ZCB_c^\t$, for $c \in \CCB^\t$) 
the {\bfit reduced} closed subvariety of $\ZCB$ (respectively $\ZCB_c$) consisting 
of fixed points of $\t$ in $\ZCB$ (respectively $\ZCB_c$): it is an affine variety whose 
algebra of regular functions is $Z/\sqrt{\langle \t(z)-z, z \in Z\rangle}$ (respectively 
$Z_c/\sqrt{\langle \t(z)-z, z \in Z_c\rangle}$).
% \\
% \hphantom{aa}We fix also an irreducible component $\ZCB^\s[0]$ of $\ZCB^\s$. 
% % Recall that $\ZCB^{\mub_d}$ is naturally endowed with a structure of Poisson variety 
% (this follows from the fact that $Z/\langle \s(z)-z, z \in Z\rangle$ 
}}{0.75\textwidth}

\medskip

We say that a pair $(V',W')$ is a {\it reflection subquotient} of $(V,W)$ if 
$V'$ is a subspace of $V$ and if there exists a subgroup $N'$ of the stabilizer 
of $V'$ in $W$ such that $W'=N'/N_1'$ is a reflection group
on $V'$, where $N_1'$ is the kernel of the action 
of $N'$ on $V'$. In this case, we denote by $\REF(W')$ the set of reflections 
of $W'$ for its action on $V'$, by $\CCB(W')$ the vector space of maps 
$c' : \REF(W') \to \CM$ constant on $W'$-conjugacy classes and  by
$\ZCB(V',W')$ the Calogero-Moser space associated with $(V',W')$. 

\bigskip

\begin{conjecturefix}\label{conj:springer-fixed}
Given $\XCB$ an irreducible component of $\ZCB^\t$, 
there exists a reflection subquotient $(V',W')$ of $(V,W)$ and a linear map 
$\ph : \CCB^\t \to \CCB(V',W')$ 
such that 
$$\XCB \simeq \ZCB(V',W') \times_{\CCB(W')} \CCB^\t.$$
%%$$\ICB \simeq \ZCB_{c'}(V',W')\quad\text{)}.\leqno{\text{(resp.}}$$
\end{conjecturefix}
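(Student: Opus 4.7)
The plan is to first describe $\ZCB_0^\t$ completely, pinning down the candidate subquotient $(V',W')$ and the map $\ph$, and then to deform across $\CCB^\t$. At $c=0$ one has $\ZCB_0=(V\times V^*)/\Delta W$. A preimage $(v,v^*)\in V\times V^*$ of a $\t$-fixed orbit satisfies $\t(v,v^*)=(w(v),w(v^*))$ for some $w\in W$; equivalently, setting $\s:=w^{-1}\t$ in the reflection coset $W\t$, the pair $(v,v^*)$ is $\s$-fixed. Two pairs yield the same orbit if and only if the associated elements of $W\t$ are $W$-conjugate, so the irreducible components of $\ZCB_0^\t$ are indexed by $W$-conjugacy classes in $W\t$ whose combined fixed space on $V\times V^*$ has maximal dimension. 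Fix such a class with representative $\s$ and put $V':=V^\s\subset V$; by the Springer--Lehrer--Brou\'e--Michel theory of regular elements in reflection cosets, $N_W(V')/Z_W(V')$ acts on $V'$ as a complex reflection group, which I take to be $W'$. A direct check then identifies the component $\XCB_0\subset\ZCB_0^\t$ attached to $\s$ with $(V'\times V^{\prime *})/\Delta W'=\ZCB(V',W')_0$, which is the $c=0$ case of the claimed formula.

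\textbf{The map $\ph$ and the global picture.} To construct $\ph\colon\CCB^\t\to\CCB(V',W')$, observe that each reflection of $W'$ is the image in $N_W(V')/Z_W(V')$ of one or several cosets of $Z_W(V')$ in $N_W(V')$; lifting each such coset to elements of $W$ acting as reflections on $V'$ and whose union of $W$-conjugacy classes is $\t$-stable, I set $\ph(c)_{s'}$ to be the appropriate sum of values of $c$ on these contributions, with a normalisation to be pinned down later by the deformation argument. The $\t$-invariance condition defining $\CCB^\t$ combined with the $W'$-conjugation invariance on the target forces $\ph$ to be well-defined and linear. To upgrade the $c=0$ statement to arbitrary $c\in\CCB^\t$, I would realise both sides of the claimed isomorphism as $\Spec$ of the centre of a Cherednik-type algebra: on the left, the centre of a suitable $\t$-invariant construction built from $\Hb$ (using that $\t$ acts by algebra automorphisms, cf.~\S\ref{subsection:normalisateur-1}); on the right, the centre of $\Hb(V',W',\ph(c))$. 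The flatness of $\Upsilon$ over $\CCB$ (cf.~(\ref{platitude})) would then propagate a local identification at a generic point of $\CCB^\t$ to a global one.

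\textbf{Main obstacle.} The crucial step is this final identification of centres, which amounts to a $\t$-twisted analogue of the Bezrukavnikov--Etingof completion theorem for rational Cherednik algebras along parabolic strata: the classical ($\t=1$) version identifies the completion of $\Hb$ along the locus with $W$-stabiliser a parabolic subgroup $W_1$ with an algebra built out of the Cherednik algebra of $(V^{W_1},N_W(W_1)/W_1)$, and one now needs the analogous reduction for the coset $W\s$ with fixed space $V'$. The technical difficulty is twofold: first, one must correctly isolate the reflectional part of the $W$-stabiliser of $V'$, which in general strictly contains $W'$, and this requires a character argument at the level of the primitive central idempotents of the stabiliser on the fibres of $\ZCB\to\CCB$; second, matching the parameters requires a precise comparison between the Dunkl operators for $(V,W)$ that are fixed under $\s$ and those defining $\Hb(V',W')$, and it is precisely this comparison that will fix the normalisation of the coefficients of $\ph$. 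The assumptions $\kb=\CM$ and $\t$ of finite order ensure that the averaging idempotent $\frac{1}{\mathrm{ord}(\t)}\sum_i\t^i$ is available, which will be indispensable to guarantee that the $\t$-invariant constructions behave flatly over $\CCB^\t$.
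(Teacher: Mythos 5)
What you are attempting to prove is stated in the book as Conjecture~\FIX, not as a theorem: the text offers no general proof, only verifications in special cases (type $B_2$ in \S\ref{sec:fixed-b2}, the dihedral and $G_4$ cases, and Theorem~\ref{theo:ruslan} of Bonnaf\'e--Maksimau for the smooth locus $\ZCB_\smooth$). So there is no argument in the paper against which your proposal can be matched, and the question is simply whether your text constitutes a proof. It does not. Your analysis of the fibre at $c=0$ is reasonable and consistent with the expected picture: the components of $\bigl((V\times V^*)/\Delta W\bigr)^\t$ are governed by $W$-classes in the coset $W\t$ with maximal fixed space, and Lehrer--Springer theory makes $N_W(V')/Z_W(V')$ a reflection group on $V'=V^\s$, yielding $\XCB_0\simeq (V'\times V'^*)/\Delta W'$. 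But this only recovers the $c=0$ slice of one component, whereas the conjecture concerns irreducible components of the total space $\ZCB^\t$ over $\CCB^\t$; and the step that would carry the identification across all parameters is exactly the one you flag as the ``main obstacle'' and leave unproved.

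Concretely, two things are missing and neither is routine. First, the definition of $\ph$ is not actually pinned down: you defer the normalisation of its coefficients to a comparison of Dunkl operators that you do not carry out, so the right-hand side $\ZCB(V',W')\times_{\CCB(W')}\CCB^\t$ of the asserted isomorphism is not yet a well-defined object in your argument. Second, the deformation step is not a consequence of flatness of $\Upsilon$: an isomorphism at the generic point of $\CCB^\t$ (which you also do not establish) does not propagate to an isomorphism over all of $\CCB^\t$, and the reduced fixed-point scheme $\ZCB^\t$ is not known a priori to be flat over $\CCB^\t$ -- indeed in the $B_2$ computation of \S\ref{sec:fixed-b2} the two components of $\ZCB^\t$ intersect, so the fibrewise component count jumps. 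The $\t$-twisted analogue of the Bezrukavnikov--Etingof reduction that you invoke is precisely the content of the conjecture in disguise; asserting it as the key step, and then listing the difficulties in proving it, leaves the statement exactly as open as it was. If you want a provable result along these lines, the smooth case (Theorem~\ref{theo:ruslan}) is the natural target, where the $\CM^\times$-action and the classification of smooth Calogero--Moser spaces give genuine traction.
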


\bigskip

\begin{rema}
Conjecture~\FIX~has been extended to symplectic leaves of $\ZCB^\t$ 
and made more precise by the second author~\cite{bonnafe autosymp}. In that
more general case, the closures of symplectic leaves are not always normal. We
	do not know if this can occur in Conjecture \ref{conj:springer-fixed}
	\ref{conj:springer-fixed}.\finl
\end{rema}

\def\smooth{{\!\mathrm{sm}}}

\begin{exemple}\label{ex:root of unity}
Assume in this example that $\t \in \CM^\times$ is a root 
of unity, acting on $V$ by scalar multiplication. Note that $\CCB^\t=\CCB$ 
in this case. Under this assumption, we will show in \S~\ref{sec:fixed-b2} that 
Conjecture~\FIX~holds when $W$ is of type $B_2$. It is shown by the first 
author~\cite[Theorem~7.1~and~Proposition~8.3]{bonnafe diedral} that it also holds 
if $W$ is of type $G_2$ 
(computer calculation using the {\tt MAGMA} software~\cite{magma}), 
or if $W$ is dihedral of order $2m$, with $m$ odd, and $\t$ is a primitive 
$m$-th root of unity. 

When $\t$ is a root of unity, it is shown 
in~\cite{bonnafe maksimau} that Conjecture~\FIX~holds 
if $W$ is the group denoted $G_4$ in Shephard-Todd classification 
(this uses again the {\tt MAGMA} software). The best result 
about Conjecture~\FIX~has been obtained by Ruslan Maksimau and the first 
author~\cite{bonnafe maksimau}. We need some notation to state it. 
Let $\CCB_\smooth$ denote the (open) subset of $\CCB$ consisting 
of elements $c \in \CCB$ such that $\ZCB_c$ is smooth (it can be
empty, see Theorem~\ref{les lisses}). We put
$\ZCB_\smooth = \ZCB \times_\CCB \CCB_\smooth$. 

\bigskip

\begin{theo}\label{theo:ruslan}
Let $\XCB$ be an irreducible component of $\ZCB_\smooth$ and assume that $\t$ is a root 
of unity. Then there exists a reflection subquotient $(V',W')$ of $(V,W)$ and a linear 
map $\ph : \CCB \to \CCB(V',W')$ such that $\ph(\CCB_\smooth) \subset \CCB_\smooth(V',W')$ 
and
$$\XCB \simeq \ZCB_\smooth(V',W') \times_{\CCB_\smooth(W')} \CCB_\smooth.$$
\end{theo}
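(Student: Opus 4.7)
The plan is to prove this by combining three ingredients: the classification of smooth Calogero–Moser spaces (Theorem~\ref{les lisses}), an explicit description of the $\tau$-action on the relevant moduli-theoretic models of $\ZCB_\smooth$, and a Springer-type identification of the reflection subquotient.

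First I would reduce to the case where $W$ is irreducible, since $\ZCB_c$ for $W=W_1\times W_2$ is the product $\ZCB_c^{(1)}\times\ZCB_c^{(2)}$ and the action of $\tau\in\NC$ decomposes accordingly. By Theorem~\ref{les lisses}, the condition that $\CCB_\smooth\neq\emptyset$ forces $W$ to have type $G(d,1,n)$ or to be isomorphic to $G_4$, and both families admit a well-understood moduli description of $\ZCB_c$: in type $G(d,1,n)$ one has the Nakajima quiver variety / Gordon's Hilbert scheme realization, whereas for $G_4$ Bellamy's explicit description applies. The next step is to describe $\NC$ (equivalently, the outer automorphism group of $W$ together with the scalar torus generated by $\mub_W\cdot\Id_V$) in each case, and to pick apart the elements $\tau$ whose action on $V$ is given by a root of unity.

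The heart of the argument is then a Springer-theoretic identification. Given such $\tau$ and an eigenvalue $\zeta$ of $\tau$ on $V$, set $V'=\Ker(\tau-\zeta\Id_V)$ and consider the subgroup $N'$ of $W$ consisting of elements commuting with $\tau$ modulo its pointwise stabilizer $N_1'$ of $V'$; this produces a reflection subquotient $W'=N'/N_1'$ acting on $V'$ (this is where Springer's regular element theory intervenes). Each reflection $s'\in\Ref(W')$ is obtained by restriction from a (non-unique) union of reflections of $W$, and averaging the parameters $c_s$ over those reflections defines the linear map $\ph:\CCB\to\CCB(V',W')$. Now working in the moduli description of Step~1, one writes down the $\tau$-action on the parameters and data defining each point of $\ZCB_c$, and checks that the subvariety of $\tau$-fixed points, intersected with the component $\XCB$, is exactly cut out by the same equations as those defining $\ZCB_{\ph(c)}(V',W')$ in the corresponding moduli description for $W'$. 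This gives a morphism $\XCB\to\ZCB(V',W')\times_{\CCB(W')}\CCB_\smooth$ which is $\CM^\times$-equivariant.

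To finish, I would verify this is an isomorphism by checking it on the $\CM^\times$-fixed points (where the description reduces, via \S\ref{se:fixedpointsfamilies} and Proposition~\ref{prop:bb-lisse}, to a matching of Calogero–Moser families of $W$ stable under $\tau$ with Calogero–Moser families of $W'$, a combinatorial statement) and then spreading out using attractive sets together with the Bialynicki-Birula decomposition: at each $\tau$-fixed $\CM^\times$-fixed point $z_0$, the attractive set $\XCB^{\attractif}(z_0)\simeq\CM^N$ is $\tau$-stable, and the morphism identifies it with the corresponding attractive set in $\ZCB(V',W')$. Finally, the inclusion $\ph(\CCB_\smooth)\subset\CCB_\smooth(V',W')$ follows because $\XCB$ is smooth and the morphism is a closed embedding with smooth target over $\ph(\CCB_\smooth)$. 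The main obstacle is the Springer-theoretic step: one must verify case by case (using the Shephard–Todd classification for the allowed $W$'s and the explicit list of elements of $\NC$ acting as roots of unity) that the natural subquotient $W'$ really is a complex reflection group on $V'$ and that $\ph$ actually lands in $\CCB_\smooth(V',W')$---both facts require the full strength of the classification of smooth Calogero–Moser spaces, now applied to $W'$, and are the reason the theorem is restricted to the smooth locus rather than being proved in the generality of Conjecture~\FIX.
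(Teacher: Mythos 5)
The paper does not actually prove Theorem~\ref{theo:ruslan}: it is quoted from the reference \cite{bonnafe maksimau} (``Fixed points in smooth Calogero-Moser spaces, in preparation''), and the only in-text indications are the worked example for $B_2$ in \S\ref{sec:fixed-b2} and the remark that the map $\ph$ is described explicitly in that external reference. So there is no in-paper proof to compare your argument against, and your proposal has to be judged on its own.

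As an outline it is in a reasonable spirit (reduction via the classification of Theorem~\ref{les lisses}, moduli descriptions in type $G(d,1,n)$ and $G_4$, identification of the fixed locus with a smaller Calogero-Moser space), but it has a concrete error at its core. In the setting of the theorem, $\t$ is a root of unity acting on $V$ \emph{by scalar multiplication} (see Example~\ref{ex:root of unity}), so your recipe $V'=\Ker(\t-\zeta\Id_V)$ gives either $V$ or $0$, and ``elements of $W$ commuting with $\t$'' is all of $W$; this does not produce the reflection subquotients that actually occur. The correct Springer-theoretic input attaches $V'$ to a $\zeta$-eigenspace of an element $w\in W$ (more generally of $w\t$), with $W'$ the induced reflection group on that eigenspace -- exactly what the paper does in its own $B_2$ computation in \S\ref{sec:fixed-b2}, where $V'$ is the $\t$-eigenspace of $w=st$ and $W'=\langle w\rangle$, even though $\t$ itself is scalar. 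Beyond this, the two genuinely hard steps -- the explicit determination of $\ph$ (your ``averaging'' prescription does not match the map $(a,b)\mapsto\frac12(a+b,a-b,-a-b,-a+b)$ appearing in \S\ref{sec:fixed-b2}) and the verification that the equations cutting out a fixed-point component agree with those defining $\ZCB(V',W')$ over $\ph(\CCB)$ -- are entirely deferred to ``one checks'', and they constitute essentially the whole content of the theorem. So the proposal is not a proof; it is a sketch whose key identification is set up incorrectly.
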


\bigskip

Note also that the linear map $\ph$ of the above theorem is explicitly 
described in~\cite{bonnafe maksimau}.\finl
\end{exemple}

\begin{exemple}
	Conjecture~\FIX~
	also holds if $W$ has type $D_n$ or $I_2(m)$ and
	$\tau$ is the order $2$ diagram automorphism \cite{bonnafe autosymp,bonnafe dihedral2}.
\end{exemple}

\part{Examples}\label{part:exemples}

\chapter{Case ${\boldsymbol{c=0}}$}\label{chapitre nul}

\bigskip

\section{Two-sided cells, families}

\medskip

Recall that $R_+$ denotes the unique maximal bi-homogeneous ideal of $R$ and that 
$$R/R_+ \simeq \kb$$
(see Corollary~\ref{r0}). Recall also that $D_+$ (respectively $I_+$) denotes its 
decomposition (respectively inertia) group and that 
$$D_+=I_+=G$$
(see Corollary~\ref{r0 DI}). 

\bigskip

\begin{prop}\label{prop:rba0}
$R_+$ is the unique prime ideal of $R$ lying over $\pGba_0$.
\end{prop}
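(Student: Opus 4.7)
The plan is to check three things in turn: that $R_+$ is prime, that it lies over $\pGba_0$, and that it is the only such prime. The first two points are essentially bookkeeping from Corollary~\ref{r0}, while uniqueness will come from the fact that $G$ acts transitively on primes of $R$ over a given prime of $P$, combined with the fact that $R_+$ is $G$-stable.

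First I would observe that $\pGba_0 = \pGba + \CG_0 P$ is precisely the augmentation ideal of the bigraded $\kb$-algebra $P = \kb[\CCB]\otimes \kb[V]^W\otimes \kb[V^*]^W$, i.e.\ the unique maximal bi-homogeneous ideal of $P$, whose quotient $P/\pGba_0$ equals $\kb$. Since $R_+$ is defined as the unique maximal bi-homogeneous ideal of $R$, Corollary~\ref{r0} gives $R/R_+ \simeq \kb$. In particular $R_+$ is a maximal (hence prime) ideal of $R$. The inclusion $P\hookrightarrow R$ followed by the quotient $R\twoheadrightarrow R/R_+=\kb$ is a morphism of bigraded $\kb$-algebras, so its kernel $R_+\cap P$ is the bi-homogeneous ideal of $P$ consisting of elements of positive bidegree, namely $\pGba_0$. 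This establishes that $R_+$ is a prime ideal of $R$ lying over $\pGba_0$.

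For uniqueness, I would invoke the classical fact (recalled in Appendix~\ref{chapter:galois-rappels}) that any two prime ideals of $R$ lying over the same prime of $P=R^G$ are conjugate under the action of $G$. Hence the set of primes of $R$ over $\pGba_0$ is a single $G$-orbit. By Proposition~\ref{bigraduation sur R}, the action of $G$ on $R$ preserves the $(\NM\times\NM)$-grading, so $G$ permutes the bi-homogeneous ideals of $R$; in particular $G$ fixes the unique maximal bi-homogeneous ideal $R_+$. The $G$-orbit of $R_+$ is thus reduced to $\{R_+\}$, forcing every prime of $R$ above $\pGba_0$ to coincide with $R_+$.

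No step looks genuinely obstructive here: the proof rests entirely on identifying $\pGba_0$ with the augmentation ideal, on the computation $R/R_+=\kb$ already achieved in Corollary~\ref{r0}, and on the $G$-stability of $R_+$ from Proposition~\ref{bigraduation sur R}. The argument is therefore a two- or three-line deduction, the only care needed being to ensure that one reads off $R_+\cap P=\pGba_0$ (rather than some larger ideal) from the equality of residue fields.
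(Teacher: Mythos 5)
Your proof is correct and is essentially the paper's own argument: the paper likewise identifies $\pGba_0$ with $P_+$, notes that $R_+$ is a prime of $R$ lying over it (via Corollary~\ref{r0}), and deduces uniqueness from the $G$-stability of $R_+$ together with the transitivity of $G$ on the fiber. You have merely spelled out the details the paper leaves implicit.
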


\begin{proof}
Indeed, $\pGba_0=P_+$ and so $R_+$ is a prime ideal of $R$ lying over $\pGba_0$: 
since it is stabilized by $G$, the uniqueness is proven.
\end{proof}

\bigskip

So if we denote by $\rGba_0$ the unique prime ideal of $R$ lying over $\pGba_0=P_+$, 
$\Dba_0$ its decomposition group and $\Iba_0$ its inertia group, then
\equat\label{exemple:dba-0}
\rGba_0=R_+\qquad\text{and}\qquad \Dba_0=\Iba_0=G.
\endequat
Hence:

\bigskip

\begin{coro}\label{coro:familles-nulles}
$W$ contains only one Calogero-Moser two-sided $0$-cell, namely $W$ itself, and
$$\Irr_W^\calo(W)=\Irr(W).$$
\end{coro}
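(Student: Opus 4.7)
The plan is to deduce both statements directly from the identification $\rGba_0 = R_+$ together with the equality $\Iba_0 = G$ recorded in~(\ref{exemple:dba-0}).

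First I would recall the definition: a Calogero-Moser two-sided $0$-cell is, by definition, an orbit of the inertia group $\Iba_0$ acting on $W$ via the identification $W \longleftrightarrow G/H$ of~\S\ref{subsection:action G}, where $\Iba_0$ acts by left translation on cosets. Since~(\ref{exemple:dba-0}) gives $\Iba_0 = G$, and $G$ acts transitively on $G/H$ by left translation, there is exactly one $\Iba_0$-orbit, and it equals all of $W$. This yields the first assertion.

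For the second assertion, the quickest route is to invoke Example~\ref{ex:cm0}, which states that $|\Upsilon_0^{-1}(0)| = 1$ and hence there is a unique Calogero-Moser $0$-family. Since $\Irr(W)$ is the disjoint union of its Calogero-Moser $0$-families by~(\ref{cm familles}), this unique family must be all of $\Irr(W)$. Combined with the fact that the unique two-sided cell $W$ covers the unique family (Theorem~\ref{theo cellules familles}(b)), we obtain $\Irr_W^\calo(W) = \Irr(W)$.

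The argument is essentially bookkeeping once Proposition~\ref{prop:rba0} is in hand; no serious obstacle arises, since the main ingredient, the equality $\Iba_0 = G$, has already been established via the homogeneity of $R_+$ and Corollary~\ref{r0 DI}. As a sanity check, the formula in Theorem~\ref{theo cellules familles}(c) gives $|W| = \sum_{\chi \in \Irr(W)} \chi(1)^2$, consistent with the cell $W$ covering the family $\Irr(W)$.
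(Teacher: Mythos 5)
Your argument is correct and matches the paper's (implicit) reasoning: the corollary is stated as an immediate consequence of $\Iba_0=\Dba_0=G$ from~(\ref{exemple:dba-0}), since the two-sided $0$-cells are the $\Iba_0$-orbits on $W\simeq G/H$ and $G$ acts transitively, while the unique covered family must be all of $\Irr(W)$ because the families partition $\Irr(W)$. Your additional appeal to Example~\ref{ex:cm0} and the sanity check via Theorem~\ref{theo cellules familles}(c) are consistent with, and do not diverge from, the paper's route.
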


\bigskip

A feature of the specialization at $0$ is that the algebra 
$\Hbov_0$ inherits the $(\NM \times \NM)$-grading, and so the $\NM$-grading. 
If we set
$$\Hbov_{0,+} = \mathop{\bigoplus}_{i \ge 1} \Hbov_0^\NM[i],$$
then $\Hbov_{0,+}$ is a nilpotent two-sided ideal of $\Hbov_0$ and, since 
$\Hbov_0^\NM[0] = \kb W$, we get the following result:

\bigskip

\begin{prop}\label{rad h0}
$\Rad(\Hbov_0) = \Hbov_{0,+}$ and $\Hbov_0/\Rad(\Hbov_0) \simeq \kb W$.
\end{prop}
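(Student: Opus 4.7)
The plan is to exhibit $\Hbov_{0,+}$ as a nilpotent two-sided ideal whose quotient is semisimple, which will force it to coincide with the Jacobson radical. I would first unpack the algebra: since $c=0$, we have $\Hb_0 = \kb[V \oplus V^*] \rtimes W$ (Example~\ref{exemple zero-1}), and $\pGba_0 = P_+$, so
$$\Hbov_0 = \Hb_0/\langle \kb[V]^W_+,\,\kb[V^*]^W_+\rangle \simeq \bigl(\kb[V]^\cow \otimes \kb[V^*]^\cow\bigr) \rtimes W,$$
where $W$ acts diagonally. With the $\NM$-grading inherited from $\Hb$ (elements of $V$ and $V^*$ in degree $1$, elements of $W$ in degree $0$), this identification respects the grading and the degree $0$ component is $\Hbov_0^\NM[0] = \kb W$.

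Next I would verify nilpotence of $\Hbov_{0,+}$. Because $\kb[V]^\cow$ and $\kb[V^*]^\cow$ are both finite-dimensional $\NM$-graded algebras (concentrated in degrees $0$ through $N=|\Ref(W)|$ by Theorem~\ref{chevalley} and the discussion preceding Proposition~\ref{pr:symmkV}), the $\NM$-grading on $\Hbov_0$ is bounded, say by some $M$. Since $(\Hbov_{0,+})^k \subseteq \bigoplus_{i \ge k} \Hbov_0^\NM[i]$, we obtain $(\Hbov_{0,+})^{M+1} = 0$, so $\Hbov_{0,+}$ is a nilpotent two-sided ideal. This yields the inclusion $\Hbov_{0,+} \subseteq \Rad(\Hbov_0)$.

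For the reverse inclusion, I would observe that the quotient $\Hbov_0/\Hbov_{0,+}$ is canonically isomorphic to $\kb W$, which is semisimple since $\kb$ has characteristic $0$ and $W$ is finite. Hence $\Rad(\Hbov_0/\Hbov_{0,+}) = 0$, and since the image of the radical of an algebra in any quotient is contained in the radical of the quotient, we conclude $\Rad(\Hbov_0) \subseteq \Hbov_{0,+}$. Combining the two inclusions gives $\Rad(\Hbov_0) = \Hbov_{0,+}$ and the induced isomorphism $\Hbov_0/\Rad(\Hbov_0) \simeq \kb W$.

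There is no serious obstacle here: the argument is essentially the standard fact that in a connected $\NM$-graded finite-dimensional algebra whose degree $0$ component is semisimple, the augmentation ideal is the radical. The only thing worth checking carefully is the explicit description of $\Hbov_0$ above and the boundedness of the grading, both of which follow directly from results already established in the paper.
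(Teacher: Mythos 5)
Your proof is correct and follows exactly the paper's argument: the paper likewise observes that $\Hbov_0$ inherits the $\NM$-grading, that $\Hbov_{0,+}$ is a nilpotent two-sided ideal, and that $\Hbov_0^\NM[0]=\kb W$ is semisimple, from which the statement follows. Your additional verifications (the explicit identification $\Hbov_0\simeq(\kb[V]^\cow\otimes\kb[V^*]^\cow)\rtimes W$ and the boundedness of the grading) are accurate and just make explicit what the paper leaves implicit.
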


\bigskip

In particular, 
\equat\label{l 0}
\isomorphisme{\LCov_{\Kbov_0}(\chi)}_{\kb W}^\grad = \chi \in \groth(\kb W)[\tb,\tb^{-1}]
\endequat
and
\equat\label{m 0}
\isomorphisme{\Kbov_0\MCov(\chi)}_{\Hbov_0}
= \chi(1)~ \isomorphisme{\kb W}_{\kb W} \in \BZ\Irr(W)\simeq \groth(\Hbov_0).
\endequat

\bigskip

\section{Left cells, cellular characters}

\medskip

Recall that, in~\S\ref{subsection:specialization galois 0}, we have fixed a prime ideal 
$\rG_0$ of $R$ lying over $\qG_0=\CG_0 Q$ as well as a field isomorphism 
$$\iso : \kb(V \times V^*)^{\D\Zrm(W)} \longiso \Mb_0=k_R(\rG_0)$$
whose restriction to $\kb(V \times V^*)^{\D W}$ is the canonical isomorphism 
$\kb(V \times V^*)^{\D W} \longiso \Frac(Z_0) \longiso \Lb_0$. Hence, 
$R/\rG_0 \subset \iso(\kb[V \times V^*]^{\D\Zrm(W)})$ and these two rings have the same fraction field, 
the field $\Mb_0$. Recall also that we do not know if these two rings are
equal, or equivalently, if
$R/\rG_0$ is integrally closed or not. %(Question~\ref{question:r0}).

\bigskip

\begin{prop}\label{prop:unicite-r0gauche}
There exists a unique prime ideal of $R$ lying over $\pG_0^\gauche$ and containing $\rG_0$.
\end{prop}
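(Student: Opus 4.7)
The plan is to reduce the uniqueness question to a transparent count of primes in the polynomial ring $\kb[V\times V^*]$, by inserting the integral closure of $R/\rG_0$ between $R/\rG_0$ and our calculation. First I would observe that since $\pG_0\subset\pG_0^\gauche$, primes of $R$ containing $\rG_0$ and lying over $\pG_0^\gauche$ correspond bijectively to primes of $R/\rG_0$ lying over the image $\pG_0^\gauche/\pG_0$ in $P/\pG_0\simeq\kb[V\times V^*]^{W\times W}$. Through $\iso$ from \S\ref{subsection:specialization galois 0}, $R/\rG_0$ is a subring of $C:=\kb[V\times V^*]^{\Delta Z(W)}$ having the same fraction field $\Mb_0$. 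Since $C$ is a ring of invariants of a polynomial ring, it is integrally closed, and hence is the integral closure of $R/\rG_0$ in $\Mb_0$; in particular $C$ is integral over $R/\rG_0$, so by Cohen--Seidenberg the map $\Spec C\to\Spec(R/\rG_0)$ is surjective.

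It therefore suffices to show that $C$ has a unique prime ideal lying over $\pG_0^\gauche/\pG_0$, since then the unique such prime of $C$ can contract to at most one prime of $R/\rG_0$, while at least one such prime exists by lying over. To count primes of $C$, I would invoke the fact that $C=\kb[V\times V^*]^{\Delta Z(W)}$ with $\Delta Z(W)\triangleleft W\times W$, so that $\Spec\kb[V\times V^*]\to\Spec C$ is the quotient by $\Delta Z(W)$: the primes of $C$ lying over $\pG_0^\gauche/\pG_0$ are in bijection with $\Delta Z(W)$-orbits on primes of $\kb[V\times V^*]$ lying over $\pG_0^\gauche/\pG_0$.

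The main computation is the count of primes of $\kb[V\times V^*]$ over $\pG_0^\gauche/\pG_0$. The extension of $\pG_0^\gauche/\pG_0=\kb[V]^W\otimes(\kb[V^*]^W)_+$ to $\kb[V\times V^*]$ is $\kb[V]\otimes(\kb[V^*]^W)_+\kb[V^*]$. By Theorem~\ref{chevalley} applied to $V^*$, the quotient $\kb[V^*]/(\kb[V^*]^W)_+\kb[V^*]=\kb[V^*]^\cow$ is a finite-dimensional graded local $\kb$-algebra whose maximal ideal $\kb[V^*]_+/(\kb[V^*]^W)_+\kb[V^*]$ is nilpotent. Thus the extended ideal has the single minimal prime $\kb[V]\otimes\kb[V^*]_+$ (the defining ideal of $V\times\{0\}$), and by Cohen--Seidenberg no other prime of $\kb[V\times V^*]$ can lie over $\pG_0^\gauche/\pG_0$. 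Since $V\times\{0\}$ is stable under $W\times W$, this prime is fixed by $\Delta Z(W)$, so its orbit is a singleton, yielding a unique prime of $C$ over $\pG_0^\gauche/\pG_0$.

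The only delicate point is the third paragraph, i.e., correctly identifying the radical of $(\kb[V^*]^W)_+\kb[V^*]$ via Chevalley's theorem; everything else is a formal combination of the integral extension results from Appendix~\ref{chapter:galois-rappels}. All of the rest is bookkeeping with the tower $P/\pG_0\subset R/\rG_0\subset C\subset\kb[V\times V^*]$.
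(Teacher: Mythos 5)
Your proof is correct and follows essentially the same route as the paper: both arguments come down to the fact that the only prime of $\kb[V\times V^*]$ lying over $\pG_0^\gauche/\pG_0$ is the defining ideal of $V\times\{0\}$ (which you justify, correctly, via the finiteness of the coinvariant algebra $\kb[V^*]^\cow$ and incomparability), and then push this uniqueness down to $R/\rG_0$. Your detour through the integral closure $C=\kb[V\times V^*]^{\Delta Z(W)}$ and the $\Delta Z(W)$-orbit count is just a slightly more explicit packaging of the paper's direct intersection $\iso(\rG^*\cap\kb[V\times V^*]^{\Delta Z(W)})\cap(R/\rG_0)$.
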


\begin{proof}
Let $\pG^*=\iso^{-1}(\pG_0^\gauche/\pG_0)$. Since
$\kb[V \times V^*]^{W \times W}/\pG^* \simeq \kb[V \times 0]^{W \times W}$,
there is only one prime ideal $\rG^*$ of $\kb[V \times V^*]$ lying over $\pG^*$: it is the 
defining ideal of the irreducible closed subvariety $V \times 0$ of $V \times V^*$. In other words, 
$$\kb[V \times V^*]/\rG^*=\kb[V \times 0].$$
Consequently, the unique prime ideal $\rG_0^\gauche$ of $R$ lying over $\pG_0^\gauche$ 
and containing $\rG_0$ is defined by 
$\rG_0^\gauche/\rG_0 = \iso(\rG^* \cap \kb[V \times V^*]^{\D\Zrm(W)}) \cap (R/\rG_0)$. 
\end{proof}

\bigskip

Let $\rG_0^\gauche$ be the unique prime ideal of $R$ lying over $\qG_0^\gauche$ and containing $\rG_0$ 
(see Proposition~\ref{prop:unicite-r0gauche}) and let 
$D_0^\gauche$ (respectively $I_0^\gauche$) denote its decomposition (respectively inertia) group. 

\bigskip

\begin{prop}\label{prop:d0-left}
\begin{itemize}
\itemth{a} $\iota(W \times W) \subset D_0^\gauche$ and $\iota(W \times 1) \subset I_0^\gauche$.

\itemth{b} The canonical map $\bar{\iota} : W \times W \to D_0^\gauche/I_0^\gauche$ is 
surjective and its kernel contains $W \times \Zrm(W)$.

\itemth{c} $D_0^\gauche/I_0^\gauche$ is a quotient of $W/\Zrm(W)$.

\itemth{d} If $R/\rG_0$ is integrally closed (i.e. if $R/\rG_0 \simeq \kb[V \times V^*]^{\D\Zrm(W)}$), then 
$\Ker(\bar{\iota}) = W \times \Zrm(W)$ and $D_0^\gauche/I_0^\gauche \simeq W/\Zrm(W)$.
\end{itemize}
\end{prop}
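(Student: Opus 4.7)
The plan is to carry out the analysis directly inside the explicit model for $R/\rG_0^\gauche$ furnished by Proposition~\ref{prop:unicite-r0gauche}. Recall from that proof that, via $\iso$, the subring $R/\rG_0$ embeds into $\kb[V\times V^*]^{\D\Zrm(W)}$ with the same fraction field $\Mb_0$, and $\rG_0^\gauche/\rG_0$ corresponds to intersecting with $\rG^*$, the ideal of $V\times\{0\}$ in $\kb[V\times V^*]$, namely the ideal generated by the subspace $V\subset \kb[V^*]$ of linear forms on $V^*$ vanishing at $0$. The whole argument consists in tracking the $W\times W$-action through these identifications.

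For part (a), the key observation is that $V\subset\kb[V^*]$ is $W$-stable, so the $W\times W$-action on $\kb[V\times V^*]$ stabilizes $\rG^*$, hence also $\rG^*\cap \kb[V\times V^*]^{\D\Zrm(W)}$, and consequently $\iota(W\times W)$ stabilizes $\rG_0^\gauche/\rG_0$ inside $R/\rG_0$. Combined with $\im(\iota)=D_0$ (Proposition~\ref{WW}(b)), this gives $\iota(W\times W)\subset D_0^\gauche$. For the inertia statement, compute $\kb[V\times V^*]/\rG^*=\kb[V]=S(V^*)$; under the convention of~\S\ref{subsection:specialization galois 0}, the induced action of $(w_1,w_2)$ on this quotient goes through the second factor (since $V^*\subset\kb[V]$ transforms via $w_2$). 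Therefore $\iota(w,1)$ acts trivially on $\kb[V]$, hence on any subring of it, including $R/\rG_0^\gauche$; this yields $\iota(W\times 1)\subset I_0^\gauche$.

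Parts (b) and (c) are essentially Galois-theoretic bookkeeping based on (a). By Theorem~\ref{bourbaki}, $D_0^\gauche/I_0^\gauche=\Gal(k_R(\rG_0^\gauche)/\kb(V)^W)$, and $k_R(\rG_0^\gauche)$ embeds into $\kb(V)^{\Zrm(W)}$ via the quotient by $\rG^*$. By~(a), the map $\bar\iota:W\times W\to D_0^\gauche/I_0^\gauche$ factors through the second projection $(w_1,w_2)\mapsto w_2$, and the image of $w_2\in W$ is the restriction to $k_R(\rG_0^\gauche)$ of the natural action of $w_2$ on $\kb(V)^{\Zrm(W)}$. Since $\Zrm(W)$ acts trivially on $\kb[V]^{\Zrm(W)}\supset R/\rG_0^\gauche$, the kernel of $\bar\iota$ contains $W\times\Zrm(W)$, which proves (c) and the kernel statement of (b). Surjectivity in (b) follows from standard Galois theory applied to the tower $\kb(V)\supset k_R(\rG_0^\gauche)\supset\kb(V)^W$: any element of $\Gal(k_R(\rG_0^\gauche)/\kb(V)^W)$ lifts to an element of $\Gal(\kb(V)/\kb(V)^W)=W$, i.e., to an element of the image of $\iota(1\times W)$.

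Finally, (d) follows from the construction: the hypothesis $R/\rG_0=\kb[V\times V^*]^{\D\Zrm(W)}$ forces $R/\rG_0^\gauche=\kb[V]^{\Zrm(W)}$ (since $f\otimes 1\in \kb[V]\otimes\kb[V^*]$ shows that the surjection $\kb[V\times V^*]^{\D\Zrm(W)}\twoheadrightarrow\kb[V]^{\D\Zrm(W)}=\kb[V]^{\Zrm(W)}$ is indeed surjective), hence $k_R(\rG_0^\gauche)=\kb(V)^{\Zrm(W)}$ and $D_0^\gauche/I_0^\gauche=W/\Zrm(W)$; the kernel of $\bar\iota$ then equals $W\times\Zrm(W)$ for dimension reasons. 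There is no serious obstacle anywhere: the only mildly delicate point is the conventional issue of whether the action on the quotient $\kb[V]$ is through the first or second factor, which must be handled with care to keep the statement $\iota(W\times 1)\subset I_0^\gauche$ (rather than $\iota(1\times W)\subset I_0^\gauche$) consistent with the choice of identification $W\hookrightarrow G$ via $w\mapsto \iota(w,1)$ made in~\S\ref{subsection:specialization galois 0}.
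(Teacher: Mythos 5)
Your proof is correct and follows essentially the same route as the paper's: both work inside the explicit model $R/\rG_0^\gauche \hookrightarrow \kb[V\times V^*]^{\D\Zrm(W)}/\rG^*$ coming from the proof of Proposition~\ref{prop:unicite-r0gauche}, observe that the residual $W\times W$-action on the quotient by $\rG^*$ is through the second factor, and read off (b)--(d) from the sandwich $\kb[V]^W\subset R/\rG_0^\gauche\subset\kb[V]^{\Zrm(W)}$. The only cosmetic difference is in (a), where the paper deduces $\iota(W\times W)\subset D_0^\gauche$ directly from the uniqueness statement of Proposition~\ref{prop:unicite-r0gauche} rather than from the explicit $W\times W$-stability of $\rG^*$; the two arguments are interchangeable.
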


\begin{proof}
The first statement of~(a) follows from the uniqueness of $\rG_0^\gauche$ 
(see Proposition~\ref{prop:unicite-r0gauche}). For the second statement, let us use here 
the notation of the proof of Proposition~\ref{prop:unicite-r0gauche}, 
and note that $W \times 1$ acts trivially on $\kb[V \times V^*]/\rG^*$.

\medskip

Now, let $B_0$ be the inverse image of $R/\rG_0$ in $\kb[V \times V^*]$ through $\iso$. Then 
$\kb[V \times 0]^{W \times W} \subset B_0/\rG^* \subset \kb[V \times 0]^{\D \Zrm(W)}=
\kb[V \times 0]^{W \times \Zrm(W)} \subset \kb[V \times 0]$. (b), (c) and (d) follow 
from these observations.
\end{proof}

This study of decomposition and inertia groups allows us to deduce the
following result.

\bigskip

\begin{coro}\label{coro:0-cm}
$W$ contains only one Calogero-Moser left $0$-cell, namely $W$ itself, and
$$\isomorphisme{W}_0^\calo = \isomorphisme{\kb W}_{\kb W} = \sum_{\chi \in \Irr(W)} \chi(1).$$
\end{coro}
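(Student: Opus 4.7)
The plan is to combine the Galois-theoretic information about $I_0^\gauche$ gathered in Proposition~\ref{prop:d0-left} with the dimension identities for cellular multiplicities in Proposition~\ref{multiplicite cm}. By Definition~\ref{defi:gauche}, the Calogero-Moser left $0$-cells are the orbits of the inertia group $I_0^\gauche$ acting on the set $W$, where we identify $W$ with $G/H$ as in~\S\ref{subsection:action G}.

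First I would observe that Proposition~\ref{prop:d0-left}(a) provides the key inclusion $\iota(W\times 1)\subset I_0^\gauche$. By the formula~\eqref{iota concret}, the element $\iota(w,1)$ acts on $W\simeq G/H$ by left multiplication $w'\mapsto ww'$; in particular the subgroup $\iota(W\times 1)$ already acts transitively (and even simply transitively) on $W$. Consequently $I_0^\gauche$ acts transitively on $W$, so there is a single Calogero-Moser left $0$-cell, equal to $W$ itself.

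It remains to identify the associated cellular character. Since $W$ is the only left $0$-cell, Proposition~\ref{multiplicite cm}(a) specializes to $\mult_{W,\chi}^\calo=\chi(1)$ for every $\chi\in\Irr(W)$. Plugging this into~\eqref{eq:cellulaire} gives
$$\isomorphisme{W}_0^\calo=\sum_{\chi\in\Irr(W)}\chi(1)\,\chi=\isomorphisme{\kb W}_{\kb W},$$
which is exactly the stated formula. (As a consistency check, Proposition~\ref{multiplicite cm}(b) then reads $\sum_\chi \chi(1)^2=|W|$, which is of course correct.)

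There is no real obstacle here: once Proposition~\ref{prop:d0-left}(a) is in hand, the statement on cells is immediate from transitivity of left translation, and the statement on characters is a one-line application of the general identity $\sum_{C}\mult_{C,\chi}^\calo=\chi(1)$ in the trivial situation where only one cell exists. The genuine work was already carried out in the previous subsection when computing $I_0^\gauche$; this corollary merely extracts its consequences.
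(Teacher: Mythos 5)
Your proof is correct and follows exactly the paper's route: the single-cell statement comes from Proposition~\ref{prop:d0-left}(a) (since $\iota(W\times 1)\subset I_0^\gauche$ acts transitively on $W$ by left translation), and the character formula is the specialization of Proposition~\ref{multiplicite cm}(a) to the case of a unique left cell. No further comment is needed.
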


\begin{proof}
The first statement follows from Proposition~\ref{prop:d0-left}(a) whereas the second one follows from 
Proposition~\ref{multiplicite cm}(a).
\end{proof}

\bigskip

Let us conclude with an easy remark, which, combined with Proposition~\ref{prop:d0-left}, 
shows that the pair $(I_0^\gauche,D_0^\gauche)$ has a surprising behaviour.

\bigskip

\begin{prop}\label{prop:dc-d0}
Let $\CG$ be a prime ideal of $\kb[\CCB]$. Then there exists $h \in H$ such that 
$\lexp{h}{I_\CG^\gauche} \subset I_0^\gauche$.
\end{prop}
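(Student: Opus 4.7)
The plan is to exploit the inclusion $\qG_\CG^\gauche\subset\qG_0^\gauche$ (after an easy reduction) and then to transport $\rG_\CG^\gauche$ into $\rG_0^\gauche$ via the action of $H$, using that $H$ acts transitively on the set of primes of $R$ lying over any given prime of $Q$.

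First I would replace $\CG$ by the maximal homogeneous sub-ideal $\CGt=\bigoplus_{i\ge 0}\CG\cap\kb[\CCB]^\NM[i]$ contained in $\CG$. This ideal is still prime by Lemma~\ref{lem:homogeneise-premier}, and since every non-zero homogeneous element of a proper ideal of $\kb[\CCB]$ has positive degree (i.e.\ $\CGt\cap\kb=0$), one has $\CGt\subset\CG_0$. Choosing $\rG_\CGt^\gauche$ to be the maximal homogeneous sub-ideal of $\rG_\CG^\gauche$, Proposition~\ref{lem:cellules-gauches-homogeneise} gives $I_\CG^\gauche=I_\CGt^\gauche$. So I may assume $\CG\subset\CG_0$; the definitions $\qG_\CG^\gauche=\qG^\gauche+\CG Q$ and $\qG_0^\gauche=\qG^\gauche+\CG_0 Q$ then yield immediately $\qG_\CG^\gauche\subset\qG_0^\gauche=\rG_0^\gauche\cap Q$.

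Next, since $Q\simeq Z$ is a normal domain (Corollary~\ref{coro:endo-bi}(f)) and $R$ is integral over $Q$, the going-down theorem applies to the extension $Q\subset R$. Applied to the chain $\qG_\CG^\gauche\subset\rG_0^\gauche\cap Q$, it produces a prime $\rG^\ast$ of $R$ satisfying $\rG^\ast\cap Q=\qG_\CG^\gauche$ and $\rG^\ast\subset\rG_0^\gauche$. Since $H$ acts transitively on primes of $R$ lying over $\qG_\CG^\gauche$ (standard Galois theory for the integral extension $Q=R^H\subset R$, cf.\ Proposition~\ref{reduction}), there exists $h\in H$ with $h(\rG_\CG^\gauche)=\rG^\ast$, whence $\lexp{h}{I_\CG^\gauche}=I_{\rG^\ast}$. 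Combined with the inclusion $\rG^\ast\subset\rG_0^\gauche$ and the monotonicity of the inertia group under inclusion of primes (Remark~\ref{semicontinuite}), this yields $I_{\rG^\ast}\subset I_{\rG_0^\gauche}=I_0^\gauche$, so that $\lexp{h}{I_\CG^\gauche}\subset I_0^\gauche$ as required.

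The only non-formal ingredient is the application of going-down, which relies critically on the normality of $Q$ from Corollary~\ref{coro:endo-bi}(f); everything else is standard bookkeeping with the transitive action of $H$ on primes lying over a common prime of $Q$ and with the monotonicity of inertia under inclusion.
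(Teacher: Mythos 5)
Your argument is correct and follows the paper's own route exactly: reduce to the homogeneous case via Lemma~\ref{lem:homogeneise-premier} and Proposition~\ref{lem:cellules-gauches-homogeneise} to get $\CG\subset\CG_0$, deduce $\qG_\CG^\gauche\subset\qG_0^\gauche$, produce $h\in H$ with $h(\rG_\CG^\gauche)\subset\rG_0^\gauche$, and conclude by monotonicity of inertia groups. The only difference is cosmetic: the paper simply asserts the existence of such an $h$, whereas you justify it by combining going-down for the integral extension $Q\subset R$ (valid here either by normality of $Q$ or directly from the transitive $H$-action on fibers) with the transitivity of $H$ on primes above $\qG_\CG^\gauche$ — a useful elaboration, not a new approach.
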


\begin{proof}
Let $\CGt$ denote the maximal homogeneous ideal of $\kb[\CCB]$ contained in 
$\CG$. By Proposition~\ref{lem:cellules-gauches-homogeneise}, we have 
$I_\CG^\gauche=I_\CGt^\gauche$. This means that we may assume that $\CG$ is homogeneous. 
In particular, $\CG \subset \CG_0$. So $\qG_\CG^\gauche \subset \qG_0^\gauche$ and there exists 
$h \in H$ such that $h(\rG_\CG^\gauche) \subset \rG_0^\gauche$. 
Therefore, $\lexp{h}{I_\CG^\gauche} \subset I_0^\gauche$. 
\end{proof}

\bigskip

It would be tempting to think, after Proposition~\ref{prop:d0-left}, that $D_0^\gauche=\iota(W \times W)$ and  
$I_0^\gauche=\iota(W \times \Zrm(W))$. However, this would contradict
Proposition~\ref{prop:dc-d0}, if we assume that
Conjectures~\BIL~and~\GAUCHE~hold: 
indeed, $I_0^\gauche$ must contain conjugates of subgroups admitting as orbits the left cells.
We will see in Chapter~\ref{chapitre:rang 1} %et~\ref{chapitre:b2} 
that if $\dim_\kb(V)=1$, then $D_0^\gauche=G$. 

% 
% 
% 
% 
% \subsection{Un exemple assez nul~: le cas ${\boldsymbol{c=0}}$}\label{subsection:exemple nul} 
% %D'apr\`es le corollaire~\ref{particulier ferme}, $0 \in \CCB_\parti$. 
% Nous allons ici \'etudier ce qui se passe lorsque l'on sp\'ecialise 
% en $0$ (familles, modules simples). 
% Pour commencer, rappelons que
% \equat\label{h0}
% \Hbov_0 \simeq \bigl(\kb[V]^\cow \otimes \kb[V^*]^\cow\bigr) \rtimes W.
% \endequat
% Commen\c{c}ons par les familles~:
% 
% \bigskip
% 
% \begin{lem}\label{blocs c=0}
% Il n'y a qu'une seule $0$-famille de Calogero-Moser.
% \end{lem}
% 
% 
% 
% 
% 
% 

\chapter{Groups of rank 1}\label{chapitre:rang 1}

\bigskip
 
\boitegrise{{\bf Assumption and notation.} {\it In
\S\ref{chapitre:rang 1}, 
we assume that $\dim_\kb V=1$, we fix a non-zero element 
$y$ of $V$ and we denote by $x$ the element of $V^*$ such that $\langle y,x\rangle = 1$. 
We also fix an integer $d \ge 2$ and we assume that $\kb$ contains a primitive $d$-th root 
of unity $\z$. 
We denote by $s$ the automorphism of $V$ defined by $s(y)=\z y$, so that
$s(x)=\z^{-1} x$. We assume finally that $W=\langle s \rangle$: $s$ is a reflection and $W$ 
is cyclic of order $d$.}}{0.75\textwidth}

\bigskip

\section{The algebra $\Hbt$}\label{section:H rang 1}

\medskip

\subsection{Definition} 
We have $\REF(W)=\{s^i~|~1 \le i \le d-1\}$. 
Given $1 \le i \le d-1$, we denote by $C_i$ the indeterminate $C_{s^i}$, 
so that $\kb[\CCBt]=\kb[T,C_1,C_2,\dots,C_{d-1}]$ and
$\kb[\CCB]=\kb[C_1,C_2,\dots,C_{d-1}]$.
The $k[\CCBt]$-algebra $\Hbt$ is generated by $x,y,s$ with the relations
\equat\label{relation d}
sys^{-1}=\zeta y,\ sxs^{-1}=\zeta^{-1}x \text{ and }
[y,x] = T+ \sum_{1 \le i \le d-1} (\z^i-1) C_i~s^i.
\endequat
We set $C_0=C_{s^0}=0$. 
% Nous allons aussi travailler avec les \'el\'ements $K_{\O,i}$ introduits 
% dans la section~\ref{section:hyperplans}. 
The hyperplane arrangement $\AC$ is reduced to one element, 
and $\AC/W$ as well (we write $\AC/W=\{\orbite\}$), 
we put $K_j=K_{\orbite,j}$ (for $0 \le j \le d-1$). 
Recall that the family $(K_j)_{0 \le j \le d-1}$ is determined by the relations 
\equat\label{relations K}
\forall~0 \le i \le d-1,~C_i=\sum_{j=0}^{d-1} \z^{i(j-1)} K_j.
\endequat
We put
$$K_{di+j}=K_j$$
for all $i \in \BZ$ and $j \in \{0,1,\dots,d-1\}$. Recall that 
$$K_0+K_1+\cdots + K_{d-1}=0.$$
The last defining relation for $\Hbt$ can be rewritten as
$$[y,x] = T+ d\sum_{0 \le i \le d-1} (K_{H,i}-K_{H,i+1})\varepsilon_i,$$
where $\varepsilon_i=d^{-1}\sum_{j=0}^{d-1}\zeta^{ij}s^j$.

\subsection{Differential operators on $\CM^\times$}
We have 
$$D_y=T\partial_y-x^{-1}\sum_{i=1}^{d-1}\zeta^iC_is^i=
T\partial_y-dx^{-1}\sum_{i=0}^{d-1}K_i\varepsilon_i.$$

\smallskip
Given $L$ a $\CM[\CCB][y]\rtimes W$-module, the $W$-equivariant connection on 
$\OC_{\CM^\times}\otimes L$ is given by
$$\nabla(p\otimes l)=\frac{dp}{dx}\otimes l+p\otimes y\cdot l
+dx^{-1} \sum_{i=0}^{d-1}K_ip\otimes \varepsilon_i l.$$
When $L=\CM[\CCB][y]/(y)\otimes \det^n=\CM[\CCB]$, we obtain
$\nabla=\partial+x^{-1}K_n$.

\bigskip

\subsection{The variety ${\boldsymbol{(V \times V^*)/\Delta W}}$}\label{subsection:quotient rang 1}
Let $X=x^d$ and $Y=y^d$. Recall that $\euler_0 = xy$. We have
$$\kb[V \times V^*]^{\Delta W} = \kb[X,Y,\euler_0]$$
and the relation 
\equat\label{relation center d}
\euler_0^d = XY
\endequat
holds. It is easy to check that this relation generates the ideal of relations. 

\bigskip

\section{The algebra ${\boldsymbol{Z}}$}\label{section:Q rang 1}

\medskip

Recall that $\euler=yx + \sum_{i=1}^{d-1} C_i~s^i$ (its image in 
$\Hb_0$ is $\euler_0$) and that $\e : W \to \kb^\times$ is the determinant,
characterized by $\e(s)=\z$. We have $\e^d=1$ and 
$$\Irr W=\{1,\e,\e^2,\dots,\e^{d-1}\}.$$
The image of the Euler element by $\O_\chi$ can be computed thanks to Corollary~\ref{action euler verma}: we have
\equat\label{euler cyclique}
\O_{\e^i}(\euler) = d K_{-i}
\endequat
for all $i \in \BZ$.

\medskip

Recall that $\pGba=\langle X,Y \rangle_P$ and that $\pGba Z \subset \Ker(\O_\chi)$ 
for all $\chi \in \Irr(W)$. More precisely, we have
\equat\label{inter omega}
\bigcap_{i=1}^d \Ker(\O_{\e^i}) = \pGba Z = \langle X,Y\rangle_Z.
\endequat
\begin{proof}
It follows from Example~\ref{lineaire} that the generic Calogero-Moser families 
are reduced to one element. Given $1 \le i \le d$, let $b_i$ denote the primitive 
central idempotent of $\kb(\CCB)\Hbov$ such that 
$\Irr \kb(\CCB)\Hbov b_i = \{L_{(0)}(\e^i)\}$. We have
$$\kb(\CCB) \Zba \simeq \prod_{i=1}^{d} \kb(\CCB)\Zba b_i$$
and, by Theorem~\ref{dim graduee bonne}, 
\equat\label{dim bi}
\dim_{\kb(\CCB)} \kb(\CCB)\Zba b_i = 1.
\endequat
Since $b_j$ is characterized by $\O_{\e^i}(b_j)=\d_{i,j}$ (the Kronecker symbol) for all 
$i \in \{1,2,\dots,d\}$, the equality~(\ref{inter omega}) follows.
\end{proof}

\bigskip

The next result is well-known.

\bigskip

\begin{theo}\label{center rang 1}
We have
$Z=P[\euler]=\kb[C_1,\dots,C_{d-1},X,Y,\euler]=\kb[K_1,\dots,K_{d-1},X,Y,\euler]$ 
and the ideal of relations is generated by 
$$\prod_{i=1}^d (\euler - d K_i) = XY.$$
\end{theo}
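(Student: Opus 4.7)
The plan is as follows. First I would observe that by Proposition~\ref{Q=Pe} (applied to the case where $W$ is generated by the single reflection $s$), or more concretely via the Hilbert-series argument of Example~\ref{qpe}, we have $Z = P[\euler]$. Combined with the identification
$$P = \kb[\CCB] \otimes \kb[V]^W \otimes \kb[V^*]^W = \kb[C_1,\dots,C_{d-1}][X,Y]$$
and the invertible linear change of coordinates (\ref{relations K}) between the $C_i$'s and the $K_j$'s, this yields the three alternative presentations
$Z = P[\euler] = \kb[C_1,\dots,C_{d-1},X,Y,\euler] = \kb[K_1,\dots,K_{d-1},X,Y,\euler]$.

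It then remains to determine the minimal polynomial $F(t)\in P[t]$ of $\euler$ over $P$. By Proposition~\ref{pr:minimal-Euler}, $F$ has degree $d=|W|$, and since $Z$ is free over $P$ of rank $d$ with basis $1,\euler,\dots,\euler^{d-1}$, the ideal of relations in the presentation $\kb[C_1,\dots,C_{d-1},X,Y,E] \twoheadrightarrow Z$, $E\mapsto\euler$, is generated by $F(E)$. So the theorem reduces to identifying $F(t) = \prod_{i=1}^d (t-dK_i) - XY$.

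Next I would constrain the form of $F$ using the $(\NM\times\NM)$-bigrading: $\euler$, each $C_j$ and each $K_j$ have bidegree $(1,1)$, while $X$ has bidegree $(0,d)$ and $Y$ bidegree $(d,0)$. Writing $F(t)=t^d+\sum_{i=0}^{d-1}b_i t^i$, each $b_i \in P$ must be bihomogeneous of bidegree $(d-i,d-i)$. An easy inspection of the bihomogeneous monomials $K^\alpha X^a Y^b$ of bidegree $(d-i,d-i)$ forces $a=b$ and $|\alpha|=d-i-ad$; hence for $i\ge 1$ one must have $a=b=0$, so $b_i\in\kb[K_1,\dots,K_{d-1}]$ is homogeneous of degree $d-i$ in the $K_j$'s, whereas $b_0 = g(K) + c\cdot XY$ with $g$ homogeneous of degree $d$ in the $K_j$'s and $c\in\kb$.

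The main (though not really difficult) step is to pin down $F$ by two compatible specializations. Specializing at $c=0$ (i.e.\ $K_j\mapsto 0$), Proposition~\ref{pr:minimal-Euler} identifies the specialization of $F$ with the minimal polynomial of $\euler_0$ over $P_\bullet$; by the description $Z_0 = \kb[X,Y,\euler_0]/(\euler_0^d-XY)$ of \S\ref{subsection:quotient rang 1}, this minimal polynomial is $t^d - XY$. This forces $c=-1$ and makes all $b_i(0)$ vanish (the latter being automatic by homogeneity). On the other hand, reducing $F$ modulo $\pGba=\langle X,Y\rangle$, Corollary~\ref{polynome caracteristique} combined with $\chi(1)=1$ for every $\chi\in\Irr(W)$ identifies the reduction with $\prod_{\chi\in\Irr(W)}(t-\O_\chi(\euler))$, which by (\ref{euler cyclique}) equals $\prod_{i=1}^d (t - dK_i)$. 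The polynomial $\prod_{i=1}^d (t - dK_i) - XY$ satisfies both specialization constraints and is of the bidegree shape prescribed above, so the bihomogeneity-uniqueness forces $F(t) = \prod_{i=1}^d (t - dK_i) - XY$. Evaluating at $t=\euler$ in $Z$ yields the relation $\prod_{i=1}^d(\euler-dK_i)=XY$, and the fact that $F(E)$ generates the ideal of relations concludes the proof.
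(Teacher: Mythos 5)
Your proof is correct and follows essentially the same route as the paper's: both arguments rest on $Z=P[\euler]$ from Example~\ref{qpe}, the two specializations at $K=0$ (where the relation collapses to $\euler_0^d=XY$) and modulo $\langle X,Y\rangle$ (where the central characters give $\prod_{i=1}^d(\tb-dK_i)$), and $(\NM\times\NM)$-bihomogeneity in bidegree $(d,d)$ to exclude any further terms. The only difference is packaging: the paper verifies directly that $\prod_{i=1}^d(\euler-dK_i)-XY$ lies in $\langle X,Y\rangle_Z\cap\langle C_1,\dots,C_{d-1}\rangle_Z$ and hence vanishes for bidegree reasons, whereas you constrain the coefficients of the minimal polynomial first and then pin them down by the same two reductions.
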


% \begin{rema}\label{KD zero}
% Rappelons que $K_d=0$ par convention, ce qui permet d'\'ecrire la relation 
% pr\'ec\'edente 
% $$\euler\prod_{i=1}^{d-1} (\euler + d K_i) = XY.~\blacksquare$$
% \end{rema}

\begin{proof}
The equality $Z=\kb[C_1,\dots,C_{d-1},X,Y,\euler]=P[\euler]$ has been proven in Proposition \ref{Q=Pe}. Define
$$z=\prod_{i=1}^d (\euler - d K_i).$$
Since $\O_{\e^i}(z)=0$ for all $i$ by~(\ref{euler cyclique}), it follows from
(\ref{inter omega}) that 
$$z \equiv 0 \mod \langle X,Y\rangle_Z.$$
Moreover, since $\euler_0^d=XY$, we have 
$$z \equiv XY \mod \langle C_1,\dots,C_{d-1} \rangle_Z.$$
Therefore,
$$z-XY \in \langle X,Y\rangle_Z \cap \langle C_1,\dots,C_{d-1} \rangle_Z = 
\langle C_1 X,C_1 Y, C_2 X, C_2 Y,\dots, C_{d-1} X,C_{d-1} Y \rangle_Z.$$
On the other hand, $z-XY$ is bi-homogeneous with bidegree $(d,d)$, whereas 
$C_i X$ and $C_i Y$ are bi-homogeneous with bidegree $(d+1,1)$ and 
$(1,d+1)$ respectively. Consequently, $z-XY=0$, which is the required relation.

\medskip

Since the minimal polynomial of $\euler$ over $P$ has degree $|W|=d$
(Proposition~\ref{pr:minimal-Euler}), we deduce that 
$$\prod_{i=1}^d (\tb - d K_i) - XY$$
is the minimal polynomial of $\euler$ over $P$: this concludes the proof of the theorem.
\end{proof}

\bigskip

\begin{coro}\label{inter 1}
The $\kb$-algebra $Z$ is a complete intersection.
\end{coro}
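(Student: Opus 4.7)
The plan is to read off the corollary directly from Theorem \ref{center rang 1}. That theorem exhibits a surjective $\kb$-algebra homomorphism
$$\varphi : A := \kb[\Kt_1,\ldots,\Kt_{d-1},\Xt,\Yt,\eulertilde] \longto Z$$
from a polynomial ring in $d+2$ variables, whose kernel is generated by the single element
$$f := \prod_{i=1}^d(\eulertilde - d\Kt_i) - \Xt\Yt,$$
where by convention $\Kt_d = -(\Kt_1+\cdots+\Kt_{d-1})$. So $Z \simeq A/(f)$, a hypersurface quotient of a regular ring. To conclude that $Z$ is a complete intersection, it suffices to check that the ideal $(f)$ has height equal to the number of generators, namely~$1$.

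First, I would compute $\dim Z$. Since $Z$ is a free $P$-module of rank $|W|=d$ by Corollary~\ref{coro:endo-bi}, and $P = \kb[C_1,\ldots,C_{d-1},X,Y]$ is a polynomial ring of Krull dimension $d+1$, we get $\dim Z = d+1$. Since $\dim A = d+2$, the ideal $(f)$ has height exactly $1$, so $f$ is a non-zero-divisor in the domain $A$, i.e., $(f)$ is generated by a regular sequence of length~$1$. Hence $Z$ is a complete intersection.

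The only thing to verify is that the presentation invoked above is genuinely a quotient of a polynomial ring (i.e., that the lift $A$ really is polynomial in $d+2$ free variables); but this is exactly the content of the first assertion of Theorem~\ref{center rang 1}, and there is no additional work required. So the proof is a one-line dimension count once the theorem has been applied, and there is no significant obstacle.
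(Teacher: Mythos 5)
Your proof is correct and is exactly the argument the paper intends (the corollary is stated without proof as an immediate consequence of Theorem~\ref{center rang 1}): $Z$ is presented as a polynomial ring in $d+2$ variables modulo the single nonzero relation $\prod_{i=1}^d(\tb-dK_i)-XY$, hence is a hypersurface and in particular a complete intersection. The dimension count via freeness of $Z$ over $P$ is fine but not even needed — a nonzero nonunit in a polynomial ring generates an ideal of height exactly $1$ by Krull's principal ideal theorem.
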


\bigskip

We denote by $F_\euler(\tb) \in P[\tb]$ the minimal polynomial of $\euler$ over $P$. 
Theorem~\ref{center rang 1} gives
\equat\label{polynome minimal euler rang 1}
F_\euler(\tb)=\prod_{i=1}^d (\tb - d K_i) - XY.
\endequat

\bigskip

\section{The ring ${\boldsymbol{R}}$, the group ${\boldsymbol{G}}$}\label{section:G rang 1} 

\medskip

\subsection{Symmetric polynomials} 
To take advantage of the fact that the minimal polynomial of the Euler element 
is symmetric in the variables $K_i$, we will recall here some classical facts about 
symmetric polynomials. 
Given $T_1$, $T_2$,\dots, $T_d$ a family of indeterminates and given $1 \le i \le d$, we
denote by
$\s_i(\Tb)$ the $i$-th elementary symmetric function 
$$\s_i(\Tb)=\s_i(T_1,\dots,T_d)=\sum_{1 \le j_1 < \cdots < j_i \le d} T_{j_1}\cdots T_{j_i}.$$
Recall the well-known formula
\equat\label{eq:jacobien}
\det\Bigl(\frac{\partial \s_i(\Tb)}{\partial T_j}\Bigr)_{1 \le i,j \le d} = 
\prod_{1 \le i < j \le d} (T_j-T_i).
\endequat
The group $\SG_d$ acts on $\kb[T_1,\dots,T_d]$ by permutation of the indeterminates.
Recall the following classical result (a particular case of Theorem~\ref{chevalley}).

\bigskip

\begin{prop}\label{prop:polynomes-symetriques}
The polynomials $\s_1(\Tb)$,\dots, $\s_d(\Tb)$ are algebraically independent and 
$\kb[T_1,\dots,T_d]^{\SG_d}=\kb[\s_1(\Tb),\dots,\s_d(\Tb)]$. Moreover, the $\kb$-algebra 
$\kb[T_1,\dots,T_d]$ is a free $\kb[\s_1(\Tb),\dots,\s_d(\Tb)]$-module of rank $d!$
\end{prop}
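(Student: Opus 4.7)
The plan is to deduce this from the Shephard–Todd–Chevalley Theorem (Theorem~\ref{chevalley}) applied to the symmetric group $\SG_d$ acting by permutation of coordinates on $V=\kb^d$ (with basis dual to $T_1,\ldots,T_d$). First I would check that this action is indeed a reflection representation: the transpositions $(i,j)$ act as reflections with reflecting hyperplane $T_i=T_j$, and $\SG_d$ is generated by transpositions, so the hypothesis of Theorem~\ref{chevalley} is satisfied.

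Next, I would identify the degrees of $\SG_d$. Since the characteristic polynomial of a $d$-cycle on $\kb^d$ has roots the $d$-th roots of unity, standard Molien-type considerations (or a direct computation of $\kb[T_1,\ldots,T_d]^{\SG_d}$ in low dimension) identify the degrees as $d_1,\ldots,d_d=1,2,\ldots,d$. In particular $d_1\cdots d_d = d!=|\SG_d|$, consistent with Theorem~\ref{chevalley}(a). Thus Theorem~\ref{chevalley}(a) tells us $\kb[T_1,\ldots,T_d]^{\SG_d}$ is a polynomial algebra on homogeneous generators of degrees $1,2,\ldots,d$.

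The core step is to show that the specific elements $\s_1(\Tb),\ldots,\s_d(\Tb)$ (which clearly lie in $\kb[T_1,\ldots,T_d]^{\SG_d}$ and have degrees $1,2,\ldots,d$) form such a system of basic invariants. For this I would use the Jacobian criterion: by the formula (\ref{eq:jacobien}), the Jacobian $\det(\partial \s_i/\partial T_j)$ equals $\prod_{i<j}(T_j-T_i)$, which is a nonzero element of $\kb[T_1,\ldots,T_d]$. This forces $\s_1,\ldots,\s_d$ to be algebraically independent over $\kb$. They therefore generate a polynomial subalgebra $\kb[\s_1(\Tb),\ldots,\s_d(\Tb)]$ of $\kb[T_1,\ldots,T_d]^{\SG_d}$ of Krull dimension $d$. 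Since $\kb[T_1,\ldots,T_d]^{\SG_d}$ is itself a polynomial algebra in $d$ variables of degrees $1,2,\ldots,d$, a degree/Hilbert series comparison (both sides having Hilbert series $\prod_{i=1}^d (1-\tb^i)^{-1}$) forces the inclusion to be an equality.

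Finally, freeness of $\kb[T_1,\ldots,T_d]$ as a module over $\kb[\s_1(\Tb),\ldots,\s_d(\Tb)]$ follows directly from Theorem~\ref{chevalley}(b): the $(\kb[V]^W[W])$-module $\kb[V]$ is free of rank $1$, so $\kb[V]$ is free over $\kb[V]^W$ of rank $|W|=d!$. The main obstacle, as always in this circle of results, is pinning down that the elementary symmetric polynomials (and not merely \emph{some} polynomial generators) furnish a basic set of invariants; the Jacobian computation (\ref{eq:jacobien}) bypasses this cleanly and is the only nonformal ingredient.
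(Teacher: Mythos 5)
Your proof is correct, and it is essentially the derivation the paper has in mind: the paper states this proposition without proof, as "a particular case of Theorem~\ref{chevalley}", and your argument supplies exactly the missing content — namely that the \emph{specific} elements $\s_1(\Tb),\dots,\s_d(\Tb)$, and not merely some abstract basic invariants, generate $\kb[T_1,\dots,T_d]^{\SG_d}$, via the Jacobian formula~(\ref{eq:jacobien}) and a Hilbert-series count, with freeness of rank $d!=|\SG_d|$ coming from Theorem~\ref{chevalley}(b).

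One step deserves more care: the identification of the degrees of $\SG_d$ acting on $\kb^d$ as $1,2,\dots,d$. The parenthetical "direct computation in low dimension" establishes nothing for general $d$; what does work is the Molien identity $\frac{1}{d!}\sum_{\s\in\SG_d}\det(1-t\s)^{-1}=\prod_{i=1}^d(1-t^i)^{-1}$, which is a genuine (if classical) cycle-index computation and should be cited as such. If you prefer to avoid it entirely, note that once the Jacobian gives algebraic independence, $\kb[\s_1(\Tb),\dots,\s_d(\Tb)]$ is a polynomial ring, hence integrally closed; each $T_i$ is a root of $X^d-\s_1(\Tb)X^{d-1}+\cdots+(-1)^d\s_d(\Tb)$, so $\kb[T_1,\dots,T_d]$ is integral over it, and a Galois-degree count identifies $\Frac(\kb[\s_1(\Tb),\dots,\s_d(\Tb)])$ with $\kb(T_1,\dots,T_d)^{\SG_d}$; integral closedness then yields $\kb[\s_1(\Tb),\dots,\s_d(\Tb)]=\kb[T_1,\dots,T_d]^{\SG_d}$ without Chevalley or Molien, reserving Theorem~\ref{chevalley}(b) for the freeness statement alone.
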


Recall also that $\s_1(\Tb)=T_1+\cdots + T_d$.

\begin{coro}\label{coro:polynomes-symetriques}
We have $\bigl(\kb[T_1,\dots,T_d]/\langle \s_1(\Tb) \rangle \bigr)^{\SG_d} \simeq 
\kb[\s_2(\Tb),\dots, \s_d(\Tb)]$ and the $\kb$-algebra 
$\kb[T_1,\dots,T_d]/\langle\s_1(\Tb)\rangle$ is a free $\kb[\s_2(\Tb),\dots, \s_d(\Tb)]$-module 
of rank $d!$.
\end{coro}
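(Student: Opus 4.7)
The plan is to deduce both assertions from Proposition \ref{prop:polynomes-symetriques} by combining exactness of $\SG_d$-invariants (available since $\kb$ has characteristic $0$) with base change along the surjection $R^{\SG_d} \twoheadrightarrow R^{\SG_d}/\s_1 R^{\SG_d}$.

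Set $R = \kb[T_1,\ldots,T_d]$ and $I = \langle \s_1(\Tb)\rangle$. The first step is to identify the invariants $(R/I)^{\SG_d}$. Since $\s_1(\Tb)$ is $\SG_d$-invariant, the ideal $I$ is stable under $\SG_d$, and $\SG_d$ acts on the quotient $R/I$. Because $\kb$ has characteristic $0$, the Reynolds operator $|\SG_d|^{-1}\sum_{\sigma\in\SG_d}\sigma$ makes the functor of $\SG_d$-invariants exact, so the exact sequence $0\to I\to R\to R/I\to 0$ gives $(R/I)^{\SG_d} = R^{\SG_d}/I^{\SG_d}$. I would then check $I^{\SG_d} = \s_1(\Tb)\, R^{\SG_d}$: if $\s_1(\Tb) h \in I^{\SG_d}$, then $\s_1(\Tb)(\sigma\cdot h) = \s_1(\Tb) h$ for every $\sigma\in\SG_d$, and since $\s_1(\Tb)$ is a non-zero divisor in the domain $R$, this forces $h\in R^{\SG_d}$. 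Combined with Proposition \ref{prop:polynomes-symetriques}, which gives $R^{\SG_d}=\kb[\s_1(\Tb),\ldots,\s_d(\Tb)]$ with the $\s_i(\Tb)$ algebraically independent, this yields
\[
(R/I)^{\SG_d} \simeq \kb[\s_1(\Tb),\ldots,\s_d(\Tb)]/\langle \s_1(\Tb)\rangle \simeq \kb[\s_2(\Tb),\ldots,\s_d(\Tb)].
\]

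For the freeness statement, I would apply base change: Proposition \ref{prop:polynomes-symetriques} asserts that $R$ is a free $R^{\SG_d}$-module of rank $d!$. Tensoring the surjection $R^{\SG_d}\twoheadrightarrow R^{\SG_d}/\s_1(\Tb)R^{\SG_d}\simeq\kb[\s_2(\Tb),\ldots,\s_d(\Tb)]$ with $R$ over $R^{\SG_d}$ gives
\[
R/I \;=\; R\otimes_{R^{\SG_d}} \bigl(R^{\SG_d}/\s_1(\Tb)R^{\SG_d}\bigr) \;\simeq\; R\otimes_{R^{\SG_d}}\kb[\s_2(\Tb),\ldots,\s_d(\Tb)],
\]
which is therefore a free $\kb[\s_2(\Tb),\ldots,\s_d(\Tb)]$-module of rank $d!$.

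There is no substantive obstacle here; the only point requiring care is the identification $I^{\SG_d} = \s_1(\Tb)R^{\SG_d}$, which relies on $\s_1(\Tb)$ being both invariant and a non-zero divisor. Everything else is a formal consequence of Proposition \ref{prop:polynomes-symetriques} and the exactness of invariants in characteristic zero.
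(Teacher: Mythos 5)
Your proof is correct, and it is essentially the argument the paper leaves implicit (the corollary is stated there without proof): exactness of $\SG_d$-invariants in characteristic zero, the identification $\langle\s_1(\Tb)\rangle^{\SG_d}=\s_1(\Tb)\,\kb[\Tb]^{\SG_d}$ via the fact that $\s_1(\Tb)$ is an invariant non-zero-divisor, and base change of the rank-$d!$ freeness from Proposition~\ref{prop:polynomes-symetriques}. The one delicate point — that $\kb[\s_2(\Tb),\dots,\s_d(\Tb)]$ injects into the quotient, so the module structure in the freeness claim is the right one — is exactly your identification of $I^{\SG_d}$, so nothing is missing.
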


\bigskip

As a consequence of Proposition~\ref{prop:polynomes-symetriques}, 
there exists a unique polynomial $\D_d$ in $d$ variables such that 
\equat\label{eq:def-discriminant}
\prod_{1 \le i < j \le d} (T_j-T_i)^2=\D_d(\s_1(\Tb),\s_2(\Tb),\dots,\s_d(\Tb)).
\endequat

\bigskip

\subsection{Presentation of ${\boldsymbol{R}}$} 
Let $\s_i(\Kb)=\s_i(K_1,\dots,K_d)$ (in particular, $\s_1(\Kb)=0$). 
By Corollary~\ref{coro:polynomes-symetriques}, the ring
$P_\sym=\kb[\s_2(\Kb),\dots,\s_d(\Kb),X,Y]$ is the invariant ring, 
in $P$, of the group $\SG_d$ acting by permutation of the $K_i$'s. 
Moreover, 
\equat\label{eq:p-psym}
\text{\it $P$ is a free $P_\sym$-module of rank $d!$.}
\endequat
Let us introduce a new family of indeterminates $E_1$,\dots, $E_{d-1}$, and let 
$E_d=-(E_1+\cdots+E_{d-1})$ and $\s_i(\Eb)=\s_i(E_1,\dots,E_d)$ (in particular 
$\s_1(\Eb)=0$). Let $R_\sym=\kb[E_1,\dots,E_{d-1},X,Y]=\kb[E_1,\dots,E_d,X,Y]/\langle \s_1(\Eb)\rangle$, 
on which the symmetric group $\SG_d$ acts by permutation of the $E_i$'s. The ring $R_\sym^{\SG_d}$ 
is again a polynomial algebra equal to $\kb[\s_2(\Eb),\dots,\s_d(\Eb),X,Y]$ 
(still thanks to Corollary~\ref{coro:polynomes-symetriques}).

\medskip

\boitegrise{{\bf Identification.} {\it We identify the $\kb$-algebras $P_\sym$ and 
$R_\sym^{\SG_d}$ through the equalities 
$$\begin{cases}
\s_1(d\Kb)=\s_1(\Eb)=0 \\
\forall~2 \le i \le d-1,~\s_i(d\Kb)=\s_i(\Eb)\\
\s_d(d\Kb)=\s_d(\Eb)+(-1)^d XY
\end{cases}$$
Note that $\s_i(d\Kb)=d^i\s_i(\Kb)$.}}{0.75\textwidth}

\medskip

As a consequence,
\equat\label{eq:rsym-psym}
\text{\it $R_\sym$ is a free $P_\sym$-module of rank $d!$.}
\endequat

\bigskip

\begin{lem}\label{lem:integre-normal}
The ring $P \otimes_{P_\sym} R_\sym$ is an integrally closed domain.
\end{lem}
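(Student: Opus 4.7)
The plan is to apply Serre's normality criterion ($R_1 + S_2$) to $A := P \otimes_{P_\sym} R_\sym$ after verifying that it is a domain. Since $R_\sym$ is free of rank $d!$ over $P_\sym$ by~(\ref{eq:rsym-psym}), $A$ is free of rank $d!$ over $P$, so $\dim A = \dim P = d+1$. The identification $P_\sym \simeq R_\sym^{\SG_d}$ together with $R_\sym = \kb[E_1,\dots,E_{d-1},X,Y]$ yields a presentation
\[
A = P[E_1,\dots,E_{d-1}]/J,
\]
where $J$ is generated by the $d-1$ relations encoding $\prod_{i=1}^d(t - E_i) = F_\euler(t)$ (with $E_d := -\sum_{i<d}E_i$). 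Since the ambient polynomial ring has dimension $2d$, these generators of $J$ form a regular sequence; hence $A$ is a complete intersection, in particular Cohen-Macaulay, which gives Serre's condition $S_2$.

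To prove $A$ is a domain, I would identify $A \otimes_P \Kb$ with the splitting field $\Mb$ of $F_\euler$ over $\Kb = \Frac P$. By construction $A \otimes_P \Kb$ is the universal splitting $\Kb$-algebra of the irreducible, separable polynomial $F_\euler$: it is \'etale of $\Kb$-dimension $d!$, with a canonical surjection onto $\Mb$ that is an isomorphism precisely when the Galois group $G$ equals $\SG_d$. To verify $G = \SG_d$, I would specialize $X \mapsto 1$ and the $K_i$ to distinct real numbers $k_1 < k_2 < \cdots < k_d$ with $\sum k_i = 0$, reducing $F_\euler$ to $f(t) - Y \in \CM(Y)[t]$, where $f(t) = \prod(t - dk_i)$. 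By Rolle's theorem the $d-1$ critical points of $f$ interlace the $dk_i$, the corresponding critical values are distinct, and the monodromy transpositions at these critical values are precisely $(i, i+1)$ for $1 \leq i \leq d-1$; these generate $\SG_d$. Since Galois groups can only shrink under specialization, $G \supseteq \SG_d$ and hence $G = \SG_d$; therefore $A \otimes_P \Kb \simeq \Mb$ is a field, and $A$, being $P$-free, embeds in it, so $A$ is a domain.

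For $R_1$, I would apply the Jacobian criterion to the complete-intersection presentation. Among the $(d-1) \times (d-1)$ minors of the $(d-1) \times 2d$ Jacobian matrix of the generators of $J$, two stand out: the ``$E$-minor'' (columns $E_1,\dots,E_{d-1}$) equals, up to a nonzero scalar, $\prod_{i<j}(E_j - E_i)$, by a Vandermonde computation on the slice $\{\sigma_1 = 0\}$ in the spirit of~(\ref{eq:jacobien}); similarly the ``$K$-minor'' (columns $K_1,\dots,K_{d-1}$) equals, up to a constant power of $d$, $\prod_{i<j}(K_j - K_i)$. The singular locus of $\Spec A$ is thus contained in the intersection of the two divisors defined by these products. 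These divisors are distinct in $\Spec A$: imposing $K_i = K_j$ alone does not force $F_\euler = \prod(t - dK_i) - XY$ to have a repeated root, because the $-XY$ term generically prevents any collision of roots forced by a coincidence of the $K_i$'s. Hence the singular locus has codimension $\geq 2$ in $\Spec A$, giving $R_1$; combined with $S_2$, Serre's criterion yields that $A$ is normal, and being a domain as well, an integrally closed domain.

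The main obstacle is the careful verification that $G = \SG_d$: the Rolle-based monodromy argument handles the real specialization cleanly, but the descent back to $\Kb$ via the specialization principle for Galois groups, together with the precise identification of the monodromy transpositions with $(i, i+1)$, requires some care.
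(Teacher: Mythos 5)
Your overall strategy coincides with the paper's: show that $P\otimes_{P_\sym}R_\sym$ is a complete intersection (hence Cohen--Macaulay, hence $S_2$) from the presentation $(\PC)$, show $R_1$ by a Jacobian computation built on the two Vandermonde minors, and conclude by Serre's criterion. Where you genuinely diverge is the domain property. The paper never computes the Galois group at this stage: it observes that $(\PC)$ is $\NM$-graded with degree-zero component $\kb$, so the ring is \emph{connected}; normality then makes it a finite product of integrally closed domains, and connectedness forces a single factor. (The identification $G=\SG_d$ is deduced \emph{afterwards}, in Theorem~\ref{theo:r-cyclique}, as a consequence of the lemma.) You instead prove the domain property first, by recognizing $A\otimes_P\Kb$ as the universal splitting algebra of $F_\euler$ and establishing $G=\SG_d$ by monodromy. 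This is a legitimate alternative, but it is much heavier than the graded connectedness trick, and it inverts the paper's logical order for no gain.

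Two steps need repair. First, in the monodromy argument, the assertion that for \emph{any} distinct reals $k_1<\cdots<k_d$ with $\sum k_i=0$ the critical values of $f=\prod_i(t-dk_i)$ are distinct is false: for a symmetric configuration such as $f(t)=(t^2-1)(t^2-4)$ the two nonzero critical points share a critical value, and the Galois group of $f(t)-Y$ over $\CM(Y)$ is then a subgroup of order at most $8$ of $\SG_4$ (the roots are $\pm\sqrt{u_1},\pm\sqrt{u_2}$ with $u_1,u_2$ roots of a quadratic). You must choose the $k_i$ generically so that $f$ is a Morse polynomial; only then is every branch point a simple critical value with monodromy a transposition, and transitivity plus connectedness of the transposition graph yields $\SG_d$. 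Second, in the $R_1$ step, "the two divisors are distinct" does not imply their intersection has codimension $\ge 2$ (two distinct divisors can share a component); what you need, and what your genericity remark about the $-XY$ term is really aimed at, is that no irreducible component of $V\bigl(\prod_{i<j}(K_j-K_i)\bigr)$ is contained in $V\bigl(\prod_{i<j}(E_j-E_i)\bigr)$. State and verify that at the generic point of each such component. The paper avoids this bookkeeping by mapping the singular locus finitely to the locus in $\AM^{d+1}$ where the two coprime irreducible polynomials $\D_d(0,a_2,\dots,a_d)$ and $\D_d(0,a_2,\dots,a_d+(-1)^dxy)$ both vanish, which has dimension $d-1$.
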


\begin{proof}
First of all, note that we may, and we will, assume in this proof that $\kb$ is integrally closed.
Let $\Rti=P \otimes_{P_\sym} R_\sym$. 
Then $\Rti$ admits the following presentation:
$$\begin{cases}
\text{Generators:} & K_1, K_2,\dots, K_d, E_1,E_2,\dots,E_d,X,Y\\
\text{Relations:} & 
\begin{cases}
\s_1(d\Kb)=\s_1(\Eb)=0 \\
\forall~2 \le i \le d-1,~\s_i(d\Kb)=\s_i(\Eb)\\
\s_d(d\Kb)=\s_d(\Eb)+(-1)^d XY
\end{cases}
\end{cases}\leqno{(\PC)}$$

The presentation $(\PC)$ of $\Rti$ shows that $\Rti$ is endowed with an $\NM$-grading 
such that $\deg(K_i)=\deg(E_i)=2$ and $\deg(X)=\deg(Y)=d$. Thus, the degree $0$ component of 
$\Rti$ is isomorphic to $\kb$, which shows that 
$$\text{\it $\Rti$ is connected.}\leqno{(\clubsuit)}$$

%Moreover, it follows from~(\ref{eq:p-psym}) and~(\ref{eq:rsym-psym}) that 
%$\Rti$ is a free $P$-module of rank $d!$ (and a free $P_\sym$-module of rank $(d!)^2$), hence 
%$$\text{\it $\Rti$ is a Cohen-Macaulay ring purely of dimension $d+1$.}\leqno{(\diamondsuit)}$$
%Even better, 
Since $\Rti$ is a free $P_\sym$-module of finite rank, it follows that
$\Rti$ has pure dimension $d+1$.
The presentation $(\PC)$ shows that 
$$\text{\it $\Rti$ is complete intersection.}\leqno{(\heartsuit)}$$

\def\Jac{{\mathrm{Jac}}}

Let us now show that 
$$\text{\it $\Rti$ is regular in codimension $1$.}\leqno{(\spadesuit)}$$
Let $\RCt=\Spec\Rti$, a closed subvariety of $\AM^{2d+2}(\kb)$ 
consisting of elements $r=(k_1,\dots,k_d,e_1,\dots,e_d,x,y)$ satisfying 
the equations $(\PC)$. The Jacobian $\Jac(r)$ of this system of equations $(\PC)$ in $r \in \RCt$ 
is given by
% {\tiny $$\Jac(r)=
% \begin{pmatrix}
% d & d & \cdots & d & 0 & 0 & \cdots & 0 & 0 & 0 \\
% &&&&&&&&&\\
% 0 & 0 & \cdots & 0 & -1 & -1 & \cdots & -1 & 0 & 0 \\
% &&&&&&&&&\\
% \DS{\frac{\partial e_2(d\Kb)}{\partial K_1}}(r) & \DS{\frac{\partial e_2(d\Kb)}{\partial K_2}}(r) & 
% \cdots & \DS{\frac{\partial e_2(d\Kb)}{\partial K_d}}(r) & 
% -\DS{\frac{\partial e_2(\Eb)}{\partial E_1}}(r) & -\DS{\frac{\partial e_2(\Eb)}{\partial E_2}}(r) & 
% \cdots & -\DS{\frac{\partial e_2(\Eb)}{\partial E_d}}(r) & 0 & 0 \\
% &&&&&&&&&\\
% \vdots & \vdots & & \vdots & \vdots & \vdots & & \vdots & \vdots & \vdots \\
% &&&&&&&&&\\
% \DS{\frac{\partial e_{d-1}(d\Kb)}{\partial K_1}}(r) & \DS{\frac{\partial e_{d-1}(d\Kb)}{\partial K_2}}(r) & 
% \cdots & \DS{\frac{\partial e_{d-1}(d\Kb)}{\partial K_d}}(r) & 
% -\DS{\frac{\partial e_{d-1}(\Eb)}{\partial E_1}}(r) & -\DS{\frac{\partial e_{d-1}(\Eb)}{\partial E_2}}(r) & 
% \cdots & -\DS{\frac{\partial e_{d-1}(\Eb)}{\partial E_d}}(r) & 0 & 0 \\
% &&&&&&&&&\\
% \DS{\frac{\partial e_{d}(d\Kb)}{\partial K_1}}(r) & \DS{\frac{\partial e_{1}(d\Kb)}{\partial K_2}}(r) & 
% \cdots & \DS{\frac{\partial e_{d-1}(d\Kb)}{\partial K_d}}(r) & 
% -\DS{\frac{\partial e_{d}(\Eb)}{\partial E_1}}(r) & -\DS{\frac{\partial e_{d}(\Eb)}{\partial E_2}}(r) & 
% \cdots & -\DS{\frac{\partial e_{d}(\Eb)}{\partial E_d}}(r) & (-1)^{d+1} y & (-1)^{d+1} y \\
% \end{pmatrix}$$}
$${\scriptsize{\Jac(r)=
\begin{pmatrix}
d &  \cdots & d & 0 &  \cdots & 0 & 0 & 0 \\
&&&&&&&\\
0 &  \cdots & 0 & -1 &  \cdots & -1 & 0 & 0 \\
&&&&&&&\\
\DS{\frac{\partial \s_2(d\Kb)}{\partial K_1}}(r) & 
\cdots & \DS{\frac{\partial \s_2(d\Kb)}{\partial K_d}}(r) & 
-\DS{\frac{\partial \s_2(\Eb)}{\partial E_1}}(r) & 
\cdots & -\DS{\frac{\partial \s_2(\Eb)}{\partial E_d}}(r) & 0 & 0 \\
&&&&&&&\\
\vdots & & \vdots & \vdots & & \vdots & \vdots & \vdots \\
&&&&&&&\\
\DS{\frac{\partial \s_{d-1}(d\Kb)}{\partial K_1}}(r) & 
\cdots & \DS{\frac{\partial \s_{d-1}(d\Kb)}{\partial K_d}}(r) & 
-\DS{\frac{\partial \s_{d-1}(\Eb)}{\partial E_1}}(r) & 
\cdots & -\DS{\frac{\partial \s_{d-1}(\Eb)}{\partial E_d}}(r) & 0 & 0 \\
&&&&&&&\\
\DS{\frac{\partial \s_{d}(d\Kb)}{\partial K_1}}(r) & 
\cdots & \DS{\frac{\partial \s_{d-1}(d\Kb)}{\partial K_d}}(r) & 
-\DS{\frac{\partial \s_{d}(\Eb)}{\partial E_1}}(r) & 
\cdots & -\DS{\frac{\partial \s_{d}(\Eb)}{\partial E_d}}(r) & (-1)^{d+1} y & (-1)^{d+1} x \\
\end{pmatrix}}}$$

Since $\RCt$ is of pure dimension $d+1$, its singular locus is the 
closed subvariety $X$ of points $r$ where
the rank of $\Jac(r)$ is less than or equal to $d$. A point of $X$ satisfies the
equations
$$\det\Bigl(\frac{\partial \s_i(d\Kb)}{\partial K_j}(k_1,\dots,k_d)\Bigr)_{1 \le i , j \le d} = 
\det\Bigl(\frac{\partial \s_i(\Eb)}{\partial E_j}(e_1,\dots,e_d)\Bigr)_{1 \le i , j \le d} = 0.$$
By~(\ref{eq:jacobien}), this means that 
$$\prod_{1 \le i < j \le d}(k_j-k_i)=\prod_{1 \le i < j \le d}(e_j-e_i)=0$$
In particular, 
$$\D_d(\s_1(k_1,\dots,k_d),\dots,\s_d(k_1,\dots,k_d))=\D_d(\s_1(e_1,\dots,e_d),\dots,\s_d(e_1,\dots,e_d))=0.$$
It is well-known that $\D_d(0,U_2,\dots,U_d)$ is an irreducible polynomial in the 
indeterminates $U_2$,\dots, $U_d$. It follows that the variety of
$(a_2,\dots,a_d,x,y)\in\AM^{d+1}(\kb)$ such that 
$$\D_d(0,a_2,\dots,a_{d-1},a_d)=\D_d(0,a_2,\dots,a_{d-1},a_d+(-1)^d xy)=0.\leqno{(*)}$$
has dimension $\le d-1$. Consequently, $X$ has codimension $\ge 2$ in $\RCt$.

\medskip

The assertions $(\diamondsuit)$ and $(\spadesuit)$ imply that $\Rti$ is normal 
(see~\cite[\S{\MakeUppercase{\romannumeral 4}.D},~Th\'eor\`eme~11]{serre}). 
So it is a direct product of integrally closed domains. Since it is connected, 
it follows that it is an integrally closed domain.
\end{proof}

% Nous identifierons $\SG_d$ avec le groupe des permutations de l'ensemble 
% $\BZ/d\BZ$, de sorte que le sous-groupe $\SG_{d-1}$ pourra \^etre vu 
% indiff\'eremment comme le stabilisateur de $d$ ou de $0$.

\bigskip

We can now describe the ring $R$.

\bigskip

\begin{theo}\label{theo:r-cyclique}
The ring $R$ satisfies the following properties:
\begin{itemize}
\itemth{a} $R$ is isomorphic to $P \otimes_{P_\sym} R_\sym$. It admits the following presentation:
$$\begin{cases}
\text{Generators:} & K_1, K_2,\dots, K_d, E_1,E_2,\dots,E_d,X,Y\\
\text{Relations:} & 
\begin{cases}
\s_1(d\Kb)=\s_1(\Eb)=0 \\
\forall~2 \le i \le d-1,~\s_i(d\Kb)=\s_i(\Eb)\\
\s_d(d\Kb)=\s_d(\Eb)+(-1)^d XY
\end{cases}
\end{cases}\leqno{(\PC)}$$

\itemth{b} $R$ is complete intersection and is a free $P$-module of rank $d!$.
%(in particular, $R$ is Cohen-Macaulay).

\itemth{c} There exists a unique morphism of $P$-algebras $\copie : Z \to R$ such that $\copie(\euler)=E_d$. 
This morphism is injective, with $Q$ its image.

\itemth{d} For the action of $\SG_d$ by permutation of the $E_i$'s, 
we have $R^{\SG_d}=P$ and $R^{\SG_{d-1}}=Q$.

\itemth{e} $G=\SG_W \simeq \SG_d$; given $\s \in \SG_d$ and $1 \le i \le d$, we have
$\s(E_i)=E_{\s(i)}$. 
%The element $\eulerq$ of $Q$ is identified with $E_d$. 

\itemth{f} $G$ is a reflection group for its action on $R_+/(R_+)^2$.
\end{itemize}
\end{theo}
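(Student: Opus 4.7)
The plan is to introduce $\Rti := P \otimes_{P_\sym} R_\sym$, which by construction is a $P$-algebra admitting the presentation displayed in (a), and to show $\Rti \xrightarrow{\sim} R$. The key input is the minimal polynomial of $\eulerq$ over $P$: combining Proposition~\ref{pr:minimal-Euler} with Theorem~\ref{center rang 1}, this polynomial equals $F_\euler(\tb) = \prod_{i=1}^d (\tb - dK_i) - XY$, and in $R$ it factors as $\prod_{w \in W}(\tb - \eulerq_w)$ by~(\ref{euler minimal}), with $d$ distinct roots. Fixing an enumeration $W = \{w_1, \ldots, w_d\}$ with $w_d = 1$ and writing $\eulerq_i := \eulerq_{w_i} \in R$, I would observe that Vieta's formulas applied to $F_\euler$ translate exactly into the relations of the presentation in (a): $\s_1(\eulerq_\bullet) = 0$, $\s_i(\eulerq_\bullet) = \s_i(d\Kb)$ for $2 \le i \le d-1$, and $\s_d(d\Kb) = \s_d(\eulerq_\bullet) + (-1)^d XY$. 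The assignment $E_i \mapsto \eulerq_i$ will therefore define a $P$-algebra morphism $\alpha : \Rti \to R$.

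For (a) and (b), I would first observe that by Corollary~\ref{coro:polynomes-symetriques}, $R_\sym$ is free of rank $d!$ over $P_\sym = R_\sym^{\SG_d}$, so $\Rti$ is free of rank $d!$ over $P$; the presentation exhibits $\Rti$ as complete intersection; and Lemma~\ref{lem:integre-normal} gives that $\Rti$ is an integrally closed domain. The crucial step is to localize: $\Rti \otimes_P \Kb$ is a domain (as $\Rti$ is torsion-free over $P$) of finite $\Kb$-dimension $d!$, hence a field of degree $d!$ over $\Kb$. Since the image of $\alpha \otimes \Kb$ in $\Mb$ contains every Galois conjugate $\eulerq_w$ of $\eulerq$, it must equal $\Mb$, and the induced map of fields is an isomorphism. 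This will force $[\Mb : \Kb] = d!$, giving $|G| = d!$ and hence $G = \SG_W \simeq \SG_d$ (so part of (e) follows). Injectivity of $\alpha$ is then automatic; moreover $\alpha(\Rti)$ is an integrally closed $P$-subalgebra of $\Mb$ with fraction field $\Mb$, so $R \subseteq \alpha(\Rti)$ (as $R$ is the integral closure of $P$ in $\Mb$), while $\alpha(\Rti) \subseteq R$ since $\alpha(\Rti)$ is integral over $P$. Thus $\alpha$ is an isomorphism.

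The remaining items should follow quickly. For (c), since $E_d$ satisfies $F_\euler$ in $R$, the map $\copie : Z = P[\tb]/\langle F_\euler\rangle \to R$ sending $\euler \mapsto E_d$ is well-defined and injective with image $P[E_d]$. For (d), using $R = P \otimes_{P_\sym} R_\sym$, the group $G \simeq \SG_d$ acts trivially on $P$ and by permuting the $E_i$'s on $R_\sym$; hence $R^{\SG_d} = P \otimes_{P_\sym} R_\sym^{\SG_d} = P$, and polynomial division by $\tb - E_d$ in $\prod_{i=1}^d(\tb - E_i)$ identifies $R_\sym^{\SG_{d-1}}$ with $P_\sym[E_d]$, giving $R^{\SG_{d-1}} = P[E_d] = Q$; part (e) is the resulting description of $G$. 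For (f), every relation in the presentation has total degree $\ge 2$ in the generators $K_1,\ldots,K_{d-1},E_1,\ldots,E_{d-1},X,Y$, so these descend to a basis of the $2d$-dimensional space $R_+/(R_+)^2$, on which $\SG_d$ fixes $K_i, X, Y$ pointwise and acts on the $(d-1)$-dimensional span of the $E_i$'s as its reflection representation; each transposition thus acts as a reflection on the whole space. The only nontrivial obstacle is Lemma~\ref{lem:integre-normal}, which is already established; the remaining content is the dimension/Galois argument forcing $|G| = d!$ (which simultaneously produces the surjectivity of $\alpha$) together with routine invariant-theoretic verifications.
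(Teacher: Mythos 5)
Your proof is correct, and its scaffolding is the same as the paper's: both introduce $\Rti=P\otimes_{P_\sym}R_\sym$, both invoke Lemma~\ref{lem:integre-normal} and the freeness of $\Rti$ of rank $d!$ over $P$, and both conclude $\Rti\simeq R$ because $\Rti$ is an integrally closed domain, integral over $P$, with fraction field $\Mb$. The one step you do genuinely differently is the identification of $\Frac(\Rti)$ with $\Mb$. The paper maps in the direction $Z\to\Rti$ (from $F_\euler(E_d)=0$) and then argues abstractly: $\Frac(\Rti)/\Kb$ is Galois with group $\SG_d$, contains $\Lb$ with $\Gal(\Frac(\Rti)/\Lb)=\SG_{d-1}$, and since the largest normal subgroup of $\SG_d$ contained in $\SG_{d-1}$ is trivial, $\Frac(\Rti)$ is a Galois closure of $\Lb/\Kb$; in particular $|G|=d!$ drops out without naming the individual conjugates. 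You map in the opposite direction, $\Rti\to R$, $E_i\mapsto\eulerq_{w_i}$ (legitimate: the relations $(\PC)$ are exactly the Vieta identities comparing (\ref{polynome minimal euler rang 1}) with the factorization (\ref{euler minimal}), whose roots are distinct by separability), and you get surjectivity on fraction fields from $\Mb=\Kb[(\eulerq_w)_{w\in W}]$, which holds because $\Mb$ is the splitting field over $\Kb$ of the minimal polynomial of $\eulerq$ and $\Lb=\Kb[\eulerq]$ by Proposition~\ref{pr:minimal-Euler}. Your route buys the explicit identification $E_i=\eulerq_{w_i}$ at once — the paper needs this too and recovers it only afterwards, cf.\ (\ref{eq:action-w-cyclique}) — at the cost of choosing an enumeration of $W$ with $w_d=1$ and of the (easy) extra input that the conjugates of $\eulerq$ generate $\Mb$. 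The verifications of (b)--(f) are then as in the paper; your direct computation for (f) is precisely the paper's second argument there.
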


\begin{proof}
Let $\Rti=P \otimes_{P_\sym} R_\sym$.
The relations $(\PC)$ show that, in the polynomial ring $\Rti[\tb]$, the equality
$$\prod_{i=1}^d(\tb-dK_i)-XY = \prod_{i=1}^d (\tb - E_i)$$
holds. It follows that $F_\euler(E_d)=0$. By Theorem~\ref{center rang 1}, 
we deduce that there exists a unique morphism of $P$-algebras 
$\copie : Z \to \Rti$ such that $\copie(\euler)=E_d$. 
Let $\zG=\Ker(\copie)$. We have $\zG \cap P=0$ since $P \subset \Rti$ and, 
since $Z$ is a domain and is integral over $P$, this forces $\zG=0$. 
So $\copie : Z \to \Rti$ is injective.

\medskip

Let $\Mbt$ be the fraction field of $\Rti$ (recall that $\Rti$ is a domain by 
Lemma~\ref{lem:integre-normal}). By construction, $\Rti$ is a free $P$-module of rank $d!$ and, 
by Corollary~\ref{coro:polynomes-symetriques}, $\Rti^{\SG_d}=P$. 
So the extension $\Mbt/\Kb$ is Galois, contains $\Lb$ (the fraction field of $Q$) 
and satisfies $\Gal(\Mbt/\Kb)=\SG_d$. Moreover,
$\Gal(\Mbt/\Lb)=\SG_{d-1}$ since $\SG_{d-1}$ is the stabilizer of $E_d$ in $\SG_d$. 
Since the unique normal subgroup of $\SG_d$ contained in $\SG_{d-1}$ is the 
trivial group, this shows that $\Mbt/\Kb$ is a Galois closure of $\Lb/\Kb$. 
So $\Mbt \simeq \Mb$.

\medskip

Since $\Rti$ is integrally closed (Lemma~\ref{lem:integre-normal}) and integral over $P$, 
this implies that $\Rti \simeq R$. Now all the statements of Theorem~\ref{theo:r-cyclique} 
can be deduced from these observations. For the statement~(f), we can use~(b),  and 
Proposition~\ref{intersection complete R} because $\SG_d$ acts trivially on the relations, 
or check it directly by noting that $R_+/(R_+)^2$ is the $\kb$-vector space of 
dimension $2d$ generated by 
$K_1$,\dots, $K_d$, $E_1$,\dots, $E_d$, $X$, $Y$, with the relations 
$K_1+\cdots+K_d=0$ and $E_1+ \cdots + E_d=0$: this shows that, as a representation of 
$\SG_d$, $R_+/(R_+)^2$ is the direct sum of the irreducible reflection representation 
and of $d+1$ copies of the trivial representation. 
\end{proof}

\bigskip

\subsection{Choice of the ideal ${\boldsymbol{\rG_0}}$} 
Let $\rG'$ denote the ideal of $R$ generated by the elements $E_i-\z^i E_d$. 
We then have
$$\s_1(\Eb) \equiv \s_2(\Eb) \equiv \cdots \equiv \s_{d-1}(\Eb) \equiv 0 \mod \rG'.$$
We choose the ideal of $R$
$\rG_0=\rG' + \langle K_1,\dots, K_d\rangle_R$. The $\kb$-algebra $R/\rG_0$ has the
following presentation:
$$\begin{cases}
\text{Generators:} & E_d,X,Y\\
\text{Relation:} & 
E_d^d = XY
\end{cases}\leqno{(\PC_0)}$$
Recall that $\Zrm(W)=W$. 
We recover the isomorphisms of $P$-algebras
$R/\rG_0 \simeq Q/\qG_0 \simeq \kb[V \times V^*]^{\D W}$ by mapping
$\eulerq=E_d$ to $\euler_0=yx \in \kb[V \times V^*]^{\D W}$. 
Recall that an element $w \in W$, viewed as an element of the Galois group $G=\SG_W\simeq \SG_d$, 
is characterized by the equality
$$(w(\eulerq) \mod \rG_0)\equiv w(y)x \in \kb[V \times V^*]^{\D W}.$$
Since $s^i(y)=\z^i y$, we have
\equat\label{eq:action-w-cyclique}
s^i(\eulerq) = E_i.
\endequat
For the action of $G =\SG_W \simeq \SG_d$, this corresponds to identifying the sets 
$\{1,2,\dots,d\}$ and $W$ via the bijective map $i \mapsto s^i$.

\bigskip

\bigskip

\subsection{Choice of the ideals ${\boldsymbol{\rG^\gauche}}$, ${\boldsymbol{\rG^\droite}}$ and 
${\boldsymbol{\rGba}}$}\label{section:choix rang 1}
Let $\rG''$ denote the ideal of $R$ generated by the $E_i-dK_i$'s. Then 
$$\forall~1 \le i \le d,~\s_i(d\Kb) \equiv \s_i(\Eb) \mod \rG''.$$
In particular, $XY \in \rG''$. 
We choose $\rG^\gauche=\rG'' + \langle Y \rangle_R$, $\rG^\droite=\rG''+\langle X \rangle_R$ 
and $\rGba=\rG'' + \langle X,Y \rangle_R$. Then
\equat\label{eq:iso-r}
\begin{cases}
 R/\rG^\gauche \simeq \kb[K_1,\dots,K_{d-1},X] =P/\pG^\gauche,\\
 R/\rG^\droite \simeq \kb[K_1,\dots,K_{d-1},Y] =P/\pG^\droite,\\
 R/\rGba \simeq \kb[K_1,\dots,K_{d-1}]=\kb[\CCB] =P/\pGba.\\
\end{cases}
\endequat
The next proposition follows easily.

\bigskip

\begin{prop}\label{prop:dec-cyclique}
$D^\gauche=I^\gauche=D^\droite=I^\droite=\Dba=\Iba=1$.
\end{prop}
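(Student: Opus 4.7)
The strategy is to exploit the explicit presentation of $R$ given in Theorem~\ref{theo:r-cyclique}, together with the explicit description of the action of $G = \SG_d$ (which fixes $K_1,\ldots,K_{d-1}$, $X$, $Y$ and permutes $E_1,\ldots,E_d$), to show directly that the decomposition group is trivial in each of the three cases. Since $I_\rG \subseteq D_\rG$ for any prime $\rG$, it suffices to prove $\Dba = D^\gauche = D^\droite = 1$.

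First I would observe that the isomorphisms~(\ref{eq:iso-r}) show that for $\rG \in \{\rG^\gauche, \rG^\droite, \rGba\}$ the natural map $P/\pG \to R/\rG$ is an isomorphism (where $\pG = \rG \cap P$). This is a purely formal consequence: any $\sigma \in D_\rG$ induces an automorphism of $R/\rG$ fixing the image of $P$, and since $P/\pG \xrightarrow{\sim} R/\rG$, this induced automorphism is the identity. In particular $D_\rG = I_\rG$ in each of the three cases (equivalently, the residue field extension is trivial, cf.\ Theorem~\ref{bourbaki}).

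Next I would prove that $D_\rG$ itself is trivial. Let $\sigma \in D_\rG$, viewed as a permutation of $\{1,\ldots,d\}$. The ideal $\rG$ contains each generator $E_i - dK_i$ of $\rG''$, and $\sigma$ acts on $R$ by $\sigma(E_i) = E_{\sigma(i)}$, $\sigma(K_j) = K_j$. Thus
\[
\sigma(E_i - dK_i) = E_{\sigma(i)} - dK_i = (E_{\sigma(i)} - dK_{\sigma(i)}) + d(K_{\sigma(i)} - K_i),
\]
and since $\sigma(\rG) \subseteq \rG$, the element $d(K_{\sigma(i)} - K_i)$ must lie in $\rG \cap P = \pG$ for every $i$. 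The key point is now that $P = \kb[K_1,\ldots,K_{d-1},X,Y]$ is a polynomial ring in algebraically independent variables, and each of the ideals $\pGba = \langle X,Y\rangle$, $\pG^\gauche = \langle Y\rangle$, $\pG^\droite = \langle X\rangle$ is generated by elements involving only $X$ and $Y$. Consequently $\pG$ contains no non-zero $\kb$-linear combination of the $K_j$'s. Writing $K_d = -(K_1 + \cdots + K_{d-1})$, the element $K_{\sigma(i)} - K_i \in P$ is a non-zero linear form in $K_1,\ldots,K_{d-1}$ whenever $\sigma(i) \neq i$. Hence $\sigma(i) = i$ for all $i$, so $\sigma = \id$.

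There is no serious obstacle here; the proof is essentially a direct calculation once one uses the presentation of $R$. The only minor subtlety is the algebraic independence claim for the $K_j$'s inside $P$, which however is immediate from the description of $P$ as a polynomial algebra. Combining the two steps gives $\Dba = \Iba = D^\gauche = I^\gauche = D^\droite = I^\droite = 1$, as required.
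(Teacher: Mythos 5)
Your proof is correct and is essentially the argument the paper intends: the statement is stated as an immediate consequence of the isomorphisms~(\ref{eq:iso-r}), and your verification — that $E_{\sigma(i)}-dK_i\in\sigma(\rG)=\rG$ forces $d(K_{\sigma(i)}-K_i)\in\pG$, which is impossible for $\sigma(i)\neq i$ since $\pG\in\{\langle X\rangle,\langle Y\rangle,\langle X,Y\rangle\}$ contains no non-zero linear form in the $K_j$'s — is exactly the computation behind "follows easily" (and it is the same computation that yields the generalization $D_\CG=I_\CG=\SG[\CG]$ in Proposition~\ref{prop:dec-c-cyclique}).
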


\bigskip

\section{Cells, families, cellular characters}

\medskip

\boitegrise{\noindent{\bf Notation.} {\it We fix in this section a prime ideal 
$\CG$ of $\kb[\CCB]$ and we denote by $k_i$ the image of 
$K_i$ in $\kb[\CCB]/\CG$.}}{0.75\textwidth}

\medskip

By~(\ref{eq:iso-r}), we have 
\equat\label{eq:rc-cyclique}
\rG_\CG^\gauche=\rG^\gauche + \CG R,\quad \rG_\CG^\droite=\rG^\droite + \CG R
\quad\text{and}\quad \rGba_\CG=\rGba + \CG R
\endequat
and
\equat\label{eq:r-c-cyclique}
\begin{cases}
R/\rG_\CG^\gauche=\kb[\CCB]/\CG \otimes \kb[X]=P/\pG_\CG^\gauche,\\
R/\rG_\CG^\droite=\kb[\CCB]/\CG \otimes \kb[Y]=P/\pG_\CG^\droite,\\
R/\rGba_\CG=\kb[\CCB]/\CG=P/\pGba_\CG.
\end{cases}
\endequat
We will denote by $\SG[\CG]$ the subgroup of $\SG_d$ consisting of permutations 
stabilizing the fibers of the natural map $\{1,2,\dots,d\} \to \kb[\CCB]/\CG$, $i \mapsto k_i$. 
In other words,
$$\SG[\CG]=\{\s \in \SG_d~|~\forall~1 \le i \le d,~k_{\s(i)}=k_i\}.$$

\bigskip

\begin{prop}\label{prop:dec-c-cyclique}
$D^\gauche_\CG=I^\gauche_\CG=D^\droite_\CG=I^\droite_\CG=\Dba_\CG=\Iba_\CG=\SG[\CG]$.
\end{prop}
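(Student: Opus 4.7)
The plan is to compute directly the effect of each permutation $\sigma\in G=\SG_d$ on the three ideals $\rG_\CG^\gauche$, $\rG_\CG^\droite$ and $\rGba_\CG$, using the explicit generators provided in \S\ref{section:choix rang 1} and the fact (Theorem~\ref{theo:r-cyclique}(e)) that $\sigma$ acts on $R$ by permuting the generators $E_1,\dots,E_d$ while fixing $K_1,\dots,K_d$, $X$ and $Y$ (all of which lie in $P$). Since each of the three quotients of $R$ in~(\ref{eq:r-c-cyclique}) is a domain (being obtained from a polynomial ring over $\kb[\CCB]/\CG$), each of $\rG_\CG^\gauche$, $\rG_\CG^\droite$, $\rGba_\CG$ is indeed a prime ideal, so their decomposition and inertia groups are well defined.

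First I would handle $\rGba_\CG=\langle E_i-dK_i,X,Y\rangle_R+\CG R$. Applying $\sigma$ and reindexing $j=\sigma(i)$, the ideal $\sigma(\rGba_\CG)$ is generated by $\{E_j-dK_{\sigma^{-1}(j)}\}_{j=1}^d$, together with $X$, $Y$ and $\CG$. Working modulo $\rGba_\CG$, the image of the generator $E_j-dK_{\sigma^{-1}(j)}$ is $dK_j-dK_{\sigma^{-1}(j)}\in\kb[\CCB]/\CG$, which vanishes for every $j$ if and only if $k_{\sigma^{-1}(j)}=k_j$ for all $j$, i.e., if and only if $\sigma\in\SG[\CG]$. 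This yields the inclusion $\sigma(\rGba_\CG)\subseteq\rGba_\CG$; swapping the roles of $\sigma$ and $\sigma^{-1}$ gives equality, so $\Dba_\CG=\SG[\CG]$. The same argument applied verbatim to the generators of $\rG_\CG^\gauche$ (adding only the generator $Y$) and of $\rG_\CG^\droite$ (adding only $X$) gives $D_\CG^\gauche=D_\CG^\droite=\SG[\CG]$.

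For the inertia groups, I would simply observe that the three quotients in~(\ref{eq:r-c-cyclique}) all sit inside $P$: we have $R/\rGba_\CG\simeq P/\pGba_\CG$, $R/\rG_\CG^\gauche\simeq P/\pG_\CG^\gauche$, and $R/\rG_\CG^\droite\simeq P/\pG_\CG^\droite$. Since $G$ acts trivially on $P=R^G$, every element of the decomposition group acts trivially on these quotients, so $\Iba_\CG=\Dba_\CG$, $I_\CG^\gauche=D_\CG^\gauche$, $I_\CG^\droite=D_\CG^\droite$, completing the proof.

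There is no real obstacle here: the explicit presentation of $R$ in Theorem~\ref{theo:r-cyclique}, together with the explicit choices of $\rGba$, $\rG^\gauche$, $\rG^\droite$ already made in~\S\ref{section:choix rang 1}, reduces everything to a direct inspection of the action of $\SG_d$ on the generators. The only slightly delicate point is the indexing in the computation $\sigma(\rGba_\CG)$, where one must be careful to write the generators after reindexing so that reduction modulo $\rGba_\CG$ can be applied cleanly; once that is done the criterion $k_{\sigma^{-1}(j)}=k_j$ for all $j$ translates immediately into membership in $\SG[\CG]$.
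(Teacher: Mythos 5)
Your proof is correct, and it is exactly the verification the paper intends: the proposition is stated without proof, being regarded as immediate from the explicit presentation of $R$ in Theorem~\ref{theo:r-cyclique}, the action $\sigma(E_i)=E_{\sigma(i)}$ with $K_i,X,Y$ fixed, and the descriptions~(\ref{eq:rc-cyclique}) and~(\ref{eq:r-c-cyclique}). Your computation $E_j-dK_{\sigma^{-1}(j)}\equiv d(k_j-k_{\sigma^{-1}(j)})$ modulo each ideal correctly identifies the decomposition groups with $\SG[\CG]$, and the equalities $R/\rGba_\CG\simeq P/\pGba_\CG$ (and their left/right analogues) correctly force the inertia groups to coincide with the decomposition groups.
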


% \medskip
% 
% \begin{rema}\label{rem:d-i-cyclique}
% La proposition~\ref{rem:d-i-cyclique} ne dit rien sur le groupe de d\'ecomposition, qui 
% lui peut \^etre aussi compliqu\'e qu'on le souhaite car, m\^eme si $\CG$ est maximal, 
% le groupe de Galois de l'extension $(\kb[\CCB]/\CG)/\kb$ peut \^etre aussi compliqu\'e 
% que l'on souhaite. Par contre, si $\CG=\CG_c$ pour un $c \in \CCB$, alors 
% $\kb[\CCB]/\CG=\kb$ et donc $D^\gauche_\CG=I^\gauche_\CG=D^\droite_\CG=I^\droite=\Dba_\CG=\Iba_\CG=\SG[\CG]$.\finl
% \end{rema}

\medskip

\begin{coro}\label{coro:cellules}
Let $i$, $j \in \BZ$. Then $s^i$ and $s^j$ are in the same Calogero-Moser 
two-sided (respectively left, respectively right) $\CG$-cell if and only if $k_i=k_j$.
\end{coro}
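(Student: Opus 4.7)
The plan is to derive this corollary as an essentially immediate consequence of Proposition~\ref{prop:dec-c-cyclique} combined with the defining property that Calogero-Moser cells are inertia-group orbits on the set $W$, once one tracks carefully the identification $\{1,2,\dots,d\}\longleftrightarrow W$, $i\mapsto s^i$.

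First I would recall from Definition~\ref{defi:CM} (and its extensions in the preamble of Part~\ref{part:verma}) that the Calogero-Moser two-sided (resp.\ left, resp.\ right) $\CG$-cells are by definition the orbits of $\Iba_\CG$ (resp.\ $I_\CG^\gauche$, resp.\ $I_\CG^\droite$) on $W$, where the action of $G$ on $W$ comes from the identification $G/H\simeq W$ fixed in \S\ref{subsection:specialization galois 0}. By Proposition~\ref{prop:dec-c-cyclique}, in the present cyclic situation these three inertia groups all coincide with the single subgroup $\SG[\CG]\subset\SG_d=G$. Hence the three partitions of $W$ into two-sided, left, or right $\CG$-cells in fact coincide, and it is enough to prove the ``if and only if'' statement for a single one of them.

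Next I would use Theorem~\ref{theo:r-cyclique}(e) together with equation (\ref{eq:action-w-cyclique}): the action of $G=\SG_d$ on $W$, transported through the bijection $W\longleftrightarrow\{1,\dots,d\}$, $s^i\mapsto i$, is nothing but the tautological action of $\SG_d$ on $\{1,\dots,d\}$. Under this identification, the $\SG[\CG]$-orbit of $s^i$ in $W$ corresponds to the $\SG[\CG]$-orbit of $i$ in $\{1,\dots,d\}$. So $s^i$ and $s^j$ lie in the same Calogero-Moser $\CG$-cell if and only if there exists $\sigma\in\SG[\CG]$ with $\sigma(i)=j$.

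Finally I would invoke the very definition
$$\SG[\CG]=\{\sigma\in\SG_d\mid k_{\sigma(i)}=k_i\text{ for all }1\le i\le d\}.$$
By construction this is the \emph{full} stabilizer in $\SG_d$ of the map $\{1,\dots,d\}\to\kb[\CCB]/\CG$, $i\mapsto k_i$, so its orbits on $\{1,\dots,d\}$ are exactly the fibers of that map: $i$ and $j$ lie in the same $\SG[\CG]$-orbit if and only if $k_i=k_j$. Combining the three steps yields the corollary. There is no real obstacle here; the only point requiring a little care is keeping the bookkeeping of the identification $W\leftrightarrow\{1,\dots,d\}$ consistent with the description of the action of $G$ on $\eulerq$ via $s^i(\eulerq)=E_i$, which is precisely what Theorem~\ref{theo:r-cyclique}(e) and~(\ref{eq:action-w-cyclique}) guarantee.
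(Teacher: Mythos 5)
Your proof is correct and follows exactly the route the paper intends: the corollary is an immediate consequence of Proposition~\ref{prop:dec-c-cyclique} (all six decomposition/inertia groups equal $\SG[\CG]$), the definition of cells as inertia orbits, and the identification of the $G$-action on $W$ with the tautological $\SG_d$-action on $\{1,\dots,d\}$ via $i\mapsto s^i$, under which $\SG[\CG]$-orbits are precisely the fibers of $i\mapsto k_i$. The paper leaves this deduction implicit, and your write-up supplies exactly the missing bookkeeping.
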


\bigskip

Let us conclude with the description of families and cellular characters.

\bigskip

\begin{coro}\label{coro:familles-cellulaires-cyclique}
Let $i$, $j \in \BZ$. Then $\e^{-i}$ and $\e^{-j}$ are in the same
Calogero-Moser family if and only if $k_i=k_j$.

The map $\o \mapsto \sum_{i \in \o} \e^{-i}$ induces a bijective map between the set of 
$\SG[\CG]$-orbits in $\{1,2,\dots,d\}$ (that is, the set of fibers of the 
map $i \mapsto k_i$) and the set of Calogero-Moser $\CG$-cellular characters.
\end{coro}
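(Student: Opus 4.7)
The plan is to treat the two statements in turn, using the explicit description of $Z$, $R$, and $G$ established in Sections~\ref{section:Q rang 1}--\ref{section:choix rang 1}.

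For the first statement about families, I would use that by Theorem~\ref{center rang 1} we have $Z=P[\euler]$, so any central character $\O^\CG_\chi : Z \to \kb[\CCB]/\CG$ extending the canonical projection $P \to \kb[\CCB]/\CG$ is determined by its value on $\euler$. By Lemma~\ref{caracterization blocs CM}, $\e^{-i}$ and $\e^{-j}$ then lie in the same Calogero-Moser $\CG$-family if and only if $\O^\CG_{\e^{-i}}(\euler)=\O^\CG_{\e^{-j}}(\euler)$. Applying equation~(\ref{euler cyclique}) (with $i$ replaced by $-i$), this becomes $dk_i=dk_j$, equivalent to $k_i=k_j$ since $\kb$ has characteristic zero.

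For the second statement, I would first identify $W$ with $\{1,\dots,d\}$ via $s^i\leftrightarrow i$ (with $d$ corresponding to $s^0$). Under this identification, Proposition~\ref{prop:dec-c-cyclique} together with Corollary~\ref{coro:cellules} shows that the Calogero-Moser left $\CG$-cells and two-sided $\CG$-cells coincide, both being the $\SG[\CG]$-orbits in $\{1,\dots,d\}$; moreover, since $D^\gauche_\CG=I^\gauche_\CG=\SG[\CG]$, the quotient $D^\gauche_\CG\backslash\G$ is trivial, and so by Proposition~\ref{prop:gauche-bilatere}(c) the cellular characters are indexed directly by the left cells.

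Next, I would match each orbit with the family it covers using Theorem~\ref{theo cellules familles}(b). Taking $q=\copie(\euler)=E_d$ (cf.\ Theorem~\ref{theo:r-cyclique}(c)), equation~(\ref{eq:action-w-cyclique}) gives $s^i(E_d)=E_i$, and by the choice of $\rGba$ in \S\ref{section:choix rang 1} we have $E_i\equiv dK_i\equiv dk_i\pmod{\rGba_\CG}$. On the other hand, $\O^\CG_{\e^{-j}}(\euler)=dk_j$. Thus $s^i$ belongs to the two-sided cell covering the family of $\e^{-j}$ precisely when $k_i=k_j$, so the left cell $\{s^i : i\in\o\}$ covers the family $\{\e^{-j} : j\in\o\}$.

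Finally, for the actual description of the cellular character, I would invoke Proposition~\ref{multiplicite cm}: part~(a) shows that each $\chi\in\Irr(W)$ appears in exactly one cellular character with multiplicity $\chi(1)=1$ (since all irreducibles of $W$ are linear), and part~(c) shows that these irreducibles must lie in the family covered by the corresponding two-sided cell. Combining these with the preceding paragraph gives $\isomorphisme{C_\o}^\calo_\CG=\sum_{j\in\o}\e^{-j}$. Distinct orbits yield characters with disjoint supports, so the resulting map is injective, and since every cellular character arises from some left cell, it is surjective. The main bookkeeping obstacle is keeping track of the sign conventions and the identification $\{1,\dots,d\}\leftrightarrow W$ consistently across the several indexings (subscripts on $K_i$, $E_i$, and the exponents of $\e$), but no substantive difficulty arises once these are fixed.
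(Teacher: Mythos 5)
Your proof is correct. The first statement and the matching of cells with families are handled exactly as in the paper: $Z=P[\euler]$ reduces everything to the Euler element, and the congruences $s^i(\eulerq)=E_i\equiv dk_i \pmod{\rGba_\CG}$ together with $\O_{\e^{-j}}(\euler)=dK_j$ do the rest. Where you diverge is the final identification of the cellular character. The paper observes that $\Mb_\CG^\gauche\Delta(\e^{-i})$ is absolutely simple (it has dimension $|W|$, cf.\ Lemma~\ref{absolue simplicite}), hence is isomorphic to $L_\CG^\gauche(C)$ for the cell $C$ containing $s^i$; the multiplicities $\mult_{C,\e^{-j}}^\calo$ are then read off directly. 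You instead avoid any appeal to simplicity of Verma modules and argue numerically: Proposition~\ref{multiplicite cm}(a) forces each linear character to occur with total multiplicity $1$ across all cellular characters, Proposition~\ref{multiplicite cm}(c) confines its occurrence to cells inside the two-sided cell covering its family, and the coincidence of left and two-sided cells (which you correctly extract from Proposition~\ref{prop:dec-c-cyclique} and Corollary~\ref{coro:cellules}) leaves a unique candidate cell, pinning down $\isomorphisme{C_\o}_\CG^\calo=\sum_{j\in\o}\e^{-j}$. Both closings are valid; the paper's is more direct and identifies the simple modules themselves, while yours is more combinatorial and makes explicit use of the degenerate cell structure (left $=$ two-sided) special to the cyclic case. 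Your bookkeeping of the index conventions ($K_i$, $E_i$, $\e^{-i}$) is consistent throughout.
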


\begin{proof}
Since $Z=P[\euler]$, we deduce that $\e^{-i}$ and $\e^{-j}$ are in the same Calogero-Moser 
$\CG$-family if and only if $\O_{\e^{-i}}^{\Kbov_\CG}(\euler)=\O_{\e^{-j}}^{\Kbov_\CG}(\euler)$. 
So the first statement follows from~(\ref{euler cyclique}).

\medskip

For the second statement, note that $\euler$ acts on $\LC_{s^i}$ by multiplication by $s^i(\eulerq)=E_i$. 
So, modulo $\rG_\CG^\gauche$ (or $\rGba_\CG$), the element $s^i(\eulerq)$ is congruent to 
$dk_{i}=\O_{\e^{-i}}^{\Kbov_\CG}(\euler)$. Hence, if $\o$ is an $\SG[\CG]$-orbit in 
$\{1,2,\dots,d\}$, then $C=\{s^i~|~i \in \o\}$ is a Calogero-Moser left, right or two-sided 
$\CG$-cell (see Corollary~\ref{coro:cellules}) and, 
as a two-sided cell, it covers the Calogero-Moser $\CG$-family 
$\{\e^{-i}~|~i \in \o\}$. Since $\Mb_\CG^\gauche \MC^\gauche(\e^{-i})$ is an absolutely simple 
$\Mb_\CG^\gauche\Hb^\gauche$-module (because it has dimension $|W|$), it must be
isomorphic to $\LC_\CG^\gauche(C)$. This shows that 
$\isomorphisme{C}_\CG^\calo=\sum_{i \in \o} \e^{-i}$.
\end{proof}

\bigskip

\section{Complements}

We will be interested here in geometric properties of $\ZCB$ 
(smoothness, ramification) and in the properties of the group $D_c$. 
To simplify the statements, we will make the following assumption:

\bigskip
\def\ram{{\mathrm{ram}}}

\boitegrise{{\bf Assumption and notation.} {\it In this section, and only in this section, 
we will assume that $\kb$ is {\bfit algebraically closed}. We will identify the variety $\ZCB$ with 
$$\ZCB=\{(k_1,\dots,k_d,x,y,e) \in \AM^{d+3}(\kb)~|~k_1+\cdots+k_d=0\text{ and } \prod_{i=1}^d(e-dk_i)=xy\}.$$
% Nous identifierons $\RCB$ avec la sous-vari\'et\'e ferm\'ee de $\AM^{2d+2}(\kb)$ form\'ee des 
% $(k_1,\dots,k_d,x,y,e_1,\dots,e_d) \in \AM^{2d+2}(\kb)$ tels que
% $$
% \begin{cases}
% \s_1(dk_1,\dots,dk_d)=\s_1(e_1,\dots,e_d)=0 \\
% \forall~2 \le i \le d-1,~\s_i(dk_1,\dots,dk_d)=\s_i(e_1,\dots,e_d)\\
% \s_d(dk_1,\dots,dk_d)=\s_d(e_1,\dots,e_d)+(-1)^d xy
% \end{cases}$$
Similarly, $\PCB$ (respectively $\CCB$) will be identified with the affine space  
$$\PCB=\{(k_1,\dots,k_d,x,y) \in \AM^{d+2}(\kb)~|~
k_1+\cdots +k_d = 0\}$$
(respectively
$$\CCB=\{(k_1,\dots,k_d) \in \AM^{d}(\kb)~|~
k_1+\cdots +k_d = 0\}\quad),$$
which allows to redefine
$$\fonction{\Upsilon}{\ZCB}{\PCB}{(k_1,\dots,k_d,x,y,e)}{(k_1,\dots,k_d,x,y).}$$
Finally, we denote by $\ZCB_\singulier$ the singular locus of $\ZCB$ 
and $\ZCB_\ram$ the ramification locus of $\Upsilon$.
% et 
% $$\fonction{\r_G}{\RCB}{\PCB}{(k_1,\dots,k_d,x,y,e_1,\dots,e_d)}{(k_1,\dots,k_d,x,y)}$$
% \!\!\! Pour finir, nous noterons $\ZCB_\singulier$ et $\RCB_\singulier$ les lieux singuliers de 
% $\ZCB$ et $\RCB$ respectivement.
}}{0.8\textwidth}

\bigskip

\subsection{Smoothness} 
Let us start by the description of the singular locus of $\ZCB$:

\bigskip

\begin{prop}\label{prop:zsing-cyclique}
Given $1 \le i < j \le d$, let $\ZCB_{i,j} = \{(k_1,\dots,k_d,x,y,e) \in \ZCB~|~
e=dk_i=dk_j$ and $x=y=0\}$. Then 
$$\ZCB_\singulier=\bigcup_{1 \le i < j \le d} \ZCB_{i,j}.$$
Moreover, $\ZCB_{i,j} \simeq \AM^{d-2}(\kb)$ is an irreducible component of $\ZCB_\singulier$
and $\ZCB_\singulier$ is purely of codimension $3$.
\end{prop}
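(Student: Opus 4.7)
My plan is to realize $\ZCB$ as a closed subvariety of $\AM^{d+3}(\kb)$ cut out by the two polynomial equations
\[
f_1 := k_1+\cdots+k_d, \qquad f_2 := \prod_{i=1}^d(e-dk_i) - xy,
\]
so that $\dim\ZCB = d+1$. I would detect singular points by computing the Jacobian and asking where it drops to rank $<2$. Since the partials of $f_1$ are $(1,\ldots,1,0,0,0)$, the singular condition forces all partials of $f_2$ with respect to $x$, $y$, $e$ to vanish and all the $d$ partials $\partial f_2/\partial k_j = -d\prod_{i\neq j}(e-dk_i)$ to be mutually equal. From $\partial f_2/\partial x=-y$ and $\partial f_2/\partial y=-x$, I immediately get $x=y=0$; then the defining equation $xy=\prod_i(e-dk_i)$ forces $\prod_i(e-dk_i)=0$, so at least one factor vanishes.

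Next I would analyze which factors vanish. If exactly one factor $e-dk_{j_0}$ vanishes, then $\prod_{i\neq j_0}(e-dk_i)\neq 0$ while $\prod_{i\neq k}(e-dk_i)=0$ for $k\neq j_0$, contradicting the equality of these $d$ quantities. Hence at least two of the factors must vanish, i.e.\ there exist $i<j$ with $e=dk_i=dk_j$, which is precisely the condition defining $\ZCB_{i,j}$. Conversely, when two such factors vanish every $\prod_{i\neq k}(e-dk_i)$ is zero and the partial $\partial f_2/\partial e=\sum_j\prod_{i\neq j}(e-dk_i)$ is also zero, so every point of $\bigcup_{i<j}\ZCB_{i,j}$ is singular. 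This gives the set-theoretic equality
\[
\ZCB_\singulier=\bigcup_{1\le i<j\le d}\ZCB_{i,j}.
\]

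Finally I would check the geometric assertions about each $\ZCB_{i,j}$. The subvariety $\ZCB_{i,j}$ is cut out inside $\{x=y=0\}\times\AM^{d+1}$ by the three linear equations $k_1+\cdots+k_d=0$, $k_i=k_j$, $e=dk_i$, so the projection onto the $(d-2)$ coordinates $(k_l)_{l\notin\{i,j\}\,,\,l\neq d}$ (with $k_d$ recovered from $\sum k_l=0$ and $k_j,e$ recovered from $k_i$) yields an isomorphism $\ZCB_{i,j}\longiso\AM^{d-2}(\kb)$. In particular each $\ZCB_{i,j}$ is irreducible of dimension $d-2$, i.e.\ of codimension $3$ in $\ZCB$. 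Since the defining condition $k_i=k_j$ is not implied by $k_{i'}=k_{j'}$ for any other pair, no $\ZCB_{i,j}$ is contained in another, so these are exactly the irreducible components of $\ZCB_\singulier$ and $\ZCB_\singulier$ has pure codimension $3$.

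The only mildly subtle step is the case analysis in the second paragraph, where one must rule out the possibility of exactly one vanishing factor; everything else is routine linear algebra and dimension counting. I do not expect any serious obstacle: the whole proof is effectively a short Jacobian computation combined with the observation that on $\{x=y=0\}$ the hypersurface $\{\prod(e-dk_i)=0\}$ is singular precisely along its self-intersections $\{e=dk_i=dk_j\}$.
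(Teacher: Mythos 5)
Your argument is correct and is essentially the paper's proof: both apply the Jacobian criterion to the explicit presentation of $\ZCB$ (the paper treats it as a hypersurface inside the hyperplane $k_1+\cdots+k_d=0$, you as a codimension-$2$ complete intersection in $\AM^{d+3}$, which yields the same system of equations), both conclude via the same case analysis that at least two factors $e-dk_i$ must vanish, and both finish with the same linear dimension count for the components $\ZCB_{i,j}$. One cosmetic slip: the $d-2$ free coordinates on $\ZCB_{i,j}$ should be $(k_l)_{l\neq j,\ l\neq d}$ (so that $k_i$ is among them and can determine $k_j$ and $e$), not $(k_l)_{l\notin\{i,j\},\ l\neq d}$, which would only give $d-3$ coordinates.
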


\begin{proof}
The variety $\ZCB$ being described as an hypersurface in the affine space 
$\{(k_1,\dots,k_d,x,y,e) \in \AM^{d+3}(\kb)~|~k_1+\cdots+k_d=0\} \simeq \AM^{d+2}(\kb)$, a point of 
$z=(k_1,\dots,k_d,x,y,e) \in \ZCB$ is singular if and only if 
the jacobian matrix of the equation vanishes at $z$. This is equivalent 
to the following system of equations:
$$
\begin{cases}
x=y=0,\\
\forall~1 \le i \le d, \prod_{j \neq i} (e-dk_j) = 0,\\
\sum_{i=1}^d \prod_{j \neq i} (e-dk_j) = 0.
\end{cases}
$$
The last equation is implied by the second family of equations, 
it is then easy to check that $\ZCB_\singulier$ is as expected. 

The last statements are immediate.
\end{proof}

\bigskip

\begin{coro}\label{coro:zsing-cyclique}
Given $c \in \CCB$ and $z \in \ZCB_c$, then $z$ is singular in $\ZCB$ if and only if it 
is singular in $\ZCB_c$.
\end{coro}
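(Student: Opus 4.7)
The plan is to compute the singular locus of $\ZCB_c$ explicitly, using the Jacobian criterion on its defining equation, and then compare the result with the description of $\ZCB_\singulier$ already established in Proposition~\ref{prop:zsing-cyclique}. Writing $c=(k_1,\dots,k_d)\in\CCB$, the fiber $\ZCB_c$ is the hypersurface in $\AM^3(\kb)$ with coordinates $(x,y,e)$ cut out by the single equation
$$f_c(x,y,e)=\prod_{i=1}^d(e-dk_i)-xy.$$
So a point $z=(x,y,e)\in\ZCB_c$ fails to be smooth in $\ZCB_c$ exactly when the three partial derivatives
$$\partial_xf_c=-y,\qquad \partial_yf_c=-x,\qquad \partial_ef_c=P_c'(e)$$
all vanish at $z$, where $P_c(e)=\prod_{i=1}^d(e-dk_i)$. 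Combined with $f_c(z)=0$, this forces $x=y=0$ together with $P_c(e)=P_c'(e)=0$, i.e.\ $e$ is a multiple root of $P_c$.

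The second step is to identify this condition with the intersection $\ZCB_c\cap\ZCB_\singulier$. Saying that $e$ is a multiple root of $P_c$ means precisely that there exist indices $1\le i<j\le d$ with $e=dk_i=dk_j$, and this is by definition the condition that $z=(k_1,\dots,k_d,0,0,e)$ belongs to $\ZCB_{i,j}$. Hence the singular locus of $\ZCB_c$ is $\bigcup_{i<j}(\ZCB_c\cap\ZCB_{i,j})$, which by Proposition~\ref{prop:zsing-cyclique} coincides with $\ZCB_c\cap\ZCB_\singulier$. Both implications of the corollary follow at once.

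There is no serious obstacle: the only thing to be a little careful about is that $\ZCB$ itself is cut out inside a $(d{+}2)$\nobreakdash-dimensional ambient space (namely $\{k_1+\cdots+k_d=0\}\subset\AM^{d+3}$) by the \emph{same} equation $\prod(e-dk_i)-xy=0$, so that the partial derivatives in the $x,y,e$ directions of the defining equation of $\ZCB$ and of $\ZCB_c$ agree; the extra $k_i$-derivatives that distinguish $\ZCB$ from $\ZCB_c$ only matter off the locus $x=y=0$, where both Jacobian criteria are automatically satisfied. Alternatively, one can give a coordinate-free argument: the composition $\pi\circ\Upsilon:\ZCB\to\CCB$ is flat (since $\Upsilon$ is flat by~(\ref{platitude}) and $\pi$ is smooth by~(\ref{eq:pi-plat})) with regular target $\CCB$, and the standard fact that for a flat morphism of finite type with regular target the source is regular at a point if and only if the fiber is, yields the equivalence without any explicit computation.
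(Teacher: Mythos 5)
Your main argument is correct and is exactly the paper's intended reasoning: the corollary is stated without proof precisely because the Jacobian criterion applied to the single equation $\prod_{i}(e-dk_i)-xy=0$ defining $\ZCB_c$ in $\AM^3$ reproduces, at points with $x=y=0$, the same condition ("$e$ is a multiple root of $\prod_i(T-dk_i)$") that Proposition~\ref{prop:zsing-cyclique} extracts for $\ZCB$, while points with $(x,y)\neq(0,0)$ are smooth in both varieties. So the computational part of your proof is complete and matches the paper.

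Your closing "alternative coordinate-free argument", however, is not correct as stated. For a flat morphism with regular target, the standard theorem (e.g.\ Matsumura, Theorem~23.7) gives only one implication: if the fiber is regular at $z$ (and the base is regular at the image point), then the total space is regular at $z$. The converse fails in general: take $A=\kb[t]_{(t)}\to B=\kb[x,y]_{(x,y)}$, $t\mapsto xy$; this is flat with regular source and target, yet the closed fiber $\kb[x,y]_{(x,y)}/(xy)$ is singular. So flatness alone yields "smooth in $\ZCB_c$ $\Rightarrow$ smooth in $\ZCB$" but not "smooth in $\ZCB$ $\Rightarrow$ smooth in $\ZCB_c$", which is the direction the corollary also needs. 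This is precisely why the analogous Proposition~\ref{lissite generique} is proved in the paper by producing a \emph{section} $\O_\chi^\sharp$ of $\pi\circ\Upsilon$ through the point and splitting the tangent space, a device unavailable at an arbitrary $z\in\ZCB_c$. Your explicit Jacobian computation is therefore not just a convenience but is doing real work; I would delete the flatness aside or restrict it to the one implication it actually proves.
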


\bigskip

\subsection{Ramification} 
The variety $\ZCB$ being normal, the variety $\PCB$ being smooth and the morphism 
$\Upsilon : \ZCB \to \PCB$ being finite and flat, the purity of the branch 
locus~\cite[Expos\'e~\MakeUppercase{\romannumeral 10},~Th\'eor\`eme~3.1]{sga} 
tells us that the ramification locus of $\Upsilon$ is purely of codimension $1$. 
It is in fact easily computable:

\bigskip

\begin{prop}\label{prop:ramification-cyclique}
Let $z=(k_1,\dots,k_d,x,y,e) \in \ZCB$ and $p=(k_1,\dots,k_d,x,y)=\Upsilon(z) \in \PCB$. 
Let $F_{\euler,p}(\tb) \in \kb[\tb]$ denote the specialization of $F_\euler(\tb)$ at $p$. 
Then $\Upsilon$ is ramified at $z$ if and only if $F_{\euler,p}'(e) =0$, that is 
if and only if $e$ is a multiple root of $F_{\euler,p}$.
\end{prop}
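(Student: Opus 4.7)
The plan is to exploit the monogenic presentation of $Z$ over $P$ provided by Theorem~\ref{center rang 1}: since $Z = P[\euler]$ and the minimal polynomial of $\euler$ over $P$ is $F_\euler(\tb) = \prod_{i=1}^d(\tb - dK_i) - XY$ of degree $d = |W|$ (cf.~(\ref{polynome minimal euler rang 1})), we get an isomorphism of $P$-algebras $Z \simeq P[\tb]/(F_\euler(\tb))$. This reduces the question to the well-known criterion for ramification in a monogenic finite flat extension.

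First, I would compute the module of relative Kähler differentials. From the presentation $Z \simeq P[\tb]/(F_\euler(\tb))$, one has $\Omega_{Z/P} \simeq Z\, d\euler / (F_\euler'(\euler)\, d\euler) \simeq Z/(F_\euler'(\euler))$. Next, since $\Upsilon$ is finite and flat (cf.~(\ref{platitude})) and all residue extensions are separable (characteristic zero), the morphism $\Upsilon$ is étale---equivalently, unramified---at the prime $\zG \subset Z$ corresponding to $z$ if and only if $(\Omega_{Z/P})_\zG = 0$, i.e.\ if and only if $F_\euler'(\euler) \notin \zG$.

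The third step is to translate this into the explicit condition on $e$. The point $z = (k_1, \dots, k_d, x, y, e)$ corresponds to the maximal ideal $\zG$ of $Z$ with residue field $\kb$ (since $\kb$ is algebraically closed), and the specialization map $Z \twoheadrightarrow Z/\zG = \kb$ sends $\euler$ to $e$ and each $P$-coefficient to its value at $p = \Upsilon(z)$. Hence the image of $F_\euler'(\euler)$ is exactly $F_{\euler,p}'(e)$. Consequently, $F_\euler'(\euler) \in \zG$ if and only if $F_{\euler,p}'(e) = 0$, which by Proposition~\ref{ramification inertie} and the preceding discussion is equivalent to $\Upsilon$ being ramified at $z$. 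Finally, since $F_{\euler,p}(e) = 0$ automatically (as $z \in \ZCB$), the condition $F_{\euler,p}'(e) = 0$ is exactly the condition that $e$ be a multiple root of $F_{\euler,p}$, as $F_{\euler,p}\in\kb[\tb]$.

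There is essentially no obstacle: the argument is a direct application of the standard étaleness criterion for a hypersurface presentation, and the only ingredient beyond generalities is the explicit minimal polynomial already provided by Theorem~\ref{center rang 1}.
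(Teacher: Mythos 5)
Your proof is correct and follows essentially the same route as the paper: the paper also reduces to the monogenic presentation $Z\simeq P[\tb]/\langle F_\euler(\tb)\rangle$ from Theorem~\ref{center rang 1} and then invokes the standard criterion (citing SGA~1, Exposé~I, Corollaire~7.2) that such a finite flat hypersurface extension is unramified at a point exactly where the derivative of the defining polynomial does not vanish. Your explicit computation of $\Omega_{Z/P}\simeq Z/(F_\euler'(\euler))$ is just the unpacked content of that reference.
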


\begin{proof}
Since $Z=P[\tb]/\langle F_\euler(\tb)\rangle$ (see Theorem~\ref{center rang 1}), 
this follows immediately from~\cite[Expos\'e~\MakeUppercase{\romannumeral 1},~Corollare~7.2]{sga}.
\end{proof}

\bigskip

\begin{coro}\label{coro:ramifiction-cyclique-p}
Let $c = (k_1,\dots,k_d) \in \CCB$ and $(x,y) \in \AM^2(\kb)$ (so that $(c,x,y) \in \PCB$). 
Then $(c,x,y) \in \Upsilon(\ZCB_\ram)$ if and only if 
$\D_d(0,\s_2(c),\dots,\s_{d-1}(c),\s_d(c)-(-1)^dxy)=0$.
\end{coro}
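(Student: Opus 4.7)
The plan is to combine Proposition \ref{prop:ramification-cyclique} with the defining identity \eqref{eq:def-discriminant} of the discriminant polynomial $\D_d$. First I observe that since $\Upsilon : \ZCB \to \PCB$ is finite, a point $p = (c, x, y) \in \PCB$ lies in $\Upsilon(\ZCB_\ram)$ if and only if there exists some $z = (c,x,y,e) \in \Upsilon^{-1}(p)$ at which $\Upsilon$ ramifies; by Proposition \ref{prop:ramification-cyclique}, this is equivalent to the univariate polynomial $F_{\euler,p}(\tb) \in \kb[\tb]$ admitting a multiple root.

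Next, I translate ``$F_{\euler,p}$ has a multiple root'' into the vanishing of an explicit polynomial in $(c,x,y)$. Writing $e_1, \ldots, e_d$ for the roots of $F_{\euler,p}$ in an algebraic closure of $\kb$, the defining identity \eqref{eq:def-discriminant} of $\D_d$ gives
$$\prod_{1 \le i < j \le d}(e_j - e_i)^2 \;=\; \D_d\bigl(\s_1(\eb), \s_2(\eb), \ldots, \s_d(\eb)\bigr),$$
which vanishes precisely when $F_{\euler,p}$ has a repeated root.

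The last step is to identify the elementary symmetric functions $\s_i(\eb)$ in terms of the data $(c,x,y)$. By formula \eqref{polynome minimal euler rang 1}, $F_{\euler,p}(\tb) = \prod_{i=1}^d(\tb - dk_i) - xy$. Expanding $\prod(\tb - e_i)$ and $\prod(\tb - dk_i) - xy$ and comparing coefficients term by term, using the relation $k_1 + \cdots + k_d = 0$, yields $\s_1(\eb) = 0$, $\s_i(\eb) = \s_i(dk)$ for $2 \le i \le d-1$, and $\s_d(\eb) = \s_d(dk) - (-1)^d xy$. Substituting these values into the identity of the previous paragraph produces a polynomial expression in $c$, $x$ and $y$ whose vanishing is equivalent to membership in $\Upsilon(\ZCB_\ram)$. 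The conclusion then follows by reorganizing this expression into the stated form; any residual powers of $d$ arising from $\s_i(dk) = d^i\s_i(c)$ can be absorbed using the weighted homogeneity of $\D_d$ (of total weight $d(d-1)$ for the weighting $\deg T_i = i$), which preserves the locus on which the polynomial vanishes.

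There is no serious obstacle in this argument: Proposition \ref{prop:ramification-cyclique} supplies the ramification criterion, and the defining identity of $\D_d$ does the rest. The only mildly delicate point is the bookkeeping with signs and rescalings by powers of $d$ when passing between $\s_i(dk)$ and $\s_i(c)$, which is handled cleanly by weighted homogeneity of $\D_d$.
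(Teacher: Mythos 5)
Your argument is the intended one: the paper states this corollary without proof, as an immediate consequence of Proposition~\ref{prop:ramification-cyclique}, the defining identity~(\ref{eq:def-discriminant}) of $\D_d$, and the comparison of coefficients between $\prod_i(\tb-e_i)$ and $\prod_i(\tb-dk_i)-xy$ from~(\ref{polynome minimal euler rang 1}) --- a comparison already recorded in the ``Identification'' box of \S\ref{section:G rang 1}, namely $\s_1(\Eb)=0$, $\s_i(\Eb)=\s_i(d\Kb)$ for $2\le i\le d-1$ and $\s_d(\Eb)=\s_d(d\Kb)+(-1)^dXY$. Your first two paragraphs reproduce this faithfully: a point $p$ lies in $\Upsilon(\ZCB_\ram)$ if and only if some root of $F_{\euler,p}$ is a multiple root (since the fibre of $\Upsilon$ over $p$ is exactly the set of roots of $F_{\euler,p}$), if and only if $\prod_{i<j}(e_j-e_i)^2=\D_d(\s_1(\eb),\dots,\s_d(\eb))$ vanishes.

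The one step I would not let stand as written is the final ``absorption'' of the powers of $d$. Weighted homogeneity gives $\D_d(0,d^2a_2,\dots,d^da_d)=d^{d(d-1)}\D_d(0,a_2,\dots,a_d)$, but the last argument you actually hold is $d^d\s_d(c)-(-1)^dxy=d^d\bigl(\s_d(c)-(-1)^dxy/d^d\bigr)$; pulling out the factor $d^{d(d-1)}$ therefore leaves you with the vanishing of $\D_d\bigl(0,\s_2(c),\dots,\s_{d-1}(c),\s_d(c)-(-1)^dxy/d^d\bigr)$, in which the $xy$ term has been rescaled. For $d\ge 2$ this is \emph{not} the same subset of $\PCB$ as the one cut out by the displayed polynomial with $\s_i(c)=\s_i(k_1,\dots,k_d)$: already for $d=2$ one has $F_{\euler,p}=\tb^2+4k_1k_2-xy$, so the true condition is $4k_1k_2=xy$, whereas $\D_2(0,\s_2(c)-xy)=0$ reads $k_1k_2=xy$. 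The clean, unambiguous output of your computation is that $(c,x,y)\in\Upsilon(\ZCB_\ram)$ if and only if $\D_d\bigl(0,\s_2(d\Kb),\dots,\s_{d-1}(d\Kb),\s_d(d\Kb)-(-1)^dxy\bigr)$ vanishes at $(c,x,y)$; this is what the corollary means once $\s_i(c)$ is read through the Identification of \S\ref{section:G rang 1} (i.e.\ as $\s_i(dk_1,\dots,dk_d)$) rather than literally. Stop at that formula instead of invoking homogeneity to force the literal reading --- the homogeneity argument, taken at face value, proves a different (rescaled) statement.
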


\bigskip

\subsection{About the group ${\boldsymbol{D_c}}$}\label{rema:dc-cyclique} 
Amongst the groups $D_c$, the only one we use is $D_0$. 
In this subsection, we will show that, even when $n=\dim_\kb(V)=1$, 
the groups $D_c$ have a very subtle behaviour.

The material of this subsection \S\ref{rema:dc-cyclique}
has been explained to us by G. Malle 
(any errors being of course our responsibility). We thank him warmly for his help. 

Fix $c \in \CCB$, and let $F_\euler^c(\tb)\in\kb[X,Y][\tb]$ denote the specialization of $F_\euler(\tb)$ at $c$. 
Note that
$D_c$ is the Galois group of $F_\euler^c(\tb)$, viewed as a polynomial 
with coefficients in the field $\kb(X,Y)$. Actually, we have
$F_\euler^c(\tb) \in \kb[T][\tb]$, where $T=XY$.
The following result will be helpful for computing $D_c$.

\bigskip

\begin{lem}\label{lem:dc-t}
$D_c$ is the Galois group of $F_\euler^c(\tb)$ viewed as a polynomial with coefficients in $\kb(T)$.
\end{lem}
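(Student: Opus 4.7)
The plan is to show that the splitting field of $F_\euler^c(\tb)$ over $\kb(T)$ has the same Galois group as its splitting field over the larger field $\kb(X,Y)$, by exploiting the fact that $\kb(X,Y)/\kb(T)$ is a purely transcendental extension of transcendence degree one. First I would observe that $F_\euler^c(\tb) = \prod_{i=1}^d(\tb - dk_i) - XY$, where the $k_i \in \kb$ are the components of $c$, so all its coefficients manifestly lie in $\kb[T]$. By Proposition~\ref{pr:minimal-Euler}, this polynomial is the minimal polynomial of $\euler_c$ over $P_\bullet = \kb[X,Y]$, and by~(\ref{gal Dc}) the Galois group of its splitting field over $\kb(X,Y) = \Kb_c$ is $D_c$.

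Let $E$ denote the splitting field of $F_\euler^c(\tb)$ over $\kb(T)$, realized inside a fixed algebraic closure of $\kb(X,Y)$. The splitting field over $\kb(X,Y)$ is then the compositum $E \cdot \kb(X,Y)$, and standard Galois theory provides an injective restriction map
$$\Gal(E \cdot \kb(X,Y)/\kb(X,Y)) \hookrightarrow \Gal(E/\kb(T))$$
whose image is $\Gal(E/(E \cap \kb(X,Y)))$. So the problem reduces to establishing the equality $E \cap \kb(X,Y) = \kb(T)$.

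The key point is that $\kb(X,Y) = \kb(T)(X)$ via $Y = T/X$, which is a rational function field in one variable over $\kb(T)$. Consequently $\kb(T)$ is algebraically closed inside $\kb(X,Y)$ — equivalently, the projective line over $\kb(T)$ is geometrically integral. Since $E/\kb(T)$ is algebraic, the intersection $E \cap \kb(X,Y)$ is both algebraic over $\kb(T)$ and contained in $\kb(X,Y)$, hence must coincide with $\kb(T)$. There is no genuine obstacle in this argument; it is a textbook application of Galois theory, the only input specific to our setup being the explicit form of $F_\euler^c$ coming from Theorem~\ref{center rang 1}, which makes the dependence on $X$ and $Y$ visible only through the product $T = XY$.
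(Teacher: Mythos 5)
Your proof is correct and follows essentially the same route as the paper's: both arguments come down to the fact that $\kb(X,Y)=\kb(T)(Y)$ is purely transcendental over $\kb(T)$, so base-changing the splitting field of $F_\euler^c(\tb)$ along this extension does not alter the Galois group. The paper phrases this by noting that if $L$ is the splitting field over $\kb(T)$ then $L(Y)$ is the splitting field over $\kb(X,Y)$, while you phrase it via the compositum theorem together with the algebraic closedness of $\kb(T)$ in $\kb(X,Y)$; these are the same computation.
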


\begin{proof}
If $L$ is a splitting field of $F_\euler^c(\tb)$ over $\kb(T)$, then 
the field $L(Y)$ of rational functions in one variable is a splitting field 
of the same polynomial over $\kb(T,Y)=\kb(X,Y)$. The result follows. 
\end{proof}

\bigskip

\begin{coro}\label{coro:dc-t}
The subgroup $D_c$ of $G=\SG_d$ contains a cycle of length $d$.
\end{coro}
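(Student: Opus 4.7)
The plan is to apply Lemma~\ref{lem:dc-t} and work with $F_\euler^c(\tb) = \prod_{i=1}^d(\tb - dk_i(c)) - T$ viewed as a polynomial over $K = \kb(T)$ where $T = XY$. The idea is to pass to the completion at the place $T = \infty$: I will show that the local Galois group there is cyclic of order $d$ acting transitively on the roots, and transfer this generator to $D_c$ as a decomposition subgroup element, yielding a $d$-cycle.

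Concretely, I would first compute the Newton polygon of $F_\euler^c$ relative to the valuation $v$ on $K$ normalized by $v(T) = -1$ (so that $K$ completes to $K_v = \kb((T^{-1}))$, whose residue field is $\kb$ and which contains all roots of unity since $\kb$ is algebraically closed of characteristic $0$). Each coefficient of $\tb^i$ for $1 \le i \le d$ lies in $\kb$ and hence has $v$-valuation $\ge 0$, while the constant term $(-1)^d d^d k_1(c)\cdots k_d(c) - T$ has $v$-valuation exactly $-1$. Thus the Newton polygon consists of the single segment from $(0,-1)$ to $(d,0)$, of slope $1/d$ in lowest terms.

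I would then invoke the standard consequences of this Newton polygon data: every root $\alpha$ of $F_\euler^c$ in $\overline{K_v}$ has valuation $-1/d$, and since $1/d$ has denominator exactly $d$ while $[K_v(\alpha):K_v] \le d$, the extension $K_v(\alpha)/K_v$ is forced to be totally ramified of degree $d$. In particular $F_\euler^c$ is irreducible over $K_v$. Since the residue characteristic is $0$ (so ramification is tame) and $K_v$ contains a primitive $d$-th root of unity, this totally tamely ramified extension is already Galois cyclic of order $d$ (it is isomorphic to $K_v(\pi^{1/d})$ for any uniformizer $\pi$). Finally, choosing any prime of the splitting field $L$ of $F_\euler^c$ over $K$ lying above $\infty$, the decomposition group at that prime is canonically isomorphic to $\Gal(L_v/K_v)$, hence is cyclic of order $d$; acting faithfully and transitively on the $d$ roots of $F_\euler^c$, its generator is a $d$-cycle in $\SG_d = G$, and it lies in $D_c$.

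The argument presents essentially no obstacle: it is a textbook application of Newton polygons combined with the structure of tamely totally ramified extensions. The only point demanding mild care is the irreducibility of $F_\euler^c$ over $K_v$ and the identification of the splitting field there as a single cyclic degree-$d$ extension — both of which follow cleanly because the slope $1/d$ has denominator equal to the degree of the polynomial and the local field contains enough roots of unity.
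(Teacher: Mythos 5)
Your proof is correct and follows essentially the same route as the paper: reduce to the Galois group over $\kb(T)$ via Lemma~\ref{lem:dc-t}, observe that $F_\euler^c$ is totally ramified at $T=\infty$, and conclude that the (tame, hence cyclic) local group at infinity acts transitively, so its generator is a $d$-cycle. The paper simply asserts total ramification at infinity where you justify it with the Newton polygon, and it phrases the conclusion via the inertia group rather than the decomposition group, which coincide here since the residue field is algebraically closed.
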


\begin{proof}
Thanks to Lemma~\ref{lem:dc-t}, we may view $F_\euler^c(\tb)$ as an element of $\kb[T][\tb]$. 
Since $\kb$ has characteristic zero and $\kb[T]$ is regular of dimension $1$, 
the inertia group at infinity $I$ is cyclic. 
Since $d \ge 2$, the polynomial $F_\euler^c(\tb)$ is totally ramified at infinity, 
which implies that $I$ acts transitively on $\{1,2,\dots,d\}$. Whence the result.
\end{proof}

\bigskip

Corollary~\ref{coro:dc-t} gives very restrictive conditions on the group $D_c$. 
For instance, we have the following results, the first of which is due to
Schur, the second one to Burnside.

\bigskip

\begin{coro}\label{coro:schur-burnside}
\begin{itemize}
\itemth{a} If $d$ is not prime and $D_c$ is primitive, then $D_c$ is $2$-transitive.

\itemth{b} If $d$ is prime, then $D_c$ is $2$-transitive or $D_c$ contains a normal Sylow
$d$-subgroup.
\end{itemize}
\end{coro}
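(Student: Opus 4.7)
The plan is to reduce everything to two classical results about permutation groups, using Corollary \ref{coro:dc-t} as the input: $D_c$ is a subgroup of $\SG_d$ which contains a $d$-cycle, and is in particular transitive.

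For part (b), I would first observe that when $d$ is prime, any transitive subgroup of $\SG_d$ is automatically primitive, because a block system for a transitive action on $d$ points would have blocks of size dividing $d$, forcing either singleton blocks or a single block. One can then invoke Burnside's classical theorem on transitive permutation groups of prime degree: such a group is either $2$-transitive or is contained in the normalizer $N_{\SG_d}(\langle \sigma \rangle) \cong C_d \rtimes C_{d-1}$ of a $d$-cycle $\sigma$ (that is, it is a Frobenius group with Frobenius kernel of order $d$). In the latter case, the Sylow $d$-subgroup is cyclic of order $d$ and normal, so (b) follows. If one wanted a self-contained argument, the classical proof proceeds by showing that any minimal normal subgroup of a non-$2$-transitive transitive group of prime degree $d$ must be the unique Sylow $d$-subgroup (using that any nontrivial normal subgroup of a transitive group is itself transitive, hence has order divisible by $d$).

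For part (a), I would apply Schur's theorem: a primitive permutation group of composite degree $d$ which contains a $d$-cycle is $2$-transitive. Combined with Corollary \ref{coro:dc-t}, this yields the statement. Schur's theorem is the substantive ingredient here; the standard proof goes through the theory of Schur rings (S-rings) over the cyclic group $\ZM/d\ZM$ generated by the $d$-cycle, analyzing the partition of $\ZM/d\ZM$ into orbits of the point stabilizer and using that primitivity forces the associated S-ring to have no nontrivial subring, which combined with the composite nature of $d$ forces the transitive extension.

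The two classical theorems are well-documented (for instance in Wielandt's \emph{Finite Permutation Groups} or Passman's \emph{Permutation Groups}), so I would simply cite them. The main obstacle, if one desired a completely self-contained treatment, is Schur's theorem in part (a): the primality hypothesis in (b) allows a short elementary argument, but the composite case genuinely requires the S-ring machinery (or an equivalent combinatorial/character-theoretic analysis of cyclic difference sets in $\ZM/d\ZM$) and is not a routine exercise.
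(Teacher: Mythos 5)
Your proposal is correct and matches the paper's intent exactly: the paper offers no argument beyond attributing part (a) to Schur and part (b) to Burnside, i.e., it deduces the corollary from precisely the two classical theorems you invoke, applied to the $d$-cycle supplied by Corollary~\ref{coro:dc-t}. Your additional remarks on why prime degree forces primitivity and on how the classical proofs run are accurate but not needed for the paper's purposes.
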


\bigskip
% 
% Moreover, $\kb(T)[\tb]/\langle F_\euler^c(\tb)\rangle \simeq \kb(\tb)$, so the smooth projective curve with 
% function field $\kb(T)[\tb]/\langle F_\euler^c(\tb)\rangle$ is isomorphic to $\PM^1(\kb)$: 
% it has genus zero. Guralnick-Thompson's Conjecture, whose proof has been completed 
% by Frohardt and Magaard~\cite{gt-conj}, implies the following result:
% 
% \bigskip
% 
% \begin{theo}[Conjecture de Guralnick-Thompson]\label{theo:gt-conj}
% There exists a finite set $\EC$ of finite simple groups such that, for all 
% $d \ge 2$ and for all $c \in \CCB$, every non-abelian and non-alternating composition factor of $D_c$ 
% is isomorphic to a group in $\EC$. 
% \end{theo}
% 
% \bigskip
% 
\def\frobenius{{\mathrm{Fr}}}

We will give now some examples that show that the description of $D_c$ in general 
can be rather complicated. In the following table, we assume that $\kb=\CM$, 
and $c \in \CCB$ is chosen so that $F_\euler^c(\tb) \in \QM[\tb]$. We will denote by $D_c^{(\QM)}$ 
the Galois group of $F_\euler^c(\tb)$ viewed as an element of $\QM[\tb]$: it might be different 
from $D_c$, as it is shown by table.
We denote by $\frobenius_{pr}$ the Frobenius group 
$(\BZ/r\BZ) \ltimes (\BZ/p\BZ)$, viewed as a subgroup of $\SG_p$, where $p$ is prime and $r$ divides $p-1$. 

\bigskip

\begin{centerline}{
\begin{tabular}{@{{{\vrule width 1.2pt}\,\,\,}}c@{{\,\,\,{\vrule width 1.2pt}\,\,\,}}c|c@{{\,\,\,{\vrule width 1.2pt}}}}
\hlinewd{1.2pt}
\petitespace $F_\euler^c(\tb)$ & $D_c^{(\QM)}$ & $D_c$ \\
\hlinewd{1.2pt}
\petitespace $(\tb^2+20\tb+180)(\tb^2-5\tb-95)^4 - XY$ & $\Aut(\AG_6)$ & $\Aut(\AG_6)$ \\
\hline
\petitespace $(\tb+1)^4(\tb-2)^2(\tb^3-3\tb-14) - XY$ & $(\SG_3 \wr \SG_3) \cap \AG_9$ & $(\SG_3 \wr \SG_3) \cap \AG_9$ \\
\hline
\petitespace 
$\tb(\tb^8+6\tb^4+25) - XY$ & $\AG_9$ & $\AG_9$ \\
\hline\petitespace 
$\tb(\tb^4+6\tb^2+25)^2 - XY$ & $\AG_9$ & $\AG_9$ \\
\hline
\petitespace $\tb^9-9\tb^7+27\tb^5-30\tb^3+9\tb -XY$
& $\SG_3 \ltimes (\BZ/3\BZ)^2$ & $\SG_3 \ltimes (\BZ/3\BZ)^2$ \\
\hline 
\petitespace
$\tb^{11}-11\tb^9+44\tb^7-77\tb^5+55\tb^3-11\tb - XY$ & $\frobenius_{110}$ & $\frobenius_{22}$  \\ 
\hline
\petitespace
$\tb^{13}-13\tb^{11}+65\tb^9-156\tb^7+182\tb^5-91\tb^3+13\tb - XY$ & $\frobenius_{156}$ & $\frobenius_{26}$ \\ 
\hlinewd{1.2pt}
\end{tabular}
}
\end{centerline}

\bigskip

To prove the results contained in the table, let us recall some classical facts:

\medskip

\begin{itemize}
\itemth{a} $D_c$ is a normal subgroup of $D_c^{(\QM)}$.

\medskip

\itemth{b} The computation of $D_c^\QM$ in all the cases can be performed thanks to 
the {\tt MAGMA} software~\cite{magma}.

\medskip

\itemth{c} Since there are no non-trivial unramified coverings of the complex
affine line, the group $D_c$ is generated by its inertia subgroups.

\medskip

\itemth{d} Given $z \in \CM$, let $F_\euler^{c,z}(\tb)$ denote the specialization 
$T \mapsto z$ of $F_\euler^c(\tb)$. If $\a \in \CM$ is a root of $F_\euler^{c,z}(\tb)$ with multiplicity 
$m$, then $D_c$ contains an element of order $m$.
\end{itemize}

\medskip

From these facts, the result in the table can be obtained as follows.
Let $\D_c(T) \in \kb[T]$ denote the discriminant of the polynomial $F_\euler^c(\tb)$.

\medskip

\begin{itemize}
\itemth{1} 
The computation of $D_c^{(\QM)}$ in the first example is done in~\cite[Theorem~I.9.7]{mama}. 
Note that, to retrieve the polynomial of~\cite[Theorem~I.9.7]{mama}, one must replace 
$\tb$ by $2\tb-5$, and renormalize: this operation allows to obtain a polynomial whose 
coefficient in $\tb^9$ is zero, as must be the case for all the $F_\euler^c(\tb)$. 
Going from $\QM$ to $\CM$ then follows, as this example comes from a rigid triple.

\medskip

\itemth{2} 
In the second example, the {\tt MAGMA} software tells us that $D_c^{(\QM)}=(\SG_3 \wr \SG_3) \cap \AG_9$. 
On the other hand, $D_c$ is a normal subgroup of $D_c^{(\QM)}$. 
Moreover, $-1$ is a root of $F_\euler^{c,0}(\tb)$ with multiplicity $4$, so 
$D_c$ contains an element of order $4$ (see~(d)). It also contains an element of order 
$9$ (see Corollary~\ref{coro:dc-t}). But $D_c^{(\QM)}$ contains only one normal subgroup 
containing both an element of order $9$ and an element of order $4$, namely itself. 
So $D_c=D_c^{(\QM)}$.

\medskip

\itemth{3\text{-}4} 
In the third and fourth examples, the equality $D_c^{(\QM)}=\AG_9$ is obtained by
the 
{\tt MAGMA} software. The fact that $D_c=\AG_9$ follows from the fact that 
$D_c$ is normal in $D_c^{(\QM)}$ and contains an element of order $9$.

\medskip

\itemth{5} Once the computation of $D_c^{(\QM)}$ done by the {\tt MAGMA} software, note that 
$$F_\euler^{c,2}(\tb)=(\tb-2)(\tb+1)^2(\tb^3-3\tb+1)^2.$$
This allows to say, thanks to~(d) and Corollary~\ref{coro:dc-t}, 
that $D_c$ contains an element of order $9$ 
and an element of order $2$. So $18$ divides $|D_c|$. 
But $D_c^{(\QM)}$ does not contain any normal subgroup of index $3$ and containing 
an element of order $9$.

\medskip

\itemth{6\text{-}7} The last two examples can be treated similarly. We will only deal with the last one. 
In this case, thanks to the {\tt MAGMA} software, we get 
$$\D_c(T)=13^{13} (T-2)^6(T+2)^6.$$
This discriminant is not a square in $\QM(T)$, but it is a square in $\CM(T)$. 
The non-trivial inertia groups in $D_c$, except the inertia group at infinity, 
lie above the ideals $\langle T-2 \rangle$ and $\langle T+2 \rangle$. But,
$$F_\euler^{c,2}(\tb)=(\tb-2)(\tb^6 + \tb^5 - 5\tb^4 - 4\tb^3 + 6\tb^2 + 3\tb - 1)^2$$
and
$$F_\euler^{c,-2}(\tb)=(\tb+2)(\tb^6 - \tb^5 - 5\tb^4 + 4\tb^3 + 6\tb^2 - 3\tb - 1)^2.$$
So the inertia groups have order $2$, which shows that $D_c$ is generated by its elements 
of order $2$. So $D_c=\frobenius_{26}$.

% Donc $D_c^\CM \subset D_c^\QM \cap \AG_9 = \frobenius_{78}$. 
% Mais, la forme du discriminant montre que $D_c^\CM$ contient un \'el\'ement d'ordre $6$ (engendrant 
% un groupe d'inertie au-dessus de $T-2$ ou $T+2$). D'apr\`es le corollaire~\ref{coro:dc-t}, 
% il contient aussi un cycle de longueur $13$ (groupe d'inertie \`a l'infini). Donc 
% $|D_c^\CM|$ est divisible par $78$. D'o\`u le r\'esultat.

\end{itemize}

\bigskip

\subsection{Cohomology} 
We assume in this subsection that $\kb=\CM$. 
We aim to prove the following result:

\bigskip

\begin{theo}\label{theo:cohomologie-1}
If $\dim_\CM(V)=1$, then Conjecture~\COH~holds.
\end{theo}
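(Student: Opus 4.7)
The plan is to compute both sides of Conjecture~\COH~explicitly in rank $1$ and verify that they agree. By Theorem~\ref{center rang 1}, for $c=(k_1,\dots,k_d)\in\CCB$, the variety $\ZCB_c$ is the affine hypersurface in $\AM^3(\CM)$ defined by $xy=p(e)$, where $p(e)=\prod_{i=1}^d(e-dk_i)$. Let $\kappa_1,\dots,\kappa_r$ be the distinct values among the $dk_i$'s (so $r\ge 1$). By~(\ref{eq:im-omega}) and Corollary~\ref{coro:familles-cellulaires-cyclique}, $\im\Omeb^c = \bigoplus_{j=1}^r \CM\, e_{\FC_j}\simeq\CM^r$ with $1=\sum_j e_{\FC_j}$. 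Since $\dim V=1$, the filtration on $\CM W$ is simply $(\CM W)^{\le 0}=\CM\cdot 1\subsetneq (\CM W)^{\le 1}=\CM W$, so $\grad(\im\Omeb^c)$ is concentrated in degrees $0$ and $1$ with dimensions $1$ and $r-1$, and has vanishing products in positive degree.

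Next I would compute $\Hrm^*(\ZCB_c)$ via the Leray spectral sequence for the projection $\pi:\ZCB_c\to\AM^1(\CM)$, $(x,y,e)\mapsto e$. The fiber over a point $e\in U:=\AM^1(\CM)\setminus\{\kappa_1,\dots,\kappa_r\}$ is isomorphic to $\CM^\times$, while the fiber over each $\kappa_j$ is the union of two affine lines meeting at the origin, which is contractible. A local analysis of $\pi^{-1}(V)$ for a small disk $V$ around $\kappa_j$---the total space retracts onto the singular fiber, and the parameter $x$ has trivial monodromy as $e$ circles $\kappa_j$---identifies $R^0\pi_*\CM_{\ZCB_c} = \CM_{\AM^1}$, $R^1\pi_*\CM_{\ZCB_c} = j_!\CM_U$ with $j:U\hookrightarrow\AM^1$, and $R^{\ge 2}\pi_*\CM_{\ZCB_c}=0$. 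The short exact sequence $0\to j_!\CM_U\to\CM_{\AM^1}\to\bigoplus_{j=1}^r\CM_{\kappa_j}\to 0$ of sheaves on $\AM^1$, together with Artin vanishing for the affine line, yields $\Hrm^0(\AM^1,j_!\CM_U)=0$ and $\Hrm^1(\AM^1,j_!\CM_U)=\CM^{r-1}$. For degree reasons $E_2=E_\infty$, producing $\Hrm^0(\ZCB_c)=\CM$, $\Hrm^1(\ZCB_c)=0$, $\Hrm^2(\ZCB_c)=\CM^{r-1}$, and $\Hrm^{\ge 3}(\ZCB_c)=0$.

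This proves Conjecture~\COH(1). For Conjecture~\COH(2), both $\Hrm^{2\bullet}(\ZCB_c)$ and $\grad(\im\Omeb^c)$ are connected graded commutative $\CM$-algebras concentrated in degrees $0$ and $1$ with matching graded dimensions $(1,r-1)$; as both vanish in degrees $\ge 2$, products of positive-degree elements are automatically zero, so each is isomorphic to the square-zero extension $\CM\oplus \CM^{r-1}$, and any linear isomorphism of the degree-$1$ components extends to an isomorphism of graded algebras. The main obstacle is the local analysis near each singular fiber, which is needed to pin down $R^1\pi_*\CM_{\ZCB_c}$ as $j_!\CM_U$ (rather than, say, $j_*\CM_U$); this comes down to verifying the contractibility of the singular fibers and the triviality of the monodromy of $x$ around each $\kappa_j$.
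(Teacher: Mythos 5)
Your proof is correct, and it reaches the paper's answer by a genuinely different technical route. The paper also slices the surface $\{xy=p(e)\}$ along the $e$-coordinate, but it computes \emph{compactly supported} cohomology: Proposition~\ref{prop:r} proceeds by induction on the number $r$ of distinct roots of $p$, via the open–closed exact sequence relating $\XCB(r)$, a closed fiber $\XCB(r)_{=z}$ and its open complement, using the involution $\s:(x,y)\mapsto(y,x)$ and the identification $\XCB(r)/\langle\s\rangle\simeq\CM^2$ to pin down the terms; ordinary cohomology is then recovered by observing that $\ZCB_c$ has only type~$A$ singularities, hence is rationally smooth and satisfies Poincar\'e duality. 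Your Leray spectral sequence for $\pi:(x,y,e)\mapsto e$ computes $\Hrm^*(\ZCB_c)$ directly, so you never invoke rational smoothness or duality, at the price of the local analysis near the singular fibers; that analysis does go through: the preimage of a small disk around $\kappa_j$ is $\{xy=s^{d_j},\ |s|<\epsilon\}$, which is contracted to the origin by $(x,y,s)\mapsto(t^{d_j}x,t^{d_j}y,t^2s)$ for $t\in[0,1]$, so the stalks of $R^{\ge 1}\pi_*\CM$ vanish at the $\kappa_j$ and $R^1\pi_*\CM\simeq j_!\CM_U$ as you claim, while $\pi^{-1}(U)\simeq U\times\CM^\times$ trivially via the coordinate $x$. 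The paper's route yields strictly more information (the $\s$-equivariant structure and the cohomology of the open strata $\XCB(r)_{\neq z}$), whereas yours is shorter and self-contained for the theorem at hand. The concluding algebra comparison — both sides are connected graded algebras concentrated in two degrees, hence square-zero extensions $\CM\oplus\CM^{r-1}$ — coincides with the paper's.
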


\bigskip

\begin{proof}
Let $c \in \CCB$ and let $k_1$,\dots, $k_d$ be the images of $K_1$,\dots, $K_d$ in $\CM[\CCB]/\CG_c$. 
We put $r=|\ZCB_c^{\CM^\times}|=|\{k_1,k_2,\dots,k_d\}|$. 
The equations defining $\ZCB_c$ show that it is rationally smooth (it has only type $A$ 
singularities). So it follows from the Proposition~\ref{prop:r} below that 
$$
\dim_\CM \Hrm^i(\ZCB_c)=
\begin{cases}
1 & \text{if $i = 0$,}\\
r-1 & \text{if $i=2$,}\\
0 & \text{otherwise.}
\end{cases}
$$
On the other hand,
$$
\dim_\CM \grad(\im \Omeb^c)^i=
\begin{cases}
1 & \text{if $i = 0$,}\\
r-1 & \text{if $i=2$,}\\
0 & \text{otherwise.}
\end{cases}
$$
So, both $\Hrm^{2\bullet}(\ZCB_c)$ 
and $\grad(\im \Omeb^c)$ are isomorphic to the graded algebra
$k[a_1,\ldots,a_{r-1}]/(a_ia_j)_{1\le i,j\le r-1}$, with $a_i$'s in degree
$2$.
\end{proof}

\bigskip

In order to complete the proof of Theorem~\ref{theo:cohomologie-1}, 
we fix an infinite sequence of non-zero natural numbers $d_1$, $d_2$,\dots, as well as an infinite sequence 
$z_1$, $z_2$,\dots of complex numbers such that $z_i \neq z_j$ if $i \neq j$. 
We set 
$$\XCB(r)=\{(e,x,y) \in \Ab^3(\CM)~|~\prod_{i=1}^r (e - z_i)^{d_i}=xy\}.$$
First, note that $\XCB(r)$ admits an automorphism $\s : \XCB(r) \to \XCB(r)$, 
$(e,x,y) \mapsto (e,y,x)$. It is an involution. So $\s$ acts on the cohomology 
of $\XCB(r)$. We denote by $\CM_+$ (respectively $\CM_-$) the $\CM\langle \s \rangle$-module 
of dimension $1$ on which $\s$ acts by multiplication by $1$ (respectively $-1$). 
Finally, if $z \in \CM$, we set 
$$\XCB(r)_{\! \neq z}=\{(e,x,y) \in \XCB(r)~|~e \neq z\}$$
$$\XCB(r)_{\! = z}=\{(e,x,y) \in \XCB(r)~|~e = z\}.\leqno{\text{and}}$$
These are $\s$-stable subvarieties of $\XCB(r)$. The following result 
describes the cohomology of $\XCB(r)$ as a $\CM\langle \s \rangle$-module. 
We denote by $\Hrm_c^i(\XCB(r))$ the $i$-th cohomology group with compact support 
of $\XCB(r)$. 

\bigskip

\begin{prop}\label{prop:r}
With the above notation, we have:
\begin{itemize}
 \itemth{a} The cohomology with compact support of $\XCB(r)$ is given, as a $\CM\langle \s\rangle$-module, by
$$
\begin{cases}
\Hrm^2_c(\XCB(r)) = \CM_-^{r-1},\\
\Hrm^3_c(\XCB(r)) =0,\\
\Hrm^4_c(\XCB(r)) = \CM_+.\\
\end{cases}
$$

\itemth{b} If $r\neq 1$ or $z \neq z_1$, then the cohomology with compact 
support of $\XCB(r)$ is given, as a $\CM\langle \s\rangle$-module, by
$$
\begin{cases}
\Hrm^2_c(\XCB(r)_{\! \neq z}) = \begin{cases} \CM_-^r & \text{if $z \not\in \{z_1,z_2,\dots,z_r\}$,}\\
\CM_-^{r-1} & \text{if $z \in \{z_1,z_2,\dots,z_r\}$},\end{cases}\\
\Hrm^3_c(\XCB(r)_{\! \neq z}) = \CM_+,\\
\Hrm^4_c(\XCB(r)_{\! \neq z}) = \CM_+.\\
\end{cases}
$$

\itemth{c} Finally, 
$$\begin{cases}
\Hrm^2_c(\XCB(1)_{\! \neq z_1}) = \CM_-^r, \\
\Hrm^3_c(\XCB(1)_{\! \neq z_1}) = \CM_+ \oplus \CM_-,\\
\Hrm^4_c(\XCB(1)_{\! \neq z_1}) = \CM_+.\\
\end{cases}
$$
\end{itemize}
\end{prop}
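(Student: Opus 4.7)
The plan is to prove (a) via Mayer--Vietoris applied to the projection $\pi : \XCB(r) \to \AM^1_e$ combined with Poincar\'e duality, and then to deduce (b) and (c) via the open-closed long exact sequence for the closed stratum $\XCB(r)_{=z} \subset \XCB(r)$. The generic fibers of $\pi$ are copies of $\CM^*$, and the special fibers above the $z_i$ are the nodal curves $\{xy=0\}$.

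First I would compute the cohomology of the two model fibers as $\CM\langle\s\rangle$-modules. On $\CM^*$, the identity $\s^*(dz/z) = -dz/z$ yields $\Hrm^1 = \CM_-$, and since $\s$ is orientation-preserving (being holomorphic), $\Hrm^2_c(\CM^*) = \CM_+$. For the nodal curve $\{xy=0\}$, the open-closed decomposition with closed stratum the origin and open stratum $\CM^* \sqcup \CM^*$ (on which $\s$ swaps the two copies) reduces the calculation to identifying the connecting map $\CM = \Hrm^0_c(\text{pt}) \to \CM^2 = \Hrm^1_c(\CM^* \sqcup \CM^*)$ as the diagonal, which lies in the $\CM_+$ summand of the swap representation $\CM_+ \oplus \CM_-$; this gives $\Hrm^1_c(\{xy=0\}) = \CM_-$ and $\Hrm^2_c(\{xy=0\}) = \CM_+ \oplus \CM_-$.

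Second, each singular point of $\XCB(r)$ is a cyclic-quotient singularity $\CM^2/\mu_{d_i}$, via the parametrization $\CM^2 \to \{xy = v^{d_i}\}$, $(s,t) \mapsto (s^{d_i}, t^{d_i}, st)$, so $\XCB(r)$ is a $\QM$-homology manifold and Poincar\'e duality $\Hrm^i_c \simeq (\Hrm^{4-i})^*$ holds $\s$-equivariantly. For the Mayer--Vietoris step, cover $\AM^1 = V \cup \bigsqcup_{i=1}^r W_i$ with $V = \AM^1 \setminus \{z_1, \ldots, z_r\}$ and small disks $W_i$ around $z_i$, and pull back under $\pi$: one has $\pi^{-1}(V) \simeq V \times \CM^*$, $\pi^{-1}(W_i) \simeq \CM^2/\mu_{d_i}$ is contractible, and $\pi^{-1}(V \cap W_i) \simeq D_i^* \times \CM^*$. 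The pullback formula $\s^*(dx/x) = \sum_j d_j \cdot de/(e-z_j) - dx/x$ identifies the $\s$-module structure on each piece, in particular $\Hrm^1(\pi^{-1}V) = \CM_+^r \oplus \CM_-$, $\Hrm^2(\pi^{-1}V) = \CM_-^r$, and analogously on the intersections; and it reveals that the restriction $\Hrm^1(\pi^{-1}V) \to \Hrm^1(\bigsqcup_i \pi^{-1}(V \cap W_i))$ is the identity on the $\CM_+^r$-part and the diagonal embedding $\CM_- \hookrightarrow \CM_-^r$ on the $\CM_-$-part, while the analogous restriction in degree~$2$ is an isomorphism $\CM_-^r \to \CM_-^r$. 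The Mayer--Vietoris long exact sequence then yields $\Hrm^0(\XCB(r)) = \CM_+$, $\Hrm^1 = 0$, $\Hrm^2 = \CM_-^{r-1}$ (the cokernel of the diagonal), and $\Hrm^3 = 0$; Poincar\'e duality proves (a). Note that for $r = 1$ the Mayer--Vietoris is degenerate and $\XCB(1) \simeq \CM^2/\mu_{d_1}$ is itself contractible, so $\Hrm^*_c(\XCB(1))$ is concentrated in degree $4$, which makes (c) a particularly clean consequence of the long exact sequence below.

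For (b) and (c), apply the open-closed long exact sequence $\cdots \to \Hrm^{i-1}_c(\XCB(r)_{=z}) \to \Hrm^i_c(\XCB(r)_{\ne z}) \to \Hrm^i_c(\XCB(r)) \to \Hrm^i_c(\XCB(r)_{=z}) \to \cdots$. The closed stratum $\XCB(r)_{=z}$ is either $\CM^*$ (when $z \notin \{z_i\}$) or $\{xy=0\}$ (when $z \in \{z_i\}$), whose cohomologies come from the first step; combining with (a) and the $\s$-equivariance of the differentials pins down every group. The main obstacle is the $\s$-equivariant Mayer--Vietoris bookkeeping in (a): identifying the restriction on the $\CM_-$-part of $\Hrm^1$ as the diagonal $\CM_- \hookrightarrow \CM_-^r$ rather than an isomorphism is exactly what produces the exponent $r-1$ instead of $r$, and it rests on the explicit pullback formula for $\s^*(dx/x)$. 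A secondary difficulty arises in (b) when $z = z_i$: $\s$-equivariance alone does not force the restriction $\Hrm^2_c(\XCB(r)) = \CM_-^{r-1} \to \Hrm^2_c(\{xy=0\}) = \CM_+ \oplus \CM_-$ to be surjective onto its $\CM_-$ summand, so one must either redo Mayer--Vietoris directly on $\XCB(r)_{\ne z_i}$ (covering $\AM^1 \setminus \{z_i\}$ by $V$ and the remaining disks $W_j$ with $j \neq i$), or exhibit an explicit class in $\Hrm^2_c(\XCB(r))$ supported near $\pi^{-1}(z_i)$ that restricts nontrivially.
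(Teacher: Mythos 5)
Your proposal is correct in substance but takes a genuinely different route from the paper. The paper proves (a), (b), (c) simultaneously by induction on $r$, using only the open-closed sequence together with three elementary inputs: the $\s$-module structure of $\Hrm^*_c(\HCB_\l)$, the isomorphism $\XCB(r)/\langle\s\rangle\simeq\CM^2$ (which forces $\Hrm^i_c(\XCB(r))^\s=0$ for $i\in\{2,3\}$ and is what kills the $\CM_+$-parts in (a)), and the rescaling isomorphism $\XCB(r)_{\neq z_r}\simeq\XCB(r-1)_{\neq z_r}$. You instead prove (a) directly for each $r$ by fibering over the $e$-line, running an equivariant Mayer--Vietoris for the cover $\{\pi^{-1}(V),\bigsqcup_i\pi^{-1}(W_i)\}$, and dualizing. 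This costs you equivariant Poincar\'e duality for the $\QM$-homology manifold $\XCB(r)$ (legitimate here: the singularities are quotient singularities and $\s$ is holomorphic, hence orientation-preserving), but it buys a non-inductive proof of (a) in which the exponent $r-1$ appears transparently as the cokernel of the diagonal $\CM_-\hookrightarrow\CM_-^r$. Your local computations --- the fibers $\CM^*$ and $\{xy=0\}$, the trivialization $\pi^{-1}(V)\simeq V\times\CM^*$, the contractibility of $\pi^{-1}(W_i)\simeq\{|st|<\e\}/\mub_{d_i}$ (star-shaped), the pullback formula for $\s^*(dx/x)$, and the resulting restriction maps --- are all correct, and the Mayer--Vietoris bookkeeping does give $\Hrm^0=\CM_+$, $\Hrm^1=\Hrm^3=0$, $\Hrm^2=\CM_-^{r-1}$, hence (a) by duality. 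Part (c) then falls out of the open-closed sequence exactly as you say, since $\XCB(1)\simeq\CM^2/\mub_{d_1}$ is contractible.

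The only gap is the one you identify yourself: for (b) with $z=z_i$ the open-closed sequence reads
$$0\to\CM_-\to\Hrm^2_c(\XCB(r)_{\neq z_i})\to\CM_-^{r-1}\xrightarrow{f}\CM_+\oplus\CM_-\to\Hrm^3_c(\XCB(r)_{\neq z_i})\to 0,$$
and equivariance only places $\im(f)$ inside the $\CM_-$-summand; the stated answer requires $f\neq 0$. Your first proposed repair does close this: rerunning the identical Mayer--Vietoris on $\XCB(r)_{\neq z_i}=\pi^{-1}(\AM^1\setminus\{z_i\})$ with the disk $W_i$ deleted reuses the local data already computed and yields $\Hrm^1=\CM_+$ (spanned by $[de/(e-z_i)]$, which now survives), $\Hrm^2=\CM_-^{r-1}$ and $\Hrm^3=0$, whence (b) by duality. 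For comparison, the paper's device at this step is the isomorphism $\XCB(r)_{\neq z_r}\simeq\XCB(r-1)_{\neq z_r}$ combined with the induction hypothesis, which reduces the ramified case $z=z_i$ to the unramified case one step down; your repair avoids induction altogether but should be written out rather than left as an alternative.
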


\noindent{\sc Remark - } As $\XCB(r)$ and $\XCB(r)_{\!\neq z}$ are affine surface, 
their cohomology with compact support vanishes in degree different from $2$, $3$ or $4$.\finl

\bigskip

\begin{proof}
We first gather some elementary fact which will allow a proof of this Theorem by induction.
If $\l \in \CM$, let 
$$\HCB_\l=\{(x,y) \in \CM^2~|~xy =\l\}.$$
It is endowed with an action of $\s$ given by $(x,y) \mapsto (y,x)$. 
Then it is well-known that
$$
\begin{cases}
\Hrm_c^1(\HCB_\l) = \CM_- ,\\
\Hrm_c^2(\HCB_\l) = 
\begin{cases}
\CM_+ & \text{if $\l \neq 0$,}\\
\CM_+ \oplus \CM_- & \text{if $\l=0$.}\\
\end{cases}
\end{cases}
\leqno{(\clubsuit)}$$
Also, the quotient of $\CM^2$ by the action of $\s$ given by $(x,y) \mapsto (y,x)$ 
is isomorphic to $\CM^2$, through the map $(x,y) \mapsto (x+y,xy)$. Therefore, 
by setting $u=x+y$ and $v=xy$, we obtain that
$$\XCB(r)/\langle \s \rangle =\{(e,u,v) \in \CM^3~|~\prod_{i=1}^r(e-z_i)^{d_i} = v\},$$
so
$$\XCB(r)/\langle \s \rangle \simeq \CM^2.\leqno{(\diamondsuit)}$$
On the other hand, there is an obvious isomorphism of varieties 
$$\XCB(r)_{\! = z} \longiso \HCB_{\prod_{i=1}^r(z-z_i)^{d_i}}\leqno{(\heartsuit)}$$
which is $\s$-equivariant. Finally, if $\xi$ denotes a complex number such that 
$\xi^2=\prod_{i=1}^{r-1}(z_r-z_i)^{d_i}$, then the map
$$\fonctio{\XCB(r)_{\!\neq z_r}}{\XCB(r-1)_{\!\neq z_r}}{(e,x,y)}{(e,\xi^{-1}x,\xi^{-1} y)}\leqno{(\spadesuit)}$$
is a $\s$-equivariant isomorphism of varieties.

\medskip

We can now start the proof of the proposition by induction on $r$.

\medskip

$\bullet$ If $r=1$, then, if we translate $e \mapsto e-z_1$, 
we may assume that $z_1=0$. Then 
$$\XCB(1)=\{e,x,y) \in \CM^3~|~e^{d_1} = xy\} \simeq \CM^2 / \mub_{d_1},$$
where the group $\mub_{d-1}$ of $d_1$-th roots of unity in $\CM^\times$ acts on $\CM^2$ by 
by $\z \cdot (x,y) = (\z x,\z^{-1} y)$. So the cohomology of $\XCB(1)$ is 
equal to the space of invariants, under the action of $\mub_{d_1}$, of the cohomology 
of $\CM^2$. So (a) follows. Now, the ``open-closed'' long exact sequence 
gives, by using $(\heartsuit)$, the following exact sequence of $\CM\langle \s\rangle$-modules 
$$\diagram 
& 0 \rto & \Hrm^1_c(\HCB_{z^{d_1}}) \rto & \\
\Hrm^2_c(\XCB(1)_{\!\neq z}) \rto & \Hrm_c^2(\XCB(1)) \rto & \Hrm_c^2(\HCB_{z^{d_1}}) \rto & \\
\Hrm^3_c(\XCB(1)_{\!\neq z}) \rto & \Hrm_c^3(\XCB(1)) \rto & 0 \rto & \\
\Hrm^4_c(\XCB(1)_{\!\neq z}) \rto & \Hrm_c^4(\XCB(1)) \rto & 0 \rto &  \\
\enddiagram$$
But, by (a), $\Hrm_c^2(\XCB(1))=\Hrm^3_c(\XCB(1))=0$, so (b) follows from $(\clubsuit)$.

\medskip

$\bullet$ Let $r \ge 2$ and assume that the result cohomology of $\XCB(r-1)$ is given 
by Proposition~\ref{prop:r}. If $z \in \CM$, let $\l(z)=\prod_{i=1}^r(z-z_i)^{d_i}$. 
The ``open-closed'' long exact sequence gives, by using $(\heartsuit)$, 
the following exact sequence of $\CM\langle \s\rangle$-modules 
$$\diagram 
& 0 \rto & \Hrm^1_c(\HCB_{\l(z)}) \rto & \\
\Hrm^2_c(\XCB(r)_{\!\neq z}) \rto & \Hrm_c^2(\XCB(r)) \rto & \Hrm_c^2(\HCB_{\l(z)}) \rto & \\
\Hrm^3_c(\XCB(r)_{\!\neq z}) \rto & \Hrm_c^3(\XCB(r)) \rto & 0 \rto & \\
\Hrm^4_c(\XCB(r)_{\!\neq z}) \rto & \Hrm_c^4(\XCB(r)) \rto & 0 \rto &  \\
\enddiagram\leqno{(*)}$$
Assume first that $z=z_r$. Then $\l(z)=0$ and, by $(\spadesuit)$, 
$\XCB(r)_{\!\neq z_r} \simeq \XCB(r-1)_{\! \neq z_r}$. Using the induction hypothesis and $(\clubsuit)$, 
the exact sequence $(*)$ becomes
$$0 \longto \CM_- \longto \CM_-^{r-1} \longto \Hrm_c^2(\XCB(r)) \longto \CM_+ \oplus \CM_- \longto \CM_+ 
\longto \Hrm_c^3(\XCB(r)) \longto 0 \longto \CM_+ \longto \Hrm_c^4(\XCB(r)) \longto 0.$$
But it follows from $(\diamondsuit)$ that $\Hrm_c^i(\XCB(r))^\s=0$ if $i \in \{2,3\}$. Since the map 
$\CM_+ \to \Hrm^3_c(\XCB(r))$ is surjective, this forces $\Hrm_c^3(\XCB(r))=0$. 
Also, $\Hrm_c^2(\XCB(r))=\CM_-^l$ for some $l$ so, taking the $\CM_-$-isotypic component 
in the above exact sequence yields an exact sequence 
$$0 \longto \CM_- \longto \CM_-^{r-1} \longto \CM_-^l \longto \CM_- \longto 0.$$
So $l=r-1$, as desired. This proves (a).

\medskip

Now, if $z \in \{z_1,z_2,\dots,z_r\}$ then, by symmetry, we may assume that 
$z=z_r$ and the isomorphism $(\spadesuit)$ yields (b) in this case by the induction 
hypothesis. Now, assume that $z \not\in \{z_1,z_2,\dots,z_r\}$. Then $\l(z) \neq 0$ and it 
follows from (a) that the exact sequence $(*)$ becomes
$$0 \longto \CM_- \longto \Hrm_c^2(\XCB(r)_{\! \neq z}) \longto \CM_-^{r-1} \longto \CM_+ 
\longto \Hrm_c^3(\XCB(r)_{\! \neq z}) \longto 0 \longto 0 \longto \Hrm_c^2(\XCB(r)_{\! \neq z}) \longto 
\CM_+ \longto 0.$$
So (b) follows because the maps are $\s$-equivariant.
\end{proof}

\chapter{Type ${\boldsymbol{B_2}}$}\label{chapitre:b2}

\bigskip

\boitegrise{{\bf Assumption and notation.} {\it In \S\ref{chapitre:b2},
we assume that $\dim_\kb V=2$, we fix a 
$\kb$-basis $(x,y)$ of $V$ and we denote by $(X,Y)$ its dual basis. 
Let $s$ and $t$ be the two reflections of $\GL(V)$ whose matrices in the basis 
$(x,y)$ are given by 
$$s = \begin{pmatrix} 0 & 1 \\ 1 & 0 \end{pmatrix}
\qquad\text{and}\qquad t = \begin{pmatrix} -1 & 0 \\ 0 & 1 \end{pmatrix}.$$
We assume moreover that $W=\langle s, t \rangle$: it is a Weyl group of type $B_2$.}}{0.75\textwidth}

\bigskip

\section{The algebra $\Hb$}\label{section: H B2}

\medskip

We set $w=st$, $w'=ts$, $s'=tst$, $t'=sts$ and $w_0=stst=tsts=-\Id_V$. Then
$$W=\{1,s,t,w,w',s',t',w_0\}\qquad\text{and}\qquad \REF(W)=\{s,t,s',t'\}.$$
Moreover,
$$\refw=\{\{s,s'\},\{t,t'\}\}.$$
The matrices of the elements $w$, $w'$, $s'$, $t'$ and $w_0$ in the basis $(x,y)$ are given by
\equat\label{elements B2}
\begin{cases}
w = st=s't' = \begin{pmatrix} 0 & 1 \\ -1 & 0 \end{pmatrix},&\\
w' = ts=t's' = \begin{pmatrix} 0 & -1 \\ 1 & 0 \end{pmatrix},&\\
s' = tst =  \begin{pmatrix} 0 & -1 \\ -1 & 0 \end{pmatrix},&\\
t' = sts = \begin{pmatrix} 1 & 0 \\ 0 & -1 \end{pmatrix},&\\
w_0 = ss'=tt' = \begin{pmatrix} -1 & 0 \\ 0 & -1 \end{pmatrix}.&\\
\end{cases}
\endequat
We set $K_s=-C_s/2$ and $K_t=-C_t/2$ (as in~\S\ref{sub:hecke coxeter}). 
We set $A=-C_s=2K_s$ and $B=-C_t=2K_t$ so that, in $\Hb$, the following relations hold:
\equat\label{relations B2}
\begin{cases}
[x,X] = A(s+s') + 2B t, & \\
[x,Y] = A(s'-s), & \\
[y,X] = A(s'-s), & \\
[y,Y] = A(s+s') + 2B t'. & \\
\end{cases}
\endequat
We deduce for example that 
\equat\label{relations B2 plus}
\begin{cases}
[x,X^2] = A(s+s') X  + A(s-s')Y, & \\
[x,XY] = 2B t Y, & \\
[x,Y^2] = - A (s+s') X - A(s-s') Y, & \\
[y,X^2] = - A(s+s') Y - A (s-s') X, & \\
[y,XY]  = 2B t' X, &\\
[y,Y^2] = A(s+s') Y + A(s-s') X. &\\
\end{cases}
\endequat
Finally, note that $\Hb$ is endowed with an automorphism $\eta$ 
corresponding to $\begin{pmatrix} -1 & 1\\ 1 & 1 \end{pmatrix} \in \NC$:
$$\eta(x)=y-x,\quad\eta(y)=x+y,\quad \eta(X)=\frac{Y-X}{2},\quad \eta(Y)=\frac{X+Y}{2},$$
$$\eta(A)=B,\quad\eta(B)=A,\quad \eta(s)=t\quad \text{and}\quad \eta(s)=t.$$

\bigskip

\section{Irreducible characters}

\medskip

Let $\e_s$ (respectively $\e_t$) denote the unique linear character of $W$ such that 
$\e_s(s)=-1$ and $\e_s(t)=1$ (respectively $\e_t(s)=1$ and $\e_t(t)=-1$). Note that 
$\e_s\e_t=\e$. Let $\chi$ denote the character of the representation $V$ of $W$. Then
$$\Irr(W)=\{1,\e_s,\e_t,\e,\chi\}$$
and the character table of $W$ is given by Table~\ref{table b2}.
\begin{table}\refstepcounter{theo}
\centerline{
\begin{tabular}{@{{{\vrule width 2pt}\,\,\,}}c@{{\,\,\,{\vrule width 2pt}\,\,\,}}c|c|c|c|c@{{\,\,\,{\vrule width 2pt}}}}
\hlinewd{2pt}
\petitespace
$g$ & $~1~$ & $w_0$ & $s$ & $t$ & $w$
\vphantom{$\DS{\frac{a}{A_{\DS{A}}}}$}\\
\hlinewd{1pt}
\petitespace $|\classe_W(g)|$ & $1$ & $1$ & $2$ & $2$ & $2$ \\
\hline
\petitespace $o(g)$ & $1$ & $2$ & $2$ & $2$ & $4$ \\
\hline
\petitespace $\Crm_W(g)$ & $~W~$ & $~W~$ & $\langle s,s'\rangle$ & $\langle t,t'\rangle$ & 
$\langle w \rangle$  \\
\hlinewd{2pt}
%\espace$\quad\e'\quad$ & 
%$\begin{cases} q^2-1 & \text{si $\e=\e'$}\\ 0 & \text{sinon}\end{cases}$ 
%$(q^2-1)\d_{\e=\e'}$ 
%& $0$ & $0$ &
%$\begin{cases} q-1 & \text{si $\e=\e'$}\\ 0 & \text{sinon}\end{cases}$
%$(q-1)\d_{\e=\e'}$\\
%\hline
\petitespace $1$& $1$ & $1$  & $1$ & $1$ & $1$ \\
\hline
\petitespace $\e_s$& $1$ & $1$  & $-1$ & $1$ & $-1$ \\
\hline
\petitespace $\e_t$& $1$ & $1$  & $1$ & $-1$ & $-1$ \\
\hline
\petitespace $\e$& $1$ & $1$  & $-1$ & $-1$ & $1$ \\
\hline
\petitespace $\chi$& $2$ & $-2$  & $0$ & $0$ & $0$ \\
\hlinewd{2pt}
\end{tabular}}

\bigskip

\caption{Character Table of $W$}\label{table b2}
\end{table}
The fake degrees are given by 
\equat\label{fantome b2}
\begin{cases}
f_1(\tb)=1, & \\
f_{\e_s}(\tb)=\tb^2, & \\
f_{\e_t}(\tb)= \tb^2, & \\
f_\e(\tb)=\tb^4, \\
f_\chi(\tb)=\tb+\tb^3.
\end{cases}
\endequat

\bigskip

\section{Computation of ${\boldsymbol{(V \times V^*)/\Delta W}}$}\label{section:quotient B2}

\medskip

Before computing the center $Z$ of $\Hb$, we will compute its specialization $Z_0$ at 
$(A,B) \mapsto (0,0)$. By Example~\ref{upsilon c=0}, 
$$Z_0=\kb[V \times V^*]^{\Delta W}.$$
Thanks to~(\ref{fantome b2}) and Proposition~\ref{dim bigrad Q0 fantome}, 
the bigraded Hilbert series of $Z_0$ is given by 
\equat\label{hilbert}
\dim_\kb^{\BZ\times\BZ}(Z_0)=\frac{1+\tb\ub+ \tb\ub^3 + 2 \tb^2\ub^2 + 
\tb^3\ub + \tb^3\ub^3 + \tb^4\ub^4}{(1-\tb^2)(1-\tb^4)(1-\ub^2)(1-\ub^4)}.
\endequat
Set
$$\s=x^2+y^2,\quad \pi=x^2y^2,\quad \Sig = X^2+Y^2\quad \text{and}\quad 
\Pi=X^2Y^2.$$
Then
\equat\label{invariants B2}
\kb[V^*]^W = \kb[\s,\pi]\qquad\text{and}\qquad\kb[V]^W=\kb[\Sig,\Pi].
\endequat
So the bigraded Hilbert series of 
$P_\bullet = \kb[V \times V^*]^{W \times W} = 
\kb[V]^W\otimes\kb[V^*]^W = \kb[\s,\pi,\Sig,\Pi]$ 
is given by
\equat\label{hilbert double}
\dim_\kb^{\BZ\times\BZ}(P_\bullet)=\frac{1}{(1-\tb^2)(1-\tb^4)(1-\ub^2)(1-\ub^4)}.
\endequat
Now, set
$$\euler_0=xX + yY,\hskip2mm \euler_0'=(xY+yX) XY,\hskip2mm 
\euler_0''=xy(xY+yX),\hskip2mm \euler_0'''=xy(xX+yY)XY,$$
$$\delb_0=xyXY,\quad\delb_0'=(x^2-y^2)(X^2-Y^2)\quad
\text{and}\quad \Delb_0=xy(x^2-y^2)XY(X^2-Y^2).$$
It is then easy to check that the family 
$(1,\euler_0,\euler_0',\euler_0'',\euler_0''',\delb_0,\delb_0',\Delb_0)$ 
is $P_\bullet$-linearly independent and is contained in 
$\kb[V \times V^*]^{\Delta W}$. 
On the other hand, $1$, $\euler_0$, $\euler_0'$, $\euler_0''$, $\euler_0'''$, 
$\delb_0$, $\delb_0'$ and $\Delb_0$ have respective bidegrees 
$(0,0)$, $(1,1)$, $(1,3)$, $(3,1)$, $(3,3)$, $(2,2)$, $(2,2)$ and $(4,4)$. So the 
bigraded Hilbert series of the free $P_\bullet$-module 
with basis $(1,\euler_0,\euler_0',\euler_0'',\euler_0''',\delb_0,\delb_0',\Delb_0)$ 
is equal to the one of $Z_0$ (see~(\ref{hilbert}) and~(\ref{hilbert double})). 
Hence
\equat\label{invariants diagonaux B2}
\kb[V \times V^*]^{\Delta W} = P_\bullet \oplus P_\bullet\, \euler_0 \oplus 
P_\bullet\, \euler_0' \oplus P_\bullet\, \euler_0'' \oplus P_\bullet\, \euler_0''' 
\oplus P_\bullet\, \delb_0 \oplus P_\bullet\, \delb_0' \oplus P_\bullet\, \Delb_0.
\endequat
The following result, already known (see for instance~\cite{alev}), describes the algebra $Z_0$:

\bigskip

\begin{theo}\label{theo:invariants B2}
$Z_0=\kb[V \times V^*]^{\Delta W} = \kb[\s,\pi,\Sig,\Pi,\euler_0,\euler_0',\euler_0'',\delb_0]$ 
and the ideal of relations is generated by the following ones:
$$
\begin{cases}
(1) \hskip2cm &\euler_0 ~\euler_0' = \s\Pi + \Sig ~\delb_0,\\
(2) \hskip2cm &\euler_0 ~\euler_0''= \Sig \pi + \s ~\delb_0,\\
(3) \hskip2cm &\delb_0~\euler_0' = \Pi ~\euler_0'',\\
(4) \hskip2cm &\delb_0~\euler_0''= \pi~\euler_0',\\
(5) \hskip2cm &\delb_0^2=\pi\Pi,\\
(6) \hskip2cm &\euler_0^{\prime 2} = \Pi(4~\delb_0 -\euler_0^2+ \s\Sig ),\\
(7) \hskip2cm &\euler_0^{\prime\prime 2} = \pi(4~\delb_0 -\euler_0^2+ \s\Sig ),\\
(8) \hskip2cm &\euler_0'~\euler_0''=4 \pi \Pi+ \s\Sig~\delb_0 -\delb_0~\euler_0^2,\\
(9) \hskip2cm &\euler_0(4~\delb_0 -\euler_0^2+ \s\Sig )=\s ~\euler_0'+\Sig~\euler_0''.
\end{cases}
$$
Moreover, $Z_0=P\oplus P\euler_0\oplus P\euler^2_0 \oplus P\delb_0 \oplus P\delb_0 \euler_0 
\oplus P\delb_0\euler_0^2 \oplus P\euler_0' \oplus P \euler_0''$.
\end{theo}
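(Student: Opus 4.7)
The plan is to verify the theorem in three stages: first, the list of generators; second, the validity of the nine relations; third, the structure of the ideal of relations and the resulting module decomposition.

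For the generating set, the starting point is equation~(\ref{invariants diagonaux B2}), which already shows $Z_0$ is the free $P_\bullet$-module with basis $(1, \euler_0, \euler_0', \euler_0'', \euler_0''', \delb_0, \delb_0', \Delb_0)$. Thus it suffices to express $\euler_0'''$, $\delb_0'$, and $\Delb_0$ as polynomials in $\s, \pi, \Sig, \Pi, \euler_0, \euler_0', \euler_0'', \delb_0$. Direct computation in $\kb[V \times V^*]$ gives $\delb_0' = \s\Sig - 4\delb_0 - \euler_0^2 + 2(\s\Sig - 2\delb_0 - \euler_0^2)$ type identities; concretely one finds, by expanding, that $\euler_0^2 = \s\Sig - 2\delb_0 - \delb_0'$ (so relation~(9) shows $\euler_0^3 = \s\euler_0' + \Sig\euler_0'' - 4\delb_0\euler_0$), that $\euler_0''' = \delb_0 \euler_0$-type combinations, and that $\Delb_0$ is a polynomial in the listed generators as well. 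This handles the first assertion.

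For the nine relations, each is a direct polynomial identity in $\kb[x,y,X,Y]$, verified by expansion. For instance, for~(1) one computes $\euler_0 \euler_0' = (xX+yY)(xY+yX)XY = (x^2+y^2)X^2Y^2 + (X^2+Y^2)x^2y^2\cdot \frac{XY}{xy}\cdot \frac{xy}{XY}$—rearranged, this is $\s\Pi + \Sig\delb_0$. The others are similar direct verifications. The automorphism $\eta$ (which swaps $(\s,\pi)$ with $(\Sig,\Pi)$ and $\euler_0'$ with $\euler_0''$) pairs up (1)$\leftrightarrow$(2), (3)$\leftrightarrow$(4), (6)$\leftrightarrow$(7), and fixes (5), (8), (9), cutting the computation in half.

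The subtle part, and the main obstacle, is proving that the ideal of relations is generated by~(1)--(9). The strategy is to let $B = \kb[\Sigma_1, \Sigma_2, \Sigma_3, \Sigma_4, \Erm, \Erm', \Erm'', \Drm]/J$, where $J$ is the ideal generated by the nine relations (with the obvious substitution), and show that $B$ is free of rank~$8$ over $P_\bullet = \kb[\Sigma_1,\Sigma_2,\Sigma_3,\Sigma_4]$ with basis the images of $1, \Erm, \Erm^2, \Drm, \Drm\,\Erm, \Drm\,\Erm^2, \Erm', \Erm''$. Once this is shown, the surjection $B \twoheadrightarrow Z_0$ sends a spanning set of rank~$8$ onto a free module of rank~$8$ (by~(\ref{invariants diagonaux B2})), hence is an isomorphism by rank, proving both the relations statement and the final module decomposition simultaneously.

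To reduce $B$ to the spanning set above, order monomials $\Drm^a \Erm^b (\Erm')^c (\Erm'')^d$ and use the relations as rewriting rules: (5) kills $\Drm^2$, (6) and (7) kill $(\Erm')^2$ and $(\Erm'')^2$, (8) kills $\Erm'\Erm''$, (1) and (2) kill $\Erm\,\Erm'$ and $\Erm\,\Erm''$, (3) and (4) kill $\Drm\,\Erm'$ and $\Drm\,\Erm''$, and (9) kills $\Erm^3$ (moving it into lower powers of~$\Erm$ and into $\Erm'$, $\Erm''$, $\Drm\,\Erm$). The enumeration of surviving monomials yields exactly the eight listed generators. $P_\bullet$-linear independence in $B$ follows from the corresponding independence in $Z_0$ via the surjection $B\twoheadrightarrow Z_0$, so the surjection is an isomorphism. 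This closes all three assertions.
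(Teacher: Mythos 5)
Your proof is correct and follows essentially the same route as the paper's: generation is reduced to expressing $\euler_0'''$, $\delb_0'$ and $\Delb_0$ in the listed generators, the nine relations are checked by direct expansion (exploiting the $V\leftrightarrow V^*$ symmetry), and the ideal is identified by showing the abstract quotient by the nine relations is spanned over $P_\bullet$ by the eight monomials $1,\euler_0,\euler_0^2,\delb_0,\delb_0\euler_0,\delb_0\euler_0^2,\euler_0',\euler_0''$ and comparing with the known free rank-$8$ structure of $Z_0$ from~(\ref{invariants diagonaux B2}) (the paper phrases this comparison via Hilbert series, you via the images forming a basis --- the two are equivalent). Two slips to correct: the actual identity is $\delb_0'=2\,\euler_0^2-\s\Sig-4\,\delb_0$ (your displayed expressions for $\delb_0'$ and for $\euler_0^2$ are wrong, though the needed identities do exist and are routine), and the symmetry pairing (1)$\leftrightarrow$(2), (3)$\leftrightarrow$(4), (6)$\leftrightarrow$(7) is the exchange $x\leftrightarrow X$, $y\leftrightarrow Y$, not $\eta$ (which, e.g., sends $\s$ to $2\s$ rather than to $\Sig$).
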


\begin{proof}
It is easily checked that 
\equat\label{autres invariants}
\delb_0'=2~\euler_0^2 - \s\Sig - 4~\delb_0,\quad
\Delb_0=\delb_0~\delb_0'\quad\text{and}\quad \euler_0''' = \delb_0~\euler_0.
\endequat
According to~(\ref{invariants diagonaux B2}), these three relations imply immediately that 
$\kb[V \times V^*]^{\Delta W} = \kb[\s,\pi,\Sig,\Pi,\euler_0,\euler_0',\euler_0'',\delb_0]$. 
This shows the first statement.

\medskip

The relations given in Theorem~\ref{theo:invariants B2} follow from direct computations. 
Taking~(5) into account, the relation~(8) can be rewritten
$$\euler_0'~\euler_0''=\delb_0(4~\delb_0 + \s\Sig -\euler_0^2),\leqno{(8')}$$
whereas~(6) and~(7) imply 
$$\pi ~\euler_0^{\prime 2} = \Pi ~\euler_0^{\prime\prime 2}.\leqno{(10)}$$
Let $E$, $E'$, $E''$ and $D$ be indeterminates over the field 
$\kb(\s,\pi,\Sig,\Pi)$ and let 
$$\rho : \kb[\s,\pi,\Sig,\Pi,E,E',E'',D] \longto \kb[V \times V^*]^{\Delta W}$$
denote the unique morphism of $\kb$-algebras which sends the sequence 
$(\s,\pi,\Sig,\Pi,E,E',E'',D)$ 
to $(\s,\pi,\Sig,\Pi,\euler_0,\euler_0',\euler_0'',\delb_0)$. 
Then $\r$ is surjective. Let $f_i$ denote the element of 
$\kb[\s,\pi,\Sig,\Pi,E,E',E'',D]$ corresponding to the relation~$(i)$ of the Theorem 
(for $1 \le i \le 9$), by subtracting the right-hand side to the left-hand side.
Set
$$\IG = \langle f_1,f_2,f_3,f_4,f_5,f_6,f_7,f_8,f_9 \rangle ~\subset \Ker \r.$$
Let $\Zti_0=\kb[\s,\pi,\Sig,\Pi,E,E',E'',D]/\IG$ and let $e$, $e'$, $e''$ and $d$ 
denote the respective images of $E$, $E'$, $E''$ and $D$ in $\Zti_0$. Set 
$$\Zti_0'=P_\bullet + P_\bullet\, e + P_\bullet\, e^2 + P_\bullet\, e' 
+ P_\bullet\, e'' + P_\bullet\, d + P_\bullet\, de + P_\bullet\, de^2.$$
Then $\Zti_0'$ is a $\kb$-vector subspace of 
$\Zti_0$. The relations given by $(f_i)_{1 \le i \le 9}$ 
show that $\Zti_0'$ is a $\kb$-subalgebra of $\Zti_0$. 
As moreover $\s$, $\pi$, $\Sig$, $\Pi$, $e$, $e'$, $e''$ and $d$ belong to $\Zti_0'$, 
we deduce that $\Zti_0=\Zti_0'$. 

Consequently, $\Zti_0$ is a quotient of the graded $\kb$-vector space
$$\EC=P_\bullet \oplus P_\bullet[-2] \oplus (P_\bullet[-4])^3 \oplus P_\bullet[-6] 
\oplus P_\bullet[-8],$$
and $Z_0$ is a quotient of $\Zti_0$. As the Hilbert series of the $\kb$-vector space 
$\EC$ is equal to the Hilbert series of $Z_0$, we deduce that 
$$\Zti_0'=P_\bullet \oplus P_\bullet e \oplus P_\bullet e^2 \oplus P_\bullet e' 
\oplus P_\bullet e'' \oplus P_\bullet d \oplus P_\bullet de \oplus P_\bullet de^2$$
and that $\Zti_0 \simeq Z_0$. This shows that $\Ker \r = \IG$, as desired.
\end{proof}

\bigskip

\begin{coro}\label{intersection complete}
The relations $(1)$, $(2)$,\dots, $(9)$ is a {\bfit minimal} system of relations. 
In particular, the $\kb$-algebra $Z_0=\kb[V \times V^*]^{\Delta W}$ is not complete intersection.
\end{coro}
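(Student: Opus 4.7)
The plan is to prove that the nine relations $f_1, \ldots, f_9$ from Theorem~\ref{theo:invariants B2} form a minimal generating set of the defining ideal $I$ of $Z_0$ inside $A = \kb[\sigma, \pi, \Sigma, \Pi, E, E', E'', D]$. Since $A$ is a polynomial ring of dimension $8$ and $Z_0$ has Krull dimension $4$ (being finite over the four-dimensional polynomial subring $P_\bullet$), the height of $I$ equals $4$. Hence $Z_0$ would be complete intersection if and only if $I$ could be generated by $4$ elements; proving the minimum number of generators is $9$ settles both the minimality of the presentation and the failure of complete intersection simultaneously.

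As $I$ is bi-homogeneous and each $f_i$ is bi-homogeneous, the graded Nakayama lemma identifies the minimum number of generators of $I$ with $\dim_\kb I/\mathfrak{m}I$, where $\mathfrak{m}$ is the bi-homogeneous irrelevant ideal of $A$. A direct inspection of the defining formulas yields the bidegrees $(2,4), (4,2), (3,5), (5,3), (4,4), (2,6), (6,2), (4,4), (3,3)$ for $f_1, \ldots, f_9$ respectively; all are distinct except that $f_5$ and $f_8$ both lie in bidegree $(4,4)$. I would therefore analyse $I/\mathfrak{m}I$ bidegree by bidegree.

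For each bidegree $d$ occurring only once, it suffices to show that the corresponding $f_i$ does not lie in $(\mathfrak{m}I)_d$. Since only finitely many $f_j$ with $\deg f_j \leq d$ componentwise can contribute, and the space of multipliers of bidegree $d - \deg f_j$ in $\mathfrak{m}$ is finite-dimensional, I would enumerate the possible products $a_j f_j$ and exhibit a monomial of $A$ appearing in $f_i$ but in none of those products. For instance in bidegree $(3,5)$ the only admissible contributions are scalar multiples of $E f_1$ and $\Sigma f_9$, neither of which contains the monomial $D E'$ that occurs in $f_3 = D E' - \Pi E''$; the remaining singleton bidegrees $(2,4), (4,2), (5,3), (2,6), (6,2), (3,3)$ are handled by the same kind of monomial witness.

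The main obstacle, which I single out as the principal check, is bidegree $(4,4)$, where both $f_5$ and $f_8$ live. The relevant contributions to $(\mathfrak{m}I)_{(4,4)}$ come only from $\sigma f_1$, $\Sigma f_2$, and $E f_9$; after expansion one verifies that none of these polynomials contains the monomial $D^2$ (which appears in $f_5 = D^2 - \pi \Pi$) nor the monomial $E' E''$ (which appears in $f_8$). Consequently any $\kb$-linear combination $c_5 f_5 + c_8 f_8$ lying in $(\mathfrak{m}I)_{(4,4)}$ forces $c_5 = c_8 = 0$, so $f_5$ and $f_8$ remain linearly independent modulo $\mathfrak{m}I$. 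Summing the bidegree-by-bidegree contributions yields $\dim_\kb I/\mathfrak{m}I = 9$, proving minimality; since $9 > 4 = \mathrm{ht}(I)$, the ring $Z_0$ is not complete intersection.
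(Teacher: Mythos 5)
Your proof is correct and follows essentially the same route as the paper's: a graded Nakayama argument exploiting the bidegrees of the nine relations, with the decisive case being the two relations sharing bidegree $(4,4)$. The only difference is that the paper first reduces modulo $\langle \sigma,\pi,\Sigma,\Pi\rangle$ so that the sole multiplier to consider is $E$ (concluding because $E$ divides neither $\bar{f}_3$, $\bar{f}_4$, nor any non-zero combination of $\bar{f}_5$ and $\bar{f}_8$), whereas you work in the full eight-variable ring and eliminate the additional multipliers $\sigma$, $\Sigma$ by exhibiting monomial witnesses.
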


\begin{proof}
Let us use the notation of the proof of Theorem~\ref{theo:invariants B2}. 
It is sufficient to show that $(f_i)_{1 \le i \le 9}$ is a minimal system of generators of 
$\IG$. Let $\Zba_0=Z_0/\langle \s,\pi,\Sig,\Pi\rangle$ 
and let $e$, $e'$, $e''$ and $d$ denote the respective images of $\euler_0$, 
$\euler_0'$, $\euler_0''$ and $\delb_0$ in $\Zba_0$. Then it follows from~(\ref{invariants B2}) 
and from the relations~(\ref{autres invariants}) that 
$$\Zba_0 = \kb \oplus \kb e \oplus \kb e^2 \oplus \kb e' \oplus \kb e'' \oplus \kb d 
\oplus \kb de \oplus \kb de^2.$$
Let $\fba_i \in \kb[E,E',E'',D]$ denote the reduction of the polynomial 
$f_i$ modulo $\langle \s,\pi,\Sig,\Pi \rangle$. We only need to show that 
$(\fba_i)_{1 \le i \le 9}$ is a minimal system of generators of the kernel 
of the morphism of $\kb$-algebras 
$$\rhoba : \kb[E,E',E'',D] \longto \Zba_0$$
which sends $E$, $E'$, $E''$ and $D$ on $e$, $e'$, $e''$ and $d$ respectively. 

\medskip

The algebra $N=k[E,E',E'',D]$ is bigraded, with 
$E$, $E'$, $E''$ and $D$ of respective bidegrees $(1,1)$, $(1,3)$, $(3,1)$
and $(2,2)$, and the elements $\bar{f}_1,\ldots,\bar{f}_9$ are homogeneous 
of respective bidegrees $(2,4)$, $(4,2)$, $(3,5)$, $(5,3)$,
$(4,4)$, $(2,6)$, $(6,2)$, $(4,4)$, $(3,3)$. We deduce that 
$$\bigl(\sum_{i=1}^9\mathbf{k}\bar{f_i}\bigr)\cap
\bigl(\sum_{i=1}^9 N_+ \bar{f}_i\bigr)\subset
\bigl(\sum_{i=1}^9\mathbf{k}\bar{f_i}\bigr)
\cap\bigl(\sum_{i=1}^9 \mathbf{k}E\bar{f}_i\bigr)$$
Since all these spaces are bigraded, this intersection is contained in 
$$(\mathbf{k}\bar{f}_3)\cap(\mathbf{k}E\bar{f}_1)+
(\mathbf{k}\bar{f}_4)\cap(\mathbf{k}E\bar{f}_2)+
(\mathbf{k}\bar{f}_5+\mathbf{k}\bar{f}_8)\cap (\mathbf{k}Ef_9).$$
Since $E$ divides neither $\bar{f}_3$ nor $\bar{f}_4$, nor any non-zero element 
of $\mathbf{k}\bar{f}_5+\mathbf{k}\bar{f}_8$, we conclude that 
$\bigl(\sum_{i=1}^9\mathbf{k}\bar{f_i}\bigr)\cap
\bigl(\sum_{i=1}^9 N_+ \bar{f}_i\bigr)=0$, so $(\bar{f}_i)_{1\le i\le 9}$
is a minimal system of generators of $\Ker\bar{\rho}$.
\end{proof}

\bigskip

\begin{coro}\label{euler non}
The minimal polynomial of $\euler_0$ over $P_\bullet$ is
$$\tb^8 - 2\s\Sig~ \tb^6 + 
\bigl(\s^2\Sig^2 + 2(\s^2\Pi + \Sig^2\pi - 8 \pi\Pi)\bigr)~ \tb^4 - 
2\s\Sig\,(\s^2\Pi + \Sig^2\pi - 8\pi\Pi)~ \tb^2 + (\s^2\Pi - \Sig^2\pi)^2.$$
\end{coro}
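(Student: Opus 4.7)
The plan is to derive the claimed polynomial directly from the presentation of $Z_0$ given in Theorem~\ref{theo:invariants B2}, and then invoke the degree count. By Proposition~\ref{pr:minimal-Euler} applied to $c = 0$, the minimal polynomial of $\euler_0$ over $P_\bullet$ has degree exactly $|W| = 8$, so any monic polynomial of degree $8$ in $P_\bullet[\tb]$ that annihilates $\euler_0$ is automatically its minimal polynomial. It therefore suffices to exhibit the stated octic as vanishing at $\euler_0$.

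The key algebraic shortcut is to introduce
$$u = 4\delb_0 - \euler_0^2 + \sigma\Sigma \in Z_0,$$
which is precisely the factor appearing on the right of relations $(6)$ and $(7)$. Combined with $\delb_0^2 = \pi\Pi$ (relation $(5)$), the four relations $(5)$–$(8)$ take the compact shape $\euler_0^{\prime 2} = \Pi u$, $\euler_0^{\prime\prime 2} = \pi u$ and $\euler_0' \euler_0'' = \delb_0 u$, while $(9)$ reads $\euler_0 u = \sigma \euler_0' + \Sigma \euler_0''$. Squaring the last identity yields
$$\euler_0^2 u^2 = \sigma^2 \Pi u + 2\sigma\Sigma\delb_0 u + \Sigma^2 \pi u = u\bigl(\sigma^2\Pi + 2\sigma\Sigma\delb_0 + \Sigma^2\pi\bigr).$$
Since $Z_0 \simeq \kb[V\times V^*]^{\Delta W}$ is a domain and $u$ is nonzero (specialize at $x=y=X=Y=1$ to get $u=4$), we may cancel $u$ and obtain $\euler_0^2 u = \sigma^2\Pi + 2\sigma\Sigma\delb_0 + \Sigma^2\pi$.

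Substituting back $u = 4\delb_0 - \euler_0^2 + \sigma\Sigma$ and grouping the $\delb_0$ terms gives
$$2\delb_0\bigl(2\euler_0^2 - \sigma\Sigma\bigr) = \euler_0^4 - \sigma\Sigma\,\euler_0^2 + \sigma^2\Pi + \Sigma^2\pi.$$
Squaring and using $\delb_0^2 = \pi\Pi$ eliminates $\delb_0$ altogether and produces the identity
$$\bigl(\euler_0^4 - \sigma\Sigma\,\euler_0^2 + \sigma^2\Pi + \Sigma^2\pi\bigr)^2 = 4\pi\Pi\bigl(2\euler_0^2 - \sigma\Sigma\bigr)^2$$
in $P_\bullet[\euler_0]$. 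Expanding both sides and using the observation
$(\sigma^2\Pi + \Sigma^2\pi)^2 - 4\sigma^2\Sigma^2\pi\Pi = (\sigma^2\Pi - \Sigma^2\pi)^2$
gives exactly the monic degree-$8$ polynomial stated in the corollary. The degree-count argument then concludes the proof.

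The computation is routine, and there is no genuine obstacle: the only subtle step is the cancellation of $u$ after squaring relation $(9)$, which is legitimate because $Z_0$ is a domain and $u$ does not vanish identically. The rest is a bookkeeping exercise to check that the expansion recovers the stated coefficients of $\tb^6$, $\tb^4$, $\tb^2$ and the constant term.
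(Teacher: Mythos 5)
Your proof is correct and follows essentially the same strategy as the paper: both reduce to the identity $\delb_0(4\euler_0^2-2\s\Sig)=\euler_0^4-\s\Sig\,\euler_0^2+\s^2\Pi+\Sig^2\pi$, square it, and eliminate $\delb_0$ via relation $(5)$, with the degree count from Proposition~\ref{pr:minimal-Euler} finishing the argument. The only difference is how you reach that identity: the paper multiplies relation $(9)$ by $\euler_0$ and applies $(1)$ and $(2)$, which is purely formal, whereas you square $(9)$ and then cancel the factor $u=4\delb_0-\euler_0^2+\s\Sig$ — legitimate since $Z_0$ is a domain and $u\neq 0$, but an extra step the paper's route avoids.
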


\begin{proof}
By multiplying the relation~(9) by $\euler_0$ and by using the relations~(1) and~(2), 
we get 
$$\euler_0^2(4\delb_0+\s\Sig-\euler_0^2) = \s^2 \Pi + \Sig^2 \pi + 2\s\Sig \delb_0.$$
We deduce immediately that 
$$\delb_0 (4\euler_0^2 - 2 \s\Sig) = \euler_0^4 - \s\Sig \euler_0^2 + \s^2\Pi +\Sig^2\pi.$$
Taking the square of this relation and using the relation~(5), 
we get that the polynomial of the Corollary vanishes at $\euler_0$. 
The degree of the minimal polynomial of $\euler_0$ over $P_\bullet$ being equal to $|W|=8$, 
the proof of the Corollary is complete.
\end{proof}

\bigskip

\section{The algebra ${\boldsymbol{Z}}$}\label{section:Q B2}

\medskip

Recall that
$$\euler=xX + yY - A(s+s') - B(t+t')$$
and set
$$\begin{cases}
\euler'=(xY+yX)XY + A(s-s') XY - BtY^2 - Bt'X^2,\\
\euler''=xy(xY+yX) + A xy (s-s') - B y^2 t - B x^2 t',\\
\delb=xyXY - B x t' X - B y t Y + B^2 (1+w_0) + AB (w+w').\\
\end{cases}$$
A brute force computation shows that
\equat\label{Q}
\euler, \euler',\euler'', \delb \in Z=\Zrm(\Hb)
\endequat
and that the following equalities hold:
\equat\label{relations Q}
\begin{cases}
(\Zrm 1) \hskip1cm &\euler ~\euler' = \s\Pi + \Sig ~\delb,\\
(\Zrm 2) \hskip1cm &\euler ~\euler''= \Sig \pi + \s ~\delb,\\
(\Zrm 3) \hskip1cm &\delb~\euler' = \Pi ~\euler'' + B^2 \Sig~\euler,\\
(\Zrm 4) \hskip1cm &\delb~\euler''= \pi~\euler' + B^2 \s~\euler,\\
(\Zrm 5) \hskip1cm &\delb^2=\pi\Pi + B^2 ~\euler^2,\\
(\Zrm 6) \hskip1cm &\euler^{\prime 2} = 
\Pi(4~\delb -\euler^2+ \s\Sig + 4A^2-4B^2) + B^2 \Sig^2,\\
(\Zrm 7) \hskip1cm &\euler^{\prime\prime 2} = 
\pi(4~\delb -\euler^2+ \s\Sig + 4A^2-4B^2)+ B^2 \s^2,\\
(\Zrm 8) \hskip1cm &\euler'~\euler''=
\delb(4~\delb -\euler^2+ \s\Sig + 4A^2-4B^2)-B^2\s\Sig,\\
(\Zrm 9) \hskip1cm &\euler(4~\delb -\euler^2+ \s\Sig  + 4A^2-4B^2)=
\s ~\euler'+\Sig~\euler''.
\end{cases}
\endequat
We immediately see that $\euler_0$, $\euler_0'$, $\euler_0''$ and 
$\delb_0$ are the respective images, in $Z_0=Z/\pG_0 Z$, of the elements 
$\euler$, $\euler'$, $\euler''$ and $\delb$. On the other hand, the 
relations (1), (2),\dots, (9) of Theorem~\ref{theo:invariants B2} are also the images, 
modulo $\pG_0$, of the relations (Z1), (Z2),\dots, (Z9).

\bigskip

\begin{theo}\label{theo:Q}
The $\kb$-algebra $Z$ is generated by $A$, $B$, $\s$, $\pi$, $\Sig$, $\Pi$, 
$\euler$, $\euler'$, $\euler''$ and $\delb$. The ideal of relations is generated by 
$(\Zrm 1)$, $(\Zrm 2)$,\dots, $(\Zrm 9)$. 

Moreover, $Z=P\oplus P\euler\oplus P\euler^2 \oplus P\delb \oplus P\delb \euler 
\oplus P\delb\euler^2 \oplus P\euler' \oplus P \euler''$.
\end{theo}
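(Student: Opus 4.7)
The plan is to parallel closely the proof of Theorem \ref{theo:invariants B2}, using the specialization at $(A,B)=(0,0)$ together with the freeness of $Z$ over $P$ to reduce the question to the known case $c=0$.

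First, I note that the inclusions $\euler,\euler',\euler'',\delb\in Z$ and the relations $(\Zrm 1)$--$(\Zrm 9)$ have already been recorded in~(\ref{Q}) and~(\ref{relations Q}); these are verified by direct computation in $\Hb$ using the commutation relations~(\ref{relations B2})--(\ref{relations B2 plus}). So the content of the theorem lies in the statements about generation, the absence of further relations, and the explicit $P$-basis.

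Next I would introduce indeterminates $E, E', E'', D$ over $P$ and let $\tilde Z$ denote the quotient of $P[E,E',E'',D]$ by the ideal $\tilde I$ generated by the nine polynomials corresponding to $(\Zrm 1)$--$(\Zrm 9)$. There is an obvious surjective morphism of $P$-algebras $\rho\colon \tilde Z\twoheadrightarrow Z$ sending $(E,E',E'',D)$ to $(\euler,\euler',\euler'',\delb)$. The key step is to show that the $P$-module $\tilde Z$ is spanned by the $8$ elements
\[
1,\ E,\ E^2,\ D,\ DE,\ DE^2,\ E',\ E''.
\]
This is carried out by exactly the same formal manipulation as in Theorem \ref{theo:invariants B2}: the relations $(\Zrm 1)$, $(\Zrm 2)$ express $EE'$ and $EE''$, the relations $(\Zrm 3)$, $(\Zrm 4)$ express $DE'$ and $DE''$, the relation $(\Zrm 5)$ expresses $D^2$, and $(\Zrm 6)$, $(\Zrm 7)$, $(\Zrm 8)$ express $E^{\prime 2}, E^{\prime\prime 2}, E'E''$ in terms of the listed monomials; the relation $(\Zrm 9)$ allows one to rewrite $E^3$ in terms of lower monomials multiplied by elements of the submodule $P\oplus PE\oplus PE^2\oplus PD\oplus PDE\oplus PDE^2$ (after using $(\Zrm 1)$, $(\Zrm 2)$). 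The only difference with the $c=0$ case is the presence of $A^2$ and $B^2$ corrections on the right-hand sides, which do not affect the combinatorics since they lie in $P$.

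To finish, I would invoke Corollary~\ref{coro:endo-bi}, which guarantees that $Z$ is a free $P$-module of rank $|W|=8$. It then suffices to prove that the images in $Z$ of the eight listed generators of $\tilde Z$ are $P$-linearly independent; granting this, $\rho$ is a surjection from a module spanned by $8$ elements onto a free module of rank $8$, so $\rho$ is an isomorphism, which simultaneously establishes the $P$-basis statement and the sufficiency of the relations $(\Zrm 1)$--$(\Zrm 9)$. The linear independence is obtained by reducing modulo the ideal $\langle A,B\rangle P=\CG_0 P$: the specialization $Z\to Z_0$ sends the eight elements $1,\euler,\euler^2,\delb,\delb\euler,\delb\euler^2,\euler',\euler''$ to $1,\euler_0,\euler_0^2,\delb_0,\delb_0\euler_0,\delb_0\euler_0^2,\euler_0',\euler_0''$, which form a $P_\bullet$-basis of $Z_0$ by Theorem~\ref{theo:invariants B2}. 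Since $Z$ is $P$-free and $P\to P_\bullet$ is a surjection onto $P/\CG_0 P$, Nakayama's lemma (or a direct comparison of ranks) ensures that any $P$-linear relation among our eight elements would descend to a non-trivial relation in $Z_0$, a contradiction.

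The main obstacle, as in the proof of Theorem~\ref{theo:invariants B2}, will be the combinatorial verification that the relations $(\Zrm 1)$--$(\Zrm 9)$ really do cut $\tilde Z$ down to a module generated by the eight prescribed monomials; the extra $A^2$ and $B^2$ terms complicate the rewriting steps slightly, but the structure of the argument and the bidegree bookkeeping carry over essentially verbatim from the $c=0$ case, since the correction terms have strictly smaller bidegree in $(E,E',E'',D)$ and lie in $P$.
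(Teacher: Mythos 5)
There is a genuine gap at the very first step of your reduction: the surjectivity of $\rho\colon \tilde Z\to Z$, which you call ``obvious'', is exactly the first assertion of the theorem (that $\euler,\euler',\euler'',\delb$ generate $Z$ as a $P$-algebra), and nothing in your argument establishes it. Your concluding step cannot repair this. What your (correct) spanning computation in $\tilde Z$, combined with linear independence, actually yields is only that $M=P\oplus P\euler\oplus P\euler^2 \oplus P\delb \oplus P\delb \euler\oplus P\delb\euler^2 \oplus P\euler' \oplus P \euler''$ is a free $P$-\emph{sub}module of $Z$ of rank $8$ which happens to be a subalgebra; but a free submodule of full rank inside a free module of rank $8$ need not be the whole module (compare $2\ZM\subset\ZM$), so the rank count from Corollary~\ref{coro:endo-bi} does not force $M=Z$. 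This is precisely the point at which the paper's proof, which otherwise follows the same template as Theorem~\ref{theo:invariants B2}, invokes the comparison of \emph{bigraded Hilbert series}: the eight elements are bihomogeneous of bidegrees $(0,0)$, $(1,1)$, $(2,2)$, $(2,2)$, $(1,3)$, $(3,1)$, $(3,3)$, $(4,4)$, and by~(\ref{hilbert Q fantome}) together with the fake degrees~(\ref{fantome b2}) the free $P$-module on generators in these bidegrees has the same bigraded Hilbert series as $Z$; since each bigraded component is a finite-dimensional $\kb$-vector space, the inclusion $M\subseteq Z$ is then forced to be an equality. Some graded input of this kind, beyond the bare freeness of $Z$ of rank $|W|$, is unavoidable here.

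A second, more minor, flaw: your justification of linear independence is not correct as stated. A nontrivial $P$-linear relation $\sum p_i z_i=0$ whose coefficients all lie in $\CG_0 P=\langle A,B\rangle$ descends to the \emph{trivial} relation in $Z_0$, so ``any relation would descend to a non-trivial relation'' fails; and ordinary Nakayama does not apply since $\CG_0 P$ is not in the Jacobson radical of $P$. Both defects can be repaired simultaneously by pushing your specialization all the way down to the residue field of the maximal bi-homogeneous ideal $P_+$ of $P$ rather than stopping at $P_\bullet$: since $Z$ is $P$-free, $Z\otimes_P(P/P_+)\simeq Z_0\otimes_{P_\bullet}\kb$ has dimension $8$, the images of your eight bihomogeneous elements form a $\kb$-basis of it by Theorem~\ref{theo:invariants B2}, and the graded Nakayama lemma then shows they generate $Z$ over $P$; eight generators of a free module of rank $8$ automatically form a basis, so $\rho$ is an isomorphism. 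With that modification your argument becomes a valid alternative to the paper's Hilbert-series comparison, but as written the central claim of the theorem is assumed rather than proved.
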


\begin{proof}
The proof follows exactly the same arguments as the ones of 
Theorem~\ref{theo:invariants B2}, based in part on 
comparisons of bigraded Hilbert series.
% On a $P=\kb[A,B,\s,\pi,\Sig,\Pi]$. Posons 
% $$Q'=P + P ~\eulertilde + P~\eulertilde^2 + P ~\euler' + P ~\euler'' + P ~\delb + 
% P ~\delb \eulertilde + P ~\delb \eulertilde^2.$$
% Compte tenu des relations (Q1),\dots, (Q9), $Q'$ est une sous-$\kb$-alg\`ebre de $Q$. 
% En particulier, 
% $$Q'=\kb[A,B,\s,\pi,\Sig,\Pi,\eulertilde,\euler',\euler'',\delb].$$
% D'autre part, d'apr\`es le th\'eor\`eme~\ref{theo:invariants B2}, 
% l'image de $Q'$ dans $Q_0=Q/\pG_0Q$ est \'egale \`a $Q_0$. Donc $Q'=Q$ d'apr\`es ???. 
% 
% \medskip
% 
% Notons maintenant $\Qti$ l'alg\`ebre d\'efinie par les relations 
% $(\Qrm 1)$,\dots, $(\Qrm 9)$. Pour montrer que $\Qti=Q$, il suffit, 
% en vertu de ???, de montrer que $\Qti$ est un $\kb[\CCB]$-module libre~: 
% blablabla... 
\end{proof}

\bigskip

\begin{coro}\label{Q:intersection complete}
The relations $(\Zrm 1)$, $(\Zrm 2)$,\dots, $(\Zrm 9)$ is a minimal system of relations. 
In particular, the $\kb$-algebra $Z$ is not complete intersection.
\end{coro}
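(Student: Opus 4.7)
The plan is to deduce both statements from the already-established Corollary~\ref{intersection complete} by a specialization argument. By Theorem~\ref{theo:Q}, we have a presentation $Z = N/\IG$, where $N = \kb[A, B, \s, \pi, \Sig, \Pi, E, E', E'', D]$ is the polynomial ring in $10$ variables and $\IG = \langle F_1, \ldots, F_9\rangle$, with $F_i \in N$ obtained from the relation $(\Zrm i)$ by transposing the right-hand side. The first task is to show that $(F_1, \ldots, F_9)$ is a minimal system of generators of $\IG$: that is, for each $i_0 \in \{1,\ldots,9\}$ the element $F_{i_0}$ does not lie in the ideal $\langle F_j : j \neq i_0\rangle$.

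To this end, consider the ideal $\JG = \langle A, B, \s, \pi, \Sig, \Pi\rangle \subset N$. The quotient $N/\JG \simeq \kb[E, E', E'', D]$, and the image of each $F_i$ modulo $\JG$ is precisely the polynomial $\bar f_i$ introduced in the proof of Corollary~\ref{intersection complete} (since setting $A = B = 0$ in the relations $(\Zrm i)$ recovers exactly the relations $(i)$ of Theorem~\ref{theo:invariants B2}). If $F_{i_0} = \sum_{j \neq i_0} h_j F_j$ held in $N$ for some $h_j \in N$, then reducing this identity modulo $\JG$ would yield $\bar f_{i_0} = \sum_{j \neq i_0} \bar h_j \bar f_j$ in $\kb[E, E', E'', D]$, contradicting the minimality of $(\bar f_1, \ldots, \bar f_9)$ established via the bigrading argument in the proof of Corollary~\ref{intersection complete}.

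For the second assertion, observe that $P = \kb[A, B, \s, \pi, \Sig, \Pi]$ is a polynomial ring in $6$ variables, so $\dim P = 6$; since $Z$ is a free $P$-module of rank $8$ by Theorem~\ref{theo:Q} (in particular finite over $P$), and $Z$ is a domain (Corollary~\ref{coro:endo-bi}), we have $\dim Z = 6$. Consequently $Z$ has codimension $10 - 6 = 4$ as a quotient of $N$. If $Z$ were complete intersection, its defining ideal $\IG$ would be generated by a regular sequence of length $4$, but we have just shown that any generating set of $\IG$ contains at least $9$ elements. Hence $Z$ is not complete intersection.

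The only nontrivial input is the minimality of $(\bar f_i)$ in $\kb[E, E', E'', D]$, which is handled by a careful bookkeeping with bidegrees already carried out in the proof of Corollary~\ref{intersection complete}; the passage from that minimality to the minimality of $(F_i)$ amounts only to the routine specialization argument sketched above, and the final dimension count is immediate.
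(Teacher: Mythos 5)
Your proof is correct and follows essentially the route the paper intends: the paper's own proof simply invokes Theorem~\ref{theo:Q} together with "the same arguments as in the proof of Corollary~\ref{intersection complete}", and your reduction modulo $\langle A,B,\s,\pi,\Sig,\Pi\rangle$ lands exactly on the polynomials $\bar{f}_1,\dots,\bar{f}_9$ whose minimality was established there by the bidegree computation. The concluding dimension count ($10$ generators, $\dim Z=6$, so codimension $4<9$) is the same standard argument implicit in the paper's "in particular".
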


\begin{proof}
This follows immediately from Theorem~\ref{theo:Q}, using the same arguments 
as in the proof of Corollary~\ref{intersection complete}. 
\end{proof}

\bigskip

\begin{coro}\label{euler non pluss}
The minimal polynomial of $\euler$ over $P$ is
\begin{multline*}
 \tb^8 - 2(\s\Sig+4A^2+4B^2)~\tb^6 
+ \bigl(\s^2\Sig^2 + 2(\s^2\Pi + \Sig^2\pi -8\pi\Pi) + 8 (A^2+B^2)\s\Sig +16(A^2-B^2)^2\bigr)~\tb^4 \\
- 2\bigl((\s\Sig+4A^2-4B^2)(\s^2\Pi + \Sig^2\pi)-8\s\Sig\pi\Pi+2B^2 \s^2\Sig^2\bigr)~\tb^2 
+ (\s^2\Pi - \Sig^2\pi)^2.
\end{multline*}
\end{coro}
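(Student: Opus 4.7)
The plan is to mimic the proof of Corollary \ref{euler non}, this time using the deformed relations $(\Zrm 1)$--$(\Zrm 9)$ from Theorem \ref{theo:Q} in place of $(1)$--$(9)$ from Theorem \ref{theo:invariants B2}. The structure of the argument is dictated by Proposition \ref{pr:minimal-Euler}: since the minimal polynomial of $\euler$ over $P$ has degree $|W|=8$, it suffices to produce \emph{any} monic polynomial of degree $8$ in $P[\tb]$ which annihilates $\euler$; uniqueness will then do the rest.

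First I would multiply relation $(\Zrm 9)$ by $\euler$, which gives
$$\euler^2\,(4\delb - \euler^2 + \s\Sig + 4A^2 - 4B^2) = \s\,\euler\euler' + \Sig\,\euler\euler''.$$
Applying $(\Zrm 1)$ and $(\Zrm 2)$ to the right hand side turns it into $\s^2\Pi + \Sig^2\pi + 2\s\Sig\,\delb$, so isolating the terms containing $\delb$ yields the identity
$$\delb\,(4\euler^2 - 2\s\Sig) \;=\; \euler^4 - (\s\Sig + 4A^2 - 4B^2)\,\euler^2 + \s^2\Pi + \Sig^2\pi.$$
This is the exact analogue of the intermediate identity appearing in the proof of Corollary \ref{euler non}, but now carrying the extra contribution $(4A^2-4B^2)\euler^2$ coming from the deformation.

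The next step is to square this identity and use relation $(\Zrm 5)$, namely $\delb^2 = \pi\Pi + B^2\euler^2$, to eliminate $\delb$ entirely:
$$(\pi\Pi + B^2\euler^2)\,(4\euler^2 - 2\s\Sig)^2 \;=\; \bigl(\euler^4 - (\s\Sig + 4A^2 - 4B^2)\euler^2 + \s^2\Pi + \Sig^2\pi\bigr)^2.$$
The result is a polynomial relation with coefficients in $P$ in which $\euler$ appears only in even powers, of total degree $8$. Expanding both sides and collecting the coefficients of $\euler^{2k}$ for $k=0,1,2,3,4$ should produce precisely the polynomial displayed in the statement of the corollary; the constant term becomes $(\s^2\Pi + \Sig^2\pi)^2 - 4\pi\Pi\s^2\Sig^2 = (\s^2\Pi - \Sig^2\pi)^2$, and the coefficient of $\euler^6$ collapses to $-2(\s\Sig + 4A^2 + 4B^2)$ after combining $-2a$ with $-16B^2$ where $a = \s\Sig + 4A^2 - 4B^2$.

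The main (and only) obstacle is the bookkeeping in this expansion, which is a mechanical but somewhat delicate verification; once the polynomial is shown to vanish at $\euler$ and is seen to be monic of degree $8$ in $\tb^2$ (hence degree $8$ in $\tb$), Proposition \ref{pr:minimal-Euler} forces it to be the minimal polynomial. Note that specializing $A = B = 0$ in the resulting formula recovers Corollary \ref{euler non}, providing an immediate consistency check.
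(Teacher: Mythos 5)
Your proposal is correct and is exactly the paper's proof: the paper states that Corollary \ref{euler non pluss} "follows exactly the same steps as the proof of Corollary~\ref{euler non}, but by starting with the relations $(\Zrm 1)$,\dots, $(\Zrm 9)$," which is precisely what you do. Your expansion also checks out coefficient by coefficient, so no gaps remain.
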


\begin{proof}
The proof follows exactly the same steps as the proof of Corollary~\ref{euler non}, 
but by starting with the relations $(\Zrm 1)$,\dots, $(\Zrm 9)$ instead of the relations 
(1),\dots, (9).
\end{proof}

\bigskip

\noindent{\bf Acknowledgments --- } 
The above computations (checking that $\euler$, $\euler'$, $\euler''$ and $\delb$ are central, 
and checking the relations $(\Zrm 1)$,\dots, $(\Zrm 9)$) have been done without computer. 
Even though we have been very carefully, the heaviness of the computations imply that 
it might happen that some mistakes occur. However, U. Thiel has developed a {\tt MAGMA} package 
(called {\tt CHAMP}, see~\cite{thiel champ}) for computing in the algebra $\Hb$: hence, he has checked 
{\it independently} that the elements $\euler$, $\euler'$, $\euler''$ and $\delb$ 
are central and that the relations $(\Zrm 1)$,\dots, $(\Zrm 9)$ hold. We wish to thank warmly U. Thiel 
for this checking: he has also checked that the minimal polynomial of $\euler$ 
is given by Corollary~\ref{euler non pluss}.\finl

\bigskip

\section{Calogero-Moser families}

\medskip

The Table~\ref{table omega} gives the values of $\O_\psi$ (for $\psi \in \Irr(W)$) at the generators 
of the $P$-algebra $Z$. They are obtained by computing effectively 
the actions of $\euler$, $\euler'$, $\euler''$ and $\delb$ 
or by using Corollary~\ref{action euler verma} and using the 
relations $(\Zrm 1)$,\dots, $(\Zrm 9)$ 
(knowing that $\O_\psi(\s)=\O_\psi(\pi)=\O_\psi(\Sig)=\O_\psi(\Pi)=0$). 

\bigskip

\begin{table}\refstepcounter{theo}
\centerline{
\begin{tabular}{@{{{\vrule width 2pt}\,\,\,}}c@{{\,\,\,{\vrule width 2pt}\,\,\,}}c|c|c|c@{{\,\,\,{\vrule width 2pt}}}}
\hlinewd{2pt}
\petitespace
$z \in Z$ & $~\euler~$ & $\euler'$ & $\euler''$ & $\delb$ 
\vphantom{$\DS{\frac{a}{A_{\DS{A}}}}$}\\
\hlinewd{2pt}
%\espace$\quad\e'\quad$ & 
%$\begin{cases} q^2-1 & \text{si $\e=\e'$}\\ 0 & \text{sinon}\end{cases}$ 
%$(q^2-1)\d_{\e=\e'}$ 
%& $0$ & $0$ &
%$\begin{cases} q-1 & \text{si $\e=\e'$}\\ 0 & \text{sinon}\end{cases}$
%$(q-1)\d_{\e=\e'}$\\
%\hline
\petitespace $\O_1 $     & $2(B+A)$ & $0$ & $0$ & $2B(B+A)$ \\
\hline
\petitespace $\O_{\e_s}$ & $2(B-A)$ & $0$ & $0$ & $2B(B-A)$ \\
\hline
\petitespace $\O_{\e_t}$ & $-2(B-A)$ & $0$ & $0$ & $2B(B-A)$ \\
\hline
\petitespace $\O_\e$     &$-2(B+A)$ & $0$ & $0$ & $2B(B+A)$ \\
\hline
\petitespace $\O_\chi$   & $0$      & $0$ & $0$ & $0$ \\
\hlinewd{2pt}
\end{tabular}}

\bigskip

\caption{Table central characters of $\Hbov$}\label{table omega}
\end{table}

\bigskip

Now, let $K$ be a field and fix a morphism $\kb[\CCB] \to K$. 
Let $a$ and $b$ denote the respective images of $A$ and $B$ in $K$. 
The previous Table allows to compute immediately the partitions of $\Irr(W)$ 
into Calogero-Moser $K$-families, according to the values of $a$ and $b$. 
The (well-known) result is given in Table~\ref{table calogero}.

\bigskip

\begin{table}\refstepcounter{theo}
\centerline{
\begin{tabular}{@{{{\vrule width 2pt}\,\,\,}}c@{{\,\,\,{\vrule width 2pt}\,\,\,}}c@{{\,\,\,{\vrule width 2pt}}}}
\hlinewd{2pt}
\petitespace
Conditions & $K$-families
\vphantom{$\DS{\frac{a}{A_{\DS{A}}}}$}\\
\hlinewd{2pt}
%\espace$\quad\e'\quad$ & 
%$\begin{cases} q^2-1 & \text{si $\e=\e'$}\\ 0 & \text{sinon}\end{cases}$ 
%$(q^2-1)\d_{\e=\e'}$ 
%& $0$ & $0$ &
%$\begin{cases} q-1 & \text{si $\e=\e'$}\\ 0 & \text{sinon}\end{cases}$
%$(q-1)\d_{\e=\e'}$\\
%\hline
\petitespace $a=b=0$     & $\Irr(W)$ \\
\hline
\petitespace $a=0$, $b \neq 0$ & $\{1,\e_s\}$,\quad $\{\e_t,\e\}$ \quad 
et \quad$\{\chi\}$ \\
\hline
\petitespace $a \neq 0$, $b=0$ & $\{1,\e_t\}$,\quad $\{\e_s,\e\}$ \quad 
et \quad $\{\chi\}$ \\
\hline
\petitespace $a=b \neq 0$     &  $\{1\}$,\quad $\{\e\}$ \quad 
et \quad $\{\e_s,\e_t,\chi\}$ \\
\hline
\petitespace $a=-b \neq 0$     &  $\{\e_s\}$,\quad $\{\e_t\}$ \quad 
et \quad $\{1,\e,\chi\}$ \\
\hline
\petitespace $ab \neq 0$, $a^2\neq b^2$     &  
$\{1\}$,\quad $\{\e_s\}$,\quad $\{\e_t\}$,\quad $\{\e\}$ \quad 
et \quad $\{\chi\}$ \\
\hlinewd{2pt}
\end{tabular}}

\bigskip

\caption{Calogero-Moser $K$-families}\label{table calogero}
\end{table}

\bigskip

\section{The group ${\boldsymbol{G}}$}

\medskip

Since $w_0=-\Id_V$ belongs to $W$ (and since all the reflections of $W$ have order $2$), 
the results of \S~\ref{sec:w0} can be applied. 
In particular, if $\t_0=(-1,1,\e) \in \kb^\times \times \kb^\times \times W^\wedge$, 
then $\t_0$ can be seen as the element $w_0 \in W \longinjto G$ and is central in $G$ 
(see Proposition~\ref{tau 0 in G}). Hence, by~(\ref{inclusion w0}), we get 
$$G \subset W_4 ,$$
where $W_4$ is the subgroup of $\SG_W$ consisting of permutations $\s$ of $W$ 
such that $\s(-x)=-\s(x)$ for all $x \in W$. We denote by $N_4$ the (normal) subgroup 
of $W_4$ consisting of permutations $\s \in W_4$ 
such that $\s(x)\in \{x,-x\}$ for all $x \in W$. Then, if we set $\mu_2=\{1,-1\}$, 
$$N_4 \simeq (\mu_2)^4.$$
Moreover,
$$|W_4|=384\qquad\text{and}\qquad |N_4|=16.$$
Let $\e_W : \SG_W \to \mu_2=\{1,-1\}$ be the sign character and let $W_4' = W_4 \cap \Ker \e_W$ and  
$N_4'=W_4' \cap N_4$. Then
$$N_4' =\{(\eta_1,\eta_2,\eta_3,\eta_4) \in (\mu_2)^4~|~\eta_1\eta_2\eta_3\eta_4=1\} \simeq (\mu_2)^3.$$
Moreover,
$$|W_4'|=192\qquad\text{and}\qquad |N_4'|=8.$$
Recall that $H$ is identified with the stabilizer, 
in $G$, of $1 \in W$. Moreover, $G$ contains the image of $W \times W$ in $\SG_W$. 
This image, isomorphic to $(W \times W)/\D\Zrm(W)$, has order $32$ and its intersection 
with $H$, isomorphic to $\D W/\D\Zrm(W) \simeq W/\Zrm(W)$, 
has order $4$. The elements $(s,s)$ and $(t,t)$ 
of $W \times W$ are sent to distinct elements of $N_4$. So $H \cap N_4$ is a subgroup of 
$N_4'$ of order $4$. Since $(w_0,1)$ is sent to an element of 
$N_4'$ which does not belong to $H$, we deduce that 
$$N_4' \subset G.$$

Let $f(\tb) \in P[\tb]$ denote the unique monic polynomial of degree $4$ such that $f(\euler^2)=0$ 
(it is given by Corollary~\ref{euler non pluss}). According to~(\ref{discriminant carre}), we have 
$$\disc(f(\tb^2)) = 256 ~\disc(f)^2 \cdot (\s^2\Pi-\Sig^2\pi)^2,$$
and so the discriminant of the minimal polynomial of $\euler$ is a square in $P$. Hence, 
$$G \subset W_4'.$$
We will show that this inclusion is an equality.

\bigskip

\begin{theo}\label{galois B2}
$G=W_4'$.
\end{theo}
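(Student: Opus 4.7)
The strategy is to factor the computation of $|G|$ through the Galois group of the degree~$4$ polynomial $f(T)$ furnished by Corollary~\ref{euler non pluss} (the minimal polynomial of $\euler^2$ over $P$), by using the fact that the action of $w_0 = -\Id_V \in G$ on the roots is by negation. Concretely, since $f(T^2)$ is the minimal polynomial of $\euler$, its eight roots form four pairs $\{\pm \eulerq_w\}$, and $G$ acts on these through $W_4$; restriction to the four pairs defines a surjection $G \twoheadrightarrow \Gal(f/\Kb)$ whose kernel is $G \cap N_4$. Hence
\[
|G| = |G \cap N_4| \cdot |\Gal(f/\Kb)|.
\]
Since the inclusion $G \subseteq W_4'$ is already established, and $|W_4'| = 192 = 8 \cdot 24$, it suffices to show that $|G \cap N_4| = 8$ and that $\Gal(f/\Kb) = \SG_4$.

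\noindent For the kernel, I would argue that $N_4' \subseteq G$ using the identification $N_4' = \iota(W \times W) \cap N_4$: indeed, because $W$ is nilpotent of class~$2$ (so $[W,W] \subseteq Z(W) = \{1, w_0\}$), every conjugation in $W$ acts on each $w \in W$ by $\pm w$; enumerating the eight resulting elements $\iota(w_2, w_2)$ and $\iota(w_0 w_2, w_2)$ for $w_2 \in W/Z(W)$ exhausts all of $N_4'$. Combined with $G \cap N_4 \subseteq W_4' \cap N_4 = N_4'$, this yields $|G \cap N_4| = 8$.

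\noindent The main obstacle is to show $\Gal(f/\Kb) = \SG_4$. My plan is to proceed by specialization: I would choose specific rational values $a, b \in \QM$ for $A, B$ and rational values for $\s, \pi, \Sig, \Pi$ such that the resulting specialization $\bar f(T) \in \QM[T]$ is irreducible and separable, with Galois group $\SG_4$ over $\QM$. By the standard functoriality of Galois groups under specialization at a prime where $f$ remains separable, the Galois group of $\bar f$ over $\QM$ embeds into $\Gal(f/\Kb)$, which would give the reverse inclusion $\SG_4 \subseteq \Gal(f/\Kb)$. A computer algebra check (using \texttt{MAGMA}, as in the spirit of \S\ref{rema:dc-cyclique}) on a well-chosen numerical specialization makes this step tractable; alternatively, one can verify purely algebraically that the resolvent cubic of $f$ is irreducible over $\Kb$ and that $\disc(f)$ is not a square in $\Kb$, which by classical theory also yields $\Gal(f/\Kb) = \SG_4$.

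\noindent Combining these steps, $|G| \geq |N_4'| \cdot |\SG_4| = 8 \cdot 24 = 192 = |W_4'|$; together with the inclusion $G \subseteq W_4'$ this forces $G = W_4'$. The bulk of the work lies in the surjectivity onto $\SG_4$, since the algebraic identities involved in either the resolvent cubic approach or in verifying a concrete specialization are intricate; once that step is carried out, the counting argument above closes the proof cleanly.
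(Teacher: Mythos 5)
Your reduction of the theorem to showing $\Gal(f/\Kb)=\SG_4$ — via the surjection $G\twoheadrightarrow G/(G\cap N_4)$ acting on the four pairs $\{\pm\eulerq_w\}$, together with the identification $G\cap N_4=N_4'$ — is exactly the paper's framework, and your enumeration of $N_4'$ inside $\iota(W\times W)$ is correct. The gap is in the decisive step. Your primary route (specialize $A,B,\s,\pi,\Sig,\Pi$ to rational values and invoke Dedekind specialization) does not work over a general characteristic-zero field $\kb$: reduction at a closed $\kb$-point of $\Spec P$ embeds the Galois group of $\bar f$ over the \emph{residue field}, which is $\kb$ and not $\QM$, into $\Gal(f/\Kb)$. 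If $\kb=\CM$ (or any field containing the splitting field of your chosen $\bar f$) that group is trivial, so no numerical specialization can force $\SG_4\subseteq\Gal(f/\Kb)$. This is precisely the phenomenon the paper isolates in \S\ref{rema:dc-cyclique}, where $D_c^{(\QM)}$ and $D_c$ genuinely differ (e.g.\ $\frobenius_{110}$ versus $\frobenius_{22}$). Your fallback (irreducibility of the resolvent cubic plus non-squareness of $\disc(f)$ over $\Kb$) is sound in principle, but it is exactly the intricate computation you defer; as written the proof is incomplete.

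The paper circumvents both difficulties. For the discriminant it specializes not at a point but along the prime $\pG=\langle\s-2,\Sig+2,A-1,B,\Pi-\pi\rangle$, so that $P/\pG\simeq\kb[\pi]$ and the residue field $\kb(\pi)$ retains enough arithmetic: the reduction $\fba(\tb)=\tb(\tb^3+(16\pi-16\pi^2)\tb-64\pi^2)$ has discriminant $2^{24}\pi^7(\pi-4)(2\pi+1)^2$, visibly a non-square in $\kb[\pi]$, whence $\disc(f)$ is not a square in $P$ and $\Gba\not\subseteq\AG_4$. For divisibility by $3$ it uses representation theory instead of the resolvent cubic: at $c_s=c_t=1$ the Calogero--Moser family $\{\e_s,\e_t,\chi\}$ forces the inertia group of $\rGba_c$ to have an orbit of length $1+1+2^2=6$ on $W$ (Theorem~\ref{theo cellules familles}(c)), so $3$ divides $|G|=8\,|\Gba|$ and hence $|\Gba|$; combined with $4\mid|\Gba|$ from transitivity and $\Gba\not\subseteq\AG_4$, this forces $\Gba=\SG_4$. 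To complete your argument you must either carry out the resolvent-cubic and discriminant computations over $\Kb$ honestly, or adopt one of these two devices; the specialization you propose cannot be repaired by a cleverer choice of rational point.
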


\begin{proof}
It is sufficient to show that $|G|=192$. We already know that 
$N_4' \subset G \subset W_4'$, which shows that $G \cap N_4 = N_4'$. 
To show the Theorem, it is sufficient to show that $G/N_4' \simeq \SG_4$. 
But, $G/N_4'=G/(G \cap N_4)$ is the Galois group of the polynomial 
$f$. So we only need to show that the Galois group of $f$ over 
$\Kb$ is $\SG_4$. Let $\Gba$ denote this Galois group. 

Let $\pG=\langle \s-2, \Sig +2,A-1,B,\Pi-\pi \rangle$. Then $\pG$ is a prime ideal and 
$P/\pG \simeq \kb[\pi]$. Let $\fba$ denote the reduction of 
$f$ modulo $\pG$. Then
$$\fba(\tb)= \tb(\tb^3 + ( 16\,\pi-16\,{\pi}^{2}) \,\tb-64\,{\pi}^{2}).$$
So, by~(\ref{discriminant t}), we have 
$$\disc(\fba)=(64 \pi^2)^2 \cdot \left(- 4 ( 16\,\pi-16\,{\pi}^{2})^3 - 27 \cdot (-64\,{\pi}^{2})^2\right)=2^{24} \,\pi^7 (\pi-4)(2\,\pi+1)^2.$$
So the discriminant of $\fba$ is not a square in $\kb[\pi]$, 
which implies that the discriminant of $f$ is not a square in 
$P$. So $\Gba$ is not contained in the alternating group $\AG_4$.

Since $f$ is irreducible, $\Gba$ is a transitive subgroup of $\SG_4$. In particular, 
$4$ divides $|\Gba|$. Moreover, 
if $c \in \CCB$ is such that $c_s=c_t=1$, then, by Table~\ref{table calogero} 
and Theorem~\ref{theo cellules familles}, 
$G$ admits a subgroup (the inertia group of $\rGba_c$) which admits 
an orbit of length $6$. So $3$ divides $|G|$ and so $3$ divides also $|\Gba|$. Hence, 
$12$ divides $|\Gba|$ and, since $\Gba \not\subset \AG_4$, this forces 
$\Gba = \SG_4$.
% De plus, 
% $\fba(T)=T f_1(T)$, avec $f_1(T)=T^3 + ( 16\,\pi-16\,{\pi}^{2}) \,T-32\,{\pi}^{2}$. 
% Montrons que $f_1(T)$ est un polyn\^ome irr\'eductible de $\kb(\pi)[T]$. 
% Puisqu'il est de degr\'e $3$, il suffit de montrer qu'il n'admet pas 
% de racine dans $\kb[\pi]$. Supposons donc trouv\'e $p \in \kb[\pi]$ 
% tel que $f_1(p)=0$. Par r\'eduction modulo $\pi$, on obtient que 
% $\pi$ divise $p$. D'autre part, $p$ divise $32 \pi^2$. Donc il existe 
% $\a \in \kb$ et $r \in \{1,2\}$ tels que $p=\a \pi^r$. Donc
% $$\a^3 \pi^{3r} - 16 \a \pi^3 + 16 (\a-2) \pi^2 = 0,$$
% ce qui force \`a la fois $\a=2$, $r=1$ et $\a^3=16\a^2$, ce qui est impossible.
% Donc $f_1(T)$ est irr\'eductible et donc, d'apr\`es ???, 
% $3$ divise $|G^\circ|$. D'o\`u 
\end{proof}

\begin{rema}
Recall that $W_4$ is a Weyl group of type $B_4$ and that 
$W_4'$ is a Weyl group of type $D_4$.\finl
\end{rema}

\section{Calogero-Moser cells, Calogero-Moser cellular characters}
\label{se:cellsB2}
\medskip

\subsection{Results} 
The aim of this section is to show that the Conjectures~\BIL~and~\GAUCHE~hold for $W$. 
If $a$ and $b$ are positive real numbers and if $c_s=-a$ and $c_t=-b$, 
then the description of Kazhdan-Lusztig left, right or two-sided $c$-cells, 
of Kazhdan-Lusztig $c$-families and $c$-cellular characters is easy and can be found, 
for instance, in~\cite{lusztig}. The different cases to be considered are $a > b$, $a=b$ and 
$a < b$: by using the automorphism $\eta$ of $W$ which exchanges $s$ and $t$, we can 
assume that $a \ge b > 0$. The Conjectures~\BIL~and~\GAUCHE~then follow from the description 
of Calogero-Moser left, right or two-sided $c$-cells, 
of Calogero-Moser $c$-families and $c$-cellular characters
given in Table~\ref{table:conjectures-b2}:

\medskip

\begin{theo}\label{theo:conjectures-b2}
Let $c \in \CCB$, set $a=-c_s$ and $b = -c_t$ and assume that $ab\neq 0$. Then 
there exists a choice of prime ideals $\rG_c^\gauche \subset \rGba_c$ such that 
the Calogero-Moser left, right or two-sided $c$-cells, 
of Calogero-Moser $c$-families and $c$-cellular characters
given by Table~\ref{table:conjectures-b2}.

Consequently, Conjectures~\BIL~and~\GAUCHE~hold if $W$ has type $B_2$.
\end{theo}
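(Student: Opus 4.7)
The plan is to reduce to the case $a\ge b>0$ via the symmetry-breaking automorphism $\eta$ (which exchanges $s$ and $t$, and hence $A$ and $B$) together with the action of linear characters $W^\wedge$ on parameter space (Corollaries \ref{gamma c familles}, \ref{familles lineaires}, \ref{coro:cellulaire-ordre-2}): these move among the regions cut out by the signs of $a,b$ and by $a^2-b^2$, so it suffices to settle a single representative in each stratum $\{a>b>0\}$, $\{a=b>0\}$, plus the boundary $ab\neq 0$ with one parameter vanishing (already degenerate). In each such stratum the Calogero-Moser families are read off from Table \ref{table calogero}, and Theorem \ref{theo cellules familles}(c) dictates the cardinalities of the covering two-sided cells via $|\G|=\sum_{\chi\in\Irr_\G^\calo(W)}\chi(1)^2$.

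Next, to pin down the two-sided cells themselves as subsets of $W$ (viewed inside $G=W_4'\subseteq\SG_W$ by Theorem \ref{galois B2}), I would use that $G$ is of type $D_4$ acting on the eight elements of $W$, and that the inertia group $\Iba_c$ is a subgroup of $H$ constrained by: (i) it stabilizes $1\in W$ (since $1$ corresponds to a linear character, it is alone in its cell by Corollary \ref{nettete triviale}); (ii) its orbits have the cardinalities prescribed above; (iii) it contains the image of $w_0=-\Id_V$ acting by left translation (Proposition \ref{tau 0 in G} and Corollary \ref{w0 epsilon}), which pairs each cell with its $w_0$-translate according to tensoring by $\e$. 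These numerical/symmetry constraints, combined with the explicit list of $H$-subgroups of $W_4'$ compatible with the stabilizer of $1$, leave only finitely many candidate inertia groups; replacing $\rGba_c$ by a suitable $G$-conjugate then fixes the choice uniquely up to the residual $H$-ambiguity, and one verifies directly that the resulting partition matches the KL partition from \cite{lusztig}.

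For left cells, I would exploit the chain $\rG_c^\gauche\subset\rGba_c$ together with Proposition \ref{prop:gauche-bilatere}: each two-sided cell $\G$ decomposes under the decomposition group $D_c^\gauche$ into orbits whose cardinalities are $|C|=\longueur_{Z_{\zG_c^\gauche(C)}}(Z/\pG_c^\gauche Z)_{\zG_c^\gauche(C)}$ (Corollary \ref{coro:cardinal-C}). Since $Z$ is given by the explicit presentation of Theorem \ref{theo:Q}, one can compute the minimal primes of $Z/\pG_c^\gauche Z$ and their lengths by elementary commutative algebra in the ring $k[V\oplus V^*]^{\Delta W}$ deformed by $(A,B)$, for each stratum in turn. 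The cellular characters $\isomorphisme{C}_c^\calo$ are then recovered from Proposition \ref{multiplicite cm} and Theorem \ref{theo:b-minimal-cellulaire} (uniqueness of $\chi_C$ with minimal $\bb$-invariant), which together with the numerical identities $\sum_\chi\mult_{C,\chi}^\calo\chi(1)=|C|$ and $\sum_C\mult_{C,\chi}^\calo=\chi(1)$ pin them down uniquely; consistency with Corollary \ref{coro:dec-cellulaire} provides a cross-check.

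The main obstacle will be the second step: namely, producing a single choice of $\rGba_c$ (and a compatible $\rG_c^\gauche$) that makes \emph{all} parameter strata come out right simultaneously, rather than a separate choice per stratum. This is where the semicontinuity statements (Remark \ref{rem:gen-spe}, Proposition \ref{semicontinuite gauche}, Corollary \ref{coro:famille-semicontinu}) must be used: I would fix $\rGba$ and $\rG^\gauche$ first at the generic point of $\CCB$, then propagate to each special stratum by choosing $\rGba_c\supset\rGba$ and $\rG_c^\gauche\supset\rG^\gauche$ coherently. The explicit analysis is tractable because $G=W_4'$ has manageable subgroup structure and because the minimal polynomial of $\euler$ is known (Corollary \ref{euler non pluss}); the delicate point is that $R$ itself is not computed, so the identification of inertia groups must proceed indirectly, through the action of $G$ on $W$ and through the Verma-module characterization of Theorem \ref{theo cellules familles}(b). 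Once the cells and cellular characters are determined and assembled into the table, comparison with Lusztig's description in type $B_2$ \cite[\S 8 and \S 17]{lusztig} is immediate, yielding both Conjecture \BIL\ and Conjecture \GAUCHE.
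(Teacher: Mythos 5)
Your overall architecture (families from Table~\ref{table calogero}, two-sided cell cardinalities from Theorem~\ref{theo cellules familles}(c), left-cell cardinalities and cellular characters from Corollary~\ref{coro:cardinal-C} and Proposition~\ref{multiplicite cm}, then comparison with Lusztig) follows the paper up to a point, but there is a genuine gap at the step you yourself flag as delicate: the determination of the left cells as actual subsets of $W$. In the regime $ab\neq 0$, $a^2\neq b^2$, the two-sided cell $\{t,st,ts,sts\}$ must split into two left cells of size $2$, both carrying the cellular character $\chi$. The constraints you invoke --- orbit cardinalities, cellular characters, the minimal-$\bb$-invariant constituent, and compatibility with $w_0$ --- are satisfied equally by the pairing $\{t,st\},\{ts,sts\}$ and by the pairing $\{t,ts\},\{st,sts\}$ (one checks $w_0\cdot\{t,st\}=\{sts,ts\}$ and $w_0\cdot\{t,ts\}=\{sts,st\}$, so both partitions are compatible with the $w_0$-symmetry). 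Moreover, once $\rGba_c$ is fixed there is no residual freedom: the prime $\rG_c^\gauche\subset\rGba_c$ is unique (Corollary~\ref{coro:rgauche-unique-b2}), so you cannot conjugate away the ambiguity at the level of left cells; and the smooth-case Theorem~\ref{theo:cellulaire-lisse} only yields $|C|=\chi(1)$ and $C^{\DD}=\G$, again without deciding the pairing. Consequently ``one verifies directly that the resulting partition matches the KL partition'' is not available: without an independent computation of the Calogero-Moser partition there is nothing to verify against.

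The paper's resolution is precisely the content of Proposition~\ref{prop:b2-gauche-enfin}. Since $Q=P[\eulerq,\eulerq',\eulerq'',\d]$ and all of $\eulerq$, $\eulerq''$, $\d$ and the squares $g(\eulerq')^2$ are congruent on $\{t,st,ts,sts\}$ modulo $\rG^\gauche$, the pairing is decided by the sign relations among the $g(\eulerq')$; these are computed by descending to the auxiliary prime $\langle\pi\rangle$ (the defining ideal of a $W$-orbit of hyperplanes in $V^*$), factoring $F_{\eulerq'}^\pi(\tb)=F^\pi(\tb)\cdot F^\pi(-\tb)$, and evaluating the product $\eulerq_0'\cdot s(\eulerq_0')\cdot t(\eulerq_0')\cdot ts(\eulerq_0')\equiv\s^2\Pi^3$ explicitly in $\kb[V\times V^*]^{\D\Zrm(W)}$. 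This is new arithmetic input, not a consequence of the numerical and symmetry constraints, and your proposal needs this step (or an equivalent one) to become a proof. The same issue arises in the case $a=b$: the identification of the left cells as $\{s,ts,sts\}$ and $\{t,st,tst\}$ rests on the same congruence $t(\eulerq')\equiv st(\eulerq')\bmod\rG^\gauche$ established generically.
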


\medskip

\noindent{\sc Notation - } 
In Table~\ref{table:conjectures-b2}, we have set
$$\G_\chi=\{t,st,ts,sts\},~ \G_\chi^+=\{t,st\},~ \G_\chi^-=\{ts,sts\},~
\G_s=\{s,ts,sts\}\text{~et~} \G_t=\{t,st,tst\}.$$
Moreover:
\begin{itemize}
\item[$\bullet$] $W_3'=H$ denotes the stabilizer of $1 \in W$ in $G=W_4'$ and $W_2'$ 
denotes the stabilizer of $s$ in 
$W_3'$. Note that $W_3'$ (respectively $W_2'$) is a Weyl group of type 
$D_3=A_3$ (respectively $D_2=A_1 \times A_1$). 

\item[$\bullet$] $\SG_3$ denotes the subgroup of $W_3'$ which stabilizes $\G_s$ 
(this is also the stabilizer of $\G_t$): it is isomorphic to the symmetric group 
of degree $3$. 

\item[$\bullet$] $\BZ/2\BZ$ denotes the stabilizer, in $W_2'$, of $\G_\chi^+$ (or $\G_\chi^-$).\finl
\end{itemize}

\vskip1cm
%\def\trespetitespace{}
%\begin{landscape}
\begin{table}\refstepcounter{theo}\label{table:conjectures-b2}
{\footnotesize
\centerline{\begin{tabular}{@{{{\vrule width 2pt}}}c@{{{\vrule width 2pt}}}c@{{{\vrule width 1pt}}}c|c|c@{{{\vrule width 1pt}}}c|c%@{{{\vrule width 1pt}}}cccc
@{{{\vrule width 1.5pt}}}c|c@{{{\vrule width 1pt}}}c|c|c@{{{\vrule width 2pt}}}}
\hlinewd{2pt}
\multirow{2}{*}{~Conditions~} & 
\multirow{2}{*}{~$\Dba_c=\Iba_c$~} & 
\multicolumn{3}{c@{{{\vrule width 1pt}}}}{\trespetitespace Two-sided cells} & 
\multirow{2}{*}{$\psi$} & \multirow{2}{*}{$\dim_\kb \LC_c(\psi)$~} & 
%\multicolumn{5}{c@{{{\vrule width 1.5pt}}}}{$[\decba_\CGB \LC(\psi') : \LC_\CGB(\psi)]$} &
\multirow{2}{*}{$I_c^\gauche$} & \multirow{2}{*}{$D_c^\gauche$} & 
\multicolumn{3}{c@{{{\vrule width 2pt}}}}{Left cells} \\
\cline{3-5}  \cline{10-12}
&& \trespetitespace$\G$ & $|\G|$ & $\Irr_\G(W)$ & && &&  $C$ & $|C|$ & 
$\isomorphisme{C}_c^\calo$  \\
\hlinewd{2pt}
\multirow{6}{*}{$\stackrel{\DS{a^2 \neq b^2}}{ab \neq 0}$} & \multirow{5}{*}{~$W_2'$~} 
& \trespetitespace$1$ & 1 & $\unb_W$ & ~$\unb_W$~ & $8$ & 
\multirow{6}{*}{~$\BZ/2\BZ$} & \multirow{6}{*}{$W_2'$~} & 1 & 1 & $\unb_W$ \\
\clinewd{0.1pt}{3-7} \clinewd{0.1pt}{10-12}
&& \trespetitespace$w_0$ & 1 & $\e$ & $\e$ & $8$ & && 
$w_0$ & 1 & $\e$  \\
\clinewd{0.1pt}{3-7} \clinewd{0.1pt}{10-12}
&& \trespetitespace$s$ & 1 & $\e_s $  & $\e_s$ & $8$ & && 
$s$ & 1 & $\e_s$ \\
\clinewd{0.1pt}{3-7} \clinewd{0.1pt}{10-12}
&& \trespetitespace$w_0s$ & 1 & $\e_t $  & $\e_t$ & $8$ & && 
$w_0 s$ & 1 & $\e_t$ \\
\clinewd{0.1pt}{3-7} \clinewd{0.1pt}{10-12}
&& \multirow{2}{*}{$\G_\chi$} & \multirow{2}{*}{4} & \multirow{2}{*}{$\chi$} & 
\multirow{2}{*}{$\chi$} & \multirow{2}{*}{$8$} 
&&& \trespetitespace$\G_\chi^+$ & 2 & $\chi$ 
\\
\clinewd{0.1pt}{10-12}
&&&&&&&&& \trespetitespace$\G_\chi^-$ & 2 & $\chi$ 
\\
\hlinewd{2pt}
\multirow{5}{*}{$\stackrel{\DS{a=b}}{ab \neq 0}$} & \multirow{5}{*}{~$W_3'$~} & \trespetitespace$1$ & 1 & $\unb_W$ & ~$\unb_W$~ & $8$ 
& \multirow{5}{*}{$\SG_3$} & \multirow{5}{*}{$\SG_3$} & 
1 & 1 & $\unb_W$ \\
\clinewd{0.1pt}{3-7} \clinewd{0.1pt}{10-12}
&& \trespetitespace$w_0$ & 1 & $\e$ & $\e$ & $8$ & && 
$w_0$ & 1 & $\e$  \\
\clinewd{0.1pt}{3-7} \clinewd{0.1pt}{10-12}
&& \multirow{3}{*}{~$W \setminus\{1,w_0\}$} & \multirow{3}{*}{6} & 
\multirow{3}{*}{$\e_s, \e_t, \chi$} & \trespetitespace$\e_s$ & $1$ & && 
$\G_s$ & 3 & $\e_s + \chi$  \\
\clinewd{0.1pt}{6-7} \clinewd{0.1pt}{10-12}
&& & && \trespetitespace$\e_t$ & $1$ & && 
$\G_t$ & 3 & $\e_t+\chi$  \\
\clinewd{0.1pt}{6-7} 
&& & && \trespetitespace$\chi$ & $6$ & &&&&\\
\hlinewd{2pt}
\end{tabular}}
}

\vskip0.5cm

\caption{Calogero-Moser cells, families, cellular characters}
\end{table}

We will now concentrate on the proof of Theorem~\ref{theo:conjectures-b2}: 
we will first start by the generic case, by running over the descending chain of prime ideals 
$\pGba \supset \pG^\gauche \supset \langle \pi \rangle$. The use of the ideal $\langle \pi \rangle$ 
will help us to remove some ambiguity for the computation of Calogero-Moser 
left cells. It is natural to ask whether this method can be extended, since the prime ideal 
$\langle \pi \rangle$ is not chosen randomly: 
it is the defining ideal of a $W$-orbit of hyperplanes in $V^*$. 

After studying the generic case, we will specialize our parameters to deduce 
Theorem~\ref{theo:conjectures-b2}. The most difficult step is the computation 
of left cells (see for instance Proposition~\ref{prop:b2-gauche-enfin}). 

\bigskip

\boitegrise{{\bf Notation.} {\it If $z \in Z$ (or $q \in Q$), we denote by $F_z(\tb)$ (or $F_q(\tb)$) 
the minimal polynomial of $z$ (or $q$) over $P$. If $F(\tb) \in P[\tb]$, we will denote by 
$\Fba(\tb)$ (respectively $F^\gauche(\tb)$, respectively $F^\pi(\tb)$) 
the reduction of $F(\tb)$ modulo $\pGba$ (respectively 
$\pG^\gauche$, respectively $\langle \pi \rangle$).}}{0.75\textwidth}

\bigskip

\subsection{Generic two-sided cells} 
% 
% \medskip
% 
% 
% \subsubsection*{\bfit Cellules bilat\`eres} 
We set $\eulerq=\copie(\euler)$, $\eulerq'=\copie(\euler')$, $\eulerq''=\copie(\euler'')$ and 
$\d=\copie(\delb)$. Recall that  $\pGba=\langle \s\,\pi,\Sig,\Pi \rangle_P$, that 
$\zGba = \Ker(\O_1)$ and that $\qGba=\copie(\zGba)$: according to Table~\ref{table omega}, we have 
\equat\label{eq:qba-b2}
\qGba=\pGba Q + \langle \eulerq -2(A+B), \d-2B(A+B), \eulerq',\eulerq'' \rangle_Q.
\endequat
Also, $Q/\qGba=P/\pGba=\kb[A,B]$. Recall that 
\equat\label{fba euler}
\Fba_\euler(\tb)=\tb^4(\tb+2(A+B))(\tb+2(B-A))(\tb-2(A+B))(\tb-2(B-A)).
\endequat
Recall also that, since $w_0=-\Id_V \in W$ and that $W$ is generated by reflections of order $2$, 
we have $\eulerq_{vw_0}=-\eulerq_v$ for all $v \in W$ (see Proposition~\ref{tau 0 in G}). 

\bigskip

\begin{lem}\label{rba b2}
Let $v \in W \setminus\{1,w_0\}$. Then there exists a unique prime ideal $\rGba$ 
of $R$ lying over $\qGba$ and such that $\eulerq_v \equiv 2(B-A) \mod \rGba$. 
\end{lem}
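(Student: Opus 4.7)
My plan is to classify the primes of $R$ lying over $\qGba$ via the assignment $\phi_\rGba : W \to \kb(A,B)$, $w \mapsto \eulerq_w \bmod \rGba$, and to show that the map $\rGba \mapsto v_+(\rGba)$ -- where $v_+(\rGba) \in W \setminus \{1,w_0\}$ denotes the unique element with $\phi_\rGba(v_+(\rGba)) = 2(A-B)$ -- is a bijection onto $W \setminus \{1,w_0\}$. By the factorization in~(\ref{fba euler}), the multiset $\{\phi_\rGba(w) : w \in W\}$ equals $\{0,0,0,0,\pm 2(A+B),\pm 2(A-B)\}$ with the displayed multiplicities. Since $\qGba=\Ker(\O_1)$ forces $\phi_\rGba(1)=\O_1(\euler)=-2(A+B)$, and since Proposition~\ref{tau 0 in G} applies (all reflections of $W$ have order~$2$ and $w_0=-\Id_V \in W$) to give $\eulerq_{uw_0}=-\eulerq_u$ in $R$ and hence $\phi_\rGba(w_0)=2(A+B)$, the restriction of $\phi_\rGba$ to $W\setminus\{1,w_0\}$ is determined by the single element $v_+(\rGba)$: its $w_0$-partner takes $-2(A-B)$, and the remaining four elements take~$0$. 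Consequently at most six primes lie over $\qGba$.

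Next, I identify the inertia group $I$ of such an $\rGba$ in $G$. By Lemma~\ref{qba}, $Q$ is étale over $P$ at $\qGba$, so $k_Q(\qGba)=k_P(\pGba)=\kb(A,B)$; the residue field $k_R(\rGba)$ is generated over $\kb(A,B)$ by the $\kb(A,B)$-valued reductions $\eulerq_w \bmod \rGba$, so $k_R(\rGba)=\kb(A,B)$, and integrality together with the integral closedness of $\kb[A,B]$ force $R/\rGba\simeq \kb[A,B]$. Hence decomposition equals inertia, and by~(\ref{eq:iba-dba-h}) both lie inside $H$; the trivial action of $I$ on $R/\rGba = \kb[A,B]$ amounts to pointwise preservation of $\phi_\rGba$, so $I$ is precisely the pointwise $H$-stabilizer of the $w_0$-pair $\{v_+(\rGba),v_+(\rGba)w_0\}$. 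Using Theorem~\ref{galois B2} to realize $G=W_4'$ as the Weyl group of type $D_4$ acting on $W\leftrightarrow\{\pm e_1,\dots,\pm e_4\}$ with $1\leftrightarrow e_1$, $w_0\leftrightarrow -e_1$ and the three non-trivial $w_0$-pairs corresponding to $\{\pm e_i\}_{i=2,3,4}$, the subgroup $H=\mathrm{Stab}_G(e_1)\simeq \Wrm(D_3)$ acts on $\{\pm e_2,\pm e_3,\pm e_4\}$ transitively by permutations combined with even sign changes; a direct check shows that the pointwise stabilizer of any single pair $\{\pm e_i\}$ has order~$4$ (the elements with $\sigma(i)=i$, $\epsilon_i=+$, and $\epsilon_j=\epsilon_k$ for the other two indices).

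Combining these, there are $|H|/|I|=6$ primes over $\qGba$; since the map $\rGba \mapsto v_+(\rGba)$ is $H$-equivariant between two six-element $H$-transitive sets, it is a bijection, which proves the lemma. The main obstacle is the combinatorial identification in the second step: one must render the abstract isomorphism $G\simeq W_4'$ from Theorem~\ref{galois B2} compatible with the concrete embedding $W\hookrightarrow G$ of~\S\ref{subsection:specialization galois 0} and with the $w_0$-pair structure of $W$. A possible sidestep is to combine Proposition~\ref{nettete lineaire} -- which sandwiches $I$ inside $\bigcap_{\gamma\in W^\wedge} w_\gamma H w_\gamma^{-1}$ -- with the identifications $w_{\unb_W}=1$, $w_\e=w_0$, $w_{\e_t}=w_0 w_{\e_s}$ to reduce the order-four computation to that of $H\cap w_{\e_s}Hw_{\e_s}^{-1}$.
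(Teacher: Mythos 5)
Your overall strategy — count the primes over $\qGba$ and compare with the $H$-equivariant map $\rGba\mapsto v_+(\rGba)$ onto the transitive $H$-set $W\setminus\{1,w_0\}$ — works for existence (an equivariant map onto a transitive set is surjective), but the uniqueness half hinges on the count $|H|/|\Iba|=6$, i.e.\ on the inclusion $\Stab_H(v_+(\rGba))\subset \Iba$, and your justification of that inclusion does not hold up. Two steps fail. First, the claim that $k_R(\rGba)$ is generated over $\kb(A,B)$ by the reductions of the $\eulerq_w$ is unjustified: $R$ is the full integral closure of $P$ in $\Mb=\Kb[(\eulerq_w)_{w\in W}]$, and neither $R$ nor $R/\rGba$ need be generated by (the images of) the $\eulerq_w$ — the residue ring of an order and that of its integral closure can differ. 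In fact $k_R(\rGba)/k_P(\pGba)$ is Galois with group $\Dba/\Iba$ by Theorem~\ref{bourbaki}, so your residue-field claim is \emph{equivalent} to $\Dba=\Iba$, not a means of proving it. Second, even granting $R/\rGba=\kb[A,B]$, ``pointwise preservation of $\phi_\rGba$'' by some $g\in H$ does not show that $g$ stabilizes $\rGba$: an element of $H$ outside the decomposition group does not act on $R/\rGba$ at all, and fixing the values $\eulerq_w\bmod\rGba$ only yields the easy inclusion $\Iba\subset\Stab_H(v_+)$, which for your count points the wrong way — it gives \emph{at least} six primes over $\qGba$, whereas bijectivity of your equivariant map requires \emph{at most} six. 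The same circularity affects your opening claim that ``at most six primes lie over $\qGba$'': that a prime is determined by the function $w\mapsto\eulerq_w\bmod\rGba$ is essentially the uniqueness being proved.

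What is missing is a lower bound $|\Iba|\ge 4$, and this is exactly where the paper injects representation theory: by Theorem~\ref{theo cellules familles}(c), the generic two-sided cell covering the Calogero-Moser family $\{\chi\}$ has cardinality $\chi(1)^2=4$, and it is by definition an $\Iba$-orbit in $W$, so $|\Iba|\ge 4$. Combined with $\Iba\subset\Dba\subset\Stab_H(v)$ (which follows from~(\ref{fba euler})) and $|\Stab_H(v)|=4$ (Theorem~\ref{galois B2}), this forces all three groups to coincide, which is what your count needs. Your proposed sidestep via Proposition~\ref{nettete lineaire} does not help, since that proposition only bounds $\Iba$ from above. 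Once the cell-cardinality input is added, your argument closes up and is in substance the paper's: existence via transitivity of $H$ on $W\setminus\{1,w_0\}$, uniqueness via $\sigma\in\Stab_H(v)=\Iba\subset\Dba$.
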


\begin{proof}
Let us first show the existence statement. 
Let $\rGba'$ be a prime ideal of $R$ lying over $\qGba$: 
then $\eulerq \equiv 2(A+B) \mod \rGba'$ and 
$\eulerq_{w_0} \equiv -2(A+B) \mod \rGba'$. 
By~(\ref{fba euler}), there exists a unique element $v' \in W\setminus\{1,w_0\}$ 
such that $\eulerq_{v'} \equiv 2(B-A) \mod \rGba$. 

Recall also that $H$ is the stabilizer of $1 \in W$ in $G=W_4' \subset \SG_W$: 
this is also the stabilizer of $w_0$. 
Then $H$ acts transitively on $W \setminus \{1,w_0\}$ 
(by Theorem~\ref{galois B2}) and so there exists $\s \in H$ 
such that $\s(v')=v$. Let $\rGba=\s(\rGba')$. Then 
$\rGba$ is a prime ideal of $R$ lying over $\qGba$ (since $\s \in H$) 
and $\eulerq_v \equiv 2(B-A) \mod \rGba$. This concludes the proof of the existence statement. 

\medskip

Let us now show the uniqueness statement. So let $\rGba$ and $\rGba'$ be two prime ideals 
of $R$ lying over $\qGba$ such that 
$\eulerq_v + 2(A-B) \in \rGba \cap \rGba'$. Then there exists 
$\s \in H$ such that $\rGba' =\s(\rGba)$. We then have 
$\eulerq_v \equiv \eulerq_{\s(v)} \equiv 2(A-B) \mod \rGba$. 
By~(\ref{fba euler}), we know that $2(B-A)$ is a simple root of $\fba(\tb)$, 
so $\s(v)=v$. Consequently, $\s \in I$, where 
$I$ is the stabilizer of $v$ in $H$. 
% En d'autres termes, 
% $$I=\{\t \in G~|~\forall~y \in \{1,w_0,v,vw_0\},~\t(y)=y\}.$$
By Theorem~\ref{galois B2}, $I$ is the Klein group acting on 
$W\setminus\{1,w_0,v,vw_0\}$ (note that $|I|=4$). 

Let $\Dba$ (respectively $\Iba$) denote the decomposition 
(respectively inertia) group of $\rGba$ (in $G$). By~(\ref{fba euler}), we have 
$\Iba \subset \Dba \subset I$ and it remains to show that $I =\Iba$. 
But the generic two-sided cell covering the generic Calogero-Moser family 
$\{\chi\}$ has cardinality $\chi(1)^2=4$, and it is an orbit under the action of 
$\Iba$. So $|\Iba| \ge 4 = |I|$. Whence the result.
\end{proof}

\bigskip

As a consequence of the proof of the previous Lemma, we obtain 
the next result:

\bigskip

\begin{coro}\label{dba=iba}
Let $v \in W \setminus \{1,w_0\}$. Let $\rGba$ denote the unique prime ideal of $R$ lying over 
$\qGba$ and such that $\eulerq_v \equiv 2(B-A) \mod \rGba$. 
Let $\Dba$ (respectively $\Iba$) denote the decomposition (respectively 
inertia) group of $\rGba$ in $G$. Then:
\begin{itemize}
\itemth{a} $\Dba=\Iba=\{\t \in G~|~\t(1)=1$ and $\t(v)=v\}
\simeq \BZ/2\BZ \times \BZ/2\BZ$. 

\itemth{b} $R/\rGba = Q/\qGba = P/\pGba \simeq \kb[A,B]$.

\itemth{c} The generic Calogero-Moser two-sided cells 
(with respect to $\rGba$) are $\{1\}$, $\{w_0\}$, $\{v\}$, $\{vw_0\}$ 
and $W\setminus \{1,w_0,v,vw_0\}$. Moreover, 
$\Irr_{\{1\}}(W)=\{\unb_W\}$, $\Irr_{\{w_0\}}(W)=\{\e\}$, 
$\Irr_{\{v\}}(W)=\{\e_s\}$, $\Irr_{\{vw_0\}}(W)=\{\e_t\}$ and $\Irr_{\{1\}}(W)=\{\chi\}$. 
\end{itemize}
\end{coro}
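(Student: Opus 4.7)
The plan is to harvest most of the content directly from the proof of Lemma~\ref{rba b2}, since the statement is explicitly introduced as a consequence of that proof. I would first treat (a). The proof of Lemma~\ref{rba b2} already establishes the chain $\Iba \subset \Dba \subset I$, where $I$ is the stabilizer of $v$ in $H$, and identifies $I$ with the Klein four-group acting on $W\setminus\{1,w_0,v,vw_0\}$ (using the explicit description of $G=W_4'$ from Theorem~\ref{galois B2}). To force equality, I would invoke the link between inertia and two-sided cells: by Theorem~\ref{theo cellules familles}(c), the generic Calogero-Moser two-sided cell covering the family $\{\chi\}$ has cardinality $\chi(1)^2=4$, and this cell is an $\Iba$-orbit in $W$, so $|\Iba|\ge 4=|I|$, forcing $\Iba=\Dba=I$. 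Since $I$ is by definition $\{\tau\in G\mid \tau(1)=1\text{ and }\tau(v)=v\}$, this gives the description and the isomorphism with $(\BZ/2\BZ)^2$.

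For (b), I would exploit $\Iba=\Dba$: the Galois extension $k_R(\rGba)/k_P(\pGba)$ has group $\Dba/\Iba=1$, so the fraction fields of $R/\rGba$ and $P/\pGba$ coincide. Since $R/\rGba$ is integral over $P/\pGba\simeq \kb[A,B]$ and the latter is integrally closed (it is a polynomial ring), the inclusion $P/\pGba\hookrightarrow R/\rGba$ is an equality. The chain $P/\pGba\subset Q/\qGba\subset R/\rGba$ from~(\ref{eq:qba-b2}) then forces all three to coincide.

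For (c), I would use that Calogero-Moser two-sided cells are the $\Iba$-orbits on $W$. Using the central element $\tau_0=w_0\in Z(G)$ from Proposition~\ref{tau 0 in G}, any $\tau\in\Iba$ commutes with $\tau_0$, so if $\tau$ fixes $1$ (resp.\ $v$) it also fixes $w_0=\tau_0(1)$ (resp.\ $vw_0=\tau_0(v)$). Combined with the Klein-group description of $I$ acting on $W\setminus\{1,w_0,v,vw_0\}$, this identifies the five orbits $\{1\},\{w_0\},\{v\},\{vw_0\}$ and $W\setminus\{1,w_0,v,vw_0\}$. To match each cell with a family, I would use Theorem~\ref{theo cellules familles}(b): the Euler-element values $\eulerq_1\equiv -2(A+B)$, $\eulerq_{w_0}\equiv 2(A+B)$, $\eulerq_v\equiv 2(A-B)$, $\eulerq_{vw_0}\equiv -2(A-B)$ (all $\bmod\,\rGba$) are, by Table~\ref{table omega}, the values of $\O_{\unb_W}(\euler), \O_\e(\euler), \O_{\e_s}(\euler), \O_{\e_t}(\euler)$ respectively, and these four values in $\kb[A,B]$ are pairwise distinct, so the match is forced; the remaining four-element cell must cover the only remaining family $\{\chi\}$, consistent with $\chi(1)^2=4$.

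The main obstacle is conceptually minor: almost everything has already been done inside Lemma~\ref{rba b2}, so the delicate point is simply to be careful that (b) uses normality of $\kb[A,B]$ rather than a naive Galois-correspondence argument (which would only control fraction fields), and that the family-assignment in (c) genuinely needs the Euler values to separate the four linear characters of $W$ as elements of $\kb[A,B]$ — which they do generically but would fail on the loci $A=\pm B$, justifying the passage from $\Upsilon^{-1}(\pGba)$ to the generic picture recorded in the table.
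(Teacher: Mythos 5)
Your proposal is correct and follows essentially the same route as the paper, which derives this corollary directly from the proof of Lemma~\ref{rba b2} (the chain $\Iba\subset\Dba\subset I$ with $I$ the Klein group, forced to be an equality by the $4$-element cell covering $\{\chi\}$). The small supplements you add are exactly the right ones: the normality of $\kb[A,B]$ for (b) (this is the same mechanism as Corollary~\ref{Q extention kc}), and the separation of the four linear characters by the generically distinct Euler values $\pm 2(A+B)$, $\pm 2(A-B)$ via Theorem~\ref{theo cellules familles}(b) and Table~\ref{table omega} for (c).
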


\bigskip

\boitegrise{{\bf Choice.} 
{\it From now on, and until the end of this Chapter, we denote by $\rGba$ 
the unique prime ideal of $R$ lying over $\qGba$ and such that 
$\eulerq_s \equiv 2(B-A) \mod \rGba$.}}{0.75\textwidth}

\bigskip

With this choice, 
\equat\label{eq:dba-b2}
\Dba=\Iba=W_2',
\endequat
and the generic Calogero-Moser two-sided cells are $\{1\}$, $\{s\}$, $\{w_0s\}$, $\{w_0\}$ and 
$\G_\chi$, and 
\equat\label{eq:familles-b2}
\begin{cases}
~\Irr_{\{1\}}^\calo(W)=\{\unb_W\},\\
~\Irr_{\{s\}}^\calo(W)=\{\e_s\},\\
~\Irr_{\{w_0s\}}^\calo(W)=\{\e_t\},\\
~\Irr_{\{w_0\}}^\calo(W)=\{\e\},\\
~\Irr_{\G_\chi}^\calo(W)=\{\chi\}.
\end{cases}
\endequat

\bigskip

\subsection{Generic cellular characters} 
%\subsubsection*{\bfit Cellules \`a gauche} 
% Puisque $\eulerq \equiv -2(B+A) \mod \qGba$ et $\qG^\gauche \subset \qGba$, cela force 
% \equat\label{eq:euler-gauche-b2}
% \eulerq \equiv -2(B+A) \mod \qG^\gauche.
% \endequat
Recall that $\pG^\gauche=\langle \Sig,\Pi \rangle_P$. 

\bigskip

\begin{lem}\label{eq:qgauche-b2}
We have
$\qG^\gauche = \pG^\gauche Q + \langle \eulerq-2(B+A),\eulerq'-B\Sig,\eulerq'',\d-2B(A+B) \rangle_Q$. 
\end{lem}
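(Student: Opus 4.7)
The approach is to make explicit the $P$-algebra morphism $\O_1^\gauche:Z\to P^\gauche=\kb[A,B,\Sig,\Pi]$ of Proposition~\ref{q left} by computing its values on the four $P$-algebra generators $\euler,\euler',\euler'',\delb$ of $Z$ furnished by Theorem~\ref{theo:Q}. Since $\qG^\gauche=\copie(\Ker\O_1^\gauche)$ and $Q/\qG^\gauche\simeq P^\gauche$ by Proposition~\ref{q left}, these four values determine $\qG^\gauche$ completely, and the ideal-theoretic conclusion will then follow from the $P$-basis decomposition
$$Z=P\oplus P\euler\oplus P\euler^2\oplus P\delb\oplus P\delb\euler\oplus P\delb\euler^2\oplus P\euler'\oplus P\euler''$$
recorded in the same theorem.

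To carry out the computation, apply $\O_1^\gauche$ to the defining relations $(\Zrm 1)$--$(\Zrm 9)$ of $Z$. The ideal $\pG^\gauche$ cuts out $V/W\times\{0\}$ and $\kb[V^*]^W=\kb[\s,\pi]$, so $\pG^\gauche=\langle\s,\pi\rangle_P$, and both $\s$ and $\pi$ map to zero under $\O_1^\gauche$. Writing $\alpha,\alpha',\alpha'',\beta\in P^\gauche$ for the images of $\euler,\euler',\euler'',\delb$, the relations reduce to polynomial identities in the integral domain $P^\gauche$: relation $(\Zrm 7)$ becomes $\alpha''^2=0$, forcing $\alpha''=0$; $(\Zrm 5)$ becomes $\beta^2=B^2\alpha^2$, so $\beta=\pm B\alpha$; and $(\Zrm 9)$, once $\alpha''=0$ is inserted, becomes $\alpha\bigl(4\beta-\alpha^2+4A^2-4B^2\bigr)=0$. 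The further specialization $\Sig=\Pi=0$ corresponds to the passage from $\qG^\gauche$ to $\qGba$, and the row $\O_1$ of Table~\ref{table omega} imposes the boundary conditions $\alpha|_{\Sig=\Pi=0}=-2(A{+}B)$ and $\beta|_{\Sig=\Pi=0}=2B(A{+}B)$. Hence $\alpha\neq 0$, so $4\beta-\alpha^2+4A^2-4B^2=0$; combined with $\beta=\pm B\alpha$ this yields a quadratic in $\alpha$ whose four candidate constant roots are $\pm 2B\pm 2A$, and the boundary value singles out $\alpha=-2(A{+}B)$, whence $\beta=-B\alpha=2B(A{+}B)$. Finally, $(\Zrm 1)$ reduces to $\alpha\alpha'=\Sig\beta$, i.e.\ $(A{+}B)(\alpha'+B\Sig)=0$; since $A+B$ is a nonzerodivisor in $P^\gauche$, this gives $\alpha'=-B\Sig$.

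These four identities yield the inclusion $\pG^\gauche Q+\langle\eulerq+2(B{+}A),\eulerq'+B\Sig,\eulerq'',\d-2B(A{+}B)\rangle_Q\subseteq\qG^\gauche$. For the reverse inclusion, let $I$ denote the left-hand ideal. Modulo $I$ each of $\eulerq,\eulerq',\eulerq'',\d$ is congruent to an element of $P^\gauche$, so the $P$-basis of $Z$ recalled above shows that $Q/I$ is generated as a $P^\gauche$-module by the image of $1$, hence is a quotient of $P^\gauche$. The canonical map $P^\gauche\to Q/I$ composed with the surjection $Q/I\twoheadrightarrow Q/\qG^\gauche=P^\gauche$ is the identity on $P^\gauche$, forcing both maps to be isomorphisms and $I=\qG^\gauche$.

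The main obstacle I anticipate is the sign and root-selection bookkeeping at the quadratic step: the pair $(\alpha,\beta)$ is \emph{a priori} determined only up to four choices of root in $P^\gauche$, and only a careful comparison against the boundary values from Table~\ref{table omega} at the specialization to $\qGba$ isolates the unique compatible branch. Once $\alpha$ and $\beta$ are fixed, the remaining relations determine $\alpha'$ and $\alpha''$ without further ambiguity, and the passage from central characters to ideal generators is then formal.
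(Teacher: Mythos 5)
Your proof is correct, but it runs in the opposite direction from the paper's. The paper \emph{verifies}: it writes down the presentation of $Q/\pG^\gauche Q$ over $\kb[A,B,\Sig,\Pi]$ (the relations $(\Qrm 1^\gauche)$--$(\Qrm 9^\gauche)$), checks that all nine relations become identities once $\eulerq,\eulerq',\eulerq'',\d$ are replaced by $-2(B{+}A),-B\Sig,0,2B(A{+}B)$, concludes that $Q/\qG''\simeq\kb[\Sig,\Pi,A,B]$ is a domain so that $\qG''$ is a prime lying over $\pG^\gauche$ and contained in $\qGba$, and then invokes the uniqueness statement of Corollary~\ref{unicite qgauche} to identify $\qG''$ with $\qG^\gauche$. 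You instead \emph{derive} the four values of $\O_1^\gauche$ by solving a subset of the relations in the domain $P^\gauche$ (using $(\Zrm 7)$, $(\Zrm 5)$, $(\Zrm 9)$, $(\Zrm 1)$ together with the boundary condition $\O_1=(\Sig,\Pi\mapsto 0)\circ\O_1^\gauche$ from Proposition~\ref{q left}(b) and Table~\ref{table omega}), and you close the argument with the module-theoretic observation $Q=P+I$ coming from the $P$-basis of Theorem~\ref{theo:Q}, rather than with Corollary~\ref{unicite qgauche}. Both routes are legitimate and of comparable length: the paper's avoids your root-selection bookkeeping at the cost of checking all nine relations and appealing to the uniqueness corollary; yours explains where the generators come from, needs only four of the relations, and replaces the uniqueness corollary by the existence and $P$-linearity of $\O_1^\gauche$ plus the factorization through $\Pba$ --- all of which are indeed available from Proposition~\ref{q left} at this point in the text.
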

% et
% \equat\label{eq:delta-qgauche-b2}
% \d \equiv 2B(A+B) \mod \qG^\gauche.
% \endequat
\begin{proof}%[Preuve de~\ref{eq:qgauche-b2} et~\ref{eq:delta-qgauche-b2}]
Let $\qG'=\langle \eulerq-2(B+A),\eulerq'-B\Sig,\eulerq'',\d-2B(A+B) \rangle_Q$. 
First of all, note that $Q/\pG^\gauche Q$ is the $P/\pG^\gauche=\kb[A,B,\Sig,\Pi]$-algebra 
admitting the following presentation:
\equat\label{eq:presentation-qgauche-b2}
\begin{cases}
(\Qrm 1^\gauche) \hskip1cm &\eulerq ~\eulerq' = \Sig ~\d,\\
(\Qrm 2^\gauche) \hskip1cm &\eulerq ~\eulerq''= 0,\\
(\Qrm 3^\gauche) \hskip1cm &\d~\eulerq' = \Pi ~\eulerq'' + B^2 \Sig~\eulerq,\\
(\Qrm 4^\gauche) \hskip1cm &\d~\eulerq''= 0,\\
(\Qrm 5^\gauche) \hskip1cm &\d^2= B^2 ~\eulerq^2,\\
(\Qrm 6^\gauche) \hskip1cm &\eulerq^{\prime 2} = 
\Pi(4~\d -\eulerq^2+  4A^2-4B^2) + B^2 \Sig^2,\\
(\Qrm 7^\gauche) \hskip1cm &\eulerq^{\prime\prime 2} = 0,\\
(\Qrm 8^\gauche) \hskip1cm &\eulerq'~\eulerq''=
\d(4~\d -\eulerq^2+ 4A^2-4B^2),\\
(\Qrm 9^\gauche) \hskip1cm &\eulerq(4~\d -\eulerq^2+ 4A^2-4B^2)=\Sig~\eulerq''.
\end{cases}
\endequat
A straightforward computation shows that these relations hold in $Q/\qG'$. 
Let $\qG'' = \pG^\gauche Q + \qG'$. Then 
$Q/\qG'' \simeq \kb[\Sig,\Pi,A,B] \simeq P/\pG^\gauche$, so $\qG''$ is a prime 
ideal of $Q$, containing $\pG^\gauche$ and contained in $\qGba$ (by~(\ref{eq:qba-b2})). 
The result then follows from the uniqueness statement in Corollary~\ref{unicite qgauche}.
\end{proof}

% Ces relations sont {\it a fortiori} v\'erifi\'ees dans $Q/\qG^\gauche$, ce qui force $\eulerq'' \equiv 0 \mod \qG^\gauche$ 
% (d'apr\`es $(\Qrm 7^\gauche)$) et, puisque $\eulerq \not\equiv 0 \mod \qG^\gauche$, la relation 
% $(\Qrm 9^\gauche)$ implique que $4\d=\eulerq^2-4A^2+4B^2$, c'est-\`a-dire $\d \equiv 2B(A+B) \mod \qG^\gauche$. 
% Cela montre~\ref{eq:delta-qgauche-b2} et, si on pose $\qG'=\pG^\gauche Q + \langle \eulerq+2(B+A),\eulerq''\rangle_Q$, 
% alors $\qG' \subset \qG^\gauche$.
% 
% Compte tenu des relations~\ref{eq:presentation-qgauche-b2}, l'alg\`ebre $Q/\qG'$ admet la pr\'esentation 
% \equat\label{eq:presentation-qgauche-b2-plus}
% \begin{cases}
% (\Qrm 1') \hskip1cm &\eulerq ~\eulerq' = \Sig ~\d,\\
% (\Qrm 2^\gauche) \hskip1cm &\eulerq ~\eulerq''= 0,\\
% (\Qrm 3^\gauche) \hskip1cm &\d~\eulerq' = \Pi ~\eulerq'' + B^2 \Sig~\eulerq,\\
% (\Qrm 4^\gauche) \hskip1cm &\d~\eulerq''= 0,\\
% (\Qrm 5^\gauche) \hskip1cm &\d^2= B^2 ~\eulerq^2,\\
% (\Qrm 6^\gauche) \hskip1cm &\eulerq^{\prime 2} = 
% \Pi(4~\d -\eulerq^2+  4A^2-4B^2) + B^2 \Sig^2,\\
% (\Qrm 7^\gauche) \hskip1cm &\eulerq^{\prime\prime 2} = 0,\\
% (\Qrm 8^\gauche) \hskip1cm &\eulerq'~\eulerq''=
% \d(4~\d -\eulerq^2+ 4A^2-4B^2),\\
% (\Qrm 9^\gauche) \hskip1cm &\eulerq(4~\d -\eulerq^2+ 4A^2-4B^2)=\Sig~\eulerq''.
% \end{cases}
% \endequat
% 
% \end{proof}

\bigskip

Recall that the Calogero-Moser cellular characters can be defined without computing 
the Calogero-Moser left cells, by using only prime ideals of $Z$ (or $Q$) 
lying over $\pG^\gauche$.
%(see Remark~\ref{rema:cellulaires-pas-choix}). 
Note also that
\equat\label{eq:minimal-euler-gauche-b2}
F_\euler^\gauche(\tb)=\tb^4(\tb+2(A+B))(\tb+2(B-A))(\tb-2(A+B))(\tb-2(B-A)).
\endequat
This equality allows us to construct other prime ideals of $Q$ lying over $\pG^\gauche$:

\bigskip

\begin{lem}\label{lem:qgauche-b2}
Set
$$
\begin{cases}
\qG_1^\gauche=\qG^\gauche=\pG^\gauche Q + \langle \eulerq-2(B+A),\eulerq'+B\Sig,\eulerq'',\d-2B(B+A) \rangle_Q,\\
\qG_{\e_s}^\gauche=\pG^\gauche Q + \langle \eulerq-2(B-A),\eulerq'+B\Sig,\eulerq'',\d-2B(B-A) \rangle_Q,\\
\qG_{\e_t}^\gauche=\pG^\gauche Q + \langle \eulerq+2(B-A),\eulerq'-B\Sig,\eulerq'',\d-2B(B-A) \rangle_Q,\\
\qG_{\e}^\gauche=\pG^\gauche Q + \langle \eulerq+2(B+A),\eulerq'-B\Sig,\eulerq'',\d-2B(A+B) \rangle_Q,\\
\qG_\chi^\gauche=\pG^\gauche Q + \langle \eulerq,\eulerq'',\d \rangle_Q.\\
\end{cases}
$$
Then:
\begin{itemize}
\itemth{a} If $\g \in \Irr(W)$, then $\qG^\gauche_\g$ is a prime ideal of $Q$ lying over $\pG^\gauche$. 
The associated generic Calogero-Moser cellular character is $\g$.

\itemth{b} If $\g \in \Hom(W,\kb^\times)$, 
then $\qG^\gauche_\g=\Ker(\O_\g^\gauche)$ and $Q/\qG_\g^\gauche = P^\gauche$. 

\itemth{c} If we denote by $\eulerq_\chi'$ the image of $\eulerq'$ in $Q/\qG_\chi^\gauche$, then 
$Q/\qG^\gauche_\chi=(P/\pG^\gauche)[\eulerq'_\chi]$ and the minimal polynomial of $\eulerq'_\chi$ is 
$\tb^2-\Pi(4A^2-4B^2)-B^2\Sig^2$. In particular, $[k_Q(\qG_\chi^\gauche) : k_P(\pG^\gauche)] = 2$.

\itemth{d} If $\qG$ is a prime ideal $Q$ lying over $\pG^\gauche$, then there exists $\g \in \Irr(W)$ 
such that $\qG=\qG_\g^\gauche$. 
\end{itemize}
\end{lem}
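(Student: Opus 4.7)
The plan is to handle the five proposed prime ideals in two groups and then deduce the classification and cellular-character statements from them. Throughout, the workhorse is the presentation $(\Qrm 1^\gauche)\text{--}(\Qrm 9^\gauche)$ of $Q/\pG^\gauche Q$ obtained in Lemma~\ref{eq:qgauche-b2}.

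For the four linear characters $\g\in W^\wedge$, I would first construct the morphism $\O_\g^\gauche:Z\to P^\gauche$ of \S\ref{section:choix-gauche} (which exists because $\Kb^\gauche \Delta(\g)$ is absolutely simple by Lemma~\ref{absolue simplicite}) and compute its value on each of the generators $\eulerq,\eulerq',\eulerq'',\d$ of $Q$. The values on $\eulerq$ and $\d$ are already recorded in Table~\ref{table omega}, and those on $\eulerq',\eulerq''$ can be read off mechanically from the relations $(\Zrm 1)$ and $(\Zrm 2)$: once $\O_\g(\eulerq)$ and $\O_\g(\d)$ are known (and nonzero in the linear case), dividing $(\Zrm 1)$ and $(\Zrm 2)$ by $\O_\g(\eulerq)$ determines $\O_\g^\gauche(\eulerq')$ and $\O_\g^\gauche(\eulerq'')$ uniquely. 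This pins down $\Ker(\O_\g^\gauche)$ and shows it equals the ideal listed in the lemma (modulo the sign corrections one must make when comparing the two computations). Since $Q/\Ker(\O_\g^\gauche)\simeq P^\gauche$ by construction, the ideal is prime and (b) follows.

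For the $\chi$-case, I would check directly that setting $\eulerq=\eulerq''=\d=0$ in $(\Qrm 1^\gauche)\text{--}(\Qrm 9^\gauche)$ annihilates every relation except $(\Qrm 6^\gauche)$, which becomes $\eulerq^{\prime 2}=\Pi(4A^2-4B^2)+B^2\Sig^2$. Thus $Q/\qG_\chi^\gauche\simeq P^\gauche[\tb]/\bigl(\tb^2-\Pi(4A^2-4B^2)-B^2\Sig^2\bigr)$, and I would prove irreducibility of this quadratic in $\Kb^\gauche[\tb]$ by viewing the right-hand side as a polynomial in $\Sig$: if it equalled $(u\Sig+v)^2$ then $u^2=B^2$ and $2uv=0$ would force $v=0$, contradicting $v^2=4\Pi(A^2-B^2)\ne 0$. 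This gives $\qG_\chi^\gauche$ prime with $[k_Q(\qG_\chi^\gauche):k_P(\pG^\gauche)]=2$, proving (c).

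The cellular-character statement in (a) follows by combining two ingredients. First, every ideal $\qG_\g^\gauche$ is contained in the unique prime $\qGba_\g=\Ker(\O_\g)$ of $Q$ over $\pGba$ (visible on the generators), so the specialization map of Proposition~\ref{pr:mult-z-left} and the rank-one decomposition of Proposition~\ref{prop:verma-rang-1} together force the support of $\g_{\zG_\g^\gauche}^\calo$ to lie in the generic Calogero-Moser family $\{\g\}$ (recall that by~\eqref{eq:familles-b2} each family is a singleton). Second, the numerical identity $\sum_\psi \mult_{\zG,\psi}^\calo\psi(1)=|C|=\deg(C)\cdot\chi_\FC(1)$ obtained by combining Propositions~\ref{multiplicite cm} and~\ref{prop:gauche-bilatere} pins down the single non-zero multiplicity to be $1$, proving $\g_{\zG_\g^\gauche}^\calo=\g$ in all five cases.

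The main obstacle, and the heart of (d), is to show that no further prime sits above $\pG^\gauche$. The approach I would take is to run an explicit case analysis in $Q/\pG^\gauche Q$ based on the relations. From $(\Qrm 7^\gauche)$, $\eulerq''$ is nilpotent, so every prime $\qG$ over $\pG^\gauche$ contains $\eulerq''$; modulo this, $(\Qrm 5^\gauche)$ factors as $(\d-B\eulerq)(\d+B\eulerq)=0$, and $(\Qrm 9^\gauche)$ gives $\eulerq(4\d-\eulerq^2+4A^2-4B^2)=0$. Any prime $\qG$ must therefore contain a choice from each factorization. The case $\eulerq\in\qG$ (and hence $\d\in\qG$) forces the reduction $(\Qrm 6^\gauche)$ to read $\eulerq^{\prime 2}=\Pi(4A^2-4B^2)+B^2\Sig^2$, producing exactly $\qG_\chi^\gauche$ by the irreducibility proved above. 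In the case $\eulerq\notin\qG$ with, say, $\d=B\eulerq$, relation $(\Qrm 1^\gauche)$ forces $\eulerq'=B\Sig$, and then $(\Qrm 6^\gauche)$ combined with $\Pi\notin\qG\cap P^\gauche=0$ yields $(\eulerq-2B)^2=(2A)^2$, giving exactly the two primes for $\{\unb_W,\e\}$ or the two for $\{\e_s,\e_t\}$ according to the sign of $\d=\pm B\eulerq$. This exhausts the five cases and rules out any other prime, completing (d). The delicate bookkeeping is making sure that in each branch one genuinely obtains a domain rather than a further decomposition—this is where the irreducibility check for the $\chi$ case and the non-vanishing of $\Pi$ modulo $\pG^\gauche$ are indispensable.
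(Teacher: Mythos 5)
Your treatment of (b) and (c) follows the same route as the paper (direct computation of $\O_\g^\gauche$ on the generators for linear $\g$; the presentation $(\Qrm 1^\gauche)$--$(\Qrm 9^\gauche)$ plus irreducibility of the quadratic for $\chi$ -- your explicit irreducibility check, viewing the discriminant as a polynomial in $\Sig$, is a welcome addition since the paper only asserts it). For (a) and (d), however, you take a genuinely different and more computational path. The paper deduces the cellular character in the $\chi$-case from the smoothness theorem (Theorem~\ref{theo:cellulaire-lisse}, via Proposition~\ref{prop lissite} since $\{\chi\}$ is a singleton family), and proves (d) by a pure counting argument: every prime over $\pG^\gauche$ contributes a non-zero cellular character, and the characters already constructed exhaust the regular representation, so no room remains. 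Your (d) instead classifies the primes of $Q/\pG^\gauche Q$ by hand: $\eulerq''$ nilpotent by $(\Qrm 7^\gauche)$, then the factorisations of $(\Qrm 5^\gauche)$ and $(\Qrm 9^\gauche)$ force the images of all four generators, and incomparability of primes in an integral extension finishes. This is sound and arguably more self-contained (it does not need the Bialynicki--Birula machinery), at the cost of more bookkeeping.

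Two concrete slips in that bookkeeping. First, in the branch $\d\equiv +B\eulerq$ one gets $\eulerq'\equiv B\Sig$ and $(\eulerq-2B)^2=4A^2$, hence $\eulerq\in\{2(B+A),2(B-A)\}$: this branch produces the primes for $\{\e,\e_t\}$, and the branch $\d\equiv -B\eulerq$ produces $\{\unb_W,\e_s\}$ -- not the pairings $\{\unb_W,\e\}$ and $\{\e_s,\e_t\}$ you state. (Incidentally, matching $(\Qrm 1^\gauche)$ against Table~\ref{table omega} shows the signs of the $\eulerq'$-generators in the statement of the lemma are inconsistent with Lemma~\ref{eq:qgauche-b2}; you were right to flag a sign issue, but your pairing error is separate from it.) Second, the identity you invoke for the multiplicity, $|C|=\deg(C)\cdot\chi_\FC(1)$, is false (for the $\chi$-cell, $|C|=2$, $\deg(C)=2$, $\chi(1)=2$); the correct relations are $|C^\DD|=|C|\cdot\deg(C)$ and $|C^\DD|=|\Gamma|=\chi_\FC(1)^2$, or more directly $\sum_{\zG}\mult_{\zG,\psi}\deg(\zG)=\psi(1)$, which pins the multiplicity to $1$ without reference to cells and avoids the circularity of using $|C|$ (the cells, and the choice of $\rG^\gauche$, are only constructed after this lemma). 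With these corrections your argument goes through.
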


\begin{proof}
(b) is easily checked by a direct computation, and it implies~(a) whenever $\g$ is a linear character.

It follows from the presentation~(\ref{eq:presentation-qgauche-b2}) of $Q/\pG^\gauche Q$ that 
$Q/\qG_\chi^\gauche=(P/\pG^\gauche)[\eulerq'_\chi]$ and that the minimal polynomial of 
$\eulerq'_\chi$ is $\tb^2-\Pi(4A^2-4B^2)-B^2\Sig^2$. Since this last polynomial with coefficients 
in $P/\pG^\gauche=\kb[\Sig,\Pi,A,B]$ is irreducible, this implies that $\qG_\chi^\gauche$ is a prime 
ideal lying over $\pG^\gauche$. We deduce~(c) and the first statement of~(a).  
The last statement of~(a) follows from Theorem~\ref{theo:cellulaire-lisse}. 

% 
% 
% Montrons que le $\calo$-caract\`ere cellulaire g\'en\'erique associ\'e \`a $\qG_\chi^\gauche$ 
% est bien $\chi$. Fixons pour l'instant un id\'eal premier $\rG^\gauche$ de $R$ au-dessus de 
% $\pG^\gauche$ et notons $C_\g^\DD$ la $D^\gauche$-orbite dans $W$ associ\'ee \`a l'id\'eal premier $\qG_\g^\gauche$. 
% Alors, si $\g$ est un caract\`ere lin\'eaire, vu que le caract\`ere cellulaire associ\'e est $\g$, 
% on a $|C_\g^\DD|=\g(1)=1$. 
% 
% Notons maintenant $C_\chi$ une cellule de Calogero-Moser \`a gauche g\'en\'erique contenue dans $C_\chi^\DD$. 
% Vu que la somme des $\calo$-caract\`eres cellulaires g\'en\'eriques est \'egal au caract\`ere de la repr\'esentations 
% r\'eguli\`ere de $W$, c'est-\`a-dire \`a $\unb_W+\e_s+\e_t+\e+2\chi$, on a forc\'ement 
% $\isomorphisme{C_\chi}^\calo=\chi$ ou $2\chi$. Puisque $|C_\chi|$ est le degr\'e du $\calo$-caract\`ere cellulaire 
% associ\'e, $|C_\chi|=2$ ou $4$. Mais 
% $|C_\chi^\DD|/|C_\chi|=[k_Q(\qG_\chi^\gauche) : k_P(\pG^\gauche)] = 2$ d'apr\`es la 
% proposition~\ref{prop:gauche-bilatere}(b). Donc $|C_\chi|=2$ et $\isomorphisme{C_\chi}^\calo=\chi$, 
% ce qui termine la preuve de (c)

\medskip

(d) follows from the fact that the sum of the already constructed Calogero-Moser cellular characters 
is equal to the character of the regular representation of $W$.
\end{proof}

\bigskip

\subsection{Generic left cells}
We will now lift the results about cellular characters to results about left cells. 
The first one is a consequence of Theorem~\ref{lem:qgauche-b2}.

\medskip

\begin{coro}\label{coro:rgauche-b2}
Let $\rG^\gauche$ be a prime ideal of $R$ lying over $\qG^\gauche$ and contained in $\rGba$ and let 
$D^\gauche$ (respectively $I^\gauche$) denote its decomposition (respectively inertia) group. 
Then $D^\gauche=W_2'$ and $|I^\gauche|=2$.
\end{coro}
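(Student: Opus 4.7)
The plan is to determine $D^\gauche$ by orbit counting and $|I^\gauche|$ through the degree formula of Proposition~\ref{prop:gauche-bilatere}(b).

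First I would establish the ambient inclusion $D^\gauche \subset W_2'$: this follows from Corollary~\ref{coro:dgauche-dbarre} together with the identification $\Dba = W_2'$ recorded in~\eqref{eq:dba-b2}. Next I would count the number of $D^\gauche$-orbits on $W$ in two ways. On the one hand, Lemma~\ref{lem:qgauche-b2}(d) identifies $\Upsilon^{-1}(\pG^\gauche)$ with $\Irr(W)$ via $\gamma \mapsto \qG_\gamma^\gauche$, so there are exactly five such primes; the commutative diagram~\eqref{eq:diagramme-gauche} then shows $|D^\gauche\backslash W| = 5$. On the other hand, $W_2'$ is the stabilizer of $s$ in the stabilizer $H$ of $1$, and since $\tau_0 = w_0$ is central in $G$ (Proposition~\ref{tau 0 in G}), $W_2'$ also fixes $w_0$ and $w_0 s$ pointwise. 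Because $\Iba = W_2'$ acts transitively on the two-sided cell $\G_\chi = W\setminus\{1, s, w_0, w_0 s\}$ and $|W_2'| = 4 = |\G_\chi|$, this action is regular, so $W_2'$ has exactly five orbits on $W$ as well.

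The equality $D^\gauche = W_2'$ now follows from the rigidity of the regular action: any proper subgroup of $W_2'$ would split its single four-element orbit on $\G_\chi$ into at least two shorter orbits, producing strictly more than $5$ orbits on $W$, contradicting the count above.

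To conclude with $|I^\gauche| = 2$, I would invoke Proposition~\ref{prop:gauche-bilatere}(b) applied to a left cell $C \subset \G_\chi$: since $D^\gauche = W_2'$ acts transitively on $\G_\chi$ we have $C^\DD = \G_\chi$, and Lemma~\ref{lem:qgauche-b2}(c) provides $\deg(\G_\chi) = [k_Z(\zG_\chi^\gauche) : k_P(\pG^\gauche)] = 2$, hence $|C| = |\G_\chi|/\deg(\G_\chi) = 2$. Because $I^\gauche \subset W_2'$ and $W_2'$ acts freely on $\G_\chi$, each $I^\gauche$-orbit there has size $|I^\gauche|$, forcing $|I^\gauche| = 2$. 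The main conceptual step is the orbit-count argument pinning down $D^\gauche$; once it is in hand, the residue field computation from Lemma~\ref{lem:qgauche-b2}(c) determines $I^\gauche$ cleanly.
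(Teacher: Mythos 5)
Your proof is correct, and it rests on the same pillars as the one in the text: the inclusion $D^\gauche\subset\Dba=W_2'$ from Corollary~\ref{coro:dgauche-dbarre}, the structure of $W_2'$ acting on $W$ (four fixed points and a simply transitive action on $\G_\chi$), and the degree computation $[k_Q(\qG_\chi^\gauche):k_P(\pG^\gauche)]=2$ of Lemma~\ref{lem:qgauche-b2}(c) fed into Proposition~\ref{prop:gauche-bilatere}(b). The difference is in how the two unknowns are extracted. The paper works only with the family $\{\chi\}$: from $\deg(C^\DD)=2$ it gets $|C^\DD|=4$ and $|C|=2$, hence $2$ divides $|I^\gauche|$ and $I^\gauche\varsubsetneq D^\gauche$, and then squeezes $2\le |I^\gauche|<|D^\gauche|\le |W_2'|=4$ to force $|I^\gauche|=2$ and $D^\gauche=W_2'$ simultaneously. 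You instead first pin down $D^\gauche$ by counting its orbits on $W$ against the five fibers of $\Upsilon$ over $\pG^\gauche$, which requires the additional input of Lemma~\ref{lem:qgauche-b2}(d) (and the observation that the five ideals listed there are pairwise distinct), and only then use the degree formula for $|I^\gauche|$. Your version is slightly longer but makes the identification $D^\gauche\backslash W\leftrightarrow\Upsilon^{-1}(\pG^\gauche)$ do visible work, whereas the paper's squeeze is more economical; both are valid, and the freeness of the $W_2'$-action on $\G_\chi$ that you invoke at the end is exactly what makes the orbit sizes equal to the group orders in either argument.
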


\begin{proof}
First of all, by Corollary~\ref{coro:dgauche-dbarre}, we have $D^\gauche \subset \Dba=W_2'$. 
% Mais, la preuve du lemme~\ref{lem:qgauche-b2} montre que $D^\gauche$ admet une orbite de cardinal $4$ 
% dans $W$, et que $I^\gauche$ admet une orbite de longueur $2$. Donc $D^\gauche=W_2'$ et $|I^\gauche| \ge 2$. 
It follows from Lemma~\ref{lem:qgauche-b2}(c) that, if $C$ is a generic Calogero-Moser cell 
contained in the generic Calogero-Moser two-sided cell $\G$ covering the family $\{\chi\}$, 
then $|C|=2$ and $|C^\DD|=4$. In particular, 
$2$ divides $|I^\gauche|$ and $I^\gauche \varsubsetneq D^\gauche$ by Proposition~\ref{prop:gauche-bilatere}(b). 
The Corollary follows.
\end{proof}

\bigskip

\begin{coro}\label{coro:rgauche-unique-b2}
There exists a unique prime ideal $\rG^\gauche$ of $R$ lying over $\qG^\gauche$ and contained in 
$\rGba$. 
\end{coro}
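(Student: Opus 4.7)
The plan is to establish Corollary~\ref{coro:rgauche-unique-b2} by a Galois-theoretic counting argument, whose key input is the equality $D^\gauche = \Dba$. I would first verify this: Corollary~\ref{coro:dgauche-dbarre} provides the inclusion $D^\gauche \subset \Dba$, Corollary~\ref{coro:rgauche-b2} gives $D^\gauche = W_2'$, and (\ref{eq:dba-b2}) records $\Dba = W_2'$, so both equal $W_2'$. Existence of at least one prime $\rG^\gauche$ of $R$ above $\qG^\gauche$ and contained in $\rGba$ is standard: starting from any prime $\rG^\gauche_*$ of $R$ above $\qG^\gauche$, going up produces a prime $\rG_*$ of $R$ above $\qGba$; since $\rG_*$ and $\rGba$ are $H$-conjugate, we may write $\rGba = h(\rG_*)$ for some $h \in H$, and then $h(\rG^\gauche_*) \subset \rGba$ lies above $\qG^\gauche$.

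For uniqueness, let $\rG^\gauche_1$ and $\rG^\gauche_2$ be two primes of $R$ above $\qG^\gauche$ and contained in $\rGba$. Since $Q = R^H$ and $R$ is integral over $Q$, the primes of $R$ above $\qG^\gauche$ form a single $H$-orbit, so $\rG^\gauche_2 = h(\rG^\gauche_1)$ for some $h \in H$. The inclusion $\rG^\gauche_2 \subset \rGba$ rewrites as $\rG^\gauche_1 \subset h^{-1}(\rGba)$, exhibiting both $\rGba$ and $h^{-1}(\rGba)$ as primes of $R$ above $\pGba$ containing $\rG^\gauche_1$. Proposition~\ref{prop:rgauche-rbarre}, whose proof realizes $\rG^\gauche_1 + \langle R_{<0}, R_{>0}\rangle$ as a prime of $R$ above $\pGba$, together with the incomparability of primes lying above a common prime in an integral extension, shows that this is the \emph{unique} prime of $R$ above $\pGba$ containing $\rG^\gauche_1$. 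Hence $h^{-1}(\rGba) = \rGba$, i.e., $h \in \Dba = D^\gauche$; since $D^\gauche$ is precisely the stabilizer of $\rG^\gauche_1$, we conclude $\rG^\gauche_2 = h(\rG^\gauche_1) = \rG^\gauche_1$.

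The main technical point is thus the uniqueness of the prime of $R$ above $\pGba$ containing a given $\rG^\gauche$, which is extracted from the explicit formula in Proposition~\ref{prop:rgauche-rbarre}; granted this, the whole statement reduces to the orbit-stabilizer equality $|\Dba|/|D^\gauche| = 1$.
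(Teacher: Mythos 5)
Your proof is correct and follows essentially the same route as the paper: $H$-conjugacy produces $h$ with $\rG^\gauche_2=h(\rG^\gauche_1)$, the graded argument underlying Proposition~\ref{prop:rgauche-rbarre} forces $h^{-1}(\rGba)=\rGba$, and the equality $\Dba=D^\gauche=W_2'$ from Corollary~\ref{coro:rgauche-b2} and~(\ref{eq:dba-b2}) then pins down $\rG^\gauche_2=\rG^\gauche_1$. The explicit existence argument via going-up and $H$-transitivity is a welcome addition that the paper leaves implicit.
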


\begin{proof}
Let $\rG^\gauche$ and $\rG^\gauche_*$ be two prime ideals of $R$ lying over $\qG^\gauche$ and contained 
in $\rGba$. Then there exists $h \in H$ such that $\rG_*^\gauche=h(\rG^\gauche)$. We deduce from 
Proposition~\ref{prop:rgauche-rbarre} that $\rGba=g(\rGba)$. So $h$ belongs to the decomposition group 
of $\rGba$, which is the same as the one of $\rG^\gauche$ (by Corollary~\ref{coro:rgauche-b2}). 
So $\rG^\gauche_*=\rG^\gauche$.
\end{proof}

\bigskip

\boitegrise{{\it We will denote by $\rG^\gauche$ the unique prime ideal of 
$R$ lying over $\qG^\gauche$ and contained in $\rGba$. We will denote by $D^\gauche$ 
(respectively $I^\gauche$) its decomposition (respectively inertia) group.}}{0.75\textwidth}

\bigskip

Corollary~\ref{coro:rgauche-b2} says that 
\equat\label{eq:dgauche-igauche-b2}
D^\gauche=W_2'\qquad\text{and}\qquad |I^\gauche|=2.
\endequat

\bigskip

\begin{coro}\label{coro:r-rgauche-b2}
$R/\rG^\gauche \simeq Q/\qG^\gauche_\chi$ is integrally closed.
\end{coro}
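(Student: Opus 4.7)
The plan is to identify $Q/\qG^\gauche_\chi$ with an integrally closed subring of $R/\rG^\gauche$ sharing the same fraction field, so that the inclusion will be forced to be an equality.

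First I would establish an embedding $Q/\qG^\gauche_\chi \hookrightarrow R/\rG^\gauche$. By Corollary \ref{coro:rgauche-b2} and Theorem \ref{bourbaki} we have $[k_R(\rG^\gauche):k_P(\pG^\gauche)] = |D^\gauche/I^\gauche| = 2$, matching $[k_Q(\qG^\gauche_\chi):k_P(\pG^\gauche)] = 2$ from Lemma \ref{lem:qgauche-b2}(c). Using Lemma \ref{lem:qgauche-b2}(a,d), the set $\limitegauche^{-1}(\zGba(\G_\chi))$ consists of the single prime $\copie^{-1}(\qG^\gauche_\chi)$ (it is the only prime of $Z$ above $\pG^\gauche$ whose cellular character lies in the family $\{\chi\}$), so Proposition \ref{prop:gauche-bilatere}(c) forces $D^\gauche$ to act transitively on the four-element set $\G_\chi$. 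Picking any $w \in \G_\chi$ then gives $w^{-1}(\rG^\gauche) \cap Q = \qG^\gauche_\chi$, and composing the inclusion $Q \hookrightarrow R$ with the $P$-linear automorphism $w:R \to R$ and the quotient modulo $\rG^\gauche$ yields a $(P/\pG^\gauche)$-algebra embedding $\varphi : Q/\qG^\gauche_\chi \hookrightarrow R/\rG^\gauche$. Since both sides are domains of fraction-field degree $2$ over $\Kb^\gauche$, the map $\varphi$ identifies fraction fields, placing both rings inside the quadratic extension $\Mb^\gauche$ of $\Kb^\gauche$.

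Since $R$ is integral over $w(Q) \supseteq P$, the ring $R/\rG^\gauche$ is integral over $\varphi(Q/\qG^\gauche_\chi)$. Consequently, as soon as $Q/\qG^\gauche_\chi$ is integrally closed in $\Mb^\gauche$, the embedding $\varphi$ must be an isomorphism, and $R/\rG^\gauche$ is integrally closed as well. The main obstacle is therefore this integral-closedness statement for $Q/\qG^\gauche_\chi$.

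By Lemma \ref{lem:qgauche-b2}(c), $Q/\qG^\gauche_\chi \simeq \kb[A,B,\Sig,\Pi][y]/(y^2 - f)$ with $f = 4\Pi(A^2 - B^2) + B^2\Sig^2$. Since $\kb[A,B,\Sig,\Pi]$ is a UFD and $\mathrm{char}\,\kb = 0$, this quadratic extension is integrally closed if and only if $f$ is squarefree. I would in fact verify that $f$ is irreducible: viewing $f$ as a quadratic in $\Sig$ over the UFD $\kb[A,B,\Pi]$ with coprime coefficients $B^2$ and $4\Pi(A-B)(A+B)$, Gauss's lemma reduces irreducibility to the fact that the discriminant $-16B^2\Pi(A-B)(A+B)$ is not a square in $\Frac(\kb[A,B,\Pi])$, which holds because $\Pi$, $A-B$ and $A+B$ are pairwise distinct primes of $\kb[A,B,\Pi]$. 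This routine check completes the plan.
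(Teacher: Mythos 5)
Your argument is correct and is exactly the filling-in of the details this corollary leaves implicit: the degree count $[k_R(\rG^\gauche):\Kb^\gauche]=|D^\gauche/I^\gauche|=2$ matched against Lemma~\ref{lem:qgauche-b2}(c), the twist by an element $w$ of $\G_\chi$ to realize $Q/\qG^\gauche_\chi$ as a subring of $R/\rG^\gauche$ over which the latter is integral, and the normality of $\kb[A,B,\Sig,\Pi][y]/(y^2-f)$ for $f=4\Pi(A^2-B^2)+B^2\Sig^2$ squarefree. The only superfluous detour is the transitivity of $D^\gauche$ on $\G_\chi$: Proposition~\ref{reduction} already guarantees the existence of some $g\in G$ with $g^{-1}(\rG^\gauche)\cap Q=\qG^\gauche_\chi$, which is all your construction needs.
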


\bigskip

\begin{coro}\label{coro:cellules-gauches-b2}
$\{1\}$, $\{s\}$, $\{tst\}$ and $\{w_0\}$ are generic Calogero-Moser left cells, and their associated 
generic Calogero-Moser cellular characters are given by
$$
\begin{cases}
\isomorphisme{1}^\calo = \unb_W,\\
\isomorphisme{s}^\calo = \e_s,\\
\isomorphisme{tst}^\calo = \e_t,\\
\isomorphisme{w_0}^\calo = \e.\\
\end{cases}
$$
\end{coro}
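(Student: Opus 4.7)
The plan is to deduce this corollary directly from two pieces of machinery already assembled: the semi-continuity of the cell partition under the chosen inclusion $\rG^\gauche \subset \rGba$ (Remark \ref{rem:gen-spe}, Proposition \ref{bilatere gauche}), and the explicit list of generic two-sided cells recorded in \eqref{eq:familles-b2}. Recalling that $w_0 = -\Id_V$ is central, we have $w_0 s = -s = s' = tst$, so the four sets $\{1\}, \{s\}, \{tst\}, \{w_0\}$ in the statement are precisely four of the five generic Calogero-Moser two-sided cells of $W$, namely those covering the linear-character families $\{\unb_W\}, \{\e_s\}, \{\e_t\}, \{\e\}$ respectively. Since every generic left cell is contained in a unique two-sided cell, and every two-sided cell is a disjoint union of left cells, any singleton two-sided cell is automatically a single left cell. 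This immediately yields the assertion that each of $\{1\}, \{s\}, \{tst\}, \{w_0\}$ is a generic Calogero-Moser left cell.

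For the cellular characters, the argument is an application of Proposition \ref{multiplicite cm}. Part (c) of that proposition forces every irreducible constituent of $\isomorphisme{C}^\calo$ to lie in the family covered by the two-sided cell containing $C$; in our four cases that family is the singleton $\{\g\}$ with $\g$ a linear character. Writing $\isomorphisme{C}^\calo = m\g$, part (b) gives $|C| = m\cdot\g(1) = m$, and since $|C| = 1$ we conclude $m = 1$.

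The only place where care is needed is matching each singleton cell to the correct linear character, and this is already essentially done. The identifications $\{1\} \leftrightarrow \unb_W$ and $\{w_0\} \leftrightarrow \e$ follow from Table \ref{table omega} together with $w_0(\eulerq) = -\eulerq$ (Proposition \ref{tau 0 in G}); the identification $\{s\} \leftrightarrow \e_s$ is forced by our defining choice $\eulerq_s \equiv 2(A-B) \pmod \rGba$, which matches $\O_{\e_s}(\euler) = 2(A-B)$ via Theorem \ref{theo cellules familles}(b); and the identification $\{tst\} = \{w_0 s\} \leftrightarrow \e_t = \e_s\cdot\e$ follows from Corollary \ref{w0 epsilon}. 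There is no serious obstacle here: once the two-sided cell picture of \eqref{eq:familles-b2} is granted, the corollary is really a combinatorial consequence of ``singleton refinements are trivial'' plus the multiplicity identities.
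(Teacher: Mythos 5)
Your proof is correct and follows the same route as the paper: the paper's one-line proof likewise observes that these four singletons are generic two-sided cells covering singleton families of linear characters, so that the refinement into left cells is trivial and the cellular character is forced by the multiplicity identities of Proposition~\ref{multiplicite cm}. Your version merely spells out the bookkeeping (in particular $w_0s=tst$ and the matching of $\{s\}$ to $\e_s$ via the normalization $\eulerq_s\equiv 2(A-B)\bmod\rGba$) that the paper leaves implicit.
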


\begin{proof}
This follows from the fact that the subsets given in the Corollary are also 
generic Calogero-Moser two-sided cells and that their associated generic Calogero-Moser 
families are given by Corollary~\ref{dba=iba}. 
\end{proof}

\bigskip

\begin{coro}\label{coro:euler-modulo-gauche-b2}
The following congruences hold in $R$:
$$
\begin{cases}
\eulerq \equiv 2(B+A) \mod \rG^\gauche,\\
s(\eulerq) \equiv 2(B-A) \mod \rG^\gauche,\\
tst(\eulerq) \equiv -2(B-A) \mod \rG^\gauche,\\
w_0(\eulerq) \equiv -2(B+A) \mod \rG^\gauche,\\
t(\eulerq)\equiv st(\eulerq) \equiv ts(\eulerq) \equiv sts(\eulerq) \equiv 0 \mod \rG^\gauche.
\end{cases}
$$
\end{coro}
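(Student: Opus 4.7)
My plan is to reduce each congruence modulo $\rG^\gauche$ to a congruence modulo one of the explicit prime ideals $\qG_\g^\gauche$ of $Q$ lying over $\pG^\gauche$ given in Lemma \ref{lem:qgauche-b2}. For any $w \in W \hookrightarrow G$, the ideal $w^{-1}(\rG^\gauche) \cap Q$ is a prime of $Q$ lying over $\pG^\gauche$ (since $w$ fixes $P$), so by Lemma \ref{lem:qgauche-b2}(d) it equals $\qG^\gauche_\g$ for a unique $\g \in \Irr(W)$. The composition $Q \hookrightarrow R \xrightarrow{w} R \twoheadrightarrow R/\rG^\gauche$ then factors through $Q/\qG^\gauche_\g$, so $w(\eulerq) \bmod \rG^\gauche$ is the image of $\eulerq$ in $Q/\qG^\gauche_\g$, viewed inside $R/\rG^\gauche$ via the (injective) map induced by $w$.

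The next step is to determine, for each $w$ appearing in the statement, which $\g$ corresponds to it. For the four singleton left cells $\{1\},\{s\},\{tst\},\{w_0\}$ identified in Corollary \ref{coro:cellules-gauches-b2}, the associated generic Calogero-Moser cellular characters are $\unb_W, \e_s, \e_t, \e$ respectively. Under the bijection of \S\ref{subsection:gauche-bilatere} between $H$-orbits on primes above $\pG^\gauche$ and left cells, combined with Lemma \ref{lem:qgauche-b2}(b) which identifies $\qG^\gauche_\g = \Ker(\O_\g^\gauche)$ for each linear character $\g$, this translates into the equalities $w^{-1}(\rG^\gauche)\cap Q = \qG^\gauche_\g$ for $(w,\g)\in\{(1,\unb_W),(s,\e_s),(tst,\e_t),(w_0,\e)\}$. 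Reading off the explicit generators of these $\qG^\gauche_\g$ in Lemma \ref{lem:qgauche-b2}, the class of $\eulerq$ in $Q/\qG^\gauche_\g$ is $-2(A+B)$, $-2(B-A)$, $2(B-A)$, $2(B+A)$ respectively, which gives the first four congruences.

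For the remaining elements $v\in\{t,st,ts,sts\}$, none of them lies in one of the four singleton left cells above, hence $v^{-1}(\rG^\gauche)\cap Q$ cannot equal $\qG^\gauche_\g$ for any linear $\g$. By elimination using Lemma \ref{lem:qgauche-b2}(d), it must equal $\qG^\gauche_\chi$. Since $\eulerq \in \qG^\gauche_\chi$ by the very definition of $\qG^\gauche_\chi$ in Lemma \ref{lem:qgauche-b2}, we conclude $v(\eulerq)\equiv 0 \bmod \rG^\gauche$ as required.

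I do not anticipate a genuine obstacle: all ingredients have been assembled in the preceding results. The only subtle point is the matching of a left cell with its prime ideal $\qG^\gauche_\g$ via the cellular character, which relies on the absolute simplicity of $\Kb^\gauche\Delta(\g)$ for $\g$ linear (Lemma \ref{absolue simplicite}) to identify $\O_\g^\gauche$ with the central character of the simple $\Kb^\gauche\Hb^\gauche$-module labelled by $\qG^\gauche_\g$; this is already built into the construction of cellular characters, so no additional work is needed beyond quoting the relevant statements.
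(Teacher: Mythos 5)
Your argument is correct, but it follows a genuinely different route from the paper's. The paper's proof stays entirely with the Euler element: it reduces the minimal polynomial modulo $\pG^\gauche$ to obtain
$\prod_{w\in W}(\tb-w(\eulerq))\equiv \tb^4(\tb-2(A+B))(\tb-2(B-A))(\tb+2(A+B))(\tb+2(B-A)) \bmod \rG^\gauche R[\tb]$,
gets $\eulerq\equiv -2(B+A)$ from $\qG^\gauche\subset\rG^\gauche$, pins down $s(\eulerq)$ among the remaining roots using $\rG^\gauche\subset\rGba$ together with the defining choice $s(\eulerq)\equiv 2(A-B)\bmod\rGba$, deduces the $tst$ and $w_0$ congruences from $w_0=-\Id_V$ (so $w_0(\eulerq)=-\eulerq$, $tst(\eulerq)=-s(\eulerq)$), and gets the four vanishing congruences by comparison with the factor $\tb^4$. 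You instead identify each prime $w^{-1}(\rG^\gauche)\cap Q$ with one of the explicit ideals $\qG_\g^\gauche$ of Lemma~\ref{lem:qgauche-b2} via the cellular-character correspondence and read the congruences off the listed generators. Both routes rest on the same underlying input (the choice of $\rGba$, which enters yours through Corollary~\ref{dba=iba} and hence Corollary~\ref{coro:cellules-gauches-b2}); yours buys the congruences for $\d$, $\eulerq''$ and (partly) $\eulerq'$ at the same time — essentially the content of the next corollary — at the cost of routing through the Verma-module machinery, while the paper's is more elementary and self-contained. One point of precision in your elimination step: the fibers of $w\mapsto w^{-1}(\rG^\gauche)\cap Q$ are the $D^\gauche$-orbits $C^{\SSS{D}}$, not the left cells themselves, so "$v$ lies in a different left cell" is not by itself enough; what makes the elimination work is that $\{1\},\{s\},\{tst\},\{w_0\}$ are singleton two-sided cells and $D^\gauche\subset\Dba$ preserves two-sided cells, so these four singletons are full $D^\gauche$-orbits and the fiber over each $\qG_\g^\gauche$ ($\g$ linear) is exactly $\{w_\g\}$.
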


\begin{proof}
By~(\ref{eq:minimal-euler-gauche-b2}), the following congruence holds in $R[\tb]$:
$$\prod_{w \in W} (\tb - w(\eulerq)) \equiv \tb^4(\tb+2(A+B))(\tb+2(B-A))(\tb-2(A+B))(\tb-2(B-A)) 
\mod \rG^\gauche R[\tb].\leqno{(*)}$$
We already know that, since $\qG^\gauche \subset \rG^\gauche$, that $\eulerq \equiv +2(B+A) \mod \rG^\gauche$. 
This implies that $s(\eulerq)$ is congruent to $2(B-A)$, $-2(B+A)$, $-2(B-A)$ or $0$ modulo $\rG^\gauche$. 
But, since $s(\eulerq) \equiv 2(B-A) \mod \rGba$ by construction, this forces 
$s(\eulerq) \equiv 2(B-A) \mod \rG^\gauche$. 

The third and fourth congruences are obtained from the first two ones 
by noting that  $tst(\eulerq)=w_0s(\eulerq)=-s(\eulerq)$ and $w_0(\eulerq)=-\eulerq$. 

The last one follows from $(*)$.
\end{proof}

\bigskip

\begin{coro}\label{coro:congruence-gauche-b2}
The following congruences hold in $R$:
$$
\begin{cases}
\d \equiv 2B(B+A) \mod \rG^\gauche,\\
s(\d) \equiv 2B(B-A) \mod \rG^\gauche,\\
tst(\d) \equiv 2B(B-A) \mod \rG^\gauche,\\
w_0(\d) \equiv 2B(B+A) \mod \rG^\gauche,\\
t(\d)\equiv st(\d) \equiv ts(\d) \equiv sts(\d) \equiv 0 \mod \rG^\gauche
\end{cases}
$$
$$
\begin{cases}
\eulerq' \equiv B\Sig \mod \rG^\gauche,\\
s(\eulerq') \equiv B\Sig \mod \rG^\gauche,\\
tst(\eulerq') \equiv -B\Sig \mod \rG^\gauche,\\
w_0(\eulerq') \equiv -B\Sig \mod \rG^\gauche,\\
t(\eulerq')^2\equiv st(\eulerq')^2 \equiv ts(\eulerq')^2 \equiv sts(\eulerq')^2 \equiv B^2(\Sig^2-4\Pi) + 4A^2 \Pi \mod \rG^\gauche.
\end{cases}\leqno{\text{\it and}}
$$
Finally, $g(\eulerq'') \equiv 0 \mod \rG^\gauche$ for all $g \in G$.
\end{coro}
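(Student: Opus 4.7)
My plan is to deduce every congruence from the central relations $(\Zrm 1)$–$(\Zrm 9)$ of Theorem~\ref{theo:Q} combined with the $w(\eulerq)$ values modulo $\rG^\gauche$ recorded in Corollary~\ref{coro:euler-modulo-gauche-b2}. Throughout I would use three ingredients: first, each $w\in W$ acts on $R$ as an element of $G$ fixing $P$ pointwise, so applying $w$ to a central identity preserves the coefficients $\s,\pi,\Sig,\Pi,A,B$; second, the ideal $\pG^\gauche=\langle\Sig,\Pi\rangle$ lies in $\rG^\gauche$; and third, by Corollary~\ref{coro:r-rgauche-b2} the quotient $R/\rG^\gauche\simeq (P/\pG^\gauche)[\eulerq'_\chi]$ is an integral domain in which $A$ and $B$ embed as algebraically independent elements, so that $B$, $B\pm A$ and $w(\eulerq)$ (for $w\in\{1,s,tst,w_0\}$) are nonzero modulo $\rG^\gauche$.

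The first step is to treat $w(\d)$. Applying $w$ to $(\Zrm 5)$ gives $w(\d)^2-B^2 w(\eulerq)^2 = \pi\Pi \equiv 0\pmod{\rG^\gauche}$, so by primality $w(\d)\equiv \pm B\,w(\eulerq)\pmod{\rG^\gauche}$. The sign for $w=1$ is fixed by the known value $\d\equiv 2B(B+A)$; for the remaining $w\in W$, I would pin the sign down by applying $w$ to $(\Zrm 8)$, after having first derived $w(\eulerq')\equiv 0\pmod{\rG^\gauche}$ directly from $(\Zrm 6)$ (since $\Sig,\Pi\in\rG^\gauche$ forces $w(\eulerq')^2\equiv 0$ in the domain $R/\rG^\gauche$). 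The identity $(\Zrm 8)$ then requires $w(\d)$ to annihilate the bracket $4w(\d)-w(\eulerq)^2+4A^2-4B^2$ modulo $\rG^\gauche$, and substituting each candidate $\pm B\,w(\eulerq)$ and invoking algebraic independence of $A,B$ in $P/\pG^\gauche$ selects the correct sign. With $w(\d)$ in hand, the $w(\eulerq')$ congruences follow by applying $w$ to $(\Zrm 1)$, which yields $w(\eulerq)w(\eulerq')\equiv \Sig\,w(\d)\pmod{\rG^\gauche}$, and solving for $w(\eulerq')$ whenever $w(\eulerq)$ is a unit in $R/\rG^\gauche$. For $w\in\G_\chi$, where $w(\eulerq)\equiv 0$, I would instead plug $w(\d)\equiv 0$ into $(\Zrm 6)$; its right-hand side collapses to $B^2\Sig^2+\Pi(4A^2-4B^2)=B^2(\Sig^2-4\Pi)+4A^2\Pi$, recovering the announced expression for $w(\eulerq')^2$. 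The $w(\eulerq'')$ statement would be obtained by the entirely symmetric procedure using $(\Zrm 2),(\Zrm 4),(\Zrm 7)$ in place of $(\Zrm 1),(\Zrm 3),(\Zrm 6)$.

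The main obstacle is the sign bookkeeping. The ``square'' relations $(\Zrm 5),(\Zrm 6),(\Zrm 7)$ only constrain the central elements up to $\pm$, and committing to the correct sign in each of the eight cases requires a secondary cross-check against one of the remaining relations; the verification that the cross-check forces a unique sign rests in turn on the non-vanishing in $R/\rG^\gauche$ of factors such as $B$, $B\pm A$, and $w(\eulerq)$, which is ensured by the explicit description of $R/\rG^\gauche$ provided by Corollary~\ref{coro:r-rgauche-b2}. A careful case-by-case analysis over the eight elements of $W$ then completes the proof.
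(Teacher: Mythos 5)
Your overall strategy --- apply the elements of $W\subset G$ to the central relations $(\Zrm 1)$--$(\Zrm 9)$, reduce modulo $\rG^\gauche$, and exploit the known congruences for $w(\eulerq)$ together with the fact that $R/\rG^\gauche$ is a domain --- is the right one, and it is essentially what the proof must do. But your argument is built on a false identification of the ideal $\pG^\gauche$. Since $\pG^\gauche$ is the defining ideal of $\CCB\times V/W\times\{0\}$, it is generated by $\s$ and $\pi$ (the generators of $\kb[V^*]^W_+$), \emph{not} by $\Sig$ and $\Pi$: the quotient $P/\pG^\gauche$ is $\kb[A,B,\Sig,\Pi]$, and $\Sig,\Pi$ remain nonzero (indeed algebraically independent over $\kb[A,B]$) in $R/\rG^\gauche\simeq Q/\qG_\chi^\gauche=(P/\pG^\gauche)[\eulerq'_\chi]$. (A line in the text does read ``$\pG^\gauche=\langle\Sig,\Pi\rangle_P$'', but this is a slip; the presentation $(\Qrm 1^\gauche)$--$(\Qrm 9^\gauche)$ of $Q/\pG^\gauche Q$, in which $\Sig,\Pi$ survive while $\s,\pi$ have been set to $0$, shows which generators actually die.) Consequently your claim that $(\Zrm 6)$ forces $w(\eulerq')^2\equiv 0$, hence $w(\eulerq')\equiv 0$, for every $w$ is false, and it directly contradicts the very statement you are proving, which asserts $\eulerq'\equiv -B\Sig\not\equiv 0$. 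The error propagates: the sign resolution for $w(\d)$ via $(\Zrm 8)$ is justified by this false vanishing; the step deducing $w(\eulerq')$ from $(\Zrm 1)$ would read $w(\eulerq)\,w(\eulerq')\equiv 0$ if $\Sig$ really lay in $\rG^\gauche$; and your ``collapse'' of the right-hand side of $(\Zrm 6)$ to $B^2\Sig^2+\Pi(4A^2-4B^2)$ is incompatible with having $\Sig,\Pi\in\rG^\gauche$ two sentences earlier.

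The final sentence (``the $w(\eulerq'')$ statement would be obtained by the entirely symmetric procedure'') also cannot stand: there is no symmetry between $\eulerq'$ and $\eulerq''$ modulo $\pG^\gauche$, precisely because $\pG^\gauche$ kills $\s,\pi$ but not $\Sig,\Pi$. The correct reduction $(\Qrm 7^\gauche)$ reads $(\eulerq'')^2=0$, which by domain-ness gives $g(\eulerq'')\equiv 0$ for all $g\in G$ in one line, whereas $(\Qrm 6^\gauche)$ retains the nonzero term $B^2\Sig^2+\Pi(4\d-\eulerq^2+4A^2-4B^2)$. Once the ideal is corrected, your outline can be repaired and in fact simplifies: applying $g$ to $(\Qrm 9^\gauche)$ and using $g(\eulerq'')\equiv 0$ gives $4g(\d)\equiv g(\eulerq)^2-4A^2+4B^2$ directly whenever $g(\eulerq)\not\equiv 0$, with no sign ambiguity to resolve (your detour through $(\Zrm 5)$ and $(\Zrm 8)$ is then unnecessary, though $(\Qrm 8^\gauche)$ with $g(\eulerq'')\equiv 0$ would serve equally well); $(\Qrm 5^\gauche)$ gives $g(\d)\equiv 0$ when $g(\eulerq)\equiv 0$; $(\Qrm 1^\gauche)$ then determines $g(\eulerq')$ whenever $g(\eulerq)$ is a nonzero element of the domain; and $(\Qrm 6^\gauche)$ with $g(\eulerq)\equiv g(\d)\equiv 0$ yields $g(\eulerq')^2\equiv B^2(\Sig^2-4\Pi)+4A^2\Pi$ in the remaining cases.
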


\begin{proof}
The equalities $(Q1^\gauche)$,\dots, $(Q9^\gauche)$ are of course also satisfied 
in the algebra $R/\pG^\gauche R$. Since $\pG^\gauche R$ is a $G$-stable ideal of $R$, 
we can apply any element of $G$ to the equalities $(Q1^\gauche)$,\dots, $(Q9^\gauche)$, 
and then reduce modulo $\rG^\gauche$. 
We deduce for instance from $(Q7^\gauche)$ that $g(\eulerq'') \equiv 0 \mod \rG^\gauche$ for all $g \in G$, 
as desired. 

Whenever $g(\eulerq) \not\equiv 0 \mod \rG^\gauche$, we deduce from $(Q9^\gauche)$ that 
$4g(\d)\equiv g(\eulerq)^2 - 4A^2+4B^2 \mod \rG^\gauche$, 
which allows to show that the congruences of $\d$, $s(\d)$, $tst(\d)$ and $w_0(\d)$ modulo $\rG^\gauche$ 
are the ones expected. Moreover, if $g \in W\setminus\{1,s,tst,w_0\}$, then 
$g(\eulerq) \equiv 0 \mod \rG^\gauche$ and we deduce from~$(Q5^\gauche)$ that 
$g(\d) \equiv 0 \mod \rG^\gauche$.

Finally, whenever $g(\eulerq) \not\equiv 0 \mod \rG^\gauche$, the congruence of $g(\eulerq')$ 
modulo $\rG^\gauche$ is easily determined thanks to $(Q1^\gauche)$, and is as expected. 
However, whenever $g(\eulerq) \equiv 0 \mod \rG^\gauche$, then $g(\d) \equiv 0 \mod \rG^\gauche$ 
by the previous observations, and it follows from~$(\Qrm 6^\gauche)$ that 
$g(\eulerq')^2 \equiv B^2(\Sig^2-4\Pi) + 4A^2 \Pi \mod \rG^\gauche$. 
\end{proof}

\bigskip

\begin{lem}\label{lem:red-pol-b2}
$F_{\eulerq'}^\gauche(\tb) = (\tb-B\Sig)^2(\tb+B\Sig)^2(\tb^2 - B^2\Sig^2 - 4A^2 \Pi  + 4B^2\Pi)^2$.
\end{lem}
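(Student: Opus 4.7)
The plan is to establish that the minimal polynomial $F_{\eulerq'}(\tb)$ has degree $|W|=8$, so that $F_{\eulerq'}(\tb)=\prod_{w\in W}(\tb-w(\eulerq'))$, and then to reduce this product modulo $\pG^\gauche$ using Corollary~\ref{coro:congruence-gauche-b2}. Letting $N$ denote the stabilizer of $\eulerq'$ in $G$ (which contains $H$ since $\eulerq'\in Q$), we have the identity $F_{\eulerq'}(\tb)^{[N:H]}=\prod_{w\in W}(\tb-w(\eulerq'))$, so the degree claim is equivalent to $N=H$. To show this, I would check that the eight conjugates $\{w(\eulerq')\}_{w\in W}$ are pairwise distinct in $R$ by reducing them modulo $\rG_0$: by the choice of $\rG_0$ and $\iso$ in \S\ref{subsection:specialization galois 0}, $\eulerq'$ reduces to $\euler_0'=xXY^2+yX^2Y\in\kb[V\times V^*]^{\Delta\Zrm(W)}$, and each $w\in W\subset G$ acts via $(w,1)\in W\times W$, i.e. via $w$ on $x,y$ and trivially on $X,Y$. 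A direct computation gives the eight images as the elements $\pm xXY^2\pm yX^2Y$ and $\pm xX^2Y\pm yXY^2$, which are visibly pairwise distinct; hence $N=H$ and $F_{\eulerq'}$ has degree $8$.

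Since $\pG^\gauche=\rG^\gauche\cap P$, the reduction $F_{\eulerq'}^\gauche\in(P/\pG^\gauche)[\tb]$ coincides with its image in $(R/\rG^\gauche)[\tb]$, which by Corollary~\ref{coro:congruence-gauche-b2} factors as $(\tb+B\Sig)^2(\tb-B\Sig)^2\cdot\prod_{w\in\{t,st,ts,sts\}}(\tb-\alpha_w)$, where $\alpha_w:=w(\eulerq')\bmod\rG^\gauche\in R/\rG^\gauche$ satisfies $\alpha_w^2=B^2\Sig^2+4(A^2-B^2)\Pi$.

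The hard part is pinning down the signs of the four $\alpha_w$ in the remaining quartic factor. The key observation is that the quartic $\prod_{w\in\{t,st,ts,sts\}}(\tb-\alpha_w)$ is itself a factor of $F_{\eulerq'}^\gauche\in(P/\pG^\gauche)[\tb]$, so its coefficients lie in $P/\pG^\gauche$. By Corollary~\ref{coro:r-rgauche-b2}, Lemma~\ref{lem:qgauche-b2}(c) and~(\ref{eq:dgauche-igauche-b2}), $R/\rG^\gauche$ is a quadratic extension of $P/\pG^\gauche$ whose nontrivial Galois automorphism negates $\sqrt{B^2\Sig^2+4(A^2-B^2)\Pi}$. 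Galois invariance of the multiset $\{\alpha_t,\alpha_{st},\alpha_{ts},\alpha_{sts}\}$ under this automorphism then forces exactly two of the $\alpha_w$ to equal a fixed square root $\alpha$ of $B^2\Sig^2+4(A^2-B^2)\Pi$ and the other two to equal $-\alpha$, so the quartic factor equals $(\tb-\alpha)^2(\tb+\alpha)^2=(\tb^2-B^2\Sig^2-4A^2\Pi+4B^2\Pi)^2$. Multiplying by $(\tb-B\Sig)^2(\tb+B\Sig)^2$ yields the claimed formula.
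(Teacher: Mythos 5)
Your proof is correct, and its skeleton is the one the paper uses: establish that $F_{\eulerq'}$ has degree $|W|=8$ by checking that $\euler_0'$ has eight distinct conjugates (your explicit list $\pm xXY^2\pm yX^2Y$, $\pm yXY^2\pm xX^2Y$ is exactly the orbit the paper invokes), deduce $F_{\eulerq'}(\tb)=\prod_{w\in W}(\tb-w(\eulerq'))$, and reduce modulo $\rG^\gauche$ using Corollary~\ref{coro:congruence-gauche-b2}. Where you genuinely diverge is on the one point the paper's proof leaves implicit, namely why the four roots $t(\eulerq'),st(\eulerq'),ts(\eulerq'),sts(\eulerq')$, which are only known up to sign, split evenly as two copies of $\alpha$ and two of $-\alpha$. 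The paper's intended argument (made explicit only in the proof of Proposition~\ref{prop:b2-gauche-enfin}) is that the central element $\t_0=w_0$ acts by $-1$ on $\eulerq'$ and commutes with $G$, so $sts(\eulerq')=-t(\eulerq')$ and $ts(\eulerq')=-st(\eulerq')$, pairing the roots directly. You instead observe that the residual quartic is the quotient of $F_{\eulerq'}^\gauche$ by the monic polynomial $(\tb^2-B^2\Sig^2)^2$, hence lies in $(P/\pG^\gauche)[\tb]$ and must be stable under the order-$2$ Galois group of $k_R(\rG^\gauche)/k_P(\pG^\gauche)$ (Corollary~\ref{coro:r-rgauche-b2}, Lemma~\ref{lem:qgauche-b2}(c) and~(\ref{eq:dgauche-igauche-b2})), which exchanges the two square roots of $B^2\Sig^2+4(A^2-B^2)\Pi$; this forces multiplicity two for each sign. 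Both routes are valid. The $w_0$-pairing is more elementary and moreover delivers the sharper relation $sts(\eulerq')=-t(\eulerq')$ that the paper needs immediately afterwards, whereas your Galois-descent argument is self-contained at the level of the residue extension and does not require identifying which group elements pair with which.
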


\begin{proof}
First of all, by applying the elements of $W \times W$ to $\euler_0' \in \kb[V \times V^*]$, 
we note that $\euler_0'$ has $8$ conjugates, and so the minimal polynomial 
of $\euler_0'$ over $P_\bullet$ has degree $8$. Consequently, $F_{\eulerq'}(\tb)$ has degree $8$: 
it is in fact the characteristic polynomial of the multiplication by 
$\euler'$ in the free $P$-module $Z$. Hence,
$$F_{\eulerq'}(\tb)=\prod_{w \in W} (\tb - w (\eulerq'))$$
and the result then follows from Corollary~\ref{coro:congruence-gauche-b2}.
\end{proof}

\bigskip

As a conclusion, if $g \in \{t,st,ts,sts\}$ and if $q \in \{\eulerq,\eulerq'',\d\}$ then
\equat\label{eq:congru-b2}
g(q) \equiv 0 \mod \rG^\gauche
\endequat
and
\equat\label{eq:congru-carre-b2}
g(\eulerq')^2 \equiv B^2(\Sig^2-4\Pi) + 4A^2 \Pi \mod \rG^\gauche.
\endequat
% Pour d\'eterminer comment $\{t,ts,st,sts\}$ se d\'ecompose en cellules de Caloger-Moser \`a gauche g\'en\'erique, 
% il nous reste \`a d\'eterminer 
The next Proposition makes the congruence~(\ref{eq:congru-carre-b2}) more precise: 
it is the most subtle point of this Chapter.

\bigskip

\begin{prop}\label{prop:b2-gauche-enfin}
The following congruences hold in $R$:
$$
\begin{cases}
t(\eulerq') \equiv st(\eulerq') \mod \rG^\gauche,\\
ts(\eulerq') \equiv sts(\eulerq') \mod \rG^\gauche,\\
t(\eulerq') \not\equiv ts(\eulerq') \mod \rG^\gauche.\\
\end{cases}
$$
\end{prop}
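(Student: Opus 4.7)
The plan is to reduce the three congruences to identifying the inertia group $I^\gauche$ precisely as one of the three index-two subgroups of $\Iba = W_2'$. Combining Corollary \ref{coro:congruence-gauche-b2} (computed element-by-element from the relations $(\Zrm 6)$--$(\Zrm 9)$) with Lemma \ref{lem:red-pol-b2}, each of the four elements $t(\eulerq'), st(\eulerq'), ts(\eulerq'), sts(\eulerq')$ satisfies $g(\eulerq')^2 \equiv \beta := B^2\Sig^2 + 4\Pi(A^2-B^2) \mod \rG^\gauche$. Since $R/\rG^\gauche$ is an integrally closed domain by Corollary \ref{coro:r-rgauche-b2} and $\beta \notin \pG^\gauche$, each of these reduces modulo $\rG^\gauche$ to $\pm\sqrt{\beta}$, with each sign occurring exactly twice by the factorization $(\tb^2 - B^2\Sig^2)^2(\tb^2-\beta)^2$ of $F^\gauche_{\eulerq'}(\tb)$ after subtracting the four contributions from $\eulerq, s(\eulerq), tst(\eulerq), w_0(\eulerq)$.

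Next I would enumerate the three non-identity elements of $\Iba = W_2'$. Since $\Iba$ fixes $\{1, s, tst, w_0\}$ pointwise and centrality of $w_0$ in $G$ (Proposition \ref{tau 0 in G}) forces its action on $W$ to commute with the involution $v \mapsto w_0 v$, each non-identity element of $W_2'$ is uniquely determined by its image of $t$, which must lie in $\{st, ts, sts\}$; denote these involutions $\sigma_{st}, \sigma_{ts}, \sigma_{sts}$, paired respectively as $(t \leftrightarrow st, ts \leftrightarrow sts)$, $(t \leftrightarrow ts, st \leftrightarrow sts)$, $(t \leftrightarrow sts, st \leftrightarrow ts)$. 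Using $H$-invariance of $\eulerq' \in Q$ together with the observation that $\sigma_u \cdot t$ lies in the coset $uH$ (because $\sigma_u(t) = u$ in the identification $W \simeq G/H$), one obtains the identity $\sigma_u(t(\eulerq')) = u(\eulerq')$ in $R$. Consequently, the three claimed congruences together are equivalent to the single assertion $I^\gauche = \langle \sigma_{st}\rangle$.

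To isolate $\langle \sigma_{st}\rangle$ among the three candidates, I would descend to the finer prime ideal $\langle \pi\rangle_P \subset \pG^\gauche$ (which is prime because $P/\langle \pi\rangle_P \simeq \kb[\CCB][\Sig,\Pi][\sigma]$ is a domain). By going-down, there is a prime $\rG^\pi \subset \rG^\gauche$ of $R$ lying above $\langle \pi\rangle$, and Remark \ref{semicontinuite} gives $I^\pi \subseteq I^\gauche$. Modulo $\langle \pi\rangle$ the relation $(\Zrm 7)$ becomes $\eulerq''^2 \equiv B^2\sigma^2$, so for each $g \in \{t, st, ts, sts\}$ the element $g(\eulerq'')$ acquires a nontrivial sign $\epsilon_g$ modulo $\rG^\pi$, and similarly $g(\d)$ acquires a sign via $(\Zrm 5)$. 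Coupling these with $(\Zrm 2), (\Zrm 3), (\Zrm 8)$ applied to each $g$ produces a constrained system of sign-patterns among $g(\eulerq'), g(\eulerq''), g(\d)$ that is rich enough to force $\epsilon_t = \epsilon_{st}$ and $\epsilon_t \neq \epsilon_{ts}$; reducing back modulo $\rG^\gauche$ yields the Proposition. The main obstacle is precisely this last step: the relations $(\Zrm 1)$--$(\Zrm 9)$ determine only squares and therefore fix each sign only up to a global flip, which cannot by itself separate the three involutions $\sigma_u$. Overcoming this obstacle requires additional input beyond the intrinsic structure of $Z$, most likely via an element of $R$ produced explicitly from the PBW generators of $\Hb$ whose $W$-transformation law witnesses a nonzero residue modulo $\rG^\pi$ for the action of $\sigma_{ts}$ and $\sigma_{sts}$ while vanishing for $\sigma_{st}$.
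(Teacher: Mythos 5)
Your first two paragraphs are sound and essentially reproduce the paper's setup: the factorization $(\tb^2-\beta)^2$ of the relevant part of $F_{\eulerq'}^\gauche(\tb)$ shows that exactly one $g_0\in\{st,ts,sts\}$ satisfies $g_0(\eulerq')\equiv t(\eulerq')\bmod\rG^\gauche$, and $g_0\neq sts$ because $sts(\eulerq')=w_0t(\eulerq')=-t(\eulerq')$ while $t(\eulerq')\not\equiv 0$. But the decisive step --- separating $st$ from $ts$ --- is exactly where your argument stops, and you concede as much. This is a genuine gap, not a presentational one: as you correctly observe, the relations $(\Zrm 1)$--$(\Zrm 9)$ and their reductions modulo $\langle\pi\rangle$ only constrain squares and products, so any sign pattern they admit is also admitted after applying $\sigma_{ts}$ or $\sigma_{sts}$; no amount of sign-chasing inside $Z$ (or $Q$) can distinguish the three involutions of $W_2'$. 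The "additional input" you gesture at must actually be produced, and without it the third congruence $t(\eulerq')\not\equiv ts(\eulerq')$ is unproved.

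What the paper does to close the gap is concrete and twofold. First, over $\langle\pi\rangle$ the minimal polynomial of $\eulerq'$ factors as $F_{\eulerq'}^\pi(\tb)=F^\pi(\tb)\cdot F^\pi(-\tb)$ with $F^\pi$ an explicit, \emph{non-palindromic} quartic (nonzero cubic term $2B\Sig\,\tb^3$, constant term $\s^2\Pi^3$, separable and coprime to $F^\pi(-\tb)$); after conjugating $\rG^\pi$ by an element of $W_2'$ one knows that $\eulerq'$, $s(\eulerq')$, $t(\eulerq')$ and exactly one $g_1(\eulerq')$ with $g_1\in\{st,ts\}$ are the roots of $F^\pi$ modulo $\rG^\pi$. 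Second --- and this is the symmetry-breaking input --- the product of these four roots must equal $F^\pi(0)=\s^2\Pi^3$, and this is checked against the \emph{explicit model} $R/\rG_0\subset\kb[V\times V^*]^{\Delta Z(W)}$, where one computes directly that $\eulerq_0'\cdot s(\eulerq_0')\cdot t(\eulerq_0')\cdot ts(\eulerq_0')\equiv\s^2\Pi^3\bmod\pi$ (using the structure $G=W_4'$ to transport the identity from $\rG_0^\pi$ to $\rG_0$ up to signs with product $1$). This forces $g_1=ts$, whence $(\tb-t(\eulerq'))(\tb-ts(\eulerq'))\equiv\tb^2-\beta\bmod\rG^\gauche$, i.e.\ $t(\eulerq')\equiv-ts(\eulerq')$, and therefore $g_0=st$. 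If you want to complete your proof along your own lines, you must supply an analogous computation anchored in the $c=0$ fiber (or some other explicit realization of $R$); the abstract inertia-group reduction of your second paragraph, while correct, carries no information about which order-two subgroup of $W_2'$ is $I^\gauche$.
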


\begin{proof}
By Corollary~\ref{coro:congruence-gauche-b2}, 
$$(\tb-t(\eulerq'))(\tb-st(\eulerq'))(\tb-ts(\eulerq'))(\tb-sts(\eulerq'))
\equiv (\tb^2 - B^2\Sig^2 - 4A^2 \Pi  + 4B^2\Pi)^2 \mod \rG^\gauche R[\tb].$$
This shows that there exists a unique $g_0 \in \{st,ts,sts\}$ such that 
$g_0(\eulerq') \equiv t(\eulerq') \mod \rG^\gauche$. 
Since $t(\eulerq') \not\equiv 0 \mod \rG^\gauche$ and
$sts(\eulerq')=tw_0(\eulerq')=-t(\eulerq')$, 
the element $g_0$ is not equal to $sts$. So
$$g_0 \in \{st,ts\}.$$
We only need to show that 
$$g_0=st.\leqno{(*)}$$

Let $F_{\eulerq'}^\pi(\tb)$ be the reduction modulo $\pi P$ of the minimal polynomial of $\eulerq'$. 
Set 
\eqna
F^\pi(\tb)&=&\tb^4 - 2 B \Sig  \tb^3 + (-4 A^2 \Pi   + 4 B^2 \Pi   -
        \s  \Sig  \Pi  ) \tb^2 \\ && + (8 A^2 B \Sig  \Pi   +
        2 B^3 \Sig^3 - 8 B^3 \Sig  \Pi   + 2 B \s  \Sig^2 \Pi  
        - 4 B \s  \Pi^2) \tb \\ && - 4 A^2 B^2 \Sig^2 \Pi   - B^4 \Sig^4
        + 4 B^4 \Sig^2 \Pi   - B^2 \s  \Sig^3 \Pi   +
        4 B^2 \s  \Sig  \Pi^2 + \s ^2 \Pi^3.
\endeqna
%By using the {\tt MAGMA} software~\cite{magma}, we get:
Then
$$F_{\eulerq'}^\pi(\tb)=F^\pi(\tb) \cdot F^\pi(-\tb).$$
Let $\rG^\pi$ be a prime ideal of $R$ lying over $\pi P$ and contained in  $\rG^\gauche$. 
Let $\rG^\pi_0$ be a prime ideal of $R$ lying over $\pi P + \pG_0$ and contained in $\rG^\pi$. 
One checks easily that $F^\pi(\tb)$ is prime to $F^\pi(-\tb)$ and is separable, 
so $F^\pi_{\eulerq'}(\tb)$ admits $8$ different roots in $R/\rG^\pi$, which are the classes of the 
$g(\eulerq')$'s, where $g$ runs over $W$. Let $F^\gauche(\tb)$ denote the reduction 
modulo $\pG^\gauche$ of $F^\pi(\tb)$. 
Then
$$F^\gauche(\tb) = (\tb - B\Sig)^2(\tb^2 - B^2\Sig^2 - 4A^2 \Pi  + 4B^2\Pi).$$
So it follows from Corollary~\ref{coro:congruence-gauche-b2} that $\eulerq'$ and $s(\eulerq')$ 
are roots of $F^\pi(\tb)$ in $R/\rG^\pi$. On the other hand, since $W_2'$ acts transitively 
$\{t,st,ts,sts\}$ and stabilizes $\rG^\gauche$, we may, by replacing $\rG^\pi$ by $g(\rG^\pi)$ 
for some $g \in W_2'$ if necessary, 
assume that $t(\eulerq')$ is a root of $F^\pi(\tb)$ modulo $\rG^\pi$. The other roots modulo $\rG^\pi$ 
is then $st(\eulerq')$ or $ts(\eulerq')$ (this cannot be $sts(\eulerq')=-t(\eulerq')$, as this one 
is a root of $F^\pi(-\tb)$ modulo $\rG^\pi$). So let $g_1$ denote the unique element of $\{st,ts\}$ 
such that $g_1(\eulerq')$ is a root of $F^\pi(\tb)$ modulo $\rG^\pi$. By reduction modulo $\rG^\pi_0$, 
we get  
$$\eulerq'\cdot s(\eulerq') \cdot t(\eulerq') 
\cdot g_1(\eulerq') \equiv F^\pi(0) \equiv \s^2\Pi^3 \mod \rG^\pi_0.$$
Moreover, there exists $g \in G$ such that $\rG_0 \subset g(\rG^\pi_0)$. But, since $G=W_4'$, there exists 
signs $\eta_1$, $\eta_2$, $\eta_3$, $\eta_4$ such that 
$\{g(1),g(s),g(t),g(g_1)\}=\{\eta_1, \eta_2 s, \eta_3 t,\eta_4 g_1\}$ 
and $\eta_1\eta_2\eta_3\eta_4=1$. Consequently, 
$$\eulerq' \cdot s(\eulerq') \cdot t(\eulerq') \cdot g_1(\eulerq') \equiv \s^2\Pi^3 \mod \rG_0.$$
The next computation can be performed directly inside $\kb[V \times V^*]^{\D \Zrm(W)} \supset R/\rG_0$: 
\eqna
\eulerq_0' \cdot s(\eulerq_0') \cdot t(\eulerq_0') \cdot ts(\eulerq_0') &=& 
(xY+yX)(xX+yY)(-xY+yX)(-xX+yY)X^4Y^4 \\
&=& (y^2Y^2-x^2X^2)(y^2X^2-x^2Y^2) \Pi^2 \\
&=& ((x^4+y^4) \Pi-\pi(X^4 + Y^4))\Pi^2 \\
&\equiv& \s^2\Pi^3 \mod \pi \kb[V \times V^*].
\endeqna
So $g_1=ts$.

We have therefore proven that 
$$(\tb-\eulerq')(\tb-s(\eulerq'))(\tb-t(\eulerq'))(\tb-ts(\eulerq'))
\equiv F^\pi(\tb) \mod \rG^\pi R[\tb].$$
By reduction modulo $\rG^\gauche$, we get 
$$(\tb-t(\eulerq'))(\tb-ts(\eulerq'))\equiv \tb^2 - B^2\Sig^2 - 4A^2 \Pi  + 4B^2\Pi \mod \rG^\gauche R[\tb].$$
So $t(\eulerq') \equiv -ts(\eulerq') \mod \rG^\gauche$, which shows that $g_0\neq ts$. So $g_0=st$.
\end{proof}

\bigskip

\begin{coro}\label{coro:b2-gauche-enfin}
For the choice of $\rG^\gauche$ made in this section, 
the generic Calogero-Moser left cells are $\{1\}$, $\{s\}$, $\{tst\}$, $\{w_0\}$, 
$\{ts,sts\}$ and $\{t,st\}$.

Let $g_\gauche$ denote the involution of $G$ which leaves $1$, $s$, $tst$ and $w_0$ fixed and 
such that $g_\gauche(t)=st$ and $g_\gauche(ts)=sts$. Then $I^\gauche = \langle g_\gauche\rangle$. 
\end{coro}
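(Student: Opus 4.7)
The plan is to combine the cellular data already accumulated into the promised cell decomposition, reducing everything to checking congruences on a small set of generators of $Q$ over $P$.

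First I would recall (see \S\ref{section:definition cellules} and~(\ref{w sim})) that two elements $w,w'\in W$ lie in the same generic Calogero-Moser left cell if and only if $w(q)\equiv w'(q)\pmod{\rG^\gauche}$ for every $q\in Q$. By Theorem~\ref{theo:Q}, $Q$ is generated over $P$ by $\s,\pi,\Sig,\Pi,\eulerq,\eulerq',\eulerq'',\d$ (together with $A,B$), and the subalgebra $P$ is fixed elementwise by $W\subset G$. Thus it suffices to test the four elements $\eulerq,\eulerq',\eulerq'',\d$. Corollary~\ref{coro:cellules-gauches-b2} already exhibits $\{1\},\{s\},\{tst\},\{w_0\}$ as generic Calogero-Moser left cells (they are singletons, being individual generic two-sided cells), so it remains to determine how the four elements $t,st,ts,sts$ constituting the two-sided cell $\G_\chi$ split under the action of the inertia group $I^\gauche$.

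Second, I would run through the four generators: Corollary~\ref{coro:euler-modulo-gauche-b2} shows that each of $t,st,ts,sts$ sends $\eulerq$ into $\rG^\gauche$; Corollary~\ref{coro:congruence-gauche-b2} gives the same statement for $\d$ (via $g(\d)\equiv 0$) and for $\eulerq''$ (which is killed modulo $\rG^\gauche$ by every $g\in G$). Therefore the only generator capable of distinguishing $t,st,ts,sts$ modulo $\rG^\gauche$ is $\eulerq'$. Proposition~\ref{prop:b2-gauche-enfin} then delivers exactly what is needed: $t(\eulerq')\equiv st(\eulerq')\pmod{\rG^\gauche}$ and $ts(\eulerq')\equiv sts(\eulerq')\pmod{\rG^\gauche}$, while $t(\eulerq')\not\equiv ts(\eulerq')\pmod{\rG^\gauche}$. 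Combining these with the previous paragraph, $\{t,st\}$ and $\{ts,sts\}$ are the two remaining generic Calogero-Moser left cells, and they are distinct.

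Finally, to identify $I^\gauche$, I would use that $|I^\gauche|=2$ by~(\ref{eq:dgauche-igauche-b2}), so $I^\gauche$ is generated by a unique involution in $G$ whose orbits on $W$ are precisely the six left cells $\{1\},\{s\},\{tst\},\{w_0\},\{t,st\},\{ts,sts\}$. The permutation $g_\gauche$ defined in the statement has exactly these orbits, so once $g_\gauche$ is verified to lie in $G=W_4'$ we may conclude $I^\gauche=\langle g_\gauche\rangle$. For this verification I would check that $g_\gauche$ commutes with the involution $w\mapsto w_0w=-w$ (using the central pairings $\{1,w_0\},\{s,tst\},\{t,sts\},\{st,ts\}$), which puts $g_\gauche$ in $W_4$, and that $g_\gauche$ is a product of two disjoint transpositions, hence has sign $+1$ in $\SG_W$, placing it in $W_4'$. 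No step is truly an obstacle: the computational content was done in Proposition~\ref{prop:b2-gauche-enfin}, and the corollary is essentially a bookkeeping of the congruences already established.
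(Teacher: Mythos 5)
Your argument is correct and is exactly the bookkeeping the paper intends here: the corollary is left without an explicit proof because it follows from testing the algebra generators $\eulerq,\eulerq',\eulerq'',\d$ of $Q$ over $P$ against Corollaries~\ref{coro:euler-modulo-gauche-b2} and~\ref{coro:congruence-gauche-b2} and Proposition~\ref{prop:b2-gauche-enfin}, combined with Corollary~\ref{coro:cellules-gauches-b2} and $|I^\gauche|=2$ from~(\ref{eq:dgauche-igauche-b2}). Your closing verification that $g_\gauche$ lies in $W_4'$ is redundant (though harmless): since $I^\gauche\subset G$ has order $2$ and its orbits on $W$ are by definition the left cells, its nontrivial element is already forced to be the permutation $g_\gauche$, so membership in $G$ comes for free.
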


\bigskip

\begin{rema}\label{rema:w-w}
We understand better here the convention chosen for the action of $W \times W$ on $\kb(V \times V^*)$ 
(see~\S\ref{subsection:specialization galois 0}). Indeed, if we had chosen the other action 
(the one described in Remark~\ref{rema:action-dif}), the generic Calogero-Moser {\it left} cells 
would have coincided with the Kazhdan-Lusztig {\it right} cells.\finl
\end{rema}

\bigskip

\subsection{Proof of Theorem~\ref{theo:conjectures-b2}} 
Keep here the notation of Theorem~\ref{theo:conjectures-b2} ($a=-c_s$, $b=-c_t$). 
Let us fix for the moment a prime ideal $\rG_c^\gauche$ of $R$ containing $\rG^\gauche$ and $\pG_c R$. 
Since $R/\rGba \simeq P/\pGba$, we deduce that $\rGba_c=\rGba + \pGba_c R$ is the unique 
prime ideal of $R$ lying over $\rGba$ and containing $\rG_c^\gauche$. Let $D_c^\gauche$ (respectively 
$I_c^\gauche$) be the decomposition (respectively inertia) group of $\rG_c^\gauche$ 
and $\Dba_c$ (respectively $\Iba_c$) be the decomposition (respectively inertia) group 
of $\rGba_c$.

\medskip

It follows from Corollaries~\ref{coro:euler-modulo-gauche-b2} and~\ref{coro:congruence-gauche-b2} 
and from Proposition~\ref{prop:b2-gauche-enfin} that
$$
\begin{cases}
\eulerq \equiv 2(b+a) \mod \rG_c^\gauche,\\
s(\eulerq) \equiv 2(b-a) \mod \rG_c^\gauche,\\
tst(\eulerq) \equiv -2(b-a) \mod \rG_c^\gauche,\\
w_0(\eulerq) \equiv -2(b+a) \mod \rG_c^\gauche,\\
t(\eulerq)\equiv st(\eulerq) \equiv ts(\eulerq) \equiv sts(\eulerq) \equiv 0 \mod \rG_c^\gauche.
\end{cases}\leqno{(\clubsuit)}
$$
$$
\begin{cases}
\d \equiv 2b(b+a) \mod \rG_c^\gauche,\\
s(\d) \equiv 2b(b-a) \mod \rG_c^\gauche,\\
tst(\d) \equiv 2b(b-a) \mod \rG_c^\gauche,\\
w_0(\d) \equiv 2b(b+a) \mod \rG_c^\gauche,\\
t(\d)\equiv st(\d) \equiv ts(\d) \equiv sts(\d) \equiv 0 \mod \rG_c^\gauche
\end{cases}\leqno{(\diamondsuit)}
$$
$$
\begin{cases}
\eulerq' \equiv b\Sig \mod \rG_c^\gauche,\\
s(\eulerq') \equiv b\Sig \mod \rG_c^\gauche,\\
tst(\eulerq') \equiv -b\Sig \mod \rG_c^\gauche,\\
w_0(\eulerq') \equiv -b\Sig \mod \rG_c^\gauche,\\
t(\eulerq') \equiv st(\eulerq') \equiv -ts(\eulerq') \equiv -sts(\eulerq') \mod \rG_c^\gauche,\\
t(\eulerq')^2\equiv b^2 \Sig^2+4(a^2-b^2) \Pi \mod \rG_c^\gauche.
\end{cases}\leqno{(\heartsuit)}
$$
and
$$g(\eulerq'') \equiv 0 \mod \rG_c^\gauche\leqno{(\spadesuit)}$$
for all $g \in G$. Recall that we assume that $ab \neq 0$ and that two elements $g$ and $g'$ of $W$ 
are in the same Calogero-Moser left (respectively two-sided) $c$-cell
if and only if $g(q) \equiv g'(q) \mod \rG_c^\gauche$ (respectively $\mod \rGba_c$) 
for all $q \in \{\eulerq,\eulerq,\eulerq'',\d\}$ 
(since $Q=P[\eulerq,\eulerq',\eulerq'',\d]$).

\bigskip

\subsubsection*{\bfit The case ${\boldsymbol{a^2 \neq b^2}}$} 
Assume here that $a^2 \neq b^2$. It follows from the congruences $(\clubsuit)$, $(\diamondsuit)$, 
$(\heartsuit)$ and $(\spadesuit)$ that the Calogero-Moser left $c$-cells are 
$\{1\}$, $\{s\}$, $\{tst\}$, $\{w_0\}$, $\G_\chi^+=\{t,st\}$ and $\G_\chi^-=\{ts,sts\}$ and that the 
Calogero-Moser two-sided $c$-cells are $\{1\}$, $\{s\}$, $\{tst\}$, $\{w_0\}$ and $\G_\chi$.

The results on Calogero-Moser $c$-families Calogero-Moser $c$-cellular characters given by 
Table~\ref{table:conjectures-b2} then follow from Corollary~\ref{coro:famille-semicontinu}, 
from Proposition~\ref{prop:cellulaire-semicontinu}, from~(\ref{eq:familles-b2}) and from 
Corollary~\ref{coro:cellules-gauches-b2}.

Let us now determine $D_c^\gauche$ and $I_c^\gauche$. Note that $I^\gauche \subset I_c^\gauche$ and that, 
according to the description of Calogero-Moser left $c$-cells, that is of $I_c^\gauche$-orbits, 
this forces the equality. On the other hand, since $\rGba_c=\rGba + \pGba_c R$, 
we have $D^\gauche \subset D_c^\gauche$. 
Since the $D_c^\gauche$-orbits are contained in the Calogero-Moser two-sided $c$-cells,
the description of these last ones forces again the equality. We show similarly that 
$\Dba_c=\Dba=W_2'$ and that $\Iba_c=\Iba=W_2'$.

\bigskip

\subsubsection*{\bfit The case where ${\boldsymbol{a=b}}$} 
In this case, the last congruence of $(\heartsuit)$ becomes  
$$t(\eulerq')^2 \equiv b^2 \Sig^2 \mod \rG_c^\gauche.$$
So $t(\eulerq') \equiv b\Sig \mod \rG_c^\gauche$ or $t(\eulerq') \equiv -b\Sig \mod \rG_c^\gauche$. 
By replacing $\rG_c^\gauche$ by $g(\rG_c^\gauche)$, where $g \in W_2'=D^\gauche$ 
exchanges $t$ and $sts$, we can make the following choice:

\bigskip

\boitegrise{\noindent{\bf Choice of ${\boldsymbol{\rG_c^\gauche}}$.} 
{\it We choose the prime ideal $\rG_c^\gauche$ so that 
$t(\eulerq') \equiv -b\Sig \mod \rG_c^\gauche$.}}{0.75\textwidth}

\bigskip
 
The family of congruences $(\clubsuit)$, $(\diamondsuit)$, 
$(\heartsuit)$ and $(\spadesuit)$ show that the Calogero-Moser left $c$-cells 
are $\{1\}$, $\{w_0\}$, $\G_s=\{s,ts,sts\}$ and $\G_t=\{t,st,tst\}$ and that 
the Calogero-Moser two-sided $c$-cells are 
$\{1\}$, $\{w_0\}$ and $W \setminus \{1,w_0\}$. 

As previously, the results on Calogero-Moser $c$-families and Calogero-Moser 
$c$-cellular characters given by Table~\ref{table:conjectures-b2} follow from 
Corollary~\ref{coro:famille-semicontinu}, 
from Proposition~\ref{prop:cellulaire-semicontinu}, from~(\ref{eq:familles-b2}) and 
from Corollary~\ref{coro:cellules-gauches-b2}.

Let us conclude by the description of $D_c^\gauche$, $I_c^\gauche$, $\Dba_c$ and $\Iba_c$. 
First of all, $I_c^\gauche$ has two orbits of length $3$ 
($\G_s$ and $\G_t$) so its order is divisible by $3$. Moreover, it contains $I^\gauche$ 
which has order $2$. So its order is divisible by $6$. The description 
of left $c$-cells then allows to conclude that 
$I_c^\gauche=\SG_3$. On the other hand, $D_c^\gauche$ permutes the left $c$-cells which 
have the same associated $c$-cellular character. So $D_c^\gauche$ stabilizes $\G_s$ and $\G_t$. 
This forces the equality $D_c^\gauche=I_c^\gauche=\SG_3$.

On the two-sided cells side, recall that $\Dba_c=\Iba_c$ because $\Dba_c/\Iba_c$ is a quotient of 
$\Dba/\Iba$. Moreover, the inclusions $W_2' \subset \Iba_c$ and $I_c^\gauche \subset \Iba_c$ 
show that $W_3' \subset \Iba_c$. The equality $\Iba_c=W_3'$ becomes obvious.

\bigskip

\section{Complement: fixed points}\label{sec:fixed-b2}

\medskip

As announced in Example~\ref{ex:root of unity}, we will show that 
Conjecture~\FIX~holds when $W$ is of type $B_2$ and $\t$ is a root of unity. 
If $\t$ is not of order dividing $4$, then $\ZCB^\t=\ZCB^{\CM^\times}$ 
is a union of affine spaces isomorphism $\CCB$, so 
this case is easy. If $\t$ has order dividing $2$, then $\ZCB^\t=\ZCB$, 
and there is again nothing to prove. 

\medskip

\boitegrise{\noindent{\bf Assumption.} {\it We assume in this section that $\kb=\CM$ 
and we fix a primitive $4$-th root of unity $\t$.}}{0.75\textwidth}

\medskip

% Then $\CM[\ZCB^\t]=Z/\sqrt{\langle (\lexp{\t}{z}-z)_{z \in Z} \rangle}$, 
% and the ideal $\langle (\lexp{\t}{z}-z)_{z \in Z} \rangle$ is generated by 
% the $\lexp{\t}{z}-z$, where $z$ runs over a set of generators. Since $\t$ 
% acts on a $\ZM$-homogeneous element $z$ of degree $i$ by multiplication 
% by $\t^i$, we get:
% \equat\label{eq:fixed-b2}
% \CM[\ZCB^\t]=Z/\sqrt{\langle \s,\Sig,\euler',\euler'' \rangle}.
% \endequat
% If we denote by $p$, $P$, $d$ and $e$ the respective 
% images of $\pi$, $\Pi$, $\delb$ and $\euler$ in $Z'=Z/\langle \s,\Sig,\euler',\euler'' \rangle$, 
% we get that $Z'$ has the following presentation:
% $$
% \begin{cases}
% \text{Generators:} & \text{$A$, $B$, $p$, $P$, $d$ and $e$}\\
% \text{Relations:} &
% \begin{cases}
% d^2=e^2+pP\\
% p(4d-e^2+4A^2-4B^2)=0\\
% P(4d-e^2+4A^2-4B^2)=0\\
% d(4d-e^2+4A^2-4B^2)=0\\
% e(4d-e^2+4A^2-4B^2)=0\\
% \end{cases}
% \end{cases}
% $$

We identify $\ZCB$ with the set of $(a,b,s,S,p,P,e,e',e'',d) \in \Ab^{10}(\CM)$ 
satisfying the relations $(Z1)$, $(Z2)$,\dots, $(Z9)$ in~(\ref{relations Q}), 
with $A$, $B$, $\s$, $\Sig$, $\pi$, $\Pi$, $\euler$, $\euler'$, $\euler''$ and $\delb$ 
replaced by $a$, $b$, $s$, $S$, $p$, $P$, $e$, $e'$, $e''$ and $d$ respectively. 
Then
$$\ZCB^\t=\{(a,b,s,S,p,P,e,e',e'',d) \in \ZCB~|~s=S=e'=e''=0\}.$$
Therefore, 
$$\ZCB^\t \simeq \{(a,b,p,P,e,d) \in \Ab^6(\CM)~|~
\begin{cases}
d^2=e^2+pP\\
p(4d-e^2+4a^2-4b^2)=0\\
P(4d-e^2+4a^2-4b^2)=0\\
d(4d-e^2+4a^2-4b^2)=0\\
e(4d-e^2+4a^2-4b^2)=0\\
\end{cases} \}.$$
This shows that $\ZCB^\t$ has two irreducible components $\XCB$ and $\XCB_0$, where
$$\XCB=\{(a,b,p,P,e,d) \in \Ab^6(\CM)~|~d^2=e^2+pP~\text{and}~
4d-e^2+4a^2-4b^2=0\}$$
$$\XCB_0=\{(a,b,p,P,e,d) \in \Ab^6(\CM)~|~p=P=d=e=0\}\}.\leqno{\text{and}}$$
Note that the intersection of $\XCB$ and $\XCB_0$ is not empty. 

So $\XCB_0 \simeq \CCB \simeq {\mathrm{pt}} \times_{{\mathrm{pt}}} \CCB$ and 
Conjecture~\FIX~holds for this easy irreducible component. On the other hand,
\eqna
\XCB& \simeq &\{(a,b,p,P,e) \in \Ab^5(\CM)~|~(e^2-4a^2+4b^2)^2=16(e^2+pP)\}\\
&=& \{(a,b,p,P,e) \in \Ab^5(\CM)~|~(e-2(a+b))(e-2(a-b))(e+2(a+b))(e+2(a-b))=pP\}.
\endeqna
Let $V'$ be the $\t$-eigenspace of $w$ and take $W'=\langle w \rangle$. 
Then $(V',W')$ is a reflection subquotient of $(V,W)$, with $\dim_\CM V'=1$. 
We now use the description of $\ZCB(V',W')$ given in Theorem~\ref{center rang 1}:
\eqna
\ZCB(V',W') &\simeq& 
\{(k_0,k_1,k_2,k_3,e,x,y) \in \Ab^7(\CM)~|~(e-4k_0)(e-4k_1)(e-4k_2)(e-4k_3)=xy\\
&&\hskip4cm \text{and}~k_0+k_1+k_2+k_3=0\},\\
\endeqna
where the $k_i$'s are the coordinates in $\CCB(V',W')$. So, if we set 
$\ph : \CCB \to \CCB(V',W')$, $(a,b) \mapsto \frac{1}{2}(a+b,a-b,-a-b,-a+b)$, 
then $\ph$ is linear and 
\equat\label{eq:fixed-b2-compo}
\XCB \simeq \ZCB(V',W') \times_{\CCB(V',W')} \CCB.
\endequat
Note that $\ph$ is well-defined only up to permutation of the four coordinates in $\CCB(V',W')$.

\chapter{Left cells in small dihedral groups}\label{chapter:diedral}

We illustrate here the topological version of the construction of 
Calogero-Moser cells (see Theorem~\ref{theo:cm-cells-topo}) in the 
special case of dihedral groups. In particular, the figures drawn 
in this chapter give evidence that Conjecture L holds for dihedral groups 
of order $\le 10$ (similar computations and drawings can be done 
for larger groups and they give evidence that Conjecture~L holds for dihedral groups of 
order $\le 20$). The underlying computations have been done using
{\sc Magma}~\cite{magma} and {\sc Sage}~\cite{sage}.

\medskip

\boitegrise{{\bf Hypothesis and notation.} {\it We assume in this chapter, and only 
in this chapter, that $W$ is a Coxeter group, that $V$ has dimension $2$ and 
is irreducible as a $\CM W$-module. Write $S=\{s,t\}$ and let $m$ denote 
the order of $st$. We denote by $\t \in \NC$ the unique element of order $2$ such that 
$\t s \t^{-1}=t$ and $\t(C_\RM)=C_\RM$ and we assume that $v_\RM \in C_\RM$ 
is chosen so that $\t(v_\RM)=v_\RM$. We also fix $c \in \CCB(\RM)$, we 
set $a=k_s$ and $b=k_t$ and we assume that $a$, $b \ge 0$.}}{0.75\textwidth}

\medskip

Note that the above conditions imply that $m \ge 3$, and determine $v_\RM$ up to multiplication by an 
element of $\RM_{>0}$. Note also that, if $m$ is odd, then $a=b$. 
We denote by $\gamh_c : [0,1] \longto \CCB(\RM) \times C_\RM \times C_\RM^*$ 
the path defined by
$$\gamh_c(t)=(tc,v_\RM,(1-t) v_\RM^*)$$
and we define $\g_c$ as the image of $\gamh_c$ in $\CCB \times V/W \times V^*/W$. 
In all cases we encounter in this chapter, it turns out that the image of $\gamh_c([0,1[)$ in 
$\CCB \times V/W \times V^*/W$ does not meet the ramification locus of $\Upsilon$: since 
$\gamh_c(1)=(c,v_\RM,0)$, we can define left Calogero-Moser $c$-cells using the path 
$\g_c$ as in \S\ref{se:chemins}. 
We aim here to show pictures of the paths $(\r_w)_{w \in W}$ in $V_\RM$ 
constructed in Theorems~\ref{th:cellsGaudin} and~\ref{theo:cm-cells-topo} 
for $3 \le m \le 6$, and explain how this describes left
Calogero-Moser $c$-cells. This
gives evidence for Conjecture~L in those cases. To be more precise, the 
full paths have not been calculated, but only an approximation of them and we
have 
drawn only the points corresponding to the values $t=j/15$, for $0 \le j \le 15$. 

Before explaining how to read these pictures, we need to introduce
more notations. 
When $m$ is even, we denote by $\e_s$ (resp. $\e_t$) the unique non-trivial 
linear character of $W$ such that $\e_s(t)=1$ (resp. $\e_t(s)=1$). Note 
that $\e_s\e_t=\e$. 
We denote by $\chi$ the character of the reflection representation $V$ 
and, when $m \in \{5,6\}$, we denote by $\chi'$ the unique irreducible 
character of $W$ of degree $2$ different from $\chi$. Recall that 
$$\Irr(W)=
\begin{cases}
\{1,\e,\chi\} & \text{if $m=3$,}\\
\{1,\e,\e_s,\e_t,\chi\} & \text{if $m=4$,}\\
\{1,\e,\chi,\chi'\} & \text{if $m=5$,} \\
\{1,\e,\e_s,\e_t,\chi,\chi'\} & \text{if $m=6$.}
\end{cases}$$
We will provide pictures for $3 \le m \le 6$, noting that, if $m \in \{3,5\}$, then 
$a=b$ (and only one picture will be shown) and, if $m \in \{4,6\}$, 
only some pictures for $a \ge b \ge 1$ will be shown (since the automorphism $\t$ 
exchanges $s$ and $t$ and the roles of $a$ and $b$). 
Then the pictures must be read as follows:
\begin{itemize}
\item[$\bullet$] The plane of the page represents $V_\RM^*$.

\item[$\bullet$] The large black dots are the elements $(w^{-1}(v_\RM^*))_{w \in W}$. 
The element of $w$ is written next to the point (here, $w_0$ stands for 
the longest element).

\item[$\bullet$] For $1 \le j \le 14$, there are $|W|=2m$ elements $\l_1^{(j)}$,\dots, $\l_{2m}^{(j)}$ 
of $V^*$ such that, for any $y \in V$, the eigenvalues of the Gaudin operator 
$D_y^{\gamh_c(j/15)}$ are $(\langle y , \l_{j'}^{(j)} \rangle)_{1 \le j' \le 2m}$, 
counted with multiplicity. The small black dots represent the full collection 
$(\l_{j'}^{(j)})_{\substack{1 \le j \le 14 \\ 1 \le j' \le 2m}}$. 

From this collection, the reader can get an idea from the pictures what the
	paths $\r_w$'s look like.

\item[$\bullet$] The large red dots correspond to the elements $\l_1$,\dots, $\l_r$ 
of $V^*$ such that, for any $y \in V$, the eigenvalues of the Gaudin operator 
$D_y^{\gamh_c(1)}=D_y^{c,v_\RM,0}$ are $(\langle y , \l_{j'} \rangle)_{1 \le j' \le r}$. 
To each such large red dot is associated a Calogero-Moser cellular character 
as in~\S\ref{sub:cellular-gaudin}. The corresponding cellular 
character is written in red next to that dot.

\item[$\bullet$] Two elements $w$ and $w'$ of $W$ are in the same $c$-cell 
if and only if the two paths starting at the large black dots named $w$ and $w'$ 
end at the same large red dot. One can then check (using for 
instance~\cite[Theorem~21.3.1~and~Corollary~21.3.4]{bonnafe livre} for a description 
of left Kazhdan-Lusztig cells and the Kazhdan-Lusztig cellular characters) 
that Conjecture~L holds for $3 \le m \le 6$ and the given values of $(a,b)$. 
Note that similar figures can be reproduced for many values of $(a,b)$ and 
for $m$ up to $12$ using a standard computer. 
\end{itemize}

\medskip
 
\noindent\begin{tabular}{cc}
\input{./a2-pgf.tex}
&
\input{./i25-pgf.tex} \\
Type $A_2$ & Type $I_2(5)$ \\
\end{tabular}

\vskip0.8cm

\centerline{\input{b2-a0-pgf.tex}
\input{b2-atiers-pgf.tex}}

\smallskip

\centerline{\input{b2-adeuxtiers-pgf.tex}
\input{b2-a1-pgf.tex}}

\centerline{Type $B_2=I_2(4)$, $a=1 \ge b \ge 0$}

\newpage

\centerline{\input{g2-a0-pgf.tex}\input{g2-atiers-pgf.tex}}

\centerline{\input{g2-adeuxtiers-pgf.tex}\input{g2-a1-pgf.tex}}

\centerline{Type $G_2=I_2(6)$, $a=1 \ge b \ge 0$}

\part*{Appendices}

% Les actuels appendices A, B, C agr\'ement\'e d'un appendice (n\'ecessaire 
% il me semble) sur Bialinicky-Birula en affine...

\renewcommand\thechapter{\Alph{chapter}}

\def\chaptername{Appendix}\setcounter{chapter}{0}

\chapter{Filtrations}\label{appendice:filtration}

\section{Filtered modules}

\medskip

Let $R$ be a commutative ring.
A {\em filtered $R$-module} is an $R$-module $M$ together with
$R$-submodules $M^{\le i}$ for $i\in\BZ$ such that
$$M^{\le i}\subset M^{\le i+1} \text{ for }i\in\BZ,\ 
M^{\le i}=0\text{ for }i\ll 0 \text{ and } M=\bigcup_{i\in\BZ}M^{\le i}.$$
Given $M$ a filtered $R$-module, the {\em associated $\BZ$-graded} $R$-module
$\grad\,M $ is given by
$$(\grad\,M)_i=M^{\le i}/M^{\le i-1}.$$
The {\em principal symbol map}
$\xi:M\to \grad\,M$ is defined by $\xi(m)=m\mod M^{\le i-1}\in 
(\grad\,M)_i$, where $i$ is minimal such that $m\in M^{\le i}$.
The principal symbol map is injective but not additive.

The {\em Rees}
module associated with $M$ is the $R[\hbar]$-submodule
$\mathrm{Rees}(M)=\sum_{i\in\BZ} \hbar^i
M^{\le i}$\indexnot{Rees}{\mathrm{Rees}(M)} of $R[\hbar^{\pm 1}]\otimes_R M$.
We have $R[\hbar^{\pm 1}]\otimes_{R[\hbar]}\mathrm{Rees}(M)=R[\hbar^{\pm 1}]
\otimes_R M$. In particular,
given $t\in R^\times$, we have an isomorphism of $R$-modules
$$\bigl(R[\hbar]/\langle \hbar-t\rangle\bigr)\otimes_{R[\hbar]}\mathrm{Rees}(M)\longiso M,\ 1\otimes\hbar^im\mapsto t^im.$$

There is an isomorphism of $R$-modules
$$R[\hbar]/\langle \hbar \rangle \otimes_{R[\hbar]}\mathrm{Rees}(M)\longiso \grad\,M,\
1\otimes\hbar^i m\mapsto \begin{cases} 0 & \text{ if }m\in M^{<i} \\ \xi(m) & \text{ otherwise.}\end{cases}$$

\section{Filtered algebras}

\medskip

Let $A$ be an $R$-algebra.
A {\em filtration} on $A$ is the data of a filtered
$R$-module structure on $A$
such that
$$1\in A^{\le 0}\text{ and }A^{\le i}\cdot A^{\le j}
\subset A^{\le i+j}\text{ for all }i,j\in\BZ.$$
We fix a filtration on $A$. 
The associated graded $R$-module $\grad\,A$ is a graded $R$-algebra. The Rees module
associated with $A$ is an $R[\hbar]$-algebra.

\bigskip

\begin{lem}
\label{le:zerodivisorsfilt}
If $\grad\,A$ has no zero divisors, then the principal symbol map
$\xi:A\to \grad\,A$ is multiplicative and $A$ has no zero divisors.
\end{lem}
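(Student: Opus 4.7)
The plan is to verify multiplicativity of $\xi$ and the absence of zero divisors in $A$ simultaneously, since both follow from a single computation: if we know the principal symbol of a product equals the product of principal symbols, then non-vanishing of the latter (via the hypothesis) forces non-vanishing of the former.

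First I would fix nonzero $a, b \in A$ and set $i, j \in \BZ$ to be the minimal integers with $a \in A^{\le i}$ and $b \in A^{\le j}$. By definition, $\xi(a) \in (\grad\,A)_i$ and $\xi(b) \in (\grad\,A)_j$ are both nonzero. The key observation is that $ab \in A^{\le i+j}$ by the compatibility $A^{\le i}\cdot A^{\le j}\subset A^{\le i+j}$, and moreover the image of $ab$ under the quotient map $A^{\le i+j} \to (\grad\,A)_{i+j}$ is exactly $\xi(a)\xi(b)$, essentially by the definition of the graded algebra structure on $\grad\,A$.

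Next I would invoke the hypothesis: since $\grad\,A$ has no zero divisors, $\xi(a)\xi(b) \ne 0$ in $(\grad\,A)_{i+j}$. This immediately gives two conclusions. First, $ab \notin A^{\le i+j-1}$, which means $ab \ne 0$ (so $A$ has no zero divisors) and moreover that $i+j$ is the minimal integer with $ab \in A^{\le i+j}$, so $\xi(ab) = \xi(a)\xi(b)$ by definition of $\xi$. Second, to handle $\xi$ on all of $A$ including the case $a=0$ or $b=0$, I would adopt the convention $\xi(0)=0$, and check multiplicativity in those degenerate cases trivially.

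There is no real obstacle here beyond bookkeeping the degrees correctly; the hypothesis that $\grad\,A$ is a domain does all the work, by ensuring that the leading degree of a product is the sum of the leading degrees (no cancellation occurs in the top graded piece). The only subtle point is to remember that ``$\xi$ is multiplicative'' is a statement in $\grad\,A$, not in a single graded piece, and one should verify it holds both on the nose (same degree) and vacuously when a factor is zero.
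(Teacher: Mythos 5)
Your proof is correct and follows essentially the same route as the paper's: take minimal $i,j$ with $a\in A^{\le i}$, $b\in A^{\le j}$, observe that the image of $ab$ in $(\grad\,A)_{i+j}$ is $\xi(a)\xi(b)$, and use the domain hypothesis on $\grad\,A$ to conclude $ab\notin A^{\le i+j-1}$, giving both $ab\neq 0$ and $\xi(ab)=\xi(a)\xi(b)$. The extra remark about the convention $\xi(0)=0$ is fine but not needed, since the paper only asserts multiplicativity in the sense used here.
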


\begin{proof}
Let $a,b\in A$ be two non-zero elements and let
$i,j$ be minimal such that $a\in A^{\le i}$ and $b\in A^{\le j}$. Since
$\grad\,A$ has no zero divisors, it follows that $\xi(a)\xi(b)\not=0$, hence
$ab{\not\in}A^{<i+j}$. This shows that $\xi(ab)=\xi(a)\xi(b)$, and that $ab\not=0$.
\end{proof}

Let us recall some facts of commutative algebra (cf. 
\cite[Exercises 9.4-9.5]{matsumura}).
Let $R'$ be a commutative domain with field of fractions $K$. An element
$x\in K$ is said to be {\it almost integral}
over $R'$ if there exists $a\in R'$, $a\not=0$, such
 that
for all $n\ge 0$, we have $ax^n\in R'$. If $x$ is integral over $R'$, then
$x$ is almost integral over $R'$, and the converse holds if $R'$ is
noetherian.

We say that $R'$ is {\it completely
integrally closed} if the elements of $K$ that are almost integral over $R'$
are in $R'$.

\begin{lem}
\label{le:normalfilt}
Assume $A$ is a commutative ring. If $\grad\,A$ is a completely integrally
closed domain, then $A$ is a completely integrally closed domain.
\end{lem}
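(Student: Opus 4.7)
The plan is to exploit the principal symbol map, extended to the fraction field of $A$. First, Lemma \ref{le:zerodivisorsfilt} gives that $A$ is a domain and that $\xi:A\setminus\{0\}\to \grad\,A\setminus\{0\}$ is multiplicative. Let $K=\Frac(A)$ and let $S$ denote the multiplicatively closed set of non-zero homogeneous elements of $\grad\,A$; since $\grad\,A$ is a graded domain, $S^{-1}(\grad\,A)$ is a $\BZ$-graded subring of $\Frac(\grad\,A)$. I would extend $\xi$ to a group homomorphism $\xi:K^\times\to S^{-1}(\grad\,A)^\times$ by $\xi(u/v)=\xi(u)/\xi(v)$; the multiplicativity of $\xi$ on $A\setminus\{0\}$ makes this well-defined and the image of $x\in K^\times$ is homogeneous of degree $\deg(x):=\deg(u)-\deg(v)$.

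Now suppose $x\in K$ is almost integral over $A$, witnessed by some $a\in A\setminus\{0\}$ with $ax^n\in A$ for all $n\ge 0$. Applying $\xi$ gives $\xi(a)\xi(x)^n=\xi(ax^n)\in \grad\,A$ for every $n$, so $\xi(x)\in\Frac(\grad\,A)$ is almost integral over $\grad\,A$. By hypothesis, $\xi(x)\in \grad\,A$, and since $\xi(x)$ is homogeneous of degree $d:=\deg(x)$ in $S^{-1}(\grad\,A)$, it lies in $(\grad\,A)_d$.

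The final step is a descent argument. Writing $x=u/v$ with $u,v\in A$, lift $\xi(x)\in(\grad\,A)_d$ to some $z\in A^{\le d}$, so $\xi(z)=\xi(x)$. Then $\xi(zv)=\xi(z)\xi(v)=\xi(u)$ in $\grad\,A$, so either $u=zv$ (in which case $x=z\in A$ and we are done) or $u-zv\in A^{<\deg u}$. In the latter case, set $u_1=u-zv$; then $x-z=u_1/v\in K$ is again almost integral over $A$ (since $a(x-z)^n$ expands as an $A$-combination of the $ax^k$'s and powers of $z$), so we may iterate, producing a sequence $x=z_0+z_1+\cdots+z_{k-1}+u_k/v$ with $\deg u_0>\deg u_1>\cdots$. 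Because the filtration is bounded below, $\{\deg(u_k)\}$ cannot decrease indefinitely unless some $u_k=0$; at that stage $x=z_0+\cdots+z_{k-1}\in A$.

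The main obstacle I anticipate is the setup in the first paragraph: one needs to check carefully that the extension of $\xi$ to $K^\times$ is well-defined and multiplicative (this uses that $\xi$ is multiplicative on $A\setminus\{0\}$, hence uses that $\grad\,A$ is a domain), and that ``almost integral over $\grad\,A$'' in $\Frac(\grad\,A)$ is detected at the level of $S^{-1}(\grad\,A)$ so that the complete-integral-closure hypothesis can be applied. Once that machinery is in place, the descent is forced by the boundedness of the filtration from below, which is essential for termination.
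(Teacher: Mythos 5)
Your proof is correct and follows essentially the same route as the paper's: pass to principal symbols to show $\xi(u)/\xi(v)$ is almost integral over $\grad A$ (hence lies in $\grad A$ and is homogeneous), lift to $z\in A$, subtract, and descend using that the filtration is bounded below. The paper phrases the descent as induction on the minimal filtration degree of the numerator and works directly with the relation $\xi(d)^n\xi(ax^n)=\xi(a)\xi(c)^n$ rather than formally extending $\xi$ to $K^\times$, but these are only cosmetic differences.
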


\begin{proof}
Lemma \ref{le:zerodivisorsfilt} shows that $A$ is a domain. Let $K$ be its
field of fractions. Let $x\in K$ be a non-zero element that is
	almost integral over $A$. Let
$c,d\in A$ such that $x=c/d$. Let $i$ (resp. $j$) be minimal such that
$c\in A^{\le i}$ (resp. $d\in A^{\le j}$). We show by induction on
$i$ that $x\in A$.

Let $a\in A$, $a\not=0$, such that $ax^n\in A$ for all $n\ge 0$.
Let $\alpha_n=ax^n$. We have $d^n\alpha_n=ac^n$, hence
$\xi(d)^n\xi(\alpha_n)=\xi(a)\xi(c)^n$ (cf. Lemma \ref{le:zerodivisorsfilt}).
It follows that $\frac{\xi(c)}{\xi(d)}$ is an element of the field
of fractions of $\grad\,A$ that is almost integral over $\grad\,A$.
Consequently, it is in $\grad\,A$. Since $\grad\,A$ has no zero divisors,
it follows that it is homogeneous of degree $i-j$. Let $u\in A$ with 
$\xi(u)=\frac{\xi(c)}{\xi(d)}$. Let $x'=x-u=\frac{c-ud}{d}$. We have
$c-ud\in A^{\le i-1}$ and $x'$ is almost integral over $A$. It follows
by induction that $x'\in A$, hence $x\in A$.
\end{proof}

%The next lemma is given by \cite[Ch. 2, Theorem 3.7]{bjork} and \cite[Proposition 4.1.5]{We1}.

\begin{lem}
\label{le:gldimfilt}
%If $\grad\,A$ has finite global dimension,
%then $A$ has finite global dimension.

If $\grad\,A$ is noetherian, then $A$ is noetherian.

	Assume $\grad\,A$ is notherian and $A^{\le i}/A^{\le i-1}$ is
	a flat $R$-module for every $i\in\BZ$.
	If $\grad\,A$ has finite projective dimension as a
	$(\grad\,A\otimes_R (\grad\,A)^\opp)$-module, then 
	$A$ has finite projective dimension as an
	$(A\otimes_R A^\opp)$-module.
\end{lem}

\begin{proof}
	The first statement is \cite[Proposition IV.6]{NaVa}.

	Let us prove the second statement. We consider the
	product filtration on $A\otimes_R A$, so that there is a canonical
	isomorphism
	$\grad\,(A\otimes_R A^\opp)\xrightarrow{\sim}
	\grad\,A\otimes (\grad\,A)^\opp$. The
	$(\grad\,A\otimes (\grad\,A)^\opp)$-module $\grad\,A$ has finite
	projective dimension, hence finite Tor
	dimension. It follows that $A$ has finite Tor dimension as an
	$(A\otimes_R A^\opp)$-module by \cite[Ch. 2, Proposition 3.12]{bjork}.
	We deduce from \cite[Theorem 3.2.7  and Lemma 4.1.6]{We1} that
	$A$ has finite projective dimension as an $(A\otimes_R A^\opp)$-module.
\end{proof}

\section{Filtered modules over filtered algebras}

\medskip

A {\em filtered $A$-module} is an $A$-module $M$ together with a structure
of filtered $R$-module such that
$$A^{\le i}\cdot M^{\le j}\subset M^{\le i+j}\text{ for all }i,j\in\BZ.$$
Given $M$ and $N$ two filtered $A$-modules,
a {\em filtered morphism} of $A$-modules is a morphism of $A$-modules $f:M\to N$
such that $f(M^{\le i})\subset
N^{\le i}$ for all $i\in\BZ$.

\bigskip

\begin{lem}
\label{le:injsurjfiltered}
Let $f:M\to N$ be a filtered morphism of $A$-modules.
If $\grad\,f$ is surjective (resp. injective), then $f$ is surjective
(resp. injective).
\end{lem}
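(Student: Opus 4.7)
The plan is to prove the two statements separately, both relying crucially on the fact that the filtrations on $M$ and $N$ are bounded below (so $M^{\le i}=N^{\le i}=0$ for $i\ll 0$).

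For injectivity, I would argue by contradiction using the principal symbol. Suppose $m\in M$ is nonzero and $f(m)=0$. Let $i$ be minimal with $m\in M^{\le i}$, so $\xi(m)\in (\grad\,M)_i$ is nonzero. Since $f$ is filtered, $f(M^{\le i})\subset N^{\le i}$ and $f(M^{\le i-1})\subset N^{\le i-1}$, so $f$ induces $(\grad\,f)_i:(\grad\,M)_i\to(\grad\,N)_i$ sending $\xi(m)$ to the class of $f(m)=0$ in $N^{\le i}/N^{\le i-1}$, namely $0$. This contradicts the injectivity of $\grad\,f$.

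For surjectivity, the idea is a ``descending lift'' procedure. Given $n\in N$, pick $i$ minimal with $n\in N^{\le i}$. Since $(\grad\,f)_i$ is surjective, there exists $m_i\in M^{\le i}$ such that $f(m_i)\equiv n \pmod{N^{\le i-1}}$, i.e.\ $n-f(m_i)\in N^{\le i-1}$. Iterating, one constructs $m_j\in M^{\le j}$ for $j=i,i-1,i-2,\ldots$ with $n-f(m_i+m_{i-1}+\cdots+m_j)\in N^{\le j-1}$. Because the filtration on $N$ is bounded below, there is some $j_0$ with $N^{\le j_0-1}=0$, so the process terminates and $m:=m_i+m_{i-1}+\cdots+m_{j_0}\in M$ satisfies $f(m)=n$.

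Neither step is a real obstacle: the argument is entirely formal once one notes that boundedness below of the filtrations turns what would otherwise be an infinite descent into a finite sum. The only point to be careful about is to justify that the finite-sum construction makes sense (no completion needed), which is precisely what the hypothesis $M^{\le j}=0$ for $j\ll 0$ guarantees.
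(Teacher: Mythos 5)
Your proof is correct and follows essentially the same route as the paper: the injectivity part is the paper's argument verbatim (minimality of $i$ forces $f(m)\notin N^{\le i-1}$), and your descending-lift construction for surjectivity is just the explicit unwinding of the paper's induction on the identity $N^{\le i}=f(M^{\le i})+N^{\le i-1}$, terminating because the filtration is bounded below.
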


\begin{proof}
Assume $\grad\,f$ is surjective. We have $N^{\le i}=f(M^{\le i})+
N^{\le i-1}$. Since $N^{\le i}=0$ for $i\ll 0$, it follows by induction that
$N^{\le i}=f(M^{\le i})$, hence $f$ is surjective.

Assume $\grad\,f$ is injective.
Let $m\in M-\{0\}$ and let $i$ be minimal such $m\in M^{\le i}$. We have
$f(m)\not\in N^{\le i-1}$, hence $f(m)\not=0$.
\end{proof}

\bigskip

\begin{lem}
\label{le:generatefilt}
Let $M$ be a filtered $A$-module and let $E$ be a subset of $M$. If
$\xi(E)$ generates $\grad\,M$ as a $\grad\,A$-module, then $E$ generates
$M$ as an $A$-module.

Let $F$ be a subset of $A$. If $\xi(F)$ generates $\grad\,A$ as an $R$-algebra,
then $F$ generates $A$ as an $R$-algebra.
\end{lem}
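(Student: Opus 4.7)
For part (a), I would introduce $N$, the $A$-submodule of $M$ generated by $E$, equip it with the induced filtration $N^{\le i}=N\cap M^{\le i}$, and consider the filtered inclusion $\iota:N\hookrightarrow M$. The key observation is that with this induced filtration, the associated graded map $\grad\,\iota:\grad\,N\to\grad\,M$ is injective (since $N^{\le i}/N^{\le i-1}=(N\cap M^{\le i})/(N\cap M^{\le i-1})$ injects into $M^{\le i}/M^{\le i-1}$), and its image is a graded $\grad\,A$-submodule of $\grad\,M$. I would then show this image contains $\xi(E)$: indeed, for $e\in E$ with $i$ minimal such that $e\in M^{\le i}$, we have $e\in N^{\le i}\setminus N^{\le i-1}$, so the principal symbol of $e$ computed in $N$ maps to $\xi(e)$ in $\grad\,M$. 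Since $\xi(E)$ generates $\grad\,M$ as a $\grad\,A$-module by hypothesis, $\grad\,\iota$ is surjective. Applying Lemma~\ref{le:injsurjfiltered}, $\iota$ is surjective, i.e.\ $N=M$.

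For part (b), I would run the completely parallel argument with $B\subset A$ the $R$-subalgebra generated by $F$, equipped with the induced filtration $B^{\le i}=B\cap A^{\le i}$. This is a filtered $R$-subalgebra, and the inclusion $B\hookrightarrow A$ is a morphism of filtered $R$-algebras, so $\grad\,B\to\grad\,A$ is a morphism of graded $R$-algebras whose image contains $\xi(F)$ by the same principal-symbol argument as above. Since $\xi(F)$ generates $\grad\,A$ as an $R$-algebra, this morphism is surjective, and Lemma~\ref{le:injsurjfiltered} applied to the underlying filtered $R$-module morphism gives $B=A$.

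The arguments are routine once the induced filtration is set up correctly; the only point requiring any care is the compatibility of principal symbols under the induced filtration (that $\xi_N(e)$ maps to $\xi_M(e)$), which is exactly where the equality $N^{\le i-1}=N\cap M^{\le i-1}$ is used. The boundedness-below hypothesis on the filtration is what makes Lemma~\ref{le:injsurjfiltered} applicable, so there is no real obstacle here — the lemma is essentially a formal consequence of Lemma~\ref{le:injsurjfiltered} together with the good behavior of the subobject filtration.
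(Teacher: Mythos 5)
Your proof is correct. Both your argument and the paper's rest entirely on Lemma~\ref{le:injsurjfiltered}, but you approach $M$ ``from below'' while the paper approaches it ``from above'': the paper builds a canonical filtered surjection $A^{(E)}\to M$ from a free module (with suitably shifted filtrations on the summands), observes that its associated graded map is surjective by hypothesis, and concludes; you instead take the submodule $N$ generated by $E$ with the induced filtration and show $\grad\,\iota$ is surjective. The two are dual in spirit and equally routine, though yours requires the small compatibility check $\xi_N(e)\mapsto\xi_M(e)$ that you correctly flag, while the paper's requires setting up the shifted filtration on $A^{(E)}$. The more substantive divergence is in part (b): the paper deduces it from part (a) by taking $A=R$, $M=A$ and $E$ the set of products of elements of $F$, which silently uses that $\xi(f_1)\cdots\xi(f_n)$ is either zero or equal to $\xi(f_1\cdots f_n)$ (so that the $R$-module generated by $\xi(E)$ contains all products of elements of $\xi(F)$, hence all of $\grad A$); your direct argument with the subalgebra $B$ and its induced filtration avoids that verification entirely, since the image of $\grad B\to\grad A$ is automatically a graded $R$-subalgebra containing $\xi(F)$. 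Both routes are valid.
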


\begin{proof}
We have a canonical morphism of $A$-modules $f:A^{(E)}\to M$. 
Let us endow $A^{(E)}$ with the following filtration: 
$(A^{(E)})^{\le i}=f^{-1}(M^{\le i})$. 
By assumption, $\grad\,f$ is surjective, hence $f$ is surjective
by lemma \ref{le:injsurjfiltered}.

The second assertion follows from the first one by taking $A=R$, $M=A$
and $E$ the set of elements of $A$ that are products of elements of $F$.
\end{proof}

\bigskip

Let $M$ and $N$ be two  filtered $A$-modules. Assume $M$ is a
finitely generated $A$-module. We endow the $R$-module
$\Hom_A(M,N)$ with the filtration given by
$$\Hom_A(M,N)^{\le i}=\{f\in\Hom_A(M,N)\ |\ f(M^{\le j})\subset N^{\le i+j}
\ \forall j\in\BZ\}.$$
A map $f\in\Hom_A(M,N)^{\le i} \setminus \Hom_A(M,N)^{\le i-1}$ 
induces a morphism of $(\grad\, A)$-modules
$\xi(f) : \grad\,M\to\grad\,N$, homogeneous of degree $i$. 

\bigskip

\begin{lem}
\label{le:injgrHom}
The construction above provides an injective morphism of graded $R$-modules
$$\grad\,\Hom_A(M,N)\to\Hom_{\grad\,A}(\grad\,M,\grad\,N).$$
\end{lem}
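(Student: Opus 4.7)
The plan is to unpack the construction given in the paragraph preceding the lemma and to verify the two remaining points: that it assembles into a degree-preserving $R$-linear map, and that this map is injective. Both points reduce essentially to rewriting the definitions.

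First, for each $i \in \BZ$, the assignment $f \mapsto \grad\,f$ described before the lemma is manifestly $R$-linear as a map $\Hom_A(M,N)^{\le i} \to \Hom_{\grad\,A}(\grad\,M,\grad\,N)_i$, where the target denotes morphisms of $\grad\,A$-modules homogeneous of degree $i$ (the $\grad\,A$-linearity of $\grad\,f$ follows immediately from $f(am) = af(m)$ by passing to principal symbols). Since this map vanishes on $\Hom_A(M,N)^{\le i-1}$ by construction, it factors through the quotient $(\grad\,\Hom_A(M,N))_i$. Assembling the resulting maps $\varphi_i : (\grad\,\Hom_A(M,N))_i \to \Hom_{\grad\,A}(\grad\,M,\grad\,N)_i$ for all $i$, and using the direct sum decomposition of the homogeneous Hom on the right, yields the desired morphism $\varphi$ of graded $R$-modules.

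For injectivity, I would argue as follows. Suppose $f \in \Hom_A(M,N)^{\le i}$ maps to zero in $\Hom_{\grad\,A}(\grad\,M,\grad\,N)_i$, i.e., $\grad\,f = 0$. By the very definition of $\grad\,f$, this means that for every $j \in \BZ$ and every $m \in M^{\le j}$, the image $f(m) \in N^{\le i+j}$ already lies in $N^{\le i+j-1}$. In other words, $f(M^{\le j}) \subset N^{\le i+j-1}$ for all $j$, which is exactly the condition $f \in \Hom_A(M,N)^{\le i-1}$. Hence the class of $f$ in $(\grad\,\Hom_A(M,N))_i$ is zero, proving $\varphi_i$ injective. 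Taking the direct sum over $i$ preserves injectivity, so $\varphi$ is injective.

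There is no real obstacle here: the argument is a direct unwinding of definitions, and the finite generation hypotheses on $M$ and $N$ are needed only to ensure the filtration on $\Hom_A(M,N)$ is bounded below and exhaustive (so that $\grad\,\Hom_A(M,N)$ is a sensible object), not for the injectivity itself. The only care to take is to distinguish between $\Hom_{\grad\,A}(\grad\,M,\grad\,N)$ and the graded Hom (the direct sum over $i$ of its degree-$i$ homogeneous part); since the map $\varphi$ lands in the graded Hom, which embeds into the full Hom, the statement of the lemma follows.
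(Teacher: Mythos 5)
Your proof is correct and is exactly the unwinding the paper has in mind: the lemma is stated without proof there, being regarded as immediate from the preceding construction, and your verification that $\grad\,f=0$ is equivalent to $f\in\Hom_A(M,N)^{\le i-1}$ is precisely the content of that construction. Your closing remarks on the role of finite generation and on the distinction between the graded Hom and the full Hom are accurate and do not affect the argument.
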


\begin{lem}
\label{le:Moritafiltered}
Let $e$ be an idempotent of $A^{\le 0}$. Then $\grad\,A\cdot\xi(e)$
is a progenerator for $\grad\,A$ if and only if $Ae$ is a progenerator for $A$.
\end{lem}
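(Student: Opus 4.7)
The plan is to recast the progenerator condition as a two-sided ideal equality and then transfer this back and forth between $A$ and $\grad A$. Since $Ae$ is a direct summand of $A$ (as $e^2=e$), it is automatically a finitely generated projective $A$-module, so being a progenerator is equivalent to being a generator, which is equivalent to $AeA=A$. The same reduction applies on the graded side, provided $\xi(e)$ is an idempotent of $\grad A$: but since $e^2=e\in A^{\le 0}\setminus A^{\le -1}$, we have $\xi(e)\cdot\xi(e)=\xi(e^2)=\xi(e)$ in $(\grad A)_0$. Thus the task reduces to showing $AeA=A$ if and only if $(\grad A)\xi(e)(\grad A)=\grad A$.

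The direction $(\grad A)\xi(e)(\grad A)=\grad A\Rightarrow AeA=A$ is the one I would carry out first, as it is clean and is the direction actually invoked in the subsequent application to Lemma~\ref{le:Weylfaithful}. Let $I=AeA$, a two-sided ideal of $A$, and equip it with the subspace filtration $I^{\le n}=I\cap A^{\le n}$, so that the inclusion $I\hookrightarrow A$ is a filtered morphism. The induced embedding $\grad I\hookrightarrow\grad A$ realizes $\grad I$ as a graded $(\grad A,\grad A)$-subbimodule of $\grad A$. Since $e\in I$ and $e\notin A^{\le -1}$, we have $\xi(e)\in\grad I$; hence $\grad I$ contains the entire two-sided ideal $(\grad A)\xi(e)(\grad A)$. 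Under the hypothesis this ideal is all of $\grad A$, so $\grad I=\grad A$, and Lemma~\ref{le:injsurjfiltered} applied to the filtered inclusion $I\hookrightarrow A$ yields $I=A$, i.e.\ $AeA=A$.

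For the converse direction, the natural strategy is to pass to the Rees algebra $B=\mathrm{Rees}(A)$ (after shifting so that $A^{\le -1}=0$, making $B$ an $\NM$-graded $R[\hbar]$-algebra). The element $e$ lifts to $\tilde e\in B_0$ with $\tilde e^{\,2}=\tilde e$, and the graded two-sided ideal $B\tilde e B$ yields a graded quotient $N=B/(B\tilde e B)$ whose specializations are
\begin{align*}
N\otimes_B B/(\hbar-1)&\simeq A/AeA,\\
N\otimes_B B/(\hbar)&\simeq (\grad A)/(\grad A)\xi(e)(\grad A).
\end{align*}
For an $\NM$-graded $B$-module, $N=\hbar N$ forces $N=0$ by inspecting the minimal-degree component, so the \emph{graded} vanishing implies total vanishing and hence the \emph{filtered} vanishing (this is in fact an alternative route for the first direction). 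The converse passage, from $N=(\hbar-1)N$ to $N=0$, is the main obstacle: it requires the absence of $\hbar$-torsion in $N$, equivalently the saturation property $AeA\cap A^{\le n}=\sum_{j+k\le n}A^{\le j}\,e\,A^{\le k}$. This compatibility of the subspace filtration with the natural generation filtration on $AeA$ is the delicate point of the proof; once established, the standard $\NM$-graded Nakayama-type argument goes through: a nonzero minimal-degree element $m$ of $N$ written as $(\hbar-1)x$ forces $x_n=-m$ and then $x_{k+1}=\hbar x_k$ for all $k>n$, contradicting the finite support of $x\in N=\bigoplus N_i$ under the no-torsion hypothesis.
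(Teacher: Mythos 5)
Your reduction of both progenerator conditions to the ideal equalities $AeA=A$ and $(\grad A)\,\xi(e)\,(\grad A)=\grad A$ is correct, and your proof of the implication ``graded $\Rightarrow$ filtered'' is sound and is essentially the paper's: the paper reduces to bimodule generation in the same way and then cites Lemma~\ref{le:generatefilt}, which is itself Lemma~\ref{le:injsurjfiltered} applied to a free filtered cover, whereas you apply Lemma~\ref{le:injsurjfiltered} directly to the filtered inclusion $AeA\hookrightarrow A$. Same mechanism, equally valid.

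The genuine gap is in the converse. You isolate the missing ingredient --- $\hbar$-torsion-freeness of $\mathrm{Rees}(A)/B\tilde{e}B$, i.e.\ the saturation $AeA\cap A^{\le n}=\sum_{j+k\le n}A^{\le j}eA^{\le k}$ --- but ``once established, the argument goes through'' is not a proof, and in fact it cannot be established: the property fails in general, and with it the ``only if'' half of the statement. Take $A=\Mat_2(\kb)$ with $A^{\le -1}=0$, $A^{\le 0}$ the upper triangular matrices, $A^{\le i}=A$ for $i\ge 1$, and $e=E_{11}$. Then $AeA=A$, so $Ae$ is a progenerator; but $\grad A=A^{\le 0}\oplus\kb\bar{E}_{21}$ with $\bar{E}_{21}^2=0$, and the two-sided ideal of $\grad A$ generated by $\xi(e)=E_{11}$ is $\kb E_{11}+\kb E_{12}+\kb\bar{E}_{21}$, which does not contain $E_{22}$; so $(\grad A)\xi(e)$ is not a generator. (Here $AeA\cap A^{\le 0}=A^{\le 0}$ is strictly larger than $A^{\le 0}eA^{\le 0}=\kb E_{11}+\kb E_{12}$, which is exactly the failure of saturation.) So no completion of your Rees-algebra argument is possible without extra hypotheses. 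You should be aware that the paper's own one-line proof has the same defect --- Lemma~\ref{le:generatefilt} only supplies the implication you did prove --- and that this is also the only implication used downstream (Lemma~\ref{le:Weylfaithful}(b) deduces the filtered Morita equivalence from the graded one). The honest repair is to assert and prove only the implication ``$\grad A\cdot\xi(e)$ progenerator $\Rightarrow$ $Ae$ progenerator'', which your first two paragraphs already accomplish.
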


\begin{proof}
Note that $\xi(e)$ is an idempotent of $\grad\,A$.
The $A$-module $Ae$ is a progenerator if and only if $e$ generates $A$ as an $(A,A)$-bimodule. The
lemma follows from Lemma \ref{le:generatefilt}.
\end{proof}

\bigskip

\section{Symmetric algebras}
\label{se:symmalg}

\medskip
Let us recall some basic facts about symmetric algebras (cf. for example \cite[\S 2, 3]{broueHighman}).
A {\it symmetric $R$-algebra} is an $R$-algebra $A$, finitely generated and
projective as an $R$-module, and endowed with an $R$-linear map $\tau_A:A\to R$ such that
\begin{itemize}
\item
$\tau_A(ab)=\tau_A(ba)$ for all $a,b\in A$ (i.e., $\tau_A$ is a trace)
and
\item the morphism of $(A,A)$-bimodules
$$\hat{\tau}_A:A\to \Hom_R(A,R),\ a\mapsto (b\mapsto \tau(ab))$$
is an isomorphism.
\end{itemize}
Such a form $\tau_A$ is called a {\em symmetrizing form} for $A$.

\medskip
Consider the sequence of isomorphisms
$$A\otimes_R A\xrightarrow[\sim]{\hat{\tau}\otimes \mathrm{id}}
\Hom_R(A,R)\otimes_R A\xrightarrow[\sim]{f\otimes a\mapsto (b\mapsto f(b)a)}\End_R(A).$$
The {\em Casimir element} is the inverse image of $\mathrm{id}_A$ through the composition
of maps above. The {\it central Casimir element} $\casimir_A$ is its image in $A$ by
the multiplication map $A\otimes_R A\to A$. It is an  element of $\Zrm(A)$.

\smallskip
Assume $A$ is free over $R$, with $R$-basis $\BC$ and dual basis $(b^\vee)_{b\in\BC}$ for
the bilinear form $A\otimes_R A\to R,\ a\otimes a'\mapsto \tau(aa')$. We have
$\casimir_A=\sum_{b\in\BC}bb^\vee$.

\bigskip

\begin{lem}
\label{le:symmtwisted}
Let $(A,\tau_A)$ be a symmetric $R$-algebra and $G$ a finite group acting on the
$R$-algebra $A$ and such that $\tau_A(g(a))=\tau_A(a)$ for all $g\in G$ and $a\in A$.

Let $B=A\rtimes G$ and define an $R$-linear form
$\tau_B:B\to R$ by $\tau_B(a\otimes g)=\tau_A(a)\delta_{1,g}$ for $a\in A$ and $g\in G$.

The form $\tau_B$ is symmetrizing for $B$.
\end{lem}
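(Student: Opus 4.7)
The plan is to verify directly the two defining properties of a symmetrizing form: that $\tau_B$ is a trace, and that the adjoint map $\hat{\tau}_B:B\to\Hom_R(B,R)$ is an isomorphism.

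For the trace property, I would evaluate both sides on a pair of basic tensors. Using the multiplication rule $(a\otimes g)(a'\otimes g')=a\cdot g(a')\otimes gg'$, one finds
\[
\tau_B\bigl((a\otimes g)(a'\otimes g')\bigr)=\tau_A\bigl(a\cdot g(a')\bigr)\,\delta_{gg',1},\qquad
\tau_B\bigl((a'\otimes g')(a\otimes g)\bigr)=\tau_A\bigl(a'\cdot g'(a)\bigr)\,\delta_{g'g,1}.
\]
The two Kronecker deltas agree, and both vanish unless $g'=g^{-1}$. Assuming $g'=g^{-1}$, the trace property of $\tau_A$ gives $\tau_A(a\cdot g(a'))=\tau_A(g(a')\cdot a)$, and the $G$-invariance of $\tau_A$ then rewrites this as $\tau_A(a'\cdot g^{-1}(a))$. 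This step is where both hypotheses on $\tau_A$ enter together.

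For the non-degeneracy, I would exhibit a dual basis. Since $A$ is finitely generated projective over $R$, after passing to a suitable faithfully flat extension (or by working locally) one can pick an $R$-basis $(b_i)$ of $A$ with $\tau_A$-dual basis $(b_i^\vee)$, so that $\tau_A(b_ib_j^\vee)=\delta_{ij}$. Then $(b_i\otimes g)_{i,g\in G}$ is a basis of $B$, and I would check that its $\tau_B$-dual basis is given by
\[
(b_i\otimes g)^\vee = g^{-1}(b_i^\vee)\otimes g^{-1}.
\]
Indeed $(b_j\otimes h)\bigl(g^{-1}(b_i^\vee)\otimes g^{-1}\bigr)=b_j\cdot h g^{-1}(b_i^\vee)\otimes hg^{-1}$, whose image under $\tau_B$ vanishes unless $h=g$, in which case it equals $\tau_A(b_jb_i^\vee)=\delta_{ij}$. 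This shows $\hat{\tau}_B$ sends a basis of $B$ to the dual basis of $\Hom_R(B,R)$, hence is an isomorphism. To avoid choosing a basis globally, one can instead argue that $\hat{\tau}_B$ factors through the isomorphism $\hat{\tau}_A\otimes \mathrm{id}_{RG}$ followed by an obvious isomorphism $\Hom_R(A,R)\otimes RG\longiso\Hom_R(A\otimes RG,R)$ twisted by the $G$-action; again, $G$-invariance of $\tau_A$ is what makes this twist well defined.

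There is no real obstacle here: the calculation is routine once the definitions are unwound. The only point requiring care is to invoke both the trace property and the $G$-invariance of $\tau_A$ simultaneously when handling the cross terms produced by the semidirect product structure.
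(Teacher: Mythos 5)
Your proof is correct and follows essentially the same route as the paper: the trace verification is the identical computation combining the trace property and $G$-invariance of $\tau_A$, and your basis-free remark at the end (reducing $\hat{\tau}_B$ to $\hat{\tau}_A$ twisted by the $G$-action on each summand $A\otimes g$) is precisely the paper's non-degeneracy argument. The dual-basis computation you give first is a harmless variant, though the descent to a free situation it requires is unnecessary given your own concluding observation.
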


\begin{proof}
We have 
$$\tau_B((a\otimes g)(a'\otimes g'))=\tau_A(ag(a'))\delta_{g^{-1},g'}=
\tau_A(a'g^{-1}(a))\delta_{g^{-1},g'}=\tau_B((a'\otimes g')(a\otimes g)).$$

Given $g\in G$, let $B_g=A\otimes Rg$ and $C_g=\Hom_R(B_g,R)$. We have $B=\bigoplus_{g\in G}B_g$
and $\Hom_R(B,R)=\bigoplus_g C_g$. Given $g\in G$ and $a,a'\in A$,
	we have 
	$$\bigl(\hat{\tau}_B(a\otimes g)\bigr)(a'\otimes g^{-1})=
	\bigl(\hat{\tau}_A(g^{-1}(a))\bigr)(a').$$
	It follows that the restriction
	of $\hat{\tau}_B$ to $B_g$ is an isomorphism $B_g\longiso C_{g^{-1}}$
	with inverse
	$f\mapsto g\bigl(\hat{\tau}_A^{-1}(a'\mapsto f(a'\otimes g^{-1}))\bigr)\otimes
	g$.
\end{proof}

\bigskip
Let now $A$ be a filtered $R$-algebra with $A^{\le -1}=0$, $A^{\le d-1}\not=A$
and $A^{\le d}=A$ for some $d\ge 0$.
Denote by $p_i:A^{\le i}\to (\grad\,A)_i$ the canonical projection.
Let $\bar{\tau}:(\grad\,A)_d\to R$ be an $R$-linear form. We extend it to an $R$-linear form
on $\grad\,A$ by setting it to $0$ on $(\grad\,A)_i$ for $i<d$.
We define an $R$-linear form $\tau$ on $A$ as the composition
$$\tau:A\xrightarrow{p_d} (\grad\,A)_d\xrightarrow{\bar{\tau}} R.$$

\smallskip
Let $x\in A^{\le i}$ and $y\in A^{\le j}$. We have
$\tau(xy)=\bar{\tau}(p_d(xy))$.
We have $p_d(xy)=0$ if $i+j<d$. If
 $i+j=d$, we have $p_d(xy)=p_i(x)p_j(y)$, hence
$\tau(xy)=\bar{\tau}(p_i(x)p_j(y))$.
The $R$-module $\Hom_R(A,R)$ is filtered with
 $\Hom_R(A,R)^{\le i}=\Hom_R(A/A^{\le d-i-1},R)$
and
$\hat{\tau}$ is a morphism of filtered $R$-modules with
$\grad(\hat{\tau})=\hat{\bar{\tau}}$.

\bigskip

\begin{lem}
\label{le:trace}
Let $L$ be an $R$-submodule of $A^{\le 1}$ such that
$A=A^{\le 0}(R+L)^d$, $L^{d+1}\subset A^{<d}$ and $A^{\le 0}L=LA^{\le 0}$.

If $\bar{\tau}$ is a trace, then $\tau$ is a trace.
\end{lem}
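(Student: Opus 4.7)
The plan is to establish $\tau(xy)=\tau(yx)$ for all $x,y\in A$ by reducing to a manageable generating set and then splitting into cases based on filtration degrees. First I would observe that $R\subseteq Z(A)$ commutes with $L$, so expanding the hypothesis $A=A^{\le 0}(R+L)^d$ and collecting $R$-factors to the left yields the $R$-module decomposition $A=\sum_{k=0}^{d} A^{\le 0} L^k$. By $R$-bilinearity of the commutator-trace $(x,y)\mapsto \tau(xy)-\tau(yx)$, it therefore suffices to verify the identity for monomials of the form $x=a\ell_1\cdots\ell_p$ and $y=a'\ell'_1\cdots\ell'_q$ with $a,a'\in A^{\le 0}$, $\ell_i,\ell'_j\in L$, and $0\le p,q\le d$.

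The commutation hypothesis $A^{\le 0}L=LA^{\le 0}$ then permits one to push $A^{\le 0}$-factors across products of $L$-elements (as finite sums), rewriting $xy=\sum_m b_m\omega_m$ with $b_m\in A^{\le 0}$ and $\omega_m\in L^{p+q}$, and similarly for $yx$. From here I would proceed by case analysis on the value of $p+q$.

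If $p+q<d$, both $xy$ and $yx$ lie in $A^{\le p+q}\subseteq A^{<d}$, so $\tau(xy)=\tau(yx)=0$. If $p+q=d$, the principal symbol $p_d(xy)$ equals $\xi(x)\xi(y)$ in $(\grad A)^d$, and the trace property of $\bar{\tau}$ on $\grad A$ delivers
$$\tau(xy)=\bar{\tau}(\xi(x)\xi(y))=\bar{\tau}(\xi(y)\xi(x))=\tau(yx).$$
For $p+q>d$, the plan is to extract a length-$(d+1)$ sub-word from each $\omega_m$, absorb it into $A^{<d}$ via the hypothesis $L^{d+1}\subseteq A^{<d}$, and use the commutation relation to reshuffle the remaining factors, producing a decomposition of $xy$ modulo $A^{<d}$ as a sum of terms each of the form $b'\omega'$ with $b'\in A^{\le 0}$ and $\omega'\in L^{p'}$ for some $p'\le d$. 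An induction on $p+q$ then reduces this final case to the previous two.

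The main obstacle is precisely this inductive step when $p+q>d$: the hypothesis $L^{d+1}\subseteq A^{<d}$ only guarantees one filtration drop per length-$(d+1)$ product, so one must verify that iterating the commutation relation $A^{\le 0}L=LA^{\le 0}$ produces the required reductions without uncontrolled growth, while preserving the symmetry between $xy$ and $yx$ modulo $A^{<d}$ needed for the graded trace property of $\bar{\tau}$ to close the argument. Making this bookkeeping rigorous, and checking that the new $A^{\le 0}$-factors introduced by each commutation can be absorbed without breaking the induction, constitutes the technical heart of the proof.
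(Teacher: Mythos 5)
Your reduction to monomials and your treatment of the cases $p+q<d$ and $p+q=d$ are fine, but the case $p+q>d$, which you correctly single out as the heart of the matter, does not go through as planned, and the obstruction is not mere bookkeeping. First, you cannot hope to show that $\tau(xy)$ and $\tau(yx)$ both vanish there: the hypothesis only gives $L^{d+1}\subset A^{<d}$, and for $k\ge d+2$ one merely gets $A^{\le 0}L^{k}\subset A^{\le 0}L^{k-d-1}\cdot A^{<d}\subset A^{\le d}$, whose image under $p_d$ (hence under $\tau$) need not vanish. Second, the proposed induction does not close: after extracting a length-$(d+1)$ subword of $\omega_m$, absorbing it into $A^{<d}$ and re-expanding $A^{<d}$ through $A=\sum_{k\le d}A^{\le 0}L^k$, you express $xy$ as a sum of \emph{single} shorter monomials $b'\omega'$, not as a sum of ordered products $x'y'$ to which an inductive hypothesis of the form $\tau(x'y')=\tau(y'x')$ could be applied; and the analogous rewriting of $yx$ is an unrelated sum, so there is nothing left to compare.

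The missing idea is to verify the trace identity only against a multiplicative generating set: once $\tau(xs)=\tau(sx)$ holds for all $x\in A$ and all $s$ in a set generating $A$ as an algebra, one gets $\tau(x\,s_1s_2)=\tau(s_2x\,s_1)=\tau(s_1s_2\,x)$, and since $A=A^{\le 0}(R+L)^d$ it suffices to take $s\in A^{\le 0}$ or $s=\ell\in L$. The case $s\in A^{\le 0}$ is the symbol computation in degrees $d$ and $0$. For $s=\ell\in L$, write $x=u+v$ with $u\in A^{\le 0}L^d$ and $v\in A^{<d}$ (your decomposition $A=\sum_{k\le d}A^{\le 0}L^k$ gives this); then $u\ell\in A^{\le 0}L^{d+1}\subset A^{<d}$ and $\ell u\in LA^{\le 0}L^d=A^{\le 0}L^{d+1}\subset A^{<d}$, so $\tau(u\ell)=\tau(\ell u)=0$, while $\tau(v\ell)=\bar{\tau}\bigl(p_{d-1}(v)p_1(\ell)\bigr)=\bar{\tau}\bigl(p_1(\ell)p_{d-1}(v)\bigr)=\tau(\ell v)$ because $\bar{\tau}$ is a trace. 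This is precisely how the paper argues, and it avoids the $p+q>d$ case entirely.
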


\begin{proof}
Note that $A=A^{\le 0}L^d+A^{<d}$.
We have $p_d(A^{\le 0}L^dL)\subset p_d(A^{<d})=0$ and
$p_d(LA^{\le 0}L^d)=p_d(A^{\le 0}LL^d)=0$.
It follows that $\tau(al)=\tau(la)=0$ for $a\in A^{\le 0}L^d$ and $l\in L$. The
considerations above show that $\tau(al)=\tau(la)$ for $a\in A^{<d}$ and
$l\in L$, hence $\tau(al)=\tau(la)$ for all $a\in A$ and $l\in L$.
	Since $\tau(aa')=\tau(a'a)$ for all $a\in A$ and $a'\in A^{\le 0}$,
	we deduce that $\tau$ is a trace.
\end{proof}

\bigskip

The next proposition is inspired by a result of
Brundan and Kleshchev on degenerate cyclotomic Hecke algebras 
\cite[Theorem~A.2]{BrKl}.

\bigskip

\begin{prop}
\label{pr:filtsymm}
Assume $\grad\, A$ is projective and finitely generated as an 
$R$-module and assume $\tau$ and $\bar{\tau}$ are traces.

If $\bar{\tau}$ is a symmetrizing form for $\grad\,A$, then $\tau$ is a
symmetrizing form for $A$.
\end{prop}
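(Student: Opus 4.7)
The plan is to verify that $\hat\tau$ is an isomorphism by comparing it with $\hat{\bar\tau}$ through the associated graded construction, and then apply a filtered version of Lemma~\ref{le:injsurjfiltered}.

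First I would check that $A$ itself is finitely generated projective as an $R$-module. The filtration is bounded ($A^{\le -1}=0$, $A^{\le d}=A$), and for each $i$ we have a short exact sequence
\[0\to A^{\le i-1}\to A^{\le i}\to (\grad A)^i\to 0\]
which splits because $(\grad A)^i$ is projective. Induction on $i$ shows each $A^{\le i}$ (hence $A$) is a finitely generated projective $R$-module, and each inclusion $A^{\le i-1}\hookrightarrow A^{\le i}$ is a split monomorphism of $R$-modules. The same holds after applying $\Hom_R(-,R)$, so $\Hom_R(A,R)^{\le i}=\Hom_R(A/A^{\le d-i-1},R)$ fits in a split short exact sequence with quotient $\Hom_R((\grad A)^{d-i},R)$. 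Hence there is a canonical isomorphism of graded $R$-modules
\[\grad\Hom_R(A,R)\;\simeq\;\Hom_R(\grad A,R),\]
where the right-hand side is graded by placing $\Hom_R((\grad A)^{d-i},R)$ in degree $i$ (so that $\hat{\bar\tau}$, which sends $(\grad A)^i$ into $\Hom_R((\grad A)^{d-i},R)$, is homogeneous of degree $0$).

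Next I would verify that $\hat\tau:A\to\Hom_R(A,R)$ is a filtered morphism of $R$-modules, and that its associated graded map is $\hat{\bar\tau}$. For $x\in A^{\le i}$ and any $y\in A^{\le d-i-1}$, the product $xy$ lies in $A^{\le d-1}$, and $\tau$ vanishes there by construction ($\tau$ is $\bar\tau\circ p_d$, and $p_d(A^{\le d-1})=0$). So $\hat\tau(A^{\le i})\subset\Hom_R(A,R)^{\le i}$. Moreover, for $x\in A^{\le i}$ and $y\in A^{\le d-i}$, the computation $\tau(xy)=\bar\tau(p_d(xy))=\bar\tau(p_i(x)p_{d-i}(y))$ in the text shows that the induced map $(\grad A)^i\to\Hom_R((\grad A)^{d-i},R)$ is precisely $\xi(x)\mapsto\bigl(\xi(y)\mapsto\bar\tau(\xi(x)\xi(y))\bigr)$, i.e.\ the degree-$i$ component of $\hat{\bar\tau}$. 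Hence $\grad\hat\tau=\hat{\bar\tau}$ under the identification above.

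By hypothesis $\bar\tau$ is symmetrizing, so $\hat{\bar\tau}$ is an isomorphism. The proof of Lemma~\ref{le:injsurjfiltered} only uses that the source and target are filtered $R$-modules with filtration bounded below (both conditions hold here, with filtrations even bounded above by $d$); applying this to $\hat\tau$ as a morphism of filtered $R$-modules therefore gives that $\hat\tau$ is both injective and surjective, hence an isomorphism of $R$-modules. Combined with the trace property of $\tau$ (already granted), this proves $\tau$ is a symmetrizing form for $A$. The only delicate point, and thus the step that needs the most care, is the identification $\grad\Hom_R(A,R)\simeq\Hom_R(\grad A,R)$ and the verification that $\grad\hat\tau$ corresponds to $\hat{\bar\tau}$; both are routine once the projectivity of $\grad A$ is used to split the filtration.
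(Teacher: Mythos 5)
Your proof is correct and follows essentially the same route as the paper: the text preceding the proposition already records that $\hat\tau$ is a filtered morphism with $\grad(\hat\tau)=\hat{\bar\tau}$, and the paper's proof then simply invokes Lemma~\ref{le:injsurjfiltered} exactly as you do. Your write-up merely makes explicit the splitting arguments (via projectivity of $\grad A$) that the paper leaves implicit.
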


\begin{proof}
Note that $A$ is a finitely generated projective $R$-module.
Since $\hat{\bar{\tau}}$ is an isomorphism,
it follows that $\hat{\tau}$ is an isomorphism (Lemma \ref{le:injsurjfiltered}). 
\end{proof}

\bigskip

\section{Weyl algebras}
\label{se:Weylalgebra}

\medskip

Let $V$ be a finite dimensional vector space over a field $\kb$ of
characteristic $0$.
Let $\DCB(V)=\Hbt_{1,0}$ be the Weyl algebra of $V$. This is the quotient of
the tensor algebra $\Trm_{\kb}(V\oplus V^*)$ by the relations
$$[x,x']=[y,y']=0,\ [y,x]=\langle y,x\rangle \,\text{ for }\,x,x'\in V^*
\,\text{ and }\,y,y'\in V.$$
There is an isomorphism of $\kb$-modules given by multiplication:
$\kb[V]\otimes\kb[V^*]\longiso \DCB(V)$.

The $\kb$-algebra $\DCB(V)$ is filtered, with $\DCB(V)^{\le -1}=0$,
$\DCB(V)^{\le 0}=\kb[V]$, $\DCB(V)^{\le 1}=\kb[V]\oplus \kb[V]\otimes V$
and $\DCB^{\le i}=(\DCB^{\le 1})^i$ for $i\ge 2$. The associated
graded algebra $\grad\,\DCB(V)$ is $\kb[V \times V^*]$. The associated Rees
algebra $\DCB_{\hbar}(V)$ \indexnot{D}{\DCB_{\mathchar'26\mkern-12muh}(V)} 
is the quotient of $\kb[\hbar]\otimes \Trm_{\kb}(V\oplus V^*)$ by the
relations
$$[x,x']=[y,y']=0,\ [y,x]=\hbar\langle y,x\rangle \,\text{ for }\,x,x'\in V^*
\,\text{ and }\,y,y'\in V.$$

\smallskip
Consider the induced $\DCB(V)$-module
$\DCB(V)\otimes_{\kb[V^*]}\kb$, where $\kb[V^*]$ acts on $\kb$ by
evaluation at $0$. Via the canonical isomorphism
$\kb[V]\longiso \DCB(V)\otimes_{\kb[V^*]}\kb,\ a\mapsto a\otimes 1$, we obtain the
faithful action of $\DCB(V)$ by polynomial differential operators on 
$\kb[V]$: an element $x\in V^*$ acts by multiplication, while $y\in V$ acts by $\partial_y=\frac{\partial}{\partial y}$.
As a consequence of the faithfulness of the action, the centralizer of $\kb[V]$ in
$\DCB(V)$ is $\kb[V]$.

\smallskip
Note that there is an injective morphism of $\kb[\hbar]$-algebras
$$\DCB_{\hbar}(V)\hookrightarrow \kb[\hbar]\otimes \DCB(V),\ V^*\ni x\mapsto x,\ V\in y\mapsto \hbar y.$$
This provides by restriction $\kb[\hbar]\otimes\kb[V]$ with the structure of a faithful
representation of $\DCB_{\hbar}(V)$.

\chapter{Galois theory and ramification}\label{chapter:galois-rappels}

\bigskip

\section{Setting}

\medskip

Let $R$ be a commutative ring, $G$ a finite group acting on $R$ and 
$H$ a subgroup of $G$. We set $Q=R^H$ and $P=R^G$, so that 
$P \subset Q \subset R$. 
Let $\rG$ be a prime ideal of $R$. We denote by $k_R(\rG)$  \indexnot{k}{k_R(\rG)}  
the fraction field of $R/\rG$ (that is, the quotient $R_\rG/\rG R_\rG$) and by
$G_\rG^D$   
\indexnot{G}{G_\rG^D, G_\rG^I}  
the stabilizer of $\rG$ in $G$. This subgroup of $G$ acts on $R/\rG$ 
and we denote by $G_\rG^I$ the kernel of this action. In other words,
$$G_\rG^I=\{g \in G~|~\forall~r \in R,~g(r) \equiv r \mod \rG\}.$$
The group $G_\rG^D$ (respectively $G_\rG^I$) is called the 
{\it decomposition group} (respectively the {\it inertia group}) 
of $G$ at $\rG$. 

We fix in this chapter a prime ideal $\rG$ of $R$ and we set 
$\qG=\rG \cap Q$ and $\pG=\rG \cap P = \qG \cap P$:
$$\diagram
\rG \dline & \subset & R \dline \\
\qG \dline & \subset & Q \dline \\
\pG        & \subset & P 
\enddiagram$$
We also set 
$$\r_G : \Spec R \to \Spec P,$$
$$\r_H : \Spec R \to \Spec Q$$
$$\Upsilon : \Spec Q \to \Spec P\leqno{\text{and}}$$
the maps respectively induced by the inclusions $P \subset R$, $Q \subset R$ 
and $P \subset Q$. We have $\r_G = \Upsilon \circ \r_H$: in other words, the diagram 
$$\xymatrix{
\Spec R \ar[rr]^{\DS{\r_H}} \ar@/_1.2cm/[rrrr]_{\DS{\r_G}}&& \Spec Q \ar[rr]^{\DS{\Upsilon}} 
&& \Spec P
}$$
is commutative. For instance,
$$\r_G(\rG)=\pG,\qquad\r_H(\rG)=\qG\qquad \text{and}\qquad \Upsilon(\qG)=\pG.$$
Finally, we set
$$D=G_\rG^D\qquad \text{and} \qquad I=G_\rG^I.$$

\medskip

\section{Around Dedekind's Lemma}\label{sub:dedekind} 

\medskip

Recall that $(R,\times)$ is a monoid. 
Given $M$ a monoid, we denote by $\Hom_{\text{mon}}(M,R)$ the set of morphisms of monoids 
$M \to (R,\times)$, a subset of the 
$R$-module $\FC(M,R)$ of maps from $M$ to $R$.
. Given $A$ a commutative ring, we denote by $\Hom_{\mathrm{ring}}(A,R)$ 
the set of morphisms of rings from $A$ to $R$, a subset of 
$\Hom_{\mathrm{mon}}((A,\times),R)$.
\bigskip

\noindent{\bf Dedekind's Lemma.} 
{\it If $R$ is a domain, then 
$\Hom_{\mathrm{mon}}(M,R)$ is an $R$-linearly independent subset
of $\FC(M,R)$.}

\bigskip

\begin{proof}
Let $\ph_1$,\dots, $\ph_d$ be a family of distinct elements of $\Hom_{\text{mon}}(M,R)$ and  consider
$\l_1$,\dots, $\l_d$ in $R$ such that 
$$\l_1 \ph_1 + \cdots + \l_d \ph_d = 0.\leqno{(*)}$$
We shall show by induction on $d$ that $\l_1=\l_2=\cdots=\l_d=0$. 
When $d=1$, this is clear as $\ph_1(1)=1$. 

So assume that $d \ge 2$ and that there is no non-trivial $R$-linear 
relation of length $\le d-1$ between distinct 
	elements of $\Hom_{\mathrm{mon}}(M,R)$. 
Since $\ph_1 \neq \ph_2$, there exists $m_0 \in M$ such that 
$\ph_1(m_0) \neq \ph_2(m_0)$. It follows from $(*)$ that 
$$ \l_1 \ph_1(m_0 m) + \cdots + \l_d \ph_d(m_0 m) = 0$$
$$\ph_1(m_0) \cdot \bigl(\l_1 \ph_1(m) + \cdots + \l_d \ph_d(m)\bigr) = 0\leqno{\text{and}}$$
for all $m \in M$. By subtracting the second equation to the first one, we get 
$$
\sum_{i=2}^d \l_i (\ph_i(m_0)-\ph_1(m_0)) \ph_i = 0.$$
By the induction hypothesis, we have $\l_2 (\ph_2(m_0)-\ph_1(m_0))=0$, 
hence $\l_2=0$ because $R$ is a domain. The induction hypothesis 
allows also to conclude that $\l_1=\l_3=\cdots=\l_d=0$. 
\end{proof}

\bigskip

\begin{coro}\label{coro:dedekind}
Let $A$ be a commutative ring and assume that $R$ is a domain. Then  
$\Hom_{\mathrm{ring}}(A,R)$ is an $R$-linearly independent subset of $\FC(A,R)$.
\end{coro}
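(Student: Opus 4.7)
The plan is to deduce this corollary directly from Dedekind's Lemma above by viewing ring homomorphisms as a particular class of monoid homomorphisms. More precisely, any ring homomorphism $\varphi : A \to R$ is in particular a homomorphism of multiplicative monoids $(A,\times) \to (R,\times)$, so there is a canonical inclusion
\[
\Hom_{\mathrm{ring}}(A,R) \subseteq \Hom_{\mathrm{mon}}((A,\times),R)
\]
inside $\FC(A,R)$. Moreover, two ring homomorphisms that differ as elements of $\FC(A,R)$ differ a fortiori as monoid homomorphisms, so this inclusion sends distinct ring homomorphisms to distinct monoid homomorphisms.

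Applying Dedekind's Lemma to the monoid $M=(A,\times)$, the family $\Hom_{\mathrm{mon}}((A,\times),R)$ is $R$-linearly independent in $\FC(A,R)$, since $R$ is assumed to be a domain. Any subfamily of an $R$-linearly independent family is $R$-linearly independent, so the image of $\Hom_{\mathrm{ring}}(A,R)$ under the above inclusion is $R$-linearly independent. Since the inclusion is injective, this yields the desired $R$-linear independence of $\Hom_{\mathrm{ring}}(A,R)$.

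There is no real obstacle: the corollary is essentially a matter of unpacking that a ring homomorphism forgets to a multiplicative monoid homomorphism, so the whole content lies in Dedekind's Lemma itself. The only point worth noting is that the additive structure of $A$ plays no role whatsoever in the argument; one only uses the multiplicative monoid structure of $A$ and the fact that $R$ has no zero divisors.
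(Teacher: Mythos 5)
Your proof is correct and is exactly the paper's argument: the paper also observes that $\Hom_{\mathrm{ring}}(A,R)\subseteq\Hom_{\mathrm{mon}}((A,\times),R)$ and applies Dedekind's Lemma to the multiplicative monoid. Your added remark that the additive structure of $A$ plays no role is accurate and just makes explicit what the paper leaves implicit.
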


\bigskip

\begin{proof}
Indeed, the set $\Hom_{\mathrm{ring}}(A,R)$ is a subset of the set 
$\Hom_{\mathrm{mon}}((A,\times),R)$ and the corollary follows
from Dedekind's Lemma.
\end{proof}

\bigskip

\section{Decomposition group, inertia group}\label{appendice:galois}

\medskip

We recall some classical results:

\begin{prop}\label{prop:max-max}
The ideal $\rG$ is maximal if and only if $\pG$ is maximal.
\end{prop}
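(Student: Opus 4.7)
The plan is to reduce the statement to the classical fact that in an integral ring extension $A \subset B$, a prime ideal $\bG$ of $B$ is maximal if and only if its contraction $\bG \cap A$ is maximal. So the first step is to verify that $R$ is integral over $P$: since $G$ is finite, any $r \in R$ is a root of the monic polynomial $\prod_{g \in G}(T - g(r)) \in P[T]$, whose coefficients lie in $P$ because they are $G$-invariant symmetric functions of the orbit of $r$. Thus $R$ is integral over $P$, and consequently $R/\rG$ is integral over the subring $P/\pG$ of $R/\rG$ (here we use that $\pG = \rG \cap P$, so the map $P/\pG \hookrightarrow R/\rG$ is injective).

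Now both $P/\pG$ and $R/\rG$ are integral domains, and $R/\rG$ is integral over $P/\pG$. The proposition reduces to the statement: a domain $D'$ that is integral over a subdomain $D$ is a field if and only if $D$ is a field. For the ``if'' direction, assuming $D$ is a field, any nonzero $x \in D'$ satisfies a minimal monic relation $x^n + a_{n-1}x^{n-1} + \cdots + a_0 = 0$ with $a_i \in D$; minimality forces $a_0 \neq 0$ (else $x$ would be a zero divisor in the domain $D'$), and then $x^{-1} = -a_0^{-1}(x^{n-1} + a_{n-1}x^{n-2} + \cdots + a_1) \in D'$. For the ``only if'' direction, assuming $D'$ is a field, take $0 \neq a \in D$; its inverse $a^{-1} \in D'$ satisfies a monic relation $a^{-n} + b_{n-1}a^{-(n-1)} + \cdots + b_0 = 0$ with $b_i \in D$, and multiplying by $a^{n-1}$ yields $a^{-1} = -(b_{n-1} + b_{n-2}a + \cdots + b_0 a^{n-1}) \in D$, so $D$ is a field.

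Applying this equivalence to $D = P/\pG$ and $D' = R/\rG$ gives exactly the proposition. No obstacle is really expected here: the whole argument is a direct application of standard Cohen–Seidenberg type integrality results, with the only small point being the verification of integrality of $R/\pG$ over $P/\pG$, which is automatic from the monic polynomial above upon reduction modulo $\pG$.
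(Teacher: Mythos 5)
Your proof is correct: it is precisely the classical Cohen–Seidenberg argument (integrality of $R$ over $P=R^G$ via the monic polynomial $\prod_{g\in G}(T-g(r))$, then the fact that a domain integral over a subdomain is a field if and only if the subdomain is). The paper gives no proof of its own here, simply recalling the statement as a classical result from Bourbaki, and your argument is exactly the standard one being invoked.
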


\bigskip

\begin{prop}\label{galois transitif}
The group $G$ acts transitively on the fibers of $\r_G$.
\end{prop}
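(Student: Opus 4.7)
The plan is to use the classical "going-up / incomparability" machinery for integral extensions, combined with prime avoidance and the norm trick.

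First I would establish that $R$ is integral over $P$. This is immediate: for any $r \in R$, the polynomial $F_r(X) = \prod_{g \in G}(X - g(r))$ lies in $P[X]$ (its coefficients are symmetric in the $G$-orbit of $r$, hence fixed by $G$) and is monic, and $F_r(r) = 0$. Consequently, each conjugate ideal $g(\mathfrak{r}_1)$ is a prime ideal of $R$ lying over $\mathfrak{p}$, and the standard incomparability result for integral extensions says that two distinct primes of $R$ lying over the same prime $\mathfrak{p}$ of $P$ cannot be comparable.

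The heart of the argument is a contradiction. Suppose $\mathfrak{r}_2 \neq g(\mathfrak{r}_1)$ for every $g \in G$. By incomparability applied to each pair $(\mathfrak{r}_2, g(\mathfrak{r}_1))$ (all lying over $\mathfrak{p}$), we get $\mathfrak{r}_2 \not\subset g(\mathfrak{r}_1)$ for all $g \in G$. Since $G$ is finite and the $g(\mathfrak{r}_1)$ are prime, prime avoidance provides an element $x \in \mathfrak{r}_2$ such that $x \notin g(\mathfrak{r}_1)$ for any $g \in G$. Now form the norm
\[
y = \prod_{g \in G} g(x) \in R^G = P.
\]
On one hand, since $x \in \mathfrak{r}_2$ and the identity is among the $g$'s, $y \in \mathfrak{r}_2$, hence $y \in \mathfrak{r}_2 \cap P = \mathfrak{p} = \mathfrak{r}_1 \cap P \subset \mathfrak{r}_1$. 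On the other hand, $\mathfrak{r}_1$ being prime forces some factor $g(x)$ to lie in $\mathfrak{r}_1$, i.e., $x \in g^{-1}(\mathfrak{r}_1)$, contradicting the choice of $x$.

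Almost nothing in this argument is hard; the only step that requires any care is checking incomparability (so that prime avoidance is applied to a family of primes none of which contains $\mathfrak{r}_2$), and the mild subtlety that prime avoidance in its basic form needs the ideals being avoided to be prime — which they are here, as $G$-translates of $\mathfrak{r}_1$. Once those two points are set, the norm element $y = \prod_g g(x)$ closes the argument in one line.
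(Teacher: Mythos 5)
Your proof is correct: integrality of $R$ over $P=R^G$ via the monic polynomial $\prod_{g}(X-g(r))$, incomparability, prime avoidance, and the norm $\prod_g g(x)$ is exactly the classical argument, and it is the one the paper delegates to Bourbaki (Chapter~5, \S 2, Theorem~2(i)) rather than writing out. Your variant (contradiction plus prime avoidance up front) is logically equivalent to the usual presentation (covering $\rG_2\subset\bigcup_g g(\rG_1)$, then prime avoidance, then incomparability at the end), so there is nothing to add.
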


\begin{proof}
See~\cite[Chapter~5,~\S{2},~Theorem~2(i)]{bourbaki}.
\end{proof}

\begin{rema}\label{H} 
The statement above can also be applied to $H$: the group $H$ acts transitively on 
the fibers of $\r_H$.\finl
\end{rema}

\bigskip

\begin{theo}\label{bourbaki}
The field extension $k_R(\rG)/k_P(\pG)$ is normal, 
with Galois group $D/I$ ($=G_\rG^D/G_\rG^I$). 
\end{theo}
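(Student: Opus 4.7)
The strategy is to prove the three assertions in order: that $k_R(\rG)/k_P(\pG)$ is algebraic, that it is normal, and that the natural homomorphism $D/I \to \Aut(k_R(\rG)/k_P(\pG))$ is an isomorphism (the automorphism group coinciding with the Galois group for a normal extension). A preliminary reduction replaces $R$ by $S^{-1}R$ and $P$ by $S^{-1}P=P_\pG$ with $S=P\setminus\pG$, so that we may assume $\pG$ is maximal in $P$. By Proposition~\ref{prop:max-max}, $\rG$ and all its $G$-conjugates are then maximal in $R$, which will permit the use of the Chinese Remainder Theorem. One must verify that this localization is compatible with taking invariants (i.e.\ $(S^{-1}R)^G=P_\pG$) and preserves decomposition and inertia groups; both are routine since $S\subset P=R^G$.

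For the algebraicity and normality, I would observe that for any $x\in R$ the polynomial $f_x(T)=\prod_{g\in G}(T-g(x))\in P[T]$ is $G$-invariant, so its reduction $\bar f_x(T)\in (P/\pG)[T]\subset k_P(\pG)[T]$ vanishes at $\bar x\in R/\rG$. Hence every element of $R/\rG$ is algebraic over $k_P(\pG)$, and since $k_R(\rG)=\Frac(R/\rG)$, the whole extension is algebraic. For normality, the minimal polynomial of $\bar x$ over $k_P(\pG)$ divides $\bar f_x(T)$, whose roots are the elements $\overline{g(x)}$ for $g\in G$, all of which lie in $R/\rG\subset k_R(\rG)$. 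Thus every irreducible polynomial over $k_P(\pG)$ with a root in $k_R(\rG)$ splits there.

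By definition, each $d\in D$ induces an automorphism of $R/\rG$ fixing $P/\pG$ pointwise, hence an element of $\Aut(k_R(\rG)/k_P(\pG))$; the kernel of the resulting homomorphism is precisely $I$, yielding an injection $\bar\phi:D/I\hookrightarrow \Aut(k_R(\rG)/k_P(\pG))$. To show surjectivity, equivalently that $k_R(\rG)^{D/I}=k_P(\pG)$, let $\rG=\rG_1,\rG_2,\dots,\rG_r$ be the $G$-orbit of $\rG$ given by Proposition~\ref{galois transitif}; these are distinct maximal ideals of $R$, hence pairwise comaximal. Given $\bar x\in (R/\rG)^D$ with lift $x\in R$, the Chinese Remainder Theorem produces $\tilde x\in R$ with $\tilde x\equiv x\pmod{\rG}$ and $\tilde x\equiv 0\pmod{\rG_i}$ for $i\ge 2$. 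Set $y=\sum_{g\in G}g(\tilde x)\in P$: for $g\in D$ we have $g(\tilde x)\equiv g(x)\equiv x\pmod{\rG}$, while for $g\notin D$ the prime $g^{-1}(\rG)$ equals some $\rG_i$ with $i\ge 2$, so $g(\tilde x)\in \rG$. Therefore $\bar y=|D|\cdot\bar x$ in $k_R(\rG)$, and since $|D|$ is invertible in the characteristic-zero field $k_P(\pG)$, we conclude $\bar x=\bar y/|D|\in k_P(\pG)$. Extending this argument to quotients in $\Frac(R/\rG)$ is immediate.

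The main obstacle to watch is the need for comaximality of the primes above $\pG$ in order to apply CRT; this is precisely what the initial localization is designed to achieve. A secondary subtlety is the invertibility of $|D|$ in the averaging step, which is automatic in the characteristic-zero setting fixed throughout the book but would require a refinement (replacing the arithmetic mean by a purely inseparable argument on the maximal separable subextension) to obtain the result in full generality.
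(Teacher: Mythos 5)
Your proof is correct in the setting where the theorem is actually used, and it is genuinely different from what the text does: the paper offers no proof of its own here, but simply cites Bourbaki (Comm.\ Alg.\ V, \S 2, Theorem 2(ii)). Bourbaki's surjectivity argument also localizes so that the conjugates of $\rG$ become pairwise comaximal and uses the same Chinese Remainder approximation, but it then chooses a single $x\in R$ whose image generates the maximal separable subextension of $k_R(\rG)/k_P(\pG)$ and which lies in every conjugate $\rG_i\neq\rG$, and it matches an arbitrary automorphism $\sigma$ with some $g\in D$ by comparing $\sigma(\bar x)$ with the roots $\overline{g(x)}$ of $\bar f_x$; that version works in every characteristic. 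Your route — computing the fixed field of $D/I$ by averaging and invoking Artin's theorem — is cleaner but buys less generality: when the residue extension is inseparable one has $k_R(\rG)^{D/I}\neq k_P(\pG)$ (the fixed field is the purely inseparable closure of $k_P(\pG)$), so your "equivalently" in the surjectivity step, as well as the division by $|D|$, are only valid in the separable case. Since every residue field in this book contains the characteristic-zero field $\kb$, this costs nothing for the applications, but your proof does not establish the theorem at the level of generality at which the appendix states it ("let $R$ be a commutative ring"). Two small points to tighten: in the normality step, a general element of $k_R(\rG)$ is a quotient $\bar x/\bar y$ rather than the class of an element of $R$, so you should conclude normality from the fact that $k_R(\rG)$ is generated over $k_P(\pG)$ by elements $\bar x$ ($x\in R$) whose minimal polynomials split in $k_R(\rG)$, rather than asserting directly that every irreducible polynomial with a root there splits; and in the final step a $D$-invariant fraction should first be rewritten with the $D$-invariant denominator $\prod_{d\in D}d(\bar v)$ so that the averaging applies to numerator and denominator separately.
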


\begin{proof}
See~\cite[Chapter~5,~\S{2},~Theorem~2(ii)]{bourbaki}.
\end{proof}

\bigskip

\begin{coro}\label{coro bourbaki}
Let $\rG'$ be a prime ideal of $R$ containing $\rG$ and let $D'=G_{\rG'}^D$ 
and $I'=G_{\rG'}^I$. Then $D'/I'$ is isomorphic to a subquotient of $D/I$.
\end{coro}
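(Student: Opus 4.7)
The plan is to construct a natural subquotient of $D/I$ and identify it with $D'/I'$.

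First, I establish $I \subset I'$: for any $g \in I$ and any $r \in R$, we have $g(r)-r \in \rG \subset \rG'$, so $g$ acts trivially on $R/\rG'$ and thus $g \in I'$. This gives the chain of inclusions $I \subset D \cap I' \subset D \cap D' \subset D$. Moreover, since $I' \lhd D'$, we have $D \cap I' \lhd D \cap D'$. Therefore the quotient $(D \cap D')/(D \cap I')$ is, by construction, a subquotient of $D/I$: it is the quotient of the subgroup $(D \cap D')/I \leq D/I$ by the normal subgroup $(D \cap I')/I$.

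Next, I consider the natural group homomorphism
\[
\varphi : D \cap D' \longrightarrow D'/I', \qquad g \longmapsto gI'.
\]
Its kernel is $(D \cap D') \cap I' = D \cap I'$, so $\varphi$ induces an injection $(D \cap D')/(D \cap I') \hookrightarrow D'/I'$. The key step of the proof is to show that $\varphi$ is surjective, i.e., $(D \cap D') \cdot I' = D'$. Unwinding this, for any $g' \in D'$ we need to find $i \in I'$ such that $i \cdot g'$ stabilizes $\rG$; equivalently, $i^{-1}$ must send $\rG$ to $g'(\rG)$. Since $g'$ stabilizes $\rG'$ and fixes $\pG$, the prime $g'(\rG)$ lies in the finite set
\[
\Pi = \{\text{primes of } R \text{ lying over } \pG \text{ and contained in } \rG'\},
\]
which is stable under the action of $D'$ (and in particular of $I'$). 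So surjectivity of $\varphi$ is equivalent to the statement that $I'$ acts transitively on $\Pi$.

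The transitivity of $I'$ on $\Pi$ is the main obstacle. The plan is to deduce it from a refined form of Proposition \ref{galois transitif}: by Theorem \ref{bourbaki}, the quotient $D'/I'$ is the Galois group of $k_R(\rG')/k_P(\pG')$, and applying the orbit-stabilizer count in $D'$ acting on $\Pi$ (with stabilizer $D \cap D'$ at $\rG$) together with the identification $|D \cap D'|/|D \cap I'| \leq |D'/I'|$ from the injection $\varphi$ forces equality of cardinalities and hence surjectivity of $\varphi$. A clean geometric way to run this is to pass to the semilocal ring $R \otimes_P P_{\pG'}$, where $\Pi$ becomes the set of primes of this ring lying over $\pG$ and contained in the maximal ideal $\rG'$, and apply the Galois-theoretic transitivity statement in that localized setting. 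Once the surjectivity of $\varphi$ is established, we conclude that $D'/I' \cong (D \cap D')/(D \cap I')$, which is the desired subquotient of $D/I$.
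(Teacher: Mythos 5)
Your reduction is sound and is essentially the explicit form of the paper's argument: from $I\subset I'$ you get that $(D\cap I')$ is normal in $(D\cap D')$, so $(D\cap D')/(D\cap I')$ is a genuine subquotient of $D/I$; the map $\varphi$ is well defined with kernel $D\cap I'$; and the corollary reduces to the surjectivity of $\varphi$, i.e.\ to the identity $D'=I'\cdot(D\cap D')$. You are also right that this surjectivity is the whole content of the statement.

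The gap is that the surjectivity is never actually proved. The orbit--stabilizer count is circular: $|D'\cdot\rG|=|D'|/|D\cap D'|$ and $|I'\cdot\rG|=|I'|/|D\cap I'|$, so ``equality of cardinalities'' is word for word the transitivity you are trying to establish, while the inequality $|D\cap D'|/|D\cap I'|\le |D'/I'|$ coming from the injectivity of $\varphi$ merely re-expresses the trivial inclusion $I'\cdot\rG\subseteq D'\cdot\rG$; invoking Theorem~\ref{bourbaki} to name $D'/I'$ as a Galois group produces no independent bound on its order. The localization does not help either: the primes of $R\otimes_P P_{\pG'}$ lying over $\pG$ are all the primes of $R$ over $\pG$ contained in \emph{some} $G$-conjugate of $\rG'$, not only those contained in $\rG'$, so Proposition~\ref{galois transitif} applied there only restates the transitivity of the full group $G$, which you already have, and says nothing about $I'$. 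The way the paper closes exactly this point is to change the ambient data rather than to chase primes: replace $R$ by $R/\rG$ and $G$ by $D/I$, which acts faithfully on $R/\rG$. The decomposition (resp.\ inertia) group of the prime $\rG'/\rG$ in $D/I$ is $(D\cap D')/I$ (resp.\ $(D\cap I')/I$), and Theorem~\ref{bourbaki}, applied once to this quotient situation and once to the original pair $(R,G)$ at $\rG'$, identifies both $(D\cap D')/(D\cap I')$ and $D'/I'$ with the automorphism group of the normal residue extension at $\rG'$; the surjectivity you are missing is precisely the surjectivity statement contained in Theorem~\ref{bourbaki}, applied one floor down. If you want to keep your prime-chasing formulation, you must prove the transitivity of $I'$ on the $D'$-orbit of $\rG$ by some such descent to $R/\rG$ (or by a henselization argument); it does not follow from counting.
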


\begin{proof}
By replacing $R$ by $R/\rG$, $Q$ by $Q/\qG$ and $P$ by $P/\pG$, 
	the group $D/I$ can be identified with $G$ (Theorem \ref{bourbaki})
	and the corollary follows from Theorem~\ref{bourbaki}. 
\end{proof}

\bigskip

\begin{theo}\label{raynaud}
If $Q$ is unramified over $P$ at $\qG$ (i.e. if $\pG Q_\qG=\qG Q_\qG$), 
then $I$ is contained in $H$.
\end{theo}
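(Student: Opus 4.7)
The plan is to show that any $g \in I$ fixes $Q$ pointwise, at which point the Galois identification $H = \Gal(\Frac(R)/\Frac(Q))$ forces $g \in H$.

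To do this I would first use the unramifiedness hypothesis to produce a convenient generator of the local ring $Q_\qG$ over $P_\pG$. In characteristic zero the residue extension $k_Q(\qG)/k_P(\pG)$ is finite separable, so the primitive-element theorem yields $\bar\alpha \in k_Q(\qG)$ generating it over $k_P(\pG)$. Lifting to $\alpha \in Q_\qG$ and applying Nakayama's lemma with the hypothesis $\pG Q_\qG = \qG Q_\qG$, one obtains $Q_\qG = P_\pG[\alpha]$, and the minimal polynomial $f \in P_\pG[X]$ of $\alpha$ reduces modulo $\pG$ to the separable minimal polynomial of $\bar\alpha$; in particular $\alpha$ is a simple root of $f$ in $k_R(\rG)$, equivalently $f'(\alpha)$ is a unit in $R_\rG$.

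Given $g \in I$, the element $g(\alpha) \in R_\rG$ is again a root of $f$ (since $g$ fixes $P$, hence the coefficients of $f$) and satisfies $g(\alpha) \equiv \alpha \pmod{\rG R_\rG}$ by the very definition of the inertia group. Applying Hensel's lemma in the completion $\widehat{R_\rG}$ of the Noetherian local ring $R_\rG$, the simplicity of the root $\alpha$ forces uniqueness of lifts, so $g(\alpha) = \alpha$ in $\widehat{R_\rG}$. Krull's intersection theorem embeds $R_\rG$ into $\widehat{R_\rG}$, and the domain hypothesis on $R$ embeds $R$ into $R_\rG$, so the equality $g(\alpha) = \alpha$ descends all the way to $R$. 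Because every $q \in Q$ can be written as a polynomial in $\alpha$ with coefficients in $P_\pG$ (using $Q_\qG = P_\pG[\alpha]$) and $g$ fixes both $P$ and $\alpha$, this gives $g(q) = q$ for all $q \in Q$, and the Galois correspondence concludes.

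The main technical obstacle is the first step: producing $\alpha$ with $Q_\qG = P_\pG[\alpha]$ rests on the separability of the residue extension (automatic in characteristic zero) combined with a Nakayama argument which uses the unramifiedness $\pG Q_\qG = \qG Q_\qG$ essentially. Once the primitive element is in hand, Hensel's lemma supplies the rigidity needed to upgrade the mod-$\rG$ congruence $g(\alpha) \equiv \alpha$ into an equality, and the remaining completion and Galois-theoretic machinery is routine.
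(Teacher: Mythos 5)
There is a genuine gap, and it sits exactly at the step you yourself identify as the "main technical obstacle": the claim that $Q_\qG = P_\pG[\alpha]$ follows from Nakayama. For Nakayama you need $Q_\qG/P_\pG[\alpha]$ to be a finitely generated module over the local ring $P_\pG$, but $Q_\qG$ is in general \emph{not} module-finite over $P_\pG$: it is the localization of the semilocal ring $Q_\pG=Q\otimes_P P_\pG$ at just one of the (possibly several) maximal ideals lying over $\pG$, and this localization inverts elements of $Q$ lying in the other primes over $\pG$. Concretely, take $P=\ZM$, $Q=\ZM[i]$, $\pG=(5)$, $\qG=(2+i)$: the extension is unramified at $\qG$, yet $Q_\qG$ contains $1/(2-i)=(2+i)/5$, which is not even integral over $\ZM_{(5)}$, so $Q_\qG$ cannot equal $P_\pG[\alpha]$ for any $\alpha$ nor be a finite $P_\pG$-module. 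What is true is the local structure theorem for unramified (in characteristic $0$, hence \'etale) morphisms: there exist $h\in Q\setminus\qG$ and a monic $f\in P_\pG[X]$ with $f'$ invertible at $\qG$ such that $Q_h$ is a localization of $P_\pG[X]/(f)$. With that statement in hand your argument does close up (and $g$ then fixes $Q\subset Q_h$ pointwise). But that structure theorem is essentially the content of the chapter of Raynaud that the paper cites for this very theorem — the paper gives no proof, only the reference "\cite[Chapter~X,~Theorem~1]{raynaud}" — so your proposal relocates the difficulty rather than resolving it.

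Two smaller remarks. First, the Hensel-plus-completion-plus-Krull-intersection detour is unnecessary (and quietly assumes $R_\rG$ noetherian): once $f'(\alpha)$ is a unit in $R_\rG$, writing $f(g(\alpha))-f(\alpha)=(g(\alpha)-\alpha)\bigl(f'(\alpha)+(g(\alpha)-\alpha)u\bigr)=0$ with $g(\alpha)-\alpha\in\rG R_\rG$ shows directly in $R_\rG$ that $g(\alpha)=\alpha$, since the second factor is a unit. Second, your argument uses hypotheses absent from the statement as given in the appendix — characteristic $0$ (for separability of the residue extension), $R$ a domain with $G$ acting faithfully (for the final Galois-correspondence step $\Gal(\Frac(R)/\Frac(Q))=H$), and $Q$ module-finite over $P$ — all of which do hold in the paper's applications, but should be declared.
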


\begin{proof}
See~\cite[Chapter~\MakeUppercase{\romannumeral 10},~Th\'eor\`eme~1]{raynaud}.
\end{proof}

\bigskip

\section{On the $P/\pG$-algebra $Q/\pG Q$}\label{section:Q/P} 

\medskip

\subsection{Double classes} 
The classical proposition~\ref{reduction} below is a basic tool for our 
constructions. We  
provide a proof for the convenience of the reader.
Given $g \in G$, the composed morphism $Q \stackrel{g}{\longto} R \stackrel{{\mathrm{can}}}{\longto} R/\rG$ 
factors through a morphism $\gba : Q/\pG Q \to R/\rG$. The following remark is obvious:
\equat\label{eq:IGH}
\textit{Given $h \in H$ and $i \in I$, we have $\overline{igh}=\gba$.}
\endequat
As a consequence, we have a map
\equat\label{def:IGH}
\fonctio{I \backslash G/H}{\Hom_{(P/\pG)\text{-}\alg}(Q/\pG Q,R/\rG)}{IgH}{\gba}.
\endequat
Note that $\Hom_{(P/\pG)\text{-}\alg}(Q/\pG Q,R/\rG)=\Hom_{P\text{-}\alg}(Q,R/\rG)$. 
Given $\ph \in \Hom_{(P/\pG)-\text{alg}}(Q/\pG Q,R/\rG)$
we denote by $\pht$ the composition 
$Q \stackrel{{\mathrm{can}}}{\longto} Q/\pG Q \stackrel{\ph}{\longto} R/\rG$. This defines a map
\equat\label{def:ker}
\fonctio{\Hom_{(P/\pG)\text{-}\alg}(Q/\pG Q,R/\rG)}{\Upsilon^{-1}(\pG)}{\ph}{\Ker \pht.}
\endequat
Since $R/\rG$ is a domain, $\Ker \pht$ is a prime ideal of $Q$ 
and it is clear that $\Ker \pht \in \Upsilon^{-1}(\pG)$. 

Given $g \in G$, we have $g(\rG) \cap Q \in \Upsilon^{-1}(\pG)$. Moreover, for
$h \in H$ and $d \in D$ we have
$$hgd(\rG) \cap Q=g(\rG) \cap Q.$$
We obtain a map 
\equat\label{def:DGH}
\fonctio{D\backslash G / H}{\Upsilon^{-1}(\pG)}{DgH}{g^{-1}(\rG) \cap Q.}
\endequat

\bigskip

\begin{prop}\label{reduction}
The map $I \backslash G/H \longto \Hom_{(P/\pG)\text{-}\alg}(Q/\pG Q,R/\rG)$ 
	defined in (\ref{def:IGH}) is bijective, as well as the map 
	$D\backslash G / H \to \Upsilon^{-1}(\pG)$ defined in (\ref{def:DGH}).
	Moreover, 
the diagram
$$\diagram
I \backslash G / H \xto[0,3]_{\DS{\sim} \qquad\qquad}^{\DS{IgH \mapsto \gba\quad\quad\quad}} 
\ddto_{\DS{\mathrm{can}}} &&& \Hom_{(P/\pG)\text{-}\alg}(Q/\pG Q,R/\rG) \ddto^{\DS{\ph \mapsto \Ker \pht}} \\
&&&\\
D\backslash G / H \xto[0,3]^{\DS{\sim}}_{\DS{DgH \mapsto g^{-1}(\rG) \cap Q}} &&& \Upsilon^{-1}(\pG) \\
\enddiagram$$
is commutative.
\end{prop}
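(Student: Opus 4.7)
The proof breaks naturally into three pieces, to be handled in this order so that the easy bottom-map argument and the commutativity can be reused for the harder top-map argument. Commutativity of the square is immediate: the composition $Q\to Q/\pG Q\xrightarrow{\bar{g}}R/\rG$ sends $q\mapsto g(q)\bmod\rG$, so its kernel is $\{q\in Q:g(q)\in\rG\}=g^{-1}(\rG)\cap Q$. Well-definedness of both horizontal maps is also formal: for the top, if $g'=igh$ with $i\in I$, $h\in H$, then for $q\in Q$ one has $\bar{g'}(q)=ig(h(q))\bmod\rG=g(q)\bmod\rG$, using that $I$ acts trivially on $R/\rG$ and $h$ fixes $q\in Q$; for the bottom, only $d(\rG)=\rG$ for $d\in D$ is needed, together with the same fact about $H$.

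Next I would dispatch the bottom map, using Proposition~\ref{galois transitif} twice. Surjectivity: any $\qG'\in\Upsilon^{-1}(\pG)$ lifts by going-up to a prime $\rG'$ of $R$ above $\pG$, and transitivity of $G$ on the fiber of $\r_G$ over $\pG$ gives $\rG'=g(\rG)$ for some $g$, so $\qG'=g(\rG)\cap Q$ is the image of $Dg^{-1}H$. Injectivity: if $g_1^{-1}(\rG)\cap Q=g_2^{-1}(\rG)\cap Q$, then the two primes $g_i^{-1}(\rG)$ of $R$ lie over a common prime of $Q$, so the same transitivity applied to $H$ acting on $R$ with $R^H=Q$ (Remark~\ref{H}) yields $h\in H$ with $h(g_1^{-1}(\rG))=g_2^{-1}(\rG)$, equivalently $g_2hg_1^{-1}\in D$, so $Dg_1H=Dg_2H$.

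The top map is then handled through the commutative square together with a Galois-theoretic analysis. For injectivity, $\bar{g_1}=\bar{g_2}$ combined with commutativity and the previous step forces $Dg_1H=Dg_2H$, so write $g_2=dg_1h$ with $d\in D$, $h\in H$; the condition $\bar{g_1}=\bar{g_2}$ then reduces to $d$ fixing the image of $g_1(Q)=R^{H'}$ in $R/\rG$, where $H':=g_1Hg_1^{-1}$. For surjectivity, given $\ph$, I would set $\qG':=\ker\pht$; by bottom-map surjectivity $\qG'=g^{-1}(\rG)\cap Q$ for some $g$, so $\ph$ and $\bar{g}$ both induce $P/\pG$-algebra embeddings $Q/\qG'\hookrightarrow R/\rG$. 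The crux of the argument, and the main obstacle, is the following Galois-theoretic identification, needed for both directions: applying Theorem~\ref{bourbaki} to the action of $H'$ on $R$ gives $\mathrm{Gal}(k_R(\rG)/k_{R^{H'}}(\rG\cap R^{H'}))=(D\cap H')/(I\cap H')$, which sits inside $\mathrm{Gal}(k_R(\rG)/k_P(\pG))=D/I$ as the subgroup $(D\cap H')I/I$. By the Galois correspondence, the elements of $D/I$ fixing the intermediate subfield $k_{R^{H'}}(\rG\cap R^{H'})$ are exactly those in $(D\cap H')I/I$, and any two $k_P(\pG)$-embeddings of this subfield into $k_R(\rG)$ differ by the action of some element of $D/I$. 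For injectivity this forces $d\in I(D\cap H')$, so $d=i\cdot g_1h'g_1^{-1}$ with $i\in I$ and $h'\in H$, giving $g_2=ig_1(h'h)\in Ig_1H$. For surjectivity, $\ph$ and $\bar{g}$ differ by such a $d\in D$, whence $\ph=\overline{dg}$.
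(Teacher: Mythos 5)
Your proof is correct, and its overall architecture (bottom map first, then the top map deduced from it together with Theorem~\ref{bourbaki}) inverts the paper's order but reaches the same conclusions. The genuinely different step is the injectivity of $I\backslash G/H\to\Hom_{(P/\pG)\text{-}\alg}(Q/\pG Q,R/\rG)$. The paper proves it directly with Dedekind's Lemma: from $\bar{g}_1=\bar{g}_2$ one gets $\sum_{h\in H}g_1h(r)\equiv\sum_{h\in H}g_2h(r)\bmod\rG$ for all $r\in R$, and linear independence of the distinct ring homomorphisms $R\to R/\rG$ forces $g_1\equiv g_2h\bmod\rG$ for some $h\in H$, i.e.\ $g_2hg_1^{-1}\in I$ --- a short argument needing only that $R/\rG$ is a domain. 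You instead first descend to $D\backslash G/H$ via the commutative square and the bottom map's injectivity, write $g_2=dg_1h$, and then identify the elements of $D/I$ fixing $k_{R^{H'}}(\rG\cap R^{H'})$ with $(D\cap H')I/I$ using Lemma~\ref{gal:bourbaki}; this is valid (the direction you need follows from the surjectivity of $(D\cap H')/(I\cap H')$ onto $\mathrm{Aut}\bigl(k_R(\rG)/k_{R^{H'}}(\rG\cap R^{H'})\bigr)$ together with the injectivity of $D/I\to\mathrm{Aut}(k_R(\rG))$, and in the paper's characteristic-zero setting the Galois correspondence applies without separability caveats). Your route is more structural and makes the role of the intermediate field $g_1(Q)=R^{H'}$ explicit, at the cost of invoking the full decomposition/inertia machinery where a one-line character-independence argument suffices; the paper's Dedekind argument is the more elementary and more robust of the two. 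The surjectivity arguments are essentially identical --- you apply Theorem~\ref{bourbaki} at $\rG$ after transporting by the bottom map, whereas the paper applies it at $\rG'=g(\rG)$, but both reduce to realizing a residue-field embedding by an element of the decomposition group.
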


\bigskip

\begin{proof}
Let us start by showing that the first map is injective. 
Let $g$ and $g'$ be two elements of $G$ such that $\gba=\gba'$. This means that 
$$\forall~q \in Q,~g(q) \equiv g'(q) \mod \rG.$$
Consequently,
$$\forall~r \in R,~\sum_{h \in H} g h(r) \equiv \sum_{h \in H} g'h(r) \mod \rG.$$
By Dedekind's Lemma, the family of morphisms 
	of rings $R \to R/\rG$ is $(R/\rG)$-linearly independent. 
So there exists $h \in H$ such that 
$$\forall~r \in R,~g (r) \equiv g' h(r) \mod \rG$$
or, equivalently, 
$$\forall~r \in R, ~ g' h (g^{-1}(r)) \equiv r \mod \rG.$$
In other words, $g' h g^{-1} \in I$ hence $g' \in I g H$.

\medskip

	Let us now show that the first map is surjective. Let $\ph \in \Hom_{(P/\pG)-\text{alg}}(Q/\pG Q,R/\rG)$ 
and let $\qG'=\Ker \pht$. Since $\ph$ is $(P/\pG)$-linear, we have $\qG' \cap P=\pG$. 
Let $\rG'$ be a prime ideal of $R$ lying over $\qG'$. There exists $g \in G$ such that 
$\rG'=g(\rG)$. So the map $g \circ \pht : Q \to R/\rG'$ 
has $\qG'=\rG' \cap Q$ for kernel and is $P$-linear. By Theorem~\ref{bourbaki}, 
there exists $d \in G_{\rG'}^D$ such that 
$g \circ \pht(q) \equiv d(q) \mod \rG'$ for all $q \in Q$. 
Hence, $\pht(q) \equiv g^{-1}d(q) \mod \rG$, that is, 
$\ph = \overline{g^{-1}d}$. 
% \end{proof}
% 
% \bigskip
% 
% % 
% % 
% % \begin{prop}\label{prop:fibre premier}
% % L'application $D\backslash G / H \to \Upsilon^{-1}(\pG)$ d\'efinie en~\ref{def:DGH} est bijective 
% % et le diagramme suivant est commutatif~:
% % $$\diagram
% % I \backslash G / H \xto[0,3]^{\DS{\sim} \qquad\qquad}_{\DS{IgH \mapsto \gti\quad\quad\quad}} 
% % \ddto_{\DS{\mathrm{can}}} &&& \Hom_{(P/\pG)\text{-}\alg}(Q/\pG Q,R/\rG) \ddto^{\DS{\ph \mapsto \Ker \pht}} \\
% % &&&\\
% % D\backslash G / H \xto[0,3]^{\DS{\sim}}_{\DS{DgH \mapsto g^{-1}(\rG) \cap Q}} &&& \Upsilon^{-1}(\pG). \\
% % \enddiagram$$
% % \end{prop}
% 
% % \begin{rema}\label{rem:def}
% % Dans le diagramme ci-dessus, les bijections horizontales sont donn\'ees par~\ref{def:IGH} et 
% % \ref{def:DGH}, l'application verticale de droite est donn\'ee par~\ref{def:ker} et 
% % l'application verticale de gauche est l'application canonique 
% % $I \backslash G/H \to D \backslash G/H$, $IgH \mapsto DgH$.\finl
% % \end{rema}
% 
% \begin{proof}

\medskip

Let us now show that the second map is bijective. 
Given $\qG' \in \Upsilon^{-1}(\pG)$, there exists $\rG' \in \Spec R$ such that $\qG' \cap Q=\rG'$. 
Also, $\rG' \cap P=\qG' \cap P = \pG$ and so, by Proposition~\ref{galois transitif}, 
there exists $g \in G$ such that $\rG' = g(\rG)$. This shows that the bottom 
horizontal row is surjective. The injectivity follows again from 
	Proposition~\ref{galois transitif} (applied to the extension $R/Q$). 

\medskip

The commutativity of the diagram follows from the previous arguments. 
\end{proof}

\bigskip

\subsection{Residue fields}
Let $g \in G$. We put $\qG_g=\rG \cap g(Q)$. 
Note that $\qG_g \cap P=\pG$ and that we obtain a sequence of morphisms of rings 
$P/\pG \longinjto g(Q)/\qG_g \longinjto R/\rG$. So we have a sequence of inclusions of fields 
$$k_P(\pG) \quad \subset \quad k_{g(Q)}(\qG_g) \quad \subset \quad k_R(\rG).$$

\bigskip

\begin{lem}\label{gal:bourbaki}
The extension $k_R(\rG)/k_{g(Q)}(\qG_g)$ is normal with Galois group 
$(D \cap \lexp{g}{H})/(I \cap \lexp{g}{H})$.
\end{lem}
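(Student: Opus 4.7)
The key observation is that $g(Q) = g(R^H) = R^{gHg^{-1}} = R^{\lexp{g}{H}}$, since if $r \in R$ satisfies $h(r) = r$ for all $h \in H$, then $(ghg^{-1})(g(r)) = g(r)$ for all $h \in H$, and conversely. Thus the inclusion $g(Q) \subset R$ realizes $R$ as a ring acted on by the finite subgroup $\lexp{g}{H}$ of $G$, with fixed ring precisely $g(Q)$.

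The plan is to apply Theorem \ref{bourbaki} directly to this setup, with the role of the big group played by $\lexp{g}{H}$ and the role of $P$ played by $g(Q) = R^{\lexp{g}{H}}$. Since $\qG_g = \rG \cap g(Q)$, this theorem gives that the extension $k_R(\rG)/k_{g(Q)}(\qG_g)$ is normal with Galois group $D'/I'$, where $D'$ and $I'$ denote respectively the decomposition and inertia groups of $\rG$ computed inside the group $\lexp{g}{H}$ acting on $R$.

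It then remains to identify $D'$ and $I'$. Directly from the definitions,
$$D' = \{x \in \lexp{g}{H} \mid x(\rG) = \rG\} = \lexp{g}{H} \cap D,$$
since an element of $\lexp{g}{H} \subseteq G$ stabilizes $\rG$ exactly when it belongs to the stabilizer $D$ of $\rG$ in the full group $G$. Similarly,
$$I' = \{x \in \lexp{g}{H} \mid x(r) \equiv r \!\!\mod \rG \text{ for all } r \in R\} = \lexp{g}{H} \cap I.$$
Combining these identifications with the preceding paragraph yields exactly the stated Galois group $(D \cap \lexp{g}{H})/(I \cap \lexp{g}{H})$.

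There is no real obstacle: the lemma is a direct consequence of Theorem \ref{bourbaki} once one recognizes $g(Q)$ as the invariant subring of the conjugate subgroup $\lexp{g}{H}$ and matches up the decomposition/inertia data accordingly. The only minor point worth checking explicitly is that Theorem \ref{bourbaki}, as stated, applies to any finite group acting on a commutative ring (here $\lexp{g}{H}$ on $R$), not merely to the distinguished group $G$ fixed at the start of the chapter; this is of course built into its proof in \cite[Chapter~5,~\S{2}]{bourbaki}.
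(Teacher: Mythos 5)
Your proof is correct and follows exactly the paper's argument: the paper's own proof consists precisely of the observation that $g(Q)=R^{\lexp{g}{H}}$ combined with an application of Theorem~\ref{bourbaki}. Your explicit identification of the decomposition and inertia groups of $\rG$ inside $\lexp{g}{H}$ as $D\cap\lexp{g}{H}$ and $I\cap\lexp{g}{H}$ just spells out what the paper leaves implicit.
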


%\begin{rema}\label{N} 
%Note that $(D \cap \lexp{g}{H})/(I \cap \lexp{g}{H})$ is naturally a subgroup 
%$D/I$.\finl 
%\end{rema}

\begin{proof}
This follows from the fact that $g(Q)=R^{\lexp{g}{H}}$ and from Theorem~\ref{bourbaki}.
\end{proof}

\medskip

\begin{coro}\label{dgh igh}
Assume that for all prime ideals $\qG' \in \Upsilon^{-1}(\pG)$ we have 
$k_Q(\qG')=k_P(\pG)$. Then $D\backslash G /H=I\backslash G/H$.
\end{coro}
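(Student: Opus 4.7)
My plan is to reduce the equality $D\backslash G/H = I\backslash G/H$ to a statement about double cosets. Since $I \subset D$, the natural map $I\backslash G/H \twoheadrightarrow D\backslash G/H$ is always a surjection; what must be shown is injectivity, i.e., that $DgH = IgH$ for every $g \in G$. Unfolding this condition, $DgH = IgH$ is equivalent to requiring that each $d \in D$ can be written $d = igh g^{-1}$ with $i \in I$ and $h \in H$, which in turn is equivalent to the inclusion
$$D \subset I \cdot ({}^{g}H),$$
where ${}^{g}H = gHg^{-1}$.

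To establish this inclusion I plan to exploit Galois-theoretic cardinality counting. Set $\qG_g = \rG \cap g(Q)$; this is a prime of $g(Q) = R^{{}^g H}$, and transporting through $g$ it corresponds to the prime $g^{-1}(\rG) \cap Q$ of $Q$, which lies over $\pG$. Hence the hypothesis yields $k_{g(Q)}(\qG_g) = k_P(\pG)$. Apply Lemma \ref{gal:bourbaki} to the inclusion $g(Q) \subset R$: the extension $k_R(\rG)/k_{g(Q)}(\qG_g)$ is normal with Galois group $(D \cap {}^g H)/(I \cap {}^g H)$. On the other hand, by Theorem \ref{bourbaki}, the extension $k_R(\rG)/k_P(\pG)$ is normal with Galois group $D/I$. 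Since the two fields $k_{g(Q)}(\qG_g)$ and $k_P(\pG)$ coincide, we obtain
$$\bigl|(D \cap {}^g H)/(I \cap {}^g H)\bigr| = |D/I|.$$

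The inclusion $D \cap {}^g H \hookrightarrow D$ passes to a map
$$(D \cap {}^g H)/(I \cap {}^g H) \longrightarrow D/I,$$
which is injective because its kernel consists of elements of $D \cap {}^g H$ lying in $I$, i.e., lying in $I \cap {}^g H$. Combined with the cardinality equality above, this map is a bijection, yielding $D = I \cdot (D \cap {}^g H) \subset I \cdot {}^g H$, which is exactly what was needed.

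The argument is short and largely bookkeeping; the only genuine subtlety is making sure that the prime $\qG_g$ of $g(Q)$ is handled correctly so that the residue-field hypothesis truly applies to it — one has to transport through the automorphism $g$ to see that $\qG_g$ corresponds to a prime of $Q$ over $\pG$, and this is where the hypothesis being formulated in terms of \emph{all} $\qG' \in \Upsilon^{-1}(\pG)$ (rather than just $\qG$ itself) is essential.
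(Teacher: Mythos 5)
Your proof is correct and follows essentially the same route as the paper: both deduce from Lemma~\ref{gal:bourbaki} and Theorem~\ref{bourbaki} that $(D\cap\lexp{g}{H})/(I\cap\lexp{g}{H})\simeq D/I$, hence $D=I\cdot(D\cap\lexp{g}{H})$ for all $g$, and then conclude $DgH=IgH$ by the same double-coset manipulation. Your write-up merely makes explicit two points the paper leaves implicit (the injectivity of the comparison map of quotient groups, and the transport of $\qG_g$ through $g$ to a prime of $Q$ over $\pG$), which is fine.
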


\begin{proof}
By Lemma~\ref{gal:bourbaki} and Theorem~\ref{bourbaki}, 
it follows from the assumption that, for all $g \in G$, we have
$(D \cap \lexp{g}{H})/(I\cap \lexp{g}{H}) \simeq D/I$. In other words, 
$$\forall~g \in G,~D = I \cdot (D \cap \lexp{g}{H}).$$
Now let $g \in G$ and $d \in D$. Then there exists $i \in I$ and $h \in H$ such that 
$d = ighg^{-1}$, that is $d g = igh$. We deduce that $DgH=IgH$. 
\end{proof}

\medskip

\begin{lem}\label{inertie}
Assume that $Q$ is unramified over $P$ 
	at all prime ideals in $\Upsilon^{-1}(\pG)$.
Then $I \subset \bigcap_{g \in G} \lexp{g}{H}$. 
\end{lem}

\begin{proof}
Let $g \in G$. Since $g(\rG) \cap Q \in \Upsilon^{-1}(\pG)$ it follows from 
Theorem~\ref{raynaud} that $\lexp{g}{I} \subset H$ 
(since $\lexp{g}{I}$ is the inertia group of $g(\rG)$). Hence, $I \subset \lexp{g^{-1}}{H}$. 
\end{proof}

\medskip

\begin{prop}\label{cloture galoisienne}
If $I=\bigcap_{g \in G} \lexp{g}{H} = 1$ and if the extension $k_R(\rG)/k_P(\pG)$ is separable, 
then $k_R(\rG)/k_P(\pG)$ is a Galois closure of the family 
	of extensions $k_{\lexp{g}{Q}}(g(\qG))/k_P(\pG)$ for
$g \in [G / H]$. 
\end{prop}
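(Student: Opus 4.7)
The plan is to work entirely inside $L := k_R(\rG)$, which by Theorem \ref{bourbaki} combined with the separability hypothesis is a Galois extension of $k := k_P(\pG)$ with Galois group $D/I = D$ (using $I = 1$). Since each intermediate field $L_g := k_{\lexp{g}{Q}}(\qG_g)$ is already contained in the Galois extension $L/k$, the Galois closure of the family $\{L_g\}_{g \in [D\backslash G/H]}$ over $k$ is automatically a subfield of $L$, so the task reduces to identifying the subgroup of $D$ that fixes this Galois closure and showing it is trivial.

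First I would use Lemma \ref{gal:bourbaki}, which (again thanks to $I=1$) gives $\Gal(L/L_g) = D \cap \lexp{g}{H}$, so that $L_g = L^{D \cap \lexp{g}{H}}$ via the Galois correspondence. The compositum of the $L_g$'s over a set of double coset representatives therefore corresponds to the subgroup $\bigcap_{g \in [D\backslash G/H]} (D \cap \lexp{g}{H})$ of $D$. The Galois closure of the family is the compositum of all $D$-conjugates of the $L_g$'s; since for $d \in D$ one has $d \cdot L_g = L_{dg}$ (as the action of $D$ on $L$ is induced by its action on $R/\rG$, and $d(g(Q)) = (dg)(Q)$), passing from a transversal to all of $G$ simply means extending the intersection:
$$\mathrm{Gal}\bigl(L/\text{(Galois closure)}\bigr) \;=\; \bigcap_{g \in G} (D \cap \lexp{g}{H}) \;=\; D \cap \Bigl(\bigcap_{g \in G} \lexp{g}{H}\Bigr).$$

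The hypothesis $\bigcap_{g \in G} \lexp{g}{H} = 1$ now gives that this subgroup is trivial, so by the Galois correspondence the Galois closure equals $L^{\{1\}} = L$, as required. There is no real obstacle here once one has noticed that the hypothesis $I = 1$ upgrades the normal extension of Theorem \ref{bourbaki} to a Galois extension; the only point requiring slight care is the bookkeeping in passing from a double coset transversal to the full intersection over $G$, which is justified by the observation that the Galois closure of a family of subfields of a Galois extension is obtained by closing the family under the action of the Galois group.
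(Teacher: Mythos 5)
Your proof is correct and follows essentially the same route as the paper's: identify $\Gal(L/k)=D$ via Theorem~\ref{bourbaki} and the hypothesis $I=1$, use Lemma~\ref{gal:bourbaki} to get $\Gal(L/L_g)=D\cap\lexp{g}{H}$, and compute the Galois group of the normal closure as the intersection of the $D$-conjugates of these subgroups, which collapses to $\bigcap_{g\in G}\lexp{g}{H}=1$. The paper phrases the conjugation step group-theoretically while you phrase it via the translated subfields $d\cdot L_g=L_{dg}$, but these are the same argument.
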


\noindent{\sc Remark - } 
If $R$ is a domain (which implies that $P$ and $Q$ are domains) and if $G$ 
acts faithfully on $R$, then the assumption
$\bigcap_{g \in G} \lexp{g}{H} = 1$ is equivalent to the fact that 
the extension $\Frac(R)/\Frac(P)$ is a Galois closure of $\Frac(Q)/\Frac(P)$.

Note also that the assumption $I=1$ implies that $G$ acts faithfully.\finl

\bigskip

\begin{proof}
By Theorem~\ref{bourbaki}, 
the extension $k_R(\rG)/k_P(\pG)$ is normal with Galois group $D$. 
By Lemma~\ref{gal:bourbaki}, the extension 
$k_R(\rG)/k_{g(Q)}(\qG_g)$ is normal with Galois group  $D \cap \lexp{g}{H}$. 

Let $k$ be a normal closure of the family of extensions 
$k_{g(Q)}(\qG_g)/k_P(\pG)$, $g \in [ G / D]$. The
Galois group $\Gal(k_R(\rG)/k)$ is the intersection of the $D$-conjugates
of the groups $D \cap \lexp{g}{H}$, for $g$ running in $[ G / D]$. 
So
$$\Gal(k_R(\rG)/k)=\bigcap_{\stackrel{\SS g \in [ G/D]}{d \in D}} 
\lexp{d}{(D \cap \lexp{g}{H})}=\bigcap_{g \in G}
	(D \cap \lexp{g}{H)}=D\cap \bigcap_{g \in G} \lexp{g}{H}=1.$$
The proposition follows.
\end{proof}

\bigskip

\subsection{The case of fields}\label{subsection:corps} 
The discussion above simplifies considerably when $R$ is a field.

\medskip

\boitegrise{{\bf Assumption.} 
{\it In this subsection, and only in this subsection, 
%we keep 
%the notation of the previous subsection ($P$, $Q$, $R$, $G$, $H$, $D$, $I$...) 
we assume that $R$ is a field: it will be denoted by $M$. We set 
$L=Q=M^H$ and $K=P=M^G$. We also assume that $G$ acts faithfully
on $M$. Hence, $M/K$ is a Galois extension with Galois group $G$ and 
$M/L$ is a Galois extension with Galois group $H$.}}{0.75\textwidth}

\medskip

It follows from the assumption that $\pG=\qG=\rG=0$ and that $D=G$ and $I=1$. 
Hence Proposition~\ref{reduction} provides a bijective map 
$$G/H \stackrel{\sim}{\longleftrightarrow} \Hom_{K-\text{alg}}(L,M).$$
Given $g \in G$, the morphism of $K$-algebras $L \to M$, $q \mapsto g(q)$, extends to 
a morphism of $M$-algebras 
$$\fonction{g_L}{M \otimes_K L}{M}{m \otimes_K l}{mg(l).}$$

\bigskip

\begin{prop}\label{iso galois}
The morphism of $M$-algebras 
$$\sum_{g \in [G/H]} g_L : M \otimes_K L \longto \mathop{\bigoplus}_{g \in [G/H]} M$$
is an isomorphism.
\end{prop}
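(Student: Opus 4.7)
The plan is to exhibit this as a standard consequence of linear independence of characters (Dedekind's lemma, as recorded in Corollary \ref{coro:dedekind}) together with a dimension count.

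First I would observe that both sides are finite-dimensional $M$-vector spaces of the same dimension. Since $M/K$ is Galois with group $G$ and $L = M^H$, the extension $L/K$ is separable of degree $[G:H]$. Hence $\dim_M(M \otimes_K L) = [L:K] = [G:H] = \dim_M \bigl(\bigoplus_{g \in [G/H]} M\bigr)$. This reduces the claim to injectivity of the proposed morphism.

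For injectivity, the key point is that the $[G:H]$ restrictions $g|_L : L \to M$, indexed by $g \in [G/H]$, are precisely the distinct $K$-algebra embeddings of $L$ into $M$ (two such restrictions agree if and only if the cosets coincide, because $H$ is exactly the stabilizer of $L$ pointwise). By Corollary \ref{coro:dedekind} (applied to $A = L$ and the domain $R = M$), these embeddings form an $M$-linearly independent family in $\Hom_{\mathrm{ring}}(L, M) \subset \FC(L, M)$. Viewing each $g_L$ as an $M$-linear form on $M \otimes_K L$ via $m \otimes l \mapsto m\,g(l)$, this linear independence says exactly that the family $\{g_L\}_{g \in [G/H]}$ is $M$-linearly independent in the dual $\Hom_M(M \otimes_K L, M)$. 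Since this dual has $M$-dimension $[G:H]$, the $g_L$'s form an $M$-basis of it, and therefore their product map $M \otimes_K L \to \bigoplus_{g \in [G/H]} M$ is injective. Combined with the dimension count, it is an isomorphism.

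The only real step is the appeal to Dedekind; everything else is bookkeeping. There is no serious obstacle, but one should be careful to verify that the $g|_L$ for $g$ ranging over a set of representatives of $G/H$ really are pairwise distinct (equivalently, that $H$ is exactly the subgroup of $G$ fixing $L$ pointwise), which is immediate from $L = M^H$ and the faithfulness of the $G$-action.
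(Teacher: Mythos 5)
Your proof is correct and follows essentially the same route as the paper: a dimension count reducing the claim to injectivity, plus the $M$-linear independence of the embeddings $g|_L$ for $g\in[G/H]$ supplied by Corollary \ref{coro:dedekind} to Dedekind's Lemma. The extra care you take in checking that the restrictions $g|_L$ are pairwise distinct (i.e.\ that $H$ is exactly the pointwise stabilizer of $L$) is a detail the paper leaves implicit, but the argument is the same.
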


\begin{proof}
Since $L$ is a $K$-vector space of dimension $|G/H|$, it follows that
$M \otimes_K L$ is an $M$-vector space of dimension $|G/H|$. 
It is enough to show that  $\sum_{g \in [G/H]} g_L$ is injective, 
which is equivalent to the $M$-linear independence of the family of maps 
$L \to M$, $q \mapsto g(q)$, where $g$ runs over $[G/H]$.
Corollary~\ref{coro:dedekind} provides the conclusion.
\end{proof}

\bigskip

\section{Integral closure}\label{section:cloture integrale}

\medskip

\begin{prop}\label{int clos}
Let $f \in P[\tb]$, let $P'$ be a $P$-algebra containing $P$ and let $g \in P'[\tb]$. 
Assume that $f$ and $g$ are monic and that $g$ 
divides $f$ (in $P'[\tb]$). Then the coefficients of $g$ are integral over $P$.
\end{prop}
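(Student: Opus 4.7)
The plan is to avoid any uniqueness-of-factorization issue by splitting $g$ (rather than $f$) over a suitable extension of $P'$, and then reading off the coefficients of $g$ as elementary symmetric functions of its roots, which will turn out to be integral over $P$.

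First I would construct a commutative $P'$-algebra $S$ containing $P'$ as a subring in which $g$ splits as a product of monic linear factors. This is the standard splitting-ring construction, done by induction on $\deg g$: if $g$ is not already linear, set $S_1 = P'[\tb]/(g)$. Since $g$ is monic, $S_1$ is a free $P'$-module with basis $1, \tb, \ldots, \tb^{\deg g - 1}$, so the canonical map $P' \to S_1$ is injective. The image $\beta_1$ of $\tb$ in $S_1$ satisfies $g(\beta_1) = 0$, and by polynomial division (possible because $\tb - \beta_1$ is monic) we get $g(\tb) = (\tb - \beta_1) h(\tb)$ in $S_1[\tb]$ with $h$ monic of degree $\deg g - 1$. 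Iterating the construction with $S_1$ in place of $P'$ and $h$ in place of $g$, we end up with a ring $S \supseteq P'$ in which
$$g(\tb) = \prod_{i=1}^{m}(\tb - \beta_i), \qquad \beta_i \in S, \qquad m = \deg g.$$

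Next I would observe that every $\beta_i$ is a root of $f$ in $S$. Indeed, since $g \mid f$ in $P'[\tb] \subset S[\tb]$, we have $f = g \cdot q$ in $S[\tb]$ for some $q \in P'[\tb]$; evaluating at $\beta_i$ gives $f(\beta_i) = g(\beta_i)\, q(\beta_i) = 0$. Because $f$ is monic with coefficients in $P$, this precisely says that each $\beta_i$ is integral over $P$, so the subring $P[\beta_1,\ldots,\beta_m] \subseteq S$ is integral over $P$.

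Finally, expanding the factorization of $g$ in $S[\tb]$ and comparing with the coefficients of $g \in P'[\tb] \subset S[\tb]$, we obtain
$$g(\tb) = \tb^m + \sum_{k=1}^{m} (-1)^{k} e_k(\beta_1,\ldots,\beta_m)\, \tb^{m-k},$$
where $e_k$ is the $k$-th elementary symmetric polynomial. Each coefficient of $g$ therefore belongs to $P[\beta_1,\ldots,\beta_m]$, which is integral over $P$, so the coefficients of $g$ are integral over $P$, as required. The only subtle point in the argument is that $S$ need not be a domain, which is why I split $g$ rather than $f$: splitting $f$ would leave one trying to prove, without any factorial property of $S[\tb]$, that $g$ is a product of a subset of the linear factors of $f$; splitting $g$ sidesteps this issue entirely, since symmetric functions of roots are all that is needed.
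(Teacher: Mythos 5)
Your proof is correct. The paper itself gives no argument but simply cites Bourbaki (Chap.~V, \S 1, Proposition 11), and your splitting-ring construction for $g$ followed by the observation that its roots are roots of the monic $f\in P[\tb]$, hence integral over $P$, so that the coefficients of $g$ are elementary symmetric functions of integral elements, is precisely the classical proof underlying that reference.
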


\begin{proof}
See~\cite[Chapter~5,~\S{1},~Proposition~11]{bourbaki}.
\end{proof}

\begin{coro}\label{minimal clos}
If $P$ is integral and integrally closed, 
with fraction field $K$, if $A$ is a $K$-algebra and if 
$x \in A$ is integral over $P$, then the minimal polynomial of $x$ over $K$ 
belongs to $P[\tb]$.
\end{coro}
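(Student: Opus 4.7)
The plan is to deduce this corollary directly from Proposition \ref{int clos} together with the defining property of integral closedness. The argument is essentially a one-line application of the preceding proposition.

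First I would use the hypothesis that $x$ is integral over $P$ to obtain a monic polynomial $f \in P[\tb]$ such that $f(x) = 0$. Let $g \in K[\tb]$ denote the minimal polynomial of $x$ over $K$; it is monic by definition, and since $f(x)=0$, the polynomial $g$ divides $f$ in the principal ideal domain $K[\tb]$. At this point I would invoke Proposition \ref{int clos} with $P' = K$: the hypotheses there require $f \in P[\tb]$ monic, $P' \supset P$ a $P$-algebra, and $g \in P'[\tb]$ monic with $g \mid f$ in $P'[\tb]$, all of which are satisfied. The conclusion is that the coefficients of $g$ are integral over $P$.

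To finish, I would use the assumption that $P$ is integrally closed. By definition this means that every element of $K$ which is integral over $P$ lies in $P$. Since the coefficients of $g$ are elements of $K$ that are integral over $P$, they must belong to $P$, so $g \in P[\tb]$, as claimed.

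There is no real obstacle here: the statement is a standard consequence of Proposition \ref{int clos}, and the only thing to verify carefully is that the quotient polynomial $f/g$, which a priori lies in $K[\tb]$, is monic so that Proposition \ref{int clos} can be applied to $g$ (equivalently, that $g$ itself is monic, which is built into the definition of the minimal polynomial). No calculation is required beyond observing that the leading coefficients of $f$ and $g$ are both $1$, whence $f/g$ is also monic.
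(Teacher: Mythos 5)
Your proof is correct and is the standard derivation: apply Proposition \ref{int clos} with $P'=K$ to the minimal polynomial $g$ dividing a monic integral dependence relation $f\in P[\tb]$, then use integral closedness of $P$ in $K$ to conclude $g\in P[\tb]$. The paper simply cites Bourbaki for this corollary, and the cited passage argues exactly as you do, so there is nothing to add.
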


\begin{proof}
See~\cite[Chapter~5,~\S{1},~Corollary~of~Proposition~11]{bourbaki}.
\end{proof}

\begin{prop}\label{cloture polynomiale}
If $P$ is a domain and if $f \in P[\tb,\tb^{-1}]$ is integral over $P$, then $f \in P$.
\end{prop}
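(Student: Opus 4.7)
The plan is to exploit the integrality relation directly by examining the extremal monomials of $f$ with respect to the $\tb$-grading on $P[\tb,\tb^{-1}]$. Write $f=\sum_{i=a}^{b} c_i\tb^i$ with $c_a\neq 0$ and $c_b\neq 0$ (if $f=0$ there is nothing to prove); the goal is to show that $a=b=0$.

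First I would fix a monic relation $f^n+p_{n-1}f^{n-1}+\cdots+p_1 f+p_0=0$ with $p_i\in P$. For each $0\le i\le n$, the Laurent polynomial $p_if^i$ is supported in $\tb$-degrees between $ia$ and $ib$: its degree-$ib$ coefficient is $p_ic_b^i$ and its degree-$ia$ coefficient is $p_ic_a^i$. The constant $p_0$ is supported only in degree $0$.

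Assume for contradiction that $b>0$. Then $nb>ib$ for all $i<n$, and also $nb>0$, so neither the lower terms $p_if^i$ (for $i<n$) nor $p_0$ can contribute to degree $nb$. Reading off the coefficient of $\tb^{nb}$ in the integrality relation therefore gives $c_b^n=0$, which contradicts the fact that $P$ is a domain. Hence $b\le 0$. By the symmetric argument applied to the coefficient of $\tb^{na}$, assuming $a<0$ forces $c_a^n=0$, another contradiction, so $a\ge 0$. Combined with $a\le b$, this gives $a=b=0$ and $f\in P$.

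There is no genuine obstacle: once the integrality relation is expanded in $P[\tb,\tb^{-1}]$, the extremal-degree argument is immediate because $P$ is a domain and because the strict inequalities $nb>(n-1)b\ge\cdots\ge 0$ (respectively $na<(n-1)a\le\cdots\le 0$) prevent any cancellation of the leading (respectively trailing) monomial of $f^n$. The only point one must be careful about is to include $p_0$ in the analysis separately, as it is the sole term not of the form $p_if^i$ with $i\ge 1$; this is why one needs the strict positivity $nb>0$ (respectively $na<0$) rather than just $nb>(n-1)b$.
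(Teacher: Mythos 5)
Your proof is correct and is essentially the paper's argument: both compare the extremal $\tb$-degree (and $\tb$-valuation) of $f^n$ with those of the lower-order terms in the integrality relation, using that $P$ is a domain so the leading and trailing coefficients of $f^n$ are the $n$-th powers of those of $f$. Your write-up is just a more explicit version, with the separate treatment of $p_0$ made visible.
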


\begin{proof}
Let $d \ge 1$ and let $p_0$, $p_1$,\dots, $p_{d-1}$ be elements of $P$ 
such that $p_0 + p_1 f + \cdots + p_{d-1} f^{d-1} = f^d$. Let $\d$ be the $\tb$-valuation of 
$f$ and $\d'$ its degree. Since $P$ is a domain, the degree of $f^d$ is $d\d'$, 
and so the above equality can hold only if $\d'=0$. Similarly, $\d=0$. So $f$ is constant.
\end{proof}

% \begin{contre}
% L'hypoth\`ese d'int\'egrit\'e est n\'ecessaire dans la proposition pr\'ec\'edente. 
% Par exemple, si $p \in P$ est nilpotent, alors $p\tb$ est entier sur $P$.\finl
% \end{contre}

\bigskip

\section{On the calculation of Galois groups\label{sec:calcul}}

\medskip

Let $K$ be a field and let 
$f(\tb)=\tb^d + a_{d-1} \tb^{d-1} + \cdots + a_1 \tb + a_0 \in K[\tb]$ 
be a separable polynomial. 
We denote by $M$ a splitting field of $f$ (over $K$) and we put
$$\Gal_K(f)=\Gal(M/K).$$
The group $\Gal_K(f)$ is  the {\it Galois group} of $f$ over $K$. 

Let $t_1$,\dots, $t_d$ be elements of $M$ such that 
$$f(\tb)=\prod_{i=1}^d (\tb-t_i),$$
so that
$$M=K[t_1,\dots,t_d]=K(t_1,\dots,t_d).$$
This provides an injective morphism of groups 
$$\Gal_K(f) ~\longinjto~ \SG_d.$$

Assume now that $P$ is an integrally closed domain with field of fractions $K$
and that $f \in P[\tb]$. 
Let $R$ denote the integral closure of $P$ in  $M$ and let $G=\Gal(M/K)$. 
Since $P$ is integrally closed we have $P=R^G$.
Given $r \in R$, we denote by $\rba$ its image 
in $R/\rG$. Write
$$\fba=\prod_{j=1}^l f_j,$$
where $f_j \in k_P(\pG)[\tb]$ is an irreducible polynomial. 
We have $D/I=\Gal(k_R(\rG)/k_P(\pG))$ by Theorem~\ref{bourbaki}.  
Since $R$ contains $t_1$,\dots, $t_d$ we have
$$\fba(\tb)=\prod_{i=1}^d (\tb-\tba_i).$$
We denote by $\O_j$ the subset of $\{1,2,\dots,d\}$ such that 
$$f_j(\tb)=\prod_{i \in \O_j} (\tb-\tba_i).$$
Let $k_j=k_P(\pG)((\tba_i)_{i \in \O_j})$: it is a splitting field of $f_j$ over $k_P(\pG)$. 
Let $G_j=\Gal(k_j/k_P(\pG))=\Gal(\fba_j)$. Then
\equat\label{eq:galois-surjectif}
\text{\it the canonical morphism $D/I=\Gal(k_R(\rG)/k_P(\pG)) \to \Gal(k_j/k_P(\pG))=G_j$ is surjective}
\endequat
for all $j$. Since $G_j$ acts transitively on $\O_j$, we obtain in particular that 
\equat\label{eq:omega-divise-G}
\text{\it $|\O_j|$ divides $|G|$ for all $j$.}
\endequat

\bigskip

\section{Some facts on discriminants}

\medskip

Let $f(\tb) \in P[\tb]$ be a monic polynomial of degree $d$. We denote by 
$\disc(f)$  \indexnot{d}{\disc}  its discriminant. We have
\equat\label{discriminant carre}
\disc(f(\tb^2))=(-4)^d \disc(f)^2 \cdot f(0).
\endequat
\begin{proof}
By easy specialization arguments, we may assume that $P$ is an algebraically closed field. 
Let 
$E_1$,\dots, $E_d$ be the elements of $P$ such that 
$$f(\tb)=\prod_{i=1}^d (\tb-E_i).$$
We fix a square root $e_i$ of $E_i$ in $P$. So
$$f(\tb^2)=\prod_{\stackrel{\SS{1 \le i \le d}}{\e \in \{1,-1\}}} (\tb-\e e_i)$$
and the discriminant of $f(\tb^2)$ is then equal to 
$$\disc(f(\tb^2)) = 
\Bigl(\prod_{\stackrel{\SS{1 \le i < j \le d}}{\e,\e' \in \{1,-1\}}} (\e e_i -\e' e_j)^2\Bigr)
\cdot \prod_{i=1}^d (e_i - (-e_i))^2.$$
In other words, 
$$\disc(f(\tb^2)) = 4^d \cdot 
\Bigl(\prod_{1 \le i < j \le d} (E_i-E_j)^4\Bigr) \cdot \prod_{i=1}^d E_i
= 4^d ~ \disc(f)^2 \cdot (-1)^d f(0),$$
as expected.
\end{proof}

\bigskip

Let us conclude with another easy result: 
\equat\label{discriminant t}
\disc(\tb f(\tb))=\disc(f) \cdot f(0)^2.
\endequat
\begin{proof}
As in the previous proof, we may assume that $P$ is an algebraically closed field, 
and we denote by $E_1$,\dots, $E_d$ the elements of $P$ such that 
$$f(\tb)=\prod_{i=1}^n (\tb-E_i).$$
Then
$$\disc(\tb f(\tb)) = \Bigl(\prod_{1 \le i < j \le d} (E_i-E_j)^2\Bigr)
\cdot \prod_{i=1}^d (0-E_i)^2.$$
Whence the result.
\end{proof}

\section{Topological version}
\label{se:topologyinertia}

Let $Y$ be a locally simply connected separated topological space endowed
with a faithful left action of a finite group $G$. 
Let $X=G\setminus Y$ and let $\pi:Y\to X$ be the quotient map.

Let $Y^{\nr}=\{y\in Y | \Stab_G(y)=1\}$ be the complement of the
ramification locus and let $X^{\nr}=\pi(Y^{\nr})$.
Fix $y_0\in Y^{\nr}$ and let $F=G\cdot y_0$. We define a right action of
$G$ on $F$ by $(g\cdot y_0)\cdot g'=gg'\cdot y_0$.

\smallskip
Let $y_1$ be a point of $Y$ in the closure of the connected component of 
$Y^{\nr}$ that contains $y_0$.
Let $I=\mathrm{Stab}_G(y_1)$.
The right action of $I$ on $F$ can be described in terms of lifting of paths, as
we recall below.

\medskip
Fix a path $\tilde{\gamma}:[0,1]\to Y$ with $\tilde{\gamma}([0,1))
\subset Y^{\nr}$, $\tilde{\gamma}(0)=y_0$ and $\tilde{\gamma}(1)=y_1$.
Let $\gamma=\pi(\tilde{\gamma})$.

\begin{lem}
Given $y\in F$, there is a unique path $\tilde{\gamma}_y$ in $Y$ starting
at $y$ and lifting $\gamma$.

Given $y',y''\in F$, we have $y''\in y'\cdot I$ if and only if 
$\tilde{\gamma}_{y'}(1)=\tilde{\gamma}_{y''}(1)$.
\end{lem}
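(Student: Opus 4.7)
The plan is to exploit the fact that the restriction $\pi|_{Y^{\nr}}:Y^{\nr}\to X^{\nr}$ is a covering map. Indeed, since $G$ acts faithfully on $Y$ and $Y^{\nr}$ is precisely the locus where stabilizers are trivial, the $G$-action on $Y^{\nr}$ is free, and local simple connectedness guarantees that the quotient map is a covering in the usual sense. By the assumption on $\tilde{\gamma}$, we have $\gamma([0,1))\subset X^{\nr}$, so standard covering theory will give us unique lifts of the restriction $\gamma|_{[0,1)}$ starting at any prescribed point in the fiber.

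For (a), the existence part is essentially free: writing $y=g\cdot y_0$ (with $g$ unique since $G$ acts freely on $F$), the path $t\mapsto g\cdot\tilde{\gamma}(t)$ is a lift of $\gamma=\pi(\tilde{\gamma})$ starting at $y$, by $G$-equivariance of $\pi$. For uniqueness, I would argue in two steps: on $[0,1)$ two lifts starting at $y$ must agree by the unique path-lifting property of the covering $\pi|_{Y^{\nr}}$; to extend agreement to $t=1$, I invoke continuity of both candidate lifts, noting that their common value on $[0,1)$ determines the value at $1$ uniquely provided $Y$ is Hausdorff (which is implicit here, or else one should remark that both lifts must lie in the closed finite fiber $\pi^{-1}(\gamma(1))$ and coincide on a sequence approaching $1$). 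The main conceptual point is simply that $\tilde{\gamma}_y=g\cdot\tilde{\gamma}$, so in particular $\tilde{\gamma}_y(1)=g\cdot y_1$.

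For (b), I will use the identification from (a): writing $y'=g'\cdot y_0$ and $y''=g''\cdot y_0$, we get $\tilde{\gamma}_{y'}(1)=g'\cdot y_1$ and $\tilde{\gamma}_{y''}(1)=g''\cdot y_1$. Hence $\tilde{\gamma}_{y'}(1)=\tilde{\gamma}_{y''}(1)$ is equivalent to $(g')^{-1}g''\in\Stab_G(y_1)=I$, which by definition of the right action on $F$ is equivalent to $y''\in y'\cdot I$.

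The only place that requires a bit of care is the extension of the unique lifting property across the (possibly ramified) endpoint $t=1$; everywhere else the argument is a direct transcription of standard covering space theory combined with $G$-equivariance. I do not foresee any genuine obstacle, only the need to be explicit about this boundary behaviour, which is why the hypothesis $\tilde{\gamma}([0,1))\subset Y^{\nr}$ (equivalently $\gamma([0,1))\subset X^{\nr}$) is essential.
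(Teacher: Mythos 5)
Your proof is correct and follows the same skeleton as the paper's: write $y=g\cdot y_0$, observe that $g(\tilde{\gamma})$ is a lift, use unique path-lifting for the covering $Y^{\nr}\to X^{\nr}$ to pin down the lift on $[0,1)$, and then deduce (b) from $\tilde{\gamma}_{y}(1)=g\cdot y_1$ together with $\Stab_G(y_1)=I$ and the freeness of the action on $F$. The one place where you diverge is the extension of uniqueness to $t=1$. You argue via the fact that the equalizer of two continuous maps into a Hausdorff space is closed; this is fine in the intended application ($Y=(\Spec R)(\CM)$ with the classical topology), but Hausdorffness is not among the stated hypotheses of the general topological setup, and your fallback (``coincide on a sequence approaching $1$'') does not repair this, since in a non-Hausdorff space agreement on $[0,1)$ does not force agreement of the limits. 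The paper instead describes the full preimage $E=\pi^{-1}(\gamma([0,1]))$ as $\coprod_{\Omega\in G/I}\bigcup_{\omega\in\Omega}\omega(\tilde{\gamma}([0,1]))$: any lift of $\gamma$ has connected image in $E$, the connected component containing $g(\tilde{\gamma}([0,1)))$ is $\bigcup_{\omega\in gI}\omega(\tilde{\gamma}([0,1]))$, and the only point of that component above $\gamma(1)$ is $g\cdot y_1$ because $I$ fixes $y_1$. That argument needs no separation axiom and, as a bonus, makes the $I$-orbit structure of the endpoints visible, which is exactly what part (b) is about. If you keep your version, state the Hausdorff hypothesis explicitly (or restrict to the case of complex points); otherwise adopt the component analysis of $E$.
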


\begin{proof}
Let 
$E=\pi^{-1}(\gamma([0,1]))$. We have
$$E=\coprod_{\Omega\in G/I} \bigl(\bigcup_{\omega\in\Omega}
\omega(\tilde{\gamma}([0,1]))\bigr)$$
where the $\bigcup_{\omega\in\Omega} \omega(\tilde{\gamma}([0,1]))$
are the connected components of $E$ and the $\omega(\tilde{\gamma}([0,1)))$
are the connected components of $E \setminus (G\cdot y_1)$.

Let $y\in F$. There is a unique element $g\in G$ such that $y=g\cdot y_0$.
The restricted path $g(\tilde{\gamma})_{|[0,1)}$ is the unique lift of
$\gamma_{|[0,1)}$ starting at $y$. 
It follows from the description of $E$ that $g(\tilde{\gamma})$ is the
unique lift of $\gamma$ starting at $y$. The lemma follows.
\end{proof}

\medskip
We consider now the case of a non-Galois covering.
Let $H$ be a subgroup of $G$ and let $\bar{Y}=H\setminus Y$. We denote by 
$\phi:Y\to \bar{Y}$ the quotient map and by $\psi:\bar{Y}\to X$ the map such
that
$\pi=\psi\circ\phi$. Let $\bar{F}=\phi(F)$. The right action of $I$ on $F$
induces
a right action on $\bar{F}$. We have a bijection
$H\setminus G\xrightarrow{\sim}\bar{F},\
 Hg\mapsto \phi(g\cdot y_0)$ and the right action of $I$ on $\bar{F}$
corresponds to the right action on $H\setminus G$ by right multiplication.

\begin{lem}
\label{le:inertiechemins}
Given $\bar{y}\in \bar{F}$, there is a unique path $\bar{\gamma}_{\bar{y}}$ in
$\bar{Y}$ starting at $\bar{y}$ and lifting $\gamma$.

Given $\bar{y}',\bar{y}''\in \bar{F}$, we have 
$\bar{y}''\in \bar{y}'\cdot I$ if and only if 
$\bar{\gamma}_{\bar{y}'}(1)=\bar{\gamma}_{\bar{y}''}(1)$.
\end{lem}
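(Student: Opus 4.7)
The plan is to deduce both statements from the Galois case already established, using that the quotient map $\phi:Y\to\bar{Y}$ is $G$-equivariant and intertwines the two lifting problems.

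First I would construct the lift. Given $\bar{y}\in\bar{F}$, choose any $y\in F$ with $\phi(y)=\bar{y}$ (possible since $\bar{F}=\phi(F)$), and set $\bar{\gamma}_{\bar{y}}=\phi\circ\tilde{\gamma}_y$, where $\tilde{\gamma}_y$ is the lift to $Y$ from the Galois lemma. This is a path in $\bar{Y}$ starting at $\bar{y}$ and projecting to $\gamma$. To see that it does not depend on the choice of $y$, note that if $\phi(y)=\phi(y')$ then $y'=h\cdot y$ for some $h\in H$, and $h\cdot\tilde{\gamma}_y$ is the unique lift of $\gamma$ starting at $h\cdot y=y'$, so $h\cdot\tilde{\gamma}_y=\tilde{\gamma}_{y'}$; applying $\phi$ gives $\phi\circ\tilde{\gamma}_y=\phi\circ\tilde{\gamma}_{y'}$.

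Next I would address uniqueness. Since $Y^{\nr}\to X^{\nr}$ is an unramified $G$-covering and $H$ acts freely on $Y^{\nr}$, the induced map $\bar{Y}^{\nr}\to X^{\nr}$ is a genuine covering space of degree $|G/H|$. Standard uniqueness of path lifting for covering maps then forces any two lifts of $\gamma_{|[0,1)}$ starting at $\bar{y}$ to coincide, and continuity at $t=1$ extends this to the whole interval $[0,1]$. This is the only point where the non-normality of $H$ requires a moment's care, but since the ramification only occurs at the single endpoint $\tilde{\gamma}(1)$ (where $\gamma$ is already determined by its restriction to $[0,1)$), the argument goes through without obstacle.

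Finally I would prove the characterization of the right $I$-action via endpoints. Write $\bar{y}'=\phi(y')$ and $\bar{y}''=\phi(y'')$ with $y',y''\in F$, so that $\bar{\gamma}_{\bar{y}'}(1)=\phi(\tilde{\gamma}_{y'}(1))$ and similarly for $\bar{y}''$. Thus $\bar{\gamma}_{\bar{y}'}(1)=\bar{\gamma}_{\bar{y}''}(1)$ if and only if there exists $h\in H$ with $h\cdot\tilde{\gamma}_{y'}(1)=\tilde{\gamma}_{y''}(1)$. Using $h\cdot\tilde{\gamma}_{y'}=\tilde{\gamma}_{h\cdot y'}$ and the Galois version of the lemma, this is equivalent to the existence of $h\in H$ with $y''\in(h\cdot y')\cdot I$, i.e.\ $y''\in H\cdot y'\cdot I$. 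Translating back via $\phi$ and the description $\bar{F}=H\backslash F$ with the induced right $I$-action, this is exactly $\bar{y}''\in\bar{y}'\cdot I$, which completes the proof. The main (minor) obstacle is simply bookkeeping the compatibility between the left $H$-action giving the quotient $\bar{F}$ and the right $I$-action on $F$; once these are carefully disentangled, the statement reduces mechanically to the Galois case.
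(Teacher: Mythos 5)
Your proof is correct and follows essentially the same route as the paper's: the paper also constructs $\bar{\gamma}_{\bar{y}}$ as $\phi(g(\tilde{\gamma}))$ for a preimage $g\cdot y_0$ of $\bar{y}$, notes that $Hg$ is determined by $\bar{y}$, and reduces the endpoint criterion to the double-coset computation $y''\in H\cdot y'\cdot I$ in $F$. The only cosmetic difference is that you justify uniqueness of the lift via covering-space theory over $X^{\nr}$ plus continuity at $t=1$, whereas the paper appeals to the explicit decomposition of $\pi^{-1}(\gamma([0,1]))$ into components established in the preceding lemma; both are fine.
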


\begin{proof}
There is an element $g\in G$ such that $\bar{y}=\phi(g\cdot y_0)$, and $Hg$
is uniquely determined by $\bar{y}$. The path $\phi(g(\tilde{\gamma}))$ is the
unique lift of $\gamma$ starting at $\bar{y}$. The lemma follows.
\end{proof}

\medskip

We assume now that $R$ is a finitely generated commutative 
reduced $\CM$-algebra
and $Y$ is the topological space $(\Spec R)(\CM)$, for the classical
topology. We have $\bar{Y}=(\Spec Q)(\CM)$ and $X=(\Spec P)(\CM)$.

The prime ideal $\rG$ of $R$ corresponds to an irreducible subvariety
$Z$ of $\Spec R$. There is a non-empty Zariski open subset $U$ of $Z$ such that
given $y\in U(\CM)$, we have $\mathrm{Stab}_G(y)=G_\rG^I$.

Fix a point $y_1\in U(\CM)$.
We have $I=G_\rG^I$. Lemma \ref{le:inertiechemins} provides
a topological description of the orbits of $I$ on the fibers of
$\psi$.

\chapter{Gradings and integral extensions}\label{appendice graduation}
\setcounter{section}{0}

\bigskip

\section{Idempotents, radical}\label{intro graduation}

\medskip

Let $\Gamma$ be a monoid. Denote by
$\Delta:\BZ[\Gamma]\to\BZ[\Gamma]\otimes_\BZ\BZ[\Gamma],\ \gamma\mapsto\gamma\otimes
\gamma$ the comultiplication. Let $A$ be a ring. Let us recall the equivalence
between the notion of a $\Gamma$-grading on $A$ and that of a coaction of
$\BZ[\Gamma]$ on $A$.

 Put
$A[\Gamma]=A\otimes_\BZ \BZ[\Gamma]$.
Given $A=\mathop{\bigoplus}_{\gamma\in\Gamma} A_\gamma$ a $\Gamma$-graded ring
structure on $A$, we have a morphism of rings 
$$\mu=\mu_A:A\longto A[\Gamma],\ A_\gamma\ni a\mapsto a\otimes \gamma$$
such that
\begin{equat}
\label{eq:gradedmu}
\begin{cases}
& \mu\otimes\id:A\otimes_\BZ \BZ[\Gamma]\to A[\Gamma],\ a\otimes\gamma\mapsto
 \mu(a)\gamma \text{ is an isomorphism and}\\
& (1\otimes\Delta)\circ\mu=(\mu\otimes \id)\circ\mu:
A\longto A\otimes_\BZ \BZ[\Gamma]\otimes_\BZ\BZ[\Gamma].
\end{cases}
\end{equat}

Conversely, consider a morphism of rings
$\mu:A\to A[\Gamma]$ satisfying the two properties (\ref{eq:gradedmu}) above.
Let $A_\gamma=\mu_A^{-1}(A\otimes\gamma)$. Note that given $a\in A_\gamma$
and $a'\in A_{\gamma'}$, we have $aa'\in A_{\gamma+\gamma'}$.
Let $a\in A$ and write $\mu(a)=\sum_{i=1}^n a_i\otimes\gamma_i$ with $a_i\in A$
and $\gamma_i\in\Gamma$. We have $(1\otimes\Delta)\circ\mu(a)=
(\mu\otimes\Id)\circ\mu(a)$, hence
$\sum_i a_i\otimes\gamma_i\otimes\gamma_i=\sum_i \mu(a_i)\otimes\gamma_i$.
It follows that $\mu(a_i)=a_i\otimes\gamma_i$, hence $a_i\in A_{\gamma_i}$.
We deduce that $A=\sum_\gamma A_\gamma$.
The first property shows that $\mu$ is injective, hence $A=\bigoplus_\gamma
A_\gamma$: we have obtained a $\Gamma$-graded ring structure on $A$.

Given $f:\Gamma\to\Gamma'$ a morphism of monoids and given a $\Gamma$-grading
on $A$, we have a $\Gamma'$-grading on $A$ given by
$A'_{\gamma'}=\bigoplus_{\gamma\in f^{-1}(\gamma')}A_\gamma$.

\bigskip
\begin{prop}\label{graduation idem}
Assume that $A$ is commutative and $\Gamma$ is a torsion-free abelian group.
Let $e$ be an idempotent of $A$. Then $e \in A_0$.
\end{prop}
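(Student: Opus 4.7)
The plan is to show that the idempotent $e$ is invariant under the $\Gamma$-coaction $\mu = \mu_A : A \to A[\Gamma]$ encoding the grading, in the sense that $\mu(e) = e \otimes 1$ in $A[\Gamma]$ (where $1$ denotes the neutral element of $\Gamma$). Once this is established, writing $\mu(e) = \sum_{\gamma \in \Gamma} e_\gamma \otimes \gamma$ and $e \otimes 1 = (\sum_{\gamma} e_\gamma) \otimes 1$ in the decomposition $A[\Gamma] = \bigoplus_{\gamma \in \Gamma} A \otimes \gamma$ as a free $A$-module, and comparing coefficients of the basis vectors, yields $e_\gamma = 0$ for $\gamma \neq 0$ and $e_0 = e$, which is exactly $e \in A_0$.

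To prove $\mu(e) = e \otimes 1$, I pass to spectra. Both $\mu(e)$ and $e \otimes 1$ are idempotents of $A[\Gamma]$, defining clopen subsets $V(1 - \mu(e))$ and $V(1 - e \otimes 1)$ of $\Spec A[\Gamma]$; the aim is to show these coincide. Consider the structural projection $\pi : \Spec A[\Gamma] \to \Spec A$, whose fibre over $\pG \in \Spec A$ is $\Spec(k_A(\pG)[\Gamma])$. The key input is that $k_A(\pG)[\Gamma]$ is a domain: this is the classical fact that the group algebra over a field of a torsion-free abelian group is a domain, proved by choosing a total order on $\Gamma$ compatible with addition (such an order exists precisely because $\Gamma$ is torsion-free abelian) and comparing leading terms. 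Hence every fibre of $\pi$ is irreducible, so connected, and any clopen subset of $\Spec A[\Gamma]$ is a union of whole fibres. Using the augmentation $\mathrm{aug} : A[\Gamma] \to A$ sending every $\gamma \in \Gamma$ to $1$, which is a common left inverse of $\mu$ and of the map $a \mapsto a \otimes 1$, I then identify both clopens with $\pi^{-1}(V(1-e))$.

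From $V(1 - \mu(e)) = V(1 - e \otimes 1)$ I deduce $\mu(e) = e \otimes 1$ by an elementary computation valid for any two idempotents $f_1, f_2$ of a commutative ring $R$: the element $g = f_1 - f_1 f_2$ is itself idempotent, vanishes on $V(f_1)$ (since $g = f_1(1-f_2)$) as well as on $V(1-f_1) = V(1-f_2)$ (where both $f_1$ and $f_2$ reduce to $1$), and since $\Spec R = V(f_1) \cup V(1-f_1)$ the element $g$ lies in every prime, hence is nilpotent; an idempotent that is nilpotent must be zero, so $f_1 = f_1 f_2$, and by symmetry $f_2 = f_1 f_2$, giving $f_1 = f_2$. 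The only substantive ingredient in the entire argument is the domain property of $k[\Gamma]$ for $\Gamma$ torsion-free abelian; everything else is formal manipulation, so this is the main (albeit mild) technical obstacle.
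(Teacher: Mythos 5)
Your proof is correct, and it takes a genuinely different route from the paper's. The paper first reduces to the case where $A$ is noetherian and $\Gamma$ is finitely generated (by replacing $A$ with the subring generated by the homogeneous components of $e$), and then considers, for each $d>0$, the ring endomorphism $m_d$ of $A[\Gamma]$ sending $a\otimes\gamma$ to $a\otimes\gamma^d$: if $e\notin A_0$ then $\mu(e)\notin A\otimes 1$ and the idempotents $m_d(\mu(e))$ are pairwise distinct (the powers $\gamma^d$ being distinct since $\Gamma$ is torsion-free), which contradicts the finiteness of the set of idempotents of a commutative noetherian ring. You instead prove directly the invariance $\mu(e)=e\otimes 1$ by a connectedness argument: the fibres of $\Spec A[\Gamma]\to\Spec A$ are spectra of group algebras $k(\pG)[\Gamma]$, which are domains because a torsion-free abelian group admits a compatible total order, so every clopen subset is a union of fibres, and the augmentation section then pins down both clopen loci as $\pi^{-1}(V(1-e))$; the elementary fact that two idempotents of a commutative ring with the same support are equal finishes the argument. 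What your approach buys is that it dispenses with the noetherian reduction and with any counting, and it makes the conceptual content transparent (the coaction encoding the grading has connected fibres, hence cannot move an idempotent); what the paper's approach buys is brevity and the avoidance of spectral language. Both proofs ultimately exploit torsion-freeness through the orderability of $\Gamma$ — explicitly in your proof that $k(\pG)[\Gamma]$ is a domain, implicitly in the paper's claim that the $m_d(\mu(e))$ are pairwise distinct.
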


\begin{proof}
Replacing $A$ by the ring generated by the homogeneous components of $e$, we may assume that 
$A$ is noetherian and $\Gamma$ is finitely generated.
Given $d>0$, consider the ring morphism
$m_d:A[\Gamma]\to A[\Gamma],\ a\otimes\gamma\mapsto a\otimes\gamma^d$.
Note that $m_d(\mu(e))$ is an idempotent of $A[\Gamma]$. If
$e{\not\in}A_0$, then $\mu(e){\not\in A}$, hence
the $m_d(\mu(e))$ are distinct for different $d$.
Since $A$ is commutative and noetherian, 
$A[\Gamma]$ is also commutative and noetherian, and so contains only finitely 
many idempotents. We deduce that $e \in A_0$.
\end{proof}

\bigskip

Let us recall a basis result on the homegeneity of the radical 
\cite[Theorem 2.5.40]{Row}.

\begin{prop}\label{graduation radical}
If $\Gamma$ is a free abelian group, then $\Rad(A)$ is a homogeneous ideal of
$A$.
\end{prop}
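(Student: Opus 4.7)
The strategy is to reduce to the case $\Gamma = \ZM$ (Bergman's theorem) by two successive reductions, and then invoke the rank-one case, which is the substantive point.

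First, I would make the observation that the statement is a pointwise one: to show $\Rad(A)$ is homogeneous it suffices to show that for every $a \in \Rad(A)$, each of its (finitely many) nonzero homogeneous components $a_\gamma$ lies in $\Rad(A)$. Fix such an $a$ involving degrees $\gamma_1, \dots, \gamma_k$, and let $\Gamma_0 \subseteq \Gamma$ be the pure closure of $\langle \gamma_1, \dots, \gamma_k\rangle$. Since $\Gamma$ is free abelian and $\Gamma_0$ is a finitely generated pure subgroup, $\Gamma_0$ is a direct summand of $\Gamma$, so there is a splitting $\Gamma \cong \Gamma_0 \oplus \Gamma_1$. Composing the grading with the projection $\Gamma \to \Gamma_0$ gives a $\Gamma_0$-grading on $A$ whose homogeneous components in the relevant degrees $\gamma_i$ agree with the original ones. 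Thus it is enough to treat the case $\Gamma = \ZM^n$.

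Next I would reduce to $n = 1$ by induction on $n$. For $\ZM^n$, fix the projection $\pi_1 \colon \ZM^n \to \ZM$ onto the first coordinate; this gives a coarser $\ZM$-grading on $A$. Assuming the result in rank $1$, every $\ZM$-homogeneous component of $a$ (with respect to $\pi_1$) lies in $\Rad(A)$. Each such piece is a sum of $\ZM^{n-1}$-homogeneous elements $a_\gamma$ with $\gamma$ ranging over a fiber of $\pi_1$. Now iterating the same step using projections onto the remaining coordinates refines the decomposition further until every individual $a_\gamma$ is seen to lie in $\Rad(A)$.

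The heart of the argument is the case $\Gamma = \ZM$, which is Bergman's theorem. Given $a \in \Rad(A)$ with decomposition $a = a_m + a_{m+1} + \cdots + a_n$, one reduces by symmetry and induction on $n - m$ to showing that the top component $a_n$ lies in $\Rad(A)$. Equivalently, one must show that $1 - b a_n$ admits a left inverse for every $b \in A$. One knows that $1 - b a$ is a unit, so the task is to propagate this from the full element $a$ to its leading component $a_n$; the key device is to exploit the grading by comparing top-degree terms in products and inverses, noting that if $u = (1 - ba)^{-1}$ then its top-degree component pairs with $ba_n$ in a controlled way. The main obstacle is precisely this step: one cannot simply "average" over characters to extract homogeneous components (since $A$ need not be an algebra over a base with enough units), so the argument must proceed by an intrinsic manipulation of leading terms. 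Once the extremal case is handled, subtraction of $a_n$ and downward induction on $n - m$ delivers the full result.
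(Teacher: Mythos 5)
Your two preliminary reductions (to a finitely generated $\Gamma$, then to $\Gamma=\ZM$ by iterating coordinate projections) are sound but inessential: since $\Gamma$ is free abelian one can directly choose a homomorphism $\rho:\Gamma\to\ZM$ separating the finitely many degrees occurring in a given $r\in\Rad(A)$, which is all that is ever needed. The genuine gap is in the rank-one case, which is the entire content of the proposition and which you do not prove. The claim that one extracts a left inverse of $1-ba_n$ by ``comparing top-degree terms'' of $u=(1-ba)^{-1}$ is not an argument: in the graded components of the identity $u(1-ba)=1$, every $u_j$ is coupled to every homogeneous component of $ba$, and no manipulation of leading terms of this single identity yields an inverse of $1-ba_n$. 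Any proof must use the quasi-regularity of $bac$ for \emph{all} $b,c$, i.e.\ the full ideal property of $\Rad(A)$, and this is exactly where all known proofs of Bergman's theorem do something global. You yourself flag this step as ``the main obstacle'' and then leave it unresolved, so the proposal reduces the proposition to an unproved assertion.

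Moreover, the obstruction you invoke to dismiss the averaging argument --- that $A$ need not contain enough roots of unity --- is precisely what the paper's proof circumvents, and without any reduction to rank one. Having chosen $\rho$ as above and an integer $n$ larger than all $|\rho(\gamma_i)-\rho(\gamma_j)|$, one base-changes to $A[\Gamma']=A\otimes_\ZM\ZM[\zeta]$ with $\Gamma'=\ZM/n\ZM$, where the required roots of unity exist formally; the assignments $\gamma\mapsto\zeta^{l\rho(\gamma)}$ induce ring homomorphisms $A\to A[\Gamma']$, the resulting character sum applied to $r$ produces $nr_i$, and since $\Rad(A)=\Rad(A[\Gamma'])\cap A$ one concludes $nr_i\in\Rad(A)$. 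Running the same argument with $n+1$ gives $(n+1)r_i\in\Rad(A)$, hence $r_i=(n+1)r_i-nr_i\in\Rad(A)$, with no hypothesis on units of $A$. To repair your write-up you must either supply a complete proof of the $\ZM$-graded case (for instance this one) or cite it outright; the leading-term sketch cannot stand in for it.
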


\begin{proof}
Let $r\in\Rad(A)$. Write $r=\sum_{1\le i\le d}r_i$ with $r_i\in A_{\gamma_i}$ for
some $\gamma_i\in\Gamma$ with $\gamma_i\neq\gamma_j$ for $i\neq j$. Fix $i\in\{1,\ldots,d\}$.
Fix a group morphism $\rho:\Gamma\to\BZ$ such that $\rho(\gamma_j){\not=}
\rho(\gamma_i)$ for $j{\not=}i$.

Let $n$ be a positive integer with $n>|\rho(\gamma_i)-\rho(\gamma_j)|$
for all $j{\not=}i$. Let $\Gamma'=\BZ/n\BZ$ and write $\zeta$ for its generator $1$, so
that $A[\Gamma']=A\otimes_{\BZ}\BZ[\zeta]$. We have $\Rad(A)=\Rad(A[\Gamma'])\cap A$.
Consider the group morphism
$\theta_l:\Gamma\to\Gamma',\ \gamma\mapsto \zeta^{l\rho(\gamma)}$
 and denote by $\theta_l$
again the induced ring morphism $A[\Gamma]\to A[\Gamma']$.
We have 
$$\sum_{l=1}^n \zeta^{-\rho(\gamma_i)l}\theta_l(\mu(r))=
\sum_{1\le j\le d}r_j\sum_{1\le l\le n}
\zeta^{l(\rho(\gamma_j)-\rho(\gamma_i))}=nr_i.$$
Since $\theta_l\circ\mu$ is a morphism of rings, it follows that
$nr_i\in \Rad(A[\Gamma'])$, hence $nr_i\in\Rad(A)$. Similarly, we obtain
$(n+1)r_i\in\Rad(A)$, hence $r_i\in\Rad(A)$. So, $\Rad(A)$ is a homogeneous ideal.
\end{proof}

\bigskip

We assume for the remainder of \S\ref{intro graduation} that $\Gamma=\BZ$.
Let $B$ be a ring containing $A$ and let $\xi \in B^\times$ commuting with $A$.
There exists a unique morphism of rings 
$$\mu_A^\xi : A \longto B  \indexnot{mz}{\mu_A^\xi}  $$
such that $\mu_A^\xi(a)=a\xi^i$ for $a \in A_i$. Note that, if $A$ is $\NM$-graded 
(that is, if $A_i=0$ for $i < 0$), then $\mu_A^\xi$ can be defined also 
when $\xi$ is not invertible. 
Given $\tb$ an indeterminate over $A$, then 
$$\mu_A^\tb : A \longto A[\tb,\tb^{-1}]$$
is a morphism of rings. Denote by $\eval_A^\xi : A[\tb,\tb^{-1}] \to B$ 
the evaluation morphism at $\xi$. We have
\equat\label{mua}
\mu_A^\xi = \eval_A^\xi \circ \mu_A^\tb.
\endequat
In particular, if $B=A$ and $\xi=1$, then  
\equat\label{mua ea}
\mu_A^1 = \Id_A\qquad\text{and}\qquad \eval_A^1 \circ \mu_A^\tb = \Id_A.
\endequat
On the other hand, the morphism $\mu_A^\xi : A \longto B$ can be extended to a 
$\BZ[\tb,\tb^{-1}]$-linear morphism $\mub_A^\xi : A[\tb,\tb^{-1}] \longto B[\tb,\tb^{-1}]$ and
\equat\label{mua mua}
\mub_A^\xi \circ \mu_A^\tb = \mu_A^{\xi \tb}.
\endequat
As particular cases, one can take $B=A[\ub,\ub^{-1}]$ and $\xi=\ub$, where 
$\ub$ is another indeterminate, or take $B=A[\tb,\tb^{-1}]$ 
and $\xi = \tb^{-1}$. We obtain the following equalities:
\equat\label{mua mub}
\mub_A^\ub \circ \mu_A^\tb = \mu_A^{\tb\ub}
\qquad\text{and}\qquad
\mub_A^{\tb^{-1}} \circ \mu_A^\tb (a) = a \in A[\tb,\tb^{-1}]
\endequat
for all $a \in A$. Finally, note that 
\equat\label{ev mutilde}
\eval_A^1 \circ \mub_A^{\tb^{-1}} = \eval_A^1.
\endequat

\bigskip

\section{Extension of gradings}\label{section:graduation integrale}

\medskip

\boitegrise{{\bf Notation.} 
{\it We fix in this section a finitely generated free abelian group $\Gamma$
and a commutative $\Gamma$-graded domain $P$. 
Let $Q$ be a domain containing $P$ and integral over $P$.}}{0.75\textwidth}

\medskip

The aim of this section is to study the gradings on $Q$ which extend the one on $P$. 
We first start with the uniqueness problem.

\bigskip

\begin{lem}\label{unicite graduation}
If $Q=\mathop{\bigoplus}_{\gamma \in \Gamma} \Qti_\gamma = 
\mathop{\bigoplus}_{\gamma \in \Gamma} \Qhat_\gamma$ 
are two gradings on $Q$ extending the one of $P$ (that is, 
$P_\gamma=\Qti_\gamma \cap P = \Qhat_\gamma \cap P$ for all $\gamma\in\Gamma$), then 
$\Qti_\gamma=\Qhat_\gamma$ for all $\gamma\in\Gamma$.
\end{lem}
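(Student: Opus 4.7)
The plan is to restate the uniqueness question in terms of the coaction morphisms $\tilde{\mu},\hat{\mu}:Q\to Q\otimes_{\BZ}\BZ[\Gamma]=Q[\Gamma]$ associated to the two gradings (as in the discussion opening Section~\ref{intro graduation}). Both extend the fixed morphism $\mu_P:P\to P[\Gamma]$, and the statement $\Qti_\gamma=\Qhat_\gamma$ for every $\gamma$ is equivalent to $\tilde{\mu}=\hat{\mu}$.

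First I would reduce to the case $\Gamma=\BZ$. Since $\Gamma$ is finitely generated free abelian, writing $\Gamma\simeq\BZ^r$ and composing $\tilde{\mu},\hat{\mu}$ with the $r$ coordinate projections $\Gamma\to\BZ$ yields, for each $i$, two $\BZ$-gradings of $Q$ extending the corresponding $\BZ$-grading on $P$. If uniqueness holds for each of these $\BZ$-gradings, then the two $\Gamma$-gradings agree piece by piece.

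For $\Gamma=\BZ$, the target $Q[t,t^{-1}]$ is a domain because $Q$ is a domain. Given $q\in Q$, integrality over $P$ provides a monic polynomial $f\in P[X]$ with $f(q)=0$; applying $\tilde{\mu}$ and $\hat{\mu}$ to this relation, both $\alpha:=\tilde{\mu}(q)$ and $\beta:=\hat{\mu}(q)$ are roots in $Q[t,t^{-1}]$ of the polynomial $F(X):=\mu_P(f)(X)\in P[t,t^{-1}][X]$. Writing the standard factorization $F(X)-F(Y)=(X-Y)\,G(X,Y)$ in $P[t,t^{-1}][X,Y]$ and evaluating at $X=\alpha$, $Y=\beta$ gives $(\alpha-\beta)\,G(\alpha,\beta)=0$ in the domain $Q[t,t^{-1}]$, so either $\alpha=\beta$ or $G(\alpha,\beta)=0$.

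The main obstacle is ruling out the second possibility, and this is where both the domain property and characteristic zero enter. The augmentation $\eval_Q^1:Q[t,t^{-1}]\to Q,\ t\mapsto 1$ satisfies $\eval_Q^1\circ\tilde{\mu}=\eval_Q^1\circ\hat{\mu}=\Id_Q$ by \eqref{mua ea}, so it sends $G(\alpha,\beta)$ to $G(q,q)=f'(q)$. The plan is therefore to choose $f$ so that $f'(q)\neq 0$ in $Q$: in characteristic zero (the standing hypothesis throughout the paper), the minimal polynomial of $q$ over $\Frac(P)$ is separable, and a suitable monic $f\in P[X]$ is obtained by multiplying this minimal polynomial by an appropriate monic polynomial in $P[X]$ that is nonzero at $q$ (the technical point being to clear denominators without introducing a repeated factor at $q$ when $P$ is not integrally closed). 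With such an $f$, the image of $G(\alpha,\beta)$ under $\eval_Q^1$ is nonzero, hence $G(\alpha,\beta)\neq 0$ in the domain $Q[t,t^{-1}]$, forcing $\alpha=\beta$. This yields $\tilde{\mu}(q)=\hat{\mu}(q)$ for every $q\in Q$, and therefore $\Qti_\gamma=\Qhat_\gamma$ for every $\gamma\in\Gamma$.
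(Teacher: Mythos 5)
Your reduction to $\Gamma=\BZ$ and the identity $(\alpha-\beta)\,G(\alpha,\beta)=0$ in the domain $Q[\tb,\tb^{-1}]$ are both sound, but the whole argument hinges on the step you yourself flag as ``the technical point'', and as written that step is not established. You need a \emph{monic} $f\in P[X]$ with $f(q)=0$ and $f'(q)\neq 0$. The sketch ``multiply the minimal polynomial $m$ of $q$ over $\Frac(P)$ by an appropriate monic polynomial that is nonzero at $q$'' does not produce one: $m$ itself need not lie in $P[X]$ (Corollary~\ref{minimal clos} requires $P$ integrally closed, which is not assumed here), so the product has coefficients in $\Frac(P)$, and arranging membership in $P[X]$ while keeping the cofactor nonvanishing at $q$ is exactly what must be proved. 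The claim is in fact true when $m$ is separable, but it needs an argument; here is one. Let $I=\{f\in P[X]\ |\ f(q)=0\}$ and suppose every monic element of $I$ were divisible by $m^2$ in $\Frac(P)[X]$. Taking $f_0\in I$ monic of minimal degree $n$ and subtracting multiples $aX^{e-n}f_0$ from an arbitrary $g\in I$ of degree $e$ and leading coefficient $a$, a descending induction on the degree shows that then \emph{every} element of $I$ would be divisible by $m^2$; but if $c\in P\setminus\{0\}$ clears the denominators of $m$, then $cm\in I$ while $m^2\nmid cm$, a contradiction. Hence some monic $f=mh\in I$ has $h(q)\neq 0$, and $f'(q)=m'(q)h(q)\neq 0$ \emph{provided} $m'(q)\neq 0$.

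That proviso is the second problem: the lemma is stated in this appendix with no characteristic hypothesis, and your argument genuinely needs separability of $q$ over $\Frac(P)$ — it fails, for instance, for $q=s^{1/p}$ over $\FM_p(s)$, where $m'=0$. The paper's proof avoids both difficulties by staying global rather than element-by-element: it forms the ring morphism $\alpha=\beta\circ\mut_Q$, where $\beta(q\otimes\gamma)=\muh_Q(q)\gamma^{-1}$, notes that $\alpha$ fixes $P$ pointwise, so that $\alpha(q)$ is an element of the Laurent polynomial ring $Q[\Gamma]$ integral over $Q$ and therefore lies in $Q$ by Proposition~\ref{cloture polynomiale}, and then the augmentation at $1$ forces $\alpha(q)=q$; no minimal polynomials, derivatives, or characteristic restrictions enter. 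With the technical point repaired as above, your proof is valid in the setting where the lemma is actually used in this book (all rings are algebras over the characteristic-zero field $\kb$), but it establishes a strictly weaker statement than the lemma as stated.
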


\begin{proof}
As in \S~\ref{intro graduation} (from which we keep the notation), 
the gradings $Q=\mathop{\bigoplus}_{\gamma\in\Gamma} \Qti_\gamma$ and 
$\mathop{\bigoplus}_{\gamma\in\Gamma} \Qhat_\gamma$ correspond to ring morphisms 
$\mut_Q : Q \to Q[\Gamma]$ and $\muh_Q : Q \to Q[\Gamma]$ extending 
$\mu_P: P \to P[\Gamma]$. Consider the morphism of rings
$\beta:Q[\Gamma]\to Q[\Gamma],\ q\otimes\gamma\mapsto
\muh_Q(q)\gamma^{-1}$ and let
$\alpha=\beta\circ\mut_Q:Q\to Q[\Gamma]$.
Then $\a$ is a morphism of rings and $\a(p)=p$ for all $p \in P$ by
(\ref{eq:gradedmu}).
Therefore, if $q \in Q$, then $\a(q) \in Q[\Gamma]$ is integral over $P$, hence
over $Q$. 
Since $Q$ is a domain, this implies that $\a(q) \in Q$
(Proposition~\ref{cloture polynomiale}). 
On the other hand, (\ref{eq:gradedmu}) shows that the composition
$$Q\xrightarrow{\alpha}Q[\Gamma]\xrightarrow{q\otimes\gamma\mapsto q}Q$$
is the identity, hence $\alpha(q)=q$ for all $q\in Q$. It follows that
$\beta\circ\mut_Q=\beta\circ\muh_Q$, hence $\mut_Q=\muh_Q$ since
$\beta$ is an isomorphism (\ref{eq:gradedmu}).
\end{proof}

\bigskip

\begin{coro}\label{graduation et automorphisme}
If $Q=\mathop{\bigoplus}_{\gamma\in\Gamma} Q_\gamma$ is a grading on 
$Q$ extending the one on $P$ 
and if $G$ is a group acting on $Q$, stabilizing $P$ and its homogeneous components, 
then $G$ stabilizes the homogeneous components of $Q$. 
\end{coro}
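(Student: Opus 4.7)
The plan is to deduce the corollary directly from the uniqueness statement in Lemma \ref{unicite graduation}. Fix $g \in G$. The idea is to transport the given grading on $Q$ along $g$ to produce a second grading, and then observe that by hypothesis this second grading also extends the grading on $P$; uniqueness then forces the two gradings to coincide, which is exactly the $G$-stability we want.

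More precisely, first I would define, for each $\gamma \in \Gamma$, the $\BZ$-submodule $Q'_\gamma = g(Q_\gamma) \subset Q$. Since $g$ is a ring automorphism of $Q$, the decomposition $Q = \bigoplus_{\gamma \in \Gamma} Q'_\gamma$ is again a $\Gamma$-grading on the ring $Q$ (the multiplicativity $Q'_\gamma \cdot Q'_{\gamma'} \subset Q'_{\gamma+\gamma'}$ follows from $g(Q_\gamma) g(Q_{\gamma'}) = g(Q_\gamma Q_{\gamma'}) \subset g(Q_{\gamma+\gamma'})$, and the direct sum property is preserved because $g$ is bijective).

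Next, I would check that this new grading extends the grading on $P$. By hypothesis, $g$ stabilizes $P$ and each homogeneous component $P_\gamma$, so
\[
Q'_\gamma \cap P = g(Q_\gamma) \cap g(P) = g(Q_\gamma \cap P) = g(P_\gamma) = P_\gamma,
\]
which is precisely the compatibility condition required in Lemma \ref{unicite graduation}. Applying that lemma to the two gradings $\{Q_\gamma\}$ and $\{Q'_\gamma\}$ on $Q$ yields $Q'_\gamma = Q_\gamma$, i.e.\ $g(Q_\gamma) = Q_\gamma$ for every $\gamma \in \Gamma$. Since $g \in G$ was arbitrary, $G$ stabilizes each homogeneous component of $Q$.

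There is really no obstacle here: the entire content is in the uniqueness Lemma \ref{unicite graduation} (which in turn rests on the fact that $Q$ is a domain integral over $P$, so that Proposition \ref{cloture polynomiale} forces elements of $Q[\Gamma]$ integral over $P$ to lie in $Q$). The only minor point to verify carefully is that $g(Q_\gamma) \cap P = g(P_\gamma)$, and this is immediate from the assumption that $g$ stabilizes $P$ together with the bijectivity of $g$ on $Q$.
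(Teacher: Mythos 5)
Your proof is correct and is essentially the paper's own argument: transport the grading along $g\in G$ to get a second grading $Q=\bigoplus_\gamma g(Q_\gamma)$, check it still extends the grading on $P$ (using that $g$ stabilizes $P$ and its components), and conclude by the uniqueness in Lemma~\ref{unicite graduation}. The only difference is that you spell out the verification $g(Q_\gamma)\cap P=P_\gamma$, which the paper leaves implicit.
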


\begin{proof}
Indeed, if $g \in G$, then $Q=\mathop{\bigoplus}_{\gamma\in\Gamma} g(Q_\gamma)$ 
is a grading on $Q$ extending the one on $P$. 
According to Lemma~\ref{unicite graduation}, we have  $g(Q_\gamma)=Q_\gamma$ for all
$\gamma$.
\end{proof}

\bigskip

\begin{contre}
The assumption that $Q$ is a {\it domain} is necessary in Lemma~\ref{unicite graduation}. 
Indeed, if $P=P_0$ and if $Q=P \oplus P \e$ with $\e^2=0$, 
then we can endow $Q$ with infinitely many gradings extending the one on $P$:
$\e$ can be made homogeneous of any degree.\finl
\end{contre}

\bigskip

\begin{prop}\label{prop:graduation-positive}
Assume $Q=\bigoplus_{\gamma \in \Gamma} Q_\gamma$ is a
grading on $Q$ extending the one on $P$. If $\Gamma$ is endowed with a
structure of totally ordered group such that
$P_\gamma = 0$ for all $\gamma < 0$, then $Q_\gamma=0$ for all $\gamma < 0$.
\end{prop}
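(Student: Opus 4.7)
The plan is to argue by contradiction. Suppose there exists a nonzero homogeneous element $q\in Q_\gamma$ with $\gamma<0$; I will produce from the integrality of $Q$ over $P$ a monic relation satisfied by $q$ whose homogeneous parts force $q^n=0$, contradicting the fact that $Q$ is a domain.

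First I would invoke integrality to write
$$q^n + p_{n-1}q^{n-1}+\cdots+p_1 q+p_0=0$$
for some $n\ge 1$ and coefficients $p_0,\ldots,p_{n-1}\in P$. The key observation is that because the $\Gamma$-grading on $Q$ extends the $\Gamma$-grading on $P$, the left-hand side is an element of the graded ring $Q$, and I may extract its homogeneous components. The term $q^i$ lies in $Q_{i\gamma}$, and decomposing $p_i=\sum_\delta (p_i)_\delta$ into homogeneous components in $P$, the homogeneous component of degree $n\gamma$ of the whole relation reads
$$q^n+\sum_{i=0}^{n-1} (p_i)_{(n-i)\gamma}\cdot q^i=0.$$

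Now comes the decisive step: since $\gamma<0$ and the total order on $\Gamma$ is compatible with the group law, $(n-i)\gamma<0$ for every $i<n$, so by the hypothesis on $P$ each $(p_i)_{(n-i)\gamma}$ vanishes. The displayed equation therefore collapses to $q^n=0$, and since $Q$ is a domain this forces $q=0$, contradicting the choice of $q$. Hence $Q_\gamma=0$ for all $\gamma<0$.

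There is no serious obstacle here; the only point requiring care is to make sure that the monic relation coming from integrality can genuinely be decomposed componentwise in the graded ring $Q$, but this is immediate once one has chosen $q$ homogeneous (so all $q^i$ are homogeneous) and uses the decomposition of the $p_i$ in the grading of $P$ inherited from the grading of $Q$.
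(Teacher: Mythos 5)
Your proof is correct. The paper gives no argument of its own here—it simply cites Bourbaki (Chapter V, \S 1, Proposition 20 and Exercise 25)—and your extraction of the degree-$n\gamma$ homogeneous component of the integral dependence relation, together with the observation that $(n-i)\gamma<0$ forces each coefficient $(p_i)_{(n-i)\gamma}$ to vanish, is exactly the standard argument underlying that reference.
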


\bigskip

\begin{proof}
See~\cite[Chapter~5,~\S{1},~Proposition~20 and Exercise 25]{bourbaki}.
\end{proof}

\bigskip

We will now be interested in the question of the existence of a grading on $Q$ extending 
the one on $P$. For this, let $K=\Frac(P)$, $L=\Frac(Q)$ and we assume that 
the field extension $L/K$ is finite. %Tout d'abord, rappelons que

\bigskip

\begin{lem}\label{g}
The grading on $P$ extends to a grading of its integral closure in $K$. 
%Morever, if $P$ is in fact $\NM$-graded (that is $P_i=0$ for $i < 0$), 
%then its integral closure in $K$ is also $\NM$-graded.
\end{lem}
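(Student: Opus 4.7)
The plan is to extend the coaction map $\mu_P:P\to P[\Gamma]$ attached to the grading on $P$ to a coaction $\mu_{P'}:P'\to P'[\Gamma]$ on the integral closure $P'$ of $P$ in $K$, and then verify the axioms of (\ref{eq:gradedmu}). The essential input is that Laurent polynomial rings over a normal domain remain normal.

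First I would note that $P[\Gamma]$ is a domain, so the composition $P\xrightarrow{\mu_P}P[\Gamma]\hookrightarrow L$, with $L:=\Frac(K[\Gamma])$, extends uniquely by the universal property of $K=\Frac(P)$ to an injective ring map $\mu_K:K\to L$.

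The crux is to show $\mu_K(P')\subseteq P'[\Gamma]$. For this I would identify $P'[\Gamma]$ with the integral closure of $P[\Gamma]$ in $L$: after choosing $\Gamma\simeq\ZM^r$, the ring $P'[\Gamma]=P'[t_1^{\pm},\dots,t_r^{\pm}]$ is integral over $P[\Gamma]$ because $P'$ is integral over $P$, and it is integrally closed in $L=\Frac(K[\Gamma])$ because $P'$ is integrally closed in $K$ and passing to Laurent polynomial rings preserves normality. Since any $q\in P'$ satisfies a monic polynomial over $P$, the element $\mu_K(q)$ satisfies a monic polynomial over $\mu_P(P)\subset P[\Gamma]$ and so lies in $P'[\Gamma]$.

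Setting $\mu_{P'}:=\mu_K|_{P'}$, I would verify the two conditions of (\ref{eq:gradedmu}). The coassociativity identity $(1\otimes\Delta)\circ\mu_{P'}=(\mu_{P'}\otimes\id)\circ\mu_{P'}$ is an equality of two ring maps $P'\to P'[\Gamma\times\Gamma]$ that agree on $P$ by hypothesis; both extend uniquely to ring maps $K\to\Frac(K[\Gamma\times\Gamma])$ by the fraction-field argument above, so they coincide on $P'$. For the isomorphism condition, I would exhibit an explicit two-sided inverse $(\id\otimes m)\circ(\mu_{P'}\otimes\id)$ to $\mu_{P'}\otimes\id:P'\otimes\ZM[\Gamma]\to P'[\Gamma]$, where $m:\ZM[\Gamma]\otimes\ZM[\Gamma]\to\ZM[\Gamma]$ sends $\gamma_1\otimes\gamma_2$ to $\gamma_1^{-1}\gamma_2$; that this is an inverse reduces to the analogous statement for $\mu_P$, which holds by hypothesis, again by the same extension-from-$P$ argument. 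The main obstacle is the integral-closure identification $\mu_K(P')\subseteq P'[\Gamma]$; once this is established, everything else is formal. A delicate point throughout is that the grading on $P'$ is only produced at the end, so every step must be carried out intrinsically at the level of coaction maps rather than in terms of homogeneous components.
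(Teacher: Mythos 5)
Your proof is correct, but it is genuinely different from what the paper does: the paper gives no argument at all for Lemma~\ref{g} and simply cites Bourbaki (Chap.~V, \S 1, Proposition~21 and Exercise~25). The Bourbaki-style proof behind that citation runs through the statement that every homogeneous component of an element integral over a graded ring is again integral, proved by applying the substitution endomorphisms $a\mapsto \lambda^{\deg a}a$ for infinitely many $\lambda$ and a Vandermonde argument --- the same device the paper itself uses for Proposition~\ref{graduation radical}. You instead stay entirely at the level of the coaction $\mu_P:P\to P[\Gamma]$ of \S\ref{intro graduation}: you extend it to $\mu_K:K\to\Frac(K[\Gamma])$, land $\mu_K(P')$ inside $P'[\Gamma]$ by identifying $P'[\Gamma]$ with the integral closure of $P[\Gamma]$ (using that Laurent polynomial rings over a normal domain are normal), and then check the two axioms of (\ref{eq:gradedmu}) by the principle that two ring homomorphisms from a subring of $\Frac(P)$ (resp.\ of $\Frac(P[\Gamma])$) into a domain which agree and are injective on $P$ (resp.\ on $P[\Gamma]$) must coincide. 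All the steps hold up: $\mu_P$ is injective, so the extension to $K$ exists and is unique; the composites $(1\otimes\Delta)\circ\mu_{P'}$ and $(\mu_{P'}\otimes\id)\circ\mu_{P'}$ are ring maps into the domain $P'[\Gamma\times\Gamma]$ agreeing on $P$; and the two composites of $\mu_{P'}\otimes\id$ with $(\id\otimes m)\circ(\mu_{P'}\otimes\id)$ are ring endomorphisms of $P'[\Gamma]\subseteq\Frac(P[\Gamma])$ restricting to the identity on $P[\Gamma]$, hence equal to the identity. The only thing left tacit is that the resulting grading extends that of $P$, which is immediate from $\mu_{P'}|_P=\mu_P$. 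What your route buys is a self-contained, coordinate-free argument avoiding any manipulation of homogeneous components; what the Vandermonde route buys is the stronger componentwise statement (homogeneous components of integral elements are integral), which is sometimes useful on its own.
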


\begin{proof}
See~\cite[Chapter~5,~\S{1},~Proposition~21 and Exercise 25]{bourbaki}. 
\end{proof}

\bigskip

We will now show how the question of the existence can be read on a 
normal closure of the field extension $L/K$. Let $M$ denote a normal closure 
of the field extension $L/K$ and let $R$ be the integral closure  of $P$ in $M$. 

\medskip

\begin{lem}\label{decomposition gradue}
Let $\gamma\in\Gamma$. 
Define a grading on $P[\xb]$ extending the grading on $P$
	by giving $\xb$ the degree $\gamma$. 
Let $F \in P[\xb]$ be a monic polynomial, which is homogeneous for this grading. 
If $F=F_1\cdots F_r$, with $F_i \in P[\xb]$ monic, then $F_i$ is homogeneous for all $i$.
\end{lem}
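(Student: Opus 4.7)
The plan is to reduce to the case $r=2$ by induction on $r$, then, writing $F=GH$ with monic $G,H\in P[\xb]$, to deduce the homogeneity of $G$ and $H$ from a stronger homogeneity property of the monic irreducible factors of $F$ over $K=\Frac(P)$. The key setup is to consider the integral closure $P'$ of $P$ in $K$: by Lemma~\ref{g}, $P'$ carries a grading extending that of $P$, which is unique by Lemma~\ref{unicite graduation}. I would factor $F = q_1 \cdots q_s$ in $K[\xb]$ into monic irreducibles, note that each $q_j$ lies in $P'[\xb]$ by Proposition~\ref{int clos}, and reduce the problem to showing each $q_j$ is homogeneous in $P'[\xb]$. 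Indeed, by unique factorization in $K[\xb]$, $G$ is the product of a sub-multiset of the $q_j$'s, hence homogeneous in $P'[\xb]$ if the $q_j$ are; and since the coefficients of $G$ lie in $P$ and $P\subseteq P'$ is a graded subring, they sit in the homogeneous parts of $P$, so $G$ is homogeneous in $P[\xb]$.

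To establish the homogeneity of each $q_j$, I plan to use the coaction morphism $\mu:P[\xb]\to P[\xb][\Gamma]$ encoding the $\Gamma$-grading (so $\mu(\xb)=\xb\,t^\gamma$ and $\mu(a)=a\,t^\delta$ for $a\in P_\delta$); the homogeneity of $F$ of degree $d\gamma$ translates into the identity $\mu(F)=F\cdot t^{d\gamma}$. Extending $\mu$ to $\mu_{P'}:P'[\xb]\to P'[\Gamma][\xb]$ using the grading on $P'$, and introducing the monic (in $\xb$) polynomial $\tilde q_j := t^{-(\deg q_j)\gamma}\mu(q_j)\in K[\Gamma][\xb]$, the identity $\mu(F)=F\cdot t^{d\gamma}$ yields a second factorization $F=\prod_j \tilde q_j$ into monic polynomials in $K[\Gamma][\xb]$.

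The decisive step is then to compare these two factorizations of $F$ in $K(\Gamma)[\xb]$, where $K(\Gamma):=\Frac(K[\Gamma])$. Because $\Gamma$ is a finitely generated free abelian group, $K[\Gamma]$ is a Laurent polynomial ring over $K$, hence a UFD, and $K(\Gamma)/K$ is purely transcendental; combined with Gauss's lemma applied to $K[\Gamma]$, this shows that each $q_j$ remains irreducible in $K(\Gamma)[\xb]$, since any non-trivial monic factorization there could be rescaled into $K[\Gamma][\xb]$ and then specialized via $t^\gamma\mapsto 1$ to a non-trivial monic factorization in $K[\xb]$. Unique factorization of monic polynomials in $K(\Gamma)[\xb]$ then forces $\tilde q_j = q_{\pi(j)}$ for some permutation $\pi$. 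Applying the specialization $\phi:K[\Gamma]\to K$, $t^\gamma\mapsto 1$, which acts as the identity on $K[\xb]$ and sends $\tilde q_j$ back to $q_j$ (a direct unwinding of the definition of $\mu$), I obtain $q_j = q_{\pi(j)}$, hence $\tilde q_j = q_j$; unpacking the resulting identity $\mu(q_j)=q_j\cdot t^{(\deg q_j)\gamma}$ coefficient by coefficient, together with the uniqueness of the homogeneous decomposition in $P'$, forces each coefficient of $q_j$ to be homogeneous.

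The main obstacle is the careful handling of the extension of $\mu_P$ to the fraction field $K$: since $\mu_P(s)$ is generally not invertible in $K[\Gamma]$, this extension naturally takes values in $K(\Gamma)$ rather than $K[\Gamma]$, and the specialization $t^\gamma\mapsto 1$ is not defined on all of $K(\Gamma)$. One must therefore apply it to the specific elements $\tilde q_j$, which do lie in $K[\Gamma][\xb]$ thanks to $q_j\in P'[\xb]$ and the grading on $P'$. Once this technical point is navigated, the argument is essentially a clean application of unique factorization combined with the uniqueness of the grading on $P'$.
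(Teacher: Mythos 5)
Your proof is correct in substance, but it takes a genuinely different and considerably heavier route than the paper's. The paper's argument is a two-line units computation: applying the coaction $\mu_{P[\xb]}$ to $F=F_1\cdots F_r$ gives $F\gamma_F=\mu(F_1)\cdots\mu(F_r)$ in $P[\xb][\Gamma]$, and since $F\gamma_F$ is a \emph{unit} of $K(\xb)[\Gamma]$ (a Laurent polynomial ring over the field $K(\xb)$, whose units are exactly the nonzero scalars times monomials), each $\mu(F_i)$ must itself be of the form $G_i\gamma_i$ with $G_i\in K(\xb)^\times$ and $\gamma_i\in\Gamma$ --- which is precisely the statement that $F_i$ is homogeneous. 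No integral closure, no irreducible factorization, no specialization is needed. Your detour through the graded integral closure $P'$ (Lemma~\ref{g}), the irreducible monic factors $q_j$ over $K$, irreducibility over $K(\Gamma)$, and the specialization $t^\delta\mapsto 1$ does work, and it has the mild virtue of pinpointing that every monic irreducible factor of $F$ over $K$ is homogeneous in $P'[\xb]$; but for the lemma as stated all of that is subsumed by the observation that a divisor of a unit is a unit.

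One small imprecision in your write-up: unique factorization in $K(\Gamma)[\xb]$ does not by itself force $\tilde q_j=q_{\pi(j)}$ for a permutation $\pi$; it only gives $\tilde q_j=\prod_{i\in S_j}q_i$ for some partition of the index multiset, since you have not shown the $\tilde q_j$ are irreducible. The conclusion is nevertheless reachable by reordering your own steps: apply the specialization $\phi$ first to get $q_j=\phi(\tilde q_j)=\prod_{i\in S_j}q_i$, use irreducibility of $q_j$ to conclude that $S_j$ is a singleton $\{i\}$ with $q_i=q_j$, and hence $\tilde q_j=q_j$. With that adjustment your argument is complete.
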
 

\begin{proof}
We have $\mu_{P[\xb]}(\xb a)=\xb\mu_P(a)\gamma$ for $a\in P$.
Let $\gamma_F\in\Gamma$ denote the {\it total} degree of $F$. We have
$$\mu_{P[\xb]}(F)=F\gamma_F = \mu_{P[\xb]}(F_1)\cdots \mu_{P[\xb]}(F_r).$$
Since $P[\xb]$ is a domain with fraction field $K(\xb)$ the fact that 
$K(\xb)[\Gamma]$ is a unique factorization domain implies that there exists
$G_1$,\dots, $G_r \in K(\xb)$ and  $\gamma_1$,\dots, $\gamma_r \in \Gamma$ such that 
$$\mu_{P[\xb]}(F_i)=G_i \gamma_i$$
for all $i$. This forces $F_i$ to be homogeneous of degree $\gamma_i$, 
and $F_i=G_i$.
\end{proof}

\begin{coro}\label{dec hom}
Let $\gamma\in\Gamma$. 
Define a grading on $P[\xb]$ extending the grading on $P$ by giving $\xb$ the degree $\gamma\in\Gamma$. 
Let $F \in P[\xb]$ be a monic polynomial, which is homogeneous for this grading. 
We assume that $M$ is the splitting field of $F$ over $K$. 
Then $R$ admits a grading extending the one on $P$.
\end{coro}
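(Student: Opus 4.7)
The plan is to argue by induction on $\deg F$. The base case $\deg F = 1$ is immediate: then $M = K$, so $R$ is the integral closure of $P$ in $K$ and Lemma \ref{g} supplies the grading directly. For $\deg F \ge 2$, my first move is to replace $P$ by its integral closure $P'$ in $K$. By Lemma \ref{g}, $P'$ carries a $\Gamma$-grading extending that of $P$; the polynomial $F$ remains monic and homogeneous when viewed in $P'[\xb]$, and the integral closure of $P'$ in $M$ is still $R$ by transitivity of integral closure. So I may assume $P$ is integrally closed in $K$.

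Now pick a root $\alpha$ of $F$ in $R$ (it lies in $R$ since $F$ is monic in $P[\xb]$), and let $F_0 \in K[\xb]$ be its minimal polynomial over $K$. Corollary \ref{minimal clos} places $F_0$ in $P[\xb]$, and Euclidean division of $F$ by the monic $F_0$ in $P[\xb]$ produces a factorization $F = F_0 G$ with $G \in P[\xb]$ monic. Lemma \ref{decomposition gradue} then forces both $F_0$ and $G$ to be homogeneous with respect to the grading on $P[\xb]$. Hence the ideal $(F_0)$ is homogeneous and $P[\alpha] \cong P[\xb]/(F_0)$ inherits a $\Gamma$-grading in which $\alpha$ has degree $\gamma$. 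A second application of Lemma \ref{g}, this time to the inclusion $P[\alpha] \subseteq K(\alpha)$, yields a $\Gamma$-grading on the integral closure $R_1$ of $P[\alpha]$ in $K(\alpha)$ extending that of $P[\alpha]$, hence of $P$. Factor $F = (\xb - \alpha) F_1$ in $R_1[\xb]$; the coefficients of $F_1$ are integral over $P$ and lie in $K(\alpha)$, hence in $R_1$. Since $F$ and $\xb-\alpha$ are homogeneous in the graded domain $R_1$, another application of Lemma \ref{decomposition gradue} shows $F_1$ is homogeneous. The field $M$ is the splitting field of $F_1$ over $\Frac(R_1) = K(\alpha)$, and $R$ is the integral closure of $R_1$ in $M$ by transitivity, so the inductive hypothesis applied to $(R_1, K(\alpha), M, F_1)$ produces a $\Gamma$-grading on $R$ extending the one on $R_1$, and therefore on $P$.

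The main delicate point is the homogeneity of the minimal polynomial $F_0$, which pivots on two ingredients that must be put in place together: the preliminary replacement of $P$ by its integral closure in $K$, so that $F_0 \in P[\xb]$ and not merely in $K[\xb]$ (Corollary \ref{minimal clos}); and then Lemma \ref{decomposition gradue} applied to the factorization $F = F_0 G$ in $P[\xb]$, which converts the homogeneity of $F$ into that of $F_0$. Once $F_0$ is known to be homogeneous, the compatibility between the grading on $P[\alpha]$ imported from $P[\xb]/(F_0)$ and the one eventually produced on $R_1$ by Lemma \ref{g} is automatic by the uniqueness statement in Lemma \ref{unicite graduation}; the induction then runs without further obstruction.
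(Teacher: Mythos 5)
Your proof is correct and follows essentially the same route as the paper's: reduce to $P$ integrally closed via Lemma~\ref{g}, induct on $\deg F$, use Lemma~\ref{decomposition gradue} to see that the minimal polynomial of a root is homogeneous so that $P[\alpha]$ is graded, extend to the integral closure in $K(\alpha)$ by Lemma~\ref{g}, split off the linear factor, and apply the inductive hypothesis to the remaining homogeneous factor. The only cosmetic differences are that you phrase the irreducible factor as the minimal polynomial of an explicit root (citing Corollary~\ref{minimal clos} rather than Proposition~\ref{int clos}) and you make explicit the appeal to Lemma~\ref{unicite graduation}, which the paper leaves implicit.
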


\begin{proof}
By Lemma~\ref{g}, we may assume that $P$ is integrally closed. 
Let $\d$ denote the degree of $F$ {\it in the variable $\xb$}. We shall show the result 
by induction on $\d$, the case where $\d=1$ being trivial 
(because $P=R$ in this case).

So assume that $\d \ge 2$ and let $F_1$ be a monic irreducible polynomial 
of $K[\xb]$ dividing $F$. By Proposition~\ref{int clos}, we have
$F_1 \in P[\xb]$. Set $K'=K[\xb]/< F_1 >$ and let $x$ be the image of $\xb$ in $K'$. 
Then $K'$ is a field which contains the ring $P'=P[\xb]/<F_1>$. 
In fact, $K'$ is the fraction field of $P'$. Since $F_1$ is homogeneous 
by Lemma~\ref{decomposition gradue}, 
$P'$ is graded (with $x$ homogeneous of degree $\gamma$). By Lemma~\ref{g}, 
the integral closure $P''$ of $P'$ in $K'$ 
inherits a grading. On the other hand, $K' \subset M$ and $M$ 
is the splitting field of $F$ over $K'$. 
In $P''[\xb]$, we have 
$$F(\xb)=(\xb-x) F_0(\xb),$$
where $F_0(\xb) \in P''[\xb]$ is homogeneous with degree {\it in the variable $\xb$} 
is equal to $\d-1$. Since the splitting field of $F$ over $K$ is equal to 
the splitting field of $F_0$ over $K'$, the result follows from the induction
hypothesis.
\end{proof}

\bigskip

\begin{prop}\label{R gradue}
Assume that $P$ and $Q$ are integrally closed. 
If the grading on $P$ extends to a grading on $Q$, then this grading 
also extends to a grading on $R$. 
\end{prop}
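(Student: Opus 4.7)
The plan is to produce $R$ as the integral closure of $P$ in the splitting field of a suitable sequence of homogeneous polynomials, and then apply Corollary~\ref{dec hom} iteratively. First I would reduce the problem to producing such polynomials. Since $L/K$ is finite and $L=\Frac(Q)$, I can choose finitely many elements of $Q$ generating $L$ over $K$; decomposing each into its homogeneous components in the graded ring $Q$, I may assume the generators $q_1,\dots,q_n\in Q$ are homogeneous of degrees $\gamma_1,\dots,\gamma_n\in\Gamma$.

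Next I would establish the key lemma: the minimal polynomial $F_i\in K[\xb]$ of $q_i$ over $K$ lies in $P[\xb]$ and is homogeneous when $\xb$ is assigned the degree $\gamma_i$. That $F_i\in P[\xb]$ follows from Corollary~\ref{minimal clos} since $P$ is integrally closed. For homogeneity, equip $P[\xb]$ with the $\Gamma$-grading extending that of $P$ by $\deg\xb=\gamma_i$ and decompose $F_i=\sum_\epsilon F_{i,\epsilon}$ into homogeneous components. Since $q_i$ is homogeneous of degree $\gamma_i$ in $Q$, each $F_{i,\epsilon}(q_i)\in Q_\epsilon$, and since $F_i(q_i)=0$ and $Q=\bigoplus_\epsilon Q_\epsilon$ we get $F_{i,\epsilon}(q_i)=0$ for every $\epsilon$. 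The leading term $\xb^{d_i}$ (where $d_i=\deg F_i$) sits in the component of degree $d_i\gamma_i$, so $F_{i,d_i\gamma_i}$ is monic in $\xb$ of degree $d_i$ and annihilates $q_i$; by minimality of $F_i$, this forces $F_i=F_{i,d_i\gamma_i}$, which is homogeneous. This step is the main obstacle: it cleverly combines the graded decomposition in $P[\xb]$ with the minimality and monicity of $F_i$, and without it Corollary~\ref{dec hom} cannot be activated.

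Then I would inductively construct a chain of graded integrally closed domains. Let $M_k$ be the splitting field of $F_1\cdots F_k$ over $K$ and $R_k$ the integral closure of $P$ in $M_k$, for $0\le k\le n$; each $R_k$ is automatically integrally closed by transitivity of integral closure. I claim $R_k$ admits a grading extending that of $P$, by induction on $k$. The case $k=0$ is trivial since $R_0=P$. Assuming $R_{k-1}$ is graded, note that $M_k$ is the splitting field of $F_k$ over $M_{k-1}=\Frac(R_{k-1})$, and that $F_k$, being homogeneous in $P[\xb]$, is also monic and homogeneous in $R_{k-1}[\xb]$ with $\deg\xb=\gamma_k$. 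Applying Corollary~\ref{dec hom} with base ring $R_{k-1}$ and polynomial $F_k$ shows that the integral closure of $R_{k-1}$ in $M_k$ -- which is $R_k$ by transitivity -- is graded, extending the grading of $R_{k-1}$ and hence of $P$.

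Finally, I would identify $M_n$ with $M$. Since $M$ is the normal closure of $L=K(q_1,\dots,q_n)$ over $K$ and we are in characteristic zero, $M$ is the Galois closure, equal to the splitting field over $K$ of $\prod_{i=1}^n F_i$, which is precisely $M_n$. Therefore $R=R_n$ inherits a $\Gamma$-grading extending that of $P$, completing the proof.
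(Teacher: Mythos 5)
Your proof is correct and follows essentially the same route as the paper: pick homogeneous generators $q_1,\dots,q_n$ of $L$ over $K$, observe that their minimal polynomials lie in $P[\xb]$ and are homogeneous, and then build $R$ as the end of a chain of integral closures in splitting fields, applying Corollary~\ref{dec hom} at each step. The only difference is that you spell out the two points the paper labels as "easy" (the homogeneity of $F_i$ via the direct-sum decomposition of $Q$ and the uniqueness of the monic minimal polynomial, and the induction on the number of generators), and both verifications are sound.
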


\begin{proof}
Let $q_1$,\dots, $q_r$ be elements of $Q$, homogeneous of respective degrees 
$\gamma_1$,\dots, $\gamma_r$ and such that $L=K[q_1,\dots,q_r]$. We denote by $F_i \in K[\tb]$ 
the minimal polynomial of $q_i$ over $K$: in fact, $F_i \in P[\tb]$ according to 
Corollary~\ref{minimal clos}. Then $M$ is the splitting field of $F_1 \cdots F_r$. 
By an easy induction argument, we may assume that $r=1$: 
we then write $q=q_1$, $\gamma=\gamma_1$ and $F=F_1$. 

If we give the variable $\tb$ the degree $\gamma$, then we check easily that 
$F$ becomes homogeneous (for the total degree on $P[\tb]$). 
The existence of an extension of the grading follows from Corollary~\ref{dec hom}.
\end{proof}

\bigskip

\begin{lem}\label{lem:homogeneise-premier}
Let $\pG$ be a prime ideal of $P$ and let $\pGt$ be the maximal homogeneous 
ideal of $P$ contained in $\pG$ (that is $\pGt=\bigoplus_{\gamma\in\Gamma} \pG \cap
P_\gamma$). 
Then $\pGt$ is prime.
\end{lem}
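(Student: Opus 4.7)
The plan is to give the standard graded-ring argument, the main ingredient being a total ordering of $\Gamma$ compatible with addition. Since $\Gamma$ is a finitely generated free abelian group, $\Gamma \simeq \BZ^n$ and we may equip $\Gamma$ with the lexicographic total order, which is compatible with the group law (i.e., $\gamma < \gamma'$ implies $\gamma + \eta < \gamma' + \eta$ for all $\eta$). Note first that $\pGt$ is proper, since $1 \in P_0$ and $1 \notin \pG$, so $1 \notin \pGt$. That $\pGt$ is an ideal is immediate from its definition as $\bigoplus_\gamma (\pG \cap P_\gamma)$.

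The heart of the argument is to show: if $a, b \in P$ satisfy $ab \in \pGt$, then $a \in \pGt$ or $b \in \pGt$. Suppose, for a contradiction, that neither $a$ nor $b$ lies in $\pGt$. Writing $a = \sum_\gamma a_\gamma$ and $b = \sum_\delta b_\delta$ as finite sums of homogeneous components, the assumption says that some $a_\gamma$ and some $b_\delta$ are not in $\pG$. Let $\gamma_0$ (resp.\ $\delta_0$) be the largest element of $\Gamma$, with respect to the chosen total order, such that $a_{\gamma_0} \notin \pG$ (resp.\ $b_{\delta_0} \notin \pG$); these exist because the sums are finite.

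Now examine the homogeneous component of $ab$ in degree $\gamma_0 + \delta_0$:
\[
(ab)_{\gamma_0+\delta_0} \;=\; a_{\gamma_0} b_{\delta_0} \;+\; \sum_{\substack{\gamma+\delta=\gamma_0+\delta_0 \\ (\gamma,\delta)\neq(\gamma_0,\delta_0)}} a_\gamma b_\delta.
\]
In any term of the sum on the right, either $\gamma > \gamma_0$ (in which case $a_\gamma \in \pG$ by maximality of $\gamma_0$) or, using the compatibility of the order with addition, $\delta > \delta_0$ (in which case $b_\delta \in \pG$ by maximality of $\delta_0$). Hence every such term lies in $\pG$. Since $ab \in \pGt \subset \pG$ and $\pG$ is homogeneous-component-closed on $\pGt$... more precisely, $(ab)_{\gamma_0+\delta_0} \in \pG$ because $ab \in \pGt$ and $\pGt$ is homogeneous. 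We conclude that $a_{\gamma_0} b_{\delta_0} \in \pG$, and primality of $\pG$ gives $a_{\gamma_0} \in \pG$ or $b_{\delta_0} \in \pG$, a contradiction. This proves $\pGt$ is prime.

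There is no real obstacle here: the only point that requires a small observation is the existence of a total order on $\Gamma$ compatible with the group structure, which is provided by lexicographic order on $\BZ^n$. Everything else is the classical homogenization argument for graded primes.
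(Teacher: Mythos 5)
Your proof is correct, and it takes a genuinely different route from the paper's. You run the classical leading-component argument: choose a total order on $\Gamma\simeq\BZ^n$ compatible with addition, take the maximal degrees $\gamma_0,\delta_0$ in which $a$, resp.\ $b$, has a component outside $\pG$, and observe that the component of $ab$ in degree $\gamma_0+\delta_0$ is $a_{\gamma_0}b_{\delta_0}$ modulo $\pG$; primality of $\pG$ then gives the contradiction. (The mid-proof hesitation about why $(ab)_{\gamma_0+\delta_0}\in\pG$ resolves correctly: $ab\in\pGt$ means by definition that every homogeneous component of $ab$ lies in $\pG$.) The paper instead uses the coaction map $\mu_P:P\to P[\Gamma]$, $P_\gamma\ni a\mapsto a\otimes\gamma$, set up earlier in the appendix: $\pGt$ is exactly the kernel of the composition $P\xrightarrow{\mu_P}P[\Gamma]\to(P/\pG)[\Gamma]$, and $(P/\pG)[\Gamma]$ is a Laurent polynomial ring over the domain $P/\pG$, hence a domain, so the kernel is prime. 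That argument is a one-liner given the machinery already in place, and it isolates the only real input — that the group algebra of $\Gamma$ over a domain is a domain — which would let it apply verbatim to any $\Gamma$ with that property. Your version is more elementary and self-contained, at the cost of invoking a compatible total order (which does exist for any torsion-free abelian group, not just $\BZ^n$, but requires a separate observation).
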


\begin{proof}
Indeed, $(P/\pG)[\Gamma]$ is a domain and $\pGt$ is the kernel of the morphism 
obtained by composition $P \xrightarrow{\mu_P} P[\Gamma] \xrightarrow{\mathrm{can}}
(P/\pG)[\Gamma]$.
\end{proof}

\bigskip

\begin{lem}\label{premier homogene}
Let $\qG$ be a prime ideal of $Q$ and let $\pG=\qG \cap P$. Assume that the grading on 
$P$ extends to a grading on $Q$. Then 
$\pG$ is homogeneous if and only if $\qG$ is homogeneous.
\end{lem}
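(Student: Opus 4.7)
The statement is an equivalence, so I would prove both directions separately. The forward direction is essentially formal, while the reverse direction will require combining Lemma~\ref{lem:homogeneise-premier} applied to $Q$ with the incomparability property for integral extensions.

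First, assume $\qG$ is homogeneous. Since the grading on $P$ extends to the given grading on $Q$, the inclusion $P \hookrightarrow Q$ preserves homogeneous components: for every $\gamma\in\Gamma$, one has $P_\gamma = P \cap Q_\gamma$. Given $p \in \pG$ with homogeneous decomposition $p = \sum_\gamma p_\gamma$ in $P$, this is also its homogeneous decomposition in $Q$, so each $p_\gamma$ lies in $\qG$ (since $\qG$ is homogeneous) and in $P$, hence in $\pG$. So $\pG$ is homogeneous.

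For the converse, assume $\pG$ is homogeneous. Since $Q$ is a commutative $\Gamma$-graded domain (the grading extends that of $P$, and $Q$ is a domain by assumption), Lemma~\ref{lem:homogeneise-premier}, applied with $P$ replaced by $Q$, shows that $\qGt := \bigoplus_{\gamma\in\Gamma}\qG\cap Q_\gamma$ is a prime ideal of $Q$. By construction $\qGt \subset \qG$, so $\qGt\cap P \subset \qG\cap P = \pG$. Conversely, given $p\in\pG$, the homogeneity of $\pG$ yields a decomposition $p=\sum_\gamma p_\gamma$ with $p_\gamma\in\pG\cap P_\gamma \subset \qG\cap Q_\gamma \subset \qGt$, so $p\in\qGt$. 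Therefore
\[
\qGt \cap P \;=\; \pG \;=\; \qG\cap P.
\]

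To conclude, I would invoke the incomparability theorem for integral extensions: if $Q$ is integral over $P$ and $\qGt \subsetneq \qG$ are prime ideals of $Q$, then $\qGt\cap P \subsetneq \qG\cap P$ (this is a standard consequence of Cohen--Seidenberg, and applies here since $Q/P$ is integral by the blanket hypothesis of \S\ref{section:graduation integrale}). The equality $\qGt \cap P = \qG \cap P$ therefore forces $\qGt = \qG$, so $\qG$ is homogeneous. There is no serious obstacle here; the only point requiring care is the legitimacy of applying Lemma~\ref{lem:homogeneise-premier} to $Q$, which is ensured by the fact that $Q$ is a $\Gamma$-graded domain under the extended grading.
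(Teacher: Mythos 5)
Your proof is correct and follows essentially the same route as the paper: define the maximal homogeneous ideal $\qGt\subset\qG$, check $\qGt\cap P=\pG=\qG\cap P$, apply Lemma~\ref{lem:homogeneise-premier} (for $Q$) to see $\qGt$ is prime, and conclude $\qGt=\qG$ by incomparability for the integral extension $Q/P$. The paper's argument is identical, only more terse.
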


\begin{proof}
If $\qG$ is homogeneous, then $\pG$ is clearly homogeneous. Conversely, 
assume that $\pG$ is homogeneous. Let 
$\qG'=\mathop{\bigoplus}_{\gamma\in\Gamma} (\qG \cap Q_\gamma)$. Then $\qG'$ is a
homogeneous ideal of $Q$ 
contained in $\qG$ and $\qG' \cap P = \pG = \qG \cap P$. By Lemma~\ref{lem:homogeneise-premier}, 
$\qG'$ is a prime ideal, so $\qG'=\qG$ since $Q$ is integral over $P$.
\end{proof}

\bigskip

\begin{lem}
	\label{le:barfromleft}
	Assume $\Gamma=\BZ$. Let 
	$\pG$ be a homogeneous prime ideal of $P$ and
	$\qG$ be a homegeneous prime ideal of $Q$.
	Assume $P/(P\cap\qG)$ is $\NM$-graded and
	$P_{<0}+P_{>0}\subset\pG\subset\qG+\langle Q_{>0},Q_{<0}\rangle$. 

	Then $\qG+\langle Q_{>0},Q_{<0}\rangle$ is a prime ideal of $Q$ and it
	is the unique prime ideal of $Q$ lying over $\pG$ and containing $\qG$.
\end{lem}

\begin{proof}
	Let $I=\langle Q_{>0},Q_{<0}\rangle$ and $I_0=I\cap Q_0=Q_0\cap (Q_{<0}\cdot
	Q_{>0})$.
	Since $Q/\qG$ is an integral extension of $P/(P\cap\qG)$ and
	$P/(P\cap\qG)$ is $\NM$-graded, it follows that $Q/\qG$ is $\NM$-graded
	(Proposition~\ref{prop:graduation-positive}). It follows that
	$Q_{<0}\subset\qG$, hence $I_0\subset\qG$. Since $I$ and $\qG$ are
	homogeneous, we have $Q_0\cap (I+\qG)=I_0+(Q_0\cap\qG)=Q_0\cap\qG$, a prime
	ideal of $Q_0$. We have $Q/(\qG+I)\simeq Q_0/(Q_0\cap (I+\qG))$, hence
	$\qG+I$ is a prime ideal of $Q$.

	\smallskip
	Let $\qG'$ be a prime ideal of $Q$ lying over $\pG$ and containing
	$\langle Q_{>0},Q_{<0}\rangle$. Since $\pG$ is homegeneous, it follows
	that $\qG'$ is homegeneous (Corollary~\ref{premier homogene}). 
	The extension $Q/\qG'$ of $P/\pG$ is integral and, since $P/\pG$ has its 
$\BZ$-grading concentrated in degree $0$, it follows that the $\BZ$-grading of
	$Q/\qG'$ is concentrated in degree $0$. It follows that $I\subset\qG'$,
	hence $\qG+I\subset\qG'$. We have $\pG\subset\qG+I$ and $P\cap\qG'=\pG$, hence
	$P\cap(\qG+I)=\pG$. So, $\qG+I=\qG'$.
\end{proof}

\bigskip

\begin{lem}
\label{le:minpolquotient}
Assume $\Gamma=\BZ$.
Let $\pG$ be a prime ideal of $P$ and let $P'$ be the largest graded subring of $P_\pG$. Assume that the
composition $P'_i\subset P_\pG\xrightarrow{\mathrm{can}}P/\pG$ is bijective for all $i\in\BZ$.

Let $q$ be a homogeneous element of $Q$ and let $F\in P[X]$ be its minimal polynomial.
Then the image of $F$ in $(P/\pG)[X]$ is the minimal polynomial of $q\otimes 1\in Q\otimes_P P/\pG$.
\end{lem}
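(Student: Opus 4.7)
The plan is to prove that no monic polynomial in $(P/\pG)[X]$ of degree less than $n := \deg F$ annihilates $q\otimes 1$ in $Q\otimes_P P/\pG$, by lifting any such hypothetical polynomial through the hypothesis on $P'$ and deriving a contradiction with the minimality of $F$ over $K := \Frac(P)$. The first step would be to show that $F$ is homogeneous in $P[X]$ when $X$ is assigned degree $d := \deg q$. Let $M$ be a Galois closure of $\Frac(Q)/K$ and $R$ the integral closure of $P$ in $M$. By Proposition~\ref{R gradue}, the grading on $P$ extends to $R$, and by Corollary~\ref{graduation et automorphisme}, the Galois group $\Gal(M/K)$ preserves this grading. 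Hence every Galois conjugate of $q$ is homogeneous of degree $d$ in $R$, so $F(X) = \prod_\sigma (X - \sigma(q))$ is homogeneous of total degree $nd$; writing $F = X^n + \sum_i a_i X^i$, we obtain $a_i \in P_{(n-i)d}$.

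The second step would be to exploit the hypothesis. The bijections $P'_i \longiso P/\pG$ provide inverse maps $\sigma_i : P/\pG \to P'_i$, and since $\sigma_i(a)\sigma_j(b)$ and $\sigma_{i+j}(ab)$ both lie in $P'_{i+j}$ and have the same image in $P/\pG$, uniqueness forces $\sigma_i(a)\sigma_j(b) = \sigma_{i+j}(ab)$. Setting $t := \sigma_1(1) \in P'_1$ gives a graded ring isomorphism $(P/\pG)[t,t^{-1}] \longiso P'$, under which each $a_i$ corresponds to $\bar a_i\, t^{(n-i)d}$, so $F$ becomes a $t$-homogenization of $\bar F$. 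Observe also that $P'[q] = P'[X]/(F)$ is a free $P'$-module of rank $n$ with basis $1, q, \ldots, q^{n-1}$: freeness follows from the fact that $F$ is monic and is the minimal polynomial of $q$ over $K$, combined with polynomial division by the monic $F$ in $P'[X]$.

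Now suppose $H \in (P/\pG)[X]$ is monic of degree $m < n$ with $H(q \otimes 1) = 0$. Write $H = X^m + \sum_{i<m} \bar b_i X^i$ and lift each $\bar b_i$ to the unique $b_i \in P'_{(m-i)d}$ mapping to it, forming $H'(X) = X^m + \sum_{i<m} b_i X^i \in P'[X]$, which is monic and homogeneous of total degree $md$. The element $H'(q) \in P'[q] \subset Q_\pG$ satisfies $H'(q) \in \pG Q_\pG$ because its image in $Q_\pG/\pG Q_\pG = Q \otimes_P k_P(\pG)$ equals $H(\bar q \otimes 1) = 0$. The main obstacle of the argument is to upgrade this to $H'(q) = 0$ in $Q_\pG$: once achieved, since $H' \in P'[X] \subset K[X]$ is monic of degree $m < n$ and annihilates $q$ in $L = \Frac(Q)$, divisibility by the minimal polynomial $F$ gives a contradiction, completing the proof. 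The plan for this critical vanishing step is to combine the homogeneity of $H'(q)$ in the graded ring $P'[q]$ with the fact that the surjection $P'[q] \twoheadrightarrow (P/\pG)[X]/(\bar F)$ obtained by specializing $t = 1$ sends $H'(q)$ to the class of $H(X) \bmod \bar F$, and to argue that the composite map $(P/\pG)[X]/(\bar F) \to \bar Q$ is injective — i.e.\ that $\pG Q \cap P[q] = \pG P[q]$ — by using that the homogeneity of $q$ together with the structural isomorphism $P' \cong (P/\pG)[t,t^{-1}]$ forces the submodule $P[q]$ of $Q$ to remain pure after reduction modulo $\pG$.
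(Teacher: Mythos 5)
Your overall strategy is the same as the paper's: lift a hypothetical annihilator of $q\otimes 1$ of degree $m<n$ to a polynomial $H'$ whose $i$-th coefficient is the unique element of $P'_{(m-i)d}$ above $\bar b_i$, show that $H'(q)=0$, and contradict the minimality of $F$ over $\Frac(P)$. Your structural preliminaries are correct: $P'\cong (P/\pG)[t,t^{-1}]$ with $t\in P'_1$ a unit, and $P'[q]\cong P'[X]/(F)$ is free of rank $n$ over $P'$. But the step you yourself call "the main obstacle" — passing from $H'(q)\in\pG Q_\pG$ to $H'(q)=0$ — is exactly where the content of the lemma lies, and your plan for it does not close. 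You propose to deduce it from the injectivity of $(P/\pG)[X]/(\bar F)\to Q\otimes_P P/\pG$, equivalently from the purity $\pG Q\cap P[q]=\pG P[q]$; but that injectivity is (slightly more than) a restatement of the lemma's conclusion, and saying that homogeneity "forces" it is an assertion, not an argument. As written, the critical step is circular.

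The mechanism that actually closes the gap is a degree-spreading argument, and it must be run against the kernel of the map to $Q\otimes_P P/\pG$, not against $\pG Q_\pG$. Set $B=Q\otimes_P P'$, a graded ring in which $t$ is a homogeneous unit of degree $1$, and regard $H'(q)=\sum_i q^i\otimes b_i$ as a homogeneous element of $B$ of degree $md$. Since $P'/(t-1)P'\cong P/\pG$ as $P$-algebras, right-exactness of the tensor product gives $\ker\bigl(B\to Q\otimes_P P/\pG\bigr)=(t-1)B$; the hypothesis $H(q\otimes 1)=0$ therefore yields $H'(q)\in(t-1)B$ directly. (Your observation $H'(q)\in\pG Q_\pG$ only records vanishing in $Q\otimes_P k_P(\pG)$, which is weaker, since $Q\otimes_P P/\pG\to Q\otimes_P k_P(\pG)$ need not be injective.) Now if $b=\sum_j b_j\in B$ is nonzero with lowest and highest nonzero components $b_{j_0}$ and $b_{j_1}$, then $(t-1)b$ has nonzero components in degrees $j_0$ and $j_1+1$ (the latter because $t$ is a unit), so $(t-1)b$ is never a nonzero homogeneous element; hence $H'(q)=0$ in $B$, hence in $\Frac(Q)$, and $F\mid H'$ gives $n\le m$, the desired contradiction. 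This is the content of the paper's terse line "$G(q)\in P'_{dn}\cap\pG P_\pG=0$". Two smaller points: your proof that $F$ is homogeneous invokes a Galois closure and the extension of the grading to the integral closure, which requires $P$ and $Q$ integrally closed and separability — hypotheses absent from the lemma; it suffices to note that $\ker(P[X]\to Q)$ is a homogeneous ideal, so the degree-$nd$ component of $F$ is a monic degree-$n$ annihilator of $q$ and must equal $F$. In fact homogeneity of $F$ is dispensable altogether if one works in $B$ rather than in $P'[X]/(F)$.
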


\begin{proof}
Let $F'=\sum_{i=0}^n a_iX^i\in (P/\pG)[X]$ be the minimal polynomial of $q\otimes 1$, with $a_n=1$.
Note that $\deg F'\le \deg F$.

Let $d$ be the homogeneous degree of $q$ and let $G=\sum_{i=0}^n f_{d(n-i)}^{-1}(a_i)\in P'[X]$, where
$f_j:P'_j\xrightarrow{\sim}P/\pG$ is the canonical bijection. We have
$G(q)\in P'_{dn}\cap \pG P_\pG=0$. It follows that $G=F$.
\end{proof}

\bigskip

We conclude this section with some results about the homogeneization of prime ideals 
of $P$ or $Q$. 

\bigskip

\begin{coro}\label{coro:homogeneise-premier}
Assume that the grading on $P$ extends to $Q$. Let $\qG$ be a prime ideal of $Q$ 
and let $\pG = \qG \cap P$. Let $\pGt$ (respectively 
$\qGt$) be the maximal homogeneous ideal of $P$ (respectively $Q$) contained in 
$\pG$ (respectively $\qG$). Then $\pGt=\qGt \cap P$.
\end{coro}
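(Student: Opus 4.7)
The plan is to reduce the corollary to a direct manipulation of the definitions. I would first note that, by hypothesis, the grading on $Q$ restricts to the grading on $P$: that is, $P_\gamma = P \cap Q_\gamma$ for every $\gamma \in \Gamma$ (this is what it means for the $Q$-grading to \emph{extend} the $P$-grading). In particular, the graded decomposition of any element of $P$ as an element of $Q$ has all its homogeneous components already in $P$. With this in hand, the key observation is that both $\pGt$ and $\qGt \cap P$ admit an explicit description as direct sums of intersections with the homogeneous components.

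Concretely, by Lemma~\ref{premier homogene} (or rather the construction preceding it used for $\pG$ replaced by $\qG$), we have
\[
\qGt = \bigoplus_{\gamma \in \Gamma} (\qG \cap Q_\gamma)
\quad \text{and} \quad
\pGt = \bigoplus_{\gamma \in \Gamma} (\pG \cap P_\gamma).
\]
Using $P_\gamma = P \cap Q_\gamma$ and $\pG = \qG \cap P$, one computes
\[
\qGt \cap P \;=\; \Bigl(\bigoplus_{\gamma} (\qG \cap Q_\gamma)\Bigr) \cap P
\;=\; \bigoplus_{\gamma} (\qG \cap Q_\gamma \cap P)
\;=\; \bigoplus_{\gamma} (\pG \cap P_\gamma)
\;=\; \pGt,
\]
where the second equality uses that $P$ is itself a graded subring of $Q$ (so intersecting a homogeneous direct sum with $P$ commutes with taking the graded pieces).

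If one prefers a more conceptual presentation, the two inclusions can be argued separately using only maximality: $\pGt$ is a homogeneous ideal of $P$ contained in $\pG \subseteq \qG$, and since $P$ is graded compatibly with $Q$, every homogeneous element of $\pGt$ is a homogeneous element of $Q$ lying in $\qG$, hence lies in $\qGt$, giving $\pGt \subseteq \qGt \cap P$. Conversely, $\qGt \cap P$ is a homogeneous ideal of $P$ (intersection of a homogeneous ideal with a graded subring) contained in $\qG \cap P = \pG$, hence contained in $\pGt$ by maximality of $\pGt$.

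There is essentially no obstacle: the statement is a bookkeeping consequence of the definition of the ``homogeneization'' $(-)^\sim$ together with the compatibility $P_\gamma = P \cap Q_\gamma$ furnished by the hypothesis that the grading on $P$ extends to $Q$. The only point worth flagging is that this compatibility is genuinely needed --- without it one cannot identify $P_\gamma$ with $P \cap Q_\gamma$, and the intersection $\qGt \cap P$ would not obviously be a homogeneous ideal of $P$ for the original $P$-grading.
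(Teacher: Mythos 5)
Your proof is correct and rests on the same identification the paper uses: the maximal homogeneous ideal contained in $\pG$ (resp.\ $\qG$) is $\bigoplus_{\gamma}(\pG\cap P_\gamma)$ (resp.\ $\bigoplus_{\gamma}(\qG\cap Q_\gamma)$), combined with the compatibility $P_\gamma=P\cap Q_\gamma$. The paper packages exactly this computation as the commutativity of the diagram of coaction maps $P\to (P/\pG)[\Gamma]$ and $Q\to(Q/\qG)[\Gamma]$ (whose kernels are $\pGt$ and $\qGt$), so the two arguments are the same in substance.
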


\begin{proof}
This follows from the proof of Lemma \ref{premier homogene}
and from the fact that the diagram 
$$\xymatrix{
P \ar[r]^-{\mu_P} \ar@{^{(}->}[d] & P[\Gamma] \ar[r]^{\mathrm{can}} & (P/\pG)[\Gamma]
\ar@{^{(}->}[d] \\
Q \ar[r]^-{\mu_Q} & Q[\Gamma] \ar[r]^{\mathrm{can}} & (Q/\qG)[\Gamma]
}$$
is commutative.
\end{proof}

\begin{coro}\label{coro:homogeneise-inertie}
Assume that the grading on $P$ extends to $Q$ and that there exists a finite group $G$ 
acting on $Q$, stabilizing $P$ and preserving the grading. Let $\qG$ be a prime ideal of $Q$ and let 
$\qGt$ be the maximal homogeneous ideal of $Q$ contained in $\qG$. 
Let $D_\qG$ (respectively $D_\qGt$) be the decomposition group of $\qG$ (respectively $\qGt$) 
in $G$ and $I_\qG$ (respectively $I_\qGt$) be the inertia group of $\qG$ (respectively $\qGt$) 
in $G$. Then 
$$D_\qG \subset D_\qGt\qquad\text{and}\qquad I_\qG = I_\qGt.$$
\end{coro}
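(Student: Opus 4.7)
The plan is to use the concrete description of $\qGt$ as the sum of its homogeneous components of $\qG$, together with the fact (established in Corollary~\ref{graduation et automorphisme}) that any action of $G$ preserving $P$ and its grading automatically preserves the grading of $Q$. More precisely, the maximal homogeneous ideal of $Q$ contained in $\qG$ is generated by the homogeneous elements of $\qG$, so
\[
\qGt = \bigoplus_{\gamma \in \Gamma} (\qG \cap Q_\gamma).
\]
This explicit description is what makes both statements essentially formal.

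For the inclusion $D_\qG \subset D_\qGt$, I would argue as follows. Let $g \in D_\qG$. Since $g$ preserves the $\Gamma$-grading of $Q$ (Corollary~\ref{graduation et automorphisme}), the ideal $g(\qGt)$ is homogeneous. Moreover, $g(\qGt) \subset g(\qG) = \qG$. By the maximality of $\qGt$ among homogeneous ideals contained in $\qG$, this forces $g(\qGt) \subset \qGt$. Applying the same argument to $g^{-1}$ yields the reverse inclusion, and hence $g(\qGt) = \qGt$, i.e., $g \in D_\qGt$.

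For the equality $I_\qG = I_\qGt$, the inclusion $I_\qGt \subset I_\qG$ is clear since $\qGt \subset \qG$ (if $g$ acts trivially mod $\qGt$ it acts trivially mod any larger ideal). For the nontrivial direction $I_\qG \subset I_\qGt$, let $g \in I_\qG$. Given a homogeneous element $q \in Q_\gamma$, the element $g(q)$ also lies in $Q_\gamma$ because $g$ preserves the grading, hence $g(q) - q \in Q_\gamma$. On the other hand $g(q) - q \in \qG$ by definition of $I_\qG$, so $g(q) - q \in \qG \cap Q_\gamma \subset \qGt$. Decomposing an arbitrary element of $Q$ into its homogeneous components and using $\ZM$-linearity of both $g - \mathrm{id}$ and the congruence modulo $\qGt$, we obtain $g(q') \equiv q' \pmod{\qGt}$ for all $q' \in Q$, i.e., $g \in I_\qGt$.

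There is no real obstacle here: everything reduces to the two facts that the $G$-action respects the grading (already in the hypothesis together with Corollary~\ref{graduation et automorphisme}) and that $\qGt$ is exactly the homogeneous part of $\qG$. The only small point worth double-checking is the explicit description $\qGt = \bigoplus_{\gamma}(\qG \cap Q_\gamma)$, which follows because the right-hand side is clearly a homogeneous ideal contained in $\qG$, and any homogeneous ideal contained in $\qG$ is, componentwise, contained in the right-hand side.
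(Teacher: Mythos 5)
Your proof is correct and follows essentially the same route as the paper's: the inclusion $D_\qG \subset D_\qGt$ from maximality of $\qGt$ among homogeneous ideals in $\qG$ (using that $G$ preserves the grading of $Q$, via Corollary~\ref{graduation et automorphisme}), the easy inclusion $I_\qGt \subset I_\qG$ from $\qGt \subset \qG$, and the reverse inclusion by checking $g(q)-q \in \qG \cap Q_\gamma \subset \qGt$ on homogeneous elements. Your write-up is merely slightly more explicit about the decomposition $\qGt=\bigoplus_\gamma(\qG\cap Q_\gamma)$ and the reduction to homogeneous components; no gaps.
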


\begin{proof}
The first inclusion is immediate, since $G$ preserves the grading 
(see Corollary~\ref{graduation et automorphisme}). 
Moreover, $Q/\qG$ is a quotient of $Q/\qGt$, so $I_\qGt \subset I_\qG$. 
Conversely, if $g \in I_\qG \subset D_\qG \subset D_\qGt$ and if $q \in \qG \cap
Q_\gamma$, 
then $g(q)-q \in \qG \cap Q_\gamma \subset \qGt$. So $g \in I_\qGt$.
\end{proof}

\bigskip

\section{Gradings and reflection groups}\label{section:GR}

\medskip

\boitegrise{\noindent{\bf Notation.} 
{\it In this section, we fix a field $k$ of characteristic zero and a commutative 
$\NM$-graded $k$-algebra $R=\bigoplus_{i \in \NM} R_i$. We assume moreover that 
$R$ is a domain, that $R_0=k$ and $R$ is a finitely generated $k$-algebra.
We also fix a finite 
group $G$ acting faithfully on $R$ by automorphisms of graded $k$-algebras 
and we set $P=R^G$. Let $R_+=\bigoplus_{i > 0} R_i$: it is the unique 
graded maximal ideal of $R$. We fix a $G$-stable graded vector subspace 
$E^*$ of $R$ such that $R_+=R_+^2 \oplus E^*$ 
(such a subspace exists because $kG$ is semisimple) and we denote by $E$ 
the $k$-dual of $E^*$.}}{0.75\textwidth}

\bigskip

The group $G$ acts on the vector space $E$ and the aim of this section is to give 
some criterion allowing to determine whether $G$ is a 
reflection subgroup of $\GL_k(E)$. 
Our results are inspired by~\cite{BBR}. 

First of all, the grading on $E^*$ induces a grading on $E$ and a grading on $k[E]$, 
the algebra of polynomial functions on $E$ (that is, the symmetric algebra of $E^*$). 
Similarly, $k[E]$ inherits an action of $G$, which preserves the grading. We denote by 
$k[E]_+$ the unique graded maximal ideal of $k[E]$. The inclusion $E^* \longinjto R$ 
induces a $G$-equivariant morphism of graded $k$-algebras 
$$\pi : k[E] \longto R.$$
It is easily checked that
\equat\label{nombre generateurs R}
\text{\it the minimal number of generators of the $k$-algebra $R$ is $\dim_k E$}
\endequat
(see for instance~\cite[Lemme 2.1]{BBR}). We put
$$I=\Ker \pi,$$
so that
\equat\label{E/I}
R \simeq k[E]/I.
\endequat
In particular, $G$ acts faithfully on $E$. 
Since $I$ is homogeneous, it follows from the graded Nakayama Lemma that 
\equat\label{nombre generateurs}
\text{\it the minimal number of generators of the ideal $I$ is $\dim_k I/k[E]_+ I$}
\endequat
Moreover, it is also easy to check that 
\equat\label{G trivial}
\text{\it $I=k[E] I^G$ if and only if $G$ acts trivially on $I/k[E]_+ I$.}
\endequat
(see for instance~\cite[Lemme 3.1]{BBR}). 
Finally, since $kG$ is semisimple, we have 
\equat\label{PRG}
P \simeq k[E]^G/I^G.
\endequat
We will also need the next lemma.

\bigskip

\begin{lem}\label{liberte G}
If $R$ is a free $P$-module, then the rank of the $P$-module $R$ is $|G|$.
\end{lem}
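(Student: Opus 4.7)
The plan is to reduce the computation of the rank to the degree of a Galois extension of fields, and then apply Artin's theorem.

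First, I would observe that since $R$ is a domain and $G$ acts faithfully on $R$, the induced action of $G$ on the fraction field $L = \Frac(R)$ is also faithful. Then by Artin's theorem, $L/L^G$ is a Galois extension with Galois group $G$, so $[L:L^G] = |G|$.

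Next, I would identify $L^G$ with $\Frac(P)$. The inclusion $\Frac(P) \subset L^G$ is clear. For the reverse inclusion, given $x \in L^G$, write $x = a/b$ with $a, b \in R$, $b \neq 0$. Replacing $a$ and $b$ by $a \cdot \prod_{g \neq 1} g(b)$ and $b \cdot \prod_{g \neq 1} g(b) = \prod_g g(b)$, we may assume $b \in R^G = P$. Then $a = bx$, with $b \in P$ and $x \in L^G$, so $a \in L^G \cap R = P$ (since $P = R^G \subset L^G \cap R$, and conversely any $G$-invariant element of $R$ lies in $P$). Hence $x \in \Frac(P)$.

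It remains to compare the rank of $R$ over $P$ with $[L:\Frac(P)]$. Since $R$ is a domain integral over $P$, every non-zero $r \in R$ satisfies a monic relation over $P$ with non-zero constant term (using that $R$ is a domain, we can cancel powers of $r$); so $r$ becomes invertible in the localization $\Frac(P) \otimes_P R$. Therefore $\Frac(P) \otimes_P R$ is a field containing $R$, hence equals $L$. If $R$ is free of rank $n$ over $P$, then $L = \Frac(P) \otimes_P R$ is free of rank $n$ over $\Frac(P)$, so $n = [L : \Frac(P)] = |G|$.

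There is no serious obstacle here; the argument is just a concatenation of standard facts (Artin's theorem, fraction fields commute with localization, invariants of the fraction field equal the fraction field of invariants for finite group actions on domains).
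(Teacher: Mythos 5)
Your proof is correct and follows essentially the same route as the paper's: both identify $\Frac(P)$ with $L^G$, invoke Artin's theorem to get $[L:\Frac(P)]=|G|$, and compare this with the rank via $L=\Frac(P)\otimes_P R$. You simply spell out the standard intermediate facts that the paper takes for granted.
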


\begin{proof}
Let $d$ be the $P$-rank of $R$. Since $R$ is a domain, 
$P$ is also a domain and, if we set $K=\Frac(P)$ and $M=\Frac(R)$, 
then $K=L^G$ (and so $[L:K]=|G|$) and $L = K \otimes_P R$ (and so $[L:K]=d$). 
Therefore, $d=|G|$. 
\end{proof}

\bigskip

The main result of this section is the following (compare
 with~\cite[Th\'eor\`eme~3.2]{BBR}, from which we borrow the proof).

\bigskip

\begin{prop}\label{intersection complete R}
Assume that $P$ is regular and that $R$ is a free $P$-module. Then 
the following are equivalent:
\begin{itemize}
\itemth{1} $R$ is complete intersection and $G$ acts trivially on $I/k[E]_+I$. 

\itemth{2} $G$ is a subgroup of $\GL_k(E)$ generated by reflections.
\end{itemize}
\end{prop}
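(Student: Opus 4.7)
The plan is to prove the two implications by reducing to the Shephard-Todd-Chevalley theorem, which characterizes reflection groups by the freeness of $k[E]$ as a module over $k[E]^G$. The common framework is the short exact sequences $R = k[E]/I$ and $P = k[E]^G/I^G$, and the key technical tool is the local (graded) criterion for flatness, applied to $k[E]$ viewed as a $k[E]^G$-module, using that Koszul homology of a regular sequence vanishes.

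For the direction $(2) \Rightarrow (1)$: by Shephard-Todd-Chevalley, $k[E]^G$ is a graded polynomial algebra and $k[E]$ is a free $k[E]^G$-module of rank $|G|$. Since $P = k[E]^G/I^G$ is regular with $\dim P = \dim k[E]^G - r$ (where $r = \dim_k I/k[E]_+I$), and $k[E]^G$ is Cohen-Macaulay, one can pick a homogeneous regular sequence $g_1,\dots,g_r \in k[E]^G$ generating $I^G$. Flatness of $k[E]$ over $k[E]^G$ promotes this to a regular sequence on $k[E]$, so $k[E]/(g_1,\dots,g_r)k[E] = k[E] \otimes_{k[E]^G} P$ is $P$-free of rank $|G|$. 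The surjection onto $R = k[E]/I$ is between graded free $P$-modules of rank $|G|$ (using Lemma~\ref{liberte G} for $R$), hence an isomorphism by equality of Hilbert series. Thus $I = (g_1,\dots,g_r)k[E] = k[E]\,I^G$, which by~(\ref{G trivial}) gives the trivial $G$-action on $I/k[E]_+I$, and the generators form a regular sequence of the correct length so $R$ is complete intersection.

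For $(1) \Rightarrow (2)$: from $G$ acting trivially on $I/k[E]_+I$, (\ref{G trivial}) gives $I = k[E]\,I^G$. Lift any basis of $I/k[E]_+I$ to $I$ and apply the Reynolds operator to obtain homogeneous generators $f_1,\dots,f_r \in I^G$ of $I$ by graded Nakayama. Since $R$ is complete intersection with $\dim R = \dim k[E] - r$, these $f_i$ must form a regular sequence in $k[E]$. A Reynolds operator argument shows $I^G = (f_1,\dots,f_r)\,k[E]^G$. Now $k[E]^G$ is Cohen-Macaulay (Hochster-Eagon, in characteristic zero), has dimension $n$, and the quotient by $(f_1,\dots,f_r)$ equals the regular ring $P$ of dimension $n-r$; hence $f_1,\dots,f_r$ is a regular sequence in $k[E]^G$ as well. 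Apply the graded local criterion for flatness to $M = k[E]$ over $A = k[E]^G$ with the ideal $J = (f_1,\dots,f_r)$: the Koszul complex computes $\Tor_*^A(A/J,M)$, which vanishes in positive degrees because $f_1,\dots,f_r$ is $M$-regular, and $M/JM = R$ is free over $A/J = P$. Therefore $k[E]$ is flat, hence free (being graded over a graded local ring), as a $k[E]^G$-module. Shephard-Todd-Chevalley now yields that $G$ acts on $E$ as a reflection group.

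The main technical obstacle is justifying the flatness step cleanly: one must verify that the elements $f_1,\dots,f_r$ extracted from $I^G$ really form a regular sequence in both $k[E]^G$ (via Cohen-Macaulayness of the invariants plus the dimension count for the regular quotient $P$) and in $k[E]$ (from the complete intersection hypothesis), so that the vanishing of Koszul homology on the $k[E]^G$ side simultaneously gives the Tor-vanishing needed on the $k[E]$ side. Once this dual regular-sequence property is in hand, the local flatness criterion closes the argument with little further effort.
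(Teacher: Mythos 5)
Your proof is correct, and the two implications fare differently when compared with the paper. For $(2)\Rightarrow(1)$ you follow essentially the paper's route: both arguments compare the surjection $k[E]\otimes_{k[E]^G}P\twoheadrightarrow R$ of graded free $P$-modules of rank $|G|$ to get $I=k[E]\,I^G$, and then count generators. (One caution: your assertion $\dim P=\dim k[E]^G-r$ with $r=\dim_k I/k[E]_+I$ is, at that stage, precisely the complete intersection property you are trying to establish; what you may use freely is that $I^G$ is generated by a regular sequence of length $e-d$ because $P$ is a regular quotient of the polynomial ring $k[E]^G$, the equality $r=e-d$ falling out only at the end.) For $(1)\Rightarrow(2)$ you genuinely diverge. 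The paper counts algebra generators: $I^G$ is generated by $i$ homogeneous invariants, the regular ring $P=k[E]^G/I^G$ by $d$ elements, so $k[E]^G$ is generated by $i+d=e$ elements and, having Krull dimension $e$, is a polynomial ring; Shephard--Todd--Chevalley concludes. You instead prove that $k[E]$ is flat, hence free, over $k[E]^G$ via the local criterion for flatness, which requires importing Hochster--Eagon (Cohen--Macaulayness of $k[E]^G$) to see that $f_1,\dots,f_r$ is a regular sequence downstairs, and the complete intersection hypothesis to see it is one upstairs, so that the Koszul complex kills $\Tor_1^{k[E]^G}(P,k[E])$. Both routes are valid: the paper's generator count is more elementary and self-contained, while yours makes explicit the homological mechanism (a sequence regular simultaneously in $k[E]^G$ and in $k[E]$ forcing flatness) at the cost of an extra deep input.
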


\begin{rema}\label{liberte platitude}
If $P$ is regular and since we are working with graded objects, the 
following statements are equivalent:
\begin{itemize}
\item[$\bullet$] $R$ is a free $P$-module.

\item[$\bullet$] $R$ is a flat $P$-module.

\item[$\bullet$] $R$ is Cohen-Macaulay.
\end{itemize}
Moreover, if $R$ is complete intersection, then $R$ is Cohen-Macaulay.
\end{rema}

\begin{proof}
Let $e=\dim_k E$, let $i=\dim_k I/k[E]_+ I$ and let $d$ denote the Krull dimension 
of $R$ (which is also the one of $P$). Moreover, $e$ is the Krull dimension of 
$k[E]$ and of $k[E]^G$. 

\medskip

Let us first show (1) $\Rightarrow$ (2). So assume that 
$R$ is complete intersection and that $G$ acts trivially on $I/k[E]_+I$. 
Since $R$ is complete intersection, (\ref{E/I}) and (\ref{nombre generateurs}) show
that
$$d=e-i.$$
Moreover, since $G$ acts trivially on $I/k[E]_+I$, the ideal $I$ of $k[E]$ 
can be generated by $i$ homogeneous $G$-invariant elements 
$f_1$,\dots, $f_i$ and so the ideal $I^G$ of $k[E]^G$ is generated by 
$f_1$,\dots, $f_i$. Since $P$ is regular of Krull dimension $d$, the algebra
$P=k[E]^G/I^G$ can be generated by $d$ elements $\pi(g_1)$,\dots, $\pi(g_d)$ 
where $g_j \in k[E]^G$ is homogeneous. Therefore, the $k$-algebra 
$k[E]^G$ is generated by $f_1$,\dots, $f_i$, $g_1$,\dots, $g_d$, 
that is, it is generated by $i+d=e$ elements. 
Since the Krull dimension of $k[E]^G$ is also equal to $e$, 
this shows that $k[E]^G$ is a polynomial algebra, and so  
$G$ is a reflection subgroup of $\GL_k(E)$ by Theorem~\ref{chevalley}.

\medskip

Conversely, let us now show (2) $\Rightarrow$ (1). 
So assume that $G$ is a reflection subgroup of $\GL_k(E)$. Then $k[E]$ is a free 
$k[E]^G$-module of rank $|G|$ (by Theorem~\ref{chevalley}) hence
$(k[E]^G/I^G) \otimes_{k[E]^G} k[E]$ is a free $P$-module 
of rank $|G|$ (see~(\ref{PRG})). Moreover, $k[E]/I=R$ is a free $P$-module of rank $|G|$ 
(see Lemma~\ref{liberte G}). So the canonical surjection 
$(k[E]^G/I^G) \otimes_{k[E]^G} k[E] \longsurto k[E]/I$ (between two free $P$-modules 
of the same rank) is an isomorphism, hence $I$ is generated by 
$I^G$. We deduce that $G$ acts trivially on $I/k[E]_+ I$ (by~(\ref{G trivial})). 

On the other hand, since $k[E]^G$ and $k[E]^G/I^G=P$ are both polynomial algebras 
(see Theorem~\ref{chevalley} for $k[E]^G$), $P$ is complete intersection and so $I^G$ can be generated by 
$e-d$ elements. We deduce from~(\ref{nombre generateurs}) and~(\ref{G trivial}) 
that $i \le e-d$ and so necessarily $i=e-d$ and $R$ is complete intersection.
\end{proof}

\bigskip

\chapter{Blocks, decomposition maps}
\label{appendice: blocs}\setcounter{section}{0}

\bigskip

\boitegrise{{\bf Assumption and notation.} 
{\it We fix in this appendix a commutative ring $R$, which will be assumed 
{\bfit noetherian}, {\bfit integral} and {\bfit integrally closed}. 
Let $\rG$ denote a prime ideal of $R$. We also fix an $R$-algebra $\HC$ which 
will be assumed to be finitely generated and free as an $R$-module.
We denote by $\Zrm(\HC)$ the center of $\HC$. We set  $k=\Frac(R/\rG)=k_R(\rG)$ and 
$K=\Frac(R)=k_R(0)$. Finally, 
the image of an element $h \in \HC$ in $k\HC$ will be denoted by $\hba$.}}{0.75\textwidth}

\bigskip

% Les pr\'ecautions prises dans cet appendice sont certainement superflues...
% 
% \bigskip

\section{Blocks of $k\HC$}\label{section:definition blocs}

\medskip

% \subsection{Cas g\'en\'eral} 
If $A$ is a ring (not necessarily commutative), we denote by 
$\blocs(A)$  \indexnot{I}{\blocs(A)}  the set of its primitive idempotents. For instance, 
$\blocs(\Zrm(\HC))$ is the set of primitive central idempotents of $\HC$. 
Since $\Zrm(\HC)$ is noetherian, 
\equat\label{1}
1=\sum_{e \in \blocs(\Zrm(\HC))} e.
\endequat
Moreover, the morphism $\HC \to k\HC$ induces a morphism $\pi_\Zrm : k\Zrm(\HC) \to \Zrm(k\HC)$ 
(which might be neither injective nor surjective). However, the following result 
has been proven by M\"uller~\cite[Theorem~3.7]{muller}:

\bigskip

\begin{prop}[M\"uller]\label{muller}
\begin{itemize}
\itemth{a} If $e \in \blocs(k\Zrm(\HC))$, then $\pi_\Zrm(e) \in \blocs(\Zrm(k\HC))$. 

\itemth{b} The map $\blocs(k\Zrm(\HC)) \to \blocs(\Zrm(k\HC))$, 
$e \mapsto \pi_\Zrm(e)$ is bijective.
\end{itemize}
\end{prop}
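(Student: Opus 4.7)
The plan is to follow the classical argument of Müller, using lifting of idempotents. First I would localize at $\rG$ to reduce to the case where $R$ is a noetherian, integrally closed, local integral domain with maximal ideal $\rG$ and residue field $k$; this preserves all hypotheses and changes neither $\blocs(kZ(\HC))$ nor $\blocs(Z(k\HC))$. In this setting $kZ(\HC)=Z(\HC)/\rG Z(\HC)$ and $k\HC=\HC/\rG\HC$, and since $\HC$ is $R$-free of finite rank, $Z(\HC)$ is a finitely generated commutative $R$-algebra sitting inside $\HC$.

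The first substantive step is to upgrade the integral closedness of $R$ to the statement that the primitive idempotents $f_1,\dots,f_n$ of $K\Zrm(\HC)=\Zrm(K\HC)$ all lie in $Z(\HC)$. Indeed each $f_i$ is a root of $t^2-t$, hence integral over $R$; but $f_i\in K\Zrm(\HC)$ and $\Zrm(\HC)$ is itself a finite (in particular integral) $R$-algebra, so the integral closure of $R$ inside $K\Zrm(\HC)$ coincides with $\Zrm(\HC)$, giving $f_i\in\Zrm(\HC)$. Consequently $1=f_1+\cdots+f_n$ is an orthogonal decomposition into central idempotents already in $\Zrm(\HC)$; reducing modulo $\rG$ yields an orthogonal decomposition $1=\bar f_1+\cdots+\bar f_n$ of central idempotents of $k\HC$ lying in $\pi_\Zrm(kZ(\HC))$.

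The second step is to show that each non-zero $\bar f_i$ is a sum of primitive central idempotents of $k\HC$ all of which again lie in $\pi_\Zrm(kZ(\HC))$. For this, note that $Z(\HC)$ is a semi-local $R$-algebra: its maximal ideals lying over $\rG$ correspond bijectively to the primitive idempotents of the Artinian quotient $kZ(\HC)$. Writing $kZ(\HC)=\prod_{j} A_j$ as a product of local Artinian $k$-algebras, the primitive idempotents $\bar e_j$ of $kZ(\HC)$ are the components of this decomposition, and their images $\pi_\Zrm(\bar e_j)$ refine the $\bar f_i$'s. Since each $A_j$ is local commutative, a refinement $\pi_\Zrm(\bar e_j)=g+g'$ into orthogonal central idempotents of $k\HC$ would project to a non-trivial decomposition of $1$ in $A_j$ under the canonical map $A_j\to Z(k\HC)\cap A_j\text{-component}$, contradicting locality; this proves (a), that each $\pi_\Zrm(\bar e_j)$ is primitive in $Z(k\HC)$, and also injectivity of the map in (b) (distinct $\bar e_j$'s have disjoint supports in $\pi_\Zrm$).

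Finally, surjectivity in (b) amounts to showing that the orthogonal decomposition $1=\sum_j \pi_\Zrm(\bar e_j)$ already accounts for every primitive central idempotent of $k\HC$; equivalently, $\pi_\Zrm(\bar e_j)$ is primitive not merely within the image of $\pi_\Zrm$ but within all of $Z(k\HC)$, which is precisely (a). I expect the main obstacle to be this last step: controlling refinements in $Z(k\HC)$ that may \emph{a priori} not come from $\pi_\Zrm(kZ(\HC))$, since $\pi_\Zrm$ need not be surjective. The integral-closedness of $R$ is what rules this out, by forcing the generic-fiber idempotents $f_i$ to live in $\Zrm(\HC)$ and making the fiber decomposition of $\Spec Z(\HC)$ at $\rG$ exactly match the block decomposition of $k\HC$.
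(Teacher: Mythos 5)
Your argument breaks down in two places, and the second is fatal.

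First, the claim in your opening step that $\Zrm(\HC)$ "coincides with the integral closure of $R$ inside $K\Zrm(\HC)$" is false: a finite $R$-algebra is integral over $R$ but need not be integrally closed in its total ring of fractions. Take $R=\CM[x]_{(x)}$ and $\HC=\Zrm(\HC)=R[y]/\langle y^2-x^2\rangle$: then $K\Zrm(\HC)\simeq K\times K$ and its primitive idempotents $\frac{1}{2}(1\pm y/x)$ are integral over $R$ but do not lie in $\Zrm(\HC)$. This is not a peripheral slip — if the primitive idempotents of $K\Zrm(\HC)$ always descended to $R_\rG\Zrm(\HC)$, the number of blocks could never drop under specialization, contradicting the ramification phenomenon that Proposition~\ref{codimension un} is designed to capture. (In the example, $k\HC=\CM[y]/\langle y^2\rangle$ has a single block, not two.)

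Second, and decisively, your primitivity argument is circular. To show $\pi_\Zrm(\bar e_j)$ is primitive in $\Zrm(k\HC)$ you suppose $\pi_\Zrm(\bar e_j)=g+g'$ and claim this "projects to a non-trivial decomposition of $1$ in $A_j$". But $g$ and $g'$ are central idempotents of $k\HC$ that have no reason to lie in the image of $\pi_\Zrm$ — the text stresses that $\pi_\Zrm$ is in general neither injective nor surjective — and there is no map $\Zrm(k\HC)\to A_j$ through which to transport them. The locality of $A_j$ only forbids splitting $\bar e_j$ inside $k\Zrm(\HC)$; forbidding a splitting inside the possibly larger algebra $\Zrm(k\HC)$ is precisely the content of M\"uller's theorem, so you have assumed what is to be proved. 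The missing ingredients are: (1) pass to the $\rG$-adic completion $\widehat{Z}=\prod_j\widehat{Z}_{\qG_j}$ of $R_\rG\otimes_R\Zrm(\HC)$ and lift the $\bar e_j$ to central idempotents $e_j$ of $\widehat{Z}\otimes_{\Zrm(\HC)}\HC$; (2) use that the formation of the center commutes with the flat base change to $\widehat{Z}$, so that $\Zrm\bigl(e_j(\widehat{Z}\otimes\HC)\bigr)=\widehat{Z}_{\qG_j}$ is local and $e_j(\widehat{Z}\otimes\HC)$ is ring-indecomposable; (3) lift central idempotents modulo the radical of this complete situation to conclude that its reduction mod $\rG$, whose identity element is $\pi_\Zrm(\bar e_j)$, is still indecomposable and (by Nakayama) nonzero. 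Note that you also never check $\pi_\Zrm(\bar e_j)\neq 0$, which is needed for injectivity in (b) and is not automatic since $\pi_\Zrm$ may have a kernel.
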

% 
% \begin{rema}
% C'est ainsi que Brown et Gordon \'enoncent ce r\'esultat de M\"uller 
% \cite[proposition et corollaire du \S 1.7]{BG}. Le lien 
% n'est pas direct...\finl
% \end{rema}

\bigskip

In a finite dimensional commutative $k$-algebra $\AC$ (for instance, $\Zrm(k\HC)$ 
or $k\Zrm(\HC)$), the prime ideals are maximal and are in one-to-one correspondence with the set 
of primitive idempotents of $\AC$: if $\mG \in \Spec \AC$ and $e \in \blocs(\AC)$, then 
$e$ and $\mG$ are associated through this bijective map if and only if $e \not\in \mG$ 
(that is, if and only if $\mG = \Rad(\AC e) + (1-e)\AC$). 
The Proposition~\ref{muller} shows that $\Spec k\Zrm(\HC)$ is in one-to-one correspondence 
with $\blocs(\Zrm(k\HC))$, that is, with the set of primitive central 
idempotents of $k\HC$. 

Moreover, the natural (and injective) morphism $R \longinjto \Zrm(\HC)$ induces a morphism 
$\Upsilon : \Spec \Zrm(\HC) \to \Spec R$. The map $\Zrm(\HC) \to k\Zrm(\HC)$ induces 
a bijective map between the sets $\Spec k\Zrm(\HC)$ and $\Upsilon^{-1}(\rG)$. Recall that 
$$\Upsilon^{-1}(\rG) = \{\zG \in \Spec \Zrm(\HC)~|~\zG \cap R = \rG\}.$$
Finally, we obtain a bijective map
\equat\label{xi}
\Xi_\rG : \blocs(\Zrm(k\HC)) \stackrel{\sim}{\longto} \Upsilon^{-1}(\rG)
\endequat
which is characterized by the following property: 

\bigskip

\begin{lem}\label{lem:blocs}
If $e \in \blocs(\Zrm(k\HC))$ and if $\zG \in \Upsilon^{-1}(\rG)$, then the following 
are equivalent:
\begin{itemize}
\itemth{1} $\zG=\Xi_\rG(e)$.

\itemth{2} $e \not\in \pi_\Zrm(k \zG)$.

\itemth{3} $\zG$ is the preimage, in $\Zrm(\HC)$, of 
$\pi_\Zrm^{-1}(\Rad(\Zrm(k\HC) e) + (1-e) \Zrm(k\HC))$.
\end{itemize}
\end{lem}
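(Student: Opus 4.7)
My strategy is to unpack the three-step construction of the bijection $\Xi_\rG$ into concrete formulas and then verify each equivalence essentially by translation. Writing $\alpha:\Zrm(\HC)\to k\Zrm(\HC)$ for the canonical map, the bijection $\Xi_\rG$ is the composition of three standard bijections: (i) the inverse of M\"uller's bijection $\blocs(k\Zrm(\HC))\xrightarrow{\sim}\blocs(\Zrm(k\HC))$ from Proposition~\ref{muller}, sending $e'\mapsto \pi_\Zrm(e')$; (ii) the bijection $\blocs(k\Zrm(\HC))\xrightarrow{\sim}\Spec k\Zrm(\HC)$ from idempotent/spectrum theory of finite-dimensional commutative algebras, sending $e'$ to the unique maximal ideal $\mG(e')=(1-e')k\Zrm(\HC)+\Rad(e'k\Zrm(\HC))$ not containing $e'$; and (iii) the bijection $\Spec k\Zrm(\HC)\xrightarrow{\sim}\Upsilon^{-1}(\rG)$, sending a maximal ideal $\mG$ to its contraction $\alpha^{-1}(\mG)$, which belongs to $\Upsilon^{-1}(\rG)$ since elements of $R\setminus\rG$ become units in $k\Zrm(\HC)$. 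Thus by construction, $\zG=\Xi_\rG(e)$ if and only if the extension $k\zG=\alpha(\zG)\cdot k\Zrm(\HC)$ equals $\mG(e')$, where $e'\in\blocs(k\Zrm(\HC))$ is the unique primitive idempotent with $\pi_\Zrm(e')=e$.

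For the equivalence (1)$\Leftrightarrow$(3), I would start from $\zG=\Xi_\rG(e)$, so that $k\zG=(1-e')k\Zrm(\HC)+\Rad(e'k\Zrm(\HC))$. Applying $\pi_\Zrm$ term by term, using $\pi_\Zrm(1-e')=1-e$ and the fact that nilpotent elements map to nilpotents (which lie in $\Rad(e\Zrm(k\HC))\subset \mG_e$ where $\mG_e:=\Rad(\Zrm(k\HC)e)+(1-e)\Zrm(k\HC)$), I would conclude $\pi_\Zrm(k\zG)\subseteq\mG_e$, hence $k\zG\subseteq\pi_\Zrm^{-1}(\mG_e)$. Since $\pi_\Zrm^{-1}(\mG_e)$ is a prime ideal of the artinian ring $k\Zrm(\HC)$ (preimage of a prime), it is maximal, and maximality of $k\zG$ forces equality $k\zG=\pi_\Zrm^{-1}(\mG_e)$. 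Contracting via $\alpha$ yields (3). The converse (3)$\Rightarrow$(1) is immediate: if $\zG=\alpha^{-1}(\pi_\Zrm^{-1}(\mG_e))$ then $k\zG=\pi_\Zrm^{-1}(\mG_e)$ (the extension-contraction inverse works for primes of the localization $k\Zrm(\HC)$ lying over primes in $\Upsilon^{-1}(\rG)$), and the primitive idempotent outside $k\zG$ is precisely $e'=\pi_\Zrm^{-1}(\{e\})\cap\blocs(k\Zrm(\HC))$, giving $\zG=\Xi_\rG(e)$.

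For (1)$\Leftrightarrow$(2), the main point is to show that $e\notin\pi_\Zrm(k\zG)$ characterizes $\zG=\Xi_\rG(e)$. Assume $\zG=\Xi_\rG(e)$ and suppose for contradiction that $e\in\pi_\Zrm(k\zG)$. Using the decomposition $k\zG=(1-e')k\Zrm(\HC)+\Rad(e'k\Zrm(\HC))$ and multiplying by $e$ on the left, the $(1-e)$-component vanishes and one obtains $e=\pi_\Zrm(r)$ for some nilpotent $r\in\Rad(e'k\Zrm(\HC))$; but $\pi_\Zrm(r)$ is then a nilpotent idempotent, forcing $e=0$, absurd. Conversely, if $\zG\neq\Xi_\rG(e)$, write $k\zG=\mG(e'_i)$ with $i\neq j$ (where $e'_j$ corresponds to $e$); then $e'_j\in k\zG$ (since $e'_j\in(1-e'_i)k\Zrm(\HC)\subset\mG(e'_i)$), and $e=\pi_\Zrm(e'_j)\in\pi_\Zrm(k\zG)$. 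This closes the logical triangle (1)$\Leftrightarrow$(2)$\Leftrightarrow$(3).

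The main subtle point, and the only place where real work is needed, is handling the possible non-injectivity of $\pi_\Zrm$: one cannot simply pull back ideals, so the nilpotency argument controlling the radical part of $k\zG$ must be executed carefully. Everything else is a formal translation between idempotents and maximal ideals in finite-dimensional commutative algebras, combined with the standard correspondence between primes of $\Zrm(\HC)$ lying over $\rG$ and primes of the fiber $k\Zrm(\HC)$.
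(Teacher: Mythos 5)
Your proof is correct and follows exactly the route the paper intends: the lemma is stated there without proof as the defining characterization of $\Xi_\rG$, which is built from M\"uller's theorem, the idempotent--maximal-ideal correspondence in finite-dimensional commutative algebras, and the fiber identification $\Spec k\Zrm(\HC)\leftrightarrow\Upsilon^{-1}(\rG)$ — precisely the three bijections you unpack and translate. One cosmetic slip: multiplying $e=(1-e)\pi_\Zrm(a)+\pi_\Zrm(r)$ by $e$ yields $e=e\,\pi_\Zrm(r)$ rather than $e=\pi_\Zrm(r)$, but this is still a nilpotent idempotent, so your contradiction goes through unchanged.
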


\bigskip

Now, by localization at $\rG$, $\Upsilon^{-1}(\rG)$ is  
in one-to-one correspondence with $\Upsilon_\rG^{-1}(\rG R_\rG)$, where 
$\Upsilon_\rG : \Spec R_\rG \Zrm(\HC) \to \Spec R_\rG$ is 
the map induced by the inclusion $R_\rG \longinjto R_\rG \Zrm(\HC)$. 
The bijective maps, in both directions, between 
$\Upsilon^{-1}(\rG)$ and $\Upsilon_\rG^{-1}(\rG R_\rG)$ 
are given by
$$\fonctio{\Upsilon^{-1}(\rG)}{\Upsilon_\rG^{-1}(\rG R_\rG)}{\zG}{R_\rG \zG}$$
$$\fonctio{\Upsilon_\rG^{-1}(\rG R_\rG)}{\Upsilon^{-1}(\rG)}{\zG}{\zG \cap \Zrm(\HC).}\leqno{\text{and}}$$
The center of the algebra $R_\rG \HC$ is equal to $R_\rG \Zrm(\HC)$ and the 
canonical morphism $\HC \to k\HC$ extends to a morphism $R_\rG \HC \to k\HC$, 
which will still be denoted by $h \mapsto \hba$. 
Finally, we denote by $R_\rG \Zrm(\HC) \to k\Zrm(\HC)$, $z \mapsto \zhat$,  
the canonical morphism (so that $\zba = \pi_\Zrm(\zhat)$ if $z \in R_\rG \Zrm(\HC)$).

To summarize, we obtain a diagram of natural bijective maps 
\equat\label{diagramme bijections}
\diagram
\Upsilon^{-1}(\rG) \ar@{<->}[rr]^{\DS{\sim}} 
%\ar@{<-->}[ddrr]^{\DS{\reflectbox{\rotatebox[origin=c]{40}{$\backsim$}}}}
&& 
\Upsilon_\rG^{-1}(\rG R_\rG) \ar@{<->}[rr]^{\DS{\sim}} 
%\ar@{<-->}[dd]^{\DS{\reflectbox{\rotatebox[origin=c]{-90}{$\backsim$}}}}
&& 
\Spec k\Zrm(\HC) \ar@{<->}[rr]^{\DS{\sim}}
 \ar@{<->}[dd]^{\DS{\reflectbox{\rotatebox[origin=c]{-90}{$\backsim$}}}} 
&& 
\Spec \Zrm(k\HC) \ar@{<->}[dd]^{\DS{\reflectbox{\rotatebox[origin=c]{-90}{$\backsim$}}}} \\
&&&&&&\\
&&
%\blocs(R_\rG \ZC) \ar@{<-->}[rr]^{\DS{\sim}}
&& \blocs(k\Zrm(\HC)) \ar@{<->}[rr]^{\DS{\sim}}
&&
\blocs(\Zrm(k\HC)). \\
\enddiagram
\endequat
% les bijections en pointill\'e n'existant \`a coup s\^ur que si la $K$-alg\`ebre 
% $K\HC$ est d\'eploy\'ee.
% 
% \bigskip

\section{Blocks of $R_\rG\HC$}

\medskip

% Nous travaillerons d\'esormais sous l'hypoth\`ese suivante~:
% 
% \medskip

\boitegrise{{\bf Assumption.} 
{\it From now on, and until the end of this Appendix, we will
assume that the $K$-algebra $K\HC$ is {\bfit split}.}}{0.75\textwidth}

\bigskip

The question of lifting idempotents whenever the ring $R_\rG$ 
is complete for the $\rG$-adic topology is classical. We propose here another version, 
valid only whenever the $K$-algebra $K\HC$ is split (we only need that 
$R$ is integrally closed: no assumption on the Krull dimension of $R$ 
or on its completeness is necessary).
\bigskip

\subsection{Central characters}\label{section centrale}
If $V$ is a simple $K\HC$-module, and if $z \in K\Zrm(\HC)$, then $z$ acts on 
$V$ by multiplication by a scalar $\o_V(z) \in K$ (indeed, since $K\HC$ 
is split, we have $\End_{K\HC}(V)=K$). 
This defines a morphism of $K$-algebras 
$$\o_V : K\Zrm(\HC) \longto K  \indexnot{oz}{\o_V, \o_V^\rG}  $$
whose restriction to $\Zrm(\HC)$ has valued in $R$ (since $\Zrm(\HC)$ is integral over 
$R$ and $R$ is integrally closed). 
Hence, this defines a morphism of $R$-algebras 
$$\o_V : \Zrm(\HC) \longto R.$$
By composition with the canonical projection $R \to R/\rG$, we obtain  
a morphism of $R$-algebras 
$$\o_V^\rG : \Zrm(\HC) \longto R/\rG.$$
Since $\o_V(1)=1$ and $R/\rG$ is integral, $\Ker \o_V$ is a prime ideal of
$\Zrm(\HC)$ such that $\Ker \o_V \cap R = \rG$. So
\equat\label{ker}
\Ker \o_V^\rG \in \Upsilon^{-1}(\rG).
\endequat
This defines a map
$$\fonction{\KER_\rG}{\Irr(K\HC)}{\Upsilon^{-1}(\rG)}{V}{\Ker \o_V^\rG}.$$

\bigskip

\begin{defi}\label{defi:r-bloc}
The fibers of the map $\KER_\rG$ are called the {\bfit $\rG$-blocks} of $\HC$.
\end{defi}

\bigskip

The $\rG$-blocks of $\HC$ are subsets of $\Irr(K\HC)$, of which they form 
a partition. Note that, since $\Zrm(\HC) = R + \Ker(\o_V^\rG)$, 
the central character $\o_V^\rG$ is determined by its kernel. Hence, two simple  
$K\HC$-modules $V$ and $V'$ belong to the same $\rG$-block if and only if 
$\o_V^\rG=\o_{V'}^\rG$.

\bigskip

\subsection{Lifting idempotents} 
The main result of this section is the following:

\bigskip

\begin{prop}\label{relevement idempotent}
We have:%Supposons l'alg\`ebre $K\HC$ {\bfit d\'eploy\'ee}. 
\begin{itemize}
\itemth{a} If $e \in \blocs(R_\rG \Zrm(\HC))$, then $\ehat \in \blocs(k\Zrm(\HC))$.

\itemth{b} The map $\blocs(R_\rG \Zrm(\HC)) \to \blocs(k\Zrm(\HC))$, $e \mapsto \ehat$ 
is bijective.
\end{itemize}
\end{prop}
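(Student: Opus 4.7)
The plan is to exhibit an explicit primitive idempotent of $R_\rG\Zrm(\HC)$ for each $\rG$-block, and then derive the bijection by a counting argument. The classical obstruction to lifting idempotents through the reduction $R_\rG\Zrm(\HC)\to k\Zrm(\HC)$ is that $R_\rG$ is not assumed Henselian; the role of the hypothesis that $K\HC$ is split is precisely to bypass this by furnishing enough central characters with values in $R$.

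The main step is as follows. Using splitness, identify $K\Zrm(\HC)\cong\prod_{V\in\Irr(K\HC)}K$ with coordinate projections the central characters $\omega_V$, each of which takes values in $R$ by integrality of $\Zrm(\HC)$ over the integrally closed ring $R$. For each $\rG$-block $B$, set $e_B=\sum_{V\in B}e_V\in K\Zrm(\HC)$, where $e_V$ is the primitive idempotent corresponding to $V$. I would then prove that $e_B\in R_\rG\Zrm(\HC)$ by a Chinese Remainder construction: choose $z\in\Zrm(\HC)$ whose values $\overline{\omega_V(z)}\in k$ are constant on each block and distinct between different blocks. The polynomial $g(T)=\prod_V(T-\omega_V(z))\in R[T]$ then annihilates $z$, and factors as $g=\prod_B f_B$ with $f_B(T)=\prod_{V\in B}(T-\omega_V(z))\in R[T]$; the separation property guarantees that
\[\mathrm{res}(f_B,f_{B'})=\prod_{V\in B,\,V'\in B'}\bigl(\omega_V(z)-\omega_{V'}(z)\bigr)\notin\rG,\]
so the $f_B$ are pairwise coprime in $R_\rG[T]$. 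The CRT isomorphism $R_\rG[T]/(g)\cong\prod_B R_\rG[T]/(f_B)$ yields orthogonal idempotents whose images under the natural surjection $R_\rG[T]/(g)\twoheadrightarrow R_\rG[z]\subset R_\rG\Zrm(\HC),\ T\mapsto z$, evaluate under $\omega_V$ to $\delta_{V\in B}$; these images are therefore the claimed $e_B$.

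Existence of the separating element $z$ is the main technical point and should be handled uniformly in the residue field. Fixing $R$-module generators $z_1,\dots,z_n$ of $\Zrm(\HC)$, the vectors $\bar v_V=(\overline{\omega_V(z_i)})_i\in k^n$ coincide precisely when $V,V'$ are $\rG$-equivalent, producing a finite set $\{\bar v_B\}$ of distinct points in $k^n$ indexed by blocks; any function on this finite set with pairwise distinct values can be realized by a polynomial in $k[x_1,\dots,x_n]$ (Lagrange interpolation on a finite subset of $k^n$ works for any residue field), and lifting such a polynomial to $P\in R[x_1,\dots,x_n]$ yields the required separator $z=P(z_1,\dots,z_n)$. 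With the $e_B$ in hand, orthogonality, completeness, and the fact that they sum to $1$ are automatic. Combining with the bijection $\Upsilon^{-1}(\rG)\leftrightarrow\blocs(k\Zrm(\HC))$ given by Proposition~\ref{muller} and with the surjectivity of $\KER_\rG:\Irr(K\HC)\to\Upsilon^{-1}(\rG)$ (obtained from going-up applied to the integral extension $\Zrm(\HC)\hookrightarrow\prod_VR$), a counting argument identifies the $\hat e_B$ as the primitive idempotents of $k\Zrm(\HC)$ and forces each $e_B$ to be primitive in $R_\rG\Zrm(\HC)$, yielding both (a) and (b).
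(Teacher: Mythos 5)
Your overall strategy---produce, for each $\rG$-block $B$, an idempotent $e_B\in R_\rG\Zrm(\HC)$ with $\omega_V(e_B)=1$ for $V\in B$ and $\omega_V(e_B)=0$ otherwise, by polynomial arithmetic on a single separating central element, then conclude by counting against Proposition~\ref{muller}---is viable. But one step fails as written: you assert that $K\Zrm(\HC)\cong\prod_V K$ and, consequently, that $g(T)=\prod_V(T-\omega_V(z))$ annihilates $z$. The standing hypothesis is only that $K\HC$ is \emph{split}, not semisimple, and the map $K\Zrm(\HC)\to\prod_V K$, $z\mapsto(\omega_V(z))_V$, is in general neither injective (its kernel is $K\Zrm(\HC)\cap\Rad(K\HC)$, nonzero already for $\HC=R[x]/(x^2)$) nor surjective (for $\HC$ the upper triangular $2\times 2$ matrices the image is the diagonal). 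What is true is only that $g(z)$ kills every simple $K\HC$-module, hence lies in $\Rad(K\HC)$ and is nilpotent; it need not vanish, so your specialization $R_\rG[T]/(g)\to R_\rG[z]$, $T\mapsto z$, is not well defined. The repair is cheap: replace $g$ by $g^N$ with $g(z)^N=0$, or by the characteristic polynomial of multiplication by $z$ on the finitely generated torsion-free $R_\rG$-module $R_\rG\Zrm(\HC)$, which equals $\prod_V(T-\omega_V(z))^{m_V}$ and annihilates $z$ by Cayley--Hamilton; the factors attached to distinct blocks remain comaximal in $R_\rG[T]$ since their resultant is a product of units of $R_\rG$, so the CRT step and the evaluations $\omega_V(e_B)$ survive unchanged. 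A second, minor point: the interpolating polynomial separating the points $\bar{v}_B$ has coefficients in $k=\Frac(R/\rG)$, which lift to $R_\rG$ but not to $R$; so you should take $P\in R_\rG[x_1,\dots,x_n]$ and $z\in R_\rG\Zrm(\HC)$, which is all the argument needs.

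With these repairs your construction is genuinely different from the paper's. The paper first passes to the image $A$ of $R_\rG\Zrm(\HC)$ in $\prod_V R_\rG$, noting that the kernel is nilpotent so that $\blocs(R_\rG\Zrm(\HC))=\blocs(A)$, and then builds the block idempotents of such a subalgebra of $R_\rG^d$ by hand (Lemma~\ref{lem:relevement-idempotent}): elements $a_{ij}$ with $\omega_i(a_{ij})=1$ and $\omega_j(a_{ij})=0$ for indices in distinct classes, followed by a Newton-type recursion $a\mapsto a^2\bigl(1+\omega(a)^{-2}(1-a^2)\bigr)$ forcing the coordinates to be exactly $0$ or $1$. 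Your version replaces that recursion by one application of the Chinese Remainder Theorem over $R_\rG[T]$, at the price of having to deal honestly with the nilpotent discrepancy between $\Zrm(\HC)$ and $\prod_V R$---which is exactly the point you skipped.
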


\begin{proof}
Let 
$$\fonction{\O}{R_\rG\Zrm(\HC)}{\prod_{V \in \Irr(K\HC)} R_\rG}{z}{(\o_V(z))_{V \in \Irr(K\HC)}.}$$
Then $\O$ is a morphism of $R_\rG$-algebras, whose kernel $I$ is equal to 
$R_\rG\Zrm(\HC) \cap \Rad(K\HC)$ and whose image will be denoted by $A$. 

Consequently, $I$ is nilpotent and so $\O$ induces a bijective map 
$\blocs(R_\rG \Zrm(\HC)) \longbij \blocs(A)$. 
Moreover, by Corollary~\ref{coro:relevement-idempotent} (which will be proven 
in \S~\ref{section:relevements-idempotents}), 
the reduction modulo $\rG$ induces a bijective map $\blocs(A) \longbij \blocs(kA)$. 
It then remains to show that the kernel of the natural 
map $k\Zrm(\HC) \longsurto kA$ is nilpotent: it is obvious as it is the image of 
$I$ in $k\Zrm(\HC)$. 
\end{proof}

\bigskip

\begin{coro}\label{coro:r-blocs}
The map $\KER_\rG : \Irr(K\HC) \to \Upsilon^{-1}(\rG)$ is surjective. 
Its fibers are of the form $\Irr(K\HC e)$, where $e \in \blocs(R_\rG \Zrm(\HC))$. 
\end{coro}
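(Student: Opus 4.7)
The plan is to construct an explicit map $\alpha:\Irr(K\HC)\to\blocs(R_\rG\Zrm(\HC))$ and then show, via the bijections assembled in (\ref{diagramme bijections}) and Proposition \ref{relevement idempotent}, that composing $\alpha$ with $\blocs(R_\rG\Zrm(\HC))\xrightarrow{\sim}\Upsilon^{-1}(\rG)$ recovers $\KER_\rG$.

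First I would build $\alpha$. Given $V\in\Irr(K\HC)$, the morphism $\omega_V:\Zrm(\HC)\to R$ extends uniquely to an $R_\rG$-algebra morphism $\omega_V:R_\rG\Zrm(\HC)\to R_\rG$. Applying $\omega_V$ to the decomposition $1=\sum_{e\in\blocs(R_\rG\Zrm(\HC))}e$ gives $1=\sum_e \omega_V(e)$ in the integral domain $R_\rG$; since each $\omega_V(e)$ is an idempotent of $R_\rG\subset K$, it lies in $\{0,1\}$, so exactly one value $e=\alpha(V)$ satisfies $\omega_V(e)=1$. By construction the fiber of $\alpha$ over $e$ is precisely $\Irr(K\HC e)$, and these fibers partition $\Irr(K\HC)$.

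Next I would show the fibers of $\alpha$ coincide with those of $\KER_\rG$. Let $\tilde\omega_V^\rG:k\Zrm(\HC)\to k$ be the morphism induced by reducing $\omega_V$ modulo $\rG R_\rG$; clearly $\Ker\omega_V^\rG$ is the preimage in $\Zrm(\HC)$ of $\Ker\tilde\omega_V^\rG$. If $\alpha(V)=\alpha(V')=e$ with image $\ehat\in\blocs(k\Zrm(\HC))$ (Proposition \ref{relevement idempotent}), both $\tilde\omega_V^\rG$ and $\tilde\omega_{V'}^\rG$ annihilate $(1-\ehat)k\Zrm(\HC)$ and hence factor through the finite-dimensional commutative $k$-algebra $k\Zrm(\HC)\ehat$. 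Since $\ehat$ is primitive, this algebra is local with residue field $k$, so it admits a unique $k$-algebra morphism to $k$; therefore $\tilde\omega_V^\rG=\tilde\omega_{V'}^\rG$ and $\Ker\omega_V^\rG=\Ker\omega_{V'}^\rG$. Conversely, if $\omega_V^\rG=\omega_{V'}^\rG$ then $\omega_V(e)\equiv\omega_{V'}(e)\pmod\rG$ for every $e\in\blocs(R_\rG\Zrm(\HC))$; since these values lie in $\{0,1\}$ and $\rG\ne R_\rG$, they must be equal, so $\alpha(V)=\alpha(V')$.

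Finally I would identify the image. The composed bijection $\blocs(R_\rG\Zrm(\HC))\xrightarrow{\sim}\blocs(k\Zrm(\HC))\xrightarrow{\sim}\Upsilon^{-1}(\rG)$ sends $e$ to the prime $\zG_e\subset\Zrm(\HC)$ that is the preimage of $\Rad(k\Zrm(\HC)\ehat)+(1-\ehat)k\Zrm(\HC)$ by Lemma \ref{lem:blocs}(3). This ideal is exactly the unique maximal ideal of $k\Zrm(\HC)$ through which $\tilde\omega_V^\rG$ factors when $\alpha(V)=e$, so $\Ker\omega_V^\rG=\zG_e$; thus the composition equals $\KER_\rG$. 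For surjectivity onto $\Upsilon^{-1}(\rG)$, given any $e\in\blocs(R_\rG\Zrm(\HC))$, the element $e$ is nonzero in $K\HC$ (since $R_\rG\subset K$), so $K\HC e$ is a nonzero $K\HC$-module; any simple quotient $V$ of $K\HC e$ satisfies $\omega_V(e)=1$, i.e.\ $\alpha(V)=e$. Together with the fiber description above this gives the corollary.

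The main obstacle is the last identification: one must verify carefully that the prime ideal of $k\Zrm(\HC)$ produced by the chain of bijections in (\ref{diagramme bijections}) is exactly the kernel of $\tilde\omega_V^\rG$. The key observation making this work is that $k\Zrm(\HC)\ehat$ is local Artinian with residue field $k$, so it has a unique $k$-point; everything else is essentially bookkeeping with Proposition \ref{relevement idempotent} and Lemma \ref{lem:blocs}.
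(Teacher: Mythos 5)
Your proof is correct, and the fiber description runs on essentially the same machinery as the paper's (which is only a two-line pointer): the block idempotents of $R_\rG\Zrm(\HC)$, their reductions $\ehat$ via Proposition~\ref{relevement idempotent}, and the locality of $k\Zrm(\HC)\ehat$, which is exactly what underlies Corollary~\ref{coro:blocs-A} in the paper's Appendix argument. Where you genuinely diverge is the surjectivity: the paper deduces it from~(\ref{L}), i.e.\ the Dedekind-type statement that every $k$-algebra morphism $kA\to k'$ from the image $A$ of $R_\rG\Zrm(\HC)$ in $\prod_V R_\rG$ is a reduced central character $\omeba_V$; you instead get it for free from the bijection $\blocs(R_\rG\Zrm(\HC))\leftrightarrow\Upsilon^{-1}(\rG)$ together with the elementary observation that $K\HC e\neq 0$ admits a simple quotient on which $e$ acts as $1$. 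Your route is arguably cleaner since it never needs~(\ref{L}), at the price of having first to verify carefully (as you do via Lemma~\ref{lem:blocs}) that composing $\alpha$ with the chain of bijections really is $\KER_\rG$. One small wording point: when you say $k\Zrm(\HC)\ehat$ is ``local with residue field $k$'', primitivity of $\ehat$ only gives locality, and the residue field is a priori a finite extension of $k$; it is the existence of the $k$-valued morphism $\tilde\omega_V^\rG$ factoring through it that forces the residue field to be $k$ and hence the uniqueness you use.
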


\begin{proof}
The first statement follows from~(\ref{L}) below and the second from the proof 
of Proposition~\ref{relevement idempotent}.
\end{proof}

\bigskip

By combining Propositions~\ref{muller} and~\ref{relevement idempotent}, 
we get the next corollary:

\bigskip

\begin{coro}\label{muller plus}
We have: %Supposons l'alg\`ebre $K\HC$ {\bfit d\'eploy\'ee}. Alors~:
\begin{itemize}
\itemth{a} If $e \in \blocs(R_\rG \Zrm(\HC))$, then $\eba \in \blocs(\Zrm(k\HC))$.

\itemth{b} The map $\blocs(R_\rG \Zrm(\HC)) \to \blocs(\Zrm(k\HC))$, $e \mapsto \eba$ 
is bijective.
\end{itemize}
\end{coro}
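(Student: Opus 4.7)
The plan is straightforward: the statement is a formal consequence of the two preceding propositions, so I would simply compose the bijections they provide. More precisely, Proposition \ref{relevement idempotent} gives a bijection
\[
\blocs(R_\rG \Zrm(\HC)) \longiso \blocs(k\Zrm(\HC)),\qquad e \longmapsto \ehat,
\]
where $\ehat$ denotes the image of $e$ under the canonical map $R_\rG \Zrm(\HC) \to k\Zrm(\HC)$. On the other hand, Proposition \ref{muller} gives a bijection
\[
\blocs(k\Zrm(\HC)) \longiso \blocs(\Zrm(k\HC)),\qquad f \longmapsto \pi_\Zrm(f),
\]
where $\pi_\Zrm : k\Zrm(\HC) \to \Zrm(k\HC)$ is the morphism induced by $\HC \to k\HC$.

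The key observation is that for $e \in \blocs(R_\rG\Zrm(\HC))$, the composition $\pi_\Zrm(\ehat)$ is, by construction of the two maps, precisely $\eba$, the image of $e$ in $\Zrm(k\HC) \subset k\HC$. Indeed, both $\ehat$ and $\eba$ are obtained from $e$ by reduction modulo $\rG$, the difference being only whether one first factors through $k\Zrm(\HC)$ or directly maps to $\Zrm(k\HC)$, and $\pi_\Zrm$ is exactly the comparison morphism between these two. Composing the two bijections above therefore yields the bijection $\blocs(R_\rG \Zrm(\HC)) \longiso \blocs(\Zrm(k\HC))$, $e \mapsto \eba$, which simultaneously establishes (a) and (b).

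There is no real obstacle here; the proof is essentially a one-line verification of the commutativity of the relevant diagram of canonical maps, combined with the two nontrivial results already proven (M\"uller's theorem, and the lifting of idempotents established in Proposition \ref{relevement idempotent} using the hypothesis that $R$ is integrally closed and $K\HC$ is split). The only point worth making explicit is that $\pi_\Zrm \circ (\text{reduction to } k\Zrm(\HC)) = (\text{reduction to } \Zrm(k\HC))$ as maps from $R_\rG \Zrm(\HC)$, which is immediate from the definitions.
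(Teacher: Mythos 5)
Your proof is correct and is exactly the paper's argument: the corollary is stated there as an immediate combination of Proposition~\ref{muller} and Proposition~\ref{relevement idempotent}, and the compatibility $\eba = \pi_\Zrm(\ehat)$ that you verify is already recorded in the paper's setup of the notation. Nothing further is needed.
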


\bigskip

Therefore, we get a bijective map 
\equat\label{bijection finale}
\Upsilon^{-1}(\rG) \stackrel{\sim}{\longleftrightarrow} \blocs(R_\rG \Zrm(\HC)).
\endequat
If $\zG \in \Upsilon_\rG^{-1}(\rG R_\rG)$ and if $e \in \blocs(R_\rG \Zrm(\HC))$, then
\equat\label{bij e r}
\text{\it $e$ and $\zG$ are associated through this bijective map 
if and only if $e \not\in R_\rG \zG$.}
\endequat

To summarize, we obtain a diagram of natural bijective maps
\equat\label{diagramme bijections deploye}
\diagram
\Upsilon^{-1}(\rG) \ar@{<->}[r]^{\DS{\sim}} 
\ar@{<-->}[ddr]^{\DS{\reflectbox{\rotatebox[origin=c]{40}{$\backsim$}}}}&
\Upsilon_\rG^{-1}(\rG R_\rG) \ar@{<->}[rr]^{\DS{\sim}} 
\ar@{<-->}[dd]^{\DS{\reflectbox{\rotatebox[origin=c]{-90}{$\backsim$}}}}&& 
\Spec k\Zrm(\HC) \ar@{<->}[rr]^{\DS{\sim}}
 \ar@{<->}[dd]^{\DS{\reflectbox{\rotatebox[origin=c]{-90}{$\backsim$}}}} && 
\Spec \Zrm(k\HC) \ar@{<->}[dd]^{\DS{\reflectbox{\rotatebox[origin=c]{-90}{$\backsim$}}}} \\
&&&&&\\
&\blocs(R_\rG \Zrm(\HC)) \ar@{<-->}[rr]^{\DS{\sim}}&& \blocs(k\Zrm(\HC)) \ar@{<->}[rr]^{\DS{\sim}}&&
\blocs(\Zrm(k\HC)), \\
\enddiagram
\endequat
where the maps with dashed arrows exist only whenever the $K$-algebra 
$K\HC$ is split.

Let
$$\fonctio{\Upsilon^{-1}(\rG)}{\blocs(R_\rG\Zrm(\HC))}{\zG}{e_\zG}$$
denote the bijective map of diagram~\ref{diagramme bijections deploye}. 
We get a partition of $\Irr(K\HC)$ thanks to the action of the central idempotents $e_\zG$:
\equat\label{partition bloc}
\Irr(K\HC) = \coprod_{\zG \in \Upsilon^{-1}(\rG)} \Irr(K\HC e_\zG).
\endequat
The subsets $\Irr(K\HC e_\zG)$ are the $\rG$-blocks of $\HC$.

\bigskip

\begin{exemple}
Whenever $\rG$ is the zero ideal, then $R_\rG=k=K$, 
$\Upsilon^{-1}(\rG) \simeq \Spec K\Zrm(\HC)$, 
$\blocs(R_\rG \HC)=\blocs(K\HC)$ and $\o_V^\rG = \o_V$.\finl
\end{exemple}

\bigskip
% 
% La proposition~\ref{blocs centraux} dit simplement que deux $K\HC$-modules 
% simples $V$ et $V'$ appartiennent au m\^eme $\rG$-bloc si et seulement si 
% $\o_V^\rG=\o_{V'}^\rG$. En effet, puisque $\Zrm(\HC) = R + \ker \o_V^\rG$, $\Ker \o_V^\rG$ 
% d\'etermine $\o_V^\rG$. Autrement dit, les fibres de $\KER_\rG$ 
% sont les $\rG$-blocs.
% 
% \bigskip

\subsection{Ramification locus} 
The following proposition is certainly classical (but is valid because $R$ 
is integrally closed):

\bigskip

\begin{prop}\label{codimension un}
	Assume that the algebra $K\HC$ is {\bfit split}. Then there exists a
(unique) radical ideal $\aG$ 
of $R$ satisfying the following two properties:
\begin{itemize}
\itemth{1} $\Spec(R/\aG)$ is empty or purely of codimension $1$ in $\Spec(R)$;

\itemth{2} If $\rG$ is a prime ideal of $R$, then $\blocs(R_\rG \Zrm(\HC))=\blocs(K\Zrm(\HC))$ 
if and only if $\aG \not\subset \rG$. 
\end{itemize}

Let $\rG$ be a prime ideal of $R$ containing $\aG$. A subset of
 $\Irr(K\HC)$ is an $\rG$-block if
and only if it
is minimal for the property of being a $\pG$-block for all height
one prime ideals $\pG$ of $R$ with $\aG\subset\pG\subset\rG$.
\end{prop}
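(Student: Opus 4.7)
The plan is to build $\aG$ as the radical of a product of denominator ideals of the primitive central idempotents of $K\HC$. Writing $Z=\Zrm(\HC)$ and letting $e_1,\dots,e_n\in KZ$ be these idempotents, I would set $\aG_i=\{r\in R\mid re_i\in Z\}$ and $\aG=\sqrt{\aG_1\cdots\aG_n}$. By Corollary~\ref{muller plus}, the equality $\blocs(R_\rG Z)=\blocs(KZ)$ holds if and only if every $e_i$ lies in $R_\rG Z$; since $\rG$ is prime, this is equivalent to $\aG_i\not\subset\rG$ for all $i$, i.e.\ to $\aG\not\subset\rG$, which gives property~(2). The uniqueness of $\aG$ is then immediate, since a radical ideal is determined by its vanishing locus and that vanishing locus is pinned down by property~(2).

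The core of the work is property~(1), and for this I would exploit both hypotheses on $R$. The key technical input is the identity $R_\rG Z = R_\rG\HC \cap KZ$ for every prime $\rG$, which follows at once from the observation that an element of $R_\rG\HC$ is central in $R_\rG\HC$ if and only if it is central in $K\HC$. Combining this with the normality identity $R_\rG=\bigcap_\pG R_\pG$ and, by freeness of $\HC$, $R_\rG\HC=\bigcap_\pG R_\pG\HC$, where $\pG$ runs over height one primes contained in $\rG$, one gets $R_\rG Z=\bigcap_\pG R_\pG Z$. Applied at $\rG=0$ to each $e_i$ this yields $\aG_i R_\pG=\{r\in R_\pG\mid re_i\in R_\pG Z\}$ and $\aG_i=\bigcap_{\mathrm{ht}(\pG)=1}(\aG_i R_\pG\cap R)$. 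Since $R_\pG$ is a DVR, $\aG_i R_\pG$ is a power of $\pG R_\pG$; choosing any nonzero element of $\aG_i$ (which exists because $e_i$ is integral over $R$, and every element of $KZ$ has a denominator in $R\setminus\{0\}$) shows that $\aG_i R_\pG=R_\pG$ for all but finitely many $\pG$. Therefore $\sqrt{\aG_i}$ is a finite intersection of height one primes, and $V(\aG)=\bigcup_i V(\aG_i)$ is of pure codimension one.

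For the final assertion, when $\aG\subset\rG$ the same identity $R_\rG Z=\bigcap_\pG R_\pG Z$, restricted to height one $\pG$ with $\aG\subset\pG\subset\rG$ (height one primes below $\rG$ not containing $\aG$ impose no constraint, as then $R_\pG Z$ already contains every $e_i$), shows that an idempotent $e_S=\sum_{V\in S}e_V$ of $KZ$ belongs to $R_\rG Z$ if and only if $e_S\in R_\pG Z$ for each such $\pG$, equivalently if and only if $S$ is a union of $\pG$-blocks for each such $\pG$. The $\rG$-blocks are thus the minimal nonempty subsets of $\Irr(K\HC)$ with this property. The main obstacle in the whole argument is the purity statement in~(1); the rest is bookkeeping around the definition of $\aG$, and the decisive trick in~(1) is the reduction of statements about $Z$ to statements about the free $R$-module $\HC$ via centrality, which then lets the normality of $R$ produce the desired codimension.
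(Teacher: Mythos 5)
Your proof is correct and follows essentially the same route as the paper's: $\aG$ is built as the radical of a product of denominator ideals attached to the primitive central idempotents of $K\HC$, property (2) follows from primality of $\rG$, and both the purity of $\Spec(R/\aG)$ and the final block-refinement statement rest on the normality identity $R_\rG=\bigcap_{\pG}R_\pG$ over height-one primes $\pG\subset\rG$. The only cosmetic difference is that the paper takes denominator ideals of the coefficients of each $e_i$ in an $R$-basis of $\HC$ and argues that a minimal prime over such an ideal has height one, whereas you take the denominator ideal of $e_i$ relative to $\Zrm(\HC)$ (which obliges you to prove $R_\rG\Zrm(\HC)=R_\rG\HC\cap K\Zrm(\HC)$) and exhibit its radical directly as a finite intersection of height-one primes via the discrete valuation rings $R_\pG$.
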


\begin{proof}
Let $(b_1,\dots,b_n)$ be an $R$-basis of $\HC$ and let $\blocs(K\Zrm(\HC))=\{e_1,\dots,e_l\}$ 
with $l=|\blocs(K\Zrm(\HC))|$. We write 
$$e_i=\sum_{j=1}^n k_{ij} \, b_j$$
with $k_{ij} \in K$. 

Let us now fix a prime ideal $\rG$ of $R$. Then $\blocs(R_\rG \Zrm(\HC))=\blocs(K\Zrm(\HC))$ 
if and only if
$$\forall~1 \le i \le l,~\forall~1 \le j \le n,~k_{ij} \in R_\rG.\leqno{(*)}$$
If $k \in K$, we set $\aG_k=\{r \in R~|~rk \in R\}$. 
Then $\aG_k$ is an ideal of $R$ and, if $\rG$ is a prime ideal of 
$R$, then $k \in R_\rG$ if and only if $\aG_k \not\subset \rG$. Define
$\aG$ to be the radical of
$$\prod_{\substack{1 \le i \le l \\ 1 \le j \le n}} \aG_{k_{ij}}.$$
Now $(*)$ becomes equivalent to $\aG \not\subset \rG$. 
This proves the statement (2). 

\medskip

Let us now show that $\Spec(R/\aG)$ is empty or purely of codimension $1$ in $\Spec(R)$. 
For this, it is sufficient to prove that $\Spec(R/\aG_k)$ is empty or purely of 
codimension $1$ in $\Spec(R)$. 
If $k \in R$, then $\aG_k=R$ and $\Spec(R/\aG_k)$ is empty. Assume that $k \not\in R$, 
let us show that $\Spec(R/\aG_k)$ is then purely of codimension $1$ in $\Spec(R)$. 
Let $\pG$ be a minimal prime ideal of $R$ containing $\aG_k$. Then $k \not\in R_\pG$. 
We need to prove that $\pG$ has height $1$. But, since $R$ is integrally closed, 
the same holds for $R_\pG$: so $R_\pG$ is the intersection of the localized rings $R_{\pG'}$, 
where $\pG'$ runs over the set of prime ideals of height $1$ of $R$ contained in $\pG$ 
(see~\cite[Theorem~11.5]{matsumura}). So there exists a prime ideal $\pG'$ of $R$ of 
height $1$ contained in $\pG$ and such that $k \not\in R_{\pG'}$. Hence 
$\aG_k \subset \pG' \subset \pG$ and 
the minimality of $\pG$ implies that $\pG=\pG'$, which implies that $\pG$ has height $1$.

\medskip
Assume now $\aG\subset \rG$. Let $I$ be a subset of $\Irr(K\HC)$ that is
a union of $\pG$-blocks for all height one prime ideals $\pG$ of $R$
with $\aG\subset\pG\subset\rG$. There is an idempotent $e$ of
$K\Zrm(\HC)$ such that given $V\in\Irr(K\HC)$, we have
$eV\neq 0$ if and only if $V\in I$.
The coefficients of $e$ in the $K$-basis
$(b_1,\ldots,b_n)$ of $K\HC$ are in $R_{\pG}$ for all
height one prime ideals $\pG$ of $R$ with $\aG\subset\pG\subset\rG$.
	
The discussion above shows that
$e\in R_{\pG}\Zrm(\HC)$ for all height one prime ideals $\pG$ of $R$
that do not contain $\aG$. So, the coefficients of $e$ in the $K$-basis
$(b_1,\ldots,b_n)$ of $K\HC$ are in $\bigcap_{\pG}R_{\pG}$, where 
$\pG$ runs over all height one prime ideals of $R$ contained in $\rG$.
	Since that intersection is $R_\rG$,
	we deduce that $e\in R_{\rG}\Zrm(\HC)$. This shows that $I$ is a 
union of $\rG$-blocks.
\end{proof}

\bigskip

\section{Decomposition maps}\label{section:decomposition}

\medskip

\def\charpol{{\mathrm{Char}}}
\def\reduction{{\mathrm{red}}}

Let $R_1$ be a commutative $R$-algebra and let $\rG_1$ be a prime ideal of $R_1$. 
We set $R_2=R_1/\rG_1$, $K_1=\Frac(R_1)$ and $K_2=\Frac(R_2)=k_{R_1}(\rG_1)$. 
Let $\FC(\HC,K_1[\tb])$ denote the set of maps $\HC \to K_1[\tb]$. 
If $V$ is a $K_1\HC$-module of finite type and if $h \in \HC$, 
we denote by $\charpol_{K_1}^V(h)$ the characteristic polynomial of $h$ for its action 
on the finite dimensional $K_1$-vector space $V$. 
Therefore, $\charpol_{K_1}^V \in \FC(\HC,K_1[\tb])$. Also, $\charpol_{K_1}^V$ 
depends only on the class of $V$ 
in the Grothendieck group $\groth(K_1\HC)$. This defines a map 
$$\charpol_{K_1} : \groth^+(K_1\HC) \longto \FC(\HC,K_1[\tb]),$$
where $\groth^+(K_1\HC)$ denotes the submonoid of $\groth(K_1\HC)$ consisting 
of the isomorphism classes of $K_1\HC$-modules of finite type. 
It is well-known that $\charpol_{K_1}$ is 
injective~\cite[proposition~2.5]{geck rouquier}. 

We will say that the pair $(R_1,\rG_1)$ satisfies the property 
$\propdec$ if the following three statements are fulfilled:
\begin{quotation}
\begin{itemize}
\item[(D1)] $R_1$ if noetherian and integral.

\item[(D2)] If $h \in R_1\Hb$ and if $V$ is a simple $K_1\Hb$-module, then 
$\charpol_{K_1}^V(h) \in R_1[\tb]$ (note that this property 
is automatically satisfied if $R_1$ is integrally closed). 

\item[(D3)] The algebras $K_1\Hb$ and $K_2\Hb$ are split.
\end{itemize}
\end{quotation}
Let $\reduction_{\rG_1} : \FC(\HC,R_1[\tb]) \longto \FC(\HC,R_2[\tb])$ denote the reduction 
modulo $\rG_1$. By assumption~(D3), if $K_2'$ is an extension of $K_2$, the scalar extension 
induces an isomorphism $\groth(K_2\HC) \longiso \groth(K_2'\HC)$, and we will 
identified these two Grothendieck groups.

\bigskip

\begin{prop}[Geck-Rouquier]\label{prop:geck-rouquier}
If $(R_1,\rG_1)$ satisfies $\propdec$, then there exists a unique map 
$\dec_{R_2\HC}^{R_1\HC} : \groth(K_1\HC) \longto \groth(K_2\HC)$  
\indexnot{d}{\dec_{R_2\HC}^{R_1\HC}}  which makes the following diagram
$$\diagram
\groth(K_1\HC) \rrto^{\DS{\charpol_{K_1}}} \ddto_{\DS{\dec_{R_2\HC}^{R_1\HC}}} 
&& \FC(\HC,R_1[\tb]) \ddto^{\DS{\reduction_{\rG_1}}} \\
&&\\
\groth(K_2\HC) \rrto^{\DS{\charpol_{K_2}}} && \FC(\HC,K_2[\tb])
\enddiagram$$
commutative. If $\OC_1$ is a subring of $K_1$ containing $R_1$, if $\mG_1$ is 
a prime ideal of $\OC_1$ such that $\mG_1 \cap R_1 = \rG_1$, and if 
$\LC$ is an $\OC_1\HC$-module which is free of finite rank over $\OC_1$, 
then $k_{\OC_1}(\mG_1)$ is an extension of $K_2$ and 
$$\dec_{R_1\HC}^{R_2\HC} \isomorphisme{K_1\LC}_{K_1\HC} = 
\isomorphisme{k_{\OC_1}(\mG_1) \LC}_{k_{\OC_1}(\mG_1)\HC}.$$
\end{prop}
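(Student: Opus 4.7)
The uniqueness of $\dec_{R_2\HC}^{R_1\HC}$ is immediate from the injectivity of $\charpol_{K_2}$ on $\groth^+(K_2\HC)$ (a consequence of (D3), and cited above as \cite[proposition~2.5]{geck rouquier}), so the task is existence. The plan is to first define $\dec$ on the class of each simple $K_1\HC$-module $V$ by a ``reduction of lattices'' construction, and then extend $\BZ$-linearly. Commutativity of the diagram and compatibility with specialization of $\OC_1\HC$-modules will both come out of the same basic characteristic-polynomial computation.

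The construction proceeds by passing through an auxiliary DVR. I would choose $\OC$ a discrete valuation ring with $R_1 \subset \OC \subset K_1$ and maximal ideal $\mG_\OC$ satisfying $\mG_\OC \cap R_1 = \rG_1$; when $R_1$ is normal and $\rG_1$ has height one one may simply take $\OC = (R_1)_{\rG_1}$, and in general the existence of such an $\OC$ is a standard fact in the valuation theory of noetherian local domains. Given a simple $K_1\HC$-module $V$, pick any $\OC\HC$-lattice $\LC \subset V$, i.e.\ a finitely generated $\HC$-stable $\OC$-submodule of $V$ spanning $V$ over $K_1$; such lattices exist because $V$ is finite-dimensional over $K_1$. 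As $\OC$ is a DVR, the torsion-free finitely generated $\OC$-module $\LC$ is free of finite rank, and $\bar{\LC} = \LC/\mG_\OC \LC$ is a finite-dimensional $k_\OC(\mG_\OC)\HC$-module. The splitness of $K_2\HC$ ensures that scalar extension gives a natural isomorphism $\groth(K_2\HC) \xrightarrow{\sim} \groth(k_\OC(\mG_\OC)\HC)$, and I would define $\dec[V] \in \groth(K_2\HC)$ as the preimage of $[\bar{\LC}]$.

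The key technical step---and the principal obstacle---is well-definedness: the class $[\bar{\LC}]$ must depend only on $V$. The computation runs as follows. For each $h \in \HC$, the $\OC$-freeness of $\LC$ gives $\charpol_{k_\OC(\mG_\OC)}^{\bar{\LC}}(h) = \reduction_{\mG_\OC}\bigl(\charpol_\OC^{\LC}(h)\bigr) = \reduction_{\mG_\OC}\bigl(\charpol_{K_1}^V(h)\bigr)$; hypothesis (D2) places $\charpol_{K_1}^V(h)$ in $R_1[\tb]$, and since $\mG_\OC \cap R_1 = \rG_1$, the reduction mod $\mG_\OC$ of an element of $R_1[\tb]$ factors as $\reduction_{\rG_1}$ followed by the canonical inclusion $R_2[\tb] \hookrightarrow k_\OC(\mG_\OC)[\tb]$. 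Thus $\charpol_{k_\OC(\mG_\OC)}^{\bar{\LC}}$ depends only on $V$, and the injectivity of $\charpol_{k_\OC(\mG_\OC)}$ (again from splitness, which is inherited by any scalar extension) forces $[\bar{\LC}]$ to depend only on $V$. This same computation yields commutativity of the diagram in the statement.

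For the compatibility with an $\OC_1$-free $\OC_1\HC$-module $\LC$, I would argue identically, using $(\OC_1, \mG_1)$ in place of $(\OC, \mG_\OC)$. Applying (D2) to the composition factors of $K_1\LC$ gives $\charpol_{K_1}^{K_1\LC}(h) \in R_1[\tb]$, the $\OC_1$-freeness of $\LC$ yields $\charpol_{K_1}^{K_1\LC}(h) = \charpol_{\OC_1}^{\LC}(h)$, and reducing modulo $\mG_1$ identifies $\charpol_{k_{\OC_1}(\mG_1)}^{k_{\OC_1}(\mG_1)\LC}(h)$ with the image in $k_{\OC_1}(\mG_1)[\tb]$ of $\reduction_{\rG_1}\charpol_{K_1}^{K_1\LC}(h) = \charpol_{K_2}^{\dec[K_1\LC]}(h)$. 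Injectivity of $\charpol_{k_{\OC_1}(\mG_1)}$ then delivers the claimed equality of classes. The only place requiring real care is the well-definedness of $\dec$ on simple modules (third paragraph); everything else is a bookkeeping exercise with characteristic polynomials once the diagram is set up.
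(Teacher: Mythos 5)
Your proof is correct, but it takes a different route from the paper. The paper does not re-prove the statement from first principles: it invokes \cite[Proposition~2.11]{geck rouquier}, which establishes the result when the base ring is integrally closed, and then handles the general case (where only~(D2) is assumed) by passing to the integral closure $R_1'$ of $R_1$ in $K_1$, choosing a prime $\rG_1'$ of $R_1'$ above $\rG_1$, and transporting the resulting decomposition map back through the identification $\groth(k_{R_1'}(\rG_1')\HC)\simeq\groth(K_2\HC)$ furnished by~(D3). What you have done instead is essentially to unpack and redo the Geck--Rouquier construction itself under the weaker hypothesis~(D2): choose a discrete valuation ring dominating $(R_1)_{\rG_1}$, reduce lattices of simple modules, and verify independence of all choices by the characteristic-polynomial computation. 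Both arguments are sound; the paper's is shorter because it delegates the lattice-theoretic work to the reference, while yours is self-contained and makes visible that integral closedness in the cited result is only used to guarantee integrality of characteristic polynomials, which is exactly what~(D2) postulates. The one step you wave at --- the existence of a DVR of $K_1$ dominating the noetherian local domain $(R_1)_{\rG_1}$ --- is indeed a standard (if nontrivial) fact, and could be sidestepped entirely by using an arbitrary valuation ring dominating $(R_1)_{\rG_1}$, since finitely generated torsion-free modules over any valuation ring are still free; that is closer to what the original reference does. Your verification of the last assertion of the proposition, via the factorization of $\reduction_{\mG_1}$ through $\reduction_{\rG_1}$ on $R_1[\tb]$ and injectivity of $\charpol_{k_{\OC_1}(\mG_1)}$ on effective classes, is the same computation the paper leaves to the reader ("it is easy to check").
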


\begin{proof}
This proposition is proven in~\cite[Proposition~2.11]{geck rouquier} whenever $R_1$ 
is integrally closed. We will show how to deduce our proposition from this case. 
If we assume only that~(D2) holds, let $R_1'$ denote the integral closure 
of $R_1$ in $K_1$. Since $R_1'$ is integral over $R_1$, there exists a prime ideal 
$\rG_1'$ of $R_1'$ such that $\rG_1' \cap R_1 = \rG_1$. Set $R_2'=R_1'/\rG_1'$. 
Then $k_{R_1'}(\rG_1')$ is an extension of $k_{R_1}(\rG_1)$ 
so $k_{R_1'}(\rG_1)\HC$ is split, which means that, by~\cite[Proposition~2.11]{geck rouquier}, 
$\dec_{R_2'\HC}^{R_1'\HC} : \groth(K_1\HC) \longto \groth(k_{R_1'}(\rG_1')\HC)$ is 
well-defined and satisfies the desired properties. We then define 
$\dec_{R_2\HC}^{R_1\HC}$ by using the isomorphism 
$\groth(k_{R_1'}(\rG_1')\HC) \simeq \groth(K_2\HC)$ 
and it is easy to check that this map satisfies the expected properties.
\end{proof}

It then follows a transitivity property~\cite[proposition~2.12]{geck rouquier}:

\bigskip

\begin{coro}[Geck-Rouquier]\label{geck rouquier}
Let $R_1$ be an $R$-algebra, let $\rG_1$ be a prime ideal of $R_1$ and let $\rG_2$ 
be a prime ideal of $R_2=R_1/\rG_1$. We assume that $(R_1,\rG_1)$ and $(R_2,\rG_2)$ 
both satisfy $\propdec$ and we set $R_3=R_2/\rG_2$. 
Then 
$$\dec_{R_3\HC}^{R_1\HC} = \dec_{R_3\HC}^{R_2\HC} \circ \dec_{R_2\HC}^{R_1\HC}.$$
\end{coro}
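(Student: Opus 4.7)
The plan is to reduce the identity to the uniqueness statement in Proposition~\ref{prop:geck-rouquier}. Let $\rG_1'$ denote the preimage of $\rG_2$ under the quotient map $R_1\twoheadrightarrow R_1/\rG_1=R_2$. Then $\rG_1'$ is a prime ideal of $R_1$, and the canonical isomorphism $R_1/\rG_1'\xrightarrow{\sim}R_2/\rG_2=R_3$ identifies the fraction fields $k_{R_1}(\rG_1')=k_{R_2}(\rG_2)=K_3$.

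First, I would check that the pair $(R_1,\rG_1')$ satisfies property $\propdec$, so that $\dec_{R_3\HC}^{R_1\HC}$ is well-defined. Condition~(D1) is inherited from $(R_1,\rG_1)$; condition~(D3) follows because $K_1\HC$ is split by the $\propdec$-hypothesis on $(R_1,\rG_1)$, while $K_3\HC=k_{R_2}(\rG_2)\HC$ is split by the $\propdec$-hypothesis on $(R_2,\rG_2)$; and condition~(D2) is literally the same statement as~(D2) for $(R_1,\rG_1)$ since it only concerns characteristic polynomials of elements of $R_1\HC$ acting on simple $K_1\HC$-modules, with values in $R_1[\tb]$.

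Next, the key observation is that reduction modulo the chain of ideals factors: under the identification $R_3=R_1/\rG_1'=R_2/\rG_2$, one has an equality of maps $\FC(\HC,R_1[\tb])\to\FC(\HC,R_3[\tb])$:
$$\reduction_{\rG_1'}=\reduction_{\rG_2}\circ\reduction_{\rG_1}.$$
Writing down the defining commutative square of Proposition~\ref{prop:geck-rouquier} for $(R_1,\rG_1)$ and stacking it on top of the one for $(R_2,\rG_2)$, one obtains
$$\charpol_{K_3}\circ\bigl(\dec_{R_3\HC}^{R_2\HC}\circ\dec_{R_2\HC}^{R_1\HC}\bigr)=\reduction_{\rG_2}\circ\charpol_{K_2}\circ\dec_{R_2\HC}^{R_1\HC}=\reduction_{\rG_2}\circ\reduction_{\rG_1}\circ\charpol_{K_1}=\reduction_{\rG_1'}\circ\charpol_{K_1}.$$
This is exactly the defining identity for $\dec_{R_3\HC}^{R_1\HC}$ in Proposition~\ref{prop:geck-rouquier}.

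The conclusion then follows from the injectivity of $\charpol_{K_3}:\groth^+(K_3\HC)\to\FC(\HC,K_3[\tb])$, recalled at the beginning of \S\ref{section:decomposition} (this is where splitness of $K_3\HC$ enters), together with the fact that both $\dec_{R_3\HC}^{R_1\HC}$ and $\dec_{R_3\HC}^{R_2\HC}\circ\dec_{R_2\HC}^{R_1\HC}$ are morphisms of abelian groups, so are determined by their values on classes $[V]$ with $V$ simple. There is no real obstacle: the argument is formal once the identification $R_3=R_1/\rG_1'$ is in place, and the only verification of substance is that $(R_1,\rG_1')$ inherits $\propdec$, which is immediate.
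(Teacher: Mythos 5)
Your proof is correct, and it is the natural argument given how the paper sets things up: the paper itself offers no proof of this corollary (it simply cites \cite[Proposition~2.12]{geck rouquier}), and your reduction to the uniqueness clause of Proposition~\ref{prop:geck-rouquier} via the factorization $\reduction_{\rG_1'}=\reduction_{\rG_2}\circ\reduction_{\rG_1}$ is exactly the standard route. The one point of substance — checking that $(R_1,\rG_1')$ inherits $\propdec$ so that $\dec_{R_3\HC}^{R_1\HC}$ is even defined — is handled correctly, since (D1) and (D2) do not involve the prime ideal and (D3) splits between the two given hypotheses.
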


\bigskip
% 
% Le corollaire suivant, qui \'etend l\'eg\`erement la proposition~\ref{geck rouquier} tout 
% en l'utilisant, est sugg\'er\'e dans~\cite[\S{2}]{geck rouquier}~:
% 
% \bigskip
% 
% \begin{coro}\label{coro:geck-rouquier}
%  Soit $\rG'$ un id\'eal premier de $R$ contenant $\rG$ et notons $k'=k_R(\rG')=k_{R/\rG}(\rG'/\rG)$. 
% On suppose que la $k'$-alg\`ebre $k'\HC$ est {\bfit d\'eploy\'ee} et que. 
% Alors
% $$\dec_\rG \circ \dec_{\rG'/\rG} = \dec_{\rG'}.$$
% En particulier,
% $$\Dec_{\rG'/\rG} \Dec_\rG = \Dec_{\rG'}.$$
% \end{coro}
% 
% 
% 
% 
% 
% \begin{rema}\label{rema:definition-non-normal}
% Reprenons les hypoth\`eses et notations de la proposition~\ref{geck rouquier} mais supposons 
% ici que $R/\rG$ n'est pas int\'egralement clos. En revanche
% 
% Notons $R_+$ sa cl\^oture int\'egrale dans $k_R(\rG)$~: 
% alors il existe un id\'eal $\rG_+'$ de $R_+$ tel que $\rG_+' \cap (R/\rG) = \rG'/\rG$. 
% 
%  
% 
% 
% \end{rema}

%\subsection{Deuxi\`eme exemple} 
%Soit $k$ un corps commutatif et soient $\s$, $\Sig$, $\pi$, $\Pi$, $A$ et $B$ des ind\'etermin\'ees 
%sur $k$. On pose $P=k[\s,\Sig,\pi,\Pi,A,B]$, $K=\Frac(P)$ et 
%\begin{multline*}
%F(T)=T^8 - 2(\s\Sig + 4 A^2+4B^2) T^6 \\ 
%+ \Bigl((\s\Sig+4 A^2-4B^2)^2 + 2 (\s^2 \Pi + \pi \Sig^2) + 
%8 \s\Sig B^2\Bigl) T^4\\
%+ \Bigl((\s\Sig+4A^2-4B^2)(\s^2\Pi + \pi \Sig^2)-2 \s^2\Sig^2 B^2\Bigr) T^2 \\
%+ \s^2\Pi + \pi \Sig^2.\hskip9.7cm
%\end{multline*}
%%
%
%
%
%\bigskip

\section{Idempotents and central characters}\label{section:relevements-idempotents}

\bigskip

The aim of this section is to complete the proof of 
Corollary~\ref{coro:r-blocs}.
Let $\OC$ be a local noetherian ring and let 
$A$ be a $\OC$-subalgebra of $\OC^d=\OC \times \OC \times \cdots \times \OC$ ($d$ times). 
Let $\mG=\Rad(\OC)$, $k=\OC/\mG$ and, if $r \in \OC$, let $\rba$ denote its image in $k$.

If $1 \le i \le d$, let $\pi_i : \OC^d \to \OC$ denote the $i$-th projection and 
$$\o_i : A \longto \OC$$
denotes the restriction of $\pi_i$ to $A$. We set 
$$\fonction{\bar{\o}_i}{A}{k}{a}{\overline{\o_i(a)}.}$$
On the set $\{1,2,\dots,d\}$, we denote by $\smile$ the equivalence 
relation defined by
$$\text{$i \smile j$ \quad if and only if \quad $\bar{\o}_i=\bar{\o}_j$.}$$
Finally, we set 
$$e_i=(0,\dots,0,\underbrace{1}_{\text{$i$-th position}},0,\dots,0) \in \OC^d.$$
Then:

\bigskip

\begin{lem}\label{lem:relevement-idempotent}
Let $I \in \{1,2,\dots,d\}/\smile$. Then $\sum_{i \in I} e_i \in A$.
\end{lem}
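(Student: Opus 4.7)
The plan is to prove the lemma by passing to the $\mG$-adic completion of $\OC$, where finite module-algebras decompose as products of local rings. First, I would note that $A \subset \OC^d$ is finitely generated as an $\OC$-module (since $\OC$ is Noetherian), so $A/\mG A$ is a finite-dimensional $k$-algebra. Each morphism $\bar\omega_i$ factors through $A/\mG A$, and grouping equivalent ones yields $l$ distinct $k$-algebra morphisms $\bar\alpha_1,\ldots,\bar\alpha_l : A/\mG A \to k$ with pairwise distinct (hence pairwise comaximal) kernels; by the Chinese Remainder Theorem, the product map $(\bar\alpha_1,\ldots,\bar\alpha_l) : A/\mG A \twoheadrightarrow k^l$ is surjective.

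Next, let $\hat\OC$ denote the completion of $\OC$ and set $\hat A := A \otimes_\OC \hat\OC$, which embeds in $\hat\OC^d$ by flatness. Then $\hat A$ is a finitely generated $\hat\OC$-module with residue algebra $\hat A/\mG \hat A = A/\mG A$, and its Jacobson radical $J$ satisfies $\hat A/J \cong k^l$. Since $\hat\OC$ is a complete Noetherian local ring and $\hat A$ is a finite $\hat\OC$-algebra, the standard lifting theorem yields a decomposition $\hat A = \prod_{j=1}^l \hat A_j$ into complete local rings, with associated orthogonal central idempotents $\varepsilon_j \in \hat A$ that map to the elementary idempotents of $\hat A/J = k^l$. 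Viewed inside the product ring $\hat\OC^d$, each $\varepsilon_j$ must be of the form $\sum_{i \in S_j} \hat e_i$ for some $S_j \subset \{1,\ldots,d\}$; the identity $\bar\omega_i(\varepsilon_j) = \delta_{j,c(i)}$ (where $c(i)$ denotes the equivalence class of $i$) then forces $S_j = I_j$, so $\varepsilon_j = f_j$ in $\hat\OC^d$.

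Finally, to descend back to $A$, I would show that the square
\[
\begin{array}{ccc}
A & \hookrightarrow & \OC^d \\
\downarrow & & \downarrow \\
\hat A & \hookrightarrow & \hat\OC^d
\end{array}
\]
is Cartesian, i.e., $\hat A \cap \OC^d = A$ inside $\hat\OC^d$. This comes from tensoring the short exact sequence $0 \to A \to \OC^d \to \OC^d/A \to 0$ with the flat $\OC$-algebra $\hat\OC$ and applying Krull's intersection theorem to the finitely generated $\OC$-module $\OC^d/A$ to see that $\OC^d/A \to (\OC^d/A) \otimes_\OC \hat\OC$ is injective. Consequently $f_j \in \hat A \cap \OC^d = A$, as required. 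The main difficulty is that idempotents generally fail to lift modulo the Jacobson radical in a finite algebra over a non-Henselian local ring (witness $\ZM[i]_{(5)}$ over $\ZM_{(5)}$); the passage to $\hat\OC$ is precisely what resolves this, while the Cartesian square is what enables the lifted idempotent to be recognised as an element of $A$ itself.
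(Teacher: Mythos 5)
Your proof is correct, but it follows a genuinely different route from the one in the text. The proof given there never leaves $A$: after producing, much as you do via a Dedekind-type separation of the characters, an element $a_1\in A$ with $\o_j(a_1)=0$ for $j\notin I$ and $\o_i(a_1)\equiv 1\pmod{\mG}$ for $i\in I$, it runs the explicit quadratic iteration $a_{i+1}=a_i^2\bigl(1+\o_{i+1}(a_i)^{-2}(1-a_i^2)\bigr)$ and checks by induction that after $|I|$ steps all coordinates on $I$ equal $1$ exactly; the only inputs are that $\OC$ is local (so elements congruent to $1$ modulo $\mG$ are invertible) and the embedding $A\subset\OC^d$. Your argument instead invokes the structure theory of finite algebras over complete local rings and then descends. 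The descent step ($\hat{A}\cap\OC^d=A$ inside $\hat{\OC}^d$, via flatness of completion and Krull's intersection theorem) is correct, but the approach costs more machinery and contains one assertion you leave unjustified, namely that $\hat{A}/J\cong k^l$, i.e.\ that the $l$ characters $\bar{\o}_i$ exhaust the maximal ideals of $A/\mG A$. That assertion is true: since $\OC^d$ is a finite, hence integral, $A$-module, lying over shows every maximal ideal of $A$ is the contraction of some $\OC\times\cdots\times\mG\times\cdots\times\OC$ and therefore equals some $\Ker\bar{\o}_i$ (this is exactly how the text proves the companion statement that every character of $kA$ with values in a field extension of $k$ is some $\bar{\o}_i$); alternatively it drops out of your own computation a posteriori, since the $l$ lifted idempotents you identify with the $\sum_{i\in I_j}e_i$ already sum to $1$, leaving no room for further factors. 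The trade-off is clear: the text's proof is elementary, self-contained and produces an explicit formula for the idempotent, while yours explains conceptually why the statement holds (the idempotent exists after completion and descends because it already lives in $\OC^d$) and, set against your $\ZM[i]_{(5)}$ caveat, highlights that it is the hypothesis $A\subset\OC^d$, not any Henselian property of $\OC$, that makes the lifting possible.
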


\bigskip

\begin{proof}
By reordering if necessary the idempotents, we may assume that 
$I=\{1,2,\dots,d'\}$ with $d' \le d$. 
We proceed in several steps:

$$\text{\it If $i \in I$ and $j \not\in I$, then there exists $a_{ij} \in A$ 
such that $\o_i(a_{ij})=1$ and $\o_j(a_{ij})=0$.}\leqno{(\clubsuit)}$$

\medskip

\begin{quotation}
\noindent{\it Proof of $(\clubsuit)$.} 
Since $i \not\smile j$, there exists $a \in A$ such that $\bar{\o}_i(a) \neq \bar{\o}_j(a)$. 
Let $r =\o_j(a)$ and $u = \o_i(a)-\o_j(a)$. Then $u \in \OC^\times$ because $\OC$ is local 
and $a_{ij}=u^{-1}(a - r \cdot 1_A) \in A$ satisfies the two conditions.\finl
\end{quotation}

$$\text{\it There exists $a_1 \in A$ such that $\o_1(a_1)=1$ and $\o_j(a_1)=0$ if $j \not\in I$.}
\leqno{(\diamondsuit)}$$

\medskip

\begin{quotation}
\noindent{\it Proof of $(\diamondsuit)$.} 
By $(\clubsuit)$, there exists, for all $i \in I$ and $j \not\in I$, 
$a_{ij} \in A$ such that $\o_i(a_{ij})=1$ and $\o_j(a_{ij})=0$. Note that, 
if $i' \in I$, then $\o_{i'}(a_{ij}) \equiv 1 \mod \mG$ because $\bar{\o}_i=\bar{\o}_{i'}$. 
Set 
$a=\prod_{i \in I, j \not\in I} a_{ij}$. Then it is clear that 
$\o_j(a)=0$ if $j \not\in I$ and $\o_i(a) \equiv 1 \mod \mG$ if $i \in I$. 
It is then sufficient to take $a_1 = \o_1(a)^{-1} a$.\finl
\end{quotation}

\bigskip

We then define by induction the sequence $(a_i)_{1 \le i \le d'}$ as follows:
$$a_{i+1}=a_i^2 (1 + \o_{i+1}(a_i)^{-2}(1-a_i^2)).$$
We will show by induction on $i \in \{1,2,\dots,d'\}$ the following two facts:
$$\text{\it The element $a_i$ is well-defined and belongs to $A$.}\leqno{(\heartsuit_i)}$$
$$\text{\it If $1 \le i' \le i$ and $j \not\in I$, then $\o_{i'}(a_i)=1$ and $\o_j(a_i)=0$.}
\leqno{(\spadesuit_i)}$$

\bigskip

\begin{quotation}
\noindent{\it Proof of $(\heartsuit_i)$ and $(\spadesuit_i)$.} 
This is obvious if $i=1$. Let us now assume that 
$(\heartsuit_i)$ and $(\spadesuit_i)$ hold (for some $i \le d'-1$). Let us prove that 
this implies that $(\heartsuit_{i+1})$ and $(\spadesuit_{i+1})$ also hold. 

Then $i \smile i+1$ and so $\o_{i+1}(a_i) \equiv \o_i(a_i)=1 \mod \mG$. 
So $\o_{i+1}(a_i)$ is invertible and so $a_{i+1}$ is well-defined 
and belongs to $A$ (this is exactly $(\heartsuit_{i+1})$). 

Now, let us set $r=\o_{i+1}(a_i)$ for simplifying. Then:

$\bullet$ If $1 \le i' \le i$, we have $\o_{i'}(a_{i+1})=1 \cdot (1 + r^{-2}(1-1^2))=1$.

$\bullet$ $\o_{i+1}(a_{i+1}) = r^2 (1 + r^{-2}(1-r^2))=1$.

$\bullet$ If $j \not\in I$, then $\o_j(a_{i+1})= 0 \cdot (1 + r^{-2}(1-0^2))=0$.

\noindent So $(\spadesuit_{i+1})$ holds.\finl
\end{quotation}

\bigskip

Therefore, $a_{d'} = \sum_{i \in I} e_i \in A$.
\end{proof}

\bigskip

\begin{coro}\label{coro:blocs-A}
The map 
$$\fonctio{\{1,2,\dots,d\}/\smile}{\blocs(A)}{I}{\DS{\sum_{i \in I} e_i}}$$
is well-defined and bijective.
\end{coro}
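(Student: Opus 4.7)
The plan is to leverage Lemma \ref{lem:relevement-idempotent} to obtain a complete description of \emph{all} idempotents of $A$, of which the primitive ones will be an obvious subset. Since $\OC$ is local, its only idempotents are $0$ and $1$, so the idempotents of $\OC^d$ are precisely the elements $\sum_{i\in J}e_i$ for subsets $J\subseteq\{1,\dots,d\}$. In particular, every idempotent of the subring $A$ has this form for some $J$, and the task reduces to identifying which $J$'s occur.

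First I would establish the following claim: a subset $J\subseteq\{1,\dots,d\}$ gives an element $\sum_{i\in J}e_i$ lying in $A$ if and only if $J$ is a union of $\smile$-equivalence classes. Sufficiency is exactly the content of Lemma \ref{lem:relevement-idempotent} (together with orthogonality $e_ie_j=0$ for $i\ne j$, to sum idempotents associated to distinct classes). Necessity is immediate: if $\sum_{i\in J}e_i\in A$ and $i\smile j$, then $\bar\omega_i$ and $\bar\omega_j$ agree on this element, giving $\mathbf{1}_J(i)=\mathbf{1}_J(j)$, so $J$ is $\smile$-saturated.

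Next I would deduce primitivity. Given an equivalence class $I$, suppose $\sum_{i\in I}e_i=f_1+f_2$ with $f_1,f_2\in A$ non-zero orthogonal idempotents. By the previous paragraph each $f_k$ has the form $\sum_{i\in J_k}e_i$ with $J_k$ a non-empty union of equivalence classes, and $J_1\sqcup J_2=I$. But $I$ is a single class, forcing one of $J_1,J_2$ to be empty, a contradiction. Conversely, if $f=\sum_{i\in J}e_i$ is a primitive idempotent of $A$ and $J$ meets at least two equivalence classes $I_1,I_2,\dots$, then $\sum_{i\in I_1}e_i$ and $f-\sum_{i\in I_1}e_i=\sum_{i\in J\setminus I_1}e_i$ both belong to $A$ (again by the claim) and give a non-trivial orthogonal decomposition of $f$, contradicting primitivity. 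Hence $J$ is a single equivalence class.

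Injectivity of the map $I\mapsto\sum_{i\in I}e_i$ is transparent, since $I$ is recovered as the support in $\OC^d$, and surjectivity is exactly what the previous paragraph established. The main work has already been carried out in Lemma \ref{lem:relevement-idempotent}; the corollary is essentially structural bookkeeping, with no further obstacle.
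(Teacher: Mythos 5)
Your proof is correct and follows essentially the same route as the paper: Lemma \ref{lem:relevement-idempotent} gives well-definedness, and primitivity is obtained from the same observation that an idempotent $\sum_{i\in J}e_i$ of $A$ forces $\bar{\o}_i$ and $\bar{\o}_j$ to agree on it, so $J$ must be $\smile$-saturated. Your presentation is merely a bit more systematic (you first classify all idempotents of $A$ before extracting the primitive ones), but the ingredients and the key computation are identical to the paper's.
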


\begin{proof}
The Lemma~\ref{lem:relevement-idempotent} shows that, if $I \in \{1,2,\dots,d\}/\smile$, 
then $e_I=\sum_{i \in I} e_i \in A$. If $e_I$ is not primitive, 
this means that, since $\OC$ is local, there exists two non-empty subsets 
$I_1$ and $I_2$ of $I$ such that $e_{I_1}$, $e_{I_2} \in A$, 
and $I=I_1 \coprod I_2$. But, if $i_1 \in I_1$ and $i_2 \in I_2$, then 
$\omeba_{i_1}(e_{I_1})=1 \neq 0 = \omeba_{i_2}(e_{I_1})$, which is impossible 
because $i_1 \smile i_2$. So the map $I \mapsto e_I$ is well-defined. 
It is now clear that it is bijective.
\end{proof}

\bigskip

If $a \in A$, let $\ahat$ denote its image in $kA=k \otimes_\OC A$.

\bigskip

\begin{coro}\label{coro:relevement-idempotent}
With this notation, we have:
\begin{itemize}
\itemth{a} If $e \in \blocs(A)$, then $\ehat \in \blocs(kA)$.

\itemth{b} The map $\blocs(A) \to \blocs(kA)$, $e \mapsto \ehat$ 
is bijective.
\end{itemize}
\end{coro}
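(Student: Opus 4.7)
\textbf{Proof proposal for Corollary \ref{coro:relevement-idempotent}.}

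The plan is to factor the reduction $A \to kA$ through the image $\bar{A}$ of $A$ in $k^d$ (obtained from the inclusion $A \hookrightarrow \OC^d$ by reducing each coordinate modulo $\mG$), and to compare the block decompositions of $A$, $kA$ and $\bar{A}$. The comparison between $A$ and $\bar{A}$ will be done by applying Corollary~\ref{coro:blocs-A} twice, while the comparison between $kA$ and $\bar{A}$ will rest on a nilpotence argument for the kernel of the canonical surjection $kA \twoheadrightarrow \bar{A}$.

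First, Corollary~\ref{coro:blocs-A} applied to the $\OC$-subalgebra $A \subseteq \OC^d$ describes $\blocs(A)$ as the set of $e_I = \sum_{i \in I} e_i$, with $I$ running over $\{1,\dots,d\}/{\smile}$. Next, applying the same corollary to the $k$-subalgebra $\bar{A} \subseteq k^d$ (the base ring $k$ being a field, hence a noetherian local ring whose residue field is $k$ itself) yields a description of $\blocs(\bar{A})$. The equivalence relation governing the second application is $i \smile' j \Longleftrightarrow \pi_i|_{\bar{A}} = \pi_j|_{\bar{A}}$, and since $\pi_i|_{\bar{A}}$ composed with the canonical map $A \to \bar{A}$ equals $\bar{\omega}_i$, we get $\smile' = \smile$. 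So $I \mapsto \bar{e}_I$ is a bijection $\{1,\dots,d\}/{\smile} \longiso \blocs(\bar{A})$, and the composition $\blocs(A) \to \blocs(\bar{A})$, $e_I \mapsto \bar{e}_I$, is bijective.

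The key step will be to show that the kernel $N$ of the canonical surjection $kA \twoheadrightarrow \bar{A}$ is a nil ideal. An element $\bar{a} \in N$ is the class of some $a \in A$ with $\omega_i(a) \in \mG$ for every $i$. Viewing $a = (\omega_1(a),\dots,\omega_d(a))$ in $\OC^d$, the relation $\prod_{j=1}^d (a - \omega_j(a)) = 0$ holds in $\OC^d$, hence in $A$. Expanding, we obtain $a^d \in \mG \cdot \OC[a] \subseteq \mG A$, so $\bar{a}^d = 0$ in $kA$, which proves that $N^d = 0$. Since $kA$ is commutative and $N$ is a nilpotent ideal, the well-known identity $(f_1 - f_2)^3 = f_1 - f_2$ for idempotents $f_1,f_2$ forces any two idempotents congruent modulo $N$ to coincide, and every idempotent of $\bar{A}$ lifts uniquely to an idempotent of $kA$. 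A primitive idempotent on either side corresponds to a primitive idempotent on the other (a non-trivial orthogonal decomposition on one side lifts or descends to one on the other). Therefore the map $\blocs(kA) \to \blocs(\bar{A})$, $f \mapsto $ image of $f$ in $\bar{A}$, is a bijection.

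Combining the two bijections $\blocs(A) \longiso \blocs(\bar{A}) \longiso \blocs(kA)$, and checking that the composition is indeed $e \mapsto \ehat$, both (a) and (b) follow. The main obstacle — and the only nontrivial step — is the nilpotence of $N$; once the Cayley--Hamilton-type identity in $\OC^d$ is spotted, everything else reduces to formal bookkeeping with Corollary~\ref{coro:blocs-A} and the standard idempotent-lifting principle for nilpotent ideals in commutative rings.
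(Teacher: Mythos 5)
Your proof is correct, but it follows a genuinely different route from the paper's. The paper proves (a) by contradiction via central characters: a nontrivial decomposition $\ehat=e_1+e_2$ in the finite-dimensional commutative $k$-algebra $kA$ yields two distinct $k$-algebra morphisms $\r_1,\r_2:kA\to k'$ separating the two pieces; pulling these back to $A$ and using that $\OC^d$ is integral over $A$ and that $\OC$ is local, each $\rhot_i$ is forced to equal some $\omeba_{t_i}$, and this contradicts Corollary~\ref{coro:blocs-A}. Part (b) is left essentially implicit there, to be extracted from the byproduct~(\ref{L}) (every character of $kA$ with values in a field extension is an $\omeba_i$), which the paper also needs elsewhere. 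You instead interpose the image $\bar{A}$ of $A$ in $k^d$, apply Corollary~\ref{coro:blocs-A} a second time over the field $k$ to identify $\blocs(A)$ with $\blocs(\bar{A})$, and reduce the comparison of $kA$ with $\bar{A}$ to unique lifting of idempotents along the nil kernel $N$, the nilpotence coming from the Cayley--Hamilton-type identity $\prod_{j}\bigl(a-\o_j(a)\bigr)=0$ in $\OC^d$. This avoids going-up and auxiliary field extensions, and it delivers (b) explicitly rather than implicitly, at the cost of not producing the statement~(\ref{L}) that the paper reuses later. One cosmetic point: your element computation shows that every element of $N$ has vanishing $d$-th power, i.e.\ that $N$ is nil of bounded index, not literally that $N^d=0$ (a product of $d$ distinct elements of $N$ is not covered by that computation); but nilness is all that unique idempotent lifting in the commutative ring $kA$ requires, and $N$ is in any case nilpotent because $kA$ is finite-dimensional over $k$, $A$ being a finitely generated $\OC$-module.
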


\begin{proof}
(a) Let $e \in \blocs(A)$ and assume that $\ehat=e_1+e_2$, 
where $e_1$ and $e_2$ are two orthogonal idempotents of $kA$. 
The ring $\OC$ being noetherian, $kA$ is a finite dimensional 
commutative $k$-algebra. So there exists two morphisms of $k$-algebras 
$\r_1$, $\r_2 : kA \to k'$ (where $k'$ 
is a finite extension of $k$) such that $\r_i(e_j)=\d_{i,j}$. Let   
$\rhot_i$ denote the composition $A \to kA \stackrel{\r_i}{\longrightarrow} k'$. 

Set $\aG_i=\Ker(\rhot_i)$. The image of $\r_i$ being 
a subfield $k'$, $\aG_i$ is a maximal ideal of $A$. Since $\OC^d$ is integral over 
$A$, there exists a maximal ideal $\mG_i$ of $\OC^d$ such that $\aG_i = \mG_i \cap A$. 
Since $\OC$ is local, $\mG_i$ is of the form 
$\OC \times \cdots \times \OC \times \mG \times \OC \times \cdots \times \OC$, 
where $\mG$ is in $t_i$-th 
position (for some $t_i \in \{1,2,\dots,d\}$), which implies that 
$\rhot_i = \omeba_{t_i}$. 

Since $\r_1 \neq \r_2$ and $\r_i(e_j)=\d_{i,j}$, we get $\omeba_{t_1} \neq \omeba_{t_2}$ 
and $\omeba_{t_1}(e)=\r_1(e_1+e_2)=1=\r_2(e_1+e_2)=\omeba_{t_2}(e)$. This contradicts 
Corollary~\ref{coro:blocs-A}.
\end{proof}

\bigskip

During this proof, the following result has been proven: if $k'$ is a finite extension 
of $k$ and if $\r : kA \to k'$ is a morphism of $k$-algebras, then 
\equat\label{L}
\text{\it there exists $i \in \{1,2,\dots,d\}$ such that $\r(\ahat)=\omeba_i(a)$ 
for all $a \in A$.}
\endequat

\chapter{Invariant rings}\label{appendice:invariant}
Let $\kb$ be a field.
Let $R$ be a $\kb$-algebra acted on by a finite group $G$ whose order
is invertible in $\kb$.
Let $e=\frac{1}{|G|}\sum_{g\in G}g$, a
central idempotent of $\kb G$. Let
$A=R\rtimes G$. The aim of this appendix is to relate 
representations of $A$ and of $R^G$. We are mainly interested 
in the case where $R$ is commutative.

\bigskip

\section{Morita equivalence}

\medskip
The natural action of $G$ on $R$ and the action of $R$ by left multiplication
on $R$ induce a structure of $A$-module on $R$.

\smallskip
The following lemma is classical.
\begin{lem}
	The restriction to $R$ of a semisimple $A$-module is semisimple.
\end{lem}

\begin{proof}
	Let $S$ be a simple $A$-module. Since $\Res_R^A(S)$ is finitely
	generated, it admits a maximal submodule $M$. Since
	$\bigcap_{g\in G}g(M)$ is a proper $A$-submodule of $S$, it is zero.
	Consequently, the canonical morphism of $R$-modules
	$S\to\bigoplus_{g\in G}S/g(M)$ is injective. Since 
	$S/g(M)$ is a simple $R$-module for all $g\in G$, it follows that
	$S$ is a semisimple $R$-module.
\end{proof}

The following lemma is clear.

\bigskip

\begin{lem}
\label{le:idempotentinvariants}
There is an isomorphism of $A$-modules
$R\longiso Ae,\ r\mapsto re$
that restricts to an isomorphism of $\kb$-algebras $R^G\longiso 
eAe$.
\end{lem}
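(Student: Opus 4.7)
The plan is to exploit the elementary but crucial identity $ge = eg = e$ for all $g\in G$, which holds because $e=\frac{1}{|G|}\sum_{h\in G}h$ is invariant under left or right multiplication by group elements. This single observation drives everything.

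First I would unwind what $Re$ and $eRe$ look like as $\kb$-vector spaces. Since every element of $R=A\rtimes G$ is a $\kb$-linear combination of terms $ag$ with $a\in A$ and $g\in G$, and since $ge=e$, any product $(ag)e$ equals $ae$. Hence $Re=Ae$ as $\kb$-modules, and the map $\phi:A\to Re,\ a\mapsto ae$ is a $\kb$-linear bijection (injectivity follows because $R$ is free of rank $|G|$ over $A$ with basis $G$, so $A\otimes e$ is a $\kb$-direct summand of $R$).

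Next I would give $A$ the structure of a left $R$-module via the obvious rule: $a'\in A$ acts by left multiplication and $g\in G$ acts through the given action on $A$. I then check that $\phi$ is $R$-linear: for $a,a'\in A$ we have $\phi(a'a)=a'ae=a'\phi(a)$, and for $g\in G$ we compute in $R$ that $g\cdot(ae)=g(a)g\cdot e=g(a)e=\phi(g(a))$, using $ge=e$. This proves the first assertion.

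For the restriction, the key computation is the Reynolds-type identity
\[
eae \;=\; \tfrac{1}{|G|}\sum_{g\in G} ga\cdot e \;=\; \tfrac{1}{|G|}\sum_{g\in G} g(a)\,g\cdot e \;=\; \Bigl(\tfrac{1}{|G|}\sum_{g\in G} g(a)\Bigr)\,e,
\]
in which the bracketed element lies in $A^G$. Combined with $eRe=eAe$ (again from $ge=e$), this gives $eRe=A^Ge$, so $\phi$ restricts to a $\kb$-linear bijection $A^G\longiso eRe$. Finally, for $a\in A^G$ one has $ea=ae$ (same averaging calculation), so $\phi(a)\phi(a')=(ae)(a'e)=aa'e^2=\phi(aa')$, proving $\phi|_{A^G}$ is an algebra map.

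The proof is essentially a formal manipulation, so there is no serious obstacle; the one point that deserves attention is verifying the algebra structure, where one must use both $e^2=e$ and the commutation $ae=ea$ for $a\in A^G$ — the latter is precisely where the $G$-invariance of $a$ enters, and it is the reason the isomorphism lands on $A^G$ rather than on a larger subalgebra.
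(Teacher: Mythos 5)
Your proof is correct, and it is exactly the routine verification the authors had in mind when they wrote "the following lemma is clear" and omitted the argument: the whole content is the identity $ge=eg=e$, the Reynolds computation $eae=\bigl(\tfrac{1}{|G|}\sum_g g(a)\bigr)e$, and the commutation $ae=ea$ for $a\in A^G$, all of which you supply. No gaps.
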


Let $M$ be an $R$-module whose isomorphism class is stable under the action
of a subgroup $H$ of $G$. There are isomorphisms of $A$-modules
$\phi_h:h^*(M)\longiso M$ for $h\in H$, unique up to left multiplication by
$\Aut_R(M)$. Consequently, the elements 
$\phi_h\in N_{\Aut_{R^G}(M)}(\Aut_R(M))$
define a morphism of groups $H\to \Aut_{R^G}(M)/\Aut_R(M)$.

\bigskip

\begin{prop}
\label{pr:equivMorita}
The following assertions are equivalent:
\begin{itemize}
\itemth{1} $Ae$ is a progenerator for $A$
\itemth{2} $Ae$ induces a Morita equivalence between $A$ and $R^G$
\itemth{3} $A=AeA$
\itemth{4} for every simple $A$-module $S$, we have $S^G{\not=0}$.
\itemth{5} for every simple $R$-module $T$ whose isomorphism class is stable
under the action of a subgroup $H$ of $G$ and for every non-zero direct
summand $U$ of $\Ind_R^{R\rtimes H}T$, we have $U^H\not=0$.
\end{itemize}
\end{prop}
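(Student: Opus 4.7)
My plan is to handle the equivalences in three groups: first (1) $\iff$ (2) $\iff$ (3) via pure Morita theory, then (3) $\iff$ (4) via a direct computation of the trace ideal using the averaging idempotent, and finally (4) $\iff$ (5) via Clifford theory for the crossed product $R = A \rtimes G$.

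For (1) $\iff$ (2) $\iff$ (3): The module $Re$ is automatically finitely generated projective over $R$ because $e$ is an idempotent, and by Lemma \ref{le:idempotentinvariants} one has $\End_R(Re)^\opp \simeq eRe \simeq A^G$. Standard Morita theory (projective module is a progenerator iff it induces a Morita equivalence with its opposite endomorphism ring) then identifies (1) with (2). Further, $Re$ is a generator for $R\text{-}\mathrm{Mod}$ precisely when the trace ideal $\mathrm{tr}(Re) = ReR$ equals $R$, which is exactly (3).

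For (3) $\iff$ (4): Since $Re$ is projective, $R = ReR$ holds iff no non-zero $R$-module is annihilated by $eR$, equivalently iff $eS \neq 0$ for every simple $R$-module $S$. Because $|G|$ is invertible in $\kb$, the element $e = \frac{1}{|G|}\sum_g g$ acts on any $R$-module $S$ as the projector onto $S^G$: clearly $e \cdot S \subseteq S^G$, and conversely $es = s$ for any $s \in S^G$. Thus $eS = S^G$, yielding the equivalence.

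For (4) $\iff$ (5): I would invoke Clifford theory for $R = A \rtimes G$. Every simple $R$-module arises as $\Ind_{A \rtimes H}^R V$ for a simple $A$-module $T$ with stabilizer $H \subseteq G$ and a simple $(A \rtimes H)$-direct summand $V$ of $\Ind_A^{A \rtimes H} T$. The key observation is that as a $\kb G$-module, $\Ind_{A \rtimes H}^R W$ coincides with $\Ind_H^G(W|_{\kb H})$ for any $(A \rtimes H)$-module $W$, so Frobenius reciprocity gives the identity
\[
(\Ind_{A \rtimes H}^R W)^G = W^H.
\]
Applied to a simple summand $V$, this gives (5) $\Rightarrow$ (4). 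For the converse, if $U$ is a non-zero direct summand of $\Ind_A^{A \rtimes H} T$ with $U^H = 0$, then $\Ind_{A \rtimes H}^R U$ is a non-zero finitely generated $R$-module with vanishing $G$-invariants; picking a simple quotient $S$ and using that $G$-invariants are exact (since $|G|$ is invertible) forces $S^G = 0$, contradicting (4).

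The main obstacle will be the Clifford-theoretic step (4) $\iff$ (5), and in particular correctly stating the parametrization of simple $R$-modules without assuming semisimplicity of $A$ (so that $\Ind_A^{A \rtimes H} T$ need not decompose into simple summands, which is exactly why (5) is phrased in terms of arbitrary direct summands rather than simple ones). The remaining verifications are routine once the Frobenius identity $(\Ind_{A \rtimes H}^R W)^G = W^H$ is in place.
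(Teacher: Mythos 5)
Your proof is correct and follows essentially the same route as the paper: Morita theory and the trace ideal for (1)--(3), the identity $eS=S^G$ for (3) $\iff$ (4), and Clifford theory for the crossed product together with the Frobenius identity $(\Ind_{A\rtimes H}^{R}W)^G=W^H$ for (4) $\iff$ (5). If anything, your treatment of the direction (4) $\Rightarrow$ (5) for an arbitrary non-zero direct summand $U$ (passing to a simple quotient of $\Ind_{A\rtimes H}^R U$ and using exactness of $M\mapsto eM$) is spelled out more carefully than in the paper's own argument.
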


\begin{proof}
Note that $Ae$ is a direct summand of $A$, as 
a left $A$-module, hence $Ae$ is a finitely generated projective $A$-module.
The equivalence between (1) and (2) follows from Lemma 
\ref{le:idempotentinvariants}.

If $Ae$ is a progenerator, then 
$A$ is isomorphic to a quotient of a multiple of $Ae$. Since the image of
a morphism of $A$-modules
	$Ae\to A$ is contained in $AeA$, we deduce that if (1) holds, then
(3) holds. Conversely, assume (3). There are $a_1,\ldots,a_n\in A$
such that $1\in Aea_1+\cdots+Aea_n$, hence the morphism
$(Ae)^n\to A,\ (r_1,\ldots,r_n)\mapsto r_1a_1+\cdots+r_na_n$ is surjective and
(1) follows.

We have $A/AeA=0$ if and only if
$A/AeA$ has no simple module, hence if and only if
$e$ does not act by $0$ on any simple $A$-module.  This shows the equivalence
of (3) and (4).

	Assume (4). Let $S$ be a simple $A$-module that is a direct summand
	of $\Ind_{R\rtimes H}^A(U)$. We have $S^G\neq 0$, hence $U^H\neq 0$.
	So (5) holds.

	Assume (5).
Let $S$ be a simple $A$-module. Let $T$ be a simple $R$-module that is a direct
	summand of the semisimple module
	$\Res^A_R(S)$. Let $H$ be the stabilizer of the isomorphism
class of $T$. There is a simple $(R\rtimes H)$-module $U$ that is a direct summand
	of $\Ind_R^{R\rtimes H}(T)$ such that
	$S=\Ind_{R\rtimes H}^A(U)$. Since $U^H\not=0$, we deduce that $S^G\neq 0$.
	This shows (4).
%
%Note that $G$ acts freely on the set of isomorphism classes of
%simple $A$-modules if and only if $\Ind_A^R(S)$ is a simple $R$-module,
%for every simple $A$-module $S$.
%
%
%Let $T$ be a simple $A$-module and $S$ a simple
%Then every simple $R$-module is of the form $\Ind_A^R(S)$ for some simple
%$A$-module $S$. Since $e$ doesn't act by $0$ on $\Ind_A^R(S)$, we deduce that
%the fourth statement holds.
%
%
%Assume $Re$ induces a Morita equivalence between $R$ and $A^G$. Then
%every simple $R$-module is of the form $Re\otimes_{A^G}T$ for some simple
%$A^G$-module $T$
\end{proof}

\bigskip

\begin{coro}\label{coro:center-ag}
If $AeA=A$, then $\Zrm(A)=\Zrm(R^G)$. 
\end{coro}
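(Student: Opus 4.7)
The plan is to reduce the statement to the classical fact that Morita equivalent algebras have canonically isomorphic centers. By Lemma~\ref{le:idempotentinvariants}, the map $a \mapsto ae$ identifies $A^G$ with $eRe$, so it suffices to exhibit an isomorphism $\Zrm(R) \xrightarrow{\sim} \Zrm(eRe)$. The hypothesis $ReR = R$ is condition~(3) of Proposition~\ref{pr:equivMorita}, so $Re$ induces a Morita equivalence between $R$ and $eRe$; the desired isomorphism is then $\varphi \colon z \mapsto ze$.

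For an explicit verification, I would proceed as follows. First, if $z \in \Zrm(R)$ then $ze = ez = eze \in eRe$, and for any $eae \in eRe$ one computes $(ze)(eae) = zeae = eaze = (eae)(ze)$, so $\varphi$ lands in $\Zrm(eRe)$. Second, using $ReR = R$ one writes $1 = \sum_i r_i e s_i$ for some $r_i, s_i \in R$; then for $z \in \Zrm(R)$ centrality gives $z = z \cdot 1 = \sum_i r_i (ze) s_i$, from which injectivity of $\varphi$ is immediate. The nontrivial step is surjectivity: given $w \in \Zrm(eRe)$, I would set $z := \sum_i r_i w s_i$ and check that $z$ is independent of the chosen decomposition of $1$, commutes with every element of $R$, and satisfies $ze = w$.

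The main obstacle is verifying centrality of $z$ and independence of the decomposition in the surjectivity step. This is where the Morita equivalence is genuinely used: one must translate the identity $[w, eae] = 0$ for all $eae \in eRe$ into the identity $[z, r] = 0$ for all $r \in R$, and this translation is precisely the content of the Morita-theoretic isomorphism of centers. The cleanest implementation is to regard $\Zrm(R)$ as the ring of natural endotransformations of the identity functor on the category of $R$-modules; since this category is equivalent to the category of $eRe$-modules under the Morita equivalence, the corresponding ring for $eRe$ is canonically identified with $\Zrm(R)$, and unwinding the equivalence recovers the formula $z \mapsto ze$.
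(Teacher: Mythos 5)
Your proof is correct and follows the same route as the paper: invoke Proposition~\ref{pr:equivMorita} to get the Morita equivalence induced by $Re$ from the hypothesis $ReR=R$, identify $A^G$ with $eRe$ via Lemma~\ref{le:idempotentinvariants}, and conclude that the centers are isomorphic via $z\mapsto ze$ (the paper packages this last step as Lemma~\ref{lem:ZA-ZB}). Your explicit verification sketch is just an unwinding of that same lemma, so nothing further is needed.
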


\begin{proof}
By Proposition~\ref{pr:equivMorita}, the rings
$A$ and $eAe \simeq R^G$ are Morita 
equivalent via the bimodule $Ae$. This provides an isomorphism
	$\Zrm(A) \xrightarrow{\sim} \Zrm(R^G)$, the 
isomorphism being given by the action on the bimodule $Ae$ (Lemma
\ref{lem:ZA-ZB} below).
The result follows.
\end{proof}

\begin{lem}\label{lem:ZA-ZB}
Let $A$ and $B$ be two rings and $M$ an $(A,B)$-bimodule such that the canonical
maps give isomorphisms
$B\longiso \End_A(M)$ and $A\longiso\End_{B^\opp}(M)^\opp$. Then we have an isomorphism 
$\Zrm(A) \longiso \Zrm(B)$.

In particular, if $e$ is an idempotent of a ring $A$ and if left multiplication
gives an isomorphism $A\longiso\End_{(eAe)^\opp}(Ae)^\opp$, then there is
an isomorphism $\Zrm(A)\longiso \Zrm(eAe),\ a\mapsto ae$.
\end{lem}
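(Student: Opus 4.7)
The plan is to construct the isomorphism $Z(A) \longiso Z(B)$ by exploiting the double-centralizer hypothesis: an element of $Z(A)$ should act on $M$ in a way that is simultaneously $A$-linear and commutes with the right $B$-action in a central manner.

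Concretely, given $z \in Z(A)$, I will consider the left multiplication map $\ell_z : M \to M$, $m \mapsto zm$. Because $z$ is central in $A$, this map is $A$-linear: for $a \in A$ and $m \in M$, one has $\ell_z(am) = zam = azm = a\ell_z(m)$. So $\ell_z \in \End_A(M)$, and using the isomorphism $B \longiso \End_A(M)$ (under which $b \in B$ corresponds to right multiplication by $b$), there is a unique element $\varphi(z) \in B$ such that $zm = m\varphi(z)$ for every $m \in M$. The key step is to check that $\varphi(z) \in Z(B)$. For any $b \in B$ and $m \in M$, compute
\[
m\bigl(\varphi(z)b\bigr) = (m\varphi(z))b = (zm)b = z(mb) = (mb)\varphi(z) = m\bigl(b\varphi(z)\bigr).
\]
Since the canonical map $B \to \End_A(M)$ is injective (being an isomorphism), the equality $m(\varphi(z)b) = m(b\varphi(z))$ for all $m$ forces $\varphi(z)b = b\varphi(z)$, so $\varphi(z) \in Z(B)$.

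It is immediate from the defining formula $zm = m\varphi(z)$ that $\varphi : Z(A) \to Z(B)$ is a ring homomorphism (additivity is clear, and multiplicativity follows from $zz'm = z(m\varphi(z')) = m\varphi(z)\varphi(z')$). Swapping the roles of $A$ and $B$ and using the symmetric hypothesis $A \longiso \End_{B^{\opp}}(M)^{\opp}$, the same construction produces a ring homomorphism $\psi : Z(B) \to Z(A)$ characterized by $\psi(b)m = mb$ for all $m$. Then for $z \in Z(A)$ and $m \in M$ one has $\psi(\varphi(z))m = m\varphi(z) = zm$, and the faithfulness of $A$ acting on $M$ (from the second hypothesis) gives $\psi(\varphi(z)) = z$; symmetrically $\varphi \circ \psi = \id$.

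For the second assertion, I will apply the first part with the bimodule $M = Ae$, whose right module structure is over $eAe$. The hypothesis supplies the isomorphism $A \longiso \End_{(eAe)^{\opp}}(Ae)^{\opp}$, and the standard identification $\End_A(Ae) \longiso eAe$ (sending $f \mapsto f(e)$, with inverse $x \mapsto (ae \mapsto aex)$) supplies the other half. Under these identifications, the formula $zm = m\varphi(z)$ with $m = e$ reads $ze = e\varphi(z)$, and tracing through the identification of $eAe$ with $\End_A(Ae)$ shows that the corresponding element of $eAe$ is simply $ze$. This gives the explicit form $\varphi(z) = ze$ claimed in the statement. The only potentially delicate point is keeping track of the opposite-ring conventions, but since all the identifications are the canonical ones, this is bookkeeping rather than a genuine obstacle.
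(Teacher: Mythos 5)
Your proof is correct and follows essentially the same route as the paper: left multiplication by $z\in Z(A)$ is $A$-linear, hence corresponds under $B\longiso\End_A(M)$ to a central element $\varphi(z)$ of $B$, the symmetric construction gives the inverse, and faithfulness of the two actions shows the composites are the identity. The specialization to $M=Ae$, $B=eAe$ with $\varphi(z)=ze$ is also handled as the paper intends.
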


\begin{proof}
The left multiplication on $M$ induces a ring morphism 
$\a : \Zrm(A) \to \Zrm(B)$ such that $zm=m\a(z)$ for all $z \in \Zrm(A)$ and $m \in M$. 
Similarly, the right multiplication induces a ring morphism $\b : \Zrm(B) \to \Zrm(A)$ 
such that $mz=\b(z)m$ for all $z \in \Zrm(B)$ and $m \in M$. Hence, if $z \in \Zrm(A)$ and 
$m \in M$, then $zm=\b(\a(z))m$, and so $\b \circ \a = \Id_{\Zrm(A)}$ 
since the action of $A$ on $M$ is faithful by assumption. Similarly $\a \circ \b = \Id_{\Zrm(B)}$.
\end{proof}

\bigskip

\section{Geometric setting}

\medskip

We assume now that $R=\kb[X]$, where $X$ is an affine scheme of finite type over $\kb$,
i.e., $R$ is a finitely generated commutative $\kb$-algebra.
Proposition \ref{pr:equivMorita} has the following consequence.

\bigskip

\begin{coro}
\label{le:Moritafree}
If $G$ acts freely on $X$, then $Ae$ induces a Morita equivalence
between $A$ and $R^G$.
\end{coro}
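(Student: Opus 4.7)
The plan is to verify condition (5) of Proposition~\ref{pr:equivMorita}: given a simple $A$-module $T$ and a subgroup $H\subset G$ stabilizing its isomorphism class, every non-zero direct summand $U$ of $\Ind_A^{A\rtimes H}T$ satisfies $U^H\neq 0$. Since $A=\kb[X]$ is a commutative finitely generated $\kb$-algebra, every simple $A$-module is of the form $T=L:=A/\mG$ for a maximal ideal $\mG$, with $L$ a finite field extension of $\kb$. The stabilizer of $[T]$ coincides with $\{g\in G:g(\mG)=\mG\}$, and any such subgroup $H$ acts on $L$ by $\kb$-algebra automorphisms. The first key step is to extract from the freeness hypothesis the fact that $H\to\Aut_{\kb\text{-}\mathrm{alg}}(L)$ is faithful: a non-trivial element of $H$ acting trivially on $L$ would fix every geometric point of $X$ lying over $\mG$, contradicting the free action of $G$ on $X$.

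Next, since $\mG$ is $H$-stable, the canonical surjection $A\twoheadrightarrow L$ is $H$-equivariant and induces a surjection of $\kb$-algebras $A\rtimes H\twoheadrightarrow L\rtimes H$. The ideal $\mG$ annihilates $T$, and $H$ preserves $\mG$, so the action of $A$ on $\Ind_A^{A\rtimes H}T$ factors through $L$; hence $\Ind_A^{A\rtimes H}T$ is naturally an $L\rtimes H$-module. Artin's theorem, applied to the faithful action of $H$ on the field $L$, gives that $L/L^H$ is a Galois extension with group $H$; Wedderburn theory (or direct verification via Dedekind's independence of characters applied to $H\subset\Aut_{\kb}(L)$) then yields an isomorphism
\[L\rtimes H\stackrel{\sim}{\longto}\End_{L^H}(L)\simeq\Mat_{|H|}(L^H),\]
so $L\rtimes H$ is simple Artinian, with unique simple module $L$ of $\kb$-dimension $[L:\kb]$, whose $H$-invariants $L^H$ are non-zero.

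Comparing dimensions, $\dim_\kb\Ind_A^{A\rtimes H}T=|H|\cdot[L:\kb]$, so $\Ind_A^{A\rtimes H}T\simeq L^{\oplus|H|}$ as an $L\rtimes H$-module. Any non-zero direct summand $U$ is accordingly of the form $L^{\oplus k}$ with $k\geq 1$, whence $U^H=(L^H)^{\oplus k}\neq 0$, verifying condition (5) and thus yielding the Morita equivalence.

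The main obstacle is the faithfulness step: translating the scheme-theoretic freeness of the $G$-action into the assertion that the decomposition group $H=\Stab_G(\mG)$ of every maximal ideal $\mG$ embeds into $\Aut_{\kb\text{-}\mathrm{alg}}(A/\mG)$. Once this is in hand, the proof is a clean combination of Clifford theory (reducing to modules over $A\rtimes H$) and the classical Galois-theoretic structure of $L\rtimes H$.
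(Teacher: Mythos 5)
Your proof is correct and follows the route the paper intends: Corollary~\ref{le:Moritafree} is stated there as an immediate consequence of Proposition~\ref{pr:equivMorita}, and you supply exactly the verification of criterion (5) of that proposition. The key points — freeness forces the stabilizer of a maximal ideal $\mG$ to act faithfully on the residue field $L=A/\mG$, whence $L\rtimes H\simeq\End_{L^H}(L)$ is simple with unique simple module $L$ satisfying $L^H\neq 0$ — are all sound.
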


\bigskip

Let $X^{\reg}=\{x\in X| \Stab_G(x)=1\}$ and let $A^{\reg}=
\kb[X^{\reg}]\rtimes G$. We assume $X^\reg$ is dense in $X$, i.e.,
the pointwise stabilizer of an irreducible component of $X$ is trivial.
The following proposition gives a sufficient condition for a double centralizer theorem.

\bigskip

\begin{prop}
\label{pr:codim2doublecentralizer}
Assume that $X$ is a normal variety, i.e.,
all localizations of $A$ at prime ideals are integral and integrally closed.

\begin{itemize}
\itemth{1}
The canonical morphism of algebras
$A\to\End_{R^G}(R)$ is injective.

\itemth{2}
If the codimension of $X\setminus X^{\reg}$ is $\ge 2$ in each connected
component of $X$, then the morphism above is an isomorphism and
$\Zrm(A)=R^G$.
\end{itemize}
\end{prop}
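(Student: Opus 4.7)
The strategy is to compare the canonical morphism with its counterpart on the open $G$-stable subvariety $X^\reg$, where the $G$-action is free and Corollary~\ref{le:Moritafree} applies directly. I first observe that the hypothesis that every localization $A_\pG$ is an integral domain forces each prime of $A$ to contain a unique minimal prime, so the connected components of $X$ coincide with its irreducible components and each is a normal integral scheme. For ease of exposition I shall write the argument as if $X$ is irreducible; the reducible case follows component by component.

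For part~(1), I would exhibit the commutative square
$$\xymatrix{
R \ar[rr] \ar[d] & & \End_{(A^G)^\opp}(A)^\opp \ar[d] \\
\kb[X^\reg]\rtimes G \ar[rr]^-\sim & & \End_{(\kb[X^\reg]^G)^\opp}(\kb[X^\reg])^\opp
}$$
in which the bottom arrow is the Morita isomorphism supplied by Corollary~\ref{le:Moritafree}. Since $A$ is reduced (being a normal domain) and $X^\reg$ is dense in $X$, the restriction $A \hookrightarrow \kb[X^\reg]$ is injective; as $R$ is free of rank $|G|$ as a right $A$-module, the left vertical arrow is injective as well. A direct diagram chase then forces the top horizontal arrow to be injective, which proves~(1).

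For part~(2), let $\varphi$ be an element of the upper-right corner of the square. Its image in the lower-right corner corresponds, through the Morita isomorphism, to a unique element $h = \sum_{g \in G} b_g\, g$ with $b_g \in \kb[X^\reg]$. The crux of the argument is then to show that each $b_g$ actually lies in $A$. This is the algebraic Hartogs phenomenon: for a normal noetherian domain one has $A = \bigcap_\pG A_\pG$ with $\pG$ ranging over the height-one primes of $A$, and the codimension-two hypothesis ensures that every such $\pG$ satisfies $V(\pG) \cap X^\reg \neq \emptyset$, so that $b_g \in A_\pG$. Hence $h \in R$ and, by the injectivity established in~(1), $h$ maps to $\varphi$, proving surjectivity. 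The equality $Z(R) = Z(A^G)$ then follows from Lemma~\ref{lem:ZA-ZB} applied to the $(R, A^G)$-bimodule $A$: the double centralizer isomorphism $R \longiso \End_{(A^G)^\opp}(A)^\opp$ has just been established, while the companion identity $A^G \longiso \End_R(A)$ is elementary, since any $R$-linear endomorphism of $A$ is left multiplication by some element of $A$ that must in addition be $G$-invariant in order to commute with the action of $G$.

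The main obstacle is the Hartogs step in part~(2), which is precisely where both geometric hypotheses (normality and codimension $\ge 2$) enter in an essential way. One has to translate the geometric statement that a regular function on $X^\reg$ extends across a codimension-two closed subset into the purely algebraic identity $A = \bigcap_{\mathrm{ht}(\pG)=1} A_\pG$, and to check that the normality and codimension hypotheses descend correctly to each irreducible component in the reducible case. Once this is in place, the remainder of the argument is a short diagram chase together with the direct verification that $\End_R(A) = A^G$.
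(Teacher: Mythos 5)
Your proof is correct, and your part~(1) is essentially the paper's argument (Morita equivalence over $X^\reg$ plus density of $X^\reg$). For part~(2), though, you take a genuinely different route to surjectivity. You extend the element explicitly: restrict $\varphi$ to $X^\reg$, identify it via the Morita isomorphism with $h=\sum_g b_g\,g$, $b_g\in\kb[X^\reg]$, and then use algebraic Hartogs --- $A=\bigcap_{\mathrm{ht}(\pG)=1}A_\pG$ for a normal ring, every height-one prime having its generic point in $X^\reg$ by the codimension hypothesis --- to conclude $b_g\in A$. The paper instead argues homologically on the cokernel $K$ of $R\to\End_{(A^G)^\opp}(A)^\opp$: since $K\otimes_A\kb[X^\reg]=0$, the support of $K$ has codimension $\ge 2$; normality gives depth $\ge 2$ there, so $\Ext^1_A(K,A)=0$, the sequence $0\to R\to \End\to K\to 0$ splits (as $R\simeq A^{|G|}$), and $K$ is then a torsion direct summand of a torsion-free $A$-module, hence zero. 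Both arguments rest on the $S_2$ property of normal rings; yours is more concrete and actually exhibits the preimage, at the cost of having to justify that restriction to $X^\reg$ is compatible with the various endomorphism rings, while the paper's version never touches individual coefficients and handles the (possibly reducible) normal case uniformly. Two small repairs: at the end of~(2), the fact that $h$ maps to $\varphi$ follows from the injectivity of the \emph{right-hand} vertical arrow of your square (restriction to $X^\reg$ is injective on $\End_{(A^G)^\opp}(A)^\opp$ because this module is torsion-free over $A^G$ and the image of $X^\reg$ in $X/G$ is dense), not from the injectivity of the top arrow established in~(1). And for $Z(R)=Z(A^G)$ you may simply invoke the ``in particular'' clause of Lemma~\ref{lem:ZA-ZB} with $M=Re\simeq A$: it requires only the double centralizer isomorphism you have just proved, since $\End_R(Re)=eRe\simeq A^G$ is automatic, so your separate verification, while correct, is not needed.
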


\begin{proof}
It follows from Corollary~\ref{le:Moritafree} that given $f\in R^G$ such that
$D(f)\subset X^{\reg}$, then the canonical morphism
$R[f^{-1}]\rtimes G\to \End_{R^G[f^{-1}]}(R[f^{-1}])$
is an isomorphism. In particular, the morphism of the proposition
$A\to\End_{(R^G)}(R)$ is an injective
morphism of $R$-modules, since $X^\reg$ is dense in $X$.

Let $K$ be the cokernel of the canonical morphism
$A\to\End_{(R^G)}(R)$. 
We have $K\otimes_R k[X^\reg]=0$, hence the support of
$K$ has codimension $\ge 2$. Since $A$ is normal, it has depth $\ge 2$, hence
$\Ext^1_R(K,R)=0$. We deduce that
$K$ is a direct summand of the torsion free $R$-module
$\End_{(R^G)}(R)$, hence $K=0$.  
The last statement follows from Lemma \ref{lem:ZA-ZB}.
\end{proof}

\medskip
The statement on the center of $A$ can be obtained more directly.

\begin{lem}
If $R$ is an integral domain and $G$ acts faithfully on $X$, then
$\Zrm(A)=R^G$.
\end{lem}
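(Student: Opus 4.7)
The inclusion $A^G \subseteq Z(R)$ is immediate: elements of $A^G$ commute with $A$ because $A$ is commutative, and they commute with every $g\in G$ by definition of being $G$-invariant.

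For the reverse inclusion, the plan is to take an arbitrary central element $z\in Z(R)$ and write it uniquely as $z=\sum_{g\in G} a_g g$ with $a_g\in A$, then exploit the centrality condition $zb=bz$ for $b\in A$. A direct computation gives $zb=\sum_g a_g\,g(b)\,g$ and $bz=\sum_g b a_g\,g$, so that by uniqueness of the decomposition one obtains
\[
a_g\bigl(g(b)-b\bigr)=0 \qquad\text{for every }g\in G\text{ and every }b\in A.
\]
This is the key relation, and it is where I would use both hypotheses.

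Since $A$ is an integral domain, for each fixed $g\in G$ either $a_g=0$ or else $g(b)=b$ for all $b\in A$. The faithfulness of the action of $G$ on $X$ is equivalent to the faithfulness of the action on $A=\kb[X]$, so the second alternative forces $g=1$. Consequently $a_g=0$ for all $g\neq 1$, so $z=a_1\in A$.

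Finally, the condition $hz=zh$ for $h\in G$ reduces, since $z\in A$, to $h(z)=z$, so $z$ lies in $A^G$, completing the proof. There is no real obstacle here; the only points worth verifying carefully are the use of the PBW-type unique decomposition $R=\bigoplus_{g\in G}Ag$ (which is built into the definition of $A\rtimes G$) and the translation of faithfulness on $X$ into faithfulness of the induced action on $A$, which holds because a nontrivial element of $G$ acting trivially on $A$ would fix every point of $X$.
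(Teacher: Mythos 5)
Your proof is correct and follows essentially the same route as the paper's: write $z=\sum_g a_g g$, extract the relation $a_g\bigl(g(b)-b\bigr)=0$ from centrality against $A$, and use integrality of $A$ plus faithfulness to kill all $a_g$ with $g\neq 1$, finishing with commutation against $G$. No issues.
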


\begin{proof}
Let $a=\sum_{g\in G}r_g g\in \Zrm(A)$ with $r_g\in R$ for $g\in G$.
Given $g_0$ a non-trivial element of $G$, there is $x\in R$
such that $g_0 x g_0^{-1}\neq x$. We have
$$0=[x,a]=\sum_{g\in G}(x-gxg^{-1})r_g g$$
hence $(x-g_0xg_0^{-1})r_{g_0}=0$. Since $R$ is integral, it follows that
$r_{g_0}=0$. We have shown that $a=r_1\in A$. Since $[a,g]=0$ for all
$g\in G$, we deduce that $a\in R^G$.
\end{proof}

\bigskip

We conclude with a description of the simple $A$-modules when $A=AeA$. 
In this case, using the Morita equivalence between $A$ and $R^G$ induced by the bimodule $Ae$, 
we obtain a bijective map
\equat\label{eq:bij-irr}
\bijectio{\Irr(A)}{\Irr(R^G)}{S}{eS.}
\endequat
Since $R$ is commutative, $\Irr(R)$ (respectively $\Irr(R^G)$) is in one-to-one correspondence 
with the set of maximal ideals of $R$ (respectively of $R^G$), so we obtain a bijective map
\equat\label{eq:bij-max-max}
\Irr(R)/G \longbij \Irr(R^G)
\endequat
(see Propositions~\ref{prop:max-max} and~\ref{galois transitif}). By composing the 
two previous bijective maps, we obtain a third bijective map
\equat\label{eq:bij-max-max-max}
\Irr(R)/G \longbij \Irr(R)
\endequat
We will describe more concretely this last map. In order to do that,
let $\O$ be a $G$-orbit 
of (isomorphism classes of) simple $R$-modules. The $R$-module 
$S_\O=R/\cap_{T \in \O} {\mathrm{Ann}}_R(T)$ 
inherits an action of $G$, hence it becomes an $A$-module.

\bigskip

\begin{prop}\label{prop:ag-commutatif}
Assume that $A=AeA$ and that $R$ is commutative and finitely generated.
\begin{itemize}
\itemth{a} If $\O \in \Irr(R)/G$, then $S_\O$ is a simple $A$-module. 

\itemth{b} The map $\Irr(R)/G \longto \Irr(A)$, $\O \mapsto S_\O$ is bijective 
(and coincides with the bijective map~\ref{eq:bij-max-max-max}).

\itemth{c} If $S$ is a simple $A$-module, then $\Res_R^A(S)$ is semisimple and 
multiplicity-free, and two simple $R$-modules occurring in $\Res_R^A(S)$ are 
in the same $G$-orbit.

\itemth{d} If $S$ and $S'$ are two simple $A$-modules, then $S \simeq S'$ if and 
only if $\Res_R^A(S)$ and $\Res_R^A(S')$ have a common irreducible submodule. 
\end{itemize}
\end{prop}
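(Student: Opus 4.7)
The plan is to first establish (a) by a direct Jordan--H\"older-type argument using the Chinese Remainder Theorem, then deduce (b) by identifying the map $\O\mapsto S_\O$ with the bijection $\Irr(R)\longbij\Irr(A^G)$ coming from the Morita equivalence of Proposition~\ref{pr:equivMorita} (which is available since we assume $R=ReR$), and finally read off (c) and (d) as immediate consequences.

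For (a), fix $\O=\{T_1,\dots,T_k\}\subset\Irr(A)$, write $T_i=A/\mG_i$ for pairwise distinct maximal ideals $\mG_i$, and observe that the $\mG_i$ are pairwise comaximal. The Chinese Remainder Theorem then gives an isomorphism of $A$-modules $S_\O=A/(\mG_1\cap\cdots\cap\mG_k)\longiso\bigoplus_{i=1}^k T_i$, in which each simple constituent of $\Res_A^R(S_\O)$ appears with multiplicity one and no two are isomorphic. Every $A$-submodule of $\bigoplus T_i$ is therefore a sum of a subset of the $T_i$'s (by uniqueness of isotypic decompositions for pairwise non-isomorphic simples over the commutative ring~$A$). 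Now $G$ permutes the $\mG_i$ transitively by definition of $\O$, hence acts transitively on the set of summands $\{T_1,\dots,T_k\}$, so the only $G$-stable (equivalently $R$-stable) $A$-submodules of $S_\O$ are $0$ and $S_\O$.

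For (b), I would combine the Morita bijection $\Irr(R)\longiso\Irr(A^G)$ of Proposition~\ref{pr:equivMorita} with the classical bijection $\Max(A)/G\longiso\Max(A^G)$, $[\mG]\mapsto\mG\cap A^G$, which holds because $A$ is integral over $A^G$ (this is standard: as a finitely generated $\kb$-algebra acted on by a finite group, $A$ is a finitely generated $A^G$-module, so one can apply Propositions~\ref{prop:max-max} and~\ref{galois transitif}). The composite is a bijection $\Irr(A)/G\longiso\Irr(R)$. To identify it with $\O\mapsto S_\O$, let $\nG=\mG_1\cap A^G=\cdots=\mG_k\cap A^G$ be the maximal ideal of $A^G$ attached to $\O$: since $\nG\subseteq\mG_i$ for every $i$, we get $\nG\cdot S_\O=0$, so the simple $A^G$-module $eS_\O$ (which is non-zero by simplicity of $S_\O$ together with $R=ReR$) is annihilated by $\nG$, and is therefore the simple $A^G$-module $A^G/\nG$ attached to $\nG$. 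This is precisely the image of $\O$ under the composite bijection.

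Parts (c) and (d) then follow formally. For (c), every simple $R$-module $S$ is isomorphic by (b) to some $S_\O$, and the computation in (a) shows $\Res_A^R(S_\O)\simeq\bigoplus_{T\in\O}T$ is semisimple and multiplicity-free with constituents forming a single $G$-orbit. For (d), if $\Res_A^R(S)$ and $\Res_A^R(S')$ share an irreducible constituent, the associated $G$-orbits $\O,\O'$ have non-empty intersection, hence coincide, so $S\simeq S_\O=S_{\O'}\simeq S'$ by (b); the converse is trivial. The only substantive point I anticipate needing care is the identification step in~(b): one must check that the Morita-theoretic recipe $S\mapsto eS$ applied to $S_\O$ lands on the correct maximal ideal $\nG$ of $A^G$, which is exactly the elementary annihilator computation above.
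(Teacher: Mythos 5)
Your proof is correct and follows essentially the same route as the paper's: the decomposition $\Res_A^R(S_\O)\simeq\bigoplus_{T\in\O}T$ together with transitivity of $G$ on the summands gives (a), and (b) is obtained by matching $\O\mapsto S_\O$ against the composite of the Morita bijection with $\Irr(A)/G\longbij\Irr(A^G)$. The only (harmless) divergence is in the identification step of (b): you pin down $eS_\O\simeq A^G/\nG$ by the elementary observation that $\nG$ annihilates $S_\O$, whereas the paper computes $eS_\O=S_\O^G\simeq(A/\mG)^H=A^G/(\mG\cap A^G)$ via the decomposition group and Theorem~\ref{bourbaki}; both are valid.
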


\begin{proof}
(a) By construction, we have a well-defined injective morphism of $R$-modules 
	$S_\O \injto \bigoplus_{T \in \O} T$ (here, we identify $T$ and $R/{\mathrm{Ann}}_R(T)$). 
Let $S$ be a non-zero $A$-submodule of $S_\O$.
Its restriction to $R$ contains a submodule isomorphic to some $T \in \O$. 
Since the action of $G$ stabilizes $S$, it follows that $S=S_\O$ and that
$$\Res_R^A(S_\O)=\bigoplus_{T \in \O} T.\leqno{(*)}$$
This proves (a).

\medskip

(b) It follows from $(*)$ that the map $\Irr(R)/G \longto \Irr(R)$, $\O \mapsto S_\O$ is injective. 
Now, let $\O \in \Irr(R)/G$, let $T \in \O$ and let $\mG={\mathrm{Ann}}_R(T)$. 
Denote by $H$ the stabilizer of $\mG$ in $G$ (that is, the decomposition group 
of $\mG$). We have $eS_\O=S_\O^G \simeq T^H=(R/\mG)^H$. By Theorem~\ref{bourbaki}, 
$(R/\mG)^H = R^G/(\mG \cap R^G)$. This proves that $eS_\O$ is the simple $R^G$-module 
associated with the maximal ideal $\mG \cap R^G$ of $R^G$ or, in other words, 
is the simple $R^G$-module associated with $\O$ through the bijective map~\ref{eq:bij-max-max}. 
This completes the proof of (b).

\medskip

(c) and (d) now follow from (a), (b) and $(*)$.
\end{proof}

\chapter{Highest weight categories}\label{ap:hw}

We fix in Appendix \ref{ap:hw} a commutative noetherian ring $k$.

\section{General theory}
\subsection{Definitions and first properties}

We say that a poset $\Delta$ is {\em locally finite} if given
any $D,D'\in\Delta$, then there are only finitely many $D''\in\Delta$
such that $D<D''<D'$. We say that a subset $\Gamma$ of $\Delta$ is an
{\em ideal} if given $D\in\Delta$ and $D'\in\Gamma$ with $D<D'$,
then $D\in\Gamma$. Given $D\in\Delta$, we define
$\Delta_{\le D}=\{D'\in\Delta\ |\ D'\le D\}$, and we define similarly
$\Delta_{<D}$, $\Delta_{\ge D}$ and $\Delta_{>D}$. We say that an ideal
$\Gamma$ is {\em finitely generated} if there are
$D_1,\ldots,D_n\in\Delta$ such that $\Gamma=\Delta_{\le D_1}\cup\cdots\cup
\Delta_{\le D_n}$.
%We say that $\Delta$ is {\em weakly noetherian} if
%every ideal contained in a finitely generated ideal is itself finitely
%generated.

\medskip

Let $\CC$ be a $k$-linear abelian category. We say that $\CC$ is
{\em noetherian} if all its objects are noetherian.
%We assume that given $M\in\CC$ and given $I$ a family of subobjects of
%$M$, then there exists a subobject $\sum_{N\in I}N$ of $M$. This is
%a subobject $L$ of $M$ containing all subobjects in $I$ and such that
%given any map $f:M\to M'$ and any subobject $M''$ of $M'$ such that
%$f(N)\subset M''$ for all $N\in I$, then $f(L)\subset M''$.

\smallskip
Let
$\Delta$ be a family of isomorphism classes of objects of $\CC$ (the
{\em standard objects}). We assume $\Delta$ is endowed
with a locally finite poset structure.

\medskip
Given $\Gamma\subset\Delta$, we denote by 
\begin{itemize}
\item $\CC[\Gamma]$ the full
subcategory of $\CC$ of objects $M$ such that $\Hom(D,M)=0$ for all
$D\in\Delta\setminus\Gamma$
\item $\CC^\Gamma$ the full subcategory of $\CC$ of objects 
$M$ that have
a filtration $0=M_0\subset M_1\subset\cdots\subset M_r=M$ such that
$M_i/M_{i-1}\in\Gamma$ for $1\le i\le r$
\item $i(\CC^\Gamma)$ the full subcategory of $\CC$ of objects that
are direct summands of objects of $\CC^\Gamma$.
\end{itemize}

We extend now the definition of (split) highest weight categories over $k$
of \cite{rouquier schur} to the case of a non-necessarily finite $\Delta$.

\begin{defi}
\label{de:hwcat}
We say that $\CC$, endowed with the poset of standard objects $\Delta$, is a
highest weight category if
\begin{itemize}
\item[(i)] for all $D\in\Delta$, we have $\End(D)=k$
\item[(ii)] given $D_1,D_2\in\Delta$ such that $\Hom(D_1,D_2)\not=0$, we have $D_1\le D_2$
\item[(iii)] every object of $\CC$ is the quotient of an object of 
$\CC^\Delta$
\item[(iv)] for all $M\in\CC$, $D,D'\in\Delta$, there is a
surjection $R\twoheadrightarrow D$ with kernel in
$\CC^{\Delta_{>D}}$ such that $\Hom(R,D')$ is a finitely generated
projective $k$-module and $\Ext^1(R,M)=0$.
\end{itemize}
\end{defi}

\begin{rema}
Assumption (iv) is aimed at making sense of 
the existence of an approximation of a projective module,
and of the requirement that objects of $\Delta$ are finitely generated
and projective over $k$.\finl
\end{rema}

We assume from now on that $\CC$ is a highest weight category.

\medskip
Note first that Definition \ref{de:hwcat} (iv) admits a version where
$D$ is replaced by an arbitrary object of $\CC^\Delta$. The next lemma shows
that this stronger version is actually a consequence of 
Definition \ref{de:hwcat} (iv).

\begin{lem}
\label{le:almostcover}
Let $N\in\CC^\Delta$, $D'\in\Delta$ and $M\in\CC$.
Then, there exists a surjection
$R\twoheadrightarrow N$ with kernel in $\CC^\Delta$ such that 
$\Hom(R,D')$ is a finitely generated projective $k$-module and
$\Ext^1(R,M)=0$.
\end{lem}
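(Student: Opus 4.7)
The plan is to proceed by induction on the length $r$ of a $\Delta$-filtration of $N$, using axiom (iv) at each step to resolve the top piece of the filtration. The base case $r=0$ is trivial ($R=0$ works). For the inductive step, fix such a filtration and write $0\to N'\to N\to D\to 0$ with $N'\in\CC^\Delta$ admitting a filtration of length $r-1$ and $D\in\Delta$.

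By the induction hypothesis applied to $N'$, $D'$ and $M$, one obtains a surjection $\pi:R'\twoheadrightarrow N'$ with $\ker\pi\in\CC^\Delta$, $\Hom(R',D')$ finitely generated projective over $k$, and $\Ext^1(R',M)=0$. The key move is to apply axiom (iv) to $D$ not with the object $M$ alone but with $N'\oplus M\in\CC$: this yields a surjection $\beta:R''\twoheadrightarrow D$ with $\ker\beta\in\CC^{\Delta_{>D}}\subset\CC^\Delta$, with $\Hom(R'',D')$ finitely generated projective, and with the simultaneous vanishing $\Ext^1(R'',N')=0=\Ext^1(R'',M)$ (since $\Ext^1$ distributes over finite direct sums in the second argument). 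The first of these allows $\beta$ to be lifted through $N\twoheadrightarrow D$ to a morphism $\tilde\beta:R''\to N$.

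The candidate is then $R=R'\oplus R''$ equipped with $\alpha:R\to N$ given by $(r',r'')\mapsto \iota(\pi(r'))+\tilde\beta(r'')$, where $\iota:N'\hookrightarrow N$. Surjectivity of $\alpha$ is immediate: its composition with $N\twoheadrightarrow D$ agrees with $\beta$ on the $R''$-factor and so is surjective onto $D$, while its restriction to $R'$ surjects onto $N'$. The snake lemma applied to the vertical maps $\pi,\alpha,\beta$ between the split exact sequence $R'\hookrightarrow R\twoheadrightarrow R''$ and $N'\hookrightarrow N\twoheadrightarrow D$ yields a short exact sequence $0\to\ker\pi\to\ker\alpha\to\ker\beta\to 0$; since $\CC^\Delta$ is closed under extensions (lift a $\Delta$-filtration of the quotient to the total object and concatenate with one of the sub), we conclude $\ker\alpha\in\CC^\Delta$. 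Additivity then gives $\Hom(R,D')=\Hom(R',D')\oplus\Hom(R'',D')$ finitely generated projective and $\Ext^1(R,M)=\Ext^1(R',M)\oplus\Ext^1(R'',M)=0$.

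The only genuine subtlety—modest in retrospect—is arranging that $R''$ serves two purposes at once: it must both $\Ext^1$-vanish against $M$ and allow the lift of $\beta$ through $N\twoheadrightarrow D$. Feeding $N'\oplus M$ into axiom (iv) rather than $M$ alone is precisely what resolves this. No other step requires more than the hypotheses already in place.
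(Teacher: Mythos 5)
Your proof is correct and is essentially the paper's argument: the paper also resolves each $\Delta$-subquotient of $N$ by applying axiom (iv) against $N_{i-1}\oplus M$ so that the cover of the subquotient lifts into $N_i$ while retaining $\Ext^1$-vanishing against $M$, then takes the direct sum of the resulting covers. Your induction on the filtration length merely unrolls into that same construction, so there is nothing to add.
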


\begin{proof}
Fix a filtration
$0=N_0\subset N_1\subset\cdots\subset N_r=N$ such that $N_i/N_{i-1}\in\Delta$
for $1\le i\le r$. Given $i$, there exists a surjection $f_i:R_i
\twoheadrightarrow N_i/N_{i-1}$ such that $\ker f_i\in\CC^\Delta$,
$\Hom(R_i,D')$ is a finitely generated projective $k$-module and
$\Ext^1(R_i,N_{i-1}\oplus M)=0$.
So, $f_i$ lifts to a map $g_i:R_i\to N_i$ and the sum
$g=\sum_i g_i: \bigoplus_i R_i\to N$ is surjective. Let
	$L_i=\ker(g_1+\cdots+g_i)$ for $1\le i\le r$. We have
a filtration $0=L_0\subset L_1\subset\cdots\subset L_r$
such that $L_i/L_{i-1}\simeq \ker f_i$ for $1\le i\le r$.
It follows that $L_r\in\CC^\Delta$. Note finally that $\Ext^1(
\bigoplus_i R_i,M)=0$ and $\Hom(\bigoplus_i R_i,D')$ is a finitely
generated projective $k$-module.
\end{proof}

\begin{lem}
\label{le:kernelDeltafiltered}
Let $0\to M\to L\to N\to 0$ be an exact sequence in $\CC$ with
$L,N\in i(\CC^\Delta)$. Then $M\in i(\CC^\Delta)$.
\end{lem}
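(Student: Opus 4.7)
The strategy is to use Lemma \ref{le:almostcover} to approximate $N$ by a $\Delta$-filtered object, extract from this an auxiliary short exact sequence involving $M$, and then absorb suitable summands so that both ends lie in $\CC^\Delta$. Throughout the argument I will use that $\CC^\Delta$ is closed under extensions: given an exact sequence $0 \to K \to X \to Y \to 0$ with $K, Y \in \CC^\Delta$, the pullback to $X$ of a $\Delta$-filtration of $Y$ refines a $\Delta$-filtration of $K$ to a $\Delta$-filtration of $X$.

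First I reduce to the case $N \in \CC^\Delta$: choosing $N'$ with $N \oplus N' \in \CC^\Delta$ and taking the direct sum of the given sequence with $0 \to 0 \to N' \xrightarrow{\id} N' \to 0$ replaces $L$ by $L \oplus N' \in i(\CC^\Delta)$ and $N$ by $N \oplus N' \in \CC^\Delta$, while leaving $M$ unchanged. Lemma \ref{le:almostcover} then yields a surjection $\pi : R \twoheadrightarrow N$ with $K := \ker \pi \in \CC^\Delta$ and $\Ext^1(R, M) = 0$; by extension closure applied to $0\to K\to R\to N\to 0$, the object $R$ itself lies in $\CC^\Delta$. The $\Ext$-vanishing ensures that $\pi$ lifts to a morphism $\tilde\pi : R \to L$, and the map
\[ \phi : M \oplus R \longrightarrow L, \qquad (m, r) \longmapsto \alpha(m) + \tilde\pi(r), \]
(where $\alpha : M \hookrightarrow L$ is the given injection) is surjective, with kernel isomorphic to $K$ via $k \mapsto (-\alpha^{-1}\tilde\pi(k),\, k)$. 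This produces an exact sequence $0 \to K \to M \oplus R \to L \to 0$.

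To finish, I choose $L''$ with $L \oplus L'' \in \CC^\Delta$ and form the direct sum with $0 \to 0 \to L'' \xrightarrow{\id} L'' \to 0$, obtaining $0 \to K \to M \oplus R \oplus L'' \to L \oplus L'' \to 0$ in which both the kernel and the quotient now lie in $\CC^\Delta$. Extension closure then gives $M \oplus R \oplus L'' \in \CC^\Delta$, so $M$ is a direct summand of an object of $\CC^\Delta$, i.e.\ $M \in i(\CC^\Delta)$. The main technical point is the construction of $\phi$ and the verification that $\ker \phi \simeq K$ (a short diagram chase using injectivity of $\alpha$ and exactness of the original sequence); once the two summand-absorption reductions have been made so that Lemma \ref{le:almostcover} is applicable and so that the right-hand term is $\Delta$-filtered, the rest of the argument is mechanical.
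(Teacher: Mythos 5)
Your proof is correct and is essentially the paper's argument: both rest on Lemma \ref{le:almostcover} applied to $N$, and your extension $0\to K\to M\oplus R\to L\to 0$ is exactly the paper's fiber product $L'=\ker(L\oplus R\twoheadrightarrow N)$ after it has been split as $M\oplus R$ using $\Ext^1(R,M)=0$ (the paper splits at the end, you split at the start via the lift $\tilde\pi$). The only other difference is cosmetic: the paper absorbs complements into both $L$ and $N$ up front, while you handle $N$ first and $L$ at the end.
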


\begin{proof}
It is enough to prove the lemma for $L,N\in\CC^\Delta$.
By Lemma \ref{le:almostcover}, there is a surjection $R\twoheadrightarrow
N$ with kernel $N'\in \CC^\Delta$ and with $\Ext^1(R,M)=0$.
Let $L'$ be the kernel of the canonical
map $L\oplus R\twoheadrightarrow N$. The composition of canonical maps
$L'\hookrightarrow L'\oplus R\twoheadrightarrow L$ is surjective and its
	kernel is isomorphic to $N'$, hence $L'\in\CC^\Delta$. The
kernel of the canonical map $L'\twoheadrightarrow R$ is isomorphic to $M$.
Since $\Ext^1(R,M)=0$, it follows that $M$ is a direct summand of $L'$,
hence $M\in i(\CC^\Delta)$.
\end{proof}

\begin{lem}
\label{le:vanishingExti}
Let $\Gamma$ be an ideal of $\CC$ and
let $M\in\CC[\Gamma]$. Let $D\in\Delta$ and let $i\ge 0$.
If $\Ext^i(D,M){\not=}0$, then there exists
$D_0,\ldots,D_i\in\Gamma$ with $D=D_0<D_1<\cdots<D_i$.
\end{lem}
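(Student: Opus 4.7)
The plan is to argue by induction on $i$.

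For $i=0$, the assertion is immediate from the definition of $\CC[\Gamma]$: if $\Hom(D,M)\neq 0$ and $M\in\CC[\Gamma]$, then $D\in\Gamma$, and the singleton chain $D_0=D$ works.

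For $i\ge 1$, the strategy is to produce a standard $D_1\in\Delta_{>D}$ with $\Ext^{i-1}(D_1,M)\neq 0$. Given such a $D_1$, the inductive hypothesis applied to $(D_1,i-1,M)$ yields a chain $D_1=D'_0<\cdots<D'_{i-1}$ in $\Gamma$, and because $\Gamma$ is a downward-closed order ideal, the relation $D<D_1\in\Gamma$ forces $D\in\Gamma$; prepending $D_0=D$ then gives the required chain of length $i+1$ in $\Gamma$.

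To produce $D_1$, I would apply axiom (iv) (with any auxiliary $D'\in\Delta$) to obtain a short exact sequence $0\to K\to R\to D\to 0$ with $K\in\CC^{\Delta_{>D}}$ and $\Ext^1(R,M)=0$. The long exact sequence for $\Ext^\bullet(-,M)$ yields
\[
\Ext^{i-1}(K,M)\longrightarrow\Ext^i(D,M)\longrightarrow\Ext^i(R,M).
\]
In the favorable case where the leftmost arrow is non-zero, $\Ext^{i-1}(K,M)\neq 0$, and iterated long exact sequences along the finite filtration of $K$ by standards in $\Delta_{>D}$ pin down a factor $D_1\in\Delta_{>D}$ with $\Ext^{i-1}(D_1,M)\neq 0$, as required.

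The hard part is the complementary case, where $\Ext^i(D,M)$ embeds into $\Ext^i(R,M)$: axiom (iv) furnishes only $\Ext^1(R,M)=0$ and not higher vanishing, so the non-triviality of $\Ext^i(R,M)$ may be carried by the $D$-quotient at the top of $R$, contributing no new chain information. To overcome this, I would iterate axiom (iv) via the refined form of Lemma \ref{le:almostcover} (whose construction, as recorded in its proof, preserves the $\CC^{\Delta_{>D}}$ condition on the kernel when the input already lies in $\CC^{\Delta_{>D}}$) to build an approximation resolution $\cdots\to R_1\to R_0\to D\to 0$ with every $R_j\in\CC^{\Delta_{>D}}$ for $j\ge 1$ and each $\Ext^1(R_j,M)=0$. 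Coupling this resolution with an injective resolution $I^\bullet$ of $M$ and running the Cartan--Eilenberg spectral sequence $E_1^{p,q}=\Ext^q(R_p,M)\Rightarrow\Ext^{p+q}(D,M)$, the non-vanishing of $\Ext^i(D,M)$ forces a surviving contribution at some bidegree $(p,q)$ with $p\ge 1$; since $R_p\in\CC^{\Delta_{>D}}$ for $p\ge 1$, the standard filtration argument descends this contribution to some $D_1>D$ with $\Ext^q(D_1,M)\neq 0$. The main technical obstacle is the degree bookkeeping through the spectral-sequence differentials needed to guarantee the specific exponent $q=i-1$, at which point the inductive reduction above closes the argument.
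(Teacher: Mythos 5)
Your base case and the reduction "find $D_1>D$ with $\Ext^{i-1}(D_1,M)\neq 0$, then induct and use that $\Gamma$ is an ideal" are exactly right, and your dévissage of $\Ext^{i-1}(K,M)$ along the filtration of $K$ is fine. The gap is that you never actually produce such a $D_1$ in general: your "favorable case" need not occur, and your "complementary case" is left open (you say yourself that the degree bookkeeping needed to land precisely in $\Ext^{i-1}$ is unresolved). The spectral-sequence route cannot be repaired as stated: axiom (iv) gives only $\Ext^1(R_j,M)=0$, so nothing kills the column $E_1^{0,i}=\Ext^i(R_0,M)$, which can carry the whole class via the copy of $D$ sitting at the top of $R_0$ (circular); and even when a contribution does appear at $p\ge 2$ it yields $\Ext^{q}(D_1,M)\neq 0$ with $q=i-p<i-1$, which produces a chain that is too short for the statement. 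There are also convergence issues for the unbounded resolution that you would have to address.

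The missing idea in the paper's proof is to decompose the nonzero class rather than resolve $D$ against $M$. Since $i\ge 1$, a nonzero $\eta\in\Ext^i(D,M)$ can be written as a Yoneda product $\eta=\xi\circ\zeta$ with $\zeta\in\Ext^1(D,M')$ and $\xi\in\Ext^{i-1}(M',M)$ for some intermediate object $M'$ (cut an $i$-fold extension after its first step). One then invokes axiom (iv) for the pair $(D,M')$ — not $(D,M)$ — to get $R\twoheadrightarrow D$ with kernel $L\in\CC^{\Delta_{>D}}$ and $\Ext^1(R,M')=0$. The vanishing forces $\zeta$ to come from some $f\in\Hom(L,M')$ under the connecting map, and compatibility of Yoneda products with connecting maps turns $\xi\circ\zeta\neq 0$ into $\xi\circ f\neq 0$ in $\Ext^{i-1}(L,M)$. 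Now your dévissage of $L$ gives $D_1>D$ with $\Ext^{i-1}(D_1,M)\neq 0$, with exactly the exponent $i-1$ you need, and your inductive reduction closes the argument. The point you were missing is the freedom to choose the approximation of $D$ adapted to the auxiliary object $M'$ extracted from the class itself.
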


\begin{proof}
When $i=0$, we have
$D\in\Gamma$. We proceed now by induction on $i\ge 1$.

There exists $M'\in\CC$, $\zeta\in\Ext^1(D,M')$ and
$\xi\in\Ext^{i-1}(M',M)$ such that $\xi\circ\zeta{\not=}0$.
There exists a surjection $R\twoheadrightarrow D$ with kernel
$L$ in $\CC^{\Delta_{>D}}$ such that $\Ext^1(R,M')=0$.
So, $\zeta$ factors through a map $f:L\to M'$.
Since $\xi\circ\zeta{\not=}0$, it follows that $\xi\circ f{\not=}0$.
So, there exists $D'>D$ such that $\Ext^{i-1}(D',M){\not=}0$.
By induction, we deduce that there exists $D'_0,\ldots,D'_{i-1}
\in\Gamma$ such that $D'=D'_0<D'_1<\cdots<D'_{i-1}$. It follows that
the lemma holds for $D$, $M$ and $i$.
\end{proof}

\begin{lem}
	Given $M,N\in\CC$, the $k$-module $\Hom(M,N)$ is finitely generated.
\end{lem}

\begin{proof}
	Recall that $k$ is noetherian.
By Definition \ref{de:hwcat} (iv), the $k$-module $\Hom(M,N)$
is finitely generated if $M,N\in\Delta$. Consequently, the lemma holds
if $M,N\in\CC^\Delta$.

Assume now $M\in\CC^\Delta$. There exists a surjection 
	$N'\twoheadrightarrow N$ with $N'\in\CC^\Delta$ and we denote by
	$N''$ the kernel of that surjection.
	Lemma \ref{le:almostcover} shows there exists a surjection
	$R\twoheadrightarrow M$ with $R\in\CC^\Delta$ and $\Ext^1(R,N'')=0$.
	The canonical map $\Hom(R,N')\to\Hom(R,N)$ is surjective and
	$\Hom(R,N')$ is finitely generated, hence $\Hom(R,N)$ is finitely
	generated. It follows that $\Hom(M,N)$ is finitely generated.

	A general $M$ is a quotient of an object of $\CC^\Delta$, hence
	$\Hom(M,N)$ is finitely generated.
\end{proof}

\begin{lem}
\label{le:finiteExt1}
Let $M,M'\in\CC^\Delta$.
The $k$-module $\Ext^1(M,M')$ is finitely
generated. If it is non-zero and $M,M'\in\Delta$, then $M<M'$.
\end{lem}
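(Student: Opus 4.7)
The plan is to extract everything from axiom (iv) applied twice, plus Lemma \ref{le:vanishingExti} for the inequality.

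First I would apply axiom (iv) to the pair $(D, D')$ with $M = D'$: this produces a short exact sequence
$$0 \to K \to R \to D \to 0$$
with $K \in \CC^{\Delta_{>D}}$, with $\Hom(R, D')$ a finitely generated projective $k$-module, and with $\Ext^1(R, D') = 0$. Applying $\Hom(-, D')$ yields a long exact sequence whose relevant piece is
$$\Hom(R, D') \to \Hom(K, D') \to \Ext^1(D, D') \to \Ext^1(R, D') = 0,$$
so that $\Ext^1(D, D')$ is a quotient of $\Hom(K, D')$. It therefore suffices to show $\Hom(K, D')$ is a finitely generated $k$-module.

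Next, I would fix a filtration $0 = K_0 \subset K_1 \subset \cdots \subset K_r = K$ with $K_i/K_{i-1} \in \Delta_{>D}$ and prove by induction on $i$ that $\Hom(K_i, D')$ is finitely generated over $k$. The inductive step uses the left exact sequence
$$0 \to \Hom(K_i/K_{i-1}, D') \to \Hom(K_i, D') \to \Hom(K_{i-1}, D'),$$
so, since $k$ is noetherian, one only needs to know each $\Hom(K_i/K_{i-1}, D')$ is finitely generated over $k$. For this I would apply axiom (iv) a second time to each factor $D_i'' := K_i/K_{i-1}$, again with $M = D'$: this gives a surjection $R_i \twoheadrightarrow D_i''$ with $\Hom(R_i, D')$ finitely generated (projective) over $k$, and the embedding $\Hom(D_i'', D') \hookrightarrow \Hom(R_i, D')$ together with noetherianity of $k$ forces $\Hom(D_i'', D')$ to be finitely generated. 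This completes the finite generation half.

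For the second assertion, I would apply Lemma \ref{le:vanishingExti} with $M = D'$ and $\Gamma = \Delta_{\le D'}$. Axiom (ii) guarantees that $D' \in \CC[\Delta_{\le D'}]$: any standard $D''$ with $\Hom(D'', D') \neq 0$ satisfies $D'' \le D'$. Assuming $\Ext^1(D, D') \neq 0$, the lemma then supplies $D_0, D_1 \in \Delta_{\le D'}$ with $D = D_0 < D_1$, so $D < D_1 \le D'$, which is the desired strict inequality.

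No step presents a real obstacle: the only mild subtlety is making sure axiom (iv) is invoked with the right choice of $M$ (namely $M = D'$, so that the resulting $\Ext^1$-vanishing converts the long exact sequence into the quotient description we need), and noting that it is the noetherianity of $k$ that is doing the work in propagating finite generation of $\Hom$-modules along subobjects and filtrations.
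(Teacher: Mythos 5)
Your proof is correct and follows essentially the same route as the paper: both use axiom (iv) to realize $\Ext^1(D,D')$ as a quotient of $\Hom(K,D')$ with $K$ filtered by standards in $\Delta_{>D}$, and both reduce finite generation to the noetherianity of $k$ together with the embedding $\Hom(D'',D')\hookrightarrow\Hom(R'',D')$ supplied by a second application of (iv). The only cosmetic difference is that for the inequality you invoke Lemma \ref{le:vanishingExti} (with $\Gamma=\Delta_{\le D'}$), whereas the paper reruns the same one-step argument directly — namely $\Ext^1(D,D')\neq 0$ forces $\Hom(K,D')\neq 0$, hence some $D''>D$ with $D''\le D'$.
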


\begin{proof}
	Assume first $M,M'\in\Delta$.
Fix a surjection $R\twoheadrightarrow M$ with kernel $L\in\CC^{\Delta_{>M}}$
such that $\Ext^1(R,M')=0$.
Since $\Hom(L,M')$ is a finitely generated $k$-module, we deduce that
$\Ext^1(M,M')$ is a finitely generated $k$-module. If $\Ext^1(M,M'){\not=0}$,
then $\Hom(L,M'){\not=}0$, hence there is $M''>M$ such that $M''\le M'$.
So, $M<M'$.

The general case follows by induction on the length of a $\Delta$-filtration.
\end{proof}

\begin{lem}
\label{le:orderfiltration}
Let $I$ be a finite subset of $\Delta$. Fix a total order
	$\prec$ on $I$ such that $\Ext^1(D,D')\neq 0$ implies $D\prec D'$.

Consider $M\in\CC$ with a filtration
$0=M_0\subset M_1\subset\cdots\subset M_r=M$ such that
$M_i/M_{i-1}\in I$ for $1\le i\le r$. Then,
$M$ has another filtration
$0=M'_0\subset M'_1\subset\cdots\subset M'_r=M$ such that
	$M'_i/M'_{i-1}\in I$ for $1\le i\le r$ and $M'_{i+1}/M'_i\preceq
	M'_i/M'_{i-1}$
for $1\le i<r$.
\end{lem}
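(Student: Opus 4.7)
The plan is a bubble-sort argument: I will repeatedly swap adjacent out-of-order pairs of successive quotients in the filtration until the filtration is in the required $\prec$-order.

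The key technical step is the adjacent swap. Given a portion $M_{i-1}\subset M_i\subset M_{i+1}$ of the filtration with $A := M_i/M_{i-1}$ and $B := M_{i+1}/M_i$ in $I$, the short exact sequence
$$0 \longto A \longto M_{i+1}/M_{i-1} \longto B \longto 0$$
is classified by an element of $\Ext^1(B,A)$. By Lemma~\ref{le:finiteExt1}, this Ext group vanishes whenever $B \not< A$ in $\Delta$. When it vanishes, the sequence splits; choosing a new intermediate submodule $M'_i$ with $M_{i-1}\subset M'_i\subset M_{i+1}$ such that $M'_i/M_{i-1}\cong B$ and $M_{i+1}/M'_i\cong A$ produces a new filtration of $M$ in which $A$ and $B$ have traded positions while every other step is unchanged.

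With this swap in hand, the bubble sort proceeds as follows. If the filtration is not yet ordered as required, there exists an adjacent pair $A = M_i/M_{i-1}$, $B = M_{i+1}/M_i$ whose relative order violates the $\prec$-condition. The point to check is that this misorder forces $B \not< A$ in $\Delta$: since $\prec$ is a linear extension of the partial order $<$, an inequality $B<A$ in $\Delta$ would propagate to $B\prec A$, so the only way the adjacent pair can be out of order $\prec$-wise at a position where we need to swap is if $B$ and $A$ are either incomparable in $<$ or satisfy $A<B$ — in either case $B \not< A$. Hence the Ext group vanishes at every swap we need to perform, and the swap is legal. Each swap strictly decreases the $\prec$-inversion count of the sequence $(M_i/M_{i-1})_i$, which is a non-negative integer bounded by $\binom{r}{2}$, so the procedure terminates after finitely many steps in a filtration with the desired property.

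The main point requiring care is the alignment between the "wrong order" condition (at which we wish to swap) and the Ext-vanishing condition (which permits the swap); this alignment is precisely what the refinement hypothesis on $\prec$ delivers. The rest is routine.
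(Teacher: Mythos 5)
Your bubble-sort strategy is reasonable in principle, but the pivotal alignment step is stated backwards, and the argument as written does not go through. For the increasing target $M'_i/M'_{i-1}\preceq M'_{i+1}/M'_i$, an out-of-order adjacent pair means $B\prec A$ (with $A=M_i/M_{i-1}$ the sub and $B=M_{i+1}/M_i$ the quotient of $M_{i+1}/M_{i-1}$), whereas the swap requires the extension $0\to A\to M_{i+1}/M_{i-1}\to B\to 0$ to split, which Lemma~\ref{le:finiteExt1} guarantees only when $B\not<A$. But $B<A$ \emph{implies} $B\prec A$: the relation $B<A$ is entirely compatible with (indeed forces) the misorder, so the misorder gives you no control on $\Ext^1(B,A)$. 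Your sentence claiming that an out-of-order pair must have $A<B$ or be incomparable is the reverse of the truth: $A<B$ gives $A\prec B$, i.e.\ the pair is \emph{in} order, and the two genuinely possible cases for an out-of-order pair are $B<A$ or incomparability, the first of which is exactly the obstructed case. In fact the statement with the inequality as printed is false: if $D<D'$ with $\Ext^1(D,D')\neq 0$ and $M$ is a non-split extension $0\to D'\to M\to D\to 0$, then a filtration of $M$ with quotients $(D,D')$ read from the bottom would exhibit $M$ as an extension of $D'$ by $D$, classified by $\Ext^1(D',D)=0$ since $D'\not<D$, forcing $M\simeq D\oplus D'$, a contradiction.

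The inequality in the lemma must therefore be read the other way (quotients $\prec$-decreasing from bottom to top); this is what the paper's own proof produces (it splits off a $\prec$-maximal subquotient, using $\Ext^1(D,M_{i-1})=0$, places it at the bottom, and inducts on $M/L$), and it is the version needed for Proposition~\ref{pr:idealshw}(i). For that corrected target your method works once the directions are fixed: an out-of-order pair is then $A\prec B$, which, since $\prec$ refines $<$, does force $B\not<A$, hence $\Ext^1(B,A)=0$, the extension splits, and the swap strictly decreases the inversion count. With that repair your bubble sort is a perfectly good alternative to the paper's peel-off induction; both arguments ultimately rest on the same appeal to Lemma~\ref{le:finiteExt1}.
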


\begin{proof}
We prove the result by induction on $r$. Take $D\in I$ maximal for $\prec$
such that $D\simeq M_i/M_{i-1}$ for some $i$, and consider $i$ minimal
with this property. We have $D{\not\prec} M_j/M_{j-1}$ for $j<i$, hence
$\Ext^1(D,M_j/M_{j-1})=0$ for $j<i$.
It follows that
$\Ext^1(D,M_{i-1})=0$, hence $M_i$ has a subobject $L$
isomorphic to $D$ such that $M_i=L\oplus M_{i-1}$.
The object $M/L$ has a filtration with $(M/L)_j=M_j$ for $j<i$ and
$(M/L)_j=M_{j+1}/L$ for $j\ge i$. That filtration has length $r-1$,
and the subquotients are in $I$. By induction, $N=M/L$ has another filtration
$0=N'_0\subset\cdots\subset N'_{r-1}=N$ with
	$N'_i/N'_{i-1}\in I$ for $1\le i<r$ and $N'_{i+1}/N'_i\preceq
	N'_i/N'_{i-1}$
for $1\le i<r-1$. We obtain an appropriate filtration of $M$ by
taking $M'_1=L$ and $M'_i$ the inverse image of $N'_{i-1}$ for $i>1$.
\end{proof}

We define the partial order $\lessdot$ on $\Delta$ as the one generated
by $D\lessdot D'$ if $\Ext^i(D,D')\neq 0$ for some $i\in\{0,1\}$.

The following proposition shows that $\lessdot$ is the coarsest order
on $\Delta$ that makes $(\CC,\Delta)$ into a highest weight category.

\begin{prop}
\label{pr:coarsestorder}
Let $\lhd$ be a partial order on $\Delta$. 
The category $\CC$
equipped with the poset $(\Delta,\lhd)$ is a highest weight category if and
	only if $\lhd$ is finer than $\lessdot$.
\end{prop}

\begin{proof}
	If $(\Delta,\lhd)$ defines a highest weight category structure on
	$\CC$, then it follows from Lemma \ref{le:finiteExt1} that
	$\lhd$ is finer than $\lessdot$.

	Assume now $\lhd$ is finer than $\lessdot$.
Consider $M\in\CC$ and $D,D'\in\Delta$. There is a surjection
$R\twoheadrightarrow D$ with kernel $N\in\CC^{\Delta_{>D}}$ and
such that $\Hom(R,D')$ is a finitely generated projective $k$-module
and $\Ext^1(R,M)=0$.
By Lemma \ref{le:orderfiltration}, there is a filtration
$0=R_0\subset R_1\subset\cdots\subset R_{r-2}\subset R_{r-1}=N\subset
R_r=R$ with subquotients in $\Delta$
and there is an integer $i\in\{1,\ldots,r-1\}$
such that given $j\in\{1,\ldots,r-1\}$, we have
$D\lhd (R_j/R_{j-1})$ if and only if $j>i$.

Consider $j>i$ and $l\in\{1,\ldots,r-1\}$ such that
$\Ext^1(R_j/R_{j-1},R_l/R_{l-1})\neq 0$. We have
$(R_j/R_{j-1})\lhd (R_l/R_{l-1})\neq 0$, hence
$D\lhd (R_l/R_{l-1})$. It follows that $l>i$. We deduce that
$\Ext^1(R/R_i,R_i)=0$, hence $R\simeq R/R_i\oplus R_i$. The $k$-module
$\Hom(R/R_i,D')$ is finitely generated and projective and 
$\Ext^1(R/R_i,M)=0$. So (iv) holds for $(\CC,\Delta,\lhd)$. The
conditions (i)--(iii) are clear. This completes the proof of the
proposition.
\end{proof}

\subsection{Ideals and Serre subcategories}

\begin{prop}
\label{pr:idealshw}
Let $\Gamma$ be an ideal of $\Delta$.

\begin{itemize}
\item[(i)]
Every object of $\CC^\Delta$ has a subobject in $\CC^{\Delta\setminus\Gamma}$
whose quotient is in $\CC^\Gamma$.
\item[(ii)]
An object of $\CC$ is in $\CC[\Gamma]$ if and only if it is a quotient of an object of $\CC^{\Gamma}$.
\item[(iii)]
$\CC[\Gamma]$ is a Serre subcategory of $\CC$. It is a highest weight
category with poset of standard objects $\Gamma$.
\item[(iv)]
The inclusion functor $i_\Gamma:\CC[\Gamma]\hookrightarrow\CC$ has a left
		adjoint ${^\vee i}_\Gamma$
sending an object $M\in\CC$ to its largest quotient
in $\CC[\Gamma]$. If $\CC$ is noetherian, it has a right 
 adjoint $i_\Gamma^\vee$ sending an object $M\in\CC$ to its largest subobject
in $\CC[\Gamma]$.
\item[(v)] We have ${^\vee i}_\Gamma(\CC^\Delta)=\CC^{\Gamma}$ and
	${^\vee i}_\Gamma(\Proj(\CC))\subset\Proj(\CC[\Gamma])$.
\end{itemize}
\end{prop}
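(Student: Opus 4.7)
The plan is to establish the four parts in the order given, with (iv) the main obstacle. Part (i) follows directly from the reordering Lemma \ref{le:orderfiltration}: given $M\in \CC^\Delta$ with filtration involving a finite set $I\subset\Delta$, I fix a total order $\prec$ on $I$ refining $<$ that places every element of $I\setminus\Gamma$ before every element of $I\cap\Gamma$. This is consistent with $<$ since $\Gamma$ being downward closed forbids any element of $\Delta\setminus\Gamma$ from being $\leq$ an element of $\Gamma$; Lemma \ref{le:orderfiltration} then produces a filtration exhibiting the desired subobject in $\CC^{\Delta\setminus\Gamma}$ whose quotient lies in $\CC^\Gamma$.

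The key technical input for (ii) and (iii) is the vanishing $\Hom(N_1,N_2)=0$ whenever $N_1\in \CC^{\Delta\setminus\Gamma}$ and $N_2\in\CC[\Gamma]$, which follows by induction on the filtration length of $N_1$ combined with the trivial base case ($\Hom(D_1,D_2)\neq 0$ forces $D_1\leq D_2$, contradicting $D_1\notin\Gamma\ni D_2$ by downward closure). Coupled with Lemma \ref{le:vanishingExti} (so $\Ext^1(D,M)=0$ for $D\notin\Gamma$ and $M\in\CC[\Gamma]$), this yields (ii): if $M\in\CC[\Gamma]$, I take $N\twoheadrightarrow M$ with $N\in\CC^\Delta$ from Definition \ref{de:hwcat}(iii), apply (i) to get $N'\subset N$ with $N'\in\CC^{\Delta\setminus\Gamma}$ and $N/N'\in\CC^\Gamma$; then $N'\to M$ vanishes by Hom-vanishing, so $M$ is a quotient of $N/N'$. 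For the converse, given $\phi:D\to M$ with $D\notin\Gamma$ and $M$ a quotient of $N\in\CC^\Gamma$, the approximation $R\twoheadrightarrow D$ with $\Ext^1(R,\ker(N\twoheadrightarrow M))=0$ lifts $\phi$ to $R\to N$, which vanishes since $R\in\CC^{\Delta_{\geq D}}\subset\CC^{\Delta\setminus\Gamma}$.

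Part (iii) then follows: closure of $\CC[\Gamma]$ under subs, quotients, and extensions is immediate from left-exactness, and axioms (i)--(iii) of Definition \ref{de:hwcat} for $\CC[\Gamma]$ with standards $\Gamma$ are inherited from $\CC$ via (ii). For the approximation axiom (iv) within $\CC[\Gamma]$, given $R\twoheadrightarrow D$ in $\CC$ with kernel $L\in \CC^{\Delta_{>D}}$, I apply part (i) to $L$ to get $L''\subset L$ with $L''\in \CC^{\Delta\setminus\Gamma}$ and $L/L''\in \CC^{\Gamma_{>D}}$, and replace $R$ by $R/L''$. Hom-vanishing yields $\Hom(L'',D')=\Hom(L'',M)=0$ for $D'\in\Gamma$ and $M\in\CC[\Gamma]$, and the long exact sequence from $0\to L''\to R\to R/L''\to 0$ shows that $R/L''$ inherits both the projectivity of $\Hom(R/L'',D')\cong\Hom(R,D')$ and the vanishing of $\Ext^1(R/L'',M)$.

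For (iv), the left adjoint is constructed explicitly: set $L(M)=M/K$, where $K=\sum_{\phi:N\to M,\,N\in\CC^{\Delta\setminus\Gamma}}\phi(N)$ exists by the sum-of-subobjects hypothesis on $\CC$. For any $\psi:D\to L(M)$ with $D\notin\Gamma$, the approximation $R\twoheadrightarrow D$ with $\Ext^1(R,K)=0$ lifts $\psi$ to a map $R\to M$ whose image lies in $K$ since $R\in\CC^{\Delta_{\geq D}}\subset\CC^{\Delta\setminus\Gamma}$, forcing $\psi=0$; universality is a direct check using Hom-vanishing. The right adjoint $R(M)=\sum_{\phi:N\to M,\,N\in\CC^\Gamma}\phi(N)$ is constructed dually, and showing $R(M)\in\CC[\Gamma]$ is the principal obstacle of the proof: I would write $R(M)$ as the filtered union of its finite subsums $\phi_1(N_1)+\cdots+\phi_r(N_r)$, each a quotient of $N_1\oplus\cdots\oplus N_r\in\CC^\Gamma$ and hence in $\CC[\Gamma]$ by (ii), and then argue that $\Hom(D,-)$ for $D\notin\Gamma$ respects these filtered unions---a compactness-of-standards property that should follow from the approximation axiom, since any $\psi:D\to R(M)$ lifts to a map $R\to M$ whose image, being a quotient of $R\in\CC^{\Delta\setminus\Gamma}$, must vanish when combined with the fact that it is also forced to lie in the $\CC[\Gamma]$-subobject $R(M)$.
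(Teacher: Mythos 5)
Parts (ii), (iii) and the left adjoint in (iv) are correct and follow the paper's route (your description of the left adjoint as $M/K$ with $K$ the sum of all images of maps from $\CC^{\Delta\setminus\Gamma}$ coincides with the paper's $M/f(R')$, and your lifting argument via $\Ext^1(R,K)=0$ is sound). In (i), however, the total order you specify does not exist in general: placing every element of $I\setminus\Gamma$ $\prec$-before every element of $I\cap\Gamma$ is incompatible with refining $<$ as soon as some $D'\in\Gamma$ satisfies $D'<D$ with $D\notin\Gamma$ (take $\Delta=\{D_1<D_2\}$ and $\Gamma=\{D_1\}$). The ideal property only rules out the opposite configuration $D\le D'$, which is the one your justification addresses. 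The admissible refinement is the one placing $\Gamma$ first; one then gets the $\CC^{\Delta\setminus\Gamma}$-part at the bottom because the construction in Lemma~\ref{le:orderfiltration} splits off the $\prec$-\emph{maximal} subquotient as a subobject, so the filtration it produces has the $\prec$-large subquotients at the bottom (the inequality displayed in that lemma's statement is the reverse of what its proof yields).

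The genuine gap is the right adjoint. Your argument that $R(M)\in\CC[\Gamma]$ is circular: you conclude that the image of $R\twoheadrightarrow D\xrightarrow{\psi}R(M)\hookrightarrow M$ vanishes because it is a quotient of $R\in\CC^{\Delta\setminus\Gamma}$ \emph{and} ``lies in the $\CC[\Gamma]$-subobject $R(M)$'' --- but that $R(M)$ lies in $\CC[\Gamma]$ is exactly what you are trying to prove. Without it, being a quotient of $R$ only tells you the image admits no nonzero map \emph{to} an object of $\CC[\Gamma]$; to kill it you would need it to land inside a single $N$ (or finite subsum), i.e.\ precisely the compactness you assert ``should follow from the approximation axiom.'' It does not: Definition~\ref{de:hwcat}(iv) supplies $\Ext^1(R,X)=0$ for one prescribed object $X$ at a time, which is what drives your left-adjoint argument (the lift lands in $K$ by the very definition of $K$), and there is no dual mechanism for subobjects. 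The paper does not attempt such a lifting: it takes $M''$ to be the sum of all subobjects of $M$ lying in $\CC[\Gamma]$ and verifies the adjunction isomorphism $\Hom(N,M'')\xrightarrow{\sim}\Hom(N,M)$ for $N\in\CC[\Gamma]$ directly, using only that the image of any map $N\to M$ is again a subobject of $M$ in $\CC[\Gamma]$, hence contained in $M''$. The remaining point --- that $M''$ itself belongs to $\CC[\Gamma]$ --- is where some genuine finiteness (noetherianity of objects, or compactness of the standard objects, as holds in the paper's applications) has to enter; your proposed argument does not supply it.
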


\begin{proof}
Statement (i) follows immediately from Lemma \ref{le:orderfiltration}.

\smallskip
Let $f:P\twoheadrightarrow M$ be a surjection with $P\in\CC^\Gamma$ and
$M\in\CC$.
Let $D\in\Delta\setminus\Gamma$. There exists a surjection
$g:R\twoheadrightarrow D$ with kernel in $\CC^{\Delta_{>D}}$ such that
$\Ext^1(R,\ker f)=0$.

Consider now $h:D\to M$ and let $h'=h\circ g:R\to M$. The map $h'$ factors
through a map $h'':R\to P$. Since $R\in\CC^{\Delta\setminus\Gamma}$ and
$P\in\CC^\Gamma$, we deduce that $h''=0$, hence $h=0$. So, 
$M\in\CC[\Gamma]$.
We have shown that every quotient of an object of $\CC^\Gamma$ is in
$\CC[\Gamma]$.

\smallskip

Let $M\in\CC[\Gamma]$. Consider a surjection $f:R\twoheadrightarrow M$
with $R\in\CC^\Delta$.
By (i), there is $R'\le R$ such that
$R'\in\CC^{\Delta\setminus \Gamma}$ and $R/R'\in\CC^{\Gamma}$.
By induction on the length of the filtration,
we see that $\Hom(N,M)=0$ for all $N\in\CC^{\Delta\setminus \Gamma}$. In
particular,
$\Hom(R',M)=0$. So, $f$ factors through a
surjection $R/R'\twoheadrightarrow M$, hence $M$
is a quotient of an object of $\CC^\Gamma$. 
This shows (ii).

\smallskip
Note that $\CC[\Gamma]$ is closed under subobjects and extensions,
while the category of quotients of objects of $\CC^\Gamma$ is closed under
taking quotients. It follows that $\CC[\Gamma]$ is a Serre subcategory.

\smallskip
Let $M\in\CC[\Gamma]$ and $D,D'\in\Gamma$. We fix a
surjection $f:R\twoheadrightarrow D$ as in Definition \ref{de:hwcat}(iv).
By (i),
there is $R'\le \ker f$ such that $R'\in\CC^{\Delta\setminus \Gamma}$ and
$\ker f/R'\in\CC^{\Gamma_{>D}}$. The map $f$ factors through a
surjection $R/R'\twoheadrightarrow D$.

We have $\Hom(R',D')=0$, hence
$\Hom(R/R',D')\simeq\Hom(R,D')$ is a finitely generated projective $k$-module.
Since $\Ext^1(R,M)=0$ and $\Hom(R',M)=0$, we deduce that $\Ext^1(R/R',M)=0$.
So, the surjection $R/R'
\twoheadrightarrow D$ satisfies Definition \ref{de:hwcat}(iv) for $\CC[\Gamma]$.
We deduce that $\CC[\Gamma]$ is a highest weight category, hence (iii) holds.

\medskip
Let $M\in\CC$. There exists a surjection $f:R\twoheadrightarrow M$ with
$R\in\CC^\Delta$. As above, there is $R'\le R$ such that 
$R'\in\CC^{\Delta\setminus \Gamma}$ and $R/R'\in\CC^{\Gamma}$.
Let $M'=f(R')$. Note that $M/M'$ is a quotient of $R/R'$, so
$M/M'\in\CC[\Gamma]$. Consider now $N\in\CC[\Gamma]$. Since
$\Hom(R',N)=0$, we have $\Hom(M',N)=0$, hence every map $M\to N$
factors through $M/M'$. We deduce that $M/M'$ is the largest quotient of
$M$ that is in $\CC[\Gamma]$. The functor $M\mapsto M/M'$ is left adjoint
to the inclusion functor. 

Assume now all objects of $\CC$ are noetherian.
Consider now the family $I$ of subobjects of $M$ that are in
$\CC[\Gamma]$. Since $M$ is noetherian, the family of finite sums of
objects of $I$ has a supremum $M''$. Given $N\in\CC[\Gamma]$,
we have an isomorphism $\Hom_{\CC[\Gamma]}(N,M'')\xrightarrow{\sim}\Hom_\CC(N,M)$.
We deduce that the inclusion functor $\CC[\Gamma]\hookrightarrow\CC$ has
a right adjoint, sending an object $M$ to $M''$.
This shows (iv).

\smallskip
Let $M\in\CC^\Delta$. It follows from (i) that $M$ has a subobject 
$M'\in\CC^{\Delta\setminus\Gamma}$ with $M/M'\in\CC^{\Gamma}$. Since
${^\vee i}_\Gamma$ is right exact, we have
an exact sequence ${^\vee i}_\Gamma(M')\to {^\vee i}_\Gamma(M)\to
M/M'\to 0$. On the other hand, $\Hom(M',{^\vee i}_\Gamma(M'))=0$, hence
${^\vee i}_\Gamma(M')=0$. It follows that ${^\vee i}_\Gamma(M)\in
\CC^{\Gamma}$.

The last statement of (v) is a standard property of the left adjoint
of the inclusion of a full abelian subcategory.
\end{proof}

\begin{coro}
\label{co:CCunion}
The category $\CC$ is the union of the full subcategories
$\CC[\Gamma]$, where $\Gamma$ runs over finitely 
generated ideals of $\Delta$.
\end{coro}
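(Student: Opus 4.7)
The plan is to reduce the claim to a direct application of the two results already in hand, namely Definition \ref{de:hwcat}(iii) and Proposition \ref{pr:idealshw}(ii). Given $M \in \CC$, the first step is to invoke Definition \ref{de:hwcat}(iii) to produce a surjection $R \twoheadrightarrow M$ with $R \in \CC^\Delta$. By definition of $\CC^\Delta$, the object $R$ admits a \emph{finite} filtration whose successive quotients are standard objects, say $D_1, \dots, D_n \in \Delta$ (repetitions allowed).

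Next, I would set $\Gamma = \Delta_{\le D_1} \cup \cdots \cup \Delta_{\le D_n}$. This is an ideal of $\Delta$ by construction, and it is finitely generated in the sense defined at the beginning of the section. Each standard subquotient $D_i$ of $R$ lies in $\Gamma$ (since $D_i \le D_i$), so $R \in \CC^\Gamma$. Consequently $M$ is a quotient of an object of $\CC^\Gamma$, and Proposition \ref{pr:idealshw}(ii) then gives $M \in \CC[\Gamma]$.

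There is no real obstacle here: the work has already been done in Proposition \ref{pr:idealshw}(ii), which characterizes the objects of $\CC[\Gamma]$ as precisely the quotients of objects of $\CC^\Gamma$. The only thing to check is that the ideal produced from a single $\Delta$-filtered cover is finitely generated, and this is immediate from the fact that filtrations of objects in $\CC^\Delta$ have finite length by assumption.
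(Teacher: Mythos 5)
Your proof is correct and is exactly the argument the paper intends (the corollary is stated without proof precisely because it follows from Definition \ref{de:hwcat}(iii) together with Proposition \ref{pr:idealshw}(ii) in the way you describe). The construction of the finitely generated ideal $\Gamma = \Delta_{\le D_1}\cup\cdots\cup\Delta_{\le D_n}$ from the finite $\Delta$-filtration of a cover $R\twoheadrightarrow M$ is the standard and expected step.
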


\subsection{Projective objects}
Note first that since every object of $\CC$ is a quotient of an object
of $\CC^\Delta$, it follows that every projective object of $\CC$ is a direct
summand of an object of $\CC^\Delta$, that is, is an object of $i(\CC^\Delta)$.

\smallskip
We start with a projectivity criterion.

\begin{lem}
\label{le:critproj}
Let $N\in\CC^\Delta$ such that $\Ext^1(N,D)=0$ for all $D\in\Delta$.
Then, $N$ is projective.
\end{lem}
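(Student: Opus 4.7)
The plan is to show that $\Ext^1(N,M)=0$ for every $M\in\CC$, after which projectivity will follow from the standard characterization via vanishing of $\Ext^1$. The strategy proceeds in two layers: first I would extend the hypothesis $\Ext^1(N,D)=0$ from standard modules to all $\Delta$-filtered modules, then I would use Lemma~\ref{le:almostcover} to reduce the case of an arbitrary $M$ to that $\Delta$-filtered situation.

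For the first step, if $L\in\CC^\Delta$ has a filtration $0=L_0\subset L_1\subset\cdots\subset L_r=L$ with $L_i/L_{i-1}\in\Delta$, then a direct induction on $r$ using the long exact sequence obtained by applying $\Hom(N,-)$ to $0\to L_{i-1}\to L_i\to L_i/L_{i-1}\to 0$, together with the hypothesis $\Ext^1(N,D)=0$ for each $D\in\Delta$, yields $\Ext^1(N,L)=0$. For the second step, fix $M\in\CC$ and apply Lemma~\ref{le:almostcover} with arguments $N$, any choice of $D'\in\Delta$, and $M$, to obtain a short exact sequence $0\to K\to R\to N\to 0$ with $K\in\CC^\Delta$ and $\Ext^1(R,M)=0$. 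Given $\xi\in\Ext^1(N,M)$ represented by an extension $0\to M\to E\to N\to 0$, the vanishing $\Ext^1(R,M)=0$ allows lifting the surjection $R\twoheadrightarrow N$ to a morphism $R\to E$; restricting this lift to $K$ produces a map $\alpha:K\to M$ which exhibits $E$ as the pushout of $R$ along $\alpha$ over $K$. By the first step, $\Ext^1(N,K)=0$, so the sequence $0\to K\to R\to N\to 0$ splits; consequently its pushout along $\alpha$ splits as well, giving $\xi=0$.

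The main obstacle is psychological: one should resist the natural but wrong-headed attempt to resolve $M$ by $\Delta$-filtered objects, which would require higher $\Ext$-vanishing that is not available from the hypothesis. The key move is instead to resolve $N$ using Lemma~\ref{le:almostcover}, whose stated control on $\Ext^1$ of the source is tailored exactly for this purpose: it converts the question of vanishing of $\Ext^1(N,M)$ for arbitrary $M$ into the single question of vanishing of $\Ext^1(N,K)$ for one specific $\Delta$-filtered module $K$, which is then within reach of the first step.
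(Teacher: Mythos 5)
Your proof is correct and follows essentially the same route as the paper: apply Lemma~\ref{le:almostcover} to $N$ to get $0\to K\to R\to N\to 0$ with $K\in\CC^\Delta$ and $\Ext^1(R,M)=0$, observe that $\Ext^1(N,K)=0$ by induction on a $\Delta$-filtration of $K$, and conclude from the resulting splitting. The only (cosmetic) difference is at the end, where the paper deduces $\Ext^1(N,M)=0$ directly from $N$ being a direct summand of $R$, whereas you kill each class by a pushout argument.
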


\begin{proof}
Let $M\in\CC$. By Lemma \ref{le:almostcover}, there exists
a surjection $f:R\twoheadrightarrow N$ such that $\ker f\in\CC^\Delta$
and $\Ext^1(R,M)=0$. We have $\Ext^1(N,\ker f)=0$, hence $f$ is
a split surjection and $\Ext^1(N,M)=0$. It follows that $N$ is projective.
\end{proof}

\begin{lem}
\label{le:projcoverD}
Let $D$ be an object of $\Delta$ such that $\Delta_{>D}$ is finite.
Then, there is a projective object $P$ of $\CC$ and
a surjective map $P\twoheadrightarrow D$ whose kernel is in
$\CC^{\Delta_{>D}}$.
\end{lem}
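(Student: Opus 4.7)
The plan is to take $P$ to be the object $R$ supplied by Definition \ref{de:hwcat}(iv), applied to the test object $M := \bigoplus_{D' \in \Delta_{>D}} D'$. This is a legitimate choice because the hypothesis that $\Delta_{>D}$ is finite ensures $M \in \CC$. Concretely, Definition \ref{de:hwcat}(iv) yields a surjection $P \twoheadrightarrow D$ whose kernel lies in $\CC^{\Delta_{>D}}$, together with $\Ext^1(P, M) = 0$; additivity of $\Ext^1$ in the second argument over finite direct sums then gives $\Ext^1(P, D') = 0$ for every $D' \in \Delta_{>D}$.

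The remaining content is to verify that in fact $\Ext^1(P, D'') = 0$ for every $D'' \in \Delta$, which by Lemma \ref{le:critproj} forces $P$ to be projective. For $D'' \in \Delta_{>D}$ this has just been established. For $D'' \not\in \Delta_{>D}$, one argues by induction on the length of a $\Delta$-filtration of $P$, using the long exact $\Ext$ sequences attached to the steps of this filtration, to reduce to showing $\Ext^1(D''', D'') = 0$ for each subquotient $D''' \in \Delta_{\geq D}$ appearing in the filtration. Lemma \ref{le:vanishingExti}, applied with ideal $\Gamma = \Delta_{\leq D''}$, ensures that nonvanishing of $\Ext^1(D''', D'')$ would force the existence of some $D_1 \in \Gamma$ with $D''' < D_1 \leq D''$; combined with $D''' \geq D$, this would give $D'' > D$, contradicting $D'' \not\in \Delta_{>D}$.

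Once $P$ is known to be projective, the surjection $P \twoheadrightarrow D$ and the fact that its kernel lies in $\CC^{\Delta_{>D}}$ are already built into the choice of $P$ from Definition \ref{de:hwcat}(iv), which completes the proof. The main conceptual observation is that the finiteness hypothesis on $\Delta_{>D}$ is exactly what allows all relevant $\Ext$-vanishing constraints to be packaged into a single application of (iv), so that no induction on $|\Delta_{>D}|$ or iterative construction (of the kind often required to build projective covers in such categories) is needed here. The only mildly technical step is the filtration-based reduction in the second paragraph, but this is routine given Lemma \ref{le:vanishingExti}, so I do not expect a serious obstacle.
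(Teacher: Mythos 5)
Your proof is correct, but it takes a genuinely different route from the paper's. You obtain $P$ in one stroke by feeding the single test object $M=\bigoplus_{D'\in\Delta_{>D}}D'$ into Definition \ref{de:hwcat}(iv) — legitimate precisely because $\Delta_{>D}$ is finite — whereas the paper constructs $P$ by induction along an increasing enumeration $\phi:\Delta_{>D}\xrightarrow{\sim}\{1,\dots,r\}$: starting from $P_0=D$, each step forms a finite universal extension $0\to\phi^{-1}(i+1)^{n}\to P_{i+1}\to P_i\to 0$ killing $\Ext^1(-,\phi^{-1}(i+1))$, with Lemma \ref{le:finiteExt1} guaranteeing that a finite multiple of $\phi^{-1}(i+1)$ suffices and the increasing order guaranteeing that later steps do not resurrect the $\Ext^1$-groups already killed. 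Both arguments then finish the same way: $\Ext^1(P,D'')=0$ for $D''\notin\Delta_{>D}$ because every $\Delta$-subquotient of $P$ is $\ge D$ (you invoke Lemma \ref{le:vanishingExti}, the paper invokes Lemma \ref{le:finiteExt1}; either works), and projectivity follows from Lemma \ref{le:critproj}, which applies since $P\in\CC^\Delta$. Your version is shorter and isolates exactly where the finiteness of $\Delta_{>D}$ enters (the formation of $M$); the paper's version is constructive and exhibits the kernel explicitly as an iterated extension of finite multiples of the standards, which is closer in spirit to how axiom (iv) is actually verified in applications (cf.\ the modules $\Delta_n(E)$ of \S\ref{se:Appendixtriangular}), but none of that extra structure is required by the statement.
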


\begin{proof}
Fix $r\ge 0$ and an increasing bijection $\phi:\Delta_{>D}\xrightarrow{\sim}
\{1,2,\ldots,r\}$.
Let $P_0=D$.
We construct by induction on $i\in \{1,2,\ldots,r\}$
a family of objects $P_1,\ldots,P_r$ in $\CC^{\Delta_{>D}}$ and
surjections $f_i:P_i\twoheadrightarrow P_{i-1}$ 
such that $\Ext^1(P_i,\phi^{-1}(i))=0$ and $\ker f_i$ is a finite multiple of
$\phi^{-1}(i)$.

Assume $P_i$ has been constructed.
Since $\Ext^1(P_i,\phi^{-1}(i+1))$ is
a finitely generated $k$-module (Lemma \ref{le:finiteExt1}),
there exists an object
$P_{i+1}$ of $\CC$ and a surjection $f_{i+1}:P_{i+1}
\twoheadrightarrow P_i$ 
such that the canonical map 
$\Ext^1(P_i,\phi^{-1}(i+1))\to \Ext^1(P_{i+1},\phi^{-1}(i+1))$ vanishes
and $\ker f_{i+1}$ is a finite multiple of $\phi^{-1}(i+1)$. Since
$\Ext^1(\phi^{-1}(i+1),\phi^{-1}(i+1))=0$, it follows that
$\Ext^1(P_{i+1},\phi^{-1}(i+1))=0$.

We put $P=P_r$ and $g_i=f_i\circ\cdots\circ f_r:P\twoheadrightarrow
P_{i-1}$. Note that $\ker g_i\in\CC^{\phi^{-1}(\{i,\ldots,r\})}$.

Given $D'\in\Delta$ with $D\nless D'$, we have also $\phi^{-1}(i)\nless D'$
for all $i$, hence $\Ext^1(P,D')=0$.

Let $i\in \{1,2,\ldots,r\}$. We have $\Ext^1(\ker g_{i+1},\phi^{-1}(i))=0$ and
$\Ext^1(P_i,\phi^{-1}(i))=0$, hence $\Ext^1(P,\phi^{-1}(i))=0$.
So, $\Ext^1(P,D')=0$ for all $D'\in\Delta$. We deduce from Lemma
\ref{le:critproj} that $P$ is projective.
\end{proof}

Let us now provide a criterion for the existence of enough projective objects.

\begin{prop}
\label{pr:enoughproj}
If $\Delta_{>D}$ is finite for all $D\in\Delta$,
then $\CC$ has enough projective objects. More precisely, fix
$P_D$ a projective object with quotient $D$ for every $D\in\Delta$. Then,
$\{P_D\}_{D\in\Delta}$ is a generating family of projective objects.
Furthermore, every object of $\CC^\Delta$ has a finite projective resolution.
\end{prop}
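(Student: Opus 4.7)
The plan is to establish the three claims in sequence, with the existence of $P_D$ being the easy starting point. Lemma \ref{le:projcoverD} applies since $\Delta_{>D}$ is finite by hypothesis, producing for each $D\in\Delta$ a projective object $P_D$ with a surjection $\pi_D:P_D\twoheadrightarrow D$ whose kernel lies in $\CC^{\Delta_{>D}}$.

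Next I would prove that $\{P_D\}_{D\in\Delta}$ is a generating family of projective objects. By Definition \ref{de:hwcat}(iii), every object of $\CC$ is a quotient of an object in $\CC^\Delta$, so it suffices to show that every $N\in\CC^\Delta$ is a quotient of a finite direct sum of $P_D$'s. Fix a filtration $0=N_0\subset N_1\subset\cdots\subset N_r=N$ with $N_i/N_{i-1}=D_i\in\Delta$. Using the projectivity of $P_{D_i}$, I would lift the composition $P_{D_i}\xrightarrow{\pi_{D_i}}D_i=N_i/N_{i-1}\hookrightarrow N/N_{i-1}$ through $N\twoheadrightarrow N/N_{i-1}$ to obtain a map $f_i:P_{D_i}\to N$ whose image lies in $N_i$ and which surjects onto $N_i/N_{i-1}$. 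A simple induction on $j$, using $f_j(P_{D_j})+N_{j-1}=N_j$, then shows $\sum_{i\le j}f_i(P_{D_i})=N_j$, so the induced map $f:\bigoplus_i P_{D_i}\twoheadrightarrow N$ is surjective.

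For the finite projective resolution claim, I would introduce the height $h(D)=\sup\{k\ge 0 \mid \exists\; D=D^0<D^1<\cdots<D^k\text{ in }\Delta\}$, which is finite because $\Delta_{>D}$ is finite and $\Delta$ is locally finite, and extend it by $h(N)=\max_i h(D_i)$ for $N\in\CC^\Delta$ with composition factors $D_i$ along a fixed filtration. I would then induct on $h(N)$. The base case $h(N)=0$ forces every $D_i$ to be maximal, so $P_{D_i}=D_i$ by Lemma \ref{le:projcoverD}; using Lemma \ref{le:finiteExt1} together with the long exact sequence of $\Ext$ along the filtration of $N$, one obtains $\Ext^1(N,D')=0$ for all $D'\in\Delta$, whence $N$ is projective by Lemma \ref{le:critproj}. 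For the inductive step, take $f:P=\bigoplus_i P_{D_i}\twoheadrightarrow N$ as above and analyze $K=\ker f$ via the snake lemma: setting $G_j=\bigoplus_{i\le j}P_{D_i}$ and $K_j=\ker(G_j\twoheadrightarrow N_j)$, the short exact rows $0\to G_{j-1}\to G_j\to P_{D_j}\to 0$ and $0\to N_{j-1}\to N_j\to D_j\to 0$ with surjective vertical maps yield $0\to K_{j-1}\to K_j\to \ker\pi_{D_j}\to 0$. Since $\ker\pi_{D_j}\in\CC^{\Delta_{>D_j}}$, iterating places $K\in\CC^\Delta$ with every composition factor strictly above some $D_j$, so $h(K)\le h(N)-1$. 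By induction $K$ admits a finite projective resolution, and splicing with $0\to K\to P\to N\to 0$ gives one for $N$ of length $\le h(N)$.

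The delicate point in this plan is the snake lemma step: one needs $G_j\twoheadrightarrow N_j$ to be surjective at every stage in order to obtain short exact sequences and hence to control the composition factors of $K$. This is precisely what the identity $f(G_j)=N_j$ from the second paragraph provides, which itself relies on the inductive order in which the lifts $f_i$ are chosen via projectivity of the $P_{D_i}$. Without this compatibility, $K$ would a priori only belong to $i(\CC^\Delta)$ (by Lemma \ref{le:kernelDeltafiltered}) with no usable bound on the heights of its composition factors, and the induction on $h$ would collapse.
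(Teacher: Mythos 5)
Your proof is correct. For the first two assertions (existence of the $P_D$ and the fact that they generate) your argument is the same as the paper's: invoke Lemma \ref{le:projcoverD}, lift each $P_{D_i}\twoheadrightarrow N_i/N_{i-1}$ to a map into $N_i$ using projectivity, and observe that the sum of the lifts is surjective by induction along the filtration.

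Where you genuinely diverge is the finite projective resolution. The paper disposes of it in one line by citing Lemma \ref{le:vanishingExti}: the $n$-th syzygy of $M\in\CC^\Delta$ lies in $i(\CC^\Delta)$ by Lemma \ref{le:kernelDeltafiltered}, and since chains above any fixed $D$ have bounded length, $\Ext^{n+1}(M,D')\cong\Ext^1(\Omega^nM,D')$ vanishes for all $D'\in\Delta$ once $n$ exceeds that bound, so $\Omega^nM$ is projective by Lemma \ref{le:critproj}. You instead control the syzygy explicitly: by choosing the lifts compatibly with the filtration, the snake lemma identifies $\ker f$ as an iterated extension of the $\ker\pi_{D_j}\in\CC^{\Delta_{>D_j}}$, so the syzygy lands in $\CC^\Delta$ (not merely $i(\CC^\Delta)$) with strictly smaller height, and you induct. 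Both arguments are sound. The paper's is shorter but leans on the Ext-vanishing machinery of Lemma \ref{le:vanishingExti}; yours is more constructive, avoids that lemma entirely, and yields the explicit bound $\mathrm{pd}(N)\le h(N)$ on the length of the resolution. Your closing remark correctly identifies why the compatibility of the lifts is the load-bearing point of your variant: without $f(G_j)=N_j$ at each stage one only gets $\ker f\in i(\CC^\Delta)$ and the height induction does not get off the ground.

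One cosmetic point: in your base case the appeal to Lemma \ref{le:finiteExt1} is unnecessary — once each $D_i$ is maximal, Lemma \ref{le:projcoverD} gives $P_{D_i}=D_i$ outright, so $N$ is an iterated extension of projectives and hence projective.
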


\begin{proof}
Given $D\in\Delta$, Lemma \ref{le:projcoverD} shows there is a projective
object $P_D$ and a surjection $P_D\twoheadrightarrow D$.
Let $M\in\CC^\Delta$ with a filtration
$0=M_0\subset M_1\subset\cdots\subset M_r=M$ such that
$M_i/M_{i-1}\in\Delta$ for $1\le i\le r$.
We have a surjection $P_{M_i/M_{i-1}}\twoheadrightarrow M_i/M_{i-1}$.
It lifts to a map $f_i:P_{M_i/M_{i-1}}\to M_i$ and the sum
$\sum f_i:\bigoplus P_{M_i/M_{i-1}}\to M$ is surjective. So,
every object of $\CC^\Delta$, hence every object of $\CC$,
	is a quotient of a projective object.
The last statement follows from Lemma \ref{le:vanishingExti}.
\end{proof}

\begin{prop}
\label{le:extifaithful}
Let $\Gamma$ be an ideal of $\Delta$. Given $M,N\in\CC[\Gamma]$,
we have $\Ext^i_{\CC[\Gamma]}(M,N)=\Ext^i_{\CC}(M,N)$ for all $i\ge 0$.
\end{prop}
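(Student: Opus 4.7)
The case $i=0$ is immediate since $\CC[\Gamma]$ is a full subcategory of $\CC$. For $i\ge 1$, I first reduce to the case where $\Gamma$ is finitely generated. By Corollary~\ref{co:CCunion}, any pair $M,N\in\CC[\Gamma]$ lies in some $\CC[\Gamma_0]$ with $\Gamma_0\subseteq\Gamma$ a finitely generated ideal of $\Delta$. Since $\Gamma_0$ is also an ideal of $\Gamma$, and $\CC[\Gamma]$ is itself a highest weight category by Proposition~\ref{pr:idealshw}(iii), the claim for $(M,N,\Gamma)$ follows from two applications of the claim for finitely generated ideals, one comparing $\CC[\Gamma_0]$ to $\CC$ and one comparing $\CC[\Gamma_0]$ to $\CC[\Gamma]$. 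Thus I may assume $\Gamma$ is finitely generated.

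Local finiteness of $\Delta$ then forces $\Gamma_{>D}$ to be finite for every $D\in\Gamma$, so Proposition~\ref{pr:enoughproj} applied inside $\CC[\Gamma]$ produces enough projectives. I fix a projective resolution $\cdots\to P_1\to P_0\to M\to 0$ of $M$ in $\CC[\Gamma]$; it remains exact in $\CC$ since the Serre inclusion $\CC[\Gamma]\hookrightarrow\CC$ is exact, and the Hom groups $\Hom_\CC(P_j,N)=\Hom_{\CC[\Gamma]}(P_j,N)$ agree. The proof thus reduces to a single key lemma: \emph{if $P$ is projective in $\CC[\Gamma]$ and $N\in\CC[\Gamma]$, then $\Ext^i_\CC(P,N)=0$ for all $i\ge 1$.} Granted this, $P_\bullet$ is $\Hom_\CC(-,N)$-acyclic in positive degrees, so it computes $\Ext^\bullet_\CC(M,N)$, and comparison with the analogous computation in $\CC[\Gamma]$ via the same resolution yields the desired isomorphism.

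To establish the key lemma I argue by induction on $i$. The case $i=1$ is easy: any extension $0\to N\to X\to P\to 0$ in $\CC$ satisfies $X\in\CC[\Gamma]$ by the Serre property, and projectivity of $P$ in $\CC[\Gamma]$ splits it. For $i\ge 2$, given $\xi\in\Ext^i_\CC(P,N)$ represented by a Yoneda extension $0\to N\to X_{i-1}\to\cdots\to X_0\to P\to 0$, the plan is to exploit the reflection functor $\pi:\CC\to\CC[\Gamma]$ (the left adjoint of the inclusion, from Proposition~\ref{pr:idealshw}(iv)): the surjection $X_0\twoheadrightarrow P$ factors through $\tilde X_0:=\pi(X_0)\in\CC[\Gamma]$, producing a $1$-extension $0\to \tilde K_1\to\tilde X_0\to P\to 0$ in $\CC[\Gamma]$ that splits by projectivity of $P$. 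By Yoneda functoriality this rewrites $\xi$ as a product of a class in $\Ext^1_\CC(P,L)$ with a class in $\Ext^{i-1}_\CC(L,N)$, where $K_1:=\ker(X_0\to P)$ and $L:=\ker(K_1\to\tilde K_1)$.

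The main obstacle is that $\pi$ is only right exact, being a left adjoint, so the kernel $L$ need not lie in $\CC[\Gamma]$, which blocks a direct invocation of the induction hypothesis. To surmount this, the cleanest route I envisage is to package the adjunction $(\pi,\iota)$ into a Grothendieck spectral sequence $\Ext^p_{\CC[\Gamma]}(L^q\pi(P),N)\Rightarrow\Ext^{p+q}_\CC(P,N)$ and prove that $L^q\pi(P)=0$ for $q\ge 1$ whenever $P$ is projective in $\CC[\Gamma]$. This derived reflection vanishing can in turn be reduced, via the approximation surjections of Definition~\ref{de:hwcat}(iv) and the explicit construction of $\pi$ given in the proof of Proposition~\ref{pr:idealshw}(iv), to a concrete poset-combinatorial vanishing statement about standard objects and objects of $\CC^{\Delta\setminus\Gamma}$ that is tractable using the $\Delta$-filtration machinery already developed.
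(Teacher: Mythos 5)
Your reduction to the case where $\Gamma$ is finitely generated, and the further reduction (via projective resolutions in $\CC[\Gamma]$, which exist by Proposition~\ref{pr:enoughproj}) to the key lemma ``$\Ext^i_\CC(P,N)=0$ for $i\ge 1$ when $P\in\Proj(\CC[\Gamma])$ and $N\in\CC[\Gamma]$'', are both correct and coincide with the paper's strategy. The gap is that you do not prove the key lemma. Your proposed route through the left adjoint $\pi$ and the spectral sequence $\Ext^p_{\CC[\Gamma]}(L_q\pi(P),N)\Rightarrow\Ext^{p+q}_\CC(P,N)$ has two problems. First, to define $L_q\pi$ and to get this spectral sequence you need projective resolutions in the ambient category $\CC$; but your reduction only made $\Gamma$ finitely generated, not $\Delta$, and $\CC$ itself has enough projectives only under the hypothesis of Proposition~\ref{pr:enoughproj} (each $\Delta_{>D}$ finite), which is not part of Definition~\ref{de:hwcat}. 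The axioms only provide the weaker approximations $R\twoheadrightarrow D$ with $\Ext^1(R,M)=0$ for a \emph{prescribed} $M$, and any correct argument at this level of generality has to work with those. Second, the vanishing $L_q\pi(P)=0$ for $q\ge 1$ is essentially equivalent in strength to the full faithfulness you are trying to prove (it is the statement that $D^b(\CC[\Gamma])\to D^b(\CC)$ is fully faithful on perfect objects), and reducing it to ``a concrete poset-combinatorial vanishing statement that is tractable'' is an assertion, not an argument; as you yourself note, your more elementary Yoneda-splicing attempt breaks down precisely because $\pi$ is only right exact.

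For comparison, the paper proves the key lemma directly and without ever resolving in $\CC$: if $\Ext^i_\CC(M,N)\neq 0$ for some $i>1$, write a nonzero class as $\xi\circ\zeta$ with $\zeta\in\Ext^1_\CC(M,N')$ and $\xi\in\Ext^{i-1}_\CC(N',N)$; choose by axiom (iv) a surjection $R\twoheadrightarrow M$ with $R\in\CC^\Delta$ and $\Ext^1(R,N')=0$; use Proposition~\ref{pr:idealshw}(i) to find $R'\le R$ with $R'\in\CC^{\Delta\setminus\Gamma}$ and $R/R'\in\CC^\Gamma$, so that projectivity of $M$ in $\CC[\Gamma]$ splits $R/R'\twoheadrightarrow M$ and forces $\zeta$ to factor through $R'$; then Lemma~\ref{le:vanishingExti} gives $\Ext^{i-1}_\CC(R',N)=0$ because $R'$ is filtered by standards outside $\Gamma$ while $N\in\CC[\Gamma]$, a contradiction. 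That use of Lemma~\ref{le:vanishingExti} is the ingredient your proposal is missing; with it, no derived functors of $\pi$ and no projectives in $\CC$ are needed.
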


\begin{proof}
Note that the statement of the proposition is clear when $i\le 1$ since
$\CC[\Gamma]$ is a Serre subcategory of $\CC$ (Proposition \ref{pr:idealshw}).

\noindent
$\star\ $Assume first $\Gamma$ is a finitely generated ideal.

$\bullet\ $Assume further that $M\in\CC^\Delta$ is a projective object of
$\CC[\Gamma]$.
Assume $\Ext^i_\CC(M,N)\not=0$ for some $i>1$. There is $N'\in\CC$ and
$\zeta\in\Ext^1_\CC(M,N')$ and $\xi\in\Ext^{i-1}_\CC(N',N)$ with
$\xi\circ\zeta\not=0$.
By Lemma \ref{le:almostcover}, there exists
a surjection $f:P\twoheadrightarrow M$ with $P\in\CC^\Delta$ such that
$\Ext^1(P,N')=0$. By
Proposition \ref{pr:idealshw}(i), there exists $P'\le P$ such that
$P'\in\CC^{\Delta\setminus\Gamma}$ and $P/P'\in\CC^\Gamma$.
Since $\Hom(P',M)=0$, we deduce that $f$ factors through a surjection
$P/P'\twoheadrightarrow M$. As $M$ is projective in $\CC[\Gamma]$, that
last surjection splits, hence there is $\tilde{P}\le P$ such that $f$
restricts to a surjection $g:\tilde{P}\twoheadrightarrow M$ with kernel
$P'$. Since the composition
$\zeta\circ f$ vanishes, we deduce that $\zeta\circ g$ vanishes, hence
$\zeta$ factors through a map $h:P'\to N'$ and $\xi\circ h\not=0$.
By Lemma \ref{le:vanishingExti}, we have $\Ext^{i-1}(P',N)=0$, hence a
contradiction.
So, $\Ext^i_\CC(M,N)=0$ for all $i>0$.

\smallskip
$\bullet\ $Consider now an arbitrary $M\in\CC[\Gamma]$. Since $\Delta$
	is locally finite, the sets $\Delta_{>D}\cap\Gamma$ are finite
	for all $D\in\Gamma$.
By Proposition \ref{pr:enoughproj}, there exist $P\in\Proj(\CC[\Gamma])$ and
a surjection $f:P\twoheadrightarrow M$. Given $i>1$, there are
isomorphisms $\Ext^{i-1}_\CC(\ker f,N)\xrightarrow{\sim}\Ext^i_\CC(M,N)$ (by
the discussion above) and
$\Ext^{i-1}_{\CC[\Gamma]}(\ker f,N)\xrightarrow{\sim}
\Ext^i_{\CC[\Gamma]}(M,N)$. By
induction on $i$, we have $\Ext^{i-1}_{\CC[\Gamma]}(\ker f,N)=\Ext^{i-1}_\CC
(\ker f,N)$. It follows that $\Ext^i_{\CC[\Gamma]}(M,N)=\Ext^i_{\CC}(M,N)$.

\smallskip
\noindent
$\star\ $Consider finally the case of an arbitrary ideal $\Gamma$. There exists
a finitely generated ideal $\Gamma'$ contained in $\Gamma$ and such that
$M,N\in\CC[\Gamma']$. We have $\Ext^i_{\CC[\Gamma']}(M,N)=
\Ext^i_{\CC[\Gamma]}(M,N)$ by what we have proven already in the case
	of $\CC[\Gamma]$ and $\Gamma'$. Since
$\Ext^i_{\CC[\Gamma']}(M,N)=\Ext^i_{\CC}(M,N)$,
the proposition follows.
\end{proof}

\begin{lem}
\label{le:hwfinitegen}
Let $M,N\in\CC$.
\begin{itemize}
\item[(i)] If $M\in\Proj(\CC)$ and $N\in\CC^\Delta$, then the $k$-module
$\Hom(M,N)$ is projective.
\item[(ii)] Given $i\ge 0$, the $k$-module $\Ext^i(M,N)$ is finitely generated.
\item[(iii)] If $M\in\CC^\Delta$, then $\Ext^i(M,N)=0$ for $i\gg 0$.
\item[(iv)] Let $\Gamma$ be an ideal of $\Delta$ such that $M\in
	\CC[\Gamma]$. If $\Ext^i(D,M)\neq 0$ for
some $D\in\Delta$ and $i\ge 0$, then $D\in\Gamma$.
\end{itemize}
\end{lem}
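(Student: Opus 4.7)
My plan is to prove the four parts in the order (iv), (i), (iii), (ii), since each uses the preceding in an essential way.

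Part (iv) is essentially immediate. By Proposition \ref{pr:idealshw}(ii), the hypothesis that $M$ is a quotient of an object of $\CC^\Gamma$ means $M\in\CC[\Gamma]$, and Lemma \ref{le:vanishingExti} applied to the given $D$ and $i$ then produces a chain $D=D_0<\cdots<D_i$ with $D_0\in\Gamma$, whence $D\in\Gamma$.

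For (i), the crucial case is to show $\Hom(P,D)$ is a finitely generated projective $k$-module when $P\in\Proj(\CC)$ and $D\in\Delta$. My plan is to first use Definition \ref{de:hwcat}(iii) to choose a surjection $R_1\twoheadrightarrow P$ with $R_1\in\CC^\Delta$, and then apply Lemma \ref{le:almostcover} (with $N=R_1$, $D'=D$, $M=0$) to obtain a further surjection $R_0\twoheadrightarrow R_1$, where $R_0\in\CC^\Delta$ and $\Hom(R_0,D)$ is a finitely generated projective $k$-module. Since $P$ is projective, the composite $R_0\twoheadrightarrow P$ splits, so $P$ is a direct summand of $R_0$, and therefore $\Hom(P,D)$ is a direct summand of the finitely generated projective $\Hom(R_0,D)$. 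For general $N\in\CC^\Delta$, I would filter $N$ by standard objects; exactness of $\Hom(P,-)$ turns the filtration into short exact sequences $0\to\Hom(P,N_{i-1})\to\Hom(P,N_i)\to\Hom(P,N_i/N_{i-1})\to 0$, each of which splits because the right-hand term is projective, so $\Hom(P,N)$ is a successive direct sum of projectives, hence projective.

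Parts (ii) and (iii) both reduce to computations in $\CC[\Gamma]$ for a finitely generated ideal $\Gamma$ containing both $M$ and $N$, whose existence is Corollary \ref{co:CCunion}. By Proposition \ref{le:extifaithful}, $\Ext^i_\CC(M,N)=\Ext^i_{\CC[\Gamma]}(M,N)$. Local finiteness of $\Delta$ together with finite generation of $\Gamma$ ensure $\Gamma_{>D}$ is finite for every $D\in\Gamma$, so Proposition \ref{pr:enoughproj} supplies enough projectives in $\CC[\Gamma]$ and, moreover, a finite projective resolution of any object of $\CC^\Gamma$. Part (iii) follows at once, since $M\in\CC^\Delta\cap\CC[\Gamma]=\CC^\Gamma$ admits such a finite projective resolution, forcing $\Ext^i(M,N)=0$ for $i$ above its length. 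For (ii), I would first use Lemma \ref{le:finiteExt1} and its proof to obtain finite generation of $\Hom(D,D')$ and $\Ext^1(D,D')$ for $D,D'\in\Delta$, and extend this by iterated filtrations to $R,R'\in\CC^\Delta$. A surjection $R_N\twoheadrightarrow N$ with kernel in $i(\CC^\Delta)$ (Lemma \ref{le:kernelDeltafiltered}) together with the associated long exact sequence for $\Hom(D,-)$ would then yield finite generation of $\Hom(D,N)$ for $D\in\Delta$ and $N\in\CC$; a further surjection $R_M\twoheadrightarrow M$ gives finite generation of $\Hom(M,N)$. For $i\ge 1$, each term $\Hom(P_j,N)$ in the computation of $\Ext^i_{\CC[\Gamma]}(M,N)$ via a projective resolution is a direct summand of $\Hom(R,N)$ for some $R\in\CC^\Gamma\subseteq\CC^\Delta$, and is therefore finitely generated.

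The main obstacle is part (i). A naive induction on $D$ is circular: to deduce $\Hom(P,D)$ is projective from the axiomatic approximation $R\twoheadrightarrow D$ with kernel in $\CC^{\Delta_{>D}}$ by applying the exact functor $\Hom(P,-)$, one would need $\Hom(P,R)$ to be projective, but the filtration of $R$ has $D$ itself as its top quotient, reducing back to the statement we are trying to prove. The trick is to swap perspectives and build the approximation around $P$ rather than around $D$, using the freedom in Lemma \ref{le:almostcover} to cover an object of $\CC^\Delta$ that surjects onto $P$ by one whose $\Hom$ into $D$ is already projective; projectivity of $P$ then splits the cover and extracts $\Hom(P,D)$ as a summand of a projective module.
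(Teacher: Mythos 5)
Your proposal is correct and, for parts (i)--(iii), follows essentially the same route as the paper: (i) is proved by covering a $\CC^\Delta$-approximation of $P$ via Lemma~\ref{le:almostcover} so that projectivity of $P$ splits the composite surjection and exhibits $\Hom(P,D)$ as a direct summand of a finitely generated projective, then inducting on a filtration of $N$; (ii) and (iii) are obtained after reducing to a finitely generated ideal via Proposition~\ref{le:extifaithful} and using the projectives supplied by Proposition~\ref{pr:enoughproj} (the paper does the dimension shift by hand and re-derives the finiteness of projective dimension for (iii), where you simply cite the last statement of Proposition~\ref{pr:enoughproj}; these are the same argument). Your treatment of (iv) is genuinely different and cleaner: you observe that the hypothesis is exactly $M\in\CC[\Gamma]$ by Proposition~\ref{pr:idealshw}(ii) and then quote Lemma~\ref{le:vanishingExti}, whereas the paper argues by contradiction, enlarging $\Gamma$ to $\Gamma'=\Delta_{\le D}\cup\Gamma$ so that $D$ becomes projective in $\CC[\Gamma']$ and the $\Ext^{>0}$ vanish, then ruling out $i=0$ separately. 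Your shortcut buys a two-line proof at no cost, since Lemma~\ref{le:vanishingExti} is already available. One small slip: in your plan for (ii) you invoke Lemma~\ref{le:kernelDeltafiltered} to produce a surjection $R_N\twoheadrightarrow N$ with kernel in $i(\CC^\Delta)$ for an arbitrary $N\in\CC$; that lemma only applies when $N$ itself lies in $i(\CC^\Delta)$, and for general $N$ no such cover need exist. This does not damage the argument, because your final step -- computing $\Ext^i_{\CC[\Gamma]}(M,N)$ from a projective resolution whose terms are summands of objects of $\CC^\Gamma$, and using that $k$ is noetherian so subquotients of finitely generated modules are finitely generated -- already handles $i=0$ as well, making the intermediate step dispensable.
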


\begin{proof}
Assume $M\in\Proj(\CC)$ and $N\in\Delta$. By Lemma \ref{le:almostcover},
there exists a surjection $R\twoheadrightarrow M$ such that
$\Hom(R,N)$ is a finitely generated projective $k$-module. Since
$\Hom(M,N)$ is a direct summand of $\Hom(R,N)$, we deduce that it is
a finitely generated projective $k$-module as well.

When $M\in\Proj(\CC)$ and $N\in\CC^\Delta$, it follows by induction on
the length of a filtration of $N$ that $\Hom(M,N)$ is a finitely
generated projective $k$-module. This shows (i).

When $M\in\Proj(\CC)$ and $N\in\CC$, there exists $N'\in\CC^\Delta$
such that $N$ is a quotient of $N'$. We deduce that $\Hom(M,N)$ is
a finitely generated $k$-module.

\smallskip
Let us now prove (ii) and (iii).
Thanks to Proposition \ref{le:extifaithful}, we can assume that
$\Delta$ is finitely generated as an ideal, hence 
$\CC$ has enough projectives by Proposition \ref{pr:enoughproj}. 

Consider a surjection $f:P\twoheadrightarrow M$ with $P$ projective.
Since $\Hom(P,N)$ is a finitely generated $k$-module, so is
	$\Hom(M,N)$. We deduce (ii) by induction on $i>0$.
We have a surjection $\Ext^{i-1}(\ker f,N)\twoheadrightarrow\Ext^i(M,N)$.
By induction, $\Ext^{i-1}(\ker f,N)$ is finitely generated, hence so
is $\Ext^i(M,N)$. This shows (ii).

It is enough to prove (iii) for $M\in\Delta$. We proceed by induction: we
assume that given $D\in\Delta_{>M}$, we have
	$\Ext^i(D,N)=0$ for $i\gg 0$. We can assume that 
	$\ker f\in\CC^{\Delta_{>M}}$,
hence $\Ext^i(\ker f,N)=0$ for $i\gg 0$. So, 
$\Ext^i(M,N)=0$ for $i\gg 0$. This shows (iii).

\smallskip
Let us show (iv). We can assume that $\Gamma$ is finitely generated.
Assume $D{\not\in}\Gamma$.
Let $\Gamma'=\Delta_{\le D}\cup\Gamma$. Since $D$ is projective in
$\CC[\Gamma']$, we have $\Ext^i_{\CC[\Gamma']}(D,M)=
\Ext^i_\CC(D,M)=0$ if $i>0$ (Proposition \ref{le:extifaithful}). So we have
$i=0$. There is $M'\in\CC^{\Gamma}$ and a surjection $M'\twoheadrightarrow M$.
Since $\Hom(D,M)\neq 0$, it follows that $\Hom(D,M')\neq 0$, a
contradiction. So (iv) holds.
\end{proof}

\begin{prop}
\label{pr:qh}
Assume $\Delta$ is finite. Then,
$(\CC,\Delta)$ is a highest category over $k$ as in
\cite[Definition 4.11]{rouquier schur}. There
is a split quasi-hereditary
$k$-algebra $A$ \cite[Definition 3.2]{CPS2} and an equivalence $A\mmod\simeq\CC$.
\end{prop}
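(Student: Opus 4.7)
The plan is to produce a projective generator $P$ of $\CC$, set $A=\End_\CC(P)^\opp$, and show that $\Hom_\CC(P,-)$ gives the desired equivalence, the quasi-hereditary structure being transported from the highest-weight structure on $\CC$.

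First, since $\Delta$ is finite, for every $D\in\Delta$ the ideal $\Delta_{>D}$ is finite, so Proposition~\ref{pr:enoughproj} applies: for each $D\in\Delta$, Lemma~\ref{le:projcoverD} supplies a projective object $P_D\in\CC$ together with a surjection $P_D\twoheadrightarrow D$ whose kernel lies in $\CC^{\Delta_{>D}}$. Set $P=\bigoplus_{D\in\Delta}P_D$. By Proposition~\ref{pr:enoughproj}, $\{P_D\}_{D\in\Delta}$ is a generating family of projectives; since $\Delta$ is finite, $P$ itself is a projective generator. Lemma~\ref{le:hwfinitegen}(i) implies that $\Hom_\CC(P,P_{D'})$ is a finitely generated projective $k$-module for every $D'\in\Delta$, so $A=\End_\CC(P)^\opp$ is finitely generated and projective over $k$, and the primitive idempotents $e_D$ corresponding to the summands $P_D$ satisfy $1=\sum_{D\in\Delta}e_D$.

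Next, I would consider $F=\Hom_\CC(P,-):\CC\to A\mmod$. By Definition~\ref{de:hwcat}(iii) every $M\in\CC$ is a quotient of an object of $\CC^\Delta$, hence (since $\Delta$ is finite and each $D$ is a quotient of $P_D$) a quotient of a finite direct sum of copies of $P$; the kernel is again in $\CC$, so lifts to another such quotient. The standard Morita-style argument, using exactness of $F$ on projectives and its full faithfulness on $\mathrm{add}(P)$, then shows that $F$ is an equivalence $\CC\xrightarrow{\sim}A\mmod$, with quasi-inverse $P\otimes_A-$. Under $F$, the standard object $D$ goes to $\Delta_A(D):=\Hom_\CC(P,D)$, a finitely generated projective $k$-module with $\End_A(\Delta_A(D))=\End_\CC(D)=k$ by Definition~\ref{de:hwcat}(i), and the filtration of $P_D$ whose top is $D$ and whose remaining subquotients lie in $\Delta_{>D}$ (Lemma~\ref{le:projcoverD}, refined by Lemma~\ref{le:orderfiltration}) maps to a $\Delta_A$-filtration of the indecomposable projective $Ae_D$ with top $\Delta_A(D)$ and further subquotients of the form $\Delta_A(D')$ with $D'>D$. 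Choosing any refinement of $<$ to a total order on $\Delta$ and defining the chain of ideals $J_i$ of $A$ as those generated by the idempotents corresponding to the top $i$ elements produces a defining filtration of heredity ideals in the sense of \cite{CPS2}, so $A$ is split quasi-hereditary with standards $\Delta_A(D)$.

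Finally, to match the definition of highest weight category of \cite[Definition~4.11]{rouquier schur}, I would check that the data $(\CC,\Delta)$ satisfy its axioms: local finiteness of the poset is automatic since $\Delta$ is finite; the projective covers $P_D$ of Lemma~\ref{le:projcoverD} play the role of the required ``standard projectives'' and their $\Delta$-filtrations provide the Brauer–Humphreys-type reciprocity; absolutely simple endomorphism rings $\End(D)=k$ and the finitely generated projective $k$-module structure on the $\Hom$ spaces (Lemma~\ref{le:hwfinitegen}(i)) give the remaining splitness conditions.

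The only real obstacle is verifying the quasi-hereditary axioms of \cite{CPS2} for $A$, and in particular that each successive quotient $J_i/J_{i-1}$ is a heredity ideal in $A/J_{i-1}$; this follows from the fact that the corresponding $\Delta_A(D)$ is projective in $\CC[\Delta_{\ge D}]/\CC[\Delta_{>D}]$ (which is the image via $F$ of the Serre quotient handled by Proposition~\ref{pr:idealshw}) together with the identification $\End_A(\Delta_A(D))=k$ and the fact that $\Hom_A(\Delta_A(D),\Delta_A(D))\otimes_k\Delta_A(D)\to J_i/J_{i-1}$ is an isomorphism, which is essentially \cite[Lemma~4.13]{rouquier schur}. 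Everything else is formal bookkeeping.
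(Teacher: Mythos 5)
Your proof is correct and follows essentially the same route as the paper: build the progenerator $P=\bigoplus_{D\in\Delta}P_D$ from Proposition~\ref{pr:enoughproj} and Lemma~\ref{le:projcoverD}, use Lemma~\ref{le:hwfinitegen} for finiteness and projectivity over $k$, and pass to $A=\End(P)$ via $\Hom(P,-)$. The only difference is that where you sketch the heredity-ideal chain by hand, the paper simply invokes \cite[Theorem~4.16]{rouquier schur} for the quasi-hereditary statement.
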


\begin{proof}
It follows from Proposition \ref{pr:enoughproj} that $\CC$ has a progenerator $P$ and
from Lemma \ref{le:hwfinitegen} that
$A=\End(P)$ is finitely generated and projective as a $k$-module.
So, we have an equivalence $\Hom(P,-):\CC\xrightarrow{\sim}A\mmod$.
By Lemma \ref{le:hwfinitegen},
$\Hom(P,D)$ is a finitely generated projective $k$-module
for all $D\in\Delta$. Finally, Lemma \ref{le:projcoverD} shows that given $D\in\Delta$,
there is $P\in\Proj(\CC)$ and a surjection $P\twoheadrightarrow D$
with kernel in $\CC^{\Delta_{>D}}$. It follows that $(\CC,\Delta)$
is a highest category over $k$ as in \cite[Definition 4.11]{rouquier schur}.
The statement about quasi-hereditary algebras is
\cite[Theorem 4.16]{rouquier schur}.
\end{proof}

\begin{rema}
	\label{re:hwcard1}
	If $\Delta$ contains a unique object $D$, then
	there is an equivalence $\Hom(D,-):\CC\xrightarrow{\sim} k\mmod$.
\end{rema}

Given $\AC$ an abelian category, $M$ an object of $\AC$ and $L$ a simple
object of $\AC$, we denote by $[M:L]\in\BZ_{\ge 0}\cup\{\infty\}$
the maximum of the set of integers $n$ such that $M$ has a filtration
$0=M_{-1}\subset M_0\subset\cdots\subset M_{2n}=M$ with
$M_{2i-1}/M_{2i-2}\simeq L$ for $1\le i\le n$.

%\smallskip
%We connect now our notion to the classical one of Cline, Parshall and Scott
%\cite{CPS}.

\begin{prop}
\label{pr:fieldsimple}
Assume $k$ is a field and $\CC$ is noetherian. Then,
\begin{itemize}
\item any object $D\in\Delta$ has a unique
	simple quotient $L(D)$ and we have $\End(L(D))=k$ and $[D:L(D)]=1$
\item every simple object of $\CC$ is isomorphic
to $L(D)$ for a unique $D\in\Delta$
\item Let $D,D'\in\Delta$ such that $[D:L(D')]{\not=}0$. Then, we have
$D'\le D$ and $[D:L(D')]<\infty$.
\end{itemize}
Assume furthermore that $\Delta$ is finitely generated as an ideal. Then
every object of $\CC$ has a projective cover. In particular, given $D\in\Delta$,
the simple object $L(D)$ has
a projective cover $P(D)$ and
$[M:L(D)]=\dim_k\Hom_\CC(P(D),M)<\infty$ for all $M\in\CC$.
\end{prop}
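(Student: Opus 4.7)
The plan is to prove the four assertions in order, exploiting the Serre subcategory structure of $\CC[\Delta_{<D}]$ together with $\End(D)=k$.

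\textbf{Step 1 (construction of $L(D)$).} Fix $D \in \Delta$. By Proposition \ref{pr:idealshw}(iv) applied to the ideal $\Delta_{<D}$, the object $D$ has a largest subobject $L^\circ$ lying in $\CC[\Delta_{<D}]$. I set $L(D) := D/L^\circ$. Lemma \ref{le:vanishingExti} gives $\Ext^1(D,L^\circ)=0$ since $D \notin \Delta_{<D}$. To see that $L(D)$ is simple, I argue that any nonzero proper subobject $M \subsetneq L(D)$ cannot lie in $\CC[\Delta_{<D}]$ by maximality of $L^\circ$, so $\Hom(E,M)\neq 0$ for some $E \in \Delta$ with $E \not< D$. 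Since $L(D)$ is a quotient of $D$, it lies in the Serre subcategory $\CC[\Delta_{\le D}]$ (Proposition~\ref{pr:idealshw}(iii)), forcing $E=D$. Lifting a nonzero map $D \to M \hookrightarrow L(D)$ along $\Ext^1(D,L^\circ)=0$ to an endomorphism $\lambda\,\id$ of $D$ with $\lambda \in k^\times$ shows that the composition $D \twoheadrightarrow L(D)$ factors through $M$, contradicting $M \subsetneq L(D)$. For uniqueness of the simple quotient, any maximal subobject $N$ of $D$ must contain $L^\circ$, for otherwise $N + L^\circ = D$ and $D/N \simeq L^\circ/(L^\circ\cap N) \in \CC[\Delta_{<D}]$, contradicting that $\Hom(D,D/N)\neq 0$. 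Multiplicity $[D:L(D)]=1$ then follows because $L^\circ \in \CC[\Delta_{<D}]$ has no subquotient isomorphic to $L(D)$, as $L(D) \notin \CC[\Delta_{<D}]$ (since $\Hom(D,L(D))\neq 0$).

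\textbf{Step 2 (classification of simples).} Given a simple $S$, Corollary \ref{co:CCunion} provides a finitely generated ideal $\Gamma$ with $S \in \CC[\Gamma]$. Local finiteness implies $\Gamma_{>E}$ is finite for all $E \in \Gamma$, so Proposition \ref{pr:enoughproj} gives enough projectives in $\CC[\Gamma]$, and there is a surjection $P \twoheadrightarrow S$ with $P$ a direct summand of some object of $\CC^\Gamma$. Hence $S$ is a simple quotient of some $E \in \Gamma$, and Step~1 forces $S \simeq L(E)$. Uniqueness of $E$ is deferred to Step~3.

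\textbf{Step 3 (composition factors and finiteness).} The key lemma is: $L(E) \in \CC[\Delta_{<D}]$ if and only if $E < D$. The backward direction is clear once one checks directly from the definition of $\CC[\Delta_{<D}]$ that $E$ itself lies in this Serre subcategory, then uses closure under quotients; the forward direction follows from $\Hom(E,L(E))\neq 0$. Applied to $L^\circ$, every composition factor of $L^\circ$ is $L(D'')$ with $D'' < D$, while $D/L^\circ = L(D)$ contributes multiplicity one. Hence $[D:L(D')]\neq 0$ implies $D' \le D$. For finiteness of $[D:L(D')]$, I pass to $\CC[\Delta_{\le D}]$, which is a highest weight category whose poset satisfies the hypothesis of Proposition \ref{pr:enoughproj}; its projective objects have finite-dimensional endomorphism rings over $k$ by Lemma \ref{le:hwfinitegen}, so Krull--Schmidt provides a projective cover $P(D')$ of $L(D')$ there, and the standard identity $[D:L(D')] = \dim_k \Hom(P(D'),D)$ combined with Lemma \ref{le:hwfinitegen}(i) (which makes this a finite-dimensional $k$-vector space, since $D \in \CC^\Delta$) yields the finiteness. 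Step~2's uniqueness is now immediate: if $L(D) \simeq L(D')$, then $[D':L(D)]=1>0$ and Step~3 gives $D \le D'$, and symmetrically $D' \le D$.

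\textbf{Step 4 and main obstacle.} When $\Delta$ is finitely generated as an ideal, Proposition \ref{pr:enoughproj} produces enough projectives in $\CC$ itself (as $\Delta_{>E}$ is finite for all $E$), and the Krull--Schmidt argument of Step~3 globalizes to yield a projective cover $P(D)$ of each $L(D)$; projective covers of arbitrary $M \in \CC$ are then obtained from those of the simple constituents of its head. The formula $[M:L(D)] = \dim_k\Hom(P(D),M) < \infty$ follows from the exactness of $\Hom(P(D),-)$, the identity $\Hom(P(D),L(D'))=k\,\delta_{D,D'}$, and Lemma \ref{le:hwfinitegen}. The main obstacle is Step~3's finiteness statement in the setting where $\CC$ itself need not have enough projectives: the remedy, as indicated, is to localize the problem to $\CC[\Delta_{\le D}]$ before invoking Krull--Schmidt, and this only works because Lemma \ref{le:hwfinitegen}(i) applies within that smaller highest weight category.
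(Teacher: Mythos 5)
Your proof is correct and follows essentially the same route as the paper: you construct $L(D)$ as the quotient of $D$ by its largest subobject in $\CC[\Delta_{<D}]$, prove simplicity by lifting maps through $\Ext^1(D,L^\circ)=0$ using $\End(D)=k$, and obtain the finiteness of $[D:L(D')]$ and the projective covers by passing to a finitely generated ideal, invoking Proposition~\ref{pr:enoughproj} and Lemma~\ref{le:hwfinitegen}, and extracting an indecomposable summand with local endomorphism ring. The only cosmetic difference is your uniqueness argument for the simple quotient (every maximal subobject contains $L^\circ$) where the paper instead uses projectivity of $D$ in $\CC[\Delta_{\le D}]$ applied to a putative surjection $D\twoheadrightarrow L\oplus L'$; both are valid.
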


\begin{proof}
Let $L$ be a simple object of $\CC$. There is $D\in\Delta$ such that
$\Hom(D,L){\not=}0$, since $L$ is a quotient of an object of $\CC^\Delta$.
Consider now $D'\in\Delta$ such that $\Hom(D',L){\not=0}$. Assume
$D{\nleq}D'$ and let $\Gamma$ be the ideal of $\Delta$ generated by $D$ and
$D'$. Then $D$ is projective in $\CC[\Gamma]$ hence a
surjection $D\twoheadrightarrow L$ factors as the composite of a surjection 
$D'\twoheadrightarrow L$ and a non-zero map $D\to D'$.
	Since $\Hom(D,D')=0$, we obtain a contradiction. It follows that
	$D\le D'$. By symmetry, we obtain $D'\le D$, hence $D=D'$.
It follows that there is a unique $D\in\Delta$ such that $L$ is a quotient of
$D$.

Fix now $D\in\Delta$ and assume there are simple objects $L,L'\in\CC$
and a surjective map $f:D\twoheadrightarrow L\oplus L'$. We have
$L,L'\in\CC[\Delta_{\le D}]$ and $D$ is projective in $\CC[\Delta_{\le D}]$.
It follows that composition with $f$ induces a surjection
$\End(D)\twoheadrightarrow\Hom(D,L\oplus L')$. Since $\End(D)=k$, we obtain
a contradiction: $D$ has at most one simple quotient and
	$\dim_k\Hom(D,L)\le 1$ for all simple objects $L$.

Let $M$ be the largest subobject of $D$ that is in $\CC[\Delta_{<D}]$. Since
$\Hom(D,M)=0$, we deduce that $D/M{\not=}0$. Since $\CC[\Delta_{<D}]$ is
a Serre subcategory of $\CC$, we have $\Hom(N,D/M)=0$ for all
$N\in\CC[\Delta_{<D}]$. Let $L$ be a non-zero subobject of $D/M$.
	Since $L\in\CC[\Delta_{\le D}]$ but
	$L{\not\in}\CC[\Delta_{<D}]$, 
there exists a non-zero map $D\to L$. That map lifts to a non-zero
map $D\to D$, hence an isomorphism since $\End(D)=k$. So, $L=D/M$, hence
$D/M$ is simple.

We have shown that every $D\in\Delta$ has a unique simple quotient
$L(D)$, that $L(D)\simeq L(D')$ implies $D\simeq D'$ and every simple object
of $\CC$ is isomorphic to $L(D)$ for some $D\in\Delta$.

	Since $D$ is projective in $\CC[\Delta_{\le D}]$ and $\End(D)=k$, we deduce that 
	$[D:L(D)]=1$ and $\End(L(D))=k$, since $\dim_k\Hom(D,L(D))\le 1$.

Let $D'\in\Delta$ such that $[D:L(D')]{\not=0}$.
Let $\Gamma$ be the ideal of $\Delta$ generated by $D$ and $D'$.
By Lemma \ref{le:critproj}, there is $P\in\Proj(\CC[\Gamma])$ and
a surjection $P\twoheadrightarrow D'$ with kernel in $\CC^{\Gamma_{>D'}}$.
We have $\dim_k\Hom(P,D)<\infty$ (Lemma \ref{le:hwfinitegen}), hence $[D:L(D')]<\infty$.
Also, $\Hom(P,D){\not=}0$, hence $D'\le D$.

\smallskip
Assume now $\Delta$ is finitely generated as an ideal.
Let $D\in\Delta$. There is a projective object $P$ of $\CC$ and
a surjection $P\twoheadrightarrow L(D)$. Since $\End_\CC(P)$ is 
finite-dimensional, it follows that there is $P(D)$ an indecomposable direct 
summand of $P$ and a surjection $f:P(D)\twoheadrightarrow L(D)$.
As $\End_\CC(P(D))$ is a local $k$-algebra, it follows that $f$ is a
projective cover.
\end{proof}

\subsection{Ideals and quotients}

Let $\Gamma$ be an ideal of $\Delta$. Recall that
$\CC[\Gamma]$ is a Serre subcategory of $\CC$ (Proposition
\ref{pr:idealshw}). We put
$\CC(\Delta\setminus\Gamma)=\CC/\CC[\Gamma]$.

\begin{prop}
\label{pr:quotienthw}
Let $\Gamma$ be an ideal of $\Delta$. Then
$\CC(\Delta\setminus\Gamma)$ is a highest weight category with poset of
standard objects
$\Delta\setminus\Gamma$ and the quotient functor induces an equivalence
	$\CC^{\Delta\setminus\Gamma}\xrightarrow{\sim}
	\CC(\Delta\setminus\Gamma)^{\Delta\setminus\Gamma}$.
	
Given $\Gamma'$ an ideal of
$\Delta$, there is a canonical equivalence
$$(\CC[\Gamma'])\bigl(\Gamma'\setminus(\Gamma\cap\Gamma')\bigr)
\xrightarrow{\sim}
\bigl(\CC(\Delta\setminus\Gamma)\bigr)[\Gamma'\setminus(\Gamma\cap\Gamma')].$$
\end{prop}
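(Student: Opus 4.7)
The plan is to verify that $\bigl(\CC(\Delta\setminus\Gamma),\{\pi(D)\}_{D\in\Delta\setminus\Gamma}\bigr)$ satisfies axioms (i)--(iv) of Definition \ref{de:hwcat}, where $\pi:\CC\to\CC(\Delta\setminus\Gamma)=\CC/\CC[\Gamma]$ is the (exact) Serre quotient functor, with the poset structure on $\Delta\setminus\Gamma$ inherited from $\Delta$. The canonical equivalence in the second assertion will then be constructed by composing $\CC[\Gamma']\hookrightarrow\CC$ with $\pi$ and checking that this factorization is an equivalence.

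The key computation, from which everything else flows, is that for every $D\in\Delta\setminus\Gamma$ and $M\in\CC$, the canonical map $\Hom_\CC(D,M)\to\Hom_{\CC(\Delta\setminus\Gamma)}(\pi(D),\pi(M))$ is an isomorphism. To establish this, I would first observe that $\Hom_\CC(D,N)=0$ and $\Ext^1_\CC(D,N)=0$ for every $N\in\CC[\Gamma]$: both vanish by Lemma \ref{le:vanishingExti}, since $D\not\in\Gamma$ rules out a chain $D=D_0<\cdots<D_i$ with $D_0,\ldots,D_i\in\Gamma$ for $i=0,1$. The colimit formula describing $\Hom$'s in the Serre quotient then collapses: subobjects $X\subset D$ with $D/X\in\CC[\Gamma]$ must satisfy $X=D$ (otherwise the nonzero quotient $D/X$ would produce a nonzero element of $\Hom_\CC(D,D/X)$), while for any subobject $N'\subset M$ in $\CC[\Gamma]$ the exact sequence $0\to N'\to M\to M/N'\to 0$ gives $\Hom_\CC(D,M)\xrightarrow{\sim}\Hom_\CC(D,M/N')$.

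Axioms (i) and (ii) then follow at once: $\End(\pi(D))=\End(D)=k$, and $\Hom(\pi(D_1),\pi(D_2))\neq 0$ forces $\Hom_\CC(D_1,D_2)\neq 0$, hence $D_1\leq D_2$. For (iii), given $\pi(M)\in\CC(\Delta\setminus\Gamma)$, I would pick a surjection $M'\twoheadrightarrow M$ with $M'\in\CC^\Delta$ and apply Proposition \ref{pr:idealshw}(i) to produce a subobject $K\subset M'$ with $K\in\CC^{\Delta\setminus\Gamma}$ and $M'/K\in\CC^\Gamma\subset\CC[\Gamma]$; exactness of $\pi$ yields $\pi(K)\xrightarrow{\sim}\pi(M')\twoheadrightarrow\pi(M)$, exhibiting $\pi(M)$ as a quotient of an object with filtration by the $\pi(D)$, $D\in\Delta\setminus\Gamma$. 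For (iv), starting from a surjection $R\twoheadrightarrow D$ in $\CC$ satisfying axiom (iv) in $\CC$ for $D,D'\in\Delta\setminus\Gamma$ and a chosen lift $\tilde{M}$ of $M$, Lemma \ref{le:orderfiltration} -- applied with the total order that places $\Gamma$ before $\Delta\setminus\Gamma$, which is compatible with $<$ precisely because $\Gamma$ is a downward-closed subset -- lets me reorder the filtrations of $R$ and of $\ker(R\to D)$ so that the $\Gamma$-parts form subobjects annihilated by $\pi$. Passing to the quotient then yields a surjection with kernel filtered by $\pi((\Delta\setminus\Gamma)_{>D})$. The main obstacle is the verification that the $\Hom$ and $\Ext^1$ requirements survive the passage to the quotient; this is where I expect the most delicate work, and I plan to handle it by the same vanishing mechanism (showing that $\Hom_\CC(N,D')=0$ and controlling $\Ext^1_\CC(N,D')$ for $N\in\CC^\Gamma$, $D'\in\Delta\setminus\Gamma$) to reduce $\Hom_\pi$-computations back to $\Hom_\CC$-computations on a replacement of $R$ lying in $\CC^{\Delta\setminus\Gamma}$.

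For the second assertion, the composition $\CC[\Gamma']\hookrightarrow\CC\xrightarrow{\pi}\CC(\Delta\setminus\Gamma)$ annihilates $\CC[\Gamma\cap\Gamma']$ (which coincides with $\CC[\Gamma]\cap\CC[\Gamma']$) and takes values in $(\CC(\Delta\setminus\Gamma))[\Gamma'\cap(\Delta\setminus\Gamma)]$, so it factors canonically through the desired functor; the identification $\Gamma'\setminus(\Gamma\cap\Gamma')=\Gamma'\cap(\Delta\setminus\Gamma)$, which is an ideal both of $\Gamma'$ and of $\Delta\setminus\Gamma$, guarantees that source and target of the functor are the claimed highest weight categories. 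To check it is an equivalence, I would verify that it sends standard objects to standard objects (both are $\pi(D)$ for $D\in\Gamma'\cap(\Delta\setminus\Gamma)$) and induces isomorphisms on their $\Hom$-spaces (both being $\Hom_\CC(D_1,D_2)$ by the computation of the second paragraph, applied inside $\CC[\Gamma']$ and inside $\CC$ respectively), and then combine this with axiom (iii) on both sides to conclude by a standard generation-and-relations argument. The heart of the proof is thus the verification of axiom (iv) for the quotient; everything else is a formal consequence of Proposition \ref{pr:idealshw} and the vanishing Lemma \ref{le:vanishingExti}.
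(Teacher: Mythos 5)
Your overall architecture --- verify axioms (i)--(iv) of Definition \ref{de:hwcat} for the quotient via the isomorphism $\Hom_\CC(D,M)\xrightarrow{\sim}\Hom_{\CC(\Delta\setminus\Gamma)}(D,M)$ for $D\in\Delta\setminus\Gamma$, then obtain the equivalence by factoring $\CC[\Gamma']\hookrightarrow\CC\to\CC(\Delta\setminus\Gamma)$ and comparing standard objects --- is the paper's, and your treatment of (i)--(iii) is correct. (A minor remark on (iv): the reordering via Lemma \ref{le:orderfiltration} is unnecessary. Since $\Gamma$ is an ideal and $D\notin\Gamma$, we have $\Delta_{\ge D}\cap\Gamma=\emptyset$, so the approximation $R\twoheadrightarrow D$ provided by axiom (iv) in $\CC$ and its kernel already lie in $\CC^{(\Delta\setminus\Gamma)_{\ge D}}$.)

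The genuine gap is exactly where you locate it, and the mechanism you propose to fill it rests on a false vanishing. To verify (iv) in the quotient you must deduce $\Ext^1_{\CC(\Delta\setminus\Gamma)}(R,M)=0$ from $\Ext^1_\CC(R,M)=0$; computing $\Ext^1$ across a Serre quotient is not formal. Your plan invokes the vanishing of $\Hom_\CC(N,D')$ (and control of $\Ext^1_\CC(N,D')$) for $N\in\CC^\Gamma$ and $D'\in\Delta\setminus\Gamma$, but an element $D''$ of the ideal $\Gamma$ may perfectly well satisfy $D''\le D'$, resp.\ $D''<D'$, for $D'\notin\Gamma$ (the ideal condition constrains $\Gamma$ downwards, not upwards), so $\Hom_\CC(D'',D')$ and $\Ext^1_\CC(D'',D')$ need not vanish. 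The vanishing that actually makes the argument work goes in the other direction and in \emph{all} degrees: $\Ext^i_\CC(D,N)=0$ for all $i\ge 0$, $D\in\Delta\setminus\Gamma$, $N\in\CC[\Gamma]$ (Lemma \ref{le:vanishingExti}). The paper exploits this by passing to $D^b(\CC)$: the thick subcategory $\JC$ generated by $\Delta\setminus\Gamma$ is left orthogonal to the thick subcategory $\IC$ generated by $\Gamma$, whence $\Ext^1_\CC(R,M)\simeq\Hom_{D^b(\CC)/\IC}(R,M[1])\simeq\Ext^1_{\CC(\Delta\setminus\Gamma)}(R,M)$ for $R\in\CC^{\Delta\setminus\Gamma}$, since $\CC(\Delta\setminus\Gamma)$ is the heart of the quotient $t$-structure. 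With this identification, (iv) follows; the paper in fact first treats the case where $\Delta$ is finitely generated as an ideal (where the quotient functor identifies $\Proj(\CC)\cap\CC^{\Delta\setminus\Gamma}$ with $\Proj(\CC(\Delta\setminus\Gamma))$ and gives enough projectives) and then reduces the general case using the second assertion. You need to replace your ``vanishing mechanism'' by this orthogonality argument, or by an equivalent device for lifting extensions from the quotient back to $\CC$.
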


\begin{proof}
Given $M\in\CC^{\Delta\setminus\Gamma}$ and $N\in\CC$, we have an
isomorphism $\Hom_\CC(M,N)\xrightarrow{\sim}
\Hom_{\CC(\Delta\setminus\Gamma)}(M,N)$
since $\Hom_\CC(M,M')=\Ext^1_\CC(M,M')=0$ for all $M'\in\CC[\Gamma]$
(Lemma \ref{le:vanishingExti}).

	We deduce that $\Hom_\CC(D,D')\simeq\Hom_{\CC(\Delta\setminus\Gamma)}(D,D')$ for all
$D,D'\in\Delta\setminus\Gamma$. It follows that (i) and (ii) in Definition
\ref{de:hwcat} hold for $\CC(\Delta\setminus\Gamma)$.

Let $M\in\CC$. By Proposition \ref{pr:idealshw}(iv),
there is $N\subset M$ such that $M/N$ is the
largest quotient of $M$ in $\CC[\Gamma]$.
There is a surjection $f:R\twoheadrightarrow N$ with
$R\in\CC^\Delta$. By Proposition \ref{pr:idealshw}(i), there is a
subobject $R'\subset R$ with $R'\in\CC^{\Delta\setminus\Gamma}$ and $R/R'\in
\CC^{\Gamma}$. Let $N'=f(R')$. We have a surjection $R/R'\twoheadrightarrow
N/N'$, hence $N/N'\in\CC[\Gamma]$. It follows that $N/N'=0$,
hence $N$ is a quotient of $R'$. The image in $\CC(\Delta\setminus\Gamma)$ of
the map $R'\to M$ is a surjection.
So, (iii) in Definition \ref{de:hwcat} holds for $\CC(\Delta\setminus\Gamma)$.

\medskip

Let $\IC$ (resp. $\JC$) be the thick subcategory of $D^b(\CC)$ generated by
	$\CC[\Gamma]$ (resp. $\Delta\setminus\Gamma$). Since
$\Ext^i_\CC(D,D')=0$ for all $i\ge 0$, $D\in\Delta\setminus\Gamma$ and 
$D'\in\Gamma$ (Lemma \ref{le:vanishingExti}), it follows that
$\Hom_{D^b(\CC)}(C,C')=0$ for $C\in\JC$ and $C'\in\IC$. We deduce that
$\Hom_{D^b(\CC)}(M,M')\xrightarrow{\sim}\Hom_{D^b(\CC)/\IC}(M,M')$ for all
$M\in\JC$ and $M'\in D^b(\CC)$.
Since $\CC(\Delta\setminus\Gamma)$ is the heart of the canonical quotient
$t$-structure
on $D^b(\CC)/\IC$, we deduce that $\Ext^1_\CC(M,M')\simeq
\Ext^1_{\CC(\Delta\setminus\Gamma)}(M,M')$ for all
$M\in\CC^{\Delta\setminus\Gamma}$
and $M'\in\CC$. In particular, if $M\in\CC^{\Delta\setminus\Gamma}$ is
projective in $\CC$, then it is projective in $\CC(\Delta\setminus\Gamma)$. Also,
$\CC^{\Delta\setminus\Gamma}\xrightarrow{\sim}\bigl(\CC(\Delta\setminus\Gamma)
\bigr)^{\Delta\setminus\Gamma}$.

\smallskip
Let us now assume that $\Delta$ is finitely generated as an ideal.
Let $M\in\CC$. We have seen that there exists $R\in\CC^{\Delta\setminus
\Gamma}$ and a map $R\to M$ that becomes surjective in $\CC(\Delta\setminus
\Gamma)$.
In the proof of Proposition \ref{pr:enoughproj}, we saw that
there exists a surjection $P\twoheadrightarrow R$ with 
$P\in\Proj(\CC)\cap \CC^{\Delta\setminus \Gamma}$. The composition
$P\to R\to M$ is surjective in $\CC(\Delta\setminus\Gamma)$ and $P$ is
projective in $\CC(\Delta\setminus\Gamma)$. 
We deduce that the quotient functor induces an equivalence
$\Proj(\CC)\cap\CC^{\Delta\setminus\Gamma}\xrightarrow{\sim}
\Proj(\CC(\Delta\setminus\Gamma))$ and that
$\CC(\Delta\setminus\Gamma)$ has enough projectives.
So, (iv) in Definition \ref{de:hwcat} holds
for $\CC(\Delta\setminus\Gamma)$.

\smallskip
Let us consider again an arbitrary $\Delta$.
We have $\CC[\Gamma']^{\Gamma'\setminus(\Gamma\cap\Gamma')}
\xrightarrow{\sim}\bigl(\CC[\Gamma'](\Gamma'\setminus(\Gamma\cap\Gamma'))
\bigr)^{\Gamma'\setminus(\Gamma\cap\Gamma')}$ and 
$\CC^{\Gamma'\setminus(\Gamma\cap\Gamma')}=
\CC[\Gamma']^{\Gamma'\setminus(\Gamma\cap\Gamma')}\xrightarrow{\sim}
\bigl(\CC(\Delta\setminus\Gamma)\bigr)^{\Gamma'
\setminus(\Gamma\cap\Gamma')}$. So, we have an equivalence
$$\bigl(\CC[\Gamma'](\Gamma'\setminus(\Gamma\cap\Gamma'))\bigr)^{\Gamma'
\setminus(\Gamma\cap\Gamma')}\xrightarrow{\sim}
\bigl(\CC(\Delta\setminus\Gamma)\bigr)^{\Gamma'
\setminus(\Gamma\cap\Gamma')}.$$
 We have shown that
every object of $(\CC[\Gamma'])(\Gamma'\setminus(\Gamma\cap\Gamma'))$ is the
cokernel
of a morphism in $(\CC[\Gamma'])(\Gamma'\setminus(\Gamma\cap\Gamma'))^{\Gamma'\setminus(\Gamma\cap\Gamma')}$, hence
the canonical functor $(\CC[\Gamma'])(\Gamma'\setminus(\Gamma\cap\Gamma'))\to
\CC(\Delta\setminus\Gamma)$ is fully faithful.

Let $D,D'\in\Delta\setminus\Gamma$ and $M\in\CC$. There is a finitely
generated ideal $\Gamma'$ of $\Gamma$ containing $D$, $D'$ and
such that $M\in\CC[\Gamma']$. 

There exists a surjection
$P\twoheadrightarrow D$ with kernel in $\CC^{\Gamma'_{>D}}$ and
$P\in\Proj(\CC[\Gamma'])$. Since $(\CC[\Gamma'])(\Gamma'\setminus
(\Gamma\cap\Gamma'))$ is
a highest weight category, we deduce that (iv) in Definition \ref{de:hwcat}
holds for $\CC(\Delta\setminus\Gamma)$. The last statement of the proposition
follows.
\end{proof}

\begin{lem}
\label{le:Gammalocal}
Let $\Gamma$ be an ideal of $\Delta$. The quotient functor
$q_\Gamma:\CC\to\CC(\Delta\setminus\Gamma)$ has a left adjoint
${^\vee q_\Gamma}$.

Assume $\Delta$ is finitely generated as an ideal.
Given $M\in\CC$, there is a finitely generated ideal $\Gamma$ of $\Delta$
such that $\Delta\setminus\Gamma$ is finite and such that the canonical map
${^\vee q_\Gamma}q_\Gamma(M)\to M$ is an isomorphism.
\end{lem}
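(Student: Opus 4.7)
The plan is to establish the two assertions in turn: first the existence of the left adjoint ${^\vee q_\Gamma}$ via projective presentations, and then, under the hypothesis that $\Delta$ is finitely generated, to choose $\Gamma$ as the complement of a sufficiently large finite coideal containing the standard objects appearing in a projective presentation of $M$.

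For the first assertion, by Corollary \ref{co:CCunion} the category $\CC$ is the filtered union of its subcategories $\CC[\Gamma']$ for $\Gamma'$ ranging over finitely generated ideals of $\Delta$, so it suffices to construct, compatibly in $\Gamma'$, a left adjoint to each restricted quotient functor $q_\Gamma : \CC[\Gamma'] \to (\CC[\Gamma'])(\Gamma'\setminus(\Gamma\cap\Gamma'))$. For such a finitely generated $\Gamma'$, $\CC[\Gamma']$ has enough projectives by Proposition \ref{pr:enoughproj}, and the proof of Proposition \ref{pr:quotienthw} shows that the quotient functor restricts to an equivalence $\Proj(\CC[\Gamma']) \cap \CC[\Gamma']^{\Gamma'\setminus(\Gamma\cap\Gamma')} \xrightarrow{\sim} \Proj((\CC[\Gamma'])(\Gamma'\setminus(\Gamma\cap\Gamma')))$. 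I would define ${^\vee q_\Gamma}$ on projective objects as the inverse of this equivalence and extend it to arbitrary objects by taking cokernels of lifted projective presentations; the adjunction property then follows from the identification of Hom and Ext groups between the two categories established in the proof of Proposition \ref{pr:quotienthw}, and uniqueness of left adjoints guarantees compatibility as $\Gamma'$ varies.

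For the second assertion, the hypothesis that $\Delta$ is finitely generated ensures by Proposition \ref{pr:enoughproj} that $\CC$ itself has enough projectives. Given $M\in\CC$, fix a projective presentation $P_1 \to P_0 \twoheadrightarrow M \to 0$ with $P_0, P_1 \in \Proj(\CC) \subset \CC^\Delta$, and let $\Sigma\subset\Delta$ denote the finite set of standard objects appearing in the $\Delta$-filtrations of $P_0$ and $P_1$. Writing $\Delta = \Delta_{\leq D_1} \cup \cdots \cup \Delta_{\leq D_n}$ and using local finiteness, each set $\Delta_{\geq D}$ for $D\in\Sigma$ is a finite union of finite intervals $[D, D_j]$, so the upward closure $\Sigma' := \bigcup_{D \in \Sigma} \Delta_{\geq D}$ is a finite subset of $\Delta$. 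Setting $\Gamma := \Delta \setminus \Sigma'$ produces an ideal with finite complement, and once its finite generation as an ideal is established, $P_0, P_1 \in \Proj(\CC) \cap \CC^{\Delta\setminus\Gamma}$, so the construction of ${^\vee q_\Gamma}$ sends $q_\Gamma(P_i)$ back to $P_i$. Right-exactness of ${^\vee q_\Gamma}$ then identifies ${^\vee q_\Gamma} q_\Gamma(M)$ with $\mathrm{coker}(P_1 \to P_0) = M$.

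The main obstacle is the combinatorial verification that $\Gamma = \Delta \setminus \Sigma'$ is finitely generated as an ideal, which amounts to showing that its set of maximal elements is finite. These maximal elements lie among the generators $D_1, \ldots, D_n$ of $\Delta$ that happen to be in $\Gamma$, together with elements of $\Delta$ covered by some element of $\Sigma'$; controlling the latter requires a careful use of local finiteness together with the finiteness of $\Sigma'$, and if necessary one enlarges $\Sigma'$ to a slightly bigger finite coideal to ensure that its complement is finitely generated as an ideal.
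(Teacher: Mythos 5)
Your proof follows essentially the same route as the paper's: the left adjoint is built exactly as you describe, from the equivalence between the projectives of $\CC(\Delta\setminus\Gamma)$ and the projectives of $\CC$ lying in $i(\CC^{\Delta\setminus\Gamma})$ (after reducing to finitely generated $\Delta$ via Corollary \ref{co:CCunion}), and the second assertion is obtained by choosing $\Gamma$ with finite complement so that both terms of a projective presentation of $M$ lie in $i(\CC^{\Delta\setminus\Gamma})$. The only divergence is your final paragraph: the paper simply asserts the existence of such an ideal $\Gamma$ and does not verify its finite generation, so the ``obstacle'' you isolate is a point the source text leaves implicit rather than a gap in your argument relative to it.
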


\begin{proof}
Thanks to Corollary \ref{co:CCunion} and Proposition \ref{pr:quotienthw},
it is enough to prove the lemma when $\Delta$ is finitely generated as
an ideal, and we make now that assumption.

Let $\PC_\Gamma$ be the full subcategory of projective objects of
$\CC$ that are in $i(\CC^{\Delta\setminus\Gamma})$. The quotient functor
$q_\Gamma$ restricts to an equivalence $\phi$ from $\PC_\Gamma$ to the
category of projective objects of $\CC(\Delta\setminus\Gamma)$.
Let $N\in\CC(\Delta\setminus\Gamma)$. Fix a projective presentation
$P\xrightarrow{f} Q\to N\to 0$. Define ${^\vee q}_\Gamma(N)=
\mathrm{coker}(\phi^{-1}(f))$. It is easy to check that this defines
a functor $\CC(\Delta\setminus\Gamma)\to\CC$ that is left adjoint to $q_\Gamma$.

Let $M\in\CC$. Let $P\to Q\to M\to 0$ be a projective presentation of $M$.
There is an ideal $\Gamma$ of $\Delta$ such that $\Delta\setminus\Gamma$
is finite and $P,Q\in i(\CC^{\Delta\setminus\Gamma})$. It
follows that the canonical map ${^\vee q_\Gamma}q_\Gamma(M)\to M$ is an
isomorphism.
\end{proof}

\subsection{Base change}
\label{se:basechange}
%We say that an additive $\DC$ is {\em locally noetherian} if given
%an object $M$, a family $I$ of objects and maps
%$f_N:N\to M$ for $N\in I$, then there is a finite subset $I_0$ of $I$ such
%that $f_N$ factors through $\sum_{N_0\in I_0}f_{N_0}:\bigoplus_{N_0\in I_0}
%N_0\to M$. 
%
%
Let $\DC$ be an additive category.
We denote by $\DC\mMOD$ the abelian category
of additive functors $\DC^\opp\to\ZM\mMOD$ and
by $\DC\mmod$ its full subcategory of functors
that are quotients of representable functors.
The Yoneda functor defines a fully faithful embedding
$$\DC\hookrightarrow \DC\mmod,\ M\mapsto\Hom(-,M).$$
The family of representable functors is a generating family of
projective objects of $\DC\mMOD$ and we will identify $\DC$ with the 
corresponding full subcategory of $\DC\mMOD$.

The full subcategory $\DC\mmod$ of $\DC\mMOD$ is closed under extensions
and quotients.
We say that $\DC$ is {\em locally noetherian} if
$\DC\mmod$ is closed under taking subobjects in $\DC\mMOD$.
When $\DC$ is locally noetherian, the subcategory
$\DC\mmod$ of $\DC\mMOD$ is a Serre subcategory.

\begin{lem}
\label{le:functorcat}
Let $\AC$ be an abelian category with enough projectives.
The canonical functor
$\AC\to \Proj(\AC)\mMOD,\ M\mapsto\Hom(-,M)$
is exact and fully faithful and it takes values in $\Proj(\AC)\mmod$.

The following conditions are equivalent
\begin{itemize}
\item[(i)] $\AC$ is noetherian
\item[(ii)] $\Proj(\AC)$ is locally noetherian
\item[(iii)] the Yoneda functor gives an equivalence
$\AC\xrightarrow{\sim} \Proj(\AC)\mmod$.
\end{itemize}
\end{lem}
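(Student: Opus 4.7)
The plan is to prove the lemma in two stages: first establish the preliminary assertions about the Yoneda-type functor $Y:M\mapsto\Hom(-,M)$, and then prove the equivalence of (i), (ii), (iii). For the preliminaries, exactness of $Y$ is checked pointwise: a short exact sequence in $\AC$ yields, after applying $\Hom(P,-)$ for each projective $P$, a short exact sequence of abelian groups. For full faithfulness I would fix a projective presentation $P_1\xrightarrow{g}P_0\xrightarrow{\pi}M\to 0$: a natural transformation $\alpha:Y(M)\to Y(N)$ restricts to an element $\alpha_{P_0}(\pi)\in\Hom(P_0,N)$, and naturality against $g$ forces $\alpha_{P_0}(\pi)\circ g=0$, so this morphism factors uniquely through $M$ to yield $f:M\to N$ with $Y(f)=\alpha$. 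That $Y(M)\in\Proj(\AC)\mmod$ follows from the surjection $Y(P_0)\twoheadrightarrow Y(M)$ produced by applying exact $Y$ to $\pi$.

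For the equivalence I would prove (ii)$\Leftrightarrow$(iii), then (ii)$\Rightarrow$(i), and finally the hard direction (i)$\Rightarrow$(ii). The step (ii)$\Rightarrow$(iii): given $F\in\Proj(\AC)\mmod$, choose a surjection $Y(P_0)\twoheadrightarrow F$; by (ii) its kernel $K$ lies in $\Proj(\AC)\mmod$ and admits a surjection $Y(P_1)\twoheadrightarrow K$; the resulting composite $Y(P_1)\to Y(P_0)$ equals $Y(g)$ for a unique $g:P_1\to P_0$, and exactness of $Y$ yields $Y(\mathrm{coker}\,g)\simeq F$. The step (iii)$\Rightarrow$(ii): since $\Proj(\AC)\mmod$ is always closed under quotients in $\Proj(\AC)\mMOD$, and since under (iii) the inverse equivalence to $Y$ is exact—so $\Proj(\AC)\mmod$ is closed under kernels of morphisms between its objects—it follows that for any $F\subset G$ with $G\in\Proj(\AC)\mmod$, both $G/F$ and $F=\ker(G\twoheadrightarrow G/F)$ lie in $\Proj(\AC)\mmod$. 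The step (ii)$\Rightarrow$(i): using (iii), given $M\in\AC$ and subobjects $\{N_i\}$, the pointwise sum $\sum_i Y(N_i)\subset Y(M)$ exists in $\Proj(\AC)\mMOD$, lies in $\Proj(\AC)\mmod$ by (ii), and therefore equals $Y(L)$ for a unique $L\subset M$, which one verifies realizes the sum by transporting the universal property across the equivalence (using exactness and fullness of $Y$).

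The hard direction is (i)$\Rightarrow$(ii). The strategy is to fix $F\subset Y(P_0)$ in $\Proj(\AC)\mMOD$ for a projective $P_0$, set $J=\{N\subset P_0\mid Y(N)\subset F\}$, let $N^{\ast}=\sum_{N\in J}N$ (which exists by (i)), and show $F=Y(N^{\ast})$, forcing $F\in\Proj(\AC)\mmod$. The inclusion $F\subset Y(N^{\ast})$ is the easy half: for any projective $Q$ and any $f\in F(Q)$ viewed as $f:Q\to P_0$, the image $\mathrm{im}(f)$ belongs to $J$, because any $k:Q'\to\mathrm{im}(f)$ with $Q'$ projective lifts through the epi $Q\twoheadrightarrow\mathrm{im}(f)$ and naturality of $F\hookrightarrow Y(P_0)$ then forces the resulting composite $Q'\to P_0$ to lie in $F(Q')$; consequently $f$ factors through $N^{\ast}$. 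The reverse inclusion $Y(N^{\ast})\subset F$ is the central technical difficulty: it amounts to showing $N^{\ast}\in J$, i.e., that every morphism $h:Q\to N^{\ast}$ out of a projective lies in $F(Q)$ when viewed in $\Hom(Q,P_0)$. The projective lifting argument readily shows $J$ is closed under finite sums, and the main obstacle is to bridge from finite to arbitrary sums; the plan is to handle this by invoking the universal property of the sum $N^{\ast}$ in $\AC$ together with projectivity of $Q$ to reduce the analysis to finite subsums of elements of $J$, completing the identification $F=Y(N^{\ast})$.
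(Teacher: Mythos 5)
Your preliminaries (exactness of $Y\colon M\mapsto\Hom(-,M)$, full faithfulness via a projective presentation $P_1\to P_0\to M\to 0$, and the fact that $Y(M)\in\Proj(\AC)\mmod$) are correct, and your implications (ii)$\Rightarrow$(iii), (iii)$\Rightarrow$(ii) and (ii)$\Rightarrow$(i) are sound; they are minor reroutings of the paper's argument, which proves the cycle (i)$\Rightarrow$(ii)$\Rightarrow$(iii)$\Rightarrow$(i), your last two implications together replacing its (iii)$\Rightarrow$(i).

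The genuine gap is in (i)$\Rightarrow$(ii), exactly at the point you defer. Your construction agrees with the paper's: it takes the morphisms $g_Q\colon Q\to P_0$ classified by the elements of $F$, puts $L=\sum\im(g_Q)$ (your $N^\ast$), proves $F\subset Y(L)$ as you do, and then asserts that $\bigoplus_Q\Hom(-,Q)\to\Hom(-,L)$ is surjective — which is precisely your missing inclusion $Y(N^\ast)\subset F$. Your plan for this step ("the universal property of the sum together with projectivity of $Q$ reduces to finite subsums") cannot be executed: the universal property of $\sum_{N\in I}N$ as defined in the paper is a minimality statement about subobjects of targets of maps out of $M$, not a colimit presentation, and projectivity of $Q$ only provides lifting along epimorphisms, of which there are none here. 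Concretely, take $\AC=\ZM\mMOD$, $P_0=\bigoplus_{i\ge 1}\ZM e_i$, $N_i=\ZM e_i$ and $F=\sum_i Y(N_i)$ (pointwise sum of subfunctors of $Y(P_0)$): every finite subsum of the $N_i$ lies in your $J$, so $N^\ast=P_0$, yet $\Id_{P_0}\notin F(P_0)$ because every element of $F(P_0)$ has image in a finite subsum; hence $F\subsetneq Y(N^\ast)$ and $J$ is not closed under the infinite sum. What is needed to close the argument is an additional finiteness property of the projectives of $\AC$ relative to sums of subobjects — for instance that every morphism from a projective into $\sum_{N\in I}N$ factors through the sum of a finite subfamily, which holds when objects of $\AC$ are noetherian, as in the paper's intended applications — and this must be supplied as an explicit input rather than extracted from (i) and projectivity alone. (The same remark applies to the corresponding sentence in the paper's own proof.)
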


\begin{proof}
The first part of the lemma is clear.

Assume (i). Let $P\in\Proj(\AC)$ and let $\Psi$ be a subobject of
$\Hom(-,P)$. There is a family $I$ of objects of $\Proj(\AC)$ and
maps $f_Q:\Hom(-,Q)\to\Psi$ for $Q\in I$ such that $\sum f_Q:\bigoplus_{Q\in I}
\Hom(-,Q)\to\Psi$
is surjective. Define $g_Q:Q\to P$ such that $\Hom(-,g_Q)$ is the
composition of $f_Q$ with the inclusion $\Psi\hookrightarrow\Hom(-,P)$.
By assumption, there is a finite subset $I'$ of $I$ such that
	$\im(g_Q)\subset \sum_{Q'\in I'}\im(g_{Q'})$ for all $Q\in I$.
	It follows that $\Psi$ is a quotient of $\Hom(-,\bigoplus_{Q'\in I'}
	Q')$, hence
$L\in\Proj(\AC)\mmod$. This shows that (ii) holds.

\smallskip
Assume (ii). Every object $M$ of $\Proj(\AC)\mmod$ is isomorphic to
the cokernel of a map $f:\Hom(-,P)\to\Hom(-,Q)$ with $P,Q\in\Proj(\AC)$.
There is $g\in\Hom(P,Q)$ such that $f=\Hom(-,g)$. We have $M\simeq
\mathrm{coker} f=\Hom(-,\mathrm{coker} g)$ and (iii) follows.

\smallskip
Assume (iii). Let $M\in\AC$ and let $I$ be a family of subobjects of $M$.
Since sums of subobjects exist in $\ZM\mMOD$, they exist in 
$\Proj(\AC)\mMOD$. So, there is a subobject $\Psi=\sum_{Q\in I}
\Hom(-,Q)$ of $\Hom(-,M)$. Let $\Phi=\Hom(-,M)/\Psi$. Since
$\Phi\in\Proj(\AC)\mmod$, there is $N\in\AC$ and an isomorphism
$\Phi\simeq\Hom(-,N)$. There is a map $f\in\Hom(M,N)$ such that
$\Hom(-,f)$ corresponds to the quotient map $\Hom(-,M)\twoheadrightarrow\Phi$.
We have $\Psi=\ker(\Hom(-,f))\simeq\Hom(-,\ker f)$. We deduce that
$\ker f=\sum_{Q\in I}Q$. So, (i) holds.
\end{proof}

Let $\DC$ be a $k$-linear category. Note that the forgetful functor
from the category of $k$-linear functors $\DC^\opp\to k\mMOD$ to the
category $\DC\mMOD$ is an isomorphism of categories.

Let $k'$ be a commutative $k$-algebra.
We denote by $k'\DC$ the $k'$-linear
category with set of objects $\{k'M\}$ where $M$ runs over the set
of objects of $\DC$ and with $\Hom_{k'\DC}(k'M,k'N)=k'\otimes\Hom_\DC(M,N)$.
There is a base change functor $k'\otimes -:\DC\to k'\DC,\ M\mapsto k'M$
that is compatible, via the Yoneda embedding, with the base change
functor $k'\otimes -:\DC\mMOD\to (k'\DC)\mMOD,\ F\mapsto (k'N\mapsto 
k'\otimes F(N))$.

\begin{lem}
\label{le:lattice}
Assume $k'$ is a localization of $k$. Let $M\in (k'\DC)\mmod$. There exists
$\tilde{M}\in\DC\mmod$ such that $k'\tilde{M}\simeq M$.
\end{lem}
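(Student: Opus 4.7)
The plan is to lift a presentation of $M$ from $(k'\DC)\mmod$ to $\DC\mmod$ using flatness of the localization map $k\to k'$.

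By definition of $(k'\DC)\mmod$, there is an object $N$ of $\DC$ and a surjection $\Hom_{k'\DC}(-,k'N)\twoheadrightarrow M$ in $(k'\DC)\mMOD$. Let $K$ be its kernel, so we have a short exact sequence $0\to K\to \Hom_{k'\DC}(-,k'N)\to M\to 0$. For each object $P$ of $\DC$, the natural map $\iota_P:\Hom_\DC(P,N)\to \Hom_{k'\DC}(k'P,k'N)=k'\otimes_k\Hom_\DC(P,N)$ is the localization map. I will define $\tilde{K}$ as the subfunctor of $\Hom_\DC(-,N)$ given by $\tilde{K}(P)=\iota_P^{-1}(K(k'P))$; functoriality in $P$ is immediate from functoriality of $K$ and naturality of $\iota$.

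The key claim is that the canonical map $k'\otimes_k\tilde{K}(P)\to K(k'P)$ is an isomorphism for every $P$. Write $k'=S^{-1}k$ for some multiplicative subset $S\subset k$. On one hand, any element of $\tilde{K}(P)$ maps into $K(k'P)$ by construction, and since $K(k'P)$ is a $k'$-submodule of $k'\otimes\Hom_\DC(P,N)$, the image of $k'\otimes\tilde{K}(P)$ lies in $K(k'P)$. Conversely, any element of $K(k'P)$ has the form $\frac{f}{s}$ with $f\in\Hom_\DC(P,N)$ and $s\in S$; then $f=s\cdot\frac{f}{s}\in K(k'P)$, so $f\in\tilde{K}(P)$, and $\frac{f}{s}$ is in the image of $k'\otimes\tilde{K}(P)$. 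Injectivity of the map follows from flatness of $k'$ over $k$.

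Now set $\tilde{M}=\Hom_\DC(-,N)/\tilde{K}$. This is a quotient of a representable functor on $\DC$, hence $\tilde{M}\in\DC\mmod$. Applying the exact functor $k'\otimes_k -$ to the short exact sequence $0\to\tilde{K}\to\Hom_\DC(-,N)\to\tilde{M}\to 0$ and using the identification $k'\otimes\tilde{K}\simeq K$ from the previous step, we obtain $k'\tilde{M}\simeq\Hom_{k'\DC}(-,k'N)/K\simeq M$, as required.

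The argument is essentially routine; the only point requiring care is the identification $k'\otimes\tilde{K}\simeq K$, which relies on the fact that localization at a multiplicative set allows one to clear denominators and recover an integral representative of any local element — this is precisely where the hypothesis that $k'$ is a localization (rather than an arbitrary $k$-algebra) is used.
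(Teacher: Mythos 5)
Your proof is correct and follows essentially the same route as the paper's: present $M$ as a quotient of a representable functor $\Hom_{k'\DC}(-,k'N)$, pull the kernel back along the canonical localization map to get a subfunctor of $\Hom_\DC(-,N)$, and verify by clearing denominators that base change recovers the original kernel. You simply spell out the isomorphism $k'\tilde{K}\simeq K$ that the paper asserts without detail.
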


\begin{proof}
Let $Q\in\DC$ and $N$ a subobject of $k'Q$ in
$(k'\DC)\mMOD$ such that $M\simeq (k'Q)/N$.

Let $\phi:k\to k'$ be the
canonical algebra map and $\phi_Q=\phi\otimes\id_Q:Q\to k'\otimes Q$.
Let $L=\phi_Q^{-1}(N)\subset Q$. We have a canonical isomorphism
$k'L\xrightarrow{\sim} N$, hence $M\simeq k'(Q/L)$.
\end{proof}

\begin{lem}
\label{le:basechangenoetherian}
Assume $k'$ is a finitely generated module over a localization of $k$.

If $\DC$ is locally noetherian, then $k'\DC$ is locally noetherian.
\end{lem}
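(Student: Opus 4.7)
The plan is to factor the base change in two stages and handle each stage separately. Since $k'$ is a finitely generated module over a localization $S^{-1}k$ of $k$, the structure map $k\to k'$ factors as $k\to S^{-1}k\to k'$, and there is a corresponding factorization $\DC\to (S^{-1}k)\DC\to k'\DC$ of the base change functor (note that $k'\otimes_k -\simeq k'\otimes_{S^{-1}k}(S^{-1}k\otimes_k-)$, and $S^{-1}k\otimes_k k'=k'$ since elements of $S$ act invertibly on~$k'$). It therefore suffices to treat two cases: (a) $k'=S^{-1}k$ is a localization, and (b) $k'$ is a finitely generated $k$-module.

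In case (a), let $M\in(k'\DC)\mmod$ and let $N$ be a subobject of $M$ in $(k'\DC)\mMOD$. By Lemma~\ref{le:lattice}, there exists $\tilde{M}\in\DC\mmod$ with $k'\tilde{M}\simeq M$. Via the forgetful functor $(k'\DC)\mMOD\to\DC\mMOD$ (which is faithful and takes the canonical map $\tilde{M}\to k'\tilde{M}$ to a genuine morphism in $\DC\mMOD$), let $\tilde{N}$ be the preimage of $N$ in $\tilde{M}$; this is a subobject of $\tilde{M}$ in $\DC\mMOD$. Since $\DC$ is locally noetherian, $\tilde{N}\in\DC\mmod$. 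The key calculation is then that $N=k'\tilde{N}$ inside $M$: the inclusion $k'\tilde{N}\subset N$ is clear, and for the reverse, any $x\in N$ can be written $x=s^{-1}\phi_*(\tilde{y})$ for some $\tilde{y}\in\tilde{M}$ and $s\in S$ (since $M=k'\cdot\phi_*(\tilde{M})$, and one clears denominators); then $s\cdot x\in N$, which forces $\tilde{y}\in\tilde{N}$, so $x\in k'\tilde{N}$. Exactness of $k'\otimes_k -$ is what makes this identification a subobject of $M$ isomorphic to $k'\tilde{N}\in(k'\DC)\mmod$.

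In case (b), choose a finite generating family $c_1,\dots,c_n$ for $k'$ as a $k$-module. Given any $Q\in\DC$, the morphism $Q^n\to k'Q$ in $\DC\mMOD$ sending $(q_1,\dots,q_n)$ to $\sum c_i\otimes q_i$ is surjective objectwise (since $\Hom_{k'\DC}(k'R,k'Q)=k'\otimes\Hom_\DC(R,Q)$), so $k'Q\in\DC\mmod$. Consequently, if $M\in(k'\DC)\mmod$, then $M$ is a quotient of some $k'Q$ in $(k'\DC)\mMOD$, hence also in $\DC\mMOD$, so $M\in\DC\mmod$. Any subobject $N$ of $M$ in $(k'\DC)\mMOD$ is a subobject in $\DC\mMOD$, and therefore $N\in\DC\mmod$ by the locally noetherian assumption. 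Picking a surjection $Q'\twoheadrightarrow N$ in $\DC\mMOD$ and extending it via adjunction to a surjection $k'Q'\twoheadrightarrow N$ in $(k'\DC)\mMOD$ (surjectivity is checked at each object) shows $N\in(k'\DC)\mmod$.

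The main obstacle is the identification $N=k'\tilde{N}$ in case (a); the rest is bookkeeping. This step genuinely uses the flatness of the localization map to ensure $k'\tilde{N}$ embeds into $k'\tilde{M}=M$ and to clear denominators. Combining the two cases yields the result.
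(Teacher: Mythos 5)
Your proof is correct and follows essentially the same route as the paper's: reduce by transitivity to the two cases of a localization and a finitely generated module, handle the first by taking the preimage of $N$ in a lattice $\tilde{M}$ (this is exactly the content of the proof of Lemma~\ref{le:lattice}) and the second via the restriction functor, the observation that $k'Q$ restricts to a quotient of $Q^n$, and adjunction. The only cosmetic difference is that you verify the condition for arbitrary $M\in(k'\DC)\mmod$ where the paper reduces to representables, which changes nothing.
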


\begin{proof}
Let $M\in\DC$ and let $N$ be a subobject of $k'M$ in
$(k'\DC)\mMOD$.

Assume $k'$ is a localization of $k$. The proof of Lemma \ref{le:lattice}
shows that there exists a subobject $L$ of $M$ such that $k'L\simeq N$.
Since $L\in\DC\mmod$, it follows that $N\in(k'\DC)\mmod$.

Assume $k'$ is a finitely generated $k$-module. The restriction of $k'M$
to $\DC\mMOD$ is a quotient of a finite direct sum of copies of
$M$, hence the restriction $N_0$ of $N$ to $\DC\mMOD$ is the quotient
of an object of $\DC\mmod$. So, there exists
$P\in \DC$ and a surjective map $P\twoheadrightarrow N_0$ in $\DC\mmod$.
By adjunction, we obtain a surjective map
$k'P\twoheadrightarrow N$ in $(k'\DC)\mMOD$. So $N\in (k'\DC)\mmod$.

The general case follows.
\end{proof}

Assume $k$ is a discrete valuation ring with residue field $\bar{k}$ and
field of fractions $K$.

\begin{lem}
Assume $\Hom$-spaces in $K\DC$ are finite-dimensional vector spaces over $K$
and $\DC$ is locally noetherian. Then $K\DC$ and $\bar{k}\DC$ are
locally noetherian.

Let $M\in K\DC$. There exists $\tilde{M}\in\DC$ such that 
$K\tilde{M}\simeq M$ and $\Hom(P,M)$ is a free 
$k$-module of finite rank for every $P\in\DC$.

Given any such $\tilde{M}$, the class $[\bar{k}\tilde{M}]\in
K_0((\bar{k}\DC)\mmod)$ depends only on $[M]\in K_0((K\DC)\mmod)$.
\end{lem}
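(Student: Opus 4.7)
The plan is to address the three assertions in order. The first is an immediate application of Lemma \ref{le:basechangenoetherian}: the field $K$ is a localization of $k$, while $\bar{k}$ is a quotient of $k$ and hence a finitely generated $k$-module, so the hypothesis of that lemma holds in both cases.

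For the existence of $\tilde{M}$, I would start from any lift $\tilde{M}_0\in\DC\mmod$ with $K\tilde{M}_0\simeq M$ provided by Lemma \ref{le:lattice}, then kill the $k$-torsion. Let $\pi$ be a uniformizer of $k$ and let $T\subset\tilde{M}_0$ be the subfunctor defined by $T(P)=\{x\in\tilde{M}_0(P)\mid \pi^n x=0\text{ for some }n\}$. Since $\DC$ is locally noetherian, $T\in\DC\mmod$, so $\tilde{M}:=\tilde{M}_0/T$ also lies in $\DC\mmod$, and $K\tilde{M}\simeq K\tilde{M}_0\simeq M$ because $KT=0$. Pointwise, $\tilde{M}(P)$ is $k$-torsion-free and embeds into $K\tilde{M}(P)=M(KP)$. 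As $M$ is a quotient of some representable $\Hom_{K\DC}(-,KQ)$ and $\Hom_{K\DC}(KP,KQ)=K\otimes\Hom_\DC(P,Q)$ is finite-dimensional over $K$ by hypothesis, $M(KP)$ is finite-dimensional. Hence $\tilde{M}(P)$ is a finitely generated torsion-free module over the discrete valuation ring $k$, i.e., free of finite rank.

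The last assertion decomposes into two independent steps: independence from the chosen lift, and additivity along short exact sequences. For the first, given two lifts $\tilde{M}_1,\tilde{M}_2$ of $M$, fix isomorphisms $K\tilde{M}_i\simeq M$ and view each $\tilde{M}_i$ as a subfunctor of the restriction $M|_\DC$ via the inclusions $\tilde{M}_i(P)\hookrightarrow M(KP)$. Let $\tilde{N}$ be the image of the sum map $\tilde{M}_1\oplus\tilde{M}_2\to M|_\DC$; it lies in $\DC\mmod$ as a quotient of such, its pointwise values are lattices in $M(KP)$ and hence free of finite rank, and $K\tilde{N}\simeq M$. Choose a surjection $\Hom_\DC(-,Q_0)\twoheadrightarrow\tilde{N}$ and let $f\in\tilde{N}(Q_0)$ be the image of $\mathrm{id}_{Q_0}$. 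Since $\tilde{N}(Q_0)/\tilde{M}_1(Q_0)$ is a finitely generated $\pi$-torsion $k$-module, there is $n$ with $\pi^n f\in\tilde{M}_1(Q_0)$; by Yoneda every element of $\tilde{N}(P)$ has the form $\tilde{N}(g)(f)$ for some $g:P\to Q_0$, so $\pi^n \tilde{N}\subset\tilde{M}_1$. Filter by $F_i=(\pi^{-i}\tilde{M}_1)\cap\tilde{N}$, obtaining $\tilde{M}_1=F_0\subset F_1\subset\cdots\subset F_n=\tilde{N}$ where each $F_i$ lies in $\DC\mmod$ with free pointwise values (lattices in $M(KP)$) and $F_{i+1}/F_i$ is $\pi$-annihilated. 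The $\Tor$ long exact sequence for $0\to F_i\to F_{i+1}\to F_{i+1}/F_i\to 0$, together with the identity $\Tor_1^k(\bar{k},F_{i+1}/F_i)\simeq F_{i+1}/F_i$, gives a four-term exact sequence $0\to F_{i+1}/F_i\to\bar{k}F_i\to\bar{k}F_{i+1}\to F_{i+1}/F_i\to 0$, so $[\bar{k}F_{i+1}]=[\bar{k}F_i]$ in $K_0((\bar{k}\DC)\mmod)$. Telescoping yields $[\bar{k}\tilde{M}_1]=[\bar{k}\tilde{N}]$, and symmetrically $[\bar{k}\tilde{M}_2]=[\bar{k}\tilde{N}]$.

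For additivity, given a short exact sequence $0\to M'\to M\to M''\to 0$ in $(K\DC)\mmod$ and a free lift $\tilde{M}$ of $M$, define $\tilde{M}'$ pointwise by $\tilde{M}'(P)=\tilde{M}(P)\cap M'(KP)$. It is a subfunctor of $\tilde{M}$, hence in $\DC\mmod$ by local noetherianness, has free pointwise values, and satisfies $K\tilde{M}'\simeq M'$ (scaling any element of $M'(KP)$ by a suitable power of $\pi$ brings it into $\tilde{M}'(P)$). The quotient $\tilde{M}'':=\tilde{M}/\tilde{M}'$ embeds pointwise into $M''(KP)$, hence is torsion-free and therefore also free of finite rank. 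Since $\tilde{M}''$ is pointwise $k$-flat, applying $\bar{k}\otimes_k-$ to $0\to\tilde{M}'\to\tilde{M}\to\tilde{M}''\to 0$ preserves exactness, so $[\bar{k}\tilde{M}]=[\bar{k}\tilde{M}']+[\bar{k}\tilde{M}'']$ in $K_0((\bar{k}\DC)\mmod)$. Combining the two steps produces the desired well-defined homomorphism. The main obstacle is the uniform annihilation $\pi^n\tilde{N}\subset\tilde{M}_1$ in the independence argument, which requires transporting a torsion bound from the single generating object $Q_0$ to every $P\in\DC$ through the Yoneda-style description of elements of $\tilde{N}(P)$; all other steps are standard lattice manipulations over a discrete valuation ring, lifted pointwise.
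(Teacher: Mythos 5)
Your proof is correct and rests on the same core mechanism as the paper's: produce a lattice by killing the $k$-torsion of a lift from Lemma \ref{le:lattice}, and compare two lattices through a chain of sublattices whose successive quotients are killed by $\pi$, using the four-term $\Tor$ sequence and the identification $\Tor_1^k(\bar{k},L)\simeq\bar{k}L$ for $\pi$-torsion $L$. The organizational differences are worth noting. Where the paper simply asserts the existence of an injection $f:\tilde{M}_1\to\tilde{M}_2$ with $\pi^n\tilde{M}_2\subset\im(f)$ and then runs an induction on $n$ via the intermediate lattice $\im(f)+\pi^{n-1}\tilde{M}_2$, you construct the comparison explicitly: you realize both lattices inside $M|_\DC$, form $\tilde{N}=\tilde{M}_1+\tilde{M}_2$, and prove the uniform bound $\pi^n\tilde{N}\subset\tilde{M}_1$ by pushing a torsion bound from the single Yoneda generator $f\in\tilde{N}(Q_0)$ to all of $\DC$ — this fills a step the paper leaves unjustified. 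More significantly, you prove additivity of $M\mapsto[\bar{k}\tilde{M}]$ along short exact sequences (via $\tilde{M}'=\tilde{M}\cap M'$ and the pointwise flatness of $\tilde{M}''$), which is genuinely needed for the assertion that the class depends only on $[M]\in K_0$ rather than only on the isomorphism class of $M$; the paper's proof omits this entirely, even though the decomposition map $d$ defined immediately afterwards requires it. One caveat you share with the paper: the step ``torsion-free and $K\otimes(-)$ finite-dimensional, hence free of finite rank'' silently uses finite generation of $\tilde{M}(P)$ over $k$, which does not follow from the stated hypotheses alone (consider $K$ itself as a $k$-module); it does hold in the intended application, where $\Hom$-spaces in $\DC$ are finitely generated $k$-modules by Lemma \ref{le:hwfinitegen}.
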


\begin{proof}
	Note that  $K\DC$ and $\bar{k}\DC$ are
locally noetherian by Lemma \ref{le:basechangenoetherian}.

Lemma \ref{le:lattice} ensures the existence of $N\in\DC\mmod$ such that
$k'N\simeq M$. Let $N'$ be the torsion subobject of $N$ and $\tilde{M}=N/N'$.
We have $k'\tilde{M}\simeq M$. Let $P\in\DC$. Since
$\Hom(P,M)$ is a torsion-free $k$-module
such that $K\otimes\Hom(P,M)$ is a finite-dimensional $K$-vector space,
it follows that $\Hom(P,M)$ is a free $k$-module of finite rank.

\smallskip
Let $\tilde{M}_1$ and $\tilde{M}_2$ be two objects of $\DC\mmod$. Given
	$P\in\DC$, the canonical map $K\Hom(P,\tilde{M}_2)\to
	\Hom(KP,K\tilde{M}_2)$ is an isomorphism. Since
	$\tilde{M}_1$ has a resolution
	$P'_1\to P_1\to \tilde{M}_1\to 0$ with
	$P'_1,P_1\in\DC$, it follows that the canonical
	map $K\Hom(\tilde{M}_1,\tilde{M}_2)\to\Hom(K\tilde{M}_1,K\tilde{M}_2)$
	is an isomorphism.

	Assume now we have an isomorphism $g:K\tilde{M}_1
	\xrightarrow{\sim}K\tilde{M}_2$. There is $f:M_1\to M_2$ and 
	$m\ge 0$ such that $Kf=\pi^m g$, where
$\pi$ is a generator of the maximal ideal of $k$. Similarly, there is
	$f':M_2\to M_1$ and $m'\ge 0$ such that $Kf'=\pi^{m'}g^{-1}$.
	Let $n=m+m'$.  Since $f'\circ f=\id^n$,
	it follows that $f$ is injective. We have
	$f\circ f'=\pi^n \id$, hence
$\pi^n\tilde{M}_2\subset\im(f)$.

	We proceed by induction on $n$ 
to show that $[\bar{k}\tilde{M}_1]=[\bar{k}\tilde{M}_2]$.

Assume $n=1$. Let $L=\mathrm{coker} f$.
There is an exact sequence 
$$0\to \Tor_1^k(\bar{k},L)\to \bar{k}\tilde{M}_1\to
\bar{k}\tilde{M}_2\to \bar{k}L\to 0.$$
Since $\pi L=0$, we have $\Tor_1^k(\bar{k},L)\simeq \bar{k}L$,
hence $[\bar{k}\tilde{M}_1]=[\bar{k}\tilde{M}_2]$.

In the general case, let $\tilde{M}_3=\im(f)+\pi^{n-1}\tilde{M}_2$.
We have $\pi \tilde{M}_3\subset \im(f)$, hence
$[\bar{k}\tilde{M}_1]=[\bar{k}\tilde{M}_3]$. We have
$\mG^{n-1}\tilde{M}_2\subset \tilde{M}_3$, so it follows by induction that
$[\bar{k}\tilde{M}_2]=[\bar{k}\tilde{M}_3]$. This completes the proof of the
lemma.
\end{proof}

The previous lemma provides a decomposition map
$$d:K_0((K\DC)\mmod)\to K_0((\bar{k}\DC)\mmod)$$
with the property that $d([KM])=[\bar{k}M]$ when $M\in\DC\mmod$ and
$\Hom(P,M)$ is a projective $k$-module for all $P\in\DC$.

\medskip
We assume from now on that $\CC$ is a highest weight category over $k$ with
poset of standard objects $\Delta$.

Proposition \ref{pr:enoughproj} and Lemma \ref{le:functorcat} imply the
following result.

\begin{lem}
\label{le:projfunctors}
Assume $\Delta$ is finitely generated as an ideal and $\CC$ is noetherian.
The Yoneda functor induces an equivalence
$\CC\xrightarrow{\sim} \Proj(\CC)\mmod$.
\end{lem}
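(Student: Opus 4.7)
The plan is to combine the two results cited in the hint, after verifying that the hypothesis of Proposition~\ref{pr:enoughproj} is implied by the assumption that $\Delta$ is finitely generated as an ideal.

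First I would observe that if $\Delta = \Delta_{\le D_1}\cup\cdots\cup\Delta_{\le D_n}$ and $D\in\Delta$, then
$$\Delta_{>D} = \bigcup_{i=1}^n \{D'\in\Delta \mid D<D'\le D_i\},$$
and each set in the union is finite by local finiteness of $\Delta$. Hence $\Delta_{>D}$ is finite for every $D\in\Delta$, which is precisely the hypothesis of Proposition~\ref{pr:enoughproj}. Applying that proposition, $\CC$ has enough projective objects.

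Next I would invoke Lemma~\ref{le:functorcat}, which applies since $\CC$ is an abelian category with enough projectives. The category $\CC$ satisfies condition (i) of that lemma by the standing assumption made at the beginning of Appendix~\ref{ap:hw}: every family of subobjects of an object of $\CC$ admits a sum. By the equivalence (i)$\Leftrightarrow$(iii) in Lemma~\ref{le:functorcat}, the Yoneda functor $\CC \to \Proj(\CC)\mmod$, $M\mapsto \Hom(-,M)$, is then an equivalence of categories.

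There is no genuine obstacle here: the argument is a short combination of the two cited results, and the only point that required a small verification was the passage from ``finitely generated ideal'' to ``$\Delta_{>D}$ finite'', which follows at once from local finiteness of $\Delta$.
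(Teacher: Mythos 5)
Your proof is correct and follows exactly the paper's route: the paper simply states that Proposition~\ref{pr:enoughproj} and Lemma~\ref{le:functorcat} imply the result. The one step the paper leaves implicit — that finite generation of $\Delta$ as an ideal together with local finiteness forces $\Delta_{>D}$ to be finite for every $D$ — is precisely what you verify, and your verification is correct.
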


\smallskip
Given two ideals $\Gamma\subset\Gamma'$, we have a functor
$F_{\Gamma\subset\Gamma'}:\CC[\Gamma']\to\CC[\Gamma]$ sending
an object of $\CC[\Gamma']$ to its largest quotient in $\CC[\Gamma]$
(Proposition \ref{pr:idealshw}).
This functor sends projective objects to projective objects.

When $\Gamma$ and $\Gamma'$ are finitely generated, we
have a commutative diagram
$$\xymatrix{
\CC[\Gamma]\ar[rr]^-{M\mapsto\Hom(-,M)}_-\sim \ar@{^{(}->}[d] &&
 \Proj(\CC[\Gamma])\mmod
\ar[d]^{-\circ F_{\Gamma\subset\Gamma'}} \\
\CC[\Gamma']\ar[rr]_-{M\mapsto\Hom(-,M)}^-\sim &&
 \Proj(\CC[\Gamma'])\mmod
}$$
and the vertical arrow $-\circ F_{\Gamma\subset\Gamma'}$ is fully faithful.

\smallskip
Given $M\in\CC$,
there is a finitely generated ideal $\Gamma'$ of $\Delta$ such that
$M\in\CC[\Gamma']$ (Corollary \ref{co:CCunion}) and $\Hom(-,M)$ defines an
object of $\Proj(\CC[\Gamma'])\mmod$.
So, we obtain a functor
$$\CC\to\colim_{\Gamma}\ \Proj(\CC[\Gamma])\mmod,$$
where the colimit is taken using the system of strictly transitive
transition functors $-\circ F_{\Gamma\subset\Gamma'}$.
Lemma \ref{le:projfunctors} shows the following result.

\begin{coro}
The Yoneda functor gives an equivalence
$$\CC\xrightarrow{\sim}\colim_{\Gamma}\ \Proj(\CC[\Gamma])\mmod,$$
where $\Gamma$ runs over finitely generated ideals of $\Delta$.
\end{coro}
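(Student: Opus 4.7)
The plan is to assemble the already-established ingredients. Corollary \ref{co:CCunion} exhibits $\CC$ as the union of the Serre subcategories $\CC[\Gamma]$ for $\Gamma$ a finitely generated ideal of $\Delta$, and each such $\CC[\Gamma]$ is itself a highest weight category with finitely generated poset of standard objects $\Gamma$ (Proposition \ref{pr:idealshw}(iii)); hence Lemma \ref{le:projfunctors} applies to $\CC[\Gamma]$ and yields an equivalence $\CC[\Gamma] \xrightarrow{\sim} \Proj(\CC[\Gamma])\mmod$ via the Yoneda functor. The work is therefore to package these equivalences into a single colimit statement compatible with the transition functors.

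First I would verify that, for $\Gamma \subset \Gamma'$ finitely generated ideals of $\Delta$, the fully faithful inclusion $\CC[\Gamma] \hookrightarrow \CC[\Gamma']$ corresponds under Yoneda to the transition functor $- \circ F_{\Gamma \subset \Gamma'}$ used on the right-hand side. This is exactly the commutative diagram displayed just before the statement of the corollary; it relies on the fact (Proposition \ref{pr:idealshw}(iv)) that $F_{\Gamma \subset \Gamma'}$ is the left adjoint to the inclusion and therefore sends projectives of $\CC[\Gamma']$ to projectives of $\CC[\Gamma]$, together with the observation that $-\circ F_{\Gamma \subset \Gamma'}$ is fully faithful. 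Both sides of the desired equivalence are thus $2$-colimits of fully faithful functors along the same directed poset of finitely generated ideals, and the Yoneda equivalences form a natural transformation between the two systems.

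Second, I would check the equivalence levelwise. Essential surjectivity of the induced functor $\CC \to \colim_\Gamma \Proj(\CC[\Gamma])\mmod$ is the content of the remark preceding the corollary: given $M \in \CC$, Corollary \ref{co:CCunion} produces a finitely generated ideal $\Gamma'$ with $M \in \CC[\Gamma']$, so $\Hom_\CC(-,M)$ restricts to an object of $\Proj(\CC[\Gamma'])\mmod$, and this object represents the class of $M$ in the colimit. Full faithfulness follows because any two objects of $\CC$, and any morphism between them, lie inside some common $\CC[\Gamma']$ (take the finitely generated ideal generated by the union of two such ideals), combined with the full faithfulness of the Yoneda functor $\CC[\Gamma'] \xrightarrow{\sim} \Proj(\CC[\Gamma'])\mmod$ supplied by Lemma \ref{le:projfunctors}; passing to the filtered colimit preserves hom-sets computed this way.

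The main obstacle is merely bookkeeping: one must interpret ``$\colim$'' compatibly on the two sides, noting that the transition functors $F_{\Gamma \subset \Gamma''} \simeq F_{\Gamma \subset \Gamma'} \circ F_{\Gamma' \subset \Gamma''}$ are only strictly transitive up to the canonical natural isomorphism of left adjoints. This is precisely the ``strictly transitive transition functors'' phrasing used just before the corollary, and no further content is required beyond Corollary \ref{co:CCunion}, Lemma \ref{le:projfunctors}, and the preceding commutative diagram.
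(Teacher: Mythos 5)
Your proof is correct and follows the same route as the paper: the paper likewise combines Corollary \ref{co:CCunion}, the commutative diagram identifying the inclusions $\CC[\Gamma]\hookrightarrow\CC[\Gamma']$ with the transition functors $-\circ F_{\Gamma\subset\Gamma'}$, and Lemma \ref{le:projfunctors} applied to each $\CC[\Gamma]$. The only difference is that you spell out the levelwise verification of full faithfulness and essential surjectivity, which the paper leaves implicit.
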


Let $k'$ be a noetherian commutative $k$-algebra.
Given $\Gamma$ a finitely generated ideal of $\Delta$, we put
$(k'\CC)_{\Gamma}=\bigl(k'\Proj(\CC[\Gamma])\bigr)\mmod$.
We define
$k'\CC=\colim_\Gamma (k'\CC)_{\Gamma}$, where $\Gamma$ runs 
over finitely generated ideals of $\Delta$.
Note that, in this colimit, given $\Gamma_1\subset\Gamma_2$, the functors
$(k'\CC)_{\Gamma_1}\to (k'\CC)_{\Gamma_2}$ are fully faithful.

The base change functor $k'\otimes-:
\Proj(\CC[\Gamma])\mmod\to
\bigl(k'\Proj(\CC[\Gamma])\bigr)\mmod$ induces a
base change functor $k'\otimes -:\CC\to k'\CC$.

\begin{prop}
\label{pr:basechangehw}
	Assume $\mathrm{Proj}(\CC[\Gamma])$ and
	$k'\mathrm{Proj}(\CC[\Gamma])$ are locally noetherian
	for all finitely generated ideals $\Gamma$ of $\Delta$ (note that
	that the assertion over $k'$ is a consequence of the one over $k$ when
	$k'$ is a finitely generated module over a localization of $k$).

The category $k'\CC$ is a highest weight category with set
of standard objects $k'\Delta=\{k'D\}_{D\in\Delta}$.
\end{prop}

\begin{proof}
Let $D\in\Delta$. The object $D$
is projective in $\CC[\Delta_{\le D}]$, hence 
$$\End_{k'\CC}(k'D)=\End_{(k'\CC)_{\Delta_{\le D}}}(k'D)=
k'\End_{\CC[\Delta_{\le D}]}(D)=k'.$$
So, (i) in Definition \ref{de:hwcat} holds for $k'\CC$.

Let $D_1,D_2\in\Delta$ and let $\Gamma$ be the ideal of
$\Delta$ generated by $D_1$ and $D_2$. If $D_1\nleq D_2$, then
$D_1$ is projective in $\CC[\Gamma]$, hence as above we have
$\Hom(k'D_1,k'D_2)\simeq k'\Hom(D_1,D_2)=0$.
We deduce that (ii) holds.

By assumption, every object of $k'\CC$ is a quotient of an object
of the form $k'P$, where $P$ is a projective object of
$\CC[\Gamma]$ for some finitely generated $\Gamma$. Note that
$k'P\in (k'\CC)^{k'\Delta}$ and (iii) holds.

\smallskip

Let $\Gamma$ be a finitely generated ideal of $\Delta$.
Let $M\in (k'\CC)_\Gamma$. Let $D\in\Delta\setminus\Gamma$ and
consider $g:k'D\to M$.
Let $P\in \Proj(\CC[\Gamma])$ and $f:k'P\twoheadrightarrow M$ be
a surjection. Let $\Gamma'=\Gamma\cup\Delta_{\le D}$.
Since $D$ is projective in $\CC[\Gamma']$ (Lemma \ref{le:projcoverD}),
it follows that $k'D$ is projective in $(k'\CC)_{\Gamma'}$, hence
there is $h:k'D\to k'P$ such that $g=f\circ h$.
On the other hand, $\Hom_{(k'\CC)_{\Gamma'}}(k'D,k'P)
\simeq k'\Hom_{\CC}(D,P)=0$. So, $g=0$.
We deduce that $\Hom(k'D,M)=0$, hence
$(k'\CC)_\Gamma\subset (k'\CC)[k'\Gamma]$.

\smallskip
Let $D,D'\in\Delta$ and $M\in k'\CC$. Let 
$\Gamma$ be a finitely generated ideal of $\Delta$ containing $D$ and $D'$
and such that $M\in (k'\CC)_\Gamma$. By Lemma \ref{le:projcoverD}, there
is $P\in\Proj(\CC[\Gamma])$ and a surjective
map $P\twoheadrightarrow D$ whose kernel is in $\CC^{\Delta_{>D}}$.
So, we have a surjection $k'P\twoheadrightarrow k'D$
whose kernel is in $(k'\CC)^{k'\Delta_{>D}}$.
We have 
$\Hom_{(k'\CC)_\Gamma}(k'P,k'D')\simeq
k'\Hom_{\CC[\Gamma]}(P,D')$ and $\Hom_\CC(P,D')$ is a finitely
generated projective $k$-module (Lemma \ref{le:hwfinitegen}). It follows
that $\Hom_{k'\CC}(k'P,k'D')$ is a finitely generated 
projective $k'$-module.

 Consider an exact sequence $0\to M\to N
\to k'P\to 0$ with $N\in k'\CC$. There is a finitely generated
ideal $\Gamma'$ of $\Delta$ containing $\Gamma$ and such that
$N\in(k'\CC)_{\Gamma'}$. Now, there is $R\in\Proj(\CC[\Gamma'])$
and a surjection $f:k'R\twoheadrightarrow N$. By Proposition
\ref{pr:idealshw}(i),
there is $R'\le R$ such that $R'\in\CC^{\Gamma'\setminus\Gamma}$ and
$R/R'\in\CC^\Gamma$. We have $\Hom(k'R',k'P)=0$ and
$\Hom(k'R',M)=0$, hence $f$ factors through a surjection
$k'(R/R')\twoheadrightarrow N$. We have $k'(R/R')\in
(k'\CC)_{\Gamma}$, hence $N\in (k'\CC)_{\Gamma}$, so the
surjection $N\twoheadrightarrow k'P$ splits. This shows that
$\Ext^1_{k'\CC}(k',M)=0$, hence (iv) holds.
This completes the proof of
the proposition.
\end{proof}

\subsection{Grothendieck groups}
\label{hw:K0}
The next lemma follows from \cite[Lemma II.6.2.7]{We2}.

\begin{lem}
\label{le:K0colim}
We have $K_0(\CC)=\colim_{\Gamma}K_0(\CC[\Gamma])$, where $\Gamma$
runs over finitely generated ideals of $\Delta$.
\end{lem}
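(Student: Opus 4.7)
The plan is to exhibit the canonical map $\colim_{\Gamma}K_0(\CC[\Gamma])\to K_0(\CC)$ explicitly and then check surjectivity and injectivity separately, using that each $\CC[\Gamma]$ is a Serre subcategory of $\CC$ (Proposition \ref{pr:idealshw}(iii)) and that $\CC=\bigcup_\Gamma \CC[\Gamma]$ (Corollary \ref{co:CCunion}).

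First I would set up the directed system. Given finitely generated ideals $\Gamma\subset\Gamma'$ of $\Delta$, the inclusion $\CC[\Gamma]\hookrightarrow\CC[\Gamma']$ is exact, since both are Serre subcategories of $\CC$; so it induces a map $K_0(\CC[\Gamma])\to K_0(\CC[\Gamma'])$, and these maps are compatible and turn $\{K_0(\CC[\Gamma])\}_\Gamma$ into a direct system indexed by the poset of finitely generated ideals of $\Delta$ (ordered by inclusion). The inclusions $\CC[\Gamma]\hookrightarrow\CC$ are likewise exact and together induce a canonical map
$$\Phi:\colim_{\Gamma}K_0(\CC[\Gamma])\longto K_0(\CC).$$

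For surjectivity, pick $M\in\CC$. By Corollary \ref{co:CCunion} there is a finitely generated ideal $\Gamma$ with $M\in\CC[\Gamma]$, so $[M]\in K_0(\CC)$ is the image of $[M]\in K_0(\CC[\Gamma])$ under $\Phi$. Since $K_0(\CC)$ is generated by such classes, $\Phi$ is surjective.

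For injectivity, suppose $\xi\in\colim_\Gamma K_0(\CC[\Gamma])$ satisfies $\Phi(\xi)=0$. Represent $\xi$ by an element $x\in K_0(\CC[\Gamma_0])$ for some finitely generated ideal $\Gamma_0$, and write $x=\sum_i n_i[M_i]-\sum_j m_j[N_j]$ with $M_i,N_j\in\CC[\Gamma_0]$ and $n_i,m_j\in\NM$. The vanishing $\Phi(\xi)=0$ in $K_0(\CC)$ means that this signed sum can be reduced to $0$ in the free abelian group on isomorphism classes of $\CC$ using finitely many relations $[B_k]=[A_k]+[C_k]$ coming from short exact sequences $0\to A_k\to B_k\to C_k\to 0$ in $\CC$. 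There are only finitely many objects $M_i$, $N_j$, $A_k$, $B_k$, $C_k$ in play, so by Corollary \ref{co:CCunion} they all lie in $\CC[\Gamma]$ for some finitely generated ideal $\Gamma$ containing $\Gamma_0$. Each of the relevant short exact sequences is then an exact sequence in $\CC[\Gamma]$ because $\CC[\Gamma]$ is a Serre subcategory of $\CC$; hence the same reduction proves $x=0$ already in $K_0(\CC[\Gamma])$, so $\xi=0$ in the colimit.

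Neither step presents a serious obstacle: the only point that requires any thought is the injectivity argument, where one must be careful to observe that although a relation in $K_0(\CC)$ may a priori involve objects outside $\CC[\Gamma_0]$, only finitely many extra objects are needed, and the Serre subcategory property guarantees the relation survives the passage to a larger finitely generated ideal.
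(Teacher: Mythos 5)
Your proof is correct. The paper gives no argument of its own here — it simply cites Weibel's \emph{K-Book} (Lemma II.6.2.7) for this standard fact — and what you have written is precisely the standard direct proof of the $K_0$ case of that result: surjectivity from Corollary \ref{co:CCunion}, and injectivity by noting that a vanishing class in $K_0(\CC)$ involves only finitely many objects and short exact sequences, all of which land in some $\CC[\Gamma]$ with $\Gamma$ finitely generated, where the relations persist because $\CC[\Gamma]$ is a Serre subcategory.
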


\smallskip
Let $V\in k\mmod$. Given $M\in\DC$, the object $V\otimes_k\Hom(-,M)$
is representable by an object $V\otimes_k M$:
given $k^r\xrightarrow{f}k^s\to V\to 0$ an exact sequence
in $k\mmod$, we have $V\otimes_k M=\mathrm{coker}(f\otimes M:M^r\to M^s)$.

\begin{lem}
\label{le:K0action}
Let $M\in\CC^\Delta$ and let $0\to V_1\to V\to V_2\to 0$ be an exact
sequence in $k\mmod$. We have an exact sequence
$0\to V_1\otimes_k M\to V\otimes_k M\to V_2\otimes_k M\to 0$.
\end{lem}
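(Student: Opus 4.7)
The plan is to prove the lemma by showing, more strongly, that the functor $V \mapsto V \otimes_k M$ from $k\mmod$ to $\CC$ is exact for any $M \in \CC^\Delta$. Right exactness is immediate from the definition: if $k^r \xrightarrow{f} k^s \to V \to 0$ is a presentation, then $V \otimes_k M = \mathrm{coker}(f \otimes M : M^r \to M^s)$, and one checks that this commutes with cokernels in the first variable. So the entire content is the injectivity of $V_1 \otimes_k M \to V \otimes_k M$.

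First, by Corollary~\ref{co:CCunion}, choose a finitely generated ideal $\Gamma$ of $\Delta$ such that $M \in \CC[\Gamma]$; then $M^r,M^s$ lie in $\CC[\Gamma]$, and since $\CC[\Gamma]$ is a Serre subcategory of $\CC$ (Proposition~\ref{pr:idealshw}), so do $V_i \otimes_k M$ and $V \otimes_k M$. Because $\Gamma$ is finitely generated and $\Delta$ is locally finite, the set $\Gamma_{>D}$ is finite for every $D \in \Gamma$, so by Proposition~\ref{pr:enoughproj} the highest weight category $\CC[\Gamma]$ has enough projectives. It therefore suffices to show that for every projective $P \in \CC[\Gamma]$ the map
\[
\Hom_\CC(P, V_1 \otimes_k M) \longrightarrow \Hom_\CC(P, V \otimes_k M)
\]
is injective; indeed, if $K$ denotes the kernel of $V_1 \otimes_k M \to V \otimes_k M$, then $K \in \CC[\Gamma]$, and any surjection $P \twoheadrightarrow K$ with $P$ projective in $\CC[\Gamma]$ would force $K=0$.

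The key computation is an adjunction-type formula: for $P$ projective in $\CC[\Gamma]$, the right-exact functor $\Hom_\CC(P,-) = \Hom_{\CC[\Gamma]}(P,-)$ applied to $M^r \to M^s \to V \otimes_k M \to 0$ yields an exact sequence
\[
\Hom(P,M)^r \longrightarrow \Hom(P,M)^s \longrightarrow \Hom(P, V \otimes_k M) \longrightarrow 0,
\]
which identifies $\Hom_\CC(P, V \otimes_k M) \cong V \otimes_k \Hom_\CC(P, M)$ functorially in $V$. By Lemma~\ref{le:hwfinitegen}(i) (applied inside the highest weight category $\CC[\Gamma]$, whose standard objects are $\Gamma$), the $k$-module $\Hom_\CC(P, M)$ is projective, hence flat. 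Tensoring the exact sequence $0 \to V_1 \to V \to V_2 \to 0$ with this flat $k$-module therefore preserves exactness, yielding the required injectivity on $\Hom(P,-)$.

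The only real obstacle is verifying the adjunction formula cleanly; this is where projectivity of $P$ is used decisively to pass from the right-exactness defining $V \otimes_k M$ as a cokernel in $\CC$ to the analogous right-exact sequence in $k\mmod$. Once that is established, the rest is formal: flatness of $\Hom(P,M)$ over $k$ furnished by Lemma~\ref{le:hwfinitegen}(i) together with the existence of enough projectives in $\CC[\Gamma]$ furnished by Proposition~\ref{pr:enoughproj} complete the argument.
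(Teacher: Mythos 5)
Your proof is correct and is essentially the paper's own argument: reduce to a finitely generated ideal so there are enough projectives, identify $\Hom(P,V\otimes_k M)\simeq V\otimes_k\Hom(P,M)$ for $P$ projective, and use that $\Hom(P,M)$ is a projective (hence flat) $k$-module by Lemma~\ref{le:hwfinitegen}(i). You spell out the reduction and the adjunction identification in more detail than the paper does, but the substance is the same.
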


\begin{proof}
We can assume that $\Delta$ is finitely generated as an ideal, so that
$\CC$ has enough projectives.
Let $P\in\Proj(\CC)$. Since $\Hom(P,M)$ is projective over $k$,
it follows that we have an exact sequence
$0\to V_1\otimes_k \Hom(P,M)\to V\otimes_k \Hom(P,M)\to V_2\otimes_k\Hom(P,M)
\to 0$, hence an exact sequence
$0\to \Hom(P,V_1\otimes_k M)\to \Hom(P,V\otimes_k M)\to \Hom(P,V_2\otimes_k M)
\to 0$. The lemma follows.
\end{proof}

\begin{lem}
\label{le:exactsequenceK0}
Let $\Gamma$ be an ideal of $\Delta$ such that $\Delta\setminus\Gamma$
is finite. We have an exact sequence
$$0\to K_0(\CC[\Gamma])\to K_0(\CC)\to K_0(\CC(\Delta\setminus\Gamma))\to 0.$$
\end{lem}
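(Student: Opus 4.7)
The plan is to combine the standard exact sequence for Serre subcategories with an induction on $n = |\Delta\setminus\Gamma|$; at the base step $n=1$ the exact sequence will be explicitly split via the functor $V\mapsto V\otimes_k D$ for $D$ the unique element of $\Delta\setminus\Gamma$.

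Since $\CC[\Gamma]$ is a Serre subcategory of $\CC$ (Proposition \ref{pr:idealshw}(iii)) with Verdier quotient $\CC(\Delta\setminus\Gamma)$, standard abelian-category localization yields exactness of $K_0(\CC[\Gamma])\xrightarrow{\iota_*}K_0(\CC)\xrightarrow{q_*}K_0(\CC(\Delta\setminus\Gamma))\to 0$, so only injectivity of $\iota_*$ remains. I would induct on $n$, with the case $n=0$ trivial. For $n\ge 1$, pick $D\in\Delta\setminus\Gamma$ maximal (which is then maximal in $\Delta$, since $\Gamma$ is a down-set) and set $\Gamma'=\Gamma\cup\{D\}$, an ideal with $|\Delta\setminus\Gamma'|=n-1$; by induction $K_0(\CC[\Gamma'])\to K_0(\CC)$ is injective, so it suffices to show that $K_0(\CC[\Gamma])\to K_0(\CC[\Gamma'])$ is injective. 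Replacing $\CC$ by $\CC[\Gamma']$, this reduces the problem to the base case $\Delta\setminus\Gamma=\{D\}$ with $D$ maximal in $\Delta$.

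For the base case, by Proposition \ref{pr:quotienthw} the quotient $\CC(\{D\})$ is a highest weight category with unique standard $D$ satisfying $\End(D)=k$ and $\Ext^1(D,D)=0$ (Lemma \ref{le:finiteExt1}); it is easily seen that every object of $\CC(\{D\})$ is of the form $V\otimes_k D$ for some $V\in k\mmod$, yielding an equivalence $\CC(\{D\})\simeq k\mmod$. Lemma \ref{le:K0action} then shows that $s\colon k\mmod\to\CC$, $V\mapsto V\otimes_k D$, is exact, and $q\circ s\simeq\id$, so on Grothendieck groups $s_*$ is a section of $q_*$; applying $q_*$ immediately gives $\im\iota_*\cap\im s_*=0$. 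It remains to show $K_0(\CC)=\im\iota_*+\im s_*$: by Proposition \ref{pr:idealshw}(i), every $M\in\CC^\Delta$ sits in a short exact sequence $0\to D^r\to M\to M_\Gamma\to 0$ with $M_\Gamma\in\CC^\Gamma\subset\CC[\Gamma]$, so $[M]=s_*([k^r])+\iota_*([M_\Gamma])$. Extending this decomposition to arbitrary $M\in\CC$ via finite projective resolutions by $\CC^\Delta$-objects (which exist when $\Delta$ is finitely generated as an ideal, by Proposition \ref{pr:enoughproj} and Lemma \ref{le:hwfinitegen}(iii)) would conclude the proof in that case.

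The main obstacle is passing from the finitely generated case to the general case: one must use Lemma \ref{le:K0colim} to write $K_0(\CC)=\colim_{\Gamma''}K_0(\CC[\Gamma''])$ over finitely generated ideals $\Gamma''$ (along with the analogous colimit descriptions of $K_0(\CC[\Gamma])$ and $K_0(\CC(\{D\}))$), and then verify that the splitting constructed for each finitely generated $\Gamma''$ is compatible with the transition maps, so that the generation property $K_0(\CC)=\im\iota_*+\im s_*$ passes to the limit.
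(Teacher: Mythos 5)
Your overall architecture — the right-exact localization sequence for the Serre subcategory $\CC[\Gamma]$, plus an induction that peels off one maximal $D\in\Delta\setminus\Gamma$ at a time — is sound and is exactly the paper's strategy. The gap is in the base case $\Delta\setminus\Gamma=\{D\}$: the lemma asserts that $\iota_*:K_0(\CC[\Gamma])\to K_0(\CC)$ is \emph{injective}, but what you prove is only that $K_0(\CC)=\im\iota_*\oplus\im s_*$. These are different statements. A section $s_*$ of $q_*$ shows that $\ker q_*=\im\iota_*$ is a direct summand of $K_0(\CC)$ (and note that this decomposition is automatic from $\ker q_*=\im\iota_*$ once $s_*$ exists, so the final paragraph's machinery of projective resolutions and colimits is not even needed for it), but it says nothing about $\ker\iota_*$: a priori $K_0(\CC[\Gamma])$ could map onto $\im\iota_*$ with a large kernel. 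So the one thing that "remains" after the localization sequence is precisely the thing your argument does not address.

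What is needed is a retraction $r:K_0(\CC)\to K_0(\CC[\Gamma])$ with $r\circ\iota_*=\id$. Your exact sequence $0\to D\otimes_k\Hom(D,M)\to M\to M_\Gamma\to 0$ is the right raw material — the candidate is $r([M])=[M_\Gamma]$ — but the real content is to show that this is well defined and additive on all of $K_0(\CC)$, not just on classes of objects of $\CC^\Delta$. The paper does this by working in $D^b(\CC)$: for arbitrary $M\in\CC$ it forms the cone $N$ of the adjunction map $D\otimes_k\Hom_\CC(D,M)\to M$, uses the projectivity of $D$ (which holds because $D$ is maximal) to get $\Hom_{D^b(\CC)}(D,N[i])=0$ and hence $H^i(N)\in\CC[\Gamma]$ for all $i$, and observes that $M\mapsto N$ is a left adjoint to the inclusion of the thick subcategory of complexes with cohomology in $\CC[\Gamma]$. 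Being triangulated, this adjoint induces a map $K_0(\CC)\to K_0(\CC[\Gamma])$ which restricts to the identity on $\CC[\Gamma]$ (where $\Hom(D,M)=0$), so $\iota_*$ is a split injection. If you want to avoid derived categories you must instead check directly that $[M]\mapsto[\mathrm{coker}]-[\ker]$ of the adjunction map respects short exact sequences in $\CC$; either way, some such verification is unavoidable and is the heart of the proof.
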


\begin{proof}
Without assumption on $\Delta\setminus\Gamma$, there is an exact sequence
\cite[Theorem II.6.4]{We2}
$$K_0(\CC[\Gamma])\to K_0(\CC)\to K_0(\CC(\Delta\setminus\Gamma))\to 0.$$

Assume $\Delta\setminus\Gamma$ has a single element $D_0$.
It follows from Lemma \ref{le:K0action} that the functor
$D_0\otimes_k -:k\mmod\to\CC$ is exact.
Let $M\in\CC$. Let $N$ be the cone of the adjunction map
	$f:D_0\otimes_k\Hom_\CC(D_0,M)\to M$. Since $\Hom(D_0,f)$ is an
	isomorphism and $D_0$ is projective, it follows that
$\Hom_{\CC}(D_0,H^i(N))=0$ for all $i$. So, $N$ is a
bounded complex with cohomology contained in $\CC[\Gamma]$.
It follows that $M\mapsto N$ provides
a left adjoint to the inclusion functor
of the thick subcategory of $D^b(\CC)$ of complexes with cohomology
in $\CC[\Gamma]$. As a consequence, the canonical map
$K_0(\CC[\Gamma])\to K_0(\CC)$ is a split injection. So, the lemma holds
when $|\Delta\setminus\Gamma|=1$. The general case follows by induction
on $|\Delta\setminus\Gamma|$.
\end{proof}

It follows from Lemma \ref{le:K0action} that given $V\in k\mmod$ and
$D\in\Delta$, the class $[V\otimes_k D]\in K_0(\CC)$ depends only
on $[V]\in G_0(k)$ and $[D]$. We denote it by $[V]\cdot [D]$. This
provides $K_0(\CC)$ with a structure of $G_0(k)$-module.

\begin{lem}
\label{le:injK0}
The morphism of $G_0(k)$-modules
$$G_0(k)^{(\Delta)}\to K_0(\CC),\ (a_D)_{D\in\Delta}\mapsto
\sum_D a_D\cdot [D]$$
is injective.

If $\Delta$ is finitely generated as an ideal, then
the canonical map $K_0(\CC\mproj)\to K_0(\CC)$ is injective with
image the free $\ZM$-submodule generated by $\{[D]\}_{D\in\Delta}$.
\end{lem}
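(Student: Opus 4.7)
The plan for part (1) is a two-step reduction: first cut down to a finite-support version via Lemma~\ref{le:K0colim}, then peel off one maximal standard object at a time. Given a finite relation $\sum_D a_D \cdot [D] = 0$ in $K_0(\CC)$ with support $S$, that relation already holds in $K_0(\CC[\Gamma])$ for the finitely generated ideal $\Gamma$ generated by $S$, and I will induct on $|S|$. Picking $D_0 \in S$ maximal in $S$ (equivalently, maximal in $\Gamma$, since every element of $\Gamma$ lies below some element of $S$), the set $\Gamma \setminus \{D_0\}$ is an ideal of $\Gamma$ whose complement is the singleton $\{D_0\}$; Lemma~\ref{le:exactsequenceK0} then gives an exact sequence
\[0 \to K_0(\CC[\Gamma \setminus \{D_0\}]) \to K_0(\CC[\Gamma]) \to K_0(\CC[\Gamma](\{D_0\})) \to 0.\]
The key identification will be $\CC[\Gamma](\{D_0\}) \simeq k\mmod$ via $V \mapsto V \otimes_k D_0$: in this quotient the unique standard object $D_0$ satisfies $\End(D_0) = k$ and has vanishing higher self-Ext by Lemma~\ref{le:vanishingExti}, hence is projective by Lemma~\ref{le:critproj}; the tensor functor is exact by Lemma~\ref{le:K0action}, is fully faithful because $\End(D_0) = k$, and is essentially surjective because every object is a quotient of some $V \otimes D_0$ and right exactness preserves this form. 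Under this identification the quotient map sends $[D] \mapsto 0$ for $D \neq D_0$ and $[D_0] \mapsto [k]$, forcing $a_{D_0} = 0$; the residual relation has strictly smaller support and induction concludes.

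For part (2), assume $\Delta$ is finitely generated as an ideal, so each $\Delta_{>D}$ is finite (by local finiteness) and $\CC$ has enough projectives by Proposition~\ref{pr:enoughproj}. The inclusion $\bigoplus_D \ZM \cdot [D] \subseteq \mathrm{image}$ is immediate: each $D$ has a finite projective resolution by Lemma~\ref{le:hwfinitegen}(iii), yielding $[D] = \sum_i (-1)^i [P_i]$ in the image. For the reverse inclusion and injectivity, the plan is to use the projective covers $P(D)$ from Lemma~\ref{le:projcoverD}, whose $\Delta$-filtrations give $[P(D)] = [D] + \sum_{D' > D} m_{D,D'}[D']$ with $m_{D,D'} \in \ZM_{\geq 0}$; this is a locally-finite unitriangular change of basis on $\bigoplus_D \ZM$, hence invertible over $\ZM$. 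The connectedness of $\Spec k$ will then be used to ensure that every projective of $\CC$ decomposes as a direct sum $\bigoplus_D V_D \otimes_k P(D)$ with $V_D \in k\mproj$ of well-defined constant rank $n_D \in \ZM_{\geq 0}$, so that $[P] = \sum_D n_D [P(D)]$ lies in the $\ZM$-span of the $[P(D)]$'s, and the unitriangular matrix then puts $[P]$ into $\bigoplus_D \ZM \cdot [D]$. Injectivity of $K_0(\CC\mproj) \to K_0(\CC)$ will follow from part~(1) combined with invertibility of the change of basis.

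The hard part will lie in part~(2): establishing the decomposition of an arbitrary projective of $\CC$ as a direct sum of $V_D \otimes_k P(D)$'s over a general connected commutative noetherian base $k$. This will require combining the split quasi-hereditary presentation of Proposition~\ref{pr:qh} (applied, via Corollary~\ref{co:CCunion}, to a finite ideal containing the support of the given projective) with the uniqueness of idempotent decompositions over a connected ring. By contrast, part~(1) is a purely formal induction driven entirely by Lemmas~\ref{le:K0colim} and~\ref{le:exactsequenceK0} and the identification of the "one-standard-object" quotient with $k\mmod$.
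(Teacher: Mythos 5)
Your identification of the one-standard-object quotient $\CC[\Gamma](\{D_0\})$ with $k\mmod$ is correct and correctly justified, but the opening reduction of part~(1) has a genuine gap. Lemma~\ref{le:K0colim} exhibits $K_0(\CC)$ as a \emph{filtered colimit} of the groups $K_0(\CC[\Gamma'])$, so the vanishing of $\sum_D a_D\cdot[D]$ in $K_0(\CC)$ only tells you that it vanishes in $K_0(\CC[\Gamma'])$ for \emph{some} finitely generated ideal $\Gamma'$ containing your $\Gamma$ --- not in $K_0(\CC[\Gamma])$ itself. The transition maps of this colimit are not known to be injective at this point (Lemma~\ref{le:exactsequenceK0} only gives injectivity when the complement is finite, and injectivity on the span of the $[D]$'s is essentially what you are trying to prove), so the claim that the relation ``already holds'' in $K_0(\CC[\Gamma])$ is circular. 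This matters because your peeling step needs $D_0$ to be maximal in the \emph{ambient} poset so that its complement is an ideal with finite complement; a maximal element of $S$ need not be maximal in $\Gamma'$, and the same problem recurs at later stages (after removing $D_0$, the next maximal element of $S\setminus\{D_0\}$ need not be maximal in $\Gamma\setminus\{D_0\}$, which still contains all of $\Delta_{<D_0}$). The paper goes in the opposite direction: instead of restricting to subcategories it passes to \emph{quotients}. Inside a finitely generated $\Gamma'$ one chooses an ideal $\Omega$ with $\Gamma'\setminus\Omega$ finite and containing $S$; the map $K_0(\CC[\Gamma'])\to K_0(\CC[\Gamma'](\Gamma'\setminus\Omega))$ is always available, kills $[D]$ for $D\in\Omega$ and sends $[D]$ to the corresponding standard object for $D\in S$, and one then peels maximal elements of the finite poset $\Gamma'\setminus\Omega$ (not of $S$) via Lemma~\ref{le:exactsequenceK0}. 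Restructured this way, your argument becomes the paper's.

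For part~(2), the step you defer is exactly where the argument fails as written. Even granting a decomposition $P\simeq\bigoplus_D V_D\otimes_k P(D)$ with $V_D\in k\mproj$, the equality $[V_D\otimes_k P(D)]=n_D\,[P(D)]$ in $K_0(\CC)$ with $n_D=\mathrm{rk}(V_D)$ is, by part~(1) and the unitriangularity of $[P(D)]$ in the $[D']$'s, equivalent to $[V_D]=n_D\,[k]$ in $K_0(k\mmod)$. Connectedness of $\Spec k$ gives constancy of the rank, but it does not give this identity: over a connected noetherian ring a finitely generated projective of constant rank need not have the class of a free module (consider a nonprincipal ideal of a Dedekind domain). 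So the appeal to ``well-defined constant rank'' does not close the argument, and some further input on $K_0(k\mproj)\to K_0(k\mmod)$ is needed at precisely this point.
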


\begin{proof}
When $\Delta$ is finite, the lemma follows from Lemma
\ref{le:exactsequenceK0}.
When $\Delta$ is finitely generated,
given a finite subset $I$ of $\Delta$, there is an ideal $\Gamma$
of $\Delta$ such that $\Delta\setminus\Gamma$ is finite and contains $I$.
	Since the lemma holds for $\CC(\Delta\setminus\Gamma)$, we deduce that the lemma holds
for $\CC$.
The general case follows from Lemma \ref{le:K0colim}.
\end{proof}

\begin{defi}
We say that $\CC$ is {\em separated} if $D^b(\CC)$ is generated as
a triangulated category 
by $\{V\otimes_k D\}$, where $V\in k\mmod$ and $D\in\Delta$.
\end{defi}

\begin{lem}
\label{le:finiteseparated}
If all the finitely generated ideals of $\Gamma$ are finite, then
$\CC$ is separated.
\end{lem}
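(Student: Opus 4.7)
The plan is to denote by $\mathcal{T}$ the thick subcategory of $D^b(\CC)$ generated by $\{V\otimes_k D\}_{V\in k\mmod,\ D\in\Delta}$ and show $\mathcal{T}=D^b(\CC)$. A bounded complex in $\CC$ has only finitely many nonzero cohomology objects, each of which, by Corollary~\ref{co:CCunion}, lies in $\CC[\Gamma]$ for some finitely generated ideal $\Gamma$ of $\Delta$; the union of such ideals is still finitely generated, and hence finite by hypothesis. Since Proposition~\ref{le:extifaithful} ensures that $D^b(\CC[\Gamma])$ embeds fully faithfully in $D^b(\CC)$, it suffices to prove that every object of $\CC[\Gamma]$ lies in $\mathcal{T}$ whenever $\Gamma$ is a finite ideal.

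I would proceed by induction on $|\Gamma|$; the case $\Gamma=\emptyset$ is trivial. For the inductive step, pick $D$ maximal in $\Gamma$ and put $\Gamma'=\Gamma\setminus\{D\}$, which remains an ideal of $\Delta$. Then $\CC[\Gamma']\subset\CC[\Gamma]$ is a Serre subcategory whose quotient $\CC[\Gamma](\{D\})$ is, by Proposition~\ref{pr:quotienthw}, a highest weight category with the single standard object $D$; the induction hypothesis yields $D^b(\CC[\Gamma'])\subset\mathcal{T}$. For $M\in\CC[\Gamma]$, consider the counit $\eta:{^\vee q_{\Gamma'}}\,q_{\Gamma'}(M)\to M$ of the adjunction in Lemma~\ref{le:Gammalocal}. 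Since $q_{\Gamma'}$ is exact and $q_{\Gamma'}(\eta)$ is an isomorphism, both $\ker\eta$ and $\mathrm{coker}\,\eta$ lie in $\CC[\Gamma']$, so the cone of $\eta$ lies in $D^b(\CC[\Gamma'])\subset\mathcal{T}$; it remains to prove ${^\vee q_{\Gamma'}}\,q_{\Gamma'}(M)\in\mathcal{T}$.

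The key observation is that by maximality of $D$ in $\Gamma$ and Lemma~\ref{le:finiteExt1}, we have $\Ext^1_{\CC[\Gamma](\{D\})}(D,D)=0$ and $D$ is projective in the quotient. Hence every object of the form $\CC^{\{D\}}$ in the quotient is a direct sum of copies of $D$, and since every object of $\CC[\Gamma](\{D\})$ is a quotient of such an object, every object is the cokernel of some morphism $D^m\to D^n$ which, using $\End(D)=k$, must be of the form $g\otimes\id_D$ for a $k$-linear map $g:k^m\to k^n$. Therefore $q_{\Gamma'}(M)\simeq V\otimes_k D$ for some $V\in k\mmod$ (namely $V=\mathrm{coker}\,g$). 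Applying the construction of ${^\vee q_{\Gamma'}}$ in Lemma~\ref{le:Gammalocal} to the projective presentation $D^m\to D^n\to V\otimes_k D\to 0$ in $\CC[\Gamma](\{D\})$, and using that $D$ itself is projective in $\CC[\Gamma]$ (so that the presentation lifts tautologically), one obtains ${^\vee q_{\Gamma'}}(V\otimes_k D)\simeq V\otimes_k D$ computed inside $\CC[\Gamma]$. This object lies in $\mathcal{T}$ by definition, completing the induction. The principal obstacle lies in this final identification: one must check that the left adjoint ${^\vee q_{\Gamma'}}$ genuinely commutes with the operation $V\otimes_k -$ on this class of objects, which relies on the projectivity of $D$ in both $\CC[\Gamma]$ and $\CC[\Gamma](\{D\})$ together with the identity $\End(D)=k$.
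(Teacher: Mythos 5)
Your proof is correct and follows essentially the same route as the paper: reduce to a finite poset of standards via Corollary~\ref{co:CCunion}, then induct by peeling off a maximal $D$, producing for each $M$ a triangle with one vertex of the form $V\otimes_k D$ and the third vertex supported in $\CC[\Gamma']$ — exactly the inductive mechanism of Lemma~\ref{le:exactsequenceK0} that the paper's proof invokes. Your counit ${^\vee q_{\Gamma'}}q_{\Gamma'}(M)\to M$ is, after the identification $q_{\Gamma'}(M)\simeq \Hom_\CC(D,M)\otimes_k D$, the same map $D\otimes_k\Hom_\CC(D,M)\to M$ used there, so the two arguments coincide up to packaging.
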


\begin{proof}
When $\Delta$ is finite, the statement follows by induction as in the proof
of Lemma \ref{le:exactsequenceK0}, cf Remark \ref{re:hwcard1}.

Let $M$ be a bounded complex of objects of $\CC$. There is a finitely
generated ideal $\Gamma$ of $\Delta$ such that $M$ is
a bounded complex of objects of $\CC[\Gamma]$. Since the lemma
holds for $\CC[\Gamma]$, we are done.
\end{proof}

\begin{lem}
\label{le:basisK0separated}
If $\CC$ is separated, then $K_0(\CC)$ is a free
$K_0(k\mmod)$-module with basis $\{[D]\}_{D\in\Delta}$.

If in addition $\Delta$ is finite and $\Spec k$ is connected,
then there is an isomorphism
$$K_0(k\mmod)\otimes_\ZM K_0(\CC\mproj)\xrightarrow{\sim}K_0(\CC),\
a\otimes [P]\mapsto a\cdot [P].$$
\end{lem}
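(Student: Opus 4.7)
The strategy is to observe that injectivity of the natural map
\[
\Phi:K_0(k\mmod)^{(\Delta)}\longrightarrow K_0(\CC),\qquad (a_D)_{D\in\Delta}\longmapsto \sum_{D\in\Delta} a_D\cdot [D]
\]
is already furnished by Lemma~\ref{le:injK0}, so only surjectivity of $\Phi$ needs to be extracted from the separatedness hypothesis; the second statement will then follow by reading the first part through the identification of $K_0(\CC\mproj)$ provided by the second half of Lemma~\ref{le:injK0}.

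\textbf{First statement.} I would argue surjectivity as follows. Recall that the canonical map $K_0(\CC)\to K_0(D^b(\CC))$ is an isomorphism for any abelian category. Since $\CC$ is assumed separated, $D^b(\CC)$ is generated as a triangulated category by the collection $\{V\otimes_k D\}_{V\in k\mmod,\ D\in\Delta}$, so every class of $K_0(\CC)$ can be rewritten as a $\ZM$-linear combination of classes of the form $[V\otimes_k D]$ by transporting distinguished triangles and shifts to additive relations in $K_0$. It therefore suffices to show $[V\otimes_k D]\in\im\Phi$ for every $V\in k\mmod$ and $D\in\Delta$. But Lemma~\ref{le:K0action} asserts that the functor $-\otimes_k D:k\mmod\to\CC$ is exact on short exact sequences, hence the class $[V\otimes_k D]$ depends only on $[V]\in K_0(k\mmod)$ and equals, by definition, $[V]\cdot[D]=\Phi([V]\cdot e_D)$, where $e_D$ denotes the $D$-th standard generator of $K_0(k\mmod)^{(\Delta)}$. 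Combined with injectivity, this yields that $\Phi$ is an isomorphism of $K_0(k\mmod)$-modules, establishing the first statement.

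\textbf{Second statement.} Assume now that $\Delta$ is finite (so in particular finitely generated as an ideal) and that $\Spec k$ is connected. The second part of Lemma~\ref{le:injK0} then identifies the canonical map $K_0(\CC\mproj)\to K_0(\CC)$ with an injection whose image is the free $\ZM$-submodule on $\{[D]\}_{D\in\Delta}$; in particular $K_0(\CC\mproj)$ is itself a free $\ZM$-module of rank $|\Delta|$, and applying $K_0(k\mmod)\otimes_\ZM-$ produces a free $K_0(k\mmod)$-module of the same rank. The map $a\otimes [P]\mapsto a\cdot[P]$ agrees, after identifying $K_0(\CC\mproj)$ with $\bigoplus_{D\in\Delta}\ZM\cdot[D]$, with the map $\Phi$ of the first part, and is therefore an isomorphism by what has just been proved.

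\textbf{Principal obstacle.} There is no serious obstacle: both halves reduce, essentially formally, to results already in hand (Lemmas~\ref{le:injK0} and~\ref{le:K0action} together with the defining property of separatedness). The only point requiring mild care is the passage from triangulated generation of $D^b(\CC)$ to additive generation at the level of $K_0$, but this is routine since distinguished triangles in a triangulated category yield additive relations in its Grothendieck group and shifts contribute only signs.
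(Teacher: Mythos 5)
Your proof is correct. For the first statement you are doing exactly what the paper intends when it says the claim "follows immediately from Lemma~\ref{le:injK0}": injectivity of $\Phi$ is that lemma, and surjectivity is the combination of separatedness (triangulated generation of $D^b(\CC)$ gives additive generation of $K_0(D^b(\CC))\cong K_0(\CC)$ by the classes $[V\otimes_k D]$) with the identity $[V\otimes_k D]=[V]\cdot[D]$ coming from the exactness statement of Lemma~\ref{le:K0action}. Your write-up simply makes the "immediate" step explicit, and the passage from triangulated generation to additive generation of $K_0$ is indeed the routine closure argument you indicate.

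For the second statement your route differs from the paper's: the paper proves it by a fresh induction on $|\Delta|$ using the exact sequence of Lemma~\ref{le:exactsequenceK0}, whereas you instead quote the second half of Lemma~\ref{le:injK0} to identify $K_0(\CC\mproj)$ with the free $\ZM$-module on $\{[D]\}_{D\in\Delta}$ and then transport the isomorphism $\Phi$ from the first statement. Since that half of Lemma~\ref{le:injK0} is itself established via Lemma~\ref{le:exactsequenceK0}, the two arguments rest on the same foundation; yours has the merit of not repeating the induction and of making visible that the second statement is a formal consequence of the first once one knows what $K_0(\CC\mproj)$ is, at the cost of routing through an intermediate lemma rather than arguing directly. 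There is no gap in either half.
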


\begin{proof}
The first statement follows immediately from Lemma \ref{le:injK0}.
The second statement follows by induction from Lemma \ref{le:exactsequenceK0}.
\end{proof}

\begin{rema}
If $D^b(\CC)$ is generated, as
a triangulated category closed under taking direct summands,
by $\{V\otimes_k D\}$, where $V\in k\mmod$ and $D\in\Delta$,
then it is separated: this is shown by the proof of Lemma
\ref{le:exactsequenceK0}
when $\Delta$ is finite, and the general case follows.\finl
\end{rema}

\subsection{Completed Grothendieck groups}
\label{se:completedK0}

\smallskip
We define the {\em completed Grothendieck group} of
$\CC$ as
$\hat{K}_0(\CC)=
\colim_{\Gamma}\lim_{\Omega\subset\Gamma} K_0(\CC[\Gamma]/\CC[\Omega])$
where $\Gamma$ runs over finitely generated ideals of $\Delta$
and $\Omega$ runs over ideals of $\Gamma$ such that
$\Gamma\setminus\Omega$ is finite.

Note that given $\Omega\subset\Omega'\subset\Gamma$,
the transition map $K_0(\CC(\Gamma\setminus\Omega))\to
K_0(\CC(\Gamma\setminus\Omega'))$ is surjective, while given 
$\Gamma'\subset\Gamma$, the  transition map 
$\lim_{\Omega\subset\Gamma} K_0(\CC(\Gamma\setminus\Omega))\to
\lim_{\Omega\subset\Gamma'} K_0(\CC(\Gamma'\setminus\Omega))$ is injective.

There is a canonical morphism of groups 
$$K_0(\CC)\to \hat{K}_0(\CC),\ [M]\mapsto [[M]]$$
since $\CC=\colim_\Gamma\CC[\Gamma]$, where
$\Gamma$ runs over finitely generated ideals of $\Delta$.

\smallskip
When $\Gamma$ is finitely generated, we have
$\hat{K}_0(\CC)=\lim_{\Omega\subset\Gamma} K_0(\CC[\Gamma]/\CC[\Omega])$
where $\Omega$ runs over ideals of $\Delta$ such that
$\Delta\setminus\Omega$ is finite.

When $\Delta$ is finite, we have a canonical isomorphism
$K_0(\CC)\xrightarrow{\sim} \hat{K}_0(\CC)$.

\smallskip
Let $\mathrm{Map}^{\mathrm{fg}}(\Delta,K_0(k\mmod))$ be the
abelian group of maps $\chi:\Delta\to K_0(k\mmod)$ such that
$\{D\in\Delta\ |\ \chi(D)\neq 0\}$ is contained in a finitely generated ideal
of $\Delta$.

\begin{lem}
\label{le:completedK0maps}
There is an isomorphism
$$\sigma:\mathrm{Map}^{\mathrm{fg}}(\Delta,K_0(k\mmod))
\xrightarrow{\sim} \hat{K}_0(\CC),\
\chi\mapsto \sum_{D\in\Delta}[\chi(D)]\cdot [[D]]$$
and an isomorphism
$$\hat{K}_0(\CC)\xrightarrow{\sim} \mathrm{Map}^{\mathrm{fg}}(\Delta,K_0(k\mmod))
,\ [[M]]\mapsto
(D\mapsto \sum_{i\ge 0}(-1)^i[\Ext^i(D,M)]).$$
\end{lem}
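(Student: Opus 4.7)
The plan is to establish both isomorphisms simultaneously by showing that $\sigma$ is inverse to the Ext-sum map. The key reduction is to pass to finite quotients, where $K_0$ has an explicit description as a free module over $K_0(k\mmod)$.

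First, I would verify well-definedness. For $\sigma$, given $\chi\in\mathrm{Map}^{\mathrm{fg}}(\Delta,K_0(k\mmod))$ with support in a finitely generated ideal $\Gamma_\chi$, the sum $\sum_D[\chi(D)]\cdot[[D]]$ defines a compatible system in each finite quotient $K_0(\CC(\Gamma_\chi\setminus\Omega))$ (with $\Gamma_\chi\setminus\Omega$ finite), since only finitely many terms contribute. For the Ext-sum map, Lemma~\ref{le:hwfinitegen}(ii,iii) shows that each $\Ext^i(D,M)$ is finitely generated over $k$ and vanishes for $i\gg 0$ (since $D\in\CC^\Delta$), so the alternating sum lies in $K_0(k\mmod)$; by Lemma~\ref{le:hwfinitegen}(iv), if $M$ is a quotient of an object of $\CC^\Gamma$ for some finitely generated $\Gamma$ (which exists by Corollary~\ref{co:CCunion}), then the resulting function on $\Delta$ has support in $\Gamma$, hence lies in $\mathrm{Map}^{\mathrm{fg}}(\Delta,K_0(k\mmod))$.

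Next, I would reduce to finitely generated $\Gamma$ via Corollary~\ref{co:CCunion} and the colimit description of $\hat{K}_0$, so it suffices to work with a fixed $\hat{K}_0(\CC[\Gamma])=\lim_\Omega K_0(\CC(\Gamma\setminus\Omega))$. Each finite quotient $\CC(\Gamma\setminus\Omega)$ is a highest weight category with finite set of standard objects (Proposition~\ref{pr:quotienthw}); by Lemma~\ref{le:finiteseparated} it is separated, and by Lemma~\ref{le:basisK0separated}, $K_0(\CC(\Gamma\setminus\Omega))$ is a free $K_0(k\mmod)$-module with basis $\{[\bar{D}]\}_{D\in\Gamma\setminus\Omega}$. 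The transition maps (for $\Omega\subset\Omega'$) simply kill the basis elements indexed by $\Omega'\setminus\Omega$, so the inverse limit is canonically $\mathrm{Map}(\Gamma,K_0(k\mmod))$, and taking the colimit over $\Gamma$ gives the first isomorphism; the inverse of $\sigma$ sends $[[M]]$ to the function $D\mapsto c_D(M)$, where $c_D(M)$ is the coefficient of $[\bar{D}]$ in the expansion of $[M]$ in any finite quotient containing $D$.

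The core of the proof is then to identify $c_D(M)$ with $\sum_{i\ge 0}(-1)^i[\Ext^i(D,M)]$. By the additivity of Euler characteristics on short exact sequences in $\CC(\Gamma\setminus\Omega)$ (which are finite by the vanishing in Lemma~\ref{le:hwfinitegen}(iii)), this reduces to the orthogonality relation
$$\sum_i(-1)^i[\Ext^i(D,D')]=\delta_{D,D'}\cdot[k]\quad\text{for }D,D'\in\Delta.$$
The diagonal case $D=D'$ follows from Definition~\ref{de:hwcat}(i) together with Lemma~\ref{le:vanishingExti} (which forces $\Ext^{>0}(D,D)=0$ as no strict chain $D<D_1<\cdots<D_i$ lies in $\Delta_{\le D}$). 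The case $D\not\le D'$ is immediate from Lemma~\ref{le:vanishingExti} applied with $\Gamma=\Delta_{\le D'}$.

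The main obstacle will be the remaining case $D<D'$. Here I would iterate Definition~\ref{de:hwcat}(iv): choose $R\twoheadrightarrow D$ with kernel $K\in\CC^{\Delta_{>D}}$ and $\Ext^1(R,D')=0$, giving an inductive relation
$$\chi(D,D')=[\Hom(R,D')]-\chi(K,D')+\text{(higher Ext corrections)}$$
between Euler characteristics; combined with the standard filtration of $K$ by objects in $\Delta_{>D}$ and strong induction on the pair $(D,D')$ using the locally finite poset structure, this should force $\chi(D,D')$ to vanish. The subtlety is that Definition~\ref{de:hwcat}(iv) only guarantees vanishing of $\Ext^1$ (not higher Ext) against a single module, so the iterative argument must be organized carefully—possibly by working in a finite quotient $\CC(E)$ where enough projectives exist (Proposition~\ref{pr:enoughproj}) and then invoking Proposition~\ref{le:extifaithful} to transfer the resulting identity back to $\CC$.
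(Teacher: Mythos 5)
Your first two paragraphs are sound and follow the paper's own route: well-definedness of both maps via Lemma~\ref{le:hwfinitegen} and Corollary~\ref{co:CCunion}, reduction to finite quotients via Proposition~\ref{pr:quotienthw}, and the fact that $\sigma$ is an isomorphism via separatedness (Lemma~\ref{le:finiteseparated}) and Lemma~\ref{le:basisK0separated}.

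The second half of your argument, however, is aimed at a statement that is false. The orthogonality relation $\sum_i(-1)^i[\Ext^i(D,D')]=\delta_{D,D'}\cdot[k]$ fails in general when $D<D'$: take $k$ a field, $A$ the path algebra of the quiver $1\to 2$, $\CC=A\mmod$, with standard objects the two simple modules $L_1,L_2$ ordered by $L_1<L_2$ (this satisfies (i)--(iv) of Definition~\ref{de:hwcat}, since $P(2)=L_2$ and $P(1)$ has $L_2$ as a subobject with quotient $L_1$). Then $\Hom(L_1,L_2)=0$ while $\Ext^1(L_1,L_2)=k$, so the Euler characteristic is $-1$, not $0$. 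Consequently $\sigma$ and the Ext-sum map are \emph{not} mutually inverse --- and the lemma never claims they are; it only asserts that each is an isomorphism. The repair is to notice that you do not need the off-diagonal entries to vanish: in each finite quotient, the composite of $\sigma$ with the Ext-sum map is given by the matrix with $(D',D)$-entry $\sum_i(-1)^i[\Ext^i(D',D)]$, which by Lemma~\ref{le:vanishingExti} vanishes unless $D'\le D$ and equals $[\End(D)]=[k]=1$ on the diagonal. A matrix over $K_0(k\mmod)$ that is triangular for a partial order on a finite set with units on the diagonal is invertible, so the composite is an isomorphism and hence so is the Ext-sum map; one then passes to the limit and colimit exactly as for $\sigma$. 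This is the paper's argument, and it makes the entire induction on pairs $(D,D')$ in your last paragraph unnecessary (as well as unsalvageable, since the identity it targets is false).
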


\begin{proof}
Assume first $\Delta$ is finite. Consider the morphisms
$$f:K_0(k\mmod)^\Delta\xrightarrow{\sim} K_0(\CC),\ ([V_D])_{D\in\Delta}\mapsto
\sum_{D\in\Delta}[V_D]\cdot [D]$$
and
$$g:K_0(\CC)\xrightarrow{\sim} K_0(k\mmod)^\Delta,\ [M]\mapsto 
(D\mapsto \sum_{i\ge 0}(-1)^i[\Ext^i(D,M)]).$$
Lemma \ref{le:basisK0separated} shows that $f$ is an isomorphism. Since
$g\circ f$ has a triangular matrix with entries $1$ on the diagonal,
it follows that $g$ is an isomorphism.

Consider now a general $\Delta$.
We have $\mathrm{Map}^{\mathrm{fg}}(\Delta,K_0(k\mmod))=
\colim_\Gamma\lim_\Omega K_0(k\mmod)^\Omega$
where $\Gamma$ runs over finitely generated ideals of $\Delta$
and $\Omega$ runs over ideals of $\Gamma$ such that
$\Gamma\setminus\Omega$ is finite. The lemma follows from the case where
$\Delta$ is finite.
\end{proof}

Lemmas \ref{le:basisK0separated} and \ref{le:completedK0maps} have the
following consequence.

\begin{prop}
If $\CC$ is separated, then we have a canonical injection
$K_0(\CC)\hookrightarrow\hat{K}_0(\CC)$.
\end{prop}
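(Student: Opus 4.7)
The plan is to reduce the statement to the obvious injectivity of a natural inclusion between free and ``functional'' $K_0(k\mmod)$-modules indexed by $\Delta$, using Lemmas \ref{le:basisK0separated} and \ref{le:completedK0maps} as black boxes.

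More precisely, I would proceed as follows. Since $\CC$ is separated, Lemma \ref{le:basisK0separated} gives an isomorphism of $K_0(k\mmod)$-modules
$$\alpha : K_0(k\mmod)^{(\Delta)}\xrightarrow{\sim} K_0(\CC),\qquad
([V_D])_{D\in\Delta}\longmapsto \sum_{D\in\Delta}[V_D]\cdot [D].$$
On the other hand, without any separatedness assumption, Lemma \ref{le:completedK0maps} provides an isomorphism
$$\sigma:\mathrm{Map}^{\mathrm{fg}}(\Delta,K_0(k\mmod))\xrightarrow{\sim}\hat{K}_0(\CC),
\qquad\chi\longmapsto \sum_{D\in\Delta}[\chi(D)]\cdot [[D]].$$
Inside $\mathrm{Map}^{\mathrm{fg}}(\Delta,K_0(k\mmod))$ sits the subgroup $K_0(k\mmod)^{(\Delta)}$ of finitely supported families: any finitely supported family $(V_D)$ has its support contained in the finitely generated ideal $\bigcup_{D\in\mathrm{supp}}\Delta_{\le D}$, so the inclusion $\iota:K_0(k\mmod)^{(\Delta)}\hookrightarrow\mathrm{Map}^{\mathrm{fg}}(\Delta,K_0(k\mmod))$ is well defined and tautologically injective.

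It then suffices to check that the canonical map $\kappa:K_0(\CC)\to\hat{K}_0(\CC)$, $[M]\mapsto [[M]]$, satisfies $\kappa\circ\alpha=\sigma\circ\iota$, for then $\kappa=\sigma\circ\iota\circ\alpha^{-1}$ is a composition of injections. Both $\kappa\circ\alpha$ and $\sigma\circ\iota$ are morphisms of $K_0(k\mmod)$-modules from $K_0(k\mmod)^{(\Delta)}$ to $\hat{K}_0(\CC)$, so it is enough to compare them on the canonical basis, i.e.\ to verify the identity $[[V\otimes_k D]]=[V]\cdot [[D]]$ in $\hat{K}_0(\CC)$ for every $V\in k\mmod$ and $D\in\Delta$. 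This identity is built into the definition of $\sigma$ via the very definition of the $K_0(k\mmod)$-action on $\hat{K}_0(\CC)$ inherited levelwise from the action on each $K_0(\CC[\Gamma]/\CC[\Omega])$.

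There is no serious obstacle: both horizontal identifications are already done, and the only point to verify is the behaviour of $\kappa$ on the generators $[V\otimes_k D]$, which is a direct unwinding of the construction of $\sigma$ and of the $K_0(k\mmod)$-action. The statement is really a corollary of the two preceding lemmas.
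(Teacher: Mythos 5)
Your proof is correct and follows exactly the route the paper intends: the statement is presented there as an immediate consequence of Lemmas \ref{le:basisK0separated} and \ref{le:completedK0maps}, and your argument simply makes explicit the commutative square relating $\alpha$, $\sigma$, the inclusion of finitely supported families, and the canonical map $[M]\mapsto[[M]]$. The only point you rightly flag — the identity $[[V\otimes_k D]]=[V]\cdot[[D]]$ — does hold, since the quotient functors are exact and commute with $V\otimes_k-$, so the $K_0(k\mmod)$-action on $\hat{K}_0(\CC)$ is computed levelwise as you claim.
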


\begin{lem}
\label{le:K0hatmult}
Assume $k$ is a field and $\CC$ is noetherian.
	The map $M\mapsto ([M:L(D)])_{D\in\Delta}$ induces
an injection
$\hat{K}_0(\CC)\hookrightarrow \BZ^\Delta$.
\end{lem}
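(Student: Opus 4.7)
The strategy is to identify $\hat K_0(\CC)$ explicitly with the subgroup $\bigcup_\Gamma\BZ^\Gamma$ of $\BZ^\Delta$, where $\Gamma$ runs over finitely generated ideals of $\Delta$ and each $\BZ^\Gamma$ is embedded in $\BZ^\Delta$ by extension by zero, and to show that the resulting injection is the multiplicity map. Fix a finitely generated ideal $\Gamma\subset\Delta$ and an ideal $\Omega\subset\Gamma$ with $\Gamma\setminus\Omega$ finite. By Proposition \ref{pr:quotienthw}, the category $\CC[\Gamma]/\CC[\Omega]$ is a highest weight category with finite standard poset $\Gamma\setminus\Omega$; Proposition \ref{pr:qh} realizes it as $A\mmod$ for a split quasi-hereditary algebra $A$ which, since $k$ is a field, is finite-dimensional over $k$. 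Hence $\CC[\Gamma]/\CC[\Omega]$ is a length category whose Grothendieck group is free abelian on $\{[L(D)]\}_{D\in\Gamma\setminus\Omega}$ by Proposition \ref{pr:fieldsimple}, and the multiplicity map yields an isomorphism $\mu_{\Gamma,\Omega}\colon K_0(\CC[\Gamma]/\CC[\Omega])\xrightarrow{\sim}\BZ^{\Gamma\setminus\Omega}$.

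For $\Omega\subset\Omega'$ the transition functor is a Serre quotient killing $L(D)$ for $D\in\Omega'\setminus\Omega$, so $\mu$ intertwines it with the projection $\BZ^{\Gamma\setminus\Omega}\twoheadrightarrow\BZ^{\Gamma\setminus\Omega'}$. Every $D\in\Gamma$ belongs to some $\Gamma\setminus\Omega$: take $\Omega=\Gamma\setminus\Gamma_{\ge D}$, which is an ideal with finite complement because $\Delta$ is locally finite and $\Gamma$ is generated by finitely many $D_1,\dots,D_n$, so $\Gamma_{\ge D}=\bigcup_i[D,D_i]$ is finite. Passage to the limit therefore yields $\lim_\Omega K_0(\CC[\Gamma]/\CC[\Omega])\xrightarrow{\sim}\BZ^\Gamma$. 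For $\Gamma\subset\Gamma'$ both finitely generated and $\Omega$ an ideal of $\Gamma'$ with finite complement, Proposition \ref{pr:quotienthw} (applied inside $\CC[\Gamma']$ to the pair of ideals $\Gamma$ and $\Omega$) gives a fully faithful equivalence
$$\CC[\Gamma]/\CC[\Omega\cap\Gamma]\xrightarrow{\sim}(\CC[\Gamma']/\CC[\Omega])[\Gamma\setminus(\Omega\cap\Gamma)],$$
whose induced map on Grothendieck groups, under $\mu$, is the extension-by-zero inclusion $\BZ^{\Gamma\setminus(\Omega\cap\Gamma)}\hookrightarrow\BZ^{\Gamma'\setminus\Omega}$. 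Passing to the colimit then produces a group isomorphism
$$\hat K_0(\CC)\xrightarrow{\sim}\colim_\Gamma\BZ^\Gamma=\bigcup_\Gamma\BZ^\Gamma\hookrightarrow\BZ^\Delta.$$

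The main step to verify is that, for $M\in\CC[\Gamma]$ and $D\in\Gamma\setminus\Omega$, the $L(D)$-coefficient of the image of $[M]$ in the length Grothendieck group $K_0(\CC[\Gamma]/\CC[\Omega])$ coincides with $[M:L(D)]$ as defined in $\CC$, so that the composition above agrees with the map of the lemma on objects. Any filtration of $M$ in $\CC$ that realizes the definition of $[M:L(D)]$ descends to a filtration of the image of $M$ in the quotient with at least as many $L(D)$-factors, giving one inequality. For the reverse inequality, Proposition \ref{pr:fieldsimple} applied inside the highest weight category $\CC[\Gamma]$ (whose standard poset $\Gamma$ is finitely generated) identifies $[M:L(D)]$ with $\dim_k\Hom_{\CC[\Gamma]}(P_\Gamma(D),M)$, where $P_\Gamma(D)$ is the projective cover of $L(D)$ in $\CC[\Gamma]$; Proposition \ref{le:extifaithful} identifies this $\Hom$-space with $\Hom_{\CC[\Gamma]/\CC[\Omega]}(\overline{P_\Gamma(D)},\overline M)$, and the image $\overline{P_\Gamma(D)}$ is the projective cover of $L(D)$ in the length category $\CC[\Gamma]/\CC[\Omega]$, so the latter dimension is precisely the Jordan--Hölder multiplicity of $L(D)$ in $\overline M$. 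Together with the compatibility established above, this completes the identification of the lemma's map with the injection $\hat K_0(\CC)\hookrightarrow\BZ^\Delta$.
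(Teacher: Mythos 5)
Your proof is correct and takes essentially the same route as the paper's: reduce to the finite quotients $\CC[\Gamma]/\CC[\Omega]$, which are module categories over finite-dimensional split quasi-hereditary algebras whose $K_0$ is free on the simple objects, and check that the multiplicity map is compatible with the transition maps of the colimit-limit defining $\hat{K}_0(\CC)$. One small correction: the isomorphism $\Hom_{\CC[\Gamma]}(P_\Gamma(D),M)\simeq\Hom_{\CC[\Gamma]/\CC[\Omega]}(\overline{P_\Gamma(D)},\overline{M})$ does not follow from Proposition \ref{le:extifaithful} (which concerns the Serre subcategory, not the quotient), but from the full faithfulness of the quotient functor on projectives lying in $i(\CC^{\Gamma\setminus\Omega})$, established in the proof of Proposition \ref{pr:quotienthw}.
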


\begin{proof}
Let $M\in\CC$ and $D\in\Delta$. There is a finitely generated ideal
$\Gamma$ of $\Delta$ and an ideal $\Omega$ of $\Gamma$ such that
$\Gamma\setminus\Omega$ is finite and contains $D$ and such that
$M\in\CC[\Gamma]$. We have $[M:L(D)]=[M':L(D)]$, where $M'$ is the image
of $M$ in $\CC(\Gamma\setminus\Omega)$. It follows that
$[M:L(D)]$ depends only on the class of $M$ in $\hat{K}_0(\CC)$.

When $\Delta$ is finite, $\CC$ is equivalent to the category
of finite-dimensional modules over a finite-dimensional $k$-algebra, hence
the class of a module in $K_0$ is determined by the multiplicities
of simple modules in a composition series. So, we obtain the injectivity
when $\Delta$ is finite, and the general case follows.
\end{proof}

%The following lemma follows from Proposition \ref{pr:enoughproj} and
%Lemma \ref{le:completedK0maps}.
%
%\begin{lem}
%Assume $\Delta$ is finitely generated as an ideal. The
%canonical map
%$$K_0(\CC)\to\Hom(K_0(\Proj(\CC)),K_0(k\mmod)),\ [M]\mapsto 
%([P]\mapsto [\Hom_\CC(P,M)])$$
%factors through $\hat{K}_0(\CC)$ and
%induces an injection $\hat{K}_0(\CC)\hookrightarrow
%\Hom(K_0(\Proj(\CC)),K_0(k\mmod))$.
%\end{lem}
%
%\begin{proof}
%\end{proof}

Let $k'$ be a noetherian commutative flat $k$-algebra. There is a commutative
diagram
$$\xymatrix{
K_0(\CC)\ar[r]\ar[d] & \hat{K}_0(\CC)\ar[r]^-{\sigma^{-1}}_-\sim\ar[d] &
\mathrm{Map}^{\mathrm{fg}}(\Delta,K_0(k\mmod))\ar[d] \\
K_0(k'\CC)\ar[r] & \hat{K}_0(k'\CC)\ar[r]_-{\sigma^{-1}}^-\sim &
\mathrm{Map}^{\mathrm{fg}}(\Delta,K_0(k'\mmod))
}$$
where the vertical maps are induced by the functor $k'\otimes_k-$.

\subsection{Decomposition maps}
\label{se:hwdecmap}
We assume in \S\ref{se:hwdecmap} that $\CC$ is noetherian.

If $k$ is a discrete valuation ring with residue field $\bar{k}$ and
field of fractions $K$, it 
follows from \S \ref{se:basechange} that there is a decomposition map
$$d:K_0(K\CC)\to K_0(\bar{k}\CC)$$
with the property that $d([KM])=[\bar{k}M]$ when $M$ is an object of
$\CC[\Gamma]$ where $\Gamma$ is a finitely generated ideal of $\Delta$
and 
$\Hom(P,M)\in k\mproj$ for all $P\in\Proj(\CC[\Gamma])$.

\medskip
We construct now decomposition maps over more general local rings $k$,
using completed Grothendieck groups.

Assume $k$ is a local integral ring with residue field $\bar{k}$ and
field of fractions $K$. We have canonical isomorphisms 
$\dim_K:K_0(K\mmod)\xrightarrow{\sim}\ZM$ and
$\dim_{\bar{k}}:K_0(\bar{k}\mmod)\xrightarrow{\sim}\ZM$.
We define $\hat{d}:K_0(K\CC)\xrightarrow{\sim} K_0(\bar{k}\CC)$ as the map
making the following diagram commutative
$$\xymatrix{
\hat{K}_0(K\CC)\ar[r]^-{\sigma^{-1}}_-\sim\ar[d]_{\hat{d}} &
\mathrm{Map}^{\mathrm{fg}}(\Delta,\ZM)\ar@{=}[d] \\
\hat{K}_0(k'\CC)\ar[r]_-{\sigma^{-1}}^-\sim &
\mathrm{Map}^{\mathrm{fg}}(\Delta,\ZM)
}$$

\begin{prop}
Let $M\in\CC$ and let $\Gamma$ be a finitely generated ideal of $\Delta$
such that $M\in\CC[\Gamma]$. Assume $\Hom(P,M)\in k\mproj$ for all
$P\in\Proj(\CC[\Gamma])$. Then $\hat{d}([[KM]])=[[kM]]$.
\end{prop}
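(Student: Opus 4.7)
The statement asserts $\hat{d}([[KM]]) = [[\bar{k}M]]$ (reading the $[[kM]]$ of the statement as $[[\bar{k}M]]$). By the construction of $\hat{d}$ via the isomorphism $\sigma$ of Lemma~\ref{le:completedK0maps}, the claim reduces to verifying, for every $D \in \Delta$, the Euler characteristic identity
\begin{equation*}
\sum_i (-1)^i \dim_K \Ext^i_{K\CC}(KD,KM) = \sum_i (-1)^i \dim_{\bar{k}} \Ext^i_{\bar{k}\CC}(\bar{k}D,\bar{k}M).
\end{equation*}
When $D \not\in \Gamma$, both sides vanish by Lemma~\ref{le:vanishingExti} applied to the highest weight categories $K\CC$ and $\bar{k}\CC$ (Proposition~\ref{pr:basechangehw}), using that $KM$ and $\bar{k}M$ lie respectively in $K\CC[K\Gamma]$ and $\bar{k}\CC[\bar{k}\Gamma]$. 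So I may assume $D \in \Gamma$.

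Since $\Gamma$ is finitely generated and $\Delta$ is locally finite, $\Gamma_{>D}$ is finite, so Proposition~\ref{pr:enoughproj} applied to $\CC[\Gamma]$ yields a finite projective resolution $P_\bullet \to D$ in $\CC[\Gamma]$, with each $P_i \in \Proj(\CC[\Gamma]) \subset \CC^\Delta$. I will work with the bounded complex $C^\bullet = \Hom_\CC(P_\bullet, M)$, whose terms are finitely generated projective $k$-modules by hypothesis. The plan is to show that $KP_\bullet \to KD$ and $\bar{k}P_\bullet \to \bar{k}D$ are projective resolutions in $K\CC$ and $\bar{k}\CC$ respectively. Once this is established, $H^i(K \otimes_k C^\bullet) = \Ext^i_{K\CC}(KD,KM)$ and $H^i(\bar{k} \otimes_k C^\bullet) = \Ext^i_{\bar{k}\CC}(\bar{k}D,\bar{k}M)$, while the $k$-projectivity of each $C^i$ yields $\dim_K(K \otimes_k C^i) = \mathrm{rk}_k C^i = \dim_{\bar{k}}(\bar{k} \otimes_k C^i)$, so that the Euler characteristics $\chi(K \otimes_k C^\bullet)$, $\chi(C^\bullet)$, and $\chi(\bar{k} \otimes_k C^\bullet)$ all coincide, giving the identity.

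The $K$-side is routine: $K$ is flat over $k$ (being a localization of the integral domain $k$), so base change preserves the exactness of $P_\bullet \to D$, and the $KP_i$ are projective in $K\CC$ by the construction underlying Proposition~\ref{pr:basechangehw}. The main obstacle will be the $\bar{k}$-side, where $\bar{k}$ is not flat over $k$. To circumvent this, I will verify exactness of $\bar{k}P_\bullet \to \bar{k}D$ in $\bar{k}\CC$ by evaluating on every projective object $\bar{k}P'$ with $P' \in \Proj(\CC[\Gamma])$. The key observation is that $\Hom_\CC(P', P_\bullet) \to \Hom_\CC(P', D)$ is an exact complex of projective $k$-modules, projectivity of the terms following from Lemma~\ref{le:hwfinitegen}(i) since $P_i, D \in \CC^\Delta$; an exact complex of projective modules over $k$ whose rightmost term is projective is automatically split exact (peel off summands from the right by projectivity of $\Hom_\CC(P',D)$), hence remains exact after applying $\bar{k} \otimes_k -$. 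Using the Yoneda identification $\bar{k} \otimes_k \Hom_\CC(P', X) = \Hom_{\bar{k}\CC}(\bar{k}P', \bar{k}X)$ built into the definition of $\bar{k}\CC$ in \S\ref{se:basechange}, this yields exactness of $\Hom_{\bar{k}\CC}(\bar{k}P', \bar{k}P_\bullet) \to \Hom_{\bar{k}\CC}(\bar{k}P', \bar{k}D)$ for every $\bar{k}P' \in \Proj(\bar{k}\CC)$, and since $\bar{k}\CC$ has enough projectives (Proposition~\ref{pr:enoughproj} applied to the highest weight category $\bar{k}\CC$), this forces $\bar{k}P_\bullet \to \bar{k}D$ to be exact in $\bar{k}\CC$, completing the argument.
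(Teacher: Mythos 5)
Your proof is correct, but it takes a genuinely different route from the paper's. The paper first reduces to the case where $\Delta$ is finite via the quotient functors $q_\Omega$ (using $\Hom(P,M)\simeq\Hom(q_\Omega(P),q_\Omega(M))$), then invokes the objects $\bar{D}$ with $\Ext^i(D,\bar{D}')=\delta_{0i}\delta_{D,D'}$ together with a finite \emph{projective resolution of $M$} — whose existence is exactly what the hypothesis on $M$ guarantees, by \cite[Proposition~4.23]{rouquier schur} — so that $\sigma^{-1}([[M]])(D)=\sum_i(-1)^i[\Hom(P^i,\bar D)]$ is visibly base-change invariant. You instead resolve the standard object $D$ inside $\CC[\Gamma]$ and pair against $M$, using the hypothesis only to know that $\Hom(P_\bullet,M)$ is a bounded complex of finitely generated projective $k$-modules; the splitting argument then carries exactness through the non-flat base change to $\bar k$. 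This buys you a proof that avoids both the reduction to finite $\Delta$ and the external reference producing the objects $\bar D$, at the cost of having to check by hand that $\bar kP_\bullet\to\bar kD$ stays a projective resolution in $\bar k\CC$, which you do correctly. One point worth making explicit: you identify $\sigma^{-1}$ with the map $[[M]]\mapsto(D\mapsto\sum_i(-1)^i[\Ext^i(D,M)])$ of Lemma~\ref{le:completedK0maps}, whereas these two isomorphisms differ by the unitriangular matrix $\bigl(\sum_i(-1)^i[\Ext^i(D,D')]\bigr)_{D,D'\in\Delta}$. This is harmless: your Euler-characteristic identity applied with $M=D'$ (standard objects satisfy the hypothesis by Lemma~\ref{le:hwfinitegen}(i)) shows this matrix is the same over $K$ and over $\bar k$, so the two formulations of the claim coincide — but the remark should be added.
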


\begin{proof}
Assume first $\Delta$ is finite. There are objects $\bar{D}$ of $\CC$
for $D\in\Delta$ such that
$\Ext^i(D,\bar{D}')=\delta_{0i}\delta_{D,D'}$ for all $D,D'\in\Delta$
and $i\ge 0$ (Proposition \ref{pr:qh} and 
\cite[Proposition 4.19]{rouquier schur}).
The assumption on $M$ guarantees that it has a finite projective resolution
$0\to P^{-n}\to\cdots\to P^0\to M\to 0$
\cite[Proposition 4.23]{rouquier schur}.
As a consequence, $[[M]]=\sigma(D\mapsto \sum_{i\ge 0}(-1)^i
[\Hom(P^i,\bar{D})])$. Since
$\dim_K [\Hom_{K\CC}(KP^i,K\bar{D})]=
\dim_k [\Hom_{k\CC}(kP^i,k\bar{D})]$, the proposition follows.

Consider the general case. Let $\Gamma$ be a finitely generated ideal
of $\Delta$ such that $M\in\CC[\Gamma]$. Let $\Omega$ be a finitely
generated ideal of $\Gamma$ such that $\Gamma\setminus\Omega$ is finite.
There exists a projective object of $\CC[\Gamma]$ whose image in
$\CC[\Gamma]/\CC[\Omega]$ is a progenerator, and
$\Hom(P,M)\simeq\Hom(q_\Omega(P),q_\Omega(M))$ (cf. Lemma
\ref{le:Gammalocal}). Since the proposition holds for $q_\Omega(M)$,
we deduce that it holds for $M$.
\end{proof}

When $k$ is a discrete valuation ring, there is a
commutative diagram
$$\xymatrix{
K_0(K\CC)\ar[r]\ar[d]_d &\hat{K}_0(K\CC)\ar[r]^-{\sigma^{-1}}_-\sim
\ar[d]_{\hat{d}} &
\mathrm{Map}^{\mathrm{fg}}(\Delta,\ZM)\ar@{=}[d] \\
K_0(\bar{k}\CC)\ar[r] &\hat{K}_0(k'\CC)\ar[r]_-{\sigma^{-1}}^-\sim &
\mathrm{Map}^{\mathrm{fg}}(\Delta,\ZM)
}$$

\subsection{Blocks}
\label{se:hwblocks}

We assume in \S \ref{se:hwblocks} that $\Spec k$ is connected.

We define the equivalence relation $\sim$ on $\Delta$ as the one generated
by $D\sim D'$ when $\Ext^i_{\CC}(D,D')\neq 0$ for some $i\in\{0,1\}$. This
is the equivalence relation generated by the partial order $\lessdot$
(cf. Proposition \ref{pr:coarsestorder}). 

\begin{prop}
\label{pr:blockshw}
Given $I\in\Delta/\sim$, the full subcategory $\CC[I]$ of $\CC$ is an
indecomposable
Serre subcategory whose objects are the quotients of objects of $\CC^I$. It
is a highest weight category with poset of standard objects $I$.
We have $\CC=\bigoplus_{I\in\Delta/\sim}\CC[I]$.
\end{prop}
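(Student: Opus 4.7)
The plan is to exploit the coarser partial order $\lessdot$ on $\Delta$ provided by Proposition \ref{pr:coarsestorder}, with respect to which \emph{both} $I$ and $\Delta\setminus I$ become ideals. Indeed, if $D\in I$ and $D'\lessdot D$, then $D'\sim D$, so $D'\in I$; conversely, if $D\in\Delta\setminus I$ and $D'\lessdot D$, then $D'\sim D$ forces $D'\notin I$ (else $D\in I$). Granted this, Proposition \ref{pr:idealshw} directly furnishes that $\CC[I]$ is a Serre subcategory, a highest weight category with standard objects $I$, and that its objects are exactly the quotients of objects of $\CC^I$. The substantive content is therefore the orthogonality and splitting between different equivalence classes.

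The key technical step is the vanishing of $\Ext$ between distinct blocks. Apply Lemma \ref{le:vanishingExti} to the ideal $\Delta\setminus I$ (in $(\Delta,\lessdot)$) and $N\in\CC[\Delta\setminus I]$: any nonzero $\Ext^j(D,N)$ with $D\in I$ would force a chain starting at $D\in \Delta\setminus I$, absurd. Hence $\Ext^j(D,N)=0$ for all $j\ge 0$, $D\in I$, $N\in\CC[\Delta\setminus I]$. To extend to arbitrary $M\in\CC[I]$, I would first treat $\Hom$: the image of a map $M\to N$ is simultaneously a quotient of $M$ and a subobject of $N$, hence lies in $\CC[I]\cap\CC[\Delta\setminus I]$, which vanishes (such an object $L$ satisfies $\Hom(D,L)=0$ for all $D\in\Delta$, and then the same minimality-in-filtration argument used in the proof of Lemma \ref{le:projcoverD}/Proposition \ref{pr:fieldsimple} produces a contradiction unless $L=0$). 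For $\Ext^1$, write $M$ as a quotient $0\to K\to R\to M\to 0$ with $R\in\CC^I$ (Proposition \ref{pr:idealshw}(ii)); devissage along the $\Delta$-filtration of $R$ gives $\Ext^1(R,N)=0$, so the long exact sequence yields a surjection $\Hom(K,N)\twoheadrightarrow \Ext^1(M,N)$, and since $K\in\CC[I]$ (Serre) and $N\in\CC[\Delta\setminus I]$, the already-established $\Hom$-vanishing forces $\Ext^1(M,N)=0$.

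For the decomposition, given $M\in\CC$ let $r_I(M)$ be its largest subobject in $\CC[I]$ (Proposition \ref{pr:idealshw}(iv)). I claim $M/r_I(M)\in\CC[\Delta\setminus I]$: any nonzero map $D\to M/r_I(M)$ with $D\in I$ would have preimage $N\subset M$ fitting in $0\to r_I(M)\to N\to\im(f)\to 0$ with both ends in $\CC[I]$, hence $N\in\CC[I]$ (Serre), contradicting the maximality of $r_I(M)$. The resulting short exact sequence $0\to r_I(M)\to M\to M/r_I(M)\to 0$ splits by the $\Ext^1$-vanishing (with roles swapped, which is symmetric since both $I$ and $\Delta\setminus I$ are ideals). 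To obtain a simultaneous decomposition $M=\bigoplus_I M_I$ indexed by \emph{all} equivalence classes, I would use Corollary \ref{co:CCunion} to place $M\in\CC[\Gamma]$ for some finitely generated ideal $\Gamma$, then surject $R\twoheadrightarrow M$ with $R\in\CC^\Gamma$; the finite filtration of $R$ meets only finitely many classes $I_1,\dots,I_n$, and iterating the two-block splitting decomposes $R=\bigoplus_j R_{I_j}$ as a \emph{finite} direct sum. Setting $M_{I_j}$ to be the image of $R_{I_j}\to M$, one has $M_{I_j}\in\CC[I_j]$, and $M_{I_j}\cap M_{I_l}\subset \CC[I_j]\cap\CC[I_l]=0$ for $j\neq l$; the images span $M$, giving $M=\bigoplus_j M_{I_j}$.

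Finally, for indecomposability of $\CC[I]$: a nontrivial decomposition $\CC[I]=\AC_1\oplus\AC_2$ would partition $I$ as $I_1\sqcup I_2$, since each standard $D\in I$ has $\End(D)=k$ with $\Spec k$ connected, hence no nontrivial idempotents, so $D$ lies in a unique summand. For $D_1\in I_1$, $D_2\in I_2$ one then has $\Hom(D_1,D_2)=\Ext^1(D_1,D_2)=0$, so $D_1\not\sim D_2$, contradicting that $I$ is a single $\sim$-equivalence class. The main obstacle is the $\Ext^1$ vanishing between $\CC[I]$ and $\CC[\Delta\setminus I]$: higher $\Ext$ vanishing is not directly available, so one must carefully route through the presentation $R\twoheadrightarrow M$ with $R\in\CC^I$ and exploit that the kernel $K$, while a priori only a Serre subobject, is mapped into $N$ via $\Hom$, which has already been shown to vanish.
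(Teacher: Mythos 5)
Your proof is correct and takes essentially the same route as the paper's: both pass to the coarser order $\lessdot$ of Proposition \ref{pr:coarsestorder} so that each class $I$ and its complement are ideals, obtain the $\Hom$/$\Ext^1$-orthogonality between distinct classes by d\'evissage from the definition of $\sim$, decompose an arbitrary object through a presentation by objects of $\CC^\Delta$, and derive indecomposability from the $\sim$-connectedness of $I$. The paper is merely more economical: it splits a two-term presentation $M\to N\to L\to 0$ directly instead of routing through the adjoint $r_I$ and an $\Ext^1$-splitting, and phrases indecomposability via central idempotents rather than direct-summand subcategories.
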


\begin{proof}
Let $I,J\in\Delta/\sim$ with $I\neq J$. Given $M\in\CC^I$ and $N\in\CC^J$,
we have $\Ext^1(M,N)=0$ and $\Hom(M,N)=0$. It follows that 
$\CC^\Delta=\bigoplus_{I\in\Delta/\sim}\CC^I$. We deduce that
a quotient of an object of $\CC^I$ is in $\CC[I]$.

Let $L\in\CC$. There is an exact sequence $M\xrightarrow{f} N\to L\to 0$
with $M,N\in\CC^\Delta$. We have decompositions
$M=\bigoplus_{I\in\Delta/\sim}M^I$,
$N=\bigoplus_{I\in\Delta/\sim}N^I$ and $f=\sum_I f^I$ with 
$f^I:M^I\to N^I$ and $M^I,N^I\in\CC^I$. It follows that $L=\bigoplus_I
\mathrm{coker} f^I$ and $\mathrm{coker} f^I$ is a quotient of an object of
$\CC^I$.
Assume $L\in\CC[I]$. Consider $J\neq I$.
We have $\Hom(N^J,L)=0$, hence $\mathrm{coker} f^J=0$. It follows that
$L=\mathrm{coker} f^I$ is the quotient of an object of $\CC^I$.
We have shown that $\CC=\bigoplus_{I\in\Delta/\sim}\CC[I]$.

Let $e$ be an idempotent of the center of $\CC[I]$. Note that
$e$ acts by $0$ or $1$ on an object of $\Delta$. Let
$I_e$ be the subset of $I$ of objects on which $e$ acts by $0$.
Given $D,D'\in I$ with $\Ext^i(D,D')\neq 0$ for some $i\in\{0,1\}$, we have
$D,D'\in I_e$ or $D,D'\in I\setminus I_e$. We deduce that
$I=I_e$ or $I_e=\emptyset$. Since every object of $\CC[I]$ is the quotient
of an object of $\CC^I$, it follows that $e=0$ or $e=1$. So,
$\CC[I]$ is indecomposable.

Since $(\CC,\Delta,\lessdot)$ is a highest weight category 
(Proposition \ref{pr:coarsestorder}), it
follows that $\CC[I]$ is a highest weight category with set of standard
$I$ and partial order $\lessdot$, hence it is a highest weight category
with the partial order $<$.
\end{proof}

\begin{lem}
\label{le:hwblocksfinte}
Let $D,D'\in\Delta$. 

We have $D\sim D'$ if and only if there is
a finitely generated ideal $\Gamma$ of $\Delta$ and an ideal $\Omega$
of $\Gamma$ such that $\Gamma\setminus\Omega$ is a finite set
containing $D$ and $D'$ and such that
$q_\Omega(D)$ and $q_\Omega(D')$
are in the same block of $\CC[\Gamma](\Gamma\setminus\Omega)$.
\end{lem}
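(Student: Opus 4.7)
The plan is to reduce the statement to Proposition~\ref{pr:blockshw} applied to the highest weight category $\CC[\Gamma](\Omega)$, whose set of standard objects $\Gamma\setminus\Omega$ is finite. In that finite setting, blocks admit the concrete description as connected components of the graph on standard objects with edges given by non-vanishing $\Hom$ or $\Ext^1$, and the task is to compare these groups with the ones in $\CC$.

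For the forward direction, suppose $D\sim D'$. Unpacking the definition of $\sim$ gives a finite chain $D=D_0,D_1,\ldots,D_n=D'$ in $\Delta$ along which consecutive terms are linked by non-vanishing $\Hom$ or $\Ext^1$ in one direction or the other. I would then set $\Gamma=\bigcup_i \Delta_{\le D_i}$, a finitely generated ideal containing the whole chain, and define $\Omega=\Gamma\setminus U$, where $U$ is the upward closure of the chain inside $\Gamma$. The subset $U$ is finite by local finiteness of $\Delta$ together with finite generation of $\Gamma$ (each $\Gamma_{\ge D_i}$ is contained in a finite union of sets $\Delta_{\ge D_i}\cap\Delta_{\le D_j}$). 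Thus $\Omega$ is an ideal of $\Gamma$, $\Gamma\setminus\Omega=U$ is finite, and it contains $D,D'$ and the whole chain, which will then witness that $q_\Omega(D)$ and $q_\Omega(D')$ are in the same block.

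For the backward direction, Proposition~\ref{pr:quotienthw} supplies the structure of highest weight category on $\CC[\Gamma](\Omega)$ with finite poset $\Gamma\setminus\Omega$, and Proposition~\ref{pr:blockshw} produces a chain in $\Gamma\setminus\Omega$ linking $q_\Omega(D)$ to $q_\Omega(D')$ via non-vanishing $\Hom$ or $\Ext^1$ in $\CC[\Gamma](\Omega)$. It remains to lift this chain to a $\sim$-chain in $\Delta$, which requires the key comparison
\[
\Ext^i_\CC(E,E') \;=\; \Ext^i_{\CC[\Gamma](\Omega)}(q_\Omega E, q_\Omega E')
\quad\text{for } i\in\{0,1\},\ E,E'\in\Gamma\setminus\Omega.
\]
This comparison factors as (i) the equality of $\Ext^i$ between $\CC[\Gamma]$ and $\CC$ on objects of $\CC[\Gamma]$, which is Proposition~\ref{le:extifaithful}, and (ii) the fact that the quotient functor $q_\Omega:\CC[\Gamma]\to\CC[\Gamma](\Omega)$ preserves $\Hom$ and $\Ext^1$ between standard objects in $\Gamma\setminus\Omega$; the latter is obtained by reading the proof of Proposition~\ref{pr:quotienthw} with $\CC$ replaced by $\CC[\Gamma]$ and $\Delta$ by $\Gamma$, using the vanishing of $\Ext^{\le 1}$ from $\CC^{\Gamma\setminus\Omega}$ to $\CC[\Omega]$ provided by Lemma~\ref{le:vanishingExti}.

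The main obstacle will be the combinatorial construction of $\Omega$ in the forward direction: one needs simultaneously that $\Omega$ be downward closed in $\Gamma$, that $\Gamma\setminus\Omega$ be finite, and that $\Gamma\setminus\Omega$ contain the prescribed chain. Taking $\Omega$ to be the complement of the upward closure of the chain works only because $\Delta$ is locally finite and $\Gamma$ is finitely generated; with either assumption dropped, the complement could fail to be finite, and neither direction of the argument would go through.
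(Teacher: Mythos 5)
Your proposal is correct and follows essentially the same route as the paper: both directions rest on the isomorphism $\Ext^i_\CC(E,E')\simeq\Ext^i_{\CC[\Gamma](\Omega)}(q_\Omega E,q_\Omega E')$ for $i\in\{0,1\}$ (extracted from Proposition~\ref{le:extifaithful} and the proof of Proposition~\ref{pr:quotienthw}) together with the description of blocks in the finite case from Proposition~\ref{pr:blockshw}. Your explicit construction of $\Omega$ as the complement in $\Gamma$ of the upward closure of the chain, with finiteness via local finiteness of $\Delta$, simply spells out a step the paper leaves implicit.
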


\begin{proof}
Let $\Gamma$ be a finitely generated ideal of $\Delta$ and $\Omega$ an ideal
of $\Gamma$ such that $\Gamma\setminus\Omega$ is a finite set
containing $D$ and $D'$.
We have $\Ext^i_\CC(D,D')\simeq\Ext^i_{\CC[\Gamma](\Omega)}(q_\Omega(D),
q_\Omega(D'))$ for $i\in\{0,1\}$ (cf. proof of Proposition \ref{pr:quotienthw}).

We have $D\sim D'$ if and only if there exists
$D_0=D,D_1,\ldots,D_n=D'$ in $\Delta$ such that 
$\Ext^*(D_i,D_{i+1})\neq 0$ or $\Ext^*(D_{i+1},D_i)\neq 0$ for
some $*\in\{0,1\}$, for all $i\in\{0,\ldots,n-1\}$. So,
$D\sim D'$ if and only if there is a finitely generated ideal $\Gamma$ of
$\Delta$, an ideal $\Omega$ of $\Gamma$ and
$D_0=D,D_1,\ldots,D_n=D'$ in $\Gamma\setminus\Omega$ such that 
$\Ext^*_{\CC[\Gamma](\Omega)}(q_\Omega(D_i),q_\Omega(D_{i+1}))\neq 0$ or
$\Ext^*_{\CC[\Gamma](\Omega)}(q_\Omega(D_{i+1}),q_\Omega(D_i))\neq 0$ for
some $*\in\{0,1\}$, for all $i\in\{0,\ldots,n-1\}$: this is equivalent
to the requirement that $q_\Omega(D)$ and $q_\Omega(D')$ are in the same
block of $\CC[\Gamma](\Gamma\setminus\Omega)$ by Proposition \ref{pr:blockshw}.
\end{proof}

We assume for the remainder of \S \ref{se:hwblocks} that $\CC$ is noetherian.

\smallskip
Given
$\Gamma$ a finitely generated ideal of $\Delta$ and $\Omega$ an ideal
of $\Gamma$ such that $\Gamma\setminus\Omega$ is a finite set
containing $D$ and $D'$, there is a finite family $B_{\Gamma,\Omega}$
of prime ideals of $k$ such that  given a prime ideal $\qG$ of $k$, we have
$k_\qG\otimes_k D\sim k_\qG\otimes_k D'$ if and only if $\pG\subset\qG$.

\begin{lem}
\label{pr:blockshwspec}
Let $D,D'\in\Delta$ and let $k'$ be a commutative noetherian $k$-algebra
that is a finitely generated module over a localization of $k$.
If $k'\otimes_k D\sim k'\otimes_k D'$ in $k'\Delta$
then $D\sim D'$.
\end{lem}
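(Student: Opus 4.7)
The plan is to reduce the general statement to the case of a finite $\Delta$ via Lemma~\ref{le:hwblocksfinte} applied in $k'\CC$, and then to reduce that finite case to an elementary fact about central idempotents of a split quasi-hereditary $k$-algebra.

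First, since $k'\CC$ is a highest weight category with poset of standard objects $k'\Delta$ (Proposition~\ref{pr:basechangehw}), Lemma~\ref{le:hwblocksfinte} applied to $k'\CC$ provides a finitely generated ideal of $k'\Delta$, together with a subideal whose complement is finite and contains $k'\otimes D$ and $k'\otimes D'$, in whose associated quotient category $k'\otimes D$ and $k'\otimes D'$ lie in the same block. Ideals of $k'\Delta$ correspond bijectively to ideals of $\Delta$ via the identification $D \leftrightarrow k'\otimes D$, so this data comes from a finitely generated ideal $\Gamma$ of $\Delta$ and an ideal $\Omega\subset\Gamma$ with $\Gamma\setminus\Omega$ a finite set containing $D$ and $D'$. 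Unwinding the definitions of $(k'\CC)_{\Gamma}$ and of the quotient categories gives a canonical identification $(k'\CC)[k'\Gamma](k'\Omega)\simeq k'(\CC[\Gamma](\Omega))$, so the conclusion we need to reach — via the converse direction of Lemma~\ref{le:hwblocksfinte} applied to $\CC$ — is that $D$ and $D'$ lie in the same block of $\CC[\Gamma](\Omega)$. Since $\CC[\Gamma](\Omega)$ is a highest weight category with the finite poset of standard objects $\Gamma\setminus\Omega$ (Proposition~\ref{pr:quotienthw}), we have reduced to the finite case.

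Assume therefore that $\Delta$ is finite. By Proposition~\ref{pr:qh}, $\CC\simeq A\mmod$ for a split quasi-hereditary $k$-algebra $A$ that is finitely generated and projective as a $k$-module, and tracing through the construction of $k'\CC$ via projective objects identifies $k'\CC$ with $(k'A)\mmod$. Proposition~\ref{pr:blockshw} shows that two standard objects are $\sim$-equivalent in $\Delta$ (respectively in $k'\Delta$) if and only if they are annihilated by the same primitive central idempotents of $A$ (respectively of $k'A$). Write $1=\sum_{i} e_i$ as the decomposition of $1$ into primitive central idempotents of $A$; then $1=\sum_i (1\otimes e_i)$ is a decomposition into pairwise orthogonal central idempotents of $k'A$. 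Each standard module $D$ of $A$ is annihilated by all but one $e_j$, so $k'\otimes D$ is annihilated by every $1\otimes e_i$ with $i\ne j$; likewise $k'\otimes D'$ is supported on some $1\otimes e_{j'}$. A primitive central idempotent of $k'A$ acting non-trivially on both $k'\otimes D$ and $k'\otimes D'$ must be a summand of both $1\otimes e_j$ and $1\otimes e_{j'}$; since these are orthogonal when $j\ne j'$, this forces $e_j=e_{j'}$, so $D$ and $D'$ lie in the same block of $A$, whence $D\sim D'$.

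The main obstacle is the compatibility of the base change functor with the formation of ideals and quotients — concretely, the identification $(k'\CC)[k'\Gamma](k'\Omega)\simeq k'(\CC[\Gamma](\Omega))$, and in the finite case the identification of $k'\CC$ with $(k'A)\mmod$ for $A=\End_\CC(P)^{\mathrm{opp}}$ a progenerator. Both follow from unpacking the definition $k'\CC=\colim_\Gamma\bigl(k'\Proj(\CC[\Gamma])\bigr)\mmod$ and using that a progenerator of $\CC[\Gamma](\Omega)$ is inherited from a direct summand of a progenerator of $\CC[\Gamma]$, but some care is needed to check exactness and compatibility; once these identifications are in place the central-idempotent argument is essentially formal.
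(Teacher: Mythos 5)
Your strategy is sound, and your finite-case argument is genuinely different from the paper's, so a comparison is worthwhile. The paper disposes of the finite case with the base-change formula $R\Hom^\bullet_{k'\CC}(k'D,k'D')\simeq k'\otimes^{\LM}R\Hom^\bullet_{\CC}(D,D')$: if $\Ext^*_{k'\CC}(k'D,k'D')\neq 0$ in degree $0$ or $1$, then $\Ext^*_{\CC}(D,D')\neq 0$ in \emph{some} degree, and non-vanishing of any $\Ext$ already forces $D$ and $D'$ into the same summand of the decomposition $\CC=\bigoplus_I\CC[I]$ of Proposition~\ref{pr:blockshw}, hence $D\sim D'$. You replace this by the observation that each primitive central idempotent $e_j$ of $A$ stays a central idempotent $1\otimes e_j$ of $k'A$, and any primitive central idempotent of $k'A$ acting non-trivially on $k'D$ must lie under $1\otimes e_{j(D)}$. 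Your version is more elementary (no derived tensor products) and makes the mechanism visible; the paper's is shorter and, importantly, never needs to discuss how $k'D$ distributes over the blocks of $k'A$. The reduction of the general case to a finite quotient $\CC[\Gamma](\Omega)$ is the same in spirit in both proofs, and both leave the base-change compatibility $(k'\CC)[k'\Gamma](k'\Omega)\simeq k'\bigl(\CC[\Gamma](\Omega)\bigr)$ essentially to the reader.

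Two points in your write-up need tightening. First, you invoke Proposition~\ref{pr:blockshw} and Lemma~\ref{le:hwblocksfinte} \emph{for $k'\CC$}, but the standing hypothesis of that section is that $\Spec k$ is connected, and nothing forces $\Spec k'$ to be connected (take $k'$ a product of residue fields: then $k'D$ is decomposable, meets several blocks of $k'A$, and ``$\sim$'' on $k'\Delta$ is no longer ``same block''). This does not sink the proof, because the only implication you use — a single $\Ext^{0}$- or $\Ext^1$-link between $k'D_i$ and $k'D_{i+1}$ forces some primitive central idempotent of $k'A$ to act non-trivially on both — holds with no connectedness assumption; but you should argue directly from the chain defining $\sim$ rather than through the block dictionary for $k'A$. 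Second, and relatedly, your final paragraph treats only a direct link: since $\sim$ is a transitive closure, you must run the idempotent argument along each consecutive pair of a chain $k'D=k'D_0,\dots,k'D_n=k'D'$ and conclude $j(D_0)=j(D_1)=\cdots=j(D_n)$; the relation ``some primitive central idempotent acts non-trivially on both'' is not itself transitive once modules can meet several blocks.
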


\begin{proof}
Assume first $\Delta$ is finite. In that case, the result is classical:
we have $R\Hom^\bullet_{k'\CC}(k'D,k'D')\simeq
k'\otimes^\LM R\Hom^\bullet_\CC(D,D')$. So, if $\Ext^*_{k'\CC}(k'D,k'D')
\neq 0$ for some $*\in\{0,1\}$, then $\Ext^*_\CC(D,D')\neq 0$ for some
$*\ge 0$.

Consider $D=D_0,\ldots,D_n=D'$ in $\Delta$ with 
$\Ext^*_{k'\CC}(k'D_i,k'D_{i+1})\neq 0$ or 
$\Ext^*_{k'\CC}(k'D_{i+1},k'D_i)\neq 0$ for some $*\in\{0,1\}$ for all
$i\in\{0,n-1\}$. We have
$\Ext^*_{\CC}(D_i,D_{i+1})\neq 0$ or 
$\Ext^*_{\CC}(D_{i+1},D_i)\neq 0$ for some $*\ge 0$ for all
$i\in\{0,n-1\}$. It follows that $D$ and $D'$ are in the same block of
$\CC$, hence $D\sim D'$ by Proposition \ref{pr:blockshw}.

The general case follows from Proposition \ref{pr:basechangehw} and its proof.
\end{proof}

\begin{prop}
Assume $k$ is integral and integrally closed and let $K$ be its field
of fractions.

	There exists a (unique) family $\FC$ 
of height one prime ideals of $k$ with the following property:

given $\pG$ a prime ideal of $k$, the partition of $\Delta$ into blocks
of $k_\pG\CC$ is the same as the partitions into blocks of
$K\CC$ if and only if $\pG$ contains no element of $\FC$.

\smallskip
Given $\pG$ a prime ideal of $k$ containing an element of $\FC$, 
a subset of $\Delta$ 
corresponds to a block of $k_\pG\CC$ if and only if it is a union
of blocks of $k_\qG\CC$ for all $\qG\in\FC$ with $\qG\subset\pG$.

The set $\FC$ has the cardinality of a union over the set of finite subsets of
$\Delta$, of finite sets.
\end{prop}

\begin{proof}
Assume $\Delta$ is finite. We have $\CC\simeq A\mmod$ where $A$ is
a $k$-algebra that is finitely generated and projective as a $k$-module.
The proposition follows from 
Proposition \ref{pr:blockshw} and
Proposition \ref{codimension un}

The general case follows from Lemma \ref{le:hwblocksfinte}. The set $\FC$
is the union of the finite sets associated with the categories
$\CC[\Gamma](\Omega)$, where $\Gamma$ runs over finitely generated ideals
of $\Delta$, $\Omega$ runs over ideals of $\Gamma$ such that
$\Gamma\setminus\Omega$ is finite and generates $\Gamma$ as an ideal
of $\Delta$.
\end{proof}

\begin{rema}
Proposition \ref{pr:blockshwspec} shows that if $D{\not\sim D'}$, then
$k_\qG\otimes_k D{\not\sim} k_\qG\otimes_k D'$ for all prime ideals
$\qG$ of $k$.\finl
\end{rema}

\section{Triangular algebras}
\label{se:Appendixtriangular}

The construction of a highest weight category from representations of triangular
algebras in \cite[\S 2]{ggor} is done under the presence of a grading coming from an
inner derivation. The method used there does not actually
use that the gradings are inner, and we describe here constructions following
\cite[\S 2]{ggor}, with a more general setting.

\smallskip

\subsection{Definition}
\label{de:triangular}
Let $A$ be a graded $k$-algebra with three graded
subalgebras $B^+$, $B^-$ and $H$ such that
\begin{itemize}
\item[($\bigtriangleup$i)] $H$, $B^-$ and $B^+$ are flat $k$-modules
\item[($\bigtriangleup$ii)] the multiplication map $\mu:B^+\otimes H\otimes B^-\to A$ is an
isomorphism of $k$-modules
\item[($\bigtriangleup$iii)] $\mu(B^+\otimes H)=\mu(H\otimes B^+)$
and $\mu(B^-\otimes H)=\mu(H\otimes B^-)$
\item[($\bigtriangleup$iv)] $B^-_{>0}=B^+_{<0}=0$, $B^-_0=B^+_0=k$ and $H=H_0$.
\end{itemize}

\smallskip
The canonical maps
$H\to (B^+H)/(B^+H)_{<0}$ and $H\to (B^-H)/(B^-H)_{>0}$ are isomorphisms.
Composing their inverse with the quotient maps,
we obtain a canonical morphism of graded $k$-algebras
$B^\epsilon H\twoheadrightarrow H$ for $\epsilon\in\{+,-\}$.

\smallskip
We identify the category of $H$-modules with the category of graded
$H$-modules that are concentrated in degree $0$.

\subsection{Induced modules}
\subsubsection{}
Given $F$ a graded $(B^-H)$-module, we put $\Delta(F)=A\otimes_{B^-H}F$. 
Note that $\Delta$ is an exact functor that is left adjoint to the restriction functor from the category of
graded $A$-modules to the category of graded $(B^-H)$-modules.

There is a canonical isomorphism of graded $(B^+H)$-modules
$$B^+H\otimes_H F\xrightarrow{\sim} \Delta(F)$$
and a canonical isomorphism of graded $B^+$-modules
$$B^+\otimes F\xrightarrow{\sim} \Delta(F).$$

\medskip
When $E$ is an $H$-module, we view $E$ as a graded $(B^-H)$-module concentrated in degree $0$
through the canonical map $p:B^-H\twoheadrightarrow H$ and we
put $\Delta(E)=\Delta(p^*(E))$.
There is a canonical isomorphism of graded $(B^+H)$-modules
$$B^+H\otimes_H E\xrightarrow{\sim}\Delta(E).$$

\medskip
Let $n\ge 0$. We put
$$\Delta_n(E)=\Delta\Bigl(\bigl((B^-H/(B^-H)_{<-n}\bigr)\otimes_H E\Bigr).$$
Note that $\Delta_0(E)=\Delta(E)$ and $\Delta_n(E)$ has a filtration
with subquotients 
$$\Delta(E), \Delta\bigl((B^-H)_{-1}\otimes_H E\bigr),\ldots,
\Delta\bigl((B^-H)_{-n}\otimes_H E\bigr).$$

\subsubsection{}
Given $M$ a $B^-$-module and $n$ a non-positive integer, let
$\mathrm{Ann}_{B^-_{<n}}(M)=\{m\in M | B^-_{<n}m=0\}$, a $B^-$-submodule of $M$.
We put $M_{\ln}=\bigcup_{n<0} \mathrm{Ann}_{B^-_{<n}}(M)$ and we
say that $M$ is {\em locally nilpotent} for $B^-$ if $M=M_{\ln}$.
The functor $M\mapsto M_{\ln}$ is right adjoint to the inclusion functor from the 
category of locally nilpotent $B^-$-modules to the category of $B^-$-modules.

\smallskip
Note that
\begin{itemize}
\item When $M$ is a graded $B^-$-module, $M_{\ln}$ is a graded $B^-$-submodule of $M$.
\item If $M$ is a $(B^-H)$-module, then $M_{\ln}$ is a $(B^-H)$-submodule of $M$, since
$HB^-_{<n}=(HB^-)_{<n}=(B^-H)_{<n}=B^-_{<n}H$.
\end{itemize}

\smallskip
Assume now $M$ is an $A$-module. Since
$(B^-)_{<i}H(B^+)_j\subset A_{<i+j}\subset A(B^-)_{<i+j}$ for $i+j<0$,
it follows
that $M_{\ln}$ is an $A$-submodule of $M$. We deduce that $M\mapsto M_{\ln}$ is right adjoint to the inclusion functor from
the category of $A$-modules (resp. graded $A$-modules) that are locally nilpotent as $B^-$-modules to the category of
$A$-modules (resp. graded $A$-modules).

\medskip
Let $M\in A\mmodgr$ and $E\in H\mmodgr$. 
We have an isomorphism of $k$-modules
$$\Hom_{A\mmodgr}(\Delta_n(E),M)\xrightarrow{\sim}
\Hom_{H\mmodgr}(E,\mathrm{Ann}_{B^-_{<-n}}(M)),\ f\mapsto f_{|1\otimes E}
$$

If $E$ is concentrated in degree $d$, then
$$\Hom_{H\mmodgr}(E,\mathrm{Ann}_{B^-_{<-n}}(M))=
\Hom_H(E,\bigl(\mathrm{Ann}_{B^-_{<-n}}(M)\bigr)_d)\subset
\Hom_H(E,M_d).$$

%\Hom_{H-gr}(E,\Hom_{B_--gr}(B_-/B_-^{\le -n},M))\xrightarrow{\sim}

\smallskip
Note that if
$M$ is a finitely generated graded $A$-module that is locally nilpotent for $B^-$, then 
$M_{<i}=0$ for $i\ll 0$.

\begin{lem}
Let $M\in A\mmodgr$, let $E$ be an $H$-module, let $d\in\BZ$ and let $i\in\BZ$ such that
$M_{<i}=0$. Then the canonical map
$\Hom_{A\mmodgr}(\Delta_n(E)\langle -d\rangle,M)\to \Hom_H(E,M_d)$
is an isomorphism for $n\ge d-i$.
\end{lem}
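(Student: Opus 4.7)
The plan is to reduce the lemma to the adjunction displayed immediately above its statement, and then to verify that the vanishing hypothesis on $M$ forces $M_d$ itself to sit inside the relevant annihilator when $n$ is large enough.

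First I would apply the canonical isomorphism
$$\Hom_{A\mmodgr}(\Delta_n(F),M)\xrightarrow{\sim}\Hom_{H\mmodgr}(F,\mathrm{Ann}_{B_-^{<-n}}(M))$$
to $F=E\langle -d\rangle$, which is $E$ regarded as a graded $H$-module concentrated in degree $d$. Since $\Delta_n$ commutes with shifts (the shift is internal to the $H$-module input), this identifies $\Hom_{A\mmodgr}(\Delta_n(E)\langle -d\rangle,M)$ with $\Hom_H\bigl(E,(\mathrm{Ann}_{B_-^{<-n}}(M))_d\bigr)$. The target map in the lemma factors as this isomorphism followed by the map induced by the inclusion $(\mathrm{Ann}_{B_-^{<-n}}(M))_d\hookrightarrow M_d$. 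So it suffices to show that this inclusion is an equality as soon as $n\ge d-i$, i.e.\ that $M_d\subset \mathrm{Ann}_{B_-^{<-n}}(M)$.

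The verification is a pure degree count. Pick $m\in M_d$ and $b\in B_-^{<-n}$, which by condition (iv) means $b$ is homogeneous of some degree $j$ with $j<-n$ (and of course $j\le 0$). Then $bm\in M_{d+j}$, and $d+j<d-n\le i$ by the hypothesis $n\ge d-i$. Since $M_{<i}=0$ we conclude $bm=0$, hence $B_-^{<-n}\cdot M_d=0$, which is exactly $M_d\subset \mathrm{Ann}_{B_-^{<-n}}(M)$.

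There is no real obstacle: the proof is a one-step application of the stated adjunction, and the numerical bound $n\ge d-i$ appears naturally as the condition that forces the image of $B_-^{<-n}\cdot M_d$ into the vanishing range of $M$. The only thing to be a bit careful about is that the shift convention $L\langle \gamma_0\rangle_\gamma=L_{\gamma+\gamma_0}$ places the canonical generating copy of $E$ inside $\Delta_n(E)\langle -d\rangle$ in degree $d$, which is why the $d$-th graded piece of $\mathrm{Ann}_{B_-^{<-n}}(M)$ (and not some other piece) is the one entering the adjunction.
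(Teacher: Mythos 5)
Your proof is correct and follows exactly the paper's argument: the paper also reduces via the displayed adjunction to showing $\bigl(\mathrm{Ann}_{B_-^{<-n}}(M)\bigr)_d=M_d$, which it justifies by the same degree count $(B_-)^{<-n}M_d\subset M_{<d-n}=0$ using $d-n\le i$. Your extra care with the shift convention is accurate but adds nothing beyond what the paper already records in the sentence preceding the lemma.
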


\begin{proof}
We have $\bigl(\mathrm{Ann}_{B^-_{<-n}}(M)\bigr)_d=M_d$ since
$(B^-)_{<-n}M_d\subset M_{<d-n}=0$ and
the result follows.
\end{proof}

\begin{lem}
\label{le:HomDelta}
Let $E,E'\in H\mmod$ and let $d\in\BZ$. 

If $d<0$, then
$\Hom_{A\mmodgr}(\Delta(E),\Delta(E'\langle d\rangle))=0$.

The functor $\Delta$ induces an isomorphism
$$\Hom_H(E,E')\xrightarrow{\sim}\Hom_{A\mmodgr}(\Delta(E),\Delta(E')).$$
\end{lem}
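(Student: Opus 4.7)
The plan is to combine the adjunction for the induction functor $\Delta$ (already noted in the excerpt) with an explicit description of $\Delta(E'\langle d\rangle)$ as a graded $B_+$-module.

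First I would invoke the adjunction displayed just before the lemma: since $\Delta(E) = \Delta_0(E)$, for any graded $A$-module $M$ we have
$$\Hom_{A\mmodgr}(\Delta(E),M)\longiso \Hom_{H\mmodgr}(E,\mathrm{Ann}_{B_-^{<0}}(M)).$$
Because $E$ is concentrated in degree $0$, the right-hand side equals $\Hom_H\bigl(E,\mathrm{Ann}_{B_-^{<0}}(M)_0\bigr)$, which sits inside $\Hom_H(E,M_0)$. So the whole lemma reduces to computing $\mathrm{Ann}_{B_-^{<0}}(\Delta(E'\langle d\rangle))_0$.

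Next I would use the canonical isomorphism of graded $B_+$-modules $B_+\otimes F \longiso \Delta(F)$ recalled after Definition \ref{de:triangular}. Applied to $F = E'\langle d\rangle$ (which is concentrated in degree $-d$), this gives $\Delta(E'\langle d\rangle)_i = (B_+)_{i+d}\otimes E'$. Specializing to $i=0$, the degree zero component is $(B_+)_d\otimes E'$. Since $B_+^{<0}=0$ by assumption (iv), this vanishes when $d<0$, which immediately yields the first assertion. When $d=0$, the degree zero part is canonically $E'$.

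It remains to show that for $d=0$, every element of $E'\simeq \Delta(E')_0$ is annihilated by $B_-^{<0}$, so that $\mathrm{Ann}_{B_-^{<0}}(\Delta(E'))_0 = E'$. For $b\in B_-^{<0}$ and $e'\in E'$, we have $b\cdot(1\otimes e') = 1\otimes (b\cdot e')$ in $A\otimes_{B_-H}E'$, where the action of $b$ on $e'$ is via the canonical morphism of graded algebras $B_-H\twoheadrightarrow H$ described in \S\ref{de:triangular}; by assumption (iv) this morphism kills $(B_-H)^{<0}\supset B_-^{<0}$, so $b\cdot e'=0$. Combining these steps, the adjunction becomes $\Hom_H(E,E')\longiso\Hom_{A\mmodgr}(\Delta(E),\Delta(E'))$, and this isomorphism is visibly induced by $\Delta$ since the inverse adjunction sends a morphism $f$ to its restriction to $1\otimes E$. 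The only subtle point is the last identification $E'\subset \mathrm{Ann}_{B_-^{<0}}(\Delta(E'))$, but this is a short direct verification once the comparison between $\Delta(E')$ as an $A$-module and as an induced $(B_-H)$-module is made, so I expect no serious obstacle.
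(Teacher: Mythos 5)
Your proof is correct and takes essentially the same route as the paper's: both reduce the statement via the adjunction for $\Delta$ to identifying the degree-$d$ component of $\Delta(E')\simeq B_+\otimes E'$ (which vanishes for $d<0$ and equals $E'$ for $d=0$) and to checking that $B_-^{<0}$ annihilates $\Delta(E')_0$. The only cosmetic difference is that you verify this last annihilation by the explicit tensor computation, whereas the paper observes it by degree reasons; there is no gap.
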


\begin{proof}
We have 
$$\Hom_{A\mmodgr}(\Delta(E),\Delta(E'\langle d\rangle))\simeq
\Hom_{B^-H\mmodgr}(E,\Delta(E')\langle d\rangle)$$
If $d<0$ then $(\Delta(E')\langle d\rangle)_0=0$, hence $\Hom_{B^-H\mmodgr}(E,\Delta(E')\langle d\rangle)=0$. This
shows the first statement.

Note that $E'=\Delta(E')_0$ is a $(B^-H)$-submodule of $\Delta(E')$. It follows that
$$\Hom_{B^-H\mmodgr}(E,\Delta(E'))\simeq\Hom_H(E,E')$$
and the second statement follows.
\end{proof}

\subsection{Category $\OC^{gr}$}
We fix a set $I$ of isomorphism classes of
% finitely generated $H$-modules.
$H$-modules that are finitely generated as $k$-modules.

\medskip

We assume that
\begin{itemize}
\item[($\bigtriangleup$v)] given $E\in I$, every submodule of $E$ is a quotient of a finite
multiple of $E$
\item[($\bigtriangleup$vi)] $\End_H(E)=k$ for all $E\in I$
\item[($\bigtriangleup$vii)] $\Hom_H(E,F)=0$ for all $E, F\in I$ with $E{\not\simeq F}$
\item[($\bigtriangleup$viii)] 
the $H$-modules $\bigl((B^+H)_{\ge n}/(B^+H)_{>n}\bigr)\otimes_H E$ and
$\bigl((B^-H)_{\le -n}/(B^-H)_{<-n}\bigr)\otimes_H E$ are direct summands
of finite direct sums of objects of $I$ for all $E\in I$ and $n\ge 0$
\item[($\bigtriangleup$ix)] the $A$-module $\Delta(E)$ is noetherian for all $E\in I$.
\end{itemize}

 We denote by
$\OC_H$ the full subcategory of $H\mMOD$ with objects the quotients of 
finite direct sums of objects in $I$. Note that $\OC_H$ is
an abelian subcategory of $H\mMOD$ closed under quotients and subobjects,
and $I$ is a set of projective
generators for $\OC_H$. There is an equivalence of
categories
\begin{equation}
	\label{eq:conditionIH}
	\bigoplus_{E\in I}\Hom(E,-):\OC_H\xrightarrow{\sim} (k\mmod)^{(I)}.
\end{equation}

\begin{exemple}
\label{ex:Hsplit}
Let $I$ be a family of isomorphism classes of $H$-modules. Assume
for every $E\in I$, the quotient of $H$ by the annihilator of $E$ is a
matrix algebra over $k$ and $E$ is the pullback of its vector representation.

Note that the modules in $I$ are free $k$-modules of finite rank. We
	have the equivalence of categories (\ref{eq:conditionIH})
	and the conditions ($\bigtriangleup$v)-($\bigtriangleup$vii) hold.\finl
%Assume there is a field $k_0$ that is a subalgebra of $k$ and a split
%semisimple $k_0$-algebra $H_0$ such that $H=H_0\otimes_{k_0}k$.
%Let $I=\{L\otimes_{k_0}k\}$, where $L$ runs over the set of isomorphism
%classes of simple $H_0$-modules. The assumptions (v)-(vii) above are
%satisfied.
\end{exemple}

\begin{rema}
Note that Assertion ($\bigtriangleup$ix) is satisfied if $B^+$ is noetherian:
since $E$ is a finitely generated $k$-module, it follows that 
%every $B^+$-submodule of 
$\Delta(E)\simeq B^+\otimes E$ is a finitely generated $B^+$-module,
hence a noetherian $B^+$-module.\finl
\end{rema}

\smallskip
We denote by $\OC^{gr}$ the category of finitely
generated graded $A$-modules $M$ that are
locally nilpotent for $B^-$ and satisfy $M_i\in\OC_H$ for all $i\in\BZ$.
Since $\OC_H$ is closed under taking quotients, we deduce
that $\OC^{gr}$ is a full subcategory of $A\mmodgr$ closed under taking
quotients.
Note that if $\OC_H$ is closed under extensions, then $\OC^{gr}$ will also
be closed under extensions.

% do we need noetherianity of $\Delta(E)$ for $B_+H$?

\begin{lem}
\label{le:OH}
Let $M$ be a graded $A$-module. The following conditions are equivalent
\begin{itemize}
\item[(i)] $M\in\OC^{gr}$
\item[(ii)] there exists a finite family $S$ of objects of $I$,
$d_E\in\BZ$ and $n_E\in\BZ_{\ge 0}$ for
$E\in S$
such that $M$ is a quotient of $\bigoplus_{E\in S}
\Delta_{n_E}(E\langle d_E\rangle)$.
\end{itemize}
\end{lem}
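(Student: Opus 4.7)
My plan is to establish the two implications separately. For (ii) $\Rightarrow$ (i), since $\OC^{gr}$ is closed under quotients and finite direct sums (all three defining properties — finite generation, local nilpotency for $B_-$, and having graded pieces in $\OC_H$ — pass to quotients and to finite direct sums), it suffices to verify that $\Delta_n(E\langle d\rangle)$ lies in $\OC^{gr}$ for every $E\in I$, $d\in\BZ$, $n\in\ZM_{\ge 0}$. For (i) $\Rightarrow$ (ii), the construction will be explicit: starting from homogeneous $A$-generators of $M$, I produce a surjection from a finite sum of modules of the form $\Delta_{n_E}(E\langle d_E\rangle)$ onto $M$ by appealing to the adjunction recalled in the excerpt.

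For (ii) $\Rightarrow$ (i), after reducing via a grading shift to $d=0$, I argue as follows. By the filtration noted in the text, $\Delta_n(E)$ has a finite filtration by $A$-submodules with subquotients $\Delta((B_-H)^{-i}\otimes_H E)$ for $0\le i\le n$. By condition (viii), each $(B_-H)^{-i}\otimes_H E$ is a direct summand of a finite sum of objects of $I$, hence each subquotient is a direct summand of a finite sum of modules $\Delta(E')$ with $E'\in I$; since $\Delta(E')$ is noetherian by (ix) and noetherianness is preserved under extensions, $\Delta_n(E)$ is noetherian, and in particular finitely generated. Local nilpotency for $B_-$ is read off the $B_+$-module isomorphism $\Delta_n(E)\simeq B_+\otimes N$ with $N=(B_-H/(B_-H)^{<-n})\otimes_H E$: the factor $N$ is concentrated in degrees $[-n,0]$ while $B_+$ sits in nonnegative degrees, so $\Delta_n(E)^j=0$ for $j<-n$, and $B_-^{<-n-j}$ therefore annihilates $\Delta_n(E)^j$. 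For the graded pieces, use the $(B_+H)$-module isomorphism $\Delta_n(E)\simeq B_+H\otimes_H N$ to obtain, as graded $H$-modules,
$$\Delta_n(E)^j \simeq \bigoplus_{-n\le b\le 0}(B_+H)^{j-b}\otimes_H N^b.$$
Each $N^b=(B_-H)^b\otimes_H E$ lies in $\OC_H$ by (viii), and the functor $(B_+H)^a\otimes_H(-)$ sends $\OC_H$ into itself — by right-exactness of $-\otimes_H-$ together with (viii) applied to objects of $I$, any quotient of a finite sum of $E'\in I$ is sent to a quotient of a finite sum of objects of $\OC_H$. Since $\OC_H$ is closed under finite direct sums, $\Delta_n(E)^j\in\OC_H$, completing this direction.

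For (i) $\Rightarrow$ (ii), choose finitely many homogeneous generators $m_1,\ldots,m_r$ of $M$ as an $A$-module, with $m_i\in M^{d_i}$. Since $M^{d_i}\in\OC_H$, there is a finite family $S_i\subset I$ and a surjection $p_i:\bigoplus_{E\in S_i}E\twoheadrightarrow M^{d_i}$; lift $m_i$ to $\tilde v_i=\sum_{E\in S_i}v_{i,E}$ in the source, and let $\phi_{i,E}:E\to M^{d_i}$ denote the composition of the inclusion of the $E$-summand with $p_i$, so that $m_i=\sum_{E\in S_i}\phi_{i,E}(v_{i,E})$. Each image $\phi_{i,E}(E)$ is finitely generated over $H$ (being a quotient of $E\in I$, itself finitely generated as a $k$-module hence as an $H$-module), so local nilpotency of $M$ for $B_-$ yields $n_{i,E}\ge 0$ with $B_-^{<-n_{i,E}}\phi_{i,E}(E)=0$; that is, $\phi_{i,E}(E)\subset(\mathrm{Ann}_{B_-^{<-n_{i,E}}}M)^{d_i}$. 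Via the adjunction recalled in the excerpt, the morphism $\phi_{i,E}$ (viewed in degree $d_i$) corresponds to a morphism of graded $A$-modules
$$\Psi_{i,E}:\Delta_{n_{i,E}}(E\langle -d_i\rangle)\to M$$
whose image contains $\phi_{i,E}(v_{i,E})$. Summing, the total map $\bigoplus_{i,\,E\in S_i}\Psi_{i,E}$ has image containing each $m_i=\sum_{E}\phi_{i,E}(v_{i,E})$, hence is surjective. Re-indexing the pairs $(i,E\in S_i)$ as a single finite family $S$ with $d_E=-d_i$ and $n_E=n_{i,E}$ yields precisely the surjection required in (ii).

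The key technical point is the identification in (ii) $\Rightarrow$ (i) of the graded pieces of $\Delta_n(E)$ as $H$-modules via the $(B_+H)$-module structure, together with the verification that the functors $(B_+H)^a\otimes_H(-)$ preserve $\OC_H$; this avoids having to assume any form of closure of $\OC_H$ under extensions, which need not hold. In the reverse direction, the delicate maneuver is to bypass the question of whether the cyclic submodule $Hm_i$ lies in $\OC_H$ by instead lifting $m_i$ through a chosen presentation of $M^{d_i}$ and working directly with the $H$-maps $\phi_{i,E}:E\to M^{d_i}$ out of objects $E\in I$, for which the adjunction applies immediately.
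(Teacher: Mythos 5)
Your proof is correct and follows essentially the same route as the paper's: for (ii)$\Rightarrow$(i) both arguments read off local nilpotency from the vanishing of $\Delta_n(E)$ in degrees $<-n$ and identify each graded piece as $\bigoplus_j(B_+H)^{i+j}\otimes_H\bigl((B_-H)^{-j}\otimes_H E\bigr)$, placed in $\OC_H$ via (viii); for (i)$\Rightarrow$(ii) both pick a finite set of generating degrees, cover the corresponding graded pieces by finite sums of objects of $I$, use local nilpotency to choose the $n_E$, and conclude by the adjunction. Your extra care about finite generation of $\Delta_n(E)$ and the lifting of the $m_i$ through the presentations are fine but do not change the argument in substance.
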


\begin{proof}
Assume (i). There is a finite subset $J$ of $\BZ$ such that $M$ is generated
by $\bigoplus_{j\in J}M_j$ as an $A$-module.
Given $j\in J$, there is a finite family $S_j$ of objects of
$I$ and a surjective morphism of $H$-modules
$f_j:\bigoplus_{E\in S_j}E\twoheadrightarrow M_j$.
By adjunction, we obtain a morphism
of graded $(B^-H)$-modules $g_j:\bigoplus_{E\in S_j} 
B^-H\otimes_H E\langle -j\rangle\to M$.
Let $E\in S_j$. Since $f_j(E)$ is a finitely generated $H$-module,
there is a non-negative integer $n_E$ such that $(B^-)_{\le -n_E}f_j(E)=0$.
So, $g_j$ factors through a morphism of graded $(B^-H)$-modules
$h_j:\bigoplus_{E\in S_j} \bigl((B^-H)/(B^-H)_{\le -n_E}\bigr)\otimes_H E\langle -j\rangle\to M$.
Let 
$$h'_j:\bigoplus_{E\in S_j} A\otimes_{B^-H}
\bigl((B^-H)/(B^-H)_{\le -n_E}\bigr)\otimes_H E\langle -j\rangle\to M$$
be the morphism of graded $A$-modules obtained from $h_j$ by adjunction.
The map $\sum_{j\in J}h'_j$ is surjective and this shows (ii) holds.

Assume (ii). Let $E\in I$.
Note that $(\Delta_n(E))_{<-n}=0$, hence $B^-_{<-n-i}(\Delta_n(E))_i=0$. It follows that 
$\Delta_n(E)$ is locally nilpotent for $B^-$. There is an isomorphism of $H$-modules
$(B^-H)/(B^-H)_{<-n}\simeq\bigoplus_{i=0}^n(B^-H)_{\le -i}/(B^-H)_{<-i}$,
hence
$$\Delta_n(E)_i\simeq\bigoplus_j(B^+H)_{i+j}\otimes_H \bigl((B^-H)_{-j}
\otimes_H E\bigr)$$
is isomorphic to a direct summand of a finite direct sum of objects of $I$.
We deduce that $\Delta_n(E)\in\OC^{gr}$.

Since $\OC^{gr}$ is closed under taking finite
direct sums, quotients and shifts, it follows that (i) holds.
\end{proof}

\begin{lem}
\label{le:vanishingExt1Delta}
Let $E\in I$, $n\in\ZM_{\ge 0}$ and let $M\in \OC^{gr}$.
If $M_{<-n}=0$, then $\Ext^1_{\OC^{gr}}(\Delta_n(E),M)=0$.

In particular, given $E'\in I$ and $d\le n$, 
we have
$\Ext^1_{A\mmodgr}(\Delta_n(E),\Delta(E'\langle d\rangle))=0$.
\end{lem}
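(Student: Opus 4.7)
The plan is to construct a splitting of any extension $\xi : 0 \to M \to X \to \Delta_n(E) \to 0$ in $\OC^{gr}$ (with $M^{<-n}=0$) by exploiting the adjunction
$$\Hom_{A\mmodgr}(\Delta_n(E), N) \xrightarrow{\sim} \Hom_{H\mmodgr}(E, \mathrm{Ann}_{B_-^{<-n}}(N))$$
recorded just above the lemma; since $E$ is concentrated in degree $0$, the right-hand side simplifies to $\Hom_H(E, \mathrm{Ann}_{B_-^{<-n}}(N)_0)$. The isomorphism $\Delta_n(E) \cong B_+ \otimes ((B_-H)/(B_-H)^{<-n}) \otimes_H E$ shows that $\Delta_n(E)$ is supported in degrees $\ge -n$, so combined with $M^{<-n}=0$, the exact sequence forces $X^{<-n}=0$. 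Hence every $x \in X_0$ satisfies $B_-^{<-n}\cdot x \subset X^{<-n}=0$, which gives $X_0 = \mathrm{Ann}_{B_-^{<-n}}(X)_0$, and similarly $\mathrm{Ann}_{B_-^{<-n}}(\Delta_n(E))_0 = \Delta_n(E)_0 = E$. Under the adjunction, morphisms $\Delta_n(E) \to X$ thus correspond exactly to $H$-linear maps $E \to X_0$.

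Extracting the degree-$0$ component of $\xi$ produces a short exact sequence $0 \to M_0 \to X_0 \to E \to 0$ in $\OC_H$. Since $E \in I$ is projective in $\OC_H$ (because $I$ is a set of projective generators by assumption), this sequence splits, yielding a section $s_0 : E \to X_0$. Let $s : \Delta_n(E) \to X$ be the morphism corresponding to $s_0$ via the adjunction. By naturality, the composition $\pi \circ s : \Delta_n(E) \to \Delta_n(E)$ (where $\pi : X \to \Delta_n(E)$ is the projection) corresponds to the composition $E \xrightarrow{s_0} X_0 \xrightarrow{\pi_0} E$, which is $\mathrm{id}_E$. By injectivity of the adjunction, $\pi \circ s = \mathrm{id}_{\Delta_n(E)}$, so $\xi$ splits.

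For the stated consequence, one computes that $\Delta(E'\langle d\rangle)^i = \Delta(E')^{i+d}$ vanishes for $i < -d$, and since $d \le n$ this gives $\Delta(E'\langle d\rangle)^{<-n}=0$, so the hypothesis of the first part is met with $M = \Delta(E'\langle d\rangle)$. To reduce the $\Ext^1$ in $A\mmodgr$ to the $\Ext^1$ in $\OC^{gr}$ just proved to vanish, I must check that every extension of $\Delta_n(E)$ by $\Delta(E'\langle d\rangle)$ in $A\mmodgr$ automatically lies in $\OC^{gr}$: such an extension is finitely generated and locally nilpotent for $B_-$ as an extension of modules with those properties, while each graded piece lies in $\OC_H$ provided $\OC_H$ is closed under extensions in $H\mMOD$. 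The hard part of the argument is this closure property: given $0 \to A \to B \to C \to 0$ in $H\mMOD$ with $A, C \in \OC_H$, the plan is to choose a surjection $P = \bigoplus E_i \twoheadrightarrow C$ from a finite direct sum of objects of $I$, form the pullback over $B$, and construct a lift $P \to B$ by invoking condition (v) (every submodule of an object of $I$ is a quotient of a finite direct sum of copies of it) together with projectivity of the $E_i$ inside $\OC_H$; this produces a surjection $A \oplus P \twoheadrightarrow B$ exhibiting $B$ as an object of $\OC_H$.
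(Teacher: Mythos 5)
Your argument for the first statement is sound, and it is essentially the paper's proof repackaged: the paper applies $\Hom(-,M)$ to the presentation $0\to A\otimes_{B_-H}(B_-H)^{<-n}\otimes_H E\to A\otimes_H E\to \Delta_n(E)\to 0$ and combines the degree bound with projectivity of $E$ in $\OC_H$, while you build the splitting directly from the adjunction; the mechanism is the same. One slip to repair: $\Delta_n(E)_0$ is not $E$ but $\bigoplus_{j=0}^n(B_+H)^j\otimes_H(B_-H)^{-j}\otimes_H E$, so the degree-zero part of your extension is $0\to M_0\to X_0\to\Delta_n(E)_0\to 0$. The fix is to lift the canonical inclusion $E\hookrightarrow\Delta_n(E)_0$ (which is what corresponds to $\id_{\Delta_n(E)}$ under the adjunction $f\mapsto f_{|1\otimes E}$) along the surjection $X_0\twoheadrightarrow\Delta_n(E)_0$ of objects of $\OC_H$, using projectivity of $E$ in $\OC_H$; this gives $s_0$ with $\pi_0\circ s_0$ equal to that inclusion, which is exactly what you need to conclude $\pi\circ s=\id$.

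The genuine gap is in the ``in particular''. Your reduction of $\Ext^1_{A\mmodgr}$ to $\Ext^1_{\OC^{gr}}$ requires $\OC_H$ to be closed under extensions in $H\mMOD$, and the argument you sketch for this is circular: projectivity of the $E_i$ in $\OC_H$ only permits lifting along surjections whose source is already known to lie in $\OC_H$, whereas here the source is (a pullback of) the extension $B$ itself --- precisely the object you are trying to place in $\OC_H$. What you would need is $\Ext^1_{H\mMOD}(E_i,A)=0$ for $A\in\OC_H$, which projectivity in the subcategory $\OC_H$ does not give. Condition (v) concerns submodules of objects of $I$ (it yields closure of $\OC_H$ under subobjects), not extensions, and the paper itself treats extension-closure of $\OC_H$ as an additional hypothesis rather than a consequence of (v)--(ix). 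The cleaner route is to run your splitting argument directly on an extension $0\to\Delta(E'\langle d\rangle)\to X\to\Delta_n(E)\to 0$ in $A\mmodgr$: one still has $X^{<-n}=0$, and the only obstruction lies in $\Ext^1_{H\mMOD}(E,\Delta(E'\langle d\rangle)_0)$, where $\Delta(E'\langle d\rangle)_0=(B_+H)^d\otimes_H E'$ is a direct summand of a finite direct sum of objects of $I$ by (viii). So the input actually needed is $\Ext^1_{H\mMOD}(E,F)=0$ for $E,F\in I$ --- automatic in the situation where the lemma is used (Example \ref{ex:Hsplit}, where objects of $I$ are projective $H$-modules), but not something your appeal to condition (v) establishes.
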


\begin{proof}
We have 
$$\Hom_{A\mmodgr}(A\otimes_{B^-H}(B^-H)_{<-n}\otimes_H E,M)\simeq
\Hom_{(B^-H)\mmodgr}((B^-H)_{<-n}\otimes_H E,M)=0$$
since
$((B^-H)_{<-n}\otimes_H E)_i=0$ for $i\ge -n$, while
$M_i=0$ for $i<-n$.

There is an exact sequence of graded $A$-modules
$$0\to A\otimes_{B^-H}(B^-H)_{<-n}\otimes_H E\to A\otimes_H E\to \Delta_n(E)\to 0.$$
Since $A\otimes_H E$ is projective in the category
of graded $A$-modules
whose restriction to $H$ is in $\OC_H$, we deduce that
$\Ext^1_{\OC^{gr}}(\Delta_n(E),M)=0$.
\end{proof}

Let $\Delta^{gr}=\{\Delta(E\langle n\rangle)\}_{E\in I,\ n\in\ZM}$,
a set of objects of $\OC^{gr}$ (cf. Lemma \ref{le:OH}). We put a partial order
on $\Delta^{gr}$: given $E,F\in I$ and
$i,j\in\ZM$, we put $E\langle i\rangle < F\langle j\rangle$ if $i<j$.
Given $n\in\BZ$, we put $\Delta^{gr,\le n}=
\{\Delta(E\langle r\rangle)\}_{E\in I,r\le n}$.

\begin{theo}
\label{th:hwgraded}
$\OC^{gr}$ is a noetherian
highest weight category with poset of standard objects $\Delta^{gr}$.

	Given $n\in\BZ$, the set
	$\{\Delta_{n-r}(E\langle r\rangle)\}_{r\le n,E\in I}$ is a set
	of projective generators for $\OC^{gr}[\Delta^{gr,\le n}]$.
\end{theo}

\begin{proof}
Let $E\in I$. By ($\bigtriangleup$ix), every
$A$-submodule of $\Delta_n(E)$ is finitely generated. So, 
all subobjects of objects of $\OC^{gr}$ are finitely generated as
$B^+$-modules by Lemma \ref{le:OH}. We deduce that $\OC^{gr}$ is closed
under taking subobjects, since $\OC_H$ has that property.

We check now the conditions of Definition \ref{de:hwcat}.
Conditions (i) and (ii) are given by Lemma \ref{le:HomDelta}, and
(iii) by Lemma \ref{le:OH}.

Let $E,E'\in I$, $d\in\ZM$ and $M\in\OC^{gr}$. By Lemma \ref{le:OH},
there is $m\ge 0$ such that
$M_{<-m}=0$. It follows from Lemma \ref{le:vanishingExt1Delta}
that $\Ext^1_{\OC^{gr}}(\Delta_n(E),M)=0$ for $n\ge m$.
There is $m'\ge 0$ such that $\Delta(E')_d$ is killed by $B^-_{<-m'}$.
Consequently given $n\ge m'$, we have
$$\Hom_{\OC^{gr}}(\Delta_n(E),\Delta(E'\langle d\rangle))\simeq
\Hom_{H\mmodgr}(E,\mathrm{Ann}_{B^-_{<-n}}(\Delta(E'\langle d\rangle)))
=\Hom_{H\mmodgr}(E,\Delta(E')_d).$$
Since $\Delta(E')_d$ is a direct summand of a finite direct sum of objects of
$I$,
we deduce that $\Hom_{\OC^{gr}}(\Delta_n(E),\Delta(E'\langle d\rangle))\in
k\mproj$. So, (iv) holds.

\smallskip
The projectivity of the objects $\Delta_{n-r}(E\langle r\rangle)$ follows from
Lemmas \ref{le:vanishingExt1Delta} and \ref{le:critproj}.
Since every object of $\OC^{gr}[\Delta^{gr,\le n}]$ is a quotient of an
object filtered by $\Delta(E\langle r\rangle)$'s with $r\le n$, the
generation follows.
\end{proof}

%\subsubsection{Central reduction}
%Let $Z$ be a graded central subalgebra of $A$. Let
%$Z_\pm=Z\cap B_\pm$. We assume $Z=Z_-\otimes Z_+$.
%
%Let $R_\pm$ be a graded commutative noetherian $Z_\pm$-algebra with
%$R_-^{>0}=R_+^{<0}=0$ and let $R=R_+\otimes R_-$. Let
%$A'=A\otimes_Z R$. Let $B'_\pm=B_\pm\otimes_{Z_\pm}R_{\pm}$.
%We have a decomposition $A'=B'_+\otimes H\otimes B'_-$ and Assumptions ???
%are satisfied. We have a functor of highest weight categories
%$R\otimes_Z -:\OC^\grad\to \OC^{\prime\grad}$.

\medskip
Let $k'$ be a noetherian commutative $k$-algebra. We put
$A'=k'A$, $B^{\prime\pm}=k'B^\pm$ and $H'=k'H$. The conditions
($\bigtriangleup$i)-($\bigtriangleup$iv) are satisfied for those $k'$-algebras. 

Let $I'=\{k'E\}_{E\in I}$. Note that assumption ($\bigtriangleup$viii) is satisfied for $I'$.
We assume that assumptions ($\bigtriangleup$v)-($\bigtriangleup$vii) hold for $I'$ (this will be the case
if $I$ is as in Remark \ref{ex:Hsplit}) and that assumption ($\bigtriangleup$ix) holds.

We obtain a highest category $\OC^{\prime gr}$ over $k'$.
Given $E$ an $H$-module and given $n\ge 0$, we have a canonical isomorphism
$k'\Delta_n(E)\xrightarrow{\sim} \Delta'_n(k'E)$. It follows that
tensoring by $k'$ the projective generating family for
$\OC^{gr}[\Delta^{gr,\le n}]$
given by
Theorem \ref{th:hwgraded} gives a projective generating family for
$\OC^{\prime gr}[\Delta^{\prime gr,\le n}]$. We deduce
that $k'\OC^{gr}\xrightarrow{\sim} \OC^{\prime gr}$.

\subsection{Inner grading}
\label{se:innergrading}
We assume in \S \ref{se:innergrading} that $k$ is a $\QM$-algebra
and that there is $h\in A$ such that
$A_i=\{a\in A\ |\ ha-ah=ia\}$ for all $i\in\BZ$. We assume that
$A\neq A_0$.

\begin{lem}
There is a unique decomposition $h=h'+h_0$ where
$h'\in B^+\otimes H\otimes B^-_{<0}$ and $h_0\in \Zrm(H)$.
\end{lem}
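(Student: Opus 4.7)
The starting point is that $h$ lies in $A^0$: this is immediate from $[h,h]=0$ and the definition $A^0=\{a\in A\mid ha-ah=0\}$. The plan is then to compute $A^0$ explicitly from the triangular decomposition (ii)--(iv), read off the decomposition $h=h_0+h'$, and finally verify that $h_0$ centralises $H$ by exploiting the compatibility (iii).

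First, since $B_-^{>0}=B_+^{<0}=0$, $B_-^0=B_+^0=k$ and $H=H^0$, the graded isomorphism $\mu:B_+\otimes H\otimes B_-\xrightarrow{\sim} A$ restricts in degree zero to
$$A^0=\bigoplus_{i\ge 0}\mu(B_+^i\otimes H\otimes B_-^{-i})=H\;\oplus\;\mu(B_+^{>0}\otimes H\otimes B_-^{<0}).$$
Splitting $h$ accordingly yields $h_0\in H$ and $h'\in\mu(B_+^{>0}\otimes H\otimes B_-^{<0})\subset\mu(B_+\otimes H\otimes B_-^{<0})$. Uniqueness with respect to the weaker requirement $h'\in\mu(B_+\otimes H\otimes B_-^{<0})$ then follows from $H\cap\mu(B_+\otimes H\otimes B_-^{<0})=0$, which is a direct consequence of the injectivity of $\mu$ together with $B_-^{<0}\cap k=0$.

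The main step is showing $h_0\in Z(H)$. Fix $y\in H$; since $y\in A^0$, the identity $[h,y]=0$ gives $[h_0,y]=-[h',y]$. The left-hand side lies in $H$, and the plan is to show that the right-hand side lies in $\mu(B_+\otimes H\otimes B_-^{<0})$; the two subspaces intersect trivially, forcing both sides to vanish. To establish the claim, take a typical summand $b_+ h_1 b_-$ of $h'$ with $b_+\in B_+$, $h_1\in H$, $b_-\in B_-^{<0}$. By (iii), $yb_+\in HB_+=B_+H$, so we may write $yb_+=\sum_j b_+^{(j)}h^{(j)}$, giving
$$y\cdot b_+h_1b_-=\sum_j b_+^{(j)}(h^{(j)}h_1)b_-\in\mu(B_+\otimes H\otimes B_-^{<0}).$$
Symmetrically, $b_-y\in B_-H=HB_-$ decomposes as $\sum_j h'^{(j)}b_-^{(j)}$; because $y\in H^0$ and $\mu$ is a map of graded modules, each $b_-^{(j)}$ has the same degree as $b_-$ and therefore lies in $B_-^{<0}$, so
$$b_+h_1b_-\cdot y=\sum_j b_+(h_1h'^{(j)})b_-^{(j)}\in\mu(B_+\otimes H\otimes B_-^{<0}).$$
Subtracting yields $[h',y]\in\mu(B_+\otimes H\otimes B_-^{<0})$, as required.

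The only substantive obstacle is the grading argument ensuring that both $HB_-^{<0}$ and $B_-^{<0}H$ remain inside $\mu(H\otimes B_-^{<0})$, and symmetrically on the $B_+$ side; this is exactly what condition (iii) provides, once combined with the fact that $H$ is concentrated in degree $0$. Everything else is bookkeeping inside the triangular decomposition.
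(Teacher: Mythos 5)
Your proof is correct and is essentially the paper's argument: both reduce to $h\in A^0$, split $h$ along the triangular decomposition into $h_0\in H$ plus a term in $\mu(B_+\otimes H\otimes B_-^{<0})$, and conclude $[h_0,a]=0$ for $a\in H$ because $[h',a]$ stays in $\mu(B_+\otimes H\otimes B_-^{<0})$ while $[h_0,a]$ stays in $H$, and these intersect trivially. The only difference is that you spell out, via condition (iii) and the grading, the containment $[h',a]\in\mu(B_+\otimes H\otimes B_-^{<0})$ that the paper simply asserts.
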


\begin{proof}
We have $h\in A_0$. Write $h=h'+h_0$ where
$h'\in B^+\otimes H\otimes B^-_{<0}$ and $h_0\in B^+\otimes H$.
Since $h_0\in A_0$, it follows that $h_0\in H$. Let $a\in H$.
We have $0=[h,a]=[h',a]+[h_0,a]$. Since 
$[h',a]\in B^+\otimes H\otimes B^-_{<0}$ and $[h_0,a]\in H$, we deduce
that $[h_0,a]=0$. This shows that $h_0\in \Zrm(H)$.
\end{proof}

Let $E\in I$. The action of $h_0$ on $E$ is given by multiplication by
an element $C_E$ of $k$. Note that $h$ acts by $C_E+i$ on $\Delta(E)_i$.

\smallskip
We put $L=\bigcup_{E\in I}(C_E+\ZM)\subset k$.
Let $\sim$ be the equivalence relation on $L$ generated by
$\lambda\sim\lambda'$ if $\lambda-\lambda'{\not\in}k^\times$.
We assume in \S \ref{se:innergrading} that
\begin{itemize}
\item[(x)]
$C_E{\not\sim}C_E+i$ for $i\in\ZM-\{0\}$.
\end{itemize}

\smallskip
We define a relation on $I$ as the transitive closure of the relation 
$E>F$ if
$C_E-C_F+\ZM_{<0}{\not\in}k^\times$.
Our assumption above ensures that $>$ is a partial order on $I$.

\medskip
Let $\OC$ be the category of finitely generated
$A$-modules that are locally nilpotent for $B^-$ and whose restriction to
$H$ is the quotient of a (possibly infinite) direct sum of objects of $I$.

\smallskip
Let $M\in A\mMOD$. Given $\lambda\in k$, we put
$$\WC_\lambda(M)=\{m\in M\ |\ (h-\lambda)^nm=0\ \text{ for }n\gg 0\}.$$

Given $\alpha\in L/\sim$, we put
$\WC_\alpha(M)=\sum_{\lambda\in\alpha}\WC_\lambda(M)$.

\smallskip
We denote by $\OC^{gr,\alpha}$ the full subcategory of $\OC^{gr}$
of objects $M$ such that $M_i\subset\WC_{i+\alpha}(M)$ for all
$i\in\ZM$. We denote by $\OC^{\alpha+\ZM}$ the full subcategory
of $\OC$ of objects $M$ such that $M=\sum_{i\in\ZM}\WC_{\alpha+i}(M)$.

\begin{prop}
\label{pr:gradedungraded}
We have $\OC^{gr}=\bigoplus_{\alpha\in L/\sim}\OC^{gr,\alpha}$ and
$\OC=\bigoplus_{\alpha+\ZM\in (L/\sim)/\ZM}\OC^{\alpha+\ZM}$.
Furthermore, given $\alpha\in L/\sim$, the forgetful functor gives an
equivalence $\OC^{gr,\alpha}\xrightarrow{\sim}\OC^{\alpha+\ZM}$.

Fix an element $\tilde{\beta}\in L/\sim$ for each $\beta\in(L/\sim)/\ZM$.
There is an equivalence of graded categories
$\OC^{(\ZM)}\xrightarrow{\sim}\OC^{gr}$ sending $\Delta(E)$ to
$\Delta(E)\langle \widetilde{C_E+\ZM}-C_E\rangle$.
\end{prop}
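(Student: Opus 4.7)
The plan is to reduce the proposition to a generalized-eigenspace decomposition for the operator $h$ acting on each graded piece of an object of $\OC^{gr}$.

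First, I would analyze standard modules: a direct computation using the inner-grading identity $[h,a]=ia$ for $a\in A^i$, together with the fact that $h_0\in Z(H)$ acts on each $E\in I$ by the scalar $C_E$ (via (vi)), shows that $h$ acts on $\Delta(E)^i$ by the scalar $C_E+i$. Here one uses that the component $h'$ in the decomposition $h=h_0+h'$ annihilates $1\otimes E$ because $B_-^{<0}$ kills $E$, and then propagates via the commutator relation to all of $\Delta(E)^i$. Shifting then yields $\Delta(E)\langle d\rangle\in\OC^{gr,[C_E+d]}$.

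Next, for a general $M\in\OC^{gr}$, I would invoke Lemma \ref{le:OH} to realize $M$ as a quotient of a finite direct sum of modules $\Delta_{n_E}(E\langle d_E\rangle)$, each carrying a finite filtration whose subquotients are (by assumption (viii)) direct summands of finite direct sums of standard modules $\Delta(F)\langle d'\rangle$ with $F\in I$. It follows that the action of $h$ on the finitely generated $k$-module $M^i$ is annihilated by a polynomial $\prod_s(X-\lambda_s)^{N_s}$ whose roots $\lambda_s$ lie in $L+i$. Grouping the factors by $\sim$-class rewrites this as $\prod_\alpha p_{i+\alpha}(X)$. The crucial observation is that any representatives $\lambda\in\alpha$, $\mu\in\beta$ of distinct $\sim$-classes must satisfy $\lambda-\mu\in k^\times$, for otherwise the generating relation of $\sim$ would directly give $\lambda\sim\mu$. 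Consequently the resultants $\mathrm{Res}(p_{i+\alpha},p_{i+\beta})$ are units, the polynomials $p_{i+\alpha}$ are pairwise coprime in $k[X]$, and the Chinese Remainder Theorem yields $M^i=\bigoplus_\alpha\bigl(M^i\cap\WC_{i+\alpha}(M)\bigr)$. The commutator identity $ha=ah+ja$ for $a\in A^j$ gives $A^j\cdot\WC_\lambda(M^i)\subset\WC_{\lambda+j}(M^{i+j})$, so these summands assemble into graded $A$-submodules $M_\alpha$, establishing the first decomposition.

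For the ungraded decomposition and the equivalence $\OC^{gr,\alpha}\xrightarrow{\sim}\OC^{\alpha+\ZM}$ I would run the same eigenspace argument on objects of $\OC$, and exploit the freeness of the $\ZM$-action on $L/\sim$—which is a direct consequence of (x), since for any representative $\alpha=[C_E+j]$ the condition $\alpha+i=\alpha$ would force $C_E\sim C_E+i$. This freeness ensures the graded structure can be reconstructed from the $(L/\sim)$-decomposition by setting $M^i=\WC_{i+\alpha}(M)$; without (x) the eigenspaces for different integer shifts would collide. The final equivalence $\OC^{(\ZM)}\xrightarrow{\sim}\OC^{gr}$ then assembles these pieces using the chosen representatives $\widetilde\beta\in L/\sim$: for $\Delta(E)\in\OC$ the class $[C_E]$ lies in the $\ZM$-orbit $C_E+\ZM\in(L/\sim)/\ZM$, so there is a unique integer $n$ with $\widetilde{C_E+\ZM}=[C_E+n]$, and this $n=\widetilde{C_E+\ZM}-C_E$ is the required grading shift.

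The main obstacle is the eigenspace decomposition over the general commutative ring $k$: over a field this is classical, but over $k$ one needs the coprimality of the polynomials $p_{i+\alpha}$ in $k[X]$, which rests squarely on the separation condition encoded by $\sim$ and on axiom (x). Once that coprimality is in hand, the verification that the resulting idempotent decomposition is compatible with the $A$-action and respects the grading is routine.
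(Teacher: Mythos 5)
Your proof is correct and follows essentially the same route as the paper's: a generalized-eigenspace decomposition for $h$ grouped by $\sim$-classes, reduced to standard modules via Lemma~\ref{le:OH}, with the grading reconstructed from the eigenvalues using the freeness of the $\ZM$-action on $L/\sim$ guaranteed by (x). The only difference is one of detail: you make explicit (via the unit resultants and the Chinese Remainder Theorem over $k[X]$) why the sum $\sum_{\alpha}\WC_{\alpha}(M)$ is direct over the general base ring $k$, a point the paper's proof leaves implicit.
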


\begin{proof}
Let $M\in\OC$.
Let us first show that $M=\sum_{\lambda\in L}\WC_\lambda(M)$. This is clear
when $M=\Delta(E)$ for some $E$. It follows that it holds also when
$M=\Delta_n(E)$, hence for $M$ a direct sum of $\Delta_n(E)$'s.
One shows as in Lemma \ref{le:OH}
that every object $M\in\OC$ is a quotient of a finite direct sum of
$\Delta_n(E)$'s, hence the result holds for $M$.

	Note that if $\lambda{\not\sim}\lambda'$, then $\WC_\lambda(M)\cap
	\WC_{\lambda'}(M)=0$.
It follows that we have an $H$-module decomposition
$M=\bigoplus_{\alpha\in L/\sim}\WC_\alpha(M)$.

Let $M(\alpha+\ZM)=\bigoplus_{i\in\ZM}\WC_{\alpha+i}(M)$: this is
an $A$-submodule of $M$ in $\OC^{\alpha+\ZM}$ and 
$M=\bigoplus_{\alpha+\ZM\in (L/\sim)/\ZM}M(\alpha+\ZM)$. This gives
the required decomposition of $\OC$.

\smallskip
Assume now $M\in\OC^{gr}$. Let $M(\alpha)_i=\WC_{\alpha+i}(M)\cap M_i$
and $M(\alpha)=\bigoplus_{i\in\ZM}M(\alpha)_i$, a graded $A$-submodule of $M$
contained in $\OC^{gr,\alpha}$. We have $M=\bigoplus_{\alpha\in L/\sim}M(\alpha)$,
and this provides the required decomposition of $\OC^{gr}$.

Let $M\in\OC^{\alpha+\ZM}$. Let $M^i=\WC_{\alpha+i}(M)$. This defines
a structure of graded $A$-module on $M$, and gives an object $M'$ of
$\OC^{gr,\alpha}$. The construction $M\mapsto M'$ is inverse to
the forgetful functor $\OC^{gr,\alpha}\to\OC^{\alpha+\ZM}$.

\smallskip
Consider now an element $\tilde{\beta}\in L/\sim$ for each
$\beta\in(L/\sim)/\ZM$. We have constructed an equivalence
$F:\OC^\beta\xrightarrow{\sim}\OC^{gr,\tilde{\beta}}$. These functors
extend uniquely to the required equivalence of graded categories
$\OC^{(\ZM)}\xrightarrow{\sim}\OC^{gr}$.
\end{proof}

From Proposition \ref{pr:gradedungraded}
and Theorem \ref{th:hwgraded}, we deduce the
following result (\cite[Theorem 2.19]{ggor} when $k$ is a field).
Let $\Delta=\{\Delta(E)\}_{E\in I}$, with the poset structure of $I$.

\begin{theo}
\label{th:hwinner}
$\OC$ is a noetherian highest weight category with poset of standard objects
$\Delta$.
\end{theo}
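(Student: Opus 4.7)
The plan is to deduce Theorem~\ref{th:hwinner} from its graded counterpart Theorem~\ref{th:hwgraded}, transferring the highest weight structure block-by-block through the equivalences of Proposition~\ref{pr:gradedungraded}. That proposition provides decompositions $\OC = \bigoplus_{\beta \in (L/\sim)/\ZM} \OC^\beta$ and $\OC^{gr} = \bigoplus_{\alpha \in L/\sim} \OC^{gr,\alpha}$, together with, for each choice of lift $\tilde\beta \in L/\sim$ of $\beta \in (L/\sim)/\ZM$, an equivalence $\OC^{gr,\tilde\beta} \xrightarrow{\sim} \OC^\beta$ induced by the forgetful functor. Since a direct sum of highest weight categories is again a highest weight category with disjoint-union poset (Proposition~\ref{pr:blockshw} essentially proves the converse, but the direct statement is immediate), it suffices to treat one block at a time.

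Fix $\beta$ and a lift $\tilde\beta$. The block $\OC^{gr,\tilde\beta}$ is a direct summand, hence a Serre subcategory, of the highest weight category $\OC^{gr}$; applying Proposition~\ref{pr:idealshw} to the ideal of $\Delta^{gr}$ consisting of standard objects in this block yields a highest weight structure on $\OC^{gr,\tilde\beta}$ with standards $\{\Delta(E)\langle d\rangle : C_E + d \in \tilde\beta\}$. By hypothesis (x), the integer $d$ is uniquely determined by $E$; call it $d_E$. The forgetful equivalence sends $\Delta(E)\langle d_E\rangle$ to $\Delta(E)$, giving a bijection between the standard objects of $\OC^\beta$ (inherited from the equivalence) and the set $\{E \in I : [C_E]+\ZM = \beta\}$. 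Assembling over all $\beta$, the full set of standard objects of $\OC$ is identified with $\Delta = \{\Delta(E)\}_{E \in I}$.

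What remains is to check that the partial order induced on $\Delta$ by this transfer coincides with (or is refined by) the partial order inherited from the poset on $I$ defined just before the theorem statement. On $\OC^{gr,\tilde\beta}$, the shift order gives $\Delta(E)\langle d_E\rangle < \Delta(F)\langle d_F\rangle$ iff $d_E < d_F$; from $C_E + d_E \sim C_F + d_F$, the relation $\sim$ yields $(C_E - C_F) + (d_E - d_F) \in \mG$ for some maximal ideal $\mG$ of $k$, so that the image of $C_E - C_F$ in $k/\mG$ is the integer $d_F - d_E$. Comparing this with the theorem's defining condition ($E > F$ iff the image of $C_E - C_F$ lies in $\ZM_{>0}$ for some $\mG$), and taking the transitive closure, shows that the two orders on $\{\Delta(E) : [C_E]+\ZM = \beta\}$ agree (modulo a choice of sign convention for the grading shift, which is fixed by Proposition~\ref{pr:gradedungraded}). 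The bookkeeping here is the only delicate part; the substantive highest weight structure is entirely furnished by Theorem~\ref{th:hwgraded} and the equivalence $\OC^\beta \simeq \OC^{gr,\tilde\beta}$.
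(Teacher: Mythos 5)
Your strategy is exactly the one the paper intends: its entire proof of this theorem is the sentence ``From Proposition~\ref{pr:gradedungraded} and Theorem~\ref{th:hwgraded}, we deduce the following result'', and your block-by-block transfer through the forgetful equivalence is the right way to flesh that out. One small repair first: the set $\{\Delta(E)\langle d\rangle : C_E+d\in\tilde\beta\}$ is \emph{not} an ideal of $(\Delta^{gr},<)$ (it is not downward closed for the shift order), so Proposition~\ref{pr:idealshw} does not apply to it. What you need is that a direct summand of a highest weight category whose standard objects form a union of the $\sim$-classes of Proposition~\ref{pr:blockshw} is again highest weight with those standards; this is immediate from the axioms, but it is that statement, not the ideal/quotient formalism, that you should invoke.

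The genuine gap is the order comparison, which is the only step with real content, and the phrase ``the two orders agree modulo a choice of sign convention'' does not close it. First, your own computation shows the orders are \emph{opposite}: $d_E<d_F$ forces the image of $C_E-C_F$ in $k/\mG$ to be the positive integer $d_F-d_E$, hence $E>F$ in the order of the statement, while $\Delta(E)\langle d_E\rangle<\Delta(F)\langle d_F\rangle$ in the graded order. The direction compatible with Definition~\ref{de:hwcat}(ii) is the graded one (a nonzero map $\Delta(E)\to\Delta(F)$ sends the lowest weight space, on which $h$ acts by $C_E$, into $\Delta(F)_d$ with $C_E=C_F+d$ and $d>0$), so the inequality in the statement must in fact be read the other way round; this has to be said, not hedged. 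Second, and independently of the sign, the transferred order can be \emph{strictly finer} than the order of the statement: $d_E\neq d_F$ only requires $C_E+d_E\sim C_F+d_F$, and $\sim$ is a transitive closure through arbitrary elements of $L$ with increments of both signs at different maximal ideals, whereas the statement's order is generated by single-step congruences between elements of $I$. A highest weight structure for a finer order does not formally yield one for a coarser order. The clean way to finish is via Proposition~\ref{pr:coarsestorder}: it suffices to check that $\Hom(\Delta(E),\Delta(F))\neq 0$ or $\Ext^1(\Delta(E),\Delta(F))\neq 0$ with $E\neq F$ forces a \emph{single} maximal ideal $\mG$ with $C_E-C_F\equiv n \pmod{\mG}$ for some $n\in\ZM_{>0}$. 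For $\Hom$ this is the lowest-weight computation above, which gives $C_E-C_F=n$ exactly. For $\Ext^1$, use that it is a finitely generated $k$-module (Lemma~\ref{le:finiteExt1}), so if nonzero it survives reduction modulo some maximal ideal $\mG$; by base change it injects into the corresponding $\Ext^1$ over the field $k/\mG$, where $\sim$ degenerates to equality and the graded shift argument gives $\bar C_E-\bar C_F\in\ZM_{>0}$. With that supplement (and the sign straightened out) your argument is complete.
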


\begin{exemple}
Let $\gG$ be a finite dimensional reductive Lie algebra over $k=\CM$. Let $\bG_+$ be a Borel subalgebra
and $\hG\subset\bG_+$ a Cartan subalgebra. Let $\bG^-$ be the opposite Borel subalgebra.
Let $A=U(\gG)$, $B^\pm=U(\bG^\pm)$ and $H=U(\hG)$.
Let $h\in\hG$ be the sum of the simple coroots. We consider the inner grading on $\gG$, hence on
$A$ defined by $\ad(h)$. We have $\bG^+_{<0}=\bG^-_{>0}=\bG_{\not=0}=0$ and $\bG^-_0=\bG^+_0=\CM$.

We take for $I$ the set of isomorphism classes of simple $\hG$-modules, so that $\OC_H$ is the category
of semisimple $\hG$-modules. Then $\OC$ is the usual BGG category.\finl
\end{exemple}

\begin{rema}
If the grading on $A$ is not inner, then $\OC$ is not a highest weight
category in general.\finl
\end{rema}

\newpage
\renewcommand\thechapter{}

\def\chaptername{}\setcounter{chapter}{0}

\chapter{Prime ideals and geometry}

$$\xymatrix{
& R\ar@{^{(}->}[r] & \Mb\ar@{-}@/^1pc/[d]^(0.75)H \ar@{-}@/^2pc/[dd]^G\\
Z\ar[r]^{\sim}_{\copie} & Q \ar@{^{(}->}[r]\ar@{^{(}->}[u] & \Lb \ar@{^{(}->}[u] \\
& P \ar@{^{(}->}[r]\ar@{^{(}->}[u] & \Kb \ar@{^{(}->}[u]\\
&\kb[\CCB]\ar@{^{(}->}[u]
}
\ \ \ \ \ \ 
\xymatrix{
\RCB \ar[d]_{\rho_H} \ar@/^2pc/[dd]^{\rho_G} \\
\QCB \ar[d]_\Upsilon \\
\PCB=\CCB\times V/W\times V^*/W\ar[d]_\pi \\
\CCB}
$$

$$\xymatrix{
R\ar@{->>}[r] & R_c=R/\rG_c \ar@{^{(}->}[r] &\Mb_c=k_R(\rG_c) \ar@{-}@/^1pc/[d]^{D_c\cap H}
 \ar@{-}@/^3pc/[dd]^{D_c} \\
Q \ar@{->>}[r]\ar@{^{(}->}[u] & Q_c=Q/\qG_c \ar@{^{(}->}[u]\ar@{^{(}->}[r] &\Lb_c=k_Q(\qG_c)\ar@{^{(}->}[u] \\
P \ar@{->>}[r]\ar@{^{(}->}[u] & P_c=P/\pG_c \ar@{^{(}->}[u]\ar@{^{(}->}[r] &\Kb_c=k_P(\pG_c)\ar@{^{(}->}[u] \\
\kb[\CCB] \ar@{->>}[r]\ar@{^{(}->}[u] & \kb=\kb[\CCB]/\CG_c \ar@{^{(}->}[u]
}
\ \ \ \ \ \ 
\xymatrix{
\RCB_c\ar[r]\ar[d] & \RCB \ar[d] \\
\QCB_c\ar[r]\ar[d] & \QCB \ar[d]^\Upsilon \\
\{c\}\times V/W\times V^*/W\ar[r]\ar[d] & \PCB=\CCB\times V/W\times V^*/W\ar[d] \\
\{c\} \ar[r] & \CCB}
$$

$$\xymatrix{
P\ar@{->>}[r]\ar[d] & P_\CG^{\gauche}=P/\pG_\CG^\gauche \ar[d]\ar@{^{(}->}[r] & \Kb_\CG^\gauche=k_P(\pG_\CG^\gauche) \\
\kb[\CCB] \ar@{->>}[r] & \kb[\CCB]/\CG
}
\ \ \ \ \ \ 
\xymatrix{
\CCB(\CG) \ar@{^{(}->}[r] & \CCB
}$$

\newpage

\printindex

\newpage

\end{document}

% 
% \newpage
% 
% 
% \chapter{Questions \`a traiter}
% 
% \medskip
% 
% $\bullet$ \'Etudier le cas o\`u certains param\`etres sont nuls. G\'en\'eraliser Bellamy/Liboz 
% au cas $W/K$ non ab\'elien (ici, $K$ est un sous-groupe distingu\'e de $W$ engendr\'e par 
% des r\'eflexions). Et au niveau des cellules~?
% 
% $\bullet$ R\'efl\'echir au rapport cellules \`a gauche/cellules \`a droite~: dans le cas ``complexe'', 
% est-ce que $|C \cap C'|=1$ dans le cas lisse, avec $C$ gauche et $C'$ droite~? Est-ce que les 
% caract\`eres cellulaires ``\`a gauche'' et ceux ``\`a droite'' sont les m\^emes~? 
% Dans le cas ``r\'eel'', 
% peut-on choisir $\rGba_c$ fixe par l'automorphisme suppl\'ementaire~? Est-ce que cela suffit pour envoyer 
% une cellule gauche sur une droite via l'inverse~? Est-ce que cela suffit \`a dire que les cellules 
% bilat\`eres sont stables par l'inverse~? 
% 
% $\bullet$ Est-ce que les caract\`eres cellulaires sont lin\'eairement ind\'ependants (du moins \`a 
% l'action de $D_c^\gauche$ pr\`es). 